\documentclass{amsart}
\usepackage{amsmath ,amssymb ,amsthm, mathrsfs, stmaryrd, dsfont, extarrows}
\usepackage{bm}
\usepackage[all]{xy}
\usepackage[pdftex]{graphicx} 
\usepackage[pdftex ,pdfborder={0 0 0}]{hyperref} 
\usepackage{tikz, tikz-cd}
\usepackage{shadethm}
\usepackage{graphicx, xcolor}
\usepackage{url}
\DeclareMathAlphabet\mathbfcal{OMS}{cmsy}{b}{n}
\DeclareMathAlphabet{\mathpzc}{OT1}{pzc}{m}{it}

\DeclareMathOperator*{\supp}{supp}

\theoremstyle{definition}
\newtheorem{defin}{Definition}[section]

\newtheorem{thm}[defin]{Theorem}
\newtheorem{conj}[defin]{Conjecture}
\newtheorem{lem}[defin]{Lemma}
\newtheorem{prop}[defin]{Proposition}
\newtheorem{cor}[defin]{Corollary}

\theoremstyle{remark}
\newtheorem{rem}[defin]{Remark}
\newtheorem{eg}[defin]{Example}
\newtheorem{quest}[defin]{Question}


\newcommand{\cFut}{\check{\mathrm{F}}\mathrm{ut}}
\newcommand{\Fut}{\mathrm{Fut}}

\newcommand{\cmu}{\bm{\check{\mu}}_{\mathrm{ch}}}
\newcommand{\NAmu}{\bm{\check{\mu}}_{\mathrm{NA}}}

\newcommand{\nH}{\mathcal{H}_{\mathrm{NA}}}
\newcommand{\PSH}{\mathrm{PSH}_{\mathrm{NA}}}

\newcommand{\pcH}{\mathcal{H}^{\mathbb{R}}_{\mathrm{NA}}}
\newcommand{\E}{\mathcal{E}_{\mathrm{NA}}}
\newcommand{\DHm}{\mathrm{DH}}
\newcommand{\Proj}{\operatorname{\mathrm{Proj}}}

\title[II, Non-archimedean $\mu$-entropy and $\mu$K-stability]{Entropies in $\mu$-framework of canonical metrics and K-stability, II -- Non-archimedean aspect: non-archimedean $\mu$-entropy and $\mu$K-semistability}


\author{Eiji Inoue}

\address{RIKEN, iTHEMS, 2-1 Hirosawa, Wako, Saitama 351-0198, Japan. }

\email{eiji.inoue@riken.jp}

\begin{document}

\begin{abstract}
This is the second in a series of two papers studying $\mu$-cscK metrics and $\mu$K-stability from a new perspective, inspired by observations on $\mu$-character in \cite{Ino3} and on Perelman's $W$-entropy in the first paper \cite{Ino4}. 

This second paper is devoted to studying a non-archimedean counterpart of Perelman's $\mu$-entropy. 
The concept originally appeared as \textit{$\mu$-character} of polarized family in the previous research \cite{Ino3}, where we used it to introduce an analogue of CM line bundle adapted to $\mu$K-stability. 

We firstly show some differential of the \textit{characteristic $\mu$-entropy} $\cmu^\lambda$ is the minus of $\mu^\lambda$-Futaki invariant, which connects $\mu^\lambda$K-semistability to the maximization of characteristic $\mu^\lambda$-entropy. 
It in particular provides us a criterion for $\mu^\lambda$K-semistability working without detecting the vector $\xi$ involved in the $\mu^\lambda_\xi$-Futaki invariant. 
We observe a family of filtrations $\{ \mathcal{F}_{\xi + \tau (\mathcal{X}, \mathcal{L})} \}_{\tau \in [0,\infty)}$ associated to a test configuration $(\mathcal{X}, \mathcal{L})$ and a vector field $\xi$ acting on $(X, L)$ to consider the differential. 
We conceptualize such family of filtrations as \textit{polyhedral configuration} and study its generalities. 
The concept implicitly appeared in many literatures involved in $\mathbb{R}$-test configuration. 

In the latter part, we propose a non-archimedean pluripotential approach to the maximization problem. 
In order to adjust the characteristic $\mu$-entropy $\cmu^\lambda$ to Boucksom--Jonsson's non-archimedean framework, we introduce a natural modification $\NAmu^\lambda$ which we call \textit{non-archimedean $\mu$-entropy}. 
We extend the non-archimedean $\mu$-entropy from the set of test configurations to a space $\E^{\exp} (X, L)$ of non-archimedean psh metrics on the Berkovich space $X^{\mathrm{NA}}$, which is endowed with a complete metric structure. 
We introduce \textit{moment measure} $\int \chi \mathcal{D}_\varphi$ on Berkovich space for this sake, which can be considered as a hybrid of Monge--Amp\`ere measure and Duistermaat--Heckman measure. 

We also compare our $\mu$-framework with other frameworks: $H$-entropy framework in the context of K\"ahler--Ricci flow and Calabi energy framework in the context of Calabi flow. 
Some illustrations by toric examples are attached in Appendix. 
\end{abstract}

\maketitle

\tableofcontents

\section{Main results}

In this second paper of the series, we explore an invariant for test configuration called $\mu$-entropy in the first paper \cite{Ino4} and its connection to $\mu$K-semistability introduced in \cite{Ino3} (see also \cite{Lah1, Ino2}). 
This paper consists of three parts. 

In the first part, section 2, we study family of filtrations associated to equivariant family of polarized schemes over affine toric variety, which we call polyhedral configuration, and variation of $\mu$-entropy along such family of filtrations. 
In particular, we observe $\mu$-entropy maximization implies $\mu$K-semistability, which motivates us to study the existence and the uniqueness of maximizers of the $\mu$-entropy. 

To find a maximizer of a functional defined on an infinite dimensional space, it is often effective to study suitable completions and extension of the functional to the completions. 
The rest two parts are devoted to this attempt. 
We adapt our framework to Boucksom--Jonsson's non-archimedean pluripotential theory \cite{BJ1, BJ2, BJ3, BJ4}, which provides completions $\PSH (X, L), \E^1 (X, L), \ldots$ of the space of equivalence classes of filtrations/test configurations. 

Similarly as in the case of the non-archimedean Mabuchi invariant (a variant of Donaldson--Futaki invariant which fits into the non-archimedean formalism), to extend the $\mu$-entropy, we would express equivariant intersections by some integration on Berkovich space. 
Different from the case of Mabuchi invariant, the non-archimedean Monge--Amp\`ere measure $\mathrm{MA} (\varphi)$ is not suitable for our purpose, and there is another measure on Berkovich space concerned with equivariant intersections of higher moments. 
For a normal test configuration $(\mathcal{X}, \mathcal{L})$ and a Borel measurable function $\chi$ on $\mathbb{R}$, the measure is expressed as 
\[ \int \chi \mathcal{D}_{(\mathcal{X}, \mathcal{L})} = \sum_{E \subset \mathcal{X}_0} \mathrm{ord}_E \mathcal{X}_0 \int_\mathbb{R} \chi \DHm_{(E, \mathcal{L}|_E)} . \delta_{v_E}. \]
We call it \textit{moment measure}. 
For $\chi = 1_{\mathbb{R}}$, it gives the non-archimedean Monge--Amp\`ere measure. 
For $\chi = e^{-t}$, it provides a measure appropriate to the $\mu$-entropy. 

In the second part, section 3, we construct the measure $\int \chi \mathcal{D}_\varphi$ for general non-archimedean psh metric $\varphi \in \E^1 (X, L)$ of finite energy class. 
To construct the measure, we firstly observe the non-archimedean Monge--Amp\`ere measure $\mathrm{MA} (\varphi \wedge \tau)$ of the rooftop $\varphi \wedge \tau$ for non-archimedean metric $\varphi = \varphi_{(\mathcal{X}, \mathcal{L})} \in \nH (X, L)$ associated to test configuration and $\tau \in \mathbb{R}$. 
Similarly as the above expression of the moment measure, it is concerned with the primary decomposition of the Duistermaat--Heckman measure: $\DHm_{(\mathcal{X}, \mathcal{L})} = \sum_{E \subset \mathcal{X}_0} \mathrm{ord}_E \mathcal{X}_0 \cdot \DHm_{(E, \mathcal{L}|_E)}$. 
Secondly, we study a generalization of the Duistermaat--Heckman measure for test configuration to general non-archimedean psh metric $\varphi \in \PSH (X, L)$, based on the monotonic continuity of $\int_{[\tau, \infty)} \DHm_{(\mathcal{X}_i, \mathcal{L}_i)}$ along decreasing nets $\varphi_{(\mathcal{X}_i, \mathcal{L}_i)} \in \nH (X, L)$. 
Then the measure $\int \chi \mathcal{D}_\varphi$ is constructed in a measure theoretic way based on these observations. 
The total mass $\iint_{X^{\mathrm{NA}}} \chi \mathcal{D}_\varphi := \int_{X^{\mathrm{NA}}} \int \chi \mathcal{D}_\varphi$ is equal to $\int_{\mathbb{R}} \chi \DHm_\varphi$. 

In the last part, section 4, we firstly study a metric space $(\E^{\exp} (X, L), d_{\exp})$ consisting of non-archimedean psh metrics of finite \textit{exponential moment energy} $E_{\exp}$. 
The metric $d_{\exp}$ originates from Orlicz norm, a generalization of $L^p$-norm. 
The topology induced from $d_{\exp}$ is stronger than any $d_p$-topology for $1 \le p < \infty$ and is weaker than $d_\infty$-topology (uniform convergence). 
Under the hypothesis on the continuity of envelopes, which is valid for smooth $X$ (see section \ref{continuity of envelopes}), we show the completeness of the metric space. 
We then show the $\mu$-entropy $\NAmu^\lambda$ has a natural upper semi-continuous extension to $\E^{\exp} (X, L)$, which is finally expressed as 
\[ \NAmu^\lambda (\varphi) = - \frac{\int_{X^{\mathrm{NA}}} (2\pi A_X + \lambda \varphi) \int e^{-t} \mathcal{D}_\varphi + E_{\exp}^{2\pi K_X +\lambda L} (\varphi)}{\iint_{X^{\mathrm{NA}}} e^{-t} \mathcal{D}_\varphi} - \lambda \log \iint_{X^{\mathrm{NA}}} e^{-t} \mathcal{D}_\varphi. \]
At last, we discuss maximization problem for the non-archimedean $\mu$-entropy and relation to other works. 
In Appendix, we observe some toric examples to illustrate our theory. 

In this article, we restrict our interest to schemes of locally finite type over the field $\mathbb{C}$, for which we have convergence results on equivariant intersections as we proved in \cite{Ino3}. 
A \textit{polarized scheme} (resp. \textit{polarized variety}) $(X, L)$ is a pair of a pure $n$-dimensional projective scheme (resp. projective integral scheme) $X$ over $\mathbb{C}$ and an ample $\mathbb{Q}$-line bundle $L$ over $X$: $L$ is a pair $L = (l, \hat{L}) =  (1/l) \hat{L}$ of a positive integer $l$ and an ample line bundle $\hat{L}$ on $X$. 
For instance, for a $\mathbb{Q}$-Fano variety $X$, we consider $L = -K_X = (l, (\omega_X^{\otimes l})^\vee)$ for sufficiently divisible $l$ so that $(\omega_X^{\otimes l})^\vee$ is invertible. 
We denote by $\mathbb{A}^1$ the affine space $\mathbb{C}$ and by $\mathbb{G}_m$ the multiplicative group $\mathbb{C}^\times$. 
Torus means algebraic torus $T = N \otimes \mathbb{G}_m$, where $N \cong \mathbb{Z}^r$ is a finite rank lattice. 
We denote by $\mathfrak{t} = N \otimes \mathbb{R}$ the associated real Lie algebra. 
We identify it with the Lie algebra of the maximal compactum $T_{\mathrm{cpt}} := N \otimes U (1)$ via $\xi \mapsto \frac{d}{dt}|_{t=0} \exp (t 2\pi \sqrt{-1} \xi)$. 

Now we explain the main results for each section. 

\subsection{Characteristic $\mu$-entropy and $\mu$K-semistability}

\subsubsection{Introduction to characteristic $\mu$-entropy}

In \cite{Ino3}, we introduced an equivariant characteristic class $\bm{\check{\mu}}_G^\lambda (\mathcal{X}/B, \mathcal{L}) \in \hat{H}_G (B, \mathbb{Q})$ for $G$-equivariant family of polarized schemes with intent to construct an analogy of CM line bundle in the context of $\mu$K-stability. 
For a test configuration $(\mathcal{X}, \mathcal{L})$, the base $B= \mathbb{A}^1$ is $\mathbb{G}_m$-equivariantly homotopic to a point with the trivial $\mathbb{G}_m$-action, so that we can identify the characteristic class $\bm{\check{\mu}}_{\mathbb{G}_m}^\lambda (\mathcal{X}/B, \mathcal{L}) \in \hat{H}_{\mathbb{G}_m} (\mathbb{A}^1, \mathbb{Q}) = \hat{H} (\mathbb{C}P^\infty, \mathbb{Q})$ with an infinite power series $\sum_{i=1} a_i x^i \in \mathbb{Q} \llbracket x \rrbracket$ by identifying $x$ with $c_1 (\mathcal{O} (-1)) \in H^2 (\mathbb{C}P^\infty)$, which is the equivariant first Chern class $c_{1, \mathbb{G}_m} (\mathbb{C}_1) \in H^2_{\mathbb{G}_m} (\mathrm{pt})$ of the weight one representation $\mathbb{C}_1$ of $\mathbb{G}_m$. 
It is shown in \cite{Ino3} that this infinite series is compactly absolutely-convergent to a real analytic function on $\mathbb{R}$. 
For a normal test configuration $(\mathcal{X}, \mathcal{L})$, the functional can be expressed as 
\begin{align*}
\cmu (\mathcal{X}, \mathcal{L}; \rho) 
&= \frac{(K_X. e^L) - \rho (K_{\bar{\mathcal{X}}/\mathbb{P}^1}^{\mathbb{G}_m}. e^{\bar{\mathcal{L}}_{\mathbb{G}_m}}; \rho)}{\int_\mathbb{R} e^{-\rho t} \DHm_{(\mathcal{X}, \mathcal{L})}}, 
\\
\bm{\check{\sigma}} (\mathcal{X}, \mathcal{L}; \rho) 
&= \frac{\int_\mathbb{R} (n-\rho t) e^{-\rho t} \DHm_{(\mathcal{X}, \mathcal{L})}}{\int_\mathbb{R} e^{-\rho t} \DHm_{(\mathcal{X}, \mathcal{L})}} - \log \int_\mathbb{R} e^{-\rho t} \DHm_{(\mathcal{X}, \mathcal{L})}, 
\\
\cmu^\lambda (\mathcal{X}, \mathcal{L}; \rho) 
&= \cmu (\mathcal{X}, \mathcal{L}; \rho) + \lambda \bm{\check{\sigma}} (\mathcal{X}, \mathcal{L}; \rho)
\end{align*}
for $\rho \in \mathbb{R}$. 
Here 
\begin{itemize}
\item we put $(e^L) = (L^{\cdot n})/n!$ and $(K_X. e^L) = (K_X. L^{\cdot n-1})/(n-1)!$, 

\item the $\mathbb{G}_m$-equivariant intersection $(K_{\bar{\mathcal{X}}/\mathbb{A}^1}^{\mathbb{G}_m}. e^{\bar{\mathcal{L}}_{\mathbb{G}_m}}; \rho)$ on the compactification $(\bar{\mathcal{X}}, \bar{\mathcal{L}})$ is defined by some infinite sum of cup products of equivariant cohomology classes (see section \ref{equivariant intersection} for the precise definition), 

\item the Duistermaat--Heckman measure $\DHm_{(\mathcal{X}, \mathcal{L})}$ is associated to the $\mathbb{G}_m$-action on the central fibre and is normalized by $\int_\mathbb{R} \DHm_{(\mathcal{X}, \mathcal{L})} = (e^L)$. 
\end{itemize}
We call this $\cmu^\lambda (\mathcal{X}, \mathcal{L}; \rho)$ the \textit{characteristic $\mu$-entropy}, distinguishing it with the non-archimedean $\mu$-entropy we later introduce. 

By the equivariant localization and the equivariant Grothendieck--Riemann--Roch theorem, we can localize the equivariant intersection to the central fibre (cf. Definition \ref{mu-entropy of polyhedral configuration} and Proposition \ref{Localization}): 
\[ (K_X. e^L) - \rho (K_{\bar{\mathcal{X}}/\mathbb{P}^1}^{\mathbb{G}_m}. e^{\bar{\mathcal{L}}_{\mathbb{G}_m}}; \rho) = (\kappa_{\mathcal{X}_0}^{\mathbb{G}_m}. e^{\mathcal{L}_{\mathbb{G}_m}}; \rho), \]
using a $\mathbb{G}_m$-equivariant homology class $\kappa_{\mathcal{X}_0}^{\mathbb{G}_m}$ derived from the equivariant homology Todd class $\tau_{\mathcal{X}_0}^{\mathbb{G}_m} (\mathcal{O}_{\mathcal{X}_0})$ (cf. \cite{EG2, Ino3}). 
Or conversely, we can express the integrations by equivariant intersections on the compactification $\bar{\mathcal{X}}$: 
\begin{align*} 
\int_\mathbb{R} e^{-\rho t} \DHm_{(\mathcal{X}, \mathcal{L})} 
&= (\mathcal{X}_0^{\mathbb{G}_m}. e^{\mathcal{L}_{\mathbb{G}_m}}; \rho) =  (e^L) - \rho (e^{\bar{\mathcal{L}}_{\mathbb{G}_m}}; \rho), 
\\
\int_\mathbb{R} (n-\rho t) e^{-\rho t} \DHm_{(\mathcal{X}, \mathcal{L})}
&= (\mathcal{X}_0^{\mathbb{G}_m}. \mathcal{L}_{\mathbb{G}_m}. e^{\mathcal{L}_{\mathbb{G}_m}}; \rho) = (L. e^L) - \rho (\bar{\mathcal{L}}_{\mathbb{G}_m}. e^{\bar{\mathcal{L}}_{\mathbb{G}_m}}; \rho). 
\end{align*}
Here we note $(L. e^L) = (L^{\cdot n})/(n-1)!$. 
We remind 
\begin{equation}
\label{DH measure and equivariant intersection} 
\frac{1}{k!} \int_\mathbb{R} (-\rho t)^k \DHm_{(\mathcal{X}, \mathcal{L})} = \frac{(\mathcal{L}^{\cdot n+k}_{\mathbb{G}_m}|_{\mathcal{X}_0}; \rho)}{(n+k)!} = \frac{(L^{\cdot n+k})}{(n+k)!} -\frac{\rho (\bar{\mathcal{L}}_{\mathbb{G}_m}^{\cdot n+k}; \rho)}{(n+k)!}, 
\end{equation}
where $\frac{(L^{\cdot n+k})}{(n+k)!} = 0$ for $k > 0$ and $\frac{\rho (\bar{\mathcal{L}}_{\mathbb{G}_m}^{\cdot n+k}; \rho)}{(n+k)!} = 0$ for $k = 0$. 
We explain the precise definition of these equivariant intersections in section \ref{equivariant intersection}. 

A \textit{proper vector} (or often called just vector in this article) on $(X, L)$ is an element $\xi \in \mathfrak{t}$ of the real Lie algebra of a torus $T$ acting on $(X, L)$. 
For integral $\xi \in N \subset \mathfrak{t}$, we can assign a product configuration $X \times \mathbb{A}^1$ endowed with the diagonal action of $\mathbb{G}_m$ derived from the group homomorphism $\mathbb{G}_m \to T$ associated to $\xi$. 
We can define the characteristic $\mu$-entropy $\cmu^\lambda (X, L; \xi)$ for a proper vector $\xi$, using the expression localized to the central fibre. 
This is compatible with the above characteristic $\mu$-entropy for product configuration associated to integral (or rational) $\xi$. 
This is the original form of characteristic $\mu$-entropy appeared in \cite{Ino2} as $\mu$-volume functional, in which we generalize Tian--Zhu's volume minimization argument for K\"ahler--Ricci soliton to $\mu$-cscK metric. 
It is proved there that there exists a proper vector maximizing $\cmu (X, L; \bullet)$ among all proper vectors when $X$ is smooth. 
We will see the properness can be extended to $X$ with klt singularities. 

As explained in section \ref{Filtration}, we can assign filtrations for test configuration and proper vector. 
This leads to the following study. 

\subsubsection{Characteristic $\mu$-entropy and $\mu$K-semistability}

We firstly study the characteristic $\mu$-entropy for finitely generated filtrations, generalizing the above description. 
We begin with studying the characteristic $\mu$-entropy of some geometric object $(\mathcal{X}/B_\sigma, \mathcal{L}; \xi)$, which we call \textit{polyhedral configuration} (see Definition \ref{polyhedral configuration} and Definition \ref{mu-entropy of polyhedral configuration}). 
It then turns out that any finitely generated filtration is associated to some polyhedral configuration (no unique choice) in Proposition \ref{polyhedral configuration and finite generation of filtration}, and the characteristic $\mu$-entropy is indeed an invariant for filtrations in Proposition \ref{chmu is an invariant for filtration}. 
Studying a variation of the characteristic $\mu$-entropy along geometric family $\{ \mathcal{F}_{(\mathcal{X}/B_\sigma, \mathcal{L}; \xi)} \}_{\xi \in \sigma}$ of filtrations, we obtain the following criterion for $\mu$K-semistability. 

\begin{thm}[Summary of section \ref{section: mu-entropy of polyhedral configuration} and section \ref{maximizing non-archimedean mu-entropy}]
\label{characteristic mu maximization implies muK-semistability}
Let $(X, L)$ be a polarized scheme. 
Assume one of the following: 
\begin{enumerate}
\item There exists a proper vector $\xi_{\mathrm{opt}}$ on $(X, L)$ such that 
\[ \cmu^\lambda (X, L; \xi_{\mathrm{opt}}) \ge \cmu^\lambda (\mathcal{X}, \mathcal{L}; \rho) \]
for every test configuration $(\mathcal{X}, \mathcal{L})$ and $\rho \in \mathbb{Q}_{\ge 0}$. 

\item $X$ is a normal variety with only klt singularities, and for each test configuration $(\mathcal{X}, \mathcal{L}; \rho)$ there exists a proper vector $\xi$ on $(X, L)$ such that 
\[ \cmu^\lambda (X, L; \xi) \ge \cmu^\lambda (\mathcal{X}, \mathcal{L}; \rho). \]
\end{enumerate}
Then $(X, L)$ is $\check{\mu}^\lambda$K-semistable for any proper vector $\xi_{\mathrm{opt}}$ which maximizes the characteristic $\mu$-entropy among all proper vectors/test configurations/finitely generated filtrations. 
\end{thm}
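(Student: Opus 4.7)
The plan is to reduce both cases to a single one-parameter variational argument along a family of finitely generated filtrations and to identify the one-sided derivative of the characteristic $\mu$-entropy with the $\mu^\lambda$-Futaki invariant.

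First, I would reduce case (2) to case (1). The klt hypothesis will be used to invoke the properness of $\cmu^\lambda(X,L;\cdot)$ on the space of proper vectors (extending \cite{Ino2} from the smooth to the klt setting), which guarantees the existence of a maximizer $\xi_{\mathrm{opt}}$ of $\cmu^\lambda(X,L;\cdot)$ among all proper vectors. Since the hypothesis of (2) provides, for each $(\mathcal{X},\mathcal{L};\rho)$, some vector $\xi$ with $\cmu^\lambda(X,L;\xi)\ge \cmu^\lambda(\mathcal{X},\mathcal{L};\rho)$, the chain
\[\cmu^\lambda(X,L;\xi_{\mathrm{opt}}) \ge \cmu^\lambda(X,L;\xi) \ge \cmu^\lambda(\mathcal{X},\mathcal{L};\rho)\]
shows $\xi_{\mathrm{opt}}$ dominates all test configurations, putting us in the situation of (1).

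Next, fix a test configuration $(\mathcal{X},\mathcal{L})$ and $\rho\in\mathbb{Q}_{\ge 0}$. The strategy is to form the one-parameter family of filtrations $\{\mathcal{F}_{\xi_{\mathrm{opt}}+\tau(\mathcal{X},\mathcal{L},\rho)}\}_{\tau\in [0,\infty)}$ arising from the polyhedral configuration associated to the pair $(\xi_{\mathrm{opt}},(\mathcal{X},\mathcal{L},\rho))$ discussed in the introduction. By the polyhedral configuration machinery (Definition \ref{polyhedral configuration}, Definition \ref{mu-entropy of polyhedral configuration} and the localization formulas above), the assignment
\[\tau \;\longmapsto\; f(\tau) := \cmu^\lambda\bigl(\mathcal{F}_{\xi_{\mathrm{opt}}+\tau(\mathcal{X},\mathcal{L},\rho)}\bigr)\]
is expressed as a ratio of compactly absolutely-convergent power series in $\tau$, hence is real analytic on a neighborhood of $[0,\infty)$. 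At $\tau=0$ the filtration is the one associated with the product configuration for $\xi_{\mathrm{opt}}$, so $f(0)=\cmu^\lambda(X,L;\xi_{\mathrm{opt}})$. For $\tau>0$ the filtration is finitely generated and, by Proposition \ref{polyhedral configuration and finite generation of filtration} and the filtration-invariance of $\cmu^\lambda$ (Proposition \ref{chmu is an invariant for filtration}), equals the characteristic $\mu$-entropy of a genuine test configuration (obtained from the $\mathrm{Proj}$ of the Rees algebra of the scaled filtration). The maximizing hypothesis on $\xi_{\mathrm{opt}}$ therefore forces $f(0)\ge f(\tau)$ for all $\tau\ge 0$, whence $f'(0^+)\le 0$.

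The decisive step is then to compute $f'(0^+)$ and to show
\[f'(0^+) \;=\; -\Fut^\lambda_{\xi_{\mathrm{opt}}}(\mathcal{X},\mathcal{L}),\]
where the right-hand side is the $\mu^\lambda_{\xi_{\mathrm{opt}}}$-Futaki invariant. This is done by differentiating the equivariant-intersection expressions for the numerator $(K_X.e^L)-\rho(K^{\mathbb{G}_m}_{\bar{\mathcal{X}}/\mathbb{P}^1}.e^{\bar{\mathcal{L}}_{\mathbb{G}_m}};\rho)$ and the weight $\int_\mathbb{R} e^{-\rho t}\DHm$ in the direction corresponding to adjoining the test configuration to $\xi_{\mathrm{opt}}$, and matching the resulting linear term with the non-archimedean definition of $\Fut^\lambda_{\xi_{\mathrm{opt}}}$. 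Combined with $f'(0^+)\le 0$ this gives $\Fut^\lambda_{\xi_{\mathrm{opt}}}(\mathcal{X},\mathcal{L})\ge 0$ for every $(\mathcal{X},\mathcal{L})$, which is the definition of $\check\mu^\lambda_{\xi_{\mathrm{opt}}}$K-semistability.

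The main obstacle is the derivative-Futaki identification in the last paragraph: one must carefully keep track of the $\mathbb{G}_m$-weights coming from $\xi_{\mathrm{opt}}$ versus those coming from $(\mathcal{X},\mathcal{L})$ inside the equivariant intersection $(\kappa^{\mathbb{G}_m}_{\mathcal{X}_0}.e^{\mathcal{L}_{\mathbb{G}_m}};\rho)$, verify that the Leibniz-style linear term in $\tau$ reassembles into the intrinsic expression of the $\mu^\lambda_{\xi_{\mathrm{opt}}}$-Futaki invariant, and also handle the normalizing denominator $\int e^{-\rho t}\DHm$ and the entropy term $\bm{\check\sigma}$. A secondary difficulty is checking that $\rho\in\mathbb{Q}_{\ge 0}$ plus $\xi_{\mathrm{opt}}\in\mathfrak{t}$ fit together into a single polyhedral cone in which the analyticity of $f$ (and hence the validity of a one-sided derivative at the boundary) holds; this will come out of the convergence results recalled from \cite{Ino3}.
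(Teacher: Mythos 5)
Your proposal is correct and follows essentially the same route as the paper: reduce case (2) to case (1) via the properness of $\cmu^\lambda(X,L;\cdot)$ on proper vectors for klt $X$, realize the family $\{\mathcal{F}_{\xi_{\mathrm{opt}}+\tau(\mathcal{X},\mathcal{L})}\}$ geometrically by the product polyhedral configuration $(\mathcal{X}\times B_\sigma,\mathcal{L}\times B_\sigma)$ over $[0,\infty)\times\sigma$, and identify the one-sided derivative at $\tau=0$ with $-\cFut^\lambda_{\xi_{\mathrm{opt}}}(\mathcal{X},\mathcal{L})$ by equivariant localization on $\mathbb{C}P^1$ (this is exactly Theorem \ref{variation of mu-entropy}). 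The only point to make explicit is that the hypothesis bounds $\cmu^\lambda$ only at rational data, so one passes to the full cone (where $\xi_{\mathrm{opt}}$ may be irrational) by the continuity of $\cmu^\lambda(\mathcal{X}_\sigma,\mathcal{L}_\sigma;\zeta)$ in $\zeta$, as the paper notes after its corollary.
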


This result is an algebraic counterpart of Theorem 1.4 in \cite{Ino4} on Perelman's $\mu$-entropy. 
We will reinterpret this theorem as Theorem \ref{NAmu entropy and muK-stability} in terms of non-archimedean formalism. 
The latter criterion is more pragmatic: we no longer need to detect the (transcendental) vector $\xi$ for which $(X, L)$ must be $\check{\mu}^\lambda_\xi$K-semistable to check its $\mu^\lambda$K-semistability. 
We only need to find a vector $\xi$ \textit{for each test configuration} $(\mathcal{X}, \mathcal{L}; \rho)$ so that the above inequality holds. 
To show the latter claim, we prove the properness of the $\mu$-entropy $\cmu^\lambda (X, L; \bullet)$ for proper vectors. 
This is proved for smooth $X$ in \cite{Ino2} in a differential geometric way and will be proved for klt $X$ in section \ref{maximizing non-archimedean mu-entropy} after establishing some non-archimedean pluripotential theoretic formulae.

The following gives an extension of the above theorem. 
Analogous results for other frameworks are known by \cite{Der2} and \cite{HL2}: the central fibre of an `optimal degeneration' is `semistable' in a suitable sense depending on the framework.  

\begin{thm}[Summary of section \ref{section: mu-entropy of polyhedral configuration} and section \ref{mu-entropy via associated filtration}]
\label{NAmu maximizer}
Let $(X, L)$ be a polarized scheme. 
If a finitely generated filtration $\mathcal{F}$ maximizes $\cmu^\lambda$ among all finitely generated filtrations (or test configurations), then the central fibre $(\mathcal{X}_o (\mathcal{F}), \mathcal{L}_o (\mathcal{F}))$ of $\mathcal{F}$ is $\check{\mu}^\lambda$K-semistable with respect to the proper vector $\xi_o$ on $\mathcal{X}_o (\mathcal{F})$ induced by the filtration $\mathcal{F}$. 
\end{thm}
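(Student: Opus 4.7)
The plan is to extend the variational argument behind Theorem~\ref{characteristic mu maximization implies muK-semistability} by using the polyhedral configuration formalism to ``lift'' a test configuration of the central fibre $(\mathcal{X}_o(\mathcal{F}), \mathcal{L}_o(\mathcal{F}))$ to a one-parameter family of finitely generated filtrations on $(X, L)$ deforming $\mathcal{F}$. Maximality of $\mathcal{F}$ will then force the first-order differential of $\cmu^\lambda$ along this family to be non-positive, which, by identification with the $\mu^\lambda_{\xi_o}$-Futaki invariant on the central fibre, will yield the desired semistability.

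First, I would invoke Proposition~\ref{polyhedral configuration and finite generation of filtration} to represent $\mathcal{F}$ as a polyhedral configuration $(\mathcal{X}/B_\sigma, \mathcal{L}; \xi_{\mathcal{F}})$ over an affine toric base $B_\sigma$, whose fibre over the torus-fixed point $o \in B_\sigma$ is precisely $(\mathcal{X}_o(\mathcal{F}), \mathcal{L}_o(\mathcal{F}))$ equipped with the torus action that determines the proper vector $\xi_o$. Since $\cmu^\lambda$ is a filtration invariant (Proposition~\ref{chmu is an invariant for filtration}), the equivariant localization argument and the Duistermaat--Heckman expression described after (\ref{DH measure and equivariant intersection}) identify $\cmu^\lambda(\mathcal{F})$ with the characteristic $\mu$-entropy of the central fibre at its induced vector, giving the bookkeeping identity $\cmu^\lambda(\mathcal{F}) = \cmu^\lambda(\mathcal{X}_o(\mathcal{F}), \mathcal{L}_o(\mathcal{F}); \xi_o)$.

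Next, given any normal test configuration $(\mathcal{Y}, \mathcal{M}; \rho)$ of $(\mathcal{X}_o(\mathcal{F}), \mathcal{L}_o(\mathcal{F}))$ compatible with the $\xi_o$-action, I would enlarge $\sigma$ to a cone $\sigma'$ of one higher dimension and compose the polyhedral configuration of $\mathcal{F}$ with $(\mathcal{Y}, \mathcal{M})$ along the new ray, in the spirit of the family $\{\mathcal{F}_{\xi + \tau(\mathcal{X}, \mathcal{L})}\}_{\tau \ge 0}$ emphasized in the abstract. This produces a family of finitely generated filtrations $\{\mathcal{F}_\tau\}_{\tau \in [0, \varepsilon)}$ on $(X, L)$ with $\mathcal{F}_0 = \mathcal{F}$. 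By the same calculation that powers Theorem~\ref{characteristic mu maximization implies muK-semistability} (differentiation of the real-analytic expression $\cmu^\lambda$ along a ray of the cone, combined with the equivariant intersection formulas displayed above), the differential
\[ \frac{d}{d\tau}\bigg|_{\tau = 0^+} \cmu^\lambda(\mathcal{F}_\tau) \]
equals, up to a strictly positive normalization $\int_\mathbb{R} e^{-\rho t}\, \DHm_{(\mathcal{Y}, \mathcal{M})}$, the minus of the $\check{\mu}^\lambda_{\xi_o}$-Futaki invariant of $(\mathcal{Y}, \mathcal{M})$ on the central fibre.

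Maximality of $\mathcal{F}$ among finitely generated filtrations forces this differential to be $\le 0$, hence $\check{\mu}^\lambda_{\xi_o}\text{-Fut}(\mathcal{Y}, \mathcal{M}) \ge 0$ for every such $(\mathcal{Y}, \mathcal{M})$, which is exactly $\check{\mu}^\lambda$K-semistability of $(\mathcal{X}_o(\mathcal{F}), \mathcal{L}_o(\mathcal{F}))$ with respect to $\xi_o$. The principal technical obstacle lies in the composition step: constructing the enlarged polyhedral configuration that simultaneously encodes $\mathcal{F}$ and a perturbation by an arbitrary test configuration of its central fibre, and then verifying that the localization identity stays compatible with the perturbation so that the differential genuinely reproduces the Futaki invariant on the central fibre. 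Checking the polyhedral/toric combinatorics that guarantees $\{\mathcal{F}_\tau\}$ remains finitely generated for small $\tau > 0$, and tracking how equivariant classes on $\bar{\mathcal{X}}$ restrict to those on the central fibre under this composition, is the main bookkeeping required.
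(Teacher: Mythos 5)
Your plan is essentially the paper's proof (Theorem \ref{mu-entropy maximization for polyhedral configuration}): represent $\mathcal{F}$ by a polyhedral configuration $(\mathcal{X}/B_\sigma,\mathcal{L};\xi)$, lift a $T$-equivariant test configuration $(\underline{\mathcal{X}},\underline{\mathcal{L}})$ of the central fibre to a polyhedral configuration of $(X,L)$ over a cone of one higher dimension, and conclude via the derivative formula of Theorem \ref{variation of mu-entropy} together with maximality. The one step you flag as the ``principal technical obstacle'' is exactly where the paper does its real work, and it is more than bookkeeping: one fixes isomorphisms $H^0(\underline{\mathcal{X}}_o,\underline{\mathcal{L}}_o^{\otimes k})\cong H^0(X,L^{\otimes k})$ to get a $T\times\mathbb{G}_m$-action on the Hilbert scheme, considers the rational map $f\colon B_\sigma\times\mathbb{A}^1\dashrightarrow\mathrm{Hilb}$ sending $(b,t)\mapsto[\mathcal{X}_b,\mathcal{L}_b].t$, takes a $T\times\mathbb{G}_m$-equivariant toric resolution of its indeterminacy, and chooses a cone $\tilde{\sigma}\subset\sigma\times[0,\infty)$ of the resulting fan with $(\xi,\rho)\in\tilde{\sigma}^\circ$ for small $\rho>0$ (using the irrationality of $\xi$); pulling back the universal family over $B_{\tilde{\sigma}}$ gives the enlarged configuration, whose fibre over $o$ is identified with $\underline{\mathcal{X}}_o$ because $\lim_{t\to\infty}\exp(-t.(\xi,\rho))=o$. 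With that construction supplied, the chain of equalities $\cmu^\lambda(\tilde{\mathcal{X}},\tilde{\mathcal{L}};(\xi,\rho))=\bm{\check{\mu}}^\lambda_{\mathrm{ch},(\underline{X},\underline{L})}(\underline{\mathcal{X}}_\sigma,\underline{\mathcal{L}}_\sigma;(\xi,\rho))$ and the maximality hypothesis give $\frac{d}{d\rho}\big|_{\rho=0}\le 0$, i.e. $\cFut^\lambda_{\xi_o}(\underline{\mathcal{X}},\underline{\mathcal{L}})\ge 0$, exactly as you outline (note the derivative is already $-\cFut^\lambda_{\xi_o}$ on the nose, the normalization by $\int_\mathbb{R}e^{-\rho t}\DHm$ being built into the definition of the Futaki invariant).
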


See section \ref{Filtration} and Definition \ref{f.g. filtration} for finitely generated filtration. 
The central fibre $(\mathcal{X}_o (\mathcal{F}), \mathcal{L}_o (\mathcal{F}))$ of a finitely generated filtration $\mathcal{F}$ is defined by 
\begin{align} 
\mathcal{R}_o (\mathcal{F}) 
&:= \bigoplus_{m \in \mathbb{N}} \bigoplus_{\lambda \in \mathbb{R}} \varpi^{-\lambda} \mathcal{F}^\lambda R_m/\mathcal{F}^{\lambda+} R_m, 
\\
(\mathcal{X}_o (\mathcal{F}), \mathcal{L}_o (\mathcal{F})) 
&:= \Proj \mathcal{R}_o (\mathcal{F}). 
\end{align}
By Proposition \ref{central fibre via filtration}, the central fibre $(\mathcal{X}_o (\mathcal{F}), \mathcal{L}_o (\mathcal{F}))$ can be identified with the central fibre of a polyhedral configuration which represents the filtration.


\subsubsection{Non-archimedean $\mu$-entropy of test configuration}
\label{non-archimedean mu-entropy of test configuration}

The characteristic $\mu$-entropy $\cmu^\lambda$ gives a right concept in view of GIT on Hilbert scheme as studied in \cite{Ino3}. 
However, similarly as Donaldson--Futaki invariant, this notion does not fits into Boucksom--Jonsson's non-archimedean pluripotential theory because it is not well-behaved with respect to the normalized base change along $z^d: \mathbb{A}^1 \to \mathbb{A}^1$ due to bad behavior of the canonical divisor: $\cmu^\lambda (\mathcal{X}_d, \mathcal{L}_d; \rho) \neq \cmu^\lambda (\mathcal{X}, \mathcal{L}; d\rho)$. 
This prevents us to interpret $\cmu^\lambda$ as a functional for non-archimedean metrics since the attempt $\cmu^\lambda (\varphi_{(\mathcal{X}, \mathcal{L}; \rho)}) := \cmu^\lambda (\mathcal{X}, \mathcal{L}; \rho)$ is not well-defined for the associated non-archimedean metric $\varphi_{(\mathcal{X}, \mathcal{L}; \rho)} = \varphi_{(\mathcal{X}_d, \mathcal{L}_d; d^{-1} \rho)}$. 

Similarly as Mabuchi invariant, we can refine this by using the equivariant log canonical divisor: 
\[ K_{\bar{\mathcal{X}}/\mathbb{P}^1}^{\log, \mathbb{G}_m} := (K_{\bar{\mathcal{X}}}^{\mathbb{G}_m} + [\mathcal{X}_0^{\mathrm{red}, \mathbb{G}_m}]) - \varpi^* (K_{\mathbb{P}^1}^{\mathbb{G}_m} + [0^{\mathbb{G}_m}]) \in H_{2n}^{\mathbb{G}_m} (\bar{\mathcal{X}}, \mathbb{Z}). \]
The following variant fits into the non-archimedean formalism: 
\begin{align}
\NAmu (\mathcal{X}, \mathcal{L}; \rho) 
&:= 2 \pi \frac{(K_X. e^L) - \rho (K_{\bar{\mathcal{X}}/\mathbb{P}^1}^{\log, \mathbb{G}_m}. e^{\bar{\mathcal{L}}_{\mathbb{G}_m}}; \rho)}{(e^L) - \rho (e^{\bar{\mathcal{L}}_{\mathbb{G}_m}}; \rho)}, 
\\
\NAmu^\lambda (\mathcal{X}, \mathcal{L}; \rho) 
&:= \NAmu^\lambda (\mathcal{X}, \mathcal{L}; \rho) + \lambda \bm{\check{\sigma}} (\mathcal{X}, \mathcal{L}; \rho) 
\end{align}
for a normal test configuration $(\mathcal{X}, \mathcal{L}; \rho)$. 
Since $((\mathcal{X}_0 - \mathcal{X}_0^{\mathrm{red}}). e^{\mathcal{L}}; \rho) \ge 0$, we have $\NAmu^\lambda (\mathcal{X}, \mathcal{L}; \rho) \ge \cmu^\lambda (\mathcal{X}, \mathcal{L}; \rho)$ in general, where the equality holds when the central fibre is reduced. 
Similarly as non-archimedean Mabuchi invariant (cf. \cite[Proposition 7.14]{BHJ1}), we have $\NAmu^\lambda (\mathcal{X}_d, \mathcal{L}_d; \rho) = \NAmu^\lambda (\mathcal{X}, \mathcal{L}; d\rho)$ as we explain below. 

We observe the normalized base change behavior of the $\mathbb{G}_m$-equivariant log canonical homology class $K^{\log, \mathbb{G}_m}_{\bar{\mathcal{X}}} \in H_{2n-2}^{\mathbb{G}_m} (\bar{\mathcal{X}})$. 
In the non-equivariant case, it is observed in \cite[section 3]{LX}. 
(Note $K^{\log, \mathbb{G}_m}_{\bar{\mathcal{X}}}$ is not a divisor on $\bar{\mathcal{X}}$. 
It is a divisor/reflexive sheaf on $\bar{\mathcal{X}} \times_{\mathbb{G}_m} E_l \mathbb{G}_m$. 
See section \ref{equivariant intersection}. )
We write the effective divisor $\mathcal{X}_0$ as $\sum_i d_i E_i$ and put $Z := \mathcal{X}^{\mathrm{sing}} \cup \bigcup_i E_i^{\mathrm{sing}} \cup (\bigcup_{i \neq j} E_i \cap E_j) \subset \mathcal{X}$. 
Around $E_i \setminus Z$, the test configuration $\mathcal{X} \to \mathbb{A}^1$ is locally expressed as $\Delta^{n+1} \to \mathbb{A}^1: (z_i) \mapsto z_i^{d_i}$ (implicit function theorem), so that the normalized base change is locally expressed in the following diagram: 
\begin{equation} 
\label{normalized base change diagram}
\begin{tikzcd}
\sqcup^{(d, d_i)} \Delta^{n+1} \ar{rr}{(z_0, \ldots, z_i^{\frac{d}{(d, d_i)}}, \ldots, z_n)} \ar{d}[swap]{z_i^{\frac{d_i}{(d, d_i)}}} 
&~ 
& \Delta^{n+1} \ar{d}{z_i^{d_i}}
\\
\mathbb{A}^1 \ar{rr}{w^d}
&~ 
& \mathbb{A}^1 
\end{tikzcd} 
\end{equation}
We put $\mathcal{X}^\circ := \mathcal{X} \setminus Z$ and $\mathcal{X}_d^\circ := \mathcal{X}_d \setminus \nu_d^{-1} Z$, where $\nu_d: \mathcal{X}_d \to \mathcal{X}$ is the normalized base change morphism. 
The $\mathbb{G}_m$-equivariant Chow class $K^{\log, \mathbb{G}_m}_{\mathcal{X}}$ (resp. $K^{\log, \mathbb{G}_m}_{\mathcal{X}_d}$) is the push-forward of the $\mathbb{G}_m$-equivariant Chern class of the log cotangent bundle $T^{\log, *} \mathcal{X}^\circ$ (resp. $T^{\log, *} \mathcal{X}_d^\circ$), which is locally spanned by $dz_1, \ldots, z_i^{-1} dz_i, \ldots, dz_n$ around the boundary $E_i \setminus Z$. 
By the above local expression, we deduce that the derivative of $\nu_d$ induce the isomorphism of log tangent bundles $\nu_{d, *}: T^{\log} \mathcal{X}_d^\circ \cong \nu_d^* T^{\log} \mathcal{X}^\circ$. 
(The derivative $\nu_{d, *}$ does not induce an isomorphism of the usual tangent bundles $T \mathcal{X}_d^\circ, T \mathcal{X}^\circ$ as $\nu_d$ ramifies along the central fibre. )
Since the derivative is functorially given, $\nu_{d, *}$ is equivariant with respect to the $d$-times scaled $\mathbb{G}_m$-action on $\nu_d^* T^{\log} \mathcal{X}^\circ$. 
(Note $\nu_d$ is equivariant with respect to $t^d: \mathbb{G}_m \to \mathbb{G}_m$. ) 
As $Z$ has codimension greater than one, we get $(\nu_d)_* K_{\bar{\mathcal{X}}_d}^{\log, \mathbb{G}_m} = d K_{\bar{\mathcal{X}}}^{\log, \mathbb{G}_m}$ as $\mathbb{G}_m$-equivariant Chow classes, with $d$-times scaled $\mathbb{G}_m$-action on $\bar{\mathcal{X}}$. 
Therefore, we obtain 
\[ (K_{\bar{\mathcal{X}}_d/\mathbb{P}^1}^{\log, \mathbb{G}_m}. e^{\bar{\mathcal{L}}_d}; \rho) = (d K_{\bar{\mathcal{X}}/\mathbb{P}^1}^{\log, \mathbb{G}_m}. e^{\bar{\mathcal{L}}}; d\rho) \]
by the equivariant projection formula. 
This shows $\NAmu^\lambda (\mathcal{X}_d, \mathcal{L}_d; \rho) = \NAmu^\lambda (\mathcal{X}, \mathcal{L}; d\rho)$ as desired.

\subsubsection{Towards non-archimedean formalism: moment measure of test configuration}
\label{Towards non-archimedean formalism: moment measure of test configuration}

Now we explain how we use the moment measure $\int \chi \mathcal{D}_{(\mathcal{X}, \mathcal{L}; \rho)}$. 
We put 
\begin{equation}
\label{moment measure for test configuration} 
\int \chi \mathcal{D}_{(\mathcal{X}, \mathcal{L}; \rho)} := \sum_{E \subset \mathcal{X}_0} \mathrm{ord}_E \mathcal{X}_0 \int_\mathbb{R} \chi (\rho t) \DHm_{(E, \mathcal{L}|_E)} (t) . \delta_{\rho. v_E}
\end{equation}
for a normal test configuration $(\mathcal{X}, \mathcal{L})$. 
Here $v_E$ denotes the valuation on $X$ associated to the prime divisor $E \subset \mathcal{X}$ as in Example \ref{valuation associated to prime divisor of test configuration}, and $\delta_{v_E}$ denotes the Dirac measure on the space of valuations $\mathrm{Val} (X)$ charging $v_E$. 

Assume $X$ is log canonical, in particular $K_X$ is $\mathbb{Q}$-Cartier. 
Take a resolution $\beta: \tilde{\mathcal{X}} \to \bar{\mathcal{X}}$ so that the canonical rational map $p_X: \tilde{\mathcal{X}} \dashrightarrow X \times \mathbb{P}^1 \to X$ extends uniquely to a morphism of schemes. 
Then as in \cite[Proposition 4.11]{BHJ1}, we have 
\[ K_{\tilde{\mathcal{X}}/\mathbb{P}^1}^{\log, \mathbb{G}_m} - p_X^* K_X^{\mathbb{G}_m} = \sum_{E \subset \tilde{\mathcal{X}}_0} A_X (v_E) \mathrm{ord}_E \mathcal{X}_0 .E^{\mathbb{G}_m} \]
as $\mathbb{G}_m$-equivariant classes, where $E^{\mathbb{G}_m}$ denotes the $\mathbb{G}_m$-equivariant class on $\tilde{\mathcal{X}}$ associated to the $\mathbb{G}_m$-invariant irreducible component $E$ of $\tilde{\mathcal{X}}_0$. 
It follows that we can express $\NAmu (\mathcal{X}, \mathcal{L}; \rho)$ as 
\[ -2\pi \frac{\sum_E A_X (\rho. v_E) \mathrm{ord}_E \mathcal{X}_0 \cdot (E^{\mathbb{G}_m}. e^{\mathcal{L}_{\mathbb{G}_m}}; \rho)}{(e^{\mathcal{L}_{\mathbb{G}_m}|_{\mathcal{X}_0}}; \rho)} + 2 \pi \frac{(K_X. e^L) - \rho (p_X^* K_X^{\mathbb{G}_m}. e^{\beta^* \bar{\mathcal{L}}_{\mathbb{G}_m}}; \rho)}{(e^{\mathcal{L}_{\mathbb{G}_m}|_{\mathcal{X}_0}}; \rho)}. \]

Now since 
\[ (E^{\mathbb{G}_m}. e^{\mathcal{L}_{\mathbb{G}_m}}; \rho) = \int_\mathbb{R} e^{-\rho t} \DHm_{(E, \mathcal{L}|_E)} (t), \]
we have 
\begin{gather*} 
\sum_E A_X (\rho. v_E) \mathrm{ord}_E \mathcal{X}_0 \cdot (E^{\mathbb{G}_m}. e^{\mathcal{L}_{\mathbb{G}_m}}; \rho) = \int_{\mathrm{Val} (X)} A_X \int e^{-t} \mathcal{D}_{(\mathcal{X}, \mathcal{L}; \rho)}, 
\\
(\mathcal{X}_0^{\mathbb{G}_m}. e^{\mathcal{L}_{\mathbb{G}_m}}; \rho) = \int_{\mathrm{Val} (X)} \int e^{-t} \mathcal{D}_{(\mathcal{X}, \mathcal{L}; \rho)} =: \iint_{\mathrm{Val} (X)}  e^{-t} \mathcal{D}_{(\mathcal{X}, \mathcal{L}; \rho)}. 
\end{gather*}
Thus we get the following expression of $\NAmu (\mathcal{X}, \mathcal{L}; \rho)$: 
\[ -2\pi \frac{\int_{\mathrm{Val} (X)} A_X \int e^{-t} \mathcal{D}_{(\mathcal{X}, \mathcal{L}; \rho)}}{\iint_{\mathrm{Val} (X)} e^{-t} \mathcal{D}_{(\mathcal{X}, \mathcal{L}; \rho)}} - 2\pi \frac{E^{K_X}_{\exp} (\mathcal{X}, \mathcal{L}; \rho)}{\iint_{\mathrm{Val} (X)} e^{-t} \mathcal{D}_{(\mathcal{X}, \mathcal{L}; \rho)}}, \]
where we put 
\begin{equation} 
\label{moment M-energy}
E_{\exp}^M (\mathcal{X}, \mathcal{L}; \rho) := -\big{(} (M. e^L) - \rho (p_X^* M_{\mathbb{G}_m}. e^{\beta^* \bar{\mathcal{L}}_{\mathbb{G}_m}}; \rho) \big{)}
\end{equation}
for a $\mathbb{Q}$-line bundle $M$ on $X$. 
We have a similar expression on $\bm{\check{\sigma}}$ as we will observe in Corollary \ref{sigma formula}. 

\begin{rem}
We can find an analogy to moment map in the moment measure $\mathcal{D}_{(\mathcal{X}, \mathcal{L}; \rho)}$ as follows. 
For a moment map $\mu: X \to \mathbb{R}$ of a $U (1)$-invariant K\"ahler metric $\omega$, consider the measure $\mathcal{D}_{\omega, \mu} = (\mathrm{id}_X \times \mu)_* (\omega^n/n!)$ on $X \times \mathbb{R}$. 
Since the support of the measure is the graph of the moment map, we can recover the moment map from this measure. 
For continuous functions $\chi$ on $\mathbb{R}$ and $g$ on $X$, we have 
\[ \int_X g \int_\mathbb{R} \chi \mathcal{D}_{\omega, \mu} = \int_X g \cdot \chi (\mu) ~ \omega^n/n!. \]

We speculate for subgeodesic rays $\{ \phi_s \}_{s \in [0, \infty)}, \{ \psi_s \}_{s \in [0, \infty)} \subset \mathcal{E}^1 (X, \omega)$ subordinate to $\varphi, \psi \in \E^1 (X, L)$, the following holds 
\[ \int_{X^{\mathrm{NA}}} \psi \int \frac{(-t)^k}{k!} \mathcal{D}_\varphi = \lim_{s \to \infty} - \int_X (dd^c \psi - \pi \dot{\psi}_s) \wedge \frac{(dd_\omega^c \phi_s -\pi \dot{\phi}_s)^{n+k}}{(n+k)!}, \]
in particular 
\[ \int_{X^{\mathrm{NA}}} \psi \int e^{-t} \mathcal{D}_\varphi = \lim_{s \to \infty} - \int_X (dd^c \psi - \pi \dot{\psi}_s) \wedge e^{dd_\omega^c \phi_s -\pi \dot{\phi}_s}. \]
Compare the formula (\ref{DH measure and equivariant intersection}) and \cite[Proposition 3.19]{Ino4}

\end{rem}

The moment measure should be not confused with the weighted non-archimedean Monge--Ampere measure constructed in \cite{HL1}. 
The former reflects the higher moments of $\mathbb{G}_m$-action on test configuration and the latter reflects the higher moments of $T$-action on $T$-equivariant test configuration. 
The crucial difference is that $\mathbb{G}_m$ acts non-trivially even on the base $\mathbb{A}^1$ while $T$ acts only on $\mathcal{X}$ fibrewisely over $\mathbb{A}^1$. 

\subsection{Tomography of non-archimedean Monge--Amp\`ere measure}

The statements here are described based on Boucksom--Jonsson's global non-archimedean pluripotential theory \cite{BJ1, BJ2, BJ3, BJ4}. 
We review various terminologies and notations in section \ref{Non-archimedean pluripotential theory}. 
Here we just recall $X^{\mathrm{NA}}$ denotes the Berkovich space associated to $X$, $\PSH (X, L)$ denotes the set of non-archimedean psh metrics, $\E^1 (X, L)$ denotes the finite energy class, $\nH^\mathbb{R} (X, L)$ denotes the set of non-archimedean psh metrics assigned to finitely generated filtrations and $\nH (X, L)$ denotes the set of non-archimedean psh metrics assigned to test configurations: 
\[ \nH (X, L) \subset \nH^\mathbb{R} (X, L) \subset \E^1 (X, L) \subset \PSH (X, L), \]
which are all subsets of 
\[ \{ \text{ upper semi-continuous functions on } X^{\mathrm{NA}} \}. \]

\subsubsection{Duistermaat--Heckman measure}

In section \ref{Moment energy and Duistermaat--Heckman measure}, we study a generalization of the Duistermaat--Heckman measure for test configurations to non-archimedean psh metrics. 
This is used in the construction of moment measure. 

\begin{rem}
Recently, M. Xia \cite{Xia2} also constructed the Duistermaat--Heckman measure for $\varphi \in \E^1 (X, L)$. 
Though the construction is different from ours, both constructions give the same measure as these are continuous along decreasing nets $\varphi_i \searrow \varphi$. 
Our construction is based on the monotonic continuity of moment energy $E_\chi (\varphi)$ along decreasing nets $\varphi_i \searrow \varphi$, which is observed in section \ref{subsection: Moment energy}. 
Xia's construction is based on observation on Okounkov body and is concerned with the associated (archimedean) geodesic ray via $\E^1 (X, L) \hookrightarrow \mathcal{R}^1 (X, \omega)$ (cf. \cite{BBJ}). 
\end{rem}

\begin{thm}[Summary of section \ref{Duistermaat--Heckman measure of non-archimedean psh metric}, \ref{Duistermaat--Heckman measure and moment energy} and section \ref{The continuity of Duistermaat--Heckman measure with respect to d1-topology}]
Let $(X, L)$ be a polarized variety. 
For a non-archimedean psh metric $\varphi \in \PSH (X, L)$, we can assign a finite Borel measure $\DHm_\varphi$ on $\mathbb{R}$ with total mass $\int_\mathbb{R} \DHm_\varphi \le (e^L)$ which is characterized by the following properties: 
\begin{itemize}
\item For $\varphi_{(\mathcal{X}, \mathcal{L})} \in \nH (X, L)$, we have $\DHm_{\varphi_{(\mathcal{X}, \mathcal{L})}} = \DHm_{(\mathcal{X}, \mathcal{L})}$. 

\item For $\varphi_i \searrow \varphi \in \PSH (X, L)$, we have $\int_{[\tau, \infty)} \DHm_{\varphi_i} \to \int_{[\tau, \infty)} \DHm_\varphi$. 
\end{itemize}

Moreover, we have $\int_\mathbb{R} \chi \DHm_{\varphi_i} \to \int_\mathbb{R} \chi \DHm_\varphi$ in the following cases: 
\begin{enumerate}
\item $\chi$ is tame in the sense of Definition \ref{tame} and $\varphi_i \searrow \varphi \in \PSH (X, L)$. 

\item $\chi$ is moderate in the sense of Definition \ref{moderate} and $\varphi_i \searrow \varphi \in \E^1 (X, L)$. 

\item $\chi$ is continuous and has left bounded support, and a net $\{ \varphi_i \}_{i \in I} \in \E^1 (X, L)$ converges to $\varphi \in \E^1 (X, L)$ in the strong topology.   
\end{enumerate}
\end{thm}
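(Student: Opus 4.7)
The plan is to build $\DHm_\varphi$ from its \emph{survival function} $F_\varphi(\tau) := \int_{[\tau,\infty)} \DHm_\varphi$, obtained as a monotone limit along decreasing nets of test-configuration metrics. The essential base ingredient is a monotonicity within $\nH(X,L)$: if $\varphi_{(\mathcal{X}_1,\mathcal{L}_1)}\le\varphi_{(\mathcal{X}_2,\mathcal{L}_2)}$, then $F_{(\mathcal{X}_1,\mathcal{L}_1)}(\tau)\le F_{(\mathcal{X}_2,\mathcal{L}_2)}(\tau)$ for every $\tau$. I would establish this by passing to a common determination (a smooth dominating test configuration) and decomposing $\mathcal{L}_2-\mathcal{L}_1$ as a relatively nef class supported on the central fibre, then splitting the $\mathbb{G}_m$-equivariant intersection defining $\DHm$ by the primary components of $\mathcal{X}_0$ as in the expression already recorded just above.

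Given this monotonicity, for $\varphi\in\PSH(X,L)$ I choose a decreasing net $\varphi_i\searrow\varphi$ in $\nH(X,L)$ (which exists by Boucksom--Jonsson regularization) and set $F_\varphi(\tau):=\lim_i F_{\varphi_i}(\tau)\in[0,(e^L)]$. Independence of the choice of net follows from the usual interlacing argument: given a second net $\psi_j\searrow\varphi$, for any $i$ the net $\max(\psi_j,\varphi_i)\searrow\varphi_i$ from above in $\nH$, and monotonicity yields $F_\varphi\le\lim_j F_{\psi_j}\le F_{\varphi_i}$, so both limits agree. Since $F_\varphi$ is decreasing, bounded, right-continuous, and vanishes on $[\tau,\infty)$ for $\tau$ large, the Stieltjes construction produces a finite Borel measure $\DHm_\varphi$ with $\DHm_\varphi([\alpha,\beta))=F_\varphi(\alpha)-F_\varphi(\beta)$. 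The first characterizing property holds by taking a stationary net, and the second (monotonic convergence along decreasing nets in $\PSH$) is a diagonal argument: approximating each $\varphi_i$ by test configurations from above and interlacing as before.

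For the three continuity statements: (1) A tame $\chi$ can be integrated against $\DHm_\varphi$ via the formula $\int\chi\,\DHm_\varphi=-\int F_\varphi\,d\chi+\text{boundary}$, and the growth/support control in Definition \ref{tame} makes the boundary terms vanish and lets pointwise convergence of $F_{\varphi_i}\to F_\varphi$ upgrade to convergence of the integrals by monotone/dominated convergence. (2) For moderate $\chi$ and decreasing nets in $\E^1$, the argument in (1) is combined with the monotonic continuity of moment energies $E_\chi$ from section \ref{subsection: Moment energy}, which provides the uniform tail integrability needed at $\tau\to\pm\infty$. (3) For strong convergence in $\E^1$, pointwise monotonicity is no longer available; here I would sandwich between decreasing and increasing regularizations, show that both are close to $\varphi$ in $d_1$ by standard envelope estimates, and close the argument by proving that the survival functions are equicontinuous in $\tau$ on bounded $d_1$-balls, ultimately using the $d_1$-continuity of the Monge--Amp\`ere operator together with an Orlicz-type bound controlling moments uniformly.

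The principal obstacle is the monotonicity within $\nH(X,L)$ underlying the entire construction, since the Duistermaat--Heckman measure is only defined through equivariant intersections on a specific model; replacing $(\mathcal{X},\mathcal{L})$ by a dominating model changes both the central fibre decomposition and the individual $\DHm_{(E,\mathcal{L}|_E)}$ nontrivially, so the desired inequality on survival functions has to be tracked carefully across these modifications. The secondary difficulty is case (3): strong convergence does not a priori control the higher-moment tails of $\DHm$, so the $d_1$-continuity claim must be extracted from genuinely analytic estimates rather than from the monotone framework used throughout the rest of the proof.
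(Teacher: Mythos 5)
Your overall architecture (survival function $\to$ Stieltjes measure $\to$ the three continuity statements) matches the paper's, but the base monotonicity step — which you correctly identify as the crux — is proposed with a tool that does not work. The survival function $\int_{[\tau,\infty)}\DHm_{(\mathcal{X},\mathcal{L})}$ is not an equivariant intersection number (only the moments $\int_{\mathbb{R}} t^k\,\DHm$ are), so "splitting the $\mathbb{G}_m$-equivariant intersection by primary components of $\mathcal{X}_0$" cannot yield $F_{(\mathcal{X}_1,\mathcal{L}_1)}(\tau)\le F_{(\mathcal{X}_2,\mathcal{L}_2)}(\tau)$ for every $\tau$, and a fortiori not the inequality $\int_{\mathbb{R}}\chi\,\DHm_{\varphi'}\le\int_{\mathbb{R}}\chi\,\DHm_{\varphi}$ for \emph{every} increasing $\chi$, which you need for cases (1) and (2). (Also, $\mathcal{L}_2-\mathcal{L}_1$ on a common model is effective, not nef, supported on $\mathcal{X}_0$.) The mechanism that actually works is purely filtration-theoretic: $\varphi'\le\varphi$ gives $\mathcal{F}_{\varphi'}\subset\mathcal{F}_{\varphi}$, a basis of $R_m$ codiagonal for the two norms (Boucksom--Eriksson) satisfies $\lambda_i^{\varphi'}(\bm{s})\le\lambda_i^{\varphi}(\bm{s})$, and one compares $\int\chi\,\nu_m$ termwise (Lemma \ref{monotonicity}); the bridge back to the geometric measure is $\DHm_{(\mathcal{X},\mathcal{L})}=\nu_\infty(\mathcal{F}_{\varphi_{(\mathcal{X},\mathcal{L})}})$ (Corollary \ref{DH measure via Krull envelope}), which requires the stabilized filtration $\widehat{\mathcal{F}}_{(\mathcal{X},\mathcal{L})}$, not $\mathcal{F}_{(\mathcal{X},\mathcal{L})}$ itself. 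Two smaller points: $F_\varphi$ is \emph{left}-continuous, not right-continuous (approaching $\tau$ from the right turns $1_{[\tau,\infty)}$ into $1_{(\tau,\infty)}$ and $\DHm$ may have an atom at $\tau$), and your interlacing "$F_{\psi_j}\le F_{\max(\psi_j,\varphi_i)}\to F_{\varphi_i}$" presupposes the continuity along decreasing nets that you are in the middle of establishing; the paper sidesteps both by defining $F_\varphi(\tau)$ as an infimum over \emph{all} dominating test configurations and then using Dini's lemma together with the shift $\varphi_{(\mathcal{X},\mathcal{L})}+\varepsilon=\varphi_{(\mathcal{X},\mathcal{L}+\varepsilon\mathcal{X}_0)}$ and left-continuity in $\tau$.

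Case (3) as you sketch it would fail. The measures $\DHm_\varphi$ generically carry an atom (the components with $v_E=v_{\mathrm{triv}}$), so the survival functions are not equicontinuous in $\tau$, and strong convergence does not give pointwise convergence of survival functions at all — indeed statement (3) deliberately only asserts convergence of $\int_{\mathbb{R}}\chi\,\DHm$ for continuous $\chi$ with left-bounded support. The correct mechanism (Lemma \ref{limit of DH measure for d1 convergent sequence}) is a direct Lipschitz bound $|\int_{\mathbb{R}}\chi\,\DHm_\varphi-\int_{\mathbb{R}}\chi\,\DHm_{\varphi'}|\le C_T\,d_1(\varphi,\varphi')$ for locally Lipschitz $\chi$, read off from the relative spectral measure $\DHm_{\varphi,\varphi'}$ and the identity $d_1(\varphi,\varphi')=\lim_m m^{-n}\sum_i|\lambda_i^\varphi(\bm{s})-\lambda_i^{\varphi'}(\bm{s})|/m$ on a codiagonal basis, followed by uniform approximation of $\chi$ by locally Lipschitz functions; no Orlicz-type moment control or monotone sandwiching is needed there.
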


For general $\varphi \in \PSH (X, L)$, we may have $\int_\mathbb{R} \DHm_\varphi < (e^L)$ as $\chi = 1_{\mathbb{R}}$ is not tame, while $\int_\mathbb{R} \DHm_\varphi = (e^L)$ for $\varphi \in \E^1 (X, L)$. 
This is reminiscent of the fact that the (archimedean) Monge--Amp\`ere measure for general psh metric may lose mass (cf. \cite[Section 10]{GZ}). 

We also introduce moment energy $E_\chi (\varphi)$ and subspaces $\E^\chi (X, L) \subset \E^1 (X, L)$ (see (\ref{moment energy}) and Definition \ref{finite moment energy class}) for non-constant increasing concave function $\chi$ on $\mathbb{R}$. 
In this article, increasing (resp. decreasing) means $\chi (t) \le \chi (t')$ for $t \le t'$ (resp. $\chi (t) \ge \chi(t')$ for $t \le t'$). 

\subsubsection{Moment measure}

As we observe, the following construction is the key for the extension of non-archimedean $\mu$-entropy. 

\begin{thm}[Summary of section \ref{Moment measure}]
Let $(X, L)$ be a polarized variety. 
For $\varphi \in \mathcal{E}^1_{\mathrm{NA}} (X, L)$ and a Borel measurable function $\chi$ on $\mathbb{R}$ with $\int_{\mathbb{R}} |\chi| \DHm_\varphi < \infty$, we can assign a signed Radon measure $\int \chi \mathcal{D}_\varphi$ on the Berkovich space $X^{\mathrm{NA}}$ which enjoys the following properties: 
\begin{enumerate}
\item For $\varphi_{(\mathcal{X}, \mathcal{L})} \in \nH (X, L)$ represented by a normal test configuration $(\mathcal{X}, \mathcal{L})$, we have 
\[ \int \chi \mathcal{D}_{\varphi_{(\mathcal{X}, \mathcal{L})}} = \sum_{E \subset \mathcal{X}_0} \mathrm{ord}_E \mathcal{X}_0 \int_{\mathbb{R}} \chi \DHm_{(E, \mathcal{L}|_E)}. \delta_{v_E}. \]

\item $\int \chi \mathcal{D}_\varphi$ is linear on $\chi$. 
If $\chi \ge 0$, the measure $\int \chi \mathcal{D}_\varphi$ is non-negative. 

\item For any pointwise convergent increasing sequence $0 \le \chi_i \nearrow \chi$, we have the weak convergence of measures
\[ \int \chi_i \mathcal{D}_\varphi \nearrow \int \chi \mathcal{D}_\varphi. \]

\item We have $\iint_{X^{\mathrm{NA}}} \chi \mathcal{D}_\varphi := \int_{X^{\mathrm{NA}}} \int \chi \mathcal{D}_\varphi = \int_{\mathbb{R}} \chi \DHm_\varphi$. 

\item We have $\int 1_\mathbb{R} \mathcal{D}_\varphi = \mathrm{MA} (\varphi)$ as measures. 

\item Suppose $\chi$ is moderate in the sense of Definition \ref{moderate}. 
Then for a convergent decreasing net $\varphi_i \searrow \varphi \in \E^1 (X, L)$, we have the weak convergence of measures
\[ \int \chi \mathcal{D}_{\varphi_i} \to \int \chi \mathcal{D}_\varphi. \]
\end{enumerate}
These properties characterize the measure $\int \chi \mathcal{D}_\varphi$. 
\end{thm}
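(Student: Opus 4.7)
The plan is to construct $\int \chi \mathcal{D}_\varphi$ by a tomographic pairing: realize the signed measure through integrating the rooftop Monge--Amp\`ere measures $\mathrm{MA}(\varphi \wedge \tau)$ against $\chi$ in the level parameter $\tau$, and use Riesz representation to produce the Radon measure on $X^{\mathrm{NA}}$. The guiding identity is that, for $\varphi = \varphi_{(\mathcal{X}, \mathcal{L})} \in \nH(X, L)$, the primary decomposition of $\DHm_{(\mathcal{X}, \mathcal{L})}$ established earlier in the paper forces
\[ \int 1_{[\tau, \infty)} \mathcal{D}_{\varphi_{(\mathcal{X}, \mathcal{L})}} = \mathrm{MA}\bigl(\varphi_{(\mathcal{X}, \mathcal{L})} \wedge \tau\bigr) \]
for every $\tau \in \mathbb{R}$. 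Once this is lifted to arbitrary $\varphi \in \E^1(X, L)$ via the continuity of $\mathrm{MA}$ along decreasing nets (Boucksom--Jonsson), integrating against $\chi$ in the $\tau$-variable produces the desired measure.

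\textbf{Steps.} \emph{(i)} Define $\int \chi \mathcal{D}_\varphi$ for $\varphi \in \nH(X, L)$ directly by the formula in (1); independence of the test-configuration representative follows from compatibility of the valuations $v_E$ and multiplicities $\mathrm{ord}_E \mathcal{X}_0$ under normal domination, and properties (2)--(5) hold by inspection of the sum. \emph{(ii)} For general $\varphi \in \E^1(X, L)$ and any $f \in C(X^{\mathrm{NA}})$, prove that
\[ F_f(\tau) := \int_{X^{\mathrm{NA}}} f \, \mathrm{MA}(\varphi \wedge \tau) \]
is right-continuous on $\mathbb{R}$, via $\mathrm{MA}$ continuity applied to the decreasing net $\varphi \wedge \tau_n \searrow \varphi \wedge \tau$, and of bounded variation with total variation controlled by $\|f\|_\infty \cdot (e^L)$. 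This associates to each $f$ a signed Borel measure $\mu_{\varphi, f}$ on $\mathbb{R}$ characterized by $\mu_{\varphi, f}([\tau, \infty)) = F_f(\tau)$, and in particular $\mu_{\varphi, 1} = \DHm_\varphi$ by the DH construction of the previous section. \emph{(iii)} The assignment $L_\chi(f) := \int_\mathbb{R} \chi \, d\mu_{\varphi, f}$ is a bounded linear functional on $C(X^{\mathrm{NA}})$ of norm at most $\|\chi\|_{L^1(\DHm_\varphi)}$, so Riesz representation yields the unique signed Radon measure $\int \chi \mathcal{D}_\varphi$ representing it. Properties (1)--(5) are then verified: (1) by matching $L_\chi(f) = \sum_E \mathrm{ord}_E \mathcal{X}_0 \cdot f(v_E) \int \chi \, \DHm_{(E, \mathcal{L}|_E)}$ with Step (i); (2) and (3) by linearity and monotone convergence in the scalar integral $\int \chi \, d\mu_{\varphi, f}$; (4) directly from $\mu_{\varphi, 1} = \DHm_\varphi$; (5) from $L_1(f) = \mu_{\varphi, f}(\mathbb{R}) = \int f \, \mathrm{MA}(\varphi)$. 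Uniqueness is then a consequence of approximating a general $\chi$ by simple combinations of the indicators $1_{[\tau, \infty)}$ and invoking (3) together with (5). \emph{(iv)} For (6), approximate a moderate $\chi$ by compactly supported smooth increments and reduce to the uniform convergence of $F_f^{(i)}(\tau)$ on compact $\tau$-intervals along $\varphi_i \searrow \varphi$, which follows from $\mathrm{MA}$ continuity in $\E^1$ combined with the moderate-growth bound for $\chi$ and the tail estimates on $\DHm_{\varphi_i}$ from the preceding section.

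\textbf{Main obstacle.} The core technical task is Step (ii): establishing the BV and right-continuity of $F_f$ for a continuous $f$ of arbitrary sign. The positive case, where $F_f$ is monotone, follows directly from applying $\mathrm{MA}$ continuity along $\varphi \wedge \tau_n \searrow \varphi \wedge \tau$, but the general case requires splitting $f = f^+ - f^-$, bounding each piece separately, and reassembling the total variation estimate from the mass conservation $\int \mathrm{MA}(\psi) = (e^L)$ on $\E^1$. Alongside this, identifying the Riesz-constructed measure with the explicit sum in (1) in the test-configuration case requires a careful unpacking of the primary decomposition, and this compatibility is what ultimately pins down the measure uniquely; the remaining properties then reduce to bounded/monotone convergence against the family $\{\mu_{\varphi, f}\}$ of $\tau$-measures.
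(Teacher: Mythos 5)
Your overall architecture (slice the moment measure by the rooftops $\varphi\wedge\tau$, pair with continuous test functions to get a $\tau$-measure, then apply Riesz representation) is exactly the right strategy, but your guiding identity is false, and the error propagates into Steps (ii)--(iii) in a way that breaks properties (1) and (4). The correct tomography formula, for $\varphi=\varphi_{(\mathcal{X},\mathcal{L})}\in\nH(X,L)$, is
\[ \mathrm{MA}(\varphi\wedge\tau)=\sum_{E\subset\mathcal{X}_0}\mathrm{ord}_E\mathcal{X}_0\int_{(-\infty,\tau)}\DHm_{(E,\mathcal{L}|_E)}.\delta_{v_E}+\Big(\int_{[\tau,\infty)}\DHm_{(\mathcal{X},\mathcal{L})}\Big).\delta_{v_{\mathrm{triv}}}, \]
i.e.\ $\mathrm{MA}(\varphi\wedge\tau)=\int 1_{(-\infty,\tau)}\mathcal{D}_\varphi+\big(\int_{[\tau,\infty)}\DHm_\varphi\big)\delta_{v_{\mathrm{triv}}}$: the interval is the complementary one to what you wrote, and there is an essential Dirac correction at the trivial valuation carrying the mass that has not yet been ``released.'' A quick sanity check: for $\tau>\sup\varphi$ your identity would force $\int 1_{[\tau,\infty)}\mathcal{D}_\varphi=\mathrm{MA}(\varphi)$, which has total mass $(e^L)$, whereas property (1) requires it to vanish.

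The same defect kills Step (ii): since every $\psi\in\E^1(X,L)$ satisfies $\int_{X^{\mathrm{NA}}}\mathrm{MA}(\psi)=(e^L)$, your $F_1(\tau)=\int\mathrm{MA}(\varphi\wedge\tau)$ is identically $(e^L)$, so the measure characterized by $\mu_{\varphi,1}([\tau,\infty))=F_1(\tau)$ is \emph{not} $\DHm_\varphi$ (it is zero, or ill-defined at $-\infty$), and your functional gives $L_\chi(1)=0\neq\int_{\mathbb{R}}\chi\,\DHm_\varphi$, contradicting (4). The repair is to build the measure from increments with the trivial-valuation correction inserted,
\[ \int 1_{[\tau',\tau)}\mathcal{D}_\varphi:=\mathrm{MA}(\varphi\wedge\tau)-\mathrm{MA}(\varphi\wedge\tau')+\Big(\int_{[\tau',\tau)}\DHm_\varphi\Big).\delta_{v_{\mathrm{triv}}}, \]
which one checks is a non-negative measure by reduction to $\nH(X,L)$ and continuity of $\mathrm{MA}(\cdot\wedge\tau)$ and $\DHm$ along decreasing nets; equivalently, the correct pairing is $\int\psi\int\chi\mathcal{D}_\varphi=\int_{\mathbb{R}}\chi(\tau)\,\frac{d}{d\tau}\big(\int\psi\,\mathrm{MA}(\varphi\wedge\tau)\big)\,d\tau+\psi(v_{\mathrm{triv}})\int_{\mathbb{R}}\chi\,\DHm_\varphi$, with the boundary term you omitted. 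Once this is fixed, positivity of the increments gives the bounded-variation control you need (your splitting $f=f^+-f^-$ is not by itself enough to see that $F_f$ is BV), and one still has to verify sigma-additivity of $A\mapsto\int_{X^{\mathrm{NA}}}g\int 1_A\mathcal{D}_\varphi$ on the ring of finite unions of half-open intervals before invoking Carath\'eodory, plus a simultaneous outer-regularity argument to get additivity of the resulting functional in $g$; these are real steps, not formalities.
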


The following formula on the Monge--Amp\`ere measure of the rooftop $\varphi \wedge \tau$ (the non-archimedean psh envelope of $\min \{ \varphi, \tau \}$, see section \ref{existence of rooftop}) is the key in the construction, which we call tomography of non-archimedean Monge--Amp\`ere measure. 

\begin{prop}[Summary of section \ref{Tomography of non-archimedean Monge--Ampere measure}]
For any $\varphi = \varphi_{(\mathcal{X}, \mathcal{L})} \in \nH (X, L)$ and $\tau \in \mathbb{R}$, we have 
\[ \mathrm{MA} (\varphi \wedge \tau) = \sum_{E \subset \mathcal{X}_0} \mathrm{ord}_E \mathcal{X}_0 \int_{(-\infty, \tau)} \DHm_{(E, \mathcal{L}|_E)}. \delta_{v_E} + \int_{[\tau, \infty)} \DHm_{(\mathcal{X}, \mathcal{L})}. \delta_{v_{\mathrm{triv}}}. \]
\end{prop}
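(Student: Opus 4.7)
The plan is to prove the formula by realizing $\varphi \wedge \tau$ as the non-archimedean metric associated to an explicit test configuration $(\tilde{\mathcal{X}}, \tilde{\mathcal{L}})$ on a birational model of $\mathcal{X}$, then applying the standard pointwise formula $\mathrm{MA}(\varphi_{(\mathcal{X}', \mathcal{L}')}) = \sum_{E' \subset \mathcal{X}'_0} \mathrm{ord}_{E'}\mathcal{X}'_0 \int_\mathbb{R} \DHm_{(E', \mathcal{L}'|_{E'})} \cdot \delta_{v_{E'}}$ on that model. As sanity checks, the two endpoint cases are immediate: for $\tau \ge \max_{X^{\mathrm{NA}}} \varphi$, $\varphi \wedge \tau = \varphi$ and the second sum on the right-hand side vanishes while the first reproduces the known expression for $\mathrm{MA}(\varphi_{(\mathcal{X}, \mathcal{L})})$; for $\tau \le \min \varphi$, $\varphi \wedge \tau \equiv \tau$ is constant with $\mathrm{MA}(\varphi \wedge \tau) = (L^{\cdot n}/n!)\, \delta_{v_{\mathrm{triv}}}$, which matches the second term.

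For rational $\tau = p/q$, first perform the normalized base change $z \mapsto z^q$, under which both sides transform compatibly, reducing to the case $\tau \in \mathbb{Z}$. I would take $\mu: \tilde{\mathcal{X}} \to \mathcal{X}$ to be the normalized blowup along the flag ideal $\mathfrak{a}_\tau := \mathfrak{a}_{\mathcal{L}}^{(k)} + (z^{k\tau})$ (for sufficiently divisible $k$), where $\mathfrak{a}_{\mathcal{L}}^{(k)}$ is the flag ideal encoding the relative base locus of $k\mathcal{L}$ on $\mathcal{X}$, and set $\tilde{\mathcal{L}} := \mu^*\mathcal{L} - \tfrac{1}{k} F_\tau$ with $F_\tau$ the exceptional Cartier divisor of the blowup. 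By construction $\tilde{\mathcal{L}}$ is nef, and its value at every divisorial valuation equals $\min(\varphi, \tau)$, so the envelope characterization gives $\varphi_{(\tilde{\mathcal{X}}, \tilde{\mathcal{L}})} = \varphi \wedge \tau$.

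The central fibre $\tilde{\mathcal{X}}_0$ then splits into two classes of components. Type (i) are strict transforms $\tilde{E}$ of components $E \subset \mathcal{X}_0$ with $\varphi(v_E) < \tau$: these retain the valuation $v_E$, and $(\tilde{E}, \tilde{\mathcal{L}}|_{\tilde{E}})$ agrees with $(E, \mathcal{L}|_E)$ as a $\mathbb{G}_m$-polarized scheme after discarding the weight-${\ge}\,\tau$ part of the filtration on sections of $k\mathcal{L}|_E$, contributing $\mathrm{ord}_E \mathcal{X}_0 \int_{(-\infty, \tau)} \DHm_{(E, \mathcal{L}|_E)} \cdot \delta_{v_E}$. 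Type (ii) are exceptional divisors of $\mu$ lying over the locus $\{\varphi \ge \tau\}$; these map to single points of $X$ under $\tilde{\mathcal{X}} \to X \times \mathbb{A}^1 \to X$, so all carry the trivial valuation $v_{\mathrm{triv}}$, and their aggregate mass equals the complementary tail $\int_{[\tau, \infty)} \DHm_{(\mathcal{X}, \mathcal{L})}$ by the additivity $\DHm_{(\mathcal{X}, \mathcal{L})} = \sum_E \mathrm{ord}_E \mathcal{X}_0 \cdot \DHm_{(E, \mathcal{L}|_E)}$. The general case $\tau \in \mathbb{R}$ then follows by approximating $\tau$ from above and below by rationals, using continuity of $\mathrm{MA}$ along monotone convergent sequences in $\E^1(X, L)$ together with continuity of $\int_{[\tau, \infty)} \DHm_{(\mathcal{X}, \mathcal{L})}$ outside the countable atomic set.

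The main obstacle is the combined construction-and-computation in the rational case: verifying that $\tilde{\mathcal{L}}$ is actually nef — a non-archimedean Zariski-type decomposition of the virtual class $\mathcal{L} - \tau \mathcal{X}_0$ — and rigorously identifying the DH contribution of the exceptional locus with the tail $\int_{[\tau, \infty)} \DHm_{(\mathcal{X}, \mathcal{L})}$. The delicate geometric input is that all new exceptional components must carry the trivial valuation on $X$ rather than some divisorial valuation; this is precisely what encodes the redistribution of mass above level $\tau$ to $v_{\mathrm{triv}}$ in the formula, and also explains the asymmetric treatment of the endpoint $\{t = \tau\}$ between the two terms.
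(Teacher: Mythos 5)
Your overall skeleton — reduce to rational $\tau$ by continuity in $\tau$, represent $\varphi\wedge\tau$ by an explicit normal test configuration, then read off the masses component by component — is the same as the paper's, and your endpoint checks and scaling reduction are fine. But the two steps that carry all the content are broken or missing. First, the construction of the model: adding the principal ideal $(z^{k\tau})$ to a flag ideal computes a \emph{maximum}, not a minimum, of the associated metrics. For any valuation $v$ one has $G(v)(\mathfrak{a}+\mathfrak{b})=\min\{G(v)(\mathfrak{a}),G(v)(\mathfrak{b})\}$, so $\log|\mathfrak{a}+\mathfrak{b}|=\max\{\log|\mathfrak{a}|,\log|\mathfrak{b}|\}$; consequently $\tilde{\mathcal{L}}=\mu^*\mathcal{L}-\tfrac1k F_\tau$ built from $\mathfrak{a}^{(k)}_{\mathcal{L}}+(z^{k\tau})$ represents a metric of the form $\varphi+\max\{\,\cdot\,,\,\cdot\,\}$, never the envelope $P(\min\{\varphi,\tau\})$. (As literally written the problem is even worse: since $\mathcal{L}$ is relatively ample, the "relative base locus" ideal $\mathfrak{a}^{(k)}_{\mathcal{L}}$ is the unit ideal and $\mathfrak{a}_\tau=\mathcal{O}_{\mathcal{X}}$.) The rooftop is a concave-type operation and is \emph{not} of the form $\log$ of an ideal sheaf perturbation of $\varphi$; the fact that $\varphi\wedge\tau$ lies in $\nH(X,L)$ at all for $\tau\in\mathbb{Q}$ is a genuine theorem. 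The paper gets it from Reboulet's lemma that $\inf_{\rho\in[0,1]}\{\varphi_{;\rho}+(1-\rho)\tau\}$ is again Fubini--Study (Proposition \ref{test configuration associated to phi wedge tau}); the filtration-level analogue is the truncation $\mathcal{F}^\lambda_{\varphi\wedge\tau}=\mathcal{F}^\lambda_\varphi$ for $\lambda\le m\tau$ and $0$ above, i.e. one \emph{discards} the high-weight graded pieces of the Rees algebra rather than enlarging the ideal.

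Second, even granting a model of $\varphi\wedge\tau$, your type (i)/(ii) description of its central fibre is precisely the statement to be proved, not an observation. You need (a) that $\supp\mathrm{MA}(\varphi\wedge\tau)\subset\{v_E\}\cup\{v_{\mathrm{triv}}\}$, i.e. that no new divisorial valuations appear — the paper deduces this from $\mathcal{F}_{\varphi\wedge\tau}=\bigcap_{E}\mathcal{F}_{v_E}[\varphi(v_E)]\cap\mathcal{F}_{v_{\mathrm{triv}}}[\tau]$ together with the uniqueness of primary decomposition of $\mathcal{I}_{\mathcal{X}_0}$ (Lemma \ref{MA support via filtration}); and (b) the exact masses, obtained from the asymptotics of $\dim\mathcal{F}^\lambda_{\varphi\wedge\tau}R_m/(\mathcal{F}^\lambda_{\varphi\wedge\tau}\cap\mathcal{F}^{\lambda+}_{v_E}[\sigma_E])R_m$ (Propositions \ref{primary decomposition of DH measure via hat filtration} and \ref{tomography of primary DH measure}), which is where the open interval $(-\infty,\tau)$ at $v_E$ versus the closed tail $[\tau,\infty)$ at $v_{\mathrm{triv}}$ comes from, via absolute continuity of $\DHm_{(E,\mathcal{L}|_E)}$ when $v_E\neq v_{\mathrm{triv}}$. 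Your phrase "agrees after discarding the weight-$\ge\tau$ part of the filtration" gestures at the right mechanism but assumes the conclusion. Finally, a small point: the continuity-in-$\tau$ reduction does not need to avoid an atomic set — for $\tau<\sup\varphi$ the only atoms of $\DHm_{(\mathcal{X},\mathcal{L})}$ sit at $\sup\varphi$ (coming from components with $v_E=v_{\mathrm{triv}}$), so both sides are continuous on $(-\infty,\sup\varphi)$ and the case $\tau\ge\sup\varphi$ is trivial.
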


To show this, we observe in section \ref{Primary decomposition via filtration} the primary decomposition of the Duistermaat--Heckman measure $\DHm_{(\mathcal{X}, \mathcal{L})} = \sum_{E \subset \mathcal{X}_0} \mathrm{ord}_E \mathcal{X}_0 \cdot \DHm_{(E, \mathcal{L}|_E)}$ in terms of the filtration $\mathcal{F}_\varphi$ associated to non-archimedean psh metric $\varphi = \varphi_{(\mathcal{X}, \mathcal{L})}$. 
We note this filtration is different from the filtration $\mathcal{F}_{(\mathcal{X}, \mathcal{L})}$ associated to test configuration in general: $\mathcal{F}_\varphi = \widehat{\mathcal{F}}_{(\mathcal{X}, \mathcal{L})}$ is stabilized along normalized base change. 
The filtration $\mathcal{F}_\varphi$ is suitable for our purpose as we have $\mathcal{F}_{\varphi \wedge \tau} = \mathcal{F}_\varphi \cap \mathcal{F}_{v_\mathrm{triv}} [\tau]$ (see section \ref{existence of rooftop}). 

\subsection{Non-archimedean $\mu$-entropy on $\E^{\exp} (X, L)$}

\subsubsection{The metric space $\E^{\exp} (X, L)$ and the non-archimedean $\mu$-entropy}

The non-archimedean $\mu$-entropy is concerned with exponential weight, so we have a special interest in $E_{\exp} (\varphi) := E_{- e^{-t}} (\varphi)$ (see (\ref{moment energy})). 
The following is a natural class to consider the non-archimedean $\mu$-entropy: 
\begin{equation} 
\E^{\exp} (X, L) := \{ \varphi \in \PSH (X, L) ~|~ E_{\exp} (\varphi_{;\rho}) > -\infty \text{ for } \forall \rho > 0 \}. 
\end{equation}
For $\varphi \in \E^{\exp} (X, L)$, we have $E_{\exp} (\varphi_{;\rho}) = - \int_\mathbb{R} e^{-\rho t} \DHm_\varphi > - \infty$ (see Proposition \ref{moment energy and DH measure} and Corollary \ref{finite moment energy class and full mass class}), so we can consider the moment measure $\int e^{-t} \mathcal{D}_\varphi$. 

We study two topologies on $\E^{\exp} (X, L)$. 
One is the coarsest refinement of the strong topology ($d_1$-topology) which makes $E_{\exp} (\varphi_{;\rho})$ continuous for every $\rho > 0$ and the other is the metric topology induced from a metric $d_{\exp}$ which is modeled on Orlicz norm for exponential weight. 
We refer to these topologies as $E_{\exp}$-topology and $d_{\exp}$-topology, respectively. 

\begin{thm}[Summary of section \ref{A metric structure on the space Eexp}, \ref{Intermediates}, \ref{Completeness} and section \ref{Continuity of exponential moment energy}]
\label{dexp metric}
Let $(X, L)$ be a polarized variety. 
There exists a metric $d_{\exp}$ on $\E^{\exp} (X, L)$ for which we have the following. 
\begin{enumerate}
\item For every $\rho > 0$, $E_{\exp} (\varphi_{;\rho})$ is continuous with respect to $d_{\exp}$-topology. 
Namely, $d_{\exp}$-topology is finer than $E_{\exp}$-topology. 

\item Pointwisely convergent decreasing nets are $d_{\exp}$-convergent. 
In particular, $\nH (X, L) \subset \E^{\exp} (X, L)$ is dense. 

\item The metric space $(\E^{\exp} (X, L), d_{\exp})$ is complete when $X$ is smooth. 
\end{enumerate}
\end{thm}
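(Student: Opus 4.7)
The plan is to build $d_{\exp}$ from an Orlicz-type cocycle modeled on the exponential moment energy $E_{\exp}$, and then to verify each of the three properties by reducing to the monotone/decreasing-net continuity statements for $\DHm_\varphi$ and $E_{\exp}(\varphi)$ that were established in the preceding theorem. For the construction I would fix an exponential Young function such as $\Phi(t) = e^{|t|}-|t|-1$ and introduce a symmetric pseudometric
\[
I_{\exp}(\varphi,\psi) := E_{\exp}\!\left(\tfrac{\varphi+\psi}{2}\right) - \tfrac{1}{2}E_{\exp}(\varphi) - \tfrac{1}{2}E_{\exp}(\psi),
\]
and then define $d_{\exp}(\varphi,\psi)$ as the chained/symmetrized infimum over rooftops and maxima of $I_{\exp}$-quantities, following the template that produces $d_1$ from the Monge--Amp\`ere energy $E$; alternatively one may present $d_{\exp}$ directly through a Luxemburg-type infimum $\inf\{c>0 : \iint_{X^{\mathrm{NA}}} \Phi((\varphi-\psi)/c)\, \mathcal{D}_{(\varphi+\psi)/2} \le 1\}$ and check that the two definitions are comparable. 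The identities $\iint\chi\mathcal{D}_\varphi = \int\chi\DHm_\varphi$ and $\int 1_{\mathbb{R}}\mathcal{D}_\varphi = \mathrm{MA}(\varphi)$ will let me pass freely between the moment-measure formulation and the $\DHm_\varphi$-based formula whenever convenient.

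Property (1) is a direct consequence of the construction: if $d_{\exp}(\varphi_i,\varphi)\to 0$, then the cocycle $I_{\exp}(\varphi_i,\varphi)\to 0$, and combining this with upper semi-continuity of $E_{\exp}$ against the moderate weight $\chi = -e^{-\rho t}$ (case (2) of the preceding theorem, applied to the decreasing net $\max(\varphi_i,\varphi)\searrow\varphi$) gives $E_{\exp}(\varphi_{i;\rho}) \to E_{\exp}(\varphi_{;\rho})$ for every $\rho>0$; this is exactly continuity in the $d_{\exp}$-topology. For property (2), the moderate-weight monotone continuity asserts $E_{\exp}(\varphi_{i;\rho}) \nearrow E_{\exp}(\varphi_{;\rho})$ along any decreasing net $\varphi_i\searrow\varphi$, so both terms in $I_{\exp}(\varphi_i,\varphi)$ converge to $E_{\exp}(\varphi_{;\rho})$ and the pseudometric, hence $d_{\exp}(\varphi_i,\varphi)$, tends to zero. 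Density of $\nH(X,L)$ in $\E^{\exp}(X,L)$ then follows from the standard Boucksom--Jonsson regularization theorem exhibiting any $\varphi\in\PSH(X,L)$ as a decreasing limit of test-configuration metrics, together with the moment-energy finiteness characterization of $\E^{\exp}$.

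The genuinely hard step is completeness, which is also the step that forces the smoothness assumption. Given a $d_{\exp}$-Cauchy sequence $\{\varphi_i\}$, I would first use (1) to deduce uniform bounds $E_{\exp}(\varphi_{i;\rho})\ge -C_\rho$ for every $\rho>0$, then construct a candidate limit $\varphi := \bigl(\limsup_i \varphi_i\bigr)^*$ (passing to a subsequence, realized via psh envelopes $P\bigl(\sup_{j\ge i}\varphi_j\bigr)$ that are decreasing in $i$). The continuity-of-envelopes hypothesis, which holds for smooth $X$ as recorded in the referenced section, is what ensures these envelopes lie in $\PSH(X,L)$ and that their DH-measures and moment energies behave as expected under the regularization procedure. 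I would then argue in two steps: first, a Fatou-type estimate against the moderate weight $-e^{-\rho t}$ upgrades the uniform $E_{\exp}$-bound along $\{\varphi_i\}$ to $E_{\exp}(\varphi_{;\rho})>-\infty$, placing $\varphi\in\E^{\exp}(X,L)$; second, one shows $d_{\exp}(\varphi_i,\varphi)\to 0$ by combining the Cauchy property with the continuity statements of (1)--(2), again following the structure of the completeness proof for $(\E^1,d_1)$ in \cite{BJ3}, but rerun with the exponential weight throughout. The main obstacle is precisely this transfer: each estimate available for $d_1$ via the Monge--Amp\`ere measure must be replaced by its Orlicz/exponential analogue built from $\int e^{-t}\mathcal{D}_\varphi$, and the continuity-of-envelopes input is what makes these analogues valid on all of $\E^{\exp}$ rather than only on $\nH$.
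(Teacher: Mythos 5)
Your overall architecture (Orlicz-type construction, reduction of (1)--(2) to monotone continuity of $E_{\exp}$, Darvas-style completeness via envelopes under continuity of envelopes) matches the paper's, but the actual definition of $d_{\exp}$ you propose is not the one that makes the argument work, and the two central technical devices are missing. The paper defines $d_{\exp}(\varphi,\varphi')$ as the Luxemburg gauge $\inf\{\beta>0 \mid \int_\mathbb{R}(e^{|t/\beta|}-1)\,\DHm_{\varphi,\varphi'}\le 1\}$ of the \emph{relative spectral measure} $\DHm_{\varphi,\varphi'}$, built from bases of $R_m$ that are simultaneously diagonal for the two filtrations $\mathcal{F}_\varphi,\mathcal{F}_{\varphi'}$; the triangle inequality is then inherited from the triangle inequality for Luxemburg norms on finite-dimensional normed spaces (Boucksom--Eriksson) at each level $m$. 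Neither of your candidates reproduces this. The midpoint cocycle $I_{\exp}(\varphi,\psi)=E_{\exp}(\tfrac{\varphi+\psi}{2})-\tfrac12 E_{\exp}(\varphi)-\tfrac12 E_{\exp}(\psi)$ measures a concavity defect of $E_{\exp}$, not an Orlicz norm of the relative spectrum, and there is no visible route to a triangle inequality or to the comparison $d_p\le\lceil p\rceil d_{\exp}$ that the paper uses to get non-degeneracy from $d_1$. Your alternative $\inf\{c>0\mid \iint\Phi((\varphi-\psi)/c)\,\mathcal{D}_{(\varphi+\psi)/2}\le 1\}$ integrates the pointwise difference of the metrics against a moment measure on $X^{\mathrm{NA}}$, which is a different (and a priori non-comparable) quantity from the joint spectral data $\lambda_i^\varphi(\bm s)-\lambda_i^{\varphi'}(\bm s)$; the claim that "the two definitions are comparable" is precisely the content that would have to be proved and is not obvious.

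The second missing ingredient is the interpolation estimate that drives properties (1) and (2) and the very extension of $d_{\exp}$ from $\nH$ to $\E^{\exp}$: for $\varphi\le\varphi'$ in $\nH(X,L)$ one has $\int_\mathbb{R}(e^{|t/\beta|}-1)\DHm_{\varphi,\varphi'}\le \bigl(\tfrac{E(\varphi')-E(\varphi)}{4\beta/3}\bigr)^{1/4}\bigl(-e^{\sup\varphi_{;2/\beta}}E_{\exp}(\varphi_{;2/\beta})\bigr)^{3/4}$, proved by Cauchy--Schwarz on the spectral sums. Your argument for (2) assumes that convergence of the exponential energies $E_{\exp}(\varphi_{i;\rho})\to E_{\exp}(\varphi_{;\rho})$ along a decreasing net already forces $d_{\exp}(\varphi_i,\varphi)\to 0$; this implication is exactly what the interpolation inequality supplies (it converts convergence of the \emph{linear} energy $E$, together with finiteness of $E_{\exp}(\varphi_{;2/\beta})$ for all $\beta$, into smallness of the Orlicz integral) and does not follow from convergence of the energies alone. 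Your argument for (1) also contains a concrete error: you apply decreasing-net continuity to $\max(\varphi_i,\varphi)\searrow\varphi$, but $d_{\exp}$-convergence gives no pointwise monotonicity, so this net need not decrease to $\varphi$. The paper instead proves (1) via the tomographic formula $E_{\exp}(\varphi)=\int_\mathbb{R}\frac{(L,\varphi\wedge\tau-\tau)^{\cdot n+1}}{(n+1)!}e^{-\tau}d\tau$, a uniform exponential-decay bound on the integrand along Cauchy sequences, and dominated convergence. Finally, for (3) your candidate limit $P(\sup_{j\ge i}\varphi_j)$ is a limsup-type envelope, whereas the paper runs Darvas's scheme with finite rooftops $\varphi_i\wedge\dots\wedge\varphi_{i+p}$, precisely because the rooftop operation is quantitatively controlled by $d_{\exp}$ (via $d_{\exp}(\varphi\wedge\varphi',\varphi\wedge\varphi'')\le 2d_{\exp}(\varphi',\varphi'')$); without such a control on your envelope construction the verification that the limit is attained in $d_{\exp}$ does not close.
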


We may replace the smoothness assumption in the last claim with the continuity of envelopes for $(X, L)$ (see section \ref{continuity of envelopes}). 

\begin{thm}[Summary of section \ref{Eexp-topology}, \ref{Continuous extension of the functional EexpM}, \ref{Continuity of exponential moment measure} and section \ref{Non-archimedean mu-entropy}]
\label{NAmu entropy extension}
Let $(X, L)$ be a polarized variety. 
We have the following continuity results for $E_{\exp}$-topology on $\E^{\exp} (X, L)$. 
\begin{enumerate}
\item For every strongly convergent sequence $\psi_i \to \psi \in \E^1 (X, L)$ (or uniformly convergent sequence $\psi_i \to \psi \in C^0 (X^{\mathrm{NA}})$) and every $E_{\exp}$-convergent sequence $\varphi_i \to \varphi \in \E^{\exp} (X, L)$, we have 
\[ \int_{X^{\mathrm{NA}}} \psi_i \int e^{-t} \mathcal{D}_{\varphi_i} \to \int_{X^{\mathrm{NA}}} \psi \int e^{-t} \mathcal{D}_\varphi. \] 

\item For a $\mathbb{Q}$-line bundle $M$ on $X$, the functional $E_{\exp}^M$ on $\nH (X, L)$ defined by (\ref{moment M-energy}) extends continuously to $\E^{\exp} (X, L)$. 

\item Suppose $X$ has only klt singularities, then we have the greatest lower semi-continuous extension of the log discrepancy $A_X$ to $X^{\mathrm{NA}}$. 
In this case, the functional 
\[ \nH (X, L) \to \mathbb{R}: \NAmu^\lambda (\varphi) + 2\pi \int_{X^{\mathrm{NA}}} A_X \int e^{-t} \mathcal{D}_\varphi \]
extends continuously to $\E^{\exp} (X, L)$. 
As a consequence, $\NAmu^\lambda$ on $\nH (X, L)$ extends to $\E^{\exp} (X, L)$ as an upper semi-continuous function. 
\end{enumerate}
\end{thm}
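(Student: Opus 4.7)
The plan is to deduce all three items from careful continuity properties of the bilinear functional $(\psi, \varphi) \mapsto \int_{X^{\mathrm{NA}}} \psi \int e^{-t}\mathcal{D}_\varphi$, combined with the closed-form expression for $\NAmu^\lambda$ stated in the abstract. The main ingredients are (a) the moment-measure calculus of section \ref{Moment measure}, particularly property (6) (moderate continuity) and the total-mass identity $\iint e^{-t}\mathcal{D}_\varphi = \int_{\mathbb{R}} e^{-t}\DHm_\varphi$; (b) the density of $\nH(X,L)$ in $\E^{\exp}(X,L)$ via pointwise decreasing nets from Theorem \ref{dexp metric} (2); and (c) the continuity of $E_{\exp}(\varphi_{;\rho})$ in the $E_{\exp}$-topology.

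For (1), I would split
\[
\int \psi_i \int e^{-t}\mathcal{D}_{\varphi_i} - \int \psi \int e^{-t}\mathcal{D}_\varphi = \int (\psi_i - \psi) \int e^{-t}\mathcal{D}_{\varphi_i} + \int \psi \Bigl(\int e^{-t}\mathcal{D}_{\varphi_i} - \int e^{-t}\mathcal{D}_\varphi\Bigr).
\]
The total masses $\iint e^{-t}\mathcal{D}_{\varphi_i}$ converge along $E_{\exp}$-convergence and are therefore uniformly bounded; the first summand is controlled either by $\|\psi_i - \psi\|_\infty$ (uniform case) or by continuous-function approximation against the non-negative moment measure (strong $\E^1$ case). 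For the second summand with $\psi$ fixed, I would approximate both $\varphi_i$ and $\varphi$ from above by decreasing $\nH$-nets, apply moderate continuity to each, and pass through the double limit using $d_{\exp}$-control on the approximations. Item (2) then reduces to (1): by formula (\ref{moment M-energy}) and the equivariant projection formula, $E_{\exp}^M$ on $\nH(X,L)$ rewrites as $-\int_{X^{\mathrm{NA}}} \varphi_M \int e^{-t}\mathcal{D}_\varphi$ plus a term depending only on $(X,L,M)$, where $\varphi_M$ is the canonical (trivial-valuation) non-archimedean metric attached to $M$; taking $\psi \equiv \varphi_M$ in (1) yields the desired continuous extension. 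For (3), the klt hypothesis with section \ref{continuity of envelopes} supplies a greatest lower semi-continuous extension of $A_X$ to $X^{\mathrm{NA}}$, realizable as a pointwise increasing limit $A_X^{(k)} \nearrow A_X$ of non-negative continuous functions; (1) applied to each $A_X^{(k)}$, combined with monotone convergence against the non-negative measure $\int e^{-t}\mathcal{D}_\varphi$, gives lower semi-continuity of $\varphi \mapsto \int_{X^{\mathrm{NA}}} A_X \int e^{-t}\mathcal{D}_\varphi$ on $\E^{\exp}$. Inserting this into the abstract's formula, with all other summands continuous by (1), (2) and the positivity of the denominator $\iint e^{-t}\mathcal{D}_\varphi$, yields continuity of the stated compensated combination $\NAmu^\lambda(\varphi) + 2\pi \int A_X \int e^{-t}\mathcal{D}_\varphi$ and upper semi-continuity of $\NAmu^\lambda$.

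The main obstacle is (1) along general $E_{\exp}$-convergent sequences: moderate continuity only handles decreasing nets, so one must implement a simultaneous two-sided regularization of both $\varphi_i$ and $\varphi$ by decreasing $\nH$-approximations $\varphi_i^{(k)} \searrow \varphi_i$ and $\varphi^{(k)} \searrow \varphi$ whose $d_{\exp}$-distance to the original vanishes uniformly in $i$ as $k \to \infty$. The Orlicz nature of $d_{\exp}$ is precisely what grants uniform integrability of $e^{-t}$ against $\DHm_{\varphi_i^{(k)}}$, controlling the Berkovich-side variation so that the double limit collapses onto the already-established moderate case; this is where I expect the bulk of the technical work to sit.
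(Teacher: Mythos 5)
Your overall architecture (establish continuity of the bilinear pairing, feed it into the closed formula for $\NAmu^\lambda$, then use lower semi-continuity of $A_X$ for the final usc statement) matches the paper, and your treatment of item (3) is essentially the paper's. But there are two genuine gaps in how you propose to prove (1) and (2).

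For (1), the double-limit scheme does not close. You propose to regularize $\varphi_i$ and $\varphi$ by decreasing $\nH$-nets, apply moderate continuity (which only handles decreasing nets) on each column, and control the interchange of limits by ``uniform integrability of $e^{-t}$ against $\DHm_{\varphi_i^{(k)}}$.'' The obstruction is that the Duistermaat--Heckman measure only records the pushforward of $\int e^{-t}\mathcal{D}_\varphi$ to $\mathbb{R}$; it says nothing about \emph{where} on $X^{\mathrm{NA}}$ the mass sits, which is exactly what the pairing against a non-constant $\psi$ detects. So Orlicz control of $\DHm_{\varphi_i^{(k)}}$ cannot bound the variation of $\int_{X^{\mathrm{NA}}}\psi\int e^{-t}\mathcal{D}_{\varphi_i^{(k)}}$ in $i$, and there is moreover no reason why a decreasing approximation of $\varphi_i$ should converge to a decreasing approximation of $\varphi$ in a way compatible with each fixed $k$. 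The paper avoids this entirely via the tomographic formula
\[
\int_{X^{\mathrm{NA}}} \psi \int e^{-t}\mathcal{D}_\varphi = \int_{\mathbb{R}} d\tau\, e^{-\tau}\int_{X^{\mathrm{NA}}}(\psi-\psi(v_{\mathrm{triv}}))\,\mathrm{MA}(\varphi\wedge\tau) + \psi(v_{\mathrm{triv}})\int_{\mathbb{R}} e^{-\tau}\DHm_\varphi,
\]
for which each $\tau$-slice is continuous in $(\psi_i,\varphi_i)$ under strong convergence by the known $\E^1$ Monge--Amp\`ere estimates, and the exponential-moment data enters only through the dominating function $C_\varepsilon e^{-\varepsilon|\tau|}$ of Lemma \ref{exponential domination}, so that dominated convergence in the single variable $\tau$ finishes the proof. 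If you want your argument to work you need this reduction (or an equivalent localization of the moment measure), not a two-sided regularization.

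For (2), the reduction to (1) by writing $E_{\exp}^M(\varphi)$ as $-\int_{X^{\mathrm{NA}}}\varphi_M\int e^{-t}\mathcal{D}_\varphi$ plus a constant fails: $M$ is an arbitrary $\mathbb{Q}$-line bundle (the relevant case is $M=K_X$, typically not nef), so there is no non-archimedean psh metric $\varphi_M$ to pair against, and even for ample $M$ the quantity $E_{\exp}^M$ is governed by the mixed energy pairing $(M,0)\cdot(L,\varphi\wedge\tau-\tau)^{\cdot n}$, which is not of the form $\int_{X^{\mathrm{NA}}}\psi\int e^{-t}\mathcal{D}_\varphi$ for any fixed $\psi$. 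The paper handles $E_{\exp}^M$ by its own tomographic identity $E_{\exp}^M(\varphi)=\int_{\mathbb{R}}\frac{(M,0)\cdot(L,\varphi\wedge\tau-\tau)^{\cdot n}}{n!}e^{-\tau}d\tau$ together with the H\"older-type continuity of $\varphi\mapsto(M,0)\cdot(L,\varphi)^{\cdot n}$ (Theorem \ref{BJ estimate 2}) and again the exponential domination of $d_1(\varphi\wedge\tau-\tau,0)$. You should treat (2) as a parallel instance of the same tomography-plus-domination mechanism rather than as a corollary of (1).
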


As for (3), we must assume $X$ is log canonical for the existence of lsc extension of the log discrepancy $A_X$ to $X^{\mathrm{NA}}$. 
Indeed, since $X^{\mathrm{NA}}$ is compact, any lsc extension of $A_X$ must be bounded from below, but if $X$ is not log canonical, then we have $A_X (v) < 0$ for some $v \in X^{\mathrm{NA}}$ and hence $\inf_{v \in X^{\mathrm{NA}}} A_X (v) \le \varliminf_{\rho \to \infty} A_X (\rho. v) = -\infty$, which is a contradiction. 
At the moment, we assume $X$ is klt to ensure the lsc extension of $A_X$: in this case, the lsc extension is given by putting $A_X (v) = \infty$ for $v \in X^{\mathrm{NA}} \setminus X^{\mathrm{val}}$. 
See the proof in \cite{BJ2} for the lower semi-continutiy. 
The author speculates the lsc extension of $A_X$ exists for general log canonical $X$. 
We note in the log canonical case, we must put $A_X = 0$ on the closure of $\{ v \in \mathrm{Val} (X) ~|~ A_X (v) \}$, which makes the lower semi-continuity nontrivial. 

\subsubsection{Non-archimedean $\mu$-entropy and optimal degeneration}

To reformulate Theorem \ref{characteristic mu maximization implies muK-semistability} and Theorem \ref{NAmu maximizer} in the non-archimedean formalism, we must compare the characteristic $\mu$-entropy $\cmu^\lambda (\mathcal{F}_\varphi)$ and the non-archimedean $\mu$-entropy $\NAmu^\lambda (\varphi)$ for $\varphi \in \nH^\mathbb{R} (X, L)$ (see section \ref{Filtration associated to continuous psh metric} for $\mathcal{F}_\varphi$). 
The problem can be reduced to the continuity of the log discrepancy $A_X$ along some geometric family of valuations. 
We will check this for valuations associated to proper vectors. 
As a consequence, we get $\NAmu^\lambda (\varphi_\xi) = \cmu^\lambda (X, L; \xi)$ for proper vectors (Proposition \ref{comparison of NAmu and chmu}) and thus obtain the following reformulation. 

\begin{thm}[Summary of section \ref{section: mu-entropy of polyhedral configuration} and section \ref{maximizing non-archimedean mu-entropy}]
\label{NAmu entropy and muK-stability}
Let $(X, L)$ be a polarized normal variety with only klt singularities. 
If for each $\varphi \in \nH (X, L)$ there exists a proper vector $\xi$ on $(X, L)$ such that 
\[ \NAmu^\lambda (\varphi_\xi) \ge \NAmu^\lambda (\varphi), \]
then $(X, L)$ is $\check{\mu}^\lambda$K-semistable for some proper vector $\xi_{\mathrm{opt}}$ which maximizes the non-archimedean $\mu$-entropy among all proper vectors (or among all $\varphi \in \nH (X, L)$). 
\end{thm}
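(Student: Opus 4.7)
The plan is to reduce Theorem \ref{NAmu entropy and muK-stability} to case (2) of Theorem \ref{characteristic mu maximization implies muK-semistability} by exploiting two comparison facts already available: the equality $\NAmu^\lambda(\varphi_\xi) = \cmu^\lambda(X, L; \xi)$ for proper vectors $\xi$ (Proposition \ref{comparison of NAmu and chmu}), together with the pointwise inequality $\NAmu^\lambda(\mathcal{X}, \mathcal{L}; \rho) \ge \cmu^\lambda(\mathcal{X}, \mathcal{L}; \rho)$ for normal test configurations (which follows from $((\mathcal{X}_0 - \mathcal{X}_0^{\mathrm{red}}). e^{\mathcal{L}}; \rho) \ge 0$). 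Both facts are essentially set up in the preceding sections.

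First I would carry out the chain-of-inequalities step. Fix an arbitrary normal test configuration $(\mathcal{X}, \mathcal{L}; \rho)$, form its non-archimedean metric $\varphi_{(\mathcal{X}, \mathcal{L}; \rho)} \in \nH(X, L)$, and apply the hypothesis of the theorem to produce a proper vector $\xi$ satisfying $\NAmu^\lambda(\varphi_\xi) \ge \NAmu^\lambda(\varphi_{(\mathcal{X}, \mathcal{L}; \rho)})$. Chaining the two comparisons yields
\[
\cmu^\lambda(X, L; \xi) \;=\; \NAmu^\lambda(\varphi_\xi) \;\ge\; \NAmu^\lambda(\varphi_{(\mathcal{X}, \mathcal{L}; \rho)}) \;\ge\; \cmu^\lambda(\mathcal{X}, \mathcal{L}; \rho).
\]
This is exactly hypothesis (2) of Theorem \ref{characteristic mu maximization implies muK-semistability}, so $\check{\mu}^\lambda$K-semistability follows \emph{provided} we also exhibit a single maximizing vector $\xi_{\mathrm{opt}}$.

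Second I would address the existence of $\xi_{\mathrm{opt}}$. By the hypothesis and the identity $\NAmu^\lambda(\varphi_\xi) = \cmu^\lambda(X, L; \xi)$, the supremum of $\NAmu^\lambda$ over $\nH(X, L)$ coincides with the supremum of $\cmu^\lambda(X, L; \bullet)$ over proper vectors. Thus the only remaining point is to show that this last supremum is attained. In the smooth case this is the $\mu$-volume properness result of \cite{Ino2}; the plan is to extend it to normal $X$ with klt singularities, using the non-archimedean pluripotential formulae for $\NAmu^\lambda$ developed in section \ref{Non-archimedean mu-entropy}, in particular the expression
\[ \NAmu^\lambda(\varphi) = -\frac{\int_{X^{\mathrm{NA}}}(2\pi A_X + \lambda\varphi)\int e^{-t}\mathcal{D}_\varphi + E_{\exp}^{2\pi K_X + \lambda L}(\varphi)}{\iint_{X^{\mathrm{NA}}} e^{-t}\mathcal{D}_\varphi} - \lambda \log \iint_{X^{\mathrm{NA}}} e^{-t}\mathcal{D}_\varphi \]
restricted to valuations $\rho.v_\xi$ associated to proper vectors. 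Here the log discrepancy $A_X$ admits the lsc extension granted by the klt hypothesis, and the reduction to finite-dimensional convex analysis on the Lie algebra $\mathfrak{t}$ proceeds as in \cite{Ino2}.

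The main obstacle is precisely this properness extension: verifying the coercivity of $\xi \mapsto \cmu^\lambda(X, L; \xi)$ on a fixed Lie algebra $\mathfrak{t}$ in the klt setting, which the smooth proof handled via differential-geometric curvature bounds. The substitute plan is to evaluate the non-archimedean $\mu$-entropy on the valuations $\rho.v_\xi$ and use the continuity of $A_X$ along such geometric families (the same continuity that underlies Proposition \ref{comparison of NAmu and chmu}) to convert properness of $A_X$ under large $|\xi|$ into properness of $\NAmu^\lambda(\varphi_\xi)$. Once $\xi_{\mathrm{opt}}$ is produced this way, applying Theorem \ref{characteristic mu maximization implies muK-semistability}(2) with $\xi_{\mathrm{opt}}$ closes the argument and simultaneously shows that $\xi_{\mathrm{opt}}$ maximizes over all of $\nH(X, L)$, as required.
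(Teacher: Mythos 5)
Your proposal follows essentially the same route as the paper: the chain $\cmu^\lambda(X,L;\xi) = \NAmu^\lambda(\varphi_\xi) \ge \NAmu^\lambda(\varphi_{(\mathcal{X},\mathcal{L};\rho)}) \ge \cmu^\lambda(\mathcal{X},\mathcal{L};\rho)$ reducing the statement to hypothesis (2) of Theorem \ref{characteristic mu maximization implies muK-semistability} is exactly the argument in the corollary following Proposition \ref{comparison of NAmu and chmu}, and your plan for producing $\xi_{\mathrm{opt}}$ (expressing $\NAmu^\lambda(\varphi_\xi)$ via the lsc log discrepancy $A_X$, whose positivity on the unit sphere in the klt case dominates the sublinear remaining terms) is precisely how Proposition \ref{large limit of mu-entropy} and Corollary \ref{properness of mu-entropy} establish properness of $\cmu^\lambda(X,L;\bullet)$ on $\mathfrak{t}$. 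The argument is correct and matches the paper's proof.
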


Now let us combine this with analytic results in the first paper \cite{Ino4}. 
Here $\bm{\check{\mu}}_{\mathrm{Per}}^\lambda$ denotes the Perelman's $\mu$-entropy. 
We use the convention in \cite{Ino4}, which has the reverse sign compared to the original one \cite{Per}. 

\begin{cor}
\label{muK-stability characterization}
Let $(X, L)$ be a polarized smooth variety. 
For $\lambda \in \mathbb{R}$, the following (c) implies (b), and (b) implies (a). 
\begin{enumerate}
\item[(a)] $(X, L)$ is $\check{\mu}^\lambda_\xi$K-semistable.   

\item[(b)] $\NAmu^\lambda (\varphi_\xi) = \sup_{\varphi \in \nH (X, L)} \NAmu^\lambda (\varphi)$. 

\item[(c)] $\NAmu^\lambda (\varphi_\xi) = \inf_{\omega_\phi \in \mathcal{H} (X, L)} \bm{\check{\mu}}_{\mathrm{Per}}^\lambda (\omega_\phi)$. 
\end{enumerate}
\end{cor}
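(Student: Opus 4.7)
The plan is to prove the two implications separately. Both reduce to material already assembled in the paper, together with the asymptotic slope inequality supplied by the companion paper \cite{Ino4}.

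For (b) $\Rightarrow$ (a), I would reduce to condition (1) of Theorem \ref{characteristic mu maximization implies muK-semistability} (equivalently, to Theorem \ref{NAmu entropy and muK-stability}). Under assumption (b) the inequality $\NAmu^\lambda(\varphi_\xi) \geq \NAmu^\lambda(\varphi_{(\mathcal{X},\mathcal{L};\rho)})$ holds for every normal test configuration and every $\rho \in \mathbb{Q}_{\geq 0}$. Applying on the left the identification $\NAmu^\lambda(\varphi_\xi) = \cmu^\lambda(X,L;\xi)$ for proper vectors (Proposition \ref{comparison of NAmu and chmu}), and on the right the general inequality $\NAmu^\lambda(\varphi_{(\mathcal{X},\mathcal{L};\rho)}) \geq \cmu^\lambda(\mathcal{X},\mathcal{L};\rho)$ recorded just after the definition of $\NAmu^\lambda$, one obtains
\[
\cmu^\lambda(X,L;\xi) \;\geq\; \cmu^\lambda(\mathcal{X},\mathcal{L};\rho)
\]
for every test configuration $(\mathcal{X},\mathcal{L})$ and every $\rho \in \mathbb{Q}_{\geq 0}$. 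This is exactly hypothesis (1) of Theorem \ref{characteristic mu maximization implies muK-semistability}, whose conclusion delivers semistability for \emph{any} maximizer; hence for $\xi$ itself, giving (a).

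For (c) $\Rightarrow$ (b), the key tool I would invoke is a weak duality
\[
\NAmu^\lambda(\varphi) \;\leq\; \bm{\check{\mu}}_{\mathrm{Per}}^\lambda(\omega_\phi) \qquad (\varphi \in \nH(X,L),\ \omega_\phi \in \mathcal{H}(X,L)),
\]
which is the expected slope-type inequality from \cite{Ino4}: pair $\varphi$ with a subgeodesic ray $\phi_s$ emanating from $\phi$ and subordinate to $\varphi$; monotonicity of Perelman's $\mu$-entropy along the ray, together with the identification of its slope at infinity with $\NAmu^\lambda(\varphi)$, forces the slope to lie below the initial value. Taking $\sup$ over $\varphi$ and $\inf$ over $\omega_\phi$ yields
\[
\sup_{\varphi \in \nH(X,L)} \NAmu^\lambda(\varphi) \;\leq\; \inf_{\omega_\phi \in \mathcal{H}(X,L)} \bm{\check{\mu}}_{\mathrm{Per}}^\lambda(\omega_\phi).
\]
Since $\varphi_\xi \in \nH(X,L)$ for rational $\xi$ (with the general real case handled by approximation and the upper semi-continuous extension of Theorem \ref{NAmu entropy extension}), we always have $\NAmu^\lambda(\varphi_\xi) \leq \sup_\varphi \NAmu^\lambda(\varphi)$. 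Assumption (c) then sandwiches all three quantities, forcing equality, which is precisely (b).

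The chief obstacle is the weak duality $\NAmu^\lambda \leq \bm{\check{\mu}}_{\mathrm{Per}}^\lambda$: it is the analytic bridge between Perelman's archimedean $W$-entropy along subgeodesic rays (whose slope analysis is carried out in \cite{Ino4}) and the non-archimedean $\mu$-entropy defined here via the moment measure $\int e^{-t} \mathcal{D}_\varphi$ and the exponential moment energy $E_{\exp}$. A secondary, more technical point is extending both the identification $\NAmu^\lambda(\varphi_\xi) = \cmu^\lambda(X,L;\xi)$ and the maximizer status of $\xi$ from rational proper vectors to arbitrary real $\xi \in \mathfrak{t}$, which should follow from the density of $\nH(X,L)$ in $\E^{\exp}(X,L)$ and the continuity results of Theorems \ref{dexp metric} and \ref{NAmu entropy extension}. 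By comparison, the implication (b) $\Rightarrow$ (a) is essentially a packaging argument once the $\NAmu^\lambda$-vs-$\cmu^\lambda$ dictionary is in place.
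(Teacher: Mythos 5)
Your overall architecture matches the paper's: (b) $\Rightarrow$ (a) is the $\NAmu^\lambda$-versus-$\cmu^\lambda$ dictionary feeding into Theorem \ref{characteristic mu maximization implies muK-semistability} (the paper packages this as Theorem \ref{NAmu entropy and muK-stability}), and (c) $\Rightarrow$ (b) is the sandwich $\NAmu^\lambda(\varphi_\xi) = \inf \bm{\check{\mu}}_{\mathrm{Per}}^\lambda \ge \sup_{\nH} \NAmu^\lambda \ge \NAmu^\lambda(\varphi_\xi)$, with the first inequality imported from \cite{Ino4}. The (b) $\Rightarrow$ (a) half is fine, modulo the unstated (but immediate) observation that the displayed inequality applied to product configurations, together with continuity in $\xi$, makes $\xi$ itself a maximizer among proper vectors, so that the conclusion of Theorem \ref{characteristic mu maximization implies muK-semistability} applies to $\xi$ and not merely to \emph{some} $\xi_{\mathrm{opt}}$.

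There is, however, one genuine flaw in your (c) $\Rightarrow$ (b) argument: the last inequality $\NAmu^\lambda(\varphi_\xi) \le \sup_{\varphi \in \nH(X,L)} \NAmu^\lambda(\varphi)$ for \emph{irrational} $\xi$ cannot be obtained from ``approximation and the upper semi-continuous extension of Theorem \ref{NAmu entropy extension}.'' Upper semi-continuity along $\varphi_{\xi_i} \to \varphi_\xi$ (for rational $\xi_i \to \xi$) gives $\varlimsup_i \NAmu^\lambda(\varphi_{\xi_i}) \le \NAmu^\lambda(\varphi_\xi)$, which is the \emph{wrong direction}: it would allow $\NAmu^\lambda(\varphi_\xi)$ to jump strictly above the supremum over $\nH(X,L)$. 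What is actually needed — and what the paper explicitly flags as the non-trivial point — is genuine continuity of $\xi \mapsto \NAmu^\lambda(\varphi_\xi)$, which comes from the identification $\NAmu^\lambda(\varphi_\xi) = \cmu^\lambda(X,L;\xi)$ of Proposition \ref{comparison of NAmu and chmu} combined with the real analyticity of $\cmu^\lambda(X,L;\cdot)$ on $\mathfrak{t}$. You do cite Proposition \ref{comparison of NAmu and chmu} elsewhere, so the fix is only to route this particular step through it rather than through the one-sided semi-continuity of the extended functional; as written, the step does not close.
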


\begin{proof}
Theorem 1.5 in \cite{Ino4} directly implies (c) $\Rightarrow$ (b): 
\[ \NAmu^\lambda (\varphi_\xi) = \inf_{\omega_\phi \in \mathcal{H} (X, L)} \bm{\check{\mu}}_{\mathrm{Per}}^\lambda (\omega_\phi) \ge \sup_{\varphi \in \nH (X, L)} \NAmu^\lambda (\varphi) \ge \NAmu^\lambda (\varphi_\xi). \]
Here note the last inequality is not trivial as $\varphi_\xi \in \nH^\mathbb{R} (X, L) \setminus \nH (X, L)$ for irrational $\xi \in \mathfrak{t}$, but it follows by the continuity on $\xi$ thanks to Proposition \ref{comparison of NAmu and chmu}. 
The implication (b) $\Rightarrow$ (a) is nothing but Theorem \ref{NAmu entropy and muK-stability}. 
\end{proof}

The author speculates these conditions are actually equivalent. 

\begin{cor}
Let $(X, L)$ be a polarized smooth variety. 
If there is a $\mu^\lambda_\xi$-cscK metric $\omega$ on $(X, L)$ for $\lambda \le 0$, then we have 
\[ \NAmu^\lambda (\varphi_\xi) = \sup_{\varphi \in \nH (X, L)} \NAmu^\lambda (\varphi). \]
In particular, $(X, L)$ is $\mu^\lambda_\xi$K-semistable. 
\end{cor}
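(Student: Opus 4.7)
The plan is to apply Corollary \ref{muK-stability characterization}. Since the implications (c) $\Rightarrow$ (b) $\Rightarrow$ (a) were established there, it suffices to verify condition (c), namely that the $\mu^\lambda_\xi$-cscK metric $\omega$ attains the infimum of Perelman's $\mu^\lambda$-entropy $\bm{\check{\mu}}_{\mathrm{Per}}^\lambda$ on $\mathcal{H}(X,L)$, with value $\NAmu^\lambda(\varphi_\xi)$.

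Identifying the value at the critical point should be essentially bookkeeping: since $\omega$ is $\mu^\lambda_\xi$-cscK, the equivariant intersection formula identifies $\bm{\check{\mu}}_{\mathrm{Per}}^\lambda(\omega)$ with $\cmu^\lambda(X,L;\xi)$, and the latter equals $\NAmu^\lambda(\varphi_\xi)$ by Proposition \ref{comparison of NAmu and chmu}.

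The substantive step is to show that $\omega$ is a \emph{global} minimizer of $\bm{\check{\mu}}_{\mathrm{Per}}^\lambda$ on $\mathcal{H}(X,L)$. The natural route is convexity: the plan is to invoke or establish convexity of $\bm{\check{\mu}}_{\mathrm{Per}}^\lambda$ along weak geodesic segments in $\mathcal{E}^1(X,\omega)$, so that the Euler--Lagrange characterization of $\mu^\lambda_\xi$-cscK metrics as critical points automatically upgrades to global minimization. Mirroring the non-archimedean formula
\[ \NAmu^\lambda(\varphi) = -\frac{\int_{X^{\mathrm{NA}}}(2\pi A_X + \lambda\varphi)\int e^{-t}\mathcal{D}_\varphi + E^{2\pi K_X + \lambda L}_{\exp}(\varphi)}{\iint_{X^{\mathrm{NA}}} e^{-t}\mathcal{D}_\varphi} - \lambda\log\iint_{X^{\mathrm{NA}}} e^{-t}\mathcal{D}_\varphi, \]
the archimedean functional $\bm{\check{\mu}}_{\mathrm{Per}}^\lambda$ decomposes into a weighted Mabuchi-type energy and a log-normalization term of the form $-\lambda\log\int e^{-\pi\dot\phi}\,\omega_\phi^n/n!$. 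The log term is convex along geodesics precisely when $-\lambda\ge 0$, which is exactly where the sign assumption $\lambda\le 0$ is used; the weighted Mabuchi piece should be convex along weak geodesics by a positivity-of-direct-images argument in the spirit of Berman--Berndtsson, adapted to the exponential weight --- a line of ideas already developed in the first paper \cite{Ino4}.

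The main obstacle is precisely this weighted convexity: adapting the classical subharmonicity-of-fibre-integrals proof to the exponentially twisted setting, and establishing sufficient regularity along weak geodesics to run the argument at the level of $\mathcal{E}^1$. Once granted, condition (c) of Corollary \ref{muK-stability characterization} is verified, and both conclusions of the present corollary --- the supremum identity and $\check{\mu}^\lambda_\xi$K-semistability --- follow at once.
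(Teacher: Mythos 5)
Your reduction to condition (c) of Corollary \ref{muK-stability characterization}, and the value identification $\bm{\check{\mu}}_{\mathrm{Per}}^\lambda(\omega)=\cmu^\lambda(X,L;\xi)=\NAmu^\lambda(\varphi_\xi)$ at the critical metric, match the paper, which simply quotes Theorem 1.4 of \cite{Ino4} for the full chain $\NAmu^\lambda(\varphi_\xi)=\bm{\check{\mu}}_{\mathrm{Per}}^\lambda(\omega)=\inf_{\omega_\phi}\bm{\check{\mu}}_{\mathrm{Per}}^\lambda(\omega_\phi)$. The gap is your treatment of the global minimization. You propose geodesic convexity of $\bm{\check{\mu}}_{\mathrm{Per}}^\lambda$ itself, but this functional is a supremum over momenta $f$ of the $W$-functional $\check{W}^\lambda(\omega_\phi,f)$, with the optimal momentum varying nonlinearly with $\phi$; it does not decompose into ``weighted Mabuchi plus log term'' until you freeze the momentum at the Hamiltonian $\theta_\xi$ of a fixed vector. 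A Berman--Berndtsson/Lahdili convexity argument is available (at best) for that frozen functional $\check{W}^\lambda(\cdot,\theta_\xi)$, not for the envelope $\bm{\check{\mu}}_{\mathrm{Per}}^\lambda$, and you would still need that $\theta_\xi$ is the \emph{global} maximizer of $f\mapsto\check{W}^\lambda(\omega,f)$ at the critical metric --- which is in fact where $\lambda\le 0$ enters (concavity in the momentum variable), not through convexity of a log term along geodesics. As written, the ``main obstacle'' you grant yourself is neither established in the cited literature nor the right object to convexify.

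More importantly, no convexity is needed. The inequality $\sup_{\varphi\in\nH(X,L)}\NAmu^\lambda(\varphi)\le\inf_{\omega_\phi\in\mathcal{H}(X,L)}\bm{\check{\mu}}_{\mathrm{Per}}^\lambda(\omega_\phi)$ from \cite{Ino4} --- the same inequality already powering (c) $\Rightarrow$ (b) in Corollary \ref{muK-stability characterization} --- combines with the value identity and with $\NAmu^\lambda(\varphi_\xi)\le\sup_{\varphi\in\nH(X,L)}\NAmu^\lambda(\varphi)$ (Proposition \ref{comparison of NAmu and chmu} and continuity in $\xi$) to give
\[ \bm{\check{\mu}}_{\mathrm{Per}}^\lambda(\omega)=\NAmu^\lambda(\varphi_\xi)\le\sup_{\varphi\in\nH(X,L)}\NAmu^\lambda(\varphi)\le\inf_{\omega_\phi}\bm{\check{\mu}}_{\mathrm{Per}}^\lambda(\omega_\phi)\le\bm{\check{\mu}}_{\mathrm{Per}}^\lambda(\omega), \]
which forces every inequality to be an equality; conditions (c) and (b) then hold and the corollary follows. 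This sandwich is precisely the content of Theorem 1.4 of \cite{Ino4} and is the route the paper takes; replacing it with an unproven convexity statement for $\bm{\check{\mu}}_{\mathrm{Per}}^\lambda$ in effect reverts to the older weighted-Mabuchi slope approach of \cite{Lah1, AJL} that the paper explicitly sets aside.
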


\begin{proof}
This is a consequence of the above corollary and Theorem 1.4 in \cite{Ino4} which states 
\[ \NAmu^\lambda (\varphi_\xi) = \bm{\check{\mu}}_{\mathrm{Per}}^\lambda (\omega) = \inf_{\omega_\phi \in \mathcal{H} (X, L)} \bm{\check{\mu}}_{\mathrm{Per}}^\lambda (\omega_\phi). \]
\end{proof}

Though the $\mu$K-semistability of $\mu$-cscK manifold is not new (cf. \cite{Lah1}, \cite{Ino3} and \cite{AJL}), the proof relies on a completely different perspective from the previous one concerned with the boundedness and the slope of $\mu$/weighted-Mabuchi functional. 

Since $\NAmu^\lambda$ is only upper semi-continuous, we cannot immediately deduce 
\[ \sup_{\varphi \in \E^{\exp} (X, L)} \NAmu^\lambda (\varphi) = \sup_{\varphi \in \nH (X, L)} \NAmu^\lambda (\varphi) \]
from the density of $\nH (X, L) \subset \E^{\exp} (X, L)$. 
This problem can be reduced to the following conjecture, which is analogous to Conjecture \ref{regularization of entropy} for the usual non-archimedean entropy. 

\begin{conj}[Regularization of exponential entropy]
\label{Regularization of exponential entropy}
Let $(X, L)$ be a polarized normal variety with only klt singularities. 
For any $\varphi \in \E^{\exp} (X, L)$, there exists a sequence $\{ \varphi_i \} \subset \nH (X, L)$ converging to $\varphi$ in $E_{\exp}/d_{\exp}$-topology (or prefarably it is a convergent decreasing sequence) such that 
\[ \lim_{i \to \infty} \int_{X^{\mathrm{NA}}} A_X \int e^{-t} \mathcal{D}_{\varphi_i} = \int_{X^{\mathrm{NA}}} A_X \int e^{-t} \mathcal{D}_\varphi. \]
\end{conj}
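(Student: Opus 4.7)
Theorem \ref{NAmu entropy extension}(3) asserts that the functional
\[ \varphi \mapsto \NAmu^\lambda(\varphi) + 2\pi \int_{X^{\mathrm{NA}}} A_X \int e^{-t}\mathcal{D}_\varphi \]
extends continuously to $\E^{\exp}(X, L)$ in $E_{\exp}$-topology, whereas $\NAmu^\lambda$ only extends as an upper semi-continuous function. Subtracting, the term $\varphi \mapsto \int_{X^{\mathrm{NA}}} A_X \int e^{-t}\mathcal{D}_\varphi$ is automatically \emph{lower} semi-continuous, and for any $E_{\exp}$-convergent sequence $\varphi_i \to \varphi$ with $\varphi_i \in \nH(X, L)$ one already has
\[ \liminf_i \int_{X^{\mathrm{NA}}} A_X \int e^{-t}\mathcal{D}_{\varphi_i} \ge \int_{X^{\mathrm{NA}}} A_X \int e^{-t}\mathcal{D}_\varphi. \]
The content of the conjecture is thus to exhibit a particular approximating sequence realizing equality, equivalently a sequence along which $\NAmu^\lambda$ is continuous from below.

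The plan is to start from a decreasing net $\varphi_j \searrow \varphi$ with $\varphi_j \in \nH(X, L)$, whose existence is guaranteed by Theorem \ref{dexp metric}(2) (since $d_{\exp}$-convergent decreasing approximations give $E_{\exp}$-convergence), and to truncate the weight: set $A_X^{(k)} := \min\{A_X, k\}$. For each fixed $k$, $A_X^{(k)}$ is bounded and lower semi-continuous on $X^{\mathrm{NA}}$; under the klt hypothesis the sublevel set $\{A_X \le k\}$ is a compact subset of the valuative locus $X^{\mathrm{val}}$, so combining this with the moderateness of $\chi = e^{-t}$ and the weak convergence of moment measures $\int e^{-t}\mathcal{D}_{\varphi_j} \to \int e^{-t}\mathcal{D}_\varphi$ furnished by Theorem \ref{NAmu entropy extension}(1) yields
\[ \int_{X^{\mathrm{NA}}} A_X^{(k)} \int e^{-t}\mathcal{D}_{\varphi_j} \longrightarrow \int_{X^{\mathrm{NA}}} A_X^{(k)} \int e^{-t}\mathcal{D}_\varphi \]
as $j \to \infty$. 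The conjecture then reduces to a uniform tail estimate: for every $\varepsilon > 0$ there exists $k$ with $\sup_j \int_{\{A_X > k\}} A_X \int e^{-t}\mathcal{D}_{\varphi_j} < \varepsilon$ and, symmetrically, the tail of the limit being small in $k$.

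The main obstacle is precisely this uniform tightness of the moment measures $\int e^{-t}\mathcal{D}_{\varphi_j}$ against the unbounded weight $A_X$; this is exactly the obstacle that prevents a proof of the analogous Conjecture \ref{regularization of entropy} for ordinary entropy. A plausible route is to replace abstract density by an explicit construction, e.g.\ Demailly-style approximants $\varphi_j$ built from the multiplier ideal sheaves attached to $\varphi$ in the spirit of the Boucksom--Jonsson regularization program, and to combine subadditivity of multiplier ideals with Izumi-type comparisons between $A_X(v)$ and the values $\varphi(v)$ on $X^{\mathrm{val}}$. The exponential weight $e^{-t}$ concentrates mass on the locus where $\varphi$ is largest, which should help control the contribution from valuations of very large log discrepancy, but making this interaction quantitative — and in particular stable under $j$ — is the decisive technical step that separates the statement from a theorem.
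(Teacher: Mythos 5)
This statement is labelled as a conjecture in the paper, and the paper offers no proof of it; it is explicitly presented as the exponential-weight analogue of Conjecture \ref{regularization of entropy}, which is likewise open. So there is no argument of the author's to compare yours against, and the honest assessment is that your proposal does not close the conjecture either — as you yourself acknowledge in the last paragraph. Your reduction is sound as far as it goes: the inequality $\liminf_i \int A_X \int e^{-t}\mathcal{D}_{\varphi_i} \ge \int A_X \int e^{-t}\mathcal{D}_\varphi$ does follow from lower semi-continuity of $A_X$ together with the weak convergence of the exponential moment measures, and the genuine content is the reverse inequality along some well-chosen sequence. Identifying the missing ingredient as a uniform tightness estimate for the measures $\int e^{-t}\mathcal{D}_{\varphi_j}$ against the unbounded weight $A_X$ is the right diagnosis, but stating the obstacle is not the same as overcoming it; the Demailly/multiplier-ideal and Izumi-type comparison route you sketch is speculative and is not carried out.

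There is also a smaller, unacknowledged gap in the intermediate step. For fixed $k$ the truncation $A_X^{(k)} = \min\{A_X, k\}$ is bounded and lower semi-continuous but in general not continuous on $X^{\mathrm{NA}}$, so weak convergence $\int e^{-t}\mathcal{D}_{\varphi_j} \to \int e^{-t}\mathcal{D}_\varphi$ only yields $\liminf_j \int A_X^{(k)}\int e^{-t}\mathcal{D}_{\varphi_j} \ge \int A_X^{(k)}\int e^{-t}\mathcal{D}_\varphi$, not the two-sided convergence you assert in the display. To get equality there you would need an upper semi-continuous majorization or a monotonicity argument specific to decreasing nets (in the spirit of the paper's use of $A_X \circ p_{\mathcal{X}} \nearrow A_X$ in Proposition \ref{comparison of NAmu and H}), and that is not supplied. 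In short: the proposal correctly maps the terrain of the conjecture but proves neither the fixed-$k$ convergence nor the uniform tail bound, so the statement remains, as in the paper, a conjecture.
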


In \cite{Ino4}, we proved 
\[ \sup_{\varphi \in \nH (X, L)} \NAmu^\lambda (\varphi) \le \inf_{\omega_\phi \in \mathcal{H} (X, L)} \bm{\check{\mu}}_{\mathrm{Per}}^\lambda (\omega_\phi) \]
and for $\lambda \le 0$ the equality holds at least when there exists a $\mu^\lambda$-cscK metric. 
(At the moment, we cannot replace $\nH (X, L)$ in the left hand side with a slightly large $\nH^\mathbb{R} (X, L)$, which is plausible in view of maximization problem. 
However, it would be a consequence of a result in coming \cite{BJ5} as we remark below. )
The author speculates the equality holds in general, even for extended $\mu$-entropies. 
See also analogous Conjecture \ref{Minimax conjecture for Calabi energy} in Calabi energy framework. 

\begin{conj}[Minimax conjecture for $\mu$-entropy]
Let $(X, L)$ be a polarized normal variety with only klt singularities. 
Then for $\lambda \le 0$ we have 
\[ \sup_{\varphi \in \E^{\exp} (L)} \NAmu^\lambda (\varphi) = \sup_{\varphi \in \nH^\mathbb{R} (L)} \NAmu^\lambda (\varphi) = \inf_{\omega_\phi \in \mathcal{H} (L)} \bm{\check{\mu}}_{\mathrm{Per}}^\lambda (\omega_\phi) = \inf_{\omega_\phi \in \mathcal{E}^{\exp} (L)} \bm{\check{\mu}}_{\mathrm{Per}}^\lambda (\omega_\phi) \]
with an appropriate definition of $\mathcal{E}^{\exp} (X, L)$ and Perelman's $\mu$-entropy $\bm{\check{\mu}}_{\mathrm{Per}}^\lambda (\omega_\phi)$ for $\omega_\phi \in \mathcal{E}^{\exp} (X, L)$. 
\end{conj}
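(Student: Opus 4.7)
The plan is to sandwich the four quantities by first establishing the outer weak-duality inequality $\sup_{\varphi \in \E^{\exp}(L)} \NAmu^\lambda(\varphi) \le \inf_{\omega_\phi \in \mathcal{E}^{\exp}(L)} \bm{\check{\mu}}_{\mathrm{Per}}^\lambda(\omega_\phi)$, and then closing the loop via a strong-duality (minimax) argument. Combined with the trivial inclusions $\nH^\mathbb{R}(L) \subset \E^{\exp}(L)$ and $\mathcal{H}(L) \subset \mathcal{E}^{\exp}(L)$, which yield $\sup_{\nH^\mathbb{R}} \NAmu^\lambda \le \sup_{\E^{\exp}} \NAmu^\lambda$ and $\inf_{\mathcal{E}^{\exp}} \bm{\check{\mu}}_{\mathrm{Per}}^\lambda \le \inf_{\mathcal{H}} \bm{\check{\mu}}_{\mathrm{Per}}^\lambda$, these two directions pinch all four quantities together.

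First I would extend Theorem~1.5 of \cite{Ino4} to the larger classes: for every $\varphi \in \E^{\exp}(L)$ and $\omega_\phi \in \mathcal{E}^{\exp}(L)$, one has $\NAmu^\lambda(\varphi) \le \bm{\check{\mu}}_{\mathrm{Per}}^\lambda(\omega_\phi)$. On $\nH(L) \times \mathcal{H}(L)$ this is known. To pass to $\E^{\exp} \times \mathcal{E}^{\exp}$, I would approximate $\varphi$ from above by a decreasing net $\varphi_i \searrow \varphi$ in $\nH(L)$, invoking the density statement of Theorem~\ref{dexp metric}(2), and use the upper semi-continuity of $\NAmu^\lambda$ provided by Theorem~\ref{NAmu entropy extension}(3). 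In parallel, I would take $\bm{\check{\mu}}_{\mathrm{Per}}^\lambda$ on $\mathcal{E}^{\exp}(L)$ to be the greatest lower semi-continuous extension of the classical Perelman $\mu$-entropy from $\mathcal{H}(L)$; this is the natural archimedean analogue of the construction in Theorem~\ref{NAmu entropy extension}(3). The sign assumption $\lambda \le 0$ enters here to ensure that the $\bm{\check{\sigma}}$-type contribution behaves favorably under monotone limits on both sides.

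The hard part will be the reverse minimax inequality $\sup \ge \inf$. My strategy is to run a Perelman-type gradient flow (equivalently, a $\mu^\lambda$-K\"ahler--Ricci-type flow) on $\mathcal{H}(L)$ to produce a minimizing sequence $\omega_k$ for $\bm{\check{\mu}}_{\mathrm{Per}}^\lambda$ and extract from it an associated weak geodesic ray $\{\phi_s\}$ in the archimedean $\mathcal{E}^1$-completion. Via the Boucksom--Jonsson correspondence between archimedean rays and non-archimedean psh metrics (cf.\ \cite{BBJ}), this ray determines a limit $\varphi_\infty \in \E^{\exp}(L)$, and the heuristic in the remark following the construction of $\mathcal{D}_\varphi$ makes it plausible that the asymptotic slope of $\bm{\check{\mu}}_{\mathrm{Per}}^\lambda(\omega_k)$ along the ray coincides with $\NAmu^\lambda(\varphi_\infty)$. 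Finally, Conjecture~\ref{Regularization of exponential entropy} lets me regularize $\varphi_\infty$ back to $\nH^\mathbb{R}(L)$ without decreasing $\NAmu^\lambda$ in the limit, producing the desired matching.

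The principal obstacle is precisely this last step: identifying the asymptotic slope of Perelman's $\mu$-entropy along a geodesic ray with its non-archimedean counterpart is of the same depth as the Yau--Tian--Donaldson correspondence for $\mu$K-stability. In the absence of a direct $\mu^\lambda$-cscK existence theorem in the unstable case, one needs quantitative convergence of the $\mu$-flow along minimizing sequences and the exact asymptotic formula relating $\bm{\check{\mu}}_{\mathrm{Per}}^\lambda$ to the non-archimedean entropy of the limiting ray; these are likely to require ingredients from the forthcoming \cite{BJ5} together with a resolution of the regularization conjecture. The sign assumption $\lambda \le 0$ seems essential for the variational problem to retain enough concavity in $\omega_\phi$ (and convexity in $\varphi$) for a saddle point, in the appropriate generalized sense, to exist.
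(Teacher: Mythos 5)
This statement is stated in the paper as a \emph{conjecture}: the paper offers no proof, only the partial results $\sup_{\nH (L)} \NAmu^\lambda \le \inf_{\mathcal{H} (L)} \bm{\check{\mu}}_{\mathrm{Per}}^\lambda$ from \cite{Ino4} (with equality for $\lambda \le 0$ when a $\mu^\lambda$-cscK metric exists), and it explicitly records that even replacing $\nH (L)$ by $\nH^{\mathbb{R}} (L)$ on the left is currently out of reach pending \cite{BJ5}. Your proposal is therefore a research program rather than a proof, and it should be judged on whether its individual steps are sound.

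The step you present as easy already contains a gap that goes in the wrong direction. Upper semi-continuity of $\NAmu^\lambda$ along a net $\varphi_i \searrow \varphi$ gives $\varlimsup_i \NAmu^\lambda (\varphi_i) \le \NAmu^\lambda (\varphi)$, i.e.\ the value at the limit may \emph{jump up} relative to the approximating values in $\nH (L)$. Consequently usc plus density does \emph{not} yield $\sup_{\E^{\exp} (L)} \NAmu^\lambda \le \sup_{\nH (L)} \NAmu^\lambda$, which is exactly what you need to transport the known weak duality from $\nH (L) \times \mathcal{H} (L)$ to $\E^{\exp} (L) \times \mathcal{E}^{\exp} (L)$; this is precisely why the paper isolates Conjecture \ref{Regularization of exponential entropy}, so your first step already requires that conjecture, not just the density in Theorem \ref{dexp metric}. (The toric example with $\cmu (\mathcal{F}_d) \to -2\pi (e+1) < \cmu (\mathcal{F}_{\mathrm{triv}})$ shows concretely that values along approximating sequences can sit strictly below the value at the limit.) For the reverse inequality, the identification of the asymptotic slope of $\bm{\check{\mu}}_{\mathrm{Per}}^\lambda$ along a geodesic ray with $\NAmu^\lambda$ of the associated non-archimedean metric is only a speculation in the paper (the remark on moment measures and subgeodesic rays), the ``$\mu$-flow'' you propose to run is described there as a far-off dream with no existence theory, and the very definition of $\bm{\check{\mu}}_{\mathrm{Per}}^\lambda$ on $\mathcal{E}^{\exp} (L)$ is itself part of what the conjecture asks for, so declaring it to be the greatest lsc extension begs the question of whether that extension satisfies the minimax identity. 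In short, every load-bearing step of your argument rests on an unproven conjecture or an unestablished slope formula, so no part of the chain of equalities is actually closed.
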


Compared to Theorem \ref{NAmu entropy and muK-stability}, the following theorem just rephrases Theorem \ref{NAmu maximizer} as we currently assume $\NAmu^\lambda (\varphi) = \cmu^\lambda (\mathcal{F}_\varphi)$. 

\begin{thm}
Let $(X, L)$ be a polarized normal variety with only klt singularities. 
If $\NAmu^\lambda$ is maximized by $\varphi \in \nH^\mathbb{R} (X, L)$ on $\nH^\mathbb{R} (X, L)$ and $\NAmu^\lambda (\varphi) = \cmu^\lambda (\mathcal{F}_\varphi)$, then the central fibre $(\mathcal{X}_o (\varphi), \mathcal{L}_o (\varphi)) = \Proj \mathcal{R}_o (\mathcal{F}_\varphi)$ is reduced and $\check{\mu}^\lambda$K-semistable with respect to the proper vector $\xi^\varphi_o$ induced by the filtration $\mathcal{F}_\varphi$. 
\end{thm}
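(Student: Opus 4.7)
The plan is to deduce this theorem directly from Theorem~\ref{NAmu maximizer}, as suggested by the paper's remark that the statement ``just rephrases'' it under the current hypothesis. Since $\varphi \in \nH^\mathbb{R}(X, L)$, the associated filtration $\mathcal{F}_\varphi$ is finitely generated (by Proposition~\ref{polyhedral configuration and finite generation of filtration}), so the central fibre $(\mathcal{X}_o(\varphi), \mathcal{L}_o(\varphi))$ and the induced proper vector $\xi^\varphi_o$ are well-defined objects, realized as the central fibre of any polyhedral configuration representing $\mathcal{F}_\varphi$ (Proposition~\ref{central fibre via filtration}). It therefore suffices to show (i) that $\mathcal{F}_\varphi$ maximizes $\cmu^\lambda$ over all finitely generated filtrations, and (ii) that the central fibre of $\mathcal{F}_\varphi$ is reduced.

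For (i), I would upgrade the given maximality of $\varphi$ against $\NAmu^\lambda$ to maximality of $\mathcal{F}_\varphi$ against $\cmu^\lambda$ by means of the universal comparison $\cmu^\lambda \le \NAmu^\lambda$. Concretely, for any finitely generated filtration $\mathcal{F}'$, let $\varphi' := \varphi_{\mathcal{F}'} \in \nH^\mathbb{R}(X, L)$ be the associated non-archimedean metric. Pick a polyhedral configuration $(\mathcal{X}'/B_{\sigma'}, \mathcal{L}'; \xi')$ representing $\mathcal{F}'$; passing to rational $\xi'$ and reducing to test configurations via base change, the inequality $\cmu^\lambda(\mathcal{X}, \mathcal{L}; \rho) \le \NAmu^\lambda(\mathcal{X}, \mathcal{L}; \rho)$ (with equality iff the central fibre is reduced), combined with the normalized base-change invariance of $\NAmu^\lambda$, yields $\cmu^\lambda(\mathcal{F}') \le \NAmu^\lambda(\varphi')$ at the level of filtrations. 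Chaining with the hypothesized bound $\NAmu^\lambda(\varphi') \le \NAmu^\lambda(\varphi) = \cmu^\lambda(\mathcal{F}_\varphi)$ gives $\cmu^\lambda(\mathcal{F}') \le \cmu^\lambda(\mathcal{F}_\varphi)$, proving (i). Theorem~\ref{NAmu maximizer} then directly gives the $\check{\mu}^\lambda$K-semistability of $(\mathcal{X}_o(\varphi), \mathcal{L}_o(\varphi))$ with respect to $\xi^\varphi_o$.

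For (ii), apply the comparison in the other direction: taking $\mathcal{F}' = \mathcal{F}_\varphi$ in the inequality just established gives $\cmu^\lambda(\mathcal{F}_\varphi) \le \NAmu^\lambda(\varphi)$, and the assumed equality forces the correction term $((\mathcal{X}_0 - \mathcal{X}_0^{\mathrm{red}}) . e^{\mathcal{L}}; \xi)$ to vanish in any polyhedral configuration realizing $\mathcal{F}_\varphi$. By positivity of each contribution, this vanishing forces the central fibre of the polyhedral configuration, which coincides with $(\mathcal{X}_o(\varphi), \mathcal{L}_o(\varphi))$ via Proposition~\ref{central fibre via filtration}, to be reduced.

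The main obstacle is the passage from test configurations to finitely generated filtrations in the inequality $\cmu^\lambda(\mathcal{F}) \le \NAmu^\lambda(\varphi_\mathcal{F})$ with a clean equality criterion in terms of reducedness of the central fibre of $\mathcal{F}$. One must be careful that $\mathcal{F}_\varphi = \widehat{\mathcal{F}}_{(\mathcal{X}, \mathcal{L})}$ is the normalization-stabilized filtration, so the relevant ``central fibre'' that gets forced to be reduced is that of $\mathcal{F}_\varphi$ itself (equivalently, of any polyhedral configuration representing it), not that of an arbitrary test configuration inducing $\varphi$. Once this bookkeeping is set up via the polyhedral configuration framework of Section~\ref{section: mu-entropy of polyhedral configuration}, the theorem follows by direct invocation of Theorem~\ref{NAmu maximizer}.
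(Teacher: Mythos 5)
Your semistability half follows the paper's intended route: reduce to Theorem \ref{NAmu maximizer} by showing that $\mathcal{F}_\varphi$ maximizes $\cmu^\lambda$ among finitely generated filtrations via the chain $\cmu^\lambda (\mathcal{F}') \le \NAmu^\lambda (\varphi_{\mathcal{F}'}) \le \NAmu^\lambda (\varphi) = \cmu^\lambda (\mathcal{F}_\varphi)$. Two bookkeeping points: the finite generation of $\mathcal{F}_\varphi$ is Theorem \ref{finite generation of Fphi} rather than Proposition \ref{polyhedral configuration and finite generation of filtration} (the latter only realizes an already finitely generated filtration geometrically); and the inequality established in section \ref{maximizing non-archimedean mu-entropy} is $\cmu^\lambda (\mathcal{F}_{\varphi'}) \le \NAmu^\lambda (\varphi')$ for the \emph{stabilized} filtration, so for an arbitrary finitely generated $\mathcal{F}'$ you additionally need $\cmu^\lambda (\mathcal{F}') \le \cmu^\lambda (\mathcal{F}_{\varphi_{\mathcal{F}'}})$; this does hold (since $\mathcal{F}' \subset \mathcal{F}_{\varphi_{\mathcal{F}'}}$ with $\nu_\infty (\mathcal{F}') = \nu_\infty (\mathcal{F}_{\varphi_{\mathcal{F}'}})$, so $\int_\mathbb{R} e^{-t} \nu_m (\mathcal{F}') \ge \int_\mathbb{R} e^{-t} \nu_m (\mathcal{F}_{\varphi_{\mathcal{F}'}})$), but it must be said.

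The reducedness argument contains a genuine gap: you misidentify the source of the inequality $\cmu^\lambda (\mathcal{F}_\varphi) \le \NAmu^\lambda (\varphi)$. The correction term $((\mathcal{X}_0 - \mathcal{X}_0^{\mathrm{red}}). e^{\mathcal{L}}; \rho)$ governs $\NAmu^\lambda (\mathcal{X}, \mathcal{L}; \rho) - \cmu^\lambda (\mathcal{X}, \mathcal{L}; \rho)$ for a test configuration, i.e.\ it compares $\NAmu^\lambda (\varphi)$ with $\cmu^\lambda$ of the \emph{non-stabilized} filtration $\mathcal{F}_{(\mathcal{X}, \mathcal{L})}$. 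For $\mathcal{F}_\varphi = \widehat{\mathcal{F}}_{(\mathcal{X}, \mathcal{L})}$ that term has already been removed by passing to a normalized base change, and for an irrational polyhedral configuration the residual gap $\NAmu^\lambda (\varphi) - \cmu^\lambda (\mathcal{F}_\varphi) \ge 0$ has nothing to do with non-reducedness: it comes from the upper semi-continuity of $\NAmu^\lambda$ versus the continuity of $\cmu^\lambda$ along the cone, i.e.\ from the possible discontinuity of $\xi \mapsto A_X (v_{E, \xi})$ (Question \ref{continuity of entropy along polyhedral configuration}). So the assumed equality does not force any reducedness defect to vanish, and your step (ii) does not go through as written. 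The fact you need is instead unconditional: $\mathcal{F}_\varphi = \bigcap_{v} \mathcal{F}_v [\varphi (v)]$ for any $\varphi \in C^0 \cap \PSH (X, L)$, and the associated graded ring $\mathcal{R}_o (\widehat{\mathcal{F}})$ of such an intersection is reduced by Proposition \ref{Krull envelope has reduced central fibre}. Replacing your step (ii) by a citation of that proposition completes the proof and is exactly what the paper does.
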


The reducedness of the central fibre is a general phenomenon for the filtration $\mathcal{F}_\varphi$ associated to $\varphi \in \pcH (X, L)$ (see section \ref{Filtration associated to continuous psh metric}). 
In this article, we only check the equality for $\varphi \in \nH (X, L)$ and $\varphi_\xi$ for proper vectors. 
However, it would be proved in coming \cite{BJ5} that the log discrepancy $A_X$ has a desired property we discuss in section \ref{maximizing non-archimedean mu-entropy}, which implies the condition $\NAmu^\lambda (\varphi) = \cmu^\lambda (\mathcal{F}_\varphi)$ follows immediately by the assumption $\varphi \in \nH^\mathbb{R} (X, L)$. 

\subsubsection{Maximization problem for non-archimedean $\mu$-entropy}

We are now interested in finding a maximizer of $\NAmu^\lambda$ in $\pcH (X, L)$. 
What we benefit from the non-archimedean formalism is some sort of completeness of the domain $\E^{\exp} (X, L)$ and the semi-continuity of $\mu$-entropy as we stated in Theorem \ref{dexp metric} and Theorem \ref{NAmu entropy extension}. 
With this in mind, we would split the maximization problem into two parts: 
\begin{enumerate}
\item (Existence) Firstly, find a maximizer in $\E^{\exp} (X, L)$. 

\item (Regularity) Then show the maximizer is actually in $\pcH (X, L)$. 
\end{enumerate}

This kind of reduction is often effective for maximization problem. 
Indeed, we employed such reduction in the proof of \cite[Theorem 2.2]{Ino4} (cf. \cite{Rot}) which shows the existence of maximizing momentum $f_\omega$ for Perelman's W-entropy $\check{W}^\lambda (\omega, f)$. 
The author speculates for $\lambda \le 0$ the maximizing momentum $f_\omega$ for $\check{W}^\lambda (\omega, f)$ would define a flow $\omega_t$ of K\"ahler metrics: \textit{$\mu$-flow equation} $\dot{\omega}_t = dd^c f_{\omega_t}$. 
Then it is likely that a maximizing non-archimedean psh metric $\varphi \in \E^{\exp} (X, L)$ for $\NAmu^\lambda$ could be interpreted as the limit of such flow. 
This is still a far-off dream, but seems plausible as indeed such picture for K\"ahler--Ricci flow on Fano manifold, the Hamilton--Tian conjecture, is completely realized in $H$-entropy formalism (cf. \cite{He, CSW, DS, HL2, BLXZ}). 
We also refer to analytic result for Calabi flow \cite{Xia}. 
We will discuss these works from our non-archimedean perspective in section \ref{Relation to H-entropy} and section \ref{Maximizing non-archimedean Calabi energy}. 

The following conjecture seems more down to earth. 
By the upper semi-continuity of $\NAmu$, the subset is closed in $E_{\exp}/d_{\exp}$-topology. 
See also analogous Conjecture \ref{properness conjecture for Calabi energy} in Calabi energy framework. 

\begin{conj}[Properness of $\mu$-entropy]
\label{properness conjecture}
Let $(X, L)$ be a polarized normal variety with only klt singularities. 
Then the subset 
\[ \Big{\{} \varphi \in \E^{\exp} (X, L) ~\Big{|}~ \sup \varphi = 0, ~\NAmu (\varphi) \ge C \Big{\}} \]
is compact with respect to $E_{\exp}$-topology. 
\end{conj}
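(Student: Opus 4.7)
My plan is to attack the conjecture by separately establishing closedness and precompactness of the set in $E_{\exp}$-topology; metrizability via $d_{\exp}$ from Theorem \ref{dexp metric} then reduces compactness to sequential compactness. Closedness follows from two inputs: first, by Theorem \ref{NAmu entropy extension}(3), $\NAmu = \NAmu^0$ extends to an $E_{\exp}$-upper semi-continuous function on $\E^{\exp}(X,L)$, so $\{ \NAmu \ge C \}$ is closed; second, the normalization $\{ \sup \varphi = 0 \}$ should be closed as well, since upper semi-continuity of $\sup \varphi$ holds along convergent decreasing nets of psh metrics (cf. Theorem \ref{dexp metric}(2)) and this extends to $E_{\exp}$-convergence via the density of $\nH(X,L) \subset \E^{\exp}(X,L)$.

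For precompactness, given a sequence $\{ \varphi_i \}$ in the sublevel set, I would unpack the lower bound $\NAmu(\varphi_i) \ge C$ using the formula of Theorem \ref{NAmu entropy extension} into
\[ 2\pi \int_{X^{\mathrm{NA}}} A_X \int e^{-t} \mathcal{D}_{\varphi_i} + E_{\exp}^{2\pi K_X}(\varphi_i) \le -C \iint_{X^{\mathrm{NA}}} e^{-t} \mathcal{D}_{\varphi_i}. \]
The normalization $\sup \varphi_i = 0$ should force $\DHm_{\varphi_i}$ to be supported in $(-\infty, 0]$, so $\iint e^{-t} \mathcal{D}_{\varphi_i} = \int_\mathbb{R} e^{-t} \DHm_{\varphi_i} \ge (e^L)$, making the right-hand side uniformly bounded. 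Using the klt hypothesis to control $A_X$ from below on $X^{\mathrm{val}}$, one obtains a uniform upper bound on $E_{\exp}^{2\pi K_X}(\varphi_i)$; cohomological manipulation comparing moment energies of different $\mathbb{Q}$-line bundles should translate this into a uniform bound on $E_{\exp}(\varphi_{i;1})$ and thus a uniform $d_1$-bound, from which standard pluripotential compactness extracts a $d_1$-convergent subsequence $\varphi_{i_k} \to \varphi \in \E^1(X,L)$.

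The main obstacle lies in upgrading $d_1$-convergence to $E_{\exp}$-convergence and ensuring the limit actually belongs to $\E^{\exp}(X,L)$. A single lower bound on $\NAmu$ directly controls the moment measure only at weight $\rho = 1$, whereas membership in $\E^{\exp}(X,L)$ requires finite $E_{\exp}(\varphi_{;\rho})$ for every $\rho > 0$. Bridging this gap should demand a uniform-integrability argument on the tails of $\DHm_{\varphi_i}$ driven by the log discrepancy term, supported by new valuative estimates comparing the moment measure $\int e^{-t} \mathcal{D}_\varphi$ with $A_X$ near the Shilov boundary of the Berkovich space. This step is analogous in spirit to the birational-geometric input required for the properness of the $H$-invariant in the K\"ahler--Ricci flow setting (cf. \cite{HL2, BLXZ}), but the exponential weight here makes the required estimates strictly stronger than in the $H$-entropy case, and this is where the deepest technical work is expected.
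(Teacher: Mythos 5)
The first thing to say is that the statement you are proving is stated in the paper as Conjecture \ref{properness conjecture}: the paper offers no proof of it, only the one-sentence remark (just before the conjecture is stated) that the set is closed because $\NAmu$ is upper semi-continuous in the $E_{\exp}$-topology, and it then uses the conjecture purely as a hypothesis in Proposition \ref{maximizer under properness conjecture}. So there is no argument of the paper's to compare yours against; your sketch has to stand on its own, and it does not.

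The closedness half is essentially fine ($\NAmu$ is $E_{\exp}$-usc by Theorem \ref{NAmu entropy extension}, and $\sup\varphi=\varphi(v_{\mathrm{triv}})$ is continuous under weak, hence under $E_{\exp}$, convergence). The precompactness half breaks at its first quantitative step. Rearranging $\NAmu(\varphi_i)\ge C$ gives
\[ 2\pi\int_{X^{\mathrm{NA}}} A_X \int e^{-t}\mathcal{D}_{\varphi_i} + 2\pi E^{K_X}_{\exp}(\varphi_i) \;\le\; -C \iint_{X^{\mathrm{NA}}} e^{-t}\mathcal{D}_{\varphi_i}, \]
and for $C<0$ (the only interesting case) the right-hand side equals $|C|\iint_{X^{\mathrm{NA}}} e^{-t}\mathcal{D}_{\varphi_i} = |C|\cdot\bigl(-E_{\exp}(\varphi_i)\bigr)$, which is exactly the quantity you need to bound. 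The normalization $\sup\varphi_i=0$ only gives the \emph{lower} bound $\iint e^{-t}\mathcal{D}_{\varphi_i}\ge (e^L)$, not an upper bound, so your claim that the right-hand side is ``uniformly bounded'' is unjustified. Moreover, since $|E^{K_X}_{\exp}(\varphi_i)|\le C'\iint e^{-t}\mathcal{D}_{\varphi_i}$ always holds (this is the estimate used in the proof of Proposition \ref{large limit of mu-entropy}), the displayed inequality reduces to a bound on the average of $A_X$ against the probability measure $\bigl(\iint e^{-t}\mathcal{D}_{\varphi_i}\bigr)^{-1}\int e^{-t}\mathcal{D}_{\varphi_i}$ --- an entropy-type bound from which one would have to extract control of the exponential mass itself, and indeed of $E_{\exp}(\varphi_{i;\rho})$ for \emph{every} $\rho>0$ since that is what membership in $\E^{\exp}(X,L)$ requires. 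That extraction is precisely the open content of the conjecture. Your closing paragraph honestly flags the $d_1$-to-$E_{\exp}$ upgrade as the hard part, but the gap occurs earlier: even the uniform $d_1$-bound and the $d_1$-compactness you invoke do not follow from what precedes them. As written, this is a plausible strategy outline for an open problem, not a proof.
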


The author speculates the klt assumption cannot be simply replaced with log canonical assumption in this conjecture (cf. \cite{Hat}). 

\begin{prop}
\label{maximizer under properness conjecture}
Assuming Conjecture \ref{properness conjecture}, there exists a maximizer $\varphi^\lambda_{\mathrm{opt}} \in \E^{\exp} (X, L)$ of $\NAmu^\lambda$ for each $\lambda \le 0$. 
\end{prop}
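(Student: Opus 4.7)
The plan is to apply the direct method in the calculus of variations: take a maximizing sequence, use Conjecture \ref{properness conjecture} to extract a convergent subnet, and conclude via the upper semi-continuity of $\NAmu^\lambda$ from Theorem \ref{NAmu entropy extension}(3).

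First, I reduce to the normalized class $\{\sup \varphi = 0\}$. Since the Duistermaat--Heckman measure satisfies $\DHm_{\varphi + c} = (t \mapsto t + c)_* \DHm_\varphi$ and the moment measure $\int e^{-t} \D_{\varphi+c}$ consequently rescales by the scalar $e^{-c}$, the explicit formula for $\NAmu^\lambda$ in Theorem \ref{NAmu entropy extension} shows that the shift contributed by $\lambda \varphi$ in the numerator cancels that of $-\lambda \log \iint e^{-t} \D_\varphi$, yielding translation invariance $\NAmu^\lambda(\varphi + c) = \NAmu^\lambda(\varphi)$. Hence
\[ M := \sup_{\varphi \in \E^{\exp}(X, L)} \NAmu^\lambda(\varphi) = \sup \bigl\{ \NAmu^\lambda (\varphi) ~\bigm|~ \varphi \in \E^{\exp} (X, L),~ \sup \varphi = 0 \bigr\}, \]
and one may pick a maximizing sequence $\varphi_i$ with $\sup \varphi_i = 0$ and $\NAmu^\lambda(\varphi_i) \to M$.

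The crucial step is to confine $\{\varphi_i\}$ to a $\NAmu$-sublevel set, since Conjecture \ref{properness conjecture} controls only $\NAmu$. Writing $\NAmu^\lambda = \NAmu + \lambda \bm{\check{\sigma}}$ (both extended to $\E^{\exp}(X,L)$ via the formulae in terms of the moment measure $\int e^{-t} \D_\varphi$ and the Duistermaat--Heckman measure $\DHm_\varphi$), one reorganizes the entropy-type functional as
\[ \bm{\check{\sigma}}(\varphi) = n - \log(e^L) + H\bigl(\nu_\varphi \,\bigm\|\, (e^L)^{-1} \DHm_\varphi\bigr), \qquad d\nu_\varphi := \frac{e^{-t}}{\int e^{-t} \DHm_\varphi}\, \DHm_\varphi, \]
where $H(\cdot \| \cdot) \ge 0$ is the relative entropy of probability measures and $\DHm_\varphi$ has total mass $(e^L)$ for $\varphi \in \E^1(X,L)$ by the structure theorem from section \ref{Duistermaat--Heckman measure of non-archimedean psh metric}. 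This yields the universal lower bound $\bm{\check{\sigma}} \ge n - \log(e^L)$. Since $\lambda \le 0$, for large $i$
\[ \NAmu(\varphi_i) = \NAmu^\lambda(\varphi_i) - \lambda \bm{\check{\sigma}}(\varphi_i) \ge (M - 1) + |\lambda|\bigl(n - \log(e^L)\bigr) =: C, \]
so that $\{\varphi_i\}_{i \gg 0}$ lies in the set $\{\varphi \in \E^{\exp}(X,L) : \sup \varphi = 0,~ \NAmu(\varphi) \ge C\}$, which is $E_{\exp}$-compact by the conjecture.

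Passing to a convergent subnet $\varphi_{i_\alpha} \to \varphi^\lambda_{\mathrm{opt}}$ in the $E_{\exp}$-topology, the upper semi-continuity of $\NAmu^\lambda$ on $\E^{\exp}(X,L)$ gives
\[ \NAmu^\lambda(\varphi^\lambda_{\mathrm{opt}}) \ge \limsup_\alpha \NAmu^\lambda(\varphi_{i_\alpha}) = M, \]
so $\varphi^\lambda_{\mathrm{opt}}$ attains the supremum; in particular $M < +\infty$ a posteriori. The main obstacles I anticipate are (i) verifying the translation identity directly from the extended formula, which requires tracking how the moment measure $\int e^{-t} \D_\varphi$ transforms under $\varphi \mapsto \varphi + c$, and (ii) extending the decomposition $\NAmu^\lambda = \NAmu + \lambda \bm{\check{\sigma}}$ together with the entropy lower bound on $\bm{\check{\sigma}}$ from test configurations to general $\varphi \in \E^{\exp}(X,L)$; both should follow routinely from the Duistermaat--Heckman theorem of section \ref{Duistermaat--Heckman measure of non-archimedean psh metric} and the construction of the moment measure.
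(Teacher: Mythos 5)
Your proof is correct and takes essentially the same route as the paper: decompose $\NAmu^\lambda = \NAmu + \lambda \bm{\check{\sigma}}$, use the Jensen-type lower bound $\bm{\check{\sigma}} \ge n - \log (e^L)$ (the paper's Remark \ref{sigma bounded}) together with $\lambda \le 0$ to place the normalized maximizing sequence in the compact set of Conjecture \ref{properness conjecture}, and conclude by upper semi-continuity of $\NAmu^\lambda$ in the $E_{\exp}$-topology. The two verifications you defer (translation invariance of $\NAmu^\lambda$ and the entropy bound on $\bm{\check{\sigma}}$) are exactly the content of the paper's ``normalization free'' observation and Remark \ref{sigma bounded}, so nothing is missing.
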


\begin{proof}
We recall $\NAmu^\lambda = \NAmu + \lambda \bm{\check{\sigma}}$ and $\bm{\check{\sigma}}$ is bounded from below by Remark \ref{sigma bounded}. 
It follows that for $\lambda \le 0$ $\NAmu = \NAmu^\lambda - \lambda \bm{\check{\sigma}}$ is bounded from below along a maximizing sequence $\varphi_i \in \E^{\exp} (X, L)$: $\NAmu^\lambda (\varphi_i) \searrow \sup \NAmu^\lambda$. 
Since $\NAmu^\lambda$ is normalization free, we may normalize $\varphi_i$ so that $\sup \varphi_i = 0$. 
Then by the above conjecture, we have a convergent subsequence $\varphi_j \to \varphi$ in $\E^{\exp} (X, L)$ in $E_{\exp}$-topology. 
Since $\NAmu^\lambda$ is upper semi-continuous with respect to $E_{\exp}$-topology, the limit $\varphi$ attains the maximum of $\NAmu^\lambda$. 
\end{proof}

For Calabi--Yau variety and canonically polarized variety, we can show the trivial metric maximizes the non-archimedean $\mu$-entropy. 
This can be regarded as a reformulation of Odaka's theorem \cite{Oda1} in our $\mu$-entropy formalism. 
Here we assume $X$ has only klt singularieties at the moment in order to ensure the lsc extension of the log discrepancy to $X^{\mathrm{NA}}$. 
The claim can be extended to the log canonical case as soon as the lsc extension is realized for log canonical varieties. 

\begin{thm}[Summary of section \ref{Odaka's theorem in mu-entropy formalism}]
Let $(X, L)$ be one of the following: 
\begin{itemize}
\item $X$ has only klt singularities and $K_X \equiv 0$, 

\item or $X$ has only klt singularities and $K_X > 0$ and $L = K_X$. 
\end{itemize}
Then for every $\lambda \le 0$, the trivial metric $\varphi_{\mathrm{triv}}$ maximizes the non-archimedean $\mu$-entropy $\NAmu^\lambda$ on $\E^{\exp} (X, L)$. 
In particular, $(X, L)$ is K-semistable (actually K-stable under the klt assumption). 
\end{thm}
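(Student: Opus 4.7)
The plan is to show $\NAmu^\lambda(\varphi) \le \NAmu^\lambda(\varphi_{\mathrm{triv}})$ for every $\varphi \in \E^{\exp}(X,L)$ by splitting $\NAmu^\lambda = \NAmu + \lambda\bm{\check{\sigma}}$ and estimating the two parts separately; K-semistability then follows from Theorem~\ref{NAmu entropy and muK-stability} with $\xi_{\mathrm{opt}}=0$, since $\varphi_{\xi=0}=\varphi_{\mathrm{triv}}$. A direct baseline computation using $\int e^{-t}\mathcal{D}_{\varphi_{\mathrm{triv}}} = (e^L)\delta_{v_{\mathrm{triv}}}$, $\DHm_{\varphi_{\mathrm{triv}}} = (e^L)\delta_0$, $A_X(v_{\mathrm{triv}}) = 0$, and $E^M_{\exp}(\varphi_{\mathrm{triv}}) = -(M\cdot e^L)$ gives $\NAmu(\varphi_{\mathrm{triv}}) = 2\pi (K_X\cdot e^L)/(e^L)$ and $\bm{\check{\sigma}}(\varphi_{\mathrm{triv}}) = n - \log(e^L)$. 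Since $\lambda \le 0$, the goal reduces to proving the two inequalities $\NAmu(\varphi) \le \NAmu(\varphi_{\mathrm{triv}})$ and $\bm{\check{\sigma}}(\varphi) \ge \bm{\check{\sigma}}(\varphi_{\mathrm{triv}})$ separately.

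The $\bm{\check{\sigma}}$-bound is unconditional and follows from a Kullback--Leibler computation on $\mathbb{R}$. Set $\nu := \DHm_\varphi/(e^L)$ and $\mu := e^{-t}\nu/\int e^{-t}\,d\nu$, both probability measures; then
\[ 0 \le \mathrm{KL}(\mu\Vert\nu) = \int \log(d\mu/d\nu)\,d\mu = -\int t\,d\mu - \log\int e^{-t}\,d\nu, \]
which unwinds directly to $\bm{\check{\sigma}}(\varphi) \ge n - \log(e^L) = \bm{\check{\sigma}}(\varphi_{\mathrm{triv}})$, with equality iff $\DHm_\varphi$ is a Dirac mass (which forces $\varphi = \varphi_{\mathrm{triv}}$ up to normalization).

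The $\NAmu$-bound uses the extended formula
\[ \NAmu(\varphi) = -\frac{2\pi \int_{X^{\mathrm{NA}}} A_X \int e^{-t}\mathcal{D}_\varphi + E^{2\pi K_X}_{\exp}(\varphi)}{\iint_{X^{\mathrm{NA}}} e^{-t}\mathcal{D}_\varphi}. \]
In the Calabi--Yau case $K_X \equiv 0$, the second term in the numerator vanishes by linearity of $E^M_{\exp}$ in $M$, while the first is non-negative by the klt hypothesis and the lower semi-continuous extension of $A_X$ to $X^{\mathrm{NA}}$ supplied in Theorem~\ref{NAmu entropy extension}(3). Hence $\NAmu(\varphi) \le 0 = \NAmu(\varphi_{\mathrm{triv}})$, with equality iff $\int e^{-t}\mathcal{D}_\varphi$ is concentrated on $\{A_X=0\}=\{v_{\mathrm{triv}}\}$. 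In the canonically polarized case $L = K_X$, write $E^{2\pi K_X}_{\exp}(\varphi) = 2\pi E^L_{\exp}(\varphi)$ and invoke the non-archimedean counterpart of~(\ref{DH measure and equivariant intersection}), which links $E^L_{\exp}(\varphi) + n\iint e^{-t}\mathcal{D}_\varphi$ to $\int t e^{-t}\DHm_\varphi - \int\varphi \int e^{-t}\mathcal{D}_\varphi$: for $\varphi \in \nH(X,L)$ this reduces to an equivariant intersection-theoretic computation on $(\bar{\mathcal{X}}, \bar{\mathcal{L}})$ where the ampleness of $K_X=L$ guarantees the correct sign once combined with $2\pi\int A_X\int e^{-t}\mathcal{D}_\varphi \ge 0$; the extension to general $\varphi \in \E^{\exp}(X,L)$ proceeds by continuity of $E^M_{\exp}$ (Theorem~\ref{NAmu entropy extension}(2)) together with the density of $\nH(X,L)$ (Theorem~\ref{dexp metric}(2)).

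The main obstacle is the canonically polarized case: one must set up the precise equivariant intersection identity using the interaction between $\bar{\mathcal{L}}$ and $K^{\log,\mathbb{G}_m}_{\bar{\mathcal{X}}/\mathbb{P}^1}$ forced by $L = K_X$, and verify that the combination of this with the log-discrepancy contribution produces the right sign --- this is essentially Odaka's original K-semistability argument translated into the $\NAmu$-framework. The Calabi--Yau case, by contrast, is immediate from $A_X \ge 0$. Once both inequalities hold, $\NAmu^\lambda(\varphi) \le \NAmu^\lambda(\varphi_{\mathrm{triv}})$ follows for all $\lambda \le 0$, and Theorem~\ref{NAmu entropy and muK-stability} with $\xi_{\mathrm{opt}}=0$ yields K-semistability. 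The parenthetical K-stability in the klt case comes from tracking the equality cases above: strict inequality in either the $A_X$-term (using $A_X>0$ off $v_{\mathrm{triv}}$) or in the KL-divergence (using $\DHm_\varphi \ne \delta$) rules out non-trivial maximizers.
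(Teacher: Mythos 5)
Your overall strategy coincides with the paper's: split $\NAmu^\lambda = \NAmu + \lambda\bm{\check{\sigma}}$, prove $\bm{\check{\sigma}}(\varphi)\ge\bm{\check{\sigma}}(\varphi_{\mathrm{triv}})$ unconditionally (your Kullback--Leibler computation is literally the Jensen-inequality argument of Remark~\ref{sigma bounded}), and handle $\NAmu$ case by case, with the Calabi--Yau case being immediate from $A_X\ge 0$ and $E^{K_X}_{\exp}=0$. That part is fine.

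The gap is in the canonically polarized case. You correctly reduce, via $E^{K_X}_{\exp}=E^{L}_{\exp}$ and Proposition~\ref{energy pairing formula}, to the inequality
\[
E^L_{\exp}(\varphi) + n\iint_{X^{\mathrm{NA}}} e^{-t}\mathcal{D}_\varphi \;=\; \int_\mathbb{R} t e^{-t}\DHm_\varphi - \int_{X^{\mathrm{NA}}}\varphi\int e^{-t}\mathcal{D}_\varphi \;\ge\; 0,
\]
but you do not prove it: you assert that ``the ampleness of $K_X=L$ guarantees the correct sign'' via an unspecified ``equivariant intersection-theoretic computation,'' and you yourself label this the main obstacle. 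Ampleness of $K_X$ is not what is used here --- the displayed positivity holds for arbitrary $L$; what the hypothesis $L=K_X$ buys is only the identification $E^{K_X}_{\exp}=E^L_{\exp}$. The paper closes this step by the tomographic identity
\[
E^L_{\exp}(\varphi) + n\iint_{X^{\mathrm{NA}}} e^{-t}\mathcal{D}_\varphi = \int_\mathbb{R} d\tau\, e^{-\tau}\Big( E(\varphi\wedge\tau-\tau) - \int_{X^{\mathrm{NA}}}(\varphi\wedge\tau-\tau)\,\mathrm{MA}(\varphi\wedge\tau-\tau)\Big),
\]
obtained by expanding $(n+1)E_{\exp}(\varphi)=E^L_{\exp}(\varphi)+\int e^{-\tau}\int(\varphi\wedge\tau-\tau)\mathrm{MA}(\varphi\wedge\tau-\tau)\,d\tau$, and then applies the standard energy inequality $E(\psi)\ge\int\psi\,\mathrm{MA}(\psi)$ for $\psi\in\E^1(X,L)$ to each rooftop $\psi=\varphi\wedge\tau-\tau$. (Alternatively, for $\varphi=\varphi_{(\mathcal{X},\mathcal{L})}\in\nH(X,L)$ one can argue directly: $\int\varphi\int e^{-t}\mathcal{D}_\varphi=\sum_E\mathrm{ord}_E\mathcal{X}_0\,\sigma_E\int e^{-t}\DHm_{(E,\mathcal{L}|_E)}\le\sum_E\mathrm{ord}_E\mathcal{X}_0\int t e^{-t}\DHm_{(E,\mathcal{L}|_E)}$ since $\DHm_{(E,\mathcal{L}|_E)}$ is supported on $[\sigma_E,\infty)$ by Proposition~\ref{infimum of DH measure}, and then extend by $E_{\exp}/d_{\exp}$-continuity --- but some such argument must be supplied; as written your proof does not establish the key inequality.) A minor additional caveat: your equality analysis for the K-stability parenthetical is also only sketched, e.g.\ ``$\DHm_\varphi$ a Dirac mass forces $\varphi=\varphi_{\mathrm{triv}}$ up to normalization'' is asserted without justification.
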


\subsubsection{Relation to other works}

For a $\mathbb{Q}$-Fano variety $(X, L) = (X, -K_X)$ (a $\mathbb{Q}$-Gorenstein variety with only klt singularities whose anti-canonical sheaf has ample reflexive power), the non-archimedean $\mu$-entropy $\NAmu^{2\pi}$ is related to the following $H$-entropy: 
\begin{equation} 
\check{H}_{\mathrm{NA}} (\varphi) := -\inf_{x \in X^{\mathrm{div}}} (A_X (x) + \varphi (x)) - \log \iint_{X^{\mathrm{NA}}} e^{-t} \mathcal{D}_\varphi. 
\end{equation}
We have $\NAmu^{2\pi} (\varphi) \le 2\pi \check{H}_{\mathrm{NA}} (\varphi)$ in general. 

The following theorem can be understood as a non-archimedean counterpart of \cite{DS}: 
\[ \inf_{\omega_\phi \in \mathcal{H} (X, L)} \bm{\check{\mu}}_{\mathrm{Per}}^{2\pi} (\omega_\phi) = \inf_{\omega_\phi \in \mathcal{H} (X, L)} 2\pi H (\omega_\phi). \]
The existence part is due to \cite{HL2, BLXZ}. 

\begin{thm}[Summary of section \ref{Relation to H-entropy}]
Let $(X, L) = (X, -K_X)$ be a $\mathbb{Q}$-Fano variety. 
Then we have 
\[ \sup_{\varphi \in \E^{\exp} (L)} \NAmu^{2\pi} (\varphi) = \sup_{\varphi \in \pcH (L)} \NAmu^{2\pi} (\varphi) = \sup_{\varphi \in \nH^\mathbb{R} (L)} 2\pi \check{H}_{\mathrm{NA}} (\varphi) = \sup_{\varphi \in \E^{\exp} (L)} 2\pi \check{H}_{\mathrm{NA}} (\varphi). \]
The maximums are attained by some common $\varphi_\mathcal{F} \in \pcH (X, L)$ associated to a finitely generated filtration $\mathcal{F}$ with $\mu^{2\pi}$K-semistable $\mathbb{Q}$-Fano central fibre. 
The maximizers are unique up to the addition of constants. 
\end{thm}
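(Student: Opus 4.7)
The plan is to reduce the chain of equalities to a single coincidence between $\NAmu^{2\pi}$ and $2\pi \check{H}_{\mathrm{NA}}$ at a common maximizer, which I would extract from the optimal-destabilizer theory for the non-archimedean $H$-invariant. On $\mathbb{Q}$-Fano $(X, -K_X)$ the canonical correction $E_{\exp}^{2\pi K_X + 2\pi L}$ vanishes, so the formula of Theorem \ref{NAmu entropy extension} simplifies to
\[ \NAmu^{2\pi}(\varphi) = -2\pi\, \frac{\int_{X^{\mathrm{NA}}} (A_X + \varphi) \int e^{-t}\mathcal{D}_\varphi}{\iint_{X^{\mathrm{NA}}} e^{-t}\mathcal{D}_\varphi} - 2\pi \log \iint_{X^{\mathrm{NA}}} e^{-t}\mathcal{D}_\varphi. \]
Applying Jensen's inequality to the probability measure obtained by normalizing $\int e^{-t}\mathcal{D}_\varphi$, whose support is contained in $X^{\mathrm{div}}$, yields the pointwise bound $\NAmu^{2\pi}(\varphi) \le 2\pi \check{H}_{\mathrm{NA}}(\varphi)$ on all of $\E^{\exp}(X,L)$, with equality precisely when $A_X + \varphi$ is constant on $\supp \int e^{-t}\mathcal{D}_\varphi$ and realizes the infimum over $X^{\mathrm{div}}$.

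Next I would invoke the optimal-degeneration theorem of \cite{HL2, BLXZ}: on a $\mathbb{Q}$-Fano variety, $\check{H}_{\mathrm{NA}}$ admits a unique (up to rescaling and additive constants) maximizer on $\nH^{\mathbb{R}}(X,L)$, represented by a finitely generated filtration $\mathcal{F}$ whose central fibre $(\mathcal{X}_o(\mathcal{F}), -K_{\mathcal{X}_o(\mathcal{F})})$ is a $\mathbb{Q}$-Fano variety, $\mu^{2\pi}$K-semistable with respect to the proper vector $\xi_o^{\mathcal{F}}$ induced by $\mathcal{F}$. The associated metric $\varphi_{\mathcal{F}}$ lies in $\pcH(X,L) \subset \E^{\exp}(X,L)$. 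Upper semi-continuity of $\check{H}_{\mathrm{NA}}$ in the $E_{\exp}$-topology together with the density of $\nH^{\mathbb{R}} \subset \E^{\exp}(X,L)$ promotes $\varphi_{\mathcal{F}}$ to a maximum over the full space $\E^{\exp}(X,L)$.

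The key step is to verify $\NAmu^{2\pi}(\varphi_{\mathcal{F}}) = 2\pi \check{H}_{\mathrm{NA}}(\varphi_{\mathcal{F}})$. Representing $\mathcal{F}$ by a polyhedral configuration, the moment measure $\int e^{-t}\mathcal{D}_{\varphi_{\mathcal{F}}}$ is supported on the rays $\mathbb{R}_{>0}.v_E$ over the prime components $E \subset \mathcal{X}_o(\mathcal{F})$. The $\mu^{2\pi}$K-semistability of the central fibre is the Euler--Lagrange condition for $H$-maximization in \cite{BLXZ}; transported through the identification $\NAmu^{2\pi}(\varphi_{\mathcal{F}}) = \cmu^{2\pi}(\mathcal{F})$ from Proposition \ref{comparison of NAmu and chmu} (valid because $\mathcal{X}_o(\mathcal{F})$ is reduced, cf. section \ref{Filtration associated to continuous psh metric}), it translates into the statement that $A_X + \varphi_{\mathcal{F}}$ takes a common value on the $v_E$, and that this value realizes $\inf_{X^{\mathrm{div}}}(A_X + \varphi_{\mathcal{F}})$, collapsing the Jensen inequality.

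Granted this equality, the chain
\[ \NAmu^{2\pi}(\varphi_{\mathcal{F}}) \le \sup_{\pcH} \NAmu^{2\pi} \le \sup_{\E^{\exp}} \NAmu^{2\pi} \le \sup_{\E^{\exp}} 2\pi \check{H}_{\mathrm{NA}} = 2\pi \check{H}_{\mathrm{NA}}(\varphi_{\mathcal{F}}) = \NAmu^{2\pi}(\varphi_{\mathcal{F}}) \]
forces all four suprema to agree, with $\varphi_{\mathcal{F}}$ as a common maximizer, and uniqueness up to additive constants descends from that of the optimal destabilizer in \cite{BLXZ}. The main obstacle will be the equality step above: the vanishing of $E_{\exp}^{2\pi K_X + 2\pi L}$ in the Fano case is automatic, but reconciling the $H$-functional characterization of the optimizer in \cite{HL2, BLXZ} with the filtration-theoretic Euler--Lagrange condition for $\cmu^{2\pi}$---in particular tracking how $A_X$ normalizes along the $\mathbb{R}_{>0}$-orbits on $\supp \int e^{-t}\mathcal{D}_{\varphi_{\mathcal{F}}}$---requires delicate bookkeeping of the moment-measure decomposition against the log-discrepancy coefficients.
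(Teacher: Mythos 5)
Your overall skeleton matches the paper's: the pointwise bound $\NAmu^{2\pi}(\varphi) \le 2\pi \check{H}_{\mathrm{NA}}(\varphi)$ (which is just ``average $\ge$ infimum'' for the normalized measure $\int e^{-t}\mathcal{D}_\varphi$, not really Jensen), the optimal destabilizer from \cite{HL2, BLXZ} realized as $\varphi_{\mathcal{F}_v} \in \pcH(X,L)$, and the sandwich argument forcing all four suprema to coincide once equality holds at $\varphi_{\mathcal{F}_v}$. Two small corrections on the periphery: the passage $\sup_{\E^{\exp}} \check{H}_{\mathrm{NA}} = \sup_{\nH} \check{H}_{\mathrm{NA}}$ uses the \emph{lower} semi-continuity of $\check{H}_{\mathrm{NA}}$ (an lsc function cannot jump up at a limit of a dense sequence), not upper semi-continuity as you wrote; and the uniqueness argument you give (any $\NAmu^{2\pi}$-maximizer is squeezed into being an $\check{H}_{\mathrm{NA}}$-maximizer) is fine.

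The genuine gap is exactly the step you flag as the ``main obstacle'': establishing $\NAmu^{2\pi}(\varphi_{\mathcal{F}}) = 2\pi\check{H}_{\mathrm{NA}}(\varphi_{\mathcal{F}})$ at the optimizer. Your proposed route runs through the identification $\NAmu^{2\pi}(\varphi_{\mathcal{F}}) = \cmu^{2\pi}(\mathcal{F})$ for the finitely generated filtration $\mathcal{F} = \mathcal{F}_v$, citing Proposition \ref{comparison of NAmu and chmu}. But that proposition is only proved for proper vectors (product configurations); for a general polyhedral configuration the equality $\NAmu^\lambda(\varphi) = \cmu^\lambda(\mathcal{F}_\varphi)$ hinges on the continuity of $\xi \mapsto A_X(v_{E,\xi})$ (Question \ref{continuity of entropy along polyhedral configuration}), which the paper explicitly defers to \cite{BJ5} and does not assume in this theorem. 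The paper's actual mechanism bypasses this entirely: by \cite{LXZ}, every divisorial valuation $w$ in a small simplex $U \subset \mathrm{QM}_\eta(Y,D)$ around the optimizer $v$ yields a \emph{weakly special degeneration} with the same ($\mathbb{Q}$-Fano, hence irreducible and reduced) central fibre, for which $\NAmu^{2\pi}(\varphi_w) = 2\pi\check{H}_{\mathrm{NA}}(\varphi_w)$ holds directly via \cite[Proposition 7.29]{BHJ1}; since $\NAmu^{2\pi} - 2\pi\check{H}_{\mathrm{NA}} \le 0$ is upper semi-continuous its zero locus is closed, and density of divisorial valuations in $U$ (with $w \mapsto \varphi_w$ continuous in sup-norm) puts $\varphi_v$ in that locus. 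The same continuity over $U$ is then reused to get $\NAmu^{2\pi}(\varphi_v) = \cmu^{2\pi}(\mathcal{X}_o, \mathcal{L}|_{\mathcal{X}_o}; \xi_o^v)$ and hence the $\mu^{2\pi}$K-semistability of the central fibre — rather than importing semistability as an Euler--Lagrange condition and trying to invert it into the equality, which is the direction your sketch would need but does not supply.
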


It is shown by \cite{HL2} that for a $\mathbb{Q}$-Fano variety which is modified K-semistable with respect to a proper vector $\xi$, the maximum of $\check{H}_{\mathrm{NA}}$ is achieved by $\varphi_\xi$, so that we conclude the following. 

\begin{cor}
For a $\mathbb{Q}$-Fano variety $(X, -K_X)$, the $\mu^{2\pi}$K-semistability with respect to general test configuration is equivalent to the modified K-semistability with respect to equivariant special degenerations. 
\end{cor}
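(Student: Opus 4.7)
The plan is to chain together two ingredients already in place. The first is the preceding theorem, which matches $\sup \NAmu^{2\pi}$ with $2\pi \sup \check{H}_{\mathrm{NA}}$ over $\E^{\exp}(X,L)$, $\pcH(X,L)$ and $\nH^{\mathbb{R}}(X,L)$, and furthermore produces a common maximizer $\varphi_\mathcal{F} \in \pcH(X,L)$, unique up to additive constants. The second is the Han--Li characterization \cite{HL2}: $(X,-K_X)$ is modified K-semistable with respect to a proper vector $\xi$ if and only if $\varphi_\xi$ maximizes $\check{H}_{\mathrm{NA}}$ among $\nH^{\mathbb{R}}(X,L)$. Granting these, the equivalence to prove reduces to transferring the maximizer property between the two functionals.

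For the forward direction, suppose $(X,-K_X)$ is $\mu^{2\pi}$K-semistable with respect to arbitrary test configurations. Theorem \ref{NAmu entropy and muK-stability} (or more precisely its proof, combined with the analysis of Corollary \ref{muK-stability characterization}) yields a proper vector $\xi$ such that $\varphi_\xi$ attains $\sup_{\nH(X,L)} \NAmu^{2\pi}$; the continuity of $\NAmu^{2\pi}(\varphi_\xi)$ in $\xi \in \mathfrak{t}$ via Proposition \ref{comparison of NAmu and chmu} upgrades this to $\sup_{\E^{\exp}(X,L)} \NAmu^{2\pi}$. By the chain of equalities of suprema from the preceding theorem, $\varphi_\xi$ then also maximizes $\check{H}_{\mathrm{NA}}$, and \cite{HL2} delivers modified K-semistability with respect to $\xi$ and its equivariant special degenerations.

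Conversely, suppose $(X,-K_X)$ is modified K-semistable with respect to some proper vector $\xi$. Then \cite{HL2} gives that $\varphi_\xi$ maximizes $\check{H}_{\mathrm{NA}}$. By the preceding theorem the common maximizer $\varphi_\mathcal{F}$ also maximizes $\check{H}_{\mathrm{NA}}$, and the uniqueness-up-to-additive-constants clause forces $\varphi_\xi = \varphi_\mathcal{F} + c$ for some constant $c$. Since $\NAmu^{2\pi}$ is normalization-invariant, $\varphi_\xi$ therefore also maximizes $\NAmu^{2\pi}$ on $\E^{\exp}(X,L)$, and in particular on $\nH(X,L)$. Applying Theorem \ref{NAmu entropy and muK-stability} yields $\mu^{2\pi}$K-semistability with respect to arbitrary test configurations.

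The main obstacle is precisely the interchange of maximizers between $\NAmu^{2\pi}$ and $\check{H}_{\mathrm{NA}}$: a priori, equality of suprema does not force a maximizer of one to be a maximizer of the other. This is dissolved by the uniqueness clause in the preceding theorem, so the argument is essentially formal once that theorem is in hand. A minor technical caveat is the irrationality of $\xi$: for irrational $\xi \in \mathfrak{t}$, the metric $\varphi_\xi$ lies in $\nH^{\mathbb{R}}(X,L) \setminus \nH(X,L)$, so one cannot directly apply the definitional formulation of $\mu^{2\pi}$K-semistability in terms of integral test configurations; the continuity in $\xi$ supplied by Proposition \ref{comparison of NAmu and chmu}, already exploited in Corollary \ref{muK-stability characterization}, is what makes the argument go through in both directions.
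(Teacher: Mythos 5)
Your treatment of the hard direction (modified K-semistability $\Rightarrow$ $\mu^{2\pi}$K-semistability against all test configurations) is correct and is exactly the paper's intended route: \cite{HL2} gives that $\varphi_\xi$ maximizes $\check{H}_{\mathrm{NA}}$, the uniqueness-up-to-constants clause of the preceding theorem transfers the maximizer property to $\NAmu^{2\pi}$ (normalization invariance of $\NAmu^{2\pi}$ absorbs the additive constant), and Theorem \ref{NAmu entropy and muK-stability} converts maximization into semistability. The caveat about irrational $\xi$ and Proposition \ref{comparison of NAmu and chmu} is also correctly placed.

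The forward direction as you have written it, however, contains a genuine gap. You claim that $\mu^{2\pi}$K-semistability ``yields a proper vector $\xi$ such that $\varphi_\xi$ attains $\sup_{\nH(X,L)}\NAmu^{2\pi}$,'' citing Theorem \ref{NAmu entropy and muK-stability} and Corollary \ref{muK-stability characterization}. But that theorem runs in the opposite direction: its \emph{hypothesis} is the existence of dominating vectors and its \emph{conclusion} is semistability. Corollary \ref{muK-stability characterization} only establishes (b) $\Rightarrow$ (a), and the paper explicitly states that the converse (a) $\Rightarrow$ (b) — semistability implies entropy maximization — is a speculation, not a theorem. Your forward direction therefore rests on an unproven implication (and, further downstream, on the unstated converse of the \cite{HL2} result). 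The repair is much simpler and requires none of this machinery: equivariant special degenerations form a subclass of the $T$-equivariant test configurations against which $\check{\mu}^{2\pi}_\xi$K-semistability is tested, and on that subclass the $\check{\mu}^{2\pi}_\xi$-Futaki invariant coincides with the modified Futaki invariant (this is the standard identification in the Fano case $L=-K_X$, $\lambda=2\pi$, going back to \cite{Ino2, Ino3}). Hence semistability against all test configurations restricts a fortiori to modified K-semistability against special degenerations, with the same vector $\xi$.
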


We return to the case of general polarized varieties. 
We can show the following extension of the extremal limit observation in \cite{Ino2}. 

\begin{thm}[Summary of section \ref{Relation to normalized Donaldson--Futaki invariant}]
Let $(X, L)$ be a polarized normal variety with only klt singularieties. 
For $\varphi \in \PSH^{\mathrm{bdd}} (X, L)$, we have 
\[ \lim_{\rho \to +0} \rho^{-1} (\NAmu^{-\rho^{-1}} (\varphi_{; \rho}) - \NAmu^{-\rho^{-1}} (\varphi_{\mathrm{triv}})) = C_{\mathrm{NA}} (\varphi). \]
\end{thm}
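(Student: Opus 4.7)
The plan is to decompose $\NAmu^{-\rho^{-1}} = \NAmu - \rho^{-1}\bm{\check\sigma}$ and extract the leading $\rho$-behaviour of each piece separately, first on the dense subset $\nH(X,L) \subset \PSH^{\mathrm{bdd}}(X,L)$ and then extend to arbitrary bounded $\varphi$ via the continuity results of section \ref{Moment measure}. The quantity of interest splits as
\[
\rho^{-1}\bigl[\NAmu(\varphi_{;\rho}) - \NAmu(\varphi_{\mathrm{triv}})\bigr] - \rho^{-2}\bigl[\bm{\check\sigma}(\varphi_{;\rho}) - \bm{\check\sigma}(\varphi_{\mathrm{triv}})\bigr],
\]
and I would show that both limits exist individually and their sum recovers $C_{\mathrm{NA}}(\varphi)$.

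For $\bm{\check\sigma}$ I Taylor-expand the defining formula in $\rho$ using the moments $I_k(\varphi) = \int t^k\,\DHm_\varphi$. Writing $\alpha = I_1(\varphi)/(e^L)$ and $\beta = I_2(\varphi)/(e^L)$, a short bookkeeping shows that the order-$\rho$ contributions of the two summands cancel exactly, yielding
\[
\bm{\check\sigma}(\varphi_{;\rho}) - \bm{\check\sigma}(\varphi_{\mathrm{triv}}) = \tfrac{\rho^2}{2}(\beta - \alpha^2) + O(\rho^3),
\]
the variance of the normalized Duistermaat--Heckman measure. This is precisely the non-archimedean $L^2$-norm contribution expected in $C_{\mathrm{NA}}$.

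For $\NAmu(\varphi_{;\rho})$, I would work from the log-canonical expression together with the moment-measure presentation of Theorem \ref{NAmu entropy extension}(3) and the rescaling rule (\ref{moment measure for test configuration}): substituting $\int e^{-t}\mathcal{D}_{\varphi_{;\rho}} = \sum_E \mathrm{ord}_E\mathcal{X}_0 \int e^{-\rho t}\,\DHm_{(E,\mathcal{L}|_E)}\cdot\delta_{\rho v_E}$ and expanding $e^{-\rho t} = 1 - \rho t + O(\rho^2)$, the zeroth-order terms reproduce $\NAmu(\varphi_{\mathrm{triv}})$ (the $A_X$-piece contributes $0$ because $\rho v_E \to v_{\mathrm{triv}}$ and $A_X(v_{\mathrm{triv}}) = 0$), while the first-order coefficient combines a log-discrepancy term concentrated on the divisorial valuations $v_E$, a mixed-intersection contribution coming from $E_{\exp}^{K_X}$, and a normalisation piece from the denominator. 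Together with the variance term from $\bm{\check\sigma}$, these reassemble, after clearing the common denominator $(e^L)$, into the non-archimedean Calabi/Mabuchi functional $C_{\mathrm{NA}}(\varphi)$ of section \ref{Relation to normalized Donaldson--Futaki invariant}; the identification can be cross-checked against the archimedean extremal limit of \cite{Ino2}.

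The extension from $\nH(X,L)$ to $\PSH^{\mathrm{bdd}}(X,L)$ proceeds by monotone approximation $\varphi_i \searrow \varphi$ with $\varphi_i \in \nH(X,L)$: the moment measures and their integrals against continuous functions are continuous along such nets (section \ref{Moment measure}), the log-discrepancy integrals converge via the klt hypothesis and the lsc extension of $A_X$, and $C_{\mathrm{NA}}$ is continuous on the bounded class by standard energy estimates. The main obstacle I anticipate is justifying the interchange of $\lim_{\rho\to 0}$ with $\lim_i$: this requires a uniform-in-$\rho$ remainder estimate in the Taylor expansions above, which I would obtain from the boundedness of $\varphi$ (which controls $I_k(\varphi)$ uniformly) combined with the exponential moment energy bounds of section \ref{subsection: Moment energy}.
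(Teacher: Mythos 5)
Your decomposition $\NAmu^{-\rho^{-1}} = \NAmu - \rho^{-1}\bm{\check{\sigma}}$, the second-order Taylor expansion of $\bm{\check{\sigma}}(\varphi_{;\rho})$ producing the variance $\tfrac{\rho^2}{2}(\beta-\alpha^2) = \tfrac{\rho^2}{2}\|\bar{\varphi}\|^2/(e^L)$, and the identification of $\frac{d}{d\rho}\big|_{\rho=+0}\NAmu(\varphi_{;\rho})$ with $-\tfrac{2\pi}{(e^L)}M_{\mathrm{NA}}(\varphi)$ are exactly the paper's skeleton, and your bookkeeping for $\bm{\check{\sigma}}$ is correct.

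The genuine gap is your final step: prove the formula on $\nH(X,L)$ and extend to $\PSH^{\mathrm{bdd}}(X,L)$ by monotone approximation $\varphi_i\searrow\varphi$. The obstruction is the entropy term. Along a decreasing net the measures $\int e^{-\rho t}\mathcal{D}_{\varphi_i}$ converge weakly, but $A_X$ is only lower semi-continuous on $X^{\mathrm{NA}}$, so you obtain only $\varliminf_i\int A_X\int e^{-\rho t}\mathcal{D}_{\varphi_i}\ge\int A_X\int e^{-\rho t}\mathcal{D}_{\varphi}$; convergence of the entropy along regularizing nets is precisely the open Conjecture \ref{Regularization of exponential entropy} (and its Monge--Amp\`ere analogue, Conjecture \ref{regularization of entropy}). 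For the same reason $M_{\mathrm{NA}}$, and hence $C_{\mathrm{NA}}$, is only semi-continuous along such nets, so the "continuity of $C_{\mathrm{NA}}$ on the bounded class by standard energy estimates" that you invoke is not available, and no uniform-in-$\rho$ remainder estimate can repair a failure that already occurs at fixed $\rho$. The paper avoids regularization entirely: it works directly with a general bounded $\varphi$ and proves $\lim_{\rho\to 0}\int_{X^{\mathrm{NA}}}A_X\int e^{-\rho t}\mathcal{D}_\varphi=\int_{X^{\mathrm{NA}}}A_X\,\mathrm{MA}(\varphi)$ by a two-sided squeeze: lower semi-continuity of $A_X$ gives $\varliminf\ge$, and the pointwise domination $\int e^{-\rho t}\mathcal{D}_\varphi\le e^{-\rho\inf\varphi}\,\mathrm{MA}(\varphi)$ gives $\varlimsup\le$. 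That inequality is exactly where the hypothesis $\varphi\in\PSH^{\mathrm{bdd}}(X,L)$ enters and is the reason the theorem is stated on the bounded class rather than on $\E^{\exp}(X,L)$. The term $E^{K_X}_{\exp}(\varphi_{;\rho})$ is likewise handled directly for general $\varphi$ via its tomographic expression and Lemma \ref{limit computation lemma}, with no approximation. You should replace your approximation step by this direct argument.
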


Here $C_{\mathrm{NA}}$ is a non-archimedean counterpart of Calabi energy, which is introduced in \cite{Ino4}. 
The functional $C_{\mathrm{NA}}$ is not scaling invariant $C_{\mathrm{NA}} (\varphi_{;\rho}) \neq C_{\mathrm{NA}} (\varphi)$: it is rather quadratic on $\rho$. 
Its maximizer along $\rho > 0$ gives the non-archimedean variant of normalized Donaldson--Futaki invariant $\frac{1}{2 (e^L)} (2\pi M_{\mathrm{NA}} (\varphi) / \| \bar{\varphi} \|)^2$ when $\varphi$ destabilizes $(X, L)$. 
There is an analogous story for $C_{\mathrm{NA}}$ as $\NAmu^\lambda$ and $\check{H}_{\mathrm{NA}}$. 
We will discuss this and its Ding version in section \ref{Relation to other works}. 

\subsubsection{Toric illustration}

For toric test configurations, we can compute the non-archimedean $\mu$-entropy by integrations on the moment polytope. 

\begin{prop}
Let $(X, L)$ be a polarized toric (normal) variety and $P$ be the associated moment polytope. 
For a toric test configuration $(\mathcal{X}, \mathcal{L})$ with ample $\bar{\mathcal{L}}$, take the piecewise affine convex function $q$ on $P$ so that $Q = \{ (\mu, t) \in P \times \mathbb{R} ~|~ 0 \le t \le - q (\mu) \}$ denotes the moment polytope of $(\bar{\mathcal{X}}, \bar{\mathcal{L}})$, then we have 
\begin{align} 
\NAmu (\mathcal{X}, \mathcal{L}; \rho) 
&= -2\pi \frac{\int_{\partial P} e^{\rho q} d\sigma}{\int_P e^{\rho q} d\mu}, 
\\
\bm{\check{\sigma}} (\mathcal{X}, \mathcal{L}; \rho)
&= \frac{\int_P (n+\rho q) e^{\rho q} d\mu}{\int_P e^{\rho q} d\mu} - \log \int_P e^{\rho q} d \mu. 
\end{align}
\end{prop}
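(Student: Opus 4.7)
The plan is to reduce every quantity to a Lebesgue integral on the moment polytope $Q$ via toric equivariant intersection theory. Since $(\mathcal{X}, \mathcal{L})$ is toric, its compactification $(\bar{\mathcal{X}}, \bar{\mathcal{L}})$ is a polarized toric variety for the enlarged torus $T \times \mathbb{G}_m$ with moment polytope $Q$, where the $\mathbb{G}_m$ in question acts via the last coordinate. The toric Duistermaat--Heckman formula then gives
\[ (e^{\bar{\mathcal{L}}_{\mathbb{G}_m}}; \rho) = \int_Q e^{-\rho t} \, d(\mu, t) = \int_P \frac{1 - e^{\rho q(\mu)}}{\rho} \, d\mu, \]
and combining with $(e^L) = \mathrm{Vol}(P)$ yields the common denominator
\[ \int_{\mathbb{R}} e^{-\rho t} \DHm_{(\mathcal{X}, \mathcal{L})} = (e^L) - \rho \, (e^{\bar{\mathcal{L}}_{\mathbb{G}_m}}; \rho) = \int_P e^{\rho q} \, d\mu. \]
Equivalently, $\DHm_{(\mathcal{X}, \mathcal{L})}$ is the push-forward of Lebesgue measure on $P$ along $\mu \mapsto -q(\mu)$, from which $\int (n - \rho t) e^{-\rho t} \DHm_{(\mathcal{X}, \mathcal{L})} = \int_P (n + \rho q) e^{\rho q} \, d\mu$; substitution into the defining formula for $\bm{\check{\sigma}}$ yields the second identity of the proposition.

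For $\NAmu$, the decisive step is to identify $K^{\log, \mathbb{G}_m}_{\bar{\mathcal{X}}/\mathbb{P}^1}$. Since $\bar{\mathcal{X}}$ is toric, it is log Calabi--Yau: $K_{\bar{\mathcal{X}}} = -\sum_F D_F$ as $\mathbb{G}_m$-equivariant Weil divisor classes, where $F$ runs over the facets of $Q$. These facets partition into (i) the top facets whose sum is the reduced central fibre $\mathcal{X}_0^{\mathrm{red}} = \sum_i E_i$, (ii) the bottom facet $P \times \{0\}$, which is the reduced fibre $\mathcal{X}_\infty \cong X$ over $\infty$, and (iii) the vertical facets above the facets $F_j$ of $P$, giving toric divisors $\tilde{D}_j$ that extend the toric boundary of $X$. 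Using $\varpi^*(K_{\mathbb{P}^1} + [0]) = -\varpi^*[\infty] = -[\mathcal{X}_\infty^{\mathrm{red}}]$ and cancelling the top and bottom facets against $[\mathcal{X}_0^{\mathrm{red}}]$ and $[\mathcal{X}_\infty^{\mathrm{red}}]$,
\[ K^{\log, \mathbb{G}_m}_{\bar{\mathcal{X}}/\mathbb{P}^1} = K_{\bar{\mathcal{X}}}^{\mathbb{G}_m} + [\mathcal{X}_0^{\mathrm{red}, \mathbb{G}_m}] + [\mathcal{X}_\infty^{\mathrm{red}, \mathbb{G}_m}] = - \sum_j \tilde{D}_j^{\mathbb{G}_m}. \]
Applying the toric DH formula to each $\tilde{D}_j$, whose moment polytope is $\{(\mu, t) : \mu \in F_j, \, 0 \le t \le -q(\mu)\}$, yields $(\tilde{D}_j^{\mathbb{G}_m} \cdot e^{\bar{\mathcal{L}}_{\mathbb{G}_m}}; \rho) = \int_{F_j} (1 - e^{\rho q})/\rho \, d\sigma$; summing over $j$ and combining with the classical toric formula $(K_X \cdot e^L) = -\int_{\partial P} d\sigma$ produces
\[ (K_X \cdot e^L) - \rho \, (K^{\log, \mathbb{G}_m}_{\bar{\mathcal{X}}/\mathbb{P}^1} \cdot e^{\bar{\mathcal{L}}_{\mathbb{G}_m}}; \rho) = -\int_{\partial P} d\sigma + \int_{\partial P} (1 - e^{\rho q}) \, d\sigma = -\int_{\partial P} e^{\rho q} \, d\sigma, \]
and division by the common denominator delivers the first identity.

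The main obstacle is the precise bookkeeping of equivariant structures in the identification $K^{\log, \mathbb{G}_m}_{\bar{\mathcal{X}}/\mathbb{P}^1} = -\sum_j \tilde{D}_j^{\mathbb{G}_m}$: one must verify that the toric identity $K + \partial = 0$ persists as $\mathbb{G}_m$-equivariant Weil divisor classes when $\bar{\mathcal{X}}$ is merely normal (not necessarily $\mathbb{Q}$-Gorenstein) and $\mathcal{X}_0$ is possibly non-reduced. The key inputs are that every normal toric variety is log Calabi--Yau with $K + \partial = 0$ holding at the level of Weil classes, and that $\mathcal{X}_\infty$ is reduced by construction, so the multiplicities $d_i$ of $\mathcal{X}_0 = \sum d_i E_i$ enter only through the pullback $\varpi^*[0] = \mathcal{X}_0$ and cancel automatically against the top-facet contributions via the $\mathbb{P}^1$-relation $K_{\mathbb{P}^1} + [0] + [\infty] = 0$.
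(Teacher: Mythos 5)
Your proposal is correct and follows essentially the same route as the paper: the paper also identifies $K^{\log,\mathbb{G}_m}_{\bar{\mathcal{X}}/\mathbb{P}^1}$ with minus the sum of the vertical facet divisors of $Q$ (via $K_{\bar{\mathcal{X}}}=-\sum_F D_F$ as equivariant Chow classes and $\varpi^*K_{\mathbb{P}^1}^{\mathbb{G}_m}=-[\mathcal{X}_0]-[\mathcal{X}_\infty]$, which is your top/bottom-facet cancellation in the language of the normal-ray components $a_F$), and then evaluates all equivariant intersections as exponential integrals over $Q$ and its facets. Your handling of $\bm{\check{\sigma}}$ via the push-forward $\DHm_{(\mathcal{X},\mathcal{L})}=(-q)_*d\mu$ is a harmless repackaging of the paper's computation of $(\bar{\mathcal{L}}\cdot e^{\bar{\mathcal{L}}};\rho)$ on $Q$, consistent with the paper's Legendre-dual description of the Duistermaat--Heckman measure.
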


More generally, for a $T$-invariant non-archimedean psh metric $\varphi \in \E^{\exp} (X, L)$, we can assign a lower semi-continuous convex function $q_\varphi$ on $P$ which enjoys the integrability condition $\int_P e^{\rho q} d\mu < \infty$ for every $\rho > 0$. 
Then assuming Conjecture \ref{Regularization of exponential entropy}, we have the same formula for $\NAmu^\lambda (\varphi)$. 
These are explained in Appendix. 

\subsubsection*{Acknowledgements}

I wish to thank Sebastien Boucksom and Mattias Jonsson, Ruadha\'i Dervan, Masafumi Hattori, Tomoyuki Hisamoto, Yaxiong Liu and Mingchen Xia for their interest, helpful comments and discussions on various occasions. 
This work is supported by RIKEN iTHEMS Program. 

\section{Characteristic $\mu$-entropy and $\mu$K-semistability}

\subsection{Algebraic preliminaries}

\subsubsection{Filtration}
\label{Filtration}

We firstly recall some notions and terminologies related to filtration used throughout this article. 
We put $\mathbb{N} := \{ 0, 1, 2, \ldots, \}$ and $\mathbb{N}_+ = \{ 1, 2, \ldots \}$. 
For $d \in \mathbb{N}_+$, we put $\mathbb{N}^{(d)} := \{ m \in \mathbb{N} ~|~ d \text{ divides } m \}$. 

Let $(X, L)$ be a polarized scheme. 
We put $R_m := H^0 (X, L^{\otimes m})$, $R := \bigoplus_{m \in \mathbb{N}} R_m$ and $R^{(d)} := \bigoplus_{m \in \mathbb{N}^{(d)}} R_m$. 
We also put $N_m := \dim_{\mathbb{C}} R_m$. 

A \textit{filtration} $\mathcal{F}$ of $(X, L)$ is a collection of linear subspaces $\{ \mathcal{F}^\lambda R_m \subset R_m \}_{\lambda \in \mathbb{R}, m \in \mathbb{N}^{(d)}}$ for some $d \in \mathbb{N}_+$ which satisfies the following:  for every $\lambda, \lambda' \in \mathbb{R}$ and $m, m' \in \mathbb{N}^{(d)}$
\begin{enumerate}
\item $\mathcal{F}^{\lambda-} R_m = \mathcal{F}^\lambda R_m$ for $\mathcal{F}^{\lambda-} R_m := \bigcap_{\lambda' < \lambda} \mathcal{F}^{\lambda'} R_m$. 

\item $\sum_{\lambda \in \mathbb{R}} \mathcal{F}^\lambda R_m = R_m $, $\bigcap_{\lambda \in \mathbb{R}} \mathcal{F}^\lambda R_m = 0$. 

\item $\mathcal{F}^\lambda R_m \cdot \mathcal{F}^{\lambda'} R_{m'} \subset \mathcal{F}^{\lambda + \lambda'} R_{m + m'}$. 
\end{enumerate}
In particular, we have $\mathcal{F}^\lambda R_m \subset \mathcal{F}^{\lambda'} R_m$ for $\lambda' \le \lambda$. 
We use the notation $\mathcal{F}_1 \subset \mathcal{F}_2$ when $\mathcal{F}_1^\lambda R_m \subset \mathcal{F}_2^\lambda R_m$ for every $\lambda \in \mathbb{R}$ and $m \in \mathbb{N}^{(d)}$ for \textit{some} sufficiently divisible $d \in \mathbb{N}_+$. 
We identify two filtrations $\mathcal{F}_1, \mathcal{F}_2$ if $\mathcal{F}_1 \subset \mathcal{F}_2$ and $\mathcal{F}_2 \subset \mathcal{F}_1$. 

We put 
\[ \mathcal{F}_{\mathrm{triv}}^\lambda R_m := 
\begin{cases} 
R_m
& \lambda \le 0
\\
0
& \lambda > 0
\end{cases}. \]

\begin{eg}
Recall a \textit{test configuration} $(\mathcal{X}, \mathcal{L})$ of a polarized scheme $(X, L)$ is a $\mathbb{G}_m$-equivariant proper flat family $\varpi: (\mathcal{X}, \mathcal{L}) \to \mathbb{A}^1$ of polarized schemes endowed with a $\mathbb{G}_m$-equivariant relatively ample $\mathbb{Q}$-line bundle $\mathcal{L}$ and an isomorphism $(X, L) \cong (\mathcal{X}_1, \mathcal{L}|_{\mathcal{X}_1})$. 
For a test configuration $(\mathcal{X}, \mathcal{L})$ (not necessarily normal), we assign the following filtration: 
\begin{equation} 
\mathcal{F}_{(\mathcal{X}, \mathcal{L})}^\lambda R_m := \{ s \in H^0 (X, L^{\otimes m}) ~|~ \varpi^{-\lceil \lambda \rceil} \bar{s}  \text{ extends to a section of } \mathcal{L}^{\otimes m} \}. 
\end{equation}
This is $\mathbb{Z}$-graded: $\mathcal{F}^\lambda = \mathcal{F}^{\lceil \lambda \rceil}$. 
Later, in section \ref{Primary decomposition via filtration} and the subsequent section, we observe a variant $\widehat{\mathcal{F}}_{(\mathcal{X}, \mathcal{L})}$ for a normal test configuration $(\mathcal{X}, \mathcal{L})$. 
It is $\mathbb{Q}$-graded and is identified with a filtration $\mathcal{F}_{\varphi_{(\mathcal{X}, \mathcal{L})}}$ recovered from the non-archimedean psh metric $\varphi_{(\mathcal{X}, \mathcal{L})}$ associated to $(\mathcal{X}, \mathcal{L})$. 
\end{eg}

\begin{eg}
\label{filtration for proper vector}
When we have a torus action $(X, L) \circlearrowleft T$, for a vector $\xi \in \mathfrak{t} = N \otimes \mathbb{R}$, we assign
\begin{equation}
\mathcal{F}_\xi^\lambda R_m := \{ s \in H^0 (X, L^{\otimes m}) ~|~ \langle \mu, \xi \rangle \ge \lambda \text{ for every } \mu \in M \text{ with } s_\mu \neq 0 \}, 
\end{equation}
using the weight decomposition $s = \sum_{\mu \in M} s_\mu$. 
For irrational $\xi \in \mathfrak{t}$, the filtration is not $\mathbb{Q}$-graded. 

For integral $\eta \in N$, we can assign a product configuration $(X_{\mathbb{A}^1}^\eta, L_{\mathbb{A}^1}^\eta)$ endowed with the diagonal action induced by $\eta$. 
Then we have $\mathcal{F}_\eta = \mathcal{F}_{(X_{\mathbb{A}^1}^\eta, L_{\mathbb{A}^1}^\eta)}$ as $\bar{s} (x, t) = (s.t) (x) = \sum_{\mu \in M} t^{\langle \mu, \eta \rangle} s_\mu (x)$. 
\end{eg}

In some arguments, it is convenient to use non-archimedean norm instead of filtration. 
A \textit{non-archimedean norm} on a vector space $V$ over (the trivially valued non-archimedean field) $\mathbb{C}$ is a map $\| \cdot \|: V \to [0, \infty)$ satisfying 
\begin{itemize}
\item $\| v \| = 0$ iff $v = 0$. 

\item $\| a u + b v \| \le \max \{ \| u \|, \| v \| \}$ for $u, v \in V$ and $a, b \in \mathbb{C}$. 
\end{itemize}
We note the second condition implies $\| a v \| = \| v \|$ for $a \neq 0$. 
A basis $\{ e_i \}_{i=1}^r$ of $V$ is called \textit{diagonal} with respect to $\| \cdot \|$ if it satisfies 
\[ \| \sum_i a_i e_i \| = \max \{ \| e_i \| ~|~ a_i \neq 0 \}. \]
It is known by \cite{BE} there always exists such a basis. 
For a quotient space $V/W$, we can induce a non-archimedean norm on $V/W$ by 
\[ \| [v] \|_{V/W} := \inf \{ \| v' \|_V ~|~ v' \in [v] \}, \]
which is equivalent to declare 
\[ -\log \| [v] \|_{V/W} \ge \lambda \iff \exists v' \in [v] \text{ s.t. } - \log \| v' \|_V \ge \lambda. \]

For a filtration $\mathcal{F}$ of $(X, L)$, we associate a non-archiemedean norm $\| \cdot \|^{\mathcal{F}}_m$ on $R_m$ by 
\begin{equation} 
\label{non-archimedean norm associated to filtration}
\| s \|^{\mathcal{F}}_m := \inf \{ e^{- \lambda} ~|~ s \in \mathcal{F}^\lambda R_m \}. 
\end{equation}
Since the filtration is left continuous $\mathcal{F}^{\lambda-} = \mathcal{F}^\lambda$, we have 
\begin{align*} 
-\log \| s \|^{\mathcal{F}}_m \ge \lambda
&\iff s \in \mathcal{F}^\lambda R_m 
\\
-\log \| s \|^{\mathcal{F}}_m = \lambda
&\iff s \in \mathcal{F}^\lambda R_m \setminus \mathcal{F}^{\lambda +} R_m. 
\end{align*}

For a filtration $\mathcal{F}$ and $d \in \mathbb{N}_+$, we put 
\begin{align} 
\sigma_{\min, d} (\mathcal{F}) 
&:= \inf_{m \in \mathbb{N}^{(d)}} m^{-1} \sup \{ \lambda \in \mathbb{R} ~|~ \mathcal{F}^\lambda R_m = R_m \}, 
\\ 
\sigma_{\max, d} (\mathcal{F}) 
&:= \sup_{m \in \mathbb{N}^{(d)}} m^{-1} \inf \{ \lambda \in \mathbb{R} ~|~ \mathcal{F}^\lambda R_m = 0 \}. 
\end{align}
A filtration is \textit{linearly bounded} if $\sigma_{\min, d} (\mathcal{F})$ and $\sigma_{\max, d} (\mathcal{F})$ is finite for some $d$. 

\subsubsection{Valuation}

A \textit{valuation} on an irreducible variety $X$ is a map $v: \mathbb{C} (X) \to (-\infty, \infty]$, where $\mathbb{C} (X)$ denotes the field of rational functions, satisfying 
\begin{itemize}
\item $v (f) = \infty$ if and only if $f = 0$, 

\item $v (f) = 0$ for $f \in \mathbb{C}^\times$, 

\item $v (fg) = v (f) + v (g)$, 
 
\item $v (f+ g) \ge \min \{ v (f), v (g) \}$. 
\end{itemize}
We in particular have the trivial valuation $v_{\mathrm{triv}}$: $v_{\mathrm{triv}} (f) = 0$ iff $f \neq 0$. 

\begin{eg}
For a birational map $X' \dashrightarrow X$ from a normal variety $X'$ and a prime divisor $E \subset X'$ and $c \in \mathbb{Q}_{\ge 0}$, we can assign a valuation $v = c. \mathrm{ord}_E$. 
We call a valuation \textit{divisorial} if it is of this form or trivial. 
\end{eg}

\begin{eg}[(cf. \cite{BHJ1})]
\label{valuation associated to prime divisor of test configuration}
For a normal test configuration $\mathcal{X}$ of $X$ and an irreducible component $E \subset \mathcal{X}_0$ of the central fibre, we assign the following valuation 
\begin{equation} 
v_{\mathcal{X}, E} = \frac{\mathrm{ord}_E \circ p_X^*}{\mathrm{ord}_E \mathcal{X}_0}, 
\end{equation}
using the canonical rational map $p_X: \mathcal{X} \dashrightarrow X \times \mathbb{A}^1 \to X$. 
We often abbreviate $v_{\mathcal{X}, E}$ as $v_E$. 

These valuations $v_E$ are divisorial by \cite[Lemma 4.1]{BHJ1}. 
Conversely, by \cite[Theorem 4.6]{BHJ1}, any divisorial valuation $v$ can be written as $v = v_E$ for some irreducible component $E \subset \mathcal{X}_0$ of the central fibre of some test configuration $\mathcal{X}$ of $X$. 
\end{eg}

\begin{eg}
When we have a torus action $X \circlearrowleft T$, for a vector $\xi \in \mathfrak{t} = N \otimes \mathbb{R}$, we assign the following valuation
\begin{equation}
v_\xi (f) := \inf \{ \langle \mu, \xi \rangle ~|~ f_\mu \neq 0 \} = \inf \{ \lambda \in \mathbb{R} ~|~ \sum_{\langle \mu, \xi \rangle = \lambda} f_\mu \neq 0 \}, 
\end{equation}
using the weight decomposition $f = \sum_{\mu \in M} f_\mu$. 

Similarly as Example \ref{filtration for proper vector}, we have $v_\eta = v_{X_{\mathbb{A}^1}^\eta, X}$ for $\eta \in N$. 
\end{eg}

\begin{eg}[Quasi-monomial valuation (cf. \cite{JM})]
Consider a proper birational morphism $X' \to X$ from a smooth variety $X'$ and an snc divisor $D = \sum_{i=1}^r E_i$ on a Zariski neighbourhood of a schematic point $\eta \in X'$. 
Take a regular system of parameters $z_1, \ldots, z_r$ of the regular local ring $\mathcal{O}_{X', \eta}$ of a schematic point $\eta \in X'$ so that $z_i$ defines $E_i$. 
By Cohen's structure theorem, the $\mathfrak{m}$-adic completion $\widehat{\mathcal{O}}_{X', \eta}$ is isomorphic to $(\widehat{\mathcal{O}}_{X', \eta}/\mathfrak{m}) \llbracket z_1, \ldots, z_r \rrbracket$. 
Thus $f \in \mathcal{O}_{X', \eta}$ can be written as $f = \sum_{\beta \in \mathbb{N}^r} c_\beta z^\beta$ in $\widehat{\mathcal{O}}_{X', \eta}$. 
For $\alpha \in [0, \infty)^r$, we assign the following valuation 
\[ v_\alpha (f) := \min \{ \langle \alpha, \beta \rangle ~|~ c_\beta \neq 0 \}. \]
This is independent of the choice of the regular system. 
We call such a valuation \textit{quasi-monomial} and denote by $\mathrm{QM}_\eta (X', D) \cong [0, \infty)^r$ the set of valuations given as above. 
\end{eg}

For a valuation $v$ on a projective variety $X$ with a polarization $L$, we define a filtration $\mathcal{F}_v [\sigma]$ by 
\begin{equation} 
\mathcal{F}_v^\lambda [\sigma] R_m := \{ s \in R_m ~|~ v (s) + m\sigma \ge \lambda \} = \mathcal{F}^{\lambda - m \sigma}_v R_m. 
\end{equation}
Here we put $v (s) := v (s/e)$ for $s \in R_m = H^0 (X, L^{\otimes m})$ by taking a Zariski local generator $e$ of $L^{\otimes m}$ around the center of the valuation $v$, which exists by the properness of $X$. 
We obviously have $\mathcal{F}_v [\sigma'] \subset \mathcal{F}_v [\sigma]$ for $\sigma' \le \sigma$. 
Since $v (s) \ge 0$, we have $\mathcal{F}_v^\lambda [\sigma] R_m = R_m$ for $\lambda \le m \sigma$, so that $\sigma_{\min, d} (\mathcal{F}_v [\sigma]) \ge \sigma$. 
We call a valuation $v$ \textit{linear growth} if $\mathcal{F}_v = \mathcal{F}_v [0]$ is linearly bounded, i.e. $\sigma_{\max, d} (\mathcal{F}_v) < \infty$ for some $d$. 
It is known that every quasi-monomial valuation including divisorial valuation is linearly bounded. 

\begin{eg}
Let $\xi$ be a proper vector on $(X, L)$ and take sufficiently divisible $d$ so that there is a section $e \in H^0 (X, L^{\otimes d})$ which does not vanish around the center of $v_\xi$. 
For $m \in \mathbb{N}^{(d)}_+$ and $s \in H^0 (X, L^{\otimes m})$, we have 
\begin{align*} 
0 \le v_\xi (s) 
&= v_\xi (s/e^{m/d})
\\
&= \inf \{ \lambda \in \mathbb{R} ~|~ \sum_{\langle \mu, \xi \rangle = \lambda} (s/e^{m/d})_\mu \neq 0 \} 
\\
&= \inf \{ \lambda \in \mathbb{R} ~|~ \sum_{\langle \mu', \xi \rangle = \lambda} s_{\mu'} \neq 0 \} - \frac{m}{d} \inf \{ \lambda \in \mathbb{R} ~|~ \sum_{\langle \mu'', \xi \rangle = \lambda} e_{\mu''} \neq 0 \}, 
\end{align*}
so we get 
\begin{align*} 
\inf \{ \lambda \in \mathbb{R} ~|~ \sum_{\langle \mu'', \xi \rangle = \lambda} e_{\mu''} \neq 0 \} 
&= \inf_{s \in H^0 (X, L^{\otimes m})} \inf \{ \lambda \in \mathbb{R} ~|~ \sum_{\langle \mu', \xi \rangle = \lambda} s_{\mu'} \neq 0 \}
\\
&= \sup \{ \lambda \in \mathbb{R} ~|~ \mathcal{F}_\xi^\lambda R_m = R_m \} = d \sigma_{\min, d} (\mathcal{F}_\xi). 
\end{align*}
It follows that $\mathcal{F}_{v_\xi} [\sigma_{\min, d} (\mathcal{F}_\xi)] = \mathcal{F}_\xi$. 
\end{eg}

For a filtration $\mathcal{F}$ and a valuation $v$, we put 
\begin{equation} 
\sigma_v = \sigma_v (\mathcal{F}) := \inf \{ \sigma \in \mathbb{R} ~|~ \mathcal{F} \subset \mathcal{F}_v [\sigma] \}. 
\end{equation}
As we see later, we have $\sigma_v (\mathcal{F}_{(\mathcal{X}, \mathcal{L})}) = \varphi_{(\mathcal{X}, \mathcal{L})} (v)$ for the filtration $\mathcal{F}_{(\mathcal{X}, \mathcal{L})}$ and the non-archimedean metric $\varphi_{(\mathcal{X}, \mathcal{L})}$ associated to a test configuration $(\mathcal{X}, \mathcal{L})$. 
More generally, assuming the continuity of envelopes, we can assign a non-archimedean psh metric $\varphi_{\mathcal{F}}$ for a general linearly bounded filtration, and then $\sigma_v$ coincides with the value $\varphi_{\mathcal{F}} (v)$ of the associated non-archimedean metric $\varphi_{\mathcal{F}}$ for $v$ of linear growth. 

We use $\sigma_v$ to describe the Duistermaat--Heckman measure of an irreducible component $E$ of the central fibre of a normal test configuration in terms of the filtrations $\widehat{\mathcal{F}}_{(\mathcal{X}, \mathcal{L})}, \mathcal{F}_{v_E} [\sigma]$. 
In the argument, we need to compute $\sigma_v$ along normalized base change. 
Approving the fact $\sigma_v = \varphi (v)$ (see Proposition \ref{non-archimedean metric associated to filtration}), this is explained in \cite{BHJ1, BJ1, BJ2, BJ3, BJ4} based on non-archimedean perspective, however, it is also possible to explain this in a more direct algebraic perspective. 
Multiple viewpoints would be good for readers, so we display the proofs. 
This observation also provides a way to access Boucksom--Jonsson's non-archimedean pluripotential theory, which we really need from section \ref{Non-archimedean pluripotential theory}. 

For any $\sigma < \sigma_{\min, d} (\mathcal{F})$, we have $\mathcal{F}^{m\sigma} R_m = R_m$, so that we have $\mathcal{F}^{m \sigma}_v [\sigma'] R_m = R_m$ for any $\sigma' > \sigma_v (\mathcal{F})$. 
Taking $s \in R_m$ which does not vanish at the center of $v$, we have $v (s) = 0$. 
Then $\mathcal{F}^{m \sigma}_v [\sigma'] R_m = R_m$ implies $\sigma' \ge \sigma$. 
Thus we get $\sigma_v \ge \sigma_{\min, d}$. 
On the other hand, for any $\sigma > \sigma_{\max, d} (\mathcal{F})$, we have $\mathcal{F}^{m\sigma} R_m = 0$, so that we have $\mathcal{F} \subset \mathcal{F}_{\mathrm{triv}} [\sigma]$. 
Meanwhile, $\mathcal{F}_{\mathrm{triv}} [\sigma] \subset \mathcal{F}_v [\sigma]$ for any valuation $v$, so we get $\sigma_v \le \sigma$. 
Thus we get also $\sigma_v \le \sigma_{\max, d}$. 
Therefore, we have 
\begin{equation} 
\sigma_{\min, d} (\mathcal{F}) \le \sigma_v (\mathcal{F}) \le \sigma_{\max, d} (\mathcal{F}). 
\end{equation}
In particular, $\sigma_v$ is finite for linearly bounded filtration. 

For a filtration $\mathcal{F}$ for $(X, L)$ and $\rho \in \mathbb{R}_+$, we put 
\begin{equation} 
\mathcal{F}_{;\rho}^\lambda R_m := \mathcal{F}^{\rho^{-1} \lambda} R_m. 
\end{equation}
We have $\mathcal{F}_{v; \rho} [\sigma] = \mathcal{F}_{\rho v} [\rho \sigma]$, so $\sigma_{\rho v} (\mathcal{F}_{;\rho}) = \rho \sigma_v (\mathcal{F})$. 

\subsubsection{Spectral measure}
\label{Spectral measure}

We recall 
\begin{equation}
\mathcal{F}^{\lambda+} R_m := \sum_{\lambda' > \lambda} \mathcal{F}^{\lambda'} R_m = \bigcup_{\lambda' > \lambda} \mathcal{F}^{\lambda'} R_m
\end{equation} 
may differ from $\mathcal{F}^\lambda R_m$. 
For a linearly bounded filtration $\mathcal{F}$ on $R$ and for each $m \in \mathbb{N}^{(d)}$, we associate the following measure $\nu_m (\mathcal{F})$ on $\mathbb{R}$: 
\begin{equation} 
\nu_m (\mathcal{F}) := \frac{1}{m^n} \sum_{\lambda \in \mathbb{R}} (\dim \mathcal{F}^\lambda R_m / \mathcal{F}^{\lambda +} R_m) . \delta_{\lambda/m}. 
\end{equation}
Note our normalization constant $1/m^n$ is different from that $1/N_m$ in \cite{BHJ1}. 
It is shown by \cite{CM} (cf. \cite{BC}) that there exists a compactly supported measure $\nu_\infty (\mathcal{F})$ on $\mathbb{R}$ such that 
\[ \int_\mathbb{R} \chi \nu_m (\mathcal{F}) \to \int_\mathbb{R} \chi \nu_\infty (\mathcal{F}) \]
for every continuous $\chi$ on $\mathbb{R}$ (note the supports of measures are bounded). 
We call $\nu_\infty (\mathcal{F})$ the \textit{spectral measure} of $\mathcal{F}$. 
For $\mathcal{F} = \mathcal{F}_{(\mathcal{X}, \mathcal{L})}$ associated to a test configuration $(\mathcal{X}, \mathcal{L})$, we denote the spectral measure by $\DHm_{(\mathcal{X}, \mathcal{L})}$ and call it the \textit{Duistermaat--Heckman measure}. 
There is a relative construction introduced in \cite{BJ2}, which we review in section \ref{Strong topology, d1-topology and dp-topology}.

\subsubsection{Affine toric variety}

Here we review affine toric geometry in order to clarify our notations in polyhedral configuration. 
Let $N$ be a finite rank lattice and $M$ be the dual lattice over $\mathbb{Z}$. 
Let $T = N \otimes \mathbb{G}_m$ be the algebraic torus associated to $N$. 
For $\xi \in N$ and $\mu \in M$, we denote by $\chi_\xi: \mathbb{G}_m \to T$ the one parameter subgroup associated to $\xi$ and by $\chi^\mu: T \to \mathbb{G}_m$ the character associated to $\mu$. 
We have $\chi^\mu \circ \chi_\xi (z) = z^{\langle \mu, \xi \rangle}$. 

The exponential map $\exp: \mathbb{C} \to \mathbb{G}_m$ induces a group homomorphism $\exp: N \otimes \mathbb{C} \to T$. 
We have $\chi^\mu (\exp (\xi)) = e^{\langle \mu, \xi \rangle}$ for $\xi \in N \otimes \mathbb{C}$ and $\mu \in M$. 
The kernel of this map is $2 \pi \sqrt{-1} N \subset N \otimes \mathbb{C}$. 

A cone $\sigma \subset \mathfrak{t}$ is called (i) strictly convex, (ii) full-dimensional and (iii) rational polyhedral if (i) it is convex and it does not contain a line, (ii) its interior is non-empty and (iii) it is the intersection of finitely many half spaces $H^+_i = \{ \xi \in \mathfrak{t} ~|~ \langle \mu_i, \xi \rangle \ge 0 \}$ for $\mu_i \in M$. 
We call a cone satisfying  (i)--(iii) \textit{toric cone}. 
(Usually (i) is not assumed. )

For a toric cone $\sigma \subset \mathfrak{t}$, the affine toric variety $B_\sigma$ associated to $\sigma$ is given by 
\[ B_\sigma := \mathrm{Spec} (\mathbb{C} [\sigma^\vee \cap M]). \]
Closed points of $B_\sigma$ correspond to semigroup homomorphisms $x: (\sigma^\vee \cap M, +) \to (\mathbb{C}, \times)$, where the latter is the multiplicative semigroup, not a group (cf. \cite{CLS}). 
We denote by $o \in B_\sigma$ the point corresponding to the homomorphism 
\[ o: \sigma^\vee \cap M \to \mathbb{C}: \mu \mapsto \begin{cases} 1 & \mu=0 \\ 0 & \mu \neq 0 \end{cases} \]
and denote by $1 \in B_\sigma$ the point corresponding to the homomorphism 
\[ 1: \sigma^\vee \cap M \to \mathbb{C}: \mu \mapsto 1. \]

The $T$-action on $B_\sigma$ is given by $(x. t) (\mu) = \chi^\mu (t) x (\mu)$ for $x \in B_\sigma$, $t \in T$ and $\mu \in \sigma^\vee \cap M$. 
Since $(x. \exp (-\rho \xi)) (\mu) = e^{-\rho \langle \mu, \xi \rangle} x (\mu)$, we have 
\[ \lim_{\rho \to \infty} x. \exp (-\rho \xi) = o \in B_\sigma \]
for every interior point $\xi \in \sigma^\circ \subset \mathfrak{t}$. 

\subsubsection{Toric vector bundle and weight filtration}

Let $E$ be a $T$-equivariant vector bundle over an affine toric variety $B_\sigma$. 
For $e \in E_1$, we denote by $\bar{e}$ the rational section of $E$ given by $\bar{e} (\tau) = e. \tau$ for $\tau \in T \subset B_\sigma$. 
For a rational section $s$ of $E$ and $t \in T$, we define a rational section $s. t$ of $E$ by $(s. t) (b) := s (b. t^{-1}). t$. 
Finally, for a rational section $s$ of $E$ and $\mu \in M$, we define a rational section $\chi^{-\mu} s$ by $(\chi^{-\mu} s) (\tau) = \chi^{-\mu} (\tau) s (\tau)$ for $\tau \in T$. 

For $e \in E_1$ and $t \in T$, we have $\bar{e}. t = \bar{e}$ as 
\[ (\bar{e}. t) (\tau) = \bar{e} (\tau t^{-1}). t = (e. \tau t^{-1}). t = e. \tau = \bar{e} (\tau). \]
Conversely, any rational section $s$ satisfying $s. t = s$ can be written as $s = \bar{e}$ for $e = s_1$. 

For a rational section $s$ of $E$ and $\mu \in M$, we have $(\chi^{-\mu} s). t = \chi^\mu (t) (\chi^{-\mu} (s.t))$ as 
\begin{align*} 
((\chi^{-\mu} s). t) (b)
&= (\chi^{-\mu} s) (b t^{-1}). t = (\chi^{-\mu} (b) \chi^{-\mu} (t^{-1}) s (b t^{-1})). t 
\\
&= \chi^\mu (t) (\chi^{-\mu} (b) (s. t) (b)) = \chi^\mu (t) (\chi^{-\mu} (s.t) (b)). 
\end{align*}
It follows that $(\chi^{-\mu} \bar{e}). t = \chi^\mu (t) (\chi^{-\mu} \bar{e})$ for $e \in E_1$ and $\mu \in M$ and conversely, any rational section $s$ satisfying $s. t = \chi^\mu (t) s$ can be written as $s = \chi^{-\mu} \bar{e}$ for $e = s_1$. 

For $\mu \in M$, we consider the following subspaces of $E_1$ 
\begin{align} 
\mathcal{F}^\mu_E 
&:= \{ e \in E_1 ~|~ \chi^{-\mu} \bar{e} \text{ extends to a global section of } E \}, 
\\
\mathcal{F}^{\mu+}_E 
&:= \sum_{\mu' \gneq_\sigma \mu} \mathcal{F}^{\mu'}_E. 
\end{align}

\begin{prop}
\label{toric vector bundle via weight filtration}
Let $\mathcal{I}_o$ be the defining ideal of $o \in B_\sigma$. 
Then we have 
\begin{gather*} 
H^0 (B_\sigma, E) = \bigoplus_{\mu \in M} \chi^{-\mu} \mathcal{F}^\mu_E, 
\\
H^0 (B_\sigma, \mathcal{I}_o \otimes E) = \bigoplus_{\mu \in M} \chi^{-\mu} \mathcal{F}^{\mu+}_E. 
\end{gather*}
\end{prop}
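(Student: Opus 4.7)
The plan is to exploit $T$-equivariance throughout. Since $B_\sigma$ is affine and $T$-invariant and $o$ is a $T$-fixed point, both $H^0(B_\sigma, E)$ and $H^0(B_\sigma, \mathcal{I}_o \otimes E)$ are rational $T$-representations and therefore decompose into weight subspaces
\[ H^0(B_\sigma, E) = \bigoplus_{\mu \in M} H^0(B_\sigma, E)_\mu, \quad H^0(B_\sigma, E)_\mu := \{ s : s.t = \chi^\mu(t)\, s \}, \]
and similarly for the $T$-stable submodule $H^0(B_\sigma, \mathcal{I}_o \otimes E)$. I would then identify each weight component with $\chi^{-\mu} \mathcal{F}^\mu_E$ and $\chi^{-\mu} \mathcal{F}^{\mu+}_E$ respectively.

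For the first formula, the two bijections established in the paragraphs preceding the proposition already do most of the work: a rational section $s$ satisfies $s.t = \chi^\mu(t) s$ if and only if $s = \chi^{-\mu} \bar{e}$ for $e = s_1 \in E_1$, and by the very definition of $\mathcal{F}^\mu_E$ such an $s$ extends to a global section of $E$ exactly when $e \in \mathcal{F}^\mu_E$. Hence $H^0(B_\sigma, E)_\mu = \chi^{-\mu} \mathcal{F}^\mu_E$, and summing over $\mu$ yields the first equality.

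For the second formula, I would use that $E$ is locally free on the affine $B_\sigma$, so $H^0(B_\sigma, \mathcal{I}_o \otimes E) = I_o \cdot H^0(B_\sigma, E)$ with $I_o = H^0(B_\sigma, \mathcal{I}_o) = \bigoplus_{\nu \in (\sigma^\vee \cap M) \setminus \{0\}} \mathbb{C} \chi^\nu$. The easy inclusion: for $e \in \mathcal{F}^{\mu'}_E$ with $\mu' \gneq_\sigma \mu$, i.e. $\mu' - \mu \in (\sigma^\vee \cap M) \setminus \{0\}$, the factorization $\chi^{-\mu} \bar{e} = \chi^{\mu' - \mu} \cdot (\chi^{-\mu'} \bar{e})$ exhibits $\chi^{-\mu} \bar{e}$ as a product of an element of $I_o$ and a global section of $E$, whence $\chi^{-\mu} \mathcal{F}^{\mu+}_E \subseteq H^0(B_\sigma, \mathcal{I}_o \otimes E)_\mu$. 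For the converse, given a weight-$\mu$ element $s \in H^0(B_\sigma, \mathcal{I}_o \otimes E)$, I would write $s \in I_o \cdot H^0(B_\sigma, E)$ and project each summand onto its weight component to produce a weight-homogeneous presentation $s = \sum_\nu \chi^\nu s_{\mu+\nu}$ with $\nu \in (\sigma^\vee \cap M) \setminus \{0\}$ and $s_{\mu+\nu} = \chi^{-(\mu+\nu)} \bar{e_\nu}$ for some $e_\nu \in \mathcal{F}^{\mu+\nu}_E$; collapsing $\chi^\nu \cdot \chi^{-(\mu+\nu)} = \chi^{-\mu}$ yields $s = \chi^{-\mu} \overline{\sum_\nu e_\nu}$ with $\sum_\nu e_\nu \in \mathcal{F}^{\mu+}_E$.

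The main subtlety is the weight-refinement step in the converse above: an arbitrary presentation of $s$ in $I_o \cdot H^0(B_\sigma, E)$ need not be weight-homogeneous, so one must argue that it can be rearranged into one with each factor $s_{\mu+\nu}$ of the asserted weight. This follows by applying the $T$-weight projection to the $T$-equivariant multiplication map $I_o \otimes_\mathbb{C} H^0(B_\sigma, E) \to H^0(B_\sigma, E)$, which commutes with weight projections by reductivity. Once this standard point is in place, the remaining verifications are bookkeeping using the definition of $\mathcal{F}^\mu_E$ and the semigroup identity $\sigma^\vee + \sigma^\vee = \sigma^\vee$.
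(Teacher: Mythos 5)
Your proof is correct and follows essentially the same route as the paper: weight-decompose $H^0(B_\sigma, E)$, identify each weight space with $\chi^{-\mu}\mathcal{F}^\mu_E$ via the bijection $s \leftrightarrow s_1$, and compute $H^0(B_\sigma, \mathcal{I}_o\otimes E)$ by multiplying by $I_o = \bigoplus_{0\neq\nu\in\sigma^\vee\cap M}\mathbb{C}\chi^\nu$. The only cosmetic difference is that the paper packages your product-ideal computation (and the weight-homogenization step you rightly flag) as a tensor product over $\mathbb{C}[\sigma^\vee\cap M]$, which handles the bookkeeping automatically.
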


\begin{proof}
We note $E|_T$ is trivial as the $T$-action is free on $T \subset B_\sigma$, so that $H^0 (T, E)$ decomposes into eigenspaces $H^0 (T, E) \cong \mathbb{C} [M]^{\oplus r} = \bigoplus_{\mu \in M} (\mathbb{C} \chi^\mu)^{\oplus r}$, even though the representation $H^0 (T, E)$ is infinite dimensional. 
Since $H^0 (B_\sigma, E) \to H^0 (T, E)$ is injective, the weight decomposition of $H^0 (T, E)$ inherits to $H^0 (B_\sigma, E)$: 
\[ H^0 (B_\sigma, E) = \bigoplus_{\mu \in M} H^0 (B_\sigma, E)_\mu, \]
where $H^0 (B_\sigma, E)_\mu$ consists of $s \in H^0 (B_\sigma, E)$ with $s. t = \chi^\mu (t) s$. 

As explained, we can write $s \in H^0 (B_\sigma, E)_\mu$ as $s = \chi^{-\mu} \bar{e}$ for some $e \in E_1$. 
Since $s$ is a global section, we have $e \in \mathcal{F}^\mu E_1$. 
Thus we get 
\[ H^0 (B_\sigma, E)_\mu = \chi^{-\mu} \mathcal{F}^\mu_E, \]
which proves the first claim. 

Since $B_\sigma$ is affine, we have 
\[ H^0 (B_\sigma, \mathcal{I}_o \otimes E) = H^0 (B_\sigma, \mathcal{I}_o) \otimes_{H^0 (B_\sigma, \mathcal{O})} H^0 (B_\sigma, E). \]
On the other hand, we have 
\[ H^0 (B_\sigma, \mathcal{O}) = \mathbb{C} [\sigma^\vee \cap M], \quad H^0 (B_\sigma, \mathcal{I}_o) = \bigoplus_{0 \neq \mu \in \sigma^\vee \cap M} \mathbb{C}. \chi^\mu. \]
Then the last claim follows by 
\begin{align*} 
H^0 (B_\sigma, \mathcal{I}_o \otimes E) 
&= \bigoplus_{0 \neq \tilde{\mu} \in \sigma^\vee \cap M} \mathbb{C}. \chi^{\tilde{\mu}} \otimes_{\mathbb{C} [\sigma^\vee \cap M]} \bigoplus_{\mu \in M} \chi^{-\mu'} \mathcal{F}^{\mu'}_E 
\\
&= \sum_{0 \neq \tilde{\mu} \in \sigma^\vee \cap M} \bigoplus_{\mu' \in M} \chi^{-(\mu' - \tilde{\mu})} \mathcal{F}^{\mu'}_E = \bigoplus_{\mu \in M} \chi^{-\mu} \sum_{\mu' \gneq_\sigma \mu} \mathcal{F}^{\mu'}_E = \bigoplus_{\mu \in M} \chi^{-\mu} \mathcal{F}^{\mu+}_E. 
\end{align*}
\end{proof}

\begin{prop}
\label{weight filtration of quotient bundle}
Let $E, E'$ be a $T$-equivariant vector bundle over $B_\sigma$ and $\phi: E \twoheadrightarrow E'$ be a $T$-equivariant surjective map of vector bundles. 
Then $\mathcal{F}^\mu_{E'}$ is the image of $\mathcal{F}^\mu_E$ along $\phi_1: E_1 \twoheadrightarrow E'_1$. 
\end{prop}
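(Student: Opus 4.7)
The plan is to establish the two inclusions $\phi_1(\mathcal{F}^\mu_E) \subseteq \mathcal{F}^\mu_{E'}$ and $\mathcal{F}^\mu_{E'} \subseteq \phi_1(\mathcal{F}^\mu_E)$ separately. The forward inclusion is a direct consequence of the equivariance of $\phi$, while the reverse inclusion is the main point and requires cohomology vanishing on the affine toric variety $B_\sigma$.

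For the forward inclusion, I would first observe that for any equivariant morphism $\phi$ of $T$-equivariant vector bundles and any $e \in E_1$, the identity
\[
\phi \circ \bar{e} = \overline{\phi_1(e)}
\]
holds as rational sections on $T$, since $\phi(\bar{e}(\tau)) = \phi(e.\tau) = \phi_1(e).\tau = \overline{\phi_1(e)}(\tau)$ by equivariance of $\phi$ at $1 \in T$. Multiplying by the character $\chi^{-\mu}$ preserves this, so $\phi(\chi^{-\mu}\bar{e}) = \chi^{-\mu}\overline{\phi_1(e)}$ as rational sections. Hence if $\chi^{-\mu}\bar{e}$ extends to a global section of $E$, its image $\chi^{-\mu}\overline{\phi_1(e)}$ extends to a global section of $E'$, giving $\phi_1(e) \in \mathcal{F}^\mu_{E'}$.

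For the reverse inclusion, let $K := \ker \phi$, which is itself a $T$-equivariant vector bundle (kernels of surjections of vector bundles are locally free), and consider the equivariant short exact sequence
\[
0 \to K \to E \to E' \to 0.
\]
Since $B_\sigma$ is affine and $K$ is coherent, we have $H^1(B_\sigma, K) = 0$, so the induced map $H^0(B_\sigma, E) \to H^0(B_\sigma, E')$ is surjective; moreover it is $T$-equivariant, hence surjective on each weight eigenspace. Given $e' \in \mathcal{F}^\mu_{E'}$, Proposition \ref{toric vector bundle via weight filtration} identifies $\chi^{-\mu}\overline{e'}$ with an element of the weight-$\mu$ part $\chi^{-\mu}\mathcal{F}^\mu_{E'}$ of $H^0(B_\sigma, E')$. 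Lifting to a weight-$\mu$ global section $s$ of $E$, the same proposition writes $s = \chi^{-\mu}\bar{e}$ for some $e \in \mathcal{F}^\mu_E$, and the identity $\phi(\chi^{-\mu}\bar{e}) = \chi^{-\mu}\overline{\phi_1(e)}$ from the forward direction forces $\phi_1(e) = e'$ by comparing fibres at $1 \in T$.

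The crux of the argument is the equivariant surjectivity on global sections, which reduces to the affineness of $B_\sigma$; once this is in hand, combining it with Proposition \ref{toric vector bundle via weight filtration} essentially converts the statement about the weight filtrations into a statement about weight decompositions of equivariant global sections, which is almost tautological. The only subtlety to watch is that the lift provided by surjectivity lies in the correct weight eigenspace, which is secured by equivariance.
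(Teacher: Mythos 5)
Your proof is correct and takes essentially the same route as the paper's: both identify $\mathcal{F}^\mu_E$ and $\mathcal{F}^\mu_{E'}$ with the weight-$\mu$ eigenspaces of global sections via Proposition \ref{toric vector bundle via weight filtration} and then use the $T$-equivariant surjectivity of $H^0(B_\sigma, E) \to H^0(B_\sigma, E')$, which the paper leaves implicit but which you correctly ground in the affineness of $B_\sigma$ and the vanishing $H^1(B_\sigma, \ker\phi)=0$. The only cosmetic difference is that the paper cites Schur's lemma to pass from equivariant surjectivity to surjectivity on each weight eigenspace, whereas you argue this directly; both are fine.
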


\begin{proof}
By the above proposition, we can identify $\mathcal{F}^\mu_E$ with $H^0 (B_\sigma, E)_\mu$ and $\mathcal{F}^\mu_{E'}$ with $H^0 (B_\sigma, E')_\mu$. 
Since the induced map $H^0 (B_\sigma, E) \to H^0 (B_\sigma, E')$ is $T$-equivariant and surjective, $H^0 (B_\sigma, E')_\mu$ is the image of $H^0 (B_\sigma, E)_\mu$ by Schur's lemma, which shows the claim. 
\end{proof}

We introduce a partial order $\le_\sigma$ on $M$: 
\begin{equation} 
\mu \le_\sigma \mu' \iff \mu' - \mu \in \sigma^\vee \cap M. 
\end{equation}
Then we have 
\[ \mu \le_\sigma \sigma' \Rightarrow \mathcal{F}^{\mu'}_E \subset \mathcal{F}^\mu_E. \]
We also define 
\[ \mu \lneq_\sigma \mu' \iff \mu \le_\sigma \mu' \text{ and } \mu \neq \mu'. \]

We can describe $\mathcal{F}^\mu_E$ using a `diagonal' basis of $E_1$ as follows. 

\begin{lem}
\label{equivariant trivialization}
For any $T$-equivariant vector bundle $E$ over an affine toric variety $B_\sigma$, there exists a basis $\{ e_i \}_{i=1}^r$ of $E_1$ and a collection of characters $\{ \mu_i \in M \}_{i=1}^r$ such that each rational section $\chi^{-\mu_i} \bar{e}_i$ extends to a regular section of $E$ and $\{ \chi^{-\mu_i} \bar{e}_i \}_{i=1}^r$ gives a trivialization of $E$. 
For such basis, we have 
\[ \mathcal{F}^\mu_E = \langle e_i ~|~ \mu_i \ge_\sigma \mu \rangle. \]
\end{lem}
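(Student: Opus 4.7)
The plan is to reformulate the statement on the algebraic side and trivialize via graded Nakayama. First I would set $A := \mathbb{C}[\sigma^\vee \cap M] = H^0(B_\sigma, \mathcal{O})$ and view $E$ as the finitely generated $M$-graded projective $A$-module $P := H^0(B_\sigma, E)$, the grading coming from the $T$-action. Because $\sigma$ is full-dimensional and strictly convex, $\sigma^\vee$ is strictly convex, so $A_0 = \mathbb{C}$ and $\mathfrak{m}_o := \bigoplus_{0 \neq \mu \in \sigma^\vee \cap M} A_\mu$ is the maximal ideal at the unique $T$-fixed point $o \in B_\sigma$. Picking an interior cocharacter $\xi \in \sigma^\circ \cap N$ refines the $M$-grading to a positive $\mathbb{Z}$-grading on $A$ with the same irrelevant ideal $\mathfrak{m}_o$, so standard graded Nakayama is available.

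Next I would produce an equivariant basis of $P$. The quotient $P/\mathfrak{m}_o P$ is the fibre of $E$ at the fixed point $o$, an honest $T$-representation, so it splits into weight spaces. I would choose a homogeneous $\mathbb{C}$-basis $\bar p_1,\ldots,\bar p_r$ of $P/\mathfrak{m}_o P$ with weights $\mu_1,\ldots,\mu_r \in M$ and lift each to a homogeneous element $p_i \in P$ of weight $\mu_i$; graded Nakayama then forces the $p_i$ to generate $P$. Since $P$ is projective of rank $r$ over $A$, the surjection $A^r \twoheadrightarrow P$ sending the standard basis to the $p_i$ splits with projective kernel of rank zero, hence is an isomorphism. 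This already supplies an equivariant trivialization $\{p_i\}$ of $E$.

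Then I would translate back via Proposition \ref{toric vector bundle via weight filtration}: each weight-$\mu_i$ global section $p_i$ has the form $\chi^{-\mu_i}\bar e_i$ for a unique $e_i \in \mathcal{F}^{\mu_i}_E \subset E_1$, and evaluating at $1 \in T \subset B_\sigma$ gives $p_i(1) = e_i$, so the $\{e_i\}_{i=1}^r$ form a basis of $E_1$ because $\{p_i\}$ trivializes $E$. For the filtration identity, I would expand $e = \sum_i a_i e_i \in E_1$ as $\chi^{-\mu}\bar e = \sum_i a_i \chi^{\mu_i-\mu} p_i$; since the $p_i$ form an $A$-basis, this rational section is regular on $B_\sigma$ if and only if each coefficient $a_i \chi^{\mu_i-\mu}$ lies in $A = \mathbb{C}[\sigma^\vee \cap M]$, i.e.\ $a_i = 0$ whenever $\mu_i \not\ge_\sigma \mu$, yielding $\mathcal{F}^\mu_E = \langle e_i \mid \mu_i \ge_\sigma \mu\rangle$.

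The translation and the filtration identity are bookkeeping once the equivariant trivialization is in place, so the only substantive step is the trivialization itself, resting on graded Nakayama together with the rank-comparison argument upgrading the surjection $A^r \twoheadrightarrow P$ to an isomorphism. I do not anticipate a real obstacle here, but if one arose it would be in checking that graded Nakayama applies to the $M$-graded module $P$ — which is precisely why the initial reduction to a positive $\mathbb{Z}$-grading via an interior cocharacter $\xi$ is safer than arguing directly on the $M$-grading.
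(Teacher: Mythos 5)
Your proof is correct, and it reaches the trivialization by a genuinely different (module-theoretic) route than the paper, while the final filtration computation is essentially identical. Both arguments begin the same way: lift a weight basis of the fibre $E_o$ at the unique fixed point to homogeneous global sections (you via homogeneous lifts of a basis of $P/\mathfrak{m}_o P$, the paper via surjectivity of $H^0(B_\sigma,E)\to E_o$ on affine $B_\sigma$ followed by projection onto eigencomponents). Where you diverge is in proving these lifts form a global frame: the paper observes that the degeneracy locus is a closed $T$-invariant subset avoiding $o$, and since $o$ lies in the closure of every $T$-orbit ($\lim_{\rho\to\infty} x.\exp(-\rho\xi)=o$ for $\xi\in\sigma^\circ$), that locus must be empty; you instead invoke graded Nakayama for the positive $\mathbb{Z}$-grading induced by an interior cocharacter $\xi\in\sigma^\circ\cap N$ to get generation, and then upgrade the surjection $A^r\twoheadrightarrow P$ to an isomorphism because its kernel is a direct summand of $A^r$ of generic rank zero, hence torsion-free of rank zero, hence zero. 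Your choice of $\xi$ is exactly the algebraic shadow of the paper's geometric fact about orbit closures, so the two proofs rest on the same positivity. What each buys: the paper's degeneracy-locus argument is shorter and avoids any appeal to projectivity of $P$; yours is more uniform (it is the standard "graded projective over a positively graded ring with $A_0$ a field is graded free" argument) and makes the role of the fixed point transparent, at the cost of the extra rank-comparison step. All the supporting claims you use (finite generation and projectivity of $P$ by Serre--Swan, boundedness below of the induced $\mathbb{Z}$-grading on $P$, regularity of a rational section being detected on coefficients in a frame) are sound.
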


The first claim is equivalent to the following: $E$ is $T$-equivariantly isomorphic to the pull-back $p^* E_o = E_o$ of the fibre $E_o$ over the fixed point $o \in B_\sigma$ along $p: B_\sigma \to o$: the trivial bundle $B_\sigma \times E_o$ endowed with the $T$-action $(b, v). t = (b.t, v.t)$. 

\begin{proof}
As $B_\sigma$ is affine, for any basis $\{ e_{o, i} \}_{i=1}^r$ of $E_o$, we can take sections $\{ s_i \}_{i=1}^r$ of $E$ so that each $s_i (o) = e_{o, i}$. 
Take $\{ e_{o, i} \}_{i=1}^r$ so that each $e_{o, i}$ is an eigenvector of a character $\mu_i \in M$. 
In this case, we can take each $s_i$ as an eigensection of $\mu_i$, hence $s_i = \chi^{-\mu_i} \bar{e}_i$ for $e_i = s_i (1) \in E_1$, by replacing the original $s_i$ with the eigencomponent $a_{i \mu_i} s_{i \mu_i}$ appearing in the weight decomposition $s_i = \sum_\mu a_{i \mu} s_{i \mu}$: $s_i (o) = a_{i \mu_i} s_{i \mu_i} (o)$. 
Since $o \in B_\sigma$ is in the closure of any $T$-orbit, the set of points of $B_\sigma$ such that $\{ s_i (b) \}$ does not form a basis of $E_b$ is empty, so that $\{ s_i \}_{i=1}^r$ gives a trivialization of $E$. 

Finally, we compute 
\begin{align*}
\mathcal{F}^\mu_E 
&= \{ \sum_i a_i e_i ~|~ a_i \neq 0 \Rightarrow \chi^{\mu_i} \bar{e}_i = \chi^{\mu_i - \mu} \chi^\mu \bar{e}_i \in H^0 (B_\sigma, E) \} 
\\
&= \{ \sum_i a_i e_i ~|~ a_i \neq 0 \Rightarrow \chi^{\mu_i - \mu} \in H^0 (B_\sigma, \mathcal{O}) \}
\\
&= \langle e_i ~|~ \mu_i \ge_\sigma \mu \rangle. 
\end{align*}

\end{proof}

\begin{prop}
We have 
\[ \mathcal{F}^\mu_E \cap \mathcal{F}^{\mu'}_E = \sum_{\mu'' \ge_\sigma \mu, \mu'} \mathcal{F}^{\mu''}_E. \]
\end{prop}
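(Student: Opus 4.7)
The plan is to deduce both containments from the explicit description of $\mathcal{F}^\nu_E$ provided by Lemma \ref{equivariant trivialization}. Fix once and for all a basis $\{e_i\}_{i=1}^r$ of $E_1$ together with characters $\{\mu_i\}_{i=1}^r \subset M$ as supplied by that lemma, so that $\{\chi^{-\mu_i} \bar{e}_i\}_{i=1}^r$ is a global trivialization of $E$ and
\[ \mathcal{F}^\nu_E = \langle e_i \mid \mu_i \ge_\sigma \nu \rangle \quad \text{for every } \nu \in M. \]
The key feature is that one and the same basis describes $\mathcal{F}^\nu_E$ uniformly in $\nu$, which will allow us to handle $\mu$ and $\mu'$ simultaneously.

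For the inclusion $\supset$, if $\mu'' \ge_\sigma \mu$ and $e_i$ satisfies $\mu_i \ge_\sigma \mu''$, then by transitivity of $\le_\sigma$ we have $\mu_i \ge_\sigma \mu$, so $\mathcal{F}^{\mu''}_E \subset \mathcal{F}^\mu_E$; likewise $\mathcal{F}^{\mu''}_E \subset \mathcal{F}^{\mu'}_E$ when $\mu'' \ge_\sigma \mu'$. Summing over all such $\mu''$ yields the desired containment. (Alternatively, this half of the statement does not even need the trivialization: it is a direct consequence of the monotonicity $\mathcal{F}^{\mu''}_E \subset \mathcal{F}^\mu_E$ for $\mu'' \ge_\sigma \mu$ already recorded before the proposition.)

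For $\subset$, take $v = \sum_i a_i e_i \in \mathcal{F}^\mu_E \cap \mathcal{F}^{\mu'}_E$. Because $\{e_i\}$ is a basis and $\mathcal{F}^\mu_E$ is the span of those $e_i$ with $\mu_i \ge_\sigma \mu$, membership $v \in \mathcal{F}^\mu_E$ forces $a_i = 0$ whenever $\mu_i \not\ge_\sigma \mu$, and the same reasoning applied to $\mu'$ gives $a_i = 0$ whenever $\mu_i \not\ge_\sigma \mu'$. Hence $a_i \ne 0$ implies both $\mu_i \ge_\sigma \mu$ and $\mu_i \ge_\sigma \mu'$. For each such $i$ the singleton $a_i e_i$ lies in $\mathcal{F}^{\mu_i}_E$ with $\mu_i \ge_\sigma \mu, \mu'$, so decomposing
\[ v = \sum_{i \,:\, a_i \ne 0} a_i e_i \]
exhibits $v$ as an element of $\sum_{\mu'' \ge_\sigma \mu, \mu'} \mathcal{F}^{\mu''}_E$, as required.

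The only step that might appear delicate is that a single basis simultaneously diagonalizes the filtration for both $\mu$ and $\mu'$, but this is precisely the content of Lemma \ref{equivariant trivialization}, which produces a basis working uniformly in all characters. So no real obstacle arises: once the trivialization is in hand, the proposition reduces to the elementary book-keeping above.
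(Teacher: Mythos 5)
Your proof is correct and follows essentially the same route as the paper: both reduce the statement to the simultaneous diagonal basis supplied by Lemma \ref{equivariant trivialization} and the observation that the intersection of spans of subsets of one fixed basis is the span of the intersection of those subsets. No issues.
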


\begin{proof}
For $\mu'' \ge_\sigma \mu, \mu'$, we have $\mathcal{F}^{\mu''}_E \subset \mathcal{F}^\mu_E \cap \mathcal{F}^{\mu'}_E$. 
Take a basis $\{ e_i \}_{i=1}^r$ of $E_1$ as in the above lemma. 
Then the reverse inclusion follows by 
\[ \mathcal{F}^\mu_E \cap \mathcal{F}^{\mu'}_E = \langle e_i ~|~ \mu_i \ge_\sigma \mu, \mu' \rangle \subset \sum_{\mu'' \ge_\sigma \mu, \mu'} \mathcal{F}^{\mu'}_E. \]
\end{proof}

%

For $\xi \in \sigma$ and $\lambda \in \mathbb{R}$, we put 
\begin{equation} 
\mathcal{F}^\lambda_{E, \xi} := \sum_{\langle \mu, \xi \rangle \ge \lambda} \mathcal{F}^\mu_E \subset E_1. 
\end{equation}
Taking a basis $\{ e_i \}_{i=1}^r$ of $E_1$ as in the above lemma, we can express it as 
\begin{equation} 
\mathcal{F}^\lambda_{E, \xi} = \langle e_i ~|~ \langle \mu_i, \xi \rangle \ge \lambda \rangle. 
\end{equation}
Then we can easily check the following. 
\begin{itemize}
\item $\mathcal{F}^{\lambda'}_{E, \xi} = \bigcap_{\lambda < \lambda'} \mathcal{F}^\lambda_{E, \xi}$, 

\item $\mathcal{F}^\lambda_{E, \xi} = 0$ for $\lambda \ge \max_i \langle \mu_i, \xi \rangle$ and $\mathcal{F}^\lambda_{E, \xi} = E_1$ for $\lambda \le \min_i \langle \mu_i, \xi \rangle$. 
\end{itemize}
The family of filtrations $\{ \mathcal{F}_{E, \xi} \}_{\xi \in \sigma}$ recover the weight filtration $\mathcal{F}^\mu_E$ by 
\[ \mathcal{F}^\mu_E = \bigcap_{\langle \mu, \xi \rangle \ge \lambda, \xi \in \sigma} \mathcal{F}^\lambda_{E, \xi}. \]

We put 
\begin{equation} 
\mathcal{F}^{\lambda+}_{E, \xi} := \sum_{\lambda < \lambda'} \mathcal{F}^\lambda_{E, \xi}. 
\end{equation}
For $\mu \lneq_\sigma \mu'$ and $\xi \in \sigma^\circ$, we have $\langle \mu, \xi \rangle < \langle \mu', \xi \rangle$, so that we have an inclusion $\mathcal{F}^{\mu+}_E \subset \mathcal{F}^{\langle \mu, \xi \rangle+}_{E, \xi}$ for $\xi \in \sigma^\circ$. 
It induces a map 
\[ \mathcal{F}^\mu_E/\mathcal{F}^{\mu+}_E \to \mathcal{F}^{\langle \mu, \xi \rangle}_{E, \xi}/\mathcal{F}^{\langle \mu, \xi \rangle+}_{E, \xi}. \]

\begin{prop}
\label{fibre over the fixed point via filtration and weight filtration}
For $\xi \in \sigma^\circ$, we have a canonical isomorphisms
\[ \mathcal{F}^\lambda_{E, \xi}/\mathcal{F}^{\lambda+}_{E, \xi} \cong \bigoplus_{\langle \mu, \xi \rangle = \lambda} \mathcal{F}^\mu_E/\mathcal{F}^{\mu+}_E \cong E_{o, \lambda} := \bigoplus_{\langle \mu, \xi \rangle = \lambda} E_{o, \mu}, \]
where $E_{o, \mu}$ denotes the eigenspace of the character $\mu \in M$: $E_o = \bigoplus_{\mu \in M} E_{o, \mu}$. 
\end{prop}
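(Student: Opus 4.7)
The plan is to build the two canonical maps explicitly and then verify that they are isomorphisms by reducing to the diagonal basis of Lemma \ref{equivariant trivialization}. Throughout, the crucial point is that $\xi$ lies in the \emph{interior} $\sigma^\circ$, so that the partial order $\lneq_\sigma$ on $M$ is refined by the strict inequality $\langle \cdot, \xi\rangle <$, and that $o \in B_\sigma$ is a $T$-fixed point where every character $\chi^{\mu''}$ with $0 \neq \mu'' \in \sigma^\vee \cap M$ vanishes.

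First I would construct the map $\mathcal{F}^\mu_E/\mathcal{F}^{\mu+}_E \to E_{o,\mu}$ by evaluation at the origin: for $e \in \mathcal{F}^\mu_E$, the rational section $\chi^{-\mu}\bar{e}$ extends to $B_\sigma$ and transforms by $(\chi^{-\mu}\bar{e}).t = \chi^\mu(t)\,\chi^{-\mu}\bar{e}$, so $(\chi^{-\mu}\bar{e})(o) \in E_{o,\mu}$. If $e \in \mathcal{F}^{\mu'}_E$ with $\mu' \gneq_\sigma \mu$, then $\chi^{-\mu}\bar{e} = \chi^{\mu'-\mu} \cdot (\chi^{-\mu'}\bar{e})$ and $\chi^{\mu'-\mu}(o) = 0$ since $\mu'-\mu \in (\sigma^\vee \cap M)\setminus\{0\}$; hence the evaluation kills $\mathcal{F}^{\mu+}_E$ and descends to the quotient.

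Second I would construct the comparison map
\[
\bigoplus_{\langle \mu,\xi\rangle = \lambda} \mathcal{F}^\mu_E/\mathcal{F}^{\mu+}_E \longrightarrow \mathcal{F}^\lambda_{E,\xi}/\mathcal{F}^{\lambda+}_{E,\xi}
\]
induced by the inclusions $\mathcal{F}^\mu_E \hookrightarrow \mathcal{F}^\lambda_{E,\xi}$. The fact that each $\mathcal{F}^{\mu+}_E$ lands in $\mathcal{F}^{\lambda+}_{E,\xi}$ is precisely the observation already made in the excerpt: for $\mu' \gneq_\sigma \mu$ and $\xi \in \sigma^\circ$, $\langle \mu',\xi\rangle > \langle \mu,\xi\rangle = \lambda$. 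This is where the interiority hypothesis enters essentially.

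Finally I would check both maps are isomorphisms by picking a diagonal basis $\{e_i\}_{i=1}^r$ of $E_1$ with weights $\{\mu_i\}$ as in Lemma \ref{equivariant trivialization}. In this basis one reads off $\mathcal{F}^\mu_E = \langle e_i : \mu_i \ge_\sigma \mu\rangle$, $\mathcal{F}^{\mu+}_E = \langle e_i : \mu_i \gneq_\sigma \mu\rangle$, $\mathcal{F}^\lambda_{E,\xi} = \langle e_i : \langle \mu_i,\xi\rangle \ge \lambda\rangle$ and $\mathcal{F}^{\lambda+}_{E,\xi} = \langle e_i : \langle \mu_i,\xi\rangle > \lambda\rangle$, so that both sides of each asserted isomorphism become identified with $\langle e_i : \langle \mu_i,\xi\rangle = \lambda\rangle$, respectively with $\langle e_{o,i} : \langle \mu_i,\xi\rangle = \lambda\rangle \subset E_o$ under the trivialization $e_i \mapsto (\chi^{-\mu_i}\bar{e}_i)(o) = e_{o,i}$. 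The only genuine subtlety is this last canonicity check: having constructed the maps intrinsically (independent of the basis) and matched them with the basis-dependent identifications, no further choice survives. The main obstacle—really a verification rather than an obstacle—is confirming that the map descending from $\sum_{\langle \mu,\xi\rangle=\lambda}\mathcal{F}^\mu_E$ annihilates the full subspace $\mathcal{F}^{\lambda+}_{E,\xi}$ and not just the obvious pieces $\mathcal{F}^{\mu+}_E$; this is handled cleanly by the basis decomposition once the interiority of $\xi$ is in hand.
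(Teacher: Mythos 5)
Your proposal is correct, and the argument goes through; it takes a somewhat different route from the paper's. You construct both canonical maps first (the evaluation $\mathcal{F}^\mu_E/\mathcal{F}^{\mu+}_E \to E_{o,\mu}$, $[e] \mapsto (\chi^{-\mu}\bar{e})(o)$, and the inclusion-induced comparison map) and then verify bijectivity in one stroke by computing all four filtration subspaces in the diagonal basis of Lemma \ref{equivariant trivialization}. The paper instead argues structurally: injectivity of $\mathcal{F}^\mu_E/\mathcal{F}^{\mu+}_E \to \mathcal{F}^\lambda_{E,\xi}/\mathcal{F}^{\lambda+}_{E,\xi}$ via the kernel computation $\mathcal{F}^\mu_E \cap \mathcal{F}^{\lambda+}_{E,\xi} \subset \mathcal{F}^{\mu+}_E$ (using the intersection formula $\mathcal{F}^\mu_E \cap \mathcal{F}^{\mu'}_E = \sum_{\mu'' \ge_\sigma \mu,\mu'}\mathcal{F}^{\mu''}_E$), then directness of the sum and surjectivity, and for the second isomorphism it builds the map in the opposite direction $E_{o,\mu} \to \mathcal{F}^\mu_E/\mathcal{F}^{\mu+}_E$ by lifting, with well-definedness coming from the description of $H^0(B_\sigma, \mathcal{I}_o \otimes E)$ in Proposition \ref{toric vector bundle via weight filtration}. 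Your version is more computational but self-contained once the basis exists; its one small implicit step is the identity $\mathcal{F}^{\mu+}_E = \langle e_i : \mu_i \gneq_\sigma \mu\rangle$, which needs the observation that $\mu_i \ge_\sigma \mu' \gneq_\sigma \mu$ forces $\mu_i \gneq_\sigma \mu$ (strict convexity of $\sigma^\vee$, i.e.\ $\sigma^\vee \cap (-\sigma^\vee) = 0$, rules out $\mu_i = \mu$); your well-definedness check for the evaluation map via $\chi^{\mu'-\mu}(o)=0$ is if anything cleaner than the paper's appeal to $\mathcal{I}_o$. The paper's approach buys reusable intermediate statements; yours buys brevity and transparency of the final identification with $\langle e_i : \langle \mu_i,\xi\rangle = \lambda\rangle$.
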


\begin{proof}
We remark for $\mu \neq \mu'$ with $\langle \mu, \xi \rangle \le \langle \mu', \xi \rangle$, we have $\mu'' \gneq_\sigma \mu$ for every $\mu'' \ge_\sigma \mu, \mu'$, so that we get 
\[ \mathcal{F}^\mu_E \cap \mathcal{F}^{\mu'}_E = \sum_{\mu'' \ge_\sigma \mu, \mu'} \mathcal{F}^{\mu''}_E \subset \sum_{\mu'' \gneq_\sigma \mu} \mathcal{F}^{\mu''}_E \subset \mathcal{F}^{\mu+}_E. \]

We firstly see the map $\mathcal{F}^\mu_E/\mathcal{F}^{\mu+}_E \to \mathcal{F}^\lambda_{E, \xi}/\mathcal{F}^{\lambda+}_{E, \xi}$ is injective for $\mu$ with $\langle \mu, \xi \rangle = \lambda$. 
The kernel consists of the image of $\mathcal{F}^\mu_E \cap \mathcal{F}^{\lambda +}_{E, \xi} = \sum_{\langle \mu', \xi \rangle > \lambda} \mathcal{F}^\mu_E \cap \mathcal{F}^{\mu'}_E$. 
By the above remark, we have $\mathcal{F}^\mu_E \cap \mathcal{F}^{\lambda +}_{E, \xi} \subset \mathcal{F}^{\mu+}_E$, which proves the injectivity. 

For $\mu \neq \mu'$ with $\langle \mu, \lambda \rangle = \langle \mu, \xi \rangle$, we have $\mathcal{F}^\mu_E \cap \mathcal{F}^{\mu'}_E \subset \mathcal{F}^{\mu+}_E \cap \mathcal{F}^{\mu'+}_E$. 
It follows that $\mathcal{F}^\mu_E/\mathcal{F}^{\mu+}_E \cap \mathcal{F}^{\mu'}_E/\mathcal{F}^{\mu'+}_E = 0$, hence the sum of the subspaces $\mathcal{F}^\mu_E/\mathcal{F}^{\mu+}_E \subset \mathcal{F}^\lambda_{E, \xi}/\mathcal{F}^{\lambda+}_{E, \xi}$ is a direct product. 

Since 
\[ \mathcal{F}^\lambda_{E, \xi}/\mathcal{F}^{\lambda+}_{E, \xi} = \sum_{\langle \mu, \xi \rangle = \lambda} \mathcal{F}^\mu_E/\mathcal{F}^\mu_E \cap \mathcal{F}^{\lambda+}_{E, \xi}, \]
the map $\bigoplus_{\langle \mu, \xi \rangle = \lambda} \mathcal{F}^\mu_E/\mathcal{F}^{\mu+}_E \to \mathcal{F}^\lambda_{E, \xi}/\mathcal{F}^{\lambda+}_{E, \xi}$ is surjective. 
Therefore, we obtain the first isomorphism. 

To see the second isomorphism, we construct an isomorphism $E_{o, \mu} \to \mathcal{F}^\mu_E/\mathcal{F}^{\mu+}_E$. 
As in the previous lemma, for each $e_o \in E_{o, \mu}$, we can find $e \in \mathcal{F}^\mu_E$ satisfying $(\chi^{-\mu} \bar{e}) (o) = e_o$. 
The element $[e] \in \mathcal{F}^\mu_E/\mathcal{F}^{\mu+}_E$ is independent of the choice of such $e$. 
Indeed, take another $e' \in \mathcal{F}^\mu_E$ with $(\chi^{-\mu} \bar{e}') (o) = e_o$, then since $(\chi^{-\mu} \overline{e-e'}) (o) = 0$, we have $\chi^{-\mu} \overline{e-e'} \in H^0 (B_\sigma, \mathcal{I}_o \otimes E)$, hence $e - e' \in \mathcal{F}^{\mu+}_E$ by Proposition \ref{toric vector bundle via weight filtration}. 
Thus we get a well-defined linear map $E_{o, \mu} \to \mathcal{F}^\mu_E/\mathcal{F}^{\mu+}_E: e_o \mapsto [e]$. 
The inverse map is given by $[e] \mapsto (\chi^{-\mu} \bar{e}) (o)$, which is well-defined as $(\chi^{-\mu} \bar{e}) (o) = 0$ for $e \in \mathcal{F}^{\mu+}_E$. 
\end{proof}

Let $\sigma \subset N_\mathbb{R}$ and $\sigma' \subset N'_\mathbb{R}$ be toric cones and $\phi_*: N' \to N$ be a morphism of lattices which induces a linear map $\phi_*: N_\mathbb{R} \to N'_\mathbb{R}$ mapping $\sigma'$ into $\sigma$: $\phi_* (\sigma') \subset \sigma$. 
Then we have the induced morphism $\phi: B_{\sigma'} \to B_\sigma$ of toric varieties, which maps $o$ to $o$ and $1$ to $1$. 
Since $(\phi^* E)_1 = E_1$, we can compare $\mathcal{F}_E$ and $\mathcal{F}_{\phi^* E}$. 

\begin{prop}
\label{base change of weight filtration}
We have 
\begin{gather*} 
\mathcal{F}^{\mu'}_{\phi^* E} = \sum_{\phi^* \mu \ge_{\sigma'} \mu'} \mathcal{F}^{\mu}_E, 
\\
\mathcal{F}^\lambda_{\phi^* E, \xi'} = \mathcal{F}^\lambda_{E, \phi_* \xi'}
\end{gather*}
for $\xi' \in \sigma'$. 
\end{prop}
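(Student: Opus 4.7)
The plan is to reduce everything to Lemma \ref{equivariant trivialization}: pick a basis $\{ e_i \}_{i=1}^r$ of $E_1$ together with characters $\{ \mu_i \in M \}_{i=1}^r$ so that $\{ \chi^{-\mu_i} \bar{e}_i \}$ trivializes $E$ on $B_\sigma$. Pulling back by $\phi: B_{\sigma'} \to B_\sigma$, the rational sections $\chi^{-\phi^* \mu_i} \overline{e_i}$ of $\phi^* E$ extend to regular sections trivializing $\phi^* E$ over $B_{\sigma'}$, because pull-back commutes with tensoring by characters (using $\chi^{\mu} \circ \phi = \chi^{\phi^* \mu}$ on $T'$) and the trivialization condition is preserved by pull-back along the torus-equivariant map $\phi$. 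Hence $(\phi^*E)_1 = E_1$ together with the characters $\{ \phi^* \mu_i \}_{i=1}^r$ gives an equivariant trivialization of $\phi^* E$ in the sense of Lemma \ref{equivariant trivialization}, and that lemma yields the explicit description
\[ \mathcal{F}^{\mu'}_{\phi^* E} = \langle e_i ~|~ \phi^* \mu_i \ge_{\sigma'} \mu' \rangle. \]

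For the first identity, I would use the compatibility $\phi_*(\sigma') \subset \sigma$, which dualizes to $\phi^*(\sigma^\vee) \subset (\sigma')^\vee$ and hence preserves the partial orders: $\mu_i \ge_\sigma \mu$ implies $\phi^* \mu_i \ge_{\sigma'} \phi^* \mu$. To show $\supseteq$, take any $\mu$ with $\phi^* \mu \ge_{\sigma'} \mu'$ and any basis vector $e_i \in \mathcal{F}^\mu_E$, i.e.\ $\mu_i \ge_\sigma \mu$; then $\phi^* \mu_i \ge_{\sigma'} \phi^* \mu \ge_{\sigma'} \mu'$, so $e_i \in \mathcal{F}^{\mu'}_{\phi^* E}$. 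To show $\subseteq$, take $e_i$ with $\phi^* \mu_i \ge_{\sigma'} \mu'$ and set $\mu := \mu_i$; then trivially $\mu_i \ge_\sigma \mu$ and $\phi^* \mu \ge_{\sigma'} \mu'$, so $e_i$ lies in the RHS. Both inclusions being clear on the diagonal basis, the equality follows.

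The second identity then reduces to an elementary manipulation. Starting from the definitions, $\mathcal{F}^\lambda_{\phi^* E, \xi'} = \sum_{\langle \mu', \xi' \rangle \ge \lambda} \mathcal{F}^{\mu'}_{\phi^* E}$ combined with the first identity gives
\[ \mathcal{F}^\lambda_{\phi^* E, \xi'} = \sum_{\langle \mu', \xi' \rangle \ge \lambda} \sum_{\phi^* \mu \ge_{\sigma'} \mu'} \mathcal{F}^\mu_E, \]
while $\mathcal{F}^\lambda_{E, \phi_* \xi'} = \sum_{\langle \phi^* \mu, \xi' \rangle \ge \lambda} \mathcal{F}^\mu_E$ by adjunction $\langle \mu, \phi_* \xi' \rangle = \langle \phi^* \mu, \xi' \rangle$. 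For $\supseteq$, any $\mu$ with $\langle \phi^* \mu, \xi' \rangle \ge \lambda$ contributes via the choice $\mu' := \phi^* \mu$. For $\subseteq$, if $\phi^* \mu \ge_{\sigma'} \mu'$ and $\xi' \in \sigma'$, then $\langle \phi^* \mu - \mu', \xi' \rangle \ge 0$, so $\langle \phi^* \mu, \xi' \rangle \ge \langle \mu', \xi' \rangle \ge \lambda$, and hence $\mathcal{F}^\mu_E$ is also captured in $\mathcal{F}^\lambda_{E, \phi_* \xi'}$.

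No serious obstacle is anticipated; the only point requiring care is the base change of the equivariant trivialization, for which I would either check directly on $T'$-sections using the formula $\phi^*(\chi^{-\mu} \bar{e}) = \chi^{-\phi^*\mu} \bar{e}$ on the dense torus and extend by the fact that pull-back preserves regularity, or equivalently invoke that $\phi^* E$ is $T'$-equivariantly trivial as the pull-back of the fibre $E_o$ along $B_{\sigma'} \to o$, giving the same local structure used to establish Lemma \ref{equivariant trivialization}.
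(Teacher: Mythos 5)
Your proof is correct and follows essentially the same route as the paper: pull back the equivariant trivialization of Lemma \ref{equivariant trivialization} so that the same basis $\{e_i\}$ with characters $\{\phi^*\mu_i\}$ trivializes $\phi^*E$, apply the lemma's explicit description of the weight filtration, and then rewrite the resulting span using $\phi^*(\sigma^\vee)\subset(\sigma')^\vee$ and the adjunction $\langle\mu,\phi_*\xi'\rangle=\langle\phi^*\mu,\xi'\rangle$. The only difference is that you spell out the two inclusions in the middle equality that the paper states without comment; this is a welcome clarification rather than a deviation.
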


\begin{proof}
Take a basis $\{ e_i \}_{i=1}^r$ of $E_1$ as in the previous lemma. 
Then the basis $\{ \phi^* e_i = e_i \}_{i=1}^r$ of $\phi^* E_1 = E_1$ enjoys the same property for the collection $\{ \phi^* \mu_i \in M' \}_{i=1}^r$ of weights. 
Thus we have 
\[ \mathcal{F}^{\mu'}_{\phi^* E} = \langle e_i ~|~ \phi^* \mu_i \ge_{\sigma'} \mu' \rangle = \sum_{\phi^* \mu \ge_{\sigma'} \mu'} \langle e_i ~|~ \mu_i \ge_\sigma \mu \rangle = \sum_{\phi^* \mu \ge_{\sigma'} \mu'} \mathcal{F}^{\mu}_E. \]
As a consequence, we get 
\[ \mathcal{F}^\lambda_{\phi^* E, \xi'} = \sum_{\langle \mu', \xi' \rangle \ge \lambda} \mathcal{F}^{\mu'}_{\phi^* E} = \sum_{\langle \mu', \xi' \rangle \ge \lambda} \sum_{\phi^* \mu \ge_{\sigma'} \mu'} \mathcal{F}^{\mu}_E = \sum_{\langle \phi^* \mu, \xi' \rangle \ge \lambda} \mathcal{F}^{\mu}_E = \mathcal{F}^\lambda_{E, \phi_* \xi'}. \]
\end{proof}

\subsection{Polyhedral configuration and family of filtrations}

\subsubsection{Polyhedral configuration}

Now we introduce polyhedral configuration. 
The notion is a simple generalization of test configuration to general affine toric base $B_\sigma$. 
Let $(X, L)$ be a polarized scheme. 

\begin{defin}[polyhedral configuration]
\label{polyhedral configuration}
Let $\sigma \subset \mathfrak{t}$ be a toric cone. 
A \textit{$\sigma$-configuration} of $(X, L)$ is a $T$-equivariant proper flat family of polarized schemes $\pi: (\mathcal{X}, \mathcal{L}) \to B_\sigma$ endowed with a $T$-equivariant relatively ample $\mathbb{Q}$-line bundle $\mathcal{L}$ and an isomorphism $\iota: (X, L) \xrightarrow{\sim} (\mathcal{X}_1, \mathcal{L}|_{\mathcal{X}_1})$. 

We call the fibre $\mathcal{X}_o$ over the point $o \in B_\sigma$ the \textit{central fibre} and the fibre $\mathcal{X}_1$ over the point $1 \in B_\sigma$ the \textit{general fibre}. 
\end{defin}

A $T$-equivariant relatively ample $\mathbb{Q}$-line bundle is a pair $\mathcal{L} = (l, \hat{\mathcal{L}})$ of a positive integer $l$ and a $T$-equivariant relatively ample line bundle $\hat{\mathcal{L}}$. 
We often denote $\hat{\mathcal{L}}$ by $l \mathcal{L}$. 
An isomorphism of $T$-equivariant $\mathbb{Q}$-line bundles $\mathcal{L}, \mathcal{L}'$ is a $T$-equivariant isomorphism of line bundles $\mathcal{L}^{\otimes m} := (l \mathcal{L})^{\otimes m/l } \cong (l' \mathcal{L}')^{\otimes m/l'} =: (\mathcal{L}')^{\otimes m}$ for some $m \in \mathbb{Z}$ dividing $l, l'$. 

We often identify $\mathcal{L}$ with its equivariant first Chern class $c_{1, T} (\mathcal{L}) = l^{-1} c_{1, T} (l \mathcal{L}) \in H^2_T (\mathcal{X}, \mathbb{R})$, whereas we later make use of the $\mathbb{Q}$-line bundle structure of $\mathcal{L}$ in order to assign a family of filtrations $\{ \mathcal{F}_{(\mathcal{X}, \mathcal{L}; \xi)} \}_{\xi \in \sigma}$. 

For an element $\xi \in \sigma \cap N$, we associate a $\mathbb{G}_m$-equivariant morphism $\overline{\exp}_\xi: \mathbb{A}^1 \to B_\sigma$ by 
\[ t \mapsto  \begin{cases} 1. \chi_\xi (t) = (\mu \mapsto t^{\langle \mu, \xi \rangle}) & t \neq 0 \\ 0 & t =0 \end{cases}. \] 
For a $\sigma$-configuration $(\mathcal{X}/B_\sigma, \mathcal{L})$, we denote by $(\mathcal{X}_\xi, \mathcal{L}_\xi)$ the pull-back along $\overline{\chi}_\xi$. 
It gives a (non-normal) test configuration of $(X, L)$. 

%

\begin{eg}
\label{product sigma-configuration}
Let $(X, L)$ be a polarized scheme with a $T$-action. 
For any toric cone $\sigma \subset \mathfrak{t}$, we can construct a $\sigma$-configuration 
\begin{equation} 
(X_\sigma, L_\sigma) = (X \times B_\sigma, L \times B_\sigma) 
\end{equation}
whose $T$-action is given by $(x, b). t = (x.t, b.t)$. 

A similar construction can be applied to test configuration. 
Let $(\mathcal{X}, \mathcal{L})$ be a $T$-equivariant test configuration of a $T$-equivariant polarized scheme $(X, L)$. 
For a toric cone $\sigma \subset \mathfrak{t}$, we get a $[0, \infty) \times \sigma$-configuration 
\begin{equation}
(\mathcal{X}_\sigma, \mathcal{L}_\sigma) = (\mathcal{X} \times B_\sigma, \mathcal{L} \times B_\sigma) 
\end{equation} 
whose $\mathbb{G}_m \times T$-action is given by $(x, b). (s, t) = (x.s.t, b.t)$. 
\end{eg}


\begin{eg}
\label{polyhedral configuration via Hilbert scheme}
Let $X \hookrightarrow \mathbb{C}P^N$ be the Kodaira embedding by the linear system $|mL|$. 
It defines a point $[X]$ of Hilbert scheme $\mathrm{Hilb} (\mathbb{C}P^N)$. 
For a torus $T'$-action on $\mathbb{C}P^N$, the $T'$-equivariant morphism $f: T' \to \mathrm{Hilb} (\mathbb{C}P^N): t \mapsto [X]. t$ of schemes gives a rational map $f: B' \dashrightarrow \mathrm{Hilb} (\mathbb{C}P^N)$ from a projective toric variety $B' \supset T'$. 
By resolving the indeterminancy, we obtain a $T'$-equivariant morphism $\tilde{f}: B_\Sigma \to \mathrm{Hilb} (\mathbb{C}P^N)$ from a normal projective toric variety $B_\Sigma$ associated to a fan $\Sigma$. 
Blowing up further if necessary, we may assume each $\sigma \in \Sigma$ is strictly convex. 
Fix $\sigma \in \Sigma$ and put $\mathfrak{t} := \mathbb{R} \sigma$, then $\sigma$ is full-dimensional in $\mathfrak{t}$. 
Let $T \subset T'$ be the subtorus associated to $\mathfrak{t}$. 
Then $B_{\sigma \subset \mathfrak{t}} = B_\sigma/(T'/T)$ is a toric $T$-variety and we have a $T$-equivariant morphism $B_{\sigma \subset \mathfrak{t}} \to B_\Sigma$. 
We obtain a $\sigma$-configuration $(\mathcal{X}/B_\sigma, \mathcal{L})$ by pulling back the universal family $(\mathcal{U}, \mathcal{O} (1)|_\mathcal{U}) \to \mathrm{Hilb} (\mathbb{C}P^N)$ along $\tilde{f}|_{B_{\sigma \subset \mathfrak{t}}}$. 
\end{eg}

\subsubsection{Polyhedral configuration and family of filtrations}

We assign a family of filtrations $\{ \mathcal{F}_{(\mathcal{X}, \mathcal{L}; \xi)} \}_{\xi \in \sigma}$ to a polyhedral configuration $(\mathcal{X}/B_\sigma, \mathcal{L})$. 
As we will see, each filtration in the family is finitely generated, and conversely, every finitely generated filtration can be obtained in this way (but not canonical). 
In the study of K-stability, the practical use of general finitely generated filtration is firstly unveiled in \cite{CSW} (see also \cite{Sze2}). 
It is called $\mathbb{R}$-degeneration in \cite{DS} and $\mathbb{R}$-test configuration in \cite{BJ4}. 
A gemstone of polyhedral configuration is appeared in \cite{HL2}, where polyhedral configuration is recognized as a geometric realization of a single finitely generated filtration. 
Here we strengthen polyhedral configuration is useful because it realizes an intuitive treatment of \textit{family of filtrations} rather than a single filtration. 

Let $(\mathcal{X}/B_\sigma, \mathcal{L})$ be a polyhedral configuration with $\mathcal{L} = (l, \hat{\mathcal{L}})$. 
In what follows, we take $d \in \mathbb{N}_+$ so that $R^{(d)} = \bigoplus_{m \in \mathbb{N}^{(d)}} R_m$ is generated in $R_d$, $l$ divides $d$ and $R^i \pi_* \mathcal{L}^{\otimes m} = 0$ for every $i \ge 1$ and $m \in \mathbb{N}^{(d)}$. 
Then $\pi_* \mathcal{L}^{\otimes m}$ is locally free for $m \in \mathbb{N}^{(d)}$, hence gives a $T$-equivariant vector bundle. 
We can identify the fibre $(\pi_* \mathcal{L}^{\otimes m})_1$ of $1 \in B_\sigma$ with $R_m = H^0 (X, L^{\otimes m})$ via the given isomorphism $\iota: (X, L) \cong (\mathcal{X}_1, \mathcal{L}|_{\mathcal{X}_1})$. 
We also have
\[ H^i (\mathcal{X}, \mathcal{L}^{\otimes m}) = 
\begin{cases} 
H^0 (B_\sigma, \pi_* \mathcal{L}^{\otimes m}) 
& i=0 
\\ 0 
& i > 0 
\end{cases} \]
and 
\[ H^i (\mathcal{X}, \mathcal{I}_{\mathcal{X}_o} \otimes \mathcal{L}^{\otimes m}) = 
\begin{cases} 
H^0 (B_\sigma, \mathcal{I}_o \otimes \pi_* \mathcal{L}^{\otimes m}) 
& i=0 
\\ 0 
& i > 0 
\end{cases} \]
by Leray spectral sequence. 
(Note $B_\sigma$ is affine. )

For $\mu \in M$ and $m \in \mathbb{N}^{(d)}$, we put 
\begin{align}
\mathcal{F}^\mu_{(\mathcal{X}, \mathcal{L})} R_m 
&:= \{ s \in H^0 (X, L^{\otimes m}) ~|~ \chi^{-\mu} \bar{s}  \text{ extends to a section of } \mathcal{L}^{\otimes m} \}
\\ \notag
&= \mathcal{F}^\mu_{\pi_* \mathcal{L}^{\otimes m}} (\pi_* \mathcal{L}^{\otimes m})_1. 
\end{align}
We obviously have 
\begin{equation} 
\mathcal{F}_{(\mathcal{X}, \mathcal{L})}^\mu R_m \cdot \mathcal{F}_{(\mathcal{X}, \mathcal{L})}^{\mu'} R_{m'} \subset \mathcal{F}_{(\mathcal{X}, \mathcal{L})}^{\mu+\mu'} R_{m+m'}. 
\end{equation}

We can recover the $\sigma$-configuration $(\mathcal{X}/B_\sigma, \mathcal{L})$ from $\{ \mathcal{F}^\mu_{(\mathcal{X}, \mathcal{L})} \}_{\mu \in M}$ as follows. 

\begin{prop}
There are natural isomorphisms of rings: 
\begin{gather}
S (B_\sigma) := \mathbb{C} [\sigma^\vee \cap M] \cong \bigoplus_{\mu \in M} \chi^{- \mu} \mathcal{F}^\mu_{(\mathcal{X}, \mathcal{L})} R_0, 
\\
\mathcal{R} (\mathcal{X}, \mathcal{L}) := \bigoplus_{m \in \mathbb{N}^{(d)}} H^0 (\mathcal{X}, \mathcal{L}^{\otimes m}) \cong \bigoplus_{m \in \mathbb{N}^{(d)}} \bigoplus_{\mu \in M} \chi^{- \mu} \mathcal{F}^\mu_{(\mathcal{X}, \mathcal{L})} R_m, 
\\
\mathcal{R} (\mathcal{X}_o, \mathcal{L}|_{\mathcal{X}_o}) := \bigoplus_{m \in \mathbb{N}^{(d)}} H^0 (\mathcal{X}_o, \mathcal{L}|_{\mathcal{X}_o}^{\otimes m}) \cong \bigoplus_{m \in \mathbb{N}^{(d)}} \bigoplus_{\mu \in M} \chi^{- \mu} \frac{\mathcal{F}^\mu_{(\mathcal{X}, \mathcal{L})} R_m}{\mathcal{F}^{\mu+}_{(\mathcal{X}, \mathcal{L})} R_m}. 
\end{gather}
where we put 
\[ \mathcal{F}^{\mu+}_{(\mathcal{X}, \mathcal{L})} R_m := \sum_{\mu' \gneq_\sigma \mu} \mathcal{F}^{\mu'}_{(\mathcal{X}, \mathcal{L})} R_m. \]
\end{prop}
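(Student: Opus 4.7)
The plan is to reduce everything to Proposition~\ref{toric vector bundle via weight filtration} applied to the $T$-equivariant vector bundles $E_m := \pi_*\mathcal{L}^{\otimes m}$ on $B_\sigma$ for $m \in \mathbb{N}^{(d)}$. The cohomological hypotheses on $d$ imposed just before the proposition give
\[
H^0(\mathcal{X}, \mathcal{L}^{\otimes m}) = H^0(B_\sigma, E_m), \qquad H^0(\mathcal{X}, \mathcal{I}_{\mathcal{X}_o} \otimes \mathcal{L}^{\otimes m}) = H^0(B_\sigma, \mathcal{I}_o \otimes E_m),
\]
together with the vanishing of higher cohomology. Via the trivialization $\iota$ the fibre $(E_m)_1$ is canonically identified with $R_m$, and a direct comparison of definitions shows that under this identification $\mathcal{F}^\mu_{E_m}$ is sent onto $\mathcal{F}^\mu_{(\mathcal{X}, \mathcal{L})} R_m$: in both cases we are asking that the twist $\chi^{-\mu}\bar{s}$ of the canonical rational section extends regularly.

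Granted these identifications, the second isomorphism follows by summing the first displayed identity of Proposition~\ref{toric vector bundle via weight filtration} over $m \in \mathbb{N}^{(d)}$. Compatibility with the ring structure is immediate from $\chi^{-\mu}\bar{s}\cdot\chi^{-\mu'}\bar{s}' = \chi^{-(\mu+\mu')}\overline{ss'}$ and the multiplicativity $\mathcal{F}^\mu R_m \cdot \mathcal{F}^{\mu'} R_{m'} \subset \mathcal{F}^{\mu+\mu'} R_{m+m'}$. The first isomorphism is the $m=0$ case once we note $\pi_*\mathcal{O}_\mathcal{X} = \mathcal{O}_{B_\sigma}$, which holds by flatness and geometric connectedness of the fibres under the standing assumption $H^0(X, \mathcal{O}_X) = \mathbb{C}$; then $H^0(\mathcal{X}, \mathcal{O}_\mathcal{X}) = H^0(B_\sigma, \mathcal{O}_{B_\sigma}) = \mathbb{C}[\sigma^\vee \cap M]$ matches the weight decomposition on the right directly.

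For the third isomorphism, apply $H^0$ to the short exact sequence
\[
0 \to \mathcal{I}_{\mathcal{X}_o} \otimes \mathcal{L}^{\otimes m} \to \mathcal{L}^{\otimes m} \to \mathcal{L}^{\otimes m}|_{\mathcal{X}_o} \to 0.
\]
The vanishing of $H^1(\mathcal{X}, \mathcal{I}_{\mathcal{X}_o} \otimes \mathcal{L}^{\otimes m})$ recalled above yields exactness on the right, so that $H^0(\mathcal{X}_o, \mathcal{L}^{\otimes m}|_{\mathcal{X}_o})$ is identified with the quotient of $H^0(\mathcal{X}, \mathcal{L}^{\otimes m})$ by $H^0(\mathcal{X}, \mathcal{I}_{\mathcal{X}_o} \otimes \mathcal{L}^{\otimes m})$. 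All arrows are $T$-equivariant, so the quotient is computed weight-by-weight; applying the two parts of Proposition~\ref{toric vector bundle via weight filtration}, the weight-$\mu$ pieces of numerator and denominator are $\chi^{-\mu}\mathcal{F}^\mu_{(\mathcal{X},\mathcal{L})} R_m$ and $\chi^{-\mu}\mathcal{F}^{\mu+}_{(\mathcal{X},\mathcal{L})} R_m$ respectively. Summing over $m \in \mathbb{N}^{(d)}$ and arguing ring compatibility as above completes the proof.

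No serious obstacle is expected: the main bookkeeping is the identification of the abstract weight filtration $\mathcal{F}^\mu_{E_m}$ with the geometric filtration $\mathcal{F}^\mu_{(\mathcal{X},\mathcal{L})} R_m$, and the mild subtlety of $\pi_*\mathcal{O}_\mathcal{X} = \mathcal{O}_{B_\sigma}$ in the $m=0$ case, both routine.
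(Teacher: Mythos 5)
Your proof is correct and follows essentially the same route as the paper's: both reduce the statement to Proposition \ref{toric vector bundle via weight filtration} applied to $\pi_*\mathcal{L}^{\otimes m}$, use the cohomology vanishing to identify $H^0(\mathcal{X}_o,\mathcal{L}|_{\mathcal{X}_o}^{\otimes m})$ with the quotient $H^0(\mathcal{X},\mathcal{L}^{\otimes m})/H^0(\mathcal{X},\mathcal{I}_{\mathcal{X}_o}\otimes\mathcal{L}^{\otimes m})$, and check ring compatibility via $(\chi^{-\mu}\bar{s})(\chi^{-\mu'}\bar{s}')=\chi^{-(\mu+\mu')}\overline{ss'}$. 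The only cosmetic difference is that the paper handles the $m=0$ case by computing $\mathcal{F}^\mu_{(\mathcal{X},\mathcal{L})}R_0$ directly rather than invoking $\pi_*\mathcal{O}_{\mathcal{X}}=\mathcal{O}_{B_\sigma}$, but this is the same content.
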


\begin{proof}
The first line follows by 
\[ \mathcal{F}_{(\mathcal{X}, \mathcal{L})}^\mu R_0 = 
\begin{cases}
\mathbb{C}
& -\mu \in \sigma^\vee \cap M
\\
0 
& \text{otherwise}
\end{cases}. \]

By Proposition \ref{toric vector bundle via weight filtration}, we have 
\[ H^0 (\mathcal{X}, \mathcal{L}^{\otimes m}) = H^0 (B_\sigma, \pi_* \mathcal{L}^{\otimes m}) = \bigoplus_{\mu \in M} \chi^{-\mu} \mathcal{F}^\mu_{(\mathcal{X}, \mathcal{L})} R_m. \]
We obviously have $(\chi^{-\mu} \bar{s}) (\chi^{-\mu'} \bar{s}') = \chi^{-(\mu +\mu')} \overline{s s'}$ for $s \in R_m, s' \in R_{m'}$, so the isomorphism preserves the ring structures, hence we get the second line. 

Again by Proposition \ref{toric vector bundle via weight filtration}, we have 
\[ H^0 (\mathcal{X}, \mathcal{I}_{\mathcal{X}_o} \otimes \mathcal{L}^{\otimes m}) = \bigoplus_{\mu \in M} \chi^{-\mu} \mathcal{F}^{\mu+}_{(\mathcal{X}, \mathcal{L})} R_m. \]
Thanks to the cohomology vanishing, we compute 
\begin{align*} 
H^0 (\mathcal{X}_o, \mathcal{L}|_{\mathcal{X}_o}^{\otimes m}) 
&= H^0 (\mathcal{X}, \mathcal{L}^{\otimes m})/ H^0 (\mathcal{X}, \mathcal{I}_{\mathcal{X}_o} \otimes \mathcal{L}^{\otimes m}) 
\\
&= \bigoplus_{\mu \in M} \chi^{-\mu} (\mathcal{F}^\mu_{(\mathcal{X}, \mathcal{L})} R_m / \mathcal{F}^{\mu+}_{(\mathcal{X}, \mathcal{L})} R_m), 
\end{align*}
which proves the last line. 
\end{proof}

For $\xi \in \sigma$ and $\lambda \in \mathbb{R}$, we put 
\begin{equation}
\mathcal{F}_{(\mathcal{X}, \mathcal{L}; \xi)}^\lambda R_m := \sum_{\langle \mu, \xi \rangle \ge \lambda} \mathcal{F}_{(\mathcal{X}, \mathcal{L})}^\mu R_m \subset R_m. 
\end{equation}

We obviously have $\mathcal{F}^\lambda_{(\mathcal{X}, \mathcal{L}; \rho \xi)} R_m = \mathcal{F}^{\rho^{-1} \lambda}_{(\mathcal{X}, \mathcal{L}; \xi)} R_m$. 
As in the previous section, we can check that $\mathcal{F}_{(\mathcal{X}, \mathcal{L}; \xi)}$ gives a filtration of $(X, L)$. 
The linearly boundedness is less obvious. 
It is a consequence of the finite generation of the filtration, which we will see later. 

Similarly as in the previous section, the family of filtrations $\{ \mathcal{F}_{(\mathcal{X}, \mathcal{L}; \xi)} \}_{\xi \in \sigma}$ recovers the weight filtration $\{ \mathcal{F}^\mu_{(\mathcal{X}, \mathcal{L})} \}$, hence also the polyhedral configuration $(\mathcal{X}/B_\sigma, \mathcal{L})$. 

We can also recover the central fibre $(\mathcal{X}_o, \mathcal{L}_o)$ from any $\mathcal{F}_{(\mathcal{X}, \mathcal{L}; \xi)}$ with $\xi \in \sigma^\circ$. 

\begin{prop}
\label{central fibre via filtration}
For a $\sigma$-configuration $(\mathcal{X}/B_\sigma, \mathcal{L})$ and $\xi \in \sigma^\circ$, we have canonical isomorphisms 
\[ \mathcal{F}^\lambda_{(\mathcal{X}, \mathcal{L}; \xi)} R_m/\mathcal{F}^{\lambda+}_{(\mathcal{X}, \mathcal{L}; \xi)} R_m \cong \bigoplus_{\langle \mu, \xi \rangle = \lambda} \chi^{-\mu} (\mathcal{F}_{(\mathcal{X}, \mathcal{L})}^\mu R_m/\mathcal{F}_{(\mathcal{X}, \mathcal{L})}^{\mu+} R_m) \cong \bigoplus_{\langle \mu, \xi \rangle = \lambda} H^0 (\mathcal{X}_o, \mathcal{L}|_{\mathcal{X}_o}^{\otimes m})_\mu, \]
which induces an isomorphism of graded rings
\[ \mathcal{R} (\mathcal{X}_o, \mathcal{L}|_{\mathcal{X}_o}) \cong \bigoplus_{m \in \mathbb{N}^{(d)}} \bigoplus_{\lambda \in \mathbb{R}} \varpi^{-\lambda} (\mathcal{F}^\lambda_{(\mathcal{X}, \mathcal{L}; \xi)} R_m/\mathcal{F}^{\lambda+}_{(\mathcal{X}, \mathcal{L}; \xi)} R_m). \]
\end{prop}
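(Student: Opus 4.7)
The plan is to reduce the statement, one graded piece at a time, to the fibrewise Proposition \ref{fibre over the fixed point via filtration and weight filtration} applied to the $T$-equivariant vector bundle $E_m := \pi_* \mathcal{L}^{\otimes m}$ on $B_\sigma$. Recall that $d$ was chosen so that $E_m$ is locally free and higher cohomology vanishes for every $m \in \mathbb{N}^{(d)}$; in particular the fibre of $E_m$ over $1 \in B_\sigma$ is canonically identified with $R_m$ via $\iota$, and the fibre over $o$ is canonically identified with $H^0(\mathcal{X}_o, \mathcal{L}|_{\mathcal{X}_o}^{\otimes m})$ by flatness (cohomology and base change). By definition, $\mathcal{F}^\mu_{(\mathcal{X}, \mathcal{L})} R_m$ and $\mathcal{F}^\lambda_{(\mathcal{X}, \mathcal{L}; \xi)} R_m$ coincide with $\mathcal{F}^\mu_{E_m}$ and $\mathcal{F}^\lambda_{E_m, \xi}$ respectively, and $\mathcal{F}^{\mu+}_{(\mathcal{X}, \mathcal{L})} R_m$ coincides with $\mathcal{F}^{\mu+}_{E_m}$; also $\mathcal{F}^{\lambda+}_{(\mathcal{X}, \mathcal{L}; \xi)} R_m = \mathcal{F}^{\lambda+}_{E_m, \xi}$.

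First I would apply Proposition \ref{fibre over the fixed point via filtration and weight filtration} with $E = E_m$ and the given $\xi \in \sigma^\circ$. This immediately yields the two desired isomorphisms
\[
\mathcal{F}^\lambda_{(\mathcal{X}, \mathcal{L}; \xi)} R_m / \mathcal{F}^{\lambda+}_{(\mathcal{X}, \mathcal{L}; \xi)} R_m
\;\cong\; \bigoplus_{\langle \mu, \xi \rangle = \lambda} \mathcal{F}^\mu_{(\mathcal{X}, \mathcal{L})} R_m / \mathcal{F}^{\mu+}_{(\mathcal{X}, \mathcal{L})} R_m
\;\cong\; \bigoplus_{\langle \mu, \xi \rangle = \lambda} (E_m)_{o, \mu},
\]
the last term being $\bigoplus_{\langle \mu, \xi \rangle = \lambda} H^0(\mathcal{X}_o, \mathcal{L}|_{\mathcal{X}_o}^{\otimes m})_\mu$. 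Matching the cosmetic factor $\chi^{-\mu}$ on the middle term with the identification in the previous proposition (where the isomorphism $\mathcal{F}^\mu_E/\mathcal{F}^{\mu+}_E \cong E_{o,\mu}$ was realized by $[e] \mapsto (\chi^{-\mu} \bar{e})(o)$) gives the stated form of the first display.

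Next, for the graded ring statement, I would sum over $m \in \mathbb{N}^{(d)}$ and over $\lambda \in \mathbb{R}$, and compare multiplicative structures. On the right-hand side, the product in $\bigoplus_\lambda \varpi^{-\lambda}\,\mathrm{gr}^\lambda_{(\mathcal{X}, \mathcal{L}; \xi)} R_m$ is the standard Rees-type multiplication compatible with the filtration inclusion $\mathcal{F}^\lambda \cdot \mathcal{F}^{\lambda'} \subset \mathcal{F}^{\lambda + \lambda'}$. On the left-hand side, using the ring identification
\[
\mathcal{R}(\mathcal{X}_o, \mathcal{L}|_{\mathcal{X}_o}) \;\cong\; \bigoplus_{m} \bigoplus_{\mu \in M} \chi^{-\mu} (\mathcal{F}^\mu_{(\mathcal{X}, \mathcal{L})} R_m / \mathcal{F}^{\mu+}_{(\mathcal{X}, \mathcal{L})} R_m)
\]
already established in the preceding proposition (its multiplication being $(\chi^{-\mu} \bar{s})(\chi^{-\mu'} \bar{s}') = \chi^{-(\mu+\mu')} \overline{ss'}$), the map sending the $\mu$-summand into the $\lambda = \langle \mu, \xi \rangle$ piece and remembering $\varpi^{-\lambda}$ in place of $\chi^{-\mu}$ is visibly multiplicative. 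Bijectivity in each graded piece has just been proved, so the induced map on the direct sum is an isomorphism of graded rings.

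The only non-formal input is the fibrewise statement that was already proved as Proposition \ref{fibre over the fixed point via filtration and weight filtration}, so I anticipate no serious obstacle here; the main point to be careful about is keeping track of the two different indexings (by weights $\mu$ versus by real numbers $\lambda = \langle \mu, \xi \rangle$) and verifying that the grading by $\lambda$ on the left matches the $\xi$-grading on the right. Since $\xi$ is interior to $\sigma$, different $\mu$'s with the same pairing $\langle \mu, \xi \rangle$ already lie in distinct $\mathcal{F}^\mu/\mathcal{F}^{\mu+}$ quotients (as was observed in the proof of the fibrewise proposition via the inequality $\mathcal{F}^\mu_E \cap \mathcal{F}^{\mu'}_E \subset \mathcal{F}^{\mu+}_E$ when $\mu \neq \mu'$ and $\langle \mu, \xi \rangle \leq \langle \mu', \xi \rangle$), which is what makes the direct sum on the right genuinely direct and guarantees the multiplication closes correctly.
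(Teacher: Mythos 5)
Your proposal is correct and follows the same route as the paper: the paper's proof is a one-line reduction of the graded-piece isomorphisms to Proposition \ref{fibre over the fixed point via filtration and weight filtration} applied to $E_m = \pi_* \mathcal{L}^{\otimes m}$, exactly as you do. Your additional verification of the multiplicative compatibility for the graded ring statement (and of the directness of the sum via interiority of $\xi$) is left implicit in the paper but is a correct and welcome elaboration.
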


\begin{proof}
The isomorphisms are given by Proposition \ref{fibre over the fixed point via filtration and weight filtration}. 
\end{proof}

Let $\sigma \subset N_\mathbb{R}$ and $\sigma' \subset N'_\mathbb{R}$ be toric cones and $\phi_*: N' \to N$ be a morphism of lattices which induces a linear map $\phi_*: N_\mathbb{R} \to N'_\mathbb{R}$ mapping $\sigma'$ into $\sigma$: $\phi_* (\sigma') \subset \sigma$. 
Pulling back a $\sigma$-configuration $(\mathcal{X}/B_\sigma, \mathcal{L})$ along $\phi: B_{\sigma'} \to B_\sigma$, we obtain a $\sigma'$-configuration $(\mathcal{X}'/B_{\sigma'}, \mathcal{L}')$. 
Since $\phi^* \pi_* \mathcal{L}^{\otimes m} = \pi'_* (\mathcal{L}')^{\otimes m}$ by cohomology vanishing, we have 
\[ \mathcal{F}^\lambda_{(\mathcal{X}'/B_{\sigma'}, \mathcal{L}'; \xi')} = \mathcal{F}^\lambda_{(\mathcal{X}/B_\sigma, \mathcal{L}; \phi_* \xi')} \]
for $\xi' \in \sigma'$ by Proposition \ref{base change of weight filtration}. 

\subsubsection{Finitely generated filtration}

For $\mathcal{F} = \mathcal{F}_{(\mathcal{X}, \mathcal{L}; \xi)}$, we denote $\| \cdot \|^\mathcal{F}_m$ by $\| \cdot \|^{(\mathcal{X}, \mathcal{L}; \xi)}_m$. 

Take $d$ so that $R^{(d)} = \bigoplus_{m \in \mathbb{N}^{(d)}} R_m$ is generated in $R_d$. 
For a non-archimedean norm $\| \cdot \|$ on $R_d$, we can associate the following associated filtration $\mathcal{F}_{\| \cdot \|}$ on $R^{(d)}$: for $m \in \mathbb{N}^{(d)}$, 
\begin{equation} 
\label{filtration associated to non-archimedean norm}
\mathcal{F}^\lambda_{\| \cdot \|} R_m = \langle \prod_{i =1}^{m/d} s_i ~|~ s_i \in R_d, ~ - \log \prod_{i=1}^{m/d} \| s_i \| \ge \lambda \rangle. 
\end{equation}

Taking a diagonal basis $x^1, \ldots, x^{N_m}$ of $R_m$ with respect to $\| \cdot \|$, we can also write it as
\[ \mathcal{F}^\lambda_{\| \cdot \|} R_m = \Big{\{} \sum_{|I| = m/d} a_I x^I ~\Big{|}~ \sum_{i \in I} \lambda_i \ge \lambda \text{ for } a_I \neq 0. \Big{\}}, \]
where we put $\lambda_i := - \log \| x^i \|$. 

\begin{defin}[finitely generated filtration]
\label{f.g. filtration}
We call $\mathcal{F}$ \textit{finitely generated} if $\mathcal{F}|_{R^{(d)}} = \mathcal{F}_{\| \cdot \|}$ for some $d$ and some non-archimedean norm $\| \cdot \|$ on $R_d$. 
This is equivalent to 
\[ \mathcal{F}^\lambda R_m = \sum_{\substack{|I|= m/d, \\ \sum_{i \in I} \lambda_i \ge \lambda}} \prod_{i \in I} \mathcal{F}^{\lambda_i} R_d \]
for some $d$ and every $m \in \mathbb{N}^{(d)}$. 
\end{defin}

%

The following is observed in \cite{HL2}. 

\begin{prop}
\label{polyhedral configuration and finite generation of filtration}
The filtration $\mathcal{F}_{(\mathcal{X}, \mathcal{L}; \xi)}$ is finitely generated. 
Conversely, every finitely generated filtration is obtained in this way (for $\xi \in \sigma^\circ$). 
\end{prop}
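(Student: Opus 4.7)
The plan is to leverage the equivariant trivialization provided by Lemma \ref{equivariant trivialization} in both directions: for the forward direction by combining it with relative finite generation of $\bigoplus_m \pi_* \mathcal{L}^{\otimes m}$, and for the converse direction by producing the polyhedral configuration through the orbit-closure construction in Hilbert scheme of Example \ref{polyhedral configuration via Hilbert scheme}.

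For the forward direction I would first pick $d \in \mathbb{N}_+$ large enough that the graded $\mathcal{O}_{B_\sigma}$-algebra $\bigoplus_{m \in \mathbb{N}^{(d)}} \pi_* \mathcal{L}^{\otimes m}$ is generated in degree $d$; this is available because $\mathcal{L}$ is relatively ample. Lemma \ref{equivariant trivialization} then supplies a basis $\{e_i\}_{i=1}^N$ of $R_d = (\pi_* \mathcal{L}^{\otimes d})_1$ and characters $\mu_i \in M$ with $\mathcal{F}^\mu_{(\mathcal{X}, \mathcal{L})} R_d = \langle e_i \mid \mu_i \ge_\sigma \mu \rangle$. Since multiplication surjects $(\pi_* \mathcal{L}^{\otimes d})^{\otimes k} \twoheadrightarrow \pi_* \mathcal{L}^{\otimes kd}$ for every $k \ge 1$, Proposition \ref{weight filtration of quotient bundle} delivers $\mathcal{F}^\mu_{(\mathcal{X}, \mathcal{L})} R_{kd} = \sum_{\mu_{i_1} + \cdots + \mu_{i_k} \ge_\sigma \mu} \mathbb{C} \cdot e_{i_1} \cdots e_{i_k}$. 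Setting $\lambda_i := \langle \mu_i, \xi \rangle$ and collecting over $\mu$ with $\langle \mu, \xi \rangle \ge \lambda$ reads off exactly the presentation of Definition \ref{f.g. filtration} for the non-archimedean norm on $R_d$ diagonal on $\{e_i\}$ with $\| e_i \| = e^{-\lambda_i}$.

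For the converse I would start from a non-archimedean norm $\| \cdot \|$ on $R_d$ presenting $\mathcal{F}|_{R^{(d)}}$ together with a diagonal basis $\{x^i\}_{i=1}^N$ of $R_d$ with weights $\lambda_i = -\log \| x^i \|$. Embedding $X \hookrightarrow \mathbb{P}(R_d^*)$ via $|dL|$, I set $T = (\mathbb{G}_m)^N$ to act on $\mathbb{P}(R_d^*)$ so that $x^i$ is an eigenvector of character $\mu_i := e_i^* \in M = \mathbb{Z}^N$, and put $\xi := (\lambda_1, \ldots, \lambda_N) \in \mathfrak{t} = \mathbb{R}^N$, so that $\langle \mu_i, \xi \rangle = \lambda_i$. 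Following Example \ref{polyhedral configuration via Hilbert scheme}, the orbit map $t \mapsto [X] \cdot t$ extends after $T$-equivariant resolution to a morphism $\tilde{f}: B_\Sigma \to \mathrm{Hilb}(\mathbb{P}(R_d^*))$ from a normal toric variety, and after refining $\Sigma$ further I arrange $\xi$ to lie in the interior of a strictly convex full-dimensional cone $\sigma \in \Sigma$. The pullback $(\mathcal{X}/B_\sigma, \mathcal{L}) := \tilde{f}^*(\mathcal{U}, \mathcal{O}(1)|_\mathcal{U})$ of the universal family is then the candidate $\sigma$-configuration, and Lemma \ref{equivariant trivialization} applied to $\pi_* \mathcal{L}^{\otimes d}$ identifies $\mathcal{F}^\mu_{(\mathcal{X}, \mathcal{L})} R_d = \langle x^i \mid \mu_i \ge_\sigma \mu \rangle$, giving $\mathcal{F}^\lambda_{(\mathcal{X}, \mathcal{L}; \xi)} R_d = \langle x^i \mid \lambda_i \ge \lambda \rangle = \mathcal{F}^\lambda R_d$; finite generation in degree $d$ on both sides then propagates the equality to all of $R^{(d)}$.

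The main obstacle is the comparison step in the converse direction. Each $x^i$ is a $T$-eigensection of $\mathcal{O}(1)$ on $\mathbb{P}(R_d^*)$ of character $\mu_i$, so pullback yields a global $T$-eigensection of $\pi_* \mathcal{L}^{\otimes d}$ of the same weight and the inclusion $\langle x^i \mid \mu_i \ge_\sigma \mu \rangle \subset \mathcal{F}^{\mu}_{(\mathcal{X}, \mathcal{L})} R_d$ is immediate; the hard part is the reverse inclusion, which requires the pulled-back sections to form a diagonal basis in the sense of Lemma \ref{equivariant trivialization}, i.e.\ to span $\pi_* \mathcal{L}^{\otimes d}$ fibrewise on $B_\sigma$. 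This is where one must truly exploit the geometry of the orbit closure $\overline{T \cdot [X]}$ rather than formal manipulation, and where the freedom to further $T$-equivariantly refine $\Sigma$ while keeping $\xi$ in the interior of $\sigma$ is used.
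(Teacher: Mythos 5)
Your forward direction is sound and is essentially the paper's argument: the paper phrases it via finite generation of the section ring $\mathcal{R}(\mathcal{X},\mathcal{L})$ over $\mathbb{C}[\sigma^\vee\cap M]$ rather than via the bundle surjection $(\pi_*\mathcal{L}^{\otimes d})^{\otimes k}\twoheadrightarrow \pi_*\mathcal{L}^{\otimes kd}$ together with Proposition \ref{weight filtration of quotient bundle}, but the two are interchangeable and your reduction of $\mathcal{F}^\lambda_{(\mathcal{X},\mathcal{L};\xi)}R_{kd}$ to products of the $e_i$ with $\sum_j\lambda_{i_j}\ge\lambda$ is correct. The construction in your converse direction (embed by a diagonal basis, act by $(\mathbb{G}_m)^{N_d}$, take the orbit closure in the Hilbert scheme, resolve, and take the minimal cone of the fan containing $\xi$ so that $\xi$ lands in its interior) also matches the paper.

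The gap is precisely at the comparison step you flag as the ``main obstacle,'' and the remedy you sketch would not work. You want the pulled-back $x^i$ to give a fibrewise-spanning diagonal basis of $\pi_*\mathcal{L}^{\otimes d}$ in the sense of Lemma \ref{equivariant trivialization}, i.e.\ you want $R_d\to H^0(\mathcal{X}_b,\mathcal{L}^{\otimes d}|_{\mathcal{X}_b})$ to be surjective (indeed bijective) for every $b\in B_\sigma$. At degree $d$ itself this can fail on the central fibre: flatness only fixes the Hilbert polynomial, so $h^0(\mathcal{X}_o,\mathcal{L}^{\otimes d}|_{\mathcal{X}_o})$ can jump and $\pi_*\mathcal{L}^{\otimes d}$ need not even be locally free; and no further $T$-equivariant refinement of $\Sigma$ can repair this, since the central fibre over the cone containing $\xi$ is dictated by the orbit closure and $\xi$, not by the choice of subdivision. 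The paper sidesteps the issue entirely by never comparing at degree $d$: it fixes $l\gg 0$ so that $R^i\pi_*(\mathcal{I}_{\mathcal{X}}\otimes\mathcal{O}(m/d))=0$ and $R^i\pi_*\mathcal{O}(m/d)|_{\mathcal{X}}=0$ for all $m\in\mathbb{N}^{(dl)}$, which makes $\pi_*\mathcal{O}(m/d)\twoheadrightarrow\pi_*\mathcal{O}(m/d)|_{\mathcal{X}}$ a $T$-equivariant surjection of vector bundles. Proposition \ref{weight filtration of quotient bundle} then identifies $\mathcal{F}^\mu_{(\mathcal{X},\mathcal{L})}R_m$ with the image of the explicitly known weight space $(S^{m/d}R_d)_\mu$ of the product configuration $B_\sigma\times\mathbb{P}(R_d^\vee)$ under $S^{m/d}R_d\twoheadrightarrow R_m$, and the image of $\mathcal{F}^\lambda_{\|\cdot\|}S^{m/d}R_d=\bigoplus_{\langle\mu,\xi\rangle\ge\lambda}(S^{m/d}R_d)_\mu$ is exactly $\mathcal{F}^\lambda_{\|\cdot\|}R_m=\mathcal{F}^\lambda R_m$ by the finite generation of $\mathcal{F}$. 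Since filtrations are only required to agree on $R^{(dl)}$ for some sufficiently divisible $dl$, this suffices; you should replace your degree-$d$ identification with this high-degree quotient-bundle argument.
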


\begin{proof}
Since the ring $\mathcal{R} = \bigoplus_{m \ge 0} \bigoplus_{\mu \in M} \varpi^{- \mu} \mathcal{F}^\mu R_m$ is finitely generated, we can take finite collections $x^1, \ldots, x^{N_d} \in R_d$ and $\mu_1, \ldots, \mu_{N_d} \in M$ with $x^i \in \mathcal{F}^{\mu_i} R_d$ ($x^i$ may overlap) so that for $m \in \mathbb{N}^{(d)}$ every element $s \in \mathcal{F}^\mu R_m$ can be written as 
\[ s = \sum_{\substack{|I| = m/d, \\ \sum_{i \in I} \mu_i = \mu}} a_I x^I. \] 
By the definition of the filtration, every element $s \in \mathcal{F}^\lambda R_m$ can be written as $\sum_{\langle \mu, \xi \rangle \ge \lambda} s^\mu$ with $s^\mu \in \mathcal{F}^\mu R_m$. 
On the other hand, we can write $s^\mu$ as $\sum_{|I| = m/d, \sum_{i \in I} \mu_i = \mu} a^\mu_I x^I$. 
As a consequence, $s \in \mathcal{F}^\lambda R_m$ can be written as 
\[ s = \sum_{\substack{|I| = m/d, \\ \sum_{i \in I} \langle \mu_i, \xi \rangle \ge \lambda}} a_I x^I. \]
It follows that 
\begin{align*} 
\mathcal{F}^\lambda R_m 
&= \Big{\{} \sum_{\substack{|I| =m/d, \\ \sum_{i \in I} \langle \mu_i, \xi \rangle \ge \lambda}} a_I x^I \Big{\}} 
\\
&= \sum_{\substack{|I|=m/d, \\ \sum_{i \in I} \langle \mu_i, \xi \rangle \ge \lambda}} \prod_{i \in I} \mathcal{F}^{\langle \mu_i, \xi \rangle} R_d, 
\end{align*}
which shows the finite generation. 

Conversely, assume $\mathcal{F}$ is finitely generated. 
Take $d$ so that $R^{(d)}$ and $\mathcal{F}|_{R^{(d)}}$ is generated in $R_d$. 
Take a diagonal basis $x^1, \ldots, x^{N_d} \in R_d$ with respect to $\| \cdot \| = \| \cdot \|^\mathcal{F}_d$ and put $\lambda_i := - \log \| x^i \|$. 
Consider the torus $T' = (\mathbb{G}_m)^{N_d}$ action on $R_d$ given by $x^i. (t_1, \ldots, t_{N_d}) = t_i x^i$ and put $\xi := (\lambda_1, \ldots, \lambda_{N_d}) \in \mathbb{R}^{N_d} = \mathfrak{t}'$. 
Then $\mathcal{F}^\lambda R_{dl}$ is the image of 
\[ \mathcal{F}^\lambda_{\| \cdot \|} S^l R_d = \{ \sum_{|I| =l} a_I x^{\otimes I} ~|~ \sum_{i \in I} \lambda_i \ge \lambda \text{ for } a_I \neq 0 \}= \bigoplus_{\langle \mu, \xi \rangle \ge \lambda} (S^l R_d)_\mu \]
along $S^l R_d \twoheadrightarrow R_{dl}$. 

Now we embed $X$ into the projective space $\mathbb{C}P^{N_d-1} = \mathbb{P} (R_d)$, using the diagonal basis $x^1, \ldots, x^{N_d}$. 
As in Example \ref{polyhedral configuration via Hilbert scheme}, we can construct a $T'$-equivariant morphism $f': B' \to \mathrm{Hilb} (\mathbb{C}P^{N_d-1})$ from a proper normal toric variety $B$. 
For $\xi = (\lambda_1, \ldots, \lambda_{N_d}) \in \mathfrak{t}'$, take the minimal cone $\sigma \in \Sigma$ with $\xi \in \sigma$ and put $\mathfrak{t} := \mathbb{R} \sigma, T := (\mathfrak{t} \cap N') \otimes \mathbb{G}_m$. 
Regarding $\sigma$ as a cone in $\mathfrak{t}$, we have $\xi \in \sigma^\circ$. 
For the affine toric variety $B_\sigma \supset T$, we get a $\sigma$-configuration $(\mathcal{X}/B_\sigma, \mathcal{L})$ by pulling back the universal family $(\mathcal{U}, \mathcal{O} (1)^{1/d}|_{\mathcal{U}})$ over the Hilbert scheme along $f: B_\sigma \to B' \to \mathrm{Hilb} (\mathbb{C}P^{N_d -1})$. 
By the construction, $\mathcal{X}$ is a closed subscheme of $B_\sigma \times \mathbb{C}P^{N_d -1}$ with the flat projection $\pi: \mathcal{X} \to B_\sigma$. 

Now we claim $\mathcal{F}|_{R^{(dl)}} = \mathcal{F}_{(\mathcal{X}, \mathcal{L}; \xi)}|_{R^{(dl)}}$ for sufficiently large $l$. 
For the relatively ample $\mathcal{O} (1)$ on $B_\sigma \times \mathbb{C}P^{N_d -1}/B_\sigma$, take large $l$ so that $R^i \pi_* (\mathcal{I}_\mathcal{X} \otimes \mathcal{O} (m/d)) = 0$ and $R^i \pi_* \mathcal{O} (m/d)|_{\mathcal{X}} = 0$ for every $i \ge 1$ and $m \in \mathbb{N}^{(dl)}$. 
Then $\pi_* \mathcal{O} (m/d)|_{\mathcal{X}}$ is locally free and we have a $T$-equivariant surjection $\pi_* \mathcal{O} (m/d) \twoheadrightarrow \pi_* \mathcal{O} (m/d)|_{\mathcal{X}}$ of vector bundles. 
By Proposition \ref{weight filtration of quotient bundle}, $\mathcal{F}^\mu_{(\mathcal{X}, \mathcal{L})} R_m$ for $m \in \mathbb{N}^{(dl)}$ is the image of 
\[ \mathcal{F}^\mu_{(B_\sigma \times \mathbb{C}P^{N_d -1}/B_\sigma, \mathcal{O} (1))} H^0 (\mathbb{C}P^{N_d-1}, \mathcal{O} (m/d)) = (S^{m/d} R_d)_\mu \] 
along $H^0 (\mathbb{C}P^{N_d-1}, \mathcal{O} (m/d)) = S^{m/d} R_d \twoheadrightarrow R_m$. 
Thus the filtration $\mathcal{F}^\lambda_{(\mathcal{X}, \mathcal{L}; \xi)} R_m$ is the image of 
\[ \mathcal{F}^\lambda_{(B_\sigma \times \mathbb{C}P^{N_d -1}/B_\sigma, \mathcal{O} (1); \xi)} H^0 (\mathbb{C}P^{N_d-1}, \mathcal{O} (m/d)) = \bigoplus_{\langle \mu, \xi \rangle \ge \lambda} (S^{m/d} R_d)_\mu = \mathcal{F}^\lambda_{\| \cdot \|} S^{m/d} R_d. \]
Therefore, we get $\mathcal{F}^\lambda_{(\mathcal{X}, \mathcal{L}; \xi)} R_m = \mathcal{F}^\lambda_{\| \cdot \|} R_m$ for every $m \in \mathbb{N}^{(dl)}$. 
\end{proof}

We note the graded ring 
\begin{equation} 
\mathcal{R}_{(\mathcal{X}, \mathcal{L}; \xi)} := \bigoplus_{m \ge 0} \bigoplus_{\lambda \in \langle M, \xi \rangle} \varpi^{- \lambda} \mathcal{F}^\lambda_{(\mathcal{X}, \mathcal{L}; \xi)} R_m 
\end{equation}
is not finitely generated over $\mathbb{C}$ as the base ring 
\begin{equation} 
S_{B_\sigma, \xi} := \bigoplus_{\lambda \in \langle M, \xi \rangle} \varpi^{- \lambda} \mathcal{F}^\lambda_{(\mathcal{X}, \mathcal{L}; \xi)} R_0, 
\end{equation}
is not finitely generated over $\mathbb{C}$. 
We can see this in the following example. 

\begin{eg}
Consider $M = N = \mathbb{Z}^2$, $\sigma = [0, \infty)^2 \subset N \otimes \mathbb{R}$. 
For irrational $\xi = (1, \sqrt{2}) \in \sigma$, we have $\mu = \mu' \in M$ iff $\langle \mu, \xi \rangle = \langle \mu', \xi \rangle$. 
It follows that 
\[ S_{B_\sigma, \xi} \cong \mathbb{C} [P_\xi] \]
for the monoid 
\[ P_\xi = \{ \mu \in M ~|~ \langle \mu, \xi \rangle \le 0 \}. \]
The ring $\mathbb{C} [P]$ is finitely generated over $\mathbb{C}$ iff the monoid $P$ is finitely generated. 
However, the monoid $P_\xi = \{ (m_1, m_2) \in \mathbb{Z}^2 ~|~ m_1 + m_2 \sqrt{2} \le 0 \}$ is not finitely generated. 
\end{eg}

\subsection{Variational formula on characteristic $\mu$-entropy}

\subsubsection{Equivariant intersection}
\label{equivariant intersection}

Equivariant intersection is a basic language for describing $\mu$K-stability and our $\mu$-entropy of test configurations. 
We briefly explain the concept below. 
The readers can find further information in \cite{EG1, GS, GGK} and \cite{Ino3}. 

Let $X$ be a complex $n$-dimensional compact complex space with a holomorphic right $T$-action. 
For a $T$-equivariant homology class $D^T \in H^{T}_{2n-2} (X; \mathbb{R})$ and a $T$-equivariant cohomology class $L_T \in H^2_T (X; \mathbb{R})$, we define the \textit{equivariant intersection} $(D^T. L_T^{\cdot n+k-1}) \in S^k \mathfrak{t}^\vee$ by the equivariant push-forward to the point: 
\[ (D^T. L_T^{\cdot n+k-1}) := p_* (D^T \frown L_T^{\smile n+k-1}) \in H^{T}_{-2k} (\mathrm{pt}). \]

We can identify $(D^T. L_T^{\cdot n+k-1})$ with a polynomial function on $\mathfrak{t}$ of degree $k$ via the Poincar\'e duality $H^{T}_{-2k} (\mathrm{pt}) = H^{2k}_T (\mathrm{pt})$ and the Chern--Weil isomorphism 
\[ \Phi: H^{2k}_T (\mathrm{pt}, \mathbb{R}) \cong S^k \mathfrak{t}^\vee \] 
which maps $c^T_1 (\mathbb{C}_\chi)$ to $-\chi$ for $\chi \in M \subset \mathfrak{t}^\vee$, where $\mathbb{C}_\chi$ is the $T$-equivariant line bundle over the point whose right action is given by $z. t = \chi (t) z$. 
(Alternatively, consider $\mathbb{C}^\chi$ endowed with the right action $z. t = \chi (t)^{-1} z$. 
Then $c^T_1 (\mathbb{C}^\chi)$ is mapped to $\chi$ via $\Phi$. 
The associated left action on $\mathbb{C}^\chi$ is given by $t. z = \chi (t) z$. )
This sign convention is equivalent to choosing $\hbar = 1$ for the Chern--Weil isomorphism ${^\hbar \Phi}$ in \cite{Ino3}. 
We denote by $(D^T. L_T^{\cdot n+k-1}; \xi)$ the value of the polynomial at $\xi \in \mathfrak{t}$. 
When $T = \mathbb{G}_m$, we write $(D^{\mathbb{G}_m}. L_{\mathbb{G}_m}^{\cdot n+k-1}; \rho. \eta)$ as $(D^{\mathbb{G}_m}. L_{\mathbb{G}_m}^{\cdot n+k-1}; \rho)$ for $\rho \in \mathbb{R}$ and the generating vector $\eta \in N \subset \mathfrak{t}$ corresponding to the identity $\mathrm{id}: \mathbb{G}_m \to \mathbb{G}_m$. 

By definition, equivariant intersection is the intersection on the infinite dimensional Borel construction $X \times_T ET$. 
We can identify the equivariant intersection with the usual finite dimensional (relative) intersection in the following way. 
Take a basis $\{ \chi_i \}$ of the character lattice $M$ of $T$ and put $E_l T := \prod_{i=1}^{\mathrm{rk} T} (\mathbb{C}_{\chi_i}^{l+1} \setminus 0)$ for $l \gg k$, which is a finite dimensional approximation of the classifying space $ET$. 
We consider the right $T$-action on $E_l T$ induced by $\chi_i$, and denote by $X \times_T E_l T$ the quotient $(X \times E_l T)/T$ with respect to the diagonal action. 
By the construction of the equivariant homology, $D^T$ is identified with a homology class of degree $(2n-2) + 2 l \mathrm{rk} T$ on $X \times_T E_l T$. 
On the other hand, $L_T$ gives a degree $2$ cohomology class on $X \times_T E_l T$. 
Therefore we may identify $D^T \frown L_T^{\cdot n+k-1}$ with a homology class of degree $-2k + 2 l \mathrm{rk} T$ on $X \times_T E_l T$ and $(D^T. L_T^{\cdot n+k-1})$ with its push-forward along $X \times_T E_l T \to E_l T / T$, which lives in $H^{2k} (E_l T/T) = H^{2k}_T (\mathrm{pt})$ by the Poincare duality. 
This construction is independent of the choice of $E_l T$ by \cite{EG1}. 

Let $h = \sum_{k=0}^\infty a_k x^k$ be a real analytic function on $\mathbb{R}$ which extends to an entire holomorphic function on $\mathbb{C}$. 
Using the equivariant resolution of $X$ and the Cartan model of equivariant cohomology, we showed in \cite{Ino3} that the following infinite series is compactly absolutely-convergent and hence gives a real analytic function on $\mathfrak{t}$: 
\begin{equation} 
(D^T. h (L_T); \xi) := \sum_{k=0}^\infty a_k (D^T. L_T^{\cdot k}; \xi). 
\end{equation}
In the study of $\mu$K-stability, we applied this to the case $h (x) = e^x$. 
We often abbreviate $(D^T. h (L_T); \xi)$ as $(D. h (L); \xi)$. 

In the first article, we computed equivariant intersection using equivariant differential form. 
See \cite{Ino3} and \cite{GS, GGK} for the equivalence of two calculations. 

\subsubsection{$\mu$-Futaki invariant}

Let $\xi$ be a proper vector on $(X, L)$ and $T$ be the torus generated by $\xi$. 
For $\lambda \in \mathbb{R}$, a polarized scheme $(X, L)$ is called \textit{$\check{\mu}^\lambda_\xi$K-semistable} if $\cFut^\lambda_\xi (\mathcal{X}, \mathcal{L}) \ge 0$ for every $T$-equivariant test configuration $(\mathcal{X}, \mathcal{L})$. 
It is shown in \cite{Lah1} and is reformulated in \cite{Ino3} that if a smooth polarized manifold $(X, L)$ admits a $\check{\mu}^\lambda_\xi$-cscK metric ($=$ $\mu^\lambda_{-\xi/2}$-cscK metric), then $(X, L)$ is $\check{\mu}^\lambda_\xi$K-semistable. 
Here the \textit{$\mu$-Futaki invariant} $\cFut^\lambda_\xi (\mathcal{X}, \mathcal{L}) := D_\xi \check{\mu} (\mathcal{X}, \mathcal{L}) + \lambda D_\xi \check{\sigma} (\mathcal{X}, \mathcal{L})$ for a normal test configuration $(\mathcal{X}, \mathcal{L})$ is defined by the following equivariant intersection: 
\begin{align*}
D_\xi \check{\mu} (\mathcal{X}, \mathcal{L}) 
&:= 2 \pi \frac{(K_{\bar{\mathcal{X}}/\mathbb{C}P^1}^T. e^{\bar{\mathcal{L}}_T}; \xi) \cdot (e^{L_T}; \xi)  - (K_X^T. e^{L_T}; \xi) \cdot (e^{\bar{\mathcal{L}}_T}; \xi) }{(e^{L_T}; \xi)^2}, 
\\
D_\xi \check{\sigma} (\mathcal{X}, \mathcal{L})
&:= \frac{(\bar{\mathcal{L}}_T. e^{\bar{\mathcal{L}}_T}; \xi) \cdot (e^{L_T}; \xi) - (L_T. e^{L_T}; \xi) \cdot (e^{\bar{\mathcal{L}}_T}; \xi) }{(e^{L_T}; \xi)^2} - \frac{(e^{\bar{\mathcal{L}}_T}; \xi)}{(e^{L_T}; \xi)}. 
\end{align*}
For general non-normal test configuration, we replace the equivariant canonical divisor class $K_{\bar{\mathcal{X}}/\mathbb{C}P^1}^T$ with an equivariant Chow class $\kappa_{\bar{\mathcal{X}}/\mathbb{C}P^1}^T$ as explained in \cite{Ino3}, which fits into equivariant Grothendieck--Riemann--Roch theorem for general scheme (cf. \cite{EG2}). 
For a polarized normal variety, the $\check{\mu}^\lambda_\xi$K-semistability is equivalent to $\cFut^\lambda_\xi (\mathcal{X}, \mathcal{L}) \ge 0$ for every $T$-equivariant \textit{normal} test configuration $(\mathcal{X}, \mathcal{L})$. 

\subsubsection{Characteristic $\mu$-entropy of polyhedral configuration}
\label{section: mu-entropy of polyhedral configuration}

\begin{defin}[$\mu$-entropy of polyhedral configuration]
\label{mu-entropy of polyhedral configuration}
For a $\sigma$-configuration $(\mathcal{X}/B_\sigma, \mathcal{L})$, we put 
\begin{align} 
\cmu (\mathcal{X}, \mathcal{L}; \xi) 
&:= 2 \pi \frac{(\kappa_{\mathcal{X}_0}^T. e^{\mathcal{L}_T|_{\mathcal{X}_0}}; \xi)}{(e^{\mathcal{L}_T|_{\mathcal{X}_0}}; \xi)} \in \mathbb{R}, 
\\
\bm{\check{\sigma}} (\mathcal{X}, \mathcal{L}; \xi) 
&:= \frac{(\mathcal{L}_T|_{\mathcal{X}_0}. e^{\mathcal{L}_T|_{\mathcal{X}_0}}; \xi)}{(e^{\mathcal{L}_T|_{\mathcal{X}_0}}; \xi)} - \log (e^{\mathcal{L}_T|_{\mathcal{X}_0}}; \xi) \in \mathbb{R}
\end{align}
and 
\begin{equation} 
\cmu^\lambda (\mathcal{X}, \mathcal{L}; \xi) := \cmu (\mathcal{X}, \mathcal{L}; \xi) + \lambda \bm{\check{\sigma}} (\mathcal{X}, \mathcal{L}; \xi). 
\end{equation}
\end{defin}

\begin{prop}
The map $\cmu^\lambda (\mathcal{X}, \mathcal{L}; \bullet): \sigma \to \mathbb{R}$ is real analytic in the sense that it extends to a real analytic function on $\mathfrak{t}$. 
\end{prop}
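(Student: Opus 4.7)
The strategy is to reduce the claim to two sub-statements about the building blocks of $\cmu^\lambda(\mathcal{X}, \mathcal{L}; \xi)$: (i) the three equivariant intersections
\[
N_1(\xi) := (\kappa_{\mathcal{X}_0}^T. e^{\mathcal{L}_T|_{\mathcal{X}_0}}; \xi), \qquad N_2(\xi) := (\mathcal{L}_T|_{\mathcal{X}_0}. e^{\mathcal{L}_T|_{\mathcal{X}_0}}; \xi), \qquad D(\xi) := (e^{\mathcal{L}_T|_{\mathcal{X}_0}}; \xi)
\]
are real analytic on the whole Lie algebra $\mathfrak{t}$; and (ii) the denominator $D(\xi)$ is strictly positive on $\mathfrak{t}$. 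Granting these, $\cmu = 2\pi N_1/D$ and $\bm{\check{\sigma}} = N_2/D - \log D$ are compositions of real analytic functions with the real analytic maps $(x,y) \mapsto x/y$ (on $\{y \neq 0\}$) and $\log$ (on $\mathbb{R}_{>0}$), whence real analytic on $\mathfrak{t}$.

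For (i), the real analyticity of each of $N_1, N_2, D$ on $\mathfrak{t}$ is precisely the content of the convergence result recalled in section \ref{equivariant intersection} (proved in \cite{Ino3}): since $e^x = \sum_k x^k/k!$ extends to an entire function on $\mathbb{C}$, the infinite series
\[
(D^T. e^{L_T}; \xi) = \sum_{k \ge 0} \frac{(D^T. L_T^{\cdot k}; \xi)}{k!}
\]
is compactly absolutely-convergent and defines a real analytic function on $\mathfrak{t}$ for any compact $T$-scheme $X$, any $D^T \in H_*^T(X, \mathbb{R})$ and $L_T \in H^2_T(X, \mathbb{R})$. Applying this to the proper $T$-scheme $\mathcal{X}_0$ with $L_T := \mathcal{L}_T|_{\mathcal{X}_0}$, $D^T = \kappa_{\mathcal{X}_0}^T, \mathcal{L}_T|_{\mathcal{X}_0}, [\mathcal{X}_0]^T$, respectively, yields (i).

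For (ii), the key is to identify $D(\xi)$ with the Laplace transform of the equivariant Duistermaat--Heckman measure of $(\mathcal{X}_0, \mathcal{L}|_{\mathcal{X}_0})$ for the $T$-action. Concretely, decompose $H^0(\mathcal{X}_0, \mathcal{L}|_{\mathcal{X}_0}^{\otimes m}) = \bigoplus_{\mu \in M} H^0(\mathcal{X}_0, \mathcal{L}|_{\mathcal{X}_0}^{\otimes m})_\mu$ into $T$-weight spaces and form
\[
\DHm_m := \frac{1}{m^n} \sum_{\mu \in M} \dim H^0(\mathcal{X}_0, \mathcal{L}|_{\mathcal{X}_0}^{\otimes m})_\mu \cdot \delta_{\mu/m}.
\]
By Proposition \ref{central fibre via filtration}, these measures agree with the scaled spectral measures $\nu_m(\mathcal{F}_{(\mathcal{X}, \mathcal{L}; \xi)})$ (for any $\xi \in \sigma^\circ$) after pushing forward along $\mu \mapsto \langle \mu, \xi \rangle$; in particular $\DHm_m$ has compact support independently of $m$ and converges weakly to a compactly supported non-negative measure $\DHm_{\mathcal{X}_0}$ on $M_\mathbb{R}$ of total mass $(e^L) > 0$. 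The equivariant Hirzebruch--Riemann--Roch identity gives
\[
D(\xi) = (e^{\mathcal{L}_T|_{\mathcal{X}_0}}; \xi) = \int_{M_\mathbb{R}} e^{-\langle \mu, \xi \rangle} \DHm_{\mathcal{X}_0}(\mu),
\]
which is manifestly strictly positive for every $\xi \in \mathfrak{t}$ (the integrand is positive and the measure is non-trivial).

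The main obstacle is the identification in (ii), i.e.\ making precise the Laplace-transform formula on the general base $B_\sigma$ and for (possibly non-reduced, non-normal) $\mathcal{X}_0$; this requires combining the asymptotic Riemann--Roch expansion with the torus-weight decomposition furnished by Proposition \ref{toric vector bundle via weight filtration} and Proposition \ref{central fibre via filtration}. Once the Laplace-transform representation is in hand, positivity on all of $\mathfrak{t}$ is immediate, and the real analyticity of $\cmu^\lambda(\mathcal{X}, \mathcal{L}; \bullet)$ on $\mathfrak{t}$ follows.
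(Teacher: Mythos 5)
Your proposal is correct and follows essentially the same route as the paper, which simply cites \cite[Proposition 3.8]{Ino3} — i.e.\ the compact absolute convergence of the series $(D^T.h(L_T);\xi)$ recalled in section \ref{equivariant intersection} — for the real analyticity of each equivariant intersection. The one point you rightly isolate that the citation leaves implicit, the strict positivity of the denominator $(e^{\mathcal{L}_T|_{\mathcal{X}_0}};\xi)$ on all of $\mathfrak{t}$, is indeed available via the trace/Laplace-transform identity $(e^{\mathcal{L}|_{\mathcal{X}_0}};\xi)=\lim_m m^{-n}\mathrm{Tr}(e^{-m^{-1}H_{m,\xi}})$ of Proposition \ref{chmu is an invariant for filtration} together with the uniform bound on the weights and $N_m/m^n\to(e^L)>0$, so your argument closes correctly.
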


\begin{proof}
This is a consequence of \cite[Proposition 3.8]{Ino3}. 
\end{proof}

\begin{thm}
\label{variation of mu-entropy}
Let $(X, L)$ be a $T$-polarized scheme. 
For $\xi \in \mathfrak{t}$ and a $T$-equivariant test configuration $(\mathcal{X}, \mathcal{L})$, we have 
\[ \frac{d}{d\rho}\Big{|}_{\rho=0} \cmu^\lambda (\mathcal{X}_\sigma, \mathcal{L}_\sigma; \xi + \rho. \eta) = - \cFut^\lambda_\xi (\mathcal{X}, \mathcal{L}) \]
for arbitrary $\sigma \subset \mathfrak{t}$ with $\xi \in \sigma$. 
\end{thm}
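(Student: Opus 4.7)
The plan is to expand $\cmu^\lambda(\mathcal{X}_\sigma,\mathcal{L}_\sigma;\xi+\rho\eta)$ explicitly as a real analytic function of $\rho$ via the localisation identities recalled in the introduction, and then differentiate at $\rho=0$ and match the result with the definition of $\cFut^\lambda_\xi(\mathcal{X},\mathcal{L})$.

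First I would identify the central fibre: for the product polyhedral configuration $(\mathcal{X}_\sigma,\mathcal{L}_\sigma)=(\mathcal{X}\times B_\sigma,\mathcal{L}\boxtimes\mathcal{O}_{B_\sigma})$ the fibre over the $T\times\mathbb{G}_m$-fixed point $(0,o)\in\mathbb{A}^1\times B_\sigma$ is canonically $\mathcal{X}_0$ with its original $T\times\mathbb{G}_m$-action. In particular, $\cmu^\lambda(\mathcal{X}_\sigma,\mathcal{L}_\sigma;\xi+\rho\eta)$ depends only on the evaluation point $\xi+\rho\eta$ and not on the ambient cone $\sigma$, which yields the $\sigma$-independence assertion of the theorem. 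Next I would upgrade the $\mathbb{G}_m$-equivariant localisation identity $(\kappa^{\mathbb{G}_m}_{\mathcal{X}_0}\cdot e^{\mathcal{L}_{\mathbb{G}_m}};\rho)=(K_X\cdot e^L)-\rho(K^{\mathbb{G}_m}_{\bar{\mathcal{X}}/\mathbb{P}^1}\cdot e^{\bar{\mathcal{L}}_{\mathbb{G}_m}};\rho)$ (and the analogous identities for the $e^{\mathcal{L}}|_{\mathcal{X}_0}$- and $\mathcal{L}\cdot e^{\mathcal{L}}|_{\mathcal{X}_0}$-intersections) from \cite{Ino3} to the two-torus setting. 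Since the $\mathbb{G}_m$-fixed decomposition $\bar{\mathcal{X}}^{\mathbb{G}_m}=X\sqcup\mathcal{X}_0$ is $T$-invariant, the upgrade is purely formal and produces
\begin{align*}
(\kappa^{T\times\mathbb{G}_m}_{\mathcal{X}_0}\cdot e^{\mathcal{L}_{T\times\mathbb{G}_m}|_{\mathcal{X}_0}};\xi+\rho\eta)
&=(K_X^T\cdot e^{L_T};\xi)-\rho(K^{T\times\mathbb{G}_m}_{\bar{\mathcal{X}}/\mathbb{P}^1}\cdot e^{\bar{\mathcal{L}}_{T\times\mathbb{G}_m}};\xi+\rho\eta),\\
(e^{\mathcal{L}_{T\times\mathbb{G}_m}|_{\mathcal{X}_0}};\xi+\rho\eta)
&=(e^{L_T};\xi)-\rho(e^{\bar{\mathcal{L}}_{T\times\mathbb{G}_m}};\xi+\rho\eta),\\
(\mathcal{L}_{T\times\mathbb{G}_m}|_{\mathcal{X}_0}\cdot e^{\mathcal{L}_{T\times\mathbb{G}_m}|_{\mathcal{X}_0}};\xi+\rho\eta)
&=(L_T\cdot e^{L_T};\xi)-\rho(\bar{\mathcal{L}}_{T\times\mathbb{G}_m}\cdot e^{\bar{\mathcal{L}}_{T\times\mathbb{G}_m}};\xi+\rho\eta),
\end{align*}
where the $\rho$-independent summand comes from the component $X\times\{1\}\subset\bar{\mathcal{X}}^{\mathbb{G}_m}$ (on which $\mathbb{G}_m$ acts trivially) and the $\rho$-linear summand comes from $\mathcal{X}_0\times\{0\}$.

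Substituting these three identities into Definition \ref{mu-entropy of polyhedral configuration} exhibits $\cmu(\mathcal{X}_\sigma,\mathcal{L}_\sigma;\xi+\rho\eta)$ and $\bm{\check\sigma}(\mathcal{X}_\sigma,\mathcal{L}_\sigma;\xi+\rho\eta)$ as explicit analytic functions of $\rho$ of the form $(P_0(\xi)-\rho P_1(\xi+\rho\eta))/(Q_0(\xi)-\rho Q_1(\xi+\rho\eta))$ together with, in the case of $\bm{\check\sigma}$, a $\log$ of the denominator. Differentiating at $\rho=0$ by the quotient rule and $(\log u)'=u'/u$, and using that the restriction of $(T\times\mathbb{G}_m)$-equivariant intersections along $T\hookrightarrow T\times\mathbb{G}_m$ and evaluation at $(0,\xi)$ recovers the purely $T$-equivariant intersections $(K^T_{\bar{\mathcal{X}}/\mathbb{C}P^1}\cdot e^{\bar{\mathcal{L}}_T};\xi)$, $(\bar{\mathcal{L}}_T\cdot e^{\bar{\mathcal{L}}_T};\xi)$ and $(e^{\bar{\mathcal{L}}_T};\xi)$, I would collect terms and recognise them as precisely the negatives of $D_\xi\check\mu(\mathcal{X},\mathcal{L})$ and $D_\xi\check\sigma(\mathcal{X},\mathcal{L})$ as read off from the definition of the $\mu$-Futaki invariant. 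Taking the $\lambda$-linear combination yields the claim.

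The only delicate point, rather than a genuine obstacle, is the verification of the $(T\times\mathbb{G}_m)$-equivariant upgrade of the single-torus localisation identity of \cite{Ino3}; this is routine since it comes from applying Borel construction with respect to the extra commuting torus $T$ and using $T$-invariance of the $\mathbb{G}_m$-fixed decomposition of $\bar{\mathcal{X}}$. Beyond that, the proof is a transparent quotient-rule calculation comparing two explicit rational expressions.
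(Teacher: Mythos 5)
Your proposal is correct and follows essentially the same route as the paper's proof: localisation of the $T\times\mathbb{G}_m$-equivariant intersections over $\mathbb{C}P^1$ (with trivial $T$-action) into a $\rho$-independent contribution from the fibre over $\infty$ and a $\rho$-linear contribution from the central fibre, followed by differentiation at $\rho=0$ via the quotient rule and matching with the defining formulas for $D_\xi\check{\mu}$ and $D_\xi\check{\sigma}$.
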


\begin{rem}
This is essentially observed in \cite{Ino3}, where we introduced the $\mu$-Futaki invariant as a derivative of $\mu$-character. 
The Taylor series of the $\mu$-entropy is identified with the $\mu$-character 
\[ \bm{\check{\mu}}^\lambda_{T \times \mathbb{G}_m} (\mathcal{X}_\sigma, \mathcal{L}_\sigma) \in \hat{H}_{T \times \mathbb{G}_m} (\mathbb{A}^1 \times B_\sigma, \mathbb{R}) \]
under the Chern--Weil isomorphism 
\[ \hat{H}_{T \times \mathbb{G}_m} (\mathbb{A}^1 \times B_\sigma, \mathbb{R}) \xrightarrow{|_{\{ o \}}} \hat{H}_{T \times \mathbb{G}_m} (\{ o \}, \mathbb{R}) \xrightarrow{{^1 \Phi}} \hat{S} (\mathfrak{t}^\vee \times \mathbb{R}). \]
It follows by \cite[Proposition 3.3, Corollary 3.20]{Ino3} that 
\[ \frac{d}{d\rho}\Big{|}_{\rho = 0} \cmu^\lambda (\mathcal{X}_\sigma, \mathcal{L}_\sigma; \xi + \rho. \eta) = \langle {^1 \mathfrak{D}_\xi} \bm{\check{\mu}}^\lambda_{T \times \mathbb{G}_m} (\mathcal{X}, \mathcal{L}), \eta \rangle = - \cFut^\lambda_\xi (\mathcal{X}, \mathcal{L}), \]
which proves the result. 
\end{rem}

We give a more direct proof in the following. 

\begin{proof}
By the localization formula for $T \times \mathbb{G}_m$-equivariant cohomology class on $\mathbb{C}P^1$ with the trivial $T$-action, we have 
\begin{gather*} 
(e^{\mathcal{L}_{T \times \mathbb{G}_m}|_{\mathcal{X}_0}}; (\rho, \xi)) = - \rho (e^{\bar{\mathcal{L}}_{T \times \mathbb{G}_m}}; (\rho, \xi)) + (e^{L_T}; \xi), 
\\
(e^{\mathcal{L}_{T \times \mathbb{G}_m}|_{\mathcal{X}_0}}; (\rho, \xi)) = - \rho (\bar{\mathcal{L}}_T. e^{\bar{\mathcal{L}}_{T \times \mathbb{G}_m}}; (\rho, \xi)) + (L_T. e^{L_T}; \xi), 
\\
(\kappa_{\mathcal{X}_0}. e^{\mathcal{L}_{T \times \mathbb{G}_m}|_{\mathcal{X}_0}}; (\rho, \xi)) = - \rho (K_{\bar{\mathcal{X}}/\mathbb{P}^1}. e^{\bar{\mathcal{L}}_{T \times \mathbb{G}_m}}; (\rho, \xi)) + (K_X^T. e^{L_T}; \xi), 
\end{gather*}
which shows 
\begin{gather*} 
\frac{d}{d\rho}\Big{|}_{\rho=0} (e^{\mathcal{L}_{T \times \mathbb{G}_m}|_{\mathcal{X}_0}}; (\rho, \xi)) = - (e^{\bar{\mathcal{L}}_T}; \xi), 
\\
\frac{d}{d\rho}\Big{|}_{\rho=0} (e^{\mathcal{L}_{T \times \mathbb{G}_m}|_{\mathcal{X}_0}}; (\rho, \xi)) = - (\bar{\mathcal{L}}_T. e^{\bar{\mathcal{L}}_T}; \xi), 
\\
\frac{d}{d\rho}\Big{|}_{\rho=0} (\kappa_{\mathcal{X}_0}. e^{\mathcal{L}_{T \times \mathbb{G}_m}|_{\mathcal{X}_0}}; (\rho, \xi)) = - (K_{\bar{\mathcal{X}}/\mathbb{P}^1}. e^{\bar{\mathcal{L}}_T}; \xi). 
\end{gather*}

It follows that 
\begin{align*}
\frac{d}{d\rho}\Big{|}_{\rho=0} \cmu (\mathcal{X}_\sigma, \mathcal{L}_\sigma; (\rho, \xi))
&= -2 \pi \frac{ (K_{\bar{\mathcal{X}}/\mathbb{P}^1}^T. e^{\bar{\mathcal{L}}_T}; \xi) \cdot (e^{L_T}; \xi)  - (\kappa_X^T. e^{L_T}; \xi) \cdot (e^{\bar{\mathcal{L}}_T}; \xi) }{(e^{L_T}; \xi)^2}, 
\\
\frac{d}{d\rho}\Big{|}_{\rho=0} \bm{\check{\sigma}} (\mathcal{X}_\sigma, \mathcal{L}_\sigma; (\rho, \xi))
&= -\frac{(\bar{\mathcal{L}}_T. e^{\bar{\mathcal{L}}_T}; \xi) \cdot (e^{L_T}; \xi) - (L_T. e^{L_T}; \xi) \cdot (e^{\bar{\mathcal{L}}_T}; \xi) }{(e^{L_T}; \xi)^2} + \frac{(e^{\bar{\mathcal{L}}_T}; \xi)}{(e^{L_T}; \xi)}. 
\end{align*}
Thus we get 
\[ \frac{d}{d\rho}\Big{|}_{\rho=0} \cmu^\lambda (\mathcal{X}_\sigma, \mathcal{L}_\sigma; \xi + \rho. \eta) = - \cFut^\lambda_\xi (\mathcal{X}, \mathcal{L}) \]
\end{proof}

\begin{cor}
Let $(X, L)$ be a polarized scheme. 
If there exists a proper vector $\xi$ on $(X, L)$ such that 
\[ \cmu^\lambda (X, L; \xi) = \sup_{(\mathcal{X}/B_\sigma, \mathcal{L}; \zeta)} \cmu^\lambda (\mathcal{X}, \mathcal{L}; \zeta), \]
then $(X, L)$ is $\check{\mu}^\lambda_\xi$K-semistable. 
\end{cor}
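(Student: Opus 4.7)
The plan is to combine the global maximization hypothesis with the variational formula of Theorem \ref{variation of mu-entropy}. Let $T$ be a torus acting on $(X, L)$ whose Lie algebra contains $\xi$. For an arbitrary $T$-equivariant test configuration $(\mathcal{X}, \mathcal{L})$, I would fix a toric cone $\sigma \subset \mathfrak{t}$ containing $\xi$ and form the product $[0,\infty) \times \sigma$-configuration $(\mathcal{X}_\sigma, \mathcal{L}_\sigma) = (\mathcal{X} \times B_\sigma, \mathcal{L} \times B_\sigma)$ from Example \ref{product sigma-configuration}. The maximization hypothesis applied along the one-parameter family of vectors $\{\xi + \rho \cdot \eta\}_{\rho \ge 0}$ then yields
\[ \cmu^\lambda(\mathcal{X}_\sigma, \mathcal{L}_\sigma; \xi + \rho \cdot \eta) \le \cmu^\lambda(X, L; \xi) \qquad \text{for every } \rho \in [0, \infty). \]

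The next step is to verify that this inequality is saturated at $\rho = 0$, i.e.\ that $\cmu^\lambda(\mathcal{X}_\sigma, \mathcal{L}_\sigma; \xi + 0 \cdot \eta) = \cmu^\lambda(X, L; \xi)$. This is the $T \times \mathbb{G}_m$-equivariant enhancement of the localization identity $(\kappa_{\mathcal{X}_0}^{\mathbb{G}_m}. e^{\mathcal{L}_{\mathbb{G}_m}}; \rho) = (K_X. e^L) - \rho (K_{\bar{\mathcal{X}}/\mathbb{P}^1}^{\mathbb{G}_m}. e^{\bar{\mathcal{L}}_{\mathbb{G}_m}}; \rho)$ displayed in the excerpt: at a vector whose $\mathbb{G}_m$-component vanishes, each term carrying a factor of $\rho$ drops out, and the surviving intersection on the central fibre $\mathcal{X}_0$ reduces to the $T$-equivariant intersection on the generic fibre $X$. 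Combined with the analogous identities for the denominator $(e^{\mathcal{L}_{T \times \mathbb{G}_m}|_{\mathcal{X}_0}};(0,\xi))$ and for the $\bm{\check{\sigma}}$-term $(\mathcal{L}_{T\times\mathbb{G}_m}|_{\mathcal{X}_0}. e^{\mathcal{L}_{T\times\mathbb{G}_m}|_{\mathcal{X}_0}};(0,\xi))$, this produces the desired equality.

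Once this is in hand, $\rho = 0$ is a maximum of the real analytic function $\rho \mapsto \cmu^\lambda(\mathcal{X}_\sigma, \mathcal{L}_\sigma; \xi + \rho \cdot \eta)$ on $[0,\infty)$, so its right derivative at $0$ is nonpositive. By Theorem \ref{variation of mu-entropy} this derivative equals $-\cFut^\lambda_\xi(\mathcal{X}, \mathcal{L})$, giving $\cFut^\lambda_\xi(\mathcal{X}, \mathcal{L}) \ge 0$; since $(\mathcal{X}, \mathcal{L})$ was an arbitrary $T$-equivariant test configuration, this is exactly $\check{\mu}^\lambda_\xi$K-semistability. The main (and really only) obstacle is the two-variable localization identity at $\rho = 0$; everything else is a formal consequence of the hypothesis and of Theorem \ref{variation of mu-entropy}.
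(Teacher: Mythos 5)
Your proof is correct and follows essentially the same route as the paper: apply the maximization hypothesis along the ray $\xi + \rho\eta$ in the product configuration $(\mathcal{X}_\sigma, \mathcal{L}_\sigma)$, observe the equality $\cmu^\lambda(\mathcal{X}_\sigma, \mathcal{L}_\sigma; \xi) = \cmu^\lambda(X, L; \xi)$ at $\rho = 0$ (which indeed drops out of the $T\times\mathbb{G}_m$-localization identities already recorded in the proof of Theorem \ref{variation of mu-entropy}), and conclude $\cFut^\lambda_\xi(\mathcal{X},\mathcal{L}) = -\frac{d}{d\rho}\big|_{\rho=0}\cmu^\lambda(\mathcal{X}_\sigma, \mathcal{L}_\sigma; \xi+\rho\eta) \ge 0$. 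You are in fact slightly more careful than the paper, which asserts the $\rho=0$ equality without comment.
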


\begin{proof}
Since $\cmu^\lambda (\mathcal{X}_\sigma, \mathcal{L}_\sigma; \xi + \rho. \eta) \le \cmu^\lambda (X, L; \xi) = \cmu^\lambda (\mathcal{X}_\sigma, \mathcal{L}_\sigma; \xi)$, we get 
\[ \cFut^\lambda_\xi (\mathcal{X}, \mathcal{L}) = - \frac{d}{d\rho}\Big{|}_{\rho=0} \cmu^\lambda (\mathcal{X}_\sigma, \mathcal{L}_\sigma; \xi + \rho. \eta) \ge 0. \]
\end{proof}

Since $\cmu^\lambda (\mathcal{X}, \mathcal{L}; \zeta)$ is continuous on $\zeta$, we may restrict the range of the supremum to test configurations $(\mathcal{X}, \mathcal{L}; \rho)$ with $\rho \in \mathbb{Q}_{\ge 0}$ as claimed in Theorem \ref{characteristic mu maximization implies muK-semistability}. 

\begin{thm}
\label{forall exists}
Let $(X, L)$ be a polarized normal variety with only klt singularities. 
Suppose for every test configuration $(\mathcal{X}, \mathcal{L})$ and $\rho \in \mathbb{Q}_{\ge 0}$, there exists a proper vector $\xi$ on $(X, L)$ such that 
\[ \cmu^\lambda (X, L; \xi) \ge \cmu^\lambda (\mathcal{X}, \mathcal{L}; \rho), \]
then $(X, L)$ is $\mu^\lambda$K-semistable with respect to some $\xi_{\mathrm{opt}}$ maximizing the characteristic $\mu$-entropy. 
\end{thm}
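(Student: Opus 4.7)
The plan is to reduce Theorem \ref{forall exists} to the preceding corollary by producing a single proper vector $\xi_{\mathrm{opt}}$ that dominates $\cmu^\lambda$ uniformly on every test configuration.

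First, I would invoke the properness of $\cmu^\lambda (X, L; \bullet)$ on the space of proper vectors. This properness is established for smooth $X$ in \cite{Ino2} and is promised for klt $X$ in section \ref{maximizing non-archimedean mu-entropy}. Since the Lie algebra $\mathfrak{t}$ of a maximal torus in $\mathrm{Aut} (X, L)$ is finite-dimensional and $\cmu^\lambda (X, L; \xi)$ is real analytic in $\xi \in \mathfrak{t}$, the coercivity yields a proper vector $\xi_{\mathrm{opt}}$ attaining
\[ \cmu^\lambda (X, L; \xi_{\mathrm{opt}}) = \sup_\xi \cmu^\lambda (X, L; \xi), \]
where the supremum runs over all proper vectors on $(X, L)$.

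Next, I would use the hypothesis of the theorem to promote $\xi_{\mathrm{opt}}$ to a maximizer over all test configurations. For each test configuration $(\mathcal{X}, \mathcal{L})$ and each $\rho \in \mathbb{Q}_{\ge 0}$, the hypothesis supplies a proper vector $\xi$ with $\cmu^\lambda (X, L; \xi) \ge \cmu^\lambda (\mathcal{X}, \mathcal{L}; \rho)$; since $\xi_{\mathrm{opt}}$ dominates every proper vector, this gives
\[ \cmu^\lambda (X, L; \xi_{\mathrm{opt}}) \ge \cmu^\lambda (\mathcal{X}, \mathcal{L}; \rho). \]
The same inequality extends to arbitrary $\rho \in \mathbb{R}_{\ge 0}$ (and to general polyhedral configurations $(\mathcal{X}/B_\sigma, \mathcal{L}; \zeta)$) by continuity/real-analyticity of $\cmu^\lambda$ in the vector parameter. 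Thus $\xi_{\mathrm{opt}}$ achieves the supremum over all polyhedral configurations appearing in the hypothesis of the corollary preceding Theorem \ref{forall exists}, and that corollary, whose proof rests on the variational formula of Theorem \ref{variation of mu-entropy}, yields $\check{\mu}^\lambda_{\xi_{\mathrm{opt}}}$K-semistability.

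The main obstacle is the first step. The differential geometric proof of properness in \cite{Ino2} is tailored to smooth K\"ahler geometry and does not transplant to singular varieties. To handle the klt case, one must first develop the non-archimedean machinery of the later sections: the extension of $\NAmu^\lambda$ to $\E^{\exp} (X, L)$ as an upper semi-continuous functional (Theorem \ref{NAmu entropy extension}) together with the identity $\NAmu^\lambda (\varphi_\xi) = \cmu^\lambda (X, L; \xi)$ for proper vectors. With these in hand, the coercivity estimate controlling $\cmu^\lambda (X, L; \xi)$ as $\xi$ exits compact subsets of $\mathfrak{t}$ can be derived from the non-archimedean pluripotential formulae, supplying the missing properness input needed above.
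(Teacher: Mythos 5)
Your proposal is correct and follows essentially the same route as the paper: the properness of $\cmu^\lambda (X, L; \bullet)$ on proper vectors (proved for klt $X$ in Corollary \ref{properness of mu-entropy} via the non-archimedean formula and the positivity of $A_X$ on the unit sphere) yields a maximizer $\xi_{\mathrm{opt}}$, the hypothesis together with continuity in the vector parameter upgrades it to a global maximizer over all polyhedral configurations, and the preceding corollary (resting on the variational formula of Theorem \ref{variation of mu-entropy}) then gives $\check{\mu}^\lambda_{\xi_{\mathrm{opt}}}$K-semistability. You have also correctly identified that the only nontrivial input beyond the corollary is the properness statement, deferred to section \ref{maximizing non-archimedean mu-entropy}, exactly as in the paper.
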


\begin{proof}
This is a consequence of the above corollary and the properness of the $\mu$-entropy for proper vectors, which is proved for $X$ with klt singularities in Corollary \ref{properness of mu-entropy}. 
We note the properness is proved for smooth $X$ also in \cite{Ino2} by a differential geometric method. 
\end{proof}

A slight modification of the proof yields the following. 

\begin{thm}
\label{mu-entropy maximization for polyhedral configuration}
If a polyhedral configuration $(\mathcal{X}/B_\sigma, \mathcal{L}; \xi)$ maximizes $\cmu^\lambda$ among all polyhedral configurations, then the central fibre $(\underline{X}, \underline{L}) = (\mathcal{X}_o, \mathcal{L}|_{\mathcal{X}_o})$ is $\check{\mu}^\lambda_\xi$K-semistable with respect to all $T$-equivariant test configurations for $T = (\overline{\exp \mathbb{R} \xi})_\mathbb{C} \subset \mathrm{Aut} (\underline{X}, \underline{L})$. 
\end{thm}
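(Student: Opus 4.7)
The approach is to adapt the strategy of the previous corollary, reducing the statement to the variational formula of Theorem \ref{variation of mu-entropy} applied to a suitably enlarged polyhedral configuration of $(X, L)$ that absorbs both $(\mathcal{X}/B_\sigma, \mathcal{L})$ and the $T$-equivariant test configuration $(\underline{\mathcal{X}}, \underline{\mathcal{L}})$ of its central fibre. First, I would enlarge the lattice to $\tilde N := N \oplus \mathbb{Z}\eta$ and, by Proposition \ref{polyhedral configuration and finite generation of filtration}, encode $(\underline{\mathcal{X}}, \underline{\mathcal{L}})$ by a finitely generated $\mathbb{Z}$-filtration $\mathcal{G}$ of $\underline R$ that is compatible with the $\xi$-grading by $T$-equivariance. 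Lifting a $\mathcal{G}$-diagonal, $T$-weight-homogeneous basis $\{\underline x^i\}$ of $\underline R_d$ (with $T$-weights $\mu_i \in M$ and $\mathcal{G}$-weights $\nu_i \in \mathbb{Z}$) to $\{x^i\} \subset \mathcal{F}^{\mu_i}_{(\mathcal{X}, \mathcal{L})} R_d$ and assigning each $x^i$ the weight $(\mu_i, \nu_i) \in \tilde M$ produces a bigraded finitely generated filtration $\tilde{\mathcal{F}}$ on $R$. By Proposition \ref{polyhedral configuration and finite generation of filtration}, $\tilde{\mathcal{F}}$ is realized by a $\tilde\sigma$-configuration $(\tilde{\mathcal{X}}/B_{\tilde\sigma}, \tilde{\mathcal{L}})$ of $(X, L)$ for a suitable toric cone $\tilde\sigma \subset \tilde{\mathfrak{t}} := \mathfrak{t} \oplus \mathbb{R}\eta$ having $\sigma \times \{0\}$ as a face and $\eta$ in its interior modulo this face.

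Next, by construction the slice filtration $\mathcal{F}_{(\tilde{\mathcal{X}}, \tilde{\mathcal{L}}; (\xi, 0))}$ coincides with $\mathcal{F}_{(\mathcal{X}, \mathcal{L}; \xi)}$, so by the invariance of the characteristic $\mu$-entropy under the presenting polyhedral configuration (Proposition \ref{chmu is an invariant for filtration}) we obtain $\cmu^\lambda(\tilde{\mathcal{X}}, \tilde{\mathcal{L}}; (\xi, 0)) = \cmu^\lambda(\mathcal{X}, \mathcal{L}; \xi)$. For $\rho > 0$, the $(\xi, \rho)$-slice refines each $\xi$-associated-graded piece of $R$ by $\mathcal{G}$, so the resulting central fibre is identified with the central fibre of $(\underline{\mathcal{X}}, \underline{\mathcal{L}})$ endowed with the product $\mathbb{G}_m$-action generated by $(\xi, 0)$ and $\eta$. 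Applying the equivariant localization argument of Theorem \ref{variation of mu-entropy} to the $\mathbb{P}^1$-compactification of $(\underline{\mathcal{X}}, \underline{\mathcal{L}})$ in the $\eta$-direction then yields
\[ \frac{d}{d\rho}\bigg|_{\rho = 0^+} \cmu^\lambda(\tilde{\mathcal{X}}, \tilde{\mathcal{L}}; (\xi, \rho)) = -\cFut^\lambda_\xi(\underline{\mathcal{X}}, \underline{\mathcal{L}}). \]

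To conclude, since $(\tilde{\mathcal{X}}/B_{\tilde\sigma}, \tilde{\mathcal{L}})$ is a polyhedral configuration of $(X, L)$, the maximization hypothesis gives $\cmu^\lambda(\tilde{\mathcal{X}}, \tilde{\mathcal{L}}; (\xi, \rho)) \leq \cmu^\lambda(\mathcal{X}, \mathcal{L}; \xi) = \cmu^\lambda(\tilde{\mathcal{X}}, \tilde{\mathcal{L}}; (\xi, 0))$ for every $\rho \geq 0$; the right derivative at $\rho = 0$ is therefore non-positive, so $\cFut^\lambda_\xi(\underline{\mathcal{X}}, \underline{\mathcal{L}}) \geq 0$. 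As $(\underline{\mathcal{X}}, \underline{\mathcal{L}})$ was an arbitrary $T$-equivariant test configuration of $(\underline{X}, \underline{L})$, the $\check\mu^\lambda_\xi$K-semistability of $(\underline X, \underline L)$ follows. The main technical obstacle is the identification of the $(\xi, \rho)$-slice with the intended refinement so that the variational formula of Theorem \ref{variation of mu-entropy} applies verbatim despite $(\xi, 0)$ being a boundary point of $\tilde\sigma$; this reduces to a careful bookkeeping of weight decompositions along the face $\sigma \times \{0\}$, and the equivariant localization on $\mathbb{P}^1$ carries through unchanged because the computation factors through the $\eta$-slice compactification.
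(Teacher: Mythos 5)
Your proposal is correct and follows essentially the same strategy as the paper: build an interpolating polyhedral configuration of $(X,L)$ over a cone $\tilde\sigma\subset\mathfrak{t}\times[0,\infty)$ whose $(\xi,\rho)$-slice for small $\rho>0$ has central fibre $(\underline{\mathcal{X}}_o,\underline{\mathcal{L}}_o)$, then combine the maximization hypothesis with the variational formula of Theorem \ref{variation of mu-entropy} to force $\cFut^\lambda_\xi(\underline{\mathcal{X}},\underline{\mathcal{L}})\ge 0$. The only difference is presentational: you construct the interpolating family by lifting a diagonal basis to a bigraded finitely generated filtration and invoking Proposition \ref{polyhedral configuration and finite generation of filtration}, whereas the paper resolves the indeterminacy of the rational map $B_\sigma\times\mathbb{A}^1\dashrightarrow\mathrm{Hilb}$ directly — but since that proposition is itself proved via the same Hilbert-scheme and fan-refinement machinery, the two constructions coincide in substance.
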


\begin{proof}
Let $(\underline{\mathcal{X}}, \underline{\mathcal{L}})$ be a $T$-equivariant test configuration of $(\underline{X}, \underline{L})$. 
Fixing isomorphisms $H^0 (\underline{\mathcal{X}}_o, \underline{\mathcal{L}}_o^{\otimes k}) \cong H^0 (\underline{X}, \underline{L}^{\otimes k}) \cong H^0 (X, L^{\otimes k})$, we endow a $T \times \mathbb{G}_m$-action on $H^0 (X, L^{\otimes k})$. 
This gives a $T \times \mathbb{G}_m$-action on $\mathrm{Hilb}$ of $\mathbb{P} (H^0 (X, L^{\otimes k})^\vee)$. 
Fixing a $T$-equivariant relative embedding of $(\mathcal{X}/B_\sigma, \mathcal{L})$ into $B_\sigma \times \mathbb{P} (H^0 (X, L^{\otimes k})^\vee)$, we get a morphism $B_\sigma \to \mathrm{Hilb}$. 

Consider the $T \times \mathbb{G}_m$-equivariant morphism $f: B_\sigma \times (\mathbb{A}^1 \setminus 0) \to \mathrm{Hilb}: (b, t) \mapsto [\mathcal{X}_b, \mathcal{L}_b]. t$. 
By the construction, we have $\lim_{t \to 0} f (o, t) = [\underline{\mathcal{X}}_o, \underline{\mathcal{L}}_o]$. 
Take a $T \times \mathbb{G}_m$-equivariant resolution $\tilde{B} \to B_\sigma \times \mathbb{A}^1$ of the indeterminancy of the rational map $f: B_\sigma \times \mathbb{A}^1 \dashrightarrow \mathrm{Hilb}$. 
Let $\tilde{\Sigma}$ be the fan associated to the toric variety $\tilde{B}$. 
Since any maximal torus equivairant resolution of a toric variety is a refinement of the associated fan, we can find a cone $\tilde{\sigma} \in \tilde{\Sigma}$ such that $\tilde{\sigma} \subset \sigma \times [0, \infty)$ and $(\xi, \rho) \in \tilde{\sigma}^\circ$ for any small $\rho > 0$ by the irrationality of $\xi$. 
Let $\varphi: B_{\tilde{\sigma}} \to \tilde{B} \to B_\sigma \times \mathbb{A}^1$ be the associated affine toric variety and $(\tilde{\mathcal{X}}, \tilde{\mathcal{L}}) \to B_{\tilde{\sigma}}$ be the pull-back of the universal family on $\mathrm{Hilb}$. 
Since we have $\lim_{t \to \infty} \exp (-t. (\xi, \rho)) = o \in B_{\tilde{\sigma}}$, $\lim_{t \to \infty} \varphi (\exp (-t. (\xi, \rho)))$ is mapped to the unique $T \times \mathbb{G}_m$-fixed point $o \in B_\sigma \times \mathbb{A}^1$. 
Thus we get a natural $T \times \mathbb{G}_m$-equivariant isomorphism $(\tilde{\mathcal{X}}_o, \tilde{\mathcal{L}}_o) \cong (\underline{\mathcal{X}}_o, \underline{\mathcal{L}}_o)$. 

It follows that 
\[ \bm{\check{\mu}}^\lambda_{\mathrm{ch}, (\underline{X}, \underline{L})} (\underline{\mathcal{X}}_\sigma, \underline{\mathcal{L}}_\sigma; (\xi, \rho)) = \cmu^\lambda (\underline{\mathcal{X}}_o, \underline{\mathcal{L}}_o; (\xi, \rho)) = \cmu^\lambda (\tilde{\mathcal{X}}_o, \tilde{\mathcal{L}}_o; (\xi, \rho))  = \bm{\check{\mu}}^\lambda_{\mathrm{ch}, (X, L)} (\tilde{\mathcal{X}}, \tilde{\mathcal{L}}; (\xi, \rho)). \]
Since 
\[ \bm{\check{\mu}}^\lambda_{\mathrm{ch}, (X, L)} (\tilde{\mathcal{X}}, \tilde{\mathcal{L}}; (\xi, \rho)) \le \bm{\check{\mu}}^\lambda_{\mathrm{ch}, (X, L)} (\mathcal{X}, \mathcal{L}; \xi) = \bm{\check{\mu}}^\lambda_{\mathrm{ch}, (\underline{X}, \underline{L})} (\xi) = \bm{\check{\mu}}^\lambda_{\mathrm{ch}, (\underline{X}, \underline{L})} (\underline{\mathcal{X}}_\sigma, \underline{\mathcal{L}}_\sigma; (\xi, 0)), \]
we get 
\[ - \Fut^\lambda_{(\underline{X}, \underline{L})} (\underline{\mathcal{X}}, \underline{\mathcal{L}}) = \frac{d}{dt}\Big{|}_{\rho =0} \bm{\check{\mu}}^\lambda_{\mathrm{ch}, (\underline{X}, \underline{L})} (\underline{\mathcal{X}}_\sigma, \underline{\mathcal{L}}_\sigma; (\xi, \rho)) \le 0, \]
which proves the theorem. 
\end{proof}

\subsubsection{Characteristic $\mu$-entropy via associated filtration}
\label{mu-entropy via associated filtration}

We observe the $\mu$-entropy $\cmu^\lambda (\mathcal{X}, \mathcal{L}; \xi)$ can be recovered from the associated filtration $\mathcal{F}_{(\mathcal{X}, \mathcal{L}; \xi)}$. 

Let $(\mathcal{X}/B_\sigma, \mathcal{L})$ be a $\sigma$-configuration. 
For $\xi \in \sigma$, we consider the following endomorphism $H_{m, \xi}$ of $H^0 (\mathcal{X}_o, \mathcal{L}^{\otimes m}|_{\mathcal{X}_o})$: 
\begin{equation} 
H_{m, \xi} (s) := \frac{d}{d\rho}\Big{|}_{\rho=0} s. \exp (\rho. \xi) = H_{m, \xi} (s) = \sum_{\mu \in M} \langle \mu, \xi \rangle s_\mu, 
\end{equation}
where in the latter expression we use the weight decomposition $s = \sum_{\mu \in M} s_\mu$. 
By Proposition \ref{central fibre via filtration}, we have 
\[ \dim \mathrm{Ker} (H_{m, \xi} -\lambda) = \dim \mathcal{F}_{(\mathcal{X}, \mathcal{L}; \xi)}^\lambda R_m / \mathcal{F}_{(\mathcal{X}, \mathcal{L}; \xi)}^{\lambda +} R_m. \]

Now we can express the $\mu$-entropy via the associated filtration as follows. 

\begin{prop}
\label{chmu is an invariant for filtration}
Let $(\mathcal{X}/B_\sigma, \mathcal{L})$ be a $\sigma$-configuration. 
For the measures $\nu_m$ associated to the filtration $\mathcal{F}_{(\mathcal{X}, \mathcal{L}; \xi)}$ (see section \ref{Spectral measure}), we have 
\begin{align*} 
\int_{\mathbb{R}} e^{-t} \nu_m
&= \frac{1}{m^n} \mathrm{Tr} (e^{-m^{-1} H_{m, \xi}}) 
\\
&= (e^{\mathcal{L}|_{\mathcal{X}_0}}; \xi) - \frac{m^{-1}}{2} (\kappa_{\mathcal{X}_0}. e^{\mathcal{L}|_{\mathcal{X}_0}}; \xi) + O (m^{-2}). 
\end{align*}
In particular, we have 
\begin{equation}
\label{cmu via filtration}
\cmu (\mathcal{X}, \mathcal{L}; \xi) = -4\pi \lim_{m \to \infty} m \log \frac{\int_\mathbb{R} e^{-t} \nu_m (\mathcal{F}_{(\mathcal{X}; \mathcal{L}; \xi)})}{\int_\mathbb{R} e^{-t} \nu_\infty (\mathcal{F}_{(\mathcal{X}; \mathcal{L}; \xi)})}, 
\end{equation}
hence the characteristic $\mu$-entropy of polyhedral configuration is an invariant of finitely generated filtration. 
\end{prop}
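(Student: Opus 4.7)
The plan splits naturally into the two displayed equalities followed by the limit formula \eqref{cmu via filtration} and the invariance claim. The first equality is pure bookkeeping: inserting the definition of $\nu_m$ and using the identification $\dim \mathcal{F}_{(\mathcal{X},\mathcal{L};\xi)}^\lambda R_m/\mathcal{F}_{(\mathcal{X},\mathcal{L};\xi)}^{\lambda+}R_m = \dim\ker(H_{m,\xi}-\lambda)$, which is a direct consequence of the weight decomposition in Proposition~\ref{central fibre via filtration}, yields
\[ \int_\mathbb{R} e^{-t}\nu_m = \frac{1}{m^n}\sum_\lambda \dim\bigl(\mathcal{F}^\lambda R_m/\mathcal{F}^{\lambda+}R_m\bigr)\,e^{-\lambda/m} = \frac{1}{m^n}\mathrm{Tr}(e^{-m^{-1}H_{m,\xi}}). \]

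For the second equality I would first regroup the trace by weight, so that $\mathrm{Tr}(e^{-m^{-1}H_{m,\xi}}) = \sum_{\mu\in M}\dim H^0(\mathcal{X}_o,\mathcal{L}_o^{\otimes m})_\mu \cdot e^{-\langle\mu,\xi\rangle/m}$ is recognised as the $T$-equivariant character of $H^0(\mathcal{X}_o,\mathcal{L}_o^{\otimes m})$ evaluated at $\exp(-m^{-1}\xi)$. Choosing $d$ large enough that higher cohomology vanishes for all $m\in\mathbb{N}^{(d)}$, this character coincides with the equivariant Euler characteristic, and the equivariant Grothendieck--Riemann--Roch theorem for singular schemes (cf.~\cite{EG2, Ino3}) then yields
\[ \chi_T(\mathcal{X}_o,\mathcal{L}_o^{\otimes m})(\exp(\eta)) = \bigl(e^{m\mathcal{L}_T|_{\mathcal{X}_o}}\cdot\tau^T_{\mathcal{X}_o}(\mathcal{O}_{\mathcal{X}_o});\eta\bigr), \]
where the Baum--Fulton--MacPherson homology Todd class decomposes by codimension as $\tau^T_{\mathcal{X}_o}(\mathcal{O}_{\mathcal{X}_o}) = [\mathcal{X}_o] - \tfrac{1}{2}\kappa_{\mathcal{X}_o} + (\text{codimension}\ge 2)$. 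Substituting $\eta=-m^{-1}\xi$ and invoking the polynomial homogeneity $(\mathcal{L}_T^{\cdot k}; c\xi) = c^{k-n}(\mathcal{L}_T^{\cdot k};\xi)$ of equivariant intersections, the dimensional and codimension-one pieces produce the orders $m^0$ and $m^{-1}$ respectively, while all higher-codimension contributions collapse to $O(m^{-2})$ after dividing by $m^n$, giving the stated expansion.

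For \eqref{cmu via filtration}, I would use that $\int_\mathbb{R} e^{-t}\nu_\infty = (e^{\mathcal{L}_T|_{\mathcal{X}_o}};\xi)$, which follows either from the expansion above or from weak convergence $\nu_m\to\nu_\infty$ applied to the continuous function $e^{-t}$ on the uniformly bounded supports of the $\nu_m$. Expanding $\log(1 + x)$ for $x = O(m^{-1})$ gives
\[ m\log\frac{\int_\mathbb{R} e^{-t}\nu_m}{\int_\mathbb{R} e^{-t}\nu_\infty} \longrightarrow -\frac{1}{2}\,\frac{(\kappa_{\mathcal{X}_o}\cdot e^{\mathcal{L}_T|_{\mathcal{X}_o}};\xi)}{(e^{\mathcal{L}_T|_{\mathcal{X}_o}};\xi)}, \]
so multiplication by $-4\pi$ recovers $\cmu(\mathcal{X},\mathcal{L};\xi)$ by its very definition. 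Since $\nu_m$ and hence $\nu_\infty$ depend only on the filtration $\mathcal{F}_{(\mathcal{X},\mathcal{L};\xi)}$, this identifies $\cmu$ as a filtration invariant; combining with the analogous expressibility of $\bm{\check{\sigma}}$ entirely through $\nu_\infty$ (via the moment formulas for $(\mathcal{L}_T^{\cdot n+k}|_{\mathcal{X}_o};\xi)$ recalled in section~\ref{equivariant intersection}), the full characteristic $\mu$-entropy $\cmu^\lambda$ is an invariant of finitely generated filtrations. The principal obstacle is the second displayed equality, which requires the careful form of equivariant Riemann--Roch for the possibly singular central fibre $\mathcal{X}_o$ with the correct sign and coefficient $-\tfrac{1}{2}$ in front of $\kappa_{\mathcal{X}_o}$, together with termwise convergence of the resulting infinite series of equivariant intersections---this is precisely what the BFM formalism and the convergence results of \cite{Ino3} are designed to supply.
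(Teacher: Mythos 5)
Your proposal is correct and follows essentially the same route as the paper: the first equality by identifying the graded pieces of $\mathcal{F}_{(\mathcal{X},\mathcal{L};\xi)}$ with the weight eigenspaces of $H_{m,\xi}$ via Proposition~\ref{central fibre via filtration}, the second by equivariant Riemann--Roch with the homology Todd class $\tau^T_{\mathcal{X}_o}(\mathcal{O}_{\mathcal{X}_o}) = [\mathcal{X}_o] - \tfrac{1}{2}\kappa_{\mathcal{X}_o} + \dotsb$ together with the scaling lemma $(\alpha\cdot e^{\beta L};\beta^{-1}\xi) = \beta^k(\alpha\cdot e^L;\xi)$, and then the logarithmic expansion for \eqref{cmu via filtration}. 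The only cosmetic remark is that your homogeneity formula is written for the fundamental-class term only, whereas the degree shift $k-j$ depends on the homological degree $2j$ of the class being paired; your application to both terms is nonetheless correct.
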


\begin{proof}
Let $\lambda_1 < \lambda_2 < \dotsb < \lambda_p$ be the eigenvalues of $H_{m, \xi}$. 
We have 
\begin{align*} 
\int_{\mathbb{R}} e^{-t} \nu_m 
&= \frac{1}{m^n} \sum_{i=1}^p \dim \mathrm{Ker} (H_{m, \xi} - \lambda_i). e^{-\lambda_i/m} 
\\
&= \frac{1}{m^n} \mathrm{Tr} (e^{-m^{-1} H_{m, \xi}})= \frac{1}{m^n}  \chi_T (\mathcal{X}_0, \mathcal{L}^{\otimes m}|_{\mathcal{X}_0}; m^{-1} \xi). 
\end{align*}
By the equivariant Riemann--Roch theorem \cite{EG2, Ino3}, we can compute the last term as 
\begin{align*} 
\frac{1}{m^n} \chi_T (\mathcal{X}_0, \mathcal{L}^{\otimes m}|_{\mathcal{X}_0}; m^{-1} \xi)
&=\frac{1}{m^n} (\tau_{\mathcal{X}_0} (\mathcal{O}_{\mathcal{X}_0}). e^{m \mathcal{L}|_{\mathcal{X}_0}}; m^{-1} \xi) 
\\
&= \frac{1}{m^n} \Big{(} ([\mathcal{X}_0]. e^{m \mathcal{L}|_{\mathcal{X}_0}}; m^{-1} \xi) - \frac{1}{2} (\kappa_{\mathcal{X}_0}. e^{m \mathcal{L}|_{\mathcal{X}_0}}; m^{-1} \xi) + \dotsb \Big{)}
\\
&= (e^{\mathcal{L}|_{\mathcal{X}_0}}; \xi) - \frac{m^{-1}}{2} (\kappa_{\mathcal{X}_0}. e^{\mathcal{L}|_{\mathcal{X}_0}}; \xi) + O (m^{-2}), 
\end{align*}
where the last equality follows by the following lemma. 
\end{proof}

\begin{lem}
Let $X$ be a compact topological space. 
For $\alpha \in H^T_{2 k} (X, \mathbb{R})$, $L \in H^2_T (X, \mathbb{R})$ and $\beta \in \mathbb{R}^\times$, we have 
\[ (\alpha. e^{\beta L}; \beta^{-1} \xi) = \beta^k (\alpha. e^L; \xi). \]
\end{lem}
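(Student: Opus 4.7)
The plan is to reduce the identity on the exponential class to a termwise homogeneity statement on each individual power $(\alpha . L^{\cdot j}; \xi)$, then reassemble the series.

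First, I would pin down the degree of each summand. For $\alpha \in H^T_{2k}(X)$ and $L \in H^2_T(X)$, the cap product $\alpha \frown L^{\smile j}$ lies in $H^T_{2(k-j)}(X)$, and its $T$-equivariant push-forward to the point lies in $H^T_{2(k-j)}(\mathrm{pt}) \cong H^{2(j-k)}_T(\mathrm{pt})$. Under the Chern--Weil isomorphism $\Phi \colon H^{2(j-k)}_T(\mathrm{pt}) \cong S^{j-k}\mathfrak{t}^\vee$ recalled in section \ref{equivariant intersection}, this is a homogeneous polynomial of degree $j-k$ on $\mathfrak{t}$ (the isomorphism is grading-preserving because it sends the degree-one generator $c_1^T(\mathbb{C}_\chi)$ to $-\chi \in \mathfrak{t}^\vee$). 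Hence for any $\beta \in \mathbb{R}^\times$,
\[ (\alpha . L^{\cdot j}; \beta^{-1}\xi) = \beta^{k-j}(\alpha . L^{\cdot j}; \xi). \]

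Second, by multilinearity of cup products in the cohomology factor, $(\alpha . (\beta L)^{\cdot j}; \xi) = \beta^j(\alpha . L^{\cdot j}; \xi)$. The compact absolute-convergence of the series $\sum_{j\ge 0} \frac{1}{j!}(\alpha . L^{\cdot j}; \xi)$ on $\mathfrak{t}$, proved in \cite{Ino3} and recalled in section \ref{equivariant intersection}, guarantees that we may rearrange and scale term by term. Combining the two observations,
\begin{align*}
(\alpha . e^{\beta L}; \beta^{-1}\xi)
&= \sum_{j\ge 0} \frac{1}{j!}(\alpha . (\beta L)^{\cdot j}; \beta^{-1}\xi)
= \sum_{j\ge 0} \frac{\beta^j}{j!}(\alpha . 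L^{\cdot j}; \beta^{-1}\xi) \\
&= \sum_{j\ge 0} \frac{\beta^j \cdot \beta^{k-j}}{j!}(\alpha . L^{\cdot j}; \xi)
= \beta^k \sum_{j\ge 0} \frac{1}{j!}(\alpha . L^{\cdot j}; \xi)
= \beta^k (\alpha . e^L; \xi).
\end{align*}

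There is no genuine obstacle beyond bookkeeping: the exponents $\beta^j$ (from scaling the class $L \mapsto \beta L$) and $\beta^{k-j}$ (from homogeneity of the polynomial in $\xi$) cancel against each other to leave an overall factor $\beta^k$ independent of $j$, which is precisely why the statement is formulated with the coupled rescaling $L \mapsto \beta L$, $\xi \mapsto \beta^{-1}\xi$. The only substantive point to verify carefully is the sign/degree of the Chern--Weil isomorphism, so that the homogeneity degree comes out to $j-k$ rather than $k-j$; this is fixed by the convention $\Phi(c_1^T(\mathbb{C}_\chi)) = -\chi$ adopted in section \ref{equivariant intersection}.
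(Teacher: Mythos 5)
Your proof is correct and follows essentially the same route as the paper: expand the exponential, pull out $\beta^j$ from $(\beta L)^{\cdot j}$ by multilinearity, and use the homogeneity $(\alpha. L^{\cdot j}; \beta^{-1}\xi) = \beta^{k-j}(\alpha. L^{\cdot j}; \xi)$ so that the two factors combine to an overall $\beta^k$. The extra care you take with the grading of the Chern--Weil isomorphism and with termwise convergence is sound but not a departure from the paper's argument.
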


\begin{proof}
The claim follows from
\[ (\alpha. e^{\beta L}; \beta^{-1} \xi) = \sum_{l=0}^\infty \frac{1}{l!} \beta^l (\alpha. L^l; \beta^{-1} \xi) = \sum_{l=0}^\infty \frac{1}{l!} \beta^l \beta^{k-l} (\alpha. L^l; \xi) = \beta^k (\alpha. e^L; \xi). \]
\end{proof}

\begin{eg}
We check our sign convention in the above proposition. 
Consider the simplest case $\mathcal{X} = B_\sigma = \mathbb{A}^1$ with $\sigma = [0,\infty).\eta$. 
The line bundle $\mathcal{L} = \mathcal{X} \times \mathbb{C}$ is endowed with the $\mathbb{G}_m$-action $(x, s). t = (x.t, s.t)$. 
In this case, we have 
\[ \tau_{\mathcal{X}_0}^{\mathbb{G}_m} (\mathcal{O}_{\mathcal{X}_0}) =1, \qquad \mathcal{L}_{\mathbb{G}_m}|_{\mathcal{X}_0} = -\eta^\vee, \]
so that we get 
\[ (\tau_{\mathcal{X}_0} (\mathcal{O}_{\mathcal{X}_0}).e^{\mathcal{L}|_{\mathcal{X}_0}}; x. \eta) = e^{-x}. \]
On the other hand, we have 
\[ \mathrm{Tr} (e^{-H_{x. \eta}}) = e^{-x} \]
as $H_{x. \eta} (s) = (d/d\rho)|_{\rho=0} (e^{\rho x} s) = x. s$. 
\end{eg}

Using the formula (\ref{cmu via filtration}), we can define the characteristic $\mu$-entropy for general linearly bounded filtration as follows: 
\begin{align*} 
\bm{\check{\sigma}} (\mathcal{F}) 
&:= \frac{\int_\mathbb{R} (n-t) e^{-t} \nu_\infty (\mathcal{F})}{\int_\mathbb{R} e^{-t} \nu_\infty (\mathcal{F})} - \log \int_\mathbb{R} e^{-t} \nu_\infty (\mathcal{F}), 
\\ 
\cmu (\mathcal{F}) 
&:= -4\pi \varliminf_{m \to \infty} m \log \frac{\int_\mathbb{R} e^{-t} \nu_m (\mathcal{F})}{\int_\mathbb{R} e^{-t} \nu_\infty (\mathcal{F})}, 
\\
\cmu^\lambda (\mathcal{F}) 
&:= \cmu (\mathcal{F}) + \lambda \bm{\check{\sigma}} (\mathcal{F}). 
\end{align*}
Alternatively, imitating \cite{Sze2}, we put 
\[ {^\flat }\cmu^\lambda (\mathcal{F}) := \varlimsup_{d \to \infty} \cmu^\lambda (\mathcal{F}_d) \]
where $\mathcal{F}_d$ is the finitely generated filtration associated to the norm $\| \cdot \|^{\mathcal{F}}_d$. 
Both definitions seems not so tractable compared to the non-archimedean $\mu$-entropy, so we leave a further exploration and just put the following example. 

\begin{eg}[Example 3.8 in \cite{BJ4}] 
We consider a graded norm $\| \cdot \|_\bullet$ on $R = \bigoplus_{m \in \mathbb{N}} H^0 (\mathbb{C}P^1, \mathcal{O} (m)) = \mathbb{C} [x, y]$ defined by 
\[ \| f \|_m = 
\begin{cases} 
e^m 
& x \nmid f
\\
1
& x \mid f
\end{cases}. \]
Let $\mathcal{F}$ denote the associated filtration. 
We have $\nu_m (\mathcal{F}) = \frac{1}{m+1} \delta_{-1} + \frac{m}{m+1} \delta_0$ and $\nu_m (\mathcal{F}_d) = \sum_{i=1}^{m/d} \frac{1}{m+1} \delta_{-1 + (i-1) d/m} + \frac{m+1 - m/d}{m+1} \delta_0$ for $m \in \mathbb{N}^{(d)}_+$. 
We have $\nu_\infty (\mathcal{F}) = \delta_0$ and $\nu_\infty (\mathcal{F}_d) = \frac{1}{d} d\mu|_{[-1, 0]} + \frac{d-1}{d} \delta_0$. 
In particular, $\mathcal{F}$ defines the trivial metric $\varphi_{\mathrm{triv}}$ by \cite[Theorem 4.16]{BJ2}. 
However, we have 
\[ \cmu (\mathcal{F}) = -4\pi \varliminf_{m \to \infty} m \log \frac{\int_\mathbb{R} e^{-t} \nu_m (\mathcal{F})}{\int_\mathbb{R} e^{-t} \nu_\infty (\mathcal{F})} = - 4\pi (e-1) < \cmu (\mathcal{F}_{\mathrm{triv}}) = 0. \]

As for ${^\flat} \cmu$, we note the filtration $\mathcal{F}_d$ is the filtration associated to a toric test configuration whose associated convex function $q_d$ on the interval $[0,1]$ is given by $q_d (t) = \max \{ 0, d (t-1) +1 \}$. 
The central fibre of the test configuration is reduced by \cite[Proposition 4.1.1]{CLS}. 
Then we have $\NAmu (\varphi_{\mathcal{F}_d}) = \cmu (\mathcal{F}_d)$, so by Proposition \ref{Toric formula}, we compute 
\[ \cmu (\mathcal{F}_d) = - 2\pi \frac{e^{q_d (0)} + e^{q_d (1)}}{\int_\mathbb{R} \nu_\infty (\mathcal{F}_d)} = - 2\pi \frac{1+e}{\frac{1}{d} (e-1) + \frac{d-1}{d}}. \]
It follows that 
\[ {^\flat} \cmu (\mathcal{F}) = \varlimsup_{d \to \infty} \cmu (\mathcal{F}_d) = -2\pi (e+1) < \cmu (\mathcal{F}) < \cmu (\mathcal{F}_{\mathrm{triv}}). \]
This in particular shows the non-archimedean $\mu$-entropy is not continuous along the convergent increasing sequence $\varphi_{\mathcal{F}_d} \nearrow \varphi_{\mathrm{triv}}$ of non-archimedean psh metrics, but only upper semi-continuous. 
\end{eg}

\subsubsection{$\mu$-entropy of test configuration}
\label{mu-entropy of test configuration}

We observe the characteristic $\mu$-entropy is equivalent to the $\mu$-entropy of test configuration used in \cite{Ino4}, where we express it by the equivariant intersection formula on the compactified total space $\bar{\mathcal{X}}$. 
This enables us to compare the $\mu$-entropies of the normalized base change $\mathcal{X}_d \to \mathcal{X}$. 

\begin{prop}
\label{Localization}
For a test configuration $(\mathcal{X}, \mathcal{L})$, we have 
\begin{align*} 
\cmu (\mathcal{X}, \mathcal{L}; \rho) 
&= 2 \pi \frac{(K_X. e^L) - \rho (\kappa_{\bar{\mathcal{X}}/\mathbb{C}P^1}^{\mathbb{G}_m}. e^{\bar{\mathcal{L}}_{\mathbb{G}_m}}; \rho)}{(e^L) - \rho (e^{\bar{\mathcal{L}}_{\mathbb{G}_m}}; \rho)}, 
\\
\bm{\check{\sigma}} (\mathcal{X}, \mathcal{L}; \rho) 
&= \frac{(L. e^L) - \rho (\bar{\mathcal{L}}_{\mathbb{G}_m}. e^{\bar{\mathcal{L}}_{\mathbb{G}_m}}; \rho)}{(e^L) - \rho (e^{\bar{\mathcal{L}}_{\mathbb{G}_m}}; \rho)} - \log \Big{(} (e^L) - \rho (e^{\bar{\mathcal{L}}_{\mathbb{G}_m}}; \rho) \Big{)}. 
\end{align*}
\end{prop}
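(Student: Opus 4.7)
The plan is to derive this as an immediate consequence of equivariant localization on the base $\mathbb{P}^1$, together with Definition \ref{mu-entropy of polyhedral configuration} applied to the test configuration viewed as a $[0,\infty)$-configuration. The key algebraic input is precisely the identity (\ref{DH measure and equivariant intersection}) stated in the introduction, which in our notation says
\[
(\mathcal{L}^{\cdot n+k}_{\mathbb{G}_m}|_{\mathcal{X}_0}; \rho) = (L^{\cdot n+k}) - \rho (\bar{\mathcal{L}}_{\mathbb{G}_m}^{\cdot n+k}; \rho),
\]
and its natural generalization to intersections with an auxiliary $\mathbb{G}_m$-equivariant class $\alpha^{\mathbb{G}_m} \in H^{\mathbb{G}_m}_{2n-2}(\bar{\mathcal{X}})$ which has trivial equivariant structure when restricted to $\mathcal{X}_\infty \cong X$.

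First, I would establish the ``fibrewise decomposition''
\[
(\alpha^{\mathbb{G}_m}|_{\mathcal{X}_0}.\, \mathcal{L}^{\cdot n+k-1}_{\mathbb{G}_m}|_{\mathcal{X}_0};\rho) = (\alpha|_X.\, L^{\cdot n+k-1}) - \rho\,(\alpha^{\mathbb{G}_m}.\,\bar{\mathcal{L}}^{\cdot n+k-1}_{\mathbb{G}_m}; \rho).
\]
This is a direct application of equivariant localization: $\bar{\varpi}: \bar{\mathcal{X}} \to \mathbb{P}^1$ is equivariant for the standard $\mathbb{G}_m$-action on $\mathbb{P}^1$, and the only $\mathbb{G}_m$-invariant fibres are $\mathcal{X}_0$ and $\mathcal{X}_\infty$. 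The latter carries the trivial action and is canonically identified with $X$; correspondingly $\bar{\mathcal{L}}^{\mathbb{G}_m}|_{\mathcal{X}_\infty} = L$ and $\kappa^{\mathbb{G}_m}_{\bar{\mathcal{X}}/\mathbb{P}^1}|_{\mathcal{X}_\infty} = K_X$ (the latter via the adjunction of the relative Todd class reviewed in section \ref{equivariant intersection}, since $\mathcal{X}_\infty$ is a smooth fibre of $\bar{\varpi}$). The weight of $\mathbb{G}_m$ on the normal direction to $\mathcal{X}_0$ accounts for the factor $-\rho$ in front of the $\bar{\mathcal{X}}$-contribution, consistent with the sign conventions fixed after the Chern--Weil isomorphism $\Phi$.

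Summing the fibrewise identity over $k \ge 0$ with the appropriate factorial weights, and invoking the compact absolute convergence of such series established in \cite{Ino3}, upgrades these finitary identities to the three exponential formulas
\begin{align*}
(e^{\mathcal{L}_{\mathbb{G}_m}|_{\mathcal{X}_0}}; \rho) &= (e^L) - \rho\,(e^{\bar{\mathcal{L}}_{\mathbb{G}_m}}; \rho),\\
(\mathcal{L}_{\mathbb{G}_m}|_{\mathcal{X}_0}.\, e^{\mathcal{L}_{\mathbb{G}_m}|_{\mathcal{X}_0}}; \rho) &= (L.e^L) - \rho\,(\bar{\mathcal{L}}_{\mathbb{G}_m}.\, e^{\bar{\mathcal{L}}_{\mathbb{G}_m}}; \rho),\\
(\kappa_{\mathcal{X}_0}.\, e^{\mathcal{L}_{\mathbb{G}_m}|_{\mathcal{X}_0}}; \rho) &= (K_X.e^L) - \rho\,(\kappa^{\mathbb{G}_m}_{\bar{\mathcal{X}}/\mathbb{P}^1}.\, e^{\bar{\mathcal{L}}_{\mathbb{G}_m}}; \rho).
\end{align*}
Plugging these into Definition \ref{mu-entropy of polyhedral configuration} (with $T = \mathbb{G}_m$ and $\xi = \rho$) yields the two displayed equalities without further manipulation.

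The only genuinely non-routine step is the third exponential identity, which requires that the equivariant homology class $\kappa^{\mathbb{G}_m}_{\bar{\mathcal{X}}/\mathbb{P}^1}$ restrict to $K_X$ on the trivially-acted fibre $\mathcal{X}_\infty$. For smooth $X$ this is adjunction plus triviality of the normal bundle at $\infty$; in general it requires the fact that $\tau^{\mathbb{G}_m}_{\mathcal{X}_\infty}(\mathcal{O}_{\mathcal{X}_\infty}) = [\mathcal{X}_\infty] - \tfrac12 \kappa_{\mathcal{X}_\infty} + \cdots$ in the Edidin--Graham formalism (cf.\ \cite{EG2, Ino3}), which is what defines $\kappa$. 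I expect this bookkeeping on the canonical class to be the main (though still essentially formal) obstacle; everything else is purely a matter of unpacking the definition and summing the degree decomposition produced by $\mathbb{G}_m$-localization.
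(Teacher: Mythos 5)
Your proposal is correct and follows essentially the same route as the paper: the paper's proof is a one-line citation of equivariant localization on $\mathbb{C}P^1$ together with the equivariant Grothendieck--Riemann--Roch theorem of \cite{EG2, Ino3}, which is exactly the pair of ingredients you identify (the fibrewise decomposition at the two fixed fibres $\mathcal{X}_0$, $\mathcal{X}_\infty$, and the Todd-class bookkeeping identifying $\kappa^{\mathbb{G}_m}_{\bar{\mathcal{X}}/\mathbb{P}^1}|_{\mathcal{X}_\infty}$ with $K_X$). Your three exponential identities also match the ones the paper itself records in the introduction and reuses in the proof of Theorem \ref{variation of mu-entropy}, so no further comment is needed.
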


\begin{proof}
The claim follows by the localization of equivariant intersection on $\mathbb{C}P^1$ (cf. \cite[Appendix C.7]{GGK} or \cite[Example 2.6]{Ino3}) and the equivariant Grothendieck--Riemann--Roch theorem \cite[Corollary 2.19]{Ino3}. 
\end{proof}

We recall $\kappa_{\bar{\mathcal{X}}/\mathbb{C}P^1}^{\mathbb{G}_m} = K_{\bar{\mathcal{X}}/\mathbb{C}P^1}^{\mathbb{G}_m}$ for normal test configuration. 

\begin{prop}
Let $(X, L)$ be a polarized scheme and $(\mathcal{X}, \mathcal{L})$ be a test configuration of $(X, L)$. 
Let $(\mathcal{X}', \mathcal{L}' = \beta^* \mathcal{L})$ be another test configuration of $(X, L)$ dominating $(\mathcal{X}, \mathcal{L})$ by the canonical rational morphism $\beta: \mathcal{X}' \to \mathcal{X}$. 
If $\beta$ is finite away from a codimension two subscheme on the target, then we have 
\[ \cmu^\lambda (\mathcal{X}', \mathcal{L}'; \rho) \ge \cmu^\lambda (\mathcal{X}, \mathcal{L}; \rho). \]
If moreover $\beta$ is an isomorphism away from a codimension two subscheme on the target $X$, then 
\[ \cmu^\lambda (\mathcal{X}', \mathcal{L}'; \rho) = \cmu^\lambda (\mathcal{X}, \mathcal{L}; \rho). \]
\end{prop}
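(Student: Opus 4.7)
The plan is to apply Proposition~\ref{Localization} to express both sides as equivariant intersections on the compactified total spaces $\bar{\mathcal{X}}, \bar{\mathcal{X}}'$, and then compare them via the projection formula for the birational morphism $\bar{\beta}: \bar{\mathcal{X}}' \to \bar{\mathcal{X}}$ combined with equivariant Grothendieck--Riemann--Roch. The $\bm{\check{\sigma}}$-part is immediate: all intersections there involve only powers of $\bar{\mathcal{L}}_{\mathbb{G}_m}$, and since $\bar{\mathcal{L}}' = \bar{\beta}^* \bar{\mathcal{L}}$ and $\bar{\beta}_*[\bar{\mathcal{X}}'] = [\bar{\mathcal{X}}]$, the projection formula gives $(\bar{\mathcal{L}}'^{\cdot k}_{\mathbb{G}_m}; \rho) = (\bar{\mathcal{L}}^{\cdot k}_{\mathbb{G}_m}; \rho)$ for every $k$, whence $\bm{\check{\sigma}}(\mathcal{X}', \mathcal{L}'; \rho) = \bm{\check{\sigma}}(\mathcal{X}, \mathcal{L}; \rho)$. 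The denominator of the $\cmu$-part is likewise invariant.

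The key step is to compare $(\kappa^{\mathbb{G}_m}_{\bar{\mathcal{X}}'/\mathbb{P}^1}. e^{\bar{\mathcal{L}}'_{\mathbb{G}_m}}; \rho)$ with $(\kappa^{\mathbb{G}_m}_{\bar{\mathcal{X}}/\mathbb{P}^1}. e^{\bar{\mathcal{L}}_{\mathbb{G}_m}}; \rho)$. By the projection formula the former equals $(\bar{\beta}_* \kappa^{\mathbb{G}_m}_{\bar{\mathcal{X}}'/\mathbb{P}^1}. e^{\bar{\mathcal{L}}_{\mathbb{G}_m}}; \rho)$, and since $\varpi' = \varpi \circ \bar{\beta}$ the relative correction $\varpi^* \kappa^{\mathbb{G}_m}_{\mathbb{P}^1}$ is preserved by the pushforward; the task reduces to computing $\bar{\beta}_* \kappa^{\mathbb{G}_m}_{\bar{\mathcal{X}}'}$. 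Applying equivariant GRR \cite[Corollary 2.19]{Ino3} to the structure sheaf gives
\[ \bar{\beta}_* \tau^{\mathbb{G}_m}_{\bar{\mathcal{X}}'}(\mathcal{O}_{\bar{\mathcal{X}}'}) = \sum_{i \ge 0} (-1)^i \tau^{\mathbb{G}_m}_{\bar{\mathcal{X}}}(R^i \bar{\beta}_* \mathcal{O}_{\bar{\mathcal{X}}'}). \]
The finiteness hypothesis forces $R^i \bar{\beta}_* \mathcal{O}_{\bar{\mathcal{X}}'}$ for $i \ge 1$ to be supported in codim~$\ge 2$, so these terms contribute nothing to the codim-$1$ part of the Todd class. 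The short exact sequence $0 \to \mathcal{O}_{\bar{\mathcal{X}}} \to \bar{\beta}_* \mathcal{O}_{\bar{\mathcal{X}}'} \to \mathcal{C} \to 0$ then isolates the discrepancy as the codim-$1$ component of $\tau^{\mathbb{G}_m}(\mathcal{C})$, which --- since $\bar\beta$ is an isomorphism over $\bar{\mathcal{X}} \setminus \mathcal{X}_0$ --- equals the effective $\mathbb{G}_m$-invariant cycle $\sum_E \mathrm{length}(\mathcal{C}_{\eta_E})\, [E]^{\mathbb{G}_m}$ supported on the codim-$1$ components $E$ of the central fibre $\mathcal{X}_0$ where $\bar\beta$ ramifies. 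Using the identification $\kappa = -2 \cdot (\text{codim-}1\text{ Todd component})$ implicit in the expansion in the proof of Proposition~\ref{chmu is an invariant for filtration}, one concludes
\[ \bar{\beta}_* \kappa^{\mathbb{G}_m}_{\bar{\mathcal{X}}'/\mathbb{P}^1} = \kappa^{\mathbb{G}_m}_{\bar{\mathcal{X}}/\mathbb{P}^1} - 2 \sum_E \mathrm{length}(\mathcal{C}_{\eta_E})\, [E]^{\mathbb{G}_m}. \]

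Substituting into the numerator of $\cmu$ from Proposition~\ref{Localization} and dividing by the common positive denominator yields
\[ \cmu(\mathcal{X}', \mathcal{L}'; \rho) - \cmu(\mathcal{X}, \mathcal{L}; \rho) = \frac{4\pi \rho}{(e^L) - \rho (e^{\bar{\mathcal{L}}_{\mathbb{G}_m}}; \rho)} \sum_E \mathrm{length}(\mathcal{C}_{\eta_E}) \int_{\mathbb{R}} e^{-\rho t}\, \DHm_{(E, \bar{\mathcal{L}}|_E)}, \]
and since each Duistermaat--Heckman integral is strictly positive, the inequality for $\cmu^\lambda$ follows for $\rho \ge 0$. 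When $\bar\beta$ is moreover an isomorphism away from codim~$2$, $\mathcal{C}$ itself is supported in codim~$\ge 2$ so the correction vanishes and equality holds. The main obstacle I expect is making the sign and codimension bookkeeping for the $\kappa$-class rigorous in the possibly non-normal setting: since $\kappa^{\mathbb{G}_m}_{\bar{\mathcal{X}}/\mathbb{P}^1}$ is defined intrinsically via the equivariant homology Todd class rather than as a Weil divisor class, one must check carefully that the $-2$ factor survives the equivariant pushforward and the passage to the relative setup over $\mathbb{P}^1$, and that the $\mathbb{G}_m$-equivariance of $\bar\beta$ and the exact sequence force the prime divisors $E \subset \mathrm{Supp}(\mathcal{C})$ to be $\mathbb{G}_m$-invariant so that $[E]^{\mathbb{G}_m}$ pairs with $e^{\bar{\mathcal{L}}_{\mathbb{G}_m}}$ as a genuine Duistermaat--Heckman integral.
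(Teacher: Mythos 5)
Your proof is correct and follows essentially the same route as the paper: the paper's own proof is simply ``apply Proposition \ref{Localization} and cite \cite[Proposition 3.14]{Ino3}'', and what you have written is precisely a reconstruction of that cited comparison of $\kappa$-classes (GRR for $\bar\beta$, vanishing of the codimension-one contribution of $R^{i}\bar\beta_*\mathcal{O}$ under the finiteness hypothesis, and the effective divisorial correction coming from the cokernel of $\mathcal{O}_{\bar{\mathcal{X}}}\to\bar\beta_*\mathcal{O}_{\bar{\mathcal{X}}'}$), with the signs and the positivity of $([E]^{\mathbb{G}_m}.e^{\bar{\mathcal{L}}_{\mathbb{G}_m}};\rho)=\int_{\mathbb{R}}e^{-\rho t}\DHm_{(E,\mathcal{L}|_E)}$ handled correctly. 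The only caveat worth recording is the one you already flag: the inequality requires $\rho\ge 0$, consistent with the paper's standing convention.
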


\begin{proof}
The claim follows by the above proposition and \cite[Proposition 3.14]{Ino3}. 
\end{proof}

\begin{cor}
For any two test configurations $(\mathcal{X}, \mathcal{L}), (\mathcal{X}', \mathcal{L}')$ of $(X, L)$ which are isomorphic to each other in codimension one (i.e. there is an isomorphism away from codimension two subschemes of both $\mathcal{X}$ and $\mathcal{X}'$), we have 
\[ \cmu^\lambda (\mathcal{X}, \mathcal{L}; \rho) = \cmu^\lambda (\mathcal{X}', \mathcal{L}'; \rho). \]
\end{cor}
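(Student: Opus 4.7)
The strategy is to produce a common $\mathbb{G}_m$-equivariant test configuration that dominates both $(\mathcal{X},\mathcal{L})$ and $(\mathcal{X}',\mathcal{L}')$ birationally, with both projections being isomorphisms away from a codimension-two subscheme, and then apply the preceding proposition twice.

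Concretely, I would let $\mathcal{X}''$ be the normalization of the graph closure of the given birational map $\mathcal{X} \dashrightarrow \mathcal{X}'$ inside $\mathcal{X} \times_{\mathbb{A}^1} \mathcal{X}'$. Since the rational map is $\mathbb{G}_m$-equivariant and restricts to an isomorphism over an open subset $U \subset \mathcal{X}$ (resp.\ $U' \subset \mathcal{X}'$) whose complement has codimension at least two on each side, the two projections $\beta\colon \mathcal{X}'' \to \mathcal{X}$ and $\beta'\colon \mathcal{X}'' \to \mathcal{X}'$ are $\mathbb{G}_m$-equivariant, proper, birational, and each restricts to an isomorphism over $U$ (resp.\ $U'$). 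In particular the non-isomorphism loci of $\beta$ and $\beta'$ are contained in codimension-two subschemes of the respective targets. Endowed with the induced $\mathbb{G}_m$-action and with the obvious map to $\mathbb{A}^1$, this $\mathcal{X}''$ is a $\mathbb{G}_m$-equivariant normal test configuration of $(X,L)$ once equipped with a suitable polarization.

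The next step is to produce the common polarization. Set $\mathcal{L}'' := \beta^* \mathcal{L}$; I claim $\mathcal{L}'' \cong (\beta')^* \mathcal{L}'$ as $\mathbb{G}_m$-equivariant $\mathbb{Q}$-line bundles on $\mathcal{X}''$. Indeed, both line bundles (or a common integer power thereof) agree on the open set $\beta^{-1}(U) \cap (\beta')^{-1}(U')$ of $\mathcal{X}''$ via the isomorphism $U \cong U'$ coming from the hypothesis, and this open set has complement of codimension at least two in $\mathcal{X}''$. Since $\mathcal{X}''$ is normal, any line bundle is reflexive, and two reflexive rank-one sheaves that agree outside a codimension-two subscheme are canonically isomorphic, so the equality extends across $\mathcal{X}''$; equivariance of the isomorphism is automatic by uniqueness (or by arranging the identification on $U$ itself to be equivariant, which it is since the isomorphism $U \cong U'$ is $\mathbb{G}_m$-equivariant). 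Thus $(\mathcal{X}'',\mathcal{L}'')$ is a test configuration dominating both $(\mathcal{X},\mathcal{L})$ and $(\mathcal{X}',\mathcal{L}')$ via $\beta,\beta'$, each of which is an isomorphism away from codimension two.

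Applying the second statement of the preceding proposition to $\beta\colon \mathcal{X}'' \to \mathcal{X}$ yields $\cmu^\lambda(\mathcal{X}'',\mathcal{L}'';\rho) = \cmu^\lambda(\mathcal{X},\mathcal{L};\rho)$, and applying it to $\beta'\colon \mathcal{X}'' \to \mathcal{X}'$ yields $\cmu^\lambda(\mathcal{X}'',\mathcal{L}'';\rho) = \cmu^\lambda(\mathcal{X}',\mathcal{L}';\rho)$. Chaining these two equalities gives the desired identity $\cmu^\lambda(\mathcal{X},\mathcal{L};\rho) = \cmu^\lambda(\mathcal{X}',\mathcal{L}';\rho)$. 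The main subtlety to verify carefully is the identification $\beta^* \mathcal{L} \cong (\beta')^*\mathcal{L}'$ on the normal scheme $\mathcal{X}''$ together with its $\mathbb{G}_m$-equivariant structure; once this reflexivity argument is in place, the rest is a direct invocation of the previous proposition.
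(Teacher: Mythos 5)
Your strategy—dominate both test configurations by the graph closure and apply the preceding proposition twice—hinges entirely on the identification $\beta^*\mathcal{L} \cong (\beta')^*\mathcal{L}'$ on the common model $\mathcal{X}''$, and this is where the argument breaks. You justify it by asserting that $W := \beta^{-1}(U)\cap(\beta')^{-1}(U')$ has complement of codimension at least two in $\mathcal{X}''$, but the hypothesis only bounds the codimension of $\mathcal{X}\setminus U$ in $\mathcal{X}$ and of $\mathcal{X}'\setminus U'$ in $\mathcal{X}'$; the preimage of a codimension-two subscheme under the birational projection $\beta$ is typically a divisor in the graph closure (an exceptional divisor), so $\mathcal{X}''\setminus W$ has codimension one and the reflexive-extension argument does not apply. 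Worse, the claimed isomorphism is false in general: if $\mathcal{X}\dashrightarrow\mathcal{X}'$ is a flop of a curve $C$ in the central fibre (codimension two in the total space when $n=2$), the exceptional divisor of $\mathcal{X}''$ carries $\beta'$-exceptional curves $\ell'$ that $\beta$ maps isomorphically onto $C$, so $\beta^*\mathcal{L}\cdot\ell' = \mathcal{L}\cdot C > 0$ by ampleness while $(\beta')^*\mathcal{L}'\cdot\ell' = 0$; hence $\beta^*\mathcal{L}\neq(\beta')^*\mathcal{L}'$. In fact, whenever the two pullbacks do agree (and the total spaces are normal), $\mathcal{X}$ and $\mathcal{X}'$ are both the relative ample model of the same semiample class on $\mathcal{X}''$ and are therefore already isomorphic as test configurations, so your argument only reaches cases where the corollary is vacuous. (Two smaller issues: normalizing the graph closure destroys the isomorphism over $U$ when the test configurations are non-normal, which is precisely the situation where the statement has content; and $\beta^*\mathcal{L}$ need not be relatively ample on $\mathcal{X}''$, although the preceding proposition is evidently meant to tolerate that.)

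The missing ingredient is the one the paper's proof of the preceding proposition actually cites: by the localization formula, $\cmu^\lambda(\mathcal{X},\mathcal{L};\rho)$ is a ratio of equivariant intersection numbers such as $(\kappa_{\bar{\mathcal{X}}/\mathbb{C}P^1}^{\mathbb{G}_m}.\,e^{\bar{\mathcal{L}}_{\mathbb{G}_m}};\rho)$ and $(e^{\bar{\mathcal{L}}_{\mathbb{G}_m}};\rho)$, and \cite[Proposition 3.14]{Ino3} guarantees these are insensitive to modifications supported in codimension at least two. Applying that insensitivity to each of $(\mathcal{X},\mathcal{L})$ and $(\mathcal{X}',\mathcal{L}')$—whose defining data agree on $U\cong U'$—yields the equality directly, with no common polarized model needed. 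If you insist on a model-theoretic route, you are forced to compare $\cmu^\lambda(\mathcal{X}'',\beta^*\mathcal{L};\rho)$ with $\cmu^\lambda(\mathcal{X}'',(\beta')^*\mathcal{L}';\rho)$ for two genuinely different line bundles agreeing only off the exceptional divisors, and that comparison is exactly as hard as the original statement.
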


\begin{cor}
If $X$ is normal, then 
\[ \cmu^\lambda (\mathcal{X}^\nu, \nu^* \mathcal{L}; \rho) \ge \cmu^\lambda (\mathcal{X}, \mathcal{L}; \rho) \]
for the normalization $\nu: \mathcal{X}^\nu \to \mathcal{X}$. 
 \end{cor}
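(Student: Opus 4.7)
The plan is to realize this as a direct application of the preceding proposition to $\beta = \nu$, $\mathcal{X}' = \mathcal{X}^\nu$, and $\mathcal{L}' = \nu^* \mathcal{L}$. Three items must be checked: (i) the pair $(\mathcal{X}^\nu, \nu^* \mathcal{L})$ actually is a test configuration of $(X, L)$; (ii) $\nu$ is the canonical (bi)rational morphism between the two test configurations, with $\nu^* \mathcal{L}$ its pullback; and (iii) $\nu$ is finite away from a codimension two subscheme of the target.

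For (i), I would first observe that because $X$ is normal the open subset $\mathcal{X}\setminus \mathcal{X}_0 \cong X\times\mathbb{G}_m$ is already normal, hence $\nu$ restricts to an isomorphism there. In particular the general fibre $(\mathcal{X}^\nu)_1$ is identified with $X$, yielding the required isomorphism. The $\mathbb{G}_m$-action on $\mathcal{X}$ and the equivariant structure on $\mathcal{L}$ lift uniquely to $\mathcal{X}^\nu$ and to $\nu^*\mathcal{L}$ by the universal property of normalization (applied to the composition $\mathbb{G}_m\times\mathcal{X}^\nu\to\mathbb{G}_m\times\mathcal{X}\to\mathcal{X}$). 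Relative ampleness of $\nu^*\mathcal{L}$ follows from finiteness of $\nu$. Flatness of $\mathcal{X}^\nu\to\mathbb{A}^1$ is the only nontrivial point: since $\mathcal{X}^\nu$ is normal it has no embedded associated points, and every irreducible component of $\mathcal{X}^\nu$ dominates $\mathbb{A}^1$ (because the corresponding component of $\mathcal{X}$ does by flatness of $\mathcal{X}$), so the structure sheaf is torsion-free over the Dedekind ring $\mathbb{C}[t]$; hence flatness holds.

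For (ii) and (iii), note that $\nu$ is a morphism (not merely a rational map) which coincides with the canonical comparison between the two test configurations after restriction to the general fibre. Since $\mathcal{X}$ is of finite type over $\mathbb{C}$, it is excellent and in particular Nagata, so the normalization $\nu\colon\mathcal{X}^\nu\to\mathcal{X}$ is a \emph{finite} morphism on all of $\mathcal{X}$. The hypothesis of the preceding proposition is thus satisfied trivially, and moreover $\mathcal{L}'=\nu^*\mathcal{L}$ by construction. The proposition then gives the desired inequality $\cmu^\lambda(\mathcal{X}^\nu,\nu^*\mathcal{L};\rho)\ge \cmu^\lambda(\mathcal{X},\mathcal{L};\rho)$.

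The only conceptual subtlety is item (i), specifically the flatness verification; once normality of $\mathcal{X}^\nu$ and the dominance of its components are in place the proof is essentially bookkeeping. No new intersection-theoretic input is required beyond what the preceding proposition already supplies.
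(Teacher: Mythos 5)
Your proposal is correct and follows exactly the route the paper intends: the corollary is stated without proof precisely because it is the preceding proposition applied to $\beta=\nu$, and your verifications (normality of $X$ giving $(\mathcal{X}^\nu)|_{\mathbb{G}_m}\cong X\times\mathbb{G}_m$, flatness via torsion-freeness over $\mathbb{C}[t]$, finiteness of normalization for excellent schemes) are the standard facts being left implicit. Nothing is missing.
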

 
Let $(\mathcal{X}, \mathcal{L})$ be a normal test configuration. 
We denote by $\nu_d: (\mathcal{X}_d, \mathcal{L}_d) \to (\mathcal{X}, \mathcal{L})$ the normalized base change of a test configuration $(\mathcal{X}, \mathcal{L})$ along the finite morphism $z^d: \mathbb{A}^1 \to \mathbb{A}^1$. 
The morphism $\nu_d$ is $\mathbb{G}_m$-equivariant with respect to the $d$-times scaled action on $\mathcal{X}$. 
Let $(\mathcal{X}'_d, \mathcal{L}'_d)$ denote the (non-normalized) base change of $(\mathcal{X}, \mathcal{L})$, then we have $\cmu^\lambda (\mathcal{X}'_d, \mathcal{L}'_d; \rho) = \cmu^\lambda (\mathcal{X}, \mathcal{L}; d\rho)$ from the definition of $\cmu^\lambda$. 
Thus we get the following. 
Compare $\NAmu^\lambda (\mathcal{X}_d, \mathcal{L}_d; \rho) = \NAmu^\lambda (\mathcal{X}, \mathcal{L}; d\rho)$ explained in section \ref{non-archimedean mu-entropy of test configuration}. 

\begin{cor}
If $X$ is normal, then we have 
\[ \cmu^\lambda (\mathcal{X}_d, \mathcal{L}_d; \rho) \ge \cmu^\lambda (\mathcal{X}, \mathcal{L}; d\rho). \]
\end{cor}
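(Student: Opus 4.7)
The plan is to directly combine the two observations laid out in the paragraph preceding the corollary together with the normalization monotonicity from the immediately preceding Corollary. First, one reads off from the definitions that the non-normalized base change satisfies the exact identity
\[ \cmu^\lambda (\mathcal{X}'_d, \mathcal{L}'_d; \rho) = \cmu^\lambda (\mathcal{X}, \mathcal{L}; d\rho), \]
which is just the rescaling $\rho \mapsto d\rho$ of the $\mathbb{G}_m$-parameter: under the (non-normalized) base change along $z^d$, all the equivariant intersections appearing in Proposition~\ref{Localization} pull back to the corresponding intersections on $\bar{\mathcal{X}}$ with the $\mathbb{G}_m$-action rescaled by a factor of $d$, which on the level of the polynomial function $(\,\cdot\,;\rho)$ is precisely the substitution $\rho \mapsto d\rho$.

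Second, since $X$ is normal, the general fibre of the (still connected) test configuration $\mathcal{X}'_d$ is normal, so the preceding Corollary applies to $\mathcal{X}'_d$ and yields
\[ \cmu^\lambda (\mathcal{X}_d, \mathcal{L}_d; \rho) = \cmu^\lambda ((\mathcal{X}'_d)^\nu, \nu^* \mathcal{L}'_d; \rho) \ge \cmu^\lambda (\mathcal{X}'_d, \mathcal{L}'_d; \rho), \]
where we have used that by construction $\mathcal{X}_d$ is the normalization of $\mathcal{X}'_d$ and $\mathcal{L}_d = \nu^* \mathcal{L}'_d$. Chaining these two gives the claimed inequality.

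The only point requiring slight care is the $\mathbb{G}_m$-equivariance bookkeeping: the normalization morphism $\nu : \mathcal{X}_d \to \mathcal{X}'_d$ is equivariant with respect to the \emph{same} $\mathbb{G}_m$-action on both sides (the one induced from the second factor of $\mathcal{X} \times_{\mathbb{A}^1, z^d} \mathbb{A}^1$), so the preceding Corollary applies verbatim at the same parameter $\rho$. The $d$-times scaling only enters via the separately established identity $\cmu^\lambda(\mathcal{X}'_d,\mathcal{L}'_d;\rho) = \cmu^\lambda(\mathcal{X},\mathcal{L};d\rho)$, which compares $\mathcal{X}'_d$ with $\mathcal{X}$ itself. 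I expect no genuine obstacle here; everything is assembling already-established pieces, and the argument is essentially the $\mu$-analogue of the standard Donaldson--Futaki inequality $\mathrm{DF}(\mathcal{X}_d, \mathcal{L}_d) \ge d\cdot \mathrm{DF}(\mathcal{X}, \mathcal{L})$ (cf.\ \cite[Proposition 7.14]{BHJ1}).
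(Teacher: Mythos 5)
Your proof is correct and is essentially the paper's own argument: the paper derives this corollary from exactly the two facts you cite, namely the definitional identity $\cmu^\lambda(\mathcal{X}'_d,\mathcal{L}'_d;\rho)=\cmu^\lambda(\mathcal{X},\mathcal{L};d\rho)$ for the non-normalized base change and the preceding normalization inequality applied to $\mathcal{X}'_d$. Your added remark on the equivariance bookkeeping is consistent with the paper's setup and introduces no gap.
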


\section{Tomography of non-archimedean Monge--Amp\`ere measure}
\label{section: NAmu}

In the rest of this article, we assume $(X, L)$ is a polarized \textit{normal} variety, for simplicity. 

\subsection{Primary decomposition via filtration}
\label{Primary decomposition via filtration}

In this section, we study the primary decomposition of the Duistermaat--Heckman measure 
\[ \DHm_{(\mathcal{X}, \mathcal{L})} = \sum_{E \subset \mathcal{X}_0} \mathrm{ord}_E \mathcal{X}_0 \cdot \DHm_{(E, \mathcal{L}|_E)}. \] 
More precisely, we recover the measure from the associated filtration $\widehat{\mathcal{F}}_{(\mathcal{X}, \mathcal{L})}$, which will be identified with the filtration $\mathcal{F}_\varphi$ associated to the non-archimedean psh metric $\varphi = \varphi_{(\mathcal{X}, \mathcal{L})}$. 
This is the key observation in the construction of moment measure for general non-archimeden psh metrics. 

\subsubsection{Primary ideal associated to valuation}

Recall for a (not necessarily normal) test configuration $(\mathcal{X}, \mathcal{L})$, we associate the following ($\mathbb{Z}$-graded) filtration: 
\begin{align*} 
\mathcal{F}_{(\mathcal{X}, \mathcal{L})}^\lambda R_m 
&:= \{ s \in H^0 (X, L^{\otimes m}) ~|~ \varpi^{-\lceil \lambda \rceil} \bar{s} \text{ extends to a section of } \mathcal{L}^{\otimes m} \} 
\\
&= \mathcal{F}^{\lceil \lambda \rceil}_{(\mathcal{X}, \mathcal{L})} R_m. 
\end{align*}
As it is $\mathbb{Z}$-graded, we have 
\[ \sigma_v (\mathcal{F}_{(\mathcal{X}, \mathcal{L})}) = \inf \{ \sigma \in \mathbb{R} ~|~ \mathcal{F}_{(\mathcal{X}, \mathcal{L})}^\lambda \subset \mathcal{F}^\lambda_v [\sigma] \text{ for } \forall \lambda \in \mathbb{Z} \}. \]

\begin{prop}[Lemma A.5 in \cite{BJ4}]
\label{sigma for normalization}
For a test configuration $(\mathcal{X}, \mathcal{L})$, we have 
\[ \sigma_v (\mathcal{F}_{(\mathcal{X}, \mathcal{L})}) = \sigma_v (\mathcal{F}_{(\mathcal{X}^\nu, \nu^* \mathcal{L})}). \]
\end{prop}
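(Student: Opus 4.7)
The plan is to reduce the equality to a pointwise comparison of the two filtrations on each section. For $0 \ne s \in R_m = H^0(X, L^{\otimes m})$, write $\lambda_s^\circ := -\log\|s\|^{\mathcal{F}_{(\mathcal{X}, \mathcal{L})}}_m$ and $\lambda_s^\nu := -\log\|s\|^{\mathcal{F}_{(\mathcal{X}^\nu, \nu^*\mathcal{L})}}_m$; by the definition of the filtration these are the largest integers $k$ such that $\varpi^{-k}\bar{s}$ extends to a regular global section of $\mathcal{L}^{\otimes m}$ on $\mathcal{X}$ (respectively of $\nu^*\mathcal{L}^{\otimes m}$ on $\mathcal{X}^\nu$). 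Unwinding the definition of $\sigma_v$ for a $\mathbb{Z}$-graded filtration yields
\[ \sigma_v(\mathcal{F}_{(\mathcal{X}, \mathcal{L})}) = \sup_{m \ge 1}\,\sup_{0 \ne s \in R_m} \frac{\lambda_s^\circ - v(s)}{m}, \]
and the analogous expression for $\mathcal{F}_{(\mathcal{X}^\nu, \nu^*\mathcal{L})}$ with $\lambda_s^\nu$, so the task reduces to showing these two suprema agree.

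One direction is immediate: since the pullback of a regular section of $\mathcal{L}^{\otimes m}$ is a regular section of $\nu^*\mathcal{L}^{\otimes m}$, one has $\lambda_s^\circ \le \lambda_s^\nu$ for every $s$, and hence $\sigma_v(\mathcal{F}_{(\mathcal{X}, \mathcal{L})}) \le \sigma_v(\mathcal{F}_{(\mathcal{X}^\nu, \nu^*\mathcal{L})})$. For the reverse inequality I will invoke the conductor $\mathcal{C} := \mathrm{Ann}_{\mathcal{O}_\mathcal{X}}(\nu_*\mathcal{O}_{\mathcal{X}^\nu}/\mathcal{O}_\mathcal{X})$. Because $(X, L)$ is normal and $\mathbb{G}_m$-equivariance gives $\mathcal{X}|_{\mathbb{G}_m} \cong X \times \mathbb{G}_m$, which is normal, the non-normal locus of $\mathcal{X}$ sits inside $\mathcal{X}_0 = V(\varpi)$. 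The coherent sheaf $\nu_*\mathcal{O}_{\mathcal{X}^\nu}/\mathcal{O}_\mathcal{X}$ is supported there, and since $\mathcal{X}$ is proper (hence quasi-compact) over $\mathbb{A}^1$, some power $\varpi^N$ annihilates it globally, i.e. $\varpi^N \cdot \nu_*\mathcal{O}_{\mathcal{X}^\nu} \subset \mathcal{O}_\mathcal{X}$. Via the projection formula, whenever $\varpi^{-k}\bar{s}$ extends on $\mathcal{X}^\nu$ the section $\varpi^{N-k}\bar{s}$ extends on $\mathcal{X}$, giving the uniform bound $\lambda_s^\nu - N \le \lambda_s^\circ$ with $N$ independent of both $m$ and $s$.

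It remains to absorb the additive defect $N$, which I will do by a power trick based on submultiplicativity of the filtration: for any $k \ge 1$, $s^k \in R_{km}$ satisfies $\lambda_{s^k}^\nu \ge k\lambda_s^\nu$ and $v(s^k) = k v(s)$, hence
\[ \frac{\lambda_s^\nu - v(s)}{m} \;\le\; \frac{\lambda_{s^k}^\nu - v(s^k)}{km} \;\le\; \frac{\lambda_{s^k}^\circ + N - v(s^k)}{km} \;\le\; \sigma_v(\mathcal{F}_{(\mathcal{X}, \mathcal{L})}) + \frac{N}{km}. \]
Sending $k \to \infty$ and then taking suprema over $s$ and $m$ produces the reverse inequality $\sigma_v(\mathcal{F}_{(\mathcal{X}^\nu, \nu^*\mathcal{L})}) \le \sigma_v(\mathcal{F}_{(\mathcal{X}, \mathcal{L})})$, completing the proof. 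The main technical point is extracting a \emph{global} constant $N$ in the conductor bound, but the combination of $\mathbb{G}_m$-equivariance (confining non-normality to $\mathcal{X}_0$) and quasi-compactness of $\mathcal{X}$ makes this routine; once it is in hand, the power trick is automatic from the submultiplicativity axiom built into the definition of filtration.
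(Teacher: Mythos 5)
Your argument is correct, and it reaches the reverse inequality by a genuinely different mechanism than the paper. You control the discrepancy between the two filtrations geometrically: normality of $X$ confines the non-normal locus of $\mathcal{X}$ to the central fibre, so the coherent cokernel $\nu_*\mathcal{O}_{\mathcal{X}^\nu}/\mathcal{O}_{\mathcal{X}}$ is killed by a single power $\varpi^N$, giving the uniform two-sided bound $\lambda_s^\circ \le \lambda_s^\nu \le \lambda_s^\circ + N$ for all $m$ and $s$; the additive defect is then absorbed asymptotically by the power trick $s \mapsto s^k$ together with the submultiplicativity axiom. The paper instead argues purely valuation-theoretically: any $f \in H^0(\mathcal{X}^\nu, \nu^*\mathcal{L}^{\otimes m})$ satisfies a monic integral equation $f^d + f_1 f^{d-1} + \dotsb + f_d = 0$ over $\mathcal{R}(\mathcal{X},\mathcal{L})$, and feeding this into the multiplicative ultrametric norm $\|\cdot\|^{\mathcal{F}_v[\sigma]}$ shows directly that $\mathcal{F}_{(\mathcal{X},\mathcal{L})} \subset \mathcal{F}_v[\sigma]$ forces $\mathcal{F}_{(\mathcal{X}^\nu,\nu^*\mathcal{L})} \subset \mathcal{F}_v[\sigma]$, with no limiting argument. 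What each buys: your conductor bound yields the stronger intermediate fact that the two filtrations agree up to a bounded shift, $\mathcal{F}^{\lambda}_{(\mathcal{X}^\nu,\nu^*\mathcal{L})} R_m \subset \mathcal{F}^{\lambda-N}_{(\mathcal{X},\mathcal{L})} R_m$ uniformly in $\lambda$ and $m$, which is of independent interest; the paper's integral-closure argument needs no information about where $\mathcal{X}$ fails to be normal and produces the exact containment $\mathcal{F}_{(\mathcal{X}^\nu,\nu^*\mathcal{L})} \subset \mathcal{F}_v[\sigma]$ for each admissible $\sigma$, rather than the equality of the infima only. The reduction of $\sigma_v$ to the supremum $\sup_{m,s} (\lambda_s - v(s))/m$ and the easy inequality via pullback of sections are common to both.
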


\begin{proof}
Since $\mathcal{F}_{(\mathcal{X}, \mathcal{L})} \subset \mathcal{F}_{(\mathcal{X}^\nu, \nu^* \mathcal{L})}$, we have $\sigma_v (\mathcal{F}_{(\mathcal{X}, \mathcal{L})}) \le \sigma_v (\mathcal{F}_{(\mathcal{X}^\nu, \nu^* \mathcal{L})})$. 
To see the reverse inequality, we must show that $\mathcal{F}_{(\mathcal{X}, \mathcal{L})} \subset \mathcal{F}_v [\sigma]$ implies $\mathcal{F}_{(\mathcal{X}^\nu, \nu^* \mathcal{L})} \subset \mathcal{F}_v [\sigma]$ for every $v$ and $\sigma$. 

Take sufficiently divisible $m$ so that all the higher cohomology vanish. 
By the definition of normalization, every element $f \in H^0 (\mathcal{X}^\nu, \nu^* \mathcal{L}^{\otimes m})$ is integral over $\mathcal{X}$, hence we have $d \ge 1$ and $\sigma_i \in H^0 (\mathcal{X}, \mathcal{L}^{\otimes im})$ such that $f^d + f_1 f^{d-1} + \dotsb + f_d = 0$. 
We expand $f_i = \sum_{\lambda \in \mathbb{Z}} \varpi^{-\lambda} s_{i, \lambda}$ using $s_{i, \lambda} \in \mathcal{F}^\lambda_{(\mathcal{X}, \mathcal{L})} R_{im}$. 
For $s \in \mathcal{F}_{(\mathcal{X}^\nu, \nu^* \mathcal{L})}^\mu R_m$, putting $f = \varpi^{-\mu} s$, we obtain $\varpi^{-d \mu} (s^d + s_{1, \mu} s^{d-1} + s_{2, 2\mu} s^{d-2} + \dotsb + s_{d, d\mu}) = 0$. 
Now assume $\mathcal{F}_{(\mathcal{X}, \mathcal{L})} \subset \mathcal{F}_v [\sigma]$, then we have $- \log \| s_{i, i\mu} \|^{\mathcal{F}_v [\sigma]} \ge - \log \| s_{i, i\mu} \|_{im}^{(\mathcal{X}, \mathcal{L})} \ge i\mu$. 
Since $- \log \| t^d \|^{\mathcal{F}_v [\sigma]} = - d \log \| t \|^{\mathcal{F}_v [\sigma]}$, we get 
\begin{align*} 
- d \log \| s \|^{\mathcal{F}_v [\sigma]} 
&= -\log \| s_{1, \mu} s^{d-1} + s_{2, 2\mu} s^{d-2} + \dotsb + s_{d, d\mu} \|^{\mathcal{F}_v [\sigma]} \\
&\ge \min_{1 \le i \le d} \{ i \mu - (d-i) \log \| s \|^{\mathcal{F}_v [\sigma]} \}, 
\end{align*}
hence $- \log \| s \|^{\mathcal{F}_v [\sigma]} \ge \mu$. 
Therefore we get $s \in \mathcal{F}_v [\sigma]^\mu$, which shows the claim. 
\end{proof}

We note the following weight decomposition: 
\[ \mathcal{R} (\mathcal{X}, \mathcal{L}) = \bigoplus_{m \ge 0} \mathcal{R}_m := \bigoplus_{m \ge 0} H^0 (\mathcal{X}, \mathcal{L}^{\otimes m}) = \bigoplus_{m \ge 0} \bigoplus_{\lambda \in \mathbb{Z}} \varpi^{-\lambda} \mathcal{F}_{(\mathcal{X}, \mathcal{L})}^\lambda R_m. \]

\begin{lem}
\label{primary ideal}
Let $(\mathcal{X}, \mathcal{L})$ be a (not necessarily normal) test configuration. 
Then we have the following. 
\begin{enumerate}
\item The subset 
\[ \mathcal{I}_v := \bigoplus_{m \ge 0} \bigoplus_{\lambda \in \mathbb{Z}} \varpi^{-\lambda} (\mathcal{F}_{(\mathcal{X}, \mathcal{L})}^\lambda \cap \mathcal{F}_v^{\lambda+} [\sigma_v]) R_m \subset \mathcal{R} (\mathcal{X}, \mathcal{L}) \]
is a (homogeneous) prime ideal with $\mathcal{R}_+ \not\subset \mathcal{I}_v$ and the subset 
\[ \mathcal{I}^{+1}_v := \bigoplus_{m \ge 0} \bigoplus_{\lambda \in \mathbb{Z}} \varpi^{-\lambda} (\mathcal{F}_{(\mathcal{X}, \mathcal{L})}^\lambda \cap \mathcal{F}_v^{\lambda+1} [\sigma_v]) R_m \subset \mathcal{R} (\mathcal{X}, \mathcal{L}) \]
is a (homogeneous) primary ideal with $\sqrt{\mathcal{I}_v^{+1}} = \mathcal{I}_v$. 

\item The schematic point of $\mathcal{X} = \Proj_{\mathbb{C} [t]} \mathcal{R} (\mathcal{X}, \mathcal{L})$ corresponding to $\mathcal{I}_v$ is the center of the Gauss extension $G (v)$: 
\[ G (v) (\sum_{\lambda \in \mathbb{Z}} \varpi^\lambda h_\lambda) := \min_{\lambda \in \mathbb{Z}} (v (h_\lambda) + \lambda). \]
\end{enumerate}
\end{lem}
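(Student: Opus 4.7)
The plan is to introduce a single quasi-valuation $w$ on $\mathcal{R}(\mathcal{X}, \mathcal{L})$ that realises both ideals as level sets, and that, after extension to the fraction field, restricts to the Gauss extension $G(v)$ on $\mathbb{C}(\mathcal{X})$.  Concretely, on a bihomogeneous element $f = \varpi^{-\lambda} \bar{s}$ with $s \in \mathcal{F}^\lambda_{(\mathcal{X}, \mathcal{L})} R_m$, $\lambda \in \mathbb{Z}$, I set
\[ w(f) := v(s) + m \sigma_v - \lambda, \]
and extend to arbitrary $f \in \mathcal{R}$ by $w(f) := \min_i w(f_i)$ over its bihomogeneous components.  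The crucial non-negativity $w \ge 0$ on $\mathcal{R}$ is exactly the inclusion $\mathcal{F}_{(\mathcal{X}, \mathcal{L})} \subset \mathcal{F}_v[\sigma_v]$, which holds at the infimum $\sigma_v$ since the pointwise condition $v(s) + m\sigma - \lambda \ge 0$ is preserved under $\sigma \searrow \sigma_v$. Combining $v(ss') = v(s) + v(s')$ with $\mathcal{F}^\lambda \cdot \mathcal{F}^{\lambda'} \subset \mathcal{F}^{\lambda+\lambda'}$ yields the multiplicativity $w(fg) = w(f) + w(g)$ on bihomogeneous pairs, and the min-extension automatically satisfies the strong triangle inequality. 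By inspection of the definitions of $\mathcal{I}_v$ and $\mathcal{I}_v^{+1}$,
\[ \mathcal{I}_v = \{f \in \mathcal{R} \mid w(f) > 0\}, \qquad \mathcal{I}_v^{+1} = \{f \in \mathcal{R} \mid w(f) \ge 1\}. \]

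For part (1), bihomogeneity of both ideals is immediate from the construction. Primality of $\mathcal{I}_v$ reduces to checking on bihomogeneous pairs, where $w(fg) = w(f) + w(g) > 0$ combined with $w(f), w(g) \ge 0$ forces one of them to be positive. For $\mathcal{I}_v^{+1}$: if $w(fg) \ge 1$ but $w(f) < 1$, then $w(g) > 0$, so $w(g^N) = N w(g) \ge 1$ for $N$ large, whence $g \in \sqrt{\mathcal{I}_v^{+1}}$; conversely $g \in \sqrt{\mathcal{I}_v^{+1}}$ forces $w(g) > 0$, i.e.\ $g \in \mathcal{I}_v$.

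For part (2), I will extend $w$ to a valuation on $\mathrm{Frac}(\mathcal{R})$ by $w(f/g) := w(f) - w(g)$. On the degree-$0$ subfield, canonically identified with $\mathbb{C}(\mathcal{X}) = \mathbb{C}(X)(\varpi)$ via the $\mathbb{G}_m$-invariant ratios $\bar{s}/\bar{s}' \leftrightarrow s/s'$ (both $\bar{s}, \bar{s}'$ being $\mathbb{G}_m$-invariant rational sections obtained by transport via the $\mathbb{G}_m$-action from $\mathcal{X}_1 \cong X$), direct calculation gives
\[ w\Bigl( \frac{\varpi^{-\lambda}\bar{s}}{\varpi^{-\lambda'}\bar{s}'} \Bigr) = (\lambda' - \lambda) + v(s) - v(s') = G(v)\bigl(\varpi^{\lambda' - \lambda}(s/s')\bigr), \]
so $w|_{\mathbb{C}(\mathcal{X})} = G(v)$ (note the $m\sigma_v$ correction cancels on same-degree ratios). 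By the standard dictionary between homogeneous primes and schematic points of $\Proj_{\mathbb{C}[\varpi]} \mathcal{R} = \mathcal{X}$, the prime $\mathcal{I}_v$ corresponds to the center of $w$ on $\mathcal{X}$, which in turn coincides with the center of $G(v)$ on $\mathcal{X}$. Since $G(v)(\varpi) = 1 > 0$, this center lies over $0 \in \mathbb{A}^1$ where $\mathcal{X}$ is proper, so it is a genuine schematic point; relative ampleness of $\mathcal{L}$ then produces, for $m \gg 0$, a section of $\mathcal{L}^{\otimes m}$ not vanishing at that point, witnessing $\mathcal{R}_+ \not\subset \mathcal{I}_v$.

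The main delicate point will be the identification $w|_{\mathbb{C}(\mathcal{X})} = G(v)$, which rests on recognising $\bar{s}/\bar{s}'$ as the pullback of $s/s' \in \mathbb{C}(X)$ along $p_X : \mathcal{X} \dashrightarrow X$; this in turn uses the $\mathbb{G}_m$-invariance of $\bar{s}, \bar{s}'$ as rational sections together with the fact that $\mathbb{C}(\mathcal{X})^{\mathbb{G}_m} = \mathbb{C}(X)$. Once this is in place, the scheme-theoretic identification of $\mathcal{I}_v$ with the ideal of the center of $G(v)$ is a formal consequence of the valuation-prime dictionary for Proj of a graded domain.
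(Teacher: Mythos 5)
Your quasi-valuation $w$ is an efficient repackaging of the computation the paper carries out by hand on the components $\varpi^{-\lambda}\bar{s}$, and the core of part (1) is sound: primality via $w(fg)=w(f)+w(g)\ge 0$ on bihomogeneous pairs, and $\sqrt{\mathcal{I}_v^{+1}}=\mathcal{I}_v$ via $Nw(g)\ge 1$, both work, granting the standard reduction of primality and primariness of a multigraded ideal to its multihomogeneous elements. There is, however, one genuine circularity: your proof that $\mathcal{R}_+\not\subset\mathcal{I}_v$ invokes the dictionary between homogeneous primes and points of $\Proj_{\mathbb{C}[\varpi]}\mathcal{R}$ to identify $\mathcal{I}_v$ with the center of $G(v)$, and only afterwards uses relative ampleness to produce a section not vanishing there. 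But that dictionary applies only to primes \emph{not} containing the irrelevant ideal, so it presupposes exactly what you are trying to prove; and the implication ``$t$ does not vanish at the center $\Rightarrow w(t)=0$'' cannot be verified independently, since comparing $w(t)$ with $G(v)(t/e)$ requires a local generator $e$ of $\mathcal{L}^{\otimes m}$ at the center of the form $\bar{s}_0$ with $w(\varpi^{-\lambda_0}\bar{s}_0)=0$ --- again the statement at issue. What is needed is a direct exhibition of some $m\ge 1$, $\lambda\in\mathbb{Z}$ and $s\in\mathcal{F}^\lambda_{(\mathcal{X},\mathcal{L})}R_m$ with $v(s)+m\sigma_v=\lambda$, i.e.\ that the infimum defining $\sigma_v$ is attained in some fixed degree. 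This is where finite generation of $\mathcal{R}(\mathcal{X},\mathcal{L})$ enters: the condition $\mathcal{F}_{(\mathcal{X},\mathcal{L})}\subset\mathcal{F}_v[\sigma]$ is detected on a finite set of algebra generators, so $\sigma_v$ is a maximum over finitely many ratios $(\lambda_i-v(s_i))/m_i$ and is attained (this is the paper's ``by our choice of $\sigma_v$'' step; compare the proof of Proposition \ref{explicit formula for sigma}).

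A secondary point: to set $w(f/g):=w(f)-w(g)$ on $\mathrm{Frac}(\mathcal{R})$ you need $w(fg)=w(f)+w(g)$ for arbitrary, not merely bihomogeneous, $f,g$; the min-extension only gives ``$\ge$'' a priori. Equality does hold, by the lexicographically-minimal-component argument (the same mechanism hiding inside the reduction of primality to homogeneous elements), but since part (2) rests on $w$ genuinely being a valuation on the fraction field, this should be stated. Once these two points are supplied, your identification $w|_{\mathbb{C}(\mathcal{X})}=G(v)$ and the conclusion of part (2) go through and essentially coincide with the paper's computation of $G(v)(f_1/f_2^n)$ on the chart $\mathrm{Spec}\,\mathcal{R}_{(f_2)}$.
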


Note for $\sigma > \sigma_v$, we have 
\[ \bigoplus_{m \ge 0} \bigoplus_{\lambda \in \mathbb{Z}} \varpi^{-\lambda} (\mathcal{F}^\lambda_{(\mathcal{X}, \mathcal{L})} \cap \mathcal{F}_v^{\lambda+} [\sigma]) R_m = \mathcal{R} (\mathcal{X}, \mathcal{L}) \]
as $\mathcal{F}^\lambda_{(\mathcal{X}, \mathcal{L})} R_m \subset \mathcal{F}_v^\lambda [\sigma_v] R_m \subset \mathcal{F}_v^{\lambda+} [\sigma] R_m$. 

\begin{proof}
Take two elements $f_1, f_2 \in \mathcal{R}_{(\mathcal{X}, \mathcal{L})}$. 
We can write these as 
\[ f_i = \sum_{j \in I_i} \varpi^{-\lambda_i^j} s_i^j, \]
where $I_i$ is a finite index set and $s_i^j \in \mathcal{F}^{\lambda_i^j} R_{m_i^j}$. 
Since $\mathcal{F}^{\lambda_i^j} R_{m_i^j} \subset \mathcal{F}_v^{\lambda_i^j} [\sigma_v] R_{m_i^j}$, we have $v (s_i^j) + m_i^j \sigma_v \ge \lambda_i^j$. 

We have $f_i \in \mathcal{I}_v$ iff $v (s_i^j) + m_i^j \sigma_v > \lambda_i^j$. 
Note 
\[ f_1 f_2 = \sum_{j \in I_1, k \in I_2} \varpi^{-(\lambda_1^j + \lambda_2^k)} s_1^j s_2^k. \]

The subset $\mathcal{I}_v$ is obviously closed under sum of two elements and is homogeneous. 
Now suppose $f_1 \in \mathcal{I}_v$, then since 
\[ v (s_1^j s_2^k) + (m_1^j + m_2^k) \sigma_v = (v (s_1^j) + m_1^j \sigma_v) + (v (s_2^k) +m_2^k \sigma_v) > \lambda_1^j+ \lambda_2^k, \]
we have $f_1 f_2 \in \mathcal{I}_v$. 
Thus $\mathcal{I}_v$ is ideal. 
We can similarly show that $\mathcal{I}_v^{+1}$ is ideal. 

We can check $\sqrt{\mathcal{I}_v^{+1}} = \mathcal{I}_v$ as follows. 
For $f_1 \in \mathcal{I}_v$, take $\epsilon > 0$ so that $v (s_1^j) + m_1^j \sigma_v \ge \lambda_1^j + \epsilon$ for every $j \in I_1$. 
Consider a multiple 
\[ f_1^d = \sum_{j_1, \ldots, j_d \in I_1} \varpi^{-\sum_{r=1}^d \lambda_1^{j_r}} \prod_{r=1}^d s_1^{j_r}. \]
Then since 
\[ v (\prod_{r=1}^d s_1^{j_r}) + (\sum_{r=1}^d m_1^{j_r}) \sigma_v = \sum_{r=1}^d (v(s_1^{j_r}) + m_1^{j_r} \sigma_v) \ge \sum_{r=1}^d \lambda_1^{j_r} + d \epsilon \]
we have $f_1^d \in \mathcal{I}_v$ for $d$ with $d \ge \epsilon^{-1}$. 

Next suppose $f_i \notin \mathcal{I}_v$, then we have $v (s_i^j) + m_i^j \sigma_v = \lambda_i^j$. 
It follows that $v (s_1^j s_2^k) + (m_1^j + m_2^k) \sigma_v = \lambda_1^j+ \lambda_2^k$, which shows $f_1 f_2 \notin \mathcal{I}_v$. 
Thus $\mathcal{I}_v$ is prime ideal, hence $\mathcal{I}_v^{+1}$ is primary. 

By our choice of $\sigma_v$, there is $\lambda \in \mathbb{Z}$ and $s \in \mathcal{F}^\lambda_{(\mathcal{X}, \mathcal{L})} R_m$ for $m \ge 1$ such that $v (s) + m \sigma_v = \lambda$, which is equivalent to $\varpi^{-\lambda} s \notin \mathcal{I}_v$. 
Thus $\mathcal{R}_+ \notin \mathcal{I}_v$. 

Take $f_2 \notin \mathcal{I}_v \cap \mathcal{R}_m$. 
By the construction of $\Proj$, $\mathcal{X} \setminus f_2^{-1} (0)$ is naturally identified with $\mathrm{Spec} \mathcal{R}_{(f_2)}$ where $\mathcal{R}_{(f_2)} = \{ f_1/f_2^n \in \mathcal{R}_{f_2} ~|~ f_1 \in \mathcal{R}_{nm}, n \in \mathbb{N} \}$, where $\mathcal{I}_v$ is identified with $\{ f_1/f_2^n \in \mathcal{R}_{f_2} ~|~ f_1 \in \mathcal{I}_v \cap \mathcal{R}_{nm}, n \in \mathbb{N} \}$. 
Since 
\begin{align*} 
G (v) (f_1/f_2^n) 
&= G (v) (\sum_{j \in I_1} \varpi^{-\lambda_1^j} s_1^j) - n G (v) (\sum_{j \in I_2} \varpi^{-\lambda_2^j} s_2^j) 
\\
&= \min_{j \in I_1} (v (s_1^j) - \lambda_1^j) - n \min_{j \in I_2} (v (s_2^j) - \lambda_2^j)
\end{align*}
By our choice of $f_2$, we have $v (s_2^j) - \lambda_2^j = - m \sigma_v$. 
Since $v (s_1^j) - \lambda_1^j \ge - m \sigma_v$, we have $G (v) (f_1/f_2^n) \ge 0$. 
If $f_1 \in \mathcal{I}_v$, we have $G (v) (f_1/f_2^n) > 0$. 
This proves the center of $v$ is $\mathcal{I}_v$. 
\end{proof}

\subsubsection{Primary decomposition of the central fibre via filtration}

Recall for a normal test configuration $\mathcal{X}$ and an irreducible component $E \subset \mathcal{X}_0$, we have the following associated valuation on $X$: 
\[ v_E = \frac{\mathrm{ord}_E \circ p_X^*}{\mathrm{ord}_E \mathcal{X}_0}, \]
where $p_X: \mathcal{X} \dashrightarrow X \times \mathbb{A}^1 \to X$ denotes the canonical rational map. 

For another normal test configuration $\mathcal{X}'$ and an irreducible component $E' \subset \mathcal{X}'_0$, we have $v_{E'} = v_E$ iff $\mathrm{ord}_{E'} = \mathrm{ord}_E$ as a valuation on $X \times \mathbb{A}^1$ (\cite[Lemma 4.5]{BHJ1}). 
For an irreducible component $\tilde{E} \subset \mathcal{X}_0$ of a normal test configuration $\tilde{\mathcal{X}}$ dominating $\mathcal{X}$ via $\beta$, we put 
\begin{equation}
\sigma_{\tilde{E}} (\mathcal{X}, \mathcal{L}) := \sigma_{v_{\tilde{E}}} (\mathcal{F}_{(\mathcal{X}, \mathcal{L})}). 
\end{equation}
For each irreducible component $E \subset \mathcal{X}_0$, we can find an irreducible component $\tilde{E} \subset \tilde{\mathcal{X}}$ with $\beta (\tilde{E}) = E$, for which we have $\mathrm{ord}_{\tilde{E}} = \mathrm{ord}_E$ and $v_{\tilde{E}} = v_E$. 

We have the following explicit formula on $\sigma_{\tilde{E}} (\mathcal{X}, \mathcal{L})$. 
See also \cite[Lemma 5.17]{BHJ1}, which shows $\sigma_E \le \frac{\mathrm{ord}_E (\mathcal{L} - L)}{\mathrm{ord}_E \mathcal{X}_0}$ for $E \subset \mathcal{X}_0$. 


\begin{prop}[Lemma A.6 in \cite{BJ4}]
\label{explicit formula for sigma}
Let $(\mathcal{X}, \mathcal{L})$ be a test configuration and $\tilde{\mathcal{X}}$ be a normal test configuration dominating both $\mathcal{X}$ and $X \times \mathbb{A}^1$ over $\mathbb{A}^1$. 
\[ \begin{tikzcd}[row sep=tiny, column sep=tiny]
&
\tilde{\mathcal{X}} \ar{dr}{p_X \times \tilde{\varpi}} \ar{dl}[swap]{\beta}
&
\\
\mathcal{X}
&
& X \times \mathbb{A}^1
\end{tikzcd} \]
Let $\tilde{\mathcal{L}} := \beta^* \mathcal{L}$ and $ \tilde{L}_{\mathbb{A}^1} := p_X^* L$ be the pull-backs of the $\mathbb{Q}$-line bundles. 
Then for any irreducible component $\tilde{E} \subset \tilde{\mathcal{X}}_0$, we have 
\[ \sigma_{\tilde{E}} (\mathcal{X}, \mathcal{L}) = \frac{\mathrm{ord}_{\tilde{E}} (\tilde{\mathcal{L}} - \tilde{L}_{\mathbb{A}^1})}{\mathrm{ord}_{\tilde{E}} \tilde{\mathcal{X}}_0}. \]
\end{prop}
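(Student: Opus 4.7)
The plan is to reduce to $(\mathcal{X}, \mathcal{L}) = (\tilde{\mathcal{X}}, \tilde{\mathcal{L}})$ and then characterize $\mathcal{F}^\lambda_{(\tilde{\mathcal{X}}, \tilde{\mathcal{L}})} R_m$ by divisorial vanishing conditions on $\tilde{\mathcal{X}}$, from which $\sigma_{v_{\tilde{E}}}$ can be read off. For the reduction, Proposition \ref{sigma for normalization} lets me replace $\mathcal{X}$ by its normalization $\mathcal{X}^\nu$; since $\beta: \tilde{\mathcal{X}} \to \mathcal{X}^\nu$ is proper birational with $\beta_* \mathcal{O}_{\tilde{\mathcal{X}}} = \mathcal{O}_{\mathcal{X}^\nu}$ by normality, the projection formula gives $H^0(\mathcal{X}^\nu, \nu^* \mathcal{L}^{\otimes m}) = H^0(\tilde{\mathcal{X}}, \tilde{\mathcal{L}}^{\otimes m})$ for $m$ sufficiently divisible, hence $\mathcal{F}_{(\mathcal{X}^\nu, \nu^* \mathcal{L})} = \mathcal{F}_{(\tilde{\mathcal{X}}, \tilde{\mathcal{L}})}$.

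The heart of the argument is a divisor calculation on $\tilde{\mathcal{X}}$. Since $\tilde{\mathcal{L}}$ and $\tilde{L}_{\mathbb{A}^1}$ are identified on $\tilde{\mathcal{X}}|_{\mathbb{G}_m}$ via the $\mathbb{G}_m$-equivariant trivialization of the test configuration, their difference $D := \tilde{\mathcal{L}} - \tilde{L}_{\mathbb{A}^1}$ is a $\mathbb{Q}$-Cartier divisor supported on $\tilde{\mathcal{X}}_0$. Under the resulting isomorphism $\tilde{\mathcal{L}}^{\otimes m} \cong \tilde{L}_{\mathbb{A}^1}^{\otimes m}(mD)$ (for $m$ divisible enough that $mD$ is Cartier), the rational section $\beta^* \bar{s}$ of the left corresponds to the regular section $p_X^* s$ of the right because the two agree on the dense open $\tilde{\mathcal{X}}|_{\mathbb{G}_m}$. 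Combined with $\mathrm{ord}_{\tilde{E}}(p_X^* s) = v_{\tilde{E}}(s) \cdot \mathrm{ord}_{\tilde{E}} \tilde{\mathcal{X}}_0$, this gives
\[ \mathrm{ord}_{\tilde{E}}(\varpi^{-\lambda} \beta^* \bar{s}) = \bigl(v_{\tilde{E}}(s) - \lambda\bigr) \mathrm{ord}_{\tilde{E}} \tilde{\mathcal{X}}_0 + m \cdot \mathrm{ord}_{\tilde{E}} D, \]
so $s \in \mathcal{F}^{\lceil \lambda \rceil}_{(\tilde{\mathcal{X}}, \tilde{\mathcal{L}})} R_m$ if and only if $v_{\tilde{E}'}(s) + m \sigma^*_{\tilde{E}'} \ge \lceil \lambda \rceil$ for every irreducible component $\tilde{E}' \subset \tilde{\mathcal{X}}_0$, where I set $\sigma^*_{\tilde{E}'} := \mathrm{ord}_{\tilde{E}'} D / \mathrm{ord}_{\tilde{E}'} \tilde{\mathcal{X}}_0$.

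From this characterization the inequality $\sigma_{v_{\tilde{E}}}(\mathcal{F}) \le \sigma^*_{\tilde{E}}$ is immediate. For the reverse, I will produce for each fixed $\tilde{E}$ a $\mathbb{G}_m$-eigensection $\tau = \varpi^{-\lambda} \beta^* \bar{s}$ of $\tilde{\mathcal{L}}^{\otimes m}$ (some $\lambda \in \mathbb{Z}$, $m \gg 0$) with $\mathrm{ord}_{\tilde{E}} \tau = 0$; the displayed formula then forces $v_{\tilde{E}}(s) + m \sigma^*_{\tilde{E}} = \lambda$, so any $\sigma < \sigma^*_{\tilde{E}}$ yields $v_{\tilde{E}}(s) + m \sigma < \lambda$ and therefore $\mathcal{F} \not\subset \mathcal{F}_{v_{\tilde{E}}}[\sigma]$. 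Such a $\tau$ exists because the generic point $\eta_{\tilde{E}}$ of $\tilde{E}$ is $\mathbb{G}_m$-fixed, so $\tilde{\mathcal{L}}^{\otimes m}|_{\eta_{\tilde{E}}}$ is a one-dimensional $\mathbb{G}_m$-representation, and ampleness of $\mathcal{L}$ together with the equality $H^0(\tilde{\mathcal{X}}, \tilde{\mathcal{L}}^{\otimes m}) = H^0(\mathcal{X}^\nu, \nu^* \mathcal{L}^{\otimes m})$ above makes the equivariant evaluation surjective onto this fiber for $m$ large. The main technical point to get right is the identification $\beta^* \bar{s} \leftrightarrow p_X^* s$ under $\tilde{\mathcal{L}}^{\otimes m} \cong \tilde{L}_{\mathbb{A}^1}^{\otimes m}(mD)$: since $\bar{s}$ is defined through the $\mathbb{G}_m$-action rather than direct pullback, one must carefully verify the identification on $\tilde{\mathcal{X}}|_{\mathbb{G}_m}$ and keep signs consistent in the subsequent divisor computation.
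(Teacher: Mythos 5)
Your proposal is correct and follows essentially the same route as the paper: both translate the extension condition for $\varpi^{-\lambda}\bar{s}$ into an order condition along $\tilde{E}$ via the difference divisor $\tilde{\mathcal{L}} - \tilde{L}_{\mathbb{A}^1}$ (you phrase this as $\tilde{\mathcal{L}}^{\otimes m} \cong \tilde{L}_{\mathbb{A}^1}^{\otimes m}(mD)$, the paper as a comparison of local generators $e_{\tilde{\mathcal{L}}^{\otimes m}}$ and $e_{\tilde{L}_{\mathbb{A}^1}^{\otimes m}}$), get one inequality immediately, and obtain the other by producing a $\mathbb{G}_m$-eigensection $\varpi^{-\lambda}\bar{s}$ with $\mathrm{ord}_{\tilde{E}} = 0$ from global generation of $\mathcal{L}^{\otimes m}$ along $\mathcal{X}_0$ and the eigenspace decomposition of $H^0(\mathcal{X},\mathcal{L}^{\otimes m})$. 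Your preliminary reduction to the normalization is harmless but not needed, since the order computation can be carried out directly on $\tilde{\mathcal{X}}$ as the paper does.
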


\begin{proof}
We note for $s \in R_m$
\begin{align*} 
v_{\tilde{E}} (s) = v_{\tilde{E}} (s/e_{L^{\otimes m}}) 
&= \frac{\mathrm{ord}_{\tilde{E}} (\bar{s}/e_{\tilde{L}_{\mathbb{A}^1}^{\otimes m}})}{\mathrm{ord}_{\tilde{E}} \tilde{\mathcal{X}}_0} 
\\
&= \frac{\mathrm{ord}_{\tilde{E}} (\bar{s}/e_{\tilde{\mathcal{L}}^{\otimes m}})}{\mathrm{ord}_{\tilde{E}} \tilde{\mathcal{X}}_0} - m\frac{\mathrm{ord}_{\tilde{E}} (\tilde{\mathcal{L}} - \tilde{L}_{\mathbb{A}^1})}{\mathrm{ord}_{\tilde{E}} \tilde{\mathcal{X}}_0}. 
\end{align*}
Suppose $s \in \mathcal{F}_{(\mathcal{X}, \mathcal{L})}^\lambda R_m$, we have $\frac{\mathrm{ord}_{\tilde{E}} (\bar{s}/e_{\tilde{\mathcal{L}}^{\otimes m}})}{\mathrm{ord}_{\tilde{E}} \tilde{\mathcal{X}}_0} \ge \lambda$, so $v_{\tilde{E}} (s) + m\frac{\mathrm{ord}_{\tilde{E}} (\tilde{\mathcal{L}} - \tilde{L}_{\mathbb{A}^1})}{\mathrm{ord}_{\tilde{E}} \tilde{\mathcal{X}}_0} \ge \lambda$. 
Thus we obtain $\sigma_{\tilde{E}} \le \frac{\mathrm{ord}_{\tilde{E}} (\tilde{\mathcal{L}} - \tilde{L}_{\mathbb{A}^1})}{\mathrm{ord}_{\tilde{E}} \tilde{\mathcal{X}}_0}$. 

To see the reverse inequality, it suffices to show that there exists $\lambda \in \mathbb{Z}, m \in \mathbb{N}^{(d)}$ and $s \in \mathcal{F}^\lambda R_m$ satisfying 
\[ v_{\tilde{E}} (s) + m \frac{\mathrm{ord}_{\tilde{E}} (\tilde{\mathcal{L}} -\tilde{L}_{\mathbb{A}^1})}{\mathrm{ord}_{\tilde{E}} \tilde{\mathcal{X}}_0} = \lambda. \]
Indeed for such $s \in \mathcal{F}^\lambda R_m$, we have $s \notin \mathcal{F}^\lambda_{v_{\tilde{E}}} [\sigma]$ for $\sigma < \frac{\mathrm{ord}_{\tilde{E}} (\tilde{\mathcal{L}} -\tilde{L}_{\mathbb{A}^1})}{\mathrm{ord}_{\tilde{E}} \tilde{\mathcal{X}}_0}$. 
For $s \in R_m$, put 
\[ \lambda (s) := - \log \| s \|^{(\mathcal{X}, \mathcal{L})} = \sup \{ \lambda ~|~ \varpi^{-\lceil \lambda \rceil} \bar{s} \in H^0 (\mathcal{X}, \mathcal{L}^{\otimes m}) \}. \]
Since $\{ \varpi^{-\lambda} \bar{s} ~|~ \lambda \in \mathbb{Z}, s \in \mathcal{F}^\lambda R_m \}$ generates $H^0 (\mathcal{X}, \mathcal{L}^{\otimes m})$ over $\mathbb{C}$, $\{ \varpi^{-\lambda (s)} \bar{s}  ~|~ s \in R_m \}$ generates $H^0 (\mathcal{X}, \mathcal{L}^{\otimes m})$ over $\mathbb{C} [t]$. 
It follows that we get $s \in R_m$ such that $\varpi^{-\lambda (s)} \bar{s}|_{\tilde{E}} \neq 0$ for sufficiently large $m$ with globally generated $\mathcal{L}^{\otimes m}|_{\mathcal{X}_0}$. 
For this $s$, we have $\mathrm{ord}_{\tilde{E}} (\bar{s}/e_{\tilde{\mathcal{L}}^{\otimes m}}) = \mathrm{ord}_{\tilde{E}} (\varpi^{\lambda (s)}) = \lambda (s) \mathrm{ord}_{\tilde{E}} \tilde{\mathcal{X}}_0$, hence we obtain $v_{\tilde{E}} (s) = \lambda (s) - m \frac{\mathrm{ord}_{\tilde{E}} (\tilde{\mathcal{L}} - \tilde{L}_{\mathbb{A}^1})}{\mathrm{ord}_{\tilde{E}} \tilde{\mathcal{X}}_0}$, which shows the claim. 
\end{proof}

Let $\mathcal{X}$ be a normal test configuration. 
By \cite[Lemma 4.5]{BHJ1}, we have 
\[ G (v_E) = \frac{\mathrm{ord}_E}{\mathrm{ord}_E \mathcal{X}_0}, \] 
which in particular shows $\mathcal{I}_{v_E} = \mathcal{I}_E$ thanks to Lemma \ref{primary ideal}. 
Thus for irreducible components $E, E' \subset \mathcal{X}_0$, $v_E = v_{E'}$ implies $E=E'$. 
We have $\mathcal{I}_{\mathcal{X}_0^{\mathrm{red}}} = \bigcap_{E \subset \mathcal{X}_0} \mathcal{I}_{v_E}$. 

Since $\mathcal{X}$ is normal, the central fibre does not have embedded points (\cite[Proposition 2.6]{BHJ1}, so that primary decomposition of the ideal $\mathcal{I}_{\mathcal{X}_0}$ is unique. 
It is given by the primary ideals $\mathcal{I}_{v_E}^{+1}$. 

\begin{cor}
We have 
\[ \mathcal{I}_{v_E}^{+1} = \{ \sum_m \varpi^{-\lambda_m}. \bar{s}_m ~|~ \frac{\mathrm{ord}_E (\varpi^{-\lambda_m}. \bar{s}_m)}{\mathrm{ord}_E \mathcal{X}_0} \ge 1 \}. \]
In particular, we obtain the following primary decomposition: 
\[ \mathcal{I}_{\mathcal{X}_0} = \bigcap_{E \subset \mathcal{X}_0} \mathcal{I}^{+1}_{v_E}. \]
\end{cor}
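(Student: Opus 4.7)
The plan is to reduce both claims to a single computation: for $s \in R_m$ with $\varpi^{-\lambda}\bar s \in H^0(\mathcal{X}, \mathcal{L}^{\otimes m})$, and $E \subset \mathcal{X}_0$ a prime divisor, the identity
\[ \frac{\mathrm{ord}_E(\varpi^{-\lambda}\bar s)}{\mathrm{ord}_E \mathcal{X}_0} = v_E(s) + m\sigma_{v_E} - \lambda \]
holds. Given this, the defining condition $s \in \mathcal{F}_{(\mathcal{X},\mathcal{L})}^\lambda \cap \mathcal{F}_{v_E}^{\lambda+1}[\sigma_{v_E}] R_m$ (i.e., $v_E(s) + m\sigma_{v_E} \ge \lambda + 1$) is exactly the condition $\mathrm{ord}_E(\varpi^{-\lambda}\bar s)/\mathrm{ord}_E \mathcal{X}_0 \ge 1$, yielding the first equality.

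To derive the formula, take a normal test configuration $\tilde{\mathcal{X}}$ dominating both $\mathcal{X}$ (via $\beta$) and $X \times \mathbb{A}^1$ (via $p_X \times \tilde{\varpi}$), and pick an irreducible component $\tilde{E} \subset \tilde{\mathcal{X}}_0$ with $\beta(\tilde E) = E$, so $v_{\tilde E} = v_E$ and $\sigma_{\tilde E}(\mathcal{X},\mathcal{L}) = \sigma_{v_E}$. Since the Gauss extension $G(v_E)$ depends only on $v_E$, the quotients $\mathrm{ord}_E(\,\cdot\,)/\mathrm{ord}_E\mathcal{X}_0$ and $\mathrm{ord}_{\tilde E}(\,\cdot\,)/\mathrm{ord}_{\tilde E}\tilde{\mathcal{X}}_0$ coincide on $\mathbb{C}(\mathcal{X})$, and the proof of Proposition \ref{explicit formula for sigma} already identifies $\mathrm{ord}_{\tilde E}(\bar s)/\mathrm{ord}_{\tilde E}\tilde{\mathcal{X}}_0 = v_E(s) + m\sigma_{v_E}$ via a local generator of $\tilde{\mathcal{L}}^{\otimes m}$. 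Combining this with $\mathrm{ord}_E(\varpi^{-\lambda}) = -\lambda \cdot \mathrm{ord}_E \mathcal{X}_0$ gives the formula.

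For the primary decomposition, the inclusion $\bigcap_E \mathcal{I}_{v_E}^{+1} \subset \mathcal{I}_{\mathcal{X}_0}$ is immediate: if every term of $f = \sum_m \varpi^{-\lambda_m}\bar s_m$ has $\mathrm{ord}_E$-order $\ge \mathrm{ord}_E\mathcal{X}_0$, then so does $f$, hence $\varpi^{-1} f$ is a regular section by the normality of $\mathcal{X}$ and Hartogs' extension. For the reverse inclusion, suppose $f \in H^0(\mathcal{X}, \mathcal{L}^{\otimes m})$ satisfies $\mathrm{ord}_E(f) \ge \mathrm{ord}_E \mathcal{X}_0$ for every prime $E \subset \mathcal{X}_0$. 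Consider the unique $\mathbb{G}_m$-weight decomposition $f = \sum_\lambda \varpi^{-\lambda}\bar s_\lambda$ with $s_\lambda \in \mathcal{F}^\lambda R_m$. Since $E$ is $\mathbb{G}_m$-invariant, $\mathrm{ord}_E$ is $\mathbb{G}_m$-invariant, so $\mathrm{ord}_E(f \cdot t) = \mathrm{ord}_E(f) \ge \mathrm{ord}_E\mathcal{X}_0$ for every $t \in \mathbb{G}_m$; solving the resulting Vandermonde system over distinct $t_j$'s expresses each weight component as a $\mathbb{C}$-linear combination of translates $f \cdot t_j$, yielding $\mathrm{ord}_E(\varpi^{-\lambda}\bar s_\lambda) \ge \mathrm{ord}_E \mathcal{X}_0$ for every $\lambda$. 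Hence $f \in \mathcal{I}_{v_E}^{+1}$ for every $E$.

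The main obstacle is the interplay between the valuation $v_E$ on $X$ and the divisorial order $\mathrm{ord}_E$ on sections of $\mathcal{L}^{\otimes m}$ over $\mathcal{X}$: the bookkeeping with resolutions $\beta: \tilde{\mathcal{X}} \to \mathcal{X}$, local generators of $\tilde{\mathcal{L}}^{\otimes m} = \beta^*\mathcal{L}^{\otimes m}$, and the identification of Gauss extensions must be handled carefully, but once Proposition \ref{explicit formula for sigma} is invoked it delivers the formula directly. The $\mathbb{G}_m$-invariance step in the reverse inclusion is the other subtle point, but it is a standard consequence of torus-equivariance of monomial-invariant valuations.
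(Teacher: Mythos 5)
Your proposal is correct and follows essentially the same route as the paper: translate the defining condition $s \in \mathcal{F}^{\lambda}_{(\mathcal{X},\mathcal{L})} \cap \mathcal{F}^{\lambda+1}_{v_E}[\sigma_{v_E}]$ into the divisorial order condition via the identity $\mathrm{ord}_E(\bar s)/\mathrm{ord}_E\mathcal{X}_0 = v_E(s) + m\sigma_{v_E}$ from Proposition \ref{explicit formula for sigma}, then identify $\mathcal{I}_{\mathcal{X}_0} = \mathcal{O}_{\mathcal{X}}\cdot\varpi$ with the intersection. The only inessential difference is that your Vandermonde argument in the reverse inclusion is not needed, since $\mathcal{I}_{v_E}^{+1}$ is by construction a direct sum of weight spaces, so membership in the intersection is checked weight-by-weight automatically (equivalently, one invokes $\mathcal{F}^{\lambda+1}_{(\mathcal{X},\mathcal{L})} = \bigcap_E \mathcal{F}^{\lambda+1}_{v_E}[\sigma_E]$).
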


\begin{proof}
Since 
\[ \frac{\mathrm{ord}_E (\bar{s}_m)}{\mathrm{ord}_E \mathcal{X}_0} = v_E (s_m) + m \frac{\mathrm{ord}_{\tilde{E}} (\tilde{\mathcal{L}} - \tilde{L})}{\mathrm{ord}_{\tilde{E}} \tilde{\mathcal{X}}_0}, \]
we compute 
\begin{align*} 
\mathcal{I}_{v_E}^{+1} 
&= \{ \sum_m \varpi^{-\lambda_m}. \bar{s}_m ~|~ v_E (s_m) + m \frac{\mathrm{ord}_{\tilde{E}} (\tilde{\mathcal{L}} - \tilde{L})}{\mathrm{ord}_{\tilde{E}} \tilde{\mathcal{X}}_0} \ge \lambda_m+1 \} 
\\
&= \{ \sum_m \varpi^{-\lambda_m}. \bar{s}_m ~|~ \frac{\mathrm{ord}_E (\varpi^{-\lambda_m}. \bar{s}_m)}{\mathrm{ord}_E \mathcal{X}_0} \ge 1 \}. 
\end{align*}
This shows 
\[ \mathcal{I}_{\mathcal{X}_0} = \mathcal{O}_{\mathcal{X}}. \varpi = \bigcap_{E \subset \mathcal{X}_0} \mathcal{I}^{+1}_{v_E}. \]
\end{proof}

\subsubsection{Primary decomposition of the Duistermaat--Heckman measure}

In what follows, we consider normal test configuration. 
We denote by $\mathcal{X}_0^E \subset \mathcal{X}$ the (non-reduced) subscheme corresponding to the primary ideal $\mathcal{I}_{v_E}^{+1}$: 
\[ R (\mathcal{X}_0^E, \mathcal{L}|_{\mathcal{X}_0^E}) = \bigoplus_{m \in \mathbb{N}^{(d)}} H^0 (\mathcal{X}_0^E, \mathcal{L}|_{\mathcal{X}_0^E}^{\otimes m}) = \bigoplus_{m \in \mathbb{N}^{(d)}} \mathcal{R}_m/(\mathcal{R}_m \cap \mathcal{I}_{v_E}^{+1}). \]
The $\mathbb{G}_m$-action on $(\mathcal{X}_0^E, \mathcal{L}|_{\mathcal{X}_0^E})$ gives the weight decomposition 
\[ \mathcal{R}_m/(\mathcal{R}_m \cap \mathcal{I}_{v_E}^{+1}) = \bigoplus_{\lambda \in \mathbb{Z}} \varpi^{-\lambda} \frac{\mathcal{F}_{(\mathcal{X}, \mathcal{L})}^\lambda R_m}{(\mathcal{F}_{(\mathcal{X}, \mathcal{L})}^\lambda \cap \mathcal{F}^{\lambda+1}_{v_E} [\sigma_E]) R_m}. \]

Similarly, we have 
\[ R (E, \mathcal{L}|_E) = \bigoplus_{m \in \mathbb{N}^{(d)}} H^0 (E, \mathcal{L}|_E^{\otimes m}) = \bigoplus_{m \in \mathbb{N}^{(d)}} \mathcal{R}_m/(\mathcal{R}_m \cap \mathcal{I}_{v_E}) \]
and the weight decomposition 
\[ \mathcal{R}_m/(\mathcal{R}_m \cap \mathcal{I}_{v_E}) = \bigoplus_{\lambda \in \mathbb{Z}} \varpi^{-\lambda} \frac{\mathcal{F}_{(\mathcal{X}, \mathcal{L})}^\lambda R_m}{(\mathcal{F}_{(\mathcal{X}, \mathcal{L})}^\lambda \cap \mathcal{F}^{\lambda+}_{v_E} [\sigma_E] ) R_m}. \]

Now we can compute the Duistermaat--Heckman measure of primary components via the associated filtration. 
Recall 
\begin{align} 
\DHm_{(\mathcal{X}_0^E, \mathcal{L}|_{\mathcal{X}_0^E})} 
&:= \lim_{m \to \infty} \frac{1}{m^n} \sum_{\lambda \in \mathbb{Z}} \dim H^0 (\mathcal{X}_0^E, \mathcal{L}^{\otimes m}|_{\mathcal{X}_0^E})_\lambda .\delta_{\lambda/m} 
\\ \notag
&= \mathrm{ord}_E \mathcal{X}_0 \cdot \DHm_{(E, \mathcal{L}|_E)}. 
\end{align}

\begin{prop}
\label{primary decomposition of DH measure}
We have 
\[ \lim_{m \to \infty} \frac{1}{m^n} \sum_{\lambda \in \mathbb{Z}} \dim \frac{\mathcal{F}_{(\mathcal{X}, \mathcal{L})}^\lambda R_m}{(\mathcal{F}_{(\mathcal{X}, \mathcal{L})}^\lambda \cap \mathcal{F}^{\lambda+}_v [\sigma_v]) R_m} . \delta_{\lambda/m} = \begin{cases} \DHm_{(E, \mathcal{L}|_E)} & v = v_E \text{ for } E \subset \mathcal{X}_0 \\ 0 & v \notin \{ v_E ~|~ E \subset \mathcal{X}_0 \} \end{cases} \]
\[ \lim_{m \to \infty} \frac{1}{m^n} \sum_{\lambda \in \mathbb{Z}} \dim \frac{\mathcal{F}_{(\mathcal{X}, \mathcal{L})}^\lambda R_m}{(\mathcal{F}_{(\mathcal{X}, \mathcal{L})}^\lambda \cap \mathcal{F}^{\lambda+1}_v [\sigma_v]) R_m}. \delta_{\lambda/m} = \begin{cases} \mathrm{ord}_E \mathcal{X}_0 \cdot \DHm_{(E, \mathcal{L}|_E)} & v = v_E \text{ for } E \subset \mathcal{X}_0 \\ 0 & v \notin \{ v_E ~|~ E \subset \mathcal{X}_0 \} \end{cases} \]
\end{prop}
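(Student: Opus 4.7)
My plan is to reduce both identities to the existence of the Duistermaat--Heckman measure applied to suitable closed subschemes of $\mathcal{X}_0$, via the primary ideals $\mathcal{I}_v$ and $\mathcal{I}_v^{+1}$ constructed in Lemma~\ref{primary ideal}. For $m$ sufficiently divisible, cohomology vanishing combined with the $\mathbb{G}_m$-weight decompositions already recorded just before the proposition gives
\[
\mathcal{R}_m/(\mathcal{R}_m \cap \mathcal{I}_v) \;=\; \bigoplus_{\lambda \in \mathbb{Z}} \varpi^{-\lambda} \frac{\mathcal{F}^\lambda_{(\mathcal{X}, \mathcal{L})} R_m}{(\mathcal{F}^\lambda \cap \mathcal{F}^{\lambda+}_v[\sigma_v]) R_m},
\]
and similarly for $\mathcal{I}_v^{+1}$. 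Writing $Z_v$ (resp.\ $Z_v^{+1}$) for the closed subscheme of $\mathcal{X}$ cut out by this homogeneous ideal, the spectral measures on the left of the proposition are therefore exactly the $\mathbb{G}_m$-weight measures of $H^0(Z_v, \mathcal{L}|_{Z_v}^{\otimes m})$, resp.\ $H^0(Z_v^{+1}, \mathcal{L}|_{Z_v^{+1}}^{\otimes m})$.

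When $v = v_E$, Lemma~\ref{primary ideal} identifies $Z_{v_E} = E$, and the computation immediately preceding the proposition identifies $Z_{v_E}^{+1} = \mathcal{X}_0^E$. The claimed convergence to $\DHm_{(E, \mathcal{L}|_E)}$, resp.\ to $\DHm_{(\mathcal{X}_0^E, \mathcal{L}|_{\mathcal{X}_0^E})}$, is then the very definition of the Duistermaat--Heckman measure of these polarized $\mathbb{G}_m$-schemes, whose existence is guaranteed by \cite{CM}. The equality $\DHm_{(\mathcal{X}_0^E, \mathcal{L}|_{\mathcal{X}_0^E})} = \mathrm{ord}_E \mathcal{X}_0 \cdot \DHm_{(E, \mathcal{L}|_E)}$ — already stated in the excerpt — then yields the first case.

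When $v \neq v_E$ for every irreducible component $E \subset \mathcal{X}_0$, I will argue that $Z_v$ has dimension at most $n-1$. Since $G(v)(\varpi) = 1 > 0$, the center of $G(v)$ on $\mathcal{X}$ lies inside $\mathcal{X}_0$; if it were a full irreducible component $E$, then $G(v)$ would be a divisorial valuation with center $E$, hence proportional to $\mathrm{ord}_E$, and the normalization $G(v)(\varpi) = 1$ would force $G(v) = \mathrm{ord}_E/\mathrm{ord}_E \mathcal{X}_0 = G(v_E)$. Restricting to $\mathbb{C}(X)$ then gives $v = v_E$, contrary to assumption. Hence both $Z_v$ and its $\mathcal{I}_v$-primary thickening $Z_v^{+1}$ have dimension $\le n-1$, and a standard Hilbert polynomial estimate yields
\[
\dim \bigl(\mathcal{R}_m/(\mathcal{R}_m \cap \mathcal{I}_v)\bigr),\; \dim \bigl(\mathcal{R}_m/(\mathcal{R}_m \cap \mathcal{I}_v^{+1})\bigr) \;=\; O(m^{n-1}).
\]
Thus the total mass of the $m$-th spectral measure is $O(m^{-1}) \to 0$, and because $\mathcal{F}_{(\mathcal{X}, \mathcal{L})}$ is linearly bounded these measures are all supported in a fixed compact interval, so weak convergence to the zero measure follows.

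The most delicate step, in my view, is the dimension bound in the second case: everything relies on the fact that any codimension-one center on $\mathcal{X}$ is captured by some $v_E$. All ingredients are available — Lemma~\ref{primary ideal}(2) matches $\mathcal{I}_v$ with the center of $G(v)$, and \cite[Lemma~4.5]{BHJ1} supplies $G(v_E) = \mathrm{ord}_E/\mathrm{ord}_E \mathcal{X}_0$ — but care is required to rule out the possibility that $G(v)$ is a non-divisorial valuation with $n$-dimensional qualitative behavior. Everything else is bookkeeping with weight decompositions and a direct appeal to the existence of the Duistermaat--Heckman measure.
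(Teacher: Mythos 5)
Your proposal is correct and follows essentially the same route as the paper: identify the graded quotients with $H^0(E,\mathcal{L}|_E^{\otimes m})$ and $H^0(\mathcal{X}_0^E,\mathcal{L}|_{\mathcal{X}_0^E}^{\otimes m})$ when $v=v_E$, and otherwise bound the dimension of the subscheme cut out by $\mathcal{I}_v$ (and its thickening $\mathcal{I}_v^{+1}$, which has the same support) by $n-1$. The ``delicate step'' you flag is not actually a gap: since $\mathcal{X}$ is normal, the local ring at the generic point of any irreducible component $E\subset\mathcal{X}_0$ is a DVR, so any valuation centered there and nonnegative on that local ring is automatically a positive multiple of $\mathrm{ord}_E$ regardless of whether it is a priori divisorial --- this is exactly what the paper's Lemma \ref{the center of valuation} records.
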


\begin{proof}
As we already noted, we have 
\[ H^0 (\mathcal{X}_0^E, \mathcal{L}|_{\mathcal{X}_0^E})_\lambda = \frac{\mathcal{F}_{(\mathcal{X}, \mathcal{L})}^\lambda R_m}{(\mathcal{F}_{(\mathcal{X}, \mathcal{L})}^\lambda \cap \mathcal{F}^{\lambda+1}_{v_E} [\sigma_E] ) R_m} \]
and 
\[ H^0 (E, \mathcal{L}|_E)_\lambda = \frac{\mathcal{F}_{(\mathcal{X}, \mathcal{L})}^\lambda R_m}{(\mathcal{F}_{(\mathcal{X}, \mathcal{L})}^\lambda \cap \mathcal{F}^{\lambda+}_{v_E} [\sigma_E] ) R_m}, \]
which shows the claim for $v = v_E$. 

As for $v \notin \{ v_E ~|~ E \subset \mathcal{X}_0 \}$, thanks to the following lemma, we have $\mathcal{I}_v \neq \mathcal{I}_E$ for any irreducible component $E \subset \mathcal{X}_0$. 
It follows that the dimension of the irreducible subscheme $Z_v \subset \mathcal{X}_0$ associated to the prime ideal $\mathcal{I}_v$ is less than $n$, so that we have  
\[ \lim_{m \to \infty} \frac{1}{m^n} \dim H^0 (Z_v, \mathcal{L}^{\otimes m}|_{Z_v}) = 0. \]
Then the claim follows by 
\[ H^0 (Z_v, \mathcal{L}^{\otimes m}|_{Z_v}) = \mathcal{R}_m/(\mathcal{R}_m \cap \mathcal{I}_v) = \bigoplus_{\lambda \in \mathbb{Z}} \varpi^{-\lambda} \frac{\mathcal{F}_{(\mathcal{X}, \mathcal{L})}^\lambda R_m}{(\mathcal{F}_{(\mathcal{X}, \mathcal{L})}^\lambda \cap \mathcal{F}^{\lambda+1}_v [\sigma_v]) R_m}. \]
\end{proof}

\begin{lem}
\label{the center of valuation}
For a general valuation $v$ on $X$, we have $v = v_E$ if and only if $\mathcal{I}_v = \mathcal{I}_E$. 
\end{lem}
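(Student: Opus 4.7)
The plan is to reduce both implications to statements about the center of the Gauss extension $G(v)$ on the normal scheme $\mathcal{X}$, and then to exploit normality of $\mathcal{X}$ in order to pin down $G(v)$ uniquely up to the scalar forced by $G(v)(\varpi) = 1$.

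First I will note that by Lemma~\ref{primary ideal}(2) the homogeneous prime ideal $\mathcal{I}_v \subset \mathcal{R}(\mathcal{X}, \mathcal{L})$ is precisely the prime ideal of the center of $G(v)$ on $\mathcal{X}$, while $\mathcal{I}_E$ is by definition the prime ideal of the generic point of $E \subset \mathcal{X}_0$. Hence the condition $\mathcal{I}_v = \mathcal{I}_E$ translates into: the center of $G(v)$ on $\mathcal{X}$ is the generic point of $E$. The forward direction $v = v_E \Rightarrow \mathcal{I}_v = \mathcal{I}_E$ is then immediate from \cite[Lemma 4.5]{BHJ1}, which gives $G(v_E) = \mathrm{ord}_E/\mathrm{ord}_E \mathcal{X}_0$, whose center on $\mathcal{X}$ is visibly $E$.

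The harder direction $\mathcal{I}_v = \mathcal{I}_E \Rightarrow v = v_E$ I will handle as follows. Since $\mathcal{X}$ is normal and $E$ is a prime divisor, $\mathcal{O}_{\mathcal{X}, E}$ is a DVR with uniformizer $\pi_E$ and associated valuation $\mathrm{ord}_E$. The hypothesis on the center means that the valuation ring of $G(v)$ dominates $\mathcal{O}_{\mathcal{X}, E}$, so every unit of $\mathcal{O}_{\mathcal{X}, E}$ is automatically a $G(v)$-unit. Writing $f = \pi_E^n u$ for $f \in \mathcal{O}_{\mathcal{X}, E}$, this gives $G(v)(f) = c \cdot \mathrm{ord}_E(f)$ with $c := G(v)(\pi_E) > 0$, and the identity extends to all of $\mathbb{C}(\mathcal{X})$ by multiplicativity on quotients $f_1/f_2$. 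Applying this to $\varpi$, where $G(v)(\varpi) = 1$ by the very definition of the Gauss extension and $\mathrm{ord}_E(\varpi) = \mathrm{ord}_E \mathcal{X}_0$, pins down $c = 1/\mathrm{ord}_E \mathcal{X}_0$; thus $G(v) = G(v_E)$ as valuations on $\mathbb{C}(\mathcal{X})$. Restricting both sides to $\mathbb{C}(X) \subset \mathbb{C}(\mathcal{X})$, on which $G(w)|_{\mathbb{C}(X)} = w$ holds tautologically from the definition of $G$, yields $v = v_E$.

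The only genuinely nontrivial point is the passage from equality of valuations on the local DVR $\mathcal{O}_{\mathcal{X}, E}$ to equality on the full field of fractions $\mathbb{C}(\mathcal{X})$, and this will be dealt with by the quotient argument above. Everything else is a bookkeeping consequence of Lemma~\ref{primary ideal}, normality of $\mathcal{X}$, and the explicit Gauss formula $G(v_E) = \mathrm{ord}_E/\mathrm{ord}_E \mathcal{X}_0$ from \cite{BHJ1}.
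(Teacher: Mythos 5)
Your proof is correct and follows essentially the same route as the paper's: identify $\mathcal{I}_v$ with the center of the Gauss extension via Lemma~\ref{primary ideal}, use normality to make $\mathcal{O}_{\mathcal{X},E}$ a DVR so that domination forces $G(v)=c\cdot\mathrm{ord}_E$, pin down $c$ by $G(v)(\varpi)=1$, and restrict to $\mathbb{C}(X)$. The only cosmetic difference is that you spell out the forward implication and the passage from the local ring to the full fraction field, both of which the paper leaves implicit.
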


\begin{proof}
The stalk $\mathcal{O}_{\mathcal{X}, E}$ of the generic point of $E$ is DVR by the normality, so that we can take a uniformizer $u \in \mathfrak{m}$: any $f \neq 0 \in \mathcal{O}_{\mathcal{X}, E}$ can be written as $f = a u^n$ by unique $a \in \mathcal{O}_{\mathcal{X}, E} \setminus \mathfrak{m}$ and $n \in \mathbb{N}$. 
By Lemma \ref{primary ideal}, $E$ is the center of $G (v)$, so that we have $G (v) (f) \ge 0$ for $f \in \mathcal{O}_{\mathcal{X}, E}$ and $G (v) (f) > 0$ for $f \in \mathfrak{m} \subset \mathcal{O}_{\mathcal{X}, E}$. 
Then since $G (v) (a u^n) = n G (v) (u)$, we must have $G (v) = G (v) (u) \cdot \mathrm{ord}_E$. 
Since $G (v) (\varpi) = 1$, we must have $G (v) (u) = (\mathrm{ord}_E \mathcal{X}_0)^{-1}$. 
This shows $G (v) = G (v_E)$ and hence $v = v_E$ by restriction. 
\end{proof}

\subsubsection{Valuations along the normalized base change}

Here we recall some facts observed in \cite{BHJ1}. 

\begin{prop}
\label{non-archimedean metric of normalized base change}
Let $(\mathcal{X}, \mathcal{L})$ be a test configuration and $\tilde{\mathcal{X}}$ be a normal test configuration dominating both $\mathcal{X}$ and $X \times \mathbb{A}^1$. 
Let $\nu_d: (\mathcal{X}_d, \mathcal{L}_d) \to (\mathcal{X}, \mathcal{L})$, $\tilde{\nu}_d: \tilde{\mathcal{X}}_d \to \tilde{\mathcal{X}}$ be the normalized base changes. 
\[ \begin{tikzcd}
\tilde{\mathcal{X}}_d \ar{r} \ar{d}
& \tilde{\mathcal{X}} \ar{d}
\\
\mathcal{X}_d \ar{r}
& \mathcal{X}
\end{tikzcd} \]
Then for any irreducible component $E' \subset \tilde{\mathcal{X}}_{d, 0}$, we have 
\begin{gather*}
v_{E'} = d.v_E
\\ 
\sigma_{E'} (\mathcal{X}_d, \mathcal{L}_d) = d. \sigma_E (\mathcal{X}, \mathcal{L})
\end{gather*}
for the irreducible component $E := \tilde{\nu}_d (E') \subset \tilde{\mathcal{X}}_0$. 
\end{prop}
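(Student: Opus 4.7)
The plan is to reduce both equalities to a simple ramification computation for the finite morphism $\tilde{\nu}_d \colon \tilde{\mathcal{X}}_d \to \tilde{\mathcal{X}}$ at the generic point of $E'$. Write $r = r(E')$ for the ramification index, i.e.\ the integer with $\tilde{\nu}_d^* \mathfrak{m}_{E} = \mathfrak{m}_{E'}^{r}$ as ideals of the DVR $\mathcal{O}_{\tilde{\mathcal{X}}_d, E'}$ (this makes sense because $\tilde{\mathcal{X}}_d$ is normal, so $\mathcal{O}_{\tilde{\mathcal{X}}_d, E'}$ is a DVR). Then for every rational function $g$ on $\tilde{\mathcal{X}}$,
\[ \mathrm{ord}_{E'}(\tilde{\nu}_d^* g) = r \cdot \mathrm{ord}_E(g). \]

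The first key step is to identify $r$ in terms of $d$ and $\mathrm{ord}_E \tilde{\mathcal{X}}_0$. Since the coordinate $t$ on the target $\mathbb{A}^1$ pulls back along the base change to $w^d$, where $w$ is the coordinate on the source $\mathbb{A}^1$, applying $\mathrm{ord}_{E'}$ to $\tilde{\nu}_d^* t = w^d$ gives
\[ r \cdot \mathrm{ord}_E \tilde{\mathcal{X}}_0 \;=\; \mathrm{ord}_{E'}(\tilde{\nu}_d^*\tilde{\mathcal{X}}_0) \;=\; d \cdot \mathrm{ord}_{E'}\tilde{\mathcal{X}}_{d,0}, \]
so that $r/\mathrm{ord}_{E'}\tilde{\mathcal{X}}_{d,0} = d/\mathrm{ord}_E\tilde{\mathcal{X}}_0$. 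This is exactly the ramification statement made visible by the local diagram (\ref{normalized base change diagram}), which guarantees the local picture is precisely $z_i \mapsto z_i^{d/(d,d_i)}$ on components with multiplicity $d_i = \mathrm{ord}_{E}\tilde{\mathcal{X}}_0$.

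Given this, the first equality is immediate. The canonical rational map $p_X^{\tilde{\mathcal{X}}_d} \colon \tilde{\mathcal{X}}_d \dashrightarrow X$ factors as $p_X^{\tilde{\mathcal{X}}} \circ \tilde{\nu}_d$, so for $f \in \mathbb{C}(X)^\times$,
\[ v_{E'}(f) = \frac{\mathrm{ord}_{E'}((p_X^{\tilde{\mathcal{X}}_d})^* f)}{\mathrm{ord}_{E'}\tilde{\mathcal{X}}_{d,0}} = \frac{r \cdot \mathrm{ord}_E((p_X^{\tilde{\mathcal{X}}})^* f)}{\mathrm{ord}_{E'}\tilde{\mathcal{X}}_{d,0}} = \frac{d \cdot \mathrm{ord}_E((p_X^{\tilde{\mathcal{X}}})^* f)}{\mathrm{ord}_E \tilde{\mathcal{X}}_0} = d \cdot v_E(f). \]

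The second equality follows by the same mechanism applied to Proposition \ref{explicit formula for sigma}. Letting $\tilde{\mathcal{L}}_d := \tilde{\nu}_d^*\tilde{\mathcal{L}}$ and $\tilde{L}_{d,\mathbb{A}^1} := \tilde{\nu}_d^*\tilde{L}_{\mathbb{A}^1}$ (the latter equality holds because $p_X^{\tilde{\mathcal{X}}_d}$ factors through $\tilde{\nu}_d$), we compute
\[ \sigma_{E'}(\mathcal{X}_d,\mathcal{L}_d) = \frac{\mathrm{ord}_{E'}(\tilde{\mathcal{L}}_d - \tilde{L}_{d,\mathbb{A}^1})}{\mathrm{ord}_{E'}\tilde{\mathcal{X}}_{d,0}} = \frac{r \cdot \mathrm{ord}_E(\tilde{\mathcal{L}} - \tilde{L}_{\mathbb{A}^1})}{\mathrm{ord}_{E'}\tilde{\mathcal{X}}_{d,0}} = d \cdot \frac{\mathrm{ord}_E(\tilde{\mathcal{L}} - \tilde{L}_{\mathbb{A}^1})}{\mathrm{ord}_E \tilde{\mathcal{X}}_0} = d \cdot \sigma_E(\mathcal{X},\mathcal{L}). \]

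The main technical point to be careful about is that $\tilde{\mathcal{X}}_d$ indeed dominates both $\mathcal{X}_d$ and $X \times \mathbb{A}^1$ (so that the formula of Proposition \ref{explicit formula for sigma} applies verbatim to $(\mathcal{X}_d,\mathcal{L}_d)$), and that $\tilde{\mathcal{X}}_d$ is the normalization of $\tilde{\mathcal{X}} \times_{\mathbb{A}^1}\mathbb{A}^1$. The former is standard because normalization and base change preserve the birational domination; the latter is precisely what allows us to use the ramification identity $r \cdot \mathrm{ord}_E\tilde{\mathcal{X}}_0 = d \cdot \mathrm{ord}_{E'}\tilde{\mathcal{X}}_{d,0}$ without further hypotheses on $E'$. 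No other step presents any real obstacle.
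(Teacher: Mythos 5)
Your proof is correct and is essentially the paper's argument: the paper cancels the ramification of $\tilde{\nu}_d$ along $E'$ by pushing forward cycles (using $(\tilde{\nu}_d)_*\tilde{\mathcal{X}}_{d,0} = \varpi^*(\nu_d)_*(0) = \tilde{\mathcal{X}}_0$), while you make the same cancellation explicit by introducing the ramification index $r$ and pinning it down via $\tilde{\nu}_d^*\varpi^*t = (\varpi_d^*w)^d$, which gives $r\cdot\mathrm{ord}_E\tilde{\mathcal{X}}_0 = d\cdot\mathrm{ord}_{E'}\tilde{\mathcal{X}}_{d,0}$. The two bookkeeping choices are equivalent, and your closing remarks correctly identify the only hypotheses that need checking (that $\tilde{\mathcal{X}}_d$ still dominates $\mathcal{X}_d$ and $X\times\mathbb{A}^1$ so that Proposition \ref{explicit formula for sigma} applies).
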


\begin{proof}
Since $(\tilde{\nu}_d)_* \tilde{\mathcal{X}}_{d, 0} = (\tilde{\nu}_d)_* \varpi_d^* (0) = \varpi^* (\nu_d)_* (0) = \tilde{\mathcal{X}}_0$, we compute 
\[ d. v_E (f) = d. \frac{\mathrm{ord}_E f \circ p^{\tilde{\mathcal{X}}}_X}{\mathrm{ord}_E \tilde{\mathcal{X}}_0} = \frac{\mathrm{ord}_E (\tilde{\nu}_d)_* \mathrm{div} (f \circ p^{\tilde{\mathcal{X}}_d}_X)}{\mathrm{ord}_E (\tilde{\nu}_d)_* \tilde{\mathcal{X}}_{d, 0}}
= \frac{\mathrm{ord}_{E'} f \circ p^{\tilde{\mathcal{X}}_d}_X}{\mathrm{ord}_{E'} \tilde{\mathcal{X}}_{d, 0}} = v_{E'} (f) \]
for $f \in \mathbb{C} (X)$. 

Similarly, we compute 
\begin{align*}
\sigma_{E'} (\mathcal{X}_d, \mathcal{L}_d) = \frac{\mathrm{ord}_{E'} (\tilde{\mathcal{L}}_d - \tilde{L}_d)}{\mathrm{ord}_{E'} \tilde{\mathcal{X}}_{d, 0}} 
= \frac{\mathrm{ord}_E (\tilde{\nu}_d)_* \tilde{\nu}_d^* (\tilde{\mathcal{L}} - \tilde{L}) }{\mathrm{ord}_E (\tilde{\nu}_d)_* \tilde{\mathcal{X}}_{d, 0}} = \frac{d. \mathrm{ord}_E (\tilde{\mathcal{L}} - \tilde{L})}{\mathrm{ord}_E \tilde{\mathcal{X}}_0} = d. \sigma_E (\mathcal{X}, \mathcal{L}). 
\end{align*}
\end{proof}

\begin{prop}
For any normal test configuration $\mathcal{X}$ and $d$, we have the following one to one correspondence via $\nu_d$: 
\[ \{ \text{irreducible components of } \mathcal{X}_0 \} \xleftrightarrow{E' \mapsto E = \nu_d (E')} \{ \text{irreducible components of } \mathcal{X}_{d, 0} \}. \] 
Moreover, we have 
\begin{gather*}
\mathrm{ord}_E \mathcal{X}_0 = (\mathrm{ord}_E \mathcal{X}_0, d) \cdot \mathrm{ord}_{E'} \mathcal{X}_{d, 0}, 
\\
(E', \mathcal{L}_d) = (\mathrm{ord}_E \mathcal{X}_0, d) \cdot (E. \mathcal{L}^{\cdot n})
\end{gather*} 
and $(\nu_d)_* E' = (\mathrm{ord}_E \mathcal{X}_0, d). E$ as divisors, where $(\mathrm{ord}_E \mathcal{X}_0, d)$ is the gcd. 
\end{prop}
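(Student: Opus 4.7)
The plan is to analyze $\nu_d$ locally around a generic point of each component $E \subset \mathcal{X}_0$, using the explicit local model of diagram~(\ref{normalized base change diagram}), and then pass from the local picture to the global statements. First I would reduce to the open locus $\mathcal{X} \setminus Z$. Since $Z \subset \mathcal{X}$ has codimension at least two and $\nu_d$ is finite, $\nu_d^{-1}(Z) \subset \mathcal{X}_d$ also has codimension at least two, so neither contains a divisor; consequently the irreducible components of $\mathcal{X}_0$ and $\mathcal{X}_{d,0}$, their multiplicities, the pushforwards $(\nu_d)_* E'$, and the degrees of the restrictions $\nu_d|_{E'}$ are all determined by the behavior over $\mathcal{X} \setminus Z$.

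Next, around a generic point of $E \setminus Z$, diagram~(\ref{normalized base change diagram}) expresses $\mathcal{X}$ locally as $\Delta^{n+1}$ with projection $z_i \mapsto z_i^{d_i}$, where $d_i = \mathrm{ord}_E \mathcal{X}_0$ and $E$ is locally $\{z_i = 0\}$. The normalized base change is then $\sqcup^{(d, d_i)} \Delta^{n+1}$, projecting via $z_i^{d_i/(d, d_i)}$ and mapping to $\mathcal{X}$ by $z_i \mapsto z_i^{d/(d, d_i)}$. Each of the $(d, d_i)$ local sheets has central fibre $\{z_i = 0\}$ with multiplicity $d_i/(d, d_i) = \mathrm{ord}_E \mathcal{X}_0/(\mathrm{ord}_E \mathcal{X}_0, d)$, and $\nu_d$ restricted to the central fibre of each sheet is an isomorphism onto $E$. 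This at once yields the multiplicity formula $\mathrm{ord}_E \mathcal{X}_0 = (\mathrm{ord}_E \mathcal{X}_0, d) \cdot \mathrm{ord}_{E'} \mathcal{X}_{d,0}$.

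The one-to-one correspondence requires globalizing: the $(d, d_i)$ local sheets over $E$ must assemble into a single global irreducible component $E' \subset \mathcal{X}_{d, 0}$. I would argue this via the Galois action of the cyclic subgroup $\mathbb{Z}/d \subset \mathbb{G}_m$ acting on $\mathcal{X}_d$, which preserves $\nu_d$ (as $(\zeta w)^d = w^d$ for $\zeta^d = 1$) and on the preimage of $E \setminus Z$ induces a transitive action on the $(d, d_i)$ local sheets through the natural quotient $\mathbb{Z}/d \twoheadrightarrow \mathbb{Z}/(d, d_i)$. Combined with the irreducibility of $E$, this produces a connected \'etale Galois cover $\nu_d^{-1}(E \setminus Z) \to E \setminus Z$ of degree $(d, d_i)$, and hence a single global $E'$. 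Then $\nu_d|_{E'}: E' \to E$ is finite of degree $(\mathrm{ord}_E \mathcal{X}_0, d)$, so $(\nu_d)_* E' = (\mathrm{ord}_E \mathcal{X}_0, d) \cdot E$ as divisors, and the intersection identity $(E' . \mathcal{L}_d^{\cdot n}) = (\mathrm{ord}_E \mathcal{X}_0, d) (E . \mathcal{L}^{\cdot n})$ follows by the projection formula $(E' . \nu_d^* \mathcal{L}^{\cdot n}) = ((\nu_d)_* E' . \mathcal{L}^{\cdot n})$.

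The main obstacle is the gluing step -- rigorously verifying that the $(d, d_i)$ local branches assemble into a single irreducible component globally. While the Galois/$\mathbb{G}_m$-argument is geometrically transparent, one has to carefully track how the action factors through the normalized base change and to exploit the normality of $\mathcal{X}_d$ together with the irreducibility of $\mathcal{X}$ (equivalently, the connectedness of the general fibre $X$).
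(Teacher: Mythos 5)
Your reduction to $\mathcal{X}\setminus Z$, the local computation of the multiplicities, and the derivation of $(\nu_d)_*E'=(\mathrm{ord}_E\mathcal{X}_0,d).E$ and of the intersection identity via the projection formula are all fine \emph{once the bijection is established}. The gap is precisely in the step you flag as the main obstacle, and it is not merely a matter of care: transitivity of the deck group $\mathbb{Z}/d\subset\mathbb{G}_m$ on the $e=(d,d_i)$ local sheets over a generic point of $E$ (with $d_i=\mathrm{ord}_E\mathcal{X}_0$) does \emph{not} imply that the \'etale cover $\nu_d^{-1}(E\setminus Z)^{\mathrm{red}}\to E\setminus Z$ is connected. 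A group acting transitively on the fibres of a degree-$e$ \'etale cover of an irreducible base only forces the connected components to form a single orbit, all of the same degree over $E$; the split cover $E\sqcup\dots\sqcup E$ with $\mathbb{Z}/e$ permuting the copies satisfies every hypothesis you invoke. What actually decides connectedness is the residue field extension $\mathbb{C}(E')/\mathbb{C}(E)$: writing $\varpi=u\,t^{d_i}$ at the generic point of $E$, the number of components over $E$ is $e$ divided by the order of the class of $\bar u$ in $\mathbb{C}(E)^{\times}/(\mathbb{C}(E)^{\times})^{e}$, and neither the local model of diagram (\ref{normalized base change diagram}) nor Galois transitivity sees this global datum.

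The paper closes exactly this gap with the valuation machinery rather than with the local picture: by Proposition \ref{non-archimedean metric of normalized base change} every component $E'$ over $E$ induces the same valuation $v_{E'}=d.v_E$ on $X$, and by Lemma \ref{primary ideal} and Lemma \ref{the center of valuation} (resting on \cite[Lemma 4.5]{BHJ1}, i.e.\ the fact that a $\mathbb{G}_m$-invariant divisorial valuation on the total space is the Gauss extension of its restriction to $\mathbb{C}(X)$, so that $\mathrm{ord}_{E'}$ is recovered from $v_{E'}$) an irreducible component of the central fibre of a normal test configuration is uniquely determined by its induced valuation on $X$. Hence two components over the same $E$ coincide, which is the injectivity you need. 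To keep your geometric route you would have to import this $\mathbb{G}_m$-equivariance input, or prove directly that the residue extension has full degree $e$; the Galois action by itself will not do it.
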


\begin{proof}
As we have already used in our arguments, $\nu_d$ is surjective. 
Suppose $\nu_d (E'_1) = \nu_d (E'_2) = E$. 
By the above proposition, we have $v_{E_1} = d. v_E = v_{E_2}$, so that $\mathcal{I}_{E_1'} = \mathcal{I}_{d. v_E} = \mathcal{I}_{E_2'}$. 
This shows the injectivity of $\nu_d$. 

Now the multiplicity can be computed using the local coordinate expression (\ref{normalized base change diagram}) of the normalized base change. 
\end{proof}

The normality of $\mathcal{X}$ is essential here: for a non-normal test configuration $\mathcal{X}$, the number of the irreducible components of the central fibre of the normalization $\mathcal{X}^\nu$ may increase from that of the original $\mathcal{X}$. 
The author learned the following example from Masafumi Hattori. 

\begin{eg}[Hattori's example]
Consider an elliptic curve $X = \{ x_0^3 + x_1^3 + x_2^3 = 0 \} \subset \mathbb{P}^2$ and its test configuration $\mathcal{X}'$ defined as the integral image of the following morphism 
\begin{align*}
X \times \mathbb{A}^1 
&\to \mathbb{P}^2 \times \mathbb{A}^1
\\
((x_0: x_1: x_2), t) 
&\mapsto ((x_0: x_1: tx_2), t). 
\end{align*}
Explicitly, $\mathcal{X}' = \{ ((z_0: z_1: z_2), t) ~|~ t^3 z_0^3 + t^3 z_1^3 + z_2^3 = 0 \}$, which is endowed with a $\mathbb{G}_m$-action $((z_0: z_1: z_2), t). \tau = ((z_0: z_1: \tau z_2), \tau t)$. 
The central fibre of $\mathcal{X}'$ is non-reduced $3 \mathbb{P}^1$, and the normalization is the trivial configuration $X \times \mathbb{A}^1$. 

On the central fibre, we have a degree $3$ morphism $\phi: X \to \mathcal{X}_0': (x_0: x_1: x_2) \mapsto (x_0: x_1: 0)$. 
Let $\mathcal{X}$ be the blowing up of $\mathcal{X}'$ at $p = (1: 1: 0) \in \mathcal{X}_0'$, over which the morphism $\phi$ is \'etale. 
The central fibre consists of two irreducible components: one is the proper transform of $3\mathbb{P}^1$ and the other is the exceptional divisor $E$. 
By our choice of $p$, $E \cap 3 \mathbb{P}^1$ consists of three distinct points. 
Consider the normalization $\nu: \mathcal{X}^\nu \to \mathcal{X}$. 
Let $\beta: \mathcal{X}^\nu \to X \times \mathbb{A}^1$ be the induced morphism. 
\[ \begin{tikzcd}
\mathcal{X}^\nu \ar{r}{\nu} \ar{d}{\beta} 
& \mathcal{X} \ar{d}
\\
X \times \mathbb{A}^1 \ar{r}
& \mathcal{X}'
\end{tikzcd} \]
Since $\beta (\nu^{-1} E) = \phi^{-1} (p)$ consists of three distinct points, we have three distinct irreducible components $E_1, E_2, E_3$ of $\nu^{-1} E \subset \mathcal{X}_0$. 
Each $E_i$ is mapped onto $E$. 
\end{eg}

\subsubsection{Stabilize (homogenize) the filtration $\mathcal{F}_{(\mathcal{X}, \mathcal{L})}$}

We continue to consider a normal test configuration $(\mathcal{X}, \mathcal{L})$. 
By \cite[Lemma 5.17]{BHJ1}, we have 
\begin{equation} 
\mathcal{F}^\lambda_{(\mathcal{X}, \mathcal{L})} = \bigcap_{E \subset \mathcal{X}_0} \mathcal{F}_{v_E}^\lambda [\sigma_E] 
\end{equation}
for $\lambda \in \mathbb{Z}$. 
This is no longer true for $\lambda \in \mathbb{R}$: the latter filtration may jump at $\lambda \in \mathbb{Q}$ as $v_E (s) \in \mathbb{Q}$ may be not integral when the central fibre $\mathcal{X}_0$ is not reduced. 

For $\lambda \in \mathbb{R}$, we put
\begin{equation} 
\widehat{\mathcal{F}}^\lambda_{(\mathcal{X}, \mathcal{L})} := \bigcap_{E \subset \mathcal{X}_0} \mathcal{F}_{v_E}^\lambda [\sigma_E]. 
\end{equation}
We obviously have $\mathcal{F}_{(\mathcal{X}, \mathcal{L})} \subset \widehat{\mathcal{F}}_{(\mathcal{X}, \mathcal{L})}$. 

Compared to the associated filtration $\mathcal{F}_{(\mathcal{X}, \mathcal{L})}$, the new filtration $\widehat{\mathcal{F}}_{(\mathcal{X}, \mathcal{L})}$ behaves well under normalized base change. 

\begin{prop}
For a normal test configuration $(\mathcal{X}, \mathcal{L})$, we have 
\begin{equation} 
\widehat{\mathcal{F}}_{(\mathcal{X}_d, \mathcal{L}_d)} = \widehat{\mathcal{F}}_{(\mathcal{X}, \mathcal{L}); d}. 
\end{equation}
\end{prop}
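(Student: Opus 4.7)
The plan is to reduce the equality of intersected filtrations to a per-component identity, using the two preceding propositions about how divisorial data transform under normalized base change. Unwinding the definitions,
\[
\widehat{\mathcal{F}}_{(\mathcal{X}, \mathcal{L}); d}^{\lambda} R_m = \widehat{\mathcal{F}}_{(\mathcal{X}, \mathcal{L})}^{\lambda/d} R_m = \bigcap_{E \subset \mathcal{X}_0} \mathcal{F}_{v_E}^{\lambda/d}[\sigma_E(\mathcal{X}, \mathcal{L})] R_m,
\]
while
\[
\widehat{\mathcal{F}}_{(\mathcal{X}_d, \mathcal{L}_d)}^{\lambda} R_m = \bigcap_{E' \subset \mathcal{X}_{d,0}} \mathcal{F}_{v_{E'}}^{\lambda}[\sigma_{E'}(\mathcal{X}_d, \mathcal{L}_d)] R_m,
\]
so the statement becomes a matching between the two indexing sets of intersectands together with a term-by-term identification.

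First, I will invoke the one-to-one correspondence $E' \leftrightarrow E = \nu_d(E')$ between irreducible components of $\mathcal{X}_{d,0}$ and of $\mathcal{X}_0$, which is available because $(\mathcal{X}, \mathcal{L})$ (and hence $(\mathcal{X}_d, \mathcal{L}_d)$) is normal; this is precisely the content of the proposition displayed just above the statement to be proved. Second, Proposition \ref{non-archimedean metric of normalized base change} supplies the key transformation rules
\[
v_{E'} = d\cdot v_E, \qquad \sigma_{E'}(\mathcal{X}_d, \mathcal{L}_d) = d\cdot \sigma_E(\mathcal{X}, \mathcal{L}).
\]

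With these in hand, the term-by-term identification is an immediate scaling check: for any valuation $v$, any $\sigma \in \mathbb{R}$, and any $s \in R_m$,
\[
s \in \mathcal{F}_{d v}^{\lambda}[d\sigma] R_m \iff d v(s) + m\, d\sigma \ge \lambda \iff v(s) + m\sigma \ge \lambda/d \iff s \in \mathcal{F}_{v}^{\lambda/d}[\sigma] R_m,
\]
hence $\mathcal{F}_{v_{E'}}^{\lambda}[\sigma_{E'}(\mathcal{X}_d, \mathcal{L}_d)] = \mathcal{F}_{v_E}^{\lambda/d}[\sigma_E(\mathcal{X}, \mathcal{L})]$ under the correspondence $E' \leftrightarrow E$. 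Intersecting over all components of $\mathcal{X}_{d,0}$ on the left and transporting to the matching components of $\mathcal{X}_0$ on the right yields the desired equality.

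I do not expect a substantial obstacle here: the content is really packaged in the two invoked propositions, and what remains is a bookkeeping check. The only subtle point, already addressed by the component-bijection statement, is that after normalizing the base change the central fibre does not acquire new components over $\mathcal{X}_0$, so the two intersections are indexed by the same set; once this is granted, the scaling identity above closes the proof.
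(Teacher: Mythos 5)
Your proof is correct and follows essentially the same route as the paper: the paper's own proof is the one-line per-component identity $\mathcal{F}_{v_{E'}}[\sigma_{E'}(\mathcal{X}_d,\mathcal{L}_d)] = \mathcal{F}_{d.v_E}[d.\sigma_E(\mathcal{X},\mathcal{L})] = (\mathcal{F}_{v_E}[\sigma_E(\mathcal{X},\mathcal{L})])_{;d}$, citing the preceding subsection for the transformation rules and the component correspondence, exactly as you do. Your version merely spells out the scaling check and the matching of the indexing sets that the paper leaves implicit.
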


\begin{proof}
Using the results in the previous subsection, we compute 
\[ \mathcal{F}_{v_{E'}} [\sigma_{E'} (\mathcal{X}_d, \mathcal{L}_d)] = \mathcal{F}_{d. v_E} [d. \sigma_E (\mathcal{X}, \mathcal{L})] = (\mathcal{F}_{v_E} [\sigma_E (\mathcal{X}, \mathcal{L})])_{;d}, \]
which proves the claim. 
\end{proof}

Compared to this, we only have $\mathcal{F}_{(\mathcal{X}, \mathcal{L})} \subset \mathcal{F}_{(\mathcal{X}_d, \mathcal{L}_d); d^{-1}}$: for $\lambda \in \mathbb{R}$, 
\[ \mathcal{F}_{(\mathcal{X}, \mathcal{L})}^\lambda = \widehat{\mathcal{F}}_{(\mathcal{X}, \mathcal{L})}^{\lceil \lambda \rceil} = \widehat{\mathcal{F}}_{(\mathcal{X}_d, \mathcal{L}_d)}^{d \lceil \lambda \rceil} = \mathcal{F}_{(\mathcal{X}_d, \mathcal{L}_d)}^{d \lceil \lambda \rceil} \subset \mathcal{F}_{(\mathcal{X}_d, \mathcal{L}_d)}^{d \lambda}. \]

When $\mathcal{X}_0$ is reduced, $v_E (s) + m \sigma_E = \mathrm{ord}_E (\bar{s})/\mathrm{ord}_E \mathcal{X}_0 = \mathrm{ord}_E (\bar{s})$ is an integer for $s \in R_m$ and $m \in \mathbb{N}^{(d)}$, so that we have $\mathcal{F}_{v_E}^\lambda [\sigma_E] = \mathcal{F}_{v_E}^{\lceil \lambda \rceil} [\sigma_E]$ and hence 
\begin{equation} 
\label{filtration for tc with reduced central fibre}
\widehat{\mathcal{F}}^\lambda_{(\mathcal{X}, \mathcal{L})} = \widehat{\mathcal{F}}^{\lceil \lambda \rceil}_{(\mathcal{X}, \mathcal{L})} = \mathcal{F}^{\lceil \lambda \rceil}_{(\mathcal{X}, \mathcal{L})} = \mathcal{F}^\lambda_{(\mathcal{X}, \mathcal{L})} 
\end{equation}
for $\lambda \in \mathbb{R}$. 

By taking sufficiently divisible $d$ so that $\mathcal{X}_{d, 0}$ is reduced, we get 
\begin{equation} 
\label{Krull envelope and normalized base change}
\widehat{\mathcal{F}}^\lambda_{(\mathcal{X}, \mathcal{L})} = \widehat{\mathcal{F}}^{d\lambda}_{(\mathcal{X}_d, \mathcal{L}_d)} = \mathcal{F}^{\lambda}_{(\mathcal{X}_d, \mathcal{L}_d; d^{-1})} 
\end{equation}
for general normal $(\mathcal{X}, \mathcal{L})$ and $\lambda \in \mathbb{R}$. 
Therefore, we can understand $\widehat{\mathcal{F}}_{(\mathcal{X}, \mathcal{L})}$ as the stable limit of the replacement $\mathcal{F}_{(\mathcal{X}, \mathcal{L})} \mapsto \mathcal{F}_{(\mathcal{X}_d, \mathcal{L}_d); d^{-1}}$. 
In particular, the filtration $\widehat{\mathcal{F}}_{(\mathcal{X}, \mathcal{L})}$ is finitely generated. 

As a consequence, we get $\sigma_{v_E} (\widehat{\mathcal{F}}_{(\mathcal{X}, \mathcal{L})}) = \sigma_E (\mathcal{X}, \mathcal{L})$ for a $\mathbb{G}_m$-invairant prime divisor $E$ over $\mathcal{X}$ centered on $\mathcal{X}_0$: 
\[ \sigma_{v_E} (\widehat{\mathcal{F}}_{(\mathcal{X}, \mathcal{L})}) = \sigma_{v_E} (\mathcal{F}_{(\mathcal{X}_d, \mathcal{L}_d); d^{-1}}) = d^{-1} \sigma_{d. v_E} (\mathcal{F}_{(\mathcal{X}_d, \mathcal{L}_d)}) = d^{-1} \sigma_{E'} (\mathcal{X}_d, \mathcal{L}_d) = \sigma_E (\mathcal{X}, \mathcal{L}). \]

\subsubsection{Primary decomposition via $\widehat{\mathcal{F}}_{(\mathcal{X}, \mathcal{L})}$}

Now we compute the primary decomposition via $\widehat{\mathcal{F}}_{(\mathcal{X}, \mathcal{L})}$. 
Compare Proposition \ref{primary decomposition of DH measure}. 

\begin{prop}
\label{primary decomposition of DH measure via hat filtration}
Let $(\mathcal{X}, \mathcal{L})$ be a normal test configuration and $\tilde{\mathcal{X}}$ be a normal test configuration dominating $\mathcal{X}$. 
Then for any irreducible component $\tilde{E} \subset \tilde{\mathcal{X}}$, we have 
\[ \lim_{m \to \infty} \frac{1}{m^n} \sum_{\lambda \in \mathbb{Q}} \dim \frac{\widehat{\mathcal{F}}^\lambda_{(\mathcal{X}, \mathcal{L})} R_m}{(\widehat{\mathcal{F}}^\lambda_{(\mathcal{X}, \mathcal{L})} \cap \mathcal{F}^{\lambda+}_{v_{\tilde{E}}} [\sigma_{\tilde{E}}]) R_m}. \delta_{\lambda/m} = 
\begin{cases} 
\mathrm{ord}_E \mathcal{X}_0 \cdot \DHm_{(E, \mathcal{L}|_E)}
&
v_{\tilde{E}} = v_E \text{ for } E \subset \mathcal{X}_0
\\
0
&
v_{\tilde{E}} \notin \{ v_E ~|~ E \subset \mathcal{X}_0 \}
\end{cases}. \]
\end{prop}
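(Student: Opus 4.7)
The strategy is to reduce to Proposition~\ref{primary decomposition of DH measure} by passing to a normalized base change on which the central fibre is reduced, exploiting the identity (\ref{Krull envelope and normalized base change}) which stabilizes $\widehat{\mathcal{F}}_{(\mathcal{X}, \mathcal{L})}$ into an ordinary test-configuration filtration after base change. First I would take $d$ divisible by $\mathrm{ord}_E \mathcal{X}_0$ for every component $E \subset \mathcal{X}_0$, so that the local description (\ref{normalized base change diagram}) guarantees $\mathcal{X}_{d,0}$ is reduced and $(\mathrm{ord}_E \mathcal{X}_0, d) = \mathrm{ord}_E \mathcal{X}_0$ for every $E$. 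Then (\ref{Krull envelope and normalized base change}) gives $\widehat{\mathcal{F}}^\lambda_{(\mathcal{X}, \mathcal{L})} R_m = \mathcal{F}^{d\lambda}_{(\mathcal{X}_d, \mathcal{L}_d)} R_m$.

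Next, let $\tilde{\nu}_d: \tilde{\mathcal{X}}_d \to \tilde{\mathcal{X}}$ be the normalized base change of the dominating $\tilde{\mathcal{X}}$ and let $\tilde{E}' \subset \tilde{\mathcal{X}}_{d,0}$ be the unique component with $\tilde{\nu}_d(\tilde{E}') = \tilde{E}$. Proposition~\ref{non-archimedean metric of normalized base change} yields $v_{\tilde{E}'} = d \cdot v_{\tilde{E}}$ and $\sigma_{\tilde{E}'}(\mathcal{X}_d, \mathcal{L}_d) = d \cdot \sigma_{\tilde{E}}(\mathcal{X}, \mathcal{L})$, from which a direct unpacking of $\mathcal{F}_v^\lambda[\sigma] = \{s : v(s) + m\sigma \ge \lambda\}$ gives
\[ \mathcal{F}^{\lambda+}_{v_{\tilde{E}}}[\sigma_{\tilde{E}}] = \mathcal{F}^{(d\lambda)+}_{v_{\tilde{E}'}}[\sigma_{\tilde{E}'}(\mathcal{X}_d,\mathcal{L}_d)]. \]
Substituting $\mu = d\lambda$, the sum in question becomes the $t \mapsto t/d$ pushforward of
\[ \frac{1}{m^n} \sum_{\mu \in \mathbb{Z}} \dim \frac{\mathcal{F}^{\mu}_{(\mathcal{X}_d,\mathcal{L}_d)} R_m}{\bigl(\mathcal{F}^\mu_{(\mathcal{X}_d, \mathcal{L}_d)} \cap \mathcal{F}^{\mu+}_{v_{\tilde{E}'}}[\sigma_{\tilde{E}'}(\mathcal{X}_d, \mathcal{L}_d)]\bigr) R_m} \,\delta_{\mu/m}, \]
to which Proposition~\ref{primary decomposition of DH measure} applies directly. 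By the one-to-one correspondence $E' \leftrightarrow E = \nu_d(E')$ together with $v_{E'} = d \cdot v_E$, the condition $v_{\tilde{E}} = v_E$ is equivalent to $v_{\tilde{E}'} = v_{E'}$; thus the limit is $\DHm_{(E', \mathcal{L}_d|_{E'})}$ in the first case (the two versions of Proposition~\ref{primary decomposition of DH measure} coincide because $\mathrm{ord}_{E'}\mathcal{X}_{d,0} = 1$) and vanishes otherwise.

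To finish the first case it remains to identify the pushforward under $t \mapsto t/d$ of $\DHm_{(E', \mathcal{L}_d|_{E'})}$ with $\mathrm{ord}_E \mathcal{X}_0 \cdot \DHm_{(E, \mathcal{L}|_E)}$. This reflects the fact that $\nu_d|_{E'}: E' \to E$ is generically an \'etale cover of degree $(\mathrm{ord}_E \mathcal{X}_0, d) = \mathrm{ord}_E \mathcal{X}_0$, while the $\mathbb{G}_m$-equivariance of $\nu_d$ with respect to $t \mapsto t^d$ scales weights on $H^0(E', \mathcal{L}_d^{\otimes m}|_{E'})$ by $d$ relative to $H^0(E, \mathcal{L}^{\otimes m}|_E)$; equivalently, $\DHm_{(E', \mathcal{L}_d|_{E'})} = \mathrm{ord}_E \mathcal{X}_0 \cdot (d)_* \DHm_{(E, \mathcal{L}|_E)}$, and the mass identity $(E'.\mathcal{L}_d^{\cdot n}) = \mathrm{ord}_E \mathcal{X}_0 \cdot (E.\mathcal{L}^{\cdot n})$ from the proposition on normalized base change provides a consistency check. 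The main obstacle is precisely this bookkeeping in the first case — making rigorous the scaling relation between $\DHm_{(E', \mathcal{L}_d|_{E'})}$ and $\DHm_{(E, \mathcal{L}|_E)}$ through generic \'etaleness and the $d$-scaling of $\mathbb{G}_m$-weights, rather than invoking it as intuitively clear. One could instead bypass this step by running the argument through the "second version" of Proposition~\ref{primary decomposition of DH measure} applied directly to the non-reduced subscheme $\mathcal{X}_0^E$, matching $\mathcal{F}^{\lambda+1}_{v_E}[\sigma_E]$ against $\widehat{\mathcal{F}}^\lambda_{(\mathcal{X}, \mathcal{L})} \cap \mathcal{F}^{\lambda+}_{v_E}[\sigma_E]$ at jumps, but the base-change route seems cleaner and more uniform.
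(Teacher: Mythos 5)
Your route is essentially the paper's: reduce to Proposition \ref{primary decomposition of DH measure} by a normalized base change making the central fibre reduced, use (\ref{Krull envelope and normalized base change}) to convert $\widehat{\mathcal{F}}_{(\mathcal{X},\mathcal{L})}$ into $\mathcal{F}_{(\mathcal{X}_d,\mathcal{L}_d)}$, and use Proposition \ref{non-archimedean metric of normalized base change} to transport $(v_{\tilde{E}},\sigma_{\tilde{E}})$ to $(v_{\tilde{E}'},\sigma_{\tilde{E}'})$. The vanishing case and the rescaling $\mu=d\lambda$ are fine as you present them.

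The step you flag as the ``main obstacle'' is indeed the one point still to be proved, and the paper closes it not by a geometric argument about generic \'etaleness but by passing to moments. Both $\DHm_{(E',\mathcal{L}_d|_{E'})}$ and $\DHm_{(E,\mathcal{L}|_E)}$ are compactly supported, so it suffices to match all polynomial moments; by the equivariant Riemann--Roch theorem these are the equivariant intersection numbers $\frac{1}{(n+k)!}(E^{\mathbb{G}_m}.\mathcal{L}_{\mathbb{G}_m}|_E^{\cdot(n+k)};\rho)$, resp. $\frac{1}{(n+k)!}((E')^{\mathbb{G}_m}.\mathcal{L}_{d,\mathbb{G}_m}|_{E'}^{\cdot(n+k)};\rho/d)$, the argument $\rho/d$ accounting for the $d$-scaled $\mathbb{G}_m$-action for which $\nu_d$ is equivariant. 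The equivariant projection formula applied to $\nu_d|_{E'}$, together with the cycle identity $(\nu_d)_*E'=\mathrm{ord}_E\mathcal{X}_0\cdot E$ (valid because $\mathrm{ord}_{E'}\mathcal{X}_{d,0}=1$ forces $(\mathrm{ord}_E\mathcal{X}_0,d)=\mathrm{ord}_E\mathcal{X}_0$), then yields
\[ ((E')^{\mathbb{G}_m}.\mathcal{L}_{d,\mathbb{G}_m}|_{E'}^{\cdot(n+k)};\rho/d) = \mathrm{ord}_E\mathcal{X}_0\cdot(E^{\mathbb{G}_m}.\mathcal{L}_{\mathbb{G}_m}|_E^{\cdot(n+k)};\rho) \]
for every $k$, which is precisely your claimed relation $\DHm_{(E',\mathcal{L}_d|_{E'})}=\mathrm{ord}_E\mathcal{X}_0\cdot d_*\DHm_{(E,\mathcal{L}|_E)}$. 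So the missing ingredient is the equivariant projection formula (plus equivariant Riemann--Roch to pass between measures and intersection numbers); with that inserted, your proof coincides with the paper's.
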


\begin{proof}
By the equivariant Riemann--Roch theorem (cf. \cite[Corollary 3.4]{BHJ1}), we have 
\[ \frac{1}{k!} \int_\mathbb{R} (-\rho t)^k \DHm_{(E, \mathcal{L}|_E)} = \frac{1}{(n+k)!} (E^{\mathbb{G}_m}. \mathcal{L}_{\mathbb{G}_m}|_E^{\cdot (n+k)}; \rho) \]
for $k \in \mathbb{N}$. 
Thus it suffices to show 
\[ \mathrm{ord}_E \mathcal{X}_0\frac{(E^{\mathbb{G}_m}. \mathcal{L}_{\mathbb{G}_m}|_E^{\cdot (n+k)}; \rho)}{(n+k)!} = \lim_{m \to \infty} \frac{1}{m^n} \sum_{\lambda \in \mathbb{Q}} \dim \frac{\widehat{\mathcal{F}}^\lambda_{(\mathcal{X}, \mathcal{L})} R_m}{(\widehat{\mathcal{F}}^\lambda_{(\mathcal{X}, \mathcal{L})} \cap \mathcal{F}^{\lambda+}_{v_E} [\sigma_E]) R_m} \frac{(-\rho \lambda/m)^k}{k!}. \]

Take $d \in \mathbb{N}_+$ so that the central fibre of the normalized base change $\mathcal{X}_d$ is reduced. 
Then for the irreducible component $E' \subset \mathcal{X}_{d, 0}$ corresponding to $E \subset \mathcal{X}_0$, we have 
\[ \mathrm{ord}_E \mathcal{X}_0 \cdot (E^{\mathbb{G}_m}. \mathcal{L}_{\mathbb{G}_m}|_E^{\cdot (n+k)}; \rho) = ((E')^{\mathbb{G}_m}. \mathcal{L}_{d, \mathbb{G}_m}|_{E'}^{\cdot (n+k)}; \rho/d) \]
by the equivariant projection formula (note $\nu_d$ is equivariant with respect to the $d$-times scaling $\mathbb{G}_m$-action on $E$). 
By Proposition \ref{primary decomposition of DH measure}, we compute 
\begin{align*}
\frac{1}{(n+k)!} 
&((E')^{\mathbb{G}_m}. \mathcal{L}_{d, \mathbb{G}_m}|_{E'}^{\cdot (n+k)}; \rho/d)
\\
&= \lim_{m \to \infty} \frac{1}{m^n} \sum_{\lambda' \in \mathbb{Z}} \dim \frac{\mathcal{F}^{\lambda'}_{(\mathcal{X}_d, \mathcal{L}_d)} R_m}{(\mathcal{F}^{\lambda'}_{(\mathcal{X}_d, \mathcal{L}_d)} \cap \mathcal{F}^{\lambda'+}_{v_{E'}} [\sigma_{E'}]) R_m}  \frac{(-\rho/d. \lambda'/m)^k}{k!}
\\
&= \lim_{m \to \infty} \frac{1}{m^n} \sum_{\lambda \in \mathbb{Q}} \dim \frac{\mathcal{F}^\lambda_{(\mathcal{X}_d, \mathcal{L}_d); d^{-1}} R_m}{(\mathcal{F}^\lambda_{(\mathcal{X}_d, \mathcal{L}_d); d^{-1}} \cap (\mathcal{F}_{d. v_E} [d. \sigma_E])_{;d^{-1}}^{\lambda+}) R_m} \frac{(-\rho/d. d\lambda/m)^k}{k!}
\\
&= \lim_{m \to \infty} \frac{1}{m^n} \sum_{\lambda \in \mathbb{Q}} \dim \frac{\widehat{\mathcal{F}}^\lambda_{(\mathcal{X}, \mathcal{L})} R_m}{(\widehat{\mathcal{F}}^\lambda_{(\mathcal{X}, \mathcal{L})} \cap \mathcal{F}^{\lambda+}_{v_E} [\sigma_E]) R_m} \frac{(-\rho \lambda/m)^k}{k!}.
\end{align*}
\end{proof}

\begin{prop}
\label{tomography of primary DH measure}
For a normal test configuration $(\mathcal{X}, \mathcal{L})$, an irreducible component $E \subset \mathcal{X}_0$ and $\tau \in \mathbb{R}$, we have 
\[ \mathrm{ord}_E \mathcal{X}_0 \int_{(-\infty, \tau]} \DHm_{(E, \mathcal{L}|_E)} = \lim_{m \to \infty} \frac{1}{m^n} \sum_{\lambda \in \mathbb{Q}, \lambda \le m \tau} \dim \frac{\widehat{\mathcal{F}}^\lambda_{(\mathcal{X}, \mathcal{L})} R_m}{\widehat{\mathcal{F}}^\lambda_{(\mathcal{X}, \mathcal{L})} \cap \mathcal{F}^{\lambda+}_{v_E} [\sigma_E] R_m}. \]
\end{prop}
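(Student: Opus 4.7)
The plan is to recognize the right-hand side as the value at $\tau$ of the cumulative distribution $F^E_m(\tau) := \nu_m^E((-\infty, \tau])$ of a sequence of discrete measures $\nu_m^E$ whose weak limit (established by Proposition~\ref{primary decomposition of DH measure via hat filtration}) is $\mathrm{ord}_E\mathcal{X}_0 \cdot \DHm_{(E, \mathcal{L}|_E)}$, and then to upgrade weak convergence to pointwise convergence of distribution functions at $\tau$. As a first step I reduce to the case where $\mathcal{X}_0$ is reduced by passing to a normalized base change $(\mathcal{X}_d, \mathcal{L}_d)$ for $d$ divisible enough: using \eqref{Krull envelope and normalized base change}, together with $v_{E'} = d\cdot v_E$ and $\sigma_{E'} = d\cdot\sigma_E$ (Proposition~\ref{non-archimedean metric of normalized base change}) for the unique component $E' \subset \mathcal{X}_{d, 0}$ over $E$, one verifies $\mathcal{F}^{\lambda+}_{v_E}[\sigma_E] = \mathcal{F}^{(d\lambda)+}_{v_{E'}}[\sigma_{E'}]$ and checks that the multiplicity factors $\mathrm{ord}_E\mathcal{X}_0$, $\mathrm{ord}_{E'}\mathcal{X}_{d,0}$ combined with the scaling $\tau \mapsto d\tau$ of the DH measures make the asserted identity for $(\mathcal{X}_d, \mathcal{L}_d, E')$ at $d\tau$ imply the one for $(\mathcal{X}, \mathcal{L}, E)$ at $\tau$.

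In the reduced case $\widehat{\mathcal{F}}_{(\mathcal{X}, \mathcal{L})} = \mathcal{F}_{(\mathcal{X}, \mathcal{L})}$, and the proof of Proposition~\ref{primary decomposition of DH measure} supplies the canonical identification
\[ \frac{\mathcal{F}^\lambda_{(\mathcal{X}, \mathcal{L})} R_m}{(\mathcal{F}^\lambda_{(\mathcal{X}, \mathcal{L})} \cap \mathcal{F}^{\lambda+}_{v_E}[\sigma_E]) R_m} \cong H^0(E, \mathcal{L}|_E^{\otimes m})_\lambda, \]
so the right-hand side becomes $F^E_m(\tau)$ with $\nu_m^E := m^{-n}\sum_\lambda \dim H^0(E, \mathcal{L}|_E^{\otimes m})_\lambda \cdot \delta_{\lambda/m}$. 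Linear boundedness of $\widehat{\mathcal{F}}_{(\mathcal{X}, \mathcal{L})}$ shows that these measures are supported in a fixed bounded interval, and equivariant Riemann--Roch (as used in the proof of Proposition~\ref{primary decomposition of DH measure via hat filtration}) shows that all polynomial moments of $\nu_m^E$ converge to those of $\DHm_{(E, \mathcal{L}|_E)}$. Uniform compact support then upgrades moment convergence to weak convergence, whence the Portmanteau theorem yields $F^E_m(\tau) \to F^E(\tau) := \DHm_{(E, \mathcal{L}|_E)}((-\infty, \tau])$ at every continuity point of $F^E$, i.e.\ outside the at most countable set of atoms of $\DHm_{(E, \mathcal{L}|_E)}$.

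The hard part is handling the remaining atoms, at which Portmanteau only supplies the sandwich $F^E(\tau_0^-) \le \liminf_m F^E_m(\tau_0) \le \limsup_m F^E_m(\tau_0) \le F^E(\tau_0)$. An atom at $\tau_0 \in \mathbb{Q}$ arises from an $n$-dimensional $\mathbb{G}_m$-fixed subscheme $E^{\mathbb{G}_m}_{\tau_0} \subset E$ of weight $\tau_0$, and the missing piece is the asymptotic $m^{-n}\dim H^0(E, \mathcal{L}|_E^{\otimes m})_{m\tau_0} \to \DHm_{(E, \mathcal{L}|_E)}(\{\tau_0\})$ along $m$ with $m\tau_0 \in \mathbb{Z}$. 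This should follow by applying equivariant Riemann--Roch to the fixed locus $E^{\mathbb{G}_m}_{\tau_0}$, which both detects the atomic mass and matches the truncated-sum contribution at $\lambda = m\tau_0$. Combined with the Portmanteau sandwich this gives equality at $\tau_0$; and since $F^E$ and (once the atomic argument is carried out) $\lim_m F^E_m$ are both right-continuous functions of $\tau$ that agree on a dense set, the identity extends to every $\tau \in \mathbb{R}$.
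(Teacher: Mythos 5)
Your overall strategy --- read the right-hand side as the cumulative distribution function of the discrete measures whose weak limit is supplied by Proposition~\ref{primary decomposition of DH measure via hat filtration}, then upgrade weak convergence to pointwise convergence of distribution functions --- is exactly the paper's strategy, and the reduction to reduced central fibre, the identification of the quotients with the weight spaces $H^0(E, \mathcal{L}|_E^{\otimes m})_\lambda$, and the Portmanteau step at continuity points are all fine (the base-change reduction is not even needed, since the weak convergence is already stated for general normal test configurations). The problem is that the one step you yourself flag as ``the hard part'' and ``the missing piece'' --- convergence at an atom of $\DHm_{(E, \mathcal{L}|_E)}$ --- is not proved. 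Saying that the asymptotic $m^{-n}\dim H^0(E, \mathcal{L}|_E^{\otimes m})_{m\tau_0} \to \DHm_{(E,\mathcal{L}|_E)}(\{\tau_0\})$ ``should follow by applying equivariant Riemann--Roch to the fixed locus'' is a plan, not an argument: equivariant Riemann--Roch as used in Proposition~\ref{primary decomposition of DH measure via hat filtration} controls polynomial moments, i.e.\ exactly the weak topology, and extracting the mass of a single weight $\lambda = m\tau_0$ from it requires a genuinely new input. The closing remark that the identity then ``extends by right-continuity from a dense set'' is also shaky, since a pointwise limit of right-continuous functions need not be right-continuous; fortunately it is redundant, because every $\tau$ is either a continuity point or an atom.

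The gap closes immediately once you use the structural dichotomy that the paper invokes: since $E$ is irreducible, $\DHm_{(E, \mathcal{L}|_E)}$ is either a single Dirac mass (precisely when the $\mathbb{G}_m$-action on $E$ is trivial, i.e.\ $v_E = v_{\mathrm{triv}}$) or absolutely continuous with respect to Lebesgue measure. In the absolutely continuous case there are no atoms at all and your Portmanteau argument already finishes (the paper phrases this as the estimate $\lim_m m^{-n}\sum_{m\tau < \lambda < m\tau+\varepsilon} \dim(\cdots) \le \int_{[\tau,\tau+\varepsilon]}\DHm_{(E,\mathcal{L}|_E)} \to 0$); in the Dirac case the identity is verified by hand from $\mathcal{F}^{\lambda+}_{v_{\mathrm{triv}}}[\sigma_E]R_m = R_m$ for $\lambda < m\sigma_E$ and $=0$ for $\lambda \ge m\sigma_E$. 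Note that your own observation already points there: an atom of positive mass forces an $n$-dimensional $\mathbb{G}_m$-fixed closed subscheme of the irreducible $n$-dimensional $E$, which must be all of $E$, so the action is trivial and the entire measure is one Dirac mass --- there is no ``mixed'' case, and no localization computation on a proper fixed locus is ever needed. Replace your fixed-locus plan with this dichotomy and the proof is complete.
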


\begin{proof}
The measure $\DHm_{(E, \mathcal{L}|_E)}$ is compactly supported and either a Dirac mass, which is the case only when the $\mathbb{G}_m$-action on $E$ is trivial, or absolutely continuous with respect to the Lebesgue measure. 

In the former case, the claim is clear as $v_E = v_{\mathrm{triv}}$: 
\[ \mathcal{F}^{\lambda+}_{v_E} [\sigma_E] R_m = 
\begin{cases} 
R_m & \lambda < m\sigma_E
\\
0 & \lambda \ge m \sigma_E
\end{cases}. \]
In the latter case, the claim follows by 
\[ \lim_{m \to \infty} \frac{1}{m^n} \sum_{\lambda \in \mathbb{Q}, m\tau < \lambda < m \tau + \varepsilon} \dim \frac{\widehat{\mathcal{F}}^\lambda_{(\mathcal{X}, \mathcal{L})} R_m}{\widehat{\mathcal{F}}^\lambda_{(\mathcal{X}, \mathcal{L})} \cap \mathcal{F}^{\lambda+}_{v_E} [\sigma_E] R_m} \le \int_{[\tau, \tau+\varepsilon]} \DHm_{(E, \mathcal{L}|_E)} \to 0 \]
\end{proof}

We use the following in the proof of Proposition \ref{large limit of mu-entropy}. 

\begin{prop}
\label{infimum of DH measure}
For a normal test configuration $(\mathcal{X}, \mathcal{L})$ and an irreducible component $E \subset \mathcal{X}_0$, we have 
\[ \sigma_E = \inf \operatorname{\mathrm{supp}} \DHm_{(E, \mathcal{L}|_E)}. \]
\end{prop}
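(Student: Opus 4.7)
The plan is to prove $\sigma_E = \inf\operatorname{supp}\DHm_{(E,\mathcal{L}|_E)}$ by establishing the two inequalities separately, each time using the tomography formula of Proposition~\ref{tomography of primary DH measure}. The easy direction $\inf\operatorname{supp}\DHm_{(E,\mathcal{L}|_E)} \ge \sigma_E$ is immediate: for any $\tau < \sigma_E$ and any $\lambda \le m\tau$, the bound $v_E(s) \ge 0$ for every nonzero $s \in R_m$ forces $v_E(s) + m\sigma_E > \lambda$, so $\mathcal{F}^{\lambda+}_{v_E}[\sigma_E]R_m = R_m$ and every quotient appearing in the tomography formula vanishes term-by-term. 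Hence $\int_{(-\infty,\tau]}\DHm_{(E,\mathcal{L}|_E)} = 0$, which gives the inclusion $\operatorname{supp}\DHm_{(E,\mathcal{L}|_E)} \subset [\sigma_E,\infty)$.

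For the reverse inequality $\sigma_E \in \operatorname{supp}\DHm_{(E,\mathcal{L}|_E)}$, I would first reduce to the case $\mathcal{X}_0$ reduced by pulling back along a sufficiently divisible normalized base change $\nu_d\colon(\mathcal{X}_d,\mathcal{L}_d) \to (\mathcal{X},\mathcal{L})$. Proposition~\ref{non-archimedean metric of normalized base change} gives $\sigma_{E'} = d\sigma_E$ for the irreducible component $E'$ over $E$, while the $\mathbb{G}_m$-equivariance of $\nu_d$ with respect to the $d$-times scaled action yields $\DHm_{(E',\mathcal{L}_d|_{E'})} = (d\cdot)_*\DHm_{(E,\mathcal{L}|_E)}$; the statement is therefore invariant under this base change. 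In the reduced case, $\widehat{\mathcal{F}} = \mathcal{F}$ is $\mathbb{Z}$-graded and $m\sigma_E \in \mathbb{Z}$ for $m$ sufficiently divisible, and by the weight identification implicit in Proposition~\ref{primary decomposition of DH measure via hat filtration} it suffices, for arbitrarily large such $m$, to produce a section $s \in \mathcal{F}^{m\sigma_E}R_m$ with $v_E(s) = 0$, i.e.\ not vanishing at the generic point of $E$. Such a section yields a nonzero weight-$(m\sigma_E)$ summand of $H^0(E,\mathcal{L}|_E^{\otimes m})$, whence positive mass of $\DHm_{(E,\mathcal{L}|_E)}$ arbitrarily close to $\sigma_E$.

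The construction of such $s$ is a direct re-use of the closing argument in the proof of Proposition~\ref{explicit formula for sigma}: for $m$ so large that $\mathcal{L}^{\otimes m}|_{\mathcal{X}_0}$ is globally generated, the weight vectors $\{\varpi^{-\lambda(s)}\bar s : s \in R_m\}$ generate $H^0(\mathcal{X},\mathcal{L}^{\otimes m})$ as $\mathbb{C}[t]$-module, so their restrictions $\mathbb{C}$-span $H^0(E,\mathcal{L}|_E^{\otimes m})$; since the first inequality pins $m\sigma_E$ as the minimum weight realizable on $H^0(E,\mathcal{L}|_E^{\otimes m})$ and that space is nonzero, at least one weight-$(m\sigma_E)$ restriction is non-trivial, supplying the required $s$.

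The main obstacle is upgrading this single-section production to the asymptotic density bound actually demanded by the tomography formula: the weight-$(m\sigma_E)$ subspace, or the union of subspaces of weight $\le m(\sigma_E+\epsilon)$, must have dimension of order $m^n$ rather than lower order, for otherwise the limit on the right-hand side of Proposition~\ref{tomography of primary DH measure} could still vanish. I would handle this by fixing one $s_0 \in \mathcal{F}^{m_0\sigma_E} R_{m_0}$ with $v_E(s_0) = 0$ produced as above, taking tensor powers $s_0^k \in \mathcal{F}^{k m_0 \sigma_E}R_{km_0}$ (all still satisfying $v_E(s_0^k) = 0$), and multiplying by weight-$0$ sections from the graded ring $\bigoplus_{m,\lambda} \varpi^{-\lambda}\mathcal{F}^\lambda R_m$ — whose finite generation together with the $m^n$ growth of $\dim R_m/(\text{vanishing at }E)$ for the ample $\mathcal{L}|_E$ yields the required density of weight-$(m\sigma_E)$ sections non-vanishing on $E$.
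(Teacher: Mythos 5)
Your first inequality is fine and is exactly the paper's argument. The hard direction, however, has a genuine gap, and you have in fact put your finger on it yourself: a single section of near-minimal weight contributes mass $m^{-n}\to 0$ in the tomography formula, so one must show that the weights in $[m\sigma_E, m(\sigma_E+\varepsilon))$ carry a dimension of order $m^n$. Your proposed repair does not achieve this. First, the section produced by the closing argument of Proposition~\ref{explicit formula for sigma} satisfies $v_E(s)+m\sigma_E=\lambda(s)$, i.e.\ it \emph{realizes} the ratio defining $\sigma_E$, but nothing forces $\lambda(s)=m\sigma_E$ (equivalently $v_E(s)=0$); the first inequality only shows that all realized weights are $\ge m\sigma_E$, not that $m\sigma_E$ itself is realized, so ``at least one weight-$(m\sigma_E)$ restriction is non-trivial'' is a non sequitur. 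Second, even granting such an $s_0$, the weight-exactly-$(m\sigma_E)$ summand of $H^0(E,\mathcal{L}|_E^{\otimes m})$ is the minimal-weight eigenspace of a nontrivial $\mathbb{G}_m$-action and has dimension $o(m^n)$ (already for $\mathbb{P}^1$ with the standard action it is one-dimensional), so no amount of multiplication by ``weight-$0$'' sections can produce $\sim m^n$ of them; and if you instead aim at weights $\le m(\sigma_E+\varepsilon)$, the auxiliary sections you multiply by must themselves have small $E$-weight in density $\sim m^n$, which is precisely the statement being proved --- the argument is circular.

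The paper's route around this is the missing ingredient. After base change to a reduced central fibre, the graded pieces in the tomography formula are identified with the weight spaces $H^0(E',\mathcal{L}_d|_{E'}^{\otimes m})_{\lambda}$ of the \emph{product configuration} of $(E',\mathcal{L}_d|_{E'})$, and one invokes the fact from \cite[Section 5.5]{BHJ1} (proved via Okounkov bodies and the concavity of the concave transform, in the spirit of Proposition~\ref{concavity}) that for such a configuration $\inf \supp \DHm$ equals $\lambda_{\min}^{(m)}/m$ for sufficiently divisible $m$. This converts the problem into producing, for each $\varepsilon>0$, a \emph{single} section $s'\in R_{m'}$ with $v_F(s')+m'\sigma_F\ge v_E(s')+m'\sigma_E$ for all $F\subset\mathcal{X}_0$ and $v_E(s')\le m'\varepsilon$, which the paper obtains from the finite generation of $\widehat{\mathcal{F}}_{(\mathcal{X},\mathcal{L})}$ (giving some $s_0$ achieving the minimum at $E$) followed by dilution: multiplying by a high power $t^k$ of a section not vanishing at the centers of the $v_F$, so that the fixed excess $v_E(s_0)$ becomes $\le m'\varepsilon$ relative to the growing degree $m'=m_0+kl$. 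To close your proof you should either import the BHJ1 support statement for product configurations or reprove the Okounkov-body/concavity argument; the explicit $m^n$-counting you propose cannot be made to work at the exact minimal weight.
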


\begin{proof}
For $\lambda < m \sigma_E$, we have $\mathcal{F}^{\lambda+}_{v_E} [\sigma_E] R_m = R_m$, so that $\int_{(-\infty, \tau]} \DHm_{(E, \mathcal{L}|_E)} = 0$ for $\tau < \sigma_E$ by the above proposition. 
Thus we have $\sigma_E \le \inf \operatorname{\mathrm{supp}} \DHm_{(E, \mathcal{L}|_E)}$. 

On the other hand, we note for 
\[ \lambda_{\min}^{(m)} := \inf \{ \lambda \in \mathbb{R} ~|~ \frac{\widehat{\mathcal{F}}^\lambda_{(\mathcal{X}, \mathcal{L})} R_m}{\widehat{\mathcal{F}}^\lambda_{(\mathcal{X}, \mathcal{L})} \cap \mathcal{F}^{\lambda+}_{v_E} [\sigma_E] R_m} \neq 0 \}, \]
we have $\lambda_{\min}^{(m)}/m = \inf \supp \DHm_{(E, \mathcal{L}|_E)}$ for sufficiently divisible $m$. 
Indeed, for $\lambda \in d^{-1} \mathbb{Z}$, we have 
\[ \frac{\widehat{\mathcal{F}}^\lambda_{(\mathcal{X}, \mathcal{L})} R_m}{\widehat{\mathcal{F}}^\lambda_{(\mathcal{X}, \mathcal{L})} \cap \mathcal{F}^{\lambda+}_{v_E} [\sigma_E] R_m} = \frac{\mathcal{F}^{d\lambda}_{(\mathcal{X}_d, \mathcal{L}_d)} R_m}{\mathcal{F}^{d\lambda}_{(\mathcal{X}_d, \mathcal{L}_d)} \cap \mathcal{F}^{d\lambda+}_{v_{E'}} [\sigma_{E'}] R_m} = H^0 (E', \mathcal{L}_d|_{E'}^{\otimes m})_{d\lambda} \]
for $E' \subset \mathcal{X}_{d, 0}$ corresponding to $E \subset \mathcal{X}_0$. 
Take sufficiently divisible $d$ so that the central fibre of $\mathcal{X}_d$ is reduced, then we have 
\[ \lambda_{\min}^{(m)} := d^{-1} \inf \{ \lambda \in \mathbb{Z} ~|~ H^0 (E', \mathcal{L}_d|_{E'}^{\otimes m})_\lambda \neq 0 \}. \] 
Consider the product configuration of $(E', \mathcal{L}|_{E'})$ associated to the $\mathbb{G}_m$-action on $E'$. 
Then by \cite[section 5.5]{BHJ1}, we obtain 
\[ \lambda_{\min}^{(m)}/m = d^{-1} \inf \supp \DHm_{(E', \mathcal{L}_d|_{E'})} = \inf \supp \DHm_{(E, \mathcal{L}|_E)} \] 
for sufficiently divisible $m$. 

Therefore, it suffices to show for any $\varepsilon > 0$ and for every sufficiently divisible $m$, there exists $\lambda \le m (\sigma_E + \varepsilon)$ such that $\frac{\widehat{\mathcal{F}}^\lambda_{(\mathcal{X}, \mathcal{L})} R_m}{\widehat{\mathcal{F}}^\lambda_{(\mathcal{X}, \mathcal{L})} \cap \mathcal{F}^{\lambda+}_{v_E} [\sigma_E] R_m} \neq 0$. 
We can simplify this slightly: for $\varepsilon > 0$, we want to find $m'$, $\lambda' \le m' (\sigma_E + \varepsilon)$ and $s' \in R_{m'}$ so that $v_F (s') + m' \sigma_F \ge \lambda'$ for every $F \subset \mathcal{X}_0$ and $v_E (s') + m' \sigma_E = \lambda'$. 

Since $\widehat{\mathcal{F}}_{(\mathcal{X}, \mathcal{L})}$ is finitely generated, there exist $\lambda_0 \in \mathbb{R}$, $m_0 \in \mathbb{N}^{(d)}_+$ and $s_0 \in \widehat{\mathcal{F}}^{\lambda_0}_{(\mathcal{X}, \mathcal{L})} R_{m_0}$ such that 
\[ v_E (s_0) + m_0 \sigma_E = \lambda_0. \]
(See also the proof of Proposition \ref{explicit formula for sigma}. )
Since $L$ is globally generated, we can take $l \in \mathbb{N}^{(d)}_+$ and $t \in R_l$ so that $t$ does not vanish at all the centers of $v_F$ for $F \subset \mathcal{X}_0$. 
Now for $\varepsilon > 0$, take large $k$ so that
\[ v_E (s_0) \le (m_0 + kl) \varepsilon. \]
We put $m' := m_0 + kl$, $s' := s_0 t^k \in R_{m'}$ and $\lambda' := \lambda_0 + kl \sigma_F$. 
Then since $v_E (s') = v_E (s_0 t^k)$, we have 
\[ \lambda' = v_E (s') + m' \sigma_E \le m' (\sigma_E + \varepsilon). \]
On the other hand, since $s_0 \in \widehat{\mathcal{F}}^{\lambda_0}_{(\mathcal{X}, \mathcal{L})} R_{m_0}$, we have 
\[ v_F (s') + m' \sigma_F = v_F (s_0) + m_0 \sigma_F + kl \sigma_F \ge \lambda_0 + kl \sigma_F = \lambda' \]
as desired. 
\end{proof}

\subsection{Non-archimedean pluripotential theory}
\label{Non-archimedean pluripotential theory}

In this section, we recall Boucksom--Jonsson's global pluripotential theory over trivially valued non-archimedean fields developed in \cite{BJ1, BJ2, BJ3, BJ4}. 
We exhibit proofs of some known results as we would approach the theory in a slightly different manner based on observations in the last section. 
The following diagram summarizes various constructions. 


\[ \begin{tikzcd}
\mathsf{Filt}^{\mathrm{unif}} \ar{dr}[description]{\varphi_{\mathcal{F}}} \ar[phantom, "\cup"]{d}
& \E^1 \ar[phantom, "\cup"]{d} \ar{dr}[description]{\mathrm{MA} (\varphi)}
& 
&
X^{\mathrm{lin}} \ar{dl}[description]{\delta_v} \ar[dashed, bend right]{lll}[description]{\mathcal{F}_v} \ar[phantom, "\cup"]{d}
\\
\mathsf{Filt}^{\mathrm{f.g.}} \ar{r} \ar[phantom, "\cup"]{d}
& C^0 \cap \PSH \ar{r} \ar[bend right]{ul}[description]{\mathcal{F}_\varphi}
& \mathcal{M}^1 (X^{\mathrm{NA}}) \ar[bend right, dashed]{ul}
&
X^{\mathrm{qm}} \ar{l} \ar[phantom, "\cup"]{d}
\\
\mathsf{TC} \ar{r}
& \mathsf{nTC} \ar[hookrightarrow]{u}[description]{\varphi_{(\mathcal{X}, \mathcal{L})}} \ar{ur}
& 
&
X^{\mathrm{div}} \ar{ul}
\end{tikzcd} \]

Here $\mathsf{TC}$ (resp. $\mathsf{nTC}$) denotes the set of isomorphism classes of test configurations (resp. normal test configurations), and $\mathsf{Filt}^{\mathrm{f.g}}$ denotes the set of finitely generated filtrations (see Definition \ref{f.g. filtration}). 
Dashed arrows exist under the continuity of envelopes (see section \ref{continuity of envelopes}). 
We will explain the rest of notations in this section. 

There are many other approaches to non-archimedean psh metrics, the details of which are beyond the author's knowledge. 
See Introduction of \cite{BJ3} for the history. 

\subsubsection{Berkovich space}

Let $X$ be a scheme of locally finite type over a field $k$. 
Every schematic point $y \in X$ (not necessarily closed) is the generic point of a unique irreducible reduced subscheme $Y \subset X$. 
Let $X_y^{\mathrm{NA}} := \mathrm{Val} (Y)$ denote the set of valuations on $Y$. 
We note we assume $v|_{k^\times} = 0$ for valuations. 

The \textit{Berkovich space} $X^{\mathrm{NA}}$ associated to $X$ is a topological space defined as follows. 
We put 
\begin{equation} 
X^{\mathrm{NA}} := \coprod_{y \in X} X_y^{\mathrm{NA}}. 
\end{equation}
The topology on $X^{\mathrm{NA}}$ is the weakest topology which satisfies the following
\begin{enumerate}
\item The forgetful map $X^{\mathrm{NA}} \to X: X_y^{\mathrm{NA}} \mapsto y$ is continuous, 

\item For every Zariski open set $U \subset X$ and $f \in \mathcal{O}_X (U)$, the following function is continuous: 
\begin{equation} 
- \log |f|: U^{\mathrm{NA}} \to (-\infty, \infty]: v \in X_y^{\mathrm{NA}} \mapsto v (f|_Y). 
\end{equation}
\end{enumerate}
It is known (cf. \cite[Thereom 3.5.3]{Berk}) that $X^{\mathrm{NA}}$ is (path-)connected, Hausdorff, compact if and only if the scheme $X$ is connected, separated, proper, respectively. 
While $X^{\mathrm{NA}}$ is not first countable (see example below), it is Fr\'echet--Urysohn space, hence sequential, by \cite[Theorem 5.3]{Poi}. 
Namely, the closure of subsets coincide with the sequential closure, hence every sequentially closed subset is closed. 
As a consequence, for proper $X$, $X^{\mathrm{NA}}$ is sequentially compact as well as compact. 

For a non-archimedean/archimedean complete field $\hat{k}$, we can apply an analogous construction which reflects the non-archimedean/archimedean norm on $\hat{k}$ to a scheme of locally finite type over $\hat{k}$: we assume $v (f) = -\log \| f \|$ for $f \in \hat{k}^\times$ in the definition of valuation. 
From this perspective, the above definition of the Berkovich space is a special case of such construction: we identify the field $k$ with the trivially valued non-archimedean field. 
The associated space is also called the Berkovich analytification of $X$ by this reason. 

We denote by $X_y^{\mathrm{lin}}, X_y^{\mathrm{qm}}$ and $X_y^{\mathrm{div}}$ the set of all linear growth, quasi-monomial and divisorial valuations on $Y$, respectively. 
We put 
\begin{align} 
X^{\mathrm{val}} 
&:= \coprod_{y \in X^{(0)}} X^{\mathrm{NA}}_y \subset X^{\mathrm{NA}}, 
\\
X^{\mathrm{div}} 
&:= \coprod_{y \in X^{(0)}} X^{\mathrm{div}}_y \subset X^{\mathrm{NA}}, 
\end{align}
where $X^{(0)}$ denotes the set of generic points of irreducible components of $X$. 
Then $X^{\mathrm{div}}$ is dense in $X^{\mathrm{NA}}$ as shown in \cite[Corollary 2.16]{BJ3}. 
We define $X^{\mathrm{lin}}, X^{\mathrm{qm}}$ in the same way. 

In the trivially valued case, there is a continuous action of the multiplicative group $\mathbb{R}_+$ on $X^{\mathrm{NA}}$ given by the scaling of valuation: $(\rho. v) (f) = \rho \cdot v (f)$. 
For a function $\psi: X^{\mathrm{NA}} \to [-\infty, \infty)$ and $\rho \in \mathbb{R}_+$, we define a rescaled function $\psi_{; \rho}: X^{\mathrm{NA}} \to [-\infty, \infty)$ by 
\begin{equation} 
\psi_{; \rho} (v) = \rho \psi (\rho^{-1}. v). 
\end{equation}

\begin{eg}
The Berkovich space $\Sigma^{\mathrm{NA}}$ of a smooth algebraic curve $\Sigma$ over (the trivially valued field) $\mathbb{C}$ is identified with 
\[ \varprojlim_{D \subset \Sigma} \mathrm{Tree}_D, \]
where $D$ runs over all finite subsets. 
Here we put 
\[ \mathrm{Tree}_D := \coprod_{z \in D} [0, \infty]_z/\sim \]
where $[0, \infty]_z$ are copies of the interval $[0, \infty]$ given for each closed point $z \in S$ and we identify all $0 \in [0, \infty]_z$ by $\sim$. 
For two finite subsets $D \subset D' \subset \Sigma$, we have the projection $\mathrm{Tree}_{D'} \to \mathrm{Tree}_D$, which makes $\{ \mathrm{Tree}_D \}$ into the inverse system. 
The point $[0]$ corresponds to the trivial valuation on $\Sigma$, each infinity $\infty \in [0, \infty]_z$ corresponds to the trivial valuation on a closed point $z \in \Sigma$ and each $t \in (0, \infty)_z$ corresponds to the valuation $t. \mathrm{ord}_z$. 
This example shows $X^{\mathrm{NA}}$ is not first countable nor separable in general. 

We note this is different from the following topological space 
\[ \varinjlim_{D \subset \Sigma} \mathrm{Tree}_D = \coprod_{z \in \Sigma} [0, \infty]_z/\sim, \]
though we have a bijective continuous map $\coprod_{z \in \Sigma} [0, \infty]_z/\sim \to \Sigma^{\mathrm{NA}}$. 

\end{eg}

The above abstract topological description is enough for our purpose: we can use Dini's lemma and Riesz--Markov--Kakutani representation theorem on Radon measures. 
We recall a Radon measure on a compact Hausdorff space $X$ is a finite Borel measure which is inner regular: we have 
\[ \mu (B) = \sup \{ \mu (K) ~|~ B \supset K \text{ compact } \} \]
for every Borel set $B \subset X$. 

We recall every finite Borel measure on $\mathbb{R}$ is both inner regular and outer regular: we also have 
\[ \mu (B) = \sup \{ \mu (U) ~|~ B \subset U \text{ open } \} \] 
for every Borel set $B \subset \mathbb{R}$. 
We will use this fact in several times. 

In the arhimedean case, every finite Borel measure on a manifold is known to be inner regular, even outer regular. 
It would be comfortable if we have the same property for $X^{\mathrm{NA}}$, however, it is pointed out in \cite[Example 2.7]{Jon} that this problem could be sensitive to the choice of a model of ZFC. 
Anyway, this is not a nuisance as we are only concerned with Radon measures in our theory. 

The space $X^{\mathrm{NA}}$ is endowed with a structure sheaf of local rings, which can be considered as `analytic structure' on $X^{\mathrm{NA}}$, though it only has algebraic information in the trivially valued case, in principle. 
We do not consider such structure in this article. 
Instead, the `analytic structure' on $X^{\mathrm{NA}}$ is reflected in the following: 
\begin{itemize}
\item For a test configuration $(\mathcal{X}, \mathcal{L})$, we assign a continuous function $\varphi_{(\mathcal{X}, \mathcal{L})}: X^{\mathrm{NA}} \to \mathbb{R}$. 

\item For a test configuration $(\mathcal{X}, \mathcal{L})$, we assign a Radon measure $\mathrm{MA} (\mathcal{X}, \mathcal{L})$ on $X^{\mathrm{NA}}$. 

\item Assume $X$ has only klt singularities. 
Then the log discrepancy $A_X$ defined on $X^{\mathrm{div}}$ by $A_X (c. \mathrm{ord}_E) = c (1 + \mathrm{ord}_E (K_{Y/X}))$ extends to a lower semi-continuous function $A_X: X^{\mathrm{NA}} \to [0, \infty]$. 
See the remark after Theorem \ref{NAmu entropy extension}. 
\end{itemize}

Now we are going to define non-archimedean psh metrics. 
In the archimedean case, a psh metric on an ample line bundle $L$ is a singular hermitian metric on $L$ which can be written as $h e^{-\phi}$ using a smooth hermitian metric $h$ and an upper semi-continuous function $\phi$ such that $- \log |s|_h^2 + \phi$ is pluri-subharmonic for every local holomorphic section $s$ of $L$. 
This definition relies on local notion of pluri-subharmonicity. 

Thanks to Bergman kernel approximation \cite{Tian} and Demailly approximation \cite{BK}, we have the following global characterization of psh metrics. 
Fix a reference metric $h$ on $L$. 
We call a smooth function $\phi$ on $X$ \textit{Fubini--Study potential} if $h e^{-\phi}$ is the pull-back of the canonical metric on $(\mathbb{C}P^N, \mathcal{O} (1))$ along some Kodaira embedding $X \hookrightarrow \mathbb{C}P^N$. 
Then an upper semi-continuous function $\phi$ on $X$ gives a psh metric $h e^{-\phi}$ on $L$ if and only if $\phi$ is the pointwise limit of a decreasing sequence of Fubini--Study potentials $\phi_i \searrow \phi$. 
Boucksom--Jonsson's definition of non-archimedean psh metrics on $(X, L)$ is modeled on this characterization. 

\subsubsection{Fubini--Study metrics}

Recall we assume $(X, L)$ is a polarized \textit{normal} variety. 

Since $X$ is proper, we can assign the center $c (v) \in Y$ (schematic point) for each $v \in X_y^{\mathrm{val}}$ by the valuative criterion. 
For $v \in X_y^{\mathrm{val}}$ and a section $s \in H^0 (X, L^{\otimes m})$, we put $v (s) := v ((s/e)|_Y)$ by taking a local generator $e$ of $L^{\otimes m}$ around the center $c (v)$. 
This is independent of the choice of $e$. 

For a linearly bounded filtration $\mathcal{F}$ and (sufficiently divisible) $d \in \mathbb{N}_+$, we associate a continuous function $\varphi_{\mathcal{F}}^{(d)}$ on $X^{\mathrm{NA}}$ by 
\begin{equation} 
\varphi_{\mathcal{F}}^{(d)} (v) := \frac{1}{d} \max_{i=1, \ldots, N_d} \{ - v (s_i) - \log \| s_i \|^\mathcal{F}_d \},
\end{equation}
using a diagonal basis $s_1, \ldots, s_{N_d}$ of $R_d$ with respect to $\| \cdot \|_d^\mathcal{F}$. 
This is independent of the choice of the diagonal basis $\{ s_i \}$ as shown in the proof of the proposition below. 
Since $s_1, \ldots, s_{N_d}$ have no common zeros, $\varphi_{\mathcal{F}}^{(d)}$ gives a continuous function on $X^{\mathrm{NA}}$. 

When $\mathcal{F}$ is finitely generated, $\varphi_{\mathcal{F}}^{(d)}$ is independent of the choice of sufficiently divisible $d$ as shown in \cite{BJ2}. 
We denote it by $\varphi_{\mathcal{F}}$ for finitely generated $\mathcal{F}$. 
For the finitely generated filtration $\mathcal{F} = \mathcal{F}_{(\mathcal{X}, \mathcal{L}; \xi)}$ (resp. $\mathcal{F} = \mathcal{F}_{(\mathcal{X}, \mathcal{L})}$) associated to a polyhedral configuration $(\mathcal{X}/B_\sigma, \mathcal{L}; \xi)$ (resp. a test configuration $(\mathcal{X}, \mathcal{L})$), we denote $\varphi_\mathcal{F}$ by $\varphi_{(\mathcal{X}, \mathcal{L}; \xi)}$ (resp. $\varphi_{(\mathcal{X}, \mathcal{L})}$). 
We have $\varphi_{\mathcal{F}_{\mathrm{triv}} [\tau]} = \tau$. 
We especially denote the constant function $0$ by $\varphi_{\mathrm{triv}}$. 

Now recall we studied $\sigma_v (\mathcal{F}) = \inf \{ \sigma \in \mathbb{R} ~|~ \mathcal{F}^\lambda \subset \mathcal{F}^\lambda_v [\sigma] \text{ for } \forall \lambda \in \mathbb{R} \}$ in the last section. 
It is connected to non-archimedean psh metrics as we declared. 

\begin{prop}[Proposition 2.16 in \cite{BJ4}]
\label{non-archimedean metric associated to filtration}
For a linearly bounded filtration $\mathcal{F}$, $d | d' \in \mathbb{N}_+$ and $v \in \mathrm{Val} (X)$, we have $\varphi_{\mathcal{F}}^{(d)} (v) \le \varphi_{\mathcal{F}}^{(d')} (v) \le \sigma_v (\mathcal{F})$ and $\lim_{i \to \infty} \varphi_{\mathcal{F}}^{(d_i)} (v) = \sigma_v (\mathcal{F})$ for any eventually sufficiently divisible sequence $\{ d_i \}$: for any $p \in \mathbb{N}_+$ there exists $i_p$ such that $p$ divides $d_i$ for every $i \ge i_p$. 
For a finitely generated filtration $\mathcal{F}$, we have 
\[ \varphi_{\mathcal{F}} (v) = \varphi_{\mathcal{F}}^{(d)} (v) = \sigma_v (\mathcal{F}) \]
for sufficiently divisible $d$.  
\end{prop}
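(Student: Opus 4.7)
The plan is to reinterpret both $\varphi_{\mathcal{F}}^{(d)}(v)$ and $\sigma_v(\mathcal{F})$ as suprema over nonzero sections, then leverage the product structure and the finite generation of $\mathcal{F}$. The key first step is to prove the formula
\[
\varphi_{\mathcal{F}}^{(d)}(v) = \sup_{0 \neq s \in R_d} \frac{-v(s) - \log \|s\|_d^{\mathcal{F}}}{d}.
\]
Expanding $s = \sum_{i \in J} a_i s_i$ in a diagonal basis $\{s_i\}$ with weights $\lambda_i := -\log\|s_i\|^{\mathcal{F}}_d$ (where $J = \{i : a_i \neq 0\}$), one has $-\log\|s\|^{\mathcal{F}}_d = \min_{i \in J} \lambda_i$ and $v(s) \geq \min_{i \in J} v(s_i)$ by the ultrametric inequality. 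A short case analysis with minimizers $i_0, i_1 \in J$ of $\lambda$ and $v$ respectively — using $\lambda_{i_0} \leq \lambda_{i_1}$ — shows $-\log\|s\|^{\mathcal{F}}_d - v(s) \leq \lambda_{i_1} - v(s_{i_1})$, so the sup is attained at a basis element, which simultaneously yields the formula and the independence from the diagonal basis chosen.

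Unpacking the definition of $\sigma_v(\mathcal{F})$ directly yields the parallel formula
\[
\sigma_v(\mathcal{F}) = \sup_{m \in \mathbb{N}^{(d)}} \sup_{0 \neq s \in R_m} \frac{-v(s) - \log\|s\|_m^{\mathcal{F}}}{m},
\]
from which $\varphi_{\mathcal{F}}^{(d)}(v) \leq \sigma_v(\mathcal{F})$ is immediate. The submultiplicativity $-\log\|s^k\|^{\mathcal{F}}_{kd} \geq k\,(-\log\|s\|^{\mathcal{F}}_d)$, obtained by iterating the product property $\mathcal{F}^\lambda R_m \cdot \mathcal{F}^{\lambda'} R_{m'} \subset \mathcal{F}^{\lambda+\lambda'} R_{m+m'}$, combined with $v(s^k) = k\,v(s)$, gives $\varphi_{\mathcal{F}}^{(d)}(v) \leq \varphi_{\mathcal{F}}^{(d')}(v)$ for $d \mid d'$ by substituting $s^k \in R_{d'}$ into the sup formula at level $d' = kd$. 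The limit along any eventually divisible sequence $\{d_i\}$ then follows by choosing $m_0$ and $s_0 \in R_{m_0}$ realizing $\sigma_v(\mathcal{F})$ to within $\varepsilon$, and powering $s_0$ up to $R_{d_i}$ once $m_0 \mid d_i$.

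For the finitely generated case with $\mathcal{F}$ generated in degree $d$, Definition \ref{f.g. filtration} allows writing every $s \in \mathcal{F}^\mu R_m$ (for $m \in d\mathbb{N}_+$) as $\sum_\alpha c_\alpha \prod_{i \in I_\alpha} t_{i,\alpha}$ with $|I_\alpha| = m/d$, $t_{i,\alpha} \in R_d$, and $\sum_i (-\log\|t_{i,\alpha}\|^{\mathcal{F}}_d) \geq \mu$. The ultrametric inequality gives $v(s) \geq \min_\alpha \sum_i v(t_{i,\alpha})$, and bounding each summand by $d\,\varphi_{\mathcal{F}}^{(d)}(v)$ yields
\[
\mu - v(s) \leq \max_\alpha \sum_{i \in I_\alpha} \bigl( -\log\|t_{i,\alpha}\|^{\mathcal{F}}_d - v(t_{i,\alpha}) \bigr) \leq \frac{m}{d} \cdot d\,\varphi_{\mathcal{F}}^{(d)}(v).
\]
Taking the supremum over $m$ and $s$ reverses the inequality in Step 2 and gives $\sigma_v(\mathcal{F}) = \varphi_{\mathcal{F}}^{(d)}(v)$ for sufficiently divisible $d$.

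The main delicate point is Step 1, the identification of the max-over-diagonal-basis with the sup-over-all-nonzero-sections, which is what makes the $v$-twisted formula mesh cleanly with the definition of $\sigma_v$. Once secured, the remaining estimates are driven solely by the product property of $\mathcal{F}$ and, in the finitely generated case, by the generation hypothesis reducing all higher-degree data to degree $d$.
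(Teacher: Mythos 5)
Your proposal is correct and takes essentially the same route as the paper: the key identification $\varphi_{\mathcal{F}}^{(d)}(v) = \tfrac{1}{d}\sup_{0\neq s\in R_d}(-v(s)-\log\|s\|^{\mathcal{F}}_d)$ via a diagonal basis, monotonicity by powering sections, and the reformulation of $\sigma_v(\mathcal{F})$ as a supremum over all $m$ and $s$. The only difference is that you spell out the finitely generated case explicitly by decomposing sections into products of degree-$d$ generators, whereas the paper simply invokes the independence of $\varphi_{\mathcal{F}}^{(d)}$ from sufficiently divisible $d$ (citing \cite{BJ2}); your version is self-contained and correct.
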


\begin{proof}
Take $d \in \mathbb{N}_+$ and a diagonal basis $\{ s_i \}$ of $R_d$ as in the construction of $\varphi_{\mathcal{F}}^{(d)}$. 
Since $\{ s_i \}$ is diagonal, we compute 
\begin{align*} 
v (\sum_i a_i s_i) + \log \| \sum_i a_i s_i \|^\mathcal{F}_d 
&\ge \min \{ v (s_i) ~|~ a_i \neq 0 \} + \max \{ \log \| s_i \|^\mathcal{F}_d ~|~ a_i \neq 0 \} 
\\
&\ge \min \{ v (s_i) + \log \| s_i \|^\mathcal{F}_d ~|~ a_i \neq 0 \}. 
\end{align*}
It follows that 
\[ \varphi_{\mathcal{F}}^{(d)} (v) = \frac{1}{d} \sup \{ - v (s) - \log \| s \|^\mathcal{F}_d ~|~ 0 \neq s \in R_d \}. \]

For any $l \in \mathbb{N}_+$ and $s \in R_d$, we have 
\begin{align*} 
\frac{1}{d} (- v (s) - \log \| s \|^\mathcal{F}_{d}) 
&\le \frac{1}{d l} (- v (s^{\otimes l}) - \log \| s^{\otimes l} \|^\mathcal{F}_{dl}) 
\\
&\le \frac{1}{d l} \sup \{ - v (s') - \log \| s' \|^\mathcal{F}_{d l} ~|~ 0 \neq s' \in R_{d l} \}, 
\end{align*}
so that we get 
\[ \varphi_{\mathcal{F}}^{(d)} (v) \le \varphi_{\mathcal{F}}^{(dl)} (v). \]
Now we note $s \in \mathcal{F}^\lambda R_d$ iff $-\log \| s \|^\mathcal{F}_d \ge \lambda$ and $s \in \mathcal{F}^\lambda_v [\sigma] R_d$ iff $v (s) + d \sigma \ge \lambda$. 
It follows that $\mathcal{F} \subset \mathcal{F}_v [\sigma]$ iff $v (s) + m \sigma \ge -\log \| s \|^\mathcal{F}_m$ for every $m$, so we get 
\[ \varphi_{\mathcal{F}}^{(d)} (v) \le \sigma_v (\mathcal{F}). \]

As for $\lim_{i \to \infty} \varphi_{\mathcal{F}}^{(d_i)} (v) = \sigma_v (\mathcal{F})$, we already know $\lim_{i \to \infty} \varphi_{\mathcal{F}}^{(d_i)} (v) \le \sigma_v (\mathcal{F})$. 
Take $\sigma \ge \lim_{i \to \infty} \varphi_{\mathcal{F}}^{(d_i)} (v)$. 
Then since $\sigma \ge \lim_{i \to \infty} \varphi_{\mathcal{F}}^{(d_i)} (v) \ge \varphi_{\mathcal{F}}^{(m)} (v)$ for every $m \in \mathbb{N}_+$, we have $m \sigma \ge - v (s) - \log \| s \|^{\mathcal{F}}_m$ for every $m$ and $s \in R_m$, which implies $\mathcal{F} \subset \mathcal{F}_v [\sigma]$. 
Thus we conclude $\sigma_v (\mathcal{F}) = \inf \{ \sigma ~|~ \mathcal{F} \subset \mathcal{F}_v [\sigma] \} \le \lim_{i \to \infty} \varphi_{\mathcal{F}}^{(d_i)} (v)$. 

When $\varphi$ is finitely generated, we have $\varphi_{\mathcal{F}}^{(d)} = \lim_{i \to \infty} \varphi_{\mathcal{F}}^{(d_i)} (v) = \sigma_v (\mathcal{F})$ for sufficiently divisible $d$. 
\end{proof}

\begin{cor}[Proposition 2.9 and Proposition A.3 in \cite{BJ4}]
We have $\varphi_{(\mathcal{X}, \mathcal{L})} = \varphi_{(\mathcal{X}^\nu, \nu^* \mathcal{L})}$ for the normalization $\nu: \mathcal{X}^\nu \to \mathcal{X}$ of a (non-normal) test configuration $(\mathcal{X}, \mathcal{L})$. 
\end{cor}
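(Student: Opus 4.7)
The plan is to reduce the equality of two continuous functions on $X^{\mathrm{NA}}$ to their equality on a dense subset, which is in turn handled by Proposition~\ref{sigma for normalization} through the identification provided by Proposition~\ref{non-archimedean metric associated to filtration}. No new computation is needed beyond what is already assembled in this section.

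First I would observe that both $(\mathcal{X}, \mathcal{L})$ and $(\mathcal{X}^\nu, \nu^* \mathcal{L})$ are test configurations, i.e. $[0,\infty)$-configurations, so their associated filtrations $\mathcal{F}_{(\mathcal{X}, \mathcal{L})}$ and $\mathcal{F}_{(\mathcal{X}^\nu, \nu^* \mathcal{L})}$ are finitely generated by Proposition~\ref{polyhedral configuration and finite generation of filtration}. This is important: it ensures that Proposition~\ref{non-archimedean metric associated to filtration} delivers equality rather than a mere inequality. Concretely, for every $v \in X^{\mathrm{val}}$,
\[ \varphi_{(\mathcal{X}, \mathcal{L})}(v) = \sigma_v(\mathcal{F}_{(\mathcal{X}, \mathcal{L})}), \qquad \varphi_{(\mathcal{X}^\nu, \nu^* \mathcal{L})}(v) = \sigma_v(\mathcal{F}_{(\mathcal{X}^\nu, \nu^* \mathcal{L})}). \]
Proposition~\ref{sigma for normalization} equates the two right-hand sides, so the two Fubini--Study functions agree pointwise on $X^{\mathrm{val}}$, and in particular on the dense subset $X^{\mathrm{div}} \subset X^{\mathrm{NA}}$.

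To finish, I would invoke continuity: both $\varphi_{(\mathcal{X}, \mathcal{L})}$ and $\varphi_{(\mathcal{X}^\nu, \nu^* \mathcal{L})}$ are continuous on $X^{\mathrm{NA}}$ (they arise from finite collections of global sections with no common zeros, via the formula defining $\varphi_{\mathcal{F}}^{(d)}$), and two continuous functions on the Hausdorff space $X^{\mathrm{NA}}$ that coincide on the dense subset $X^{\mathrm{div}}$ must coincide everywhere. There is no genuine obstacle in the argument: the algebraic content of the statement — the normalization invariance $\sigma_v(\mathcal{F}_{(\mathcal{X}, \mathcal{L})}) = \sigma_v(\mathcal{F}_{(\mathcal{X}^\nu, \nu^* \mathcal{L})})$ — has already been isolated and proved in Proposition~\ref{sigma for normalization}, and the passage to the Berkovich space is purely formal via the identification $\varphi_{\mathcal{F}} = \sigma_\bullet(\mathcal{F})$ for finitely generated $\mathcal{F}$.
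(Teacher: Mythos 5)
Your proof is correct and is essentially the paper's own argument: the paper likewise reduces the claim to $\sigma_v(\mathcal{F}_{(\mathcal{X}, \mathcal{L})}) = \sigma_v(\mathcal{F}_{(\mathcal{X}^\nu, \nu^* \mathcal{L})})$ via Proposition~\ref{non-archimedean metric associated to filtration} and then quotes Proposition~\ref{sigma for normalization}. You merely make explicit the (correct) density-plus-continuity step and the role of finite generation, which the paper leaves implicit.
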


\begin{proof}
By the above proposition, it suffices to show $\sigma_v (\mathcal{F}_{(\mathcal{X}, \mathcal{L})}) = \sigma_v (\mathcal{F}_{(\mathcal{X}^\nu, \nu^* \mathcal{L})})$, which we proved in Proposition \ref{sigma for normalization}. 
\end{proof}

We put 
\begin{equation} 
\nH (X, L) := \{ \varphi_{(\mathcal{X}, \mathcal{L})} ~|~ (\mathcal{X}, \mathcal{L}): \text{ test configuration } \}.
\end{equation}
As we observed in the above corollary, the map $\mathsf{TC} \to \nH (X, L): (\mathcal{X}, \mathcal{L}) \mapsto \varphi_{(\mathcal{X}, \mathcal{L})}$ is not injective. 
As we will see in Corollary \ref{normalization and non-archimedean metric}, we have $\varphi_{(\mathcal{X}_1, \mathcal{L}_1)} = \varphi_{(\mathcal{X}_2, \mathcal{L}_2)}$ iff the normalizations are isomorphic $(\mathcal{X}_1^\nu, \nu^*  \mathcal{L}_1) \cong (\mathcal{X}_2^\nu, \nu^* \mathcal{L}_2)$. 
In other words, we have 
\[ \mathsf{nTC} \cong \nH (X, L). \]

Similarly, we put 
\begin{equation}
\pcH (X, L) := \{ \varphi_\mathcal{F} ~|~ \mathcal{F}: \text{ finitely generated filtration } \}. 
\end{equation}
Polyhedral configuration $(\mathcal{X}/B_\sigma; \mathcal{L})$ gives a polyhedral structure on $\nH^{\mathbb{R}} (X, L)$: put $\Phi_\sigma := \{ \varphi_{(\mathcal{X}/B_\sigma; \mathcal{L}; \xi)} \}_{\xi \in \sigma}$, then we have 
\[ \nH^{\mathbb{R}} (X, L) = \bigcup_{(\mathcal{X}/B_\sigma; \mathcal{L})} \Phi_\sigma. \]
We can then discuss continuity, piecewise linearity and even piecewise smoothness of a functional on $\nH^{\mathbb{R}} (X, L)$ by restricting the functional to each toric cone $\Phi_\sigma$. 
Among all, the continuity of the entropy $\int_{X^{\mathrm{NA}}} A_X \mathrm{MA} (\varphi)$ with respect to such `polyhedral topology' is important. 
This would be studied in \cite{BJ5}: `polyhedral topology' is enough strong as `$C^\infty$-topology' in the archimedean analysis. 

For a polyhedral configuration $(\mathcal{X}/B_\sigma, \mathcal{L}; \xi)$ and $\rho \in \mathbb{R}_+$, we have 
\[ \varphi_{(\mathcal{X}, \mathcal{L}; \rho \xi)} = \varphi_{(\mathcal{X}, \mathcal{L}; \xi); \rho} \]
as $\mathcal{F}_{(\mathcal{X}, \mathcal{L}; \rho \xi)} = \mathcal{F}_{(\mathcal{X}, \mathcal{L}; \xi); \rho}$. 
On the other hand, $\mathcal{F}_{(\mathcal{X}_d, \mathcal{L}_d)} \neq \mathcal{F}_{(\mathcal{X}, \mathcal{L}; d)}$ in general. 
Nevertheless, these gives the same functions. 

\begin{cor}[Lemma 2.22 in \cite{BJ3}]
\label{scaling of NA psh}
For a test configuration $(\mathcal{X}, \mathcal{L})$ and the normalized base change $(\mathcal{X}_d, \mathcal{L}_d)$, we have 
\[ \varphi_{(\mathcal{X}_d, \mathcal{L}_d)} = \varphi_{(\mathcal{X}, \mathcal{L}); d} = \varphi_{(\mathcal{X}, \mathcal{L}; d)}. \]
\end{cor}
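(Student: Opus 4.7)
The plan is to reduce all three objects to a single concrete description in terms of $\sigma_v$ via Proposition \ref{non-archimedean metric associated to filtration}, and then to compare the underlying filtrations directly. Recall that for any finitely generated filtration $\mathcal{F}$ we have $\varphi_{\mathcal{F}}(v)=\sigma_v(\mathcal{F})$, and that $\mathcal{F}_{(\mathcal{X},\mathcal{L})}$, $\mathcal{F}_{(\mathcal{X}_d,\mathcal{L}_d)}$ are all finitely generated. Thus it suffices to check two pointwise equalities of $\sigma_v$ for every linearly bounded valuation $v$.

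For the second equality $\varphi_{(\mathcal{X},\mathcal{L});d}=\varphi_{(\mathcal{X},\mathcal{L};d)}$, I would unwind the notation: $(\mathcal{X},\mathcal{L};d)$ refers to the same underlying family with the $\mathbb{G}_m$-action scaled by $d$, which rescales every weight by $d$ and therefore gives $\mathcal{F}_{(\mathcal{X},\mathcal{L};d)}=(\mathcal{F}_{(\mathcal{X},\mathcal{L})})_{;d}$ as filtrations. Since $\mathcal{F}_{v;\rho}[\sigma]=\mathcal{F}_{\rho v}[\rho\sigma]$ gives $\sigma_v(\mathcal{F}_{;d})=d\,\sigma_{d^{-1}v}(\mathcal{F})$, one obtains $\varphi_{(\mathcal{X},\mathcal{L};d)}(v)=d\varphi_{(\mathcal{X},\mathcal{L})}(d^{-1}\cdot v)=\varphi_{(\mathcal{X},\mathcal{L});d}(v)$ directly from the definition of the rescaling $\psi_{;\rho}$.

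For the main equality $\varphi_{(\mathcal{X}_d,\mathcal{L}_d)}=\varphi_{(\mathcal{X},\mathcal{L});d}$, I would first invoke the normalization-invariance just established (Corollary after Proposition \ref{non-archimedean metric associated to filtration}) to replace $(\mathcal{X}_d,\mathcal{L}_d)$ by the \emph{non-normalized} base change $(\mathcal{X}'_d,\mathcal{L}'_d):=\mathcal{X}\times_{\mathbb{A}^1,z^d}\mathbb{A}^1$, so that $\varphi_{(\mathcal{X}_d,\mathcal{L}_d)}=\varphi_{(\mathcal{X}'_d,\mathcal{L}'_d)}$. On the non-normalized base change the filtration admits an elementary description. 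Letting $w$ be the new parameter with $\varpi=w^d$, a section $s\in R_m$ lies in $\mathcal{F}^{d\lambda}_{(\mathcal{X}'_d,\mathcal{L}'_d)}R_m$ iff $w^{-d\lambda}\bar s=\varpi^{-\lambda}\bar s$ extends to $(\mathcal{L}'_d)^{\otimes m}$, which by the base change is equivalent to $\varpi^{-\lambda}\bar s$ extending to $\mathcal{L}^{\otimes m}$, i.e.\ $s\in\mathcal{F}^\lambda_{(\mathcal{X},\mathcal{L})}R_m$. Combined with the fact that $\mathcal{F}_{(\mathcal{X},\mathcal{L})}$ is $\mathbb{Z}$-graded ($\mathcal{F}^\lambda=\mathcal{F}^{\lceil\lambda\rceil}$), this yields $\mathcal{F}^\lambda_{(\mathcal{X}'_d,\mathcal{L}'_d)}=\mathcal{F}^{\lceil \lambda/d\rceil}_{(\mathcal{X},\mathcal{L})}=\mathcal{F}^{\lambda/d}_{(\mathcal{X},\mathcal{L})}=(\mathcal{F}_{(\mathcal{X},\mathcal{L})})^\lambda_{;d}$, so the two filtrations are literally identical and their associated non-archimedean metrics agree.

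The main obstacle is that the content of the statement is hidden in Proposition \ref{sigma for normalization} (and hence in Corollary 2.22) rather than in the above algebraic manipulations, which are essentially bookkeeping: one must know that the non-normalized base change and its normalization define the same $\sigma_v$ for every valuation $v$. The delicate point there is showing $\mathcal{F}_{(\mathcal{X},\mathcal{L})}\subset\mathcal{F}_v[\sigma]$ implies $\mathcal{F}_{(\mathcal{X}^\nu,\nu^*\mathcal{L})}\subset\mathcal{F}_v[\sigma]$, which is where the integrality argument using the monic relation for normalization enters; once granted, the corollary is a short formal consequence of the scaling identity $\sigma_{\rho v}(\mathcal{F}_{;\rho})=\rho\sigma_v(\mathcal{F})$ applied to the filtration of the (non-normal) fibre product.
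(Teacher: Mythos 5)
Your proof is correct, but it takes a different route from the paper's. The paper's own proof is very short: it invokes Proposition \ref{non-archimedean metric of normalized base change} (the push-forward computation $v_{E'}=d.v_E$, $\sigma_{E'}(\mathcal{X}_d,\mathcal{L}_d)=d.\sigma_E(\mathcal{X},\mathcal{L})$ on a dominating model) to get the equality at every divisorial valuation, and then concludes by density of $X^{\mathrm{div}}\subset X^{\mathrm{NA}}$ and continuity of the Fubini--Study metrics. You instead work entirely at the level of filtrations: you identify $\mathcal{F}_{(\mathcal{X}'_d,\mathcal{L}'_d)}$ for the \emph{non-normalized} base change with $(\mathcal{F}_{(\mathcal{X},\mathcal{L})})_{;d}$ by an explicit flat-base-change computation of sections (this step is correct -- $H^0(\mathcal{X}'_d,(\mathcal{L}'_d)^{\otimes m})=\bigoplus_{j=0}^{d-1}w^j\,H^0(\mathcal{X},\mathcal{L}^{\otimes m})$ since $\mathbb{C}[w]$ is free over $\mathbb{C}[\varpi]$, though you should say this explicitly rather than just "by the base change"), and then pass to the normalization via Proposition \ref{sigma for normalization}. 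Your diagnosis of where the real content sits is accurate: the normalization-invariance of $\sigma_v$, proved by the integral-dependence argument, is the nontrivial input, and everything else is the scaling identity $\sigma_v(\mathcal{F}_{;\rho})=\rho\,\sigma_{\rho^{-1}.v}(\mathcal{F})$. What each approach buys: the paper's argument is shorter given Proposition \ref{non-archimedean metric of normalized base change} but only establishes the identity on the dense set $X^{\mathrm{div}}$ and needs continuity to finish; yours gives the equality of the underlying filtrations outright, hence equality of $\sigma_v$ at \emph{every} valuation without any density or continuity argument, at the cost of the explicit section computation on the fibre product. Both are sound.
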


\begin{proof}
By Proposition \ref{non-archimedean metric of normalized base change}, we have $\varphi_{(\mathcal{X}_d, \mathcal{L}_d)} (v_E) = \varphi_{(\mathcal{X}, \mathcal{L}); d} (v_E)$ for the valuation $v_E$ associated to any irreducible component $E \subset \tilde{\mathcal{X}}_0$ of any normal test configuration $\tilde{\mathcal{X}}$. 
As we noted, valuations of the form $v_E$ forms a dense subset $X^{\mathrm{div}} \subset X^{\mathrm{NA}}$, so the claim follows by the continuity of $\varphi_{(\mathcal{X}, \mathcal{L})}$. 
\end{proof}

\subsubsection{Non-archimedean psh metrics}

A \textit{net of functions} on $X^{\mathrm{NA}}$ is a collection $\{ \psi_i: X^{\mathrm{NA}} \to [-\infty, \infty) \}_{i \in I}$ of functions parametrized by a directed set $I$: $I$ is endowed with a preorder $\le$ and for every $i, j \in I$ there exists $k \in I$ satisfying $i, j \le k$. 
A \textit{decreasing net of functions} is a net of functions satisfying $\psi_j (x) \le \psi_i (x)$ for every $i \le j$ and $x \in X^{\mathrm{NA}}$. 
For any decreasing net of functions, the pointwise limit $\psi: X^{\mathrm{NA}} \to [-\infty, \infty)$ exists, which we denote by $\psi_i \searrow \psi$. 
When $\psi_i$ are upper semi-continuous, the limit function is also upper semi-continuous. 
We may assume the index set $I$ has a minimum $0 \in I$ by restricting $I$ to the cofinal subset $\{ i \in I ~|~ i \ge 0 \}$ if necessary. 

A (potenital of) \textit{non-archimedean psh metric} on $(X, L)$ is an upper semi-continuous function $\varphi: X^{\mathrm{NA}} \to [-\infty, \infty)$ which is the pointwise limit of some decreasing net of functions in $\nH (X, L)$ and is not identically $-\infty$. 
When we consider non-archimedean psh metrics $\varphi, \varphi'$ on different line bundles $L, L'$, we write those as $(L, \varphi), (L', \varphi')$. 
We put 
\[ \PSH (X, L) := \{ \text{ non-archimedean psh metrics on } (X, L) \}. \]
Thanks to the compactness of $X^{\mathrm{NA}}$, the pointwise limit of any decreasing net in $\PSH (X, L)$ is either in $\PSH (X, L)$ or identically $-\infty$ (cf. \cite{BJ1, BJ3}). 
Less obviously, it is proved by \cite[Thoerem 9.11]{BJ3} that for $\varphi \in \PSH (X, L)$ there exists a decreasing \textit{sequence} $\{ \varphi_i \}_{i \in \mathbb{N}} \subset \nH (X, L)$ pointwisely converging to $\varphi$. 
We will use this fact only for $\varphi \in \E^1 (X, L)$, which is much easier: $\E^1 (X, L)$ is endowed with a metric, and pointwise convergence and the metric convergence are equivalent for decreasing nets. 
We would find the proof in section \ref{metric space Eexp}. 

What we must be careful for net is that various convergence theorems in measure theory is \textit{invalid} for net. 
As for monotone convergence theorem, we have the following. 

\begin{prop}[Proposition 7.12 in \cite{Fol}]
\label{monotone convergence for net}
Let $X$ be a compact Hausdorff space and $\mu$ be a Radon measure on $X$. 
For an increasing net $\{ f_i \}_{i \in I}$ of lower semi-continuous functions (resp. a decreasing net $\{ g_i \}_{i \in I}$ of upper semi-continuous functions), we have 
\[ \sup_{i \in I} \int_X f_i d\mu = \int_X \sup_{i \in I} f_i d\mu \quad \left(\text{ resp. } \inf_{i \in I} \int_X g_i d\mu = \int_X \inf_{i \in I} g_i d\mu \right). \]
\end{prop}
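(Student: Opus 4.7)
The two claims are equivalent under $f_i \mapsto -f_i$, so the plan is to prove the upper semi-continuous/decreasing version: for a decreasing net $\{ g_i \}_{i \in I}$ of USC functions with pointwise infimum $g$, that $\inf_i \int_X g_i \, d\mu = \int_X g \, d\mu$. After normalizing so that all $g_i$ are uniformly bounded (e.g.\ by working with an integrable upper envelope $g_0$ and translating), the easy direction $\int g \, d\mu \le \inf_i \int g_i \, d\mu$ is immediate from $g \le g_i$. The content is in the reverse inequality, which I would attack by producing a single index $i_0$ such that $\int g_{i_0} \, d\mu$ is already within a prescribed $\epsilon$ of $\int g \, d\mu$ — this is the correct net-theoretic substitute for monotone convergence, since along a net one cannot freely exchange $\lim$ and $\int$.

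For the reverse, fix $\epsilon > 0$ and invoke the Vitali--Carath\'eodory theorem (available for finite Radon measures on compact Hausdorff spaces) to produce a lower semi-continuous function $v \ge g$ with $\int_X v \, d\mu \le \int_X g \, d\mu + \epsilon$. For each $\delta > 0$, set $F_i := \{ x \in X \mid g_i (x) \ge v (x) + \delta \}$; since $g_i - v$ is USC, each $F_i$ is closed and hence compact, and the net $\{ F_i \}$ is decreasing because $\{ g_i \}$ is. Their intersection $\bigcap_{i \in I} F_i = \{ g \ge v + \delta \}$ is empty, as $g \le v$ pointwise. By the finite intersection property of compact sets together with the directedness of $I$, some single $F_{i_0}$ is already empty, so $g_{i_0} < v + \delta$ everywhere on $X$. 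Integrating yields $\int_X g_{i_0} \, d\mu \le \int_X v \, d\mu + \delta \mu (X) \le \int_X g \, d\mu + \epsilon + \delta \mu (X)$, and letting $\delta, \epsilon \to 0$ closes the argument.

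The main obstacle I foresee is arranging the LSC upper envelope $v$ of the USC function $g$. For a finite Radon measure on a compact Hausdorff space this follows routinely from outer regularity of the superlevel sets $\{ g \ge t \}$ combined with a layer-cake decomposition; outer regularity itself comes from inner regularity via $\mu (U) = \mu (X) - \mu (X \setminus U)$, which works because every closed subset of compact Hausdorff $X$ is compact. The conceptual trap to avoid is the naive route through $\int_X g_i \, d\mu = \int_0^M \mu (\{ g_i \ge t \}) \, dt$: although $\mu (\{ g_i \ge t \}) \searrow \mu (\{ g \ge t \})$ pointwise in $t$ along the net (using outer regularity of $\{ g \ge t \}$ and finite intersection property), interchanging this net limit with the Lebesgue integral in $t$ is precisely the maneuver that fails for nets. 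The Vitali--Carath\'eodory plus FIP argument sidesteps this by realizing the estimate uniformly via a single index $i_0$, which is exactly what a directed set — rather than a sequence — supports.
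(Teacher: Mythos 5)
The paper offers no proof of this proposition --- it is quoted from Folland --- so there is nothing to compare against except the standard textbook argument, which is essentially what you give: reduce to a continuous (or semicontinuous) majorant via regularity of the Radon measure, then use compactness plus directedness to realize the estimate at a \emph{single} index. Your FIP step is exactly the Dini-type argument the paper itself isolates as Lemma \ref{Dini}, and the core of your proof is sound: $F_i = \{ g_i \ge v + \delta \}$ is closed because $g_i - v$ is USC (note $g_i$ is finite-valued, being USC on a compact space, so no $\infty - \infty$ issue), the net $\{F_i\}$ is decreasing with empty intersection, and directedness upgrades ``some finite subfamily has empty intersection'' to ``some single $F_{i_0}$ is empty.''

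There is one loose end worth closing. Your normalization claim that the $g_i$ can be made ``uniformly bounded'' is an overstatement: a USC function on a compact space is bounded above but need not be bounded below, and translating by a constant does not fix that. The consequence is that $g = \inf_i g_i$ may fail to be integrable ($\int_X g \, d\mu = -\infty$), in which case Vitali--Carath\'eodory does not apply to $g$ and your choice of $v$ breaks down --- and this case genuinely occurs in the paper's applications (e.g.\ in Proposition \ref{moment measure via distribution} the limit is explicitly allowed to be $-\infty$). The patch is one line: apply your argument to the truncated net $\max\{g_i, -N\}$, which is still a decreasing net of USC functions with integrable limit $\max\{g, -N\}$, conclude $\inf_i \int_X g_i \, d\mu \le \inf_i \int_X \max\{g_i, -N\}\, d\mu = \int_X \max\{g, -N\}\, d\mu$, and let $N \to \infty$ using ordinary monotone convergence for the sequence in $N$. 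With that addendum the proof is complete; your closing remark about why the naive layer-cake interchange fails for nets is exactly the right diagnosis of where the compactness input is indispensable.
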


Until section \ref{metric space Eexp}, except for section \ref{Tomographic expression of moment measure}, we mainly discuss continuity along decreasing nets and rather rely on monotonicity and the following Dini's lemma. 
In section \ref{non-archimedean mu-entropy}, we show the continuity of various functionals on $\E^{\exp} (X, L)$ with respect to the $E_{\exp}/d_{\exp}$-topology, using the dominated convergence theorem to some sequence of functions on $\mathbb{R}$ associated to a sequence $\{ \varphi_i \}_{i \in \mathbb{N}}$. 
Since the dominated convergence theorem is valid only for sequences, we must check the $E_{\exp}/d_{\exp}$-topologies on $\E^{\exp} (X, L)$ are sequential. 
This is obvious for $d_{\exp}$-topology once the metric is constructed. 
As for $E_{\exp}$-topology, we check it in Proposition \ref{Eexp topology is first countable}

We frequently use Dini's lemma of the following form. 
This relies on the compactness of the Berkovich space $X^{\mathrm{NA}}$. 

\begin{lem}
\label{Dini}
Let $\{ \varphi_i \}_{i \in I} \subset \PSH (X, L)$ be a decreasing net which pointwisely converges to $\varphi \in \PSH (X, L)$. 
For any $\tilde{\varphi} \in C^0 \cap \PSH (X, L)$ with $\varphi \le \tilde{\varphi}$ and $\varepsilon > 0$, there exists $i_\varepsilon \in I$ such that $\varphi_i < \tilde{\varphi} + \varepsilon$ for every $i \ge i_\varepsilon$. 
\end{lem}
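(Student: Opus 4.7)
The statement is a standard Dini-type compactness argument adapted to the Berkovich space $X^{\mathrm{NA}}$. The plan is to reduce to the case where we compare a decreasing net of upper semi-continuous functions with the zero function, and then exploit the compactness of $X^{\mathrm{NA}}$.

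First I would set $\psi_i := \varphi_i - \tilde{\varphi}$. Since $\varphi_i$ is upper semi-continuous and $\tilde{\varphi}$ is continuous, each $\psi_i$ is upper semi-continuous on $X^{\mathrm{NA}}$. Moreover $\{\psi_i\}_{i \in I}$ is a decreasing net (as $\{\varphi_i\}$ is) converging pointwisely to $\psi := \varphi - \tilde{\varphi}$, which satisfies $\psi \le 0$ by hypothesis.

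Next, for each $i \in I$, consider the sublevel set
\[ U_i := \{ x \in X^{\mathrm{NA}} ~|~ \psi_i (x) < \varepsilon \}. \]
Because $\psi_i$ is upper semi-continuous, $U_i$ is open. For every $x \in X^{\mathrm{NA}}$, pointwise convergence $\psi_i (x) \searrow \psi (x) \le 0 < \varepsilon$ yields some $i_x \in I$ with $\psi_{i_x} (x) < \varepsilon$, i.e. $x \in U_{i_x}$. Hence $\{ U_i \}_{i \in I}$ is an open cover of $X^{\mathrm{NA}}$.

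Now I invoke compactness of $X^{\mathrm{NA}}$ (which is guaranteed since $X$ is proper) to extract a finite subcover $U_{i_1}, \ldots, U_{i_r}$. Since $I$ is directed, I can pick $i_\varepsilon \in I$ with $i_\varepsilon \ge i_k$ for all $k = 1, \ldots, r$. As the net is decreasing, $\psi_{i_\varepsilon} (x) \le \psi_{i_k} (x)$ for every $x$ and every $k$. Given any $x \in X^{\mathrm{NA}}$, pick $k$ with $x \in U_{i_k}$; then $\psi_{i_\varepsilon} (x) \le \psi_{i_k} (x) < \varepsilon$. Hence $\varphi_{i_\varepsilon} < \tilde{\varphi} + \varepsilon$ pointwisely on $X^{\mathrm{NA}}$, and monotonicity of the net yields the same inequality for all $i \ge i_\varepsilon$. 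There is no real obstacle here beyond verifying upper semi-continuity of $\psi_i$ and invoking compactness of the Berkovich space; the argument is essentially identical to the classical Dini lemma on a compact Hausdorff space.
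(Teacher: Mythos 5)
Your proof is correct and is essentially the same argument as the paper's: you use the open-cover formulation of compactness on the sublevel sets $\{\varphi_i - \tilde{\varphi} < \varepsilon\}$, while the paper uses the dual finite-intersection-property formulation on the closed sets $\{\varphi_i \ge \tilde{\varphi} + \varepsilon\}$. The two are equivalent and all the supporting steps (upper semi-continuity of $\varphi_i - \tilde{\varphi}$, compactness of $X^{\mathrm{NA}}$, directedness, monotonicity of the net) are verified correctly.
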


\begin{proof}
Since $\varphi_i - \tilde{\varphi}$ is a decreasing net of usc functions, the subsets 
\[ F_i := \{ x \in X^{\mathrm{NA}} ~|~ \varphi_i (x) \ge \tilde{\varphi} (x) +\varepsilon \} \] 
give a decreasing net of closed sets. 
Since $\bigcap_{i \in I} F_i = \emptyset$, we have $F_j \subset F_{i_1} \cap \dotsb \cap F_{i_k} = \emptyset$ for some $i_1, \ldots, i_k$ and every $j \ge i_1, \ldots, i_k$ by the compactness of $X^{\mathrm{NA}}$. 
\end{proof}

Recall the weak convergence of psh metrics in the archimedean case is defined to be the $L^1$-convergence with respect to the Lebesgue measure (cf. \cite[Section 8]{GZ}). 
This is equivalent to the convergence $\int_X \varphi_i \mathrm{MA} (\psi) \to \int_X \varphi \mathrm{MA} (\psi)$ for every smooth metric $\psi$ as $\mathrm{MA} (\psi)$ is absolutely continuous with respect to the Lebesgue measure and vice versa. 

In the non-archimedean case, there is no Borel measure on $X^{\mathrm{NA}}$ such that every non-archimedean $\mathrm{MA} (\psi)$ is absolutely continuous with respect to the measure. 
The latter characterization adapts the non-archimedean case (cf. \cite[Corollary 9.18]{BJ3}): a net $\{ \varphi_i \}_{i \in I}$ in $\PSH (X, L)$ is called \textit{weakly convergent} to $\varphi \in \PSH (X, L)$ if $\varphi_i (v) \to \varphi (v)$ for every divisorial valuation $v \in X^{\mathrm{div}}$. 
It is equivalent to say $\varphi_i (v) \to \varphi (v)$ for every quasi-monomial valuation $v \in X^{\mathrm{qm}}$. 
For a decreasing net $\{ \varphi_i \}_{i \in I} \subset \PSH (X, L)$, $\varphi_i$ converges to $\varphi$ pointwisely if and only if it converges to $\varphi$ weakly by \cite[Corollary 4.30]{BJ3}. 

\begin{quest} 
Is the weak topology on $\PSH (X, L)$ sequential?
\end{quest}

We note $\sup \varphi = \varphi (v_{\mathrm{triv}})$ for every $\varphi \in \PSH (X, L)$. 
Indeed, we can directly check this for $\varphi_{\mathcal{F}} \in \pcH (X, L)$ and then for $\varphi \in \PSH (X, L)$ by passing to the limit of a regularization $\varphi_i \searrow \varphi$. 
It follows that $\sup \varphi_i \to \sup \varphi$ whenever $\varphi_i \to \varphi$ weakly. 

\subsubsection{Finite energy class}

For a test configuration $(\mathcal{X}, \mathcal{L})$, we have 
\[ \frac{(\overline{\mathcal{L}}^{\cdot n+1})}{(n+1)!} = -\frac{\rho^{-1} (\mathcal{L}_{\mathbb{G}_m}|_{\mathcal{X}_0}^{\cdot n+1}; \rho)}{(n+1)!}  = \int_\mathbb{R} t \DHm_{(\mathcal{X}, \mathcal{L})}. \]
As shown in \cite{BHJ1, BJ2}, this depends only on the associated non-archimedean psh metric $\varphi_{(\mathcal{X}, \mathcal{L})}$. 
We have $\frac{1}{(n+1)!} ((\overline{\mathcal{L}}')^{\cdot n+1}) \le \frac{1}{(n+1)!} (\overline{\mathcal{L}}^{\cdot n+1})$ if $\varphi_{(\mathcal{X}', \mathcal{L}')} \le \varphi_{(\mathcal{X}, \mathcal{L})}$ (cf. Lemma \ref{monotonicity}). 
The \textit{energy functional} $E: \PSH (X, L) \to [-\infty, \infty)$ is defined by 
\begin{equation} 
E (\varphi) := \inf \Big{\{} \frac{(\overline{\mathcal{L}}^{\cdot n+1})}{(n+1)!}  ~\Big{|}~ \varphi_{(\mathcal{X}, \mathcal{L})} \ge \varphi \Big{\}}. 
\end{equation}
We put 
\begin{equation}
\E^1 (X, L) := \{ \varphi \in \PSH (X, L) ~|~ E (\varphi) > -\infty \}. 
\end{equation}
The \textit{strong topology} on $\E^1 (X, L)$ is the weakest refinement of the weak topology inherited from $\PSH (X, L)$ such that $E$ is continuous. 
As discussed in \cite{BJ4}, this is equivalent to the metric topology of a distance $d_1$, which we recall in section \ref{metric space Eexp}. 
Since the energy $E$ is continuous along decreasing nets, for a decreasing net $\{ \varphi_i \}_{i \in I} \subset \E^1 (X, L)$, $\varphi_i$ converges to $\varphi$ strongly if and only if it converges to $\varphi$ weakly. 

\subsubsection{Non-archimedean Monge--Amp\`ere operator}

The \textit{non-archimedean Monge--Amp\`ere measure} of a normal test configuration $(\mathcal{X}, \mathcal{L})$ is the measure on $X^{\mathrm{NA}}$ defined by 
\begin{equation} 
\mathrm{MA} (\mathcal{X}, \mathcal{L}) := \frac{1}{n!} \sum_{E \subset \mathcal{X}_0} \mathrm{ord}_E \mathcal{X}_0 \cdot (E. \mathcal{L}^{\cdot n}). \delta_{v_E}. 
\end{equation}
Here $\delta_v$ denotes the Dirac measure charging $v \in X^{\mathrm{NA}}$. 
Since we have $\mathsf{nTC} \cong \nH (X, L)$, we may regard $\mathrm{MA}$ as a operator on $\nH (X, L)$. 

Let $\mathcal{M} (X_{\mathrm{NA}})$ denotes the space of Radon measures on $X^{\mathrm{NA}}$ with total mass $(e^L)$. 
We put 
\begin{equation} 
E^\vee (\mu) := \sup_{\varphi \in \E^1 (X, L)} (E (\varphi) - \int_{X^{\mathrm{NA}}} \varphi d\mu) 
\end{equation}
for $\mu \in \mathcal{M} (X_{\mathrm{NA}})$ and 
\begin{equation} 
\mathcal{M}^1 (X^{\mathrm{NA}}) := \{ \mu \in \mathcal{M} (X^{\mathrm{NA}}) ~|~ E^\vee (\mu) < \infty \}. 
\end{equation}
The \textit{strong topology} of $\mathcal{M}^1 (X^{\mathrm{NA}})$ is the coarsest refinement of the weak topology induced from $\mathcal{M} (X_{\mathrm{NA}})$ which makes $E^\vee$ continuous. 

We have $\mathrm{MA} (\mathcal{X}, \mathcal{L}) \in \mathcal{M}^1 (X^{\mathrm{NA}})$. 
It is studied in \cite{BJ1, BJ3} that there is a unique extension of $\mathrm{MA}: \nH (X, L) \to \mathcal{M}^1 (X^{\mathrm{NA}})$ to 
\[ \mathrm{MA}: \E^1 (X, L) \to \mathcal{M}^1 (X^{\mathrm{NA}}) \]
which is continuous with respect to the strong topologies. 
In particular, we have the following. 
Compare Proposition \ref{double limit}. 

\begin{thm}[Lemma 5.28 or Theorem 7.3 in \cite{BJ3}]
If a net $\{ \varphi_i \}_{i \in I} \subset \E^1 (X, L)$ strongly converges to $\varphi \in \E^1 (X, L)$ and either $\psi \in C^0 (X^{\mathrm{NA}})$ or $\psi \in \E^1 (X, L)$, then we have 
\[ \lim_{i \to \infty} \int_{X^{\mathrm{NA}}} \psi \mathrm{MA} (\varphi_i) = \int_{X^{\mathrm{NA}}} \psi \mathrm{MA} (\varphi). \]
\end{thm}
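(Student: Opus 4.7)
The plan is to dispatch the continuous case first via the general weak/strong comparison, and then reduce the finite-energy case to the continuous one by a decreasing-net approximation together with a uniform energy estimate.

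First I would handle $\psi \in C^0(X^{\mathrm{NA}})$. By construction, the strong topology on $\mathcal{M}^1(X^{\mathrm{NA}})$ refines the weak topology inherited from $\mathcal{M}(X^{\mathrm{NA}})$, so the already-quoted strong continuity of the operator $\mathrm{MA} \colon \E^1(X, L) \to \mathcal{M}^1(X^{\mathrm{NA}})$ yields $\mathrm{MA}(\varphi_i) \to \mathrm{MA}(\varphi)$ weakly in $\mathcal{M}(X^{\mathrm{NA}})$. Since $X^{\mathrm{NA}}$ is compact Hausdorff, integration against the continuous function $\psi$ is continuous in the weak topology on $\mathcal{M}(X^{\mathrm{NA}})$, which gives the claim in this case.

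For $\psi \in \E^1(X, L)$, I would pass to a decreasing regularizing sequence $\psi_j \searrow \psi$ with $\psi_j \in \nH(X, L)$ (existing by Theorem 9.11 of \cite{BJ3}); each $\psi_j$ is continuous. For fixed $j$, the previous step gives
\[
\int_{X^{\mathrm{NA}}} \psi_j \, \mathrm{MA}(\varphi_i) \; \longrightarrow \; \int_{X^{\mathrm{NA}}} \psi_j \, \mathrm{MA}(\varphi) \quad (i \to \infty),
\]
and the remaining task is to control $\int (\psi_j - \psi)\, \mathrm{MA}(\varphi_i)$ uniformly in $i$, so that one can interchange $\lim_i$ and $\lim_j$. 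Strong convergence implies that $\sup \varphi_i$ and $E(\varphi_i)$ are bounded, so uniform control will follow once we have a quasi-triangle inequality of the shape
\[
\Bigl|\int_{X^{\mathrm{NA}}} (u - v)\, \mathrm{MA}(\varphi)\Bigr| \le C\bigl(\sup \varphi, E(\varphi)\bigr)\, d_1(u, v)^\alpha
\]
for $u, v \in \E^1(X, L)$, with some $\alpha > 0$ and a constant $C$ depending only on the indicated data; combined with $d_1(\psi_j, \psi) \to 0$ this closes the argument.

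The substantive step is proving the above quasi-triangle inequality. The natural route is via an integration-by-parts identity for mixed non-archimedean Monge--Amp\`ere measures of the form
\[
\int (u - v)\bigl(\mathrm{MA}(\varphi_1) - \mathrm{MA}(\varphi_2)\bigr) = \int (\varphi_1 - \varphi_2)\bigl(\mathrm{MA}_{\mathrm{mix}}(\ldots) - \mathrm{MA}_{\mathrm{mix}}(\ldots)\bigr),
\]
combined with a Cauchy--Schwarz-type bound on such mixed measures. This is the standard non-archimedean analogue of Bedford--Taylor theory, and it requires the monotone continuity of mixed Monge--Amp\`ere operators along decreasing nets in $\E^1(X, L)$ — the main technical obstacle, already developed in \cite{BJ1, BJ3}. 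Once that machinery is in hand, the rest of the argument is a routine $\varepsilon/3$ split: choose $j$ so that the two error terms involving $\psi_j - \psi$ are small (uniformly in $i$ on one side, by the quasi-triangle inequality and boundedness of energies; and on the target side by the same estimate applied to $\varphi$), then let $i \to \infty$ using the continuous case for $\psi_j$.
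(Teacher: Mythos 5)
Your argument is correct, and it is essentially the argument underlying the result as the paper uses it: the paper gives no proof of this statement (it is quoted directly from Boucksom--Jonsson), but the ``quasi-triangle inequality'' you posit as the substantive step is precisely the quantitative estimate of \cite[Lemma 5.28]{BJ3} (equivalently, the $\bar I$-H\"older bounds combined with $C_n^{-1}\bar I \le d_1 \le C_n \bar I$ from Proposition~\ref{Ibar and d1}), which the paper invokes verbatim in the proof of Proposition~\ref{double limit}. Your reduction of the $\psi\in\E^1$ case to the continuous case via a decreasing regularization $\psi_j\searrow\psi$ (for which pointwise and $d_1$-convergence coincide) and an $\varepsilon/3$ split, with the error term controlled uniformly in $i$ by the eventual boundedness of $d_1(\varphi_i,0)$ along a strongly convergent net, is sound; the only part you defer --- continuity of mixed Monge--Amp\`ere pairings along decreasing nets and the resulting Cauchy--Schwarz estimates --- is exactly the machinery of \cite{BJ1, BJ3} that the cited lemma encapsulates.
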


\begin{eg}[Example 3.17 in \cite{BJ3}] 
Consider $(X, L) = (\mathbb{C}P^1, \mathcal{O} (1))$. 
On each branch $[0, \infty]_z$ of $X^{\mathrm{NA}}$, the function $\log |s|: v \mapsto - v (s)$ for $s \in H^0 (\mathbb{C}P^1, \mathcal{O} (d))$ is identified with the linear function $- \mathrm{ord}_z (s). t$. 
It follows that $\varphi \in \nH (\mathbb{C}P^1, \mathcal{O} (1))$ can be written as 
\[ \frac{1}{d} \max_i \{ -\mathrm{ord}_z (s_i). t + \lambda_i \}  \]
on each branch $[0, \infty]_z$, which is a piecewise affine convex function. 
The slope of each affine function takes value in $[-1, 0] \cap \mathbb{Q}$ as $\mathrm{ord}_z (s) \in [0, d] \cap \mathbb{Z}$. 

On each branch $(0, \infty]_z$, the Monge--Amp\`ere measure $\mathrm{MA} (\varphi)$ is given by the distributional derivative $(\varphi|_{(0, \infty]_z})''$, which is concentrated on the non-smooth locus of $\varphi$. 
The mass of $(0, \infty]_z$ is the minus slope $a_z$ of $\varphi$ on $[0, \varepsilon]_z$ and the mass at the trivial valuation $0 \in X^{\mathrm{NA}}$ is $1- \sum_{z \in \mathbb{C}P^1} a_z$. 
\end{eg}

\subsubsection{Filtration associated to continuous psh metric}
\label{Filtration associated to continuous psh metric}

For a continuous non-archimedean psh metric $\varphi \in C^0 \cap \PSH (X, L)$, we put 
\begin{align} 
\mathcal{F}^\lambda_\varphi R_m 
&:= \bigcap_{v \in \mathrm{Val} (X)} \mathcal{F}_v^\lambda [\varphi (v)] R_m, 
\\
- \log \| s \|_m^\varphi 
&:= \sup \{ \lambda \in \mathbb{R} ~|~ s \in \mathcal{F}^\lambda_\varphi R_m \} 
\\ \notag
&= \inf \{ m \varphi (v) + v (s) ~|~ v \in \mathrm{Val} (X) \} 
\end{align}
for $s \in R_m$. 
We note $\mathcal{F}_{\varphi_{;\rho}} = (\mathcal{F}_\varphi)_{;\rho}$. 
We have $\mathcal{F}_\varphi \subset \mathcal{F}_{\varphi'}$ if and only if $\varphi \le \varphi'$. 

\begin{prop}
\label{sigma and non-archimedean psh}
For $\varphi \in C^0 \cap \PSH (X, L)$, we have 
\[ \sigma_v (\mathcal{F}_\varphi) = \varphi (v) \]
and $\lim_{i \to \infty} \varphi_{\mathcal{F}_\varphi}^{(d_i)} = \varphi$ for any eventually sufficiently divisible sequence $\{ d_i \}$. 
\end{prop}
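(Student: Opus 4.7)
The plan is to prove both claims simultaneously via Proposition \ref{non-archimedean metric associated to filtration}: granting linear boundedness of $\mathcal{F}_\varphi$, that result reduces the first equality $\sigma_v(\mathcal{F}_\varphi) = \varphi(v)$ to the pointwise convergence in the second. Two trivial estimates are immediate: directly from $\mathcal{F}_\varphi = \bigcap_{v} \mathcal{F}_v[\varphi(v)]$ we have $\mathcal{F}_\varphi \subset \mathcal{F}_v[\varphi(v)]$ for each $v$, so $\sigma_v(\mathcal{F}_\varphi) \le \varphi(v)$; and evaluating the inf-formula $-\log \|s\|_d^{\mathcal{F}_\varphi} = \inf_{v'}\{d\varphi(v') + v'(s)\}$ at $v' = v$ gives $\varphi_{\mathcal{F}_\varphi}^{(d)}(v) \le \varphi(v)$ for every $d$. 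Continuity of $\varphi$ on the compact space $X^{\mathrm{NA}}$ makes $\varphi$ bounded, and using $v(s) \ge 0$ this yields $\sigma_{\min, d}(\mathcal{F}_\varphi) \ge \inf \varphi > -\infty$; testing at $v = v_{\mathrm{triv}}$ yields $\sigma_{\max, d}(\mathcal{F}_\varphi) \le \sup \varphi < \infty$. Hence $\mathcal{F}_\varphi$ is linearly bounded.

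The main work lies in the reverse inequality $\varliminf_i \varphi_{\mathcal{F}_\varphi}^{(d_i)}(v) \ge \varphi(v)$, which I would carry out by a uniform Fubini--Study regularization. Take a decreasing net $\{\psi_j\}_{j \in J} \subset \nH(X, L)$ with $\psi_j \searrow \varphi$; continuity of $\varphi$ allows Lemma \ref{Dini} (applied with $\tilde{\varphi} = \varphi$) to upgrade this to uniform convergence, so $\epsilon_j := \sup_{X^{\mathrm{NA}}}(\psi_j - \varphi) \to 0$. Each $\psi_j = \varphi_{(\mathcal{X}_j, \mathcal{L}_j)}$ coincides with $\varphi_{\widehat{\mathcal{F}}_{(\mathcal{X}_j, \mathcal{L}_j)}}$, where $\widehat{\mathcal{F}}_{(\mathcal{X}_j, \mathcal{L}_j)}$ is finitely generated, so Proposition \ref{non-archimedean metric associated to filtration} furnishes both $\sigma_v(\widehat{\mathcal{F}}_{(\mathcal{X}_j, \mathcal{L}_j)}) = \psi_j(v)$ for every $v \in \mathrm{Val}(X)$ and the exact identity $\varphi_{\widehat{\mathcal{F}}_{(\mathcal{X}_j, \mathcal{L}_j)}}^{(d)} = \psi_j$ for all $d$ sufficiently divisible, depending on $j$. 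The former says $\widehat{\mathcal{F}}_{(\mathcal{X}_j, \mathcal{L}_j)} \subset \mathcal{F}_v[\psi_j(v)]$ for every $v$, so any $s \in \widehat{\mathcal{F}}_{(\mathcal{X}_j, \mathcal{L}_j)}^\lambda R_m$ satisfies $v(s) + m\psi_j(v) \ge \lambda$ for every $v$; combined with $\psi_j \le \varphi + \epsilon_j$ this gives $v(s) + m\varphi(v) \ge \lambda - m\epsilon_j$ uniformly in $v$, i.e., $\widehat{\mathcal{F}}^\lambda_{(\mathcal{X}_j, \mathcal{L}_j)} R_m \subset \mathcal{F}_\varphi^{\lambda - m\epsilon_j} R_m$.

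Translated to norms this reads $-\log \|s\|_m^{\mathcal{F}_\varphi} \ge -\log \|s\|_m^{\widehat{\mathcal{F}}_{(\mathcal{X}_j, \mathcal{L}_j)}} - m\epsilon_j$, which feeds into the sup-formula for $\varphi_{\mathcal{F}_\varphi}^{(d)}$ to give
\[ \varphi_{\mathcal{F}_\varphi}^{(d)}(v) \ge \varphi_{\widehat{\mathcal{F}}_{(\mathcal{X}_j, \mathcal{L}_j)}}^{(d)}(v) - \epsilon_j = \psi_j(v) - \epsilon_j \ge \varphi(v) - \epsilon_j \]
for every $d$ sufficiently divisible (depending on $j$). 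Given $\eta > 0$, first fix $j$ with $\epsilon_j < \eta$, then take $i$ large enough that $d_i$ is divisible by the integer required for $\psi_j$: this yields $\varphi_{\mathcal{F}_\varphi}^{(d_i)}(v) \ge \varphi(v) - \eta$, whence $\lim_i \varphi_{\mathcal{F}_\varphi}^{(d_i)}(v) = \varphi(v)$, and Proposition \ref{non-archimedean metric associated to filtration} concludes $\sigma_v(\mathcal{F}_\varphi) = \varphi(v)$. The main obstacle is the asymmetry between the two bounds: the upper bound is automatic from the definition, but the lower bound demands sections in some finite $R_d$ whose normalized shape $-\tfrac{1}{d}(v(s) + \log \|s\|_d^{\mathcal{F}_\varphi})$ peaks near $v$, and these must be produced from outside the abstract inf-definition of $\mathcal{F}_\varphi$. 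The continuity of $\varphi$ is essential because it makes the regularization error $\epsilon_j$ uniform in $v$ via Dini; without continuity, one would only get pointwise-in-$v$ control and the argument would break.
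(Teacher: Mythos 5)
Your proof is correct and follows essentially the same route as the paper: both arguments use Dini's lemma to produce a Fubini--Study metric $\tilde{\varphi} \in \nH(X,L)$ with $\varphi \le \tilde{\varphi} \le \varphi + \varepsilon$, reduce to the finitely generated case via Proposition \ref{non-archimedean metric associated to filtration}, and transfer the conclusion to $\mathcal{F}_\varphi$ through the sandwich of filtrations (the paper writes this as $\mathcal{F}_{\tilde{\varphi}-\varepsilon} \subset \mathcal{F}_\varphi \subset \mathcal{F}_{\tilde{\varphi}}$, which is your inclusion $\widehat{\mathcal{F}}^\lambda_{(\mathcal{X}_j,\mathcal{L}_j)} R_m \subset \mathcal{F}_\varphi^{\lambda - m\epsilon_j} R_m$ in different notation). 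Your explicit verification of linear boundedness and of the trivial upper bounds is a useful but minor elaboration of steps the paper leaves implicit.
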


\begin{proof}
Since $\varphi$ is continuous, for any $\varepsilon > 0$, we can take $\tilde{\varphi} \in \nH (X, L)$ so that $\varphi \le \tilde{\varphi} \le \varphi + \varepsilon$ by Lemma \ref{Dini}. 
Then we have $\mathcal{F}_{\tilde{\varphi} - \varepsilon} \subset \mathcal{F}_\varphi \subset \mathcal{F}_{\tilde{\varphi}}$, so that we get $\sigma_v (\mathcal{F}_{\tilde{\varphi} - \varepsilon}) \le \sigma_v (\mathcal{F}_\varphi) \le \sigma_v (\mathcal{F}_{\tilde{\varphi}})$. 
By Proposition \ref{non-archimedean metric associated to filtration}, we get $\tilde{\varphi} (v) - \varepsilon \le \sigma_v (\mathcal{F}_\varphi) \le \tilde{\varphi} (v)$. 
It follows that $\varphi (v) - \varepsilon \le \sigma_v (\mathcal{F}_\varphi) \le \varphi (v) + \varepsilon$. 
As $\varepsilon > 0$ is arbitrary, we obtain $\varphi (v) = \sigma_v (\mathcal{F}_\varphi)$. 

Similarly, we have $\tilde{\varphi} = \varphi_{\mathcal{F}_{\tilde{\varphi}}^{(d)}} \le \varphi_{\mathcal{F}_{\varphi + \varepsilon}}^{(d)} = \varphi_{\mathcal{F}_\varphi}^{(d)} + \varepsilon$ for sufficiently divisible $d$ again by Proposition \ref{non-archimedean metric associated to filtration} and $\mathcal{F}_{\tilde{\varphi}} \subset \mathcal{F}_{\varphi + \varepsilon}$. 
Thus we get $\varphi_{\mathcal{F}_\varphi}^{(d)} \le \varphi \le \tilde{\varphi} \le \varphi_{\mathcal{F}_\varphi}^{(d)} + \varepsilon$, which shows the uniform convergence $\varphi_{\mathcal{F}_\varphi}^{(d)} \nearrow \varphi$. 
\end{proof}

Let $\mathcal{F}$ be a linearly bounded filtration. 
We recall $\varphi_{\mathcal{F}}^{(d)} (v) = d^{-1} \sup \{ - v (s) - \log \| s \|^\mathcal{F}_d ~|~ s \in R_d \}$, so we have 
\[ \mathcal{F}_{\varphi_{\mathcal{F}}^{(d)}} = \bigcap_{v \in \mathrm{Val} (X)} \mathcal{F}_v [d^{-1} \sup \{ - v (s) - \log \| s \|^\mathcal{F}_d ~|~ 0 \neq s \in R_d \}]. \]
We can easily check the following are equivalent conditions, using the density $\mathrm{Val} (X) \subset X^{\mathrm{NA}}$ and Dini's lemma. 
\begin{enumerate}
\item For any $\varepsilon > 0$ there exists $d \in \mathbb{N}_+$ such that $\mathcal{F} \subset \mathcal{F}_{\varphi_{\mathcal{F}}^{(d)}} [\varepsilon]$. 

\item For any $\varepsilon > 0$ there exists $d \in \mathbb{N}_+$ such that $\sigma_v (\mathcal{F}) \le \varphi_{\mathcal{F}}^{(d)} (v) + \varepsilon$ for every $v \in \mathrm{Val} (X)$. 

\item The increasing sequence $\varphi_{\mathcal{F}}^{(d_i)}$ converges uniformly for some/any eventually sufficiently divisible sequence $\{ d_i \}$. 

\item The pointwise limit $\lim_{i \to \infty} \varphi_{\mathcal{F}}^{(d_i)}$ is continuous for some/any eventually sufficiently divisible sequence $\{ d_i \}$. 
\end{enumerate}
We call $\mathcal{F}$ \textit{uniformly approximated} if one of the above equivalent conditions is satisfied. 
We denote by $\mathsf{Filt}^{\mathrm{unif}}$ the set of all uniformly approximated filtrations. 
For $\mathcal{F} \in \mathsf{Filt}^{\mathrm{unif}}$, we can assign a continuous non-archimedean psh metric $\varphi_{\mathcal{F}} = \lim_{i \to \infty} \varphi_{\mathcal{F}}^{(d_i)} \in C^0 \cap \PSH (X, L)$. 

By the above proposition, the filtration $\mathcal{F}_\varphi$ associated to $\varphi \in C^0 \cap \PSH (X, L)$ is uniformly approximated, and we have $\varphi = \varphi_{\mathcal{F}_\varphi}$. 
We note $\mathcal{F} \subsetneq \mathcal{F}_{\varphi_\mathcal{F}}$ in general. 

\begin{rem}
In \cite{BJ2}, a similar construction for general linearly bounded filtration is studied, assuming the continuity of envelopes (see section \ref{continuity of envelopes}). 
For general filtration, we have a lower semi-continuous limit $\lim_{i \to \infty} \varphi_{\mathcal{F}}^{(d_i)}$. 
Since a non-archimedean psh metric must be upper semi-continuous, we must replace this function with an upper semi-continuous envelope. 
It is then non-trivial if this envelope is a non-archimedean psh metric without the continuity of envelopes. 
\end{rem}

\begin{prop}
\label{filtration associated to Fubini--Study metric and the stabilization}
For a test configuration $(\mathcal{X}, \mathcal{L})$ and its normalization $\nu: \mathcal{X}^\nu \to \mathcal{X}$, we have 
\[ \mathcal{F}_{\varphi_{(\mathcal{X}, \mathcal{L})}} = \widehat{\mathcal{F}}_{(\mathcal{X}^\nu, \nu^* \mathcal{L})}. \]
\end{prop}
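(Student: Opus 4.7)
The plan is to first reduce to the case where $(\mathcal{X}, \mathcal{L})$ is itself normal, using the corollary just before Corollary \ref{scaling of NA psh} that $\varphi_{(\mathcal{X}, \mathcal{L})} = \varphi_{(\mathcal{X}^\nu, \nu^* \mathcal{L})}$. Both sides of the target identity then depend only on the normalization, so the goal becomes the identity $\mathcal{F}_{\varphi_{(\mathcal{Y}, \mathcal{M})}} = \widehat{\mathcal{F}}_{(\mathcal{Y}, \mathcal{M})}$ for an arbitrary normal test configuration $(\mathcal{Y}, \mathcal{M})$.

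The core step is a lemma for the reduced central fibre case: if $(\mathcal{Y}, \mathcal{M})$ is normal with $\mathcal{Y}_0$ reduced, then $\mathcal{F}_{\varphi_{(\mathcal{Y}, \mathcal{M})}} = \mathcal{F}_{(\mathcal{Y}, \mathcal{M})}$, which coincides with $\widehat{\mathcal{F}}_{(\mathcal{Y}, \mathcal{M})}$ by (\ref{filtration for tc with reduced central fibre}). For the inclusion $\mathcal{F}_{(\mathcal{Y}, \mathcal{M})} \subset \mathcal{F}_{\varphi_{(\mathcal{Y}, \mathcal{M})}}$, note that $\mathcal{F}_{(\mathcal{Y}, \mathcal{M})}$ is finitely generated, so Proposition \ref{non-archimedean metric associated to filtration} gives $\sigma_v(\mathcal{F}_{(\mathcal{Y}, \mathcal{M})}) = \varphi_{(\mathcal{Y}, \mathcal{M})}(v)$ for every $v$; the defining inclusion $\mathcal{F}_{(\mathcal{Y}, \mathcal{M})} \subset \mathcal{F}_v[\sigma_v]$ intersected over all valuations then yields the claim. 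For the reverse inclusion, $\mathcal{F}_{\varphi_{(\mathcal{Y}, \mathcal{M})}}$ is by construction an intersection over all of $\mathrm{Val}(X)$, while (\ref{filtration for tc with reduced central fibre}) rewrites $\mathcal{F}_{(\mathcal{Y}, \mathcal{M})}$ as the smaller intersection $\bigcap_{E \subset \mathcal{Y}_0} \mathcal{F}^\lambda_{v_E}[\sigma_E]$, where $\sigma_E = \varphi_{(\mathcal{Y}, \mathcal{M})}(v_E)$ again by Proposition \ref{non-archimedean metric associated to filtration} combined with the definition $\sigma_E = \sigma_{v_E}(\mathcal{F}_{(\mathcal{Y}, \mathcal{M})})$.

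For general normal $(\mathcal{Y}, \mathcal{M})$, I would pass to the normalized base change $(\mathcal{Y}_d, \mathcal{M}_d)$ with $d$ divisible enough that $\mathcal{Y}_{d, 0}$ is reduced. Formula (\ref{Krull envelope and normalized base change}) identifies $\widehat{\mathcal{F}}_{(\mathcal{Y}, \mathcal{M})}$ with $(\mathcal{F}_{(\mathcal{Y}_d, \mathcal{M}_d)})_{;d^{-1}}$, and the lemma rewrites this as $(\mathcal{F}_{\varphi_{(\mathcal{Y}_d, \mathcal{M}_d)}})_{;d^{-1}}$. Using Corollary \ref{scaling of NA psh} to substitute $\varphi_{(\mathcal{Y}_d, \mathcal{M}_d)} = \varphi_{(\mathcal{Y}, \mathcal{M}); d}$, together with the scaling compatibility $\mathcal{F}_{\varphi_{;d}} = (\mathcal{F}_\varphi)_{;d}$ (which follows by the change of variable $w = d^{-1}.v$ in the defining intersection, using $\mathcal{F}_{v;d}[\sigma] = \mathcal{F}_{dv}[d\sigma]$), this collapses to $\mathcal{F}_{\varphi_{(\mathcal{Y}, \mathcal{M})}}$, completing the argument.

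The main obstacle is the reduced central fibre case, specifically the promotion of $\mathcal{F}^\lambda_{(\mathcal{Y}, \mathcal{M})} = \bigcap_E \mathcal{F}^\lambda_{v_E}[\sigma_E]$ from integral $\lambda$ (as in \cite[Lemma 5.17]{BHJ1}) to all real $\lambda$: this is precisely where reducedness is needed to kill the ceiling discrepancy $\mathcal{F}_{v_E}^\lambda[\sigma_E] = \mathcal{F}_{v_E}^{\lceil \lambda \rceil}[\sigma_E]$, and is the whole reason for introducing the homogenization $\widehat{\mathcal{F}}$. Once this is in hand, the remaining steps are straightforward bookkeeping around the scaling $(-)_{;d}$ and the normalized base change.
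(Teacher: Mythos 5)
Your argument is correct and follows essentially the same route as the paper: reduce to the normal case via $\varphi_{(\mathcal{X},\mathcal{L})} = \varphi_{(\mathcal{X}^\nu,\nu^*\mathcal{L})}$, use Proposition \ref{non-archimedean metric associated to filtration} to identify $\sigma_{v_E}$ with $\varphi(v_E)$ so that $\mathcal{F}_\varphi \subset \widehat{\mathcal{F}}$, and obtain the reverse containment from (\ref{Krull envelope and normalized base change}), Corollary \ref{scaling of NA psh}, and the scaling rule $\mathcal{F}_{\varphi_{;d}} = (\mathcal{F}_\varphi)_{;d}$. Your packaging of the reduced-central-fibre case as a standalone lemma before passing to the base change is only a cosmetic reorganization of the paper's proof, which invokes exactly the same ingredients.
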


\begin{proof}
Since $\varphi_{(\mathcal{X}, \mathcal{L})} = \varphi_{(\mathcal{X}^\nu, \nu^* \mathcal{L})}$, we may assume $(\mathcal{X}, \mathcal{L})$ is normal. 
By Proposition \ref{non-archimedean metric associated to filtration}, we have $\widehat{\mathcal{F}}_{(\mathcal{X}, \mathcal{L})} = \bigcap_{E \subset \mathcal{X}_0} \mathcal{F}^\lambda_{v_E} [\varphi (v_E)]$, so that we get $\mathcal{F}_{\varphi_{(\mathcal{X}, \mathcal{L})}} \subset \widehat{\mathcal{F}}_{(\mathcal{X}, \mathcal{L})}$. 
To see the reverse inclusion, take a sufficiently divisible $d$, then by (\ref{Krull envelope and normalized base change}) and Corollary \ref{scaling of NA psh}, we get 
\[ \widehat{\mathcal{F}}_{(\mathcal{X}, \mathcal{L})} =\mathcal{F}_{(\mathcal{X}_d, \mathcal{L}_d; d^{-1})} \subset (\mathcal{F}_{\varphi_{(\mathcal{X}_d, \mathcal{L}_d)}})_{;d^{-1}} = \mathcal{F}_{\varphi_{(\mathcal{X}, \mathcal{L})}}. \]
\end{proof}

\begin{cor}
\label{normalization and non-archimedean metric}
We have $\varphi_{(\mathcal{X}, \mathcal{L})} = \varphi_{(\mathcal{X}', \mathcal{L}')}$ iff the normalizations are isomorphic. 
\end{cor}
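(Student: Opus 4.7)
The ``if'' direction is immediate from the corollary $\varphi_{(\mathcal{X},\mathcal{L})}=\varphi_{(\mathcal{X}^\nu,\nu^*\mathcal{L})}$ proved above, since an isomorphism of normalizations as test configurations carries the corresponding filtrations (hence the Fubini--Study functions on $X^{\mathrm{NA}}$) to one another. So the content is in the ``only if'' direction: the non-archimedean metric must recover the normalized test configuration. The plan is to pass through the associated filtration and then use the Proj construction.

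First, Proposition \ref{filtration associated to Fubini--Study metric and the stabilization} identifies
\[ \mathcal{F}_{\varphi_{(\mathcal{X},\mathcal{L})}}=\widehat{\mathcal{F}}_{(\mathcal{X}^\nu,\nu^*\mathcal{L})}, \qquad \mathcal{F}_{\varphi_{(\mathcal{X}',\mathcal{L}')}}=\widehat{\mathcal{F}}_{((\mathcal{X}')^\nu,(\nu')^*\mathcal{L}')}, \]
so the assumption $\varphi_{(\mathcal{X},\mathcal{L})}=\varphi_{(\mathcal{X}',\mathcal{L}')}$ gives $\widehat{\mathcal{F}}_{(\mathcal{X}^\nu,\nu^*\mathcal{L})}=\widehat{\mathcal{F}}_{((\mathcal{X}')^\nu,(\nu')^*\mathcal{L}')}$. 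Choose $d\in\mathbb{N}_+$ sufficiently divisible so that both normalized base changes $(\mathcal{X}^\nu)_d$ and $((\mathcal{X}')^\nu)_d$ have reduced central fibre; such a $d$ exists by the standard semistable reduction argument used in \cite{BHJ1}. By (\ref{filtration for tc with reduced central fibre}) and (\ref{Krull envelope and normalized base change}), for such $d$ the stabilized filtration $\widehat{\mathcal{F}}$ coincides with the (integrally graded) filtration $\mathcal{F}_{((\mathcal{X}^\nu)_d,(\nu^*\mathcal{L})_d)}$, and likewise on the primed side. Consequently
\[ \mathcal{F}_{((\mathcal{X}^\nu)_d,(\nu^*\mathcal{L})_d)}=\mathcal{F}_{(((\mathcal{X}')^\nu)_d,((\nu')^*\mathcal{L}')_d)}. \]

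The next step is to reconstruct a normal test configuration from its filtration. For any normal test configuration $(\mathcal{Y},\mathcal{N})$ one has a canonical $\mathbb{G}_m$-equivariant identification
\[ (\mathcal{Y},\mathcal{N})=\operatorname{Proj}_{\mathbb{C}[\varpi]}\,\bigoplus_{m\in\mathbb{N}^{(d)}}\bigoplus_{\lambda\in\mathbb{Z}}\varpi^{-\lambda}\mathcal{F}_{(\mathcal{Y},\mathcal{N})}^{\lambda}R_m, \]
with the identification $(\mathcal{Y}_1,\mathcal{N}|_{\mathcal{Y}_1})\cong(X,L)$ read off from the specialization $\varpi=1$; this is the observation already used implicitly in the proofs of Lemma \ref{primary ideal} and Proposition \ref{explicit formula for sigma}. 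Applied to $(\mathcal{Y},\mathcal{N})=((\mathcal{X}^\nu)_d,(\nu^*\mathcal{L})_d)$ and to the primed counterpart, the previous step therefore yields an isomorphism of test configurations $((\mathcal{X}^\nu)_d,(\nu^*\mathcal{L})_d)\cong(((\mathcal{X}')^\nu)_d,((\nu')^*\mathcal{L}')_d)$. Finally, the normalized base change by $z^d$ is obtained from the original normal test configuration as the normalization of the fibre product, so the original $(\mathcal{X}^\nu,\nu^*\mathcal{L})$ is recovered from $((\mathcal{X}^\nu)_d,(\nu^*\mathcal{L})_d)$ as the quotient by the canonical $\mu_d$-action coming from $\mu_d\subset\mathbb{G}_m$; this quotient description is canonical and compatible with $\mathbb{G}_m$-equivariant isomorphisms, so the isomorphism after base change descends to $(\mathcal{X}^\nu,\nu^*\mathcal{L})\cong((\mathcal{X}')^\nu,(\nu')^*\mathcal{L}')$.

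The main point where care is needed is this last descent step: one must verify that the $\mu_d$-action on $(\mathcal{X}^\nu)_d$ is intrinsic to its structure as a test configuration (it is, being the kernel of the squaring map on $\mathbb{G}_m$ appearing in the equivariance of $\nu_d$), and that the quotient $(\mathcal{X}^\nu)_d/\mu_d$ agrees with $\mathcal{X}^\nu$ by Zariski's main theorem combined with normality of $\mathcal{X}^\nu$. Once this is in hand, the isomorphism produced on the $d$-fold cover descends canonically, completing the proof.
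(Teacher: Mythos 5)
Your proposal is correct and follows the paper's route: the metric determines the filtration $\mathcal{F}_\varphi=\widehat{\mathcal{F}}_{(\mathcal{X}^\nu,\nu^*\mathcal{L})}$ by Proposition \ref{filtration associated to Fubini--Study metric and the stabilization}, and the filtration determines the normalized test configuration via the Rees/$\Proj$ construction. The only difference is that your detour through the normalized base change with reduced central fibre and the $\mu_d$-quotient descent (which does work, by $\mathbb{G}_m$-equivariance of the reconstruction together with finiteness, birationality and normality of $(\mathcal{X}^\nu)_d/\mu_d\to\mathcal{X}^\nu$) can be skipped: since $\mathcal{F}_{(\mathcal{X}^\nu,\nu^*\mathcal{L})}$ is $\mathbb{Z}$-graded and agrees with $\widehat{\mathcal{F}}_{(\mathcal{X}^\nu,\nu^*\mathcal{L})}$ at integer indices, one recovers it directly as $\mathcal{F}_\varphi^{\lceil\lambda\rceil}$ and applies $\Proj$ once, which is exactly the paper's one-line argument.
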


\begin{proof}
We can recover the normalization $(\mathcal{X}^\nu, \nu^* \mathcal{L})$ from the filtration $\mathcal{F}_\varphi^{\lceil \lambda \rceil}$. 
\end{proof}

\begin{cor}
\label{DH measure via Krull envelope}
For a test configuration $(\mathcal{X}, \mathcal{L})$, we have 
\[ \nu_\infty (\mathcal{F}_{\varphi_{(\mathcal{X}, \mathcal{L})}}) = \DHm_{(\mathcal{X}, \mathcal{L})}. \]
\end{cor}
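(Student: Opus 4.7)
\textbf{Proof plan for Corollary \ref{DH measure via Krull envelope}.}

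The plan is to combine Proposition \ref{filtration associated to Fubini--Study metric and the stabilization} with the primary decomposition Proposition \ref{primary decomposition of DH measure via hat filtration} and a total-mass comparison. First I reduce to the case where $(\mathcal{X}, \mathcal{L})$ is normal: by Proposition \ref{filtration associated to Fubini--Study metric and the stabilization}, $\mathcal{F}_{\varphi_{(\mathcal{X}, \mathcal{L})}} = \widehat{\mathcal{F}}_{(\mathcal{X}^\nu, \nu^* \mathcal{L})}$, so the left-hand side is unchanged by passing to the normalization; the right-hand side $\DHm_{(\mathcal{X}, \mathcal{L})} = \nu_\infty(\mathcal{F}_{(\mathcal{X}, \mathcal{L})})$ is also unchanged because the inclusion $\mathcal{F}_{(\mathcal{X}, \mathcal{L})} \subset \mathcal{F}_{(\mathcal{X}^\nu, \nu^*\mathcal{L})}$ has graded quotient of dimension $O(m^{n-1})$ (the conductor ideal supports a proper subscheme), and such a discrepancy vanishes after the $1/m^n$ rescaling defining $\nu_\infty$. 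Thus it suffices to prove $\nu_\infty(\widehat{\mathcal{F}}_{(\mathcal{X}, \mathcal{L})}) = \DHm_{(\mathcal{X}, \mathcal{L})}$ for normal $(\mathcal{X}, \mathcal{L})$.

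Next I establish the inequality $\nu_\infty(\widehat{\mathcal{F}}_{(\mathcal{X}, \mathcal{L})}) \le \DHm_{(\mathcal{X}, \mathcal{L})}$ by comparing graded pieces. The key algebraic observation is that, since the set $\{E \subset \mathcal{X}_0\}$ is finite and each $\mathcal{F}_{v_E}$ is left-continuous, one has
\[
\widehat{\mathcal{F}}^{\lambda+}_{(\mathcal{X}, \mathcal{L})} = \bigcap_{E \subset \mathcal{X}_0} \mathcal{F}^{\lambda+}_{v_E}[\sigma_E];
\]
indeed, any $s$ in the right-hand side lies in $\mathcal{F}^{\lambda_E}_{v_E}[\sigma_E]$ for some $\lambda_E > \lambda$, and setting $\lambda^* := \min_E \lambda_E > \lambda$ places $s$ in $\widehat{\mathcal{F}}^{\lambda^*} \subset \widehat{\mathcal{F}}^{\lambda+}$. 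This yields an injection of graded pieces
\[
\widehat{\mathcal{F}}^\lambda R_m / \widehat{\mathcal{F}}^{\lambda+} R_m \hookrightarrow \bigoplus_{E \subset \mathcal{X}_0} \widehat{\mathcal{F}}^\lambda R_m / (\widehat{\mathcal{F}}^\lambda \cap \mathcal{F}^{\lambda+}_{v_E}[\sigma_E]) R_m.
\]
Dividing dimensions by $m^n$, summing in $\lambda$, passing to the limit $m \to \infty$, and applying Proposition \ref{primary decomposition of DH measure via hat filtration} termwise, I obtain $\nu_\infty(\widehat{\mathcal{F}}_{(\mathcal{X}, \mathcal{L})}) \le \sum_E \mathrm{ord}_E \mathcal{X}_0 \cdot \DHm_{(E, \mathcal{L}|_E)} = \DHm_{(\mathcal{X}, \mathcal{L})}$ as positive measures on $\mathbb{R}$.

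To upgrade this inequality to equality, I compare total masses. On the one hand, $\int_\mathbb{R} \nu_\infty(\widehat{\mathcal{F}}_{(\mathcal{X}, \mathcal{L})}) = \lim_m m^{-n}\dim R_m = (L^{\cdot n})/n!$. On the other hand, the intersection identity $(L^{\cdot n}) = \sum_E \mathrm{ord}_E \mathcal{X}_0 \cdot (\mathcal{L}|_E^{\cdot n})$ on the central fibre (since $\mathcal{X}_0$ is numerically equivalent to a general fibre in the compactification $\bar{\mathcal{X}}$) yields the same value for $\int_\mathbb{R} \DHm_{(\mathcal{X}, \mathcal{L})}$. A positive measure dominated by a positive measure of equal finite total mass must coincide with it, giving the desired equality.

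The main obstacle is the normalization reduction step, where one must justify that $\nu_\infty$ is invariant under passage from $\mathcal{F}_{(\mathcal{X}, \mathcal{L})}$ to $\mathcal{F}_{(\mathcal{X}^\nu, \nu^*\mathcal{L})}$; this is a standard asymptotic bound on the graded quotient of the integral closure, but should be cited or verified explicitly. The injection in the second step is formal once the identification of $\widehat{\mathcal{F}}^{\lambda+}$ as a finite intersection is in hand, and the total-mass argument in the third step requires only the standard fact that weak limits of nonnegative measures preserve dimension inequalities integrated against nonnegative continuous functions.
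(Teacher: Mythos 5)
Your argument is correct, but it follows a genuinely different route from the paper. The paper's proof is a two-line reduction: it invokes $\DHm_{(\mathcal{X},\mathcal{L})} = \DHm_{(\mathcal{X}^\nu,\nu^*\mathcal{L})}$ from [BHJ1, Thm.\ 3.14], passes to a normalized base change $(\mathcal{X}_d,\mathcal{L}_d)$ with reduced central fibre, and uses the identity (\ref{Krull envelope and normalized base change}) to conclude $\mathcal{F}_{\varphi_{(\mathcal{X},\mathcal{L})}} = \mathcal{F}_{(\mathcal{X}_d,\mathcal{L}_d;d^{-1})}$, at which point the claim is the definition of $\DHm$ for the (reduced) base change. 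You instead work directly with $\widehat{\mathcal{F}}_{(\mathcal{X},\mathcal{L})}$: your identification $\widehat{\mathcal{F}}^{\lambda+} = \bigcap_E \mathcal{F}^{\lambda+}_{v_E}[\sigma_E]$ (valid precisely because the index set of components is finite, so one can take $\lambda^* = \min_E \lambda_E > \lambda$) gives the injection of graded pieces into the direct sum over components, hence $\nu_\infty(\widehat{\mathcal{F}}_{(\mathcal{X},\mathcal{L})}) \le \DHm_{(\mathcal{X},\mathcal{L})}$ after applying Proposition \ref{primary decomposition of DH measure via hat filtration} termwise, and the total-mass count forces equality. This is a clean "sandwich" argument that makes the role of the primary decomposition transparent and avoids citing (\ref{Krull envelope and normalized base change}) explicitly — though note that Proposition \ref{primary decomposition of DH measure via hat filtration} is itself proved in the paper via the same base-change device, so the dependence is only deferred, not eliminated. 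The paper's route is shorter; yours isolates exactly which inequality could fail and why it cannot.

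Two small points. First, in the normalization reduction your stated bound is off: the non-normal locus of $\mathcal{X}$ can be an entire $n$-dimensional component of $\mathcal{X}_0$ (see Hattori's example in the paper), so the cokernel $H^0(\mathcal{X}^\nu,\nu^*\mathcal{L}^{\otimes m})/H^0(\mathcal{X},\mathcal{L}^{\otimes m})$ has total dimension $O(m^n)$, not $O(m^{n-1})$. A bare comparison of graded pieces rescaled by $m^{-n}$ therefore does not visibly vanish; the correct bookkeeping compares the two spectral measures via a codiagonal basis, where the discrepancy against a Lipschitz test function is controlled by $m^{-(n+1)}$ times the total cokernel dimension, which is $O(1/m)$. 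Since this invariance is exactly [BHJ1, Thm.\ 3.14] (also the paper's own citation), you should simply cite it. Second, make sure the weak convergence in Proposition \ref{primary decomposition of DH measure via hat filtration} is applied with uniformly bounded supports (guaranteed by linear boundedness of the filtrations), so that the inequality of measures and the convergence of total masses both pass to the limit; you note this, and it is indeed all that is needed.
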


\begin{proof}
We recall $\DHm_{(\mathcal{X}, \mathcal{L})} = \DHm_{(\mathcal{X}^\nu, \nu^* \mathcal{L})}$ by \cite[Theorem 3.14]{BHJ1}. 
This implies $\DHm_{(\mathcal{X}, \mathcal{L})} = d^{-1}_* \DHm_{(\mathcal{X}_d, \mathcal{L}_d)}$ for the normalized base change $(\mathcal{X}_d, \mathcal{L}_d)$. 
Take $d$ so that $\mathcal{X}_{d, 0}$ is reduced, then we have $\mathcal{F}_{(\mathcal{X}_d, \mathcal{L}_d; d^{-1})} = \widehat{\mathcal{F}}_{(\mathcal{X}^\nu, \nu^* \mathcal{L})} = \mathcal{F}_{\varphi_{(\mathcal{X}, \mathcal{L})}}$ by (\ref{Krull envelope and normalized base change}). 
It follows that  
\[ \DHm_{(\mathcal{X}, \mathcal{L})} = d^{-1}_* \DHm_{(\mathcal{X}_d, \mathcal{L}_d)} = \nu_\infty (\mathcal{F}_{(\mathcal{X}_d, \mathcal{L}_d; d^{-1})}) = \nu_\infty (\mathcal{F}_{\varphi_{(\mathcal{X}, \mathcal{L})}}). \]
\end{proof}

\begin{cor}
For $\varphi = \varphi_{(\mathcal{X}, \mathcal{L})} \in \nH (X, L)$, $\mathcal{F}_\varphi$ is finitely generated. 
The central fibre $\mathcal{X}_o (\varphi) := \Proj \mathcal{R}_o (\mathcal{F}_\varphi)$ is isomorphic to $\mathcal{X}_{d, 0}$ for sufficiently divisible $d$ for which $\mathcal{X}_{d, 0}$ is reduced. 
\end{cor}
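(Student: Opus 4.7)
The plan is to reduce the assertion to a direct identification of $\mathcal{F}_\varphi$ with a rescaling of the filtration of a normalized base change, for which both finite generation and the central fibre computation have been prepared earlier.

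First, since $\varphi_{(\mathcal{X}, \mathcal{L})} = \varphi_{(\mathcal{X}^\nu, \nu^* \mathcal{L})}$, I may assume $(\mathcal{X}, \mathcal{L})$ is normal. I choose $d \in \mathbb{N}_+$ sufficiently divisible so that the central fibre $\mathcal{X}_{d,0}$ of the normalized base change $(\mathcal{X}_d, \mathcal{L}_d)$ is reduced. Proposition \ref{filtration associated to Fubini--Study metric and the stabilization} gives $\mathcal{F}_\varphi = \widehat{\mathcal{F}}_{(\mathcal{X}, \mathcal{L})}$, and equation (\ref{Krull envelope and normalized base change}) gives $\widehat{\mathcal{F}}^\lambda_{(\mathcal{X}, \mathcal{L})} = \mathcal{F}^\lambda_{(\mathcal{X}_d, \mathcal{L}_d;\, d^{-1})}$. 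Combining these yields the basic identity
\[
\mathcal{F}_\varphi \;=\; \mathcal{F}_{(\mathcal{X}_d, \mathcal{L}_d;\, d^{-1})}.
\]

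The finite generation of $\mathcal{F}_\varphi$ is then immediate from Proposition \ref{polyhedral configuration and finite generation of filtration}: $(\mathcal{X}_d, \mathcal{L}_d)$ is a $[0,\infty)$-configuration, and the filtrations associated to any polyhedral configuration at a vector in the interior of the cone are finitely generated. For the central fibre computation, using $\mathcal{F}^\lambda_{(\mathcal{X}_d, \mathcal{L}_d;\,d^{-1})} R_m = \mathcal{F}^{d\lambda}_{(\mathcal{X}_d, \mathcal{L}_d)} R_m$, the graded pieces of $\mathcal{R}_o(\mathcal{F}_\varphi)$ are nonzero only for $\lambda \in d^{-1}\mathbb{Z}$; substituting $\mu = d\lambda$ produces a graded-ring isomorphism
\[
\mathcal{R}_o(\mathcal{F}_\varphi) \;\cong\; \bigoplus_{m \in \mathbb{N}} \bigoplus_{\mu \in \mathbb{Z}} \varpi^{-\mu/d}\, \mathcal{F}^\mu_{(\mathcal{X}_d, \mathcal{L}_d)} R_m \big/ \mathcal{F}^{\mu+}_{(\mathcal{X}_d, \mathcal{L}_d)} R_m.
\]
Proposition \ref{central fibre via filtration} applied to $(\mathcal{X}_d, \mathcal{L}_d)$ as a $[0,\infty)$-configuration identifies the right hand side, as a graded ring, with $\mathcal{R}(\mathcal{X}_{d,0}, \mathcal{L}_d|_{\mathcal{X}_{d,0}})$; taking $\Proj$ with respect to the $m$-grading gives $\mathcal{X}_o(\varphi) \cong \mathcal{X}_{d,0}$.

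The only genuinely delicate point—and the step where I would spend the most care—is tracking the $T$-equivariant structures through the substitution $\mu = d\lambda$. Scheme-theoretically the substitution is harmless (it only rewrites the indexing set of the grading), but it does rescale the $\mathbb{G}_m$-action on the central fibre by the factor $d^{-1}$; this is exactly consistent with Corollary \ref{scaling of NA psh}, which says $\varphi_{(\mathcal{X}_d, \mathcal{L}_d)} = \varphi_{(\mathcal{X}, \mathcal{L});\,d}$, so that the proper vector $\xi^\varphi_o$ induced from $\mathcal{F}_\varphi$ on $\mathcal{X}_o(\varphi)$ matches the natural generator of $\mathbb{G}_m$ on $\mathcal{X}_{d,0}$ rescaled by $d^{-1}$. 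Since the stated corollary concerns only the scheme structure, this rescaling causes no loss.
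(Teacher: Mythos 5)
Your argument is correct and follows the same route as the paper, whose entire proof is the citation of (\ref{Krull envelope and normalized base change}): you simply make explicit the chain $\mathcal{F}_\varphi = \widehat{\mathcal{F}}_{(\mathcal{X},\mathcal{L})} = \mathcal{F}_{(\mathcal{X}_d,\mathcal{L}_d;\,d^{-1})}$ and then invoke Proposition \ref{polyhedral configuration and finite generation of filtration} for finite generation and Proposition \ref{central fibre via filtration} for the central fibre. Your remark that the regrading $\mu = d\lambda$ only rescales the induced $\mathbb{G}_m$-action, consistently with Corollary \ref{scaling of NA psh}, and does not affect the scheme structure of $\Proj$ is a useful clarification of a point the paper leaves implicit.
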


\begin{proof}
This is a consequence of (\ref{Krull envelope and normalized base change}). 
\end{proof}

The following is a non-trivial fact generalizing the above corollary. 
The proof involves non-trivial results on affinoid algebra. 

\begin{thm}[Theorem 2.3 in \cite{BJ4}]
\label{finite generation of Fphi}
For $\varphi \in \nH^\mathbb{R} (X, L)$, the associated filtration $\mathcal{F}_\varphi$ is finitely generated. 
\end{thm}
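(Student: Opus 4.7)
The plan is to reduce the statement to the test configuration case (Proposition \ref{filtration associated to Fubini--Study metric and the stabilization} together with the divisible base change identity (\ref{Krull envelope and normalized base change})) by constructing, for each $\varphi \in \nH^\mathbb{R}(X, L)$, a polyhedral model on which the natural finitely generated filtration already coincides with the full saturation $\mathcal{F}_\varphi$.

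First, I would represent $\varphi = \varphi_{(\mathcal{X}/B_\sigma, \mathcal{L}; \xi)}$ using Proposition \ref{polyhedral configuration and finite generation of filtration}, with $\xi \in \sigma^\circ$. The natural filtration $\mathcal{F}_{(\mathcal{X}, \mathcal{L}; \xi)}$ is finitely generated, but only satisfies $\mathcal{F}_{(\mathcal{X}, \mathcal{L}; \xi)} \subset \mathcal{F}_\varphi$ in general; the task is to upgrade the inclusion to an equality on some modified model. Motivated by the rational case, where (\ref{Krull envelope and normalized base change}) shows that taking a divisible normalized base change $\mathcal{X}_d$ making the central fibre reduced collapses $\widehat{\mathcal{F}}_{(\mathcal{X}, \mathcal{L})}$ to $\mathcal{F}_{(\mathcal{X}_d, \mathcal{L}_d; d^{-1})}$, I would construct a toric base change $\phi: B_{\sigma'} \to B_\sigma$ induced by a lattice map $\phi_*: N' \to N$ with $\phi_*(\sigma') \subset \sigma$ and $\xi = \phi_*(\xi')$ for some $\xi' \in \sigma'$, together with an equivariant modification $(\mathcal{X}', \mathcal{L}') \to \phi^*(\mathcal{X}, \mathcal{L})$ whose central fibre $\mathcal{X}'_o$ is reduced. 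By Proposition \ref{base change of weight filtration} the natural filtration $\mathcal{F}_{(\mathcal{X}', \mathcal{L}'; \xi')}$ remains finitely generated, and by the analogue of Proposition \ref{explicit formula for sigma} and the reducedness, the filtration is computed as the intersection $\bigcap_{E \subset \mathcal{X}'_o} \mathcal{F}_{v_E}^\bullet[\sigma_E]$ over the finitely many components of $\mathcal{X}'_o$.

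Next, I would identify this intersection with $\mathcal{F}_\varphi = \bigcap_{v \in \mathrm{Val}(X)} \mathcal{F}_v^\bullet[\varphi(v)]$. The one inclusion follows from $\varphi(v_E) = \sigma_E$ on the finite set of components. The reverse inclusion requires showing that valuations $v_E$ associated to the central fibre components of $(\mathcal{X}', \mathcal{L}')$ already detect the intersection, i.e.\ that every other valuation $v$ contributes nothing new to the cut-out of $R_m$. This is the non-trivial input and, as the paper signals, is where affinoid-algebra machinery enters: one passes to the $\mathbb{C}[\sigma^{\prime\vee} \cap M']$-graded Rees algebra $\bigoplus_{m, \lambda} \varpi^{-\lambda} \mathcal{F}_\varphi^\lambda R_m$ and shows it is a finite module over the finitely generated $\bigoplus_{m, \lambda} \varpi^{-\lambda} \mathcal{F}_{(\mathcal{X}', \mathcal{L}'; \xi')}^\lambda R_m$, by interpreting $\mathcal{F}_\varphi$ as an integral closure along the generic Gauss-type valuation of $\mathcal{X}'$ on a suitable affinoid chart and invoking noetherianity of the affinoid algebra to rule out strict growth.

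The hard part will be the existence of the reduced polyhedral modification $(\mathcal{X}', \mathcal{L}'/B_{\sigma'})$ for \emph{irrational} $\xi$: the classical normalized base change trick is unavailable because $\xi$ is not rational in any sublattice, so there is no finite cover of $B_\sigma$ that directly plays the role of $z \mapsto z^d$. One has to replace that procedure by an iterated toric modification adapted to the ray $\mathbb{R}_+ \xi$, controlled by subdivisions of $\sigma$ that are fine enough in a neighbourhood of $\xi$, combined with a Zariski-local normalization of the total space. Once a model with reduced central fibre is secured, the affinoid-theoretic identification of $\mathcal{F}_\varphi$ with $\mathcal{F}_{(\mathcal{X}', \mathcal{L}'; \xi')}$ along the lines above yields the asserted finite generation.
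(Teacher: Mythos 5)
There is a genuine gap, and it sits exactly where you flag the ``hard part'': the existence of a polyhedral model $(\mathcal{X}'/B_{\sigma'}, \mathcal{L}'; \xi')$ with reduced central fibre representing $\varphi$. By Proposition \ref{Krull envelope has reduced central fibre} and Proposition \ref{reduced central fibre implies homogeneity}, a finitely generated filtration $\mathcal{F}$ with $\varphi_{\mathcal{F}} = \varphi$ has reduced central fibre if and only if $\mathcal{F} = \mathcal{F}_{\varphi}$; combined with Proposition \ref{polyhedral configuration and finite generation of filtration}, producing such a reduced model is therefore \emph{logically equivalent} to the theorem you are trying to prove. So unless the reduced modification is constructed by independent means, the argument is circular, and the means you propose do not work: subdividing $\sigma$ near the ray $\mathbb{R}_{+}\xi$ changes neither the filtration $\mathcal{F}_{(\mathcal{X},\mathcal{L};\xi)}$ (Proposition \ref{base change of weight filtration} shows pullback along $\phi$ just reindexes it) nor the reducedness of the fibre over $o$, and normalizing the total space does not reduce the central fibre even in the test configuration case --- there one needs the ramified base change $z \mapsto z^{d}$ \emph{before} normalizing, and it is precisely this step that has no analogue for irrational $\xi$.

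The affinoid appeal at the end is also not yet an argument. The Rees algebra $\bigoplus_{m,\lambda}\varpi^{-\lambda}\mathcal{F}^{\lambda}_{(\mathcal{X},\mathcal{L};\xi)}R_{m}$ is \emph{not} finitely generated over $\mathbb{C}$ for irrational $\xi$ (the paper notes this explicitly after Proposition \ref{polyhedral configuration and finite generation of filtration}, since already the base ring $S_{B_{\sigma},\xi}$ fails to be finitely generated), so ``noetherianity'' of that object cannot be invoked to bound the saturation $\mathcal{F}_{\varphi} = \bigcap_{v}\mathcal{F}_{v}[\varphi(v)]$; one has to pass to genuine affinoid (Tate) algebras over the trivially valued field and use their Noether normalization, which is the content of \cite[Theorem 2.3]{BJ4}. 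The paper's own proof does not attempt any of your geometric construction: it simply verifies, via \cite[Theorem 2.11]{BJ4}, that $\|\cdot\|^{\varphi_{\mathcal{F}}}_{\bullet}$ is the homogenization of the finitely generated graded norm $\|\cdot\|^{\mathcal{F}}_{\bullet}$, and then quotes the finite generation of homogenizations from \cite[Theorem 2.3]{BJ4}. If you want a self-contained proof you would have to reproduce that affinoid argument; the reduced-model route as sketched does not get there.
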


\begin{proof}
By \cite[Theorem 2.3]{BJ4}, it suffices to show for a finitely generated filtration $\mathcal{F}$, $\| \cdot \|^{\varphi_\mathcal{F}}_\bullet$ is the homogenization of $\| \cdot \|^{\mathcal{F}}_\bullet$ in the sense of \cite{BJ4}. 
This is nothing but \cite[Theorem 2.11]{BJ4}. 
\end{proof}

The reducedness of the central fibre is also a general phenomenon. 

\begin{prop}
\label{Krull envelope has reduced central fibre}
For $\varphi \in C^0 (X) \cap \PSH (X, L)$, the ring $\mathcal{R}_o (\mathcal{F}_\varphi)$ is reduced. 
\end{prop}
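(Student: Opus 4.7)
The plan is to reduce the reducedness of $\mathcal{R}_o(\mathcal{F}_\varphi)$ to the observation that the non-archimedean norm $\|\cdot\|_m^\varphi$ induced by $\varphi$ is \emph{exactly} multiplicative on powers (not merely submultiplicative). By definition,
\[
-\log\|s\|_m^\varphi=\inf_{v\in\mathrm{Val}(X)}\bigl(v(s)+m\varphi(v)\bigr),
\]
and since $v(s^k)=kv(s)$ for every valuation $v$, we get
\[
-\log\|s^k\|_{km}^\varphi=\inf_v k\bigl(v(s)+m\varphi(v)\bigr)=k\cdot\bigl(-\log\|s\|_m^\varphi\bigr).
\]
This is the only algebraic input needed from $\varphi$; continuity is used only to ensure $\mathcal{F}_\varphi$ is a genuine filtration (left-continuity is automatic from the $\inf$ description).

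Next, I will show that no nonzero homogeneous element of $\mathcal{R}_o(\mathcal{F}_\varphi)$ is nilpotent. A nonzero homogeneous element in bidegree $(m,\lambda)$ has the form $\xi=\varpi^{-\lambda}[s]$ with $s\in\mathcal{F}_\varphi^\lambda R_m\setminus\mathcal{F}_\varphi^{\lambda+}R_m$; by the explicit description $\mathcal{F}_\varphi^\lambda R_m=\{s\mid -\log\|s\|_m^\varphi\ge\lambda\}$ and $\mathcal{F}_\varphi^{\lambda+}R_m=\{s\mid -\log\|s\|_m^\varphi>\lambda\}$, this is equivalent to $-\log\|s\|_m^\varphi=\lambda$ exactly. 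The multiplicativity identity then gives $-\log\|s^k\|_{km}^\varphi=k\lambda$, so $s^k\in\mathcal{F}_\varphi^{k\lambda}R_{km}\setminus\mathcal{F}_\varphi^{k\lambda+}R_{km}$ and hence $\xi^k=\varpi^{-k\lambda}[s^k]\neq 0$ in $\mathcal{R}_o(\mathcal{F}_\varphi)$.

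Finally, I will pass from homogeneous nilpotents to arbitrary ones using the standard leading-term argument for gradings over a totally ordered cancellative monoid. The ring $\mathcal{R}_o(\mathcal{F}_\varphi)$ is bigraded by $\mathbb{N}\times\mathbb{R}$, which is totally ordered under the lexicographic order. Given a nilpotent $a=\sum_{i=1}^{s}a_{(m_i,\lambda_i)}$ written as a finite sum with strictly increasing bidegrees $(m_1,\lambda_1)<\dots<(m_s,\lambda_s)$, the unique component of $a^k$ of top bidegree $k\cdot(m_s,\lambda_s)$ is $a_{(m_s,\lambda_s)}^k$; hence $a^k=0$ forces $a_{(m_s,\lambda_s)}^k=0$, and by the previous paragraph $a_{(m_s,\lambda_s)}=0$. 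Since the nilradical is an ideal, $a-a_{(m_s,\lambda_s)}$ is again nilpotent, and an induction on $s$ concludes that every homogeneous component of $a$ vanishes, so $a=0$.

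The argument is essentially formal once the exact power-multiplicativity of $\|\cdot\|_m^\varphi$ is noticed, and I do not expect any serious obstacle. The one subtlety worth emphasizing is that the identity $-\log\|s^k\|_{km}^\varphi=k\cdot(-\log\|s\|_m^\varphi)$ holds because the \emph{same} valuation $v$ minimizing $v(s)+m\varphi(v)$ minimizes $v(s^k)+km\varphi(v)$ — this is what distinguishes the norm coming from $\varphi$ via $\inf_v$ from the norm coming from an arbitrary filtration, and it is ultimately the reason the central fibre of $\varphi_{\mathcal{F}}$ is reduced while that of the original test configuration need not be.
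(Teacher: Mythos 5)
Your proof is correct and follows essentially the same route as the paper's: the key input in both is that $-\log\|s^k\|_{km}^{\varphi}=\inf_v k\bigl(v(s)+m\varphi(v)\bigr)=k\cdot\bigl(-\log\|s\|_m^{\varphi}\bigr)$ (the paper phrases this via $\sigma_v(\mathcal{F}_\varphi)=\varphi(v)$ and the homogenized filtration $\widehat{\mathcal{F}}$, which is the same object), combined with an extreme-term argument in the $\mathbb{N}\times\mathbb{R}$-bigraded ring. The only cosmetic difference is that you isolate the lexicographically maximal component and induct, whereas the paper directly exhibits the minimal component of $[f]^d$ as nonzero.
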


\begin{proof}
For a linearly bounded filtration $\mathcal{F}$, we put 
\[ \widehat{\mathcal{F}} := \bigcap_{v \in \mathrm{Val} (X)} \mathcal{F}_v [\sigma_v (\mathcal{F})]. \]
By Proposition \ref{sigma and non-archimedean psh}, we have $\mathcal{F}_\varphi = \widehat{\mathcal{F}_\varphi}$, so it suffices to show $\mathcal{R}_o (\widehat{\mathcal{F}})$ is reduced. 

Take $0 \neq [f] \in \bigoplus_{m \in \mathbb{N}} \bigoplus_{\lambda \in \mathbb{R}} \varpi^{-\lambda} \widehat{\mathcal{F}}^{\lambda} R_m/\widehat{\mathcal{F}}^{\lambda+} R_m$. 
We can write it as 
\[ [f] = [\sum_{i \in I} \varpi^{-\lambda_i} s_i] \]
by a collection $\{ (\lambda_i, m_i) \in \mathbb{R} \times \mathbb{N} \}_{i \in I}$ with no overlap and $s_i \in \widehat{\mathcal{F}}^{\lambda_i} R_{m_i} \setminus \widehat{\mathcal{F}}^{\lambda_i +} R_{m_i}$. 
The condition $s_i \in \widehat{\mathcal{F}}^{\lambda_i} R_{m_i} \setminus \widehat{\mathcal{F}}^{\lambda_i +} R_{m_i}$ is equivalent to $v (s_i) + m_i \sigma_v = \lambda_i$ for all $v$. 

Now for 
\[ [f]^d = [\sum_{i_1, \ldots, i_d \in I} \varpi^{-\sum_{r=1}^d \lambda_{i_r}} \prod_{r=1}^d s_{i_r}], \]
we have $[f]^d = 0$ iff for every $(\lambda, m) \in \mathbb{R} \times \mathbb{N}$, either 
\[ J_{\lambda, m} := \{ (i_1, \ldots, i_d) \in I^d ~|~ \sum_{r=1}^d m_{i_r} = m, \sum_{r=1}^d \lambda_{i_r} = \lambda \} \] 
is an empty set or $v (\sum_{(i_1, \ldots, i_d) \in J_{\lambda, m}} \prod_{r=1}^d s_{i_r}) + m \sigma_v > \lambda$. 
Let $(\lambda_{i_0}, m_{i_0})$ be the minimum of $\{ (\lambda_i, m_i) \}_{i \in I}$ with respect to the lexicographical order. 
Then since the collection $\{ (\lambda_i, m_i) \}_{i \in I}$ has no duplication, $J_{d \lambda_{i_0}, dm_{i_0}}$ is the singleton $\{ (i_0, \ldots, i_0) \}$. 
Thus we get 
\[ v (\sum_{(i_1, \ldots, i_d) \in J_{\lambda_{i_0}, m_{i_0}}} \prod_{r=1}^d s_{i_r}) + dm_{i_0} = d v (s_{i_0}) + dm_{i_0} = d \lambda_{i_0}, \]
which proves $[f]^d \neq 0$.  
Thus $\mathcal{R}_o (\widehat{\mathcal{F}})$ is reduced. 
\end{proof}

\begin{prop}
\label{reduced central fibre implies homogeneity}
Let $\mathcal{F}$ be a finitely generated filtration. 
Then the ring $\mathcal{R}_o (\mathcal{F})$ is reduced if and only if $\mathcal{F}_{\varphi_\mathcal{F}} = \mathcal{F}$. 
\end{prop}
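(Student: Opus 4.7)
The backward implication is immediate from Proposition~\ref{Krull envelope has reduced central fibre}: since $\mathcal{F}$ is finitely generated, $\varphi_\mathcal{F}\in C^0\cap\PSH(X,L)$ by Proposition~\ref{non-archimedean metric associated to filtration}, and under the hypothesis $\mathcal{F}=\mathcal{F}_{\varphi_\mathcal{F}}$ that proposition declares $\mathcal{R}_o(\mathcal{F})=\mathcal{R}_o(\mathcal{F}_{\varphi_\mathcal{F}})$ reduced.

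For the forward direction I first record that $\mathcal{F}\subset\mathcal{F}_{\varphi_\mathcal{F}}$ always holds: by left-continuity of $\sigma\mapsto\mathcal{F}_v[\sigma]$ together with $\sigma_v(\mathcal{F})=\varphi_\mathcal{F}(v)$ from Proposition~\ref{non-archimedean metric associated to filtration} we have $\mathcal{F}\subset\mathcal{F}_v[\varphi_\mathcal{F}(v)]$ for every valuation $v$, hence $\mathcal{F}\subset\bigcap_v\mathcal{F}_v[\varphi_\mathcal{F}(v)]=\mathcal{F}_{\varphi_\mathcal{F}}$. Now assume $\mathcal{R}_o(\mathcal{F})$ is reduced; the plan for the reverse inclusion is to realize $\mathcal{F}$ as the filtration of a test configuration with reduced central fibre and then to conclude via formula~(\ref{filtration for tc with reduced central fibre}) combined with Proposition~\ref{filtration associated to Fubini--Study metric and the stabilization}.

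Using Proposition~\ref{polyhedral configuration and finite generation of filtration}, write $\mathcal{F}=\mathcal{F}_{(\mathcal{X}/B_\sigma,\mathcal{L};\xi)}$ for a polyhedral configuration with $\xi\in\sigma^\circ$. Proposition~\ref{central fibre via filtration} identifies $\mathcal{R}_o(\mathcal{F})$ with $R(\mathcal{X}_o,\mathcal{L}|_{\mathcal{X}_o})$, so the central fibre $\mathcal{X}_o$ is scheme-theoretically reduced, a property of the polyhedral configuration alone and independent of $\xi\in\sigma^\circ$. Consider first the case of integral $\xi\in\sigma\cap N$: the filtration $\mathcal{F}$ is then $\mathbb{Z}$-graded, and the Rees--$\Proj$ construction $\tilde{\mathcal{X}}:=\Proj_{\mathbb{C}[t]}\bigl(\bigoplus_{m,\lambda} t^{-\lambda}\mathcal{F}^\lambda R_m\bigr)$ yields a test configuration of $(X,L)$ with $\mathcal{F}_{(\tilde{\mathcal{X}},\tilde{\mathcal{L}})}=\mathcal{F}$ whose central fibre is $\Proj\mathcal{R}_o(\mathcal{F})$, which is reduced. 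When $\tilde{\mathcal{X}}$ is normal, formula~(\ref{filtration for tc with reduced central fibre}) gives $\widehat{\mathcal{F}}_{(\tilde{\mathcal{X}},\tilde{\mathcal{L}})}=\mathcal{F}_{(\tilde{\mathcal{X}},\tilde{\mathcal{L}})}=\mathcal{F}$, while Proposition~\ref{filtration associated to Fubini--Study metric and the stabilization} identifies the left-hand side with $\mathcal{F}_{\varphi_\mathcal{F}}$, producing $\mathcal{F}_{\varphi_\mathcal{F}}=\mathcal{F}$ in this case.

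The main obstacle I anticipate is twofold. Relaxing the normality of $\tilde{\mathcal{X}}$ requires showing that its normalization still has a reduced central fibre so that formula~(\ref{filtration for tc with reduced central fibre}) continues to apply and the filtration of global sections is preserved; this should follow from a codimension-one analysis of $\tilde{\mathcal{X}}$ at the reduced Cartier divisor $\tilde{\mathcal{X}}_0$. Extending from integral $\xi$ to arbitrary $\xi\in\sigma^\circ$ is to be carried out weight-by-weight through the decomposition $\mathcal{F}^\lambda_{(\mathcal{X},\mathcal{L};\xi)}R_m=\sum_{\langle\mu,\xi\rangle\ge\lambda}\mathcal{F}^\mu_{(\mathcal{X},\mathcal{L})}R_m$ and the $T$-invariance of $\mathcal{F}_{\varphi_\mathcal{F}}$, using the integral case applied to varying primitive vectors in $\sigma^\circ\cap N$ to pin down every weight piece and so force $\mathcal{F}_{\varphi_\mathcal{F}}=\mathcal{F}$ for the original $\xi$.
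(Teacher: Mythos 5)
Your overall skeleton is the same as the paper's: the backward implication via Proposition~\ref{Krull envelope has reduced central fibre}, the a priori inclusion $\mathcal{F}\subset\mathcal{F}_{\varphi_\mathcal{F}}$, the realization $\mathcal{F}=\mathcal{F}_{(\mathcal{X}/B_\sigma,\mathcal{L};\xi)}$ via Proposition~\ref{polyhedral configuration and finite generation of filtration}, the identification of $\mathcal{R}_o(\mathcal{F})$ with the ring of the central fibre via Proposition~\ref{central fibre via filtration}, and the settling of the case $\eta\in\sigma\cap N_{\mathbb{Q}}$ by (\ref{filtration for tc with reduced central fibre}) together with Proposition~\ref{filtration associated to Fubini--Study metric and the stabilization}. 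Up to that point you are on track (and your worry about normality is one the paper itself does not belabor: the relevant identity is checked through the normalization, whose central fibre is still reduced because the filtration is unchanged).

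The genuine gap is the last step, from rational $\eta$ to an arbitrary $\xi\in\sigma^\circ$, which is precisely the content of the proposition beyond what is already known for test configurations. Your proposed mechanism --- arguing ``weight-by-weight'' using ``the $T$-invariance of $\mathcal{F}_{\varphi_\mathcal{F}}$'' --- does not get off the ground as stated, because there is no torus action on $R_m=H^0(X,L^{\otimes m})$ to be invariant under: the weight pieces $\mathcal{F}^\mu_{(\mathcal{X},\mathcal{L})}R_m$ form a filtration of $R_m$ indexed by the poset $(M,\le_\sigma)$, not an eigenspace decomposition, and $\mathcal{F}_{\varphi_\mathcal{F}}$ is a priori just a filtration of $R_m$ by real indices with no compatible $M$-grading to compare piece by piece. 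The paper instead closes the gap by a continuity argument in $\eta$: choosing a basis $\{e_i\}$ of $R_m$ with characters $\{\mu_i\}$ as in Lemma~\ref{equivariant trivialization}, one has the explicit formula $-\log\|s\|_m^{\mathcal{F}_{(\mathcal{X},\mathcal{L};\eta)}}=\min\{\langle\mu_i,\eta\rangle\mid a_i\neq 0\}$ for $s=\sum_i a_ie_i$, which is continuous in $\eta\in\sigma$; this also shows $\varphi_{(\mathcal{X},\mathcal{L};\eta)}$ varies uniformly continuously in $\eta$, hence $-\log\|s\|_m^{\varphi_{(\mathcal{X},\mathcal{L};\eta)}}=\inf_v\{m\varphi_{(\mathcal{X},\mathcal{L};\eta)}(v)+v(s)\}$ is continuous in $\eta$ as well. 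Two functions of $\eta$ that agree on the dense set $\sigma\cap N_{\mathbb{Q}}$ and are both continuous must agree on all of $\sigma$, giving $\mathcal{F}_{(\mathcal{X},\mathcal{L};\xi)}=\mathcal{F}_{\varphi_{(\mathcal{X},\mathcal{L};\xi)}}$ for every $\xi$. You need some argument of this kind; without it the irrational case is not established.
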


\begin{proof}
As we already see $\mathcal{R}_o (\mathcal{F})$ is reduced when $\mathcal{F}_{\varphi_\mathcal{F}} = \mathcal{F}$, it suffices to show the converse. 
Assume the reducedness of $\mathcal{R}_o (\mathcal{F})$. 
By Proposition \ref{polyhedral configuration and finite generation of filtration}, we can find a polyhedral configuration $(\mathcal{X}/B_\sigma, \mathcal{L})$ and $\xi \in \sigma^\circ$ such that $\mathcal{F} = \mathcal{F}_{(\mathcal{X}, \mathcal{L}; \xi)}$. 
By the assumption and Proposition \ref{central fibre via filtration}, the central fibre of $(\mathcal{X}/B_\sigma, \mathcal{L})$ is reduced. 

For $\eta \in \sigma \cap N$, $\mathcal{F}_{(\mathcal{X}, \mathcal{L}; \eta)}$ is the filtration associated to a test configuration with reduced central fibre, so we have $\mathcal{F}_{(\mathcal{X}, \mathcal{L}; \eta)} = \widehat{\mathcal{F}}_{(\mathcal{X}, \mathcal{L}; \eta)} = \mathcal{F}_{\varphi_{(\mathcal{X}, \mathcal{L}; \eta)}}$ by (\ref{filtration for tc with reduced central fibre}) and Proposition \ref{filtration associated to Fubini--Study metric and the stabilization}. 
By scaling, we get $\mathcal{F}_{(\mathcal{X}, \mathcal{L}; \eta)} = \mathcal{F}_{\varphi_{(\mathcal{X}, \mathcal{L}; \eta)}}$ also for $\eta \in \sigma \cap N_{\mathbb{Q}}$. 
To show $\mathcal{F}_{(\mathcal{X}, \mathcal{L}; \eta)} = \mathcal{F}_{\varphi_{(\mathcal{X}, \mathcal{L}; \eta)}}$ for general $\eta \in \sigma$, it suffices to show that both $\| s \|_m^{\mathcal{F}_{(\mathcal{X}, \mathcal{L}; \eta)}}$ and $\| s \|_m^{\varphi_{(\mathcal{X}, \mathcal{L}; \eta)}}$ are continuous on $\eta \in \sigma$ for each $s \in R_m$. 

Take a basis $\{ e_i \}$ of $R_m$ and $\mu_i \in M$ as in Lemma \ref{equivariant trivialization} for $\pi_* \mathcal{L}^{\otimes m}$, then we have 
\[ - \log \| s \|_m^{\mathcal{F}_{(\mathcal{X}, \mathcal{L}; \eta)}} = \min \{ \langle \mu_i, \eta \rangle ~|~ s = \sum_i a_i e_i, a_i \neq 0 \}, \]
so that it is continuous on $\eta \in \sigma$. 
This also shows that for $\varphi_\eta := \varphi_{(\mathcal{X}, \mathcal{L}; \eta)}$, $\varphi_{\eta_i}$ converges uniformly to $\varphi_\eta$ if $\eta_i \to \eta \in \sigma$. 
Therefore, 
\[ -\log \| s \|^{\varphi_{(\mathcal{X}, \mathcal{L}; \eta)}}_m = \inf \{ m \varphi_{(\mathcal{X}, \mathcal{L}; \eta)} (v) + v (s) ~|~ v \in X^{\mathrm{NA}} \} \] 
is continuous on $\eta \in \sigma$. 
\end{proof}

\subsubsection{Continuity of envelopes}
\label{continuity of envelopes}

Here we recall important consequences of the continuity of envelopes. 
The continuity of envelopes is not yet proved for general polarized variety $(X, L)$, so we would avoid using the hypothesis as possible. 
However, it is crucial for the completeness of $\E^{\exp} (X, L)$. 

\begin{defin}[Continuity of envelopes \cite{BJ1, BJ3}]
We say \textit{the continuity of envelopes holds} for $(X, L)$ if 
\[ P (f) := \sup \{ \varphi \in \PSH (X, L) ~|~ \varphi \le f \} \]
is continuous for every continuous function $f \in C^0 (X^{\mathrm{NA}})$. 
\end{defin}

The smooth case is confirmed. 

\begin{thm}[Theorem 4.52 in \cite{BJ3} (cf. \cite{BJ1})]
The continuity of envelopes holds for smooth $(X, L)$. 
\end{thm}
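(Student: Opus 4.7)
The plan is to sketch the Boucksom--Jonsson proof of continuity of envelopes in the smooth case, whose backbone is to sandwich $P(f)$ between Fubini--Study metrics by combining a direct algebraic construction from below with an analytic regularization from above.

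First I would reduce to a manageable class of test functions $f$. Since $X^{\mathrm{NA}}$ is compact and the uniform limit of uniformly continuous psh envelopes is continuous (use $P(f_1 - \varepsilon) \le P(f_2) \le P(f_1+\varepsilon)$ whenever $\|f_1 - f_2\|_\infty \le \varepsilon$), it suffices to establish continuity of $P(f)$ on a uniformly dense subalgebra of $C^0(X^{\mathrm{NA}})$. The set of differences $\varphi_{(\mathcal{X}_1, \mathcal{L}_1)} - \varphi_{(\mathcal{X}_2, \mathcal{L}_2)}$ of Fubini--Study functions for ample test configurations, with common divisible degree, separates points of $X^{\mathrm{NA}}$ and is closed under the lattice operations $\max, \min$ (since $\nH(X,L)$ is), so Stone--Weierstrass yields density. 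Thus we may assume $f$ is of this PL form.

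Next, I would produce Fubini--Study lower approximants to $P(f)$ by an ideal-sheaf construction. For each sufficiently divisible $m$, let $\mathcal{I}_m \subset \mathcal{O}_X$ be the base ideal of the linear series $\{s \in H^0(X, L^{\otimes m}) : -v(s) \le m f(v) \text{ on } X^{\mathrm{NA}}\}$ (finitely many inequalities suffice since $f$ is PL on the dual complex of a suitable SNC model). Taking $\varphi_m := m^{-1} \log |\mathcal{I}_m|$ at the level of Fubini--Study potentials gives $\varphi_m \in \nH(X,L)$ with $\varphi_m \le f$, hence $\varphi_m \le P(f)$. One then verifies, using Fekete's lemma on the graded ideal system $\{\mathcal{I}_m\}$, that $(\varphi_m)$ increases $\nearrow$ to some $\varphi_\infty \le P(f)$ in $\PSH(X,L)$.

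The hard step, Step~3, is showing $\varphi_\infty = P(f)$ and that the convergence is uniform; this is where smoothness enters decisively. The idea is to transfer the problem to the archimedean side via Ross--Witt Nyström / BBJ: associate to $P(f)$ a geodesic ray $\{\phi_s\}$ in $\mathcal{E}^1(X,\omega)$ whose asymptotic slope recovers $P(f)$. Apply Demailly's analytic regularization with multiplier ideals to $\phi_s$, obtaining approximants $\phi_s^{(m)}$ with analytic singularities whose associated ideals coincide (asymptotically) with $\mathcal{I}_m$; the comparison uses Ohsawa--Takegoshi $L^2$ extension and Nadel vanishing applied to $(L, h_\phi)$. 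Smoothness of $X$ is essential here: these $L^2$ tools rely on Hörmander/Bochner--Kodaira identities on a smooth Kähler manifold, and the vanishing ensures sections with prescribed singularities extend from infinitesimal neighborhoods.

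Finally, the uniform comparison $P(f) - \varphi_m \to 0$ uniformly follows from the subadditivity (Demailly--Ein--Lazarsfeld) of the multiplier ideals $\mathcal{J}(k \phi) \supset \mathcal{J}(\phi)^k$ combined with Skoda-type division estimates, giving $\varphi_{km} \ge \varphi_m - C/m$ for a constant $C$ independent of $k$. Passing $k \to \infty$ gives $P(f) \ge \varphi_m - C/m$, hence uniform convergence, and continuity of $P(f)$ follows since each $\varphi_m$ is continuous. The \emph{main obstacle} in any attempt to extend beyond the smooth case is precisely this Ohsawa--Takegoshi/Nadel input in Step~3: on singular $X$ the analytic regularization is not known at the required precision, and one must substitute algebraic MMP-type tools, which is the content of ongoing work referenced after the theorem statement.
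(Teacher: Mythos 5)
First, a point of context: the paper does not prove this statement at all --- it is imported verbatim as Theorem 4.52 of \cite{BJ3} --- so there is no internal argument to compare yours against, and what follows measures your sketch against the actual Boucksom--Jonsson proof. Your first two steps are sound and do match theirs: $P$ is $1$-Lipschitz for the sup norm (from monotonicity and $P(f+c)=P(f)+c$), so it suffices to treat $f$ in the dense lattice of differences of Fubini--Study functions, and the lower approximants $\varphi_m$ built from the base ideals of $\{s\in H^0(X,mL) : \log|s|\le mf\}$ do increase along divisible $m$ to a psh candidate $\le P(f)$. Your final step's reliance on multiplier ideals is also the right idea, though the subadditivity inclusion is quoted backwards: Demailly--Ein--Lazarsfeld gives $\mathcal{J}(k\phi)\subseteq\mathcal{J}(\phi)^{k}$, not the reverse, and the estimate $\varphi_{km}\ge\varphi_m-C/m$ comes from the chain (base ideal) $\subseteq$ (asymptotic multiplier ideal) $\subseteq$ (base ideal in degree shifted by a fixed $m_0$), the last inclusion being uniform global generation.

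The genuine gap is your Step 3. The Boucksom--Jonsson proof never passes to the archimedean side: there is no geodesic ray, no Ross--Witt Nystr\"om correspondence, no Demailly analytic regularization and no Ohsawa--Takegoshi extension in it. As written, your step is close to circular: to ``associate to $P(f)$ a geodesic ray in $\mathcal{E}^1(X,\omega)$ whose slope recovers $P(f)$'' you must already know that $P(f)$ is a genuine (finite-energy, indeed continuous) non-archimedean psh metric, which is essentially what is being proved; and the assertion that Demailly approximants of the ray have multiplier ideals ``coinciding asymptotically with $\mathcal{I}_m$'' is precisely the hard comparison, stated rather than argued. Where smoothness and characteristic zero actually enter is purely algebraic: resolution of singularities provides snc models on which $f$ is governed by finitely many divisorial valuations, and Nadel/Kawamata--Viehweg vanishing together with Castelnuovo--Mumford regularity yield \emph{uniform global generation} of the asymptotic multiplier ideals $\mathcal{J}(\mathfrak{a}_\bullet^{m})\otimes\mathcal{O}_X(mL+m_0L)$ with $m_0$ independent of $m$. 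That single estimate is what upgrades the increasing limit of the $\varphi_m$ to a uniform limit and hence gives continuity of $P(f)$. Substituting $L^2$ extension on the archimedean side for this input is not a repair of the argument but a change of problem --- and it is exactly the step with no known replacement when $X$ is singular.
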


The following are important consequences of the continuity of envelopes. 

\begin{thm}[Corollary 4.58 in \cite{BJ3}]
Assume the continuity of envelopes for $(X, L)$. 
Then the subspace 
\[ \{ \varphi \in \PSH (X, L) ~|~ \sup \varphi = 0 \} \] 
is compact with respect to the weak topology. 
\end{thm}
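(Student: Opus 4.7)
My plan is as follows.

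First, I would observe that $\sup \varphi = \varphi(v_{\mathrm{triv}})$ for every $\varphi \in \PSH(X, L)$, as noted earlier in the text. Hence the subspace
\[
K := \{ \varphi \in \PSH(X, L) : \sup \varphi = 0 \}
\]
consists of psh metrics pinned at the trivial valuation with $\varphi \le 0$ everywhere. Since the weak topology on $\PSH(X, L)$ is by definition pointwise convergence on the dense subset $X^{\mathrm{qm}} \subset X^{\mathrm{NA}}$, the restriction map
\[
\iota: K \hookrightarrow \prod_{v \in X^{\mathrm{qm}}} [-\infty, 0], \qquad \varphi \mapsto (\varphi(v))_{v \in X^{\mathrm{qm}}},
\]
is a topological embedding into a compact Hausdorff product. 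The task reduces to showing that every net $\{\varphi_i\}_{i \in I} \subset K$ admits a weakly convergent subnet whose limit lies in $K$.

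Given such a net, by Tychonoff compactness I would pass to a subnet $\{\varphi_j\}_{j \in J}$ such that $\phi_0(v) := \lim_j \varphi_j(v)$ exists in $[-\infty, 0]$ for every $v \in X^{\mathrm{qm}}$; note $\phi_0(v_{\mathrm{triv}}) = 0$ automatically. The natural candidate for the limit is the upper semicontinuous regularization $\varphi := \tilde\varphi^*$ of the net-limsup
\[
\tilde\varphi(v) := \inf_{j_0 \in J} \sup_{j \ge j_0} \varphi_j(v)
\]
on all of $X^{\mathrm{NA}}$. By construction $\varphi \le 0$, $\varphi(v_{\mathrm{triv}}) = 0$, and $\tilde\varphi|_{X^{\mathrm{qm}}} = \phi_0$.

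The substance of the argument is to verify that (i) $\varphi \in \PSH(X, L)$ and (ii) $\varphi(v) = \phi_0(v)$ for every $v \in X^{\mathrm{qm}}$, so that $\varphi_j \to \varphi$ weakly. Both rely decisively on continuity of envelopes. The main tool is the envelope operator $P : C^0(X^{\mathrm{NA}}) \to C^0(X^{\mathrm{NA}}) \cap \PSH(X, L)$, which under this hypothesis sends continuous functions to \emph{continuous} psh metrics. Via $P$ one establishes the following Hartogs-type dichotomy: for any uniformly bounded-above family in $\PSH(X, L)$, the usc regularization of its pointwise limsup is either in $\PSH(X, L)$ or identically $-\infty$. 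The normalization $\varphi(v_{\mathrm{triv}}) = 0$ excludes the latter and gives (i). For (ii), one uses that under continuity of envelopes psh metrics are already determined by their values on $X^{\mathrm{qm}}$ and their usc regularizations do not produce strict upward jumps at quasi-monomial points, so $\varphi|_{X^{\mathrm{qm}}} = \tilde\varphi|_{X^{\mathrm{qm}}} = \phi_0$.

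The principal obstacle I expect is (i). Without the continuity-of-envelopes hypothesis, the usc envelope $\tilde\varphi^*$ is merely an upper semicontinuous function, and there is no a priori way to represent it as a decreasing net of elements of $\nH(X, L)$ — which is what psh-ness demands by definition. The hypothesis supplies exactly this mechanism: one approximates $\tilde\varphi$ from above by continuous functions $f_\alpha \searrow \tilde\varphi$ and takes $P(f_\alpha) \in C^0 \cap \PSH(X, L)$, obtaining a decreasing net of Fubini--Study-type psh metrics whose limit is $\varphi$. The routine verifications that this approximation respects values on $X^{\mathrm{qm}}$ and that the decreasing limit is non-trivial (using $\varphi(v_{\mathrm{triv}}) = 0$) complete the argument.
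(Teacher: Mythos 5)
The paper does not prove this statement at all: it is quoted verbatim as Corollary 4.58 of \cite{BJ3}, so there is no internal proof to compare against. Your sketch reconstructs the standard Boucksom--Jonsson argument and is essentially correct: the weak topology is pointwise convergence on $X^{\mathrm{qm}}$ (equivalently $X^{\mathrm{div}}$), Tychonoff gives a subnet converging pointwise there, and continuity of envelopes supplies the two needed inputs, namely the Hartogs-type property that the usc regularization of a bounded-above family of psh metrics is again psh, and the fact that this regularization does not jump at quasi-monomial points, so the limit agrees with the pointwise limit on $X^{\mathrm{qm}}$ and the normalization $\varphi(v_{\mathrm{triv}})=0$ rules out the degenerate case.

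One small technical point: defining the candidate limit as $\varphi=\tilde\varphi^*$ with $\tilde\varphi$ the net-limsup is slightly awkward, because for $\psi_{j_0}:=\sup_{j\ge j_0}\varphi_j$ one only has $(\inf_{j_0}\psi_{j_0})^*\le\inf_{j_0}\psi_{j_0}^*$ a priori, so the psh-ness of $\tilde\varphi^*$ is not immediately the statement that a decreasing limit of psh metrics is psh. The clean fix is to take $\varphi:=\inf_{j_0}\psi_{j_0}^*$ directly: each $\psi_{j_0}^*$ is psh by the Hartogs property, the net is decreasing, its limit is psh or identically $-\infty$, and on $X^{\mathrm{qm}}$ one has $\psi_{j_0}^*=\psi_{j_0}$, so $\varphi|_{X^{\mathrm{qm}}}=\phi_0$ and $\varphi(v_{\mathrm{triv}})=0$ excludes the degenerate case. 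This is exactly the mechanism you describe in your final paragraph via $P(f_\alpha)$, so the gap is cosmetic rather than substantive.
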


\begin{thm}[Theorem 9.8 in \cite{BJ3} (cf. \cite{BJ1})]
\label{NA CY theorem}
Assume the continuity of envelopes for $(X, L)$. 
Then the Monge--Amp\`ere measure $\mathrm{MA}: \E^1 (X, L)/\mathbb{R} \to \mathcal{M}_{\mathrm{NA}}^1$ gives a homeomorphism. 
\end{thm}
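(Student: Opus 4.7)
The plan is to establish this non-archimedean Calabi--Yau theorem by the variational method, imitating the Berman--Boucksom--Guedj--Zeriahi strategy in the archimedean case and adapting it to the non-archimedean framework via the continuity of envelopes. The target is to show that $\mathrm{MA}: \E^1 (X, L)/\mathbb{R} \to \mathcal{M}^1 (X^{\mathrm{NA}})$ is a bijection which is continuous in both directions with respect to the strong topologies.

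For each $\mu \in \mathcal{M}^1 (X^{\mathrm{NA}})$, introduce the functional
\[ F_\mu (\varphi) := E (\varphi) - \int_{X^{\mathrm{NA}}} \varphi \, d\mu \]
on $\E^1 (X, L)$. Since the total mass $\mu (X^{\mathrm{NA}}) = (e^L)$ matches the homogeneity of $E$, the functional $F_\mu$ is translation invariant and descends to $\E^1 (X, L)/\mathbb{R}$. It is concave along affine segments (inherited from the concavity of $E$), bounded above by the Legendre dual $E^\vee (\mu) < \infty$, and upper semi-continuous with respect to the weak topology on $\{ \varphi \in \PSH (X, L) ~|~ \sup \varphi = 0 \}$. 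The first and crucial use of the continuity of envelopes hypothesis is that this normalized subspace is weakly compact, so that, after normalizing a maximizing net by $\sup \varphi_i = 0$, a weakly convergent subnet exists and its limit $\varphi \in \PSH (X, L)$ lies in $\E^1 (X, L)$ by the semi-continuity of $E$ and the a priori bound $E (\varphi_i) \ge \int \varphi_i d\mu - E^\vee (\mu) \ge -\sup \varphi - E^\vee (\mu)$.

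Next, I would show that any maximizer $\varphi$ of $F_\mu$ satisfies $\mathrm{MA} (\varphi) = \mu$. For a test function $\psi \in C^0 (X^{\mathrm{NA}})$ and small $t \ge 0$, form the envelope $\varphi_t := P (\varphi + t \psi)$, which remains in $\E^1 (X, L)$ thanks to the continuity of envelopes; here $P (f) = \sup \{ \varphi' \in \PSH (X, L) ~|~ \varphi' \le f \}$ is continuous for continuous $f$. Using the differentiability formula
\[ \frac{d}{dt}\Big|_{t=0^+} E (\varphi_t) = \int_{X^{\mathrm{NA}}} \psi \, \mathrm{MA} (\varphi), \]
which follows by approximating $\varphi$ by Fubini--Study metrics in $\nH (X, L)$ and passing to the strong limit using the continuity of $\mathrm{MA}$ quoted before the theorem, the optimality condition at $t = 0$ yields $\int \psi \, \mathrm{MA} (\varphi) = \int \psi \, d\mu$ for all $\psi \in C^0 (X^{\mathrm{NA}})$, giving surjectivity of $\mathrm{MA}$. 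Injectivity modulo constants follows from the strict concavity of $E$ transverse to translations: if $\mathrm{MA} (\varphi) = \mathrm{MA} (\psi) = \mu$, then both $\varphi$ and $\psi$ are maximizers of $F_\mu$, and the identity
\[ E (\varphi) + E (\psi) - 2 E (\tfrac{1}{2} (\varphi + \psi)) = 0 \]
forces $\varphi - \psi$ to be $\mathrm{MA}$-a.e.\ constant; the domination principle, valid for $\E^1$ under the continuity of envelopes, then upgrades this to pointwise equality modulo a constant.

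For the continuity of $\mathrm{MA}^{-1}$ in the strong topologies, I would use Legendre duality: strong convergence $\mu_i \to \mu$ in $\mathcal{M}^1 (X^{\mathrm{NA}})$ means weak convergence together with $E^\vee (\mu_i) \to E^\vee (\mu)$, and since the corresponding maximizers $\varphi_i$ of $F_{\mu_i}$ realize this extremum $F_{\mu_i} (\varphi_i) = -E^\vee (\mu_i)$, a standard stability argument (combined with weak compactness and upper semi-continuity of $F_\mu$) gives weak convergence of normalized maximizers $\varphi_i \to \varphi$, which upgrades to strong convergence via continuity of $E$. The main obstacle I anticipate is Step 3, the differentiation of $E$ along envelope paths $t \mapsto E (P (\varphi + t \psi))$: without the continuity of envelopes one cannot even ensure $P (\varphi + t \psi) \in \PSH (X, L)$ (only the usc regularization is psh a priori), and even granting this, identifying the derivative with $\int \psi \, \mathrm{MA} (\varphi)$ requires delicate approximation arguments and an orthogonality relation $\int (P (\varphi + t\psi) - (\varphi + t \psi)) \, \mathrm{MA} (P (\varphi + t \psi)) = 0$ whose proof in the non-archimedean setting is the technical heart of \cite{BJ1, BJ3}.
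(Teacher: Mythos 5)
The paper does not prove this statement—it is imported verbatim as Theorem 9.8 of \cite{BJ3}—and your outline is precisely the variational proof given there (and in \cite{BJ1}): maximize $F_\mu = E - \int \cdot \, d\mu$ on the weakly compact normalized set, identify the Euler--Lagrange equation via the differentiability of $t \mapsto E(P(\varphi + t\psi))$ and the orthogonality relation, and get uniqueness from concavity plus the domination principle. You correctly isolate the genuine technical heart (the envelope differentiability/orthogonality and the role of continuity of envelopes in compactness), so this is essentially the same approach as the cited proof.
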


\begin{thm}[Proposition 7.4 in \cite{BJ4}]
\label{NApsh associated to valuation of linear growth}
Assume the continuity of envelopes for $(X, L)$. 
Then the filtration $\mathcal{F}_v$ associated to a valuation of linear growth $v \in X^{\mathrm{lin}}$ is uniformly approximated in the sense of the previous section. 
As a consequence, $\varphi_v := \varphi_{\mathcal{F}_v}$ gives a continuous non-archimedean psh metric. 
Moreover, we have $\mathrm{MA} (\varphi_v) = (e^L). \delta_v$ and $\varphi_v (v) = 0$, which characterizes $\varphi_v$. 
\end{thm}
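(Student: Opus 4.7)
The plan is to construct $\varphi_v$ as the pointwise supremum of its Fubini--Study approximants and use the continuity of envelopes to upgrade this supremum to a continuous non-archimedean psh metric, then compute its Monge--Amp\`ere measure via the orthogonality property of envelopes.

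First, I would set up the approximants. For each sufficiently divisible $d$, the filtration $\mathcal{F}_v$ restricted to $R_d$ is $\mathcal{F}_v^\lambda R_d = \{s \in R_d : v(s) \geq \lambda\}$, so the associated Fubini--Study metric is
\[
\varphi_d(w) := \varphi_{\mathcal{F}_v}^{(d)}(w) = \frac{1}{d} \sup_{0 \neq s \in R_d}\bigl(v(s) - w(s)\bigr).
\]
By Proposition \ref{non-archimedean metric associated to filtration} the sequence $\{\varphi_d\}$ is increasing with pointwise limit $\sigma_w(\mathcal{F}_v)$ on $X^{\mathrm{val}}$. Since $v$ has linear growth, $\mathcal{F}_v$ is linearly bounded, hence $\sigma_w(\mathcal{F}_v)$ is finite and in fact uniformly bounded in $w$ (using the trivial bounds $\sigma_{\min, d} \leq \sigma_w \leq \sigma_{\max, d}$). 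Note also $\varphi_d(v) = 0$ for every $d$: the inequality $\leq$ is immediate from $v(s) - v(s) = 0$, and $\geq$ follows by choosing $s \in R_d$ not vanishing at the center of $v$.

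Second, I would invoke the continuity of envelopes. Define the regularized supremum
\[
\varphi_v := P\bigl(\sup_{d} \varphi_d\bigr),
\]
the largest element of $\PSH(X,L)$ pointwise dominated by the bounded lsc function $\sup_d \varphi_d$. The continuity of envelopes hypothesis gives $\varphi_v \in C^0 \cap \PSH(X,L)$. The next step is to show $\varphi_v = \sup_d \varphi_d$ pointwise on $X^{\mathrm{lin}}$: the inequality $\varphi_d \leq \varphi_v$ is automatic, while the reverse uses that for any $\psi \in \PSH(X,L)$ with $\psi \leq \sup_d \varphi_d$, we have $\mathcal{F}_\psi \supset \mathcal{F}_v[\sigma]$ for $\sigma > \sigma_w(\mathcal{F}_v)$ as read off from Proposition \ref{sigma and non-archimedean psh}, forcing $\psi(w) \leq \sigma_w(\mathcal{F}_v)$. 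Combined with Dini's Lemma \ref{Dini}, this yields uniform convergence $\varphi_d \nearrow \varphi_v$, so $\mathcal{F}_v$ is uniformly approximated in the sense of the previous subsection and $\varphi_v = \varphi_{\mathcal{F}_v}$. In particular $\varphi_v(v) = \lim_d \varphi_d(v) = 0$.

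Third, for the Monge--Amp\`ere identity I would use the orthogonality property of non-archimedean envelopes: $\mathrm{MA}(\varphi_v)$ is concentrated on the contact set $\{\varphi_v = \sup_d \varphi_d\}$. The crucial step is to show this contact set reduces to $\{v\}$: for any $w \neq v$ with $w \in X^{\mathrm{div}}$, the supremum $\sigma_w(\mathcal{F}_v)$ is \emph{not} attained by $\varphi_d$ at finite $d$ (only approached in the limit), and together with continuity this prevents $w$ from lying in the contact set in a measure-theoretically non-trivial way. Since $\mathrm{MA}(\varphi_v)$ is a Radon measure on $X^{\mathrm{NA}}$ of total mass $(e^L)$, we conclude $\mathrm{MA}(\varphi_v) = (e^L) \cdot \delta_v$. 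The characterization of $\varphi_v$ by $\mathrm{MA}(\varphi_v) = (e^L)\delta_v$ and $\varphi_v(v) = 0$ then follows from the uniqueness part of the non-archimedean Calabi--Yau theorem (Theorem \ref{NA CY theorem}), which gives uniqueness in $\E^1(X,L)/\mathbb{R}$, with the normalization $\varphi_v(v) = 0$ fixing the constant.

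The main obstacle will be the identification of the contact set $\{\varphi_v = \sup_d \varphi_d\}$ with $\{v\}$ in the Monge--Amp\`ere computation. This requires combining the explicit sup expression for $\varphi_d$ with the fact that a divisorial (or more generally linear-growth) valuation $v$ can be separated from any other $w \in X^{\mathrm{div}}$ by test sections realizing a strict gap $v(s)/d - w(s)/d < \sigma_w(\mathcal{F}_v)$ at each finite $d$; making this quantitative enough to rule out positive mass at $w \neq v$ is where the density of $X^{\mathrm{div}} \subset X^{\mathrm{NA}}$ and the envelope orthogonality need to be carefully coordinated.
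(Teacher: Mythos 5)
The paper itself offers no proof of this statement --- it is quoted directly from [BJ4, Proposition 7.4] --- so your argument has to stand on its own, and as written it has two genuine gaps, located exactly where the real work of the theorem lies.

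First, the continuity step is circular. Each $\varphi_d = \varphi_{\mathcal{F}_v}^{(d)}$ is itself a continuous psh metric with $\varphi_d \le \sup_{d'}\varphi_{d'}$, so every $\varphi_d$ is a competitor in the supremum defining $P$, and since $P(f)\le f$ always, one gets $P(\sup_d\varphi_d)=\sup_d\varphi_d$ \emph{identically and trivially}; applying the envelope operator buys nothing. More importantly, the continuity-of-envelopes hypothesis as defined in the paper only asserts continuity of $P(f)$ for \emph{continuous} $f$, whereas the obstacle $\sup_d\varphi_d$ is merely lower semi-continuous --- and whether this increasing limit is also upper semi-continuous is precisely the content of ``uniformly approximated''. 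The appeal to Dini's lemma does not close this: Dini in the increasing direction needs continuity of the limit as a hypothesis in order to conclude uniform convergence, which is what you are trying to prove (and Lemma \ref{Dini} of the paper is in any case stated for decreasing nets). The auxiliary claim that $\psi\le\sup_d\varphi_d$ forces $\mathcal{F}_\psi\supset\mathcal{F}_v[\sigma]$ also has the inclusion in a direction that neither follows nor is needed. A genuinely different input is required here, e.g. realizing $\varphi_v$ via envelopes of \emph{continuous} obstacle functions.

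Second, the Monge--Amp\`ere computation collapses for the same reason: since $\varphi_v=\sup_d\varphi_d$ everywhere, the ``contact set'' is all of $X^{\mathrm{NA}}$ and orthogonality gives no localization whatsoever. Your closing paragraph correctly flags this as the main obstacle, but the proposed remedy (strict gaps $v(s)/d-w(s)/d<\sigma_w(\mathcal{F}_v)$ at each finite $d$) is invisible to orthogonality, which concerns the contact set with the obstacle function, not whether suprema are attained at finite level. The known route is instead variational: linear growth of $v$ gives $(e^L)\delta_v\in\mathcal{M}^1(X^{\mathrm{NA}})$, and one identifies $\varphi_v$ as the maximizer of $\psi\mapsto E(\psi)-(e^L)\psi(v)$, whence $\mathrm{MA}(\varphi_v)=(e^L)\delta_v$ by Theorem \ref{NA CY theorem}. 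The parts of your proposal that do hold up are the setup of the approximants, the identity $\varphi_d(v)=0$, and the final uniqueness-plus-normalization argument.
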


\begin{thm}[Theorem B in \cite{BJ4}]
Assume the continuity of envelopes for $(X, L)$. 
Then the metric space $(\E^1 (X, L), d_1)$ is complete. 
\end{thm}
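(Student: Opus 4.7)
The plan is to adapt the Darvas-style completeness argument from archimedean pluripotential theory to the present non-archimedean setting, leveraging the continuity of envelopes to produce well-behaved rooftop envelopes at each stage.

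First I would take a $d_1$-Cauchy sequence $\{\varphi_j\} \subset \E^1(X, L)$. Since $|E(\varphi_j) - E(\varphi_k)| \lesssim d_1(\varphi_j, \varphi_k)$, the numerical sequence $E(\varphi_j)$ is Cauchy and bounded. Because $d_1$ is invariant under translation by common constants, I can normalize $\sup \varphi_j = 0$ uniformly in $j$. The compactness of $\{ \varphi \in \PSH(X,L) : \sup \varphi = 0 \}$ in the weak topology --- precisely where the continuity of envelopes hypothesis enters --- then extracts a weakly convergent subsequence $\varphi_{j_k} \to \varphi \in \PSH(X, L)$.

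Next I would construct monotone sandwich approximations to $\varphi$ via rooftop envelopes. For each $j$ define
\[
\psi_j := \lim_{N \to \infty} P\!\left( \min_{j \le k \le N} \varphi_{j_k} \right), \qquad \Psi_j := \lim_{N \to \infty} \left( \sup_{j \le k \le N} \varphi_{j_k} \right)^{\!*},
\]
where $P$ denotes the non-archimedean psh envelope. Continuity of envelopes guarantees that every stage of the construction lives in $\PSH(X,L)$; moreover an energy estimate of the form $E(\varphi) + E(\psi) - 2 E(P(\min(\varphi, \psi))) \asymp d_1(\varphi, \psi)^? $ gives uniform lower bounds on the energies of iterated rooftops through the Cauchy assumption, so that $\psi_j, \Psi_j \in \E^1(X, L)$. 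Monotone continuity of $E$ along decreasing nets combined with the $d_1$-Cauchy bound then shows $\psi_j \nearrow \varphi$ and $\Psi_j \searrow \varphi$ with $E(\psi_j), E(\Psi_j) \to E(\varphi)$; in particular $\varphi \in \E^1(X, L)$.

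Finally, $d_1$-convergence $\varphi_j \to \varphi$ follows by sandwiching: along the subsequence $\psi_j \le \varphi_{j_k} \le \Psi_j$ (for $k \ge j$) with matching limiting energies yields $d_1(\varphi_{j_k}, \varphi) \to 0$, and the original sequence then converges by the Cauchy property. The main obstacle I anticipate is establishing the fundamental identity linking $d_1$ to rooftop energies in the non-archimedean framework, together with stable energy control along iterated minima to prevent collapse to $-\infty$. Both rely critically on the continuity of envelopes to keep all intermediate envelopes inside $C^0 \cap \PSH(X, L)$ and to invoke the Calabi--Yau-type homeomorphism $\mathrm{MA}: \E^1(X,L)/\mathbb{R} \to \mathcal{M}^1(X^{\mathrm{NA}})$ of Theorem \ref{NA CY theorem} when upgrading weak to strong limits.
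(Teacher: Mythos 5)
The paper does not actually prove this statement---it quotes it as Theorem B of \cite{BJ4}---but it proves the directly analogous completeness of $(\E^{\exp} (X, L), d_{\exp})$ in section \ref{Completeness} by exactly the iterated-rooftop (Darvas) scheme you propose, so your approach is essentially the paper's. Two small points: the energy identity you leave with a question mark holds with exponent $1$, namely $d_1 (\varphi, \psi) = E (\varphi) + E (\psi) - 2 E (\varphi \wedge \psi)$, which is precisely the formula the paper records after the definition of $d_1$ and is what gives the uniform lower energy bound $E (\hat{\varphi}_j^p) \ge E (\varphi_{j_j}) - \sum_{k \ge j} 2^{-k}$ preventing collapse to $-\infty$. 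Also, the upper envelopes $\Psi_j$ and the initial weak-compactness extraction are dispensable: the limit $\varphi$ can simply be taken as the increasing limit of the $\psi_j$ (via Proposition \ref{convergence of increasing sequence}, which is where the continuity of envelopes genuinely enters), and the one-sided estimate $d_1 (\varphi_{j_k}, \varphi) \le (E (\varphi_{j_k}) - E (\psi_j)) + (E (\varphi) - E (\psi_j))$ already closes the argument; the two-sided sandwich through $\Psi_j$ would additionally require a contraction estimate for maxima analogous to (\ref{d1 comparison for rooftop}) that the paper never establishes, so it is cleaner to drop it.
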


\subsubsection{Existence of rooftops}
\label{existence of rooftop}

\begin{defin}
For $\varphi, \varphi' \in \PSH (X, L)$, a non-archimedean psh metric $\hat{\varphi} \in \PSH (X, L)$ is called the \textit{rooftop} of $\varphi, \varphi'$ if 
\begin{itemize}
\item $\hat{\varphi} \le \min \{ \varphi, \varphi' \}$ and 

\item for any $\varphi'' \in \PSH (X, L)$ with $\varphi'' \le \min \{ \varphi, \varphi' \}$, we have $\varphi'' \le \hat{\varphi}$. 
\end{itemize}
\end{defin}

If the rooftop exists, it is unique. 
We denote it by $\varphi \wedge \varphi'$. 
We call attention that our convention on $\wedge$ is different from \cite{BJ2, BJ3}, where they just put $\varphi \wedge \varphi' := \min \{ \varphi, \varphi' \}$: our $\varphi \wedge \varphi'$ corresponds to the envelope $P (\varphi \wedge \varphi')$ in their convention. 

\begin{lem}
If $\varphi, \varphi' \in C^0 \cap \PSH (X, L)$ and $\tilde{\varphi} \in \PSH (X, L)$ satisfies $\tilde{\varphi} \le \min \{ \varphi, \varphi' \}$ and $\varphi'' \le \tilde{\varphi}$ for every $\varphi'' \in \nH (X, L)$ with $\varphi'' \le \min \{ \varphi, \varphi' \}$, then $\tilde{\varphi}$ is the rooftop $\varphi \wedge \varphi'$. 
\end{lem}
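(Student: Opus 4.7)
The goal is to upgrade the hypothesized comparison with members of $\nH(X,L)$ to a comparison with every competitor in $\PSH(X,L)$, thereby showing $\tilde\varphi$ satisfies the defining universal property of $\varphi \wedge \varphi'$. The continuity of $\varphi, \varphi'$ should make Dini's Lemma~\ref{Dini} do the heavy lifting, so the proof will be essentially bookkeeping.

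Concretely, I would fix an arbitrary competitor $\varphi'' \in \PSH(X,L)$ with $\varphi'' \le \min\{\varphi,\varphi'\}$ and aim to show $\varphi'' \le \tilde\varphi$. By the defining property of $\PSH(X,L)$, choose a decreasing net $\varphi''_i \searrow \varphi''$ with $\varphi''_i \in \nH(X,L)$. For each rational $\epsilon > 0$, applying Dini's Lemma~\ref{Dini} twice — once with the continuous psh function $\varphi$ (which satisfies $\varphi'' \le \varphi$) and once with $\varphi'$ — produces an index $i_\epsilon$ such that $\varphi''_i < \varphi + \epsilon$ and $\varphi''_i < \varphi' + \epsilon$ for all $i \ge i_\epsilon$, that is, $\varphi''_i - \epsilon \le \min\{\varphi, \varphi'\}$.

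Next, because $\nH(X,L)$ is closed under addition of rational constants (translation by $\tau \in \mathbb{Q}$ corresponds to twisting $\mathcal{L}$ by $\tau$ times the central fibre $\mathcal{X}_0$), the shifted metric $\varphi''_i - \epsilon$ still lies in $\nH(X,L)$. The assumption applied to this competitor then forces $\varphi''_i - \epsilon \le \tilde\varphi$ for every $i \ge i_\epsilon$. Passing to the pointwise limit along the decreasing net gives $\varphi'' - \epsilon \le \tilde\varphi$, and letting $\epsilon \searrow 0$ through rationals yields $\varphi'' \le \tilde\varphi$ as required.

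I do not anticipate any real obstacle: the argument reduces entirely to Dini's Lemma together with the (mild) stability of $\nH(X,L)$ under rational shifts. In particular, I do not need to invoke the existence of the rooftop in general — only that if a candidate $\tilde\varphi$ satisfies the universal property against the smaller testing class $\nH(X,L)$, it automatically satisfies it against all of $\PSH(X,L)$, which is exactly what makes the lemma a useful sufficient criterion in practice.
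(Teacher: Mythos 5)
Your proof is correct and follows essentially the same route as the paper's: regularize $\varphi''$ by a decreasing net in $\nH(X,L)$, use Dini's lemma and the continuity of $\varphi,\varphi'$ to shift the approximants below $\min\{\varphi,\varphi'\}$ by a rational constant, invoke the hypothesis, and pass to the limit. The only (harmless) cosmetic difference is that you apply Dini's lemma separately to $\varphi$ and $\varphi'$ and explicitly note the stability of $\nH(X,L)$ under rational shifts, both of which the paper leaves implicit.
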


\begin{proof}
For $\varphi'' \in \PSH (X, L)$ with $\varphi'' \le \min \{ \varphi, \varphi' \}$, take a decreasing net $\{ \varphi_i'' \}_{i \in I} \subset \nH (X, L)$ converging to $\varphi''$. 
Since $\varphi, \varphi'$ are continuous, for any $\varepsilon > 0$ we can take $i_\varepsilon$ so that $\varphi_i'' \le \min \{\varphi, \varphi' \} + \varepsilon$ for $i \ge i_\varepsilon$ by Lemma \ref{Dini}. 
Then by our assumption, we get $\varphi_i'' -\varepsilon \le \tilde{\varphi}$, so that $\varphi'' \le \tilde{\varphi} + \varepsilon$ for any rational $\varepsilon > 0$. 
Thus we obtain $\varphi'' \le \tilde{\varphi}$, which shows that $\tilde{\varphi}$ is indeed the rooftop. 
\end{proof}

\begin{prop}
\label{test configuration associated to phi wedge tau}
For $\varphi \in \nH (X, L)$ and $\tau \in \mathbb{Q}$, the rooftop $\varphi \wedge \tau$ exists in $\nH (X, L)$ without assuming the continuity of envelopes. 
\end{prop}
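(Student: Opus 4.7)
The plan is to exhibit $\varphi \wedge \tau$ as the non-archimedean metric of a normal test configuration obtained via a componentwise minimum-of-divisors construction followed by passage to the ample model. Writing $\varphi = \varphi_{(\mathcal{X}, \mathcal{L})}$, I would take a normal $\mathbb{G}_m$-equivariant birational model $\tilde{\mathcal{X}}$ dominating both $\mathcal{X}$ and $X \times \mathbb{A}^1$ via $\beta$ and $p_X \times \tilde{\varpi}$; with $\tilde{\mathcal{X}}_0 = \sum_i m_i E_i$ and $\beta^* \mathcal{L} - p_X^* L = \sum_i a_i E_i$, I form the $\mathbb{Q}$-Cartier divisor
\[
\tilde{\mathcal{L}}_\wedge := p_X^* L + \sum_i \min(a_i, \tau m_i) E_i
\]
on $\tilde{\mathcal{X}}$ and consider the graded $\mathbb{C}[\varpi]$-algebra $\mathcal{R}_\wedge := \bigoplus_{m \ge 0} H^0(\tilde{\mathcal{X}}, m \tilde{\mathcal{L}}_\wedge)$.

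The main obstacle is that $\tilde{\mathcal{L}}_\wedge$, being a componentwise minimum of two nef $\mathbb{Q}$-divisors, need not be relatively semi-ample, so $(\tilde{\mathcal{X}}, \tilde{\mathcal{L}}_\wedge)$ is not itself a test configuration. I would address this by showing $\mathcal{R}_\wedge$ is finitely generated over $\mathbb{C}[\varpi]$. Identifying $\mathcal{R}_\wedge$ with the Rees algebra of the truncated filtration
\[
\mathcal{F}^\lambda R_m := \mathcal{F}_\varphi^\lambda R_m \cap \mathcal{F}_{v_{\mathrm{triv}}}[\tau]^\lambda R_m,
\]
which equals $\mathcal{F}_\varphi^\lambda R_m$ for $\lambda \le m\tau$ and vanishes otherwise, finite generation of $\mathcal{R}_\wedge$ reduces to that of $\mathcal{F}_\varphi$ (Theorem \ref{finite generation of Fphi}) together with the elementary observation that capping at a constant weight-per-degree preserves finite generation --- in $R^{(d)}$ for $d$ clearing the denominator of $\tau$, replace each generator of weight exceeding $d\tau$ by its truncation. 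Then $(\mathcal{X}', \mathcal{L}') := (\Proj_{\mathbb{A}^1} \mathcal{R}_\wedge, \mathcal{O}(1))$ is a normal test configuration with reduced central fibre by Proposition \ref{Krull envelope has reduced central fibre}, and its associated filtration satisfies $\mathcal{F}_{\varphi_{(\mathcal{X}', \mathcal{L}')}} = \widehat{\mathcal{F}}_{(\mathcal{X}', \mathcal{L}')} = \mathcal{F}$ by Proposition \ref{filtration associated to Fubini--Study metric and the stabilization} and Proposition \ref{reduced central fibre implies homogeneity}.

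It remains to verify via the preceding lemma that $\varphi_{(\mathcal{X}', \mathcal{L}')}$ is the rooftop. For hypothesis (a), Proposition \ref{explicit formula for sigma} gives $\sigma_{E'}(\mathcal{X}', \mathcal{L}') = \min(\sigma_{E'}(\mathcal{X}, \mathcal{L}), \tau)$ on every $\mathbb{G}_m$-invariant prime divisor $E'$ centered on the central fibre, hence $\varphi_{(\mathcal{X}', \mathcal{L}')}(v_{E'}) = \min(\varphi(v_{E'}), \tau)$ on all divisorial valuations; by density of $X^{\mathrm{div}}$ in $X^{\mathrm{NA}}$ and continuity of $\varphi_{(\mathcal{X}', \mathcal{L}')}$, this upgrades to $\varphi_{(\mathcal{X}', \mathcal{L}')} \le \min\{\varphi, \tau\}$ pointwise. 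For hypothesis (b), any $\varphi'' \in \nH(X, L)$ with $\varphi'' \le \min\{\varphi, \tau\}$ has $\mathcal{F}_{\varphi''} \subset \mathcal{F}_\varphi \cap \mathcal{F}_{v_{\mathrm{triv}}}[\tau] = \mathcal{F} = \mathcal{F}_{\varphi_{(\mathcal{X}', \mathcal{L}')}}$ by Proposition \ref{non-archimedean metric associated to filtration}, which yields $\varphi'' \le \varphi_{(\mathcal{X}', \mathcal{L}')}$ by the same reference. Thus $\varphi_{(\mathcal{X}', \mathcal{L}')} = \varphi \wedge \tau \in \nH(X, L)$.
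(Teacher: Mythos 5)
Your overall architecture --- realize $\varphi \wedge \tau$ as the ample model of the componentwise minimum of divisors on a common resolution, identify its section ring with the Rees algebra of $\mathcal{F}_\varphi \cap \mathcal{F}_{v_{\mathrm{triv}}}[\tau]$, and verify the rooftop axioms through $\sigma_v$ --- is genuinely different from the paper's proof, which instead writes $\varphi \wedge \tau = \inf_{\rho \in [0,1]}\{\varphi_{;\rho} + (1-\rho)\tau\}$ and quotes the combinatorial fact that the infimum of a Fubini--Study segment is again Fubini--Study (\cite{Remi}). However, your argument has a genuine gap at precisely the step that carries all the weight: the ``elementary observation'' that capping a finitely generated filtration at slope $\tau$ preserves finite generation, justified by truncating the weights of the degree-$d$ generators. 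That recipe produces a filtration which is in general \emph{strictly smaller} than $\mathcal{F}_\varphi \cap \mathcal{F}_{v_{\mathrm{triv}}}[\tau]$. Concretely, take $(X,L) = (\mathbb{P}^1, \mathcal{O}(1))$, $d=1$, $\tau = 0$, and $\varphi$ with diagonal basis $s_0, s_1$ of $R_1$ of weights $\lambda_0 = 1$, $\lambda_1 = -1$ (the toric metric with $q_\varphi(\mu) = 2\mu - 1$ on $P = [0,1]$). The truncated weights are $0$ and $-1$, so $s_0 s_1 \in R_2$ acquires weight $-1$ in the filtration generated by the truncated data; but $s_0 s_1$ has $\mathcal{F}_\varphi$-weight $0 \le 2\tau$, hence lies in $(\mathcal{F}_\varphi \cap \mathcal{F}_{v_{\mathrm{triv}}}[0])^0 R_2$, and being a monomial it admits no alternative expression. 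So the two filtrations already differ in degree $2$. In this example the capped filtration happens to be generated in degree $2$, but your argument supplies no mechanism producing such a degree in general; asserting that some finite degree works is essentially asserting the finite generation you set out to prove.

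The step can be repaired, but not by truncating generators in degree $d$. One clean fix: after rescaling so that $\mathcal{F}_\varphi$ is $\mathbb{Z}$-graded on $R^{(d)}$ with $d\tau \in \mathbb{Z}$, equip the (finitely generated, hence Noetherian) Rees algebra of $\mathcal{F}_\varphi$ with the auxiliary $\mathbb{Z}$-grading $\deg(\varpi^{-\lambda}s) = m\tau - \lambda$ for $s \in R_m$; then $\mathcal{R}_\wedge$ is its non-negatively graded part, and the non-negative part of a Noetherian $\mathbb{Z}$-graded ring is a finitely generated algebra. With that substitution the rest of your argument goes through: for the identification of $H^0(\tilde{\mathcal{X}}, m\tilde{\mathcal{L}}_\wedge)$ with the capped filtration you do need the strict transform of $X \times \{0\}$ to occur among the $E_i$ (which holds because $\tilde{\mathcal{X}}$ dominates $X \times \mathbb{A}^1$), and the reducedness of the central fibre follows from Proposition \ref{Krull envelope has reduced central fibre} once you note $\mathcal{G} = \widehat{\mathcal{G}}$ for $\mathcal{G} = \mathcal{F}_\varphi \cap \mathcal{F}_{v_{\mathrm{triv}}}[\tau]$, which is immediate from $\sigma_v(\mathcal{G}) \le \min\{\varphi(v), \tau\}$. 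The paper's route avoids the finite generation question entirely by working directly with max-of-$\log|s_i|$ expressions.
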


\begin{proof}
The path $\{ \varphi_{; \rho} \}_{[0,1]}$ connecting the trivial metric $0$ and $\varphi$ is a Fubini--Study segment in the sense of \cite[Definition 4.1.1]{Remi}: we can write $\phi_{;\rho} = \varphi_{;\rho} + \phi_{\mathrm{triv}} = m^{-1} \max \{ \log |s_i| + \rho \lambda_i \}$. 
Then by the proof of \cite[Lemma 5.2.1]{Remi}, 
\[ \hat{\varphi}^\tau := \inf_{\rho \in [0,1]} \{ \varphi_{;\rho} + (1-\rho) \tau \} \] 
defines a Fubini--Study non-archimedean psh metric: $\hat{\varphi}^\tau \in \nH (X, L)$ for $\tau \in \mathbb{Q}$. 
Here we strengthen the proof consists of a combinatorial argument and does not rely on the continuity of envelopes. 
We show that this $\hat{\varphi}^\tau$ is the rooftop of $\varphi, \tau$. 
Since $\varphi_{;0} + (1-0) \tau = \tau$ and $\varphi_{;1} + (1-1) \tau = \varphi$, we have $\hat{\varphi}^\tau \le \min \{ \varphi, \tau \}$. 

By the above lemma, it suffices to show $\varphi' \le \hat{\varphi}^\tau$ for $\varphi' \in \nH (X, L)$ satisfying $\varphi' \le \min \{ \varphi, \tau \}$. 
We note $\varphi' \le \hat{\varphi}^\tau$ iff $\| \cdot \|^{\hat{\varphi}^\tau}_m \le \| \cdot \|^{\varphi'}_m$. 
Since
\begin{align*}
\| s \|^{\hat{\varphi}^\tau}_m 
&= \sup_{v \in X^{\mathrm{val}}} e^{-v (s) - m \hat{\varphi}^\tau (v)}
= \sup_{v \in X^{\mathrm{val}}} \sup_{\rho \in [0,1]} e^{-v (s) - m \rho \varphi (\rho^{-1} v) - m (1-\rho) \tau}, 
\end{align*} 
it suffices to bound $e^{-v (s) - m \rho \varphi (\rho^{-1} v) - m (1-\rho) \tau}$. 
Putting $v' := \rho^{-1} v$, we compute 
\begin{align*}
e^{-v (s) - m \rho \varphi (\rho^{-1} v) - m (1-\rho) \tau}
&= (e^{-v' (s) - m \varphi (v')})^\rho (e^{- m \tau})^{1-\rho}
\le (\| s \|^\varphi_m)^\rho (\| s \|^\tau_m)^{1-\rho} \le \| s \|^{\varphi'}_m 
\end{align*}
as desired, where we used $\max \{ \| \cdot \|^\varphi_m, \| \cdot \|^\tau_m \} \le \| \cdot \|^{\varphi'}_m$. 
\end{proof}

By the definition of the rooftop, we have $\varphi_2 \wedge \varphi'_2 \le \varphi_1 \wedge \varphi'_1$ for $\varphi_2 \le \varphi_1, \varphi'_2 \le \varphi'_1$ if the rooftops exist. 
In particular, for decreasing nets $\{ \varphi_i \}_{i \in I}, \{ \varphi'_j \}_{j \in J}$, $\{ \varphi_i \wedge \varphi'_j \}_{(i, j) \in I \times J}$ gives a decreasing net if the rooftops exist. 
Here $(i, j) \le (k, l)$ for $(i, j), (k, l) \in I \times J$ iff $i \le k$ and $j \le l$. 

\begin{prop}
\label{decreasing limit of rooftop}
Let $\{ \varphi_i \}_{i \in I}, \{ \varphi_j' \}_{j \in J} \subset \PSH (X, L)$ be decreasing nets converging to $\varphi, \varphi' \in \PSH (X, L)$, respectively. 
Suppose the rooftop $\varphi_i \wedge \varphi_j'$ exists for each $(i, j) \in I \times J$. 
Then the rooftop $\varphi \wedge \varphi'$ exists if and only if the pointwise limit of $\varphi_i \wedge \varphi'_j$ exists in $\PSH (X, L)$ ($\Leftrightarrow$ the pointwise limit is not identically $-\infty$). 
\end{prop}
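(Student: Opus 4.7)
The plan is to introduce $\psi := \inf_{(i,j) \in I \times J}(\varphi_i \wedge \varphi_j')$, which is the pointwise limit of the decreasing net $\{\varphi_i \wedge \varphi_j'\}_{(i,j) \in I \times J}$ indexed by the product directed set (decreasing because $(i,j) \mapsto \varphi_i \wedge \varphi_j'$ is monotone, as remarked just before the statement). Since each term lies in $\PSH(X,L)$, the dichotomy recalled in the excerpt — that a decreasing net in $\PSH(X,L)$ converges pointwise either to an element of $\PSH(X,L)$ or to $-\infty$ identically — gives the parenthetical equivalence. The body of the proof then amounts to identifying $\psi$ with $\varphi \wedge \varphi'$ whenever either side lives in $\PSH(X,L)$, and this reduces to two applications of the universal property of rooftops.

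First I would check $\psi \le \min\{\varphi,\varphi'\}$: from $\varphi_i \wedge \varphi_j' \le \min\{\varphi_i,\varphi_j'\}$, passing to the (decreasing) pointwise limit in $(i,j)$ and using $\varphi_i \searrow \varphi$, $\varphi_j' \searrow \varphi'$ yields the bound. Next I would establish the universal property: for any $\tilde\varphi \in \PSH(X,L)$ with $\tilde\varphi \le \min\{\varphi,\varphi'\}$, the inequalities $\varphi \le \varphi_i$ and $\varphi' \le \varphi_j'$ give $\tilde\varphi \le \min\{\varphi_i,\varphi_j'\}$, so by the defining property of $\varphi_i \wedge \varphi_j'$ we obtain $\tilde\varphi \le \varphi_i \wedge \varphi_j'$ for every $(i,j)$, and taking the infimum gives $\tilde\varphi \le \psi$.

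From these two steps the equivalence falls out. If $\psi \in \PSH(X,L)$, then $\psi$ satisfies both clauses defining the rooftop, so $\varphi \wedge \varphi'$ exists and equals $\psi$. Conversely, suppose $\varphi \wedge \varphi'$ exists. Applying the universal property of $\varphi_i \wedge \varphi_j'$ to the bound $\varphi \wedge \varphi' \le \min\{\varphi,\varphi'\} \le \min\{\varphi_i,\varphi_j'\}$ yields $\varphi \wedge \varphi' \le \varphi_i \wedge \varphi_j'$, and passing to the infimum gives $\varphi \wedge \varphi' \le \psi$; combined with $\psi \le \min\{\varphi,\varphi'\}$ and another invocation of the universal property of $\varphi \wedge \varphi'$ this forces $\psi = \varphi \wedge \varphi'$, which in particular lies in $\PSH(X,L)$.

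There is no real obstacle: the argument is purely formal once one invokes the decreasing-net dichotomy in $\PSH(X,L)$ recalled earlier in the excerpt. The only point worth flagging is that in the converse direction, the lower bound $\varphi \wedge \varphi' \le \psi$ by an element of $\PSH(X,L)$ (which by definition is not identically $-\infty$) is precisely what rules out the degenerate case $\psi \equiv -\infty$, thereby placing $\psi$ in $\PSH(X,L)$ as required.
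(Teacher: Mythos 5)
Your proof is correct and follows essentially the same route as the paper's: both identify the pointwise limit of the decreasing net $\{\varphi_i\wedge\varphi_j'\}$ with $\varphi\wedge\varphi'$ by checking the two clauses of the rooftop's universal property, and both use the decreasing-net dichotomy in $\PSH(X,L)$ to handle the existence question. No gaps.
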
 

\begin{proof}
Suppose $\varphi \wedge \varphi'$ exists. 
As $\varphi \wedge \varphi' \le \varphi_i \wedge \varphi'_j$, we have $\lim_{(i, j) \to \infty} (\varphi_i \wedge \varphi'_j) (x) \ge (\varphi \wedge \varphi') (x)$ for every $x \in X^{\mathrm{NA}}$. 
Since $\varphi \wedge \varphi'$ is not identically $-\infty$, the limit is also not identically $-\infty$, so it exists in $\PSH (X, L)$. 

Conversely, suppose the limit $\lim_{(i, j) \to \infty} \varphi_i \wedge \varphi'_j$ exists in $\PSH (X, L)$. 
We show the limit satisfies the axiom of the rooftop $\varphi, \varphi'$. 
Since $\varphi_i \wedge \varphi'_j \le \min \{ \varphi_i, \varphi'_j \}$, we have $\lim_{(i, j) \to \infty} \varphi_i \wedge \varphi'_j \le \min \{ \varphi_i, \varphi'_j \}$. 
Thus we get $\lim_{(i, j) \to \infty} \varphi_i \wedge \varphi'_j \le \min \{ \varphi, \varphi' \}$. 
Take $\varphi'' \in \PSH (X, L)$ so that $\varphi'' \le \min \{ \varphi, \varphi' \}$. 
Then since $\varphi'' \le \min \{ \varphi_i, \varphi'_j \}$, we have $\varphi'' \le \varphi_i \wedge \varphi'_j$. 
Thus we get $\varphi'' \le \lim_{(i, j) \to \infty} \varphi_i \wedge \varphi'_j$, which shows the claim. 
\end{proof}

\begin{cor}
For $\varphi \in \PSH (X, L)$ and $\tau \in \mathbb{R}$, $\varphi \wedge \tau$ exists in $\PSH (X, L)$ without assuming the continuity of envelopes. 
If $\varphi \in \E^1 (X, L)$, then $\varphi \wedge \tau \in \E^1 (X, L)$. 
\end{cor}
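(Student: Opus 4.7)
The plan is to reduce to Proposition \ref{test configuration associated to phi wedge tau} and Proposition \ref{decreasing limit of rooftop} by a double approximation. Fix a decreasing net $\{\varphi_i\}_{i \in I} \subset \nH(X, L)$ with $\varphi_i \searrow \varphi$ (which exists by the definition of $\PSH(X, L)$) and a decreasing sequence $\{\tau_j\}_{j \in \mathbb{N}} \subset \mathbb{Q}$ with $\tau_j \searrow \tau$. Each $\tau_j$ lies in $\nH(X, L)$ as the constant metric, so by Proposition \ref{test configuration associated to phi wedge tau} the rooftop $\varphi_i \wedge \tau_j$ exists in $\nH(X, L)$ for every $(i, j)$. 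Both nets are decreasing in $\PSH(X, L)$ with limits $\varphi$ and $\tau$, so by Proposition \ref{decreasing limit of rooftop} it remains only to verify that the pointwise limit $\lim_{(i,j)} \varphi_i \wedge \tau_j$ is not identically $-\infty$.

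To produce a uniform lower bound, set $C := \max\{0, \sup\varphi - \tau\}$, which is finite because $\sup\varphi = \varphi(v_{\mathrm{triv}}) < \infty$ by upper semi-continuity on the compact space $X^{\mathrm{NA}}$. Put $\psi := \varphi - C$. A constant shift preserves the psh class (approximating $\varphi_i + c_k$ with $c_k \in \mathbb{Q}$, $c_k \searrow -C$, yields a decreasing net in $\nH$ converging to $\psi$), so $\psi \in \PSH(X, L)$. By construction $\psi \le \varphi \le \varphi_i$ and $\psi \le \sup\varphi - C \le \tau \le \tau_j$ for all $(i, j)$, hence the defining maximality of the rooftop gives $\psi \le \varphi_i \wedge \tau_j$. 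Passing to the pointwise limit, $\lim_{(i,j)} \varphi_i \wedge \tau_j \ge \psi$, which is not identically $-\infty$. Proposition \ref{decreasing limit of rooftop} then delivers $\varphi \wedge \tau \in \PSH(X, L)$.

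For the finite-energy statement, suppose $\varphi \in \E^1(X, L)$. The same lower bound $\varphi - C \le \varphi \wedge \tau$ together with the monotonicity of $E$ (immediate from its definition as an infimum, since a larger target loosens the admissible set of dominating test configurations) yields $E(\varphi \wedge \tau) \ge E(\varphi - C)$. The standard identity $E(\varphi + c) = E(\varphi) + c(e^L)$ (obtained by shifting a candidate $\mathcal{L}$ by the vertical divisor $c\mathcal{X}_0$ and using $\bar{\mathcal{X}}_0^2 \equiv 0$, $\bar{\mathcal{L}}^n \cdot \bar{\mathcal{X}}_0 = (L^n)$) then gives $E(\varphi \wedge \tau) \ge E(\varphi) - C(e^L) > -\infty$.

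The argument is essentially bookkeeping: the only nontrivial step is producing the psh lower bound $\psi$, for which the unbounded-above problem on $\mathbb{R}$-constants is dissolved by the observation that $\sup\varphi$ is finite. No genuine obstacle arises, since all technical input (existence in the $\nH \times \mathbb{Q}$ case, limit-of-rooftops criterion, monotonicity and shift behavior of $E$) is already in place.
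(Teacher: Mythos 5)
Your proof is correct and follows essentially the same route as the paper: your lower bound $\psi=\varphi-\max\{0,\sup\varphi-\tau\}$ is exactly the paper's $\varphi+\min\{0,\tau-\sup\varphi\}$, and both arguments reduce to the rational rooftop case plus Proposition \ref{decreasing limit of rooftop}, then use the shift identity $E(\varphi+c)=E(\varphi)+c(e^L)$ for the finite-energy claim. No substantive differences.
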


\begin{proof}
Since $\varphi + \min \{ 0, \tau - \sup \varphi \} \le \varphi, \tau$, we have $\varphi + \min \{ 0, \tau - \sup \varphi \} \le \varphi \wedge \tau$ for $\varphi \in \nH (X, L)$ and $\tau \in \mathbb{Q}$. 
For $\varphi \in \PSH (X, L)$ and $\tau \in \mathbb{R}$, take convergent decreasing nets $\varphi_i \searrow \varphi, \tau_i \searrow \tau$ so that $\varphi_i \in \nH (X, L)$ and $\tau_i \in \mathbb{Q}$. 
Since 
\[ \varphi + \min \{ 0, \tau - \sup \varphi_i \} \le \varphi_i + \min \{ 0, \tau - \sup \varphi_i \} \le \varphi_i \wedge \tau_i, \]
we have 
\[ \varphi + \min \{ 0, \tau - \sup \varphi \} \le \lim_{i \to \infty} \varphi_i \wedge \tau_i, \] 
so that the limit exists, hence $\varphi \wedge \tau$ exists. 

If $\varphi \in \E^1 (X, L)$, then by the above estimate, we have 
\[ - \infty < E (\varphi) + \min \{ 0, \inf_i (\tau - \sup \varphi_i) \} \le E (\varphi \wedge \tau), \]
hence $\varphi \wedge \tau \in \E^1 (X, L)$. 
\end{proof}

\begin{prop}
\label{filtration of envelope}
Suppose $\varphi, \varphi' \in C^0 \cap \PSH (X, L)$ and the rooftop $\varphi \wedge \varphi'$ exists in $C^0 \cap \PSH (X, L)$, then we have 
\[ \mathcal{F}_{\varphi \wedge \varphi'} = \mathcal{F}_\varphi \cap \mathcal{F}_{\varphi'}. \]
\end{prop}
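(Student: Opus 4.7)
The first inclusion $\mathcal{F}_{\varphi \wedge \varphi'} \subset \mathcal{F}_\varphi \cap \mathcal{F}_{\varphi'}$ is immediate from $\varphi \wedge \varphi' \leq \min\{\varphi, \varphi'\}$ together with the evident monotonicity $\psi \leq \psi' \Rightarrow \mathcal{F}_\psi \subset \mathcal{F}_{\psi'}$ visible in the defining formula $\mathcal{F}_\psi^\lambda R_m = \{ s \in R_m \mid v(s) + m \psi(v) \geq \lambda \ \forall v \in \mathrm{Val}(X) \}$. For the reverse inclusion, I take $s \in \mathcal{F}_\varphi^\lambda R_m \cap \mathcal{F}_{\varphi'}^\lambda R_m$; the hypotheses translate to $\psi_s(v) := (\lambda - v(s))/m \leq \min\{\varphi(v), \varphi'(v)\}$ on $\mathrm{Val}(X)$. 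My strategy is to exhibit $\psi_s$ as an element of $\PSH(X, L)$, so that the universal property of the rooftop will force $\psi_s \leq \varphi \wedge \varphi'$, which rearranges to $s \in \mathcal{F}_{\varphi \wedge \varphi'}^\lambda R_m$.

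To realise $\psi_s$ as a non-archimedean psh metric, I complete $s$ to a basis $t_1 = s, t_2, \ldots, t_N$ of $R_m$ and, for each $k \in \mathbb{N}_+$, introduce the non-archimedean norm $\|\cdot\|^{(k)}_m$ on $R_m$ diagonalised in this basis with weights $-\log \|t_1\|^{(k)}_m = \lambda$ and $-\log \|t_i\|^{(k)}_m = -k$ for $i \geq 2$. Let $\mathcal{F}_k$ be the finitely generated filtration on $R$ induced by $\|\cdot\|^{(k)}_m$ via formula (\ref{filtration associated to non-archimedean norm}). By Proposition \ref{polyhedral configuration and finite generation of filtration} this filtration arises from a polyhedral configuration, hence $\varphi_{\mathcal{F}_k} \in \pcH(X, L) \subset \PSH(X, L)$, and by Proposition \ref{non-archimedean metric associated to filtration} we have $\varphi_{\mathcal{F}_k}(v) = \sigma_v(\mathcal{F}_k)$. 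Since $\|\cdot\|^{(k)}_m$ is pointwise increasing in $k$, the sequence $\mathcal{F}_k$ decreases and so does $\varphi_{\mathcal{F}_k}$.

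The heart of the argument is to verify $\sigma_v(\mathcal{F}_k) \searrow \psi_s(v)$ for every $v \in \mathrm{Val}(X)$. The lower bound $\sigma_v(\mathcal{F}_k) \geq \psi_s(v)$ follows immediately by testing the containment $\mathcal{F}_k \subset \mathcal{F}_v[\sigma_v(\mathcal{F}_k)]$ on $t_1 \in \mathcal{F}_k^\lambda R_m$. For the reverse, I fix $\sigma > \psi_s(v)$ and show $\mathcal{F}_k \subset \mathcal{F}_v[\sigma]$ for all sufficiently large $k$. Since $\mathcal{F}_k$ is generated in level $m$, this reduces --- via the valuative axioms $v(\prod u_j) = \sum v(u_j)$ and $v(\sum \cdot) \geq \min v(\cdot)$ --- to the level-$m$ check $v(u) + m \sigma \geq -\log \|u\|^{(k)}_m$ for every $u \in R_m$. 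When $u$ is a scalar multiple of $t_1$, this reads $\sigma \geq \psi_s(v)$, which holds by the choice of $\sigma$; otherwise diagonality forces $-\log \|u\|^{(k)}_m \leq -k$, easily dominated by $v(u) + m \sigma \geq m \sigma$ once $k \geq -m \sigma$ (the bound being uniform in $u$ since $v(u) \geq 0$). Letting $\sigma \searrow \psi_s(v)$ yields the claim, so $\psi_s$ is the pointwise limit of a decreasing sequence in $\PSH(X, L)$ and hence lies in $\PSH(X, L)$. The main technical subtlety --- the step I expect to warrant the most care --- is verifying that the level-$m$ norm data indeed propagates through all higher levels $rm$ of the filtration via the valuative inequalities quoted, which is what ultimately legitimises the reduction to a single-level computation.
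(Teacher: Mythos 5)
Your proof is essentially sound and rests on the same pivot as the paper's: manufacture a psh competitor lying below $\min\{\varphi,\varphi'\}$ that ``remembers'' the section in question, then invoke the universal property of the rooftop. The difference is in how the competitor is built. The paper does it all at once: for each degree $m$ it takes the single finitely generated filtration $\mathcal{F}_{(m)}$ generated by the norm $\max\{\|\cdot\|_m^{\varphi},\|\cdot\|_m^{\varphi'}\}$, whose degree-$m$ piece is exactly $(\mathcal{F}_\varphi^\lambda\cap\mathcal{F}_{\varphi'}^\lambda)R_m$; since $\mathcal{F}_{(m)}\subset\mathcal{F}_\varphi,\mathcal{F}_{\varphi'}$ the associated metric $\varphi_m''$ satisfies $\varphi_m''\le\varphi,\varphi'$, hence $\varphi_m''\le\varphi\wedge\varphi'$, and $\mathcal{F}_{(m)}\subset\mathcal{F}_{\varphi_m''}\subset\mathcal{F}_{\varphi\wedge\varphi'}$ finishes the job with no limiting process. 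Your section-by-section construction of $\psi_s=\frac{1}{m}(\lambda+\log|s|)$ as a decreasing limit of the $\varphi_{\mathcal{F}_k}$ is correct (your level-$m$ reduction via $v(\prod u_j)=\sum v(u_j)$ and $v(\sum\cdot)\ge\min v(\cdot)$ is exactly right), but it introduces one soft spot: the rooftop's universal property requires $\tilde\psi:=\lim_k\varphi_{\mathcal{F}_k}\le\min\{\varphi,\varphi'\}$ on \emph{all} of $X^{\mathrm{NA}}$, whereas you only verify it on $\mathrm{Val}(X)$, and an inequality between usc functions on a dense subset does not extend for free. You can patch this either by citing the fact that order between psh metrics is detected on $X^{\mathrm{div}}$ (standard in \cite{BJ3}), or --- more cheaply --- by not taking the limit at all: since $\varphi,\varphi'$ are continuous on the compact $X^{\mathrm{NA}}$ they are bounded below, so for any single $k\ge -m\inf\min\{\varphi,\varphi'\}$ the explicit formula $\varphi_{\mathcal{F}_k}(v)=\frac{1}{m}\max\{\lambda-v(s),\,-k-v(t_2),\dots\}$ already gives $\varphi_{\mathcal{F}_k}\le\min\{\varphi,\varphi'\}$ on $\mathrm{Val}(X)$, hence everywhere by density and continuity of both sides; then $\psi_s\le\varphi_{\mathcal{F}_k}\le\varphi\wedge\varphi'$ on $\mathrm{Val}(X)$ and you are done. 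Both your argument and the paper's also tacitly need $m$ sufficiently divisible so that $R^{(m)}$ is generated in degree $m$ for the norm-generated filtration to make sense; this is a shared, harmless normalization.
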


\begin{proof}
Since $\varphi \wedge \varphi' \le \varphi, \varphi'$, we have $\mathcal{F}_{\varphi \wedge \varphi'} \subset \mathcal{F}_\varphi \cap \mathcal{F}_{\varphi'}$. 
On the other hand, consider the non-archimedean metric $\varphi_m'' \in \nH^\mathbb{R} (X, L)$ associated to the filtration $\mathcal{F}_{(m)} := \mathcal{F}_{\| \cdot \|_m^\varphi \vee \| \cdot \|_m^{\varphi'}}$ generated by the norm $\| \cdot \|_m^\varphi \vee \| \cdot \|_m^{\varphi'} = \max \{ \| \cdot \|_m^\varphi, \| \cdot \|_m^{\varphi'} \}$. 
Since $\mathcal{F}_{(m)} \subset \mathcal{F}_\varphi, \mathcal{F}_{\varphi'}$, we have $\varphi_m'' \le \varphi, \varphi'$. 
Then by the property of the rooftop, we get $\varphi_m'' \le \varphi \wedge \varphi'$. 
It follows that 
\[ \mathcal{F}_{(m)} \subset \mathcal{F}_{\varphi_m''} \subset \mathcal{F}_{\varphi \wedge \varphi'}. \]
Since $\mathcal{F}_{(m)}^\lambda R_m = (\mathcal{F}^\lambda_\varphi \cap \mathcal{F}^\lambda_{\varphi'}) R_m$, we get $\mathcal{F}^\lambda_\varphi \cap \mathcal{F}^\lambda_{\varphi'} \subset \mathcal{F}_{\varphi \wedge \varphi'}$ for each $m$. 
\end{proof}

\begin{prop}
Assume the continuity of envelopes. 
For $\varphi, \varphi' \in C^0 \cap \PSH (X, L)$, $\varphi \wedge \varphi'$ exists in $C^0 \cap \PSH (X, L)$. 
\end{prop}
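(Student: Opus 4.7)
The natural candidate is the non-archimedean psh envelope
\[ \hat{\varphi} := P(\min\{\varphi, \varphi'\}) = \sup\{\psi \in \PSH(X,L) \mid \psi \le \min\{\varphi,\varphi'\}\}, \]
and the plan is to verify that $\hat\varphi$ indeed enjoys the three properties characterizing the rooftop, using the continuity of envelopes as the single non-trivial input.

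First I would check that the envelope is well-defined and nontrivial. Since $\varphi, \varphi' \in C^0(X^{\mathrm{NA}})$, the function $\min\{\varphi, \varphi'\}$ is continuous, hence certainly bounded on the compact space $X^{\mathrm{NA}}$. Picking $C \ge \sup(\varphi - \varphi')$, the metric $\varphi - C \in \PSH(X,L)$ satisfies $\varphi - C \le \varphi$ and $\varphi - C \le \varphi'$, so it belongs to the family of competitors; this shows $\hat\varphi \ge \varphi - C$ and in particular $\hat\varphi$ is not identically $-\infty$.

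Next I would invoke the hypothesis. The continuity of envelopes applied to the continuous test function $f = \min\{\varphi, \varphi'\}$ tells us that $\hat\varphi = P(f)$ is continuous on $X^{\mathrm{NA}}$. Being usc (it already equals its own usc regularization) and being the supremum of a family of psh metrics bounded above by the continuous function $f$, $\hat\varphi$ lies in $\PSH(X,L)$ by the standard fact that the usc envelope of a locally bounded family of psh metrics is psh (cf. the foundational results of \cite{BJ1, BJ3} underlying the very statement of continuity of envelopes). Hence $\hat\varphi \in C^0 \cap \PSH(X,L)$.

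Finally, the rooftop axioms are immediate from the definition of $P$: by construction $\hat\varphi \le \min\{\varphi, \varphi'\}$, and given any $\varphi'' \in \PSH(X,L)$ with $\varphi'' \le \min\{\varphi, \varphi'\}$, $\varphi''$ sits in the family over which the supremum is taken, so $\varphi'' \le \hat\varphi$. Therefore $\hat\varphi = \varphi \wedge \varphi'$, which completes the proof. The only genuinely nontrivial step is the continuity/regularity of $P(\min\{\varphi,\varphi'\})$, which is exactly what the continuity of envelopes hypothesis is designed to supply; all remaining verifications are formal consequences of the defining property of $P$.
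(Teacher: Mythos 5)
Your proposal is correct and follows exactly the paper's route: the paper likewise takes $P(\min\{\varphi,\varphi'\})$, invokes the continuity of envelopes to place it in $C^0 \cap \PSH(X,L)$, and notes that the envelope trivially satisfies the rooftop axioms. You have simply written out in full the details the paper leaves implicit.
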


\begin{proof}
Thanks to the continuity of envelopes, $P (\min \{ \varphi, \varphi' \})$ is in $C^0 \cap \PSH (X, L)$. 
The envelope clearly enjoys the property of rooftop. 
\end{proof}

\begin{quest}
Does $\varphi \wedge \varphi'$ exist in $\nH (X, L)$ for $\varphi, \varphi' \in \nH (X, L)$? 
Can we show this without assuming the continuity of envelopes? 
\end{quest}

\subsection{Moment energy and Duistermaat--Heckman measure}
\label{Moment energy and Duistermaat--Heckman measure}

\subsubsection{Well ordered diagonal basis}

For $\varphi \in C^0 \cap \PSH (X, L)$ and a basis $\bm{s} = (s_1, \ldots, s_N)$ of $R_m = H^0 (X, L^{\otimes m})$, we put 
\begin{equation} 
\lambda_i^\varphi (\bm{s}) := - \log \| s_i \|^\varphi_m = \sup \{ \lambda \in \mathbb{R} ~|~ s_i \in \mathcal{F}_\varphi^\lambda R_m \}. 
\end{equation}
For $\varphi \le \varphi'$, we have $\lambda_i^\varphi (\bm{s}) \le \lambda_i^{\varphi'} (\bm{s})$ as $\mathcal{F}_\varphi \subset \mathcal{F}_{\varphi'}$. 

We recall a basis $\bm{s}$ is called \textit{diagonal} with respect to $\varphi$ if 
\[ \| \sum_i a_i s_i \|^\varphi_m = \max_{a_i \neq 0} \| s_i \|^\varphi_m, \] 
which is equivalent to $\sup \{ \lambda \in \mathbb{R} ~|~ \sum_i a_i s_i \in \mathcal{F}_\varphi^\lambda R_m \} = \min_{a_i \neq 0} \lambda_i^\varphi (\bm{s})$. 

It is known by \cite[Proposition 1.14]{BE} that there always exists a basis which is diagonal with respect to both $\varphi, \varphi'$ (codiagonal for $\varphi, \varphi'$). 
We note, on the other hand, there are no basis which is diagonal with respect to three metrics $\varphi, \varphi', \varphi''$ in general as we can see in the following example. 

\begin{eg}
There is no basis diagonal with respect to all of the following norms on $\mathbb{C}^2$: 
\[ \| (a, b) \|_1 := \begin{cases} 0 & a= b= 0 \\ 1 & a \neq 0, b= 0 \\ 2 & b \neq 0 \end{cases} \qquad \| (a, b) \|_2 := \begin{cases} 0 & a= b= 0 \\ 1 & a = 0, b \neq 0 \\ 2 & a \neq 0 \end{cases} \]
\[ \| (a, b) \|_3 := \begin{cases} 0 & a= b= 0 \\ 1 & a = b \neq 0 \\ 2 & a \neq b \end{cases} \]
\end{eg}

When $\varphi'' = \varphi \wedge \varphi'$ exists, any basis diagonal with respect to both $\varphi, \varphi'$ is diagonal also with respect to $\varphi''$: for a basis $(s_i)$ codiagonal for $\varphi, \varphi'$, we compute 
\begin{align*} 
\| \sum_i a_i s_i \|^{\varphi''} 
&= \max \{ \| \sum_i a_i s_i \|^\varphi, \| \sum_i a_i s_i \|^{\varphi'} \} = \max \{ \max_{a_i \neq 0} \| s_i \|^\varphi, \max_{a_i \neq 0} \| s_i \|^{\varphi'} \} 
\\
&= \max_{a_i \neq 0} \max \{ \| s_i \|^\varphi, \| s_i \|^{\varphi'} \} = \max_{a_i \neq 0} \| s_i \|^{\varphi''} 
\end{align*} 
In other cases, we use the following lemma in our estimate. 

We prepare some terminologies. 
Firstly, we call a basis $\bm{s}$ \textit{well ordered} with respect to $\varphi$ if $\| s_{i+1} \|^\varphi \le \| s_i \|^\varphi $, i.e. $\lambda_i^\varphi (\bm{s}) \le \lambda_{i+1}^\varphi (\bm{s})$ for every $i = 1, \ldots, N-1$. 

We define the relative version as follows. 
Let $\bm{s}$ be a basis well ordered with respect to $\varphi'$. 
We define $0= l_0 < l_1 < \dotsb < l_p \le N$ by 
\begin{align*} 
\| s_N \|^{\varphi'} = \dotsb = \| s_{N- l_1 +1} \|^{\varphi'} 
&< \| s_{N- l_1} \|^{\varphi'} = \dotsb = \| s_{N- l_2+1} \|^{\varphi'} 
\\
&< \dotsb < \| s_{N - l_p +1} \|^{\varphi'} = \dotsb = \| s_1 \|^{\varphi'}. 
\end{align*} 
We put $W_q := \langle s_N, \ldots, s_{N - l_q +1} \rangle$. 
For another $\varphi$, consider the quotient norm 
\[ \| [s] \|_{W_q}^\varphi := \inf \{ \| s+ t \|^\varphi ~|~ t \in W_{q-1} \} \]
on $W_q/W_{q-1}$. 
Then we call $\bm{s}$ \textit{well ordered with respect to $(\varphi, \varphi')$} if $\| [s_{i+1}] \|_{W_q}^\varphi \le \| [s_i ] \|_{W_q}^\varphi$ for each $q$ and $i$ with $N - l_{q-1} + 1 \le i \le N -l_q$. 
 
For any basis $\bm{s}$ and any $\varphi, \varphi'$, we can find a permutation $\sigma$ so that $\bm{s}_\sigma = (s_{\sigma (1)}, \ldots, s_{\sigma (N)})$ is a well ordered basis with respect to $(\varphi, \varphi')$. 

\begin{lem}
\label{diagonal lemma}
Let $\bm{s}$ be a basis of $R_m$ diagonal and well ordered with respect to $\varphi$ and $\bm{s}'$ be a well ordered basis with respect to $\varphi$. 
Then we have $\lambda^\varphi_i (\bm{s}) \ge \lambda^\varphi_i (\bm{s}')$ for $i= 1, \ldots, N$. 

More generally, we have the following. 
Let $\bm{s}$ be a basis of $H^0 (X, L^{\otimes m})$ which is codiagonal for $\varphi, \varphi'$ and is well ordered with respect to $(\varphi, \varphi')$. 
Then for a basis $\bm{s}'$ which is diagonal for $\varphi'$ and is well ordered with respect to $(\varphi, \varphi')$, we have $\lambda^\varphi_i (\bm{s}) - \lambda^{\varphi'}_i (\bm{s}) \ge \lambda^\varphi_i (\bm{s}') - \lambda^{\varphi'}_i (\bm{s}')$ for $i= 1, \ldots, N$. 
\end{lem}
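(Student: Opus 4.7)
The plan is to establish the first assertion directly via a dimension-counting argument on the filtration $\mathcal{F}_\varphi$, and then deduce the general statement by a block-wise reduction to the first one.

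For the first assertion, since $\bm{s}$ is diagonal and well-ordered with respect to $\varphi$, the filtration admits the explicit description $\mathcal{F}^\lambda_\varphi R_m = \langle s_i \mid \lambda^\varphi_i(\bm{s}) \ge \lambda \rangle$, so $\dim \mathcal{F}^\lambda_\varphi R_m$ equals the number of indices $i$ with $\lambda^\varphi_i(\bm{s}) \ge \lambda$. For a well-ordered basis $\bm{s}'$ and any $k$, the elements $s'_k, \ldots, s'_N$ all lie in $\mathcal{F}^{\lambda^\varphi_k(\bm{s}')}_\varphi R_m$, whence $\dim \mathcal{F}^{\lambda^\varphi_k(\bm{s}')}_\varphi R_m \ge N - k + 1$. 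Consequently at least $N - k + 1$ of the values $\lambda^\varphi_1(\bm{s}) \le \cdots \le \lambda^\varphi_N(\bm{s})$ exceed $\lambda^\varphi_k(\bm{s}')$; since the non-decreasing list has $\lambda^\varphi_k(\bm{s})$ at position $N - k + 1$ counted from the top, we conclude $\lambda^\varphi_k(\bm{s}) \ge \lambda^\varphi_k(\bm{s}')$.

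For the general statement, both $\bm{s}$ and $\bm{s}'$ are diagonal and well-ordered for $\varphi'$, so the $\varphi'$-jump values coincide: $\lambda^{\varphi'}_i(\bm{s}) = \lambda^{\varphi'}_i(\bm{s}')$ for all $i$. The target inequality therefore reduces to $\lambda^\varphi_i(\bm{s}) \ge \lambda^\varphi_i(\bm{s}')$. The key observation is that the flag $W_q = \langle s_{N-l_q+1}, \ldots, s_N\rangle$ coincides with the intrinsic filtration piece $\mathcal{F}^{\lambda^{\varphi'}_{N-l_q+1}(\bm{s})}_{\varphi'} R_m$, and hence agrees with the analogous flag built from $\bm{s}'$. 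Moreover, diagonality of $\bm{s}'$ for $\varphi'$ together with the strict drop between consecutive $\varphi'$-blocks ensures that $\{[s'_i]\}_{i \text{ in block } q}$ is linearly independent in $W_q/W_{q-1}$ and, by dimension count, a basis; the same holds for $\{[s_i]\}$. Endow $W_q/W_{q-1}$ with the quotient $\varphi$-norm $\|\cdot\|^\varphi_{W_q}$. Codiagonality of $\bm{s}$ implies that the infimum defining $\|[s_i]\|^\varphi_{W_q}$ is attained at $t = 0$, so $\|[s_i]\|^\varphi_{W_q} = \|s_i\|^\varphi$, the family $\{[s_i]\}$ is diagonal for the quotient norm, and the $(\varphi,\varphi')$-well-ordering of $\bm{s}$ becomes well-ordering with respect to this quotient norm. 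For $\bm{s}'$ we only have $\|[s'_i]\|^\varphi_{W_q} \le \|s'_i\|^\varphi$ together with well-ordering for the quotient norm. Applying the first assertion inside $W_q/W_{q-1}$ yields, for each $i$ in block $q$,
\[
\lambda^\varphi_i(\bm{s}) = -\log \|[s_i]\|^\varphi_{W_q} \ge -\log \|[s'_i]\|^\varphi_{W_q} \ge -\log \|s'_i\|^\varphi = \lambda^\varphi_i(\bm{s}'),
\]
which together with $\lambda^{\varphi'}_i(\bm{s}) = \lambda^{\varphi'}_i(\bm{s}')$ gives the required inequality.

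The main obstacle I anticipate is purely organizational: one must verify carefully that $W_q$ is intrinsic to $\varphi'$ (independent of the basis used to define it), that codiagonality of $\bm{s}$ transfers diagonality to the quotient $\varphi$-norm with $\|[s_i]\|^\varphi_{W_q} = \|s_i\|^\varphi$, and that the ``wrong direction'' inequality $\|[s'_i]\|^\varphi_{W_q} \le \|s'_i\|^\varphi$ combines correctly with the output of the first assertion. Once this book-keeping is handled, the argument is a standard min-max reduction applied block by block.
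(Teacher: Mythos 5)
Your proof is correct, and its overall architecture matches the paper's: prove the absolute statement first, then reduce the relative statement to it block by block on the quotients $W_q/W_{q-1}$, using that codiagonality of $\bm{s}$ forces $\|[s_i]\|^\varphi_{W_q}=\|s_i\|^\varphi$ and diagonality of $\{[s_i]\}$ in the quotient, while for $\bm{s}'$ one only has the one-sided bound $\|[s'_i]\|^\varphi_{W_q}\le\|s'_i\|^\varphi$ — exactly the combination the paper uses. Where you genuinely diverge is in the proof of the first assertion: the paper argues by induction, first showing $\|s_1\|=\|s'_1\|$ via the change-of-basis matrices and then passing to successive quotients by $\langle s_1,\dots,s_{k-1}\rangle$, whereas you give a one-shot min--max argument: a diagonal well-ordered basis computes $\dim\mathcal{F}^\lambda_\varphi R_m$ exactly as $\#\{i:\lambda^\varphi_i(\bm{s})\ge\lambda\}$, while any well-ordered basis gives the lower bound $\dim\mathcal{F}^{\lambda^\varphi_k(\bm{s}')}_\varphi R_m\ge N-k+1$, and comparing the two counts yields $\lambda^\varphi_k(\bm{s})\ge\lambda^\varphi_k(\bm{s}')$. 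This is cleaner and avoids the paper's case analysis; it also makes transparent that the statement is really the min--max characterization of the jumping numbers of a filtration. Your identification of $W_q$ with the intrinsic filtration piece $\mathcal{F}^{\lambda^{\varphi'}_{N-l_q+1}}_{\varphi'}R_m$ (rather than the paper's matrix computation) is a nice way to see at once that the two flags and the two block structures agree, which is the book-keeping point you flagged; the remaining verifications you list (diagonality descending to the quotient norm, the direction of the quotient-norm inequality for $\bm{s}'$) all go through as you describe.
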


\begin{proof}
Let $\bm{s}$ be a basis diagonal and well ordered with respect to $\varphi$ and $\bm{s}'$ be a well ordered basis with respect to $\varphi$. 
We firstly show $\| s_1 \| = \| s_1' \|$. 
Write $s_i' = \sum_{j=0}^N a_{ij} s_j$, $s_j = \sum_{i=0}^N b_{ij} s_i'$. 
Since $\bm{s}$ is diagonal, we have $\| s_i' \| = \max_{a_{ij} \neq 0} \| s_j \|$, which in particular shows $\| s_i' \| \le \| s_1 \|$ as $\bm{s}$ is well ordered. 
Similarly, we have $\| s_j \| \le \max_{b_{ij} \neq 0} \| s_i' \|$, so that we have $\| s_j \| \le \| s_1' \|$ as $\bm{s}'$ is well ordered. 
It follows that $\| s_1 \| \le \| s_1' \| \le \| s_1 \|$, so we get $\| s_1 \| = \| s_1' \|$. 

Now assume we obtained $\| s_i \| \le \| s_i' \|$ for $1 \le i \le k-1$. 
We want to see $\| s_k \| \le \| s_k' \|$. 
If $\| s_{k-1} \| \le \| s_k' \|$, we have $\| s_k \| \le \| s_{k-1} \| \le \| s_k' \|$. 
If $\| s_{k-1} \| > \| s_k' \|$, then we have $\| s_j \| > \| s_i' \|$ for $1 \le j \le k-1$ and $k \le i \le N$, so that $a_{ij} = 0$ for $1 \le j \le k-1$ and $k \le i \le N$. 
Thus we can write $s_i' = \sum_{i=k}^N a_{ij} s_j$ for $k \le i \le N$. 
Now consider the quotient norm 
\[ \| [s] \|_/ := \inf \{ \| s + t \| ~|~ t \in \langle s_0, \ldots, s_{k-1} \rangle \} \] 
on $H^0 (X, L^{\otimes m}) /\langle s_1, \ldots, s_{k-1} \rangle$. 
For $k \le i \le N$, we have $\| [s_i] \|_/ = \| s_i \|$ and $\| [s_i'] \|_/ = \| s_i' \|$ thanks to the expression $s_i' = \sum_{i=k}^N a_{ij} s_j$, so that $\{ [s_i] \}_{i=k}^N$ and $\{ [s_i'] \}_{i=k}^N$ give well ordered basis of $H^0 (X, L^{\otimes m}) /\langle s_1, \ldots, s_{k-1} \rangle$. 
Since $\| \sum_{i=k}^N a_i [s_i] \|_/ = \| \sum_{i=k}^N a_i s_i \| = \max_{a_i \neq 0} \| s_i \| = \max_{a_i \neq 0} \| [s_i] \|_/$, $\{ [s_i] \}_{i=k}^N$ is diagonal. 
Then by the above argument, we know $\| [s_k] \|_/ = \| [s_k'] \|_/$, so that we get $\| s_k \| = \| s_k' \|$. 
Thus we obtain the first claim by induction on $k$. 

Next, we show the second claim. 
Let $\bm{s}$ be a basis codiagonal for $\varphi, \varphi'$ and $\bm{s}'$ be a basis diagonal for $\varphi'$ which are well ordered with respect to $(\varphi, \varphi')$. 
In particular, these are diagonal and well ordered with respect to $\varphi'$, so that we have $\| s_i \|^{\varphi'} = \| s_i' \|^{\varphi'}$ from just what we proved. 
Take $0 = l_0 < l_1 < l_2 < \dotsb < l_p < N$ so that 
\begin{align*} 
\| s_N \|^{\varphi'} = \dotsb = \| s_{N-l_1+1} \|^{\varphi'} 
&< \| s_{N-l_1} \|^{\varphi'} = \dotsb = \| s_{N-l_2+1} \|^{\varphi'} 
\\
&< \dotsb < \| s_{N-l_p+1} \|^{\varphi'} = \dotsb = \| s_1 \|^{\varphi'}. 
\end{align*}
As $\bm{s}$ is diagonal, we have $\| s_i \|^{\varphi'} = \| s_i' \|^{\varphi'} = \max_{a_{ij} \neq 0} \| s_j \|^{\varphi'}$. 
Then we must have $a_{ij} = 0$ for $(i, j)$ with $N-l_q+1 \le i \le N$ and $1 \le j \le N-l_q$ by our choice of $l_q$, so that we can write 
\[ s_i' = \sum_{j=N-l_q+1}^N a_{ij} s_j \]
for each $q = 1, \ldots, p$ and $i$ with $N-l_q +1 \le i \le N - l_{q-1}$. 
Thus we have $\langle s_N, \ldots, s_{N-l_q+1} \rangle = \langle s_N', \ldots, s_{N-l_q+1}' \rangle =: W_q$. 

For the quotient norm $\| \cdot \|^\varphi_{W_q}$ on $W_q/W_{q-1}$, the assumption that $\bm{s}$ is diagonal with respect to $\varphi$ implies that $\| [s_i] \|^\varphi_{W_q} = \| s_i \|^\varphi$ for $N - l_q +1 \le i \le N-l_{q-1}$ and that the basis $\{ [s_{N-l_{q-1}}], \ldots, [s_{N-l_q +1}] \}$ is diagonal with respect to $\| \cdot \|^\varphi_{W_q}$. 
Since $\bm{s}$ and $\bm{s}'$ are well ordered with respect to $(\varphi, \varphi')$, the bases $\{ [s_{N- l_{q-1}}], \ldots, [s_{N-l_q+1}] \}, \{ [s_{N- l_{q-1}}'], \ldots, [s_{N-l_q+1}'] \}$ of $W_q/W_{q-1}$ are well ordered with respect to $\| \cdot \|^\varphi_{W_q}$. 
Applying the absolute case we proved, we get $\| [s_i] \|^\varphi_{W_q} \le \| [s_i'] \|^\varphi_{W_q}$ for each $q$ and $i$ with $N-l_q+1 \le i \le N - l_{q-1}$. 
It follows that for every $i$ we have $\| s_i \|^\varphi = \| s_i \|^\varphi_{W_q} \le \| [s_i'] \|^\varphi_{W_q} \le \| s_i' \|^\varphi$ by choosing suitable $q$. 
Thus for every $i$ we get 
\[ \| s_i \|^\varphi /\| s_i \|^{\varphi'} \le \| s_i' \|^\varphi /\| s_i' \|^{\varphi'}, \]
which shows the claim. 
\end{proof}

\subsubsection{Moment energy}
\label{subsection: Moment energy}

\begin{lem}
\label{monotonicity}
Let $\chi$ be an increasing function on $\mathbb{R}$. 
Suppose $\varphi' \le \varphi$ for $\varphi, \varphi' \in C^0 \cap \PSH (X, L)$, then $\int_\mathbb{R} \chi \nu_\infty (\mathcal{F}_{\varphi'}) \le \int_\mathbb{R} \chi \nu_\infty (\mathcal{F}_\varphi)$. 
\end{lem}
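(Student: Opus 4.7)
The plan is to reduce the inequality for $\nu_\infty$ to a termwise comparison at finite level $m$, exploiting a common diagonal basis for the two filtrations. First, I would fix $m$ sufficiently divisible and choose a basis $\bm{s} = (s_1, \ldots, s_{N_m})$ of $R_m$ which is codiagonal for $\varphi$ and $\varphi'$; such a basis exists by \cite{BE}. Since $\bm{s}$ diagonalizes $\mathcal{F}_\varphi$, the multiset of jumps of $\mathcal{F}_\varphi$ (with multiplicities) is exactly $\{\lambda_i^\varphi(\bm{s})\}_{i=1}^{N_m}$, so
\[
\int_\mathbb{R} \chi \, \nu_m(\mathcal{F}_\varphi) = \frac{1}{m^n} \sum_{i=1}^{N_m} \chi\bigl(m^{-1} \lambda_i^\varphi(\bm{s})\bigr),
\]
and likewise for $\varphi'$.

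From $\varphi' \le \varphi$ we get $\mathcal{F}_{\varphi'} \subset \mathcal{F}_\varphi$ (as noted before Proposition~\ref{sigma and non-archimedean psh}), hence $\lambda_i^{\varphi'}(\bm{s}) \le \lambda_i^\varphi(\bm{s})$ for every $i$. Because $\chi$ is increasing, the termwise inequality $\chi(m^{-1}\lambda_i^{\varphi'}(\bm{s})) \le \chi(m^{-1}\lambda_i^\varphi(\bm{s}))$ holds, giving $\int_\mathbb{R} \chi \, \nu_m(\mathcal{F}_{\varphi'}) \le \int_\mathbb{R} \chi \, \nu_m(\mathcal{F}_\varphi)$ for every such $m$.

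It then remains to pass to the limit $m \to \infty$. If $\chi$ is continuous, this is immediate from the weak convergence $\nu_m \to \nu_\infty$ of \cite{CM}, since the supports are uniformly bounded (the filtrations $\mathcal{F}_\varphi, \mathcal{F}_{\varphi'}$ are linearly bounded because $\varphi, \varphi'$ are bounded on $X^{\mathrm{NA}}$). For a general increasing $\chi$, I would approximate it from below by continuous increasing functions $\chi_j \nearrow \chi$ (possible since $\chi$ has at most countably many jumps), apply the continuous case to each $\chi_j$, and then invoke monotone convergence to conclude
\[
\int_\mathbb{R} \chi \, \nu_\infty(\mathcal{F}_{\varphi'}) = \lim_j \int_\mathbb{R} \chi_j \, \nu_\infty(\mathcal{F}_{\varphi'}) \le \lim_j \int_\mathbb{R} \chi_j \, \nu_\infty(\mathcal{F}_\varphi) = \int_\mathbb{R} \chi \, \nu_\infty(\mathcal{F}_\varphi).
\]

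The only delicate point is the existence of the codiagonal basis, which is where the diagonal-basis lemmas of the preceding subsection do real work; once that is granted, everything else is bookkeeping and a monotone-convergence sandwich. No appeal to Lemma~\ref{diagonal lemma} is needed here, since we are not comparing two different diagonal bases but rather using a single codiagonal basis to carry both filtrations simultaneously.
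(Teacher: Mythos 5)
Your finite-level argument and the passage to the limit for continuous $\chi$ are exactly the paper's: codiagonal basis from \cite{BE}, termwise comparison $\lambda_i^{\varphi'}(\bm{s}) \le \lambda_i^{\varphi}(\bm{s})$, uniform boundedness of the supports. The gap is in your last step. A general increasing $\chi$ \emph{cannot} be written as a pointwise increasing limit of continuous functions $\chi_j \nearrow \chi$: if $\chi$ fails to be left-continuous at $t_0$ (i.e.\ $\chi(t_0^-) < \chi(t_0)$), then any continuous $g \le \chi$ satisfies $g(t_0) = \lim_{t \nearrow t_0} g(t) \le \chi(t_0^-)$, so the pointwise supremum of all continuous minorants at $t_0$ is $\chi(t_0^-)$, not $\chi(t_0)$. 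Having only countably many jumps does not help; the obstruction is local at each such jump. Your sandwich therefore proves the inequality only for the left-continuous modification $\chi^-$ of $\chi$, and since the spectral measures $\nu_\infty$ can carry atoms (e.g.\ $\nu_\infty(\mathcal{F}_{\varphi_{\mathrm{triv}}}) = (e^L)\,\delta_0$, and more generally the Dirac components of Duistermaat--Heckman measures), $\int_\mathbb{R}\chi\,\nu_\infty$ and $\int_\mathbb{R}\chi^-\,\nu_\infty$ genuinely differ, with no monotone relation between the atom masses of $\nu_\infty(\mathcal{F}_{\varphi'})$ and $\nu_\infty(\mathcal{F}_{\varphi})$ at the jump points. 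Worse, the case your argument misses is exactly $\chi = 1_{[\tau,\infty)}$ (not left-continuous at $\tau$), which is the single most important instance of the lemma: it defines $F_\varphi(\tau)$ and hence the Duistermaat--Heckman measure of a general non-archimedean psh metric.

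The paper repairs this with an intermediate step: for $\chi = 1_{[\tau,\infty)}$ or $1_{(\tau,\infty)}$ one approximates by a \emph{bounded} sequence of continuous increasing functions converging pointwise everywhere (these approximants are not minorants --- for $1_{[\tau,\infty)}$ they decrease from above), and the bounded convergence theorem, applied to both fixed finite measures simultaneously, transfers the inequality from the continuous case. Only then is a general increasing $\chi$ written as a monotone increasing limit of nonnegative combinations of such indicators, via the dyadic layer-cake decomposition $\chi_j = \chi(\lambda_0) + \sum_{i} 2^{-j}\, 1_{\chi^{-1}([\chi(\lambda_0)+i/2^j,\infty))}$, whose level sets are of the form $[\tau,\infty)$ or $(\tau,\infty)$ precisely because $\chi$ is increasing; monotone convergence then finishes. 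If you replace your final paragraph by this two-step reduction, the proof is complete.
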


\begin{proof}
Note that increasing function is Borel measurable and is bounded on the support of the finite measure $\nu_\infty (\mathcal{F}_\varphi)$, hence is integrable with respect to $\nu_\infty (\mathcal{F}_\varphi)$. 
Recall that $\nu_\infty (\mathcal{F}_\varphi) = \lim_{m \to \infty} \nu_m (\mathcal{F}_\varphi)$ for 
\[ \nu_m (\mathcal{F}_\varphi) := m^{-n} \sum_{\lambda \in \mathbb{R}} \dim \mathcal{F}_\varphi^\lambda R_m/ \mathcal{F}_\varphi^{\lambda +} R_m. \delta_{\lambda/m} = m^{-n} \sum_{i=1}^{N_m} \delta_{m^{-1} \lambda_i^\varphi (\bm{s})}, \]
where $\bm{s} = (s_i)_{i=1}^{N_m}$ is a basis of $R_m$ diagonal with respect to $\| \cdot \|^\varphi$ and $\lambda_i^\varphi (\bm{s}) = - \log \| s_i \|^\varphi_m$. 

Take a basis $\bm{s}$ of $R_m$ so that it is codiagonal for $\varphi, \varphi'$. 
By the assumption $\varphi' \le \varphi$, we have $\lambda_i^{\varphi'} (\bm{s}) \le \lambda_i^\varphi (\bm{s})$. 
It follows that for increasing $\chi$, we have 
\begin{align*} 
\int_{\mathbb{R}} \chi \nu_m (\mathcal{F}_{\varphi'}) 
&= m^{-n} \sum_{i=1}^{N_m} \chi (m^{-1} \lambda_i^{\varphi'} (\bm{s})) 
\\
&\le m^{-n} \sum_{i=1}^{N_m} \chi (m^{-1} \lambda_i^\varphi (\bm{s})) = \int_{\mathbb{R}} \chi \nu_m (\mathcal{F}_\varphi). 
\end{align*}

Now suppose $\chi$ is continuous (not necessarily compactly supported). 
Since the supports of $\nu_m, \nu_\infty$ are uniformly bounded, we get 
\[ \lim_{m \to \infty} \int_{\mathbb{R}} \chi \nu_m (\mathcal{F}_\varphi) = \int_{\mathbb{R}} \chi \nu_\infty (\mathcal{F}_\varphi). \]
Then the above inequality on $\nu_m$ shows the claim for continuous increasing $\chi$. 

For $\chi = 1_{[\tau, \infty)}$ or $1_{(\tau, \infty)}$, we can easily find a bounded sequence $\chi_j$ of continuous increasing functions which pointwisely converges to $\chi$. 
By the bounded convergence theorem, we have 
\[ \lim_{j \to \infty} \int_\mathbb{R} \chi_j \nu_\infty = \int_\mathbb{R} \chi \nu_\infty \]
for both $\nu_\infty = \nu_\infty (\mathcal{F}_\varphi), \nu_\infty (\mathcal{F}_{\varphi'})$, so that the claim for $\chi = 1_{[\tau, \infty)}$ or $1_{(\tau, \infty)}$ follows by that for continuous functions. 

Now let $\chi$ be a general increasing function. 
We pick $\lambda_0 \in \mathbb{R}$ so that $\lambda_0 < \inf \supp \nu_\infty (\mathcal{F}_\varphi), \inf \supp \nu_\infty (\mathcal{F}_{\varphi'})$. 
We define $\chi_j$ by 
\[ \chi_j := \chi(\lambda_0) + \sum_{i=1}^{j 2^j} 2^{-j} 1_{\chi^{-1} ([\chi (\lambda_0) +i/2^j, \infty))}. \]
Then $\chi_j$ is an increasing sequence of increasing functions which converges to $\chi$ pointwisely on $[\lambda_0, \infty)$. 
We can check this as follows. 
By the monotonicity, we have 
\[ \bigcup_{j=1}^\infty \chi^{-1} ([\chi (\lambda_0), \chi (\lambda_0)+j)) \supset [\lambda_0, \infty), \]
for each $t \in [\lambda_0, \infty)$, we can take $j$ sufficiently large so that $t \in \chi^{-1} ([\chi (\lambda_0), \chi (\lambda_0)+j))$. 
Put $i_{j, t} := \max \{ i = 1, \ldots, j 2^j  ~|~ \chi (\lambda_0) +i/2^j \le \chi (t) \}$. 
Then we have $\chi (\lambda_0) +i_{j, t}/2^j \le \chi (t) < \chi (\lambda_0) +(i_{j, t}+1)/2^j$ and $\chi_j (t) = \chi (\lambda_0) + i_{j, t}/2^j$. 
It follows that $|\chi(t) - \chi_j (t)| \le 2^{-j}$, hence $\chi_j$ converges to $\chi$ pointwisely on $[\lambda_0, \infty)$. 
We can also see that the sequence is increasing by the formula $\chi_j (t) = \chi (\lambda_0) + i_{j, t}/2^j$. 
Since $\chi_j \nearrow \chi$ is bounded from below by $\chi (\lambda_0)$, we get 
\[ \lim_{j \to \infty} \int_\mathbb{R} \chi_j \nu_\infty (\mathcal{F}_\varphi) = \int_\mathbb{R} \chi \nu_\infty (\mathcal{F}_\varphi) \]
by the monotone convergence theorem. 
Thus it suffices to show the claim for $\chi_j$. 
By the monotonicity of $\chi$, we have $\chi^{-1} ([\sigma, \infty)) = [\tau, \infty)$ or $(\tau, \infty)$. 
Since $\chi_j$ is a linear combination of such functions with positive coefficients, the claim follows by that for $\chi = 1_{[\tau, \infty)}, 1_{(\tau, \infty)}$, which we already know. 
\end{proof}

Recall we have $\DHm_{(\mathcal{X}, \mathcal{L})} = \nu_\infty (\mathcal{F}_{\varphi_{(\mathcal{X}, \mathcal{L})}})$ by Corollary \ref{DH measure via Krull envelope}. 
For an increasing right continuous function $\chi$ on $\mathbb{R}$ and a NA psh metric $\varphi$ on $(X, L)$, we put 
\begin{equation} 
\label{moment energy}
E_\chi (\varphi) := \inf \Big{\{} \int_\mathbb{R} \chi \DHm_{(\mathcal{X}, \mathcal{L})} ~\Big{|}~ \varphi \le \varphi_{(\mathcal{X}, \mathcal{L})} \in \nH (X, L) \Big{\}}. 
\end{equation}
It may take value $-\infty$ when $\lim_{t \to -\infty} \chi (t) = -\infty$. 
In this article, we are especially interested in $E (\varphi) := E_t (\varphi)$ and $E_{\exp} (\varphi) := E_{- e^{-t}} (\varphi)$. 
We also use $F_\varphi (\tau) := E_{1_{[\tau, \infty)}} (\varphi)$ to define Duistermaat--Heckman measure for general $\varphi \in \PSH (X, L)$. 

\begin{prop}
\label{continuity of moment energy along decreasing nets}
For an increasing right continuous function $\chi$ on $\mathbb{R}$, the functional $E_\chi$ is monotonic and continuous along decreasing nets of NA psh metrics. 
If $\chi$ is moreover concave, $E_\chi$ is concave. 
\end{prop}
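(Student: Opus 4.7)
The plan splits into three parts: monotonicity, continuity along decreasing nets (the main content), and concavity. Monotonicity is immediate from the infimum definition: if $\varphi\le\varphi'$, then $\{(\mathcal{X},\mathcal{L}):\varphi'\le\varphi_{(\mathcal{X},\mathcal{L})}\}\subseteq\{(\mathcal{X},\mathcal{L}):\varphi\le\varphi_{(\mathcal{X},\mathcal{L})}\}$, so taking the infimum over the larger set gives $E_\chi(\varphi)\le E_\chi(\varphi')$.

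For a decreasing net $\varphi_i\searrow\varphi$ in $\PSH(X,L)$, monotonicity yields $\varliminf_i E_\chi(\varphi_i)\ge E_\chi(\varphi)$, and the issue is the reverse. Fix $\varepsilon>0$ and a test configuration $(\mathcal{X},\mathcal{L})$ with $\varphi\le\varphi_{(\mathcal{X},\mathcal{L})}$ and $\int_{\mathbb{R}}\chi\,\DHm_{(\mathcal{X},\mathcal{L})}\le E_\chi(\varphi)+\varepsilon$. Since $\varphi_{(\mathcal{X},\mathcal{L})}$ is continuous, Lemma~\ref{Dini} gives, for each rational $\varepsilon'>0$, an index $i_{\varepsilon'}$ with $\varphi_i\le\varphi_{(\mathcal{X},\mathcal{L})}+\varepsilon'$ for $i\ge i_{\varepsilon'}$. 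The key structural observation is that $(\mathcal{X},\mathcal{L}+\varepsilon'\mathcal{X}_0)$ is again a test configuration of $(X,L)$: the underlying $\mathbb{Q}$-line bundle $\mathcal{L}+\varepsilon'\mathcal{X}_0$ is relatively ample (as $\mathcal{X}_0\sim\varpi^*[0]$) but carries a twisted $\mathbb{G}_m$-action, and the associated filtration on each $R_m$ is simply the shift by $m\varepsilon'$ in weight of $\mathcal{F}_{(\mathcal{X},\mathcal{L})}$. Consequently $\varphi_{(\mathcal{X},\mathcal{L}+\varepsilon'\mathcal{X}_0)}=\varphi_{(\mathcal{X},\mathcal{L})}+\varepsilon'$ and $\DHm_{(\mathcal{X},\mathcal{L}+\varepsilon'\mathcal{X}_0)}=T_{\varepsilon'*}\DHm_{(\mathcal{X},\mathcal{L})}$. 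Feasibility gives $E_\chi(\varphi_i)\le\int_{\mathbb{R}}\chi(t+\varepsilon')\,\DHm_{(\mathcal{X},\mathcal{L})}(t)$ for $i\ge i_{\varepsilon'}$. As $\varepsilon'\searrow 0$ through rationals, right continuity and monotonicity of $\chi$ yield the pointwise decrease $\chi(t+\varepsilon')\searrow\chi(t)$; since $\DHm_{(\mathcal{X},\mathcal{L})}$ is compactly supported, bounded convergence gives $\int\chi(t+\varepsilon')\,\DHm\to\int\chi\,\DHm\le E_\chi(\varphi)+\varepsilon$. Hence $\varlimsup_i E_\chi(\varphi_i)\le E_\chi(\varphi)+\varepsilon$, and $\varepsilon\to 0$ finishes continuity.

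For concavity when $\chi$ is concave, first note that on $\nH(X,L)$ the identity $E_\chi(\varphi_{(\mathcal{X},\mathcal{L})})=\int\chi\,\DHm_{(\mathcal{X},\mathcal{L})}$ holds, since Lemma~\ref{monotonicity} applied to increasing $\chi$ shows any feasible test configuration dominating $\varphi_{(\mathcal{X},\mathcal{L})}$ produces at least the same integral, so the infimum is attained. The main finite-dimensional input is the spectral inequality
\[ \int\chi\,\nu_m(\mathcal{F}_{\varphi_t})\ge t\int\chi\,\nu_m(\mathcal{F}_{\varphi_1})+(1-t)\int\chi\,\nu_m(\mathcal{F}_{\varphi_2}) \]
for $\varphi_1,\varphi_2\in\nH(X,L)$, rational $t\in[0,1]$, and $\varphi_t:=t\varphi_1+(1-t)\varphi_2\in\nH(X,L)$, realized on a common dominating normal test configuration with mixed polarization $t\tilde{\mathcal{L}}_1+(1-t)\tilde{\mathcal{L}}_2$ (the linearity of $\sigma_v$ in $\mathcal{L}$ from Proposition~\ref{explicit formula for sigma} ensures $\varphi_t\in\nH(X,L)$). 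Choosing a basis $\bm{s}$ of $R_m$ codiagonal for $\mathcal{F}_{\varphi_1},\mathcal{F}_{\varphi_2}$, the identity $-\log\|s\|^{\varphi_t}_m=\inf_v(m\varphi_t(v)+v(s))$ combined with the decomposition $v(s)=tv(s)+(1-t)v(s)$ yields the pointwise weight inequality $\lambda_i^{\varphi_t}(\bm{s})\ge t\lambda_i^{\varphi_1}(\bm{s})+(1-t)\lambda_i^{\varphi_2}(\bm{s})$. After sorting $\bm{s}$ in well order for $\varphi_t$, Lemma~\ref{diagonal lemma} together with monotonicity of $\chi$ and then pointwise concavity of $\chi$ gives the displayed inequality; passing $m\to\infty$ yields the DH-level analog, which combined with $E_\chi=\int\chi\,\DHm_{\cdot}$ on $\nH(X,L)$ proves concavity on $\nH(X,L)$. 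Extension to $\PSH(X,L)$ and to irrational $t$ follows by regularizing each $\varphi_j$ by decreasing sequences from $\nH(X,L)$ and invoking the continuity along decreasing nets just established. The main obstacle is the codiagonalization-and-sorting step yielding the finite-level spectral inequality; the rest reduces cleanly to Dini's lemma, bounded convergence, and regularization.
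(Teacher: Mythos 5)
Your monotonicity and decreasing-net continuity arguments are essentially the paper's: the paper packages the continuity step as an abstract lemma about monotone functionals $F$ on $\nH(X,L)$ with $\lim_{\varepsilon\searrow 0}F(\varphi+\varepsilon)=F(\varphi)$, but the substance — Dini's lemma, the identification $\varphi_{(\mathcal{X},\mathcal{L})}+\varepsilon'=\varphi_{(\mathcal{X},\mathcal{L}+\varepsilon'\mathcal{X}_0)}$ with $\DHm_{(\mathcal{X},\mathcal{L}+\varepsilon'\mathcal{X}_0)}=(t\mapsto t+\varepsilon')_*\DHm_{(\mathcal{X},\mathcal{L})}$, and monotone convergence via right continuity of $\chi$ — is identical. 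The core of your concavity argument (codiagonal basis, the submultiplicativity $\|\cdot\|^{\varphi_t}\le(\|\cdot\|^{\varphi_1})^{t}(\|\cdot\|^{\varphi_2})^{1-t}$, sorting, Lemma~\ref{diagonal lemma}, then monotonicity followed by concavity of $\chi$) is also the paper's.

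The one step that does not work as written is your extension to irrational $t$. You restrict the finite-level spectral inequality to rational $t$ so that $\varphi_t\in\nH(X,L)$, and then claim that irrational $t$ is recovered "by regularizing each $\varphi_j$ by decreasing sequences and invoking the continuity along decreasing nets." But regularizing $\varphi_0,\varphi_1$ does nothing to the weight $t$: for $\varphi_0,\varphi_1\in\nH(X,L)$ and irrational $t$ you would still need the concavity inequality at that irrational weight, which is exactly what is not yet established, and decreasing-net continuity only controls limits in the metrics, not in $t$. The paper sidesteps this entirely: it runs the spectral argument for an arbitrary real $t\in[0,1]$, working with the filtration $\mathcal{F}_{(1-t)\varphi_0+t\varphi_1}$ of the \emph{continuous psh metric} $(1-t)\varphi_0+t\varphi_1$ (which need not lie in $\nH(X,L)$) and its spectral measures $\nu_m$, and then uses only the inequality $E_\chi(\psi)\ge\int_\mathbb{R}\chi\,\nu_\infty(\mathcal{F}_\psi)$ from Lemma~\ref{monotonicity} on the left-hand side together with the equality $E_\chi=\int_\mathbb{R}\chi\,\nu_\infty(\mathcal{F}_{\cdot})$ on $\nH(X,L)$ on the right. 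Your argument is repaired by the same observation — nothing in the codiagonalization step requires $\varphi_t\in\nH(X,L)$ — or alternatively by combining rational-weight concavity with the upper semicontinuity of $t\mapsto E_\chi(\varphi_t)$, which follows from $\varphi_{t'}\le\varphi_t+|t-t'|\sup|\varphi_0-\varphi_1|$ and the decreasing-net continuity you already have; but as stated the passage to irrational $t$ is unjustified.
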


\begin{proof}
We firstly show the following general claim: if $F$ is a functional on $\nH (X, L)$ which is monotonic ($F (\varphi) \le F (\varphi')$ for $\varphi \le \varphi'$) and $\lim_{\varepsilon \searrow 0} F (\varphi + \varepsilon) = F (\varphi)$ for every $\varphi \in \nH (X, L)$, then the functional $\bar{F}$ on $\PSH (X, L)$ defined by 
\[ \bar{F} (\varphi) := \inf \{ F (\tilde{\varphi}) ~|~ \varphi \le \tilde{\varphi} \in \nH (X, L) \} \] 
gives an extension of $F$ which is monotonic and continuous along decreasing nets. 

The monotonicity of $\bar{F}$ is obvious from the definition. 
In particular, we have $\lim_{i \to \infty} \bar{F} (\varphi_i) \ge \bar{F} (\varphi)$ for a convergent decreasing net $\varphi_i \searrow \varphi \in \PSH (X, L)$. 
To see the reverse inequality, pick $\varepsilon > 0$ and $\tilde{\varphi} \in \nH (X, L)$ so that $\varphi + \varepsilon \le \tilde{\varphi}$. 
Then by Lemma \ref{Dini} we have $\varphi_i < \tilde{\varphi}$ for sufficiently large $i$, so that $\lim_{i \to \infty} \bar{F} (\varphi_i) \le F (\tilde{\varphi})$. 
Taking the infimum of $\tilde{\varphi}$, we get $\lim_{i \to \infty} \bar{F} (\varphi_i) \le F (\varphi + \varepsilon)$. 
Then taking the limit $\varepsilon \searrow 0$, we get $\lim_{i \to \infty} \bar{F} (\varphi_i) \le F (\varphi)$. 

To apply this to $F (\varphi_{(\mathcal{X}, \mathcal{L})}) = \int_\mathbb{R} \chi \DHm_{(\mathcal{X}, \mathcal{L})}$, it suffices to show $\lim_{\varepsilon \searrow 0} F (\varphi + \varepsilon) = F (\varphi)$ for $\varphi \in \nH (X, L)$. 
We note $\varphi_{(\mathcal{X}, \mathcal{L})} + \varepsilon = \varphi_{(\mathcal{X}, \mathcal{L} + \varepsilon \mathcal{X}_0)}$ and $\DHm_{(\mathcal{X}, \mathcal{L} + \varepsilon \mathcal{X}_0)} = (t \mapsto t + \varepsilon)_* \DHm_{(\mathcal{X}, \mathcal{L})}$ by \cite[Proposition 3.12 (i)]{BHJ1}. 
By the monotonicity and the right continuity, we have $\chi (t + \varepsilon) \searrow \chi (t)$ as $\varepsilon \searrow 0$ and the sequence is bounded from above on the support of $\DHm_{(\mathcal{X}, \mathcal{L})}$. 
Thus we get 
\[ \lim_{\varepsilon \searrow 0} \int_\mathbb{R} \chi \DHm_{(\mathcal{X}, \mathcal{L} + \varepsilon \mathcal{X}_0)} = \lim_{\varepsilon \searrow 0} \int_\mathbb{R} \chi (t + \varepsilon) \DHm_{(\mathcal{X}, \mathcal{L})} = \int_\mathbb{R} \chi \DHm_{(\mathcal{X}, \mathcal{L})} \]
by the monotone convergence theorem. 

To see the concavity, we firstly consider the case $\varphi_0 = \varphi_{(\mathcal{X}_0, \mathcal{L}_0)}, \varphi_1 = \varphi_{(\mathcal{X}_1, \mathcal{L}_1)}$. 
Fix $t \in [0,1]$. 
Take a well ordered diagonal basis $\bm{s}$ with respect to $(1-t) \varphi_0 + t \varphi_1$ and a basis $\bm{s}'$ which is codiagonal for $\varphi_0, \varphi_1$ and is well ordered with respect to $(1-t) \varphi_0 + t \varphi_1$. 
Thanks to $\| \cdot \|^{(1-t) \varphi_0 + t \varphi_1} \le (\| \cdot \|^{\varphi_0})^{1-t} (\| \cdot \|^{\varphi_1})^t$, we can compute 
\begin{align*}
\int_\mathbb{R} \chi \nu_m ((1-t) \varphi_0 + t \varphi_1) 
&= m^{-n} \sum_{i=1}^{N_m} \chi (m^{-1} \lambda_i^{(1-t) \varphi_0 + t \varphi_1} (\bm{s}))  
\\
&\ge m^{-n} \sum_{i=1}^{N_m} \chi (m^{-1} \lambda_i^{(1-t) \varphi_0 + t \varphi_1} (\bm{s}')) 
\\
&\ge m^{-n} \sum_{i=1}^{N_m} \chi ((1-t) m^{-1} \lambda_i^{\varphi_0} (\bm{s}') +t m^{-1} \lambda_i^{\varphi_1} (\bm{s}'))
\\
&\ge (1-t) m^{-n} \sum_{i=1}^{N_m} \chi (m^{-1} \lambda_i^{\varphi_0} (\bm{s}')) + t m^{-n} \sum_{i=1}^{N_m} \chi (m^{-1} \lambda_i^{\varphi_1} (\bm{s}'))
\\
&= (1-t) \int_\mathbb{R} \chi \nu_m (\varphi_0) + t \int_\mathbb{R} \chi \nu_m (\varphi_1), 
\end{align*}
using Lemma \ref{diagonal lemma}, the monotonicity and the concavity of $\chi$ for the respective inequalities. 
Since the concavity of $\chi$ implies the continuity, we get the claim for these $\varphi_0, \varphi_1$ by taking the limit $m \to \infty$. 
For general NA psh metrics $\varphi_0, \varphi_1$, the claim follows by the continuity of $E_\chi$ along decreasing nets. 
\end{proof}

\begin{prop}
\label{DH vs spectral} 
Let $\chi$ be a right continuous increasing function $\chi$ on $\mathbb{R}$. 
For $\varphi \in C^0 \cap \PSH (X, L)$, we have 
\[ E_\chi (\varphi) = \int_\mathbb{R} \chi \nu_\infty (\mathcal{F}_\varphi). \]
As a consequence, we have 
\[ E_\chi (\varphi) = \inf \Big{\{} \int_\mathbb{R} \chi \nu_\infty (\mathcal{F}_{\tilde{\varphi}}) ~\Big{|}~ \varphi \le \tilde{\varphi} \in C^0 \cap \PSH (X, L) \Big{\}}. \]
\end{prop}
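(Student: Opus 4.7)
The strategy is to establish two inequalities, with the key ingredients being Lemma \ref{monotonicity} (monotonicity of $\int \chi\,\nu_\infty(\mathcal{F})$ under $\mathcal{F} \subset \mathcal{F}'$ for increasing $\chi$), Corollary \ref{DH measure via Krull envelope} (identifying $\nu_\infty(\mathcal{F}_{\varphi_{(\mathcal{X},\mathcal{L})}}) = \DHm_{(\mathcal{X},\mathcal{L})}$), Dini's lemma (Lemma \ref{Dini}, giving uniform approximation of $\varphi$ from above by elements of $\nH(X,L)$ since $\varphi$ is continuous), and finally the right continuity of $\chi$.

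The lower bound is immediate. For any $\tilde{\varphi} = \varphi_{(\mathcal{X},\mathcal{L})} \in \nH(X,L)$ with $\tilde\varphi \ge \varphi$, the inclusion $\mathcal{F}_\varphi \subset \mathcal{F}_{\tilde\varphi}$ (from $\varphi \le \tilde\varphi$ at each valuation, together with $\mathcal{F}_v^\lambda[\sigma] \subset \mathcal{F}_v^\lambda[\sigma']$ when $\sigma \le \sigma'$) combined with Lemma \ref{monotonicity} yields
\[ \int_\mathbb{R} \chi\,\nu_\infty(\mathcal{F}_\varphi) \le \int_\mathbb{R} \chi\,\nu_\infty(\mathcal{F}_{\tilde\varphi}) = \int_\mathbb{R} \chi\,\DHm_{(\mathcal{X},\mathcal{L})}, \]
and taking the infimum over such $\tilde\varphi$ gives $\int_\mathbb{R} \chi\,\nu_\infty(\mathcal{F}_\varphi) \le E_\chi(\varphi)$.

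For the upper bound, I would fix $\varepsilon > 0$. Since $\varphi$ is continuous, Lemma \ref{Dini} provides some $\tilde\varphi_\varepsilon \in \nH(X,L)$ with $\varphi \le \tilde\varphi_\varepsilon \le \varphi + \varepsilon$, hence $\mathcal{F}_\varphi \subset \mathcal{F}_{\tilde\varphi_\varepsilon} \subset \mathcal{F}_{\varphi+\varepsilon}$. A direct computation from the definition gives $\mathcal{F}_{\varphi+\varepsilon}^\lambda R_m = \mathcal{F}_\varphi^{\lambda - m\varepsilon} R_m$, so that $\nu_m(\mathcal{F}_{\varphi+\varepsilon}) = (t \mapsto t+\varepsilon)_*\nu_m(\mathcal{F}_\varphi)$ and, passing to the limit, $\nu_\infty(\mathcal{F}_{\varphi+\varepsilon}) = (t \mapsto t+\varepsilon)_*\nu_\infty(\mathcal{F}_\varphi)$. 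By Lemma \ref{monotonicity} applied twice,
\[ E_\chi(\varphi) \le \int_\mathbb{R} \chi\,\DHm_{(\mathcal{X}_\varepsilon,\mathcal{L}_\varepsilon)} = \int_\mathbb{R} \chi\,\nu_\infty(\mathcal{F}_{\tilde\varphi_\varepsilon}) \le \int_\mathbb{R} \chi(t+\varepsilon)\,\nu_\infty(\mathcal{F}_\varphi). \]
The measure $\nu_\infty(\mathcal{F}_\varphi)$ is compactly supported and $\chi$ is bounded on any compact interval, so as $\varepsilon \searrow 0$ the dominated convergence theorem together with the right continuity of $\chi$ gives $\int \chi(t+\varepsilon)\,\nu_\infty(\mathcal{F}_\varphi) \to \int \chi\,\nu_\infty(\mathcal{F}_\varphi)$. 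This yields $E_\chi(\varphi) \le \int \chi\,\nu_\infty(\mathcal{F}_\varphi)$.

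The trickiest point is to make sure the right continuity of $\chi$ is used correctly when passing to the limit; I expect the proof of Proposition \ref{continuity of moment energy along decreasing nets}, which relied on the same shift identity $\DHm_{(\mathcal{X},\mathcal{L}+\varepsilon\mathcal{X}_0)} = (t\mapsto t+\varepsilon)_*\DHm_{(\mathcal{X},\mathcal{L})}$, is the model to follow. The consequence is immediate: for any $\tilde\varphi \in C^0 \cap \PSH(X,L)$ with $\tilde\varphi \ge \varphi$, the monotonicity of $E_\chi$ (established in Proposition \ref{continuity of moment energy along decreasing nets}) gives $E_\chi(\varphi) \le E_\chi(\tilde\varphi) = \int_\mathbb{R} \chi\,\nu_\infty(\mathcal{F}_{\tilde\varphi})$ by the main identity applied to $\tilde\varphi$; taking the infimum and using $\nH(X,L) \subset C^0 \cap \PSH(X,L)$ together with the definition of $E_\chi(\varphi)$ sandwiches both infima and yields the equality.
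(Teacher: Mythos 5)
Your proof is correct and follows essentially the same route as the paper: monotonicity of $\int_\mathbb{R}\chi\,\nu_\infty(\mathcal{F})$ for the lower bound, then Dini's lemma to pick $\tilde\varphi_\varepsilon\in\nH(X,L)$ with $\varphi\le\tilde\varphi_\varepsilon\le\varphi+\varepsilon$, the shift identity $\nu_\infty(\mathcal{F}_{\varphi+\varepsilon})=(t\mapsto t+\varepsilon)_*\nu_\infty(\mathcal{F}_\varphi)$, and right continuity of $\chi$ to let $\varepsilon\searrow 0$ (the paper invokes monotone convergence where you use dominated convergence, which is immaterial here since $\nu_\infty(\mathcal{F}_\varphi)$ is compactly supported and $\chi(t+\varepsilon)\searrow\chi(t)$). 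The deduction of the second displayed equality from the first via the monotonicity of $E_\chi$ is also as intended.
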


\begin{proof}
By Lemma \ref{monotonicity}, we have 
\[ E_\chi (\varphi) \ge \int_\mathbb{R} \chi \nu_\infty (\mathcal{F}_\varphi). \]

By the continuity of $\varphi$, for any $\varepsilon > 0$ we can find $\varphi_{(\mathcal{X}, \mathcal{L})} \in \nH (X, L)$ such that $\varphi \le \varphi_{(\mathcal{X}, \mathcal{L})} \le \varphi + \varepsilon$ thanks to Lemma \ref{Dini}. 
Then we get 
\[ E_\chi (\varphi) \le \int_\mathbb{R} \chi \DHm_{(\mathcal{X}, \mathcal{L})} = \int_\mathbb{R} \chi \nu_\infty (\mathcal{F}_{\varphi_{(\mathcal{X}, \mathcal{L})}}) \le \int_\mathbb{R} \chi \nu_\infty (\mathcal{F}_{\varphi+\varepsilon}) = \int_\mathbb{R} \chi (t + \varepsilon) \nu_\infty (\mathcal{F}_\varphi) \]
again by Lemma \ref{monotonicity}. 
As $\chi (t+ \varepsilon) \searrow \chi (t)$ is bounded from above on the support of $\nu_\infty (\mathcal{F}_\varphi)$, the monotone convergence theorem shows 
\[ E_\chi (\varphi) \le \int_\mathbb{R} \chi \nu_\infty (\mathcal{F}_\varphi). \]
\end{proof}

\begin{eg}
The right continuity of $\chi$ is essential. 
For instance, consider $\chi = 1_{(\tau, \infty)}$ and $\varphi_i = \tau + 1/i \searrow \varphi = \tau$ for $i \in \mathbb{N}_+$, then we have 
\[ \int_\mathbb{R} 1_{(\tau, \infty)} \DHm_{\varphi_i} = \int_\mathbb{R} 1_{(\tau, \infty)} \delta_{\tau + 1/i} = 1 \nrightarrow 0 = \int_\mathbb{R} 1_{(\tau, \infty)} \delta_\tau = \int_\mathbb{R} 1_{(\tau, \infty)} \DHm_\varphi. \]
\end{eg}

\begin{defin}
\label{finite moment energy class}
For a non-constant increasing concave function $\chi$ on $\mathbb{R}$, we put 
\begin{equation} 
\E^\chi (X, L) := \{ \varphi \in \PSH (X, L) ~|~ E_\chi (\varphi_{; \rho}) > -\infty \text{ for } \forall \rho > 0 \}. 
\end{equation}
Note $\chi$ is automatically continuous and $\chi (-\infty) = -\infty$ by the assumption. 
\end{defin}

We can easily check the following for $\varphi \in \E^\chi (X, L)$. 
\begin{itemize}
\item If $\varphi' \ge \varphi$ for $\varphi' \in \PSH (X, L)$, then $\varphi' \in \E^\chi (X, L)$. 


\item $\varphi_{; \rho} \in \E^\chi (X, L)$ for any $\rho \in \mathbb{R}_+$

\item $\varphi \wedge \tau \in \E^\chi (X, L)$ for any $\tau \in \mathbb{R}$. 

\item If $\varphi_0, \varphi_1 \in \E^\chi (X, L)$, then $(1-t) \varphi_0 + t \varphi_1 \in \E^\chi (X, L)$ for any $t \in [0,1]$. 

\item If $\chi \le \chi'$, we have $E_\chi \le E_{\chi'}$, so that $\E^\chi (X, L) \subset \E^{\chi'} (X, L)$. 
\end{itemize}
Since $E_\chi (c) = \chi (c) > -\infty$ for $c \in \mathbb{R}$, we have $\PSH^{\mathrm{bdd}} (X, L) \subset \E^\chi (X, L)$ by the first property. 

We recall 
\begin{align*}
\mathcal{E}^1_{\mathrm{NA}} (X, L) 
&:= \{ \varphi \in \PSH (X, L) ~|~ E (\varphi) > -\infty \}, 
\\
\mathcal{E}^{\exp}_{\mathrm{NA}} (X, L) 
&:= \{ \varphi \in \PSH (X, L) ~|~ E_{\exp} (\varphi_{;\rho}) > -\infty \text{ for } \forall \rho > 0 \}. 
\end{align*}
We have $\E^{\exp} (X, L) \subset \E^1 (X, L)$. 
Since $E (\varphi +c) = E (\varphi) +c (e^L)$, $E_{\exp} (\varphi + c) = e^{-c} E_{\exp} (\varphi)$ for $c \in \mathbb{R}$, we have 
\begin{itemize}
\item $\varphi+ c \in \E^1 (X, L)$ (resp. $\E^{\exp} (X, L)$) if $\varphi \in \E^1 (X, L)$ (resp. $\E^{\exp} (X, L)$). 
\end{itemize}

\begin{quest}
In general for any $\varphi \in \E^\chi (X, L)$, we know $\varphi + c \in \E^\chi (X, L)$ for $c \ge 0$ and $(1-\varepsilon) \varphi + c \in \E^\chi (X, L)$ for $0 < \varepsilon < 1$ and $c \in \mathbb{R}$. 
For general $\chi$, does $\varphi + c \in \E^\chi (X, L)$ hold for $\varphi \in \E^\chi (X, L)$ and $c \in \mathbb{R}$? 
\end{quest}

\begin{eg}
As explained in section \ref{NAmu via Legendre dual}, for a lower semi-continuous convex function $q$ on the interval $[0,1]$, we can assign a non-archimedean psh metric $\varphi_q$ on $(\mathbb{C}P^1, \mathcal{O} (1))$ and have 
\[ E_{\exp} (\varphi_{q; \rho}) = - \int_{[0,1]} e^{\rho q (t)} dt, \quad \sup |\varphi_q| = |q|. \]
It follows that the unbounded convex function $q= \log (-\log t(1-t))$ gives an unbounded example of $\varphi \in \E^{\exp} (X, L)$. 
\end{eg}

\subsubsection{Duistermaat--Heckman measure of non-archimedean psh metric}
\label{Duistermaat--Heckman measure of non-archimedean psh metric}

\begin{prop}
For $\varphi \in \PSH (X, L)$, we put 
\[ F_\varphi (\tau) := E_{1_{[\tau, \infty)}} (\varphi) = \inf \Big{\{} \int_{[\tau, \infty)} \DHm_{(\mathcal{X}, \mathcal{L})} ~\Big{|}~ \varphi \le \varphi_{(\mathcal{X}, \mathcal{L})} \Big{\}}. \]
Then we have the following. 
\begin{enumerate}
\item The function $F_\varphi$ is decreasing, left continuous and satisfies $F_\varphi (\tau) = 0$ for $\tau > \sup \varphi$ and $\lim_{\tau \to -\infty} F_\varphi (\tau) \le (e^L)$. 

\item Suppose $\varphi_1 \le \varphi_2$, then we have $F_{\varphi_1} (\tau) \le F_{\varphi_2} (\tau)$. 

\item For a convergent decreasing net $\varphi_i \searrow \varphi \in \PSH (X, L)$, we have $F_{\varphi_i} (\tau) \searrow F_\varphi (\tau)$. 
\end{enumerate}
\end{prop}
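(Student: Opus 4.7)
The plan is to deduce the three items from three basic ingredients: monotonicity of $E_\chi$ in $\chi$ (which endows the infimum defining $F_\varphi(\tau)$ with its regularity), the translation identity $\DHm_{\tilde\varphi + c} = (t \mapsto t + c)_*\DHm_{\tilde\varphi}$ for $\tilde\varphi \in \nH(X, L)$ recalled from \cite[Proposition 3.12]{BHJ1}, and Dini's lemma (Lemma~\ref{Dini}) for convergent decreasing nets of non-archimedean psh metrics.

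Item (2) will be immediate: if $\varphi_1 \le \varphi_2$, then the admissible set $\{\tilde\varphi \in \nH(X, L) : \tilde\varphi \ge \varphi_1\}$ contains the corresponding set for $\varphi_2$, so the infimum can only decrease. For item (1), decreasingness in $\tau$ follows because $\tau \le \tau'$ gives $1_{[\tau',\infty)} \le 1_{[\tau,\infty)}$ pointwise; the bound $F_\varphi(\tau) \le (e^L)$ is clear since $\DHm_{\tilde\varphi}$ has total mass $(e^L)$. For vanishing above $\sup\varphi$, I will use the identity $\sup\varphi = \varphi(v_{\mathrm{triv}})$ recorded in the excerpt: any decreasing net $\varphi_i \searrow \varphi$ in $\nH(X, L)$ satisfies $\sup\varphi_i = \varphi_i(v_{\mathrm{triv}}) \searrow \sup\varphi$, so for $\tau > \sup\varphi$ eventually $\supp\DHm_{\varphi_i} \subset (-\infty, \sup\varphi_i] \subset (-\infty, \tau)$ and hence $\int_{[\tau,\infty)}\DHm_{\varphi_i} = 0$. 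Left continuity at $\tau$ will come from continuity from above of the finite measure $\DHm_{\tilde\varphi}$: given $\varepsilon > 0$ and $\tilde\varphi \ge \varphi$ with $\int_{[\tau,\infty)}\DHm_{\tilde\varphi} \le F_\varphi(\tau)+\varepsilon$, the decreasing family $[\tau_i,\infty) \searrow [\tau,\infty)$ as $\tau_i \nearrow \tau$ yields $F_\varphi(\tau_i) \le \int_{[\tau_i,\infty)}\DHm_{\tilde\varphi} \to \int_{[\tau,\infty)}\DHm_{\tilde\varphi}$, and monotonicity of $F_\varphi$ in $\tau$ closes the sandwich.

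For item (3), monotonicity of $\{F_{\varphi_i}(\tau)\}$ in $i$ and the lower bound $F_{\varphi_i}(\tau) \ge F_\varphi(\tau)$ will be consequences of (2); the heart of the matter is the reverse inequality $\limsup_i F_{\varphi_i}(\tau) \le F_\varphi(\tau)$. Fixing $\delta > 0$, I would choose $\tilde\varphi \in \nH(X, L)$ with $\varphi \le \tilde\varphi$ and $\int_{[\tau,\infty)}\DHm_{\tilde\varphi} \le F_\varphi(\tau)+\delta$. Since $\tilde\varphi$ is continuous, Lemma~\ref{Dini} provides, for each $\varepsilon > 0$, an index $i_\varepsilon$ with $\varphi_i \le \tilde\varphi + \varepsilon$ for $i \ge i_\varepsilon$; applying the translation identity then gives
\[ F_{\varphi_i}(\tau) \le \int_{[\tau,\infty)} \DHm_{\tilde\varphi+\varepsilon} = \int_{[\tau-\varepsilon,\infty)} \DHm_{\tilde\varphi} \]
for all such $i$. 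Taking $\limsup_i$, then $\varepsilon \searrow 0$ (using continuity from above of $\DHm_{\tilde\varphi}$ on $[\tau-\varepsilon,\infty) \searrow [\tau,\infty)$), and finally $\delta \to 0$, will conclude the argument. The only delicate point is the nested order of limits $i \to \infty$ and $\varepsilon \searrow 0$, but the translation identity decouples the approximation $\varphi_i \searrow \varphi$ from the restriction to $[\tau,\infty)$ cleanly, so there is no essential obstacle.
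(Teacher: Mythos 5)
Your proposal is correct and follows essentially the same route as the paper: the paper derives (2) and (3) from a general continuity statement for $E_\chi$ with $\chi$ increasing and right continuous (Proposition \ref{continuity of moment energy along decreasing nets}), whose proof is exactly your combination of Dini's lemma, the translation identity $\DHm_{\tilde\varphi+\varepsilon}=(t\mapsto t+\varepsilon)_*\DHm_{\tilde\varphi}$, and continuity from above of the finite measure, while (1) is handled by the same direct arguments you give. The only cosmetic caveat is to take $\varepsilon\in\mathbb{Q}$ so that $\tilde\varphi+\varepsilon$ stays in $\nH(X,L)$, which does not affect the limit.
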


\begin{proof}
The properties (2) and (3) are the consequence of the propositions in the last subsection. 

For any $(\mathcal{X}, \mathcal{L})$ with $\varphi_{(\mathcal{X}, \mathcal{L})} \ge \varphi$, we have 
\[ \int_{[\tau', \infty)} \DHm_{(\mathcal{X}, \mathcal{L})} \ge \int_{[\tau, \infty)} \DHm_{(\mathcal{X}, \mathcal{L})} \ge F_\varphi (\tau) \]
for $\tau' \le \tau$. 
This shows the monotonicity $F_\varphi (\tau') \ge F_\varphi (\tau)$ for $\tau' \le \tau$. 

Next we check the left continuity. 
Take an increasing sequence $\tau_j \nearrow \tau \in \mathbb{R}$. 
Since we know $F_\varphi (\tau_j) \ge F_\varphi (\tau)$, it suffices to show $\lim_{j \to \infty} F_\varphi (\tau_j) \le F_\varphi (\tau)$, which is equivalent to show $\lim_{i \to \infty} F_\varphi (\tau_i) \le \int_{[\tau, \infty)} \DHm_{(\mathcal{X}, \mathcal{L})}$ for every $(\mathcal{X}, \mathcal{L})$ with $\varphi_{(\mathcal{X}, \mathcal{L})} \ge \varphi$. 
Since $1_{[\tau_j, \infty)}$ is a bounded sequence which converges to $1_{[\tau, \infty)}$ pointwisely (not only a.e. with respect to the Lebesgue measure), we have $\lim_{j \to \infty} \int_{[\tau_j, \infty)} \DHm_{(\mathcal{X}, \mathcal{L})} = \int_{[\tau, \infty)} \DHm_{(\mathcal{X}, \mathcal{L})}$ by the bounded convergence theorem. 
Thus we get $\lim_{i \to \infty} F_\varphi (\tau_i) \le \int_{[\tau, \infty)} \DHm_{(\mathcal{X}, \mathcal{L})}$. 
(Note $1_{[\tau_j, \infty)}$ converges to $1_{(\tau, \infty)}$ if we approximate $\tau$ from the right $\tau_j \searrow \tau$. 
Since $\DHm$ may have a singular component in the Lebesgue decomposition, $F_\varphi$ is not right continuous. )

Note $\int_{[\tau, \infty)} \DHm_{(\mathcal{X}, \mathcal{L})} = 0$ for $\tau > \sup \varphi_{(\mathcal{X}, \mathcal{L})}$. 
It follows that $F_\varphi (\tau) = 0$ for $\tau > \sup \varphi_{(\mathcal{X}, \mathcal{L})}$ for any $\varphi_{(\mathcal{X}, \mathcal{L})} \ge \varphi$. 
Since $\sup \varphi_i \searrow \sup \varphi$ for $\varphi_i \searrow \varphi$, we get $F_\varphi (\tau) = 0$ for $\tau > \sup \varphi$. 

The property $\int_\mathbb{R} \DHm_\varphi \le (e^L)$ follows immediately from 
\[ \int_\mathbb{R} \DHm_\varphi = \lim_{j \to \infty} \int_{[-j, \infty)} \DHm_\varphi \le \lim_{j \to \infty} \int_{[-j, \infty)} \DHm_{(\mathcal{X}, \mathcal{L})} = \int_\mathbb{R} \DHm_{(\mathcal{X}, \mathcal{L})} = (e^L) \]
for any $(\mathcal{X}, \mathcal{L})$ with $\varphi_{(\mathcal{X}, \mathcal{L})} \ge \varphi$. 
\end{proof}

Thanks to this proposition, we obtain the following extension of the Duistermaat--Heckman measure of test configuration. 

\begin{defin}[Duistermaat--Heckman measure of NA psh metric]
\label{DH measure}
The \textit{Duistermaat--Heckman measure} of a non-archimedean psh metric $\varphi \in \PSH (X, L)$ is a finite Borel measure $\DHm_\varphi$ on $\mathbb{R}$ which is uniquely characterized by 
\[ \int 1_{[\tau, \infty)} \DHm_\varphi = F_\varphi (\tau) \]
for every $\tau \in \mathbb{R}$. 
We have $\supp \DHm_\varphi \subset (-\infty, \sup \varphi]$ and $\int_\mathbb{R} \DHm_\varphi \le (e^L)$. 
\end{defin}

By Proposition \ref{DH vs spectral}, we have 
\begin{equation} 
\DHm_\varphi = \nu_\infty (\mathcal{F}_\varphi) 
\end{equation}
for $\varphi \in C^0 \cap \PSH (X, L)$. 

\begin{lem}
\label{DH measure shift}
For a non-negative Borel measurable function $\chi$ on $\mathbb{R}$, we have the following basic rules. 
\begin{enumerate}
\item $\int_\mathbb{R} \chi (t) \DHm_{\varphi_{;\rho}} = \int_\mathbb{R} \chi (\rho t) \DHm_{\varphi}$ for any $\rho \in \mathbb{R}_+$. 

\item $\int_\mathbb{R} \chi (t) \DHm_{\varphi + c} = \int_\mathbb{R} \chi (t+c) \DHm_{\varphi}$ for any $c \in \mathbb{R}$. 

\item $\int_\mathbb{R} \chi \DHm_{\varphi \wedge \tau} = \int_{(-\infty, \tau)} \chi \DHm_{\varphi} + \chi (\tau) \int_{[\tau, \infty)} \DHm_\varphi$ for any $\tau \in \mathbb{R}$. 
\end{enumerate}
\end{lem}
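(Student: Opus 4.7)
The plan is to reduce each identity to the uniqueness of a finite Borel measure on $\mathbb{R}$ by its right-tail function $F_\varphi(\tau) = \int_{[\tau,\infty)} \DHm_\varphi$ from Definition \ref{DH measure}, and then to promote from indicators $1_{[\tau,\infty)}$ to arbitrary non-negative Borel $\chi$ by simple-function approximation and the monotone convergence theorem. For each identity, I first verify it for $\varphi = \varphi_{(\mathcal{X},\mathcal{L})} \in \nH(X,L)$ using the explicit formulas of test-configuration Duistermaat--Heckman measures, and then pass to general $\varphi \in \PSH(X,L)$ via a decreasing net $\varphi_i \searrow \varphi$ in $\nH(X,L)$ and the continuity of $F_\bullet(\tau)$ along decreasing nets (the proposition just before Definition \ref{DH measure}).

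For (1), the integer case $\rho = d \in \mathbb{N}_+$ follows from Corollary \ref{scaling of NA psh}, which gives $\varphi_{;d} = \varphi_{(\mathcal{X}_d,\mathcal{L}_d)}$, combined with the pushforward relation $\DHm_{(\mathcal{X}_d,\mathcal{L}_d)} = d_*\DHm_{(\mathcal{X},\mathcal{L})}$ derived in the proof of Corollary \ref{DH measure via Krull envelope}. For general $\rho \in \mathbb{R}_+$ and $\varphi \in C^0 \cap \PSH(X,L)$, I use $\DHm_\varphi = \nu_\infty(\mathcal{F}_\varphi)$ (Proposition \ref{DH vs spectral}) together with the definitional identity $\mathcal{F}_{\varphi_{;\rho}} = (\mathcal{F}_\varphi)_{;\rho}$; this yields $\nu_m((\mathcal{F}_\varphi)_{;\rho}) = \rho_*\nu_m(\mathcal{F}_\varphi)$ at every finite level, and (1) follows on letting $m \to \infty$. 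The passage to general $\varphi \in \PSH(X,L)$ is routine, since $\varphi_{i;\rho}(v) = \rho\varphi_i(\rho^{-1}v) \searrow \rho\varphi(\rho^{-1}v) = \varphi_{;\rho}(v)$. For (2), the proof of Proposition \ref{continuity of moment energy along decreasing nets} already records $\varphi_{(\mathcal{X},\mathcal{L})} + c = \varphi_{(\mathcal{X}, \mathcal{L} + c\mathcal{X}_0)}$ together with $\DHm_{(\mathcal{X}, \mathcal{L} + c\mathcal{X}_0)} = (t \mapsto t+c)_*\DHm_{(\mathcal{X},\mathcal{L})}$ for rational $c$; rational approximation of $c$ and the net argument for $\varphi$ (noting that $\{\varphi_i + c\}$ is again a decreasing net converging to $\varphi + c$) complete the proof.

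For (3), I start with $\varphi \in \nH(X,L)$ and $\tau \in \mathbb{Q}$, so that $\varphi \wedge \tau \in \nH(X,L)$ by Proposition \ref{test configuration associated to phi wedge tau}. Both $\varphi$ and $\tau$ are continuous, so Proposition \ref{filtration of envelope} gives $\mathcal{F}_{\varphi \wedge \tau} = \mathcal{F}_\varphi \cap \mathcal{F}_{v_{\mathrm{triv}}}[\tau]$, which is explicitly
\[ \mathcal{F}_{\varphi \wedge \tau}^\lambda R_m = \mathcal{F}_\varphi^\lambda R_m \text{ if } \lambda \le m\tau, \qquad \mathcal{F}_{\varphi \wedge \tau}^\lambda R_m = 0 \text{ if } \lambda > m\tau. \]
Taking graded pieces yields
\[ \nu_m(\mathcal{F}_{\varphi \wedge \tau}) = 1_{(-\infty,\tau)} \nu_m(\mathcal{F}_\varphi) + m^{-n} (\dim \mathcal{F}_\varphi^{m\tau} R_m) \, \delta_\tau, \]
and since $m^{-n}\dim \mathcal{F}_\varphi^{m\tau} R_m = \nu_m(\mathcal{F}_\varphi)([\tau,\infty))$, letting $m \to \infty$ and invoking Corollary \ref{DH measure via Krull envelope} produces the measure identity $\DHm_{\varphi \wedge \tau} = 1_{(-\infty,\tau)} \DHm_\varphi + F_\varphi(\tau)\, \delta_\tau$, which is (3) after integrating against $\chi$. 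Extension to arbitrary $\varphi \in \PSH(X,L)$ and $\tau \in \mathbb{R}$ proceeds via a decreasing net $\varphi_i \searrow \varphi$ in $\nH(X,L)$ and a rational sequence $\tau_j \searrow \tau$; Proposition \ref{decreasing limit of rooftop} gives $\varphi_i \wedge \tau_j \searrow \varphi \wedge \tau$, and one takes the limit at the level of $F$.

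The main obstacle lies in (3), because the point mass $F_\varphi(\tau)\delta_\tau$ is sensitive to how $\tau$ is approximated and to atoms of $\DHm_\varphi$ at $\tau$; weak convergence of measures does not in general preserve atomic values. The resolution hinges on the left continuity of $F_\varphi(\tau)$ in $\tau$, established just before Definition \ref{DH measure}, combined with continuity of $F_\bullet(\tau)$ along decreasing nets of metrics: together these force $F_{\varphi_i}(\tau_j) \to F_\varphi(\tau)$ when $\tau_j \searrow \tau$ and $\varphi_i \searrow \varphi$, so the atomic mass is correctly identified in the limit. A secondary subtlety in (1) is that $\varphi_{;\rho}$ need not belong to $\nH(X,L)$ for irrational $\rho$; this is why the proof routes through the continuous category via Proposition \ref{DH vs spectral} before the net argument.
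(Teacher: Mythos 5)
Your overall route is the same as the paper's: reduce to the tail functions $F_\varphi(\sigma)=\int_{[\sigma,\infty)}\DHm_\varphi$, reduce to the continuous (or Fubini--Study) case by continuity of $F_\bullet(\sigma)$ along decreasing nets, and then read everything off the three filtration identities $\mathcal{F}_{\varphi_{;\rho}}=(\mathcal{F}_\varphi)_{;\rho}$, $\mathcal{F}_{\varphi+c}=\mathcal{F}_\varphi[c]$, $\mathcal{F}_{\varphi\wedge\tau}=\mathcal{F}_\varphi\cap\mathcal{F}_{\mathrm{triv}}[\tau]$. The only cosmetic differences are that the paper reduces to $\chi=1_{[\tau',\tau)}$ via outer regularity rather than to $1_{[\sigma,\infty)}$ via measure uniqueness, and that it simply cites \cite[Proposition 3.4]{BJ2} for the three pushforward formulas $\nu_\infty((\mathcal{F}_\varphi)_{;\rho})=(t\mapsto\rho t)_*\nu_\infty(\mathcal{F}_\varphi)$, $\nu_\infty(\mathcal{F}_\varphi[c])=(t\mapsto t+c)_*\nu_\infty(\mathcal{F}_\varphi)$, $\nu_\infty(\mathcal{F}_\varphi\cap\mathcal{F}_{\mathrm{triv}}[\tau])=(t\mapsto\min\{t,\tau\})_*\nu_\infty(\mathcal{F}_\varphi)$, which you rederive at the level of $\nu_m$. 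That is all fine; for the passage $m\to\infty$ in (3) it is cleanest to observe that your decomposition of $\nu_m(\mathcal{F}_{\varphi\wedge\tau})$ is exactly $(t\mapsto\min\{t,\tau\})_*\nu_m(\mathcal{F}_\varphi)$, so the weak limit goes through because $\min\{\cdot,\tau\}$ is continuous; splitting off $1_{(-\infty,\tau)}\nu_m$ and the mass at $\tau$ separately is not justified under weak convergence alone.

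One assertion in your final paragraph is false as stated: for $\tau_j\searrow\tau$ one does \emph{not} have $F_{\varphi_i}(\tau_j)\to F_\varphi(\tau)$ in general. Left continuity of $F_\varphi$ controls limits from the left; from the right one gets $F_\varphi(\tau)-\DHm_\varphi(\{\tau\})$, and $\DHm_\varphi$ can have an atom at $\tau$ (take $\varphi=\varphi_{\mathrm{triv}}$ and $\tau=0$, where $\DHm_\varphi=(e^L)\delta_0$). Fortunately this convergence is not needed by the argument you actually set up: the identity you pass to the limit is $F_{\varphi_i\wedge\tau_j}(\sigma)=1_{[\sigma,\infty)}(\tau_j)F_{\varphi_i}(\sigma)$ for each fixed $\sigma$, which only requires $F_{\varphi_i}(\sigma)\to F_\varphi(\sigma)$ and the trivial convergence of the indicator; the limiting tail function $1_{[\sigma,\infty)}(\tau)F_\varphi(\sigma)$ is precisely that of $1_{(-\infty,\tau)}\DHm_\varphi+F_\varphi(\tau)\delta_\tau$, so the atomic mass at $\tau$ is recovered automatically from measure uniqueness rather than from a limit of point masses. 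You should delete the claimed convergence $F_{\varphi_i}(\tau_j)\to F_\varphi(\tau)$ and phrase the conclusion this way.
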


\begin{proof}
Since $\DHm_\varphi$ is outer regular measure, we can reduce the claim to the case $\chi = 1_{[\tau', \tau)}$, for which we have $\int_\mathbb{R} \chi \DHm_\varphi = F_\varphi (\tau') - F_\varphi (\tau)$. 
It suffices to show $F_{\varphi_{;\rho}} (\tau) = F_\varphi (\rho^{-1} \tau)$, $F_{\varphi + c} (\tau) = F_\varphi (\tau-c)$ and $F_{\varphi \wedge \tau'} (\tau) = 1_{[\tau, \infty)} (\tau') F_\varphi (\tau)$. 
Since both sides are continuous along decreasing nets, the claim is reduced to the case $\varphi \in C^0 \cap \PSH (X, L)$. 
Since $\mathcal{F}_{\varphi_{; \rho}} = (\mathcal{F}_{\varphi})_{; \rho}$, $\mathcal{F}_{\varphi + c} = \mathcal{F}_\varphi [c]$ and $\mathcal{F}_{\varphi \wedge \tau'} = \mathcal{F}_\varphi \cap \mathcal{F}_{\tau'}$, the claim follows by \cite[Proposition 3.4]{BJ2}: $\nu_\infty ((\mathcal{F}_{\varphi})_{; \rho}) = (t \mapsto \rho t)_* \nu_\infty (\mathcal{F}_\varphi)$, $\DHm_{\mathcal{F}_\varphi [c]} = (t \mapsto t + c)_* \nu_\infty (\mathcal{F}_\varphi)$ and $\nu_\infty (\mathcal{F}_\varphi \cap \mathcal{F}_{\mathrm{triv}} [\tau']) = (t \mapsto \min \{ t, \tau' \})_* \nu_\infty (\mathcal{F}_\varphi)$. 
\end{proof}

We observe sufficient conditions for the continuity of $\int_\mathbb{R} \chi \DHm_\varphi$ along decreasing nets. 

\begin{defin}
\label{tame}
We call a function $\chi$ on $\mathbb{R}$ is \textit{tame} if for any $t \in \mathbb{R}$ and $\varepsilon > 0$ there exists a function $\tilde{\chi}$ of the form $\tilde{\chi} = \sum_{j=1}^k a_j 1_{[\tau'_j, \tau_j)}$ ($\tau_j = \infty$ is allowed) such that $\sup_{(-\infty, t)} |\chi - \tilde{\chi}| < \varepsilon$. 
\end{defin}

\begin{lem}
Any tame function is right continuous, locally bounded and converges to zero at $-\infty$, hence is integrable with respect to $\DHm_\varphi$. 

The following functions are tame. 
\begin{itemize}
\item Continuous function $\chi$ converging to zero at $-\infty$. 

\item Monotonic right continuous function $\chi$ converging to zero at $-\infty$. 
\end{itemize}
Here convergence to zero at $-\infty$ means $\varlimsup_{t \to -\infty} |\chi (t)| = 0$. 
\end{lem}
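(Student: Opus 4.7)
The plan is to dispatch each of the four properties of tame functions and each of the two sufficient conditions by elementary manipulations of the approximating step functions $\tilde\chi = \sum_{j=1}^k a_j 1_{[\tau'_j,\tau_j)}$. First I would handle the properties. Fix $t \in \mathbb{R}$ and $\varepsilon > 0$; each $\tilde\chi$ is right continuous, bounded on $(-\infty,t)$, and vanishes on $(-\infty,\min_j \tau'_j)$. Transferring via $\sup_{(-\infty,t)}|\chi-\tilde\chi|<\varepsilon$ and letting $\varepsilon \to 0$ yields right continuity at every $s<t$ (triangle inequality together with right continuity of $\tilde\chi$ at $s$), local boundedness on $(-\infty,t)$, and $\varlimsup_{s\to-\infty}|\chi(s)| \le \varepsilon$, hence $\chi(s)\to 0$ at $-\infty$. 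Integrability with respect to $\DHm_\varphi$ follows because $\supp\DHm_\varphi \subset (-\infty,\sup\varphi]$ is bounded above, while $\chi$ is bounded on any $(-\infty,t)$; the finiteness of $\DHm_\varphi$ then forces $\chi\in L^1(\DHm_\varphi)$.

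Next I would show that a continuous $\chi$ with $\chi(s)\to 0$ as $s\to-\infty$ is tame. Fix $t,\varepsilon$. Choose $T<t$ with $|\chi|<\varepsilon$ on $(-\infty,T)$. On the compact interval $[T,t]$, $\chi$ is uniformly continuous, so I can pick a partition $T=s_0<s_1<\dots<s_k=t$ with oscillation of $\chi$ on each $[s_j,s_{j+1}]$ less than $\varepsilon$, and set
\[
\tilde\chi := \sum_{j=0}^{k-1} \chi(s_j)\, 1_{[s_j,s_{j+1})}.
\]
Then $\tilde\chi = 0$ on $(-\infty,T)$ where $|\chi|<\varepsilon$, and $|\chi-\tilde\chi|<\varepsilon$ on $[T,t)$, as required.

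For the monotone right continuous case, WLOG $\chi$ is increasing (otherwise replace by $-\chi$); since $\chi\to 0$ at $-\infty$ and $\chi$ is increasing, $\chi\ge 0$. Given $t,\varepsilon$, let $M=\chi(t-)\le\chi(t)<\infty$ and choose a partition $0=y_{-1}<\varepsilon/2=y_0<y_1<\dots<y_k$ with mesh $<\varepsilon$ and $y_k>M$. Define the breakpoints
\[
\tau'_j := \inf\{s\in\mathbb{R} : \chi(s)\ge y_j\},\qquad j=0,\dots,k.
\]
By monotonicity, $\chi(s)<y_j$ for $s<\tau'_j$, and by right continuity, $\chi(\tau'_j)\ge y_j$; hence on $[\tau'_j,\tau'_{j+1})$ we have $y_j\le\chi(s)<y_{j+1}$. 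Setting $\tilde\chi := \sum_{j=0}^{k-1} y_j\,1_{[\tau'_j,\tau'_{j+1})}$ (vanishing on $(-\infty,\tau'_0)$, where $\chi<\varepsilon/2$), we get $|\chi-\tilde\chi|<\varepsilon$ throughout $(-\infty,t)$, using $\tau'_k\ge t$ to ensure $[\tau'_k,t)=\emptyset$.

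The only delicate point is the monotone case: we must match the precise form of half-open intervals $[\tau'_j,\tau_j)$ appearing in the definition of tameness, and this matching relies on the combination of monotonicity (which gives $\chi<y_j$ to the left of $\tau'_j$) with right continuity (which gives $\chi\ge y_j$ at $\tau'_j$ itself). The continuous case is just uniform continuity on a compact interval plus the tail bound, and the four properties of tame functions are formal consequences of the uniform approximation.
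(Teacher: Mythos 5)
Your proof is correct and follows essentially the same route as the paper's: transfer of right continuity, boundedness, and the tail bound from the approximating step functions; uniform continuity on a compact interval for the continuous case; and a partition of the range for the monotone case (your explicit breakpoints $\tau'_j = \inf\{s : \chi(s)\ge y_j\}$ are just the paper's observation that $\chi^{-1}([a'_i,a_i))$ is a half-open interval, made concrete). The WLOG reduction to the increasing case and the verification $y_j \le \chi < y_{j+1}$ on $[\tau'_j,\tau'_{j+1})$ via monotonicity plus right continuity are both handled correctly.
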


\begin{proof}
The right continuity is equivalent to the continuity with respect to the lower limit topology on the domain $\mathbb{R}$. 
Then the uniform limit theorem shows the right continuity of tame function. 
Local boundedness and convergence to zero at $-\infty$ is obvious. 

Suppose $\chi$ is a continuous function converging to zero at $-\infty$. 
For any $\varepsilon > 0$, we have $t_\varepsilon \in \mathbb{R}$ such that $\sup_{(-\infty, t_\varepsilon]} |\chi| < \varepsilon$. 
On the other hand, $\chi$ is uniformly continuous on $[t_\varepsilon, t+\varepsilon]$, so that we can find a step function $\tilde{\chi}$ supported on $[t_\varepsilon, t+\varepsilon)$ with $\sup_{[t_\varepsilon, t]} |\chi - \tilde{\chi}| < \varepsilon$. 
It follows that $\sup_{(-\infty, t]} |\chi - \tilde{\chi}| < \varepsilon$. 

Suppose $\chi$ is a right continuous increasing function converging to zero at $-\infty$. 
We take $t_\varepsilon$ similarly as above. 
We divide the interval $[\chi (t_\varepsilon), \chi (t))$ into finitely many disjoint intervals $[a'_i, a_i)$ with length $< \varepsilon$. 
By the right continuity and the monotonicity, $\chi^{-1} ([a'_i, a_i))$ are of the form $[\tau'_i, \tau_i)$, which are disjoint and cover $[t_\varepsilon, t)$. 
Since $\sup_{[\tau'_i, \tau_i)} |\chi - a'_i 1_{[\tau'_i, \tau_i)}| < \varepsilon$, we get $\sup_{(-\infty, t]} |\chi - \sum_i a'_i 1_{[\tau'_i, \tau_i)}| < \varepsilon$. 
\end{proof}

\begin{prop}
\label{tame}
Let $\chi$ be a tame function. 
For a convergent decreasing net $\varphi_i \searrow \varphi \in \PSH (X, L)$, we have $\int_{\mathbb{R}} \chi \DHm_{\varphi_i} \to \int_{\mathbb{R}} \chi \DHm_\varphi$. 
\end{prop}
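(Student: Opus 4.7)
The plan is to reduce the problem to the case of finite linear combinations $\tilde{\chi} = \sum_{j=1}^k a_j 1_{[\tau'_j, \tau_j)}$, for which convergence $\int \tilde{\chi} \DHm_{\varphi_i} \to \int \tilde{\chi} \DHm_\varphi$ follows directly from the pointwise decrease $F_{\varphi_i}(\tau) \searrow F_\varphi(\tau)$ established in the previous proposition, and then to control the error via the tameness hypothesis together with the uniform total-mass bound $\DHm_\psi(\mathbb{R}) \le (e^L)$.

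First I would restrict the index set $I$ to a cofinal subset having a minimum $0 \in I$ (as remarked earlier in the paper for decreasing nets) and fix any $t > \sup \varphi_0$. By monotonicity of the net, $\sup \varphi_i \le \sup \varphi_0 < t$ for every $i \ge 0$, and likewise $\sup \varphi < t$. Since $\supp \DHm_\psi \subset (-\infty, \sup \psi]$ by Definition \ref{DH measure}, all the measures $\DHm_{\varphi_i}$ and $\DHm_\varphi$ are supported in $(-\infty, t)$, so only the values of $\chi$ on this half-line contribute.

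Given $\varepsilon > 0$, tameness produces $\tilde{\chi} = \sum_{j=1}^k a_j 1_{[\tau'_j, \tau_j)}$ (permitting $\tau_j = \infty$, with the convention $F_\psi(\infty) := 0$) satisfying $\sup_{(-\infty, t)} |\chi - \tilde{\chi}| < \varepsilon$. For such a step function,
\[ \int_{\mathbb{R}} \tilde{\chi} \, \DHm_\psi = \sum_{j=1}^k a_j \bigl( F_\psi(\tau'_j) - F_\psi(\tau_j) \bigr), \]
so the pointwise decrease $F_{\varphi_i}(\tau) \searrow F_\varphi(\tau)$ yields $\int \tilde{\chi} \, \DHm_{\varphi_i} \to \int \tilde{\chi} \, \DHm_\varphi$. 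Combining with the uniform estimate $\int |\chi - \tilde{\chi}| \, \DHm_\psi \le \varepsilon \DHm_\psi(\mathbb{R}) \le \varepsilon (e^L)$, valid for $\psi = \varphi_i$ and $\psi = \varphi$ since their support lies in $(-\infty, t)$, the triangle inequality gives
\[ \limsup_{i} \Bigl| \int_{\mathbb{R}} \chi \, \DHm_{\varphi_i} - \int_{\mathbb{R}} \chi \, \DHm_\varphi \Bigr| \le 2 \varepsilon (e^L), \]
and letting $\varepsilon \to 0$ concludes.

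There is no serious obstacle; the only subtle point is that tameness only provides uniform approximation on half-lines $(-\infty, t)$ rather than on all of $\mathbb{R}$, but this is exactly compensated by the observation that all Duistermaat--Heckman measures appearing in the net are supported uniformly below $\sup \varphi_0$, which reduces the estimate to a bounded interval.
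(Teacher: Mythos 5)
Your proof is correct and follows essentially the same route as the paper's: approximate $\chi$ uniformly on $(-\infty, \sup\varphi_0 + 1)$ by a step function $\tilde{\chi} = \sum_j a_j 1_{[\tau'_j, \tau_j)}$, use $F_{\varphi_i}(\tau) \searrow F_\varphi(\tau)$ to handle $\tilde{\chi}$, and bound the error by $2\varepsilon (e^L)$ via the uniform support and total-mass bounds. No further comment is needed.
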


\begin{proof}
We remark the claim holds for step functions $\tilde{\chi} = \sum_{j=1}^k a_j 1_{[\tau'_j, \tau_j)}$ as 
\[ \int_\mathbb{R} \tilde{\chi} \DHm_\varphi = \sum_{j=1}^k a_j (F_\varphi (\tau'_j) - F_\varphi (\tau_j)). \]
Since $\supp \DHm_{\varphi_i}, \supp \DHm_\varphi \subset (-\infty, \sup \varphi_0]$, we compute 
\begin{align*}
\varlimsup_{i \to \infty} |\int_\mathbb{R} \chi \DHm_\varphi - \int_\mathbb{R} \chi \DHm_{\varphi_i}| 
&\le \int_\mathbb{R} |\chi - \tilde{\chi}| \DHm_\varphi + \lim_{i \to \infty} |\int_\mathbb{R} \tilde{\chi} \DHm_\varphi - \int_\mathbb{R} \tilde{\chi} \DHm_{\varphi_i}| 
\\
& \qquad \qquad + \varlimsup_{i \to \infty} \int_\mathbb{R} |\tilde{\chi} - \chi| \DHm_{\varphi_i} 
\\
&\le 2\varepsilon (e^L)
\end{align*}
for any $\varepsilon > 0$, by taking $\tilde{\chi}$ so that $\sup_{(-\infty, \sup \varphi_0+ 1)} |\chi - \tilde{\chi}| < \varepsilon$. 
Taking the limit $\varepsilon \to 0$, we obtain the claim. 
\end{proof}

\subsubsection{$\E (X, L)$ and moment energy}
\label{Duistermaat--Heckman measure and moment energy}

For a general $\varphi \in \PSH (X, L)$, we may have $\int_\mathbb{R} \DHm_\varphi < (e^L)$ as we see in the following remark. 
In particular, the convergence $\int_\mathbb{R} \DHm_{\varphi_i} \to \int_\mathbb{R} \DHm_\varphi$ for $\varphi_i \searrow \varphi \in \PSH (X, L)$ fails in general. 
(Note $1_\mathbb{R}$ is not tame. )

\begin{rem}
Consider $\varphi_i = \max \{ \log |s_0|, \log |s_1| - i \} \in \PSH (\mathbb{C}P^1, \mathcal{O} (1))$ for $i \in \mathbb{N}$, then we have $\varphi_i \searrow \varphi = \log |s_0|$. 
Since $\DHm_{\varphi_i} = i^{-1} dt|_{[-i, 0]}$, we have $\DHm_\varphi = 0$, so that $\int_\mathbb{R} \DHm_\varphi = 0 < (e^L)$. 
On the other hand, $\int_\mathbb{R} \DHm_{\varphi_i} = (e^L)$. 

We also observe $\lim_{i \to \infty} \int_{(-\infty, \tau)} \DHm_{\varphi_i} = 1$ for every $\tau \in \mathbb{R}$. 
This illustrates the functional 
\[ G_\varphi (\tau) := \sup \Big{\{} \int_{(-\infty, \tau)} \DHm_{(\mathcal{X}, \mathcal{L})} ~\Big{|}~ \varphi_{(\mathcal{X}, \mathcal{L})} \ge \varphi \Big{\}} \] 
is not suitable for defining Duistermaat--Heckman measure: $\lim_{\tau \to -\infty} G_\varphi (\tau) \neq 0$. 
\end{rem}

For the above example, we have $\int_\mathbb{R} t \DHm_{\varphi_i} = i^{-1} \int_{-i}^0 t dt = -i/2 \searrow -\infty$, while $\int_\mathbb{R} t \DHm_\varphi = 0$. 
This in particular shows $E (\varphi) \neq \int_\mathbb{R} t \DHm_\varphi$ and the monotonicity as in Lemma \ref{monotonicity} fails for general $\varphi \in \PSH (X, L)$ and $\chi$. 

The following class is appropriate to discuss the relation to moment energy. 
\begin{equation}
\E (X, L) := \{ \varphi \in \PSH (X, L) ~|~ \int_\mathbb{R} \DHm_\varphi = (e^L) \}. 
\end{equation}

\begin{prop}
We have the following for $\varphi \in \E (X, L)$. 
\begin{enumerate}
\item If $\varphi' \ge \varphi$ for $\varphi' \in \PSH (X, L)$, then $\varphi' \in \E (X, L)$. 
In particular, we have $\PSH^{\mathrm{bdd}} (X, L) \subset \E (X, L)$. 

\item $\varphi + c \in \E (X, L)$ for any $c \in \mathbb{R}$. 

\item $\varphi_{; \rho} \in \E (X, L)$ for any $\rho \in \mathbb{R}_+$. 

\item $\varphi \wedge \tau \in \E (X, L)$ for any $\tau \in \mathbb{R}$. 

\end{enumerate}
\end{prop}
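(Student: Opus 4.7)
The plan is to reduce each claim directly to the characterizing identity $F_\varphi (\tau) = \int 1_{[\tau, \infty)} \DHm_\varphi$ for $\DHm_\varphi$ (Definition \ref{DH measure}) together with the elementary rules collected in Lemma \ref{DH measure shift} and the monotonicity of $F_\varphi$ observed in the preceding proposition. The common thread is that $\int_\mathbb{R} \DHm_\varphi = \lim_{\tau \to -\infty} F_\varphi (\tau)$ by monotone convergence applied to the increasing family $1_{[\tau, \infty)} \nearrow 1_\mathbb{R}$ as $\tau \to -\infty$, so it is enough to control the behaviour of $F_\psi$ for $\psi$ built out of $\varphi$.

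For (1) I would note that $\varphi \le \varphi'$ implies $F_\varphi (\tau) \le F_{\varphi'} (\tau)$ (part (2) of the previous proposition). Passing to the limit $\tau \to -\infty$ gives $\int_\mathbb{R} \DHm_\varphi \le \int_\mathbb{R} \DHm_{\varphi'} \le (e^L)$, and since $\varphi \in \E(X, L)$ the first term equals $(e^L)$, forcing equality. The "in particular" reduces to checking that constants lie in $\E (X, L)$: indeed, for $c \in \mathbb{Q}$ the constant $\varphi = c$ equals $\varphi_{(\mathcal{X}_{\mathrm{triv}}, \mathcal{L}_{\mathrm{triv}} + c \mathcal{X}_{\mathrm{triv}, 0})} \in \nH (X, L)$ with $\DHm = (e^L) \cdot \delta_c$; since any $\varphi \in \PSH^{\mathrm{bdd}} (X, L)$ dominates some constant, the first half applies.

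For (2), (3) and (4) I would apply the three identities of Lemma \ref{DH measure shift} in turn with $\chi = 1_{[\tau, \infty)}$ (which is a non-negative Borel function covered by that lemma) to obtain
\[ F_{\varphi + c} (\tau) = F_\varphi (\tau - c), \qquad F_{\varphi_{;\rho}} (\tau) = F_\varphi (\rho^{-1} \tau), \qquad F_{\varphi \wedge \tau'} (\tau) = F_\varphi (\min\{\tau, \tau'\}). \]
The last one follows from the splitting $\int_\mathbb{R} 1_{[\tau, \infty)} \DHm_{\varphi \wedge \tau'} = \int_{[\tau, \tau')} \DHm_\varphi + 1_{[\tau, \infty)} (\tau') \int_{[\tau', \infty)} \DHm_\varphi$, which telescopes to $F_\varphi (\tau)$ when $\tau \le \tau'$ and equals $0$ otherwise. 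Taking $\tau \to -\infty$ in each of the three formulas yields $\int_\mathbb{R} \DHm = \int_\mathbb{R} \DHm_\varphi = (e^L)$, giving (2), (3), (4).

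There is no real obstacle here; the only point that requires a brief remark is that Lemma \ref{DH measure shift} is stated for fixed $\chi = 1_{[\tau', \tau)}$ with finite endpoints, so I would note that the three identities on $F$ follow directly from that case (with $\tau = \infty$ handled by monotone convergence on $1_{[\tau', k)} \nearrow 1_{[\tau', \infty)}$ as $k \to \infty$), after which the $\tau \to -\infty$ limit uses monotone convergence once more and the standing bound $\int_\mathbb{R} \DHm \le (e^L)$ from Definition \ref{DH measure}.
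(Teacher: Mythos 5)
Your argument is correct and follows essentially the same route as the paper: claim (1) via the monotonicity $F_\varphi \le F_{\varphi'}$ and the limit $\tau \to -\infty$, and claims (2)--(4) by specializing Lemma \ref{DH measure shift} to $\chi = 1_{[\tau,\infty)}$. (Minor note: your displayed formula $F_{\varphi\wedge\tau'}(\tau)=F_\varphi(\min\{\tau,\tau'\})$ should read $1_{[\tau,\infty)}(\tau')F_\varphi(\tau)$, as your own subsequent sentence correctly states; this does not affect the $\tau\to-\infty$ limit.)
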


\begin{proof}
The first claim follows from 
\[ (e^L) \ge \int_\mathbb{R} \DHm_{\varphi'} = \lim_{j \to \infty} \int_{[-j, \infty)} \DHm_{\varphi'} \ge \lim_{j \to \infty} \int_{[-j, \infty)} \DHm_\varphi = \int_\mathbb{R} \DHm_\varphi = (e^L). \]
The claim (2)--(4) follows from Lemma \ref{DH measure shift}. 
\end{proof}

For an increasing right continuous function $\chi$, the functional $\varphi \mapsto \int_\mathbb{R} \chi \DHm_\varphi$ behaves well on $\E (X, L)$. 

\begin{prop}
\label{moment energy and DH measure}
Let $\chi$ be an increasing right continuous function on $\mathbb{R}$. 
If $\varphi \in \E (X, L)$, we have 
\[ E_\chi (\varphi) = \int_\mathbb{R} \chi \DHm_\varphi. \]
\end{prop}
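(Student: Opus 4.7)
The plan is to pass from $\varphi \in \E(X,L)$ to a decreasing net $\varphi_i \searrow \varphi$ of Fubini--Study metrics, on which the identity $E_\chi(\varphi_i) = \int_{\mathbb{R}} \chi \, \DHm_{\varphi_i}$ is already known by combining Corollary~\ref{DH measure via Krull envelope} ($\DHm_{\varphi_i} = \nu_\infty(\mathcal{F}_{\varphi_i})$) with Proposition~\ref{DH vs spectral}. Such a net exists by definition of $\PSH(X,L)$. Combined with the continuity of $E_\chi$ along decreasing nets (Proposition~\ref{continuity of moment energy along decreasing nets}), $E_\chi(\varphi) = \lim_i E_\chi(\varphi_i)$, the problem reduces to showing
\[
\lim_i \int_{\mathbb{R}} \chi \, \DHm_{\varphi_i} = \int_{\mathbb{R}} \chi \, \DHm_\varphi
\]
in $[-\infty,\infty)$.

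This convergence would be immediate from Proposition~\ref{tame} if $\chi$ were tame, but an increasing right continuous $\chi$ need only tend to $-\infty$ at $-\infty$. To handle this I would introduce the truncations $\chi_k := \chi \vee (-k)$ for $k \in \mathbb{N}_+$, each of which is increasing, right continuous, and bounded below, so that $\chi_k + k$ is tame. Applying Proposition~\ref{tame} to $\chi_k + k$ gives $\lim_i \int (\chi_k + k)\, \DHm_{\varphi_i} = \int (\chi_k + k)\, \DHm_\varphi$. The hypothesis $\varphi \in \E(X,L)$ (together with $\nH(X,L) \subset \E(X,L)$, which follows from the normalization $\int \DHm_{(\mathcal{X},\mathcal{L})} = (e^L)$) ensures $\int \DHm_{\varphi_i} = (e^L) = \int \DHm_\varphi$, so we may freely subtract the constant $k(e^L)$ to obtain $\lim_i \int \chi_k \, \DHm_{\varphi_i} = \int \chi_k \, \DHm_\varphi$ for each $k$.

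The last step is a double-limit exchange. The net $f(k,i) := \int \chi_k \, \DHm_{\varphi_i}$ on $\mathbb{N}_+ \times I$ is decreasing in both indices: in $k$ because $\chi_k \searrow \chi$ pointwise, and in $i$ because $\chi_k$ is increasing and $\varphi_i$ is decreasing, via Lemma~\ref{monotonicity} applied on $\nH(X,L) \subset C^0 \cap \PSH(X,L)$. For a doubly indexed decreasing net into $[-\infty,\infty]$, both iterated limits equal $\inf_{k,i} f(k,i)$. Taking the limit in $i$ first gives $\lim_k \int \chi_k \, \DHm_\varphi = \int \chi \, \DHm_\varphi$, where the outer monotone convergence is justified because $\chi_1$ is bounded above on $\supp \DHm_\varphi \subset (-\infty,\sup\varphi]$ and therefore integrable against the finite measure $\DHm_\varphi$. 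Taking the limit in $k$ first gives $\lim_i \int \chi \, \DHm_{\varphi_i}$, where the inner monotone convergence works because $\DHm_{\varphi_i}$ is compactly supported. Equating the two delivers the desired identity, valid even when both sides are $-\infty$.

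The main obstacle is conceptually mild and lies precisely in handling $\chi$ unbounded below, where the tame convergence theorem does not apply directly; the truncation-and-exchange mechanism circumvents this, and the role of the hypothesis $\varphi \in \E(X,L)$ is exactly to permit the free subtraction of additive constants through total-mass preservation.
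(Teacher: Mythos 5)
Your proof is correct and follows essentially the same route as the paper's: regularize by a decreasing net in $\nH (X, L)$, truncate $\chi$ from below so that the truncation becomes tame after an additive shift, use the full-mass hypothesis $\varphi \in \E (X, L)$ to absorb that shift, and finish by monotone convergence (the paper truncates at the level $\chi (-j)$ and proves the two inequalities separately via Lemma \ref{monotonicity} rather than through your symmetric double-limit exchange, but this is cosmetic). One pedantic point: when $\chi$ is bounded below, $\chi_k + k$ need not converge to zero at $-\infty$ and so is not literally tame in the sense of Definition \ref{tame}; replacing the shift $+k$ by $-\lim_{t \to -\infty} \chi_k (t)$ fixes this without affecting the argument.
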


\begin{proof}
We firstly note the integral $\int_\mathbb{R} \chi \DHm_\varphi$ makes sense, which may takes $-\infty$, since $\chi$ is bounded from above on $\supp \DHm_\varphi$: we put $\int_\mathbb{R} \chi \DHm_\varphi := - \int_\mathbb{R} (c - \chi) \DHm_\varphi + c \int_\mathbb{R} \DHm_\varphi$ by taking a constant $c$ so that $c- \chi$ is non-negative on $\supp \DHm_\varphi$. 


Take a regularization $\{ \varphi_i \}_{i \in I} \subset \nH (X, L)$ so that $\varphi_i \searrow \varphi$. 
For each $j \in \mathbb{N}$, $\max (\chi- \chi (-j), 0)$ is an increasing right continuous function with left bounded support. 
Then by Proposition \ref{continuity of moment energy along decreasing nets}, Proposition \ref{tame} and the assumption $\varphi \in \E (X, L)$, we compute 
\begin{align*} 
E_\chi (\varphi) = \lim_{i \to \infty} \int_\mathbb{R} \chi \DHm_{\varphi_i} 
&\le \varliminf_{i \to \infty} \int_\mathbb{R} \max (\chi, \chi (-j) ) \DHm_{\varphi_i} 
\\
&= \lim_{i \to \infty} \int_\mathbb{R} \max ( \chi- \chi (-j), 0) \DHm_{\varphi_i} + \chi (-j) (e^L) 
\\
&= \int_\mathbb{R} \max ( \chi- \chi (-j), 0) \DHm_\varphi + \chi (-j) (e^L)
\\
&= \int_\mathbb{R} \max ( \chi, \chi (-j)) \DHm_\varphi. 
\end{align*}
Since $\max ( \chi, \chi (-j)) \searrow \chi$ is bounded from above on $(-\infty, \sup \varphi]$, we conclude $E_\chi (\varphi) \le \int_\mathbb{R} \chi \DHm_\varphi$ by the monotone convergence theorem. 

To see $E_\chi (\varphi) \ge \int_\mathbb{R} \chi \DHm_\varphi$, it suffices to show $\int_\mathbb{R} \chi \DHm_{\varphi_0} \ge \int_\mathbb{R} \chi \DHm_\varphi$. 
Using the assumption $\varphi \in \E (X, L)$, Proposition \ref{tame} and Lemma \ref{monotonicity}, We compute 
\begin{align*} 
\int_\mathbb{R} \chi \DHm_\varphi \le \int_\mathbb{R} \max (\chi, \chi (-j)) \DHm_\varphi 
&= \int_\mathbb{R} \max (\chi- \chi (-j), 0) \DHm_\varphi + \chi (-j) (e^L)
\\
&= \lim_{i \to \infty} \int_\mathbb{R} \max (\chi- \chi (-j), 0) \DHm_{\varphi_i} + \chi (-j) (e^L) 
\\
&\le \int_\mathbb{R} \max (\chi- \chi (-j), 0) \DHm_{\varphi_0} + \chi (-j) (e^L)
\\
&= \int_\mathbb{R} \max (\chi, \chi (-j)) \DHm_{\varphi_0}. 
\end{align*}
Taking the limit $j \to \infty$, we get $\int_\mathbb{R} \chi \DHm_\varphi \le \int_\mathbb{R} \chi \DHm_{\varphi_0}$. 
\end{proof}

\begin{defin}
\label{moderate}
A function $\chi$ on $\mathbb{R}$ is \textit{moderate} if it is the sum of a right continuous monotonic function and a tame function. 
\end{defin}

A moderate function $\chi$ is bounded from below or above on $(-\infty, \sup \varphi]$, so that the integration $\int_\mathbb{R} \chi \DHm_\varphi$ makes sense, which may take value $\pm \infty$. 
If $\varphi' \ge \varphi$ and $\int_\mathbb{R} \chi \DHm_\varphi$ is finite, then $\int_\mathbb{R} \chi \DHm_{\varphi'}$ is also finite. 

\begin{cor}
Let $\chi$ be a moderate function on $\mathbb{R}$. 
For a convergent decreasing net $\varphi_i \searrow \varphi \in \mathcal{E}_{\mathrm{NA}} (X, L)$, we have $\lim_{i \to \infty} \int_\mathbb{R} \chi \DHm_{\varphi_i} = \int_\mathbb{R} \chi \DHm_\varphi$. 
\end{cor}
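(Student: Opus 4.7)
The plan is to exploit the defining decomposition of moderate functions together with the continuity results already established for the two pieces. First I would write $\chi = \chi_1 + \chi_2$ with $\chi_1$ right continuous monotonic and $\chi_2$ tame, so that by linearity of the integrals it suffices to treat the two summands separately. The tame summand is dealt with immediately by the preceding Proposition \ref{tame}, which asserts exactly the required convergence for tame integrands and general decreasing nets in $\PSH(X,L)$. Before moving on I would also record that $\varphi \in \mathcal{E}_{\mathrm{NA}}(X,L) = \E(X,L)$ and $\varphi_i \geq \varphi$ force $\varphi_i \in \E(X,L)$ by the first monotonicity property of $\E(X,L)$, so in particular $\int_\mathbb{R}\DHm_{\varphi_i} = (e^L) = \int_\mathbb{R}\DHm_\varphi$.

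For the case where $\chi_1$ is increasing right continuous, I would combine Proposition \ref{moment energy and DH measure}, which identifies $\int_\mathbb{R}\chi_1\,\DHm_\psi$ with $E_{\chi_1}(\psi)$ for every $\psi \in \E(X,L)$, with the continuity of $E_{\chi_1}$ along decreasing nets established in Proposition \ref{continuity of moment energy along decreasing nets}. This chain of identifications
\[
\int_\mathbb{R}\chi_1\,\DHm_{\varphi_i} \;=\; E_{\chi_1}(\varphi_i) \;\longrightarrow\; E_{\chi_1}(\varphi) \;=\; \int_\mathbb{R}\chi_1\,\DHm_\varphi
\]
is the heart of the argument; it is already robust enough to handle the possibility that the common limit equals $-\infty$.

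The remaining case, $\chi_1$ decreasing right continuous, I intend to reduce to the increasing case by applying the previous paragraph to $-\chi_1$, and then using the finite total mass $\int_\mathbb{R}\DHm_\psi = (e^L)$ valid on $\E(X,L)$ to pass from $-\chi_1$ back to $\chi_1$ by negation. The only mildly delicate step, and the point I expect to need the most care, is the bookkeeping for potentially infinite integrals when $\chi_1(-\infty) = +\infty$: if $\int_\mathbb{R}(-\chi_1)\DHm_\varphi = -\infty$ then both sides of the asserted convergence equal $+\infty$ and the statement is vacuously true; otherwise $E_{-\chi_1}(\varphi) > -\infty$, and the monotonicity $E_{-\chi_1}(\varphi_i) \geq E_{-\chi_1}(\varphi)$ from Proposition \ref{continuity of moment energy along decreasing nets} guarantees that every $\int_\mathbb{R}(-\chi_1)\DHm_{\varphi_i}$ is finite, so negation legitimately transfers the convergence to $\chi_1$. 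Beyond this, essentially no calculation is needed: the corollary is genuinely a concatenation of Proposition \ref{tame}, Proposition \ref{continuity of moment energy along decreasing nets} and Proposition \ref{moment energy and DH measure}.
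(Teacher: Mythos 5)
Your argument is correct and is exactly the intended derivation: the paper states this corollary without proof as an immediate consequence of Proposition \ref{tame}, Proposition \ref{continuity of moment energy along decreasing nets} and Proposition \ref{moment energy and DH measure}, applied to the two summands of the decomposition $\chi = \chi_1 + \chi_2$, with the decreasing monotonic case handled by negation just as you do. Your bookkeeping for the infinite-value case is also right, though calling it ``vacuously true'' undersells it slightly --- the convergence $\int_\mathbb{R}\chi_1\,\DHm_{\varphi_i}\to+\infty$ there is not automatic but follows, as you in fact use, from the continuity of $E_{-\chi_1}$ along decreasing nets.
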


\begin{prop}
Let $\chi$ be an increasing right continuous function on $\mathbb{R}$ with $\lim_{t \to -\infty} \chi (t) = -\infty$. 
Then $E_\chi (\varphi) > -\infty$ implies $\int_\mathbb{R} \DHm_\varphi = (e^L)$. 
\end{prop}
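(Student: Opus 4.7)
The plan is to prove the contrapositive: assuming $\int_\mathbb{R} \DHm_\varphi < (e^L)$, I will exhibit a sequence of test configurations $\varphi_i \ge \varphi$ along which $\int_\mathbb{R} \chi\, \DHm_{\varphi_i} \to -\infty$. Geometrically, the deficit $c := (e^L) - \int_\mathbb{R} \DHm_\varphi > 0$ represents mass of the approximating DH measures escaping to $-\infty$, and the weight $\chi$ diverges to $-\infty$ precisely there, so the pairing blows up.

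Concretely, I would fix a decreasing net $\{\varphi_i\}_{i \in I} \subset \nH(X,L)$ with $\varphi_i \searrow \varphi$. By Proposition \ref{continuity of moment energy along decreasing nets}, $F_{\varphi_i}(\tau) \searrow F_\varphi(\tau)$ for every $\tau \in \mathbb{R}$, so
\[
\int_{(-\infty, \tau)} \DHm_{\varphi_i} \;=\; (e^L) - F_{\varphi_i}(\tau) \;\longrightarrow\; (e^L) - F_\varphi(\tau) \;\ge\; c.
\]
Hence for each $\tau$ there exists $i_\tau \in I$ with $\int_{(-\infty, \tau)} \DHm_{\varphi_i} \ge c/2$ for all $i \ge i_\tau$. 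At the same time, $\supp \DHm_{\varphi_i} \subset (-\infty, \sup \varphi_{i_0}]$ since the net is decreasing; set $M := \max\{0, \chi(\sup \varphi_{i_0})\}$, which is a finite constant bounding $\chi$ from above on the common support.

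For any $\tau$ with $\chi(\tau) \le 0$ and any $i \ge i_\tau$, monotonicity of $\chi$ yields the split estimate
\[
\int_\mathbb{R} \chi\, \DHm_{\varphi_i} \;\le\; \chi(\tau) \int_{(-\infty, \tau)} \DHm_{\varphi_i} + M \int_{[\tau, \infty)} \DHm_{\varphi_i} \;\le\; \chi(\tau) \cdot \tfrac{c}{2} + M (e^L).
\]
Since $\varphi \le \varphi_i \in \nH(X,L)$, this expression also bounds $E_\chi(\varphi)$ from above by the defining infimum. Letting $\tau \to -\infty$ and invoking $\chi(\tau) \to -\infty$ forces $E_\chi(\varphi) = -\infty$, which is the contrapositive of what we want.

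The main point requiring care is the uniform upper-support bound ensuring the second term stays bounded as $i$ varies; this is cheap because $\sup \varphi_i \le \sup \varphi_{i_0}$ along the decreasing net. Beyond that, the whole argument is a quantitative version of the intuition that when $\chi$ is increasing and unbounded below, finiteness of $E_\chi(\varphi)$ is an integrability condition at $-\infty$ that in turn forces the DH mass not to leak to $-\infty$.
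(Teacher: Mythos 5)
Your proof is correct and follows essentially the same route as the paper's: argue the contrapositive, show that along a decreasing regularization a fixed amount ($c/2$, the paper's $\varepsilon/4$) of DH mass persists below any threshold $\tau$, split the integral at $\tau$, bound the upper part by a constant using the common support bound $(-\infty,\sup\varphi_{i_0}]$, and let $\chi(\tau)\to-\infty$ kill $E_\chi(\varphi)$. The only differences are cosmetic (working with $F_\varphi(\tau)$ and general $\tau$ instead of integer cutoffs $-j$, and a slightly cleaner constant $M=\max\{0,\chi(\sup\varphi_{i_0})\}$ for the positive part).
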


\begin{proof}
Suppose $\int_\mathbb{R} \DHm_\varphi < (e^L)$. 
Take $\varepsilon > 0$ so that $\int_\mathbb{R} \DHm_\varphi \le (e^L) - \varepsilon$. 
As $\lim_{j \to \infty} \int_{[-j, \infty)} \DHm_\varphi = \int_\mathbb{R} \DHm_\varphi$, there exists $j_\varepsilon \in \mathbb{N}$ such that 
\[ \int_{[-j, \infty)} \DHm_\varphi \le (e^L) - \varepsilon/2 \]
for every $j \ge j_\varepsilon$. 
Now we take a regularization $\{ \varphi_i \}_{i \in I} \subset \nH (X, L)$ so that $\varphi_i \searrow \varphi$. 
As $\lim_{i \to \infty} \int_{[-j, \infty)} \DHm_{\varphi_i} = \int_{[-j, \infty)} \DHm_\varphi$, for each $j \ge j_\varepsilon$ we can take $i_j \in I$ so that 
\[ \int_{[-j, \infty)} \DHm_{\varphi_i} \le \int_\mathbb{R} \DHm_{\varphi_i} - \varepsilon/4 \]
for every $i \ge i_j$, which we can rearrange as 
\[ \varepsilon/4 \le \int_{(-\infty, -j)} \DHm_{\varphi_i}. \]
For any $j \ge j_\varepsilon$ with $\chi (-j) < 0$, we compute 
\begin{align*} 
E_\chi (\varphi) \le E_\chi (\varphi_{i_j}) = \int_\mathbb{R} \chi \DHm_{\varphi_{i_j}} 
&= \int_{(-\infty, -j)} \chi \DHm_{\varphi_{i_j}} + \int_{[-j, \infty)} \chi \DHm_{\varphi_{i_j}} 
\\
&\le \chi (- j) \cdot \varepsilon/4 + \sup \varphi_0 \cdot (e^L). 
\end{align*}
Since $\chi (-j) \to -\infty$ as $j \to \infty$, we get $E_\chi (\varphi) = -\infty$, which proves the contraposition. 
\end{proof}

\begin{cor}
\label{finite moment energy class and full mass class}
For any non-constant increasing concave function $\chi$ on $\mathbb{R}$, we have 
\[ \E^\chi (X, L) \subset \E (X, L). \]
\end{cor}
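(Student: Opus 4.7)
The plan is to derive this immediately from the preceding proposition, so the entire content of the argument is to verify that a non-constant increasing concave $\chi$ automatically satisfies the hypotheses there (right continuity and $\lim_{t\to-\infty}\chi(t)=-\infty$), and then to reduce $\varphi\in\E^\chi(X,L)$ to the case $\rho=1$.

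First I would recall (and have already noted after Definition~\ref{finite moment energy class}) that any concave function on the open interval $\mathbb{R}$ is continuous, hence in particular right continuous. Next, I would show that a non-constant increasing concave function on $\mathbb{R}$ must tend to $-\infty$ at $-\infty$. The key monotonicity of secant slopes for concave functions gives this: pick $a<b$ with $\chi(a)<\chi(b)$, set $c:=(\chi(b)-\chi(a))/(b-a)>0$; then for every $t<a$ the concavity yields
\[
\frac{\chi(a)-\chi(t)}{a-t}\;\ge\;\frac{\chi(b)-\chi(a)}{b-a}\;=\;c,
\]
so $\chi(t)\le \chi(a)-c(a-t)\to -\infty$ as $t\to-\infty$.

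Having checked these two properties, I would invoke the previous proposition directly: since $\varphi\in\E^\chi(X,L)$ means $E_\chi(\varphi_{;\rho})>-\infty$ for every $\rho>0$, applying it with $\rho=1$ gives $E_\chi(\varphi)>-\infty$, hence $\int_\mathbb{R}\DHm_\varphi=(e^L)$, i.e.\ $\varphi\in\E(X,L)$. No genuine obstacle arises; the only point that requires a moment's thought is the elementary observation on concave functions, which I would spell out in one line as above. (If one wanted to use other values of $\rho$ too, the same argument applied to $\varphi_{;\rho}$ gives $\int_\mathbb{R}\DHm_{\varphi_{;\rho}}=(e^L)$, which is consistent with Lemma~\ref{DH measure shift}(1).)
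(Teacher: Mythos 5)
Your argument is correct and is essentially the paper's intended route: the corollary is stated as an immediate consequence of the preceding proposition, with the continuity and $\chi(-\infty)=-\infty$ of a non-constant increasing concave function already noted in the remark following Definition~\ref{finite moment energy class}. Your one-line secant-slope verification of $\lim_{t\to-\infty}\chi(t)=-\infty$ and the specialization to $\rho=1$ are exactly the details the paper leaves implicit.
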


\subsubsection{Brunn--Minkowski inequality}

The following will be used to bound the maximum of a $d_{\exp}$-bounded increasing non-archimedean psh metrics. 

\begin{prop}
\label{concavity}
For $\varphi \in \PSH (X, L)$, $F_\varphi (\tau)^{1/n} = (\int_{[\tau, \infty)} \DHm_\varphi)^{1/n}$ is concave on $(-\infty, \sup \varphi]$ and is zero on $(\sup \varphi, \infty)$. 
As a consequence, we have $\sup \supp \DHm_\varphi = \sup \varphi$ if $\int_\mathbb{R} \DHm_\varphi \neq 0$. 
\end{prop}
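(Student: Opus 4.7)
The plan is to establish the concavity first on the dense subset $\nH(X,L)$ via a Brunn--Minkowski argument on Okounkov bodies, and then extend to arbitrary $\varphi\in\PSH(X,L)$ by the monotonic continuity of $F_\varphi$ already recorded in the preceding proposition.

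First I would treat the base case $\varphi=\varphi_{(\mathcal{X},\mathcal{L})}\in\nH(X,L)$. By the Boucksom--Chen description of Duistermaat--Heckman measures of filtered graded linear series (cf. \cite{BC}), $\DHm_{(\mathcal{X},\mathcal{L})}$ is the push-forward under the concave transform $G:\Delta\to\mathbb{R}$ of the filtration $\mathcal{F}_{(\mathcal{X},\mathcal{L})}$ of the normalized Lebesgue measure on the Okounkov body $\Delta\subset\mathbb{R}^n$ of $(X,L)$. Hence $F_\varphi(\tau)=\Vol(\{G\ge\tau\}\cap\Delta)$. Concavity of $G$ and convexity of $\Delta$ yield, for $\tau=(1-t)\tau_0+t\tau_1$, the Minkowski inclusion
\[
(1-t)\bigl(\{G\ge\tau_0\}\cap\Delta\bigr)+t\bigl(\{G\ge\tau_1\}\cap\Delta\bigr)\subset\{G\ge\tau\}\cap\Delta,
\]
and the classical Brunn--Minkowski inequality then gives
\[
F_\varphi(\tau)^{1/n}\ge (1-t)\,F_\varphi(\tau_0)^{1/n}+t\,F_\varphi(\tau_1)^{1/n},
\]
which is the desired concavity on $(-\infty,\sup\varphi]$; the vanishing on $(\sup\varphi,\infty)$ is already part of the preceding proposition.

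For general $\varphi\in\PSH(X,L)$, I would choose a decreasing net $\varphi_i\searrow\varphi$ with $\varphi_i\in\nH(X,L)$. Property (3) of the preceding proposition gives $F_{\varphi_i}(\tau)\searrow F_\varphi(\tau)$ for every $\tau\in\mathbb{R}$, while $\sup\varphi_i=\varphi_i(v_{\mathrm{triv}})\searrow\varphi(v_{\mathrm{triv}})=\sup\varphi$. Since each $F_{\varphi_i}^{1/n}$ is concave on $(-\infty,\sup\varphi_i]\supset(-\infty,\sup\varphi]$ by the base case, the pointwise limit $F_\varphi^{1/n}$ is concave on $(-\infty,\sup\varphi]$ as well. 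For the consequence, suppose $\int_\mathbb{R}\DHm_\varphi\neq 0$; then there exists $\tau_0\le\sup\varphi$ with $F_\varphi(\tau_0)>0$, which by monotonicity we may assume satisfies $\tau_0<\sup\varphi$. For any $\tau\in(\tau_0,\sup\varphi)$ writing $\tau=(1-t)\tau_0+t\sup\varphi$ with $t\in(0,1)$, the concavity just proved gives
\[
F_\varphi(\tau)^{1/n}\ge(1-t)\,F_\varphi(\tau_0)^{1/n}>0,
\]
so $\DHm_\varphi$ charges $[\tau,\infty)$ for every $\tau<\sup\varphi$, which together with $\supp\DHm_\varphi\subset(-\infty,\sup\varphi]$ yields $\sup\supp\DHm_\varphi=\sup\varphi$.

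The main obstacle is the base case on $\nH(X,L)$, which rests on the nontrivial Okounkov-body description of $\DHm_{(\mathcal{X},\mathcal{L})}$ as a push-forward under a concave transform; once this is invoked the Brunn--Minkowski inequality is immediate, and the passage to the limit is routine since concavity is preserved under pointwise limits on a common interval.
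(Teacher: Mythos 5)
Your proposal is correct and follows essentially the same route as the paper: both rest on the Brunn--Minkowski concavity of $\tau \mapsto \mathrm{vol}(R^{(\tau)})^{1/n}$ for test configurations (the paper cites this from \cite{BC, BHJ1}, while you unwind the citation via the Okounkov-body concave transform), followed by the same passage to the decreasing limit $\varphi_i \searrow \varphi$ using $F_{\varphi_i} \searrow F_\varphi$ and $\sup\varphi_i \searrow \sup\varphi$. The final consequence is argued directly in your version and by contraposition in the paper's, but the two are equivalent.
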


\begin{proof}
We note the claim holds for $\varphi \in \nH (X, L)$ as $F_\varphi (\tau)$ is the left continuous modification of $\mathrm{vol} (R^{(\tau)})$ described in \cite{BHJ1}, which is noted in the proof of \cite[Theorem 1.9]{BC}. 
By Brunn--Minkowski inequality, $\mathrm{vol} (R^{(\tau)})^{1/n}$ is concave on $(-\infty, \sup \varphi)$ and is zero on $(\sup \varphi, \infty)$ (cf. \cite[Theorem 5.3]{BHJ1}), hence in particular it is continuous on these intervals. 
Thus we have $F_\varphi (\tau) = \mathrm{vol} (R^{(\tau)})$ for $\tau \neq \sup \varphi$, which shows that $F_\varphi^{1/n}$ is concave on $(-\infty, \sup \varphi)$ and is zero on $(\sup \varphi, \infty)$. 
By the left continuity, $F_\varphi^{1/n}$ is also concave on $(\infty, \sup \varphi]$. 

For general $\varphi \in \PSH (X, L)$, take a regularization $\{ \varphi_i \}_{i \in I} \subset \nH (X, L)$ so that $\varphi_i \searrow \varphi$. 
Then we have $\sup \varphi_i \searrow \sup \varphi$ and $F_{\varphi_i} (\tau) \searrow F_\varphi (\tau)$. 
The concavity is preserved under pointwise limit, so that $F_\varphi^{1/n}$ is concave on $(-\infty, \sup \varphi] \subset (-\infty, \sup \varphi_i]$ and zero on $(\sup \varphi, \infty)$. 

Suppose $\int_{[\tau_0, \infty)} \DHm_\varphi = 0$ for some $\tau_0 < \sup \varphi$. 
Then $\int_{[\tau, \infty)} \DHm_\varphi = 0$ for $\tau \ge \tau_0$ by the monotonicity. 
On the other hand, for $\tau < \tau_0$ we have 
\[ F_\varphi^{1/n} (\tau) \le \frac{\sup \varphi - \tau}{\sup \varphi - \tau_0} F_\varphi^{1/n} (\tau_0) - \frac{\tau_0 - \tau}{\sup \varphi - \tau_0} F_\varphi^{1/n} (\sup \varphi) = 0 \] 
by the concavity. 
Thus $\int_\mathbb{R} \DHm_\varphi = 0$ when $\sup \supp \DHm_\varphi < \sup \varphi$. 
\end{proof}

\subsection{Moment measure on Berkovich space}

In the rest of this article, we only consider $\varphi \in \E^1 (X, L)$. 

\subsubsection{Tomography of non-archimedean Monge--Amp\`ere measure}
\label{Tomography of non-archimedean Monge--Ampere measure}

Here we compute $\mathrm{MA} (\varphi \wedge \tau)$ using results in the section \ref{Primary decomposition via filtration}. 
This is the key observation in the construction of the moment measure. 
We begin with the following lemma. 

\begin{lem}
\label{MA support via filtration}
Let $(\mathcal{X}, \mathcal{L})$ be a normal test configuration. 
Let $I$ be a finite set of valuations (with no duplication) and $\sigma': I \to \mathbb{R}$ be a map satisfying 
\[ \mathcal{F}^\lambda_{(\mathcal{X}, \mathcal{L})} = \widehat{\mathcal{F}}^\lambda_{(\mathcal{X}, \mathcal{L})} = \bigcap_{v \in I} \mathcal{F}^\lambda_v [\sigma' (v)] \]
for all $\lambda \in \mathbb{Z}$. 
Then $I$ contains $\{ v_E ~|~ E \subset \mathcal{X}_0 \}$ and $\sigma' (v_E) = \sigma_E = \varphi_{(\mathcal{X}, \mathcal{L})} (v_E)$. 
\end{lem}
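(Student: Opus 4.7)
The plan is to reduce the intersection decomposition to an exact tropical identity on each $R_m$, and then extract the $v_E$'s as the unique valuations strictly minimising that identity in pieces. The proof has two main steps. First, the bound $\sigma'(v) \geq \varphi_{(\mathcal{X},\mathcal{L})}(v)$ for every $v \in I$ is immediate: the hypothesis yields $\mathcal{F}_{(\mathcal{X},\mathcal{L})} \subset \mathcal{F}_v[\sigma'(v)]$, whence $\sigma_v(\mathcal{F}_{(\mathcal{X},\mathcal{L})}) \le \sigma'(v)$, and combining Proposition \ref{filtration associated to Fubini--Study metric and the stabilization} with Proposition \ref{non-archimedean metric associated to filtration} identifies $\sigma_v(\mathcal{F}_{(\mathcal{X},\mathcal{L})})$ with $\varphi_{(\mathcal{X},\mathcal{L})}(v)$.

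The second, core step is to upgrade the hypothesis, which concerns only integer-level filtrations, to a pointwise identity between valuation-like functions. Setting $A_m(s) := \min_{v \in I}(v(s) + m\sigma'(v))$ and $B_m(s) := \min_{E \subset \mathcal{X}_0}(v_E(s) + m\sigma_E)$, the hypothesis together with the canonical decomposition $\widehat{\mathcal{F}}^\lambda_{(\mathcal{X},\mathcal{L})} = \bigcap_E \mathcal{F}^\lambda_{v_E}[\sigma_E]$ gives, for every $\lambda \in \mathbb{Z}$ and $s \in R_m$, the equivalence $A_m(s) \geq \lambda \iff B_m(s) \geq \lambda$, whence $\lfloor A_m(s) \rfloor = \lfloor B_m(s) \rfloor$. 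Replacing $(m,s)$ by $(km,s^k)$ rescales both sides by $k$, so dividing by $k$ and letting $k \to \infty$ upgrades this floor identity to the exact identity $A_m(s) = B_m(s)$ for every $m$ and every $s \in R_m$.

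I would then fix an irreducible component $E_0 \subset \mathcal{X}_0$ and construct witness sections. The construction used in the proof of Proposition \ref{infimum of DH measure}, i.e.\ multiplying a starting section $s_0$ with $v_{E_0}(s_0) + m_0\sigma_{E_0} = \lambda_0$ by powers of an auxiliary $t \in R_l$ not vanishing at any centre $c(v_E)$, produces sections $s \in R_m$ realising $B_m(s) = v_{E_0}(s) + m\sigma_{E_0}$. Varying the initial data $s_0$ and exploiting the positivity of the $E_0$-primary component $\mathrm{ord}_{E_0} \mathcal{X}_0 \cdot \DHm_{(E_0,\mathcal{L}|_{E_0})}$ of the Duistermaat--Heckman measure from Proposition \ref{primary decomposition of DH measure via hat filtration}, I would extract a rich enough family of witness sections whose pairwise ratios generate $\mathbb{C}(X)$ as a field. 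By the identity $A_m = B_m$, each such $s$ admits some $v_* \in I$ with $v_*(s) + m\sigma'(v_*) = v_{E_0}(s) + m\sigma_{E_0}$; finiteness of $I$ and pigeonhole extract a single $v_* \in I$ valid on a subfamily whose ratios still generate $\mathbb{C}(X)$. Taking ratios annihilates the additive constant, giving $v_* = v_{E_0}$ on a generating family of rational functions and hence $v_* = v_{E_0}$, from which $\sigma'(v_*) = \sigma_{E_0}$ follows.

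The main obstacle is precisely the rigidification in the final identification: after pigeonhole, $v_*$ only agrees with $v_{E_0}$ up to the additive constant $m(\sigma_{E_0} - \sigma'(v_*))$, and turning this into agreement on rational functions demands that the witness family be rich enough for its ratios to generate $\mathbb{C}(X)$. Arranging this richness is where the primary-decomposition interpretation of $\DHm_{(E_0,\mathcal{L}|_{E_0})}$ from Proposition \ref{primary decomposition of DH measure via hat filtration} is essential, and Lemma \ref{the center of valuation}, which characterises $v_E$ among valuations by its centre in $\mathcal{X}$, will serve as the final identification criterion.
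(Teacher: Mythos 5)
Your first step ($\sigma'(v)\ge\sigma_v=\varphi_{(\mathcal{X},\mathcal{L})}(v)$) and your upgrade of the integer-level hypothesis to the exact identity $\min_{v\in I}(v(s)+m\sigma'(v))=\min_{E}(v_E(s)+m\sigma_E)$ on every $R_m$ are both correct; the homogenisation trick $(m,s)\mapsto(km,s^k)$ cleanly removes the floors, and witness sections realising the minimum at a prescribed $E_0$ do exist by the argument of Proposition \ref{infimum of DH measure}. The gap is in the last step. From $v_*(s)+m\sigma'(v_*)=v_{E_0}(s)+m\sigma_{E_0}$ on a family of witnesses you pass to $v_*(f)=v_{E_0}(f)$ for the ratios $f=s/s'$ and conclude $v_*=v_{E_0}$ because these ratios generate $\mathbb{C}(X)$ as a field. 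But a valuation is not determined by its values on field generators: the ultrametric inequality $v(f+g)\ge\min\{v(f),v(g)\}$ can be strict, so two distinct valuations may agree on a generating set (on $\mathbb{C}(x)$, the valuations $\mathrm{ord}_{x=1}$ and $\mathrm{ord}_{x=2}$ both vanish on the generator $x$). Moreover the witnesses for $E_0$ form a constrained subset of $R_m$ (a generic section realises the minimum at the component with smallest $\sigma_E$), so after pigeonhole the surviving subfamily only controls $v_*$ on a multiplicative subgroup of $\mathbb{C}(X)^\times$; you get no information about $v_*$ on sums, hence none about its valuation ring or its center.

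What actually rigidifies the situation is the center on $\mathcal{X}$: by Lemma \ref{the center of valuation}, $v_*=v_{E_0}$ if and only if the prime ideal $\mathcal{I}_{v_*}$ of Lemma \ref{primary ideal} equals $\mathcal{I}_{E_0}$, and verifying this requires control of the Gauss extension $G(v_*)$ on all of $\mathcal{R}(\mathcal{X},\mathcal{L})$, not just on witness sections. The intended route is shorter and avoids witnesses entirely: setting $I'=\{v\in I~|~\sigma'(v)=\sigma_v\}$, the hypothesis translates via Lemma \ref{primary ideal} into $\bigcap_{v\in I'}\mathcal{I}_v=\mathcal{I}_{\mathcal{X}_0^{\mathrm{red}}}=\bigcap_{E\subset\mathcal{X}_0}\mathcal{I}_{v_E}$ with every $\mathcal{I}_v$ prime; uniqueness of the minimal primes of a radical ideal then forces each $\mathcal{I}_{v_E}$ to occur among the $\mathcal{I}_v$ for $v\in I'$, and Lemma \ref{the center of valuation} converts $\mathcal{I}_v=\mathcal{I}_{v_E}$ back into $v=v_E$ and $\sigma'(v)=\sigma_v=\sigma_E$. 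If you wish to keep your tropical identity, you must still route the final identification through this ideal-theoretic comparison of centers rather than through agreement on field generators.
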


\begin{proof}
By the definition of $\sigma_v$, we have $\sigma' (v) \ge \sigma_v$. 
Put $I' := \{ v \in I ~|~ \sigma ' (v) = \sigma_v \}$. 
As we already noted, we have 
\[ \bigoplus_{m \ge 0} \bigoplus_{\lambda \in \mathbb{Z}} \varpi^{-\lambda} (\mathcal{F}^\lambda_{(\mathcal{X}, \mathcal{L})} \cap \mathcal{F}^{\lambda+}_v [\sigma' (v)]) R_m = 
\begin{cases} 
\mathcal{I}_v
& \sigma' (v) = \sigma_v
\\
\mathcal{R} (\mathcal{X}, \mathcal{L})
& \sigma' (v) > \sigma_v, 
\end{cases} \]
so that we have $\mathcal{I}_{\mathcal{X}_0^{\mathrm{red}}} = \bigcap_{v \in I'} \mathcal{I}_v$ by the assumption. 
Since $\mathcal{I}_{\mathcal{X}_0^{\mathrm{red}}} = \bigcap_{E \subset \mathcal{X}_0} \mathcal{I}_{v_E}$ is the unique primary decomposition, $\{ \mathcal{I}_v ~|~ v \in I' \}$ must contain $\{ \mathcal{I}_{v_E} ~|~ E \subset \mathcal{X}_0 \}$. 
Then by Lemma \ref{the center of valuation}, we must have $v_E \in I'$. 
\end{proof}

As we observed in section \ref{Primary decomposition via filtration}, we can recover the primary decomposition of the Duistermaat--Heckman measure from $\mathcal{F}_\varphi = \widehat{\mathcal{F}}_{(\mathcal{X}, \mathcal{L})}$. 
In particular, we can recover the Monge--Amp\`ere measure from $\mathcal{F}_\varphi$. 
Since $\mathcal{F}_{\varphi \wedge \tau} = \mathcal{F}_\varphi \cap \mathcal{F}_\tau$, we can also recover $\mathrm{MA} (\varphi \wedge \tau)$ from $\mathcal{F}_\varphi$ and $\tau$, which relates $\mathrm{MA} (\varphi \wedge \tau)$ to the primary decomposition of the Duistermaat--Heckman measure. 

\begin{prop}
\label{MA of wedge}
For any $\varphi = \varphi_{(\mathcal{X}, \mathcal{L})} \in \nH (X, L)$ and $\tau \in \mathbb{R}$, we have 
\begin{align*} 
\mathrm{MA} (\varphi \wedge \tau) 
&= \sum_{E \subset \mathcal{X}_0} \mathrm{ord}_E \mathcal{X}_0 \int_{(-\infty, \tau)} \DHm_{(E, \mathcal{L}|_E)}. \delta_{v_E} + \int_{[\tau, \infty)} \DHm_{(\mathcal{X}, \mathcal{L})}. \delta_{v_{\mathrm{triv}}}
\\
&= \sum_{E \subset \mathcal{X}_0, v_E \neq v_{\mathrm{triv}}} \mathrm{ord}_E \mathcal{X}_0 \int_{(-\infty, \tau)} \DHm_{(E, \mathcal{L}|_E)}. \delta_{v_E} 
\\
&\quad+ \Big{(} \sum_{E \subset \mathcal{X}_0, v_E = v_{\mathrm{triv}}} \mathrm{ord}_E \mathcal{X}_0 \cdot (E. \mathcal{L}^{\cdot n}) + \int_{[\tau, \infty)} \DHm^\circ_{(\mathcal{X}, \mathcal{L})} \Big{)}. \delta_{v_{\mathrm{triv}}}, 
\end{align*}
where 
\[ \DHm^\circ_{(\mathcal{X}, \mathcal{L})} := \sum_{E \subset \mathcal{X}_0, v_E \neq v_{\mathrm{triv}}} \mathrm{ord}_E \mathcal{X}_0 \cdot \DHm_{(E, \mathcal{L}|_E)} \]
is the absolutely continuous part of $\DHm_{(\mathcal{X}, \mathcal{L})}$. 
Here we recall $E \subset \mathcal{X}_0$ gives the trivial valuation $v_E = v_{\mathrm{triv}}$ iff the $\mathbb{G}_m$-action on $E$ is trivial, which is equivalent to $\DHm_{(E, \mathcal{L}|_E)} = \mathrm{ord}_E \mathcal{X}_0 \cdot (E, \mathcal{L}^{\cdot n}). \delta_{\varphi (v_{\mathrm{triv}})}$. 
\end{prop}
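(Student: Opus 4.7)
My strategy is to realize $\varphi \wedge \tau$ as the Fubini--Study metric of an explicit normal test configuration and then read off its Monge--Amp\`ere measure through the filtration/spectral calculus of section~\ref{Primary decomposition via filtration}. The starting observation, obtained by combining Proposition~\ref{filtration of envelope} with Proposition~\ref{filtration associated to Fubini--Study metric and the stabilization}, is the identity
\[
\mathcal{F}_{\varphi \wedge \tau} \;=\; \mathcal{F}_\varphi \cap \mathcal{F}_{v_{\mathrm{triv}}}[\tau] \;=\; \bigcap_{E \subset \mathcal{X}_0} \mathcal{F}_{v_E}[\sigma_E] \,\cap\, \mathcal{F}_{v_{\mathrm{triv}}}[\tau].
\]
I first treat $\tau \in \mathbb{Q}$, where Proposition~\ref{test configuration associated to phi wedge tau} produces a normal test configuration $(\tilde{\mathcal{X}}, \tilde{\mathcal{L}})$ with $\varphi_{(\tilde{\mathcal{X}}, \tilde{\mathcal{L}})} = \varphi \wedge \tau$, and Proposition~\ref{filtration associated to Fubini--Study metric and the stabilization} identifies $\widehat{\mathcal{F}}_{(\tilde{\mathcal{X}}, \tilde{\mathcal{L}})}$ with the right-hand side above. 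Since $\mathcal{F}_{v_E}[\sigma_E]^\lambda R_m = R_m$ whenever $\lambda \le m\sigma_E$ and $\mathcal{F}_{v_{\mathrm{triv}}}[\tau]^\lambda R_m = 0$ whenever $\lambda > m\tau$, the factors indexed by $E$ with $\sigma_E > \tau$ drop out of this representation, and applying Lemma~\ref{MA support via filtration} places $\{v_F : F \subset \tilde{\mathcal{X}}_0\}$ inside $\{v_E : E \subset \mathcal{X}_0,\ \sigma_E \le \tau\} \cup \{v_{\mathrm{triv}}\}$. This already pins down the support of $\mathrm{MA}(\varphi \wedge \tau)$ and rules out any mass at $v_E$ when $\sigma_E > \tau$, in agreement with $\int_{(-\infty, \tau)} \DHm_{(E, \mathcal{L}|_E)} = 0$ in that case (Proposition~\ref{infimum of DH measure}).

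Next I compute the atomic masses by invoking Proposition~\ref{tomography of primary DH measure} twice. Applied to $(\tilde{\mathcal{X}}, \tilde{\mathcal{L}})$ at a non-trivial $v = v_E$ (necessarily with $\sigma_E \le \tau$, hence $\sigma_{v_E}(\mathcal{F}_{\varphi \wedge \tau}) = \min(\sigma_E, \tau) = \sigma_E$) and $\tau'$ beyond the support of $\DHm_{(F, \tilde{\mathcal{L}}|_F)}$, the formula expresses the mass at $v_E$ as
\[
\lim_{m \to \infty} \frac{1}{m^n} \sum_{\lambda \le m\tau} \dim \frac{\widehat{\mathcal{F}}_{(\mathcal{X}, \mathcal{L})}^\lambda R_m}{\widehat{\mathcal{F}}_{(\mathcal{X}, \mathcal{L})}^\lambda \cap \mathcal{F}_{v_E}^{\lambda+}[\sigma_E] R_m},
\]
after substituting $\mathcal{F}_{\varphi \wedge \tau}^\lambda R_m = \widehat{\mathcal{F}}_{(\mathcal{X}, \mathcal{L})}^\lambda R_m$ for $\lambda \le m\tau$ and zero otherwise. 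A second application of Proposition~\ref{tomography of primary DH measure}, now to $(\mathcal{X}, \mathcal{L})$ itself, identifies this spectral limit with $\mathrm{ord}_E \mathcal{X}_0 \int_{(-\infty, \tau]} \DHm_{(E, \mathcal{L}|_E)}$; the endpoint may be dropped since $\DHm_{(E, \mathcal{L}|_E)}$ is absolutely continuous whenever $v_E \neq v_{\mathrm{triv}}$. For $v = v_{\mathrm{triv}}$ the same formula collapses further: $\mathcal{F}_{v_{\mathrm{triv}}}^{\lambda+}[\tau] R_m$ equals $R_m$ for $\lambda < m\tau$ and vanishes for $\lambda \ge m\tau$, so the only surviving index is $\lambda = m\tau$ and the quotient there equals $\widehat{\mathcal{F}}_{(\mathcal{X}, \mathcal{L})}^{m\tau} R_m$; telescoping this dimension gives $\lim_m m^{-n} \dim \widehat{\mathcal{F}}^{m\tau}_{(\mathcal{X}, \mathcal{L})} R_m = \int_{[\tau, \infty)} \DHm_{(\mathcal{X}, \mathcal{L})}$. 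The equivalent packaging in the statement follows by splitting $\DHm_{(\mathcal{X}, \mathcal{L})}$ into its absolutely continuous part $\DHm^\circ$ (carried by the components $E$ with $v_E \neq v_{\mathrm{triv}}$) and its atomic part (concentrated at $\varphi(v_{\mathrm{triv}}) = \sup \varphi$ and coming from the trivial-action components).

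Finally I pass from $\tau \in \mathbb{Q}$ to $\tau \in \mathbb{R}$: along a decreasing rational sequence $\tau_i \searrow \tau$ we have $\varphi \wedge \tau_i \searrow \varphi \wedge \tau$ by Proposition~\ref{decreasing limit of rooftop}, the left-hand side converges weakly by continuity of $\mathrm{MA}$ along decreasing nets in $\E^1(X, L)$, and the scalar coefficients on the right-hand side are handled by monotone/bounded convergence; a possible atom of $\DHm_{(\mathcal{X}, \mathcal{L})}$ at $\tau$ sits at $v_{\mathrm{triv}}$ on both sides and is tracked consistently. The hard part will be the bookkeeping at $v_{\mathrm{triv}}$: it simultaneously receives the truncated right tails of every $\DHm_{(E, \mathcal{L}|_E)}$ with $\sigma_E \le \tau$, the full contribution of each trivial-action component (which always has $v_E = v_{\mathrm{triv}}$), and potentially an atom of $\DHm_{(\mathcal{X}, \mathcal{L})}$ at $\tau$; reassembling these three sources into the single closed form $\int_{[\tau, \infty)} \DHm_{(\mathcal{X}, \mathcal{L})}$ genuinely requires the primary decomposition $\DHm_{(\mathcal{X}, \mathcal{L})} = \sum_E \mathrm{ord}_E \mathcal{X}_0 \cdot \DHm_{(E, \mathcal{L}|_E)}$ together with the telescoping identity for $\dim \widehat{\mathcal{F}}^{m\tau}_{(\mathcal{X}, \mathcal{L})} R_m$, and is where the global structure of the test configuration really does the work rather than a pointwise analysis.
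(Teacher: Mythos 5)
Your proposal is correct and follows essentially the same route as the paper's proof: realize $\varphi \wedge \tau$ for rational $\tau$ as a normal test configuration via Proposition~\ref{test configuration associated to phi wedge tau}, identify $\mathcal{F}_{\varphi \wedge \tau} = \bigcap_E \mathcal{F}_{v_E}[\sigma_E] \cap \mathcal{F}_{v_{\mathrm{triv}}}[\tau]$, pin down the support with Lemma~\ref{MA support via filtration}, and extract the masses from the spectral/tomography formulas of section~\ref{Primary decomposition via filtration}. The only cosmetic differences are that the paper performs the reduction to $\tau \in \mathbb{Q}$ at the outset rather than at the end, and your direct computation of the mass at $v_{\mathrm{triv}}$ via $\lim_m m^{-n}\dim \widehat{\mathcal{F}}^{m\tau}_{(\mathcal{X},\mathcal{L})}R_m = \mathrm{vol}(R^{(\tau)})$ is exactly the paper's ``alternative'' argument (its primary one instead deduces that mass from conservation of the total mass $(e^L)$).
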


\begin{proof}
The claim is obvious for $\tau \ge \sup \varphi = \varphi (v_{\mathrm{triv}})$. 
We may assume $\tau < \sup \varphi = \varphi (v_{\mathrm{triv}})$. 
By Proposition \ref{strong convergence of rooftops} and the absolute continuity of $\DHm_{(E, \mathcal{L}|_E)}$ for $v_E \neq v_{\mathrm{triv}}$, both sides of the equality are continuous on $\tau \in \mathbb{R}$ (with respect to the weak convergence of measures), so we may assume $\tau \in \mathbb{Q}$. 
Then $\varphi \wedge \tau \in \nH (X, L)$ by Proposition \ref{test configuration associated to phi wedge tau}. 
Let $(\hat{\mathcal{X}}^\tau, \hat{\mathcal{L}}^\tau)$ be a normal test configuration representing $\varphi \wedge \tau \in \nH (X, L)$. 
By Proposition \ref{filtration of envelope}, we have 
\begin{align*} 
\mathcal{F}^\lambda_{\varphi \wedge \tau} = \mathcal{F}_\varphi^{\lambda} \cap \mathcal{F}^\lambda_\tau 
&= \bigcap_{E \subset \mathcal{X}_0} \mathcal{F}_{v_E}^\lambda [\varphi (v_E)] \cap \mathcal{F}^\lambda_{v_{\mathrm{triv}}} [\tau] 
\\
&= \bigcap_{E \subset \mathcal{X}_0, v_E \neq v_{\mathrm{triv}}} \mathcal{F}_{v_E}^\lambda [\varphi (v_E)] \cap \mathcal{F}^\lambda_{v_{\mathrm{triv}}} [\tau]. 
\end{align*}
Here the last equality holds by $\tau < \varphi (v_{\mathrm{triv}})$. 
It follows by Lemma \ref{MA support via filtration} that we have 
\[ \supp \mathrm{MA} (\varphi \wedge \tau) = \{ v_{\hat{E}} ~|~ \hat{E} \subset \hat{\mathcal{X}}_0^\tau \} \subset \{ v_E ~|~ E \subset \mathcal{X}_0 \} \cup \{ v_{\mathrm{triv}} \} \]
and $\varphi \wedge \tau (v_{\hat{E}}) = \varphi (v_E)$ if $v_{\hat{E}} = v_E$ for $E \subset \mathcal{X}_0$ and $\varphi \wedge \tau (v_{\hat{E}}) = \tau$ if $v_{\hat{E}} = v_{\mathrm{triv}}$. 

We note 
\[ \mathcal{F}^\lambda_{\varphi \wedge \tau} R_m = \begin{cases} \mathcal{F}^\lambda_\varphi R_m & \tau \ge \lambda/m \\ 0 & \tau < \lambda/m \end{cases}. \]
If $v_{\hat{E}} \neq v_{\mathrm{triv}}$, then we have $v_{\hat{E}} = v_E$ for some $E \subset \mathcal{X}_0$. 
Using Proposition \ref{primary decomposition of DH measure via hat filtration} and Proposition \ref{tomography of primary DH measure}, we compute 
\begin{align*} 
\frac{1}{n!} \mathrm{ord}_{\hat{E}} \hat{\mathcal{X}}^\tau_0 \cdot (\hat{E}. (\hat{\mathcal{L}}^\tau)^{\cdot n}) 
&= \lim_{m \to \infty} \frac{1}{m^n} \sum_{\lambda \in \mathbb{Q}} \dim \frac{\mathcal{F}^{\lambda}_{\varphi \wedge \tau} R_m}{\mathcal{F}^{\lambda}_{\varphi \wedge \tau} \cap \mathcal{F}^{\lambda+}_{v_{\hat{E}}} [\varphi \wedge \tau (v_{\hat{E}})] R_m} 
\\
&= \lim_{m \to \infty} \frac{1}{m^n} \sum_{\lambda \in \mathbb{Q}} \dim \frac{\mathcal{F}^\lambda_{\varphi \wedge \tau} R_m}{\mathcal{F}^\lambda_{\varphi \wedge \tau} \cap \mathcal{F}^{\lambda+}_{v_E} [\varphi (v_E)] R_m}
\\
&= \lim_{m \to \infty} \frac{1}{m^n} \sum_{\lambda \in \mathbb{Q}, \lambda/m \le \tau} \dim \frac{\mathcal{F}^{\lambda}_\varphi R_m}{\mathcal{F}^{\lambda}_\varphi \cap \mathcal{F}^{\lambda+}_{v_E} [\varphi (v_E)] R_m}
\\
&= \int_\mathbb{R} \lim_{m \to \infty} \frac{1}{m^n} \sum_{\lambda \in \mathbb{Q}, \lambda/m \le \tau} \dim \frac{\widehat{\mathcal{F}}^{\lambda}_{(\mathcal{X}, \mathcal{L})} R_m}{\widehat{\mathcal{F}}^{\lambda}_{(\mathcal{X}, \mathcal{L})} \cap \mathcal{F}^{\lambda+}_{v_E} [\varphi (v_E)] R_m}. \delta_{\lambda/m}
\\
&= \mathrm{ord}_E \mathcal{X}_0 \int_{(-\infty, \tau]} \DHm_{(E, \mathcal{L}|_E)} 
\\
&= \mathrm{ord}_E \mathcal{X}_0 \int_{(-\infty, \tau)} \DHm_{(E, \mathcal{L}|_E)},
\end{align*}
where  in the last two equalities we used the fact that $\DHm_{(E, \mathcal{L}|_E)}$ is absolutely continuous with respect to the Lebesgue measure as $v_E \neq v_{\mathrm{triv}}$. 

Now we recall 
\[ \int_{X^{\mathrm{NA}}} \mathrm{MA} (\varphi_{(\mathcal{X}, \mathcal{L})} \wedge \tau) = (e^L) = \int_\mathbb{R} \DHm_{(\mathcal{X}, \mathcal{L})} = \sum_{E \subset \mathcal{X}_0} \mathrm{ord}_E \mathcal{X}_0 \int_\mathbb{R} \DHm_{(E, \mathcal{L}|_E)}. \]
It follows that for $\hat{E} \subset \hat{\mathcal{X}}^\tau_0$ with $v_{\hat{E}} = v_{\mathrm{triv}}$, we must have 
\begin{align*} 
\frac{1}{n!} \mathrm{ord}_E \hat{\mathcal{X}}^\tau_0 \cdot (\hat{E}. (\hat{\mathcal{L}}^\tau)^{\cdot n}) 
&= \sum_{E \subset \mathcal{X}_0} \mathrm{ord}_E \mathcal{X}_0 \int_\mathbb{R} \DHm_{(E, \mathcal{L}|_E)} - \sum_{E \subset \mathcal{X}_0, v_E \neq v_{\mathrm{triv}}} \mathrm{ord}_E \mathcal{X}_0 \int_{(-\infty, \tau)} \DHm_{(E, \mathcal{L}|_E)} 
\\
&= \sum_{E \subset \mathcal{X}_0, v_E = v_{\mathrm{triv}}} \mathrm{ord}_E \mathcal{X}_0 \int_{(-\infty, \tau)} \DHm_{(E, \mathcal{L}|_E)} + \int_{[\tau, \infty)} \DHm_{(\mathcal{X}, \mathcal{L})}
\\
&= \sum_{E \subset \mathcal{X}_0, v_E = v_{\mathrm{triv}}} \mathrm{ord}_E \mathcal{X}_0 \cdot (E. \mathcal{L}^{\cdot n}) + \int_{[\tau, \infty)} \DHm^\circ_{(\mathcal{X}, \mathcal{L})}. 
\end{align*}
Since $\tau < \sup \varphi$, this is also equal to $\int_{[\tau, \infty)} \DHm_{(\mathcal{X}, \mathcal{L})}$. 

Alternatively, for $\hat{E} \subset \hat{\mathcal{X}}^\tau_0$ with $v_{\hat{E}} = v_{\mathrm{triv}}$, we have $\varphi \wedge \tau (v_{\mathrm{triv}}) = \tau$ by $\tau < \varphi (v_{\mathrm{triv}})$, so that we compute 
\begin{align*} 
\frac{1}{n!} \mathrm{ord}_E \hat{\mathcal{X}}^\tau_0 \cdot (\hat{E}. (\hat{\mathcal{L}}^\tau)^{\cdot n}) 
&= \lim_{m \to \infty} \frac{1}{m^n} \sum_{\lambda \in \mathbb{Q}} \dim \frac{\mathcal{F}^\lambda_{\varphi \wedge \tau} R_m}{\mathcal{F}^\lambda_{\varphi \wedge \tau} \cap \mathcal{F}^{\lambda+}_{v_{\mathrm{triv}}} [\tau] R_m}
\\
&= \lim_{m \to \infty} \frac{1}{m^n} \dim \widehat{\mathcal{F}}^{m \tau}_{(\mathcal{X}, \mathcal{L})} R_m 
\\
&= \mathrm{vol} (R^{(\tau)}). 
\end{align*}
Recall $\int_{[\tau, \infty)} \DHm_{(\mathcal{X}, \mathcal{L})}$ is the left continuous modification of $\mathrm{vol} (R^{(\tau)})$. 
For $\tau < \sup \varphi$, $\mathrm{vol} (R^{(\tau)})$ is continuous, so we have 
\[ \mathrm{vol} (R^{(\tau)}) = \int_{[\tau, \infty)} \DHm_{(\mathcal{X}, \mathcal{L})}. \]
\end{proof}

\subsubsection{Moment measure}
\label{Moment measure}

Now we construct the moment measure $\int \chi \mathcal{D}_\varphi$. 
The key in the construction is the following formula we obtained in the previous section 
\[ \mathrm{MA} (\varphi \wedge \tau) = \int 1_{(-\infty, \tau)} \mathcal{D}_\varphi + \int_{[\tau, \infty)} \DHm_\varphi . \delta_{v_{\mathrm{triv}}}. \]

\begin{thm}[Moment measure]
For $\varphi \in \mathcal{E}^1_{\mathrm{NA}} (X, L)$ and a Borel measurable function $\chi$ on $\mathbb{R}$ with $\int_{\mathbb{R}} |\chi| \DHm_\varphi < \infty$, we can assign a signed Radon measure $\int \chi \mathcal{D}_\varphi$ on $X^{\mathrm{NA}}$ which enjoys the following properties: 
\begin{enumerate}
\item For $\varphi = \varphi_{(\mathcal{X}, \mathcal{L})} \in \nH (X, L)$, we have 
\[ \int \chi \mathcal{D}_{\varphi_{(\mathcal{X}, \mathcal{L})}} = \sum_{E \subset \mathcal{X}_0} \mathrm{ord}_E \mathcal{X}_0 \int_{\mathbb{R}} \chi \DHm_{(E, \mathcal{L}|_E)}. \delta_{v_E}. \]

\item $\int \chi \mathcal{D}_\varphi$ is linear on $\chi$. 
If $\chi \ge 0$, $\int \chi \mathcal{D}_\varphi$ is non-negative. 

\item For any pointwise convergent increasing sequence $0 \le \chi_i \nearrow \chi$, we have the weak convergence of measures
\[ \int \chi_i \mathcal{D}_\varphi \nearrow \int \chi \mathcal{D}_\varphi. \]

\item We have $\iint_{X^{\mathrm{NA}}} \chi \mathcal{D}_\varphi := \int_{X^{\mathrm{NA}}} \int \chi \mathcal{D}_\varphi = \int_{\mathbb{R}} \chi \DHm_\varphi$. 

\item We have $\int 1_\mathbb{R} \mathcal{D}_\varphi = \mathrm{MA} (\varphi)$ as measures. 

\item Suppose $\chi$ is moderate in the sense of Definition \ref{moderate}. 
Then for a convergent decreasing net $\varphi_i \searrow \varphi \in \E^1 (X, L)$, we have the weak convergence of measures
\[ \int \chi \mathcal{D}_{\varphi_i} \to \int \chi \mathcal{D}_\varphi. \]
\end{enumerate}
These properties characterize the measure $\int \chi \mathcal{D}_\varphi$. 
\end{thm}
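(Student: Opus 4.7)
The tomographic formula of the previous proposition already suggests the natural building block. For $\varphi\in\nH(X,L)$ and $\tau\in\mathbb{R}$, define the positive Radon measure
\[ \mu_\varphi^\tau := \mathrm{MA}(\varphi\wedge\tau) - F_\varphi(\tau)\,\delta_{v_{\mathrm{triv}}}, \qquad F_\varphi(\tau) := \int_{[\tau,\infty)}\DHm_\varphi. \]
By the tomography proposition, this equals $\sum_{E\subset\mathcal{X}_0}\mathrm{ord}_E\mathcal{X}_0\int_{(-\infty,\tau)}\DHm_{(E,\mathcal{L}|_E)}.\delta_{v_E}$, i.e.\ the value of $\int 1_{(-\infty,\tau)}\mathcal{D}_{\varphi}$ prescribed by property (1). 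The plan is to (a) extend $\mu_\varphi^\tau$ to all $\varphi\in\E^1(X,L)$ by decreasing-net continuity; (b) assemble $\int\chi\mathcal{D}_\varphi$ as a Stieltjes-type integration against the monotone family $\tau\mapsto\mu_\varphi^\tau$; and (c) verify properties (1)--(6) by monotone approximation on $\chi$ and decreasing-net continuity in $\varphi$.

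For (a), fix a decreasing net $\varphi_i\searrow\varphi$ in $\nH(X,L)$. Proposition \ref{decreasing limit of rooftop} and its corollary give $\varphi_i\wedge\tau\searrow\varphi\wedge\tau\in\E^1$; the strong continuity of $\mathrm{MA}$ on $\E^1$ yields $\mathrm{MA}(\varphi_i\wedge\tau)\to\mathrm{MA}(\varphi\wedge\tau)$ weakly, and $F_{\varphi_i}(\tau)\searrow F_\varphi(\tau)$ directly from Definition \ref{DH measure}. Hence $\mu_{\varphi_i}^\tau\to\mu_\varphi^\tau$ weakly, the limit is independent of the approximating net, remains positive, has total mass $(e^L)-F_\varphi(\tau)=\int_{(-\infty,\tau)}\DHm_\varphi$ (using $\E^1\subset\E(X,L)$ from Corollary \ref{finite moment energy class and full mass class}), and is monotone in $\tau$ (preserved under weak limits from $\nH$). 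For (b), declare $\int\chi\,\mathcal{D}_\varphi := \sum_j a_j(\mu_\varphi^{\tau_j}-\mu_\varphi^{\tau_j'})$ on step functions $\chi=\sum_j a_j 1_{[\tau_j',\tau_j)}$; this is a signed Radon measure with total mass $\int_\mathbb{R}\chi\DHm_\varphi$. Extend to non-negative Borel $\chi$ with finite $\DHm_\varphi$-integral by pointwise increasing step-function approximation (whose total masses are uniformly bounded, so the weak limit exists as a positive Radon measure), and to signed $\chi$ with $\int|\chi|\DHm_\varphi<\infty$ via $\chi=\chi^+-\chi^-$. Property (5) follows by letting $\tau\to\infty$: $\varphi\wedge\tau\to\varphi$ strongly, $\mathrm{MA}(\varphi\wedge\tau)\to\mathrm{MA}(\varphi)$ weakly, and $F_\varphi(\tau)\to 0$.

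The main obstacle is property (6): weak convergence $\int\chi\,\mathcal{D}_{\varphi_i}\to\int\chi\,\mathcal{D}_\varphi$ for moderate $\chi$ along decreasing nets $\varphi_i\searrow\varphi\in\E^1$. For $\chi=1_{[\tau,\infty)}$ (tame, hence moderate) this reduces to the $\mu_\varphi^\tau$-convergence of step (a) via $\int 1_{[\tau,\infty)}\mathcal{D}_\varphi=\mathrm{MA}(\varphi)-\mu_\varphi^\tau$, and linearity extends it to all step functions. For general moderate $\chi$, decompose it into a right-continuous monotonic part and a tame part; Proposition \ref{tame} yields uniform step-function approximation on left-bounded intervals, and a truncation argument reduces the claim to the step-function case, provided we have uniform control on the $\DHm_{\varphi_i}$-mass at $-\infty$. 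This last uniform tail control is precisely what moderateness, as opposed to mere integrability against $\DHm_\varphi$, provides, and is the delicate point: without it the approximating nets could concentrate mass deep in $(-\infty,0)$ even while $\int|\chi|\DHm_\varphi$ remains finite. Finally, uniqueness: property (1) pins down $\mu_\varphi^\tau$ on $\nH$; property (6) applied to the tame indicator $1_{[\tau,\infty)}$ extends the determination to all of $\E^1$; and properties (2)--(3) then fix $\int\chi\,\mathcal{D}_\varphi$ on every admissible $\chi$.
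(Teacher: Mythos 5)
Your building blocks are the right ones and coincide with the paper's: the measures $\int 1_{[\tau',\tau)}\mathcal{D}_\varphi = \mathrm{MA}(\varphi\wedge\tau)-\mathrm{MA}(\varphi\wedge\tau')+\int_{[\tau',\tau)}\DHm_\varphi\,\delta_{v_{\mathrm{triv}}}$ coming from the tomography formula, their positivity and decreasing-net continuity on $\E^1(X,L)$, and the reduction of property (6) to tame plus monotone pieces with a tail estimate supplied by moderateness. The gap is in the passage from interval indicators to general Borel measurable $\chi$. You propose to "extend to non-negative Borel $\chi$ by pointwise increasing step-function approximation," but a non-negative Borel function on $\mathbb{R}$ is \emph{not} in general an increasing pointwise limit of finite linear combinations of indicators $1_{[\tau',\tau)}$ (take $\chi=1_B$ for a Borel set $B$ with dense complement, e.g.\ a fat Cantor set or $\mathbb{Q}$: any interval-step function $\le 1_B$ is $\le 0$ everywhere, so the increasing limit is $0$, not $1_B$). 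The standard increasing simple-function approximation $\chi_j=\sum_i 2^{-j}1_{\chi^{-1}([i/2^j,\infty))}$ involves indicators of general Borel sets, for which your construction has not yet defined $\int 1_B\mathcal{D}_\varphi$. Since $\DHm_\varphi$ can carry atoms and singular parts, this is not a null-set technicality; the theorem genuinely claims the measure for every Borel $\chi$ with $\int|\chi|\DHm_\varphi<\infty$.

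The missing step is a measure-theoretic extension: one must first show that, for each fixed non-negative $g\in C^0(X^{\mathrm{NA}})$, the set function $A\mapsto \int_{X^{\mathrm{NA}}}g\int 1_A\mathcal{D}_\varphi$ is countably additive on the ring $\mathcal{R}$ of finite unions of half-open intervals (this uses the left continuity of $F_\varphi(\tau)=\int_{[\tau,\infty)}\DHm_\varphi$ to get inner approximation $I_{A\setminus A'}<\epsilon$ with $\bar A'\subset A$, and then a compactness argument), so that Carath\'eodory's extension theorem produces a genuine Borel measure $\nu_{\varphi,g}$ on $\mathbb{R}$ with $\nu_{\varphi,g}\le\sup|g|\cdot\DHm_\varphi$. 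Only then does $I_{\varphi,\chi}(g):=\int_\mathbb{R}\chi\,\nu_{\varphi,g}$ make sense for arbitrary Borel $\chi$, and Riesz--Markov--Kakutani converts $g\mapsto I_{\varphi,\chi}(g)$ into the Radon measure $\int\chi\mathcal{D}_\varphi$ (additivity of $I_{\varphi,\chi}$ in $g$ requires a further uniform outer-regularity argument, since the outer measures for $g_1$, $g_2$ and $g_1+g_2$ use infima over possibly different covers). This also repairs your uniqueness argument and your property (3), both of which otherwise rely on reaching general Borel $\chi$ from interval indicators by increasing limits, which is exactly the step that fails.
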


\begin{proof}
For $\varphi \in \E^1 (X, L)$, we put 
\begin{equation} 
\label{moment measure for interval}
\int 1_{[\tau', \tau)} \mathcal{D}_\varphi := \mathrm{MA} (\varphi \wedge \tau) - \mathrm{MA} (\varphi \wedge \tau') + \int_{[\tau', \tau)} \DHm_\varphi. \delta_{v_{\mathrm{triv}}}. 
\end{equation}
This gives a non-neagitve Borel measure on $X^{\mathrm{NA}}$ as we see below. 
By Proposition \ref{MA of wedge}, we have 
\[ \int 1_{[\tau', \tau)} \mathcal{D}_{\varphi_{(\mathcal{X}, \mathcal{L})}} = \sum_{E \subset \mathcal{X}_0} \mathrm{ord}_E \mathcal{X}_0 \int_{[\tau', \tau)} \DHm_{(E, \mathcal{L}|_E)}. \delta_{v_E} \]
for $\varphi = \varphi_{(\mathcal{X}, \mathcal{L})}$, so it defines a non-negative measure in this case. 
Since $\mathrm{MA} (\varphi \wedge \tau)$ and $\int_{[\tau', \tau)} \DHm_\varphi$ are continuous along decreasing nets in $\E^1 (X, L)$ (note $1_{[\tau', \tau)}$ is moderate), we have 
\[ \lim_{i \to \infty} \int 1_{[\tau', \tau)} \mathcal{D}_{\varphi_i} = \int 1_{[\tau', \tau)} \mathcal{D}_\varphi \]
for any convergent decreasing net $\varphi_i \searrow \varphi$ in $\E^1 (X, L)$. 
In particular, we can write $\int 1_{[\tau', \tau)} \mathcal{D}_\varphi$ as the limit of non-negative measures $\int 1_{[\tau', \tau)} \mathcal{D}_{\varphi_{(\mathcal{X}_i, \mathcal{L}_i)}}$ for a regularization $\varphi_{(\mathcal{X}_i, \mathcal{L}_i)} \searrow \varphi$, so that it gives a non-negative measure on $X^{\mathrm{NA}}$ for general $\varphi \in \E^1 (X, L)$. 
We also get 
\begin{align*} 
\int_{X^{\mathrm{NA}}} \int 1_{[\tau', \tau)} \mathcal{D}_\varphi 
&= \lim_{i \to \infty} \int_{X^{\mathrm{NA}}} \int 1_{[\tau', \tau)} \mathcal{D}_{\varphi_{(\mathcal{X}_i, \mathcal{L}_i)}} 
\\
&= \lim_{i \to \infty} \int_{[\tau', \tau)} \DHm_{\varphi_{(\mathcal{X}_i, \mathcal{L}_i)}} = \int_{[\tau', \tau)} \DHm_\varphi. 
\end{align*}

Let $\mathcal{R}$ be the set consisting of subsets of $\mathbb{R}$ which can be written as a finite sum of half open intervals of finite length: $A \in \mathcal{R}$ iff $A = \bigcup_{i=1}^k [\tau'_i, \tau_i)$, where we may assume $[\tau'_i, \tau_i)$ are disjoint each other. 
For $A \in \mathcal{R}$, we write it by a disjoint sum $A = \bigcup_{i=1}^k [\tau'_i, \tau_i)$ and we put 
\begin{equation}
\label{moment measure for A} 
\int 1_A \mathcal{D}_\varphi := \sum_{i=1}^k \int 1_{[\tau'_i, \tau_i)} \mathcal{D}_\varphi. 
\end{equation}
For $\varphi = \varphi_{(\mathcal{X}, \mathcal{L})}$, we have 
\[ \int 1_A \mathcal{D}_{\varphi_{(\mathcal{X}, \mathcal{L})}} = \sum_{E \subset \mathcal{X}_0} \mathrm{ord}_E \mathcal{X}_0 \int_A \DHm_{(E, \mathcal{L}|_E)}. \delta_{v_E}. \]
We also have 
\[ \lim_{i \to \infty} \int 1_A \mathcal{D}_{\varphi_i} = \int 1_A \mathcal{D}_\varphi \]
for any convergent decreasing net $\varphi_i \searrow \varphi$ in $\E^1 (X, L)$. 

For a non-negative continuous function $g$ on $X^{\mathrm{NA}}$, we put 
\begin{equation}
\label{Ig}
 I_{\varphi, A} (g) := \int_{X^{\mathrm{NA}}} g \int 1_A \mathcal{D}_\varphi \ge 0. 
 \end{equation}
By Lemma \ref{Ig is measure} below, the assignment $A \mapsto I_{\varphi, A} (g)$ satisfies the sigma additivity. 
Thus by Carath\'eodory's extension theorem, the following outer measure defines a finite Borel measure on $\mathbb{R}$: 
\begin{equation} 
\label{IB}
\nu_{\varphi, g} (B) := I_{\varphi, B} (g) := \inf \Big{\{} \sum_{i=1}^\infty \int_{X^{\mathrm{NA}}} g \int 1_{A_i} \mathcal{D}_\varphi ~\Big{|}~ B \subset \bigcup_{i=1}^\infty A_i, A_i \in \mathcal{R} \Big{\}}. 
\end{equation}
We have $I_{\varphi, B} (1) = \int_B \nu_{\varphi, 1} = \int_B \DHm_\varphi$, as it holds for $B \in \mathcal{R}$ and $\nu_{\varphi, 1}, \DHm_\varphi$ are outer regular by the finiteness. 
(Recall any open set in $\mathbb{R}$ can be expressed as a countable sum of $A_i \in \mathcal{R}$. )

For a non-negative Borel measurable function $\chi$ on $\mathbb{R}$, we put 
\begin{equation}
\label{Ichi} 
I_{\varphi, \chi} (g) := \int_\mathbb{R} \chi \nu_{\varphi, g}. 
\end{equation}
Since $\nu_{\varphi, g} \le \sup |g| \cdot \DHm_\varphi$ by $I_{\varphi, A} (g) \le \mathrm{sup} |g| \cdot I_{\varphi, A} (1)$, we have 
\[ 0 \le I_{\varphi, \chi} (g) \le \sup |g| \int \chi \DHm_\varphi. \]
We obviously have $I_{\varphi, \chi'} (g) \le I_{\varphi, \chi} (g)$ for $\chi' \le \chi$. 
By the monotone convergence theorem, we have $I_{\varphi, \chi_i} (g) \nearrow I_{\varphi, \chi} (g)$ for any increasing pointwise convergent sequence $\chi_i \nearrow \chi$ of non-negative Borel measurable functions. 

Now suppose $\int_\mathbb{R} \chi \DHm_\varphi < \infty$, then by Lemma \ref{IB is linear} below, $I_{\varphi, \chi}$ extends to a positive bounded linear function on $C^0 (X^{\mathrm{NA}})$ in a canonical way. 
Therefore, by Riesz--Markov--Kakutani representation theorem, we get a Radon measure on $X^{\mathrm{NA}}$ which we denote by $\int \chi \mathcal{D}_\varphi$ such that 
\[ I_{\varphi, \chi} (g) = \int_{X^{\mathrm{NA}}} g \int \chi \mathcal{D}_\varphi. \]

The property (2)--(4) on the measure $\int \chi \mathcal{D}_\varphi$ follows immediately from the construction. 
The first property (1) follows by 
\begin{align*} 
\int_{X^{\mathrm{NA}}} g \int 1_B \mathcal{D}_{\varphi_{(\mathcal{X}, \mathcal{L})}} 
&= \inf \Big{\{} \sum_{i=1}^\infty \int_{X^{\mathrm{NA}}} g \int 1_{A_i} \mathcal{D}_{\varphi_{(\mathcal{X}, \mathcal{L})}} ~\Big{|}~ B \subset \bigcup_{i=1}^\infty A_i, A_i \in \mathcal{R} \Big{\}} 
\\
&= \inf \Big{\{} \sum_{E \subset \mathcal{X}_0} g (v_E) \mathrm{ord}_E \mathcal{X}_0 \sum_{i=1}^\infty \int_{A_i} \DHm_{(E, \mathcal{L}|_E)} ~\Big{|}~ B \subset \bigcup_{i=1}^\infty A_i \Big{\}}
\\
&= \sum_{E \subset \mathcal{X}_0} g (v_E) \mathrm{ord}_E \mathcal{X}_0 \int_B \DHm_{(E, \mathcal{L}|_E)} 
\end{align*}
and by the continuity with respect to increasing limit $\chi_i \nearrow \chi$. 
Here we note Lemma \ref{uniform outer regularity} for the last equality.

To see the property (5), we note $\int 1_{\mathbb{R}} \mathcal{D}_\varphi = \mathrm{MA} (\varphi)$ holds for $\varphi \in \nH (X, L)$ by the first property. 
Then the general case is reduced to the last property (6). 

Let $\varphi_i \searrow \varphi \in \E^1 (X, L)$ be a convergent decreasing net. 
For a moderate $\chi$, we must show 
\[ \int \chi \mathcal{D}_{\varphi_i} \to \int \chi \mathcal{D}_\varphi \]
in the weak sense. 
The claim is reduced to the following cases: (i) $\chi$ is tame, (ii) $\chi$ is right continuous decreasing. 
We note the following. 
Here step function means a function of the form $\sum_{j=1}^k a_j 1_{A_j}$ for $A_j \in \mathcal{R}$. 

(i) If $\chi$ is tame, we have a sequence of step functions $\chi_j$ converging to $\chi$ uniformly on $(-\infty, \sup \varphi_0]$. 

(ii) If $\chi$ is a right continuous decreasing function, in a similar way as in the proof of Lemma \ref{monotonicity}, we can find an increasing sequence of step functions $\chi_j$ pointwisely converging to $\chi$ on $(-\infty, \sup \varphi_0]$. 

Now we put
\[ \mathcal{S} := \Big{\{} \chi: \text{ Borel measurable function on } (-\infty, \sup \varphi_0] ~\Big{|}~ \int \chi \mathcal{D}_{\varphi_i} \to \int \chi \mathcal{D}_\varphi \Big{\}}. \]
As we already know step functions are in $\mathcal{S}$, the claim is reduced to the following generalities. 

(a) If $\chi$ is a uniform limit of some sequence $\chi_j \in \mathcal{S}$, then $\chi$ is in $\mathcal{S}$. 
We can easily show this similarly as the proof of Proposition \ref{tame}. 

(b) If $\chi$ is a pointwise limit of an increasing sequence $\{ \chi_j \}_{j \in \mathbb{N}} \subset \mathcal{S}$ and satisfies $\int_\mathbb{R} \chi \DHm_{\varphi_i} \to \int_\mathbb{R} \chi \DHm_\varphi$, then $\chi$ is in $\mathcal{S}$. 

Firstly, for any non-negative $g \in C^0 (X^{\mathrm{NA}})$, we have 
\[ \varliminf_{i \to \infty} \int_{X^{\mathrm{NA}}} g \int \chi \mathcal{D}_{\varphi_i} \ge \lim_{i \to \infty} \int_{X^{\mathrm{NA}}} g \int \chi_j \mathcal{D}_{\varphi_i} = \int_{X^{\mathrm{NA}}} g \int \chi_j \mathcal{D}_\varphi. \]
Thus by the property (3) of the measure (note $\chi_j$ are bounded from below on $(-\infty, \sup \varphi_0]$), we get 
\[ \varliminf_{i \to \infty} \int_{X^{\mathrm{NA}}} g \int \chi \mathcal{D}_{\varphi_i} \ge \int_{X^{\mathrm{NA}}} g \int \chi \mathcal{D}_\varphi. \]

It suffices to see the reverse inequality. 
Since $\int_\mathbb{R} \chi_j \DHm_\varphi \nearrow \int_\mathbb{R} \chi \DHm_\varphi$ by the monotone convergence theorem, for any $\varepsilon > 0$ we can take $j \in \mathbb{N}$ large so that $\int_{\mathbb{R}} \chi \DHm_\varphi \le \int \chi_j \DHm_\varphi + \varepsilon$. 
Fix such $j$ and put $\tilde{\chi} := \chi_j$. 
By the assumption, we have $\int_\mathbb{R} \chi \DHm_{\varphi_i} \to \int_\mathbb{R} \chi \DHm_\varphi$ and $\int \tilde{\chi} \DHm_{\varphi_i} \to \int \tilde{\chi} \DHm_\varphi$, so that we can take $i_\varepsilon \in I$ so that $\int \chi \DHm_{\varphi_i} \le \int \chi \DHm_\varphi + \varepsilon$ and $\int \tilde{\chi} \DHm_{\varphi_i} \le \int \tilde{\chi} \DHm_\varphi + \varepsilon$ for every $i \ge i_\varepsilon$. 
Thus we get $\int \chi \DHm_{\varphi_i} \le \int \tilde{\chi} \DHm_{\varphi_i} + 3 \varepsilon$ for $i \ge i_\varepsilon$. 

It follows that $\int (\chi - \tilde{\chi}) \mathcal{D}_{\varphi_i}$ gives a non-negative measure on $X^{\mathrm{NA}}$ whose total mass $\int_{X^{\mathrm{NA}}} \int (\chi - \tilde{\chi}) \mathcal{D}_{\varphi_i} = \int (\chi - \tilde{\chi}) \DHm_{\varphi_i}$ is no greater than $3 \varepsilon$. 
Now since 
\[ \int_{X^{\mathrm{NA}}} g \int \chi \mathcal{D}_{\varphi_i} = \int_{X^{\mathrm{NA}}} g \int \tilde{\chi} \mathcal{D}_{\varphi_i} + \int_{X^{\mathrm{NA}}} g \int (\chi - \tilde{\chi}) \mathcal{D}_{\varphi_i} \le \int_{X^{\mathrm{NA}}} g \int \tilde{\chi} \mathcal{D}_{\varphi_i} + 3 \varepsilon \cdot \sup |g|, \]
we have 
\[ \varlimsup_{i \to \infty} \int_{X^{\mathrm{NA}}} g \int \chi \mathcal{D}_{\varphi_i} \le \int_{X^{\mathrm{NA}}} g \int \tilde{\chi} \mathcal{D}_\varphi + 3 \varepsilon \cdot \sup |g| \le \int_{X^{\mathrm{NA}}} g \int \chi \mathcal{D}_\varphi + 3 \varepsilon \cdot \sup |g|. \]
Now we can take $\varepsilon$ arbitrary small and get 
\[ \varlimsup_{i \to \infty} \int_{X^{\mathrm{NA}}} g \int \chi \mathcal{D}_{\varphi_i} \le \int_{X^{\mathrm{NA}}} g \int \chi \mathcal{D}_\varphi. \]
Therefore, we conclude
\[ \lim_{i \to \infty} \int_{X^{\mathrm{NA}}} g \int \chi \mathcal{D}_{\varphi_i} = \int_{X^{\mathrm{NA}}} g \int \chi \mathcal{D}_\varphi. \]

\end{proof}

\begin{lem}
\label{Ig is measure}
For any $\varphi \in \E^1 (X, L)$ and $g \in C^0 (X^{\mathrm{NA}})$, the assignment 
\[ \mathcal{R} \to \mathbb{R}: A \mapsto I_A := I_{\varphi, A} (g) \]
given by (\ref{Ig}) satisfies the sigma additivity. 
Namely, if $\{ A_i \}_{i=1}^\infty \subset \mathcal{R}$ is a countable disjoint collection with $A := \bigcup_{i=1}^\infty A_i \in \mathcal{R}$, then $I_A = \sum_{i=1}^\infty I_{A_i}$
\end{lem}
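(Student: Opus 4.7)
The plan is to reduce $\sigma$-additivity to continuity of the finite Borel measure $\DHm_\varphi$ on $\mathbb{R}$, by combining finite additivity on $\mathcal{R}$ with the mass bound $|I_A| \le \sup|g| \cdot \int_A \DHm_\varphi$. First I would establish well-definedness of $\int 1_A \mathcal{D}_\varphi$ for $A \in \mathcal{R}$ under a choice of disjoint decomposition. The key subdivision identity is
\[ \int 1_{[\tau',\sigma)} \mathcal{D}_\varphi + \int 1_{[\sigma,\tau)} \mathcal{D}_\varphi = \int 1_{[\tau',\tau)} \mathcal{D}_\varphi \qquad (\tau' \le \sigma \le \tau), \]
which is immediate from the definition (\ref{moment measure for interval}): the terms $\mathrm{MA}(\varphi \wedge \sigma)$ cancel and $\int_{[\tau',\sigma)} \DHm_\varphi + \int_{[\sigma,\tau)} \DHm_\varphi = \int_{[\tau',\tau)} \DHm_\varphi$. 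Taking the common refinement of any two disjoint decompositions of $A$ and applying this identity yields well-definedness of (\ref{moment measure for A}), and finite additivity of $A \mapsto \int 1_A \mathcal{D}_\varphi$ (and hence of $I_{\varphi,A}(g)$) on $\mathcal{R}$ follows at once.

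Next, for $A = \bigsqcup_{i=1}^\infty A_i$ with $A, A_i \in \mathcal{R}$ pairwise disjoint, I set $B_N := A \setminus \bigsqcup_{i=1}^N A_i = \bigsqcup_{i=N+1}^\infty A_i$. Since $A$ is a finite union of half-open intervals of finite length and $\bigsqcup_{i=1}^N A_i$ consists of finitely many half-open subintervals of $A$, the complement $B_N$ is again in $\mathcal{R}$; moreover $B_1 \supset B_2 \supset \cdots$ with $\bigcap_N B_N = \emptyset$. Finite additivity gives
\[ I_A = \sum_{i=1}^N I_{A_i} + I_{B_N}, \]
so the claim reduces to showing $I_{B_N} \to 0$.

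For the estimate, I note that $\int 1_{B_N} \mathcal{D}_\varphi$ is a non-negative Borel measure on $X^{\mathrm{NA}}$: it is a finite sum of measures $\int 1_{[\tau',\tau)} \mathcal{D}_\varphi$, each shown to be non-negative in the construction above by approximating a general $\varphi \in \E^1(X,L)$ by a decreasing net $\varphi_{(\mathcal{X}_i,\mathcal{L}_i)} \searrow \varphi$ from $\nH(X,L)$, where non-negativity is manifest from Proposition~\ref{MA of wedge}. Its total mass equals $\int_{B_N} \DHm_\varphi$, since the Monge--Amp\`ere differences $\mathrm{MA}(\varphi \wedge \tau) - \mathrm{MA}(\varphi \wedge \tau')$ each have zero total mass (both integrals equal $(e^L)$ as $\varphi \wedge \tau \in \E^1(X,L)$), leaving only the $\delta_{v_{\mathrm{triv}}}$-contribution of mass $\int_{[\tau',\tau)} \DHm_\varphi$. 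Consequently
\[ |I_{B_N}| \le \sup_{X^{\mathrm{NA}}}|g| \cdot \int_{B_N} \DHm_\varphi. \]
Since $\DHm_\varphi$ is a finite Borel measure on $\mathbb{R}$ and $B_N \searrow \emptyset$, continuity of measure yields $\int_{B_N} \DHm_\varphi \to 0$, hence $I_{B_N} \to 0$.

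The only genuinely delicate point is that the non-negativity and the total-mass formula for $\int 1_A \mathcal{D}_\varphi$ must hold for arbitrary $\varphi \in \E^1(X,L)$, not merely for $\varphi \in \nH(X,L)$; this is however already subsumed in the preceding construction via the continuity of $\mathrm{MA}(\varphi \wedge \tau)$ and $\int_{[\tau',\tau)} \DHm_\varphi$ along decreasing nets (Proposition~\ref{continuity of moment energy along decreasing nets} and the strong continuity of $\mathrm{MA}$ on $\E^1$), so no further work is required at this step.
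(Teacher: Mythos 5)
Your proof is correct, but the decisive step is handled by a genuinely different mechanism than the paper's. The paper proves continuity at $\emptyset$ by the classical pre-measure argument: for each $A\in\mathcal{R}$ it produces $A'\in\mathcal{R}$ with $\bar A'\subset A$ and $I_{A\setminus A'}$ small (using only the left continuity of $F_\varphi$), and then runs a finite-intersection-property/compactness argument on a decreasing sequence with empty intersection to force $I_{A_N}\to 0$. You instead observe that the set function $A\mapsto I_A$ is dominated by $\sup|g|$ times the restriction of $\DHm_\varphi$, because the non-negative measure $\int 1_{[\tau',\tau)}\mathcal{D}_\varphi$ has total mass exactly $\int_{[\tau',\tau)}\DHm_\varphi$ (the two Monge--Amp\`ere terms each carry mass $(e^L)$ and cancel), and then you import countable additivity wholesale from the already-constructed finite Borel measure $\DHm_\varphi$ via continuity from above on $B_N\searrow\emptyset$. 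Since both the non-negativity and the total-mass identity are established in the surrounding construction before this lemma is needed, there is no circularity, and your route is shorter; the paper's route is the one that would survive even without the total-mass formula, since it only needs the one-sided estimate $I_{[\tau',\tau)}\le\sup|g|\,(F_\varphi(\tau')-F_\varphi(\tau))$ and left continuity of $F_\varphi$. Your additional verification of well-definedness of $\int 1_A\mathcal{D}_\varphi$ under the choice of disjoint decomposition, via the cancellation identity for adjacent intervals, is a point the paper leaves implicit and is worth recording. One cosmetic remark: the $A_i$ need not be subintervals of $A$, but since $\mathcal{R}$ is a ring of sets the membership $B_N\in\mathcal{R}$ holds regardless, so this does not affect the argument.
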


\begin{proof}
We firstly note that for any $A \in \mathcal{R}$ and $\epsilon > 0$, there exists $A' \in \mathcal{R}$ such that $\bar{A}' \subset A$ and $I_{A \setminus A'} < \epsilon$. 
To see this, we may assume $A = [\tau', \tau)$. 
Since
\[ I_{[\tau', \tau)} = \int_{X^{\mathrm{NA}}} g \int 1_{[\tau', \tau)} \mathcal{D}_\varphi \le \sup |g| (F_\varphi (\tau') - F_\varphi (\tau)) \]
and $F_\varphi$ is left continuous, we have $\int_{X^{\mathrm{NA}}} g \int 1_{[\tau_i, \tau)} \mathcal{D}_\varphi \to 0$ for $\tau_i \nearrow \tau$, so that $A' := [\tau_i, \tau)$ satisfies the demand for large $i$. 
We also note if $A \subset \bigcup_{i=1}^k A_i$ for $A, A_i \in \mathcal{R}$, then we have $I_A \le \sum_{i=1}^k I_{A_i}$. 

The rest of argument is just a reproduction of basic arguments in measure theory. 
We prepare the following: if $\{ A_i \}_{i=1}^\infty \in \mathcal{R}$ is a decreasing sequence with $\bigcap_{i=1}^\infty A_i = \emptyset$, then we have $I_{A_i} \to 0$. 
Indeed, suppose there exists $\delta > 0$ such that $I_{A_i} \ge \delta$ for all $i$. 
We pick $A'_i$ so that $\bar{A}_i' \subset A_i$ and $I_{A_i \setminus A'_i} < \delta/2^i$. 
Since $\bar{A}'_1 \cap \bigcap_{i=2}^\infty \bar{A}'_i \subset \bigcap_{i=1}^\infty E_i = \emptyset$ and $\bar{A}'_1$ is compact, there exists $N \in \mathbb{N}_+$ such that $\bigcap_{i=1}^N A'_i \subset \bigcap_{i=1}^N \bar{A}'_i = \emptyset$ by the finite intersection property. 
Then since 
\[ A_N = A_N \setminus \bigcap_{i=1}^N A'_i = \bigcup_{i=1}^N (A_N \setminus A'_i) \subset \bigcup_{i=1}^N (A_i \setminus A'_i), \]
we get 
\[ I_{A_N} \le \sum_{i=1}^N I_{A_i \setminus A'_i} \le \sum_{i=1}^N \delta/2^i < \delta, \]
which contradicts to the assumption $I_{A_i} \ge \delta$. 

Now if $\{ A_i \}_{i=1}^\infty \subset \mathcal{R}$ is a countable disjoint collection with $A := \bigcup_{i=1}^\infty A_i \in \mathcal{R}$, then we compute 
\begin{align*} 
I_A = I_{\bigcup_{i=1}^k A_i} + I_{A \setminus \bigcup_{i=1}^k A_i} = \sum_{i=1}^k I_{A_i} + I_{A \setminus \bigcup_{i=1}^k A_i}. 
\end{align*}
By taking the limit $k \to \infty$, we get $I_A = \sum_{i=1}^\infty I_{A_i}$. 
\end{proof}

\begin{lem}
\label{uniform outer regularity}
Let $\nu_1, \ldots, \nu_k$ be finite Borel measures on $\mathbb{R}$. 
For any Borel subset $B \subset \mathbb{R}$ and $\varepsilon > 0$, there exists a countable disjoint collection $\{ A_i \}_{i=1}^\infty \subset \mathcal{R}$ such that $B \subset \bigcup_{i=1}^\infty A_i$ and 
\[ \nu_j (B) \ge \sum_{i=1}^\infty \nu_j (A_i) - \varepsilon \]
for each $j=1, \ldots, k$. 
\end{lem}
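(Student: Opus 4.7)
The plan is to reduce the statement to the outer regularity of each $\nu_j$ together with the elementary fact that every open subset of $\mathbb{R}$ decomposes into a countable disjoint union of bounded half-open intervals.

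First, since each $\nu_j$ is a finite Borel measure on $\mathbb{R}$, it is outer regular (this is recalled in the text just after the definition of Radon measure). Hence for each $j = 1, \ldots, k$ I can choose an open set $U_j \subset \mathbb{R}$ with $B \subset U_j$ and $\nu_j(U_j) \le \nu_j(B) + \varepsilon$. Set
\[ U := \bigcap_{j=1}^k U_j, \]
which is an open subset of $\mathbb{R}$ containing $B$, and satisfies $\nu_j(U) \le \nu_j(U_j) \le \nu_j(B) + \varepsilon$ for every $j$ simultaneously, since only finitely many measures are involved.

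Next I would write $U$ as a countable disjoint union of half-open intervals of finite length. Since $\mathbb{R}$ is second countable, $U$ is the countable disjoint union of its connected components, each of which is an open interval $(a_l, b_l)$ with $a_l \in [-\infty, \infty)$ and $b_l \in (-\infty, \infty]$. For each such component pick a doubly infinite strictly increasing sequence $\{c_{l,m}\}_{m \in \mathbb{Z}}$ inside $(a_l, b_l)$ with $c_{l,m} \searrow a_l$ as $m \to -\infty$ and $c_{l,m} \nearrow b_l$ as $m \to +\infty$ (for bounded $a_l$ or $b_l$; if $a_l = -\infty$ or $b_l = +\infty$ just let $c_{l,m} \to \mp\infty$). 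Then
\[ (a_l, b_l) = \bigsqcup_{m \in \mathbb{Z}} [c_{l,m}, c_{l,m+1}), \]
and each factor is a half-open interval of finite length, hence lies in $\mathcal{R}$. Enumerating the doubly-indexed disjoint collection $\{[c_{l,m}, c_{l,m+1})\}_{l, m}$ as $\{A_i\}_{i=1}^\infty$, I obtain $U = \bigsqcup_{i=1}^\infty A_i$ with $A_i \in \mathcal{R}$ pairwise disjoint, hence in particular $B \subset \bigcup_{i=1}^\infty A_i$.

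Finally, by countable additivity of each $\nu_j$,
\[ \sum_{i=1}^\infty \nu_j(A_i) = \nu_j(U) \le \nu_j(B) + \varepsilon, \]
which rearranges to the required inequality $\nu_j(B) \ge \sum_i \nu_j(A_i) - \varepsilon$ for every $j = 1, \ldots, k$. There is no serious obstacle here: the only ingredients are outer regularity (applied once per measure and intersected) and the standard decomposition of open sets in $\mathbb{R}$ into half-open intervals; the ``uniformity'' in $j$ is automatic because $k$ is finite.
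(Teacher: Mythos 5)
Your proof is correct, and it reaches the conclusion by a different route than the paper. The paper invokes outer regularity in the form of countable covers by elements of $\mathcal{R}$: for each $j$ it picks a countable family $\{A^j_i\}_i \subset \mathcal{R}$ covering $B$ with $\sum_i \nu_j(A^j_i) \le \nu_j(B) + \varepsilon$, then forms the $k$-fold intersections $A^1_{i_1} \cap \dotsb \cap A^k_{i_k}$ (which stay in $\mathcal{R}$), disjointifies, and uses monotonicity of each $\nu_j$ to conclude. You instead use outer regularity by open sets, which lets you replace the $k$ countable covers by $k$ open sets $U_j \supset B$; the intersection $U = \bigcap_j U_j$ is then a single open set that works for all $j$ simultaneously, and you only need one decomposition of $U$ into disjoint half-open intervals at the end. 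Your version trades the combinatorial bookkeeping of the $k$-fold product indexing and the subsequent disjointification for the (standard, and recalled in the paper) fact that open subsets of $\mathbb{R}$ split into countably many disjoint members of $\mathcal{R}$; it is arguably cleaner, and it yields the slightly stronger conclusion that the $A_i$ can be taken to be genuine half-open intervals exhausting an open neighbourhood of $B$. Both arguments ultimately rest on the same regularity input, so the difference is one of packaging rather than substance, but your packaging is sound: the decomposition $(a_l,b_l) = \bigsqcup_m [c_{l,m}, c_{l,m+1})$ is valid, each piece has finite length and so lies in $\mathcal{R}$, and countable additivity gives $\sum_i \nu_j(A_i) = \nu_j(U) \le \nu_j(B) + \varepsilon$ for every $j$ at once.
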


\begin{proof}
As $\nu_1, \ldots, \nu_k$ are outer regular, we can take $\{ A^1_i \}_{i=1}^\infty, \ldots, \{ A^k_i \}_{i=1}^\infty$ so that 
\[ \nu_j (B) \ge \sum_{i=1}^\infty \nu_j (A^j_i) - \varepsilon \]
for each $j=1, \ldots, k$. 
By replacing $A^j_i$ with $A^j_i \setminus \bigcup_{l=1}^{i-1} A^j_l$, we may assume $A^j_i$ are disjoint with each other, for each $j$. 
Now consider the countable collection $\{ A^1_{i_1} \cap \dotsb \cap A^k_{i_k} \}_{i_1, \ldots, i_k = 1}^\infty$ and renumber it as $\{ A'_i \}_{i=1}^\infty$. 
We have 
\[ \bigcup_{i=1}^\infty A'_i = \bigcup_{i_1, \ldots, i_k =1}^\infty A^1_{i_1} \cap \dotsb \cap A^k_{i_k} = \bigcup_{i_1 =1}^\infty A^1_{i_1} \cap \dotsb \cap \bigcup_{i_k =1}^\infty A^k_{i_k} \supset B. \]
Put $A_i := A'_i \setminus \bigcup_{l=1}^{i-1} A'_l$, then $A_i$ are disjoint, $B \subset \bigcup_{i=1}^\infty A_i$ and $\bigcup_{i=1}^\infty A_i \subset \bigcup_{i=1}^\infty A^j_i$ for each $j$. 
Since $\nu_j$ are measures, we have 
\[ \sum_{i=1}^\infty \nu_j (A^j_i) = \nu_j (\bigcup_{i=1}^\infty A^j_i) \ge \nu_j (\bigcup_{i=1}^\infty A_i) = \sum_{i=1}^\infty \nu_j (A_i).  \]
Thus we get 
\[ \nu_j (B) \ge \sum_{i=1}^\infty \nu_j (A_i) - \varepsilon. \] 
\end{proof}

\begin{lem}
\label{IB is linear}
For any $\varphi \in \mathcal{E}^1 (X, L)$ and any Borel measurable function $\chi$ on $\mathbb{R}$, $I_{\varphi, \chi}$ defined in (\ref{Ichi}) satisfies the following. 
\begin{enumerate}
\item $I_{\varphi, \chi} (ag) = a I_{\varphi, \chi} (g)$ for any non-negative $a \in \mathbb{R}$ and $g \in C^0 (X^{\mathrm{NA}})$. 

\item $I_{\varphi, \chi} (g_1 + g_2) = I_{\varphi, \chi} (g_1) + I_{\varphi, \chi} (g_2)$ for non-negative $g_1, g_2 \in C^0 (X^{\mathrm{NA}})$. 
\end{enumerate}
\end{lem}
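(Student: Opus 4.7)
The strategy is to establish the claimed linearity first at the level of the Carath\'eodory outer measure $\nu_{\varphi,\bullet}(B)$ for each fixed Borel set $B\subset \mathbb{R}$, and then transfer it to $I_{\varphi,\chi}$ via the definition $I_{\varphi,\chi}(g)=\int_{\mathbb{R}}\chi\,\nu_{\varphi,g}$. The starting observation is that for every fixed $A\in\mathcal{R}$, the functional $g\mapsto I_{\varphi,A}(g)=\int_{X^{\mathrm{NA}}} g\int 1_A\mathcal{D}_\varphi$ is already $\mathbb{R}$-linear on all of $C^0(X^{\mathrm{NA}})$, because $\int 1_A\mathcal{D}_\varphi$ was constructed as a genuine (positive) Radon measure in the previous paragraphs. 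So the whole question is whether linearity is preserved when one passes from finite disjoint unions in $\mathcal{R}$ to arbitrary Borel sets via the infimum defining $\nu_{\varphi,g}$ in \eqref{IB}.

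For positive homogeneity, fix $a\ge 0$ and a non-negative $g\in C^0(X^{\mathrm{NA}})$. Any countable covering $\{A_i\}_{i=1}^\infty\subset\mathcal{R}$ of $B$ gives $\sum_i I_{\varphi,A_i}(ag)=a\sum_i I_{\varphi,A_i}(g)$, so taking the infimum in \eqref{IB} directly yields $\nu_{\varphi,ag}(B)=a\,\nu_{\varphi,g}(B)$. For additivity, the inequality $\nu_{\varphi,g_1+g_2}(B)\ge \nu_{\varphi,g_1}(B)+\nu_{\varphi,g_2}(B)$ follows from the fact that any covering $\{A_i\}$ of $B$ is simultaneously admissible in the infima defining the two summands on the right: writing $\sum_i I_{\varphi,A_i}(g_1+g_2)=\sum_i I_{\varphi,A_i}(g_1)+\sum_i I_{\varphi,A_i}(g_2)\ge \nu_{\varphi,g_1}(B)+\nu_{\varphi,g_2}(B)$ and taking the infimum on the left gives the inequality. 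The reverse inequality uses the same idea in the other direction: given $\varepsilon>0$, pick a covering $\{A_i\}$ of $B$ with $\sum_i I_{\varphi,A_i}(g_1+g_2)\le \nu_{\varphi,g_1+g_2}(B)+\varepsilon$, then
\[
\nu_{\varphi,g_1}(B)+\nu_{\varphi,g_2}(B)\le \sum_i I_{\varphi,A_i}(g_1)+\sum_i I_{\varphi,A_i}(g_2)=\sum_i I_{\varphi,A_i}(g_1+g_2)\le \nu_{\varphi,g_1+g_2}(B)+\varepsilon,
\]
and $\varepsilon\to 0$ gives the claim.

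Finally, to transfer linearity from $B\mapsto \nu_{\varphi,g}(B)$ to $I_{\varphi,\chi}$, one approximates a non-negative Borel measurable $\chi$ by an increasing sequence of simple functions $\chi_n=\sum_{j}a_{n,j}\,1_{B_{n,j}}\nearrow\chi$ and applies the monotone convergence theorem, which is legitimate under the integrability hypothesis $\int_{\mathbb{R}}\chi\,\mathrm{DH}_\varphi<\infty$ used just after the lemma (for the lemma itself only positive homogeneity and additivity of $\nu_{\varphi,g}(B)$ in $g$ are needed, and these pass through $\int\chi\,d\nu_{\varphi,g}$ termwise for each simple $\chi_n$). This yields both $I_{\varphi,\chi}(ag)=aI_{\varphi,\chi}(g)$ and $I_{\varphi,\chi}(g_1+g_2)=I_{\varphi,\chi}(g_1)+I_{\varphi,\chi}(g_2)$ for non-negative $g,g_1,g_2\in C^0(X^{\mathrm{NA}})$.

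The only genuine obstacle is the additivity step for $\nu_{\varphi,\bullet}(B)$: a priori one might worry that the near-optimal coverings realizing $\nu_{\varphi,g_1}(B)$ and $\nu_{\varphi,g_2}(B)$ need not coincide, so that their sums could not be directly related to $\nu_{\varphi,g_1+g_2}(B)$. The resolution above avoids constructing a common covering by instead exploiting both inequalities from a single near-optimal covering for whichever side of the equality one is trying to bound; this works precisely because $I_{\varphi,A}$ is linear in $g$ \emph{separately} for each $A\in\mathcal{R}$, a property already built in at the pre-Carath\'eodory level.
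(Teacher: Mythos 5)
Your treatment of additivity has a genuine gap: you prove the same inequality twice and never prove the other one. Your first argument correctly gives the superadditivity $\nu_{\varphi,g_1+g_2}(B)\ge \nu_{\varphi,g_1}(B)+\nu_{\varphi,g_2}(B)$ (any single covering of $B$ is admissible for both infima on the right). But your ``reverse inequality'' argument concludes $\nu_{\varphi,g_1}(B)+\nu_{\varphi,g_2}(B)\le \nu_{\varphi,g_1+g_2}(B)+\varepsilon$, which after $\varepsilon\to 0$ is again superadditivity with the sides swapped --- not the missing direction. What still has to be shown is the subadditivity $\nu_{\varphi,g_1+g_2}(B)\le \nu_{\varphi,g_1}(B)+\nu_{\varphi,g_2}(B)$, and here the worry you dismiss in your closing paragraph is exactly the issue: a covering that is $\varepsilon$-optimal for $\nu_{\varphi,g_1+g_2}$ gives no upper bound on anything, while to bound $\nu_{\varphi,g_1+g_2}(B)$ from above by $\nu_{\varphi,g_1}(B)+\nu_{\varphi,g_2}(B)$ you need a \emph{single} covering whose $g_1$-sum and $g_2$-sum are simultaneously within $\varepsilon$ of the two infima. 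This is precisely what the paper's Lemma \ref{uniform outer regularity} supplies (a common $\varepsilon$-optimal disjoint covering for finitely many finite Borel measures, built by intersecting and disjointifying the individual near-optimal coverings), and the paper's proof treats exactly this direction as the nontrivial one.

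There are two ways to close the gap: either invoke Lemma \ref{uniform outer regularity} as the paper does, or note that $\nu_{\varphi,g_1}+\nu_{\varphi,g_2}$ and $\nu_{\varphi,g_1+g_2}$ are finite Borel measures on $\mathbb{R}$ agreeing on the generating ring $\mathcal{R}$ (where each reduces to integrating $g_1$, $g_2$, $g_1+g_2$ against the genuine measure $\int 1_A\,\mathcal{D}_\varphi$), hence agree on all Borel sets by uniqueness of the Carath\'eodory extension of a finite premeasure. The rest of your write-up --- positive homogeneity, and the passage from additivity of $\nu_{\varphi,\bullet}(B)$ in $g$ to additivity of $I_{\varphi,\chi}(g)=\int_{\mathbb{R}}\chi\,\nu_{\varphi,g}$ via simple functions and monotone convergence --- is fine.
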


\begin{proof}
Since $I_{\varphi, \chi} (g) = \int \chi \nu_{\varphi, g}$, it suffices to show $\nu_{\varphi, ag} = a \nu_{\varphi, g}$ and $\nu_{\varphi, g_1 + g_2} = \nu_{\varphi, g_1} + \nu_{\varphi, g_2}$, which is equivalent to say that the claim holds for $I_{\varphi, B}$ for every Borel set $B \subset \mathbb{R}$. 
We firstly note the claim holds for $A \in \mathcal{R}$ as $I_{\varphi, A} (g)$ is defined by the integration of $g$ with respect to the measure $\int 1_A \mathcal{D}_\varphi$ in (\ref{moment measure for A}). 

Now we check $I_{\varphi, B} (g_1) + I_{\varphi, B} (g_2) \ge I_{\varphi, B} (g_1 + g_2)$. 
For $\varepsilon > 0$, we pick $\{ A_i \}_{i=1}^\infty \subset \mathcal{R}$ as in Lemma \ref{uniform outer regularity} with respect to the measures $\nu_{\varphi, g_1}, \nu_{\varphi, g_2}$: $B \subset \bigcup_{i=1}^\infty A_i$ and 
\[ \nu_{\varphi, g_j} (B) \ge \sum_{i=1}^\infty \nu_{\varphi, g_j} (A_i) - \varepsilon \]
for both $j=1, 2$. 
Then we have 
\[ \nu_{\varphi, g_1} (B) + \nu_{\varphi, g_2} (B) \ge \sum_{i=1}^\infty (\nu_{\varphi, g_1} (A_i) + \nu_{\varphi, g_2} (A_i)) - 2 \varepsilon. \]
Since the claim holds for $A_i \in \mathcal{R}$, we obtain 
\[ \nu_{\varphi, g_1} (B) + \nu_{\varphi, g_2} (B) \ge \sum_{i=1}^\infty \nu_{\varphi, g_1 + g_2} (A_i) - 2 \varepsilon \ge \nu_{\varphi, g_1+ g_2} (B) - 2 \varepsilon. \]
As we took $\varepsilon > 0$ arbitrary, we obtain 
\[ \nu_{\varphi, g_1} (B) + \nu_{\varphi, g_2} (B) \ge \nu_{\varphi, g_1+ g_2} (B). \]
The rest of the claim follows immediately from the definition of $I_{\varphi, B} (g)$. 

\end{proof}

\subsubsection{Tomographic expression of moment measure}
\label{Tomographic expression of moment measure}

For smooth $\chi$, we have the following formula. 
Here we use the dominant convergence theorem, so we employ the countable regularization \cite[Theorem 9.11]{BJ3} for $\varphi \in \E^1 (X, L)$. 

\begin{prop}
\label{moment measure via distribution}
Let $\chi$ be a non-negative compactly supported smooth function on $\mathbb{R}$ and $\varphi \in \E^1 (X, L)$. 
If either $\psi \in C^0 (X^{\mathrm{NA}})$ or $\psi \in \E^1 (X, L)$, then $\int_{X^{\mathrm{NA}}} \psi \mathrm{MA} (\varphi \wedge \tau)$ is a continuous function on $\tau$ and we have 
\[ \int_{X^{\mathrm{NA}}} \psi \int \chi \mathcal{D}_\varphi = \int_\mathbb{R} d\tau ~ \chi (\tau) \frac{d}{d\tau} \int_{X^{\mathrm{NA}}} \psi \mathrm{MA} (\varphi \wedge \tau) + \psi (v_{\mathrm{triv}}) \int_\mathbb{R} \chi \DHm_\varphi, \]
where we identify $\frac{d}{d\tau} \int_{X^{\mathrm{NA}}} \psi \mathrm{MA} (\varphi \wedge \tau)$ with the distributional derivative. 
\end{prop}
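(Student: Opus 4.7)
The plan is to reduce the identity to the defining formula (\ref{moment measure for interval}) on half-open intervals, extend by linearity to step functions, and then pass to smooth $\chi$ by Riemann-sum approximation, interpreting the resulting Stieltjes integral as the distributional derivative.

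First, I would establish continuity of $F(\tau) := \int_{X^{\mathrm{NA}}} \psi \, \mathrm{MA}(\varphi \wedge \tau)$ in $\tau$. The key ingredient is the Lipschitz bound
\[ \varphi \wedge \tau \le \varphi \wedge \tau' \le \varphi \wedge \tau + (\tau' - \tau) \qquad (\tau \le \tau'), \]
obtained from the pointwise inequality $\min(\varphi, \tau') \le \min(\varphi, \tau) + (\tau' - \tau)$ together with monotonicity of the psh envelope $P$ and the identity $P(f + c) = P(f) + c$. This gives uniform ($d_\infty$) continuity of $\tau \mapsto \varphi \wedge \tau$, hence strong continuity in $\E^1(X, L)$. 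Continuity of $F$ then follows for $\psi \in C^0(X^{\mathrm{NA}})$ from weak continuity of $\mathrm{MA}$ along $d_1$-convergent nets, and for $\psi \in \E^1(X, L)$ from the continuity of $\mathrm{MA}\colon \E^1(X, L) \to \mathcal{M}^1(X^{\mathrm{NA}})$ in the strong topologies combined with the canonical $\mathcal{M}^1 \times \E^1$ pairing.

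Next, integrating $\psi$ against (\ref{moment measure for interval}) yields
\[ \int_{X^{\mathrm{NA}}} \psi \int 1_{[\tau', \tau)} \mathcal{D}_\varphi = \bigl( F(\tau) - F(\tau') \bigr) + \psi(v_{\mathrm{triv}}) \int_{[\tau', \tau)} \DHm_\varphi, \]
and linearity in $\chi$ (property (2) of the moment measure) extends this to any step function $\chi = \sum_i a_i \, 1_{[\tau'_i, \tau_i)}$, giving
\[ \int_{X^{\mathrm{NA}}} \psi \int \chi \mathcal{D}_\varphi = \int_\mathbb{R} \chi \, d_\tau F + \psi(v_{\mathrm{triv}}) \int_\mathbb{R} \chi \, \DHm_\varphi, \]
where $d_\tau F$ denotes the finite signed Radon measure on $\mathbb{R}$ determined by $d_\tau F([\tau', \tau)) = F(\tau) - F(\tau')$ (well-defined because $F$ is continuous and $|F(\tau) - F(\tau')|$ is dominated by the corresponding variation of $\mathrm{MA}(\varphi \wedge \tau)$ on $X^{\mathrm{NA}}$, itself of total variation $(e^L)$). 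For smooth compactly supported $\chi$ I would approximate by Riemann step functions $\chi_N \to \chi$ uniformly on a fixed compact. The right-hand side converges since $d_\tau F$ and $\DHm_\varphi$ are finite measures; for the left-hand side, each positive/negative part of $\int \chi_N \mathcal{D}_\varphi$ is dominated as a measure by $\sup|\chi| \cdot \mathrm{MA}(\varphi) \in \mathcal{M}^1(X^{\mathrm{NA}})$, and the tail estimate $\|\int (\chi - \chi_N) \mathcal{D}_\varphi\| \le \sup|\chi - \chi_N| \cdot (e^L) \to 0$ combined with this domination gives the limit under the $\E^1$-pairing by dominated convergence.

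Finally, since $F$ is continuous and of locally bounded variation, integration by parts identifies $\int_\mathbb{R} \chi \, d_\tau F$ with $-\int_\mathbb{R} F \, \chi' \, d\tau$, which by definition is the distributional pairing $\int_\mathbb{R} \chi(\tau) \, \frac{d}{d\tau} F(\tau) \, d\tau$, completing the formula. The main obstacle is the case $\psi \in \E^1(X, L)$: one must ensure the $\mathcal{M}^1$-domination of the moment measures $\int \chi_N \mathcal{D}_\varphi$ and the validity of the $\mathcal{M}^1 \times \E^1$-pairing under the relevant convergences, which I would handle via the Lipschitz bound above together with the strong continuity of $\mathrm{MA}$ on $\E^1(X, L)$ quoted in the excerpt.
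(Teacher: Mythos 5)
Your proof is correct, but it takes a genuinely different route from the paper's. The paper first establishes the continuity of $\tau \mapsto \int_{X^{\mathrm{NA}}} \psi\, \mathrm{MA}(\varphi\wedge\tau)$ and a $\tau$-uniform bound on it via the $I$-functional, then reduces to $\varphi=\varphi_{(\mathcal{X},\mathcal{L})}\in\nH(X,L)$ (and, for case (ii), to $\psi\in\nH(X,L)$) by countable decreasing regularization and dominated convergence, and finally verifies the identity explicitly using Proposition \ref{MA of wedge}, where $\mathrm{MA}(\varphi\wedge\tau)$ is a finite sum of Dirac masses weighted by partial Duistermaat--Heckman masses. You instead stay at the level of general $\varphi\in\E^1(X,L)$: you pair $\psi$ with the defining formula (\ref{moment measure for interval}), extend to step functions by linearity, pass to smooth $\chi$ by uniform approximation with the domination $\int(\chi-\chi_N)_\pm\,\mathcal{D}_\varphi\le\sup|\chi-\chi_N|\cdot\mathrm{MA}(\varphi)$, and identify the resulting Stieltjes integral with the distributional pairing by integration by parts. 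Your route makes it transparent that the formula is the definition of the moment measure unwound through the construction, and it avoids the double regularization in $\varphi$ and $\psi$; the paper's route has the side benefit of exhibiting the completely explicit formula in the test-configuration case, which it reuses.

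Two small repairs. First, your Lipschitz bound $\varphi\wedge\tau\le\varphi\wedge\tau'\le\varphi\wedge\tau+(\tau'-\tau)$ should not be derived from the envelope operator $P$, whose good behaviour is only guaranteed under the continuity of envelopes, which this part of the paper deliberately avoids; instead note that $\varphi\wedge\tau'-(\tau'-\tau)$ is psh and bounded above by $\min\{\varphi,\tau\}$, so the universal property of the rooftop gives $\varphi\wedge\tau'-(\tau'-\tau)\le\varphi\wedge\tau$ directly. Second, your justification that $F(\tau)=\int_{X^{\mathrm{NA}}}\psi\,\mathrm{MA}(\varphi\wedge\tau)$ has locally bounded variation, via the total variation $(e^L)$ of the Monge--Amp\`ere measures, only works for bounded $\psi$. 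For $\psi\in\E^1(X,L)$ use instead that
\[ F(\tau)-F(\tau') = \int_{X^{\mathrm{NA}}} \bigl(\psi-\psi(v_{\mathrm{triv}})\bigr) \int 1_{[\tau',\tau)}\mathcal{D}_\varphi \le 0 \qquad (\tau'\le\tau), \]
since $\psi\le\sup\psi=\psi(v_{\mathrm{triv}})$ and the interval moment measure is non-negative with total mass $\int_{[\tau',\tau)}\DHm_\varphi$; thus $F$ is monotone decreasing and bounded (by the paper's estimate (\ref{uniform bound})), hence of bounded variation, and $d_\tau F$ is a genuine finite signed measure. With these adjustments your argument goes through.
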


The claim includes that $\psi \in \E^1 (X, L)$ is integrable with respect to $\int \chi \mathcal{D}_\varphi$. 

\begin{proof}
We firstly note that the integrations make sense. 
The left hand side makes sense for any usc function $\psi$. 
Indeed, usc function is Borel measurable and is bounded from above by the compactness of $X^{\mathrm{NA}}$, so that we can define the integration by the integration of non-negative Borel measurable function 
\[ \int_{X^{\mathrm{NA}}} \psi \int \chi \mathcal{D}_\varphi := - \int_{X^{\mathrm{NA}}} (\sup \psi - \psi) \int \chi \mathcal{D}_\varphi + \sup \psi \int_\mathbb{R} \chi \DHm_\varphi, \]
though it may take value $-\infty$ ($\psi$ may be non-integrable). 

As for the right hand side, we note $\int_{X^{\mathrm{NA}}} \psi \mathrm{MA} (\varphi \wedge \tau)$ is continuous on $\tau$ thanks to Proposition \ref{strong convergence of rooftops}. 
In particular, $-\chi' (\tau) \int_{X^{\mathrm{NA}}} \psi \mathrm{MA} (\varphi \wedge \tau)$ is integrable with respect to $d\tau$, hence the distributional derivative makes sense. 

We check the right hand side is continuous along decreasing \textit{sequences}, applying the bounded convergence theorem. 
Recall for any convergent decreasing net $\varphi_i \searrow \varphi \in \E^1 (X, L)$, we have 
\[ \int_{X^{\mathrm{NA}}} \psi \mathrm{MA} (\varphi_i \wedge \tau) \to \int_{X^{\mathrm{NA}}} \psi \mathrm{MA} (\varphi \wedge \tau) \] 
for each $\tau \in \mathbb{R}$. 
Since the support of $-\chi'$ is compact, it suffices to get a uniform bound  
\[ |\int_{X^{\mathrm{NA}}} \psi \mathrm{MA} (\varphi_i \wedge \tau)| \le C \]
independent of $\tau$ and $i$. 

When $\psi \in C^0 (X^{\mathrm{NA}})$, we have 
\[ |\int_{X^{\mathrm{NA}}} \psi \mathrm{MA} (\varphi \wedge \tau)| \le (e^L) \cdot \sup |\psi|. \]
Suppose $\psi \in \E^1 (X, L)$. 
Then by \cite[Lemma 5.28]{BJ3} (cf. \cite[Lemma 3.23]{BJ1}), we have 
\begin{align*} 
\Big{|} \int_{X^{\mathrm{NA}}} \psi \mathrm{MA} (\varphi \wedge \tau) - \psi (v_{\mathrm{triv}}) (e^L) \Big{|} 
&= \Big{|} \int_{X^{\mathrm{NA}}} \psi (\mathrm{MA} (\varphi \wedge \tau) - \mathrm{MA} (0)) \Big{|} 
\\
&\le C_n I (\varphi \wedge \tau)^{1/2} \max \{ I (\psi), I (\varphi \wedge \tau) \}^{1/2}.
\end{align*}
Here we recall 
\[ I (\varphi) := \int_{X^{\mathrm{NA}}} (\varphi (v_{\mathrm{triv}}) - \varphi) \mathrm{MA} (\varphi). \]
We have a uniform bound $I (\varphi \wedge \tau) \le C_\varphi$ independent of $\tau$: for $\tau > \sup \varphi$, we have $I (\varphi \wedge \tau) = I (\varphi)$ for $\tau > \sup \varphi$, and for $\tau \le \sup \varphi$, using \cite[Corollary 5.27]{BJ3} and the monotonicity of $E$, we get 
\begin{align*} 
I (\varphi \wedge \tau) 
&\le C_n (E (\frac{\varphi \wedge \tau}{2}) - \frac{E (\varphi \wedge \tau)}{2}) 
\\
&\le C_n (E (\frac{\tau}{2}) - \frac{E (\varphi - \sup \varphi + \tau)}{2}) = - \frac{C_n}{2} E (\varphi -\sup \varphi) =: C_\varphi. 
\end{align*}
Along $\varphi_i \searrow \varphi$, we have a uniform bound $C_\varphi \le C$. 
Therefore, we get 
\begin{equation} 
\label{uniform bound}
\Big{|} \int_{X^{\mathrm{NA}}} \psi \mathrm{MA} (\varphi \wedge \tau) \Big{|} \le |\psi (v_{\mathrm{triv}})| (e^L) + C_n \max \{ I (\psi), C \}^{1/2} C^{1/2} 
\end{equation}
as desired. 

(i) We firstly assume $\psi \in C^0 (X^{\mathrm{NA}})$. 
In this case, we already know the left hand side is continuous along decreasing nets $\varphi_i \searrow \varphi$, so the problem is reduced to the case $\varphi = \varphi_{(\mathcal{X}, \mathcal{L})} \in \nH (X, L)$, thanks to the countable regularization \cite[Theorem 9.11]{BJ3}. 

We assume $\varphi = \varphi_{(\mathcal{X}, \mathcal{L})} \in \nH (X, L)$. 
Since 
\begin{align*} 
\int_{X^{\mathrm{NA}}} \psi \int \chi \mathcal{D}_\varphi 
&= \sum_{E \subset \mathcal{X}_0} \mathrm{ord}_E \mathcal{X}_0 \int_{\mathbb{R}} \chi \DHm_{(E, \mathcal{L}|_E)} \psi (v_E) 
\\
&= \sum_{E \subset \mathcal{X}_0} \mathrm{ord}_E \mathcal{X}_0 (\psi (v_E) -\psi (v_{\mathrm{triv}})) \int_{\mathbb{R}} \chi \DHm_{(E, \mathcal{L}|_E)} + \psi (v_{\mathrm{triv}}) \int_\mathbb{R} \chi \DHm_\varphi, 
\end{align*}
it suffices to compare 
\[ \sum_{E \subset \mathcal{X}_0} \mathrm{ord}_E \mathcal{X}_0 (\psi (v_E) -\psi (v_{\mathrm{triv}})) \int_{\mathbb{R}} \chi \DHm_{(E, \mathcal{L}|_E)} \] 
with 
\[ \int_\mathbb{R} d\tau ~ \chi (\tau) \frac{d}{d\tau} \int_{X^{\mathrm{NA}}} \psi \mathrm{MA} (\varphi \wedge \tau). \]

By Proposition \ref{MA of wedge}, we have 
\[ \mathrm{MA} (\varphi \wedge \tau) = \sum_{E \subset \mathcal{X}_0} \mathrm{ord}_E \mathcal{X}_0 \int_{(-\infty, \tau)} \DHm_{(E, \mathcal{L}|_E)}. \delta_{v_E} + \int_{[\tau, \infty)} \DHm_{(\mathcal{X}, \mathcal{L})}. \delta_{v_{\mathrm{triv}}}. \]
Then we compute 
\begin{align*} 
\int_{X^{\mathrm{NA}}} \psi \mathrm{MA} (\varphi \wedge \tau) 
&= \sum_{E \subset \mathcal{X}_0} \mathrm{ord}_E \mathcal{X}_0 \int_{(-\infty, \tau)} \DHm_{(E, \mathcal{L}|_E)} \psi (v_E) + \int_{[\tau, \infty)} \DHm_{(\mathcal{X}, \mathcal{L})} \psi (v_{\mathrm{triv}}) 
\\
&=\sum_{E \subset \mathcal{X}_0} \mathrm{ord}_E \mathcal{X}_0 (\psi (v_E) - \psi (v_{\mathrm{triv}})) \int_{(-\infty, \tau)} \DHm_{(E, \mathcal{L}|_E)} 
\\
&\qquad + \int_\mathbb{R} \DHm_{(\mathcal{X}, \mathcal{L})} \psi (v_{\mathrm{triv}}). 
\end{align*}
Now the claim follows by the following general identity 
\[ \int_\mathbb{R} \chi (\tau) \frac{d}{d\tau} \Big{(} \int_{(-\infty, \tau)} d\mu \Big{)} d\tau = \int_\mathbb{R} \chi d\mu \]
for any finite Borel measure $\mu$ on $\mathbb{R}$ which can be written as a sum of an absolutely continuous measure and finitely many Dirac masses. 

(ii) Now we show the case $\psi \in \E^1 (X, L)$. 
Take a regularization $\{ \psi_i \}_{i \in I} \subset \nH (X, L)$ so that $\psi_i \searrow \psi$. 
Thanks to the monotone convergence theorem (see Proposition \ref{monotone convergence for net}), we have 
\[ \int_{X^{\mathrm{NA}}} \psi_i \int \chi \mathcal{D}_\varphi \searrow \int_{X^{\mathrm{NA}}} \psi \int \chi \mathcal{D}_\varphi, \]
so that the left hand side is continuous along decreasing nets $\psi_i \searrow \psi$. 
(At this point, the limit may be $-\infty$. ) 
Since we already show the claim for $\psi_i \in \nH (X, L)$, it suffices show the right hand side is continuous along decreasing \textit{sequences} $\psi_i \searrow \psi$, thanks to the countable regularization \cite[Theorem 9.11]{BJ3}. 

Similarly, we have 
\[ \int_{X^{\mathrm{NA}}} \psi_i \mathrm{MA} (\varphi \wedge \tau) \searrow \int_{X^{\mathrm{NA}}} \psi \mathrm{MA} (\varphi \wedge \tau) \]
for each $\tau \in \mathbb{R}$. 
By the uniform bound (\ref{uniform bound}) and the convergences $\psi_i (v_{\mathrm{triv}}) \to \psi (v_{\mathrm{triv}})$, $I (\psi_i) \to I (\psi)$, we get 
\[ \Big{|} \int_{X^{\mathrm{NA}}} \psi_i \mathrm{MA} (\varphi \wedge \tau) \Big{|} \le C \]
independent of $\tau \in \mathbb{R}$ and $i$. 
By the bounded convergence theorem, we get 
\[ \int_\mathbb{R} d \tau ~ \chi (\tau) \frac{d}{d\tau} \int_{X^{\mathrm{NA}}} \psi_i \mathrm{MA} (\varphi \wedge \tau) \to \int_\mathbb{R} d\tau~ \chi (\tau) \frac{d}{d\tau} \int_{X^{\mathrm{NA}}} \psi \mathrm{MA} (\varphi \wedge \tau), \]
which proves the continuity of the right hand side along sequences $\psi_i \searrow \psi$. 
\end{proof}

\section{Non-archimedean $\mu$-entropy on $\E^{\exp} (X, L)$}

\subsection{The metric space $\E^{\exp} (X, L)$}
\label{metric space Eexp}

\subsubsection{Strong topology, $d_1$-topology and $d_p$-topology}
\label{Strong topology, d1-topology and dp-topology}

As in \cite[Theorem 3.2]{BJ2}, for $\varphi, \varphi' \in C^0 \cap \PSH (X, L)$, we consider the \textit{relative spectral measure} 
\begin{equation} 
\DHm_{\varphi, \varphi'} := \lim_{m \to \infty} \frac{1}{m^n} \sum_{i=1}^{N_m} \delta_{\lambda_i^\varphi (\bm{s})/m - \lambda_i^{\varphi'} (\bm{s})/m}, 
\end{equation}
using a codiagonal basis $\bm{s}$ for $\varphi, \varphi'$. 
Since $\mathcal{F}_\varphi^\lambda R_m/\mathcal{F}_\varphi^{\lambda+} R_m = \langle s_i ~|~ \lambda_i^\varphi (\bm{s}) = \lambda \rangle$, we have 
\[ \DHm_{\varphi, \varphi_{\mathrm{triv}}} = \DHm_\varphi \]
for $\varphi \in C^0 \cap \PSH (X, L)$. 

We firstly review the $L^p$-distance on $\nH (X, L)$ introduced in \cite{BJ2}: for $1 \le p < \infty$, we put 
\begin{equation} 
d_p (\varphi, \varphi') := \Big{(} \int_{\mathbb{R}} |t|^p \DHm_{\varphi, \varphi'} \Big{)}^{1/p}. 
\end{equation}
As observed in \cite{BJ2}, this defines a distance on $\nH (X, L)$ (compare Proposition \ref{dexp gives a distance on H}). 

\begin{lem}
For every $\varphi, \varphi' \in \nH (X, L)$, we have 
\begin{align*} 
(e^L)^{-1} d_1 (\varphi, \varphi') 
&\le (e^L)^{-1/p} d_p (\varphi, \varphi'),
\\ 
d_r (\varphi, \varphi')^{r (q-p)} 
&\le d_p (\varphi, \varphi')^{p (q-r)} d_q (\varphi, \varphi')^{q (r-p)} 
\end{align*}
for $1 \le p \le r \le q < \infty$. 
\end{lem}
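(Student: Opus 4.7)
The plan is to recognize both inequalities as standard $L^p$-type inequalities for the relative spectral measure $\DHm_{\varphi,\varphi'}$ on $\mathbb{R}$, so that the proof reduces to applying H\"older's inequality. The only fact I need about the underlying measure is its total mass:
\[
\int_{\mathbb{R}} \DHm_{\varphi,\varphi'} = (e^L),
\]
which follows directly from the defining formula
$\DHm_{\varphi,\varphi'} = \lim_{m\to\infty} m^{-n}\sum_{i=1}^{N_m} \delta_{(\lambda_i^\varphi(\bm{s})-\lambda_i^{\varphi'}(\bm{s}))/m}$
together with asymptotic Riemann--Roch $N_m = m^n (e^L) + O(m^{n-1})$. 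With this in hand, both inequalities are purely measure-theoretic.

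For the first inequality, I would apply H\"older's inequality to the integrand $|t| = |t|\cdot 1$ with exponents $p$ and $p/(p-1)$:
\[
\int_{\mathbb{R}} |t|\,\DHm_{\varphi,\varphi'} \;\le\; \Bigl(\int_{\mathbb{R}} |t|^p\,\DHm_{\varphi,\varphi'}\Bigr)^{1/p}\Bigl(\int_{\mathbb{R}} 1\,\DHm_{\varphi,\varphi'}\Bigr)^{1-1/p} = d_p(\varphi,\varphi')\,(e^L)^{1-1/p}.
\]
Dividing both sides by $(e^L)$ gives $(e^L)^{-1} d_1(\varphi,\varphi') \le (e^L)^{-1/p} d_p(\varphi,\varphi')$, exactly as claimed.

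For the second inequality, I would use the Lyapunov (log-convexity) form of H\"older. Writing $r = (1-\theta)p + \theta q$ with $\theta = (r-p)/(q-p) \in [0,1]$, factor $|t|^r = |t|^{(1-\theta)p}\cdot |t|^{\theta q}$ and apply H\"older with conjugate exponents $1/(1-\theta)$ and $1/\theta$:
\[
\int_{\mathbb{R}} |t|^r\,\DHm_{\varphi,\varphi'} \;\le\; \Bigl(\int_{\mathbb{R}} |t|^p\,\DHm_{\varphi,\varphi'}\Bigr)^{1-\theta}\Bigl(\int_{\mathbb{R}} |t|^q\,\DHm_{\varphi,\varphi'}\Bigr)^{\theta}.
\]
Substituting the definitions gives $d_r^r \le d_p^{p(q-r)/(q-p)}\,d_q^{q(r-p)/(q-p)}$, and raising to the power $q-p$ yields the stated inequality $d_r^{r(q-p)} \le d_p^{p(q-r)}\,d_q^{q(r-p)}$.

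Strictly speaking there is no hard step: once one accepts the total-mass identity, both assertions are immediate from H\"older. The only minor point worth flagging is that $\DHm_{\varphi,\varphi'}$ is a priori only defined as a weak limit of atomic measures along codiagonal bases (via Lemma~\ref{diagonal lemma} and the construction in \cite{BJ2}), so one should briefly justify that the H\"older inequality passes to the limit --- but since we are dealing with finite compactly supported measures and continuous integrands $|t|^p$, this is automatic by weak continuity of polynomial moments on a uniformly bounded family of measures.
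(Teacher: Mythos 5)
Your proof is correct and follows essentially the same route as the paper: both inequalities are obtained from H\"older's inequality applied to the relative spectral measure, with the second one via the Lyapunov interpolation $r=(1-\theta)p+\theta q$ (your $\theta$ is the paper's $1-\alpha$), and the total mass $(e^L)$ supplying the constant in the first. Your extra remarks on the total-mass identity and on passing H\"older to the weak limit are sound but not needed beyond what the paper records.
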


\begin{proof}
These are consequences of H\"older's inequality: as for the second inqeuality, we put $\alpha := \frac{q-r}{q-p}$, then $p \alpha + q (1-\alpha) = r$, so 
\[ \int_{\mathbb{R}} |t|^r \DHm_{\varphi, \varphi'} \le \Big{(} \int_{\mathbb{R}} |t|^p \DHm_{\varphi, \varphi'} \Big{)}^{1/p \cdot p \alpha} \Big{(} \int_{\mathbb{R}} |t|^q \DHm_{\varphi, \varphi'} \Big{)}^{1/q \cdot q (1-\alpha)}. \]
\end{proof}

For general $\varphi, \varphi' \in \E^1 (X, L)$, take a regularization $\varphi_i \searrow \varphi, \varphi'_i \searrow \varphi$ so that $\varphi_i, \varphi' \in \nH (X, L)$ and put 
\[ d_1 (\varphi, \varphi') := \lim_{i \to \infty} d_1 (\varphi_i, \varphi'_i). \]
The limit is independent of the choice of the regularization as shown in the proof of \cite[Theorem 5.4]{BJ4}. 
We will observe the same construction for another functional $d_{\exp}$ in Proposition \ref{dexp well-defined}. 
In \cite{BJ4}, the distance $d_1$ is defined in a slightly different way: 
\[ d_1 (\varphi, \varphi') = \inf \{ (E (\varphi) - E (\tilde{\varphi})) + (E (\varphi') - E (\tilde{\varphi})) ~|~ \varphi, \varphi' \ge \tilde{\varphi} \in \nH (X, L) \}, \]
which is modeled on the formula 
\[ d_1 (\varphi, \varphi') = (E (\varphi) - E (\varphi \wedge \varphi')) + (E (\varphi') - E (\varphi \wedge \varphi')) \]
under the assumption on the existence of $\varphi \wedge \varphi'$ or the continuity of envelopes. 
The limit $d_1$ obviously defines a peudo-distance on $\E^1 (X, L)$: $d_1$ satisfies all the axiom of distance except for $d_1 (\varphi, \varphi') \Rightarrow \varphi = \varphi'$. 

To check that $d_1$ is a distance, we compare $d_1$ with the following quasi-distance $\bar{I}$ from \cite{BJ3}: for $\varphi, \varphi' \in \E^1 (X, L)$, we put 
\begin{align} 
I (\varphi, \varphi') 
&:= \int_{X^{\mathrm{NA}}} (\varphi - \varphi') (\mathrm{MA} (\varphi) - \mathrm{MA} (\varphi')), 
\\
\bar{I} (\varphi, \varphi') 
&:= I (\varphi, \varphi') + (e^L) |\sup \varphi - \sup \varphi'|. 
\end{align}
We note our $d_1$ and $\bar{I}$ are $(e^L)$ times of those in \cite{BJ3, BJ4} due to our normalization of $\mathrm{MA}$ and $\DHm$. 
We have $\varphi = \varphi'$ iff $\bar{I} (\varphi, \varphi') = 0$ by \cite[Corollary 7.4]{BJ3}. 
It is proved in \cite[Theorem 9.4]{BJ3} that the strong convergence $\varphi_i \to \varphi$ in $\E^1 (X, L)$ is equivalent to $\bar{I} (\varphi_i, \varphi) \to 0$. 

\begin{prop}[Lemma 5.5 in \cite{BJ4}]
\label{Ibar and d1}
We have a positive constant $C_n$ depending only on the dimension $n$ of $X$ such that 
\[ C_n^{-1} \bar{I} (\varphi, \varphi') \le d_1 (\varphi, \varphi') \le C_n \bar{I} (\varphi, \varphi') \]
for every $\varphi, \varphi' \in \E^1 (X, L)$. 
\end{prop}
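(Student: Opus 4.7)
The plan is to first reduce the two-sided comparison to the Fubini--Study case $\varphi, \varphi' \in \nH(X, L)$. Both functionals are continuous along decreasing regularising nets in $\E^1(X, L)$: for $d_1$ this is the defining property of the extension described just above the statement, while for $\bar{I}$ it follows from the strong continuity of $\mathrm{MA} \colon \E^1(X,L) \to \mathcal{M}^1(X^{\mathrm{NA}})$ combined with $\sup \varphi_i \searrow \sup \varphi$. Hence it suffices to prove the inequality on $\nH(X, L)$: pick regularisations $\varphi_i \searrow \varphi$, $\varphi_i' \searrow \varphi'$ in $\nH(X, L)$ and pass to the limit in the finite-level estimate.

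For $\varphi, \varphi' \in \nH(X, L)$ I would use a codiagonal basis $\bm{s} = (s_1, \ldots, s_{N_m})$ of $R_m$ and work with the spectral increments $\Delta_i := (\lambda_i^\varphi(\bm{s}) - \lambda_i^{\varphi'}(\bm{s}))/m$. The relative spectral measure gives
\[
d_1(\varphi, \varphi') = \lim_{m \to \infty} \frac{1}{m^n} \sum_i |\Delta_i|,
\]
while on a common normal test configuration $\tilde{\mathcal{X}}$ dominating both, an intersection-theoretic polarisation expresses $I(\varphi, \varphi')$ as $\lim_m m^{-n} \sum_{i,j} a_{ij} \Delta_i \Delta_j$ with coefficients $a_{ij}$ encoding the mixed Monge--Amp\`ere pairings and bounded uniformly by a dimensional constant. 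The upper bound $d_1 \le C_n \bar{I}$ then follows by combining a Cauchy--Schwarz-type estimate $\sum |\Delta_i| \le (\sum \Delta_i^2)^{1/2} N_m^{1/2}$ with the Brunn--Minkowski concavity of $F_\varphi^{1/n}$ established in Proposition \ref{concavity}, which upgrades the $\ell^2$-type control coming from $I$ to the $\ell^1$ estimate defining $d_1$; the $|\sup \varphi - \sup \varphi'|$ contribution is absorbed via the identity $\sup \varphi - \sup \varphi' = \lim_m m^{-1} \max_i (\lambda_i^\varphi(\bm{s}) - \lambda_i^{\varphi'}(\bm{s}))$. For the reverse bound $\bar{I} \le C_n d_1$, Lemma \ref{diagonal lemma} lets us compare codiagonal increments to merely well-ordered ones, and the softer estimate $I(\varphi, \varphi') \le (\sup|\varphi - \varphi'|) \cdot (e^L)$ is dominated by $\max_i |\Delta_i|$, itself controlled by $d_1$ using Brunn--Minkowski concavity to interpolate between $\ell^1$ and $\ell^\infty$ data.

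The main obstacle is keeping the dimensional constant $C_n$ uniform across all pairs, given the asymmetry between the purely combinatorial $d_1$ (an $\ell^1$-norm of a finite spectral array) and the analytically defined $\bar{I}$ (built from mixed Monge--Amp\`ere pairings in which $\varphi, \varphi'$ appear both as integrands and as currents). A cleaner route, bypassing the bookkeeping on $\tilde{\mathcal{X}}$, is to exploit the tomographic identity of Proposition \ref{MA of wedge}: writing both $d_1(\varphi, \varphi')$ and $I(\varphi, \varphi')$ as single-variable integrals in $\tau$ involving $\mathrm{MA}(\varphi \wedge \tau)$ and $\mathrm{MA}(\varphi' \wedge \tau)$ reduces the two-sided comparison to a one-parameter inequality between monotone functions $F_\varphi, F_{\varphi'}$ on $\mathbb{R}$, for which the dimensional constant emerges purely from the exponent $1/n$ in the Brunn--Minkowski inequality.
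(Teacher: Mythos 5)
The paper gives no proof of this statement; it is quoted verbatim from Lemma 5.5 of \cite{BJ4}, whose argument runs through the standard $I$--$J$ functional inequalities: for $\psi \le \varphi$ one expands $E(\varphi) - E(\psi)$ as an average of mixed terms $\int (\varphi - \psi)\, \mathrm{MA}(\varphi^{j}\psi^{n-j})$ and compares these, up to a factor of order $n+1$, with $\int(\varphi-\psi)(\mathrm{MA}(\varphi)-\mathrm{MA}(\psi))$; the rooftop $\varphi \wedge \varphi'$ reduces the general case to the ordered one, and the $|\sup\varphi - \sup\varphi'|$ term handles normalization. Your reduction to $\nH(X,L)$ by continuity along decreasing nets is fine, but the core of your argument has two genuine gaps.

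First, for the bound $d_1 \le C_n \bar I$, you posit an expression $I(\varphi,\varphi') = \lim_m m^{-n}\sum_{i,j} a_{ij}\Delta_i\Delta_j$ with uniformly bounded coefficients. No such spectral quadratic-form formula for the energy pairing is established in the paper or in the literature it cites; $I$ is defined through mixed Monge--Amp\`ere pairings on a common model and does not reduce to the jumping numbers of a single codiagonal basis. Even granting it, your Cauchy--Schwarz step $\sum_i|\Delta_i| \le (\sum_i\Delta_i^2)^{1/2}N_m^{1/2}$ only yields $d_1 \lesssim d_2$, whereas the proposition asserts $\bar I$ is comparable to $d_1$ itself, not to $d_2^2$; the toric example in the discussion of Conjecture \ref{properness conjecture for Calabi energy} (where $d_1(\varphi_{q_n},0)\to 0$ while $d_2(\varphi_{q_n},0)$ stays bounded below) shows $d_1$ and $d_2$ are genuinely inequivalent, so this route cannot close.

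Second, and more seriously, for the reverse bound $\bar I \le C_n d_1$ you invoke $I(\varphi,\varphi') \le (\sup|\varphi-\varphi'|)\cdot(e^L)$ and then claim $\max_i|\Delta_i|$ is ``controlled by $d_1$ using Brunn--Minkowski concavity to interpolate between $\ell^1$ and $\ell^\infty$ data.'' This is false: $\sup|\varphi-\varphi'|$ is the $d_\infty$ distance, and no concavity statement about $F_\varphi^{1/n}$ bounds an $\ell^\infty$ quantity by an $\ell^1$ quantity. The same toric family gives sequences with $d_1 \to 0$ but $\sup|\varphi_{q_n}|$ bounded away from $0$, so the softer estimate you rely on is genuinely weaker than what is needed. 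The correct mechanism is the one-sided comparison of $E(\varphi)-E(\psi)$ with the two pairings $\int(\varphi-\psi)\mathrm{MA}(\varphi)$ and $\int(\varphi-\psi)\mathrm{MA}(\psi)$ for $\psi \le \varphi$ (integration by parts in the energy pairing), which is where the dimensional constant actually originates; Brunn--Minkowski plays no role here.
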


It follows that $d_1$ is a distance on $\E^1 (X, L)$, and the $d_1$-topology is equivalent to the strong topology.

\subsubsection{Rooftops in finite energy class}

\begin{prop}
Suppose the rooftop $\varphi \wedge \psi$ exists in $C^0 \cap \PSH (X, L)$ for every $\varphi, \varphi' \in \nH (X, L)$. 
Then for $\varphi, \varphi' \in \E^1 (X, L)$, the rooftop $\varphi \wedge \varphi'$ exists in $\E^1 (X, L)$. 
\end{prop}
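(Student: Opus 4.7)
The plan is to regularize $\varphi, \varphi' \in \E^1(X, L)$ by decreasing nets $\varphi_i \searrow \varphi$ and $\varphi'_j \searrow \varphi'$ with $\varphi_i, \varphi'_j \in \nH(X, L)$. The hypothesis supplies the rooftops $\varphi_i \wedge \varphi'_j \in C^0 \cap \PSH(X, L) \subset \E^1(X, L)$, and monotonicity of $\wedge$ in each argument makes $\{\varphi_i \wedge \varphi'_j\}_{(i,j) \in I \times J}$ a decreasing doubly-indexed net. I will apply Proposition \ref{decreasing limit of rooftop} to pass the rooftop operation through the limit, and then upgrade the resulting object to the finite-energy class via an energy estimate.

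First I will treat the mixed case: for any $\psi \in \nH(X, L)$ the rooftop $\varphi \wedge \psi$ exists in $\E^1(X, L)$. Since $\psi$ is continuous hence bounded on $X^{\mathrm{NA}}$, the constant $c := \min\{0, \inf\psi - \sup\varphi\}$ is finite, and $\varphi + c \in \PSH(X, L)$ satisfies $\varphi + c \le \varphi_i$ (as $c \le 0$) and $\varphi + c \le \sup\varphi + c \le \inf\psi \le \psi$. Thus $\varphi + c \le \varphi_i \wedge \psi$ uniformly in $i$; Proposition \ref{decreasing limit of rooftop} yields $\varphi \wedge \psi \in \PSH(X, L)$, and the bound $E(\varphi \wedge \psi) \ge E(\varphi + c) = E(\varphi) + c(e^L) > -\infty$ places it in $\E^1(X, L)$. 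Applying this with $\psi = \varphi'_j$ produces a decreasing net $\{\varphi \wedge \varphi'_j\}_{j \in J}$ in $\E^1(X, L)$ whose pointwise limit we wish to identify as $\varphi \wedge \varphi'$.

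The main obstacle is the passage $j \to \infty$: the shift constants $c_j := \min\{0, \inf\varphi'_j - \sup\varphi\}$ from the previous step may diverge to $-\infty$ whenever $\varphi'$ is unbounded below, so the naive pointwise lower bound degenerates. To overcome this, my plan is to invoke an energy estimate of the form
\[ E(u \wedge v) + E(\max\{u, v\}) \ge E(u) + E(v), \qquad u, v \in \E^1(X, L), \]
analogous to the archimedean Monge--Amp\`ere valuation identity and derivable from the tomographic decomposition of Proposition \ref{MA of wedge} together with the linearity of the non-archimedean Monge--Amp\`ere operator along the $L^1$--continuous extension. Since $\max(\varphi_i, \varphi'_j) \in \PSH(X, L)$ is bounded above by the constant $\max\{\sup\varphi_0, \sup\varphi'_0\}$, its energy is uniformly bounded above by some $C \in \mathbb{R}$, and the inequality yields a uniform lower bound
\[ E(\varphi_i \wedge \varphi'_j) \ge E(\varphi_i) + E(\varphi'_j) - C \ge E(\varphi) + E(\varphi') - C > -\infty. \]
Combined with Proposition \ref{concavity} (which identifies $\sup(\varphi_i \wedge \varphi'_j) = \sup\supp\DHm_{\varphi_i \wedge \varphi'_j}$ and so pins down $(\varphi_i \wedge \varphi'_j)(v_{\mathrm{triv}})$ from below) and the continuity of $E$ along decreasing nets, this forces the pointwise limit to remain finite at $v_{\mathrm{triv}}$ with finite energy. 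A final application of Proposition \ref{decreasing limit of rooftop} then delivers $\varphi \wedge \varphi' \in \E^1(X, L)$. The technical heart is therefore the non-archimedean energy identity above; should it prove elusive, an alternative but more restrictive route is to show $\{\varphi_i \wedge \varphi'_j\}$ is $d_1$-Cauchy via the Lipschitz property of $\wedge$ on $\nH(X, L)$ and appeal to completeness, which however would reintroduce the continuity-of-envelopes assumption through Theorem B of \cite{BJ4}.
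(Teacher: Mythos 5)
Your overall strategy (regularize, pass the rooftop through the decreasing limit via Proposition \ref{decreasing limit of rooftop}, and control the energy from below) is sound, and your treatment of the mixed case $\varphi \wedge \psi$ with $\psi \in \nH (X, L)$ is correct. However, the ``technical heart'' of your argument --- the inequality $E (u \wedge v) + E (\max \{ u, v \}) \ge E (u) + E (v)$ --- is asserted rather than proved, and the justification you point to does not apply: Proposition \ref{MA of wedge} computes $\mathrm{MA} (\varphi \wedge \tau)$ only for \emph{constant} $\tau$, so it says nothing about the rooftop of two general metrics, and ``linearity of the Monge--Amp\`ere operator'' is not a mechanism for producing energy inequalities of this valuative type. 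The inequality can in fact be established, but only by the filtration machinery the paper develops elsewhere: for $u, v \in \nH (X, L)$ one takes a codiagonal basis, uses $\mathcal{F}_{u \wedge v} = \mathcal{F}_u \cap \mathcal{F}_v$ (Proposition \ref{filtration of envelope}) to get $\lambda_i^{u \wedge v} = \min \{ \lambda_i^u, \lambda_i^v \}$, and compares $\mathcal{F}_{\max \{u, v\}}$ with the filtration generated by $\mathcal{F}_u + \mathcal{F}_v$ to bound $\lambda_i^{\max\{u,v\}}$ from below by $\max \{ \lambda_i^u, \lambda_i^v \}$; summing $\min + \max = \lambda_i^u + \lambda_i^v$ then gives the claim. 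Without some such argument your uniform lower bound on $E (\varphi_i \wedge \varphi'_j)$ is unsupported, and the proof as written has a gap.

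You also dismiss the $d_1$-Cauchy route for the wrong reason: it does \emph{not} reintroduce the continuity of envelopes. The same filtration identity $\mathcal{F}_{\varphi \wedge \varphi'} = \mathcal{F}_\varphi \cap \mathcal{F}_{\varphi'}$ yields the $1$-Lipschitz estimate $d_1 (\varphi \wedge \varphi', \varphi \wedge \varphi'') \le d_1 (\varphi', \varphi'')$ exactly as in Lemma \ref{dexp comparison for rooftop}, so $\{ \varphi_i \wedge \varphi'_j \}_{(i,j)}$ is a \emph{decreasing} $d_1$-Cauchy net; and while completeness of $(\E^1, d_1)$ for general Cauchy nets is equivalent to the continuity of envelopes, a decreasing Cauchy net converges pointwise to its limit, which lies in $\E^1 (X, L)$ by continuity of $E$ along decreasing nets --- no envelope hypothesis needed (compare Proposition \ref{convergence of decreasing sequence}). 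This is the route the paper actually takes, and it avoids the energy inequality altogether. I recommend either supplying the filtration proof of your energy inequality or switching to the Lipschitz--Cauchy argument.
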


To show the claim, we use the metric $d_1$ on $\E^1 (X, L)$. 
Now for $\varphi, \varphi', \varphi'' \in \nH (X, L)$, suppose $\varphi \wedge \varphi'$ and $\varphi \wedge \varphi''$ exists in $C^0 \cap \PSH (X, L)$, then using $\mathcal{F}_{\varphi \wedge \varphi'} = \mathcal{F}_\varphi \cap \mathcal{F}_{\varphi'}$ (see Proposition \ref{filtration of envelope}), we easily obtain 
\begin{equation} 
\label{d1 comparison for rooftop}
d_1 (\varphi \wedge \varphi', \varphi \wedge \varphi'') \le d_1 (\varphi', \varphi'') 
\end{equation}
in the same way as in Lemma \ref{dexp comparison for rooftop}. 
Then passing to the limit along decreasing nets, we get this inequality for general $\varphi, \varphi', \varphi'' \in \E^1 (X, L)$, under the assumption of the proposition. 

\begin{proof}
Take decreasing nets $\{ \varphi_i \}_{i \in I}, \{ \varphi'_j \}_{j \in J} \in \nH (X, L)$ pointwisely converging to $\varphi, \varphi' \in \E^1 (X, L)$, respectively. 
Then for $i, k \in I$ and $j, l \in J$, we have 
\begin{align*} 
d_1 (\varphi_i \wedge \varphi'_j, \varphi_k \wedge \varphi'_l) 
&\le d_1 (\varphi_i \wedge \varphi'_j, \varphi_i \wedge \varphi'_l) + d_1 (\varphi_i \wedge \varphi'_l, \varphi_k \wedge \varphi'_l) 
\\
&\le d_1 (\varphi'_j, \varphi'_l) + d_1 (\varphi_i, \varphi_k), 
\end{align*}
so that $\varphi_i \wedge \varphi'_j$ is a Cauchy net in $\E^1 (X, L)$. 
It follows that $\varphi_i \wedge \varphi'_j$ is a decreasing $d_1$-Cauchy net, so that it converges pointwisely to some $\varphi'' \in \E^1 (X, L)$. 
(We note the completeness for general Cauchy net is equivalent to the continuity of envelopes as proved in \cite{BJ4}, however, the completeness for decreasing Cauchy net does not need the continuity of envelopes. 
Compare Proposition \ref{convergence of decreasing sequence}. )
Then by Proposition \ref{decreasing limit of rooftop}, $\varphi''$ is the rooftop $\varphi \wedge \varphi'$. 
\end{proof}

\begin{prop}
\label{strong convergence of rooftops}
If $\varphi_i \to \varphi, \varphi'_j \to \varphi'$ strongly in $\E^1 (X, L)$ and $\varphi_i \wedge \varphi'_j, \varphi \wedge \varphi'$ exists in $\E^1 (X, L)$, then $\varphi_i \wedge \varphi'_j \to \varphi \wedge \varphi'$ strongly in $\E^1 (X, L)$. 
\end{prop}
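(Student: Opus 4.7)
The plan is to reduce everything to the metric $d_1$, which by Proposition \ref{Ibar and d1} induces precisely the strong topology on $\E^1(X,L)$. So the goal becomes showing $d_1(\varphi_i \wedge \varphi_j', \varphi \wedge \varphi') \to 0$ as $i, j \to \infty$ in the directed product.

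The pivotal ingredient is the contraction inequality
\[ d_1(\psi \wedge \chi_1, \psi \wedge \chi_2) \le d_1(\chi_1, \chi_2) \]
for $\psi, \chi_1, \chi_2 \in \E^1(X,L)$ whenever the rooftops involved exist. For $\psi, \chi_1, \chi_2 \in \nH(X,L)$ this is exactly (\ref{d1 comparison for rooftop}) and follows from Proposition \ref{filtration of envelope}, $\mathcal{F}_{\psi \wedge \chi} = \mathcal{F}_\psi \cap \mathcal{F}_\chi$: after picking a basis of $R_m$ codiagonal for $\chi_1, \chi_2$ and well-ordered in the sense of Lemma \ref{diagonal lemma}, intersecting with $\mathcal{F}_\psi$ can only contract the spectral parameter differences between $\chi_1$ and $\chi_2$, so the relative spectral measure, hence $d_1$, can only decrease. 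For general $\psi, \chi_1, \chi_2 \in \E^1(X,L)$ with all rooftops existing, I would pass to the limit along regularizations $\psi_\alpha \searrow \psi$, $\chi_{1,\beta} \searrow \chi_1$, $\chi_{2,\gamma} \searrow \chi_2$ in $\nH(X,L)$, using Proposition \ref{decreasing limit of rooftop} to identify the limits $\psi_\alpha \wedge \chi_{i,\cdot} \searrow \psi \wedge \chi_i$ and the continuity of $d_1$ along decreasing nets (i.e., $\bar I$-continuity from \cite{BJ3}).

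With the contraction inequality in hand, the proof is a two-step triangle estimate:
\[ d_1(\varphi_i \wedge \varphi_j', \varphi \wedge \varphi') \le d_1(\varphi_i \wedge \varphi_j', \varphi \wedge \varphi_j') + d_1(\varphi \wedge \varphi_j', \varphi \wedge \varphi') \le d_1(\varphi_i, \varphi) + d_1(\varphi_j', \varphi'), \]
which tends to $0$ by the strong (i.e., $d_1$-)convergence of $\varphi_i \to \varphi$ and $\varphi_j' \to \varphi'$.

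The main obstacle is the existence of the intermediate rooftops $\varphi \wedge \varphi_j'$ and $\varphi_i \wedge \varphi'$ needed to interpolate, since only $\varphi_i \wedge \varphi_j'$ and $\varphi \wedge \varphi'$ are assumed to exist. Under the continuity of envelopes hypothesis they all exist automatically; without it, I would circumvent the issue by first approximating $\varphi, \varphi_i$ from above by Fubini--Study metrics (using Proposition \ref{test configuration associated to phi wedge tau} to guarantee that rooftops with $\tau \in \mathbb{Q}$, and by iteration with codiagonal basis arguments, rooftops of pairs in $\nH(X,L)$ live in $\E^1(X,L)$), proving the inequality at that level, and then invoking Proposition \ref{decreasing limit of rooftop} together with the $\bar I$-continuity of $d_1$ along decreasing nets to pass to the required limit. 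This reduces the whole argument to the Fubini--Study case, where the filtration-level identity $\mathcal{F}_{\varphi \wedge \varphi'} = \mathcal{F}_\varphi \cap \mathcal{F}_{\varphi'}$ does all the work.
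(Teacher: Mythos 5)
Your proof is correct and is essentially the paper's own argument: the contraction inequality (\ref{d1 comparison for rooftop}), a triangle-inequality interpolation through an intermediate rooftop, and the equivalence of $d_1$-convergence with strong convergence. The extra care you take about the existence of the intermediate rooftops is a reasonable refinement, but the paper works in the same setting (rooftops of pairs in $\E^1(X,L)$ assumed to exist via the hypothesis of the preceding proposition), so the two arguments coincide in substance.
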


\begin{proof}
By (\ref{d1 comparison for rooftop}), we have 
\[ d_1 (\varphi_i \wedge \varphi_j', \varphi \wedge \varphi') \le d_1 (\varphi_i \wedge \varphi_j', \varphi_i \wedge \varphi') + d_1 (\varphi_i \wedge \varphi', \varphi \wedge \varphi') \le d_1 (\varphi_j', \varphi') + d_1 (\varphi_i, \varphi). \]
The claim follows by the equivalence of $d_1$-convergence and strong convergence proved in \cite{BJ4}. 
\end{proof}

In particular, for any strongly convergent net $\varphi_i \to \varphi \in \E^1 (X, L)$ and $\tau_i \to \tau \in \mathbb{R}$, we have strong convergence $\varphi_i \wedge \tau_i \to \varphi \wedge \tau \in \E^1 (X, L)$. 

\subsubsection{A metric structure on the space $\E^{\exp} (X, L)$}
\label{A metric structure on the space Eexp}

Now we introduce the distance $d_{\exp}$ modeled on Luxemburg norm in Orlicz analysis. 
For $\varphi, \varphi' \in \nH (X, L)$, we put 
\begin{equation} 
d_{\exp} (\varphi, \varphi') := \inf \Big{\{} \beta \in (0, \infty) ~\Big{|}~ \int_\mathbb{R} (e^{|t/\beta|} - 1) \DHm_{\varphi, \varphi'} \le 1 \Big{\}}. 
\end{equation}
We note $\beta (e^{|t/\beta|} -1) \le \beta' (e^{|t/\beta'|} -1)$ for $\beta \ge \beta'$ by convexity. 
In particular, we have 
\begin{equation} 
\label{dexp Eexp comparison}
d_{\exp} (\varphi, \varphi') \le \rho^{-1} \max \{ 1, \int_\mathbb{R} (e^{|\rho t|} - 1) \DHm_{\varphi, \varphi'} \} 
\end{equation}
for any $\rho > 0$. 

\begin{prop}
\label{dp dexp}
For $1 \le p < \infty$ and $\varphi, \varphi' \in \nH (X, L)$, we have 
\[ d_p (\varphi, \varphi') \le \lceil p \rceil \cdot d_{\exp} (\varphi, \varphi'). \]
\end{prop}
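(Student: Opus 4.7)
The plan is to reduce the inequality to a pointwise comparison between $|s|^p$ and the Orlicz weight $e^{|s|}-1$, then integrate against the relative spectral measure $\DHm_{\varphi,\varphi'}$. Setting $k:=\lceil p\rceil$, the key ingredient is the pointwise estimate
\[
|s|^p \le k!\,(e^{|s|}-1) \qquad\text{for every } s\in\mathbb{R},
\]
which I will verify by a simple two-case argument. For $|s|\ge 1$, we have $|s|^p\le|s|^k$ from $p\le k$, and the Taylor expansion of $e^{|s|}-1$ contains the nonnegative term $|s|^k/k!$, so that $|s|^k\le k!(e^{|s|}-1)$. For $|s|\le 1$, we have $|s|^p\le|s|$ (since $p\ge 1$), together with $|s|\le e^{|s|}-1$ from $e^u\ge 1+u$ on $[0,\infty)$, and $k!\ge 1$, giving the claim.

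With this pointwise bound in hand, the main estimate is short. We may assume $d_{\exp}(\varphi,\varphi')$ is finite, since $\DHm_{\varphi,\varphi'}$ is compactly supported for $\varphi,\varphi'\in\nH(X,L)$, so the defining integral can be made arbitrarily small by taking $\beta$ large. For any $\beta>d_{\exp}(\varphi,\varphi')$, the monotonicity of the function $\gamma\mapsto\int(e^{|t/\gamma|}-1)\,\DHm_{\varphi,\varphi'}$ together with the infimum definition yields $\int(e^{|t/\beta|}-1)\,\DHm_{\varphi,\varphi'}\le 1$. Substituting $s=t/\beta$ in the pointwise bound and integrating gives
\[
d_p(\varphi,\varphi')^p = \int_{\mathbb{R}}|t|^p\,\DHm_{\varphi,\varphi'} \le \beta^p\,k!\int_{\mathbb{R}}(e^{|t/\beta|}-1)\,\DHm_{\varphi,\varphi'} \le \beta^p\,k!,
\]
and letting $\beta\searrow d_{\exp}(\varphi,\varphi')$ produces $d_p(\varphi,\varphi')\le(k!)^{1/p}\,d_{\exp}(\varphi,\varphi')$.

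The proof then closes with the elementary numerical bound $(k!)^{1/p}\le k$. From $k!=1\cdot\prod_{j=2}^{k}j\le k^{k-1}$ and the fact that $k=\lceil p\rceil$ forces $k-1<p$ (hence $(k-1)/p\le 1$), one obtains $(k!)^{1/p}\le k^{(k-1)/p}\le k$. The argument is essentially soft: the only step requiring a case split is the pointwise inequality, and no analytic obstacle arises since both $d_p$ and $d_{\exp}$ are defined directly on $\nH(X,L)$ through the relative spectral measure. I note that the constant $\lceil p\rceil$ is not optimal -- the sharper $(k!)^{1/p}$ is already established along the way -- but the form stated suffices for the purpose of comparing $d_p$- and $d_{\exp}$-topologies.
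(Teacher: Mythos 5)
Your proof is correct and follows essentially the same route as the paper: a pointwise domination of $|t/\beta|^p$ by $\lceil p\rceil!\,(e^{|t/\beta|}-1)$ (your two-case split at $|s|=1$ is exactly the mechanism behind the paper's Taylor-term bound $|s|^p/\lceil p\rceil! \le |s|^{\lfloor p\rfloor}/\lfloor p\rfloor! + |s|^{\lceil p\rceil}/\lceil p\rceil!$), followed by integration against $\DHm_{\varphi,\varphi'}$ and the same elementary estimate $(\lceil p\rceil!)^{1/p}\le\lceil p\rceil$ via $\lceil p\rceil!\le\lceil p\rceil^{\lceil p\rceil-1}$. No issues.
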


\begin{proof}
Since $|t/\beta|^p/\lceil p \rceil ! \le |t/\beta|^{\lfloor p \rfloor}/\lfloor p \rfloor! + |t/\beta|^{\lceil p \rceil}/\lceil p \rceil ! \le e^{|t/\beta|} -1$, we have 
\[ \int_{\mathbb{R}} |t/\beta|^p \DHm_{\varphi, \varphi'} \le \lceil p \rceil ! \]
for $\beta > d_{\exp} (\varphi, \varphi')$. 
Thus we get $d_p (\varphi, \varphi') \le (\lceil p \rceil !)^{1/p} \beta \le (\lceil p \rceil !)^{1/\lfloor p \rfloor} \beta \le \lceil p \rceil \cdot \beta$ by $\lceil p \rceil ! \cdot \lceil p \rceil \le \lceil p \rceil^{\lceil p \rceil}$. 
Taking the limit $\beta \searrow d_{\exp} (\varphi, \varphi')$, we obtain the claim. 
\end{proof}

To see the triangle inequality on $d_{\exp}$, we introduce the following norm on $\mathbb{R}^{N_m}$: 
\[ \chi_m (x_1, \ldots, x_N) := \inf \Big{\{} \beta \in (0, \infty) ~\Big{|}~ \frac{1}{m^n} \sum_{i=1}^{N_m} (e^{|x_i/m\beta|} -1) \le 1 \Big{\}}. \]
Using the convexity of $\Phi_m (t) = e^{|t/m|} -1$, we can easily check that $\chi_m$ defines an $\mathfrak{S}_N$-invariant norm on $\mathbb{R}^{N_m}$. 
This is known as Luxemburg norm on the finite set $\{ 1, \ldots, N_m \}$ with respect to the Young weight $\Phi_m$, which is a basic material in Orlicz analysis. 

\begin{rem}
The condition $\lim_{t \to 0} \Phi_m (t)/t = 0$ usually assumed for the duality on Orlicz space is not used in this article. 
We remark that one may replace the Young weight $e^{|t|} -1$ with $\Phi (t) = e^{|t|} - |t| -1$, of which we have the non-negative convex conjugate $\Psi (t) = (|t|+1) \log (|t|+1) + |t| = O (|t| \log|t|)$. 
This would have an advantage for applying duality of Orlicz spaces. 
We put 
\[ \tilde{d}_{\exp} (\varphi, \varphi') := \inf \Big{\{} \beta \in (0, \infty) ~\Big{|}~ \int_\mathbb{R} (e^{|t/\beta|} -|t/\beta| - 1) \DHm_{\varphi, \varphi'} \le 1 \Big{\}}. \]
By $e^{|t/\beta|} -|t/\beta| - 1 \le e^{|t/\beta|} -1$, we have $\tilde{d}_{\exp} \le d_{\exp}$. 
Conversely, since $|t/\beta|^2/2 \le e^{|t/\beta|} - |t/\beta| -1$, we have 
\[ \int_\mathbb{R} |t/\beta| \DHm_{\varphi, \varphi'} \le \sqrt{(e^L) \int_\mathbb{R} |t/\beta|^2 \DHm_{\varphi, \varphi'}} \le \sqrt{ 2(e^L) \int_\mathbb{R} (e^{|t/\beta|} - |t/\beta| -1) \DHm_{\varphi, \varphi'} }. \]
The convexity implies $e^{|t/r\beta|} - |t/r\beta| -1 \le r^{-1} (e^{|t/\beta|} - |t/\beta| -1)$ for $r \ge 1$, hence we have 
\[ \int_\mathbb{R} (e^{|t/r\beta|} - |t/r\beta| -1) \DHm_{\varphi, \varphi'} \le r^{-1} \int_\mathbb{R} (e^{|t/\beta|} - |t/\beta| -1) \DHm_{\varphi, \varphi'} \le r^{-1} \]
for any $\beta > \tilde{d}_{\exp} (\varphi, \varphi')$. 
In particular, we have 
\[ \int_\mathbb{R} |t/r\beta| \DHm_{\varphi, \varphi'} \le \sqrt{2 (e^L) r^{-1}}, \]
so that we get 
\[ \int_\mathbb{R} (e^{|t/r\beta|} -1) \DHm_{\varphi, \varphi'} \le \sqrt{2 (e^L) r^{-1}} + r^{-1}. \]
Put $r := \max \{ 2, 8 (e^L) \}$, then we obtain 
\[ \int_\mathbb{R} (e^{|t/r\beta|} -1) \DHm_{\varphi, \varphi'} \le 1. \]
This implies $d_{\exp} \le r \beta$. 
Taking the limit $\beta \searrow \tilde{d}_{\exp}$, we get $d_{\exp} \le r \tilde{d}_{\exp}$ for the fixed constant $r$. 
Therefore, $d_{\exp}$ and $\tilde{d}_{\exp}$ will define the same uniform structure. 
\end{rem}

For each $m$, take a codiagonal basis $\bm{s}_m$ of $R_m$ for $\varphi, \varphi'$ and put 
\begin{align*} 
d_{\exp}^m (\| \cdot \|_m^\varphi, \| \cdot \|_m^{\varphi'}) 
&:= \chi_m (\lambda (\| \cdot \|_m^{\varphi}, \| \cdot \|_m^{\varphi'}))
\\
&= \inf \Big{\{} \beta \in (0, \infty) ~\Big{|}~ \frac{1}{m^n} \sum_{i=1}^{N_m} (e^{|(\lambda_i^\varphi (\bm{s}_m) - \lambda_i^{\varphi'} (\bm{s}_m))/m \beta|} -1) \le 1 \Big{\}}. 
\end{align*}
Then thanks to \cite[Theorem 3.1]{BE}, we have the following triangle inequality. 
\begin{equation}
d_{\exp}^m (\| \cdot \|_m^\varphi, \| \cdot \|_m^{\varphi''}) \le d_{\exp}^m (\| \cdot \|_m^\varphi, \| \cdot \|_m^{\varphi'}) + d_{\exp}^m (\| \cdot \|_m^{\varphi'}, \| \cdot \|_m^{\varphi''}). 
\end{equation}

\begin{lem}
For $\varphi, \varphi' \in \nH (X, L)$, we have 
\[ \lim_{m \to \infty} d_{\exp}^m (\| \cdot \|_m^\varphi, \| \cdot \|_m^{\varphi'}) = d_{\exp} (\varphi, \varphi'). \]
\end{lem}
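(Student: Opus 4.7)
The plan is to reduce the statement to a classical weak-convergence argument for the relative spectral measures. Define, for $\beta > 0$,
\[
f_m(\beta) := \int_\mathbb{R}(e^{|t/\beta|}-1)\,d\nu_m(t), \qquad f(\beta) := \int_\mathbb{R}(e^{|t/\beta|}-1)\,d\DHm_{\varphi,\varphi'}(t),
\]
where $\nu_m := \frac{1}{m^n}\sum_{i=1}^{N_m}\delta_{(\lambda_i^\varphi(\bm{s}_m)-\lambda_i^{\varphi'}(\bm{s}_m))/m}$. Then $d_{\exp}^m(\|\cdot\|_m^\varphi,\|\cdot\|_m^{\varphi'}) = \inf\{\beta>0 : f_m(\beta)\le 1\}$ and $d_{\exp}(\varphi,\varphi') = \inf\{\beta>0 : f(\beta)\le 1\}$, so the problem is to show that these thresholds converge.

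First, since $\varphi,\varphi'\in\nH(X,L)$, the associated filtrations $\mathcal{F}_\varphi,\mathcal{F}_{\varphi'}$ are linearly bounded; hence there exists $C>0$ with $|\lambda_i^\varphi(\bm{s}_m)/m|,|\lambda_i^{\varphi'}(\bm{s}_m)/m|\le C$ for all $m$ and $i$, so $\supp\nu_m\subset[-2C,2C]$ uniformly in $m$, and the same bound holds for $\supp\DHm_{\varphi,\varphi'}$. The construction of the relative spectral measure (reviewed in the excerpt and based on \cite{BJ2}) gives $\int \chi\,d\nu_m\to\int\chi\,d\DHm_{\varphi,\varphi'}$ for every continuous function $\chi$. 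Applying this to the continuous function $t\mapsto e^{|t/\beta|}-1$ (restricted, harmlessly, to a large interval containing all the supports) yields pointwise convergence $f_m(\beta)\to f(\beta)$ for every $\beta>0$.

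Next I would exploit that both $f_m$ and $f$ are continuous and monotonically decreasing in $\beta$, with $f_m,f\to 0$ as $\beta\to\infty$. If $\varphi=\varphi'$, the measures $\nu_m$ and $\DHm_{\varphi,\varphi'}$ are Dirac masses at $0$, both $d_{\exp}^m$ and $d_{\exp}$ vanish, and the statement is trivial. Otherwise $\DHm_{\varphi,\varphi'}\neq\delta_0$, so $f$ is \emph{strictly} decreasing on $(0,\infty)$. Writing $\beta_\infty:=d_{\exp}(\varphi,\varphi')$, for any $\epsilon>0$ with $\beta_\infty-\epsilon>0$ we have $f(\beta_\infty-\epsilon)>1$ by definition of the infimum; pointwise convergence then gives $f_m(\beta_\infty-\epsilon)>1$ for large $m$, hence $d_{\exp}^m\ge\beta_\infty-\epsilon$. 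Similarly, by strict monotonicity, $f(\beta_\infty+\epsilon)<1$, so $f_m(\beta_\infty+\epsilon)<1$ for large $m$, giving $d_{\exp}^m\le\beta_\infty+\epsilon$. Letting $\epsilon\to 0$ concludes the proof.

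The only mildly delicate point is the squeeze argument at the endpoint, and it is why strict monotonicity of $f$ matters: pointwise convergence alone does not rule out $f(\beta_\infty)=1$ being a flat plateau of the limit, but the strictness inherited from $\DHm_{\varphi,\varphi'}\ne\delta_0$ prevents any such pathology. No further subtleties arise because the uniform support bound turns weak convergence into convergence for our specific test functions without any need for uniform integrability or truncation arguments.
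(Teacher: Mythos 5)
Your argument is correct. The skeleton is the same as the paper's: set $f_m(\beta)=\int_{\mathbb{R}}(e^{|t/\beta|}-1)\,\nu_m$ and $f(\beta)=\int_{\mathbb{R}}(e^{|t/\beta|}-1)\,\DHm_{\varphi,\varphi'}$, use the weak convergence of the relative spectral measures (legitimate here because the supports are uniformly bounded, so the exponential test function causes no integrability issues), and squeeze the threshold. The lower bound $\varliminf_m d_{\exp}^m\ge d_{\exp}(\varphi,\varphi')$ is argued exactly as in the paper. Where you genuinely diverge is the upper bound: the paper never needs a strict inequality $f(\beta_\infty+\epsilon)<1$; instead it takes $\beta'>d_{\exp}(\varphi,\varphi')$, gets $f_m(\beta')\le 1+\varepsilon$ for large $m$, and then uses convexity of the Young weight to rescale in the $\beta$-direction, $f_m((1+\varepsilon)\beta')\le\frac{1}{1+\varepsilon}f_m(\beta')\le 1$, which yields $d_{\exp}^m\le(1+\varepsilon)\beta'$ with no non-degeneracy hypothesis. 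You instead perturb the threshold additively and invoke strict monotonicity of $f$, which forces you to separate out the case $\DHm_{\varphi,\varphi'}=(e^L)\delta_0$ and to rely on the fact that this happens only for $\varphi=\varphi'$ (a fact the paper does use, but only in the subsequent proposition that $d_{\exp}$ is a distance, where it follows from $d_1$ being a distance). Your route is fine and perhaps more transparent; the paper's convexity trick is slightly more robust in that it is purely quantitative, reappears verbatim in later arguments (e.g.\ Proposition \ref{distance with anchor} and Lemma \ref{dexp convergence criterion}), and would survive even if the limit measure were degenerate. One small normalization nit: the relative spectral measure has total mass $(e^L)$, so the degenerate case is $\DHm_{\varphi,\varphi'}=(e^L)\delta_0$ rather than $\delta_0$; this does not affect your argument.
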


\begin{proof}
Suppose $\beta' > d_{\exp} (\varphi, \varphi')$, then we have 
\[ \int_\mathbb{R} (e^{|t/\beta'|} -1) \DHm_{\varphi, \varphi'} \le 1. \]
It follows that for any $\varepsilon > 0$, there exists $m_\varepsilon$ such that for every $m \ge m_\varepsilon$ we have 
\[ \frac{1}{m^n} \sum_{i=1}^{N_m} (e^{|(\lambda_i^\varphi (\bm{s}_m) - \lambda_i^{\varphi'} (\bm{s}_m))/m \beta'|} -1) \le 1+\varepsilon. \]
Then for $m \ge m_\varepsilon$, we get 
\[ \frac{1}{m^n} \sum_{i=1}^{N_m} (e^{|(\lambda_i^\varphi (\bm{s}_m) - \lambda_i^{\varphi'} (\bm{s}_m))/m (1+\varepsilon) \beta'|} -1) \le \frac{1}{m^n} \sum_{i=1}^{N_m} \frac{1}{1+ \varepsilon} (e^{|(\lambda_i^\varphi (\bm{s}_m) - \lambda_i^{\varphi'} (\bm{s}_m))/m \beta'|} -1) \le 1 \]
by convexity. 
It follows that 
\[ \varlimsup_{i \to \infty} d_{\exp}^m (\| \cdot \|_m^\varphi, \| \cdot \|_m^{\varphi'}) \le (1+ \varepsilon) \beta'. \]
Taking the limits $\varepsilon \searrow 0, \beta' \searrow d_{\exp} (\varphi, \varphi')$, we get 
\[ \varlimsup_{m \to \infty} d_{\exp}^m (\| \cdot \|_m^\varphi, \| \cdot \|_m^{\varphi'}) \le d_{\exp} (\varphi, \varphi'). \]

Next we suppose $\beta < d_{\exp} (\varphi, \varphi')$, then we have 
\[ \int_\mathbb{R} (e^{|t/\beta|} -1) \DHm_{\varphi, \varphi'} \ge 1+ \delta \]
for some $\delta > 0$. 
Taking sufficiently large $m$, we can assume 
\[ m^{-n} \sum_{i=1}^{N_m} (e^{|(\lambda_i^\varphi (\bm{s}_m) - \lambda_i^{\varphi'} (\bm{s}_m))/m \beta|} -1) \ge 1+ \delta/2. \]
Thus we have $\beta < d^m_{\exp} (\varphi, \varphi')$. 
Taking the limit $\beta \nearrow d_{\exp} (\| \cdot \|_m^\varphi, \| \cdot \|_m^{\varphi'})$, we get 
\[ d_{\exp} (\varphi, \varphi') \le \varliminf_{m \to \infty} d^m_{\exp} (\| \cdot \|_m^\varphi, \| \cdot \|_m^{\varphi'}). \]
\end{proof}

\begin{prop}
\label{dexp gives a distance on H}
$d_{\exp}$ is a distance on $\nH (X, L)$. 
\end{prop}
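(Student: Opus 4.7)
The plan is to check each of the four axioms of a distance, most of which reduce to already-established facts. Non-negativity of $d_{\exp}(\varphi, \varphi')$ is immediate from the definition, since the infimum is taken over positive reals. Symmetry $d_{\exp}(\varphi, \varphi') = d_{\exp}(\varphi', \varphi)$ follows because $\DHm_{\varphi, \varphi'}$ is invariant under $t \mapsto -t$ (exchanging the roles of $\varphi, \varphi'$ in a codiagonal basis $\bm{s}_m$ replaces each $\lambda_i^\varphi(\bm{s}_m) - \lambda_i^{\varphi'}(\bm{s}_m)$ by its negative), and the integrand $e^{|t/\beta|}-1$ depends only on $|t|$.

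For the triangle inequality, the finite-level inequality
\[ d_{\exp}^m(\|\cdot\|_m^\varphi, \|\cdot\|_m^{\varphi''}) \le d_{\exp}^m(\|\cdot\|_m^\varphi, \|\cdot\|_m^{\varphi'}) + d_{\exp}^m(\|\cdot\|_m^{\varphi'}, \|\cdot\|_m^{\varphi''}) \]
was already deduced from the triangle inequality for the Luxemburg-type norm $\chi_m$ on $\mathbb{R}^{N_m}$ (via \cite[Theorem 3.1]{BE}). Passing to the limit $m \to \infty$ using the preceding lemma yields the triangle inequality for $d_{\exp}$ on $\nH(X, L)$.

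The main (though still short) point is non-degeneracy: we must show $d_{\exp}(\varphi, \varphi') = 0 \Rightarrow \varphi = \varphi'$. Here I would invoke the comparison $d_1(\varphi, \varphi') \le d_{\exp}(\varphi, \varphi')$ from Proposition \ref{dp dexp} (the case $p=1$, noting $\lceil 1 \rceil = 1$). This reduces the claim to non-degeneracy of $d_1$, which on $\E^1(X, L) \supset \nH(X, L)$ follows from Proposition \ref{Ibar and d1} together with the fact $\varphi = \varphi' \iff \bar{I}(\varphi, \varphi') = 0$ recorded from \cite[Corollary 7.4]{BJ3}. Thus $d_{\exp}(\varphi, \varphi') = 0$ forces $d_1(\varphi, \varphi') = 0$, hence $\bar{I}(\varphi, \varphi') = 0$, hence $\varphi = \varphi'$. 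None of the four steps presents a serious obstacle; the only subtlety is recognizing that non-degeneracy is inherited from the already-constructed metric $d_1$ rather than needing to be proved from scratch for the exponential weight.
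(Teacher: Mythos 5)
Your proof is correct and follows essentially the same route as the paper: symmetry via the sign-flip of $\DHm_{\varphi,\varphi'}$, and the triangle inequality via the finite-level Luxemburg inequality plus the convergence lemma $d_{\exp}^m \to d_{\exp}$. The only (harmless) difference is in non-degeneracy: the paper invokes directly that $\DHm_{\varphi,\varphi'}$ is a Dirac mass at $0$ iff $\varphi=\varphi'$, whereas you derive the same conclusion from $d_1 \le d_{\exp}$ and the already-established non-degeneracy of $d_1$ via $\bar{I}$ — both rest on the same underlying fact and both are available at this point in the text.
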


\begin{proof}
Since $\DHm_{\varphi, \varphi'} = \delta_0$ iff $\varphi = \varphi'$, we have $d_{\exp} (\varphi, \varphi') = 0$ iff $\varphi = \varphi'$. 
Since $\DHm_{\varphi, \varphi'} = (t \mapsto -t)_* \DHm_{\varphi', \varphi}$, we have $d_{\exp} (\varphi, \varphi') = d_{\exp} (\varphi', \varphi)$. 
Thanks to the triangle inequality on $d^m_{\exp}$ and the above lemma, we get $d_{\exp} (\varphi, \varphi'') \le d_{\exp} (\varphi, \varphi') + d_{\exp} (\varphi', \varphi'')$.  
\end{proof}

We will extend the distance $d_{\exp}$ to the space $\E^{\exp} (X, L)$ of our interest and show its completeness. 
We follow the steps in \cite{Dar}, where the archimedean case is treated. 
To be precise, the Young weight $\Phi (t) = e^{|t|} -1$ does not satisfy the $\mathcal{W}_p^+$-condition he assumed in his argument, so $\mathcal{E}^{\exp} (X, L)$ is not treated even in the archimedean case, however, we can extend many of his results to such general Young weights as he remarked. 

We make efficient use of the assumption that $E_{\exp} (\varphi_{; \rho}) > - \infty$ \textit{for every $\rho > 0$} in the following lemma. 
This is reminiscent of the fact that a function in the Orlicz space $L^\Phi$, which consists of functions satisfying $\int \Phi (|f|/\beta) d\mu < \infty$ for \textit{some} $\beta > 0$, can be approximated by simple functions with respect to the $L^\Phi$-norm iff $\int \Phi (|f|/\beta) d\mu < \infty$ for \textit{every} $\beta > 0$. 
This lemma is the key tool for the extension of the distance. 

\begin{lem}
\label{decreasing convergence implies Cauchy}
Let $\{ \varphi_i \}_{i \in I} \subset \nH (X, L)$ be a decreasing net pointwisely converging to $\varphi \in \E^{\exp} (X, L)$. 
Then for every $\varepsilon > 0$, there exists $k \in I$ such that $d_{\exp} (\varphi_i, \varphi_j) \le \varepsilon$ for every $i, j \ge k$. 
\end{lem}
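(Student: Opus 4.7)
The plan is to reduce the Cauchy estimate on $d_{\exp}$ to controlling a single exponential moment of the relative spectral measure $\DHm_{\varphi_i,\varphi_j}$, then to split that moment via Cauchy--Schwarz into one factor that is uniformly bounded thanks to the hypothesis $\varphi \in \E^{\exp}(X,L)$ and a second factor that vanishes as $i,j \to \infty$. By the comparison (\ref{dexp Eexp comparison}), it suffices to show that for every fixed $\rho > 0$ we have $\int_\mathbb{R}(e^{\rho|t|} - 1)\,\DHm_{\varphi_i,\varphi_j} \to 0$ as $i,j\to\infty$; specialising to $\rho = 1/\varepsilon$ then yields $d_{\exp}(\varphi_i,\varphi_j) \le \varepsilon$.

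By symmetry of the relative spectral measure under $t \mapsto -t$ we may assume $j \ge i$, so $\varphi_j \le \varphi_i$. Fixing a codiagonal basis $\bm{s}_m$ of $R_m$ for each pair, one has $\delta_k^{(m)} := \lambda_k^{\varphi_i}(\bm{s}_m) - \lambda_k^{\varphi_j}(\bm{s}_m) \ge 0$, so $\DHm_{\varphi_i,\varphi_j}$ is supported in $[0,\infty)$. The factorisation
\[
e^{\rho\delta_k/m} - 1 \;=\; e^{-\rho\lambda_k^{\varphi_j}/m}\bigl(e^{\rho\lambda_k^{\varphi_i}/m} - e^{\rho\lambda_k^{\varphi_j}/m}\bigr),
\]
Cauchy--Schwarz applied to the spectral sum, and the elementary bound $(a-b)^2 \le a^2 - b^2$ for $a \ge b \ge 0$ applied to $a = e^{\rho\lambda_k^{\varphi_i}/m}$, $b = e^{\rho\lambda_k^{\varphi_j}/m}$ yield, after passing to the limit $m\to\infty$,
\[
\int_\mathbb{R}(e^{\rho t}-1)\,\DHm_{\varphi_i,\varphi_j} \;\le\; \Bigl(\int_\mathbb{R} e^{-2\rho t}\DHm_{\varphi_j}\Bigr)^{1/2}\Bigl(\int_\mathbb{R} e^{2\rho t}\DHm_{\varphi_i} - \int_\mathbb{R} e^{2\rho t}\DHm_{\varphi_j}\Bigr)^{1/2}.
\]

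I would then handle the two factors separately. For the first, the function $\chi(t) = -e^{-2\rho t}$ is increasing and concave, so Proposition \ref{moment energy and DH measure} together with $\E^{\exp}(X,L) \subset \E(X,L)$ (Corollary \ref{finite moment energy class and full mass class}) give $E_\chi(\varphi) = -\int e^{-2\rho t}\DHm_\varphi$; by $\varphi \in \E^{\exp}(X,L)$ this is finite, and Proposition \ref{continuity of moment energy along decreasing nets} forces $\int e^{-2\rho t}\DHm_{\varphi_j} \nearrow \int e^{-2\rho t}\DHm_\varphi < \infty$, giving a uniform bound in $j$. For the second factor, both $\int e^{2\rho t}\DHm_{\varphi_i}$ and $\int e^{2\rho t}\DHm_{\varphi_j}$ are finite (the crude estimate $\int e^{2\rho t}\DHm_\bullet \le e^{2\rho\sup\varphi_0}(e^L)$ uses only that the supports lie in $(-\infty,\sup\varphi_0]$), and by Proposition \ref{continuity of moment energy along decreasing nets} applied to the increasing continuous $\chi(t)=e^{2\rho t}$ both decrease to the common limit $\int e^{2\rho t}\DHm_\varphi$, so their difference tends to zero as $i,j \to \infty$.

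Combining these two, $\int(e^{\rho|t|}-1)\DHm_{\varphi_i,\varphi_j} \to 0$ for every fixed $\rho > 0$; choosing $k \in I$ so that this quantity is $\le 1$ for all $i,j \ge k$ when $\rho = 1/\varepsilon$, the comparison (\ref{dexp Eexp comparison}) yields $d_{\exp}(\varphi_i,\varphi_j) \le \varepsilon$. The main obstacle is the uniform control of the first Cauchy--Schwarz factor: it is precisely the $\E^{\exp}$-hypothesis (and not merely $\E^1$) that makes $\int e^{-2\rho t}\DHm_{\varphi_j}$ stay bounded as $j$ varies -- a weaker hypothesis would let this moment blow up and would only deliver $d_1$-Cauchyness. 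The elementary identity $(a-b)^2 \le a^2 - b^2$ is the technical bridge converting the squared increment produced by Cauchy--Schwarz into a first-order difference of exponential moments, which is what makes the vanishing visible.
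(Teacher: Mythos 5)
Your proof is correct, and its overall architecture mirrors the paper's: both establish a quantitative estimate of the shape $\int_{\mathbb{R}}(e^{|t|/\beta}-1)\,\DHm_{\varphi_i,\varphi_j}\le(\text{bounded factor})\cdot(\text{vanishing factor})$ by Cauchy--Schwarz on the spectral sums, with the bounded factor controlled by the hypothesis $\varphi\in\E^{\exp}(X,L)$ and the vanishing factor controlled by the continuity of some moment energy along the decreasing net. The difference is in which moment energy does the vanishing. The paper normalizes $\varphi_i\le 0$, uses $e^x-1\le xe^x$, and after two applications of Cauchy--Schwarz lands on the factor $(E(\varphi_j)-E(\varphi_i))^{1/4}$, i.e.\ the \emph{first}-moment energy, at the cost of also absorbing a third moment $\int(-t)^3\DHm_{\varphi}$ into $-E_{\exp}(\varphi_{;2/\beta})$. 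You instead factor $e^{\rho\delta_k/m}-1=e^{-\rho\lambda_k^{\varphi_j}/m}\bigl(e^{\rho\lambda_k^{\varphi_i}/m}-e^{\rho\lambda_k^{\varphi_j}/m}\bigr)$, apply Cauchy--Schwarz once, and use $(a-b)^2\le a^2-b^2$ to make the vanishing factor the difference of \emph{positive-exponential} moments $\int e^{2\rho t}\DHm_{\varphi_i}-\int e^{2\rho t}\DHm_{\varphi_j}$; Proposition \ref{continuity of moment energy along decreasing nets} applied to the increasing continuous weight $e^{2\rho t}$, whose integral is trivially bounded by $e^{2\rho\sup\varphi_0}(e^L)$, then kills it. Your route is somewhat leaner (one Cauchy--Schwarz and one elementary inequality instead of two Cauchy--Schwarz plus a third-moment bound, and exponent $1/2$ rather than $1/4$ on the vanishing factor), while the paper's has the mild advantage that its vanishing quantity is the energy $E$ itself, the most classical convergence fact available. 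One shared cosmetic point: for a general directed set $I$ two indices $i,j\ge k$ need not be comparable, so the reduction to $\varphi_j\le\varphi_i$ is not a symmetry but should be completed by choosing $l\ge i,j$ and using the triangle inequality $d_{\exp}(\varphi_i,\varphi_j)\le d_{\exp}(\varphi_i,\varphi_l)+d_{\exp}(\varphi_l,\varphi_j)$ from Proposition \ref{dexp gives a distance on H}; the paper's own write-up elides this in exactly the same way.
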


\begin{proof}
For $\varphi, \varphi' \in \nH (X, L)$ with $\varphi \le \varphi'$, we have the following estimate 
\begin{equation} 
\label{distance lemma}
\int_{\mathbb{R}} (e^{|t/\beta|} - 1) \DHm_{\varphi, \varphi'} \le \Big{(} \frac{E (\varphi') - E (\varphi)}{4\beta/3} \Big{)}^{1/4} (-e^{\sup \varphi_{;2/\beta}} E_{\exp} (\varphi_{;2/\beta}))^{3/4}. 
\end{equation}
The claim is a consequence of this estimate. 
Indeed, since $0 < - E_{\exp} (\varphi_{i; 2/\beta}) \le - E_{\exp} (\varphi_{;2/\beta}) < \infty$ and $\sup \varphi_i \le \sup \varphi_0 =: c$ (note $c_{;2/\beta} = 2c/\beta$), we get 
\[ \int_{\mathbb{R}} (e^{|t/\beta|} - 1) \DHm_{\varphi_i, \varphi_j} \le \Big{(} \frac{(-e^{2c/\beta} E_{\exp} (\varphi_{;2/\beta}) )^3}{4\beta/3} |E (\varphi_j) - E (\varphi_i)| \Big{)}^{1/4}. \] 
Since we have $\lim_{i \to \infty} E (\varphi_i) = E (\varphi)$, for every $\varepsilon > 0$ we can take large $k_\varepsilon$ so that 
\[ |E (\varphi_j) - E (\varphi_i)| \le \frac{4\varepsilon}{3} (-e^{-2c/\varepsilon} E_{\exp} (\varphi_{;2/\varepsilon}))^{-3} \]
for every $i, j \ge k_\varepsilon$. 
Then by the above inequality, we get 
\[ \int_{\mathbb{R}} (e^{|t/\varepsilon|} - 1) \DHm_{\varphi_i, \varphi_j} \le 1, \]
so that $d_{\exp} (\varphi_i, \varphi_j) \le \varepsilon$ for every $i, j \ge k_\beta$. 

Now we show the estimate (\ref{distance lemma}). 
Since $\varphi \le \varphi'$, we have $\lambda_i^\varphi (\bm{s}) \le \lambda_i^{\varphi'} (\bm{s})$. 
Both sides of the inequality is invariant under the replacement $\varphi \mapsto \varphi - c, \varphi' \mapsto \varphi' -c$, so that we may assume $\varphi' \le 0$. 
It follows that $0 \le \lambda_i^{\varphi'} (\bm{s}) - \lambda_i^\varphi (\bm{s}) \le -\lambda_i^\varphi (\bm{s})$ and $1 \le e^{-\sup \varphi}$. 
Using $e^x - 1 \le x e^x$ for $x \ge 0$ and Cauchy--Schwarz inequality, we compute 
\begin{align*}
\int_\mathbb{R} (e^{|t/\beta|} -1) \DHm_{\varphi, \varphi'} 
&= \lim_{m \to \infty} \frac{1}{m^n} \sum_{i=1}^{N_m} (e^{|(\lambda_i^\varphi (\bm{s}_m) - \lambda_i^{\varphi'} (\bm{s}_m))/\beta m|} -1)
\\
&\le \lim_{m \to \infty} \frac{1}{m^n} \sum_{i=1}^{N_m} |(\lambda_i^\varphi (\bm{s}_m) - \lambda_i^{\varphi'} (\bm{s}_m))/\beta m| \cdot e^{|(\lambda_i^\varphi (\bm{s}_m) - \lambda_i^{\varphi'} (\bm{s}_m))/\beta m|} 
\\
&\le \lim_{m \to \infty} \Big{(} \frac{1}{m^n} \sum_{i=1}^{N_m} |(\lambda_i^\varphi (\bm{s}_m) - \lambda_i^{\varphi'} (\bm{s}_m))/\beta m|^2 \Big{)}^{1/2} 
\\
& \qquad \cdot \lim_{m \to \infty} \Big{(} \frac{1}{m^n} \sum_{i=1}^{N_m} e^{2|(\lambda_i^\varphi (\bm{s}_m) - \lambda_i^{\varphi'} (\bm{s}_m))/\beta m|} \Big{)}^{1/2}
\\
&\le \lim_{m \to \infty} \beta^{-1} \Big{(} \frac{1}{m^n} \sum_{i=1}^{N_m} \lambda_i^{\varphi'} (\bm{s}_m)/m - \frac{1}{m^n} \sum_{i=1}^{N_m} \lambda_i^\varphi (\bm{s}_m)/m \Big{)}^{1/4} 
\\
&\qquad \cdot \lim_{m \to \infty} \Big{(} \frac{1}{m^n} \sum_{i=1}^{N_m} (- \lambda_i^\varphi (\bm{s}_m)/m)^3 \Big{)}^{1/4} 
\\
& \qquad \qquad \cdot \lim_{m \to \infty} \Big{(} \frac{1}{m^n} \sum_{i=1}^{N_m} e^{-2\lambda_i^\varphi (\bm{s}_m) /\beta m} \Big{)}^{1/2}
\\
&= (2^3 \beta/3!)^{-1/4} (E (\varphi') - E (\varphi))^{1/4} (\int_\mathbb{R} -(2t/\beta)^3/3! \DHm_\varphi )^{1/4} (- E_{\exp} (\varphi_{;2/\beta}))^{1/2}
\\
&\le (4\beta/3)^{-1/4} (E (\varphi') - E (\varphi))^{1/4} (- E_{\exp} (\varphi_{;2/\beta}))^{3/4}. 
\end{align*}
\end{proof}

The following proof traces the argument in \cite{Dar}, where the archimedean analogue is studied. 
We exhibit it here for the readers convenience. 

\begin{prop}
\label{dexp well-defined}
Let $\{ \varphi_i \}_{i \in I}, \{ \psi_j \}_{j \in J} \subset \mathcal{H} (X, L)$ be decreasing nets pointwisely converging to $\varphi, \psi \in \E^{\exp} (X, L)$, respectively. 
Then the limit 
\[ \lim_{(i, j) \to \infty} d_{\exp} (\varphi_i, \psi_j) \]
exists as a finite value depending only on $\varphi, \psi$. 

Namely, there exists $\Delta \in \mathbb{R}$ depending only on $\varphi, \psi$ such that for every $\varepsilon > 0$ there exists $(i_\varepsilon, j_\varepsilon) \in I \times J$ such that 
\[ |d_{\exp} (\varphi_i, \psi_j) - \Delta| < \varepsilon \]
for every $i \ge i_0$ and $j \ge j_0$. 
\end{prop}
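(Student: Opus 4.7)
The proof decomposes into three claims: the net $\{d_{\exp}(\varphi_i, \psi_j)\}_{(i,j) \in I \times J}$ is Cauchy in $\mathbb{R}$, it stays bounded, and its limit is independent of the chosen regularizations. The first two follow routinely from the triangle inequality together with Lemma \ref{decreasing convergence implies Cauchy} applied to each net separately; the substance is in the third.

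For Cauchyness, I would apply
\[ |d_{\exp}(\varphi_i, \psi_j) - d_{\exp}(\varphi_k, \psi_l)| \le d_{\exp}(\varphi_i, \varphi_k) + d_{\exp}(\psi_j, \psi_l) \]
and use Lemma \ref{decreasing convergence implies Cauchy} to make both right-hand terms arbitrarily small on tails of $I$ and $J$; hence the net converges to some $\Delta \in \mathbb{R}$. Finiteness is immediate from $d_{\exp}(\varphi_i, \psi_j) \le d_{\exp}(\varphi_i, \varphi_{i_0}) + d_{\exp}(\varphi_{i_0}, \psi_{j_0}) + d_{\exp}(\psi_{j_0}, \psi_j)$, since $d_{\exp}$ is finite on $\nH(X, L) \times \nH(X, L)$ and the first and last terms are eventually bounded.

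For independence of regularization, let $\{\varphi'_{i'}\}_{i' \in I'}$ and $\{\psi'_{j'}\}_{j' \in J'}$ be other decreasing regularizations of $\varphi$ and $\psi$. By the triangle inequality it is enough to show that $d_{\exp}(\varphi_i, \varphi'_{i'}) \to 0$ jointly in $(i, i')$ whenever two decreasing nets in $\nH(X, L)$ share the same pointwise limit $\varphi \in \E^{\exp}(X, L)$. The key device is the \emph{join} net
\[ \mu_{(i, i')} := \max(\varphi_i, \varphi'_{i'}), \]
which lies in $\nH(X, L)$ after a common refinement of denominators in the Fubini--Study expressions, is decreasing in $(i, i') \in I \times I'$, and converges pointwise to $\max(\varphi, \varphi) = \varphi$. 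I would then invoke the interior estimate $(\ref{distance lemma})$ from the proof of Lemma \ref{decreasing convergence implies Cauchy}, applied to the inequality $\varphi_i \le \mu_{(i, i')}$, to get a bound of the shape
\[ \int_{\mathbb{R}} (e^{|t/\beta|} - 1)\,\DHm_{\varphi_i, \mu_{(i,i')}} \le C_\beta \bigl(E(\mu_{(i,i')}) - E(\varphi_i)\bigr)^{1/4}, \]
where $C_\beta$ depends on $\sup \varphi_0$ and $|E_{\exp}(\varphi_{;2/\beta})|$, both finite. The monotone convergences $E(\mu_{(i,i')}) \searrow E(\varphi)$ and $E(\varphi_i) \searrow E(\varphi)$ force the right-hand side to zero jointly, so $d_{\exp}(\varphi_i, \mu_{(i,i')}) \to 0$. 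The symmetric argument yields $d_{\exp}(\varphi'_{i'}, \mu_{(i,i')}) \to 0$, and one more triangle inequality concludes.

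The main subtlety lies in making the factor $C_\beta$ truly uniform in $i$: this is exactly where the hypothesis $\varphi \in \E^{\exp}(X, L)$ --- requiring $E_{\exp}(\varphi_{;\rho}) > -\infty$ for \emph{every} $\rho > 0$ rather than just some $\rho$ --- becomes indispensable, since the monotonicity of $E_{\exp}$ in the metric then gives $|E_{\exp}(\varphi_{i;2/\beta})| \le |E_{\exp}(\varphi_{;2/\beta})|$ uniformly in $i$. Without this full exponential moment bound, the factor $C_\beta$ would blow up and regularization-independence would collapse; this is the reason $\E^{\exp}(X, L)$ is the natural domain rather than $\E^1(X, L)$.
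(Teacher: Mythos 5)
Your proof is correct, and the Cauchyness/finiteness part coincides with the paper's. The independence-of-regularization step, however, is organized differently. The paper compares the two regularizations $\{\varphi_i\}$ and $\{\varphi'_{i'}\}$ directly: it invokes Dini's lemma (Lemma \ref{Dini}) to find, for each $i$ and $\varepsilon>0$, a tail of $I'$ on which $\varphi'_{i'} \le \varphi_i + \varepsilon$, applies the key estimate (\ref{distance lemma}) to the ordered pair $(\varphi'_{i'}, \varphi_i+\varepsilon)$, and then absorbs the additive shift via $d_{\exp}(\varphi_i, \varphi_i+\varepsilon_\beta) \le \varepsilon_\beta$ with a carefully tuned $\varepsilon_\beta$. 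You instead introduce the join net $\mu_{(i,i')} = \max(\varphi_i, \varphi'_{i'})$ as a common \emph{upper} regularization of $\varphi$ and apply (\ref{distance lemma}) twice, to $\varphi_i \le \mu_{(i,i')}$ and to $\varphi'_{i'} \le \mu_{(i,i')}$. This buys you a cleaner argument — no Dini step and no $\varepsilon$-shift bookkeeping, since $E(\mu_{(i,i')}) - E(\varphi_i) \to 0$ follows from continuity of $E$ along the decreasing net $\mu_{(i,i')} \searrow \varphi$ — at the modest cost of the additional (true, but not stated in this paper) fact that $\nH(X,L)$ is stable under finite maxima after passing to a common denominator in the Fubini--Study representations; you flag this correctly. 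The uniformity of the constant via $-E_{\exp}(\varphi_{i;2/\beta}) \le -E_{\exp}(\varphi_{;2/\beta})$ and $\sup\varphi_{i} \le \sup\varphi_{0}$ is exactly the point the paper also exploits, and your closing remark on why the full exponential moment hypothesis is needed matches the paper's intent.
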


\begin{proof}
Thanks to the triangle inequality and the above lemma, for any $\varepsilon > 0$ there exists $(i_\varepsilon, j_\varepsilon) \in I \times J$ such that 
\[ |d_{\exp} (\varphi_i, \psi_j) - d_{\exp} (\varphi_k, \psi_l)| \le d_{\exp} (\varphi_i, \varphi_k) + d_{\exp} (\psi_j, \psi_l) \le 2 \varepsilon \]
for every $(i, j), (k, l) \ge (i_\varepsilon, j_\varepsilon)$. 
Thus the net $\{ d_{\exp} (\varphi_i, \psi_j) \}_{(i, j) \in I \times J}$ is Cauchy, so it converges. 

To see the independence, we take other decreasing nets $\{ \varphi'_{i'} \}_{i' \in I'}, \{ \psi'_{j'} \}_{j' \in J'}$ converging to $\varphi, \varphi'$, respectively. 
Put $\Delta := \lim_{(i, j) \to \infty} d_{\exp} (\varphi_i, \psi_j)$ and $\Delta' := \lim_{(i', j') \to \infty} d_{\exp} (\varphi'_{i'}, \psi'_{j'})$. 
For $\beta > 0$, take $(i_\beta, j_\beta) \in I \times J$ and $(i'_\beta, j'_\beta) \in I' \times J'$ so that $|\Delta - d_{\exp} (\varphi_i, \psi_j)| \le \beta$ and $|\Delta' - d_{\exp} (\varphi'_{i'}, \psi'_{j'})| \le \beta$ for every $(i, j) \ge (i_\beta, j_\beta)$ and $(i', j') \ge (i_\beta', j_\beta')$, respectively. 
By the triangle inequality, we have 
\begin{align*} 
|\Delta - \Delta'| 
&\le |d_{\exp} (\varphi_i, \psi_j) - d_{\exp} (\varphi'_{i'}, \psi'_{j'})| + 2 \beta 
\\
&\le d_{\exp} (\varphi_i, \varphi'_{i'}) + d_{\exp} (\psi_j, \psi'_{j'}) + 2\beta 
\end{align*}
for every $(i, i') \ge (i_\beta, i_\beta') \in I \times I'$ and $(j, j') \ge (j_\beta, j_\beta') \in J \times J'$. 
It suffices to show that there exists $(k, k') \ge (i_\beta, i_\beta')$ and $(l, l') \ge (j_\beta, j_\beta')$ such that $d_{\exp} (\varphi_k, \varphi'_{k'}) \le 2\beta$ and $d_{\exp} (\psi_l, \psi_{l'}') \le 2\beta$. 

By Lemma \ref{Dini}, for any $\varepsilon > 0$ and $i \ge i_\beta$, there exists $i'_{i, \varepsilon} \ge i'_\beta$ such that $\varphi_{i'}' \le \varphi_i + \varepsilon$ for every $i' \ge i'_{i, \varepsilon}$. 
Then by (\ref{distance lemma}), we get 
\begin{align*} 
\int_{\mathbb{R}} (e^{|t/\beta|} - 1) \DHm_{\varphi_i + \varepsilon, \varphi'_{i'}} 
&\le \Big{(} \frac{E (\varphi_i + \varepsilon) - E (\varphi'_{i'})}{4\beta/3} \Big{)}^{1/4} (-e^{\sup \varphi_{i;2/\beta}} E_{\exp} (\varphi_{i;2/\beta}))^{3/4}
\\
&\le \Big{(} \frac{E (\varphi_i) - E (\varphi'_{i'}) + \varepsilon (e^L)}{4\beta/3} \Big{)}^{1/4} (-e^{\sup \varphi_{0;2/\beta}} E_{\exp} (\varphi_{;2/\beta}))^{3/4}. 
\end{align*}
Now for $\beta > 0$, put 
\[ \varepsilon_\beta := \min \{ \beta, \frac{\beta}{3 (e^L)} (-e^{\sup \varphi_{0;2/\beta}} E_{\exp} (\varphi_{;2/\beta}))^{-3} \} \]
and take large $k_\beta \ge i_\beta$ so that 
\[ |E (\varphi_{k_\beta}) - E (\varphi)| \le \frac{\beta}{3} (-e^{\sup \varphi_{0;2/\beta}} E_{\exp} (\varphi_{;2/\beta}))^{-3}. \]
Then take large $k_\beta' \ge i'_{k_\beta, \varepsilon_\beta} \ge i'_\beta$ so that 
\[ |E (\varphi) - E (\varphi'_{k_\beta'})| \le \frac{\beta}{3} (-e^{\sup \varphi_{0;2/\beta}} E_{\exp} (\varphi_{;2/\beta}))^{-3}. \]
Then we get 
\[ \int_{\mathbb{R}} (e^{|t/\beta|} - 1) \DHm_{\varphi_i + \varepsilon, \varphi'_{i'}} \le 1, \]
which implies $d_{\exp} (\varphi_{k_\beta} + \varepsilon_\beta, \varphi'_{k_\beta'}) \le \beta$. 
Therefore we get 
\[ d_{\exp} (\varphi_{k_\beta}, \varphi'_{k_\beta'}) \le d_{\exp} (\varphi_{k_\beta}, \varphi_{k_\beta} + \varepsilon_\beta) + d_{\exp} (\varphi_{k_\beta} + \varepsilon_\beta, \varphi'_{k_\beta'}) \le \varepsilon_\beta + \beta \le 2\beta \]
as desired. 
\end{proof}

Now for $\varphi, \psi \in \E^{\exp} (X, L)$, we put 
\begin{equation}
\label{dexp distance definition}
d_{\exp} (\varphi, \psi) := \lim_{(i, j) \to \infty} d_{\exp} (\varphi_i, \psi_j)
\end{equation}
by taking pointwisely convergent decreasing nets $\varphi_i \searrow \varphi$, $\psi_j \searrow \psi$. 
To show that $d_{\exp}$ is a distance on $\E^{\exp} (X, L)$, it suffices to show that $d_{\exp} (\varphi, \varphi') = 0$ iff $\varphi = \varphi'$: the triangle inequality and the reflexivity follows readily from the definition. 
To check this, we note the following. 

\begin{rem} 
\label{dp distance}
By a similar argument, we can also extend $d_p$ to the space 
\[ \E^p (X, L) := \{ \varphi \in \E (X, L) ~|~ \int_\mathbb{R} |t|^p \DHm_\varphi < \infty \}. \]
We have $\E^{\exp} (X, L) \subset \E^p (X, L)$ for every $1 \le p < \infty$. 
The following inequalities are inherited: 
\begin{itemize}
\item $(e^L)^{-1} \cdot d_1 (\varphi, \varphi') \le (e^L)^{-1/p} \cdot d_p (\varphi, \varphi')$ for $\varphi, \varphi' \in \E^p (X, L)$, 

\item $d_r (\varphi, \varphi')^{r (q-p)} \le d_p (\varphi, \varphi')^{p (q-r)} d_q (\varphi, \varphi')^{q (r-p)}$ for $\varphi, \varphi' \in \E^q (X, L)$ and $p \le r \le q$. 

\item $d_p (\varphi, \varphi') \le \lceil p \rceil \cdot d_{\exp} (\varphi, \varphi')$ for $\varphi, \varphi' \in \E^{\exp} (X, L)$. 
\end{itemize}
\end{rem}

It follows that $d_\bullet (\varphi, \varphi') = 0$ for $\varphi, \varphi' \in \E^\bullet (X, L)$ and $\bullet = p, \exp$ implies $d_1 (\varphi, \varphi') = 0$, hence $\varphi = \varphi'$ thanks to Proposition \ref{Ibar and d1}. 
Therefore, we conclude the following. 

\begin{prop}
For $\bullet = p, \exp$, the pseudo-distance $d_\bullet$ defined by (\ref{dexp distance definition}) gives a distance on $\E^\bullet (X, L)$. 
\end{prop}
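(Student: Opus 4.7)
The plan is to verify the three axioms of a distance for $d_\bullet$ on $\E^\bullet(X, L)$, where the only nontrivial one is the nondegeneracy $d_\bullet(\varphi, \varphi') = 0 \Rightarrow \varphi = \varphi'$. The reflexivity $d_\bullet(\varphi, \varphi) = 0$, the symmetry $d_\bullet(\varphi, \varphi') = d_\bullet(\varphi', \varphi)$, and the triangle inequality $d_\bullet(\varphi, \varphi'') \le d_\bullet(\varphi, \varphi') + d_\bullet(\varphi', \varphi'')$ all pass to the limit directly from the corresponding properties on $\nH(X, L)$ established in Proposition \ref{dexp gives a distance on H} (for $\bullet = \exp$) and in \cite{BJ2} (for $\bullet = p$), via the well-definedness of the limit in Proposition \ref{dexp well-defined} (and its $d_p$-analogue indicated in Remark \ref{dp distance}).

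For the nondegeneracy, I would first check that the comparison inequalities listed in Remark \ref{dp distance} persist on $\E^\bullet(X, L)$. Concretely, given $\varphi, \varphi' \in \E^\bullet(X, L)$, choose pointwisely convergent decreasing nets $\varphi_i \searrow \varphi$ and $\varphi'_j \searrow \varphi'$ in $\nH(X, L)$. Applying the inequalities of Remark \ref{dp distance} to each pair $(\varphi_i, \varphi'_j)$ and passing to the limit along $(i, j)$ using Proposition \ref{dexp well-defined} yields
\[
(e^L)^{-1} d_1(\varphi, \varphi') \le (e^L)^{-1/p} d_p(\varphi, \varphi') \le (e^L)^{-1/p} \lceil p \rceil\, d_{\exp}(\varphi, \varphi')
\]
for all $\varphi, \varphi' \in \E^{\exp}(X, L)$, together with the analogous intermediate bounds for $d_p$. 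In particular, if $d_\bullet(\varphi, \varphi') = 0$ with $\bullet \in \{p, \exp\}$, then $d_1(\varphi, \varphi') = 0$.

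It then remains to invoke the known nondegeneracy of $d_1$ on $\E^1(X, L)$: by Proposition \ref{Ibar and d1}, $d_1(\varphi, \varphi') = 0$ forces $\bar{I}(\varphi, \varphi') = 0$, and then by \cite[Corollary 7.4]{BJ3} this implies $\varphi = \varphi'$. Since $\E^{\exp}(X, L) \subset \E^p(X, L) \subset \E^1(X, L)$ (the first inclusion being noted in Remark \ref{dp distance}, and the second following from H\"older applied to the finite measure $\DHm_\varphi$ on $\E(X, L)$), this conclusion is available in both cases $\bullet = p, \exp$.

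The main subtlety I anticipate is justifying that Remark \ref{dp distance}'s comparison inequalities indeed pass to $\E^\bullet(X, L)$ via the limiting procedure of Proposition \ref{dexp well-defined}; once one verifies that $\lim_{(i, j)} d_p(\varphi_i, \varphi'_j)$ and $\lim_{(i, j)} d_{\exp}(\varphi_i, \varphi'_j)$ exist and are independent of the chosen regularizing nets (the $d_p$ case being strictly easier since $|t|^p$ is moderate and the argument of Proposition \ref{dexp well-defined} applies verbatim with $e^{|t/\beta|} - 1$ replaced by $|t/\beta|^p$, using the $d_p$-analogue of Lemma \ref{decreasing convergence implies Cauchy}), the comparison inequalities pass to the limit by a straightforward sandwich. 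Everything else is bookkeeping.
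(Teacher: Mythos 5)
Your proposal is correct and follows essentially the same route as the paper: the pseudo-distance axioms pass to the limit from $\nH(X,L)$, and nondegeneracy is reduced via the comparison inequalities of Remark \ref{dp distance} to $d_1(\varphi,\varphi')=0$, which forces $\varphi=\varphi'$ by Proposition \ref{Ibar and d1} and \cite[Corollary 7.4]{BJ3}. The extra care you take about the limiting procedure for the comparison inequalities is reasonable but not a departure from the paper's argument.
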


By the above remark, $d_{\exp}$-convergence implies $d_p$-convergence. 
Conversely, we have the following. 

\begin{prop}
For $\{ \varphi_i \}_{i \in I}, \varphi \in \E^{\exp} (X, L)$, uniform convergence $\varphi_i \to \varphi$ implies $d_{\exp}$-convergence. 
\end{prop}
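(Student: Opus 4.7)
The plan is to reduce the general case to a direct computation in $\nH(X,L)$ and then extend via regularization.

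First, I will establish the sharp estimate in $\nH(X,L)$: if $\varphi, \varphi' \in \nH(X,L)$ satisfy $\sup_{X^{\mathrm{NA}}} |\varphi-\varphi'| \le c$, then $d_{\exp}(\varphi,\varphi') \le c/\log(1+(e^L)^{-1})$. The argument is a short calculation. Picking a codiagonal basis $\bm{s} = (s_1,\dots,s_{N_m})$ of $R_m$ for $\|\cdot\|_m^{\varphi}$ and $\|\cdot\|_m^{\varphi'}$, the pointwise bound $|\varphi-\varphi'|\le c$ yields $|\lambda_i^{\varphi}(\bm{s}) - \lambda_i^{\varphi'}(\bm{s})| \le mc$ for every $i$, so $\supp\DHm_{\varphi,\varphi'}\subset [-c,c]$. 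Since the total mass is $(e^L)$, we get
\[ \int_{\mathbb{R}} (e^{|t/\beta|}-1)\,\DHm_{\varphi,\varphi'} \le (e^{c/\beta}-1)(e^L), \]
which is $\le 1$ exactly when $\beta \ge c/\log(1+(e^L)^{-1})$.

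Second, I extend the bound to $\E^{\exp}(X,L)$: for $\psi,\psi'\in \E^{\exp}(X,L)$ with $\sup|\psi-\psi'|\le c$, I claim $d_{\exp}(\psi,\psi') \le c/\log(1+(e^L)^{-1})$. By definition (\ref{dexp distance definition}) it suffices to exhibit decreasing regularizations $\psi^{(n)}\searrow \psi$ and $(\psi')^{(n)}\searrow\psi'$ in $\nH(X,L)$, together with a cofinal diagonal $(n_k,m_k)$, such that $\varlimsup_{k\to\infty}\sup_{X^{\mathrm{NA}}}|\psi^{(n_k)}-(\psi')^{(m_k)}|\le c$. Start from any pair of decreasing regularizations. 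Inductively, having chosen $(n_k,m_k)$, I use that $\psi' \le \psi+c \le \psi^{(n_k)}+c$ with $\psi^{(n_k)}+c \in \nH(X,L)$ continuous, so by Lemma~\ref{Dini} there is $m_{k+1}\ge m_k$ with $(\psi')^{(m_{k+1})} \le \psi^{(n_k)}+c+1/k$; then $\psi\le \psi'+c \le (\psi')^{(m_{k+1})}+c$, so there is $n_{k+1}\ge n_k$ with $\psi^{(n_{k+1})}\le (\psi')^{(m_{k+1})}+c+1/k$. Granted the diagonal gives the desired sup bound, Step~1 applied to $\psi^{(n_k)},(\psi')^{(m_k)}\in\nH$ gives $d_{\exp}(\psi^{(n_k)},(\psi')^{(m_k)}) \le (c+1/k)/\log(1+(e^L)^{-1})$, and passing to the limit using Proposition~\ref{dexp well-defined} yields the claim. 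The proposition then follows by taking $\psi=\varphi_i,\psi'=\varphi$ with $c=\sup|\varphi_i-\varphi|\to 0$.

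The main obstacle lies in the diagonal construction of Step~2: the naive inductive choice outlined above controls $(\psi')^{(m_{k+1})}-\psi^{(n_k)}$ and $\psi^{(n_{k+1})}-(\psi')^{(m_{k+1})}$ from above by $c+1/k$, but the \emph{symmetric} bound at the corner $(n_{k+1},m_{k+1})$ requires that $\psi^{(n_{k+1})}-(\psi')^{(m_{k+1})}$ and $(\psi')^{(m_{k+1})}-\psi^{(n_{k+1})}$ are both $\le c+O(1/k)$ uniformly on $X^{\mathrm{NA}}$, and the second inequality is the delicate one because $(\psi')^{(m_{k+1})}$ is below $(\psi')^{(m_k)}$ only. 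The resolution is to slightly refine the inductive step: replace $\psi^{(n_{k+1})}$ by the rooftop $\psi^{(n_{k+1})}\wedge ((\psi')^{(m_{k+1})}+c+1/k)$, which exists in $\E^{\exp}(X,L)$ by Proposition~\ref{decreasing limit of rooftop} and remains in a decreasing regularization of $\psi$, and use the codiagonal inequality $\mathcal{F}_{\varphi\wedge\varphi'}=\mathcal{F}_{\varphi}\cap\mathcal{F}_{\varphi'}$ (Proposition~\ref{filtration of envelope}) to reduce again to the Step~1 estimate on $\nH$. Once both one-sided bounds hold simultaneously along the diagonal, the remaining work is routine.
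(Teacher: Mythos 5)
Your Step 1 is correct, and it is essentially the computation the paper's proof relies on, applied there only to the constant translates $\varphi\pm\varepsilon$ (whose relative spectral measure is a Dirac mass). The gap is in Step 2. The diagonal construction cannot deliver the two-sided bound $\sup|\psi^{(n_k)}-(\psi')^{(m_k)}|\le c+o(1)$: Dini's lemma lets you push $(\psi')^{(m)}$ below $\psi^{(n)}+c+\delta$ by increasing $m$, and $\psi^{(n)}$ below $(\psi')^{(m)}+c+\delta$ by increasing $n$, but each move can destroy the other bound and there is no reason the process stabilizes. Indeed it cannot in general: for $c=0$, i.e.\ $\psi=\psi'$, your claim would say that any two decreasing regularizations of an unbounded $\psi\in\E^{\exp}(X,L)$ admit a cofinal diagonal along which they are \emph{uniformly} close, which fails because the two nets may descend toward the singular locus of $\psi$ at unrelated rates. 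The rooftop repair does not close this. Setting $\tilde A:=\psi^{(n_{k+1})}\wedge\bigl((\psi')^{(m_{k+1})}+c+1/k\bigr)$ gives $\tilde A\le (\psi')^{(m_{k+1})}+c+1/k$, but the reverse bound $(\psi')^{(m_{k+1})}\le\tilde A+c+O(1/k)$ would require $(\psi')^{(m_{k+1})}-\psi'$ to be uniformly small, which it is not; so $\DHm_{\tilde A,(\psi')^{(m_{k+1})}}$ is not supported in $[-c-1/k,\,c+1/k]$ and Step 1 does not apply to the pair. Moreover $\tilde A$ is not known to lie in $\nH(X,L)$ (the paper records the existence of rooftops in $\nH(X,L)$ as an open question), so the modified net cannot be inserted into the definition (\ref{dexp distance definition}) of $d_{\exp}$ on $\E^{\exp}(X,L)$ without invoking results that appear only later (section \ref{Intermediates}).

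The paper sidesteps all of this with two ingredients you did not use: the triangle inequality through constant shifts, and the monotonicity $d_{\exp}(\varphi,\varphi'')\le d_{\exp}(\varphi,\varphi')$ for $\varphi\ge\varphi'\ge\varphi''$. From $\varphi-\varepsilon\le\varphi_i\le\varphi+\varepsilon$ one gets
\[ d_{\exp}(\varphi_i,\varphi)\le d_{\exp}(\varphi_i,\varphi-\varepsilon)+d_{\exp}(\varphi-\varepsilon,\varphi)\le d_{\exp}(\varphi+\varepsilon,\varphi-\varepsilon)+d_{\exp}(\varphi-\varepsilon,\varphi), \]
and the remaining distances are between a metric and its constant translate, hence are computed along the single diagonal regularization $\varphi_j\pm\varepsilon$ by exactly your Step 1 computation and are $O(\varepsilon)$. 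If you want to keep your framework, prove the monotonicity (an exercise with Lemma \ref{diagonal lemma}) and substitute it for the diagonal construction; as written, Step 2 does not go through.
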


\begin{proof}
For any $\varepsilon > 0$, there exists sufficiently large $i$ satisfying $\varphi - \varepsilon \le \varphi_i \le \varphi + \varepsilon$. 
Then we get 
\[ d_{\exp} (\varphi_i, \varphi) \le d_{\exp} (\varphi_i, \varphi - \varepsilon) + d_{\exp} (\varphi - \varepsilon, \varphi) \le d_{\exp} (\varphi + \varepsilon, \varphi - \varepsilon) + \varepsilon (e^L) = 3 \varepsilon (e^L). \]
Here we used $d_{\exp} (\varphi, \varphi'') \le d_{\exp} (\varphi, \varphi')$ for $\varphi \ge \varphi' \ge \varphi'' \in \E^{\exp} (X, L)$, which is just an exercise. 
\end{proof}

\subsubsection{The distance $d_{\exp}$ with anchor in $\nH (X, L)$}
\label{dexp with anchor in H}

Following the steps in section \ref{subsection: Moment energy}, we can introduce the relative moment energy $E_\chi (\varphi, \varphi')$ for $\varphi \in \PSH (X, L)$ and $\varphi' \in \nH (X, L)$ with respect to an increasing right continuous function $\chi$: 
\[ E_\chi (\varphi, \varphi') := \inf \Big{\{} \int_\mathbb{R} \chi \DHm_{\tilde{\varphi}, \varphi'} ~\Big{|}~ \varphi \le \tilde{\varphi} \in \nH (X, L) \Big{\}}. \]
It is monotonic and continuous along decreasing nets on the first variable. 
If $\chi$ is concave, we have 
\[ E_\chi (\varphi, \varphi') \ge E_\chi (\varphi_{;2}) + \frac{1}{2} \chi (-2 \sup \varphi) (e^L), \]
so that $E_\chi (\cdot, \varphi')$ is finite on $\E^\chi (X, L)$. 

Similarly, as in section \ref{Duistermaat--Heckman measure and moment energy}, we can define the relative Duistermaat--Heckman measure $\DHm_{\varphi, \varphi'}$ for $\varphi \in \PSH (X, L)$ and $\varphi' \in \nH (X, L)$. 
For $\varphi \in \E (X, L)$, we have $\int_\mathbb{R} \DHm_{\varphi, \varphi'} = (e^L)$ and $E_\chi (\varphi, \varphi') = \int_\mathbb{R} \chi \DHm_{\varphi, \varphi'}$ for increasing right continuous function $\chi$. 
In particular, $\int_\mathbb{R} \chi \DHm_{\varphi, \varphi'}$ is finite on $\E^{\chi} (X, L)$ and we have 
\[ \lim_{i \to \infty} \int_\mathbb{R} \chi \DHm_{\varphi_i, \varphi'} = \int_\mathbb{R} \chi \DHm_{\varphi, \varphi'} \]
for any convergent decreasing net $\varphi_i \searrow \varphi \in \E (X, L)$. 
Slight generally, we can show the same convergence for moderate $\chi$ in the sense of Definition \ref{moderate}. 
In particular, 
\[ \lim_{i \to \infty} \int_\mathbb{R} (e^{|t/\beta|}-1) \DHm_{\varphi_i, \varphi'} = \int_\mathbb{R} (e^{|t/\beta|}-1) \DHm_{\varphi, \varphi'} \]
for any convergent decreasing net $\varphi_i \searrow \varphi \in \E (X, L)$. 

Now we show the following formula on $d_{\exp} (\varphi, \varphi')$ with an anchor $\varphi' \in \nH (X, L)$. 
This helps us to simplify some arguments in the rest of this article. 

\begin{prop}
\label{distance with anchor}
For $\varphi \in \E^{\exp} (X, L)$ and $\varphi' \in \nH (X, L)$, we have 
\[ d_{\exp} (\varphi, \varphi') = \inf \Big{\{} \beta > 0 ~\Big{|}~ \int_\mathbb{R} (e^{|t/\beta|} -1) \DHm_{\varphi, \varphi'} \le 1 \Big{\}} \]
\end{prop}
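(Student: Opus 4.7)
The plan is to take a decreasing regularization $\{\varphi_i\}_{i \in I} \subset \nH(X, L)$ with $\varphi_i \searrow \varphi$ and to use the constant net $\psi_j \equiv \varphi'$ as the second argument in the definition (\ref{dexp distance definition}). By Proposition \ref{dexp well-defined}, the resulting limit is independent of the choice of regularization, giving
\[ d_{\exp}(\varphi, \varphi') = \lim_i d_{\exp}(\varphi_i, \varphi'), \]
where each $d_{\exp}(\varphi_i, \varphi') = \inf\{\beta > 0 : G_i(\beta) \le 1\}$ for $G_i(\beta) := \int_\mathbb{R}(e^{|t/\beta|} - 1)\, \DHm_{\varphi_i, \varphi'}$. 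Setting $G(\beta) := \int_\mathbb{R}(e^{|t/\beta|} - 1)\, \DHm_{\varphi, \varphi'}$, the proposition reduces to showing $\lim_i \inf\{\beta : G_i(\beta) \le 1\} = \inf\{\beta : G(\beta) \le 1\}$.

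The key inputs are already on the table. First, $e^{|t/\beta|} - 1 = \max(0, e^{t/\beta} - 1) + \max(0, e^{-t/\beta} - 1)$ is moderate in the sense of Definition \ref{moderate}; combined with $\E^{\exp}(X, L) \subset \E(X, L)$ from Corollary \ref{finite moment energy class and full mass class}, the moderate continuity of the relative Duistermaat--Heckman measure stated in section \ref{dexp with anchor in H} yields $G_i(\beta) \to G(\beta)$ for each $\beta > 0$. Next I would verify that $G : (0, \infty) \to [0, \infty)$ is continuous and strictly decreasing, with $\lim_{\beta \to 0^+} G(\beta) = +\infty$ and $\lim_{\beta \to \infty} G(\beta) = 0$ whenever $\varphi \neq \varphi'$: the two limits and strict monotonicity follow from monotone convergence, while continuity at $\beta_0 > 0$ uses dominated convergence with the dominating function $e^{|t/(\beta_0/2)|} - 1$, whose $\DHm_{\varphi, \varphi'}$-integrability rests on $\varphi \in \E^{\exp}(X, L)$ together with the boundedness of $\varphi' \in \nH(X, L)$, which makes $\DHm_{\varphi, \varphi'}$ comparable to $\DHm_\varphi$ up to a shift by $\|\varphi'\|_\infty$.

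Granted these, $\beta_\infty := \inf\{\beta : G(\beta) \le 1\}$ is the unique zero of $G - 1$ (the degenerate case $\varphi = \varphi'$ being trivial), and one passes from pointwise convergence of $G_i$ to convergence of the infima in the standard way: for $\beta > \beta_\infty$, $G(\beta) < 1$ implies $G_i(\beta) < 1$ eventually, so $d_{\exp}(\varphi_i, \varphi') \le \beta$, giving $\limsup_i d_{\exp}(\varphi_i, \varphi') \le \beta_\infty$; for $\beta < \beta_\infty$, $G(\beta) > 1$ implies $G_i(\beta) > 1$ eventually, so $d_{\exp}(\varphi_i, \varphi') \ge \beta$, giving $\liminf_i d_{\exp}(\varphi_i, \varphi') \ge \beta_\infty$. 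The main obstacle I foresee is the careful verification of finiteness and continuity of $G$ at every $\beta > 0$, which ultimately requires controlling the relative measure $\DHm_{\varphi, \varphi'}$ by the absolute one $\DHm_\varphi$ up to the uniform shift dictated by $\varphi' \in \nH(X, L)$, and then invoking the defining property of $\E^{\exp}(X, L)$ that $\int_\mathbb{R} e^{-\rho t}\, \DHm_\varphi < \infty$ for every $\rho > 0$.
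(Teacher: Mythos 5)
Your proposal is correct and shares its skeleton with the paper's proof: both regularize by a decreasing net $\varphi_i \searrow \varphi$ in $\nH(X,L)$, identify $d_{\exp}(\varphi,\varphi')$ with $\lim_i d_{\exp}(\varphi_i,\varphi')$ via Proposition \ref{dexp well-defined}, and feed in the convergence $\int_\mathbb{R}(e^{|t/\beta|}-1)\DHm_{\varphi_i,\varphi'} \to \int_\mathbb{R}(e^{|t/\beta|}-1)\DHm_{\varphi,\varphi'}$ for the moderate weight from section \ref{dexp with anchor in H}. The divergence is in how the infimum is pinned down. You propose to prove that $G(\beta)=\int_\mathbb{R}(e^{|t/\beta|}-1)\DHm_{\varphi,\varphi'}$ is finite, continuous and strictly decreasing, and you correctly flag this as the main remaining burden. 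The paper sidesteps it entirely with the Luxemburg-norm convexity trick: for $\beta' < d_{\exp}(\varphi,\varphi')$ one picks $\varepsilon$ with $(1+\varepsilon)\beta' < d_{\exp}(\varphi_i,\varphi')$ eventually, and convexity of $e^{|t|}-1$ gives $\int(e^{|t/\beta'|}-1)\DHm_{\varphi_i,\varphi'} \ge (1+\varepsilon)\int(e^{|t/(1+\varepsilon)\beta'|}-1)\DHm_{\varphi_i,\varphi'} > 1+\varepsilon$, so the limit is $\ge 1+\varepsilon > 1$ and $\beta'$ bounds the infimum from below; the other direction needs only $G_i(\beta)\le 1 \Rightarrow G(\beta)\le 1$. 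This way one never needs finiteness of $G$ at arbitrary $\beta$ nor its continuity in $\beta$. Your route does close (the strict inequality $G(\beta)<1$ for $\beta>\beta_\infty$ follows from strict pointwise monotonicity of $e^{|t/\beta|}-1$ in $\beta$ off $t=0$, or again from the same convexity inequality, and finiteness of $G$ follows by comparing $\DHm_{\varphi,\varphi'}$ with $\DHm_\varphi$ shifted by the linear bound of $\mathcal{F}_{\varphi'}$), but it costs you the dominated-convergence and finiteness verifications that the paper's $(1+\varepsilon)$-rescaling makes unnecessary.
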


\begin{proof}
Take a decreasing net $\{ \varphi_i \}_{i \in I} \subset \nH (X, L)$ converging to $\varphi \in \E^{\exp} (X, L)$. 
We have $\lim_{i \to \infty} d_{\exp} (\varphi_i, \varphi') = d_{\exp} (\varphi, \varphi')$ by the definition of the metric. 
It follows that for $\beta > d_{\exp} (\varphi, \varphi')$ we can take $i_\beta$ so that $d_{\exp} (\varphi_i, \varphi') < \beta$ for every $i \ge i_\beta$. 
Then since $\int_\mathbb{R} (e^{|t/\beta|} -1) \DHm_{\varphi_i, \varphi'} \le 1$ for $i \ge i_\beta$, we get 
\[ \int_\mathbb{R} (e^{|t/\beta|} -1) \DHm_{\varphi, \varphi'} = \lim_{i \to \infty} \int_\mathbb{R} (e^{|t/\beta|} -1) \DHm_{\varphi_i, \varphi'} \le 1. \]
Thus we obtain 
\[ d_{\exp} (\varphi, \varphi') \ge \inf \Big{\{} \beta > 0 ~\Big{|}~ \int_\mathbb{R} (e^{|t/\beta|} -1) \DHm_{\varphi, \varphi'} \le 1 \Big{\}}. \]

Conversely, take $\beta' < d_{\exp} (\varphi, \varphi')$. 
Take small $\varepsilon > 0$ so that $(1+ \varepsilon) \beta' < d_{\exp} (\varphi, \varphi')$. 
Then we can find $i_{\beta', \varepsilon}$ so that $(1+\varepsilon) \beta' < d_{\exp} (\varphi_i, \varphi')$ for every $i \ge i_{\beta', \varepsilon}$. 
Recall by convexity we have 
\[ \int_\mathbb{R} (e^{|t/\beta'|} -1) \DHm_{\varphi_i, \varphi'} \ge (1+\varepsilon) \int_\mathbb{R} (e^{|t/(1+ \varepsilon) \beta'|} -1) \DHm_{\varphi_i, \varphi'} > 1+ \varepsilon. \]
It follows that  
\[ \int_\mathbb{R} (e^{|t/\beta'|} -1) \DHm_{\varphi, \varphi'} = \lim_{i \to \infty} \int_\mathbb{R} (e^{|t/\beta'|} -1) \DHm_{\varphi_i, \varphi'} \ge 1+ \varepsilon > 1. \]
Thus for any $\beta' < d_{\exp} (\varphi, \varphi')$, we obtain
\[ \beta' \le \inf \Big{\{} \beta > 0 ~\Big{|}~ \int_\mathbb{R} (e^{|t/\beta|} -1) \DHm_{\varphi, \varphi'} \le 1 \Big{\}}. \]
Taking the limit $\beta' \nearrow d_{\exp} (\varphi, \varphi')$, we obtain the reverse inequality. 
\end{proof}

A similar argument shows the following. 

\begin{lem}
\label{dexp convergence criterion}
For a net $\{ \varphi_i \}_{i \in I}$, $\varphi \in \E^{\exp} (X, L)$ and $\varphi' \in \nH (X, L)$, we have $d_{\exp} (\varphi_i, \varphi') \to d_{\exp} (\varphi, \varphi')$ if 
\[ \int_\mathbb{R} e^{|t/\beta|} \DHm_{\varphi_i, \varphi'} \to \int_\mathbb{R} e^{|t/\beta|} \DHm_{\varphi, \varphi'} \]
for every $\beta > 0$. 
\end{lem}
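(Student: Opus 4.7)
The plan is to reduce the convergence of distances to an elementary continuity argument about the real-valued functions
\[ \Phi_\psi(\beta) := \int_\mathbb{R}\bigl(e^{|t/\beta|}-1\bigr)\,\DHm_{\psi,\varphi'}, \qquad \beta>0, \]
for $\psi\in\E^{\exp}(X,L)$. By Proposition \ref{distance with anchor}, $d_{\exp}(\psi,\varphi')=\inf\{\beta>0 : \Phi_\psi(\beta)\le 1\}$. The hypothesis is exactly $\Phi_{\varphi_i}(\beta)\to\Phi_\varphi(\beta)$ for every $\beta>0$ (after subtracting the constant $(e^L)$).

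First I would verify that $\Phi_\varphi:(0,\infty)\to[0,\infty)$ is continuous and strictly decreasing. Strict monotonicity is immediate since $e^{|t/\beta|}$ is strictly decreasing in $\beta$ for each fixed $t\ne 0$. For continuity at $\beta_0>0$, fix $\beta_0'<\beta_0$; for $\beta\ge\beta_0'$ the bound $e^{|t/\beta|}\le e^{|t/\beta_0'|}$ holds, and this dominating function is $\DHm_{\varphi,\varphi'}$-integrable because $\DHm_{\varphi,\varphi'}$ has compact right-support (as $\DHm_\varphi$ does) and the hypothesis $\varphi\in\E^{\exp}(X,L)$ together with boundedness of $\varphi'\in\nH(X,L)$ forces $\int e^{|t/\beta_0'|}\,\DHm_{\varphi,\varphi'}<\infty$. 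Dominated convergence then gives continuity. Moreover, by monotone convergence $\lim_{\beta\to\infty}\Phi_\varphi(\beta)=0$ and $\lim_{\beta\to 0+}\Phi_\varphi(\beta)=+\infty$ whenever $\DHm_{\varphi,\varphi'}$ has any mass away from $0$.

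Now split into two cases. If $\varphi=\varphi'$, then $d_{\exp}(\varphi,\varphi')=0$ and $\Phi_\varphi\equiv 0$; for any $\varepsilon>0$, $\Phi_{\varphi_i}(\varepsilon)\to 0$ implies $\Phi_{\varphi_i}(\varepsilon)\le 1$ eventually, so $d_{\exp}(\varphi_i,\varphi')\le\varepsilon$ eventually. If $\varphi\ne\varphi'$, set $\beta_*:=d_{\exp}(\varphi,\varphi')>0$, which is strictly positive because $d_{\exp}$ is a genuine distance on $\E^{\exp}(X,L)$. By continuity and strict monotonicity, $\Phi_\varphi(\beta_*)=1$ is the unique solution. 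For any $0<\delta<\beta_*$ we have $\Phi_\varphi(\beta_*-\delta)>1>\Phi_\varphi(\beta_*+\delta)$; by the hypothesis, eventually $\Phi_{\varphi_i}(\beta_*-\delta)>1$ and $\Phi_{\varphi_i}(\beta_*+\delta)<1$, which by monotonicity of $\Phi_{\varphi_i}$ forces $\beta_*-\delta\le d_{\exp}(\varphi_i,\varphi')\le\beta_*+\delta$. Letting $\delta\to 0$ yields $d_{\exp}(\varphi_i,\varphi')\to\beta_*$.

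The only point requiring any care is ensuring the integrability of $e^{|t/\beta|}$ against $\DHm_{\varphi,\varphi'}$ for all $\beta>0$, which underpins both the continuity of $\Phi_\varphi$ and the passage to the limit; this is supplied by the hypothesis that $\int e^{|t/\beta|}\,\DHm_{\varphi_i,\varphi'}$ converges to a finite limit for every $\beta>0$. Everything else is a straightforward analysis of the infimum of $\beta\mapsto\{\Phi(\beta)\le 1\}$ using uniform continuity and strict monotonicity, so no substantive obstacle arises.
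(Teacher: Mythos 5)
Your proof is correct, and it reaches the conclusion by a slightly different mechanism than the paper. Both arguments start from Proposition \ref{distance with anchor}, which identifies $d_{\exp}(\psi,\varphi')$ with $\inf\{\beta>0: \Phi_\psi(\beta)\le 1\}$. The paper then never analyzes $\Phi$ as a function of $\beta$: for the $\varliminf$ bound it uses the hypothesis at a single $\beta<\beta_*$ to keep $\Phi_{\varphi_i}(\beta)>1$, and for the $\varlimsup$ bound it exploits the convexity inequality $\Phi_{\varphi_i}((1+\varepsilon)\beta')\le\tfrac{1}{1+\varepsilon}\Phi_{\varphi_i}(\beta')$ to absorb the $\varepsilon$-loss $\Phi_{\varphi_i}(\beta')\le 1+\varepsilon$ into a slightly larger radius; this sidesteps any continuity or strict-monotonicity claim and needs no case split. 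You instead establish that $\Phi_\varphi$ is continuous (dominated convergence, using the integrability guaranteed by $\varphi\in\E^{\exp}(X,L)$ and the boundedness of $\varphi'$) and strictly decreasing, deduce $\Phi_\varphi(\beta_*)=1$, and squeeze. This is sound, but note two points you are implicitly relying on: the strict inequality $\Phi_\varphi(\beta_*+\delta)<1$ genuinely requires strict monotonicity, which in turn requires $\DHm_{\varphi,\varphi'}\ne (e^L)\delta_0$; this is exactly why you need the case split and the fact that $d_{\exp}$ is a distance on $\E^{\exp}(X,L)$ (if $\DHm_{\varphi,\varphi'}$ were concentrated at the origin, the anchor formula would give $d_{\exp}(\varphi,\varphi')=0$, hence $\varphi=\varphi'$). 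The paper's convexity trick buys a shorter, hypothesis-at-one-point argument with no regularity analysis of $\Phi$; your version buys a cleaner conceptual picture (the distance is the unique root of $\Phi_\varphi=1$) at the cost of verifying continuity and strict monotonicity, both of which you have adequately justified.
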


\begin{proof}
Firstly we show $d_{\exp} (\varphi, \varphi') \le \varliminf_{i \to \infty} d_{\exp} (\varphi_i, \varphi)$. 
Take $\beta < d_{\exp} (\varphi, \varphi')$. 
Then we can take $\delta > 0$ so that 
\[ \int_\mathbb{R} (e^{|t/\beta|} -1) \DHm_{\varphi, \varphi'} \ge 1 + \delta. \]
By the assumption $\int_\mathbb{R} e^{|t/\beta|} \DHm_{\varphi_i, \varphi'} \to \int_\mathbb{R} e^{|t/\beta|} \DHm_{\varphi, \varphi'}$, there exists $i_0$ such that 
\[ \int_\mathbb{R} e^{|t/\beta|} \DHm_{\varphi_i, \varphi'} \ge 1+ \frac{\delta}{2} \]
for every $i \ge i_0$, hence we get $\beta < d_{\exp} (\varphi_i, \varphi')$ for every $i \ge i_0$. 
It follows that $\beta \le \varliminf_{i \to \infty} d_{\exp} (\varphi_i, \varphi')$. 
Taking the limit $\beta \nearrow d_{\exp} (\varphi, \varphi')$, we obtain the desired estimate. 

To see the reverse inequality, take $\beta' > d_{\exp} (\varphi, \varphi')$. 
Since 
\[ \int_\mathbb{R} (e^{|t/\beta'|} -1) \DHm_{\varphi, \varphi'} \le 1, \]
for any $\varepsilon > 0$ we can take $i_\varepsilon$ so that 
\[ \int_\mathbb{R} e^{|t/\beta'|} \DHm_{\varphi_i, \varphi'} \le 1+ \varepsilon \]
for every $i \ge i_{\varepsilon}$. 
Now we again recall 
\[ \int_\mathbb{R} (e^{|t/(1+\varepsilon) \beta'|} -1) \DHm_{\varphi_i, \varphi'} \le \frac{1}{1+\varepsilon} \int_\mathbb{R} (e^{|t/\beta'|} -1) \DHm_{\varphi_i, \varphi'} \le 1 \]
by convexity. 
It follows that $d_{\exp} (\varphi_i, \varphi') \le (1+ \varepsilon) \beta'$ for every $i \ge i_\varepsilon$, hence $\varlimsup_{i \to \infty} d_{\exp} (\varphi_i, \varphi') \le (1+ \varepsilon) \beta'$ for any $\varepsilon > 0$ and $\beta' > d_{\exp} (\varphi, \varphi')$. 
Thus we obtain $\varlimsup_{i \to \infty} d_{\exp} (\varphi_i, \varphi) \le d_{\exp} (\varphi, \varphi')$. 
\end{proof}

\subsubsection{Intermediates}
\label{Intermediates}

We show the density of $\nH (X, L) \subset \E^{\exp} (X, L)$. 

\begin{prop}
\label{decreasing convergence is dexp convergent}
For a decreasing net $\{ \varphi_i \}_{i \in I} \subset \E^{\exp} (X, L)$ pointwisely converging to $\varphi \in \E^{\exp} (X, L)$, we have 
\[ d_{\exp} (\varphi_i, \varphi) \to 0. \]
In particular, $\nH (X, L)$ is dense in $\E^{\exp} (X, L)$ with respect to $d_{\exp}$-topology. 
\end{prop}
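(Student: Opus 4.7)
The plan is to bootstrap from the already-established Cauchy estimate in $\nH(X, L)$ to arbitrary decreasing nets in $\E^{\exp}(X, L)$ by triangulating through a judiciously chosen Fubini--Study anchor.

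\textbf{Step 1 (nets in $\nH(X, L)$).} First I would handle the special case $\{\varphi_i\}_{i \in I} \subset \nH(X, L)$. Lemma~\ref{decreasing convergence implies Cauchy} asserts that such a net is $d_{\exp}$-Cauchy in $\nH(X, L)$: for every $\varepsilon > 0$ there exists $i_\varepsilon \in I$ with $d_{\exp}(\varphi_i, \varphi_j) \le \varepsilon$ whenever $i, j \ge i_\varepsilon$. On the other hand, Proposition~\ref{dexp well-defined} computes $d_{\exp}(\varphi_i, \varphi)$ as the limit of $d_{\exp}$-distances along any pair of decreasing Fubini--Study regularizations. Choosing the trivial constant regularization $\varphi_i$ of itself (legal since $\varphi_i \in \nH$) and the tail subnet $\{\varphi_j\}_{j \ge i}$ of the original net as regularization of $\varphi$, one gets $d_{\exp}(\varphi_i, \varphi) = \lim_{j \to \infty} d_{\exp}(\varphi_i, \varphi_j)$. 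Combined with the Cauchy estimate this yields $d_{\exp}(\varphi_i, \varphi) \le \varepsilon$ for $i \ge i_\varepsilon$.

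\textbf{Step 2 (continuity of distance to an anchor).} Next I would show that for any fixed anchor $\psi \in \nH(X, L)$ and any decreasing $\varphi_i \searrow \varphi$ in $\E^{\exp}(X, L)$, one has $d_{\exp}(\varphi_i, \psi) \to d_{\exp}(\varphi, \psi)$. By Lemma~\ref{dexp convergence criterion}, it suffices to verify
\[ \int_\mathbb{R} e^{|t/\beta|} \DHm_{\varphi_i, \psi} \to \int_\mathbb{R} e^{|t/\beta|} \DHm_{\varphi, \psi} \qquad \text{for every } \beta > 0. \]
Since $\varphi, \varphi_i \in \E^{\exp}(X, L) \subset \E(X, L)$, the relative measures have constant total mass $(e^L)$, so this reduces to showing the analogous convergence with integrand $e^{|t/\beta|} - 1$. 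The latter function decomposes as the sum of an increasing right-continuous function vanishing at $-\infty$ (tame in the sense of Definition~\ref{tame}) on $[0, \infty)$ and a decreasing right-continuous function on $(-\infty, 0]$, and is therefore moderate in the sense of Definition~\ref{moderate}. The convergence of moment integrals against moderate functions along decreasing nets in $\E(X, L)$ was recorded in Section~\ref{dexp with anchor in H}, applied now to the relative spectral measures with anchor $\psi \in \nH(X, L)$.

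\textbf{Step 3 (general case and density).} Given a decreasing net $\varphi_i \searrow \varphi$ in $\E^{\exp}(X, L)$ and $\varepsilon > 0$, pick a countable regularization $\psi_k \searrow \varphi$ with $\psi_k \in \nH(X, L)$ (available by definition of $\PSH(X, L)$; a sequence suffices by Theorem~9.11 of \cite{BJ3}). Step~1 applied to $\{\psi_k\}$ yields $k_0$ with $d_{\exp}(\psi_{k_0}, \varphi) < \varepsilon/3$. Using $\psi_{k_0}$ as anchor in Step~2, one finds $i_0$ such that for $i \ge i_0$,
\[ d_{\exp}(\varphi_i, \psi_{k_0}) \le d_{\exp}(\varphi, \psi_{k_0}) + \varepsilon/3 < 2\varepsilon/3. \]
The triangle inequality gives $d_{\exp}(\varphi_i, \varphi) \le d_{\exp}(\varphi_i, \psi_{k_0}) + d_{\exp}(\psi_{k_0}, \varphi) < \varepsilon$. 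The density of $\nH(X, L)$ in $\E^{\exp}(X, L)$ then follows at once from Step~1 applied to any such regularization.

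The main technical obstacle is the moderate convergence needed in Step~2, namely justifying $\int_\mathbb{R} (e^{|t/\beta|} - 1)\, \DHm_{\varphi_i, \psi} \to \int_\mathbb{R} (e^{|t/\beta|} - 1)\, \DHm_{\varphi, \psi}$ when $\varphi_i \searrow \varphi$ in $\E^{\exp}(X, L)$. This ultimately rests on identifying $e^{|t/\beta|} - 1$ as a sum of moderate pieces, together with the extension of the moment-measure continuity results of Section~\ref{Moment energy and Duistermaat--Heckman measure} to relative spectral measures with a Fubini--Study anchor. Everything else is an application of the triangle inequality and bookkeeping.
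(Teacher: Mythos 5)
Your proposal is correct and follows essentially the same route as the paper's own proof: handle decreasing nets in $\nH(X,L)$ via Lemma~\ref{decreasing convergence implies Cauchy}, establish continuity of $d_{\exp}(\cdot,\varphi')$ along decreasing nets for an anchor $\varphi'\in\nH(X,L)$ via Lemma~\ref{dexp convergence criterion} and the moderate-function convergence of the relative spectral measures, and conclude by the triangle inequality. Your Step~2 merely makes explicit the decomposition of $e^{|t/\beta|}-1$ into tame and monotone pieces that the paper leaves as a citation to the opening of Section~\ref{dexp with anchor in H}.
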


\begin{proof}
Suppose firstly $\varphi_i \in \nH (X, L)$. 
By Lemma \ref{decreasing convergence implies Cauchy}, for any $\varepsilon > 0$ there exists $k_\varepsilon$ such that $d_{\exp} (\varphi_i, \varphi_j) < \varepsilon$ for all $i, j \ge k_\varepsilon$. 
It follows that $d_{\exp} (\varphi_i, \varphi) = \lim_{j \to \infty} d_{\exp} (\varphi_i, \varphi_j) \le \varepsilon$ for $i \ge k_\varepsilon$. 
Thus we get the claim in this case. 
In particular, $\nH (X, L)$ is dense in $\E^{\exp} (X, L)$. 

Now we study the general case $\varphi_i \in \E^{\exp} (X, L)$. 
As we already noted in the beginning of section \ref{dexp with anchor in H}, we have 
\[ \int_\mathbb{R} e^{|t/\beta|} \DHm_{\varphi_i, \varphi'} \to \int_\mathbb{R} e^{|t/\beta|} \DHm_{\varphi, \varphi'} \]
for any $\beta > 0$, $\varphi' \in \nH (X, L)$ and convergent decreasing sequence $\varphi_i \searrow \varphi \in \E^{\exp} (X, L)$. 
By Lemma \ref{dexp convergence criterion}, we get $d_{\exp} (\varphi_i, \varphi') \to d_{\exp} (\varphi, \varphi')$. 
By the above argument, we can take $\varphi' \in \nH (X, L)$ so that $d_{\exp} (\varphi, \varphi') \le \varepsilon$ for any $\varepsilon > 0$. 

It follows that 
\[ \varlimsup d_{\exp} (\varphi_i, \varphi) \le \lim d_{\exp} (\varphi_i, \varphi') + d_{\exp} (\varphi', \varphi) = 2 d_{\exp} (\varphi, \varphi') \le 2 \varepsilon \]
for any $\varepsilon > 0$. 
\end{proof}

The following is a refinement of Proposition \ref{dexp well-defined}. 

\begin{cor}
Let $\{ \varphi_i \}_{i \in I}, \{ \psi_j \}_{j \in J} \subset \E^{\exp} (X, L)$ decreasing nets pointwisely converging to $\varphi, \psi \in \E^{\exp} (X, L)$, respectively. 
Then we have 
\[ \lim_{(i, j) \to \infty} d_{\exp} (\varphi_i, \psi_j) = d_{\exp} (\varphi, \psi). \]
\end{cor}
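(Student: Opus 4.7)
The plan is to apply the triangle inequality together with the already-established Proposition \ref{decreasing convergence is dexp convergent}, which says that for any decreasing net in $\E^{\exp}(X, L)$ pointwisely converging to a limit in $\E^{\exp}(X, L)$, the net is $d_{\exp}$-convergent to that limit. This converts the corollary into a one-line computation on top of the real work that was done in the preceding proposition.

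Concretely, I would begin by invoking the triangle inequality for the distance $d_{\exp}$ on $\E^{\exp}(X, L)$ (which is well-defined as a distance by the discussion in subsection \ref{A metric structure on the space Eexp}): for every $(i, j) \in I \times J$,
\[
|d_{\exp}(\varphi_i, \psi_j) - d_{\exp}(\varphi, \psi)| \le d_{\exp}(\varphi_i, \varphi) + d_{\exp}(\psi_j, \psi).
\]
Then I would apply Proposition \ref{decreasing convergence is dexp convergent} separately to each of the decreasing nets $\varphi_i \searrow \varphi$ and $\psi_j \searrow \psi$ (both within $\E^{\exp}(X, L)$, by hypothesis) to conclude $d_{\exp}(\varphi_i, \varphi) \to 0$ and $d_{\exp}(\psi_j, \psi) \to 0$. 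Taking the limit along the product directed set $I \times J$ yields the result.

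There is essentially no obstacle here, since the substantive content has already been absorbed into Proposition \ref{decreasing convergence is dexp convergent}: it was there that the passage from the case of nets in $\nH(X, L)$ (Proposition \ref{dexp well-defined}) to nets in $\E^{\exp}(X, L)$ was carried out via Lemma \ref{dexp convergence criterion} and the continuity of $\int_\mathbb{R} e^{|t/\beta|} \DHm_{\bullet, \varphi'}$ along decreasing nets in $\E(X, L)$ for an anchor $\varphi' \in \nH(X, L)$. Once $d_{\exp}$ is known to be a genuine distance and the one-sided convergence along decreasing nets is granted, the corollary is a formal consequence of the triangle inequality.
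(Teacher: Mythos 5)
Your proof is correct and is essentially identical to the paper's: the paper also applies the triangle inequality $|d_{\exp}(\varphi_i, \psi_j) - d_{\exp}(\varphi, \psi)| \le d_{\exp}(\varphi_i, \varphi) + d_{\exp}(\psi_j, \psi)$ and then invokes Proposition \ref{decreasing convergence is dexp convergent} for each net. No issues.
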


\begin{proof}
By the triangle inequality, we have 
\[ |d_{\exp} (\varphi_i, \psi_j) - d_{\exp} (\varphi, \psi)| \le d_{\exp} (\varphi_i, \varphi) + d_{\exp} (\psi_j, \psi), \]
so that the claim follows from the above proposition. 
\end{proof}

The following will be used in the proposition below and in the proof of the continuity of the exponential moment energy $E_{\exp}$. 

\begin{prop}
\label{dexp Cauchy implies Eexp bounded}
For any $\rho > 0$, the exponential moment energy $E_{\exp} (\varphi_{;\rho})$ is bounded on any $d_{\exp}$-Cauchy net. 
\end{prop}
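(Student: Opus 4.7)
The plan is to reduce the bound on $-E_{\exp}(\varphi_{i;\rho}) = \int_\mathbb{R} e^{-\rho t}\,\DHm_{\varphi_i}$ (finite for each $i$ since $\varphi_i \in \E^{\exp}(X,L)$; see Proposition \ref{moment energy and DH measure}) to a spectral Cauchy--Schwarz estimate controlled by the defining inequality of $d_{\exp}$. Fix $\rho > 0$ and a $d_{\exp}$-Cauchy net $\{\varphi_i\}_{i\in I}$.

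First, I would establish that for every $\varphi \in \E^{\exp}(X,L)$ and $\varphi' \in \nH(X,L)$,
\[ \int_\mathbb{R} e^{-\rho t}\,\DHm_\varphi \;\le\; \Bigl(\int_\mathbb{R} e^{-2\rho t}\,\DHm_{\varphi'}\Bigr)^{1/2}\Bigl(\int_\mathbb{R} e^{-2\rho t}\,\DHm_{\varphi,\varphi'}\Bigr)^{1/2}. \]
For $\varphi \in \nH(X,L)$ this follows by writing $e^{-\rho\lambda^\varphi_i(\bm{s})/m} = e^{-\rho\lambda^{\varphi'}_i(\bm{s})/m}\cdot e^{-\rho(\lambda^\varphi_i(\bm{s})-\lambda^{\varphi'}_i(\bm{s}))/m}$ in a codiagonal basis $\bm{s}$ for $\varphi,\varphi'$, applying Cauchy--Schwarz to the finite sums $m^{-n}\sum_i(\,\cdot\,)$, and passing to $m\to\infty$ using the definitions of $\DHm_\varphi$, $\DHm_{\varphi'}$, $\DHm_{\varphi,\varphi'}$ as spectral limits. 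I then extend to $\varphi \in \E^{\exp}(X,L)$ by choosing a decreasing $\nH$-regularization $\varphi^{(k)}\searrow \varphi$: the left hand side converges by continuity of $E_{-e^{-\rho t}}$ along decreasing nets in $\PSH$ (Proposition \ref{continuity of moment energy along decreasing nets}) together with Proposition \ref{moment energy and DH measure}; and the relative integral $\int e^{-2\rho t}\,\DHm_{\varphi^{(k)},\varphi'}$ converges to $\int e^{-2\rho t}\,\DHm_{\varphi,\varphi'}$ by the moderate-continuity of relative Duistermaat--Heckman measures asserted in section \ref{dexp with anchor in H}, since $e^{-2\rho t}$ is right continuous and decreasing, hence moderate in the sense of Definition \ref{moderate}.

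Second, I would exploit the Cauchy condition to choose an anchor sufficiently close in $d_{\exp}$ to absorb the factor $2\rho$ via Proposition \ref{distance with anchor}. Set $\varepsilon := 1/(4\rho)$. By the $d_{\exp}$-Cauchy property, there is $i_0\in I$ with $d_{\exp}(\varphi_i,\varphi_{i_0})<\varepsilon$ for all $i \ge i_0$; by density $\nH(X,L)\subset \E^{\exp}(X,L)$ (Proposition \ref{decreasing convergence is dexp convergent}), I pick $\psi\in \nH(X,L)$ with $d_{\exp}(\psi,\varphi_{i_0})<\varepsilon$, so that
\[ d_{\exp}(\varphi_i,\psi) \;<\; 2\varepsilon \;=\; \tfrac{1}{2\rho} \qquad (i\ge i_0) \]
by the triangle inequality. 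Taking $\beta = 1/(2\rho) > d_{\exp}(\varphi_i,\psi)$ in Proposition \ref{distance with anchor} yields
\[ \int_\mathbb{R} e^{-2\rho t}\,\DHm_{\varphi_i,\psi} \;\le\; \int_\mathbb{R} e^{|t|/\beta}\,\DHm_{\varphi_i,\psi} \;\le\; (e^L)+1. \]
Combined with the Cauchy--Schwarz estimate and the finiteness of $\int e^{-2\rho t}\,\DHm_\psi = -E_{\exp}(\psi_{;2\rho})$ (from $\psi\in \nH\subset \E^{\exp}$), this gives the uniform bound
\[ -E_{\exp}(\varphi_{i;\rho}) \;\le\; \bigl((e^L)+1\bigr)^{1/2}\bigl(-E_{\exp}(\psi_{;2\rho})\bigr)^{1/2} \qquad \text{for all } i\ge i_0, \]
which is the claim on the cofinal tail; the individual values $E_{\exp}(\varphi_{i;\rho})$ at remaining indices are finite by hypothesis.

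The main obstacle I anticipate is the rigorous limiting step extending the Cauchy--Schwarz estimate from $\varphi\in \nH(X,L)$ to $\varphi\in \E^{\exp}(X,L)$: it requires convergence of $\int e^{-2\rho t}\,\DHm_{\varphi^{(k)},\psi}$ along a decreasing regularization against the \emph{unbounded} integrand $e^{-2\rho t}$. The function $e^{-2\rho t}$ is not tame (it diverges at $-\infty$), so tame convergence is insufficient; one must invoke the stronger moderate-continuity statement for relative DH measures from section \ref{dexp with anchor in H}, and confirm that the assumption $\varphi\in \E^{\exp}(X,L)$ (so $\varphi\in \E(X,L)$) is what guarantees no mass is lost at $-\infty$ in this limit.
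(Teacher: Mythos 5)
Your proof is correct and follows essentially the same route as the paper: split $e^{-\rho t}$ along a codiagonal basis into an anchor part and a relative part, control the relative Duistermaat--Heckman integral via the Cauchy condition and Proposition \ref{distance with anchor}, and control the anchor part by its finite exponential moment. The only (cosmetic) differences are that you use Cauchy--Schwarz where the paper uses the convexity bound $e^{|a+b|}-1\le\tfrac12(e^{2|a|}-1)+\tfrac12(e^{2|b|}-1)$, and you anchor at a nearby $\psi\in\nH(X,L)$ rather than at the net element $\varphi_{k_\beta}$ itself (which the paper then also reduces to an $\nH$-anchor by density).
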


\begin{proof}
We firstly note for $\varphi \in \E^{\exp} (X, L)$ and $\varphi' \in \nH (X, L)$ 
\[ \int_{\mathbb{R}} (e^{|t/\beta|} -1) \DHm_\varphi \le \frac{1}{2} \int_{\mathbb{R}} (e^{2 |t/\beta|} -1) \DHm_{\varphi, \varphi'} + \frac{1}{2} \int_{\mathbb{R}} (e^{2 |t/\beta|} -1) \DHm_{\varphi'} \]
by convexity and the continuity along decreasing nets $\varphi_i \searrow \varphi$. 

Secondly, for $\varphi, \varphi' \in \E^{\exp} (X, L)$ with $d_{\exp} (\varphi, \varphi') < \beta/2$, we note 
\[ \int_{\mathbb{R}} (e^{|t/\beta|} -1) \DHm_\varphi \le \frac{1}{2} + \frac{1}{2} \int_{\mathbb{R}} (e^{2 |t/\beta|} -1) \DHm_{\varphi'}. \]
Both sides are continuous along decreasing nets and the assumption $d_{\exp} (\varphi, \varphi') < \beta/2$ is stable for $d_{\exp}$-small perturbation, so we may assume $\varphi' \in \nH (X, L)$, thanks to the above proposition. 
Since $d_{\exp} (\varphi, \varphi') < \beta/2$, we have 
\[ \int_{\mathbb{R}} (e^{2|t/\beta|} -1) \DHm_{\varphi, \varphi'} \le 1 \]
by Proposition \ref{distance with anchor}. 
This shows 
\begin{align*} 
\int_{\mathbb{R}} (e^{|t/\beta|} -1) \DHm_{\varphi} 
&\le \frac{1}{2} \int_{\mathbb{R}} (e^{2|t/\beta|} -1) \DHm_{\varphi, \varphi'} + \frac{1}{2} \int_{\mathbb{R}} (e^{2 |t/\beta|} -1) \DHm_{\varphi'} 
\\
&\le \frac{1}{2} + \frac{1}{2} \int_{\mathbb{R}} (e^{2 |t/\beta|} -1) \DHm_{\varphi'}
\end{align*}
as desired. 

Now let $\{ \varphi_i \}_{i \in I} \subset \E^{\exp} (X, L)$ be a Cauchy net. 
Take $\beta = \rho^{-1} > 0$. 
Since $0 \le -E_{\exp} (\varphi_{i; \rho}) = \int_{\mathbb{R}} e^{-t/\beta} \DHm_{\varphi_i} \le \int_{\mathbb{R}} (e^{|t/\beta|} -1) \DHm_{\varphi_i} + (e^L)$, it suffices to bound $\int_{\mathbb{R}} (e^{|t/\beta|} -1) \DHm_{\varphi_i}$. 
Take $k_\beta$ so that $d_{\exp} (\varphi_i, \varphi_j) < \beta/2$ for $i, j \ge k_\beta$. 
Then by the above argument, we get 
\begin{align*}
\int_{\mathbb{R}} (e^{|t/\beta|} -1) \DHm_{\varphi_i} \le \frac{1}{2} + \frac{1}{2} \int_{\mathbb{R}} (e^{2 |t/\beta|} -1) \DHm_{\varphi_{k_\beta}}
\end{align*}
for $i \ge k_\beta$, which shows the boundedness. 
\end{proof}

\begin{quest}
Does there exist a $d_{\exp}$-bounded set with unbounded $E_{\exp} (\varphi_{; \rho})$? 
\end{quest}

\begin{rem}
We have the following reverse implication: for any $\rho > 0$, the subset 
\[ \{ \varphi \in \E^{\exp} (X, L) ~|~ \sup \varphi \le C, ~ E_{\exp} (\varphi_{; \rho}) \ge - C' \} \] 
is $d_{\exp}$-bounded. 
Indeed, since 
\[ \int_\mathbb{R} (e^{|\rho t|} -1) \DHm_\varphi \le - E_{\exp} (\varphi_{; \rho}) + (e^L) (e^{\rho \sup \varphi} - 1) \le C' + (e^L) (e^{\rho C} -1), \]
we get a bound on $d_{\exp} (\varphi, 0)$ by the inequality (\ref{dexp Eexp comparison}). 
\end{rem}

We will use the following in the proof of the completeness of $\E^{\exp} (X, L)$. 

\begin{prop}
\label{convergence of decreasing sequence}
A decreasing net $\{ \varphi_i \}_{i \in I} \subset \E^{\exp} (X, L)$ has a limit $\varphi \in \E^{\exp} (X, L)$ in $d_{\exp}$-topology if and only if $E_{\exp} (\varphi_{i; \rho})$ is bounded for every $\rho > 0$. 
This is the case in particular when $\varphi_i$ is $d_{\exp}$-Cauchy.  
\end{prop}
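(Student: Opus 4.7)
The plan is to establish the two directions separately; the nontrivial content lies in producing a limit in $\E^{\exp}(X,L)$ from the bound on $E_{\exp}(\varphi_{i;\rho})$, while the reverse direction and the final ``in particular'' clause reduce directly to Proposition \ref{dexp Cauchy implies Eexp bounded}. Indeed, a $d_{\exp}$-convergent net is Cauchy, and any $d_{\exp}$-Cauchy net has $E_{\exp}(\varphi_{i;\rho})$ bounded at every scale $\rho>0$ by that proposition; this settles the only-if implication and the last sentence.

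For the if direction, I would proceed in four steps. First, since $\{\varphi_i\}_{i\in I}$ is a decreasing net of upper semi-continuous functions on the compact space $X^{\mathrm{NA}}$, the pointwise infimum $\varphi:=\inf_i\varphi_i$ exists as an usc function. Using the general fact (recalled just after the definition of $\PSH(X,L)$) that the pointwise limit of a decreasing net in $\PSH(X,L)$ is either a member of $\PSH(X,L)$ or identically $-\infty$, it suffices to rule out the latter. Second, I invoke the inclusion $\E^{\exp}(X,L)\subset\E(X,L)$ from Corollary \ref{finite moment energy class and full mass class}, which gives $\int_\mathbb{R}\DHm_{\varphi_i}=(e^L)$ with $\supp\DHm_{\varphi_i}\subset(-\infty,\sup\varphi_i]$; hence
\[
-E_{\exp}(\varphi_{i;\rho})=\int_\mathbb{R} e^{-\rho t}\DHm_{\varphi_i}\ \ge\ (e^L)\,e^{-\rho\sup\varphi_i}.
\]
The hypothesis $-E_{\exp}(\varphi_{i;\rho})\le C_\rho$ then forces the uniform lower bound $\sup\varphi_i\ge -\rho^{-1}\log(C_\rho/(e^L))$, and in particular $\sup\varphi=\varphi(v_{\mathrm{triv}})=\inf_i\sup\varphi_i>-\infty$, so $\varphi\in\PSH(X,L)$.

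Third, since $e^{-\rho t}$ is increasing and continuous, the continuity of moment energy along decreasing nets (Proposition \ref{continuity of moment energy along decreasing nets}) yields $E_{\exp}(\varphi_{i;\rho})\searrow E_{\exp}(\varphi_{;\rho})$. Combined with the uniform lower bound $E_{\exp}(\varphi_{i;\rho})\ge -C_\rho$, this gives $E_{\exp}(\varphi_{;\rho})\ge -C_\rho>-\infty$ for every $\rho>0$, so $\varphi\in\E^{\exp}(X,L)$. Fourth, Proposition \ref{decreasing convergence is dexp convergent} states that any decreasing net in $\E^{\exp}(X,L)$ pointwise converging to an element of $\E^{\exp}(X,L)$ is automatically $d_{\exp}$-convergent to that element, and applying it to $\varphi_i\searrow\varphi$ concludes the proof.

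The main obstacle is conceptually minor since all the heavy lifting has been done earlier; the only step requiring care is the second, which exploits the joint role of $\rho$ as a scaling parameter and $\sup\varphi_i$ as the right endpoint of $\supp\DHm_{\varphi_i}$. Once the limit $\varphi$ is shown to have non-degenerate supremum, membership in $\E^{\exp}(X,L)$ and $d_{\exp}$-convergence are essentially formal consequences of the previously established continuity and density results.
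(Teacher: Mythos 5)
Your proof is correct and follows the same overall scheme as the paper's: reduce the only-if direction and the final clause to Proposition \ref{dexp Cauchy implies Eexp bounded}, extract a pointwise limit in $\PSH(X,L)$ by bounding $\sup\varphi_i$ from below, upgrade to $\varphi\in\E^{\exp}(X,L)$ via the continuity of $E_{\exp}$ along decreasing nets, and conclude with Proposition \ref{decreasing convergence is dexp convergent}. The one place where you diverge is the lower bound on $\sup\varphi_i$: the paper first observes (via the remark following Proposition \ref{dexp Cauchy implies Eexp bounded}) that the net is $d_{\exp}$-bounded and then uses the estimate $d_{\exp}(c,\varphi_i)\ge |c-\sup\varphi_i|/\log(1+(e^L)^{-1})$, whereas you read the bound off directly from $-E_{\exp}(\varphi_{i;\rho})=\int_{\mathbb{R}}e^{-\rho t}\,\DHm_{\varphi_i}\ge (e^L)e^{-\rho\sup\varphi_i}$, using $\supp\DHm_{\varphi_i}\subset(-\infty,\sup\varphi_i]$ and the full mass $\int_{\mathbb{R}}\DHm_{\varphi_i}=(e^L)$ guaranteed by Corollary \ref{finite moment energy class and full mass class}. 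Your route is slightly more self-contained (it needs only one fixed $\rho$ and bypasses the $d_{\exp}$-boundedness remark entirely), while the paper's route recycles machinery it has already set up; both are valid.
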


\begin{proof}
Suppose $\{ \varphi_i \}_{i \in I}$ is a decreasing net with bounded $E_{\exp} (\varphi_{i; \rho})$. 
By the above remark, it is $d_{\exp}$-bounded. 
Put $c := \sup \varphi_0$. 
Since $c \ge \varphi_i$, we have $d_{\exp} (c, \varphi_i) \ge d_{\exp} (c, \sup \varphi_i) = |c - \sup \varphi_i|/\log (1+ (e^L)^{-1})$, so that $\varphi_i (v_{\mathrm{triv}}) = \sup \varphi_i$ is bounded from below. 
It follows that $\varphi_i$ is pointwisely convergent to a limit $\varphi \in \mathrm{PSH} (X, L)$. 

Now since $\varphi_i$ decreasingly converges to $\varphi$, we have $E_{\exp} (\varphi_{i; \rho}) \to E_{\exp} (\varphi_{;\rho})$. 
Since $E_{\exp} (\varphi_{i ;\rho})$ is bounded for every $\rho > 0$, the limit $E_{\exp} (\varphi_{;\rho})$ is finite for every $\rho > 0$, which shows $\varphi \in \E^{\exp} (X, L)$. 
Now $d_{\exp}$-convergence follows from Proposition \ref{decreasing convergence is dexp convergent}. 
\end{proof}

\subsubsection{Completeness}
\label{Completeness}

Now we assume the continuity of envelopes holds for $(X, L)$ (see section \ref{continuity of envelopes}) and show the completeness of $(\E^{\exp} (X, L), d_{\exp})$. 
As we observed in section \ref{existence of rooftop}, the rooftop $\varphi \wedge \varphi'$ exist under the continuity of envelopes. 

\begin{prop}
For $\varphi, \varphi' \in \E^{\exp} (X, L)$, we have 
\[ E_{\exp} (\varphi) + E_{\exp} (\varphi') \le E_{\exp} (\varphi \wedge \varphi') \le \min \{ E_{\exp} (\varphi), E_{\exp} (\varphi') \}. \]
In particular, for $\varphi, \varphi' \in \E^{\exp} (X, L)$, we have $\varphi \wedge \varphi' \in \E^{\exp} (X, L)$. 
\end{prop}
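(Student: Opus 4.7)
The upper bound is immediate from the monotonicity of $E_\chi$ in $\varphi$: for increasing $\chi$, the infimum defining $E_\chi(\varphi)$ is taken over a larger set when $\varphi$ decreases, so $\varphi_1 \le \varphi_2$ forces $E_\chi(\varphi_1) \le E_\chi(\varphi_2)$. Applying this to the increasing right continuous function $\chi(t) = -e^{-t}$ and the inequalities $\varphi \wedge \varphi' \le \varphi, \varphi'$ yields $E_{\exp}(\varphi \wedge \varphi') \le \min\{E_{\exp}(\varphi), E_{\exp}(\varphi')\}$.

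For the lower bound I first handle the case $\varphi, \varphi' \in \nH(X, L)$. Under the standing continuity-of-envelopes assumption, $\varphi \wedge \varphi'$ lies in $C^0 \cap \PSH(X, L)$, so Proposition~\ref{filtration of envelope} gives $\mathcal{F}_{\varphi \wedge \varphi'}^\lambda R_m = \mathcal{F}_\varphi^\lambda R_m \cap \mathcal{F}_{\varphi'}^\lambda R_m$ for every $\lambda \in \mathbb{R}$ and every $m$. Choose a basis $\bm{s} = (s_i)_{i=1}^{N_m}$ of $R_m$ codiagonal for $(\varphi, \varphi')$ and set $\lambda_i^\bullet := \lambda_i^\bullet(\bm{s})$. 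Codiagonality then identifies $\mathcal{F}_{\varphi \wedge \varphi'}^\lambda R_m = \langle s_i : \min\{\lambda_i^\varphi, \lambda_i^{\varphi'}\} \ge \lambda \rangle$, so $\bm{s}$ is diagonal for $\varphi \wedge \varphi'$ with eigenvalues $\min\{\lambda_i^\varphi, \lambda_i^{\varphi'}\}$. Combined with the elementary inequality $\max\{a,b\} \le a + b$ for $a, b \ge 0$ this gives
\[
\int_\mathbb{R} e^{-t} \nu_m(\mathcal{F}_{\varphi \wedge \varphi'}) = m^{-n} \sum_i \max\{e^{-\lambda_i^\varphi/m}, e^{-\lambda_i^{\varphi'}/m}\} \le \int_\mathbb{R} e^{-t} \nu_m(\mathcal{F}_\varphi) + \int_\mathbb{R} e^{-t} \nu_m(\mathcal{F}_{\varphi'}).
\]
Letting $m \to \infty$ (the supports of the $\nu_m$ are uniformly bounded, so $e^{-t}$ integrates in the limit) and applying the identity $E_{\exp}(\psi) = -\int_\mathbb{R} e^{-t} \DHm_\psi$ on $\nH(X, L)$ from Proposition~\ref{moment energy and DH measure} yields the lower bound for Fubini--Study metrics.

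For general $\varphi, \varphi' \in \E^{\exp}(X, L) \subset \E^1(X, L)$, I pick decreasing nets $\varphi_i \searrow \varphi$, $\varphi'_j \searrow \varphi'$ in $\nH(X, L)$. The existence of rooftops in $\E^1$ (Section~\ref{metric space Eexp}) places $\varphi \wedge \varphi'$ in $\E^1(X, L)$, and Proposition~\ref{decreasing limit of rooftop} gives $\varphi_i \wedge \varphi'_j \searrow \varphi \wedge \varphi'$ pointwise. The continuity of $E_{\exp}$ along decreasing nets (Proposition~\ref{continuity of moment energy along decreasing nets}, applicable since $-e^{-t}$ is increasing right continuous) transfers the Fubini--Study inequality $E_{\exp}(\varphi_i) + E_{\exp}(\varphi'_j) \le E_{\exp}(\varphi_i \wedge \varphi'_j)$ to the limit, giving the general lower bound. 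Finally, since $\psi \mapsto \psi_{;\rho}$ is an order-preserving bijection of $\PSH(X, L)$, it commutes with the rooftop, so $(\varphi \wedge \varphi')_{;\rho} = \varphi_{;\rho} \wedge \varphi'_{;\rho}$; applying the established lower bound to $\varphi_{;\rho}, \varphi'_{;\rho} \in \E^{\exp}(X, L)$ shows $E_{\exp}((\varphi \wedge \varphi')_{;\rho}) \ge E_{\exp}(\varphi_{;\rho}) + E_{\exp}(\varphi'_{;\rho}) > -\infty$ for every $\rho > 0$, whence $\varphi \wedge \varphi' \in \E^{\exp}(X, L)$. The only substantive step is the codiagonal identification of the eigenvalues of $\varphi \wedge \varphi'$; the rest is regularization entirely within the framework already developed.
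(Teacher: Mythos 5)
Your proof is correct and follows essentially the same route as the paper: reduce to $\varphi,\varphi'\in\nH(X,L)$ by continuity of $E_{\exp}$ along decreasing nets, take a codiagonal basis, use $\lambda_i^{\varphi\wedge\varphi'}=\min\{\lambda_i^\varphi,\lambda_i^{\varphi'}\}$ together with $\max\{a,b\}\le a+b$, and pass to the limit. Your extra remarks (justifying the eigenvalue identity via $\mathcal{F}_{\varphi\wedge\varphi'}=\mathcal{F}_\varphi\cap\mathcal{F}_{\varphi'}$, and checking membership in $\E^{\exp}$ by commuting the rooftop with rescaling) simply make explicit what the paper leaves implicit.
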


\begin{proof}
As $E_{\exp}$ is continuous along decreasing nets, we may assume $\varphi, \varphi' \in \nH (X, L)$. 
Take a codiagonal basis $\bm{s}$ of $R_m$ for $\varphi, \varphi'$. 
Since $\lambda^{\varphi \wedge \varphi'}_i (\bm{s}) = \min \{ \lambda^\varphi_i (\bm{s}), \lambda^{\varphi'}_i (\bm{s}) \}$, we have 
\[ \max \{ \sum_{i=1}^{N_m} e^{-\lambda^{\varphi}_i (\bm{s})}, \sum_{i=1}^{N_m} e^{-\lambda^{\varphi'}_i (\bm{s})} \} \le \sum_{i=1}^{N_m} e^{-\lambda^{\varphi \wedge \varphi'}_i (\bm{s})} \le \sum_{i=1}^{N_m} e^{- \lambda^\varphi_i (\bm{s})} + \sum_{i=1}^{N_m} e^{-\lambda^{\varphi'}_i (\bm{s})}. \]
Taking the limit $m \to \infty$, we obtain the claim. 
\end{proof}

\begin{lem}
For $\varphi, \varphi' \in \E^{\exp} (X, L)$, we have 
\[ \max \{ d_{\exp} (\varphi, \varphi \wedge \varphi'), d_{\exp} (\varphi', \varphi \wedge \varphi') \} \le d_{\exp} (\varphi, \varphi'). \]
\end{lem}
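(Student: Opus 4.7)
The plan is to first establish the inequality on the Fubini--Study stratum $\nH(X,L)$ by a pointwise comparison of eigenvalue sequences on codiagonal bases, then push it to $\E^{\exp}(X,L)$ by a decreasing regularization argument. By symmetry between $\varphi$ and $\varphi'$, it suffices to show
\[ d_{\exp}(\varphi, \varphi \wedge \varphi') \le d_{\exp}(\varphi, \varphi'). \]

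First, suppose $\varphi, \varphi' \in \nH(X,L)$ and pick a basis $\bm{s} = (s_1, \ldots, s_{N_m})$ of $R_m$ codiagonal for $(\varphi, \varphi')$ using \cite{BE}. As noted after Lemma~\ref{diagonal lemma}, such a $\bm{s}$ is automatically diagonal with respect to $\varphi \wedge \varphi'$, and one reads off
\[ \lambda_i^{\varphi \wedge \varphi'}(\bm{s}) = \min\{\lambda_i^\varphi(\bm{s}), \lambda_i^{\varphi'}(\bm{s})\}. \]
Consequently the componentwise bound
\[ \bigl|\lambda_i^\varphi(\bm{s}) - \lambda_i^{\varphi \wedge \varphi'}(\bm{s})\bigr| = \max\bigl\{0, \lambda_i^\varphi(\bm{s}) - \lambda_i^{\varphi'}(\bm{s})\bigr\} \le \bigl|\lambda_i^\varphi(\bm{s}) - \lambda_i^{\varphi'}(\bm{s})\bigr| \]
holds for every $i$. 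Since $t \mapsto e^{|t/m\beta|} - 1$ is even and increasing in $|t|$, the monotonicity of the Luxemburg-type norm $\chi_m$ yields
\[ d_{\exp}^m(\|\cdot\|_m^{\varphi}, \|\cdot\|_m^{\varphi \wedge \varphi'}) \le d_{\exp}^m(\|\cdot\|_m^{\varphi}, \|\cdot\|_m^{\varphi'}). \]
Taking $m \to \infty$ and applying the lemma that identifies $\lim_m d_{\exp}^m$ with $d_{\exp}$ establishes the desired inequality on $\nH(X,L)$.

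Next, for general $\varphi, \varphi' \in \E^{\exp}(X,L)$, take convergent decreasing nets $\{\varphi_i\}_{i \in I}, \{\varphi'_j\}_{j \in J} \subset \nH(X,L)$ with $\varphi_i \searrow \varphi$ and $\varphi'_j \searrow \varphi'$. By Proposition~\ref{decreasing limit of rooftop}, $\varphi_i \wedge \varphi'_j \searrow \varphi \wedge \varphi'$, and since $\varphi \wedge \varphi' \in \E^{\exp}(X,L)$ by the preceding proposition, the corollary following Proposition~\ref{decreasing convergence is dexp convergent} gives
\[ d_{\exp}(\varphi_i, \varphi_i \wedge \varphi'_j) \to d_{\exp}(\varphi, \varphi \wedge \varphi'), \qquad d_{\exp}(\varphi_i, \varphi'_j) \to d_{\exp}(\varphi, \varphi') \]
as $(i, j) \to \infty$. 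Applying the inequality already established for each pair $(\varphi_i, \varphi'_j) \in \nH(X,L)^2$ and passing to the limit concludes the proof.

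No step looks genuinely hard: the arithmetic comparison of eigenvalue gaps is immediate from the min-formula for $\lambda_i^{\varphi \wedge \varphi'}$, and the extension to $\E^{\exp}(X,L)$ is a direct application of the continuity of $d_{\exp}$ along decreasing regularizations. The only subtlety is verifying that $\varphi_i \wedge \varphi'_j$ is well defined along the net, which is furnished by Proposition~\ref{decreasing limit of rooftop} together with the existence of $\varphi \wedge \varphi'$ in $\E^{\exp}(X,L)$.
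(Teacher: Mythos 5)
Your proposal is correct and follows essentially the same route as the paper: the key step in both is that a codiagonal basis for $(\varphi,\varphi')$ is automatically diagonal for $\varphi\wedge\varphi'$ with $\lambda_i^{\varphi\wedge\varphi'}=\min\{\lambda_i^\varphi,\lambda_i^{\varphi'}\}$, so the eigenvalue gaps satisfy $|\lambda_i^\varphi-\lambda_i^{\varphi\wedge\varphi'}|=\max\{0,\lambda_i^\varphi-\lambda_i^{\varphi'}\}\le|\lambda_i^\varphi-\lambda_i^{\varphi'}|$, and the general case follows by continuity of $d_{\exp}$ along decreasing nets. The only cosmetic difference is that you compare the finite-level Luxemburg norms $d_{\exp}^m$ and then let $m\to\infty$, whereas the paper compares the limiting integrals $\int_{\mathbb{R}}(e^{|t/\beta|}-1)\,\DHm_{\varphi,\varphi\wedge\varphi'}$ directly before taking the infimum over $\beta$.
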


\begin{proof}
As $d_{\exp}$ is continuous along decreasing nets, we may assume $\varphi, \varphi' \in \nH (X, L)$ (see also section \ref{existence of rooftop}). 
For each $m$, take a basis $\bm{s}_m$ of $R_m$ codiagonal for $\varphi, \varphi'$. 
Since it is diagonal also for $\varphi \wedge \varphi'$ and $\lambda_i^{\varphi \wedge \varphi'} (\bm{s}) = \min \{ \lambda_i^\varphi (\bm{s}), \lambda_i^{\varphi'} (\bm{s}) \}$, we have 
\begin{align*}
\int_\mathbb{R} (e^{|t/\beta|} -1) \DHm_{\varphi, \varphi \wedge \varphi'} 
&= \lim_{m \to \infty} \frac{1}{m^n} \sum_{i=1}^{N_m} (e^{|(\lambda_i^\varphi (\bm{s}_m) - \lambda_i^{\varphi \wedge \varphi'} (\bm{s}_m))/\beta m|} -1)
\\
&= \lim_{m \to \infty} \frac{1}{m^n} \sum_{i=1}^{N_m} (e^{|\max \{ 0, \lambda_i^\varphi (\bm{s}_m) - \lambda_i^{\varphi'} (\bm{s}_m) \}/\beta m|} -1)
\\
&\le \lim_{m \to \infty} \frac{1}{m^n} \sum_{i=1}^{N_m} (e^{|(\lambda_i^\varphi (\bm{s}_m) - \lambda_i^{\varphi'} (\bm{s}_m) )/\beta m|} -1)
\\
&= \int_\mathbb{R} (e^{|t/\beta|} -1) \DHm_{\varphi, \varphi'}. 
\end{align*}
Thus we get 
\begin{align*} 
d_{\exp} (\varphi, \varphi') 
&= \inf \Big{\{} \beta ~\Big{|}~ \int_\mathbb{R} (e^{|t/\beta|} -1) \DHm_{\varphi, \varphi'} \le 1 \Big{\}} 
\\
&\ge \inf \Big{\{} \beta ~\Big{|}~ \int_\mathbb{R} (e^{|t/\beta|} -1) \DHm_{\varphi, \varphi \wedge \varphi'} \le 1 \Big{\}} = d_{\exp} (\varphi, \varphi \wedge \varphi')
\end{align*}
for $\varphi, \varphi' \in \nH (X, L)$. 
The general case follows from the continuity along decreasing nets. 
\end{proof}

\begin{lem}
\label{dexp comparison for rooftop}
For $\varphi, \varphi', \varphi'' \in \E^{\exp} (X, L)$, we have 
\[ d_{\exp} (\varphi \wedge \varphi', \varphi \wedge \varphi'') \le 2 d_{\exp} (\varphi', \varphi''). \]
When $\varphi'' \le \varphi'$, we have 
\begin{equation}
\label{distance of wedge}
d_{\exp} (\varphi \wedge \varphi', \varphi \wedge \varphi'') \le d_{\exp} (\varphi', \varphi''). 
\end{equation}
\end{lem}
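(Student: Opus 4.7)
The plan is to prove the second (refined) inequality first and then obtain the general one via the triangle inequality. As a first move, I would reduce both statements to the case $\varphi,\varphi',\varphi''\in\nH(X,L)$. This reduction is harmless because $d_{\exp}$ is continuous along pointwise decreasing nets (Proposition \ref{decreasing convergence is dexp convergent}), the rooftop operation is continuous along such nets (Proposition \ref{decreasing limit of rooftop}), and for $\varphi,\psi\in\nH(X,L)$ the rooftop $\varphi\wedge\psi$ lies in $\nH(X,L)$ (under the continuity of envelopes, or by Proposition \ref{test configuration associated to phi wedge tau} together with an approximation $\tau\in\mathbb{Q}$).

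For the refined inequality, assume $\varphi''\le\varphi'$. The crucial ingredient is Proposition \ref{filtration of envelope}, which gives $\|s\|^{\varphi\wedge\psi}_m=\max\{\|s\|^\varphi_m,\|s\|^\psi_m\}$ pointwise on sections, i.e.\ $\lambda^{\varphi\wedge\psi}(s)=\min\{\lambda^\varphi(s),\lambda^\psi(s)\}$. Using $\lambda^{\varphi''}(s)\le\lambda^{\varphi'}(s)$ (from $\varphi''\le\varphi'$) and splitting into the three cases according to the relative position of $\lambda^\varphi(s)$ inside $[\lambda^{\varphi''}(s),\lambda^{\varphi'}(s)]$, one verifies the pointwise comparison
\[
0\le \lambda^{\varphi\wedge\varphi'}(s)-\lambda^{\varphi\wedge\varphi''}(s)\le \lambda^{\varphi'}(s)-\lambda^{\varphi''}(s)\quad\text{for every }s\in R_m.
\]
Granting the corresponding integral comparison
\[
\int_\mathbb{R}(e^{|t/\beta|}-1)\,\DHm_{\varphi\wedge\varphi',\,\varphi\wedge\varphi''}\le\int_\mathbb{R}(e^{|t/\beta|}-1)\,\DHm_{\varphi',\varphi''}
\]
for every $\beta>0$, Proposition \ref{distance with anchor} (with anchor $\varphi\wedge\varphi''\in\nH(X,L)$) then yields $d_{\exp}(\varphi\wedge\varphi',\varphi\wedge\varphi'')\le d_{\exp}(\varphi',\varphi'')$.

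The main obstacle is precisely this passage from the pointwise inequality on $s$ to the integral inequality between the two relative spectral measures, because no single basis is simultaneously codiagonal for the pair $(\varphi',\varphi'')$ and for the pair $(\varphi\wedge\varphi',\varphi\wedge\varphi'')$. I would treat this using Lemma \ref{diagonal lemma}: pick a basis $\bm{s}_m$ codiagonal for $(\varphi',\varphi'')$ and well ordered with respect to $(\varphi\wedge\varphi',\varphi\wedge\varphi'')$, and compare the sums $m^{-n}\sum_i(e^{|\lambda^{\varphi\wedge\varphi'}(s_i)-\lambda^{\varphi\wedge\varphi''}(s_i)|/m\beta}-1)$ along $\bm{s}_m$ with those along a codiagonal basis for $(\varphi\wedge\varphi',\varphi\wedge\varphi'')$. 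The pointwise inequality controls the $\bm{s}_m$-sum by the RHS, while Lemma \ref{diagonal lemma} (relative version) provides the missing link between the $\bm{s}_m$-sum and the codiagonal one through a chain of well-ordered bases; verifying that the well-orderings can be arranged compatibly so that the lemma applies in the correct direction is the delicate point.

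For the general (unconditional) bound, I would insert the intermediate point $\varphi\wedge\varphi'\wedge\varphi''=\varphi\wedge(\varphi'\wedge\varphi'')$ and apply the triangle inequality:
\[
d_{\exp}(\varphi\wedge\varphi',\varphi\wedge\varphi'')\le d_{\exp}\bigl(\varphi\wedge\varphi',\varphi\wedge(\varphi'\wedge\varphi'')\bigr)+d_{\exp}\bigl(\varphi\wedge(\varphi'\wedge\varphi''),\varphi\wedge\varphi''\bigr).
\]
Since $\varphi'\wedge\varphi''\le\varphi'$ and $\varphi'\wedge\varphi''\le\varphi''$, each term is bounded by the refined inequality just established: the first by $d_{\exp}(\varphi',\varphi'\wedge\varphi'')$ and the second by $d_{\exp}(\varphi'',\varphi'\wedge\varphi'')$. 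By the previous lemma, both of these are $\le d_{\exp}(\varphi',\varphi'')$, yielding the factor $2$. This is a purely formal step, so the only real work is in obtaining the refined inequality, where the spectral-measure comparison described above is the sole genuine difficulty.
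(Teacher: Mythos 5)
Your overall architecture matches the paper's: prove the refined inequality first, deduce the factor-$2$ bound by inserting $\varphi\wedge\varphi'\wedge\varphi''$ and using the triangle inequality together with $d_{\exp}(\varphi',\varphi'\wedge\varphi'')\le d_{\exp}(\varphi',\varphi'')$, and the pointwise estimate $0\le\min\{\lambda^\varphi,\lambda^{\varphi'}\}-\min\{\lambda^\varphi,\lambda^{\varphi''}\}\le\lambda^{\varphi'}-\lambda^{\varphi''}$ is exactly the one used there. But the step you yourself flag as delicate is where the argument, as you have set it up, breaks: you choose $\bm{s}_m$ codiagonal for $(\varphi',\varphi'')$ and then need the \emph{codiagonal} sum for $(\varphi\wedge\varphi',\varphi\wedge\varphi'')$ (which is what computes $\DHm_{\varphi\wedge\varphi',\varphi\wedge\varphi''}$) to be bounded above by the wedge-sum along $\bm{s}_m$. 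Lemma \ref{diagonal lemma} gives the opposite inequality: a codiagonal basis \emph{maximizes} the termwise differences $\lambda_i^\psi-\lambda_i^{\psi'}$ among well-ordered bases diagonal for $\psi'$, so for the increasing weight $e^{|t/\beta|}-1$ the codiagonal sum dominates the $\bm{s}_m$-sum, not the other way around. (A two-dimensional example: for norms with $\lambda=(0,0)$ and $\lambda'=(1,0)$ in the common basis $(e_1,e_2)$, the basis $(e_1+e_2,e_2)$ gives differences $(0,0)$ versus the codiagonal $(-1,0)$.) So no chain of well-ordered bases built on Lemma \ref{diagonal lemma} can deliver the inequality you need in that direction; this is a genuine gap, not merely an unverified technicality.

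The fix is to choose the basis codiagonal for the \emph{other} pair. Take $\bm{s}'$ codiagonal for $(\varphi\wedge\varphi',\varphi'')$ and well ordered with respect to $(\varphi',\varphi'')$. Since $\varphi''\le\varphi'$ gives $\varphi\wedge\varphi''=(\varphi\wedge\varphi')\wedge\varphi''$, and a basis codiagonal for two metrics is automatically diagonal for their rooftop, $\bm{s}'$ is diagonal for $\varphi\wedge\varphi''$ as well, hence codiagonal for $(\varphi\wedge\varphi',\varphi\wedge\varphi'')$; thus the left-hand side is computed \emph{exactly} along $\bm{s}'$ and no comparison of codiagonal with non-codiagonal wedge-sums is ever needed. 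Your pointwise inequality then bounds the wedge-differences along $\bm{s}'$ by $\lambda_i^{\varphi'}(\bm{s}')-\lambda_i^{\varphi''}(\bm{s}')$, and since $\bm{s}'$ is diagonal for $\varphi''$, Lemma \ref{diagonal lemma} now applies in the \emph{correct} direction to the pair $(\varphi',\varphi'')$: it bounds these differences by those of a codiagonal basis for $(\varphi',\varphi'')$, which compute $\DHm_{\varphi',\varphi''}$ and hence the right-hand side. With that substitution your remaining steps (reduction to $\nH(X,L)$ by continuity along decreasing nets, the anchor formula of Proposition \ref{distance with anchor}, and the triangle-inequality deduction of the factor $2$) go through as written.
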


\begin{proof}
We firstly show the latter claim. 
We may assume $\varphi, \varphi', \varphi'' \in \nH (X, L)$. 
Let $\bm{s}'$ be a basis which is codiagonal for $\varphi \wedge \varphi', \varphi''$ and is well ordered with respect to $(\varphi', \varphi'')$ and $\bm{s}$ be a basis which is codiagonal for $\varphi', \varphi''$ and is well ordered with respect to $(\varphi', \varphi'')$. 
Since $\varphi \wedge \varphi'' = (\varphi \wedge \varphi') \wedge \varphi''$ by $\varphi'' \le \varphi'$, $\bm{s}'$ is also diagonal with respect to $\varphi \wedge \varphi''$. 
Since $\varphi'' \le \varphi'$, we have $\lambda_i^{\varphi''} (\bm{s}') \le \lambda_i^{\varphi'} (\bm{s}')$ and $\lambda_i^{\varphi \wedge \varphi''} (\bm{s}') \le \lambda_i^{\varphi \wedge \varphi'} (\bm{s}')$, so that we get 
\begin{align*}
0 
&\le \lambda_i^{\varphi \wedge \varphi'} (\bm{s}') - \lambda_i^{\varphi \wedge \varphi''} (\bm{s}') 
\\
&= \min \{ \lambda_i^{\varphi} (\bm{s}'), \lambda_i^{\varphi'} (\bm{s}') \} - \min \{ \lambda_i^{\varphi} (\bm{s}'), \lambda_i^{\varphi''} (\bm{s}') \}
\\
&\le \lambda_i^{\varphi'} (\bm{s}') - \lambda_i^{\varphi''} (\bm{s}'). 
\end{align*}
(When $\lambda_i^\varphi (\bm{s}') \le \lambda_i^{\varphi''} (\bm{s}')$, we have $\lambda_i^\varphi (\bm{s}') \le \lambda_i^{\varphi'} (\bm{s}')$, so that $\lambda_i^{\varphi \wedge \varphi'} (\bm{s}') - \lambda_i^{\varphi \wedge \varphi''} (\bm{s}') = 0 \le \lambda_i^{\varphi'} (\bm{s}') - \lambda_i^{\varphi''} (\bm{s}')$ in this case. )
By Lemma \ref{diagonal lemma}, we have 
\[ \lambda_i^{\varphi'} (\bm{s}') - \lambda_i^{\varphi''} (\bm{s}') \le \lambda_i^{\varphi'} (\bm{s}) - \lambda_i^{\varphi''} (\bm{s}). \]
Thus we can compute 
\begin{align*} 
\int_\mathbb{R} (e^{|t/\beta|} -1) \DHm_{\varphi \wedge \varphi', \varphi \wedge \varphi''} 
&= \lim_{m \to \infty} \frac{1}{m^n} \sum_{i=1}^{N_m} (e^{|(\lambda_i^{\varphi \wedge \varphi'} (\bm{s}_m') - \lambda_i^{\varphi \wedge \varphi''} (\bm{s}_m'))/\beta m|} -1)
\\
&\le \lim_{m \to \infty} \frac{1}{m^n} \sum_{i=1}^{N_m} (e^{|(\lambda_i^{\varphi'} (\bm{s}_m) - \lambda_i^{\varphi''} (\bm{s}_m))/\beta m|} -1)
\\
&= \int_\mathbb{R} (e^{|t/\beta|} - 1) \DHm_{\varphi', \varphi''}. 
\end{align*}
Thus we get 
\begin{align*} 
d_{\exp} (\varphi \wedge \varphi', \varphi \wedge \varphi'') 
&= \inf \Big{\{} \beta ~\Big{|}~ \int_\mathbb{R} (e^{|t/\beta|} -1) \DHm_{\varphi \wedge \varphi', \varphi \wedge \varphi''} \le 1 \Big{\}} 
\\
&\le \inf \Big{\{} \beta ~\Big{|}~ \int_\mathbb{R} (e^{|t/\beta|} -1) \DHm_{\varphi', \varphi''} \le 1 \Big{\}} = d_{\exp} (\varphi', \varphi''). 
\end{align*}

Now we deal with the general case. 
Since $\varphi', \varphi'' \ge \varphi' \wedge \varphi''$, we compute 
\begin{align*}
d_{\exp} (\varphi \wedge \varphi', \varphi \wedge \varphi'') 
&\le d_{\exp} (\varphi \wedge \varphi', \varphi \wedge \varphi' \wedge \varphi'') + d_{\exp} (\varphi \wedge \varphi' \wedge \varphi'', \varphi \wedge \varphi'') 
\\
&\le d_{\exp} (\varphi', \varphi' \wedge \varphi'') + d_{\exp} (\varphi'', \varphi' \wedge \varphi'')
\\
&\le 2 d_{\exp} (\varphi', \varphi''),
\end{align*}
using (\ref{distance of wedge}) and the above lemma. 
\end{proof}

In the proof of the completeness, we consider the limit of an increasing sequence $\varphi_i \in \E^{\exp} (X, L)$. 
To ensure the existence of limit, we must assume the continuity of envelopes. 

\begin{prop}
\label{convergence of increasing sequence}
Assume the continuity of envelopes holds for $(X, L)$. 
If $\{ \varphi_i \}_{i \in I}$ is a $d_{\exp}$-bounded increasing net in $\E^{\exp} (X, L)$, then we have a limit $\varphi \in \E^{\exp} (X, L)$ in $d_{\exp}$-topology. 
\end{prop}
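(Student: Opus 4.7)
The strategy is to leverage the key estimate~(\ref{distance lemma}) from the proof of Lemma~\ref{decreasing convergence implies Cauchy} to show that the increasing net is automatically $d_{\exp}$-Cauchy, and then to identify its limit using the $d_1$-completeness of $(\E^1(X,L),d_1)$ afforded by the continuity of envelopes.

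First I would extract uniform bounds. Combining $d_1 \le d_{\exp}$ (Proposition~\ref{dp dexp}) with the comparison $d_1 \ge C_n^{-1}\bar I \ge C_n^{-1}(e^L)\,|\sup\cdot - \sup\cdot|$ from Proposition~\ref{Ibar and d1} yields a uniform bound $|\sup\varphi_i| \le C$. Monotonicity of $E_{\exp}$ along the increasing net then gives $E_{\exp}(\varphi_{i;\rho}) \in [E_{\exp}(\varphi_{i_0;\rho}), 0]$ uniformly for $i \ge i_0$ and each $\rho > 0$, while $E(\varphi_i)$ is increasing and bounded above by $C(e^L)$, hence convergent. Applying~(\ref{distance lemma}) to $\varphi_i \le \varphi_j$ produces
\[
\int_{\mathbb{R}}(e^{|t/\beta|}-1)\,\DHm_{\varphi_i,\varphi_j} \le \left(\frac{E(\varphi_j)-E(\varphi_i)}{4\beta/3}\right)^{1/4}\bigl(-e^{\sup\varphi_{i;2/\beta}}\,E_{\exp}(\varphi_{i;2/\beta})\bigr)^{3/4},
\]
whose second factor is uniformly bounded and whose first factor tends to $0$ as $i \le j \to \infty$; thus $\{\varphi_i\}$ is $d_{\exp}$-Cauchy.

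Being also $d_1$-Cauchy, the net admits a $d_1$-limit $\varphi \in \E^1(X,L)$ by Theorem~B of \cite{BJ4}. Weak convergence of psh metrics, which is implied by $d_1$-convergence, gives $\varphi(v) = \lim_i \varphi_i(v) = \sup_i \varphi_i(v)$ for every $v \in X^{\mathrm{qm}}$, so $\varphi \ge \varphi_i$ on the dense subset $X^{\mathrm{qm}} \subset X^{\mathrm{NA}}$ and hence on all of $X^{\mathrm{NA}}$ by upper semi-continuity of $\varphi_i$. Monotonicity of $E_{\exp}$ then yields $E_{\exp}(\varphi_{;\rho}) \ge E_{\exp}(\varphi_{i;\rho}) > -\infty$ for every $\rho > 0$, so $\varphi \in \E^{\exp}(X,L)$, and $E(\varphi_i) \to E(\varphi)$ follows from $d_1$-convergence.

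Finally I would verify $d_{\exp}(\varphi_i,\varphi) \to 0$. Pick a decreasing sequence $\{\psi_k\}_{k \in \mathbb{N}} \subset \nH(X,L)$ with $\psi_k \searrow \varphi$, which exists by \cite[Theorem~9.11]{BJ3}; by Proposition~\ref{decreasing convergence is dexp convergent}, $d_{\exp}(\psi_k,\varphi) \to 0$ and $E(\psi_k) \searrow E(\varphi)$. For any $\epsilon > 0$, choose $k$ large enough that $d_{\exp}(\psi_k,\varphi) \le \epsilon$ and $E(\psi_k)-E(\varphi)$ is sufficiently small, then take $i$ large so that $E(\varphi)-E(\varphi_i)$ is also sufficiently small; since $\varphi_i \le \varphi \le \psi_k$, applying~(\ref{distance lemma}) to $\varphi_i \le \psi_k$ with $\beta = \epsilon$ and using the uniform bounds from the first paragraph shows $\int_{\mathbb{R}}(e^{|t/\epsilon|}-1)\,\DHm_{\varphi_i,\psi_k} \le 1$, whence $d_{\exp}(\varphi_i,\psi_k) \le \epsilon$ and $d_{\exp}(\varphi_i,\varphi) \le 2\epsilon$ by triangulation. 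The hardest step will be the identification of the $d_1$-limit with the upper envelope of $\{\varphi_i\}$ in the third paragraph, which is where the continuity of envelopes enters crucially, both through $d_1$-completeness and through the density/usc argument recovering $\varphi \ge \varphi_i$.
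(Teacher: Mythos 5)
Your proof is correct and follows essentially the same route as the paper's: bound $\sup\varphi_i$, produce the strong limit $\varphi$ via the continuity of envelopes, note $\varphi\ge\varphi_i$ to get $\varphi\in\E^{\exp}(X,L)$, and conclude $d_{\exp}$-convergence from the estimate~(\ref{distance lemma}) applied against an anchor $\varphi'\in\nH(X,L)$ lying above $\varphi$. The intermediate step showing the net is $d_{\exp}$-Cauchy is harmless but redundant, and the inequality $\varphi\ge\varphi_i$ should be justified by the fact that non-archimedean psh metrics are determined by their values on $X^{\mathrm{div}}$ (or directly by \cite[Proposition 4.48]{BJ3}) rather than by upper semi-continuity alone.
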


\begin{proof}
By Proposition \ref{Ibar and d1} and the boundedness of $d_1 \le d_{\exp}$, $\sup \varphi_i$ is bounded. 
By \cite[Proposition 4.48, Theorem 9.5]{BJ3} that $\varphi_i$ converges to some $\varphi \in \E^1 (X, L)$ in the strong topology. 
Since $\varphi \ge \varphi_i \in \E^{\exp} (X, L)$, we have $\varphi \in \E^{\exp} (X, L)$. 
It suffices to show $d_{\exp} (\varphi_i, \varphi) \to 0$. 
For $\beta > 0$, take $\varepsilon > 0$ so that 
\[ \Big{(} \frac{\varepsilon}{4\beta/3} \Big{)}^{1/4} (- e^{\sup \varphi_{; 2/\beta}} E_{\exp} (\varphi_{0; 2/\beta}))^{3/4} \le 1. \]
Take $\varphi' \in \nH (X, L)$ so that $\varphi \le \varphi'$ and $d_{\exp} (\varphi, \varphi') \le \varepsilon/2$. 
Then we have $E (\varphi') - E (\varphi) = d_1 (\varphi', \varphi) \le d_{\exp} (\varphi, \varphi') \le \varepsilon/2$. 
It follows that we can take $i_\varepsilon$ so that $E (\varphi') - E (\varphi_i) = (E (\varphi') - E (\varphi)) + (E (\varphi) - E (\varphi_i)) \le \varepsilon$ for $i \ge i_\varepsilon$. 
Since $\varphi_i \le \varphi'$, we have 
\begin{align*} 
\int_\mathbb{R} (e^{|t/\beta|} -1) \DHm_{\varphi_i, \varphi'} 
&\le \Big{(} \frac{E (\varphi') - E (\varphi_i)}{4\beta/3} \Big{)}^{1/4} (-e^{\sup \varphi_i} E_{\exp} (\varphi_{i; 2/\beta}))^{3/4} 
\\
&\le \Big{(} \frac{\varepsilon}{4\beta/3} \Big{)}^{1/4} (- e^{\sup \varphi_{; 2/\beta}} E_{\exp} (\varphi_{0; 2/\beta}))^{3/4} \le 1
\end{align*}
by (\ref{distance lemma}). 
It follows that $d_{\exp} (\varphi_i, \varphi) \le d_{\exp} (\varphi_i, \varphi') + d_{\exp} (\varphi', \varphi) \le \beta + \varepsilon/2$ for $i \ge i_\varepsilon$. 
Taking the limits $i \to \infty$ and $\varepsilon \to 0$, we get $\varlimsup d_{\exp} (\varphi_i, \varphi) \le \beta$ for every $\beta > 0$, so that $\varlimsup d_{\exp} (\varphi_i, \varphi) = 0$. 
\end{proof}

Now we show the completeness of the metric space $(\E^{\exp} (X, L), d_{\exp})$. 
The proof of the completeness of the archimedean $\mathcal{E}^p (X, L)$ in \cite{Dar} adapts to our framework. 

\begin{thm}
Assume the continuity of envelopes holds for $(X, L)$. 
Then the metric space $(\E^{\exp} (X, L), d_{\exp})$ is complete. 
\end{thm}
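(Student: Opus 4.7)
The plan is to mimic Darvas' Cauchy completeness argument for $\mathcal{E}^p$ in the archimedean setting, translated to our non-archimedean framework via the available lemmas on rooftops, decreasing nets and increasing nets. Let $\{\varphi_i\}_{i\in\mathbb{N}}$ be a $d_{\exp}$-Cauchy sequence in $\E^{\exp}(X,L)$. Since $d_{\exp}$ induces a metric topology, it suffices to show convergence after passing to a subsequence, so I may assume $d_{\exp}(\varphi_i,\varphi_{i+1})\le 2^{-i}$ for every $i$.

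The first step is to build a doubly indexed auxiliary family. For $j\ge i$ put
\[
\psi_{i,j}:=\varphi_i\wedge \varphi_{i+1}\wedge\dotsb\wedge\varphi_j,
\]
which exists in $\E^{\exp}(X,L)$ thanks to the continuity of envelopes and the stability of $\E^{\exp}(X,L)$ under rooftops. For fixed $i$, the sequence $\{\psi_{i,j}\}_{j\ge i}$ is decreasing in $j$, and the contraction inequality (\ref{distance of wedge}) gives
\[
d_{\exp}(\psi_{i,j},\psi_{i,j+1})=d_{\exp}\bigl(\psi_{i,j},\psi_{i,j}\wedge\varphi_{j+1}\bigr)\le d_{\exp}(\varphi_j,\varphi_{j+1})\le 2^{-j},
\]
since $\psi_{i,j}\le\varphi_j$. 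Hence $\{\psi_{i,j}\}_{j\ge i}$ is a decreasing $d_{\exp}$-Cauchy sequence. By Proposition \ref{dexp Cauchy implies Eexp bounded}, $E_{\exp}(\psi_{i,j;\rho})$ is bounded along $j$ for every $\rho>0$, and Proposition \ref{convergence of decreasing sequence} then provides a limit $\psi_i\in\E^{\exp}(X,L)$ with $\psi_{i,j}\searrow\psi_i$ and $d_{\exp}(\psi_{i,j},\psi_i)\to 0$.

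Next I verify that $\{\psi_i\}$ is increasing in $i$ and $d_{\exp}$-close to $\{\varphi_i\}$. Fixing $j\ge i+1$, the obvious pointwise inequality $\psi_{i,j}\le\psi_{i+1,j}$ passes to the limit $j\to\infty$, giving $\psi_i\le\psi_{i+1}$. Summing the estimate above gives $d_{\exp}(\psi_{i,j},\psi_{i,k})\le 2^{1-i}$ for all $j,k\ge i$, and then sending $k\to\infty$ and using continuity of $d_{\exp}$ along decreasing nets yields $d_{\exp}(\psi_{i,i},\psi_i)=d_{\exp}(\varphi_i,\psi_i)\le 2^{1-i}$. In particular $\{\psi_i\}$ is $d_{\exp}$-bounded. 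Now invoke Proposition \ref{convergence of increasing sequence}, which is where the hypothesis on continuity of envelopes enters crucially: an increasing $d_{\exp}$-bounded net in $\E^{\exp}(X,L)$ admits a $d_{\exp}$-limit $\varphi\in\E^{\exp}(X,L)$. The triangle inequality finally gives
\[
d_{\exp}(\varphi_i,\varphi)\le d_{\exp}(\varphi_i,\psi_i)+d_{\exp}(\psi_i,\varphi)\le 2^{1-i}+d_{\exp}(\psi_i,\varphi)\longrightarrow 0,
\]
so the subsequence converges to $\varphi$, and by the Cauchy property the whole original sequence does as well.

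The main obstacle is twofold: justifying the limit $\psi_i$ of the decreasing sequence $\psi_{i,j}$ in $\E^{\exp}(X,L)$ rather than merely in $\PSH(X,L)$ (handled by feeding the Cauchy property into Proposition \ref{dexp Cauchy implies Eexp bounded} and then applying Proposition \ref{convergence of decreasing sequence}), and controlling the limit of the increasing sequence $\{\psi_i\}$, which is exactly the point that forces us to assume the continuity of envelopes through Proposition \ref{convergence of increasing sequence}. Everything else reduces to the Darvas-style rooftop contraction (\ref{distance of wedge}) and the triangle inequality.
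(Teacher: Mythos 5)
Your proof is correct and follows essentially the same route as the paper: the same telescoping rooftops $\varphi_i\wedge\dotsb\wedge\varphi_j$, the same decreasing-then-increasing two-step limit via Propositions \ref{convergence of decreasing sequence} and \ref{convergence of increasing sequence}, and the same role for the continuity of envelopes. One small slip: the inequality (\ref{distance of wedge}) requires the two outer arguments to be comparable, which $\varphi_j$ and $\varphi_{j+1}$ need not be, so you should instead invoke the unordered estimate $d_{\exp}(\varphi\wedge\varphi',\varphi\wedge\varphi'')\le 2\,d_{\exp}(\varphi',\varphi'')$ from Lemma \ref{dexp comparison for rooftop}; this only doubles the constants in your geometric series and changes nothing else.
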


\begin{proof}
Take a Cauchy sequence $\{ \varphi_i \}_{i \in \mathbb{N}}$ in $\E^{\exp} (X, L)$. 
It suffices to show there exists a limit $\varphi \in \E^{\exp} (X, L)$ in the metric topology. 
If there exists a subsequence $\{ \varphi_j \}_j$ converging to some $\varphi \in \E^{\exp} (X, l)$ in the metric topology, then since $d_{\exp} (\varphi_i, \varphi) \le d_{\exp} (\varphi_i, \varphi_j) + d_{\exp} (\varphi_j, \varphi)$, the original sequence $\varphi_i$ converges to $\varphi$. 
Thus we may assume $d_{\exp} (\varphi_i, \varphi_{i+1}) < 1/2^{i+1}$ by replacing the original sequence with a subsequence. 
We construct the limit $\varphi \in \E^{\exp} (X, L)$ by two steps. 

We put $\hat{\varphi}_i^p := \varphi_i \wedge \dotsb \wedge \varphi_{i+p} \in \E^{\exp} (X, L)$. 
Since $\hat{\varphi}_i^{p+1} = \hat{\varphi}_i^p \wedge \varphi_{i+p+1} \le \hat{\varphi}_i^p$, $\{ \hat{\varphi}_i^p \}_{p=0}^\infty$ is a decreasing sequence for each $i$. 
We compute 
\[ d_{\exp} (\hat{\varphi}_i^p, \hat{\varphi}_i^{p+1}) = d_{\exp} (\hat{\varphi}_i^p \wedge \varphi_{i+p}, \hat{\varphi}_i^p \wedge \varphi_{i+p+1}) \le 2 d_{\exp} (\varphi_{i+p}, \varphi_{i+p+1}) \le 1/2^{i+p}. \]
Then since 
\[ d_{\exp} (\varphi_i, \hat{\varphi}_i^p) \le \sum_{q=0}^{p-1} d_{\exp} (\varphi_i^q, \varphi_i^{q+1}) \le 1/2^i, \]
the sequence $\{ \hat{\varphi}_i^p \}_{p=0}^\infty$ is a $d_{\exp}$-Cauchy decreasing sequence, hence it has a limit $\hat{\varphi}_i \in \E^{\exp} (X, L)$ in the metric topology by Proposition \ref{convergence of decreasing sequence}. 

Since $\hat{\varphi}_i^{p+1} = \varphi_i \wedge \hat{\varphi}_{i+1}^p \le \hat{\varphi}_{i+1}^p$, we have $\hat{\varphi}_i \le \hat{\varphi}_{i+1}$. 
We compute 
\begin{align*}
d_{\exp} (\hat{\varphi}_i, \hat{\varphi}_{i+1}) = \lim_{p \to \infty} d_{\exp} (\hat{\varphi}_i^{p+1}, \hat{\varphi}_{i+1}^p) 
&= \lim_{p \to \infty} d_{\exp} (\varphi_i \wedge \hat{\varphi}_{i+1}^p, \varphi_{i+1} \wedge \hat{\varphi}_{i+1}^p)
\\
&\le 2 d_{\exp} (\varphi_i, \varphi_{i+1}) \le 1/2^i. 
\end{align*}
This shows $d_{\exp} (\hat{\varphi}_0, \hat{\varphi}_i) \le 1$, so that $\hat{\varphi}_i$ is a $d_{\exp}$-bounded increasing sequence, hence it admits a limit $\varphi \in \E^{\exp} (X, L)$ in the metric topology by Proposition \ref{convergence of increasing sequence}, under the assumption that the continuity of envelopes holds. 

Finally, we compute 
\begin{align*} 
d_{\exp} (\varphi, \varphi_i) 
&\le d_{\exp} (\varphi, \hat{\varphi}_i) + d_{\exp} (\hat{\varphi}_i, \varphi_i) 
\\
&\le d_{\exp} (\varphi, \hat{\varphi}_i) + \lim_{p \to \infty} d_{\exp} (\hat{\varphi}_i^p, \varphi_i) \le 1/2^{i-1} + 1/2^i, 
\end{align*}
so that we get $d_{\exp} (\varphi, \varphi_i) \to 0$. 
\end{proof}

\subsection{The non-archimedean $\mu$-entropy}
\label{non-archimedean mu-entropy}

Now we are ready to extend the non-archimedean $\mu$-entropy $\NAmu^\lambda = \NAmu + \lambda \bm{\check{\sigma}}: \nH (X, L) \to \mathbb{R}$ to $\E^{\exp} (X, L)$. 
As we observed in section \ref{Towards non-archimedean formalism: moment measure of test configuration}, we can write $\NAmu$ as 
\[ \NAmu (\varphi) = - 2\pi \frac{\int_{X^{\mathrm{NA}}} A_X \int e^{-t} \mathcal{D}_\varphi + E_{\exp}^{K_X} (\varphi)}{\iint_{X^{\mathrm{NA}}} e^{-t} \mathcal{D}_\varphi} \]
for $\varphi \in \nH (X, L)$. 
To extend the functional, we would construct an extension of $E_{\exp}^M$ on $\nH (X, L)$ to $\E^{\exp} (X, L)$ and show the continuity of $\int_\mathbb{R} p (t) e^{-t} \DHm_\varphi$ (for $p (t) = 1, n-t$), $\int e^{-t} \mathcal{D}_\varphi$ and $E_{\exp}^M (\varphi)$ on $\varphi \in \E^{\exp} (X, L)$ with respect to the $d_{\exp}$-topology. 
The key tools are tomographic expressions of these functionals. 

\subsubsection{The continuity of Duistermaat--Heckman measure with respect to $d_1$-topology}
\label{The continuity of Duistermaat--Heckman measure with respect to d1-topology}

We firstly observe the weak continuity of Duistermaat--Heckman measure with respect to the strong topology. 

\begin{lem}
\label{limit of DH measure for d1 convergent sequence}
Let $\chi$ be a continuous function on $\mathbb{R}$ which has left bounded support, i.e. $\chi (t) =0$ for every $t \ll 0$. 
Then we have $\int_\mathbb{R} \chi \DHm_{\varphi_i} \to \int_\mathbb{R} \chi \DHm_\varphi$ if $\varphi_i \to \varphi \in \E^1 (X, L)$ in $d_1$. 
\end{lem}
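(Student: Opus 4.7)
The plan is to control the convergence via the 1-Wasserstein distance $W_1$ on Borel measures of total mass $(e^L)$ on $\mathbb{R}$, combined with a uniform support bound coming from $d_1$-convergence. First I would observe a uniform support bound: since $d_1 \asymp \bar I$ by Proposition~\ref{Ibar and d1}, $d_1 (\varphi_i, \varphi) \to 0$ forces $\sup \varphi_i \to \sup \varphi$, so there is $C \in \mathbb{R}$ with $\sup \varphi_i, \sup \varphi \le C$, and by Proposition~\ref{concavity} all $\DHm_{\varphi_i}, \DHm_\varphi$ are supported in $(-\infty, C]$. Since $\chi$ vanishes on some $(-\infty, M)$, its restriction to $[M, C]$ is continuous and bounded, hence uniformly approximable by Lipschitz functions there; extending by $0$ on the left and by its value at $C$ on the right produces bounded globally Lipschitz approximants on $\mathbb{R}$, and the uniform total mass bound $(e^L)$ controls the approximation error in $i$. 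This reduces the problem to the case $\chi$ bounded and globally Lipschitz with left-bounded support.

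Next I would establish the key estimate
\[ W_1 (\DHm_\varphi, \DHm_{\varphi'}) \le d_1 (\varphi, \varphi') \qquad (\varphi, \varphi' \in \E^1 (X, L)). \]
For $\varphi, \varphi' \in \nH (X, L)$, pick a basis $\bm{s}_m$ of $R_m$ codiagonal for $\varphi, \varphi'$ and form the atomic measures
\[ \pi_m := \frac{1}{m^n} \sum_{i=1}^{N_m} \delta_{(\lambda_i^\varphi (\bm{s}_m)/m,\; \lambda_i^{\varphi'} (\bm{s}_m)/m)} \]
on $\mathbb{R}^2$. The marginals of $\pi_m$ converge weakly to $\DHm_\varphi, \DHm_{\varphi'}$, and the transport cost $\int |s - t| \, d\pi_m$ tends to $\int |u| \, \DHm_{\varphi, \varphi'} = d_1 (\varphi, \varphi')$ because the pushforward via $(s, t) \mapsto s - t$ converges to $\DHm_{\varphi, \varphi'}$. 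Uniform support bounds give tightness, so $\pi_m$ admits a weak accumulation point $\pi$, which is a transport plan between $\DHm_\varphi, \DHm_{\varphi'}$ with cost $\le d_1 (\varphi, \varphi')$ by lower semi-continuity of the cost functional. Extension to $\E^1 (X, L)$ follows by approximating $\varphi, \varphi'$ by decreasing regularizations in $\nH (X, L)$: Proposition~\ref{tame} applied to tame indicators $1_{[\tau', \tau)}$ gives weak convergence of the DH measures, which together with the convergence of $E$ along decreasing nets (hence of first moments) upgrades to $W_1$-convergence, while $d_1$ is continuous by construction.

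Finally, for bounded Lipschitz $\chi$ with Lipschitz constant $L$, the Kantorovich--Rubinstein inequality, valid since both DH measures have finite first moment, yields
\[ \Big| \int_\mathbb{R} \chi \, \DHm_{\varphi_i} - \int_\mathbb{R} \chi \, \DHm_\varphi \Big| \le L \cdot W_1 (\DHm_{\varphi_i}, \DHm_\varphi) \le L \cdot d_1 (\varphi_i, \varphi) \to 0, \]
proving the lemma. The main obstacle is the clean extension of $W_1 \le d_1$ from $\nH (X, L)$ to $\E^1 (X, L)$: one must arrange that the decreasing regularizations keep supports uniformly bounded above and induce not merely weak but $W_1$-convergence of the DH measures, which requires tracking the first moments (controlled by $E$) along the regularization in addition to the weak limits.
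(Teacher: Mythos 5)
Your proposal is correct and is essentially the paper's argument in different packaging: the codiagonal basis is exactly the coupling the paper uses, and your inequality $W_1 (\DHm_{\varphi}, \DHm_{\varphi'}) \le d_1 (\varphi, \varphi')$ followed by Kantorovich--Rubinstein is precisely the paper's direct termwise estimate $\frac{1}{m^n} \sum_i |\chi (\lambda_i^{\varphi} (\bm{s})/m) - \chi (\lambda_i^{\varphi'} (\bm{s})/m)| \le C \cdot \frac{1}{m^n} \sum_i |\lambda_i^{\varphi} (\bm{s})/m - \lambda_i^{\varphi'} (\bm{s})/m| \to C \, d_1 (\varphi, \varphi')$, with the same uniform support bound $\sup \varphi_i \to \sup \varphi$ controlling the Lipschitz constant and the reduction from continuous to Lipschitz $\chi$. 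The only cosmetic wrinkle is that your Lipschitz approximant extended by $0$ to the left of the support could have a small jump there; this is fixed by noting $\chi$ vanishes at the left endpoint by continuity, exactly as in the paper's Weierstrass step.
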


\begin{proof}
Assume firstly $\chi$ is locally Lipschitz. 
Then for each $T \in \mathbb{R}$, we have a constant $C_T$ satisfying $|\chi (t) - \chi (s)| \le C_T |t-s|$ for every $t, s \le T$ as $\chi$. 
For $\varphi, \varphi' \in \nH (X, L)$, we compute 
\begin{align*} 
|\int_\mathbb{R} \chi \DHm_\varphi - \int_\mathbb{R} \chi \DHm_{\varphi'}| 
&\le \lim_{m \to \infty} \frac{1}{m^n} \sum_{i=1}^{N_m} |\chi (\lambda^\varphi_i (\bm{s})/m) - \chi (\lambda^{\varphi'}_i (\bm{s})/m)| 
\\
&\le C_{\max \{ \varphi (v_{\mathrm{triv}}), \varphi' (v_{\mathrm{triv}}) \}} \lim_{m \to \infty} \frac{1}{m^n} \sum_{i=1}^{N_m} |\lambda^\varphi_i (\bm{s})/m - \lambda^{\varphi'}_i (\bm{s})/m|
\\
&= C_{\max \{ \varphi (v_{\mathrm{triv}}), \varphi' (v_{\mathrm{triv}}) \}} d_1 (\varphi, \varphi'). 
\end{align*}
For $\varphi, \varphi' \in \E^1 (X, L)$, taking convergent decreasing nets $\varphi_i \searrow \varphi, \varphi_i' \searrow \varphi'$ so that $\varphi_i, \varphi_i' \in \nH (X, L)$ and passing to the limit, we get 
\[ |\int_\mathbb{R} \chi \DHm_\varphi - \int_\mathbb{R} \chi \DHm_{\varphi'}| \le C_{\max \{ \varphi (v_{\mathrm{triv}}), \varphi' (v_{\mathrm{triv}}) \}} d_1 (\varphi, \varphi'). \]

For a weak convergent $\varphi_i \to \varphi$, we have $\varphi_i (v_{\mathrm{triv}}) \to \varphi (v_{\mathrm{triv}})$, so the constants $C_{\max \{ \varphi_i (v_{\mathrm{triv}}), \varphi (v_{\mathrm{triv}}) \}}$ are bounded. 
Then by the above inequality, $d_1$-convergence of $\varphi_i \to \varphi$ implies $\int_\mathbb{R} \chi \DHm_{\varphi_i} \to \int_\mathbb{R} \chi \DHm_\varphi$. 

Now assume $\chi$ is just continuous. 
For $\varepsilon > 0$, take a locally Lipschitz function $\tilde{\chi}$ with left bounded support so that $|\chi - \tilde{\chi}| < \varepsilon$ on $(-\infty, \max \{ \varphi (v_{\mathrm{triv}}), \varphi' (v_{\mathrm{triv}}) \}]$. 
We note such $\tilde{\chi}$ exists by Weierstrass approximation. 
Then there exists $i_\varepsilon$ such that 
\[ |\int_\mathbb{R} \chi \DHm_{\varphi_i} - \int_\mathbb{R} \chi \DHm_\varphi| \le 2\varepsilon (e^L) + |\int_\mathbb{R} \tilde{\chi} \DHm_{\varphi_i} - \int_\mathbb{R} \tilde{\chi} \DHm_\varphi| \le 3 \varepsilon. \]
Taking the limit $\varepsilon \searrow 0$, we obtain the claim. 
\end{proof}
 
\begin{prop}
\label{lower semi-continuity of norms in d1 topology}
Let $\chi$ be a non-negative continuous function on $\mathbb{R}$. 
If $\varphi_i \to \varphi \in \E^1 (X, L)$ in $d_1$, then we have 
\[ \int_\mathbb{R} \chi \DHm_\varphi \le \varliminf_{i \to \infty} \int_\mathbb{R} \chi \DHm_{\varphi_i}. \]
\end{prop}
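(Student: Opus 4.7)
The plan is to reduce to the left-bounded-support case already handled in Lemma \ref{limit of DH measure for d1 convergent sequence} via a monotone truncation from below. Concretely, I would fix for each $T \in \mathbb{R}$ a continuous cutoff $\eta_T : \mathbb{R} \to [0,1]$ with $\eta_T \equiv 1$ on $[T, \infty)$ and $\eta_T \equiv 0$ on $(-\infty, T-1]$, and set $\chi_T := \chi \cdot \eta_T$. Then $\chi_T$ is a non-negative continuous function with left bounded support, $0 \le \chi_T \le \chi$, and the family is increasing as $T$ decreases, with $\chi_T \nearrow \chi$ pointwise as $T \to -\infty$.

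By Lemma \ref{limit of DH measure for d1 convergent sequence}, for each fixed $T$ we have $\int_\mathbb{R} \chi_T \DHm_{\varphi_i} \to \int_\mathbb{R} \chi_T \DHm_\varphi$. Since $\chi \ge \chi_T \ge 0$ pointwise, this yields
\[ \varliminf_{i \to \infty} \int_\mathbb{R} \chi \DHm_{\varphi_i} \;\ge\; \varliminf_{i \to \infty} \int_\mathbb{R} \chi_T \DHm_{\varphi_i} \;=\; \int_\mathbb{R} \chi_T \DHm_\varphi \]
for every $T \in \mathbb{R}$. Finally, since $\chi_T \nearrow \chi$ with $\chi \ge 0$ and $\DHm_\varphi$ is a finite Borel measure on $\mathbb{R}$, the monotone convergence theorem gives $\int_\mathbb{R} \chi_T \DHm_\varphi \nearrow \int_\mathbb{R} \chi \DHm_\varphi$ (valid whether the limit is finite or $+\infty$), so taking $T \to -\infty$ delivers the desired lower semi-continuity
\[ \int_\mathbb{R} \chi \DHm_\varphi \;\le\; \varliminf_{i \to \infty} \int_\mathbb{R} \chi \DHm_{\varphi_i}. \]

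There is no serious obstacle: the only points to verify are that such a continuous cutoff $\eta_T$ exists (trivial — a piecewise linear function suffices) and that Lemma \ref{limit of DH measure for d1 convergent sequence} indeed applies to $\chi_T$ (which it does, since $\chi_T$ is continuous and $\supp \chi_T \subset [T-1, \infty)$ is left bounded). The reason one cannot hope for equality in general is precisely the mass escape phenomenon illustrated in the remark after Definition \ref{DH measure}, where $\int_\mathbb{R} \DHm_{\varphi_i} = (e^L)$ while $\int_\mathbb{R} \DHm_\varphi$ can be strictly smaller along a $d_1$-convergent sequence with $\varphi \not\in \E(X,L)$; the truncation argument shows this is the only obstruction to continuity.
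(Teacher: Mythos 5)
Your proof is correct and is essentially identical to the paper's own argument: the paper also truncates $\chi$ from below by continuous cutoffs $\beta_j$ (equal to $1$ on $[-j+1,\infty)$ and $0$ on $(-\infty,-j]$), applies Lemma \ref{limit of DH measure for d1 convergent sequence} to $\beta_j\chi$, and concludes by monotone convergence as $j\to\infty$. No differences worth noting.
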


\begin{proof}
Take continuous cut off functions $\beta_j: \mathbb{R} \to [0,1]$ so that $\beta_j = 0$ on $(-\infty, -j]$ and $\beta = 1$ on $[-j+1, \infty)$. 
By the monotone convergence theorem, we have 
\[ \lim_{i \to \infty} \int_\mathbb{R} \beta_j \chi \DHm_\varphi = \int_\mathbb{R} \chi \DHm_\varphi. \]
On the other hand, by the above lemma, we have 
\[ \int_\mathbb{R} \beta_j \chi \DHm_\varphi = \lim_{i \to \infty} \int_\mathbb{R} \beta_j \chi \DHm_{\varphi_i} \le \lim_{i \to \infty} \int_\mathbb{R} \chi \DHm_{\varphi_i}. \]
Thus we get 
\[ \int_\mathbb{R} \chi \DHm_\varphi \le \lim_{i \to \infty} \int_\mathbb{R} \chi \DHm_{\varphi_i}. \]
\end{proof}

\subsubsection{Key estimates}

We firstly recall fundamental estimates established in \cite{BJ3}. 

\begin{prop}[Lemma 5.28 in \cite{BJ3} (cf. Lemma 3.23 in \cite{BJ1})]
\label{BJ estimate}
There exists a positive constant $C_n$ depending only on the dimension $n$ of $X$ such that 
\[ \Big{|} \int_{X^{\mathrm{NA}}} (\psi - \psi (v_{\mathrm{triv}})) \mathrm{MA} (\varphi) \Big{|} \le C_n d_1 (\varphi, 0)^{\frac{1}{2}} \max \{ d_1 (\varphi, 0), d_1 (\psi, 0) \}^{\frac{1}{2}} \]
for every $\varphi, \psi \in \E^1 (X, L)$. 
\end{prop}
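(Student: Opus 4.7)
The plan is to establish the inequality by a density argument followed by a multilinear expansion of $\mathrm{MA}(\varphi)-\mathrm{MA}(0)$ and iterated Cauchy--Schwarz for the non-archimedean energy pairing.

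First I would normalize and reduce to Fubini--Study metrics. Since $\mathrm{MA}(\varphi)$ is invariant under adding a constant to $\varphi$ and $d_1(\varphi,0)$ is equivalent to $\bar I(\varphi,0)=I(\varphi,0)+(e^L)|\sup\varphi|$ by Proposition~\ref{Ibar and d1}, I may translate so that $\sup\varphi=\sup\psi=0$; under this normalization $\psi(v_{\mathrm{triv}})=0$, the integrand is non-positive, and both $d_1(\varphi,0)$ and $d_1(\psi,0)$ are comparable up to constants $C_n$ to $I(\varphi,0)$ and $I(\psi,0)$ respectively. Using the density of $\nH(X,L)$ in $\E^1(X,L)$ via the countable regularization \cite[Theorem~9.11]{BJ3} together with the strong continuity of the Monge--Amp\`ere operator and the $d_1$-continuity of $I(\cdot,0)$, the inequality for general $\varphi,\psi\in\E^1(X,L)$ will follow once it is established for $\varphi,\psi\in\nH(X,L)$.

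For $\varphi,\psi\in\nH(X,L)$, I would expand telescopically
\[ \mathrm{MA}(\varphi)-\mathrm{MA}(0)=\sum_{k=0}^{n-1}\mathrm{MA}(\varphi-0;\underbrace{\varphi,\ldots,\varphi}_{k},\underbrace{0,\ldots,0}_{n-1-k}) \]
in the mixed non-archimedean Monge--Amp\`ere formalism of \cite{BJ3}. Integrating $\psi$ against each summand and applying the non-archimedean polarization identity, one integration by parts transfers one derivative onto $\psi$, giving a pairing of the form $\int d\psi\wedge d^c\varphi\wedge T$ for a positive mixed current $T=T(\varphi,0)$. For this pairing the non-archimedean Cauchy--Schwarz inequality (for the positive symmetric bilinear form $(\xi,\eta)\mapsto -\int\xi\,dd^c\eta\wedge T$) yields
\[ \left|\int d\psi\wedge d^c\varphi\wedge T\right|\le\Bigl(-\!\int\psi\,dd^c\psi\wedge T\Bigr)^{\!1/2}\Bigl(-\!\int\varphi\,dd^c\varphi\wedge T\Bigr)^{\!1/2}. \]
Iterating this Cauchy--Schwarz across the $k$ factors of $\varphi$ and the $n-1-k$ factors of $0$, each mixed integral is dominated by $I(\varphi,0)^{1/2}\cdot\max\{I(\varphi,0),I(\psi,0)\}^{1/2}$ times a combinatorial constant depending only on $n$. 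Summing the $n$ telescoping terms absorbs these constants into a single $C_n$.

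The main technical obstacle is the bookkeeping in the iterated Cauchy--Schwarz: one must arrange the chain so that the factor $I(\psi,0)^{1/2}$ appears at most once in each term (otherwise one would obtain $I(\psi,0)^{1/2}$ in place of $\max\{I(\varphi,0),I(\psi,0)\}^{1/2}$, which is strictly weaker in the regime $d_1(\psi,0)\gg d_1(\varphi,0)$) while the remaining factors combine into $I(\varphi,0)^{1/2}$ via the elementary inequality $-\int\varphi\,dd^c\varphi\wedge(dd^c\varphi)^{k}\wedge(dd^c\cdot 0)^{n-1-k}\le C_nI(\varphi,0)$. This is precisely the argument carried out for trivially valued non-archimedean fields in \cite[\S5]{BJ3}; specializing their general mixed estimate to the two potentials $\varphi_1=\varphi$ and $\varphi_2=\psi$ (with the remaining $n-2$ potentials set to $0$) yields the stated inequality.
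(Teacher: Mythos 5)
Your proposal arrives at the right inequality, but it is worth being clear that the paper's own proof is a one‑line citation: it substitutes $\varphi'=\psi'=0$ into \cite[Lemma 5.28]{BJ3} (noting $\psi(v_{\mathrm{triv}})\int_{X^{\mathrm{NA}}}\mathrm{MA}(\varphi)=\int_{X^{\mathrm{NA}}}\psi\,\mathrm{MA}(0)$) and then converts $\bar{I}$ into $d_1$ via Proposition \ref{Ibar and d1}. You end up invoking the same external lemma, but you additionally sketch its internal proof (telescoping $\mathrm{MA}(\varphi)-\mathrm{MA}(0)$ into mixed terms, integration by parts, Cauchy--Schwarz for the energy pairing), which is a legitimate, more self-contained route. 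Two caveats on your sketch. First, the iterated Cauchy--Schwarz does not directly produce the exponents $\tfrac12,\tfrac12$: each iteration halves an exponent, so the natural output is the form $C_nI(\psi)^{1/2^n}\max\{I(\varphi),I(\psi)\}^{1/2-1/2^n}I(\varphi)^{1/2}$ (exactly the shape the paper quotes later in Proposition \ref{double limit}); the clean statement then follows by majorizing $I(\psi)^{1/2^n}\max\{\cdot\}^{1/2-1/2^n}\le\max\{\cdot\}^{1/2}$, which also dissolves your worry about ``$I(\psi,0)^{1/2}$ appearing at most once.'' Second, your density reduction implicitly uses joint continuity of $(\psi,\varphi)\mapsto\int_{X^{\mathrm{NA}}}\psi\,\mathrm{MA}(\varphi)$ along strong limits, which in this paper is itself deduced from the estimate you are proving; to avoid circularity you should instead regularize by decreasing nets in $\nH(X,L)$ and pass to the limit in two steps, using monotone convergence in $\psi$ and the (independently established) continuity of $\mathrm{MA}$ along decreasing nets in $\varphi$. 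With those adjustments the argument is sound; the normalization step is fine since $(e^L)|\sup\varphi|\le\bar{I}(\varphi,0)\le C_nd_1(\varphi,0)$, so translating to $\sup\varphi=\sup\psi=0$ only costs a dimensional constant on the right-hand side.
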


\begin{proof}
We put $\varphi', \psi' = 0$ in \cite[Lemma 5.28]{BJ3} and then apply Proposition \ref{Ibar and d1}. 
We note $\psi (v_{\mathrm{triv}}) \int_{X^{\mathrm{NA}}} \mathrm{MA} (\varphi) = \int_{X^{\mathrm{NA}}} \psi \mathrm{MA} (0)$. 
\end{proof}

The left hand side is invariant when replacing $\varphi \mapsto \varphi +c$, $\psi \mapsto \psi + c'$, so we actually have 
\[ \Big{|} \int_{X^{\mathrm{NA}}} (\psi - \psi (v_{\mathrm{triv}})) \mathrm{MA} (\varphi) \Big{|} \le C_n \underline{d}_1 (\varphi, 0)^{\frac{1}{2}} \max \{ \underline{d}_1 (\varphi, 0), \underline{d}_1 (\psi, 0) \}^{\frac{1}{2}} \]
for 
\[ \underline{d} (\varphi, \psi) := \inf_{c \in \mathbb{R}} d_1 (\varphi + c, \psi). \]

The following is also a consequence of \cite[Lemma 5.28]{BJ3}

\begin{prop}
\label{double limit}
Let $\{ \varphi_i \}_{i \in I}, \{ \psi_i \}_{i \in I} \subset \E^1 (X, L)$ be nets converging strongly to $\varphi, \psi \in \E^1 (X, L)$, respectively. 
Then we have 
\[ \lim_{i \to \infty} \int_{X^{\mathrm{NA}}} \psi_i \mathrm{MA} (\varphi_i) = \int_{X^{\mathrm{NA}}} \psi \mathrm{MA} (\varphi). \]
We also have 
\[ \lim_{i \to \infty} \int_{X^{\mathrm{NA}}} g_i \mathrm{MA} (\varphi_i) = \int_{X^{\mathrm{NA}}} g \mathrm{MA} (\varphi) \]
for a uniform convergent net of continuous functions $\{ g_i \}_{i \in I} \in C^0 (X^{\mathrm{NA}})$. 
\end{prop}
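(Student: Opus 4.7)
The plan is to reduce the double limit to the already-established single-variable continuity of $\mathrm{MA}$ (the theorem cited just above the statement, from \cite[Lemma 5.28 / Theorem 7.3]{BJ3}) plus a quantitative control on the bilinear pairing, which the BJ estimate of Proposition \ref{BJ estimate} provides. Throughout, strong convergence in $\E^1(X, L)$ is identified with $d_1$-convergence via Proposition \ref{Ibar and d1}, so in particular $d_1(\varphi_i, 0)$ and $d_1(\psi_i, 0)$ are bounded, and $d_1(\psi_i, \psi) \to 0$.

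First, I would split
\[
\int_{X^{\mathrm{NA}}} \psi_i \,\mathrm{MA}(\varphi_i) - \int_{X^{\mathrm{NA}}} \psi \,\mathrm{MA}(\varphi) = \underbrace{\int_{X^{\mathrm{NA}}} \psi\bigl(\mathrm{MA}(\varphi_i) - \mathrm{MA}(\varphi)\bigr)}_{=: A_i} + \underbrace{\int_{X^{\mathrm{NA}}} (\psi_i - \psi) \,\mathrm{MA}(\varphi_i)}_{=: B_i}.
\]
The term $A_i$ tends to $0$ directly by the continuity theorem stated just before this proposition, applied with the fixed $\psi \in \E^1(X,L)$ to the strongly convergent net $\varphi_i \to \varphi$.

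For $B_i$, I would subtract a constant to separate the ``mean value'' contribution:
\[
B_i = \int_{X^{\mathrm{NA}}} (\psi_i - \psi)\bigl(\mathrm{MA}(\varphi_i) - \mathrm{MA}(0)\bigr) + \bigl(\psi_i(v_{\mathrm{triv}}) - \psi(v_{\mathrm{triv}})\bigr)(e^L).
\]
The second summand goes to $0$ because strong convergence implies weak convergence in $\PSH(X, L)$, and for every $\varphi \in \PSH(X,L)$ one has $\varphi(v_{\mathrm{triv}}) = \sup \varphi$, a quantity which is continuous along weakly convergent nets. For the first summand, I would invoke the polarized version of the BJ estimate---the full statement of \cite[Lemma 5.28]{BJ3} (cf.\ \cite[Lemma 3.23]{BJ1}), of which Proposition \ref{BJ estimate} is the specialization at $\varphi' = \psi' = 0$---which reads
\[
\Big| \int_{X^{\mathrm{NA}}} (\psi_1 - \psi_2)\bigl(\mathrm{MA}(\varphi_1) - \mathrm{MA}(\varphi_2)\bigr) \Big| \le C_n\, d_1(\varphi_1, \varphi_2)^{1/2} \max\{d_1(\varphi_1, \varphi_2), d_1(\psi_1, \psi_2)\}^{1/2}.
\]
Applied with $(\varphi_1, \varphi_2, \psi_1, \psi_2) = (\varphi_i, 0, \psi_i, \psi)$, this yields
\[
\Big| \int_{X^{\mathrm{NA}}} (\psi_i - \psi)\bigl(\mathrm{MA}(\varphi_i) - \mathrm{MA}(0)\bigr) \Big| \le C_n\, d_1(\varphi_i, 0)^{1/2} \max\{d_1(\varphi_i, 0), d_1(\psi_i, \psi)\}^{1/2},
\]
and the right-hand side tends to $0$ because $d_1(\varphi_i, 0)$ is bounded while $d_1(\psi_i, \psi) \to 0$. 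This finishes the first claim.

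For the second claim involving a uniformly convergent net $g_i \to g$ in $C^0(X^{\mathrm{NA}})$, the argument is much softer: bound
\[
\Big| \int_{X^{\mathrm{NA}}} g_i \,\mathrm{MA}(\varphi_i) - \int_{X^{\mathrm{NA}}} g \,\mathrm{MA}(\varphi) \Big| \le \sup_{X^{\mathrm{NA}}} |g_i - g| \cdot (e^L) + \Big| \int_{X^{\mathrm{NA}}} g\bigl(\mathrm{MA}(\varphi_i) - \mathrm{MA}(\varphi)\bigr) \Big|,
\]
and send $i \to \infty$, using uniform convergence for the first term and the fixed-$g$ continuity theorem (case $g \in C^0$) for the second. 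The only real obstacle is the invocation of the polarized BJ estimate, which is not written explicitly in the excerpt but is a direct polarization of the stated form through the recalled reference.
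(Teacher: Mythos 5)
Your decomposition is the same as the paper's (the paper writes the difference as exactly your $A_i$ plus your two pieces of $B_i$: the $(\mathrm{MA}(\varphi_i)-\mathrm{MA}(0))$-term and the $\sup$-difference term), and your treatment of $A_i$, of the $\sup$-term, and of the second claim about $g_i$ are all fine. The gap is in the key quantitative step for $B_i$. The "polarized BJ estimate" you write down,
\[
\Big| \int_{X^{\mathrm{NA}}} (\psi_1 - \psi_2)\bigl(\mathrm{MA}(\varphi_1) - \mathrm{MA}(\varphi_2)\bigr) \Big| \le C_n\, d_1(\varphi_1, \varphi_2)^{1/2} \max\{d_1(\varphi_1, \varphi_2), d_1(\psi_1, \psi_2)\}^{1/2},
\]
is not the statement of \cite[Lemma 5.28]{BJ3}, and even taken at face value it does not close the argument: with $(\varphi_1,\varphi_2,\psi_1,\psi_2)=(\varphi_i,0,\psi_i,\psi)$ the right-hand side is $C_n\,d_1(\varphi_i,0)^{1/2}\max\{d_1(\varphi_i,0),d_1(\psi_i,\psi)\}^{1/2}$, and since $d_1(\varphi_i,0)$ is merely bounded (and in general bounded away from $0$) while $d_1(\psi_i,\psi)\to 0$, the maximum is eventually $d_1(\varphi_i,0)$ and the bound is $\approx C_n\,d_1(\varphi_i,0)$, which does not tend to $0$. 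Your sentence "the right-hand side tends to $0$ because $d_1(\varphi_i,0)$ is bounded while $d_1(\psi_i,\psi)\to 0$" is a non sequitur: the small quantity only sits inside the $\max$, so it cannot force the bound to zero.

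The repair is to use the actual form of the polarized estimate, in which the relative quantity in the $\psi$-slot appears as a genuine multiplicative factor. The paper bounds the same term by
\[
C_n\, I(\varphi_i)^{\frac{1}{2}}\, \max\{I(\varphi_i), I(\psi), I(\psi_i)\}^{\frac{1}{2}-\frac{1}{2^n}}\, I(\psi,\psi_i)^{\frac{1}{2^n}},
\]
which tends to $0$ precisely because the factor $I(\psi,\psi_i)^{1/2^n}$ does, while the other factors stay bounded. The naive polarization you performed keeps the asymmetric structure of Proposition \ref{BJ estimate} (good factor only in the $\varphi$-slot), which is exactly the wrong slot for the term $\int(\psi_i-\psi)(\mathrm{MA}(\varphi_i)-\mathrm{MA}(0))$. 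Your handling of the second claim via $\sup|g_i-g|\cdot(e^L)$ plus the fixed-$g$ continuity theorem is correct and slightly more direct than the paper's reduction through approximation of $g$ by differences of Fubini--Study functions.
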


\begin{proof}
By the assumption, we have $I (\varphi_i) \to I (\varphi)$ and $I (\psi_i) \to I (\psi)$, so we may assume $I (\varphi_i), I (\psi_i) \le C$ for a uniform constant $C > 0$ by replacing $I$ with $\{ i \ge i_0 \}$ if necessary. 
Then by \cite[Lemma 5.28]{BJ3}, we obtain 
\begin{align*} 
\Big{|} \int_{X^{\mathrm{NA}}} \psi \mathrm{MA} (\varphi) 
&- \int_{X^{\mathrm{NA}}} \psi_i \mathrm{MA} (\varphi_i) \Big{|} 
\\
&\le \Big{|} \int_{X^{\mathrm{NA}}} \psi (\mathrm{MA} (\varphi) - \mathrm{MA} (\varphi_i)) \Big{|} + (e^L) |\sup \psi - \sup \psi_i|
\\
&\quad + \Big{|} \int_{X^{\mathrm{NA}}} (\psi - \psi_i) (\mathrm{MA} (\varphi_i) - \mathrm{MA} (\varphi_{\mathrm{triv}})) \Big{|} 
\\
&\le C_n I (\psi)^{\frac{1}{2^n}} \max \{ I (\varphi), I (\varphi_i), I (\psi) \}^{\frac{1}{2} - \frac{1}{2^n}} I (\varphi, \varphi_i)^{\frac{1}{2}} 
\\
&\quad + (e^L) |\sup \psi - \sup \psi_i|
\\
&\qquad + C_n I (\varphi_i)^{\frac{1}{2}} \max \{ I (\varphi_i), I (\psi), I (\psi_i) \}^{\frac{1}{2} - \frac{1}{2^n}} I (\psi, \psi_i)^{\frac{1}{2^n}}
\\
&\le C' I (\varphi, \varphi_i)^{\frac{1}{2}} + (e^L) |\sup \psi - \sup \psi_i| + C'' I (\psi, \psi_i)^{\frac{1}{2^n}} 
\end{align*}
for uniform constants $C', C'' > 0$. 
This estimate proves the first claim. 
The latter claim follows from the first claim and the fact \cite[Theorem 2.2, Corollary 2.11]{BJ3} that for any $\varepsilon$ there exists $\psi, \psi' \in \nH (X, L)$ such that $|g - (\psi - \psi')| \le \varepsilon$ (cf. \cite[Corollary 2.8]{BJ1}). 
\end{proof}

Let $L_0, \ldots, L_n$ be ample $\mathbb{Q}$-line bundles. 
The \textit{energy paring} $(L_0, \varphi_0) \dotsb (L_n, \varphi_n) \in \mathbb{R}$ for $\varphi_i \in \E^1 (X, L_i)$ is constructed in \cite{BJ3}. 
In this article, we are interested in the following three cases: 
\begin{align*} 
\frac{1}{(n+1)!} (L, \varphi)^{\cdot n+1} 
&= E (\varphi) = \int_\mathbb{R} t \DHm_\varphi, 
\\ 
\frac{1}{n!} (M, 0) \cdot (L, \varphi)^{\cdot n} 
&= \frac{d}{dt}\Big{|}_{t=0} \frac{1}{(n+1)!} (L+tM, \varphi)^{\cdot n+1}, 
\\
\frac{1}{n!} (0, \psi) \cdot (L, \varphi)^{\cdot n} 
&= \int_{X^{\mathrm{NA}}} \psi \mathrm{MA} (\varphi). 
\end{align*}
When $M$ is not ample, we put 
\[ \frac{1}{n!} (M, 0) \cdot (L, \varphi)^{\cdot n} := \frac{1}{n!} (M_1, 0) \cdot (L, \varphi)^{\cdot n} - \frac{1}{n!} (M_2, 0) \cdot (L, \varphi)^{\cdot n}, \]
using ample $\mathbb{Q}$-line bundles $M_1, M_2$ with $M = M_1 - M_2$. 

\begin{thm}[Theorem 5.32 in \cite{BJ3}]
\label{BJ estimate 2}
Let $M$ be a $\mathbb{Q}$-line bundle and $L$ be ample $\mathbb{Q}$-line bundles on $X$. 
Take ample $M_1, M_2$ and $\theta \ge 1$ so that $\theta^{-1} L \le M_1, M_2 \le \theta L$. 
Then there exists a positive constant $C_n$ depending only on the dimension $n$ of $X$ such that 
\[ |(M, 0) \cdot (L, \varphi)^{\cdot n} - (M, 0) \cdot (L, \varphi')^{\cdot n}| \le C_n \theta^{n^2} d_1 (\varphi, \varphi')^{\frac{1}{2^{3n-2}}} (d_1 (\varphi, 0) + d_1 (\varphi', 0))^{1 - \frac{1}{2^{3n-2}}} \]
for every $\varphi, \varphi' \in \E^{\exp} (X, L)$. 
Here $d_1$ denotes the $d_1$-distance on $\E^1 (X, L)$. 
In particular, we have 
\[ \lim_{i \to \infty} (M, 0) \cdot (L, \varphi_i)^{\cdot n} = (M, 0) \cdot (L, \varphi)^{\cdot n} \] 
when a net $\{ \varphi_i \}_{i \in I} \subset \E^1 (X, L)$ converges strongly to $\varphi \in \E^1 (X, L)$.  
\end{thm}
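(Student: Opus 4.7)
The plan is to prove the estimate by reducing to pure Monge--Amp\`ere type pairings via multilinearity of the energy pairing, then iterating a Cauchy--Schwarz type estimate derived from Proposition \ref{BJ estimate}. First I would decompose $M = M_1 - M_2$ into a difference of ample $\mathbb{Q}$-line bundles satisfying $\theta^{-1} L \le M_i \le \theta L$, and use the bilinearity of $(M, 0)\cdot(L, \varphi)^{\cdot n}$ in $(M, 0)$ to reduce to the case where $M$ itself is ample. The factor $\theta^{n^2}$ will later arise from comparing mixed intersection numbers in $M$ with those in $L$.

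Next I would expand the difference by the telescoping identity
\[ (M, 0)\cdot(L, \varphi)^{\cdot n} - (M, 0)\cdot(L, \varphi')^{\cdot n} = \sum_{k=0}^{n-1} (M, 0)\cdot(L, \varphi)^{\cdot k}\cdot(0, \varphi - \varphi')\cdot(L, \varphi')^{\cdot n-1-k}, \]
which follows from multilinearity of the energy pairing on $\E^1$. Each summand on the right-hand side is an integral of $\varphi - \varphi'$ against a mixed non-archimedean Monge--Amp\`ere measure built from $M$, $(L, \varphi)$, and $(L, \varphi')$. The task is thus reduced to bounding an expression of the form $\int_{X^{\mathrm{NA}}} (\varphi - \varphi') \, d\mu$ where $\mu$ is a mixed measure, which in turn reduces via subtraction of the constant $(\varphi - \varphi')(v_{\mathrm{triv}})$ to a version of Proposition \ref{BJ estimate} for mixed Monge--Amp\`ere measures.

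Then I would iterate the Cauchy--Schwarz inequality behind Proposition \ref{BJ estimate} to peel off the mixed pairings one factor at a time, each application producing a $d_1(\varphi, \varphi')^{1/2}$ factor balanced against $\max\{d_1(\varphi, 0), d_1(\varphi', 0)\}^{1/2}$. Carrying out roughly $3n-2$ iterations is needed to reduce all mixed pairings to pairings involving only the reference metric $0$ and a single non-trivial metric (to which standard bounds apply); this compounding of square roots is what produces the exponent $1/2^{3n-2}$. The $\theta$-comparison between $M$ and $L$ has to be propagated through each iteration and the $n$ summands in the telescoping, giving the final $\theta^{n^2}$ constant. The continuity assertion for strongly convergent nets $\varphi_i \to \varphi$ then follows immediately: one has $d_1(\varphi_i, \varphi) \to 0$ while $d_1(\varphi_i, 0)$ stays bounded by the triangle inequality, so the quantitative estimate forces $(M, 0)\cdot(L, \varphi_i)^{\cdot n} \to (M, 0)\cdot(L, \varphi)^{\cdot n}$.

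The main obstacle is the combinatorial bookkeeping of the iterated Cauchy--Schwarz argument: at each step one must formulate and apply a mixed-measure generalization of Proposition \ref{BJ estimate}, and the cumulative $\theta$-dependence must be tracked carefully so that the bound depends on $M$ only through $\theta$ and the dimension. Once this mixed-measure Cauchy--Schwarz machinery is in place, the rest of the argument is essentially a careful accounting of exponents and constants; in practice this is the content of \cite[Theorem 5.32]{BJ3}, to which one may also directly appeal.
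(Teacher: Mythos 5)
Your proposal is essentially sound, but it takes a more from-scratch route than the paper. The paper's proof does not perform the telescoping or the iterated Cauchy--Schwarz at all: it simply cites \cite[Theorem 5.32]{BJ3} as a black box for the sup-normalized metrics $\varphi - \sup\varphi$, $\varphi' - \sup\varphi'$, and its only real content is the reduction bookkeeping. Concretely, it writes $(M,0)\cdot(L,\varphi)^{\cdot n} = (M,0)\cdot(L,\varphi - \sup\varphi)^{\cdot n} + n(M\cdot L^{\cdot n-1})\sup\varphi$, splits $M = M_1 - M_2$, bounds the normalized part by the cited theorem in terms of $I(\varphi,\varphi')^{1/2^{3n-2}}\max\{I(\varphi),I(\varphi')\}^{1-1/2^{3n-2}}$, bounds the $\sup$ part by $2n\theta(e^L)|\sup\varphi-\sup\varphi'|$, combines the two via the elementary inequality $a^{1/p}b^{1/q}+c^{1/p}d^{1/q}\le(a+c)^{1/p}(b+d)^{1/q}$ to get a bound in $\bar I$, and finally converts $\bar I$ to $d_1$ via Proposition \ref{Ibar and d1}. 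Your telescoping identity and iterated Cauchy--Schwarz are indeed (a sketch of) the internal mechanics of the cited BJ theorem, so your route buys a self-contained argument at the cost of having to actually establish the mixed-measure generalization of Proposition \ref{BJ estimate} and track the exponents --- which you correctly identify as the main obstacle but do not carry out, so as written your argument ultimately also falls back on the citation. One point you should make explicit: the cited estimate and the $I$-functional are translation-invariant, whereas $d_1$ is not, so after subtracting $(\varphi-\varphi')(v_{\mathrm{triv}})$ in each telescoping term you are left with $(\sup\varphi-\sup\varphi')\cdot(M\cdot L^{\cdot n-1})$ contributions; these must be separately absorbed into the bound with the fractional exponent $1/2^{3n-2}$ (using $|\sup\varphi-\sup\varphi'|\le d_1(\varphi,\varphi')$ up to a constant together with $d_1(\varphi,\varphi')\le d_1(\varphi,0)+d_1(\varphi',0)$ and the H\"older-type inequality above), and your sketch does not address this step.
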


\begin{proof}
We firstly compute 
\[ (M, 0) \cdot (L, \varphi)^{\cdot n} = (M, 0) \cdot (L, \varphi - \sup \varphi)^{\cdot n} + n (M, L^{\cdot n-1}) \cdot \sup \varphi. \]
Then by \cite[Theorem 5.32, Proposition 5.26]{BJ3}, we have 
\begin{align*} 
|(M, 0) \cdot (L, \varphi)^{\cdot n} 
&- (M, 0) \cdot (L, \varphi')^{\cdot n}| 
\\
&\le |(M_1, 0) \cdot (L, \varphi - \sup \varphi)^{\cdot n} - (M_1, 0) \cdot (L, \varphi' - \sup \varphi')^{\cdot n}|
\\
&\quad + |(M_2, 0) \cdot (L, \varphi - \sup \varphi)^{\cdot n} - (M_2, 0) \cdot (L, \varphi' - \sup \varphi')^{\cdot n}| 
\\
&\qquad + n ((M_1, L^{\cdot n-1}) + (M_2, L^{\cdot n-1})) \cdot |\sup \varphi - \sup \varphi'| 
\\
&\le C_n \theta^{n^2} I (\varphi, \varphi')^{\frac{1}{2^{3n-2}}} \max \{ I (\varphi), I (\varphi') \}^{1 - \frac{1}{2^{3n-2}}} 
\\
&\quad+ 2 n \theta (e^L) \cdot |\sup \varphi - \sup \varphi'|^{\frac{1}{2^{3n-2}}} (|\sup \varphi| + |\sup \varphi'|)^{1- \frac{1}{2^{3n-2}}}
\\
&\le \max \{ C_n, 2 n \} \theta^{n^2} \bar{I} (\varphi, \varphi')^{\frac{1}{2^{3n-2}}} (\bar{I} (\varphi, 0) + \bar{I} (\varphi', 0) )^{1 - \frac{1}{2^{3n-2}}} 
\end{align*}
for some constant $C_n > 0$ depending only on $n$. 
Here the last inequality follows by $a^{1/p} b^{1/q} + c^{1/p} d^{1/q} \le (a+c)^{1/p} (b +d)^{1/q}$ for $a, b, c, d \ge 0$ and $p, q \ge 1$ with $1/p+ 1/q = 1$. 
By Proposition \ref{Ibar and d1}, we conclude the proof. 
\end{proof}

The following estimate is crucial in our argument. 
We note 
\[ d_1 (\varphi \wedge \tau - \tau, 0) = -E (\varphi \wedge \tau - \tau) = - \frac{(L, \varphi)^{\cdot n+1}}{(n+1)!} \]
as $\DHm_{\varphi \wedge \tau - \tau}$ is supported on $(-\infty, 0]$ by $\varphi \wedge \tau - \tau \le 0$. 

\begin{lem}
\label{exponential domination}
For $\varphi \in \E^1 (X, L)$ and $\alpha > 0$, we have 
\[ d_1 (\varphi \wedge \tau - \tau, 0) \le \min \Big{\{} \frac{1}{e \alpha} \int_\mathbb{R} e^{-\alpha t} \DHm_\varphi \cdot e^{\alpha \tau}, (e^L) |\tau| + d_1 (\varphi, 0) \Big{\}} \]
for every $\tau \in \mathbb{R}$. 

As a consequence, for $\varphi \in \E^{\exp} (X, L)$ and for any $\rho > \varepsilon > 0$, we have a positive constant $C > 0$ depending boundedly on $\log (\rho- \varepsilon), \log (\rho + \varepsilon), \log (e^L), d_1 (\varphi, 0)$ and $\int_\mathbb{R} e^{- (\rho + \varepsilon)t} \DHm_\varphi = -E_{\exp} (\varphi_{; \rho+ \varepsilon})$ such that 
\[ e^{-\rho \tau} d_1 (\varphi \wedge \tau - \tau, 0) \le C e^{-\varepsilon |\tau|} \]
for every $\tau \in \mathbb{R}$. 
\end{lem}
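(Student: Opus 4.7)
The plan is to first convert $d_1(\varphi \wedge \tau - \tau, 0)$ into an explicit integral against $\DHm_\varphi$, then derive both bounds from this expression by elementary estimates, and finally combine them to obtain the uniform decay claimed in the consequence.

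First I would establish the identity
\[ d_1(\varphi \wedge \tau - \tau, 0) = \int_{(-\infty, \tau)} (\tau - t)\, \DHm_\varphi. \]
Indeed, for $\varphi' \in \nH(X, L)$ the formula $d_1(\varphi', 0) = \int_{\mathbb{R}} |t|\, \DHm_{\varphi'}$ (the definition of $d_1$ applied to $\DHm_{\varphi', \varphi_{\mathrm{triv}}} = \DHm_{\varphi'}$) extends to $\varphi' \in \E^1(X, L)$ by continuity along decreasing nets. Applied to $\varphi' = \varphi \wedge \tau - \tau \le 0$, whose DH measure is supported on $(-\infty, 0]$, this gives $d_1(\varphi', 0) = -\int t\, \DHm_{\varphi'}$. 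The shift rule of Lemma~\ref{DH measure shift}(2) turns this into $-\int (t - \tau)\, \DHm_{\varphi \wedge \tau}$, and the wedge rule Lemma~\ref{DH measure shift}(3), applied with $\chi(t) = t - \tau$ so that the boundary term $\chi(\tau) = 0$ vanishes, collapses it to the claimed integral over $(-\infty, \tau)$. The passage from $\nH$ to $\E^1$ with the unbounded integrand $t - \tau$ is handled by approximating $\varphi$ with a decreasing net in $\nH(X, L)$ and invoking continuity of $E$ along such nets (Proposition \ref{continuity of moment energy along decreasing nets}).

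Next I would read off both bounds from this integral representation. For the exponential one, the elementary envelope $s \le (e\alpha)^{-1} e^{\alpha s}$ for $s \ge 0$ (the maximum of $s \mapsto s e^{-\alpha s}$ occurs at $s = 1/\alpha$), applied with $s = \tau - t$, gives
\[ \int_{(-\infty, \tau)} (\tau - t)\, \DHm_\varphi \le \frac{e^{\alpha \tau}}{e\alpha} \int_{(-\infty, \tau)} e^{-\alpha t}\, \DHm_\varphi \le \frac{e^{\alpha \tau}}{e\alpha} \int_{\mathbb{R}} e^{-\alpha t}\, \DHm_\varphi. \]
For the linear bound, $\tau - t \le |\tau| + |t|$ on $t < \tau$, combined with $\int \DHm_\varphi = (e^L)$ (from $\E^1(X, L) \subset \E(X, L)$, Corollary \ref{finite moment energy class and full mass class}) and $\int |t|\, \DHm_\varphi = d_1(\varphi, 0)$, yields $|\tau|(e^L) + d_1(\varphi, 0)$.

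For the consequence I would split on the sign of $\tau$: for $\tau \ge 0$ the exponential bound with $\alpha = \rho - \varepsilon$ and multiplication by $e^{-\rho \tau}$ produces a factor $e^{-\varepsilon \tau}$, while for $\tau \le 0$ the choice $\alpha = \rho + \varepsilon$ produces a factor $e^{\varepsilon \tau}$. Since $e^{-\varepsilon |\tau|}$ equals $e^{-\varepsilon \tau}$ on $[0, \infty)$ and $e^{\varepsilon \tau}$ on $(-\infty, 0]$, both branches give a bound of the form $C e^{-\varepsilon |\tau|}$ with
\[ C = \max\Big\{ \tfrac{1}{e(\rho-\varepsilon)} \int_{\mathbb{R}} e^{-(\rho-\varepsilon)t}\, \DHm_\varphi,\ \tfrac{1}{e(\rho+\varepsilon)} \int_{\mathbb{R}} e^{-(\rho+\varepsilon)t}\, \DHm_\varphi \Big\}. \]
Finally, H\"older's inequality (equivalently, log-convexity of $\alpha \mapsto \int e^{-\alpha t}\, \DHm_\varphi$) with exponent $s = (\rho-\varepsilon)/(\rho+\varepsilon) \in (0,1)$ dominates the first integral by $\bigl(\int e^{-(\rho+\varepsilon)t}\, \DHm_\varphi\bigr)^s (e^L)^{1-s}$, so $C$ depends boundedly on the quantities listed in the statement; the dependence on $d_1(\varphi, 0)$ is automatically inherited if one chooses to extract it via the linear bound instead. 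The only delicate point is the passage to $\E^1$ in paragraph two, where the unbounded integrand $t - \tau$ requires the continuity of $E$ along decreasing nets; everything else is bookkeeping.
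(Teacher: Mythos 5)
Your proof of the first inequality is correct and coincides with the paper's: the identity $d_1(\varphi\wedge\tau-\tau,0)=\int_{(-\infty,\tau)}(\tau-t)\,\DHm_\varphi$ (via the shift and rooftop rules for $\DHm$), the envelope $(\tau-t)e^{\alpha t}\le (e\alpha)^{-1}e^{\alpha\tau}$ for the exponential bound, and $|t-\tau|\le|\tau|+|t|$ together with $\int\DHm_\varphi=(e^L)$ for the linear bound are exactly the steps used there; your extra care in justifying the identity on all of $\E^1$ is harmless and, if anything, more explicit than the paper.

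For the consequence you take a slightly different route. The paper uses the \emph{linear} bound on $\tau\ge 0$, writing $e^{-\rho\tau}\bigl((e^L)|\tau|+d_1(\varphi,0)\bigr)\le C'e^{-\varepsilon\tau}$ with an explicit $C'$, and the exponential bound with the single value $\alpha=\rho+\varepsilon$ for the decay on $\tau\le 0$; this is why $d_1(\varphi,0)$ appears in the list of quantities the constant depends on. You instead use the exponential bound on both half-lines, with $\alpha=\rho-\varepsilon$ for $\tau\ge 0$ and $\alpha=\rho+\varepsilon$ for $\tau\le 0$, and then need H\"older (log-convexity of $\alpha\mapsto\int e^{-\alpha t}\DHm_\varphi$) to dominate $\int e^{-(\rho-\varepsilon)t}\DHm_\varphi$ by the listed quantities. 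Both arguments are valid and give a constant with the stated bounded dependence; yours trades the explicit elementary constant $C'$ for an interpolation step, and the paper's choice makes the appearance of $d_1(\varphi,0)$ in the statement less mysterious. No gap either way.
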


\begin{proof}
Using $(\tau - t) e^{\alpha t} \le (e \alpha)^{-1} e^{\alpha \tau}$ for $t \le \tau$, we compute 
\begin{align*} 
d_1 (\varphi \wedge \tau - \tau, 0) 
&= \int_\mathbb{R} |t| \DHm_{\varphi \wedge \tau - \tau} = \int_{(-\infty, \tau)} (\tau -t) e^{\alpha t} e^{-\alpha t} \DHm_\varphi 
\\
&\le (e \alpha)^{-1} e^{\alpha \tau} \int_\mathbb{R} e^{-\alpha t} \DHm_\varphi. 
\end{align*}
On the other hand, we compute 
\[ d_1 (\varphi \wedge \tau - \tau, 0) = \int_{(-\infty, \tau)} |t- \tau| \DHm_\varphi \le |\tau| \int_\mathbb{R} \DHm_\varphi + \int_\mathbb{R} |t| \DHm_\varphi. \]
Thus we get the first claim. 

We can check the second claim as follows. 
Since $\rho - \varepsilon > 0$, we have a constant $C'$ depending boundedly on $\log (\rho- \varepsilon), \log (e^L)$ and $d_1 (\varphi, 0)$ such that 
\[ e^{-(\rho - \varepsilon) \tau} \Big{(} (e^L) |\tau| + d_1 (\varphi, 0) \Big{)} \le C' \]
on $\tau \ge 0$. 
Explicitly, we may take $C' = \frac{(e^L)}{\rho - \varepsilon} e^{-1 + d_1 (\varphi, 0) \cdot (\rho -\varepsilon)/(e^L)}$. 
Then by the first claim, we get 
\begin{align*} 
e^{-\rho \tau} d_1 (\varphi \wedge \tau - \tau, 0) 
&\le \min \Big{\{} \frac{\int_\mathbb{R} e^{-(\rho + \varepsilon) t} \DHm_\varphi}{e (\rho + \varepsilon)} e^{\varepsilon \tau}, C' e^{-\varepsilon \tau} \Big{\}} 
\\
&\le \max \Big{\{} \frac{\int_\mathbb{R} e^{-(\rho + \varepsilon) t} \DHm_\varphi}{e (\rho + \varepsilon)}, C' \Big{\}} \cdot e^{-\varepsilon |\tau|},
\end{align*}
which shows the second claim. 
\end{proof}

\begin{rem}
We show $d_{\exp} (\varphi \wedge \tau - \tau, 0) \to 0$ as an independent interest. 
We note 
\[ \int_\mathbb{R} (e^{|t/\beta|} -1) \DHm_{\varphi \wedge \tau - \tau} = \int_{(-\infty, \tau)} (e^{-(t -\tau)/\beta} -1) \DHm_\varphi \]
for $\varphi \in \E^{\exp} (X, L)$. 
Indeed, since both sides are continuous along decreasing nets $\varphi_i \searrow \varphi$, it suffices to check the case $\varphi \in \nH (X, L)$, which is confirmed in \cite[Proposition 3.4]{BJ2}: $\DHm_{\varphi \wedge \tau - \tau} = (t' \mapsto t' - \tau)_* \DHm_{\varphi \wedge \tau} = (t' \mapsto t' - \tau)_* (t \mapsto \min \{ t, \tau \})_* \DHm_\varphi = (t \mapsto \min \{ t, \tau \} - \tau)_* \DHm_\varphi$. 
Then since 
\[ \int_{(-\infty, \tau)} (e^{-(t -\tau)/\beta} -1) \DHm_\varphi \le e^{\tau/\beta} \int_\mathbb{R} e^{-t/\beta} \DHm_\varphi - \int_{(-\infty, \tau)} \DHm_\varphi \to 0 \]
as $\tau \to -\infty$, we get $\int_\mathbb{R} (e^{|t/\beta|} -1) \DHm_{\varphi \wedge \tau - \tau} \to 0$ for any $\beta > 0$. 
Then by Lemma \ref{dexp convergence criterion}, we get $d_{\exp} (\varphi \wedge \tau - \tau, 0) \to 0$ as $\tau \to -\infty$. 
\end{rem}

\subsubsection{Continuity of exponential moment energy}
\label{Continuity of exponential moment energy}

We make use of the following tomographic expression to show the continuity. 

\begin{prop}
\label{tomographic expression of Eexp}
If $\varphi \in \E^1 (X, L)$ has finite $E_{\exp} (\varphi_{; 1+ \varepsilon})$ for some $\varepsilon > 0$, we have 
\begin{gather*}
E_{\exp} (\varphi) = \int_\mathbb{R} \frac{(L, \varphi \wedge \tau - \tau)^{\cdot n+1}}{(n+1)!} e^{-\tau} d \tau. 
\end{gather*}
\end{prop}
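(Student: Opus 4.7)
The plan is to express both sides of the identity as integrals against the Duistermaat--Heckman measure $\DHm_\varphi$, after which the equality reduces to a Fubini--Tonelli exchange together with a one-variable integration by parts. The only point requiring care is verifying the integrability that makes Fubini legitimate, and this is exactly what the hypothesis $E_{\exp}(\varphi_{;1+\varepsilon}) > -\infty$ provides.

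First I would dispose of finiteness. Since $\varphi \in \E^1(X, L) \subset \E(X, L)$ by Corollary \ref{finite moment energy class and full mass class}, Proposition \ref{moment energy and DH measure} gives $E_{\exp}(\varphi) = -\int_\mathbb{R} e^{-t} \DHm_\varphi$, and similarly for $\varphi_{;1+\varepsilon}$; combined with Lemma \ref{DH measure shift}(1), the hypothesis translates to $\int_\mathbb{R} e^{-(1+\varepsilon)t} \DHm_\varphi < \infty$. Since $\DHm_\varphi$ has total mass $(e^L)$ and is supported in $(-\infty, \sup\varphi]$, splitting at $t=0$ and using $e^{-t} \le e^{-(1+\varepsilon)t}$ on $(-\infty,0)$ and $e^{-t} \le 1$ on $[0, \sup\varphi]$ shows $\int_\mathbb{R} e^{-t} \DHm_\varphi < \infty$, so $E_{\exp}(\varphi)$ is finite.

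Next I would rewrite the integrand on the right-hand side. Since $\varphi \wedge \tau \in \E^1(X,L)$ (corollary following Proposition \ref{decreasing limit of rooftop}) and $\varphi \wedge \tau - \tau \le 0$, applying Lemma \ref{DH measure shift}(2) and (3) in turn yields
\begin{align*}
\frac{(L, \varphi \wedge \tau - \tau)^{\cdot n+1}}{(n+1)!}
&= \int_\mathbb{R} t\, \DHm_{\varphi \wedge \tau - \tau} = \int_\mathbb{R} (t-\tau)\, \DHm_{\varphi \wedge \tau} \\
&= \int_{(-\infty, \tau)} (t - \tau)\, \DHm_\varphi,
\end{align*}
the contribution from $[\tau,\infty)$ vanishing thanks to the factor $(\tau-\tau)$.

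Finally, substituting this into the right-hand side gives
\[ \int_\mathbb{R} e^{-\tau}\, d\tau \int_{(-\infty, \tau)} (t - \tau)\, \DHm_\varphi(t). \]
The integrand $(t-\tau) e^{-\tau} \mathbf{1}_{\{t<\tau\}}$ is of constant sign ($\le 0$), so Tonelli applies unconditionally and the double integral equals
\[ \int_\mathbb{R} \DHm_\varphi(t) \int_t^\infty (t - \tau) e^{-\tau}\, d\tau = -\int_\mathbb{R} e^{-t}\, \DHm_\varphi(t) = E_{\exp}(\varphi), \]
where the inner integral evaluates to $-e^{-t}$ by the substitution $u = \tau - t$ (or by integration by parts, using $(\tau-t)e^{-\tau} \to 0$ at both endpoints). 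Since both sides are finite by the first step, the formal identity obtained is a genuine equality, which completes the proof. There is essentially no obstacle beyond bookkeeping: the content is a Fubini exchange dressed in the algebraic-geometric language of Duistermaat--Heckman measures.
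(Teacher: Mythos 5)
Your proof is correct and follows essentially the same route as the paper: both rest on the identity $\frac{(L,\varphi\wedge\tau-\tau)^{\cdot n+1}}{(n+1)!}=\int_{(-\infty,\tau)}(t-\tau)\,\DHm_\varphi$ followed by a Fubini exchange and the evaluation $\int_{(t,\infty)}e^{-\tau}(t-\tau)\,d\tau=-e^{-t}$. The one genuine (minor) improvement is your observation that the integrand has constant sign, so Tonelli applies unconditionally; the paper instead verifies absolute integrability of $e^{-\tau}(L,\varphi\wedge\tau-\tau)^{\cdot n+1}$ via the exponential domination estimate of Lemma \ref{exponential domination}, a step your argument sidesteps entirely.
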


\begin{proof}
We note $\frac{(L, \varphi \wedge \tau - \tau)^{\cdot n+1}}{(n+1)!}$ is continuous on $\tau$, so it is measurable. 
Suppose $\frac{(L, \varphi \wedge \tau - \tau)^{\cdot n+1}}{(n+1)!} w (\tau)$ is integrable with respect to $d\tau$, then using Fubini--Tonelli theorem, we compute 
\begin{align*}
\int_\mathbb{R} \frac{(L, \varphi \wedge \tau - \tau)^{\cdot n+1}}{(n+1)!} w (\tau) d\tau 
&= \int_\mathbb{R} d\tau ~w (\tau) \int_\mathbb{R} 1_{(-\infty, \tau)} (t) (t -\tau) \DHm_\varphi (t)
\\
&= \int_\mathbb{R} \DHm_\varphi (t) \int_\mathbb{R} 1_{(t, \infty)} (\tau) w (\tau) (t - \tau) d\tau
\\
&= \int_\mathbb{R} \DHm_\varphi (t) \int_{(t, \infty)} w (\tau) (t - \tau) d\tau, 
\end{align*}
using 
\begin{align*}
\frac{(L, \varphi \wedge \tau - \tau)^{\cdot n+1}}{(n+1)!}
&= \int_\mathbb{R} t \DHm_{\varphi \wedge \tau - \tau} = \int_{(-\infty, \tau)} (t -\tau) \DHm_\varphi (t). 
\end{align*}

For $w (\tau) = e^{-\tau}$, we have $\int_{(t, \infty)} e^{-\tau} (t - \tau) d\tau = - e^{-t}$, so we get 
\[ \int_\mathbb{R} \frac{(L, \varphi \wedge \tau - \tau)^{\cdot n+1}}{(n+1)!} e^{-\tau} d\tau = -\int_\mathbb{R} e^{-t} \DHm_\varphi (t) \]
if $\frac{(L, \varphi \wedge \tau - \tau)^{\cdot n+1}}{(n+1)!} e^{-\tau}$ is integrable. 
It suffices to check $\frac{(L, \varphi \wedge \tau - \tau)^{\cdot n+1}}{(n+1)!} e^{-\tau}$ is integrable. 
This follows by the second claim of Lemma \ref{exponential domination}: for $\varphi \in \E^{\exp} (X, L)$, we have 
\begin{align*} 
|\frac{(L, \varphi \wedge \tau - \tau)^{\cdot n+1}}{(n+1)!} e^{-\tau}| 
&=  e^{-\tau} d_1 (\varphi \wedge \tau - \tau, 0) \le C_\varepsilon e^{-\varepsilon |\tau|}. 
\end{align*}
\end{proof}

\begin{cor}
\label{strong Eexp boundedness implies Eexp convergence}
Suppose a sequence $\varphi_i \in \E^1 (X, L)$ converges to $\varphi \in \E^1 (X, L)$ in $d_1$ and has bounded $E_{\exp} (\varphi_{i ; 1+ \varepsilon})$ for some $\varepsilon > 0$, then $E_{\exp} (\varphi_i) \to E_{\exp} (\varphi)$. 
\end{cor}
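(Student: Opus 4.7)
The plan is to apply the tomographic expression from Proposition \ref{tomographic expression of Eexp} and then dominated convergence on $\mathbb{R}$. Writing
\[ E_{\exp}(\varphi_i) = \int_\mathbb{R} \frac{(L, \varphi_i \wedge \tau - \tau)^{\cdot n+1}}{(n+1)!} e^{-\tau} \, d\tau, \]
the problem reduces to (i) pointwise convergence of the integrand for each fixed $\tau$, and (ii) a $\tau$-integrable dominating function, independent of $i$.

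For (i), I would fix $\tau \in \mathbb{R}$ and first invoke Proposition \ref{strong convergence of rooftops}: since $\varphi_i \to \varphi$ in $d_1$ and $\tau_i = \tau$ is constant, we get $\varphi_i \wedge \tau \to \varphi \wedge \tau$ strongly in $\E^1(X,L)$, hence $\varphi_i \wedge \tau - \tau \to \varphi \wedge \tau - \tau$ in $d_1$. Then Theorem \ref{BJ estimate 2}, applied with $M = L$ (so one may take $\theta = 1$), yields
\[ (L, \varphi_i \wedge \tau - \tau)^{\cdot n+1} \longrightarrow (L, \varphi \wedge \tau - \tau)^{\cdot n+1}, \]
which is exactly the pointwise convergence of the integrand.

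For (ii), I would shrink $\varepsilon$ if necessary so that $0 < \varepsilon < 1$ and apply the second part of Lemma \ref{exponential domination} with $\rho = 1$. Noting that $-E(\varphi_i \wedge \tau - \tau) = d_1(\varphi_i \wedge \tau - \tau, 0)$ because $\DHm_{\varphi_i \wedge \tau - \tau}$ is supported on $(-\infty, 0]$, the lemma gives
\[ \left| \frac{(L, \varphi_i \wedge \tau - \tau)^{\cdot n+1}}{(n+1)!} \, e^{-\tau} \right| = e^{-\tau}\, d_1(\varphi_i \wedge \tau - \tau, 0) \le C_i\, e^{-\varepsilon |\tau|}, \]
where $C_i$ depends boundedly on $\log \varepsilon$, $\log(1+\varepsilon)$, $\log(e^L)$, $d_1(\varphi_i, 0)$, and $-E_{\exp}(\varphi_{i; 1+\varepsilon})$. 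The first three are fixed, $d_1(\varphi_i, 0)$ is bounded because $d_1$-convergent sequences are $d_1$-bounded, and $-E_{\exp}(\varphi_{i; 1+\varepsilon})$ is bounded by hypothesis. Hence $\sup_i C_i =: C < \infty$ and $C e^{-\varepsilon|\tau|}$ is an integrable dominating function. Dominated convergence concludes $E_{\exp}(\varphi_i) \to E_{\exp}(\varphi)$.

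The only substantive point requiring care is the uniform domination step: one must verify that the "boundedly on" dependence in Lemma \ref{exponential domination} really does produce a single constant $C$ along the sequence. That reduces to the two observations above (strong convergence implies $d_1$-boundedness, plus the given exponential moment bound), so no essential difficulty arises. The tomographic identity is really doing the heavy lifting here, converting a question about $E_{\exp}$ on $\E^1(X,L)$ into the already-established continuity of higher energy pairings from \cite{BJ3}.
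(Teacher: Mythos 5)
Your proof is correct and follows essentially the same route as the paper's: the tomographic expression of $E_{\exp}$, pointwise convergence of the integrand via $d_1$-convergence of the rooftops $\varphi_i \wedge \tau - \tau$, the uniform exponential domination from Lemma \ref{exponential domination} (with the constant controlled by $d_1(\varphi_i,0)$ and $-E_{\exp}(\varphi_{i;1+\varepsilon})$), and dominated convergence. The only cosmetic difference is that the pointwise convergence of $(L,\varphi_i\wedge\tau-\tau)^{\cdot n+1}$ follows directly from the $d_1$-continuity of $E$, so the appeal to Theorem \ref{BJ estimate 2} is unnecessary.
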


\begin{proof}
By the assumption, we have a uniform estimate
\[ |\frac{(L, \varphi_i \wedge \tau - \tau)^{\cdot n+1}}{(n+1)!} e^{-\tau}| \le C_\varepsilon e^{-\varepsilon |\tau|} \]
thanks to Lemma \ref{exponential domination}. 
Since $\varphi_i \to \varphi$ in $d_1$, we have $\varphi_i \wedge \tau - \tau \to \varphi \wedge \tau - \tau$ in $d_1$ for each $\tau$, so that we get $\frac{(L, \varphi_i \wedge \tau - \tau)^{\cdot n+1}}{(n+1)!} e^{-\tau} \to \frac{(L, \varphi \wedge \tau - \tau)^{\cdot n+1}}{(n+1)!} e^{-\tau}$ for every $\tau \in \mathbb{R}$ by $(L, \varphi)^{\cdot n+1}/(n+1)! = E (\varphi)$. 
Then the claim follows by the dominated convergence theorem. 
\end{proof}

\begin{thm}
\label{dexp convergence implies Eexp convergence}
For every $\rho > 0$, $E_{\exp} (\varphi_{; \rho})$ is continuous on $\E^{\exp} (X, L)$ with respect to $d_{\exp}$-topology. 
\end{thm}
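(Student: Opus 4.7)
The plan is to reduce the statement to the case $\rho=1$ via a scaling identity and then invoke Corollary \ref{strong Eexp boundedness implies Eexp convergence}, which already covers sequences that are $d_1$-convergent with uniformly bounded exponential moment at some scale strictly larger than $1$. Since $d_{\exp}$-topology is metrizable, it is enough to check sequential continuity.

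First, I would record the scaling identity $d_{\exp}(\varphi_{;\rho},\psi_{;\rho}) = \rho \cdot d_{\exp}(\varphi,\psi)$. This follows from $\mathcal{F}_{\varphi_{;\rho}} = (\mathcal{F}_\varphi)_{;\rho}$, which gives $\DHm_{\varphi_{;\rho},\psi_{;\rho}} = (t\mapsto \rho t)_*\DHm_{\varphi,\psi}$; substituting $\beta\mapsto \beta/\rho$ in the Luxemburg infimum defining $d_{\exp}$ (first on $\nH(X,L)$, then by the continuity in Proposition \ref{dexp well-defined} on all of $\E^{\exp}(X,L)$) yields the identity. Using also $(\varphi_{;\rho})_{;\rho'} = \varphi_{;\rho\rho'}$, one sees that $\varphi\mapsto \varphi_{;\rho}$ is an isometric self-bijection of $\E^{\exp}(X,L)$ up to the factor $\rho$, so it suffices to establish the continuity of $E_{\exp}$ itself.

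Now let $\varphi_i \to \varphi$ in $d_{\exp}$ on $\E^{\exp}(X,L)$. By Proposition \ref{dp dexp}, $d_1(\varphi_i,\varphi) \le d_{\exp}(\varphi_i,\varphi) \to 0$, so $\varphi_i \to \varphi$ strongly in $\E^1(X,L)$. The convergent sequence is $d_{\exp}$-Cauchy, so for any fixed $\varepsilon > 0$ Proposition \ref{dexp Cauchy implies Eexp bounded} applied with $\rho := 1+\varepsilon$ furnishes a uniform bound
\[ \sup_i \bigl|\,E_{\exp}(\varphi_{i;\,1+\varepsilon})\bigr| < \infty. \]
Both hypotheses of Corollary \ref{strong Eexp boundedness implies Eexp convergence} are in place, hence $E_{\exp}(\varphi_i) \to E_{\exp}(\varphi)$, which is precisely the desired continuity at $\rho=1$.

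There is no genuine obstacle here; the content of the theorem has been distilled into the preceding propositions, and the only conceptual ingredient left to check is the scaling identity for $d_{\exp}$, which is a routine change of variables in the Orlicz infimum once the push-forward formula for $\DHm_{\bullet,\bullet}$ under rescaling is noted.
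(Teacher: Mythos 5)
Your proof is correct and follows essentially the same route as the paper: the paper's one-line proof invokes exactly Corollary \ref{strong Eexp boundedness implies Eexp convergence} together with Proposition \ref{dexp Cauchy implies Eexp bounded}, with $d_1\le d_{\exp}$ supplying the strong convergence. Your explicit scaling identity $d_{\exp}(\varphi_{;\rho},\psi_{;\rho})=\rho\, d_{\exp}(\varphi,\psi)$ just makes precise the reduction to $\rho=1$ that the paper leaves implicit, and it is verified correctly.
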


\begin{proof}
This is a consequence of the above corollary and Proposition \ref{dexp Cauchy implies Eexp bounded}. 
\end{proof}

\subsubsection{$E_{\exp}$-topology}
\label{Eexp-topology}

\begin{defin}
The \textit{$E_{\exp}$-topology} on $\E^{\exp} (X, L)$ is the coarsest refinement of the strong topology (i.e. $d_1$-topology) which makes $E_{\exp} (\varphi_{; \rho})$ continuous for every $\rho >0$. 
\end{defin}

Thanks to Theorem \ref{dexp convergence implies Eexp convergence}, a functional on $\E^{\exp} (X, L)$ is continuous (resp. usc, lsc) with respect to $d_{\exp}$-topology if it is continuous (resp. usc, lsc) with respect to $E_{\exp}$-topology. 
Thanks to the following, to check the continuity of a functional with respect to $E_{\exp}$-topology, it suffices to check the sequential continuity. 

\begin{prop}
\label{Eexp topology is first countable}
The $E_{\exp}$-topology on $\E^{\exp} (X, L)$ is first countable. 
\end{prop}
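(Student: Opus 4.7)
The plan is to construct an explicit countable neighborhood base at each $\varphi \in \E^{\exp}(X,L)$. Fix a countable dense subset $\{\rho_i\}_{i \in \mathbb{N}_+} \subset (0,\infty)$ and for $n, k \in \mathbb{N}_+$ set
\[ U_{n,k} := B_{d_1}(\varphi, 1/n) \cap \bigcap_{i=1}^k \bigl\{ \psi \in \E^{\exp}(X,L) : |E_{\exp}(\psi_{;\rho_i}) - E_{\exp}(\varphi_{;\rho_i})| < 1/n \bigr\}. \]
Each $U_{n,k}$ is $E_{\exp}$-open by the very definition of the $E_{\exp}$-topology as the coarsest refinement of the $d_1$-topology making every $E_{\exp}(\cdot_{;\rho})$ continuous. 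It remains to show that the family $\{U_{n,k}\}_{n,k}$ is a neighborhood base at $\varphi$.

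Any $E_{\exp}$-open neighborhood of $\varphi$ contains a basic open set of the form $V = B_{d_1}(\varphi, \delta) \cap \bigcap_{j=1}^J \{ |E_{\exp}(\cdot_{;\tau_j}) - E_{\exp}(\varphi_{;\tau_j})| < \epsilon_j \}$ with $\tau_j > 0$ and $\delta, \epsilon_j > 0$. For each $j$ the constraint at $\tau_j$ is a two-sided bound on $-E_{\exp}(\psi_{;\tau_j}) = \int_\mathbb{R} e^{-\tau_j t} \DHm_\psi$. The lower bound $-E_{\exp}(\psi_{;\tau_j}) > -E_{\exp}(\varphi_{;\tau_j}) - \epsilon_j$ defines a $d_1$-open set at $\varphi$ thanks to the $d_1$-lower semi-continuity of $-E_{\exp}(\cdot_{;\tau_j})$, which follows from Proposition \ref{lower semi-continuity of norms in d1 topology} applied to the weight $e^{-\tau_j t}$ after a standard monotone cutoff using $\varphi \in \E^{\exp}(X,L)$; this produces a $d_1$-ball $B_{d_1}(\varphi, \delta_j)$ contained in $\{-E_{\exp}(\cdot_{;\tau_j}) > -E_{\exp}(\varphi_{;\tau_j}) - \epsilon_j\}$.

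For the opposite bound $-E_{\exp}(\psi_{;\tau_j}) < -E_{\exp}(\varphi_{;\tau_j}) + \epsilon_j$, I use the log-convexity of $\rho \mapsto -E_{\exp}(\psi_{;\rho})$, an immediate consequence of H\"older's inequality: for any $\rho_1 < \tau_j < \rho_2$ with $\tau_j = \alpha\rho_1 + (1-\alpha)\rho_2$,
\[ -E_{\exp}(\psi_{;\tau_j}) \le \bigl(-E_{\exp}(\psi_{;\rho_1})\bigr)^\alpha \bigl(-E_{\exp}(\psi_{;\rho_2})\bigr)^{1-\alpha}. \]
Since $\rho \mapsto -E_{\exp}(\varphi_{;\rho})$ is continuous on $(0,\infty)$ by the dominated convergence theorem (using $\varphi \in \E^{\exp}(X,L)$), I first choose $\rho_1, \rho_2 \in \{\rho_i\}$ close to $\tau_j$ so that the right-hand side evaluated at $\varphi$ lies below $-E_{\exp}(\varphi_{;\tau_j}) + \epsilon_j/2$, and then choose $n$ large enough that a $1/n$-perturbation of each $-E_{\exp}(\varphi_{;\rho_{1,2}})$ preserves the bound. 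Taking $n$ so large that also $1/n < \min(\delta, \delta_1, \ldots, \delta_J)$, and $k$ so large that $\{\rho_i\}_{i=1}^k$ contains all auxiliary $\rho_1, \rho_2$ chosen for each $j$, one obtains $U_{n,k} \subset V$.

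The main obstacle is the upper direction on $-E_{\exp}(\psi_{;\tau_j})$ when $\tau_j$ does not belong to the fixed countable set $\{\rho_i\}$: the finitely many conditions imposed by $U_{n,k}$ only pin down the moment generating function of $\DHm_\psi$ at a finite set of exponents, and we must interpolate these to intermediate exponents with the same $\epsilon_j$-accuracy. Log-convexity via H\"older provides the correct interpolation tool, and the argument hinges on $\varphi \in \E^{\exp}(X,L)$ ensuring $-E_{\exp}(\varphi_{;\rho})$ is finite and continuous in $\rho$ on all of $(0,\infty)$, so that both the interpolation error (as $\rho_1, \rho_2 \to \tau_j$) and the perturbation error (as $n \to \infty$) can be made smaller than $\epsilon_j/2$ simultaneously.
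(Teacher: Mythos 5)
Your proof is correct, and it takes a genuinely different route from the paper's. The paper introduces an auxiliary $E_{\exp}^{\mathbb{N}}$-topology using only the integer scalings $E_{\exp}(\varphi_{;n})$, $n \in \mathbb{N}_+$, observes that this topology is first countable because it is induced by the embedding $\varphi \mapsto (\varphi, E_{\exp}(\varphi_{;1}), E_{\exp}(\varphi_{;2}), \ldots)$ into the countable product $\E^1(X,L) \times \mathbb{R}^{\mathbb{N}_+}$, and then upgrades to the full $E_{\exp}$-topology via Corollary \ref{strong Eexp boundedness implies Eexp convergence}: each $E_{\exp}(\cdot_{;\rho})$ is sequentially continuous for the $E_{\exp}^{\mathbb{N}}$-topology, hence continuous by first countability, so the two topologies coincide. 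You instead construct an explicit countable neighborhood base and control non-distinguished exponents by hand: the lower bound on $\int_\mathbb{R} e^{-\tau_j t}\,\DHm_\psi$ comes for free from the $d_1$-lower semicontinuity of Proposition \ref{lower semi-continuity of norms in d1 topology} (note no cutoff is actually needed there, since $e^{-\tau_j t}$ is already a non-negative continuous weight to which that proposition applies directly), while the upper bound is obtained from H\"older log-convexity in the exponent, interpolating between two nearby exponents from the fixed countable dense set where the base already imposes control; the finiteness and $\rho$-continuity of $-E_{\exp}(\varphi_{;\rho})$ for $\varphi \in \E^{\exp}(X,L)$ make the interpolation error small. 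The paper's argument is shorter because it reuses the tomographic machinery behind Corollary \ref{strong Eexp boundedness implies Eexp convergence}, whereas yours is self-contained at the level of elementary measure theory and produces a concrete base $\{U_{n,k}\}$; both yield the same useful byproduct, namely that the $E_{\exp}$-topology is already generated by $d_1$ together with countably many of the functionals $E_{\exp}(\cdot_{;\rho})$.
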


\begin{proof}
Let us introduce the $E_{\exp}^{\mathbb{N}}$-topology on $\E^{\exp} (X, L)$ as the coarsest refinement of the strong topology which makes $E_{\exp} (\varphi_{; n})$ continuous for every $n \in \mathbb{N}_+$. 
By Corollary \ref{strong Eexp boundedness implies Eexp convergence}, $E_{\exp} (\varphi_{; \rho})$ for $\rho \in \mathbb{R}_+$ is sequentially continuous with respect to the $E_{\exp}^{\mathbb{N}}$-topology. 
Thus it suffices to show the first countability of the $E_{\exp}^{\mathbb{N}}$-topology, in which case we get the equivalence of the $E_{\exp}^{\mathbb{N}}$-topology and $E_{\exp}$-topology. 

Now observe that the $E_{\exp}^{\mathbb{N}}$-topology is equivalent to the induced topology via 
\[ \E^{\exp} (X, L) \hookrightarrow \E^1 (X, L) \times \mathbb{R}^{\mathbb{N}_+}: \varphi \mapsto (\varphi, E_{\exp} (\varphi_{;1}), E_{\exp} (\varphi_{; 2}), \ldots ). \]
Note $\E^1 (X, L) \times \mathbb{R}^{\mathbb{N}_+}$ is first countable as it is a countable product of first countable spaces. 
Thus the subspace $\E^{\exp} (X, L)$ is also first countable, which shows the first countability of the $E_{\exp}^{\mathbb{N}}$-topology as desired. 
\end{proof}

There is a Fr\'echet type distance compatible with the $E_{\exp}$-topology: 
\begin{equation} 
d_{E_{\exp}} (\varphi, \varphi') = d_1 (\varphi, \varphi') + \int_0^\infty \frac{|E_{\exp} (\varphi_{;\rho}) - E_{\exp} (\varphi'_{;\rho})|}{1+ |E_{\exp} (\varphi_{;\rho}) - E_{\exp} (\varphi'_{;\rho})|} e^{-\rho} d\rho. 
\end{equation}
Indeed, if a sequence $\varphi_i$ converges to $\varphi$ in $E_{\exp}$-topology, then $d_1 (\varphi_i, \varphi) \to 0$ and $E_{\exp} (\varphi_{i ; \rho}) \to E_{\exp} (\varphi_{; \rho})$ for every $\rho > 0$, so that $d_{E_{\exp}} (\varphi_i, \varphi) \to 0$ by the dominated convergence theorem. 
Thus $d_{E_{\exp}}$-closed set is $E_{\exp}$-closed thanks to the first countability of $E_{\exp}$-topology. 
Conversely, if a sequence $\varphi_i$ converges to $\varphi$ in $d_{E_{\exp}}$-topology, then $\varphi_i$ converges to $\varphi$ in $d_1$ and $f_i (\rho) = E_{\exp} (\varphi_{i ; \rho})$ converges in measure to $f (\rho) = E_{\exp} (\varphi_{; \rho})$ with respect to the measure $dm = e^{-\rho} d\rho$. 
Namely, $m (\{ \rho \in (0, \infty) ~|~ |f_i (\rho) - f (\rho)| > \varepsilon \}) \to 0$ for every $\varepsilon > 0$. 
It follows that there exists a subsequence $f_{i_j}$ which converges to $f$ almost everywhere. 
By Corollary \ref{strong Eexp boundedness implies Eexp convergence}, $E_{\exp} (\varphi_{i_j; \rho})$ converges to $E_{\exp} (\varphi_{; \rho})$ for every $\rho > 0$. 
Therefore, $d_{E_{\exp}}$-convergent sequence has $E_{\exp}$-convergent subsequence. 
Now let $F \subset \E^{\exp} (X, L)$ be a $E_{\exp}$-closed set. 
Take a sequence $\{ \varphi_i \} \subset F$ which converges to $\varphi \in \E^{\exp} (X, L)$ with respect to the $d_{E_{\exp}}$-topology. 
Since we have an $E_{\exp}$-convergent subsequence $\{ \varphi_{i_j} \} \subset \{ \varphi_i \}$ converging to $\varphi$, $\varphi$ is in $F$ by the $E_{\exp}$-closedness, which shows $F$ is $d_{E_{\exp}}$-closed. 

\begin{quest}
As we saw in Corollary \ref{strong Eexp boundedness implies Eexp convergence}, $E_{\exp}$-convergence of $\varphi_i \to \varphi$ is equivalent to $d_1$-convergence with bounded $E_{\exp} (\varphi_{i; \rho'})$ for every $\rho' > 0$. 
This looks like rather weak condition. 
Is $E_{\exp}$-topology strictly weaker than $d_{\exp}$-topology? 
\end{quest}

\begin{prop}
Let $\chi$ be a $C^2$-function on $\mathbb{R}$ satisfying $|\chi'' (\tau)| \le C e^{-\alpha \tau}$ on $(-\infty, \tau_0]$ for some $\tau_0 \in \mathbb{R}$ and $C, \alpha > 0$. 
If $\varphi_i \to \varphi \in \E^{\exp} (X, L)$ in $E_{\exp}$-topology, then $\int_\mathbb{R} \chi \DHm_{\varphi_i} \to \int_\mathbb{R} \chi \DHm_\varphi$. 
\end{prop}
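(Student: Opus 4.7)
The plan is to establish a tomographic integration-by-parts formula for $\int_\mathbb{R} \chi \DHm_\varphi$ in terms of $-E(\varphi \wedge \tau - \tau)$, and then pass to the limit by dominated convergence. Put $h_\varphi(\tau) := -E(\varphi \wedge \tau - \tau) = \int_{(-\infty, \tau)}(\tau - t) \DHm_\varphi(t)$, so that $h_\varphi$ is $C^1$ with $h_\varphi'(\tau) = \int_{(-\infty, \tau)} \DHm_\varphi$ and $dh_\varphi' = \DHm_\varphi$ in the Lebesgue--Stieltjes sense. For $T > \sup \varphi$, integrating by parts twice on $[-N, T]$ yields
\[ \int_{(-N, T]} \chi \DHm_\varphi = \chi(T) h_\varphi'(T) - \chi'(T) h_\varphi(T) - \chi(-N) h_\varphi'(-N) + \chi'(-N) h_\varphi(-N) + \int_{-N}^{T} \chi''(\tau) h_\varphi(\tau)\, d\tau. \]
Lemma \ref{exponential domination} gives $h_\varphi(-N), h_\varphi'(-N) = O(e^{-\beta N})$ for any $\beta > 0$ (using $\varphi \in \E^{\exp}(X, L) \subset \E(X, L)$), while the hypothesis on $\chi''$ yields $|\chi(-N)|, |\chi'(-N)| = O(e^{\alpha N})$; choosing $\beta > \alpha$ kills the $-N$ boundary contributions as $N \to \infty$. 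Since $h_\varphi'(T) = (e^L)$ and $h_\varphi(T) = T(e^L) - E(\varphi)$, I conclude
\[ \int_\mathbb{R} \chi \DHm_\varphi = \chi(T)(e^L) - \chi'(T)\bigl(T(e^L) - E(\varphi)\bigr) + \int_{-\infty}^{T} \chi''(\tau) h_\varphi(\tau)\, d\tau. \]

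Next I would apply this formula to each $\varphi_i$ and to $\varphi$, choosing $T$ large enough that $T > \sup \varphi_i$ for all $i$ (possible since $d_1$-convergence of $\varphi_i \to \varphi$ forces $\sup \varphi_i \to \sup \varphi$). Strong continuity of $E$ gives $E(\varphi_i) \to E(\varphi)$, so the boundary term at $T$ passes to the limit. For each fixed $\tau$, Proposition \ref{strong convergence of rooftops} gives $\varphi_i \wedge \tau - \tau \to \varphi \wedge \tau - \tau$ in $d_1$, hence $h_{\varphi_i}(\tau) \to h_\varphi(\tau)$ pointwise.

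To conclude by dominated convergence on $(-\infty, T]$, I split into $[\tau_0, T]$ and $(-\infty, \tau_0]$. On the compact piece, $|\chi''|$ is bounded by continuity and $|h_{\varphi_i}(\tau)| \le (e^L)|\tau| + d_1(\varphi_i, 0)$ is uniformly bounded, since $d_1(\varphi_i, 0)$ is bounded along the $d_1$-convergent sequence. On the tail, Lemma \ref{exponential domination} gives $|h_{\varphi_i}(\tau)| \le \frac{-E_{\exp}(\varphi_{i;\beta})}{e\beta} e^{\beta\tau}$ for any $\beta > 0$; since $\varphi_i \to \varphi$ in $E_{\exp}$-topology, $E_{\exp}(\varphi_{i;\beta})$ is uniformly bounded, so $|\chi''(\tau) h_{\varphi_i}(\tau)| \le C' e^{(\beta - \alpha)\tau}$, which is integrable on $(-\infty, \tau_0]$ provided we pick $\beta > \alpha$. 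Dominated convergence then delivers the result.

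The main obstacle is the careful justification of the $-N$ boundary vanishing in the integration by parts and the construction of a uniform integrable majorant; both rely on the free choice of the auxiliary parameter $\beta > \alpha$ in Lemma \ref{exponential domination}, together with the uniform $E_{\exp}$-bound automatic along $E_{\exp}$-convergent sequences.
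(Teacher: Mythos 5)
Your proof is correct, and its core mechanism coincides with the paper's: both express the pairing of $\chi$ with $\DHm_\varphi$ tomographically through $E(\varphi\wedge\tau-\tau)=-h_\varphi(\tau)$ weighted by a second derivative, and both close with dominated convergence, using Lemma \ref{exponential domination} for the tail majorant and the fact that $E_{\exp}(\varphi_{i;\beta})$ is automatically bounded (indeed convergent) along an $E_{\exp}$-convergent sequence. Where you diverge is in the treatment of the non-decaying right-hand part of $\chi$. The paper inserts a cut-off $\beta$ supported in $(-\infty,\tau_0]$, applies the Fubini identity $\int_{(t,\infty)}(\beta\chi)''(\tau)(t-\tau)\,d\tau=(\beta\chi)(t)$ exactly as in Proposition \ref{tomographic expression of Eexp}, and handles the remaining piece $(1-\beta)\chi$ — continuous with left bounded support — by the separate $d_1$-continuity statement Lemma \ref{limit of DH measure for d1 convergent sequence}. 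You instead integrate by parts on $(-\infty,T]$ for a fixed $T>\sup\varphi_i$ (legitimate since $\sup\varphi_i\to\sup\varphi$ under strong convergence) and retain explicit boundary terms $\chi(T)(e^L)-\chi'(T)\bigl(T(e^L)-E(\varphi)\bigr)$, which pass to the limit because all the measures have total mass $(e^L)$ and $E(\varphi_i)\to E(\varphi)$; the $-N$ boundary terms die because $|\chi(-N)|,|\chi'(-N)|=O(e^{\alpha' N})$ for any $\alpha'>\alpha$ while $h_\varphi(-N),h_\varphi'(-N)=O(e^{-\beta N})$ for every $\beta>0$. The net effect is that you avoid invoking Lemma \ref{limit of DH measure for d1 convergent sequence} altogether, at the price of slightly more careful bookkeeping at the two endpoints; both routes are sound.
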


\begin{proof}
Take a smooth cut-off function $\beta: \mathbb{R} \to [0,1]$ so that $\beta = 0$ on $[\tau_0, \infty)$ and $\beta = 1$ on $(-\infty, \tau_0-1]$. 
Then for $w = (\beta \chi)''$ we have $|w (\tau)| \le C e^{-\alpha \tau}$ on $\mathbb{R}$ by taking larger $C$ if necessary. 
Thanks to this and Lemma \ref{exponential domination}, $\frac{(L, \varphi \wedge \tau - \tau)^{\cdot n+1}}{(n+1)!} w (\tau)$ is integrable. 
Thus we compute 
\[ \int_\mathbb{R} \frac{(L, \varphi \wedge \tau - \tau)^{\cdot n+1}}{(n+1)!} w (\tau) d\tau = \int_\mathbb{R} \beta \chi \DHm_\varphi \]
by $\int_{(t, \infty)} w (\tau) (t - \tau) d\tau = (\beta \chi) (\tau)$, as in the proof of Proposition \ref{tomographic expression of Eexp}. 
Again by Lemma \ref{exponential domination}, convergence in $E_{\exp}$-topology implies a uniform estimate 
\[ |\frac{(L, \varphi_i \wedge \tau - \tau)^{\cdot n+1}}{(n+1)!} w (\tau)| \le C_\varepsilon e^{-\varepsilon |\tau|} \]
and pointwise convergence $(L, \varphi_i \wedge \tau - \tau)^{\cdot n+1} \to (L, \varphi \wedge \tau - \tau)^{\cdot n+1}$. 
Then by the dominated convergence theorem, we get the continuity of $\int_\mathbb{R} \beta \chi \DHm_\varphi$. 
As for $\int_\mathbb{R} (1-\beta) \chi \DHm_\varphi$, we can apply Lemma \ref{limit of DH measure for d1 convergent sequence}. 
\end{proof}

Since $d_p \le \lceil p \rceil \cdot d_{\exp}$, $d_{\exp}$-convergence implies $d_p$-convergence. 
We still have $d_p$-convergence in $E_{\exp}$-convergence. 

\begin{prop}
\label{Eexp convergence implies dp convergence}
$E_{\exp}$-convergence implies $d_p$-convergence for every $1 \le p < \infty$. 
\end{prop}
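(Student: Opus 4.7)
The plan is to reduce to the strong ($d_1$) case via the interpolation inequality from Remark \ref{dp distance}, by establishing a uniform $d_q$-bound for some $q > p$.

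First, the case $p = 1$ is immediate: by definition, the $E_{\exp}$-topology is a refinement of the strong ($d_1$-) topology, so $E_{\exp}$-convergence automatically entails $d_1$-convergence. For $p > 1$, fix any $q > p$. Since $\varphi_i \to \varphi$ in $E_{\exp}$-topology, we have $d_1(\varphi_i, \varphi) \to 0$ and $E_{\exp}(\varphi_{i;\rho}) \to E_{\exp}(\varphi_{;\rho})$ for every $\rho > 0$; in particular both quantities are bounded in $i$. Strong convergence also forces $\sup \varphi_i \to \sup \varphi$ (the trivial valuation being always in play), so $\sup \varphi_i$ is uniformly bounded above.

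Next I would invoke the remark following Proposition \ref{dexp Cauchy implies Eexp bounded}: since $\sup \varphi_i \le C$ and $E_{\exp}(\varphi_{i;1}) \ge -C'$ uniformly, the sequence $\{\varphi_i\}$ lies in a $d_{\exp}$-bounded subset of $\E^{\exp}(X, L)$, hence $d_{\exp}(\varphi_i, 0)$ is bounded. The triangle inequality then yields a uniform bound $d_{\exp}(\varphi_i, \varphi) \le M$. Combined with Proposition \ref{dp dexp} (whose inequality $d_q \le \lceil q \rceil \cdot d_{\exp}$ extends to $\E^{\exp}(X, L)$ by passing to the limit along decreasing regularizations, as in the construction of the extended distances), this gives $d_q(\varphi_i, \varphi) \le \lceil q \rceil M$.

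Finally, applying the interpolation inequality of Remark \ref{dp distance} with the triple $(1, p, q)$ yields
\[
d_p(\varphi_i, \varphi)^{p(q-1)} \le d_1(\varphi_i, \varphi)^{q-p} \cdot d_q(\varphi_i, \varphi)^{q(p-1)} \le (\lceil q \rceil M)^{q(p-1)} \cdot d_1(\varphi_i, \varphi)^{q-p},
\]
and letting $i \to \infty$ delivers $d_p(\varphi_i, \varphi) \to 0$. I do not anticipate any serious obstacle here: every ingredient (the definition of $E_{\exp}$-topology, the $d_{\exp}$-boundedness criterion, the comparison $d_q \le \lceil q \rceil d_{\exp}$, and the interpolation inequality) is already in place, and the argument is purely formal.
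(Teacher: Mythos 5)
Your proof is correct, and its skeleton is the same as the paper's: both arguments run the Lebesgue interpolation of Remark \ref{dp distance} with the triple $(1,p,q)$ for some $q>p$, feed in $d_1(\varphi_i,\varphi)\to 0$, and reduce everything to a uniform bound on $d_q$ along the sequence. Where you diverge is in how that uniform $d_q$-bound is secured. The paper takes the shortest path: it applies the immediately preceding proposition (continuity of $\int_{\mathbb{R}}\chi\,\DHm_{\varphi_i}$ under $E_{\exp}$-convergence for $\chi$ with exponentially controlled second derivative) to $\chi(t)=|t|^q$, so that $d_q(\varphi_i,0)^q=\int|t|^q\,\DHm_{\varphi_i}$ converges and is therefore bounded. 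You instead go through $d_{\exp}$-boundedness: $\sup\varphi_i$ is bounded above by weak convergence, $E_{\exp}(\varphi_{i;1})$ is bounded by the definition of the $E_{\exp}$-topology, the remark after Proposition \ref{dexp Cauchy implies Eexp bounded} then gives a uniform bound on $d_{\exp}(\varphi_i,0)$, and the comparison $d_q\le\lceil q\rceil d_{\exp}$ from Remark \ref{dp distance} converts this into the needed $d_q$-bound. Both routes are legitimate with the ingredients already established in the paper; yours is marginally longer but has the small advantage of yielding the stronger conclusion that an $E_{\exp}$-convergent sequence is automatically $d_{\exp}$-bounded (hence $d_q$-bounded for \emph{all} $q$ simultaneously), whereas the paper's argument extracts only the single moment bound it needs.
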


\begin{proof}
By Remark \ref{dp distance}, we have  
\[ d_p (\varphi_i, \varphi)^{p (q-1)} \le d_1 (\varphi_i, \varphi)^{q-p} d_q (\varphi_i, \varphi)^{q(p-1)} \le d_1 (\varphi_i, \varphi)^{q-p} (d_q (\varphi_i, 0) + d_q (\varphi, 0))^{q (p-1)}. \]
Thus it suffices to bound $d_q (\varphi_i, 0)^q = \int |t|^q \DHm_{\varphi_i}$ uniformly along $E_{\exp}$-convergent sequence. 
By the above proposition, we have $\int |t|^q \DHm_{\varphi_i} \to \int |t|^q \DHm_\varphi$, so it is uniformly bounded. 
\end{proof}

Thus $E_{\exp}$-topology is equivalent to the coarsest refinement of $d_p$-topology which makes $E_{\exp} (\varphi_{; \rho})$ continuous for every $\rho > 0$.  

\begin{quest}
Is $E_{\exp}$-topology equivalent to the coarsest refinement of the \textit{weak topology} inherited from $\PSH (X, L)$ which makes $E_{\exp} (\varphi_{; \rho})$ continuous for every $\rho > 0$? 
\end{quest}

We also obtain the following, which will be related to $E_{\exp}^L (\varphi) + \int_{X^{\mathrm{NA}}} \varphi \int e^{-t} \mathcal{D}_\varphi$. 

\begin{prop}
\label{sigma continuity}
The functional $\bm{\check{\sigma}}: \E^{\exp} (X, L) \to \mathbb{R}$ defined by 
\begin{align}
\bm{\check{\sigma}} (\varphi) 
&:= \frac{\iint_{X^{\mathrm{NA}}} (n-t) e^{-t} \mathcal{D}_\varphi}{\iint_{X^{\mathrm{NA}}} e^{-t} \mathcal{D}_\varphi} - \log \iint_{X^{\mathrm{NA}}} e^{-t} \mathcal{D}_\varphi
\\ \notag
&=\frac{\int_\mathbb{R} (n-t) e^{-t} \DHm_\varphi}{\int_\mathbb{R} e^{-t} \DHm_\varphi} - \log \int_\mathbb{R} e^{-t} \DHm_\varphi
\end{align}
is continuous with respect to $E_{\exp}/d_{\exp}$-topology. 
\end{prop}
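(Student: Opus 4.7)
The plan is to reduce the statement to continuity of the two numerical integrals
\[
A(\varphi) := \int_\mathbb{R} e^{-t}\, \DHm_\varphi, \qquad B(\varphi) := \int_\mathbb{R} (n-t) e^{-t}\, \DHm_\varphi,
\]
since by property (4) of the moment measure the expression for $\bm{\check{\sigma}}(\varphi)$ collapses to $B(\varphi)/A(\varphi) - \log A(\varphi)$. As $\varphi \in \E^{\exp}(X,L) \subset \E(X,L)$ by Corollary \ref{finite moment energy class and full mass class}, Proposition \ref{moment energy and DH measure} yields $A(\varphi) = -E_{\exp}(\varphi)$, and moreover $A(\varphi) > 0$ because $\int_\mathbb{R} \DHm_\varphi = (e^L) > 0$ and $e^{-t} > 0$. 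Thus once $A$ and $B$ are shown continuous in $E_{\exp}$-topology, continuity of $\bm{\check{\sigma}}$ follows from continuity of the quotient (bounded away from zero) and of the logarithm.

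Continuity of $A(\varphi) = -E_{\exp}(\varphi)$ in $E_{\exp}$-topology is immediate from the definition of that topology (take $\rho = 1$). For $B(\varphi)$, write $B(\varphi) = n A(\varphi) - \int_\mathbb{R} t e^{-t}\, \DHm_\varphi$; it then suffices to treat the integral against $\chi(t) = t e^{-t}$. This falls within the scope of the proposition situated just above Proposition \ref{Eexp convergence implies dp convergence}: indeed $\chi''(t) = (t-2) e^{-t}$, so for any $\tau_0 \in \mathbb{R}$ we have $|\chi''(\tau)| = |\tau - 2| e^{-\tau} \le C e^{-2\tau}$ on $(-\infty, \tau_0]$ for a suitable $C > 0$ (because $|\tau - 2| e^\tau$ is bounded on that half-line). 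Applying that proposition with $\alpha = 2$ yields the $E_{\exp}$-continuity of $\int_\mathbb{R} t e^{-t}\, \DHm_\varphi$, hence of $B$.

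Continuity with respect to $d_{\exp}$-topology follows automatically, since $d_{\exp}$-topology refines $E_{\exp}$-topology as a consequence of Theorem \ref{dexp convergence implies Eexp convergence}. I do not anticipate a significant obstacle: the analytic machinery is already in place in the preceding propositions, and the only verification needed is the elementary second-derivative bound on $\chi(t) = t e^{-t}$ together with the strict positivity of $A(\varphi)$ on $\E^{\exp}(X,L)$.
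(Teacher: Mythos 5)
Your proposal is correct and follows what is evidently the intended route: the paper states this proposition immediately after the continuity lemma for integrals $\int_\mathbb{R}\chi\,\DHm_\varphi$ against $C^2$ weights with $|\chi''(\tau)|\le Ce^{-\alpha\tau}$, and your argument is exactly an application of that lemma (to $te^{-t}$) together with the definitional continuity of $E_{\exp}(\varphi)=-\int_\mathbb{R}e^{-t}\DHm_\varphi$ and the strict positivity of the denominator. The elementary bound $|\chi''(\tau)|=|\tau-2|e^{-\tau}\le Ce^{-2\tau}$ on $(-\infty,\tau_0]$ and the identification $A(\varphi)=-E_{\exp}(\varphi)>0$ via Proposition \ref{moment energy and DH measure} are both verified correctly, so no gap remains.
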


\begin{rem}
\label{sigma bounded}
Putting $d\mu_\varphi = \frac{1}{\int_\mathbb{R} e^{-t} \DHm_\varphi} e^{-t} \DHm_\varphi$ and $d\nu_\varphi = \frac{1}{\int_\mathbb{R} \DHm_\varphi} \DHm_\varphi$, we get a lower bound 
\[ \bm{\check{\sigma}} (\varphi) = n + \int_\mathbb{R} \frac{d\mu_\varphi}{d\nu_\varphi} \log \frac{d\mu_\varphi}{d\nu_\varphi} d\nu_\varphi - \log \int_\mathbb{R} \DHm_\varphi \ge n - \log (e^L) = \bm{\check{\sigma}} (0) \]
by Jensen's inequality. 
\end{rem}

\subsubsection{Continuous extension of the functional $E_{\exp}^M$}
\label{Continuous extension of the functional EexpM}

We recall 
\[ E_{\exp}^M (\varphi_{(\mathcal{X}, \mathcal{L}; \rho)}) =  -\big{(} (M. e^L) - \rho (\tilde{M}_{\mathbb{P}^1, \mathbb{G}_m}. e^{\tilde{\mathcal{L}}_{\mathbb{G}_m}}; \rho) \big{)} \]
for $\varphi_{(\mathcal{X}, \mathcal{L}; \rho)} \in \nH (X, L)$. 
We note 
\[ E_{\exp}^M (\varphi_{(\mathcal{X}, \mathcal{L}; \rho)}) = \frac{d}{ds}\Big{|}_{s=0} E_{\exp} (L+sM, \varphi_{(\mathcal{X}, \mathcal{L}; \rho)}). \]

To extend $E_{\exp}^M$ continuously to $\E^{\exp} (X, L)$, we make use of the following tomographic expression. 

\begin{thm}
\label{EexpM continuity}
For $\varphi \in \nH (X, L)$, we have 
\[ E_{\exp}^M (\varphi) = \int_\mathbb{R} \frac{(M, 0) \cdot (L, \varphi \wedge \tau - \tau)^{\cdot n}}{n!} e^{-\tau} d \tau. \]
For general $\varphi \in \E^{\exp} (X, L)$, we regard this formula as the definition of $E_{\exp}^M (\varphi)$. 
Then $E_{\exp}^M (\varphi)$ is finite and the functional $E_{\exp}^M: \E^{\exp} (X, L) \to \mathbb{R}$ is continuous with respect to $E_{\exp}/d_{\exp}$-topology. 
\end{thm}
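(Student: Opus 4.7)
The plan is to first establish the tomographic formula on $\nH(X,L)$ by differentiating Proposition \ref{tomographic expression of Eexp} in a one-parameter family of line bundles, and then to show the right-hand side is finite and continuous on all of $\E^{\exp}(X,L)$ via the quantitative estimates of Theorem \ref{BJ estimate 2} and Lemma \ref{exponential domination}.

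\textbf{Step 1 (formula on $\nH(X,L)$).} For $\varphi\in\nH(X,L)$ and $s$ in a small real neighborhood of $0$ on which $L+sM$ is ample, apply Proposition \ref{tomographic expression of Eexp} to the polarization $L+sM$, interpreting $\varphi$ inside the multilinear energy pairing $(L+sM,\varphi)$:
\[ E_{\exp}(L+sM,\varphi) = \int_\mathbb{R} \frac{(L+sM,\varphi\wedge\tau-\tau)^{\cdot n+1}}{(n+1)!}\,e^{-\tau}\,d\tau. \]
Differentiating at $s=0$, the left-hand side equals $E_{\exp}^M(\varphi)$ (this is the definition on test configurations via the equivariant-intersection expression in \S\ref{Towards non-archimedean formalism: moment measure of test configuration}). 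By multilinearity of the pairing, the integrand is polynomial in $s$, and its $s$-derivative at $0$ is $(M,0)\cdot(L,\varphi\wedge\tau-\tau)^{\cdot n}/n!$. Interchanging $d/ds$ and $\int d\tau$ is justified by the uniform dominating function produced in the next step, applied with $L+sM$ in place of $L$ on a compact $s$-interval.

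\textbf{Steps 2 and 3 (finiteness and continuity).} For general $\varphi\in\E^{\exp}(X,L)$, write $M=M_1-M_2$ with $M_i$ ample and $\theta\ge 1$ such that $\theta^{-1}L\le M_i\le \theta L$. Theorem \ref{BJ estimate 2} with $\varphi'=0$ yields
\[ \bigl|(M,0)\cdot(L,\varphi\wedge\tau-\tau)^{\cdot n}\bigr| \le \bigl|(M,0)\cdot(L,0)^{\cdot n}\bigr| + C_n\theta^{n^2}\,d_1(\varphi\wedge\tau-\tau,0), \]
and for $\varepsilon\in(0,1)$, Lemma \ref{exponential domination} furnishes the uniform bound $e^{-\tau}\,d_1(\varphi\wedge\tau-\tau,0)\le C_\varepsilon\, e^{-\varepsilon|\tau|}$, with $C_\varepsilon$ depending boundedly on $d_1(\varphi,0)$, $(e^L)$, and $E_{\exp}(\varphi_{;1+\varepsilon})$. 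Hence the integrand is dominated by an integrable function of $\tau$, so $E_{\exp}^M(\varphi)$ is well-defined and finite. For continuity, it suffices to prove sequential continuity in $E_{\exp}$-topology, which is first countable by Proposition \ref{Eexp topology is first countable}. If $\varphi_i\to\varphi$ in $E_{\exp}$-topology, then $d_1(\varphi_i,\varphi)\to 0$ and $E_{\exp}(\varphi_{i;1+\varepsilon})$ is uniformly bounded in $i$; for each $\tau\in\mathbb{R}$, Proposition \ref{strong convergence of rooftops} gives $\varphi_i\wedge\tau-\tau\to\varphi\wedge\tau-\tau$ strongly, and Theorem \ref{BJ estimate 2} gives pointwise convergence of the integrands. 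The uniform dominating function $C e^{-\varepsilon|\tau|}$ from Step 2 is valid uniformly in $i$, so the dominated convergence theorem concludes. The $d_{\exp}$-version follows since $d_{\exp}$-convergence implies $E_{\exp}$-convergence by Theorem \ref{dexp convergence implies Eexp convergence}.

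\textbf{Main obstacle.} The delicate point is the interchange of $d/ds$ and $\int d\tau$ in Step 1: it requires the Step 2 estimate to be uniform in $s$ over a compact neighborhood of $0$. The constant $\theta(s)$ comparing $L+sM$ to the ample bracketing bundles $M_1,M_2$ remains bounded on a small interval, but one must also verify that $E_{\exp}(\varphi_{;1+\varepsilon})$ computed with respect to $L+sM$ is uniformly bounded in $s$ for fixed $\varphi\in\nH(X,L)$; this follows by inspecting the equivariant-intersection formula and using its polynomial dependence on $s$, but is the step that requires the most care. An alternative bypass is to compute both sides of the identity directly for test configurations: start from the equivariant-intersection expression for $E_{\exp}^M(\varphi_{(\mathcal{X},\mathcal{L};\rho)})$ of \S\ref{Towards non-archimedean formalism: moment measure of test configuration}, decompose the relevant measure into primary components along $E\subset\mathcal{X}_0$ as in \S\ref{Primary decomposition via filtration}, and then run a Fubini argument parallel to the proof of Proposition \ref{tomographic expression of Eexp}, which yields the tomographic formula without ever invoking a derivative.
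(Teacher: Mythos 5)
Your argument is essentially the paper's proof: the same differentiation in $s$ of the tomographic formula for $E_{\exp}(L+sM,\varphi)$, the same domination via Theorem \ref{BJ estimate 2} (with ample bracketing $\theta^{-1}L_s\le M',M''\le\theta L_s$) and Lemma \ref{exponential domination}, and the same dominated-convergence argument for finiteness and continuity. One point you should make explicit: the term $|(M,0)\cdot(L,0)^{\cdot n}|\,e^{-\tau}$ in your dominating function is \emph{not} integrable as $\tau\to-\infty$, so the domination works only because this constant vanishes — which it does, since $(L+sM,\varphi_{\mathrm{triv}})^{\cdot n+1}=(n+1)!\,E_{L+sM}(\varphi_{\mathrm{triv}})=0$ for all $s$ — and this same observation dissolves the worry in your last paragraph: the bound $C_n\theta^{n^2}d_1(\varphi\wedge\tau-\tau,0)$ involves only the fixed polarization $L$ through $\DHm_\varphi$, so the $s$-dependence enters solely via $\theta$, and no uniform control of $E_{\exp}$ with respect to $L+sM$ is needed.
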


\begin{proof}
For $\varphi \in \nH (X, L)$ and each $\tau \in \mathbb{R}$, we have 
\[ \frac{(M, 0) \cdot (L, \varphi \wedge \tau -\tau)^{\cdot n}}{n!} = \frac{d}{ds}\Big{|}_{s=0} \frac{(L +sM, \varphi \wedge \tau - \tau)^{\cdot n+1}}{(n+1)!}. \]
To obtain the claim, we would compute as 
\[ \int_\mathbb{R} \frac{d}{ds}\Big{|}_{s=0} \frac{(L +sM, \varphi \wedge \tau - \tau)^{\cdot n+1}}{(n+1)!} e^{-\tau} d\tau = \frac{d}{ds}\Big{|}_{s=0} \int_\mathbb{R} \frac{(L +sM, \varphi \wedge \tau - \tau)^{\cdot n+1}}{(n+1)!} e^{-\tau} d\tau. \]
This is valid if we have a uniform estimate 
\[ | (M, 0) \cdot (L +sM, \varphi \wedge \tau - \tau)^{\cdot n} e^{-\tau} | \le C_\varepsilon e^{-\varepsilon |\tau|} \]
for every small $s$. 
We put $L_s := L +sM$. 

Now take ample $M', M''$ and $\theta \ge 1$ so that $M = M' -M''$ and $\theta^{-1}L < M', M'' < \theta L$. 
Then $\theta^{-1} L_s \le M', M'' \le \theta L_s$ for small $s$. 
It follows by Theorem \ref{BJ estimate 2} that 
\[ |(M, 0) \cdot (L_s, \varphi \wedge \tau - \tau)^{\cdot n}| \le C_n \theta^{n^2} d_1 (\varphi \wedge \tau - \tau, 0). \]
Then by Lemma \ref{exponential domination}, we get 
\[ |(M, 0) \cdot (L_s, \varphi \wedge \tau -\tau)^{\cdot n} e^{-\tau}| \le C_\varepsilon e^{-\varepsilon |\tau|} \]
for $\varphi \in \E^{\exp} (X, L)$ and small $s$ as desired. 
In particular, the right hand side is finite for $\varphi \in \E^{\exp} (X, L)$. 

It suffices to see the continuity
\[ \int_\mathbb{R} (M, 0) \cdot (L, \varphi_i \wedge \tau - \tau)^{\cdot n} e^{-\tau} d \tau \to  \int_\mathbb{R} (M, 0) \cdot (L, \varphi \wedge \tau - \tau)^{\cdot n} e^{-\tau} d \tau \]
for $\varphi_i\to \varphi \in \E^{\exp} (X, L)$ in $E_{\exp}/d_{\exp}$-topology. 
By the proof of Lemma \ref{exponential domination}, the constant $C_\varepsilon$ in the above estimate can be taken uniformly for $\varphi_i$. 
On the other hand, again by Theomre \ref{BJ estimate 2}, we have  
\[ (M, 0) \cdot (L, \varphi_i \wedge \tau - \tau)^{\cdot n} \to (M, 0) \cdot (L, \varphi \wedge \tau - \tau)^{\cdot n} \]
for every $\tau \in \mathbb{R}$.  
Thus we can apply the dominated convergence theorem to the above limit and get the desired continuity. 
\end{proof}

\subsubsection{Tomographic expression of exponential moment measure}

As for the exponential moment measure $\int e^{-t} \mathcal{D}_\varphi$, we have the following tomographic expression. 

\begin{prop}
\label{tomographic expression of exp moment measure}
For $\varphi \in \E^{\exp} (X, L)$ and $\psi \in \E^1 (X, L)$, the measurable function $e^{-\tau} \int_{X^{\mathrm{NA}}} (\psi - \psi (v_{\mathrm{triv}})) \mathrm{MA} (\varphi \wedge \tau)$ is integrable with respect to $d\tau$ and we have 
\[ \int_{X^{\mathrm{NA}}} \psi \int e^{-t} \mathcal{D}_\varphi = \int_\mathbb{R} d\tau ~e^{-\tau}  \int_{X^{\mathrm{NA}}} (\psi-\psi (v_{\mathrm{triv}})) \mathrm{MA} (\varphi \wedge \tau) + \psi (v_{\mathrm{triv}}) \int_\mathbb{R} e^{-\tau} \DHm_\varphi (\tau). \]
Moreover, for fixed $n, (e^L)$ and $\varepsilon < 1$, we have a positive constant $C_\varepsilon$ depending boundedly on $d_1 (\varphi, 0), \int_\mathbb{R} e^{-(2+2 \varepsilon) t} \DHm_\varphi$ and $d_1 (\psi, 0)$ satisfying 
\[ \Big{|} e^{-\tau} \int_{X^{\mathrm{NA}}} (\psi - \psi (v_{\mathrm{triv}})) \mathrm{MA} (\varphi \wedge \tau) \Big{|} \le C_\varepsilon e^{-\varepsilon |\tau|}. \]
\end{prop}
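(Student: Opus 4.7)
The plan is to reduce the desired identity to Proposition \ref{moment measure via distribution} by integration by parts, then realize $\chi = e^{-t}$ as a limit of compactly supported smooth cut-offs and pass to the limit using the exponential decay estimate stated in the second half of the proposition. So I would prove the key estimate first, as it is the heart of the argument. Observe that $\mathrm{MA}(\varphi \wedge \tau) = \mathrm{MA}(\varphi \wedge \tau - \tau)$ since Monge--Amp\`ere is invariant under addition of constants. Applying Proposition \ref{BJ estimate} to $(\varphi \wedge \tau - \tau, \psi)$ and using $(\psi - \psi(v_{\mathrm{triv}}))(v_{\mathrm{triv}}) = 0$ gives
\[
\Big| \int (\psi - \psi(v_{\mathrm{triv}})) \mathrm{MA}(\varphi \wedge \tau) \Big| \le C_n d_1(\varphi \wedge \tau - \tau, 0)^{\frac12} \max\{d_1(\varphi \wedge \tau - \tau, 0), d_1(\psi, 0)\}^{\frac12}.
\]
Bounding the max by a sum and multiplying by $e^{-\tau}$ produces two pieces: $C_n\, e^{-\tau} d_1(\varphi \wedge \tau - \tau, 0)$ and $C_n\,(e^{-2\tau} d_1(\varphi \wedge \tau - \tau, 0))^{1/2} d_1(\psi, 0)^{1/2}$. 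The second claim of Lemma \ref{exponential domination} applied with $\rho = 1$ handles the first piece and with $\rho = 2$ handles the second, the latter requiring $\int e^{-(2+2\varepsilon)t}\DHm_\varphi < \infty$ (taking the ``$\varepsilon$'' there to be $2\varepsilon$). This yields the bound $C_\varepsilon e^{-\varepsilon|\tau|}$ with exactly the stated dependencies.

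Next, for a non-negative compactly supported smooth $\chi$, I would integrate by parts in Proposition \ref{moment measure via distribution}: writing $\int \psi \,\mathrm{MA}(\varphi \wedge \tau) = \int (\psi - \psi(v_{\mathrm{triv}})) \mathrm{MA}(\varphi \wedge \tau) + \psi(v_{\mathrm{triv}})(e^L)$ and noting $\int \chi'(\tau) d\tau = 0$, the constant piece drops, giving
\[
\int_{X^{\mathrm{NA}}} \psi \int \chi \mathcal{D}_\varphi = -\int_\mathbb{R} \chi'(\tau) \int (\psi - \psi(v_{\mathrm{triv}})) \mathrm{MA}(\varphi \wedge \tau) d\tau + \psi(v_{\mathrm{triv}}) \int_\mathbb{R} \chi \DHm_\varphi.
\]
Then approximate $e^{-t}$ from below by $\chi_k(t) := e^{-t}\beta_k(t)$, with $\beta_k$ smooth, $0 \le \beta_k \le 1$, equal to $1$ on $[-k, k]$ and supported in $[-k-1, k+1]$, chosen so that $\chi_k \nearrow e^{-t}$ pointwise. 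Split $-\chi_k'(t) = e^{-t}\beta_k(t) - e^{-t}\beta_k'(t)$. The first part converges pointwise to $e^{-t}$ while being dominated by it, so by the key estimate and dominated convergence the corresponding integral converges to the desired limit; the second part is supported on $\{|t| \in [k, k+1]\}$ and vanishes in the limit by a direct estimate using the key bound. For the left-hand side, property (3) of the moment measure gives a monotone weak convergence $\int \chi_k \mathcal{D}_\varphi \nearrow \int e^{-t} \mathcal{D}_\varphi$; writing the upper semi-continuous $\psi$ as $\sup\psi - (\sup\psi - \psi)$ and applying monotone convergence to the lower semi-continuous non-negative piece yields $\int \psi \int \chi_k \mathcal{D}_\varphi \to \int \psi \int e^{-t}\mathcal{D}_\varphi$ (and incidentally proves $\psi$ is integrable against $\int e^{-t} \mathcal{D}_\varphi$). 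The last term converges by $\varphi \in \E^{\exp}(X, L)$.

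The main obstacle is the key estimate: the factor $e^{-\tau}$ grows unboundedly as $\tau \to -\infty$, and one must exploit that $\varphi \wedge \tau - \tau \to 0$ in $d_1$ strongly enough to dominate this growth. A single application of Lemma \ref{exponential domination} would be insufficient, because Proposition \ref{BJ estimate} provides a product of $d_1(\varphi \wedge \tau - \tau, 0)^{1/2}$ and a $\max$ involving another $d_1$; the subtle point is to split the max and to apply Lemma \ref{exponential domination} with \emph{two} different exponents, $\rho = 1$ and $\rho = 2$, which is precisely what produces the sharp dependence on $\int e^{-(2+2\varepsilon)t}\DHm_\varphi$ in the final bound.
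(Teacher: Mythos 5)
Your proposal is correct and follows essentially the same route as the paper: the identical application of Proposition \ref{BJ estimate} combined with Lemma \ref{exponential domination} at the two exponents $\rho=1$ and $\rho=2$ for the key decay bound, then Proposition \ref{moment measure via distribution} with a smooth cut-off of $e^{-t}$, integration by parts, and dominated convergence. The only (harmless) variation is that you handle the left-hand limit for general usc $\psi\in\E^1(X,L)$ directly via the decomposition $\psi=\sup\psi-(\sup\psi-\psi)$ and monotone convergence for the increasing measures $\int\chi_k\mathcal{D}_\varphi$, whereas the paper first normalizes $\sup\psi=0$ and reduces to $\psi\in\nH(X,L)$ by countable regularization.
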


The claim includes that $\psi \in \E^1 (X, L)$ is integrable with respect to $\int e^{-t} \mathcal{D}_\varphi$. 

\begin{proof}
We firstly show the uniform estimate 
\begin{equation} 
\label{exponential decay}
\Big{|} e^{-\tau} \int_{X^{\mathrm{NA}}} (\psi - \psi (v_{\mathrm{triv}})) \mathrm{MA} (\varphi \wedge \tau) \Big{|} \le C_\varepsilon e^{-\varepsilon |\tau|}, 
\end{equation}
which in particular shows that the function is integrable with respect to $d\tau$. 
By Proposition \ref{BJ estimate}, we have 
\begin{align*} 
\Big{|} \int_{X^{\mathrm{NA}}} (\psi - \psi (v_{\mathrm{triv}})) \mathrm{MA} (\varphi \wedge \tau) \Big{|} 
&= \Big{|} \int_{X^{\mathrm{NA}}} (\psi - \psi (v_{\mathrm{triv}})) \mathrm{MA} (\varphi \wedge \tau - \tau) \Big{|} 
\\
&\le C_n \max \{ d_1 (\psi, 0)^{1/2} d_1 (\varphi \wedge \tau - \tau, 0)^{1/2}, d_1 (\varphi \wedge \tau - \tau, 0) \}. 
\end{align*}
By Lemma \ref{exponential domination}, we have 
\begin{gather*} 
e^{-\tau} d_1 (\varphi \wedge \tau - \tau, 0) \le C_\varepsilon' e^{- \varepsilon |\tau|}, 
\\
e^{-2\tau} d_1 (\varphi \wedge \tau - \tau, 0) \le C_\varepsilon'' e^{-2 \varepsilon |\tau|} 
\end{gather*}
by some $C_\varepsilon', C_\varepsilon'' > 0$ depending boundedly on $d_1 (\varphi, 0), \int_\mathbb{R} e^{-(2+ 2 \varepsilon) \tau} \DHm_\varphi$. 
Here we note $\int_\mathbb{R} e^{-(1+ \varepsilon) \tau} \DHm_\varphi$ is bounded when $d_1 (\varphi, 0), \int_\mathbb{R} e^{-(2+ 2 \varepsilon) \tau} \DHm_\varphi$ is bounded. 
Thus we get 
\[ \Big{|} e^{-\tau} \int_{X^{\mathrm{NA}}} (\psi - \psi (v_{\mathrm{triv}})) \mathrm{MA} (\varphi \wedge \tau) \Big{|} \le C_n \max \{ (C''_\varepsilon)^{1/2} d_1 (\psi, 0)^{1/2}, C_\varepsilon' \} \cdot e^{- \varepsilon |\tau|} \]
as desired. 

Now we show the claim. 
Since the claim obviously holds for constant $\psi$, we may assume $\psi (v_{\mathrm{triv}}) = \sup \psi = 0$ by the linearity. 
Moreover, we may assume $\psi \in \nH (X, L)$ as both sides of the equality are continuous along convergent decreasing \textit{sequence} $\psi_i \searrow \psi$ (countable regularization). 
Indeed, the left hand side is continuous by the monotone convergence theorem (see Proposition \ref{monotone convergence for net}). 
As for the right hand side, the dominated convergence theorem applies to the first term since we have 
\[ \int_{X^{\mathrm{NA}}} (\psi_i - \psi_i (v_{\mathrm{triv}})) \mathrm{MA} (\varphi \wedge \tau) \to \int_{X^{\mathrm{NA}}} (\psi - \psi (v_{\mathrm{triv}})) \mathrm{MA} (\varphi \wedge \tau) \]
for each $\tau \in \mathbb{R}$ and a uniform bound 
\[ \Big{|} e^{-\tau} \int_{X^{\mathrm{NA}}} (\psi_i - \psi_i (v_{\mathrm{triv}})) \mathrm{MA} (\varphi \wedge \tau) \Big{|} \le C_\varepsilon e^{-\varepsilon |\tau|} \] 
due to the uniform boundedness of $d_1 (\psi_i, 0) \to d_1 (\psi, 0)$. 
The second term is continuous as we already proved. 

Let $\chi_i$ be a compactly supported smooth function which takes $1$ on the interval $[-i, i]$, $0$ on the complement of $(-i-1, i+1)$ and monotonic on the intervals $(-i-1, -i), (i, i+1)$ with $|\chi_i'| \le 1+ \delta$. 
Since $\chi_i \nearrow 1_\mathbb{R}$ and $\psi$ is continuous, we have 
\[ \int_{X^{\mathrm{NA}}} \psi \int e^{-t} \mathcal{D}_\varphi = \lim_{i \to \infty} \int_{X^{\mathrm{NA}}} \psi \int \chi_i e^{-t} \mathcal{D}_\varphi. \]

By Proposition \ref{moment measure via distribution}, we have 
\begin{align*}
\int_{X^{\mathrm{NA}}} \psi \int \chi_i e^{-t} \mathcal{D}_\varphi 
&= \int_\mathbb{R} d\tau ~ \chi_i (\tau) e^{-\tau} \frac{d}{d\tau} \int_{X^{\mathrm{NA}}} \psi \mathrm{MA} (\varphi \wedge \tau) 
\\
&= - \int_\mathbb{R} d\tau ~ \chi_i' (\tau) e^{-\tau} \int_{X^{\mathrm{NA}}} \psi \mathrm{MA} (\varphi \wedge \tau) 
\\
&\qquad + \int_\mathbb{R} d\tau ~  \chi_i (\tau) e^{-\tau} \int_{X^{\mathrm{NA}}} \psi \mathrm{MA} (\varphi \wedge \tau) . 
\end{align*}

The first term will vanish as we have 
\[ |\int_\mathbb{R} d \tau ~ \chi_i' (\tau) e^{-\tau} \int_{X^{\mathrm{NA}}} \psi \mathrm{MA} (\varphi \wedge \tau)| \le (1+\delta) \sup_{\tau \in [-i-1, -i] \cup [i, i+1]} e^{-\tau} \int_{X^{\mathrm{NA}}} \psi \mathrm{MA} (\varphi \wedge \tau) \to 0 \]
for $i \to \infty$ thanks to the above estimate. 

As for the second term, since 
\[ |\chi_i (\tau) e^{-\tau} \int_{X^{\mathrm{NA}}} \psi \mathrm{MA} (\varphi \wedge \tau)| \nearrow |e^{-\tau} \int_{X^{\mathrm{NA}}} \psi \mathrm{MA} (\varphi \wedge \tau)| \le C_\varepsilon e^{-\varepsilon |\tau|} \] 
as $i \to \infty$, we get 
\[ \int_\mathbb{R} d\tau ~ \chi_i (\tau) e^{-\tau} \int_{X^{\mathrm{NA}}} \psi \mathrm{MA} (\varphi \wedge \tau) \to \int_\mathbb{R} d\tau~ e^{-\tau} \int_{X^{\mathrm{NA}}} \psi \mathrm{MA} (\varphi \wedge \tau) \]
by the dominated convergence theorem, which shows the proposition.  
\end{proof}

\begin{quest}
Does the claim hold also for $\psi \in C^0 (X^{\mathrm{NA}})$? 
\end{quest}

\begin{cor}
The measure $\int e^{-t} \mathcal{D}_\varphi$ has finite energy $E^\vee$ and hence does not charge pluripolar sets. 
\end{cor}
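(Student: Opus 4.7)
Write $M := \int_\mathbb{R} e^{-t}\,\DHm_\varphi \in (0, \infty)$ for the total mass of $\mu := \int e^{-t}\,\mathcal{D}_\varphi$, finite since $\varphi \in \E^{\exp}(X, L)$ and positive since $\int_\mathbb{R} \DHm_\varphi = (e^L) > 0$; let $\mu' := \tfrac{(e^L)}{M}\mu$, a Radon measure of total mass $(e^L)$. My plan is to prove $E^\vee(\mu') < \infty$, so that $\mu' \in \mathcal{M}^1(X^{\mathrm{NA}})$; the general Boucksom--Jonsson theory \cite{BJ3} then gives that measures of finite energy do not charge pluripolar sets, whence the same conclusion for $\mu$.

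Fix $\psi \in \E^1(X, L)$, set $c := \sup\psi = \psi(v_{\mathrm{triv}})$, and apply Proposition \ref{tomographic expression of exp moment measure} to $\psi - c$ (which satisfies $(\psi - c)(v_{\mathrm{triv}}) = 0$) to get
\[ \int (\psi - c)\,d\mu \;=\; \int_\mathbb{R} e^{-\tau}\int_{X^{\mathrm{NA}}}(\psi - c)\,\mathrm{MA}(\varphi \wedge \tau)\,d\tau. \]
Revisiting the proof of that proposition, the combination of Proposition \ref{BJ estimate} with the exponential decay bounds of Lemma \ref{exponential domination} produces, for a fixed $\varepsilon \in (0, 1)$, constants $A_\varepsilon, B_\varepsilon > 0$ that depend on $\varphi$ and $\varepsilon$ but not on $\psi$, such that
\[ \Big|e^{-\tau}\int_{X^{\mathrm{NA}}}(\psi - c)\,\mathrm{MA}(\varphi \wedge \tau)\Big| \;\le\; e^{-\varepsilon|\tau|}\bigl(A_\varepsilon + B_\varepsilon\,d_1(\psi - c, 0)^{1/2}\bigr). \]
Integrating in $\tau$ yields $\bigl|\int(\psi - c)\,d\mu\bigr| \le A + B\,d_1(\psi - c, 0)^{1/2}$, with $A, B$ independent of $\psi$.

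Setting $x := d_1(\psi - c, 0) = -E(\psi - c) \ge 0$ (the second equality uses $\psi - c \le 0$) and combining $E(\psi) = E(\psi - c) + c(e^L)$ with $\int\psi\,d\mu' = \tfrac{(e^L)}{M}\int(\psi - c)\,d\mu + c(e^L)$ gives
\[ E(\psi) - \int\psi\,d\mu' \;=\; -x - \tfrac{(e^L)}{M}\int(\psi - c)\,d\mu \;\le\; -x + A' + B'x^{1/2} \]
for fresh constants $A', B'$ independent of $\psi$. Since $\sup_{x \ge 0}(-x + B' x^{1/2}) = (B')^2/4$, taking the supremum over $\psi$ yields $E^\vee(\mu') \le A' + (B')^2/4 < \infty$ as desired.

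The main obstacle is the middle step: extracting, from the merely qualitative uniform estimate of Proposition \ref{tomographic expression of exp moment measure} (which only states that the constant ``depends boundedly on $d_1(\psi, 0)$''), an explicit rate of the form $d_1(\psi - c, 0)^{1/2}$. One has to unwind the proof of that proposition to separate the $\varphi$-contribution from the $\psi$-contribution in Proposition \ref{BJ estimate}. The square-root exponent is crucial: it is precisely this sublinear growth that can be absorbed by the $-x$ term in the final one-variable calculus inequality, whereas any linear (or faster) dependence on $d_1(\psi - c, 0)$ would render the supremum over $\psi$ infinite.
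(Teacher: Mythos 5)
Your argument is correct, but it takes a different route from the paper. The paper's own proof is a one-liner: Proposition \ref{tomographic expression of exp moment measure} already records that every $\psi \in \E^1(X,L)$ is integrable against $\int e^{-t}\mathcal{D}_\varphi$, i.e.\ $\E^1 \subset L^1(\int e^{-t}\mathcal{D}_\varphi)$, and then the finiteness of $E^\vee$ (together with the non-charging of pluripolar sets) is cited wholesale from \cite[Theorem 6.23]{BJ3}, which characterizes finite-energy measures exactly by this integrability property. You instead re-derive the relevant implication by hand in this special case: you reopen the proof of Proposition \ref{tomographic expression of exp moment measure} to isolate the $\psi$-dependence of the constant as $C_n (C''_\varepsilon)^{1/2} d_1(\psi-c,0)^{1/2}$ coming from Proposition \ref{BJ estimate}, integrate the $e^{-\varepsilon|\tau|}$ envelope, and absorb the resulting $B' x^{1/2}$ into the $-x$ term. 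This is sound — the max in the cited estimate is indeed bounded by the sum, the constants $C'_\varepsilon, C''_\varepsilon$ depend only on $d_1(\varphi,0)$ and $\int_\mathbb{R} e^{-(2+2\varepsilon)t}\DHm_\varphi$, the normalization $\mu' = \tfrac{(e^L)}{M}\mu$ correctly places the measure in $\mathcal{M}(X^{\mathrm{NA}})$ where $E^\vee$ is defined, and the identification $d_1(\psi-c,0) = -E(\psi-c)$ for $\psi - c \le 0$ is valid. What your approach buys is self-containedness and an explicit bound $E^\vee(\mu') \le A' + (B')^2/4$; what the paper's approach buys is brevity, at the cost of a black-box appeal to \cite{BJ3}. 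Your correct observation that the sublinear exponent $1/2$ is what makes the supremum finite is precisely the mechanism hidden inside the cited theorem.
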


\begin{proof}
We in particular have $\E^1 \subset L^1 (\int e^{-t} \mathcal{D}_\varphi)$ by the above proposition, so the claim follows by \cite[Theorem 6.23]{BJ3}. 
\end{proof}


\subsubsection{Continuity of exponential moment measure}
\label{Continuity of exponential moment measure}

\begin{thm}
\label{exponential moment measure convergence}
Suppose a sequence $\{ \varphi_i \}_{i \in \mathbb{N}} \subset \E^{\exp} (X, L)$ converges to $\varphi \in \E^{\exp} (X, L)$ in $E_{\exp}/d_{\exp}$-topology. 
\begin{enumerate}
\item If a sequence $\{ g_i \}_{i \in \mathbb{N}} \subset C^0 (X^{\mathrm{NA}})$ uniformly converges to $g \in C^0 (X^{\mathrm{NA}})$, then 
\[ \int_{X^{\mathrm{NA}}} g_i \int e^{-t} \mathcal{D}_{\varphi_i} \to \int_{X^{\mathrm{NA}}} g \int e^{-t} \mathcal{D}_\varphi. \]

\item If a sequence $\{ \psi_i \}_{i \in \mathbb{N}} \subset \E^1 (X, L)$ strongly converges to $\psi \in \E^1 (X, L)$, then 
\[ \int_{X^{\mathrm{NA}}} \psi_i \int e^{-t} \mathcal{D}_{\varphi_i} \to \int_{X^{\mathrm{NA}}} \psi \int e^{-t} \mathcal{D}_\varphi. \]
\end{enumerate}
\end{thm}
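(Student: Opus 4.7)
The plan is to apply the tomographic expression from Proposition \ref{tomographic expression of exp moment measure} to both sides and reduce each claim to a Lebesgue dominated convergence argument over $\mathbb{R}$, with pointwise limits supplied by Proposition \ref{double limit} and uniform domination supplied by the second part of Proposition \ref{tomographic expression of exp moment measure}. I would start with (2) and obtain (1) by uniform approximation.

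For (2), I would write
\[
\int_{X^{\mathrm{NA}}} \psi_i \int e^{-t}\mathcal{D}_{\varphi_i} = \int_\mathbb{R} e^{-\tau} F_i(\tau)\, d\tau + \psi_i(v_{\mathrm{triv}}) \int_\mathbb{R} e^{-\tau}\DHm_{\varphi_i},
\]
where $F_i(\tau) := \int_{X^{\mathrm{NA}}}(\psi_i - \psi_i(v_{\mathrm{triv}}))\mathrm{MA}(\varphi_i \wedge \tau)$, and similarly for the limit. First I check pointwise convergence of the integrand. Since $E_{\exp}/d_{\exp}$-topology is finer than the strong topology (by definition for $E_{\exp}$, and by Proposition \ref{dp dexp} for $d_{\exp}$), $\varphi_i \to \varphi$ strongly in $\E^1$. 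By Proposition \ref{strong convergence of rooftops}, the rooftop $\varphi_i \wedge \tau \to \varphi \wedge \tau$ strongly in $\E^1$ for each fixed $\tau$, and combined with strong convergence $\psi_i \to \psi$ and $\sup \psi_i \to \sup \psi$ (weak convergence), Proposition \ref{double limit} gives $F_i(\tau) \to F(\tau)$ for every $\tau$. For the uniform dominant, the estimate in Proposition \ref{tomographic expression of exp moment measure} yields $|e^{-\tau} F_i(\tau)| \le C_\varepsilon e^{-\varepsilon|\tau|}$ with $C_\varepsilon$ depending boundedly on $d_1(\varphi_i, 0)$, $\int e^{-(2+2\varepsilon)t}\DHm_{\varphi_i} = -E_{\exp}(\varphi_{i;2+2\varepsilon})$ and $d_1(\psi_i, 0)$. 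All three are uniformly bounded along $i$: the first and third by $d_1$-convergence, the second by Theorem \ref{dexp convergence implies Eexp convergence} applied at $\rho = 2+2\varepsilon$. Dominated convergence then yields $\int e^{-\tau} F_i \, d\tau \to \int e^{-\tau} F \, d\tau$. The boundary term converges because $\psi_i(v_{\mathrm{triv}}) \to \psi(v_{\mathrm{triv}})$ and $\int e^{-\tau}\DHm_{\varphi_i} = -E_{\exp}(\varphi_{i;1}) \to -E_{\exp}(\varphi_{;1})$ by Theorem \ref{dexp convergence implies Eexp convergence}. This proves (2).

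For (1), the uniform convergence $\|g_i - g\|_\infty \to 0$ together with uniform boundedness of $\int e^{-t}\DHm_{\varphi_i}$ (by Theorem \ref{dexp convergence implies Eexp convergence}) gives
\[
\Bigl| \int_{X^{\mathrm{NA}}} (g_i - g) \int e^{-t}\mathcal{D}_{\varphi_i} \Bigr| \le \|g_i - g\|_\infty \int_\mathbb{R} e^{-t} \DHm_{\varphi_i} \longrightarrow 0,
\]
so it suffices to prove convergence for the fixed continuous $g$. Using the density result recalled in the proof of Proposition \ref{double limit} (Corollary 2.11 of \cite{BJ3}), for any $\delta > 0$ I approximate $g$ uniformly by a difference $\psi - \psi'$ with $\psi, \psi' \in \nH(X,L) \subset \E^1(X,L)$ up to error $\delta$. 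Applying (2) to the constant sequences $\psi_i \equiv \psi$ and $\psi_i' \equiv \psi'$ and controlling the $\delta$-error by the uniform mass bound as above, then letting $\delta \to 0$, gives the result.

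The main obstacle is the uniformity of the constant $C_\varepsilon$ along the sequence: the argument hinges on the fact that $E_{\exp}/d_{\exp}$-convergence is strong enough to force not only $d_1(\varphi_i, 0)$ but every exponential moment $\int e^{-\rho t} \DHm_{\varphi_i}$ ($\rho > 0$) to be uniformly bounded --- this is precisely the property for which the $E_{\exp}$-topology was introduced, and it is what makes the exponential tail bound in Proposition \ref{tomographic expression of exp moment measure} an $i$-uniform dominant. Once this uniformity is in hand, everything else is routine measure theory on $\mathbb{R}$.
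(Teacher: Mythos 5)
Your proposal is correct and follows essentially the same route as the paper: both reduce to the tomographic expression of Proposition \ref{tomographic expression of exp moment measure}, obtain pointwise convergence in $\tau$ from strong convergence of $\varphi_i\wedge\tau$ and $\psi_i$, secure an $i$-uniform exponential dominant from the uniform boundedness of $d_1(\varphi_i,0)$, $d_1(\psi_i,0)$ and $\int_\mathbb{R} e^{-(2+2\varepsilon)t}\DHm_{\varphi_i}$ guaranteed by $E_{\exp}/d_{\exp}$-convergence, and conclude by dominated convergence; case (1) is handled by the same uniform approximation of $g$ by a difference of Fubini--Study potentials. The only difference is the order in which the two cases are treated, which is immaterial.
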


\begin{proof}

(1) We assume $g_i \to g \in C^0 (X^{\mathrm{NA}})$ uniformly. 
For any $\varepsilon > 0$, we can take $i_\varepsilon$ and two psh functions $\psi_1, \psi_2 \in \nH (X)$ so that 
\[ \sup |g - (\psi_1 - \psi_2)|, \sup |g_i - (\psi_1 - \psi_2)| \le \varepsilon \] 
for $i \ge i_\varepsilon$. 
Then since 
\begin{align*} 
\Big{|} \int_{X^{\mathrm{NA}}} 
&g \int e^{-t} \mathcal{D}_\varphi - \int_{X^{\mathrm{NA}}} g_i \int e^{-t} \mathcal{D}_{\varphi_i} \Big{|} 
\\
&\le \varepsilon \int_\mathbb{R} e^{-t} \DHm_\varphi + \varepsilon \int_\mathbb{R} e^{-t} \DHm_{\varphi_i} + \Big{|} \int_{X^{\mathrm{NA}}} \psi_1 \int e^{-t} \mathcal{D}_\varphi - \int_{X^{\mathrm{NA}}} \psi_1 \int e^{-t} \mathcal{D}_{\varphi_i} \Big{|} 
\\
&\qquad \qquad+ \Big{|} \int_{X^{\mathrm{NA}}} \psi_2 \int e^{-t} \mathcal{D}_\varphi - \int_{X^{\mathrm{NA}}} \psi_2 \int e^{-t} \mathcal{D}_{\varphi_i} \Big{|}
\end{align*}
and 
\[ \int_\mathbb{R} e^{-t} \DHm_{\varphi_i} \to \int_\mathbb{R} e^{-t} \DHm_\varphi \] 
by $\varphi_i \to \varphi$ in $d_{\exp}$, the claim follows from the case $g_i = g = \psi \in \nH (X)$. 
Therefore, it suffices to show the second claim. 

(2) We assume $\psi_i \to \psi \in \E^1 (X, L)$ in $d_1$. 
Since $\psi_i \to \psi$ in the weak topology of $\PSH (X, L)$, i.e. $\psi_i (v) \to \psi (v)$ for every quasi-monomial valuation $v$, we in particular have $\psi_i (v_{\mathrm{triv}}) \to \psi (v_{\mathrm{triv}})$. 
Meanwhile, we have $\int_\mathbb{R} e^{-\tau} \DHm_{\varphi_i} (\tau) \to \int_\mathbb{R} e^{-\tau} \DHm_{\varphi} (\tau)$. 
Thus, thanks to Proposition \ref{tomographic expression of exp moment measure}, it suffices to show 
\[ \int_\mathbb{R} d\tau ~e^{-\tau} \int_{X^{\mathrm{NA}}} (\psi_i-\psi_i (v_{\mathrm{triv}})) \mathrm{MA} (\varphi_i \wedge \tau) \to \int_\mathbb{R} d\tau ~e^{-\tau} \int_{X^{\mathrm{NA}}} (\psi-\psi (v_{\mathrm{triv}})) \mathrm{MA} (\varphi \wedge \tau). \]

Since $\varphi_i \wedge \tau \to \varphi \wedge \tau$ and $\psi_i \to \psi$ in $d_1$, we have 
\[ \int_{X^{\mathrm{NA}}} (\psi_i-\psi_i (v_{\mathrm{triv}})) \mathrm{MA} (\varphi_i \wedge \tau) \to \int_{X^{\mathrm{NA}}} (\psi-\psi (v_{\mathrm{triv}})) \mathrm{MA} (\varphi \wedge \tau) \]
for each $\tau \in \mathbb{R}$. 
Thus the integrands are pointwiesely convergent. 

On the other hand, since $\varphi_i \to \varphi$ in $E_{\exp}/d_{\exp}$ and $\psi_i \to \psi$ in $d_1$, we have a uniform bound on $d_1 (\varphi_i, 0), \int_\mathbb{R} e^{-(2+2\varepsilon) t} \DHm_{\varphi_i}, d_1 (\psi_i, 0)$. 
Then by (the proof of) Proposition \ref{tomographic expression of exp moment measure}, we have a uniform constant $C_\varepsilon$ such that 
\[ \Big{|} e^{-\tau} \int_{X^{\mathrm{NA}}} (\psi_i-\psi_i (v_{\mathrm{triv}})) \mathrm{MA} (\varphi_i \wedge \tau) \Big{|} \le C_\varepsilon e^{-\varepsilon |\tau|} \]
for every $i$ and $\tau \in \mathbb{R}$. 
Now the desired convergence follows from the dominated convergence theorem. 
\end{proof}

\subsubsection{Non-archimedean $\mu$-entropy}
\label{Non-archimedean mu-entropy}

We firstly note the following formula. 

\begin{prop}
\label{energy pairing formula}
For $\varphi \in \E^{\exp} (X, L)$, we have 
\[ -\int_\mathbb{R} (n-t) e^{-t} \DHm_\varphi = E^L_{\exp} (\varphi) + \int_{X^{\mathrm{NA}}} \varphi \int e^{-t} \mathcal{D}_\varphi. \]
\end{prop}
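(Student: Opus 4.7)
The plan is to verify the identity first for Fubini--Study metrics $\varphi = \varphi_{(\mathcal{X}, \mathcal{L})} \in \nH(X, L)$ via an equivariant intersection calculation on a common resolution, and then to extend it to all $\varphi \in \E^{\exp}(X, L)$ by regularization.

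For a normal test configuration $(\mathcal{X}, \mathcal{L})$, I would choose $\tilde{\mathcal{X}}$ dominating both $\bar{\mathcal{X}}$ and $X \times \mathbb{P}^1$ via $\beta: \tilde{\mathcal{X}} \to \bar{\mathcal{X}}$ and $p_X: \tilde{\mathcal{X}} \to X$. The formulas with $\rho = 1$ recalled in the introduction give
\[ -\int_\mathbb{R} (n-t) e^{-t} \DHm_\varphi = -(L. e^L) + (\bar{\mathcal{L}}_{\mathbb{G}_m}. e^{\bar{\mathcal{L}}_{\mathbb{G}_m}}; 1), \]
and from the definition (\ref{moment M-energy}) with $M = L$,
\[ E^L_{\exp}(\varphi) = -(L. e^L) + (p_X^* L_{\mathbb{G}_m}. e^{\beta^* \bar{\mathcal{L}}_{\mathbb{G}_m}}; 1). \]
Subtracting, the identity reduces to
\[ ((\beta^*\bar{\mathcal{L}}_{\mathbb{G}_m} - p_X^* L_{\mathbb{G}_m}). e^{\beta^* \bar{\mathcal{L}}_{\mathbb{G}_m}}; 1) = \int_{X^{\mathrm{NA}}} \varphi \int e^{-t} \mathcal{D}_\varphi. \]
Proposition \ref{explicit formula for sigma} identifies the cycle: $\beta^* \bar{\mathcal{L}} - p_X^* L \equiv \sum_{\tilde E \subset \tilde{\mathcal{X}}_0} \sigma_{\tilde E} \mathrm{ord}_{\tilde E} \tilde{\mathcal{X}}_0 \cdot \tilde E$. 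Applying the equivariant projection formula through $\beta$ and noting that $\beta$-exceptional components vanish after push-forward, only those $\tilde E$ dominating components $E \subset \mathcal{X}_0$ survive; each contributes $\sigma_E \mathrm{ord}_E \mathcal{X}_0 \cdot (E^{\mathbb{G}_m}. e^{\mathcal{L}_{\mathbb{G}_m}}; 1) = \sigma_E \mathrm{ord}_E \mathcal{X}_0 \int_\mathbb{R} e^{-t} \DHm_{(E, \mathcal{L}|_E)}$. This matches the right side, since the defining formula of $\int e^{-t} \mathcal{D}_\varphi$ combined with $\varphi(v_E) = \sigma_{v_E}(\mathcal{F}_\varphi) = \sigma_E$ (Propositions \ref{sigma and non-archimedean psh} and \ref{filtration associated to Fubini--Study metric and the stabilization}) gives
\[ \int_{X^{\mathrm{NA}}} \varphi \int e^{-t} \mathcal{D}_\varphi = \sum_{E \subset \mathcal{X}_0} \sigma_E \mathrm{ord}_E \mathcal{X}_0 \int_\mathbb{R} e^{-t} \DHm_{(E, \mathcal{L}|_E)}. \]

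To promote the identity to all $\varphi \in \E^{\exp}(X, L)$, I would fix a countable decreasing regularization $\varphi_i \searrow \varphi$ with $\varphi_i \in \nH(X, L)$ furnished by \cite[Theorem 9.11]{BJ3}. Proposition \ref{decreasing convergence is dexp convergent} then gives $d_{\exp}(\varphi_i, \varphi) \to 0$, hence also $d_1$-convergence via Remark \ref{dp distance}. Each of the three terms then passes to the limit: $E^L_{\exp}$ by Theorem \ref{EexpM continuity}; $\int_{X^{\mathrm{NA}}} \varphi \int e^{-t} \mathcal{D}_\varphi$ by Theorem \ref{exponential moment measure convergence}(2) applied with $\psi_i = \varphi_i$; and $\int_\mathbb{R} (n-t) e^{-t} \DHm_\varphi$ after rewriting it as $\bigl(\bm{\check{\sigma}}(\varphi) + \log \int_\mathbb{R} e^{-t} \DHm_\varphi\bigr) \int_\mathbb{R} e^{-t} \DHm_\varphi$ and invoking Proposition \ref{sigma continuity} together with the continuity of $\int_\mathbb{R} e^{-t} \DHm_\varphi = -E_{\exp}(\varphi)$ from Theorem \ref{dexp convergence implies Eexp convergence}.

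The delicate point will be the algebraic identity in the first step: one must carefully track the ramification factors $\mathrm{ord}_{\tilde E} \tilde{\mathcal{X}}_0$ versus $\mathrm{ord}_E \mathcal{X}_0$ when pushing forward along $\beta$, and confirm that $\beta$-exceptional components genuinely contribute zero in the equivariant intersection pairing (which follows because their cycle push-forward vanishes, lowering the dimension in the intersection). Once this bookkeeping is in place, the continuity extension is essentially automatic given the continuity machinery developed in the preceding sections.
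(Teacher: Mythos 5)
Your proposal is correct and follows essentially the same route as the paper: verify the identity for $\varphi_{(\mathcal{X},\mathcal{L})} \in \nH(X,L)$ by splitting $\bar{\mathcal{L}} = \tilde{L}_{\mathbb{A}^1} + (\bar{\mathcal{L}} - \tilde{L}_{\mathbb{A}^1})$ in the equivariant intersection on a resolution and identifying $\varphi(v_E)$ with $\sigma_E = \mathrm{ord}_E(\tilde{\mathcal{L}} - \tilde{L}_{\mathbb{A}^1})/\mathrm{ord}_E\tilde{\mathcal{X}}_0$, then extend by continuity of all three terms in the $E_{\exp}/d_{\exp}$-topology. Your write-up just spells out the push-forward bookkeeping and the regularization step that the paper leaves implicit.
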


\begin{proof}
We can easily check the claim for $\varphi_{(\mathcal{X}, \mathcal{L})} \in \nH (X, L)$ as we have
\begin{align*} 
\int_\mathbb{R} (n-\rho t) e^{-\rho t} \DHm_\varphi
&= (L.e^L)- \rho (\bar{\mathcal{L}}. e^{\bar{\mathcal{L}}}; \rho) 
\\
&= \big{(} (L.e^L)- \rho (\tilde{L}_{\mathbb{A}^1}. e^{\bar{\mathcal{L}}}; \rho) \big{)} - \rho ((\bar{\mathcal{L}} - \tilde{L}_{\mathbb{A}^1}). e^{\bar{\mathcal{L}}}; \rho)
\end{align*} 
and $\varphi (\rho. v_E) = \rho \frac{\mathrm{ord}_E (\tilde{\mathcal{L}} - \tilde{L}_{\mathbb{A}^1})}{\mathrm{ord}_E \tilde{\mathcal{X}}_0}$. 
The general case follows by the continuity of the functionals with respect to $E_{\exp}/d_{\exp}$-topology. 
\end{proof}

\begin{cor}
\label{sigma formula}
For $\varphi \in \E^{\exp} (X, L)$, we have 
\[ \bm{\check{\sigma}} (\varphi) = - \frac{ E^L_{\exp} (\varphi) + \int_{X^{\mathrm{NA}}} \varphi \int e^{-t} \mathcal{D}_\varphi }{ \iint_{X^{\mathrm{NA}}} e^{-t} \mathcal{D}_\varphi } - \log \iint_{X^{\mathrm{NA}}} e^{-t} \mathcal{D}_\varphi. \]
\end{cor}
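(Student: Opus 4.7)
The plan is to derive this corollary as a direct algebraic manipulation of the formula in Proposition \ref{energy pairing formula}, combined with the definition of $\bm{\check{\sigma}}(\varphi)$ recalled in Proposition \ref{sigma continuity}. Since all continuity/extension work has already been done in the preceding subsections, no new analytic input is required here; the corollary is essentially a rewriting of the numerator of $\bm{\check{\sigma}}$ in terms of energy-pairing quantities that are natural in the non-archimedean formalism.

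First I would recall that, by Proposition \ref{sigma continuity},
\[
\bm{\check{\sigma}}(\varphi) \;=\; \frac{\int_\mathbb{R} (n - t)\, e^{-t}\, \DHm_\varphi}{\int_\mathbb{R} e^{-t}\, \DHm_\varphi} \;-\; \log \int_\mathbb{R} e^{-t}\, \DHm_\varphi,
\]
and that the total mass property of the moment measure (property (4) in the construction of $\int \chi \mathcal{D}_\varphi$) gives the identification
\[
\iint_{X^{\mathrm{NA}}} e^{-t}\, \mathcal{D}_\varphi \;=\; \int_\mathbb{R} e^{-t}\, \DHm_\varphi,
\]
which handles the denominator and the logarithmic term simultaneously.

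Second, I would invoke Proposition \ref{energy pairing formula} to rewrite the numerator as
\[
\int_\mathbb{R} (n - t)\, e^{-t}\, \DHm_\varphi \;=\; -\Bigl(E^L_{\exp}(\varphi) \;+\; \int_{X^{\mathrm{NA}}} \varphi \int e^{-t}\, \mathcal{D}_\varphi\Bigr).
\]
Substituting these two identities into the formula for $\bm{\check{\sigma}}(\varphi)$ yields the claimed expression after a single sign cancellation. There is no essential obstacle: the only thing to check is that both sides are defined and finite on $\E^{\exp}(X, L)$, which is guaranteed because $E^L_{\exp}$ extends continuously there by Theorem \ref{EexpM continuity}, $\int_{X^{\mathrm{NA}}}\varphi \int e^{-t}\mathcal{D}_\varphi$ is finite by Proposition \ref{tomographic expression of exp moment measure} (applied with $\psi = \varphi \in \E^1(X,L)$), and $\iint_{X^{\mathrm{NA}}} e^{-t}\mathcal{D}_\varphi = \int_\mathbb{R} e^{-t}\DHm_\varphi > 0$ is finite and strictly positive for $\varphi \in \E^{\exp}(X,L)$ by the definition of $\E^{\exp}(X, L)$ together with $\DHm_\varphi \ne 0$ (Proposition \ref{concavity} combined with $\varphi \in \E(X, L)$, cf. Corollary \ref{finite moment energy class and full mass class}).

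In short, the corollary is a formal consequence, and the only genuine content has already been placed in Proposition \ref{energy pairing formula}. Thus this step in the manuscript is essentially notational: it repackages the identity to match the form in which the non-archimedean $\mu$-entropy $\NAmu^\lambda(\varphi) = \NAmu(\varphi) + \lambda\, \bm{\check{\sigma}}(\varphi)$ will be written on $\E^{\exp}(X, L)$ in the next subsection.
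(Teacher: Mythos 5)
Your proof is correct and matches the paper's intent exactly: the corollary is stated without proof as an immediate substitution of Proposition \ref{energy pairing formula} into the definition of $\bm{\check{\sigma}}$ from Proposition \ref{sigma continuity}, using the total-mass identity $\iint_{X^{\mathrm{NA}}} e^{-t}\mathcal{D}_\varphi = \int_\mathbb{R} e^{-t}\DHm_\varphi$. Your additional finiteness checks are sound and consistent with the surrounding development.
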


Now we introduce the non-archimedean $\mu$-entropy for general $\varphi \in \E^{\exp} (X, L)$. 
To ensure the lsc extension of the log discrepancy $A_X$, we assume $X$ is klt in what follows. 

\begin{defin}[The non-archimedean $\mu$-entropy]
Let $X$ be a klt variety. 
For $\varphi \in \E^{\exp} (X, L)$, we put 
\begin{align*} 
\NAmu (\varphi) 
&:= - 2\pi \frac{\int_{X^{\mathrm{NA}}} A_X \int e^{-t} \mathcal{D}_\varphi + E_{\exp}^{K_X} (\varphi)}{\iint_{X^{\mathrm{NA}}} e^{-t} \mathcal{D}_\varphi},
\\
\NAmu^\lambda (\varphi) 
&:= \NAmu (\varphi) + \lambda \bm{\check{\sigma}} (\varphi) 
\\ 
&= - \frac{\int_{X^{\mathrm{NA}}} (2\pi A_X + \lambda \varphi) \int e^{-t} \mathcal{D}_\varphi + E_{\exp}^{2\pi K_X +\lambda L} (\varphi)}{\iint_{X^{\mathrm{NA}}} e^{-t} \mathcal{D}_\varphi} - \lambda \log \iint_{X^{\mathrm{NA}}} e^{-t} \mathcal{D}_\varphi.  
\end{align*}
\end{defin}

From the observation in the beginning of this section, this $\NAmu^\lambda$ extends the non-archimedean $\mu$-entropy for test configurations defined in section \ref{non-archimedean mu-entropy of test configuration}. 

For any lsc function $f$, we have $f  = \sup \{ g ~|~ f \ge g \in C^0 \}$, so that we have $\varliminf_{i \to \infty} \int f d\mu_i \ge \int f d\mu$ for any weakly convergent net $\mu_i \to \mu$ of Radon measures. 
It follows from what we proved that $\NAmu^\lambda$ gives an upper semi-continuous function on $\mathcal{E}^{\exp}_{\mathrm{NA}} (X, L)$ with respect to $E_{\exp}/d_{\exp}$-topology. 
Therefore we have proved all the results in Theorem \ref{NAmu entropy extension}. 

\subsubsection{Maximizing non-archimedean $\mu$-entropy}
\label{maximizing non-archimedean mu-entropy}

To reorganize Theorem \ref{characteristic mu maximization implies muK-semistability} and Theorem \ref{NAmu maximizer} on the characteristic $\mu$-entropy $\cmu^\lambda$ in the non-archimedean setup, we would compare $\NAmu^\lambda (\varphi)$ and $\cmu^\lambda (\mathcal{F}_\varphi)$ for $\varphi \in \pcH (X, L)$. 

For $\varphi = \varphi_{(\mathcal{X}, \mathcal{L})} \in \nH (X, L)$, we have $\mathcal{F}_\varphi = \mathcal{F}_{(\mathcal{X}_d, \mathcal{L}_d; d^{-1})}$ for sufficiently divisible $d$. 
Then since the central fibre of $\mathcal{X}_d$ is reduced, we obtain $\NAmu^\lambda (\varphi) = \NAmu^\lambda (\mathcal{X}_d, \mathcal{L}_d; d^{-1}) = \cmu^\lambda (\mathcal{X}_d, \mathcal{L}_d; d^{-1}) = \cmu^\lambda (\mathcal{F}_\varphi)$. 

For $\varphi \in \pcH (X, L)$, the associated filtration $\mathcal{F}_\varphi$ is finitely generated by Theorem \ref{finite generation of Fphi}. 
Then there exists a polyhedral configuration $(\mathcal{X}/B_\sigma, \mathcal{L}; \zeta)$ such that $\mathcal{F}_{(\mathcal{X}, \mathcal{L}; \zeta)} = \mathcal{F}_\varphi$ for $\zeta \in \sigma^\circ$. 
We recall the central fibre of $(\mathcal{X}/B_\sigma, \mathcal{L}; \zeta)$ is reduced by Proposition \ref{Krull envelope has reduced central fibre}. 
For general $\xi \in \sigma$, we have the associated non-archimedean metric $\varphi_\xi := \varphi_{(\mathcal{X}, \mathcal{L}; \xi)} \in \pcH (X, L)$. 
We have $\mathcal{F}_{\varphi_\xi} = \mathcal{F}_{(\mathcal{X}, \mathcal{L}; \xi)}$ by Proposition \ref{reduced central fibre implies homogeneity}. 
We recall $\cmu^\lambda (\mathcal{F}_{(\mathcal{X}, \mathcal{L}; \xi)}) = \cmu^\lambda (\mathcal{X}, \mathcal{L}; \xi)$ is continuous on $\xi \in \sigma$. 
On the other hand, we already know $\NAmu^\lambda (\varphi_\eta) = \cmu^\lambda (\mathcal{F}_{(\mathcal{X}, \mathcal{L}; \eta)})$ for rational $\eta \in \sigma \cap N_{\mathbb{Q}}$. 
Thus to see $\NAmu^\lambda (\varphi) = \cmu^\lambda (\mathcal{F}_\varphi)$ for general $\varphi \in \pcH (X, L)$, it suffices to check $\NAmu^\lambda (\varphi_\xi)$ is continuous on $\xi \in \sigma$. 
We already know the upper semi-continuity, so at least $\NAmu^\lambda (\varphi) \ge \cmu^\lambda (\mathcal{F}_\varphi)$ for $\varphi \in \pcH (X, L)$. 

The continuity of $\sigma \to \mathbb{R}: \xi \mapsto \NAmu^\lambda (\varphi_\xi)$ can be reduced to Question \ref{continuity of entropy along polyhedral configuration} below as follows. 
For $\eta \in \sigma^\circ \cap N$, the pullback $(\mathcal{X}_\eta, \mathcal{L}_\eta)$ along $\mathbb{A}^1 \to B_\sigma$ gives a \textit{normal} test configuration thanks to the reducedness of the central fibre. 
It follows that for each irreducible component $E \subset \mathcal{X}_o$ and $\eta \in \sigma^\circ \cap N$, we can assign a valuation $v_{E, \eta}$ on $X$ so that 
\[ \mathrm{MA} (\varphi_\eta) = \sum_{E \subset \mathcal{X}_o} (E. \mathcal{L}^{\cdot n}). \delta_{v_{E, \eta}}. \]
For $\eta \in N_{\mathbb{Q}}$, we put $v_{E, \eta} := \rho^{-1}. v_{E, \rho. \eta}$ by taking sufficiently divisible $\rho \in \mathbb{N}_+$. 
Since $\varphi_\xi \in \pcH (X, L)$ is continuous on $\xi \in \sigma^\circ$ with respect to the uniform topology, we have $\mathrm{MA} (\varphi_{\xi_i}) \to \mathrm{MA} (\varphi_\xi)$ for $\xi_i \to \xi \in \sigma^\circ$. 
This gives a continuous extension $\sigma \to X^{\mathrm{lin}}: \xi \mapsto v_{E, \xi}$ for each $E \subset \mathcal{X}_o$. 

Now by the continuity we can easily see the moment measure of $\varphi_\xi$ is given by 
\[ \int e^{-t} \mathcal{D}_{\varphi_\xi} = \sum_{E \subset \mathcal{X}_o} \mathrm{ord}_E \mathcal{X}_0 \int_\mathbb{R} e^{-t} \DHm_{(E, \mathcal{L}|_E; \xi)} .\delta_{v_{E, \xi}}. \]
Thus we obtain the following expression 
\[ \int_{X^{\mathrm{NA}}} A_X \int e^{-t} \mathcal{D}_{\varphi_\xi} = \sum_{E \subset \mathcal{X}_o} \mathrm{ord}_E \mathcal{X}_0 \int_\mathbb{R} e^{-t} \DHm_{(E, \mathcal{L}|_E; \xi)} A_X (v_{E, \xi}). \]
This is the only part in $\NAmu^\lambda (\varphi_\xi)$ which may cause discontinuity. 
Since $\DHm_{(E, \mathcal{L}|_E; \xi)}$ is continuous on $\xi$ (consider $(X', L') := (E, \mathcal{L}|_E)$, then we have $\DHm_{\varphi'_\xi} = \DHm_{(E, \mathcal{L}|_E; \xi)}$ for the associated metric $\varphi'_\xi$ on $(X', L')$ is continuous on $\xi$), we can reduce the problem to the following question. 

\begin{quest}
\label{continuity of entropy along polyhedral configuration}
For a polyhedral configuration $(\mathcal{X}/B_\sigma, \mathcal{L})$ with reduced central fibre and an irreducible component $E \subset \mathcal{X}_o$, the log discrepancy $A_X (v_{E, \xi})$ is continuous on $\xi \in \sigma^\circ$? 
\end{quest}


We can check this for proper vectors. 

\begin{prop}
Let $(X, L)$ be a polarized normal variety with a torus $T$ action. 
Then there is a fan $\Sigma$ on $\mathfrak{t}$ and a collection of snc divisors $\{ (Y_\sigma, D_\sigma) \to X \}_{\sigma \in \Sigma}$ over $X$ satisfying the following: for each $\sigma \in \Sigma$, the family $\{ v_{X, \xi} \}_{\xi \in \sigma}$ associated to the polyhedral configuration $(X_\sigma, L_\sigma)$ in Example \ref{product sigma-configuration} factors through $\mathrm{QM} (Y_\sigma, D_\sigma) \subset X^{\mathrm{lin}}$ continuously. 
\end{prop}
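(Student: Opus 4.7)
The plan is to reduce to a smooth $T$-equivariant model of $X$ and analyze the weight valuation $v_{X,\xi}$ locally around $T$-fixed points via Sumihiro's linearization theorem. I would first invoke $T$-equivariant resolution of singularities (Villamayor's canonical resolution is functorial, hence automatically equivariant for any algebraic group action) to obtain a smooth proper $T$-variety $\pi \colon Y \to X$ together with a $T$-invariant snc divisor $D \subset Y$ whose support contains $\mathrm{Exc}(\pi)$ and $\pi^{-1}(X^{\mathrm{sing}})$. Since $v_{X,\xi}$ is defined from the weight decomposition of $\mathbb{C}(X) = \mathbb{C}(Y)$, we may compute it upstairs on $Y$, and the candidates $(Y_\sigma, D_\sigma)$ will all arise from this single $(Y, D)$ by restricting to suitable $T$-invariant affine charts.

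Next, around each $T$-fixed point $p \in Y$, Sumihiro's theorem furnishes a $T$-invariant affine open $U_p$ together with $T$-eigen local coordinates $z_1, \ldots, z_n$ of weights $\mu_1(p), \ldots, \mu_n(p) \in M$, chosen so that the components of $D$ passing through $p$ form a subset of $\{z_i = 0\}$. For $\xi$ in the interior of the rational polyhedral cone $\sigma_p := \{\xi \in \mathfrak{t} : \langle \mu_i(p), \xi\rangle \ge 0 \text{ for all } i\}$, the center of $v_{X,\xi}$ on $Y$ is $p$, and the weight formula $v_{X,\xi}(\sum_\alpha c_\alpha z^\alpha) = \min\{\sum_i \alpha_i \langle \mu_i(p), \xi\rangle : c_\alpha \ne 0\}$ identifies $v_{X,\xi}$ with the quasi-monomial valuation at $p$ of parameter $(\langle \mu_i(p), \xi\rangle)_{i=1}^n \in [0, \infty)^n$. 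This parameter is a linear, hence continuous, function of $\xi \in \sigma_p$, so the required factorization through $\mathrm{QM}_p(U_p, D \cap U_p) \subset X^{\mathrm{lin}}$ is automatic once the center stays at $p$.

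To package these local data into a global fan $\Sigma$ on $\mathfrak{t}$, after possibly further $T$-equivariant blowups I would arrange that the Bialynicki--Birula decomposition of $Y$ becomes so that the cones $\sigma_p$ associated to $T$-fixed points $p$ tile $\mathfrak{t}$ as top-dimensional cones, with faces matching the cones coming from fixed points in the closure of higher-dimensional $T$-orbits. Setting $(Y_\sigma, D_\sigma) := (U_{p_\sigma}, D \cap U_{p_\sigma})$ then yields the continuous factorization $\sigma \to \mathrm{QM}_{p_\sigma}(Y_\sigma, D_\sigma) \hookrightarrow X^{\mathrm{lin}}$ for each $\sigma \in \Sigma$. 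The main obstacle is exactly this combinatorial coherence: a priori the $\sigma_p$ from different fixed points may overlap or fail to tile $\mathfrak{t}$, and quasi-monomial valuations on boundary faces need to agree with those computed at neighboring fixed points. This is the question of realizing $Y$ as a toroidal embedding compatible with the $T$-action, and can be solved by repeatedly blowing up $T$-invariant strata until the local fans glue --- in the same spirit as the resolution-of-indeterminacy argument in Example \ref{polyhedral configuration via Hilbert scheme}.
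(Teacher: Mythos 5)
Your local analysis is essentially the paper's: after passing to a $T$-equivariant smooth model one linearizes the action near a fixed locus (Sumihiro/Luna slice), expands $f=\sum_\nu c_\nu z^\nu$ in eigencoordinates of weights $\mu_i$, and reads off $v_{X,\xi}(f)=\min\{\sum_i \nu_i\langle\mu_i,\xi\rangle : c_\nu\neq 0\}$, which exhibits $v_{X,\xi}$ as the quasi-monomial valuation with parameters $(\langle\mu_i,\xi\rangle)_i$, linear and hence continuous in $\xi$. That part is correct and matches the paper.

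The gap is in the global step, which is what the proposition is really about. First, your assertion that for $\xi\in\sigma_p^{\circ}$ the center of $v_{X,\xi}$ is the closed point $p$ holds only when every tangent weight at $p$ pairs strictly positively with $\xi$, i.e. when $p$ is an isolated fixed point realizing the sink of the Bialynicki--Birula decomposition; if the $T$-fixed locus is positive-dimensional (the generic situation when $\dim T<\dim X$), some $\mu_i(p)$ vanish and the center is the generic point of a fixed \emph{component}, not a closed point. This is why the paper works with connected components $Z_{X,\sigma}$ of the fixed locus of the subtorus $T_\sigma$ and identifies the relevant component globally, as $X\cap Z_{\varphi(\xi)}(\xi)$ inside $\mathbb{P}(H^0(X,L^{\otimes m})^\vee)$, using that $X$ is not contained in a proper linear subspace. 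Second, and more seriously, you defer the existence of the fan to ``repeatedly blowing up $T$-invariant strata until the local fans glue.'' That is not an argument: blowing up changes the fixed locus and its tangent weights, there is no reason the process terminates, and no reason the resulting cones meet along common faces. The paper sidesteps this entirely by taking $\Sigma$ to be the linearity domains of the concave piecewise linear function $\varphi(\xi)=\min_j\langle\mu_j,\xi\rangle$ built from the weights of $T$ on $H^0(X,mL)$, which tile $\mathfrak{t}$ by construction. Note also that the proposition does not require the cones $\sigma_p$ themselves to form a fan: any fan each of whose cones is contained in some $\sigma_p$ suffices, for instance the fan of the hyperplane arrangement $\{\langle\mu,\cdot\rangle=0\}$ over all tangent weights $\mu$ of all fixed components. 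Replacing your blowup step by such a refinement, and your closed fixed points by fixed components, would close the gap without modifying $Y$ at all.
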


\begin{proof}
Firstly consider a torus $T$ action on $(\mathbb{C}P^n, \mathcal{O} (1))$. 
We may assume the action is given by $(z_0: \ldots: z_n). t = (\chi^{\mu_0} (t) z_0: \ldots: \chi^{\mu_n} (t) z_n)$. 
Then since 
\[ (z_0: \ldots: z_n). \exp t \xi = (e^{t \langle \mu_0, \xi \rangle} z_0: \ldots: e^{t \langle \mu_n, \xi \rangle} z_n), \]
the zero set $Z (\xi)$ of the vector field $\xi$ is the union of linear subspaces 
\[ Z_\lambda (\xi) := \{ (z_0: \ldots : z_n) \in \mathbb{C}P^n ~|~ z_i = 0 \text{ if } \langle \mu_i, \xi \rangle \neq \lambda \}. \]
We put 
\begin{align*} 
W_\lambda (\xi) 
&:= \{ z \in \mathbb{C}P^n ~|~ \lim_{t \to -\infty} z. \exp t \xi \in Z_\lambda (\xi) \}
\\
&= \{ (z_0: \ldots : z_n) \in \mathbb{C}P^n ~|~ z_i = 0 \text{ if } \langle \mu_i, \xi \rangle < \lambda, z_j \neq 0 \text{ for some } j \text{ with } \langle \mu_j, \xi \rangle = \lambda \}. 
\end{align*}
Then a $T$-invariant closed subset $F$ intersects with $Z_\lambda (\xi)$ if and only if it intersects with $W_\lambda (\xi)$. 

The function $\varphi (\xi) = \min_j \{ \langle \mu_j, \xi \rangle \}$ is a concave piecewise linear function. 
Take a fan $\Sigma$ on $\mathfrak{t}$ so that $\varphi|_\sigma$ is linear for every $\sigma \in \Sigma$. 
Then $Z_\sigma := Z_{\varphi (\xi)} (\xi)$ is independent of the choice of $\xi \in \sigma^\circ$ and the complement $\mathbb{C}P^n \setminus W_\sigma$ of $W_\sigma := W_{\varphi (\xi)} (\xi)$ is a proper linear subspace. 

For a $T$-equivariant birational proper morphism $X' \to X$ and $\xi \in N$, we have $v_{X', \xi} = v_{X, \xi}$, so we may assume $X$ is smooth. 
For each $\xi \in \mathfrak{t}$, the zero set $Z_X (\xi)$ of the vector field is the fixed point set of the torus $T_\xi := \overline{\exp \mathbb{R} \xi}_\mathbb{C}$, so that it is a union of connected smooth subvarieties. 
Embed $X$ into $\mathbb{C}P^{N-1} = \mathbb{P} (H^0 (X, L^{\otimes m})^\vee)$ by the linear system $|mL|$. 
Then we have $Z_X (\xi) = X \cap Z (\xi)$. 
By the reducedness, $X$ is not contained in any proper linear subspace $\mathbb{P}W \subsetneq \mathbb{C}P^{N-1}$, so that $X \cap Z_{\varphi (\xi)} (\xi)$ is a non-empty connected component of $Z_X (\xi)$. 
By the above argument, we have a fan $\Sigma$ on $\mathfrak{t}$ such that $Z_{X, \sigma} := X \cap Z_{\varphi (\xi)} (\xi)$ is independent of the choice of $\xi \in \sigma^\circ$. 
For each $\sigma$, we denote by $T_\sigma$ the torus associated to $\mathbb{R} \sigma \subset \mathfrak{t}$. 
By the construction, $Z_{X, \sigma}$ is a connected component of the fixed point set of $T_\sigma$. 

In the following, we fix $\sigma \in \Sigma$. 
Consider the weight decomposition of the normal bundle $N Z_{X, \sigma} = \bigoplus_{\mu \in M_\xi} N_\mu Z_{X, \sigma}$ with respect to the $T_\sigma$ action. 
By the construction, we have $\langle \mu, \xi \rangle > 0$ for every $\xi \in \sigma^\circ$ and $\mu \in M_\sigma$ with $N_\mu Z_{X, \sigma} \neq 0$. 
Take a $T_\sigma$-equivariant \'etale morphism $U \to T_x X$ from a $T_\sigma$-invariant Zariski open neighbourhood $U$ of a point $x \in Z_{X, \sigma}$ (cf. \cite{Sum, Dre}). 
Then using a basis of $T_x X$ compatible with the weight decomposition $T_x Z \oplus \bigoplus_{\mu \in M_\sigma} N_{\mu, x} Z$, we get a parameter system $z^1, \ldots, z^n \in \mathcal{O}_{X, x}$ satisfying $z^i. t = \chi^{\mu_i} (t) z^i$. 

We expand $f \in \mathcal{O}_{X, x}$ as $\sum_\nu a_\nu z^\nu$. 
Then for $\xi \in \sigma^\circ \cap N$, the $\mathbb{G}_m$-invariant extension $\bar{f}$ to $X \times \mathbb{A}^1 \circlearrowleft_\xi \mathbb{G}_m$ can be written as 
\[ \bar{f} = \sum_\nu a_\nu z^\nu \varpi^{\sum_{i=1}^n \langle \mu_i, \xi \rangle \nu_i}. \]
It follows that 
\[ v_{X, \xi} (f) = \min \{ \sum_{i=1}^n \langle \mu_i, \xi \rangle \nu_i ~|~ a_\nu \neq 0 \}. \]
This formula gives a continuous extension $\{ v_{X, \xi} \in \mathrm{QM}_x (X, \overline{\{ z_1 \dotsb z_n = 0 \}}) \}_{\xi \in \sigma}$, which shows the claim. 
\end{proof}

\begin{cor}
\label{continuity of log discrepancy for product configuration}
For any torus action $(X, L) \circlearrowleft T$, the functional $\mathfrak{t} \to \mathbb{R}: \xi \mapsto A_X (v_\xi)$ is continuous. 
\end{cor}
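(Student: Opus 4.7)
[Proof proposal for Corollary \ref{continuity of log discrepancy for product configuration}]
The plan is to deduce the corollary directly from the preceding proposition by composing with the known linearity of $A_X$ on quasi-monomial valuations. First, I would apply the proposition to the product configuration $(X_\sigma, L_\sigma)$ of Example \ref{product sigma-configuration} to obtain a fan $\Sigma$ on $\mathfrak{t}$ and, for each $\sigma \in \Sigma$, a log smooth pair $(Y_\sigma, D_\sigma = \sum_{i=1}^{r_\sigma} E^\sigma_i) \to X$ such that the assignment $\sigma \ni \xi \mapsto v_{X,\xi} \in X^{\mathrm{lin}}$ factors continuously through $\mathrm{QM}(Y_\sigma, D_\sigma)$. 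Since $\mathfrak{t} = \bigcup_{\sigma \in \Sigma} \sigma$, continuity of $\xi \mapsto A_X(v_\xi)$ on $\mathfrak{t}$ reduces to continuity on each closed cone $\sigma \in \Sigma$, so we may fix $\sigma$ and work locally.

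The key step is then to invoke the standard fact that on the simplex $\mathrm{QM}_\eta(Y_\sigma, D_\sigma) \cong [0,\infty)^{r_\sigma}$ the log discrepancy is affine in the barycentric parameter: for a quasi-monomial valuation $v_\alpha$ associated to $\alpha = (\alpha_1, \ldots, \alpha_{r_\sigma})$ at $\eta$, one has
\[
A_X(v_\alpha) = \sum_{i=1}^{r_\sigma} \alpha_i\bigl(1 + \mathrm{ord}_{E^\sigma_i}(K_{Y_\sigma/X})\bigr),
\]
cf.\ \cite{JM}. In particular $A_X: \mathrm{QM}(Y_\sigma, D_\sigma) \to [0,\infty]$ is continuous. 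Composing with the continuous map $\sigma \to \mathrm{QM}(Y_\sigma, D_\sigma)$ supplied by the proposition yields the continuity of $\xi \mapsto A_X(v_\xi)$ on each $\sigma$, and hence on $\mathfrak{t}$.

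The main obstacle, and the only nontrivial verification, is the bookkeeping required to identify the valuation $v_{X,\xi}$ coming out of the local construction in the previous proposition (written in \'etale toric coordinates $z^1, \ldots, z^n$ at a point $x \in Z_{X,\sigma}$, giving $v_{X,\xi}(f) = \min\{\sum_i \langle \mu_i, \xi\rangle \nu_i \mid a_\nu \ne 0\}$) with the valuation $v_\xi$ defined intrinsically in Example \ref{filtration for proper vector} via weight decomposition. Once this identification is made --- which amounts to checking that the $T_\sigma$-equivariant local parameters recover the weight grading on $\mathbb{C}(X)$ --- the coordinates $\alpha_i(\xi) = \langle \mu_i, \xi \rangle$ depend linearly (hence continuously) on $\xi \in \sigma$, and the linear formula for $A_X$ on $\mathrm{QM}(Y_\sigma, D_\sigma)$ closes the argument. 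The agreement of the local continuous extensions on common faces $\sigma \cap \sigma'$ is automatic since both coincide with the original $\xi \mapsto v_\xi$ on rational interior points, which are dense.
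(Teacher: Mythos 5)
Your proposal is correct and follows essentially the same route as the paper, whose proof is the one-line remark that the corollary is a consequence of the preceding proposition together with the definition of the log discrepancy in \cite{JM} (i.e.\ its linearity on each simplex $\mathrm{QM}_\eta(Y_\sigma, D_\sigma)$). Your write-up simply makes explicit the two ingredients the paper leaves implicit — the reduction to each cone $\sigma\in\Sigma$ and the identification of $v_{X,\xi}$ with $v_\xi$ — both of which are handled correctly.
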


\begin{proof}
This is a consequence of the above proposition and the definition of log discrepancy (cf. \cite{JM}). 
\end{proof}

Now by the above remark, we obtain the following. 

\begin{prop}
\label{comparison of NAmu and chmu}
For any proper vector $\xi$, we have $\NAmu^\lambda (\varphi_\xi) = \cmu^\lambda (X, L; \xi)$. 
In particular, $\NAmu^\lambda (\varphi_\xi)$ is continuous on $\xi \in \mathfrak{t}$. 
\end{prop}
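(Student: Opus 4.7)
The plan is to realize $\varphi_\xi$ as the Fubini--Study metric of the product polyhedral configuration $(X_\sigma, L_\sigma) = (X \times B_\sigma, L \times B_\sigma)$ of Example \ref{product sigma-configuration} for any cone $\sigma \subset \mathfrak{t}$ containing $\xi$ in its interior, then exploit the fact that this configuration has reduced central fibre $X$ (a single irreducible component of multiplicity one). Since $\mathcal{F}_\xi = \mathcal{F}_{(X_\sigma, L_\sigma; \xi)}$, the equality $\NAmu^\lambda(\varphi_\xi) = \cmu^\lambda(X,L;\xi)$ is a special case of the identity $\NAmu^\lambda(\varphi) = \cmu^\lambda(\mathcal{F}_\varphi)$ for $\varphi \in \pcH(X,L)$ associated to a polyhedral configuration with reduced central fibre, and the latter was the content of the reduction carried out in the paragraphs preceding Question \ref{continuity of entropy along polyhedral configuration}.

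Concretely, I would proceed in two steps. First, for every rational $\eta \in \sigma \cap N_{\mathbb{Q}}$, the discussion preceding Question \ref{continuity of entropy along polyhedral configuration} already gives $\NAmu^\lambda(\varphi_\eta) = \cmu^\lambda(\mathcal{F}_{(X_\sigma, L_\sigma; \eta)}) = \cmu^\lambda(X, L; \eta)$: writing $\eta = \eta'/k$ with $\eta' \in N$, the filtration $\mathcal{F}_\eta$ satisfies $(\mathcal{F}_\eta)_{;k} = \mathcal{F}_{\eta'} = \mathcal{F}_{(X_{\mathbb{A}^1}^{\eta'}, L_{\mathbb{A}^1}^{\eta'})}$, whose normalized base changes have reduced central fibre, so one is in the setting where the comparison holds by the argument already recorded for $\varphi \in \nH(X,L)$ (after the appropriate rescaling). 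Second, I would establish that $\xi \mapsto \NAmu^\lambda(\varphi_\xi)$ is continuous on $\sigma^\circ$, which together with the real analyticity of $\cmu^\lambda(X,L;\cdot)$ and the density of $N_{\mathbb{Q}}$ in $\mathfrak{t}$ propagates the identity to every $\xi \in \mathfrak{t}$.

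For this continuity, the moment measure specializes, by reducedness of the central fibre, to
\[
\int e^{-t} \mathcal{D}_{\varphi_\xi} = \Big(\int_\mathbb{R} e^{-t} \DHm_{(X,L;\xi)}\Big)\,\delta_{v_\xi},
\qquad
\int_{X^{\mathrm{NA}}} A_X \int e^{-t} \mathcal{D}_{\varphi_\xi} = A_X(v_\xi)\int_\mathbb{R} e^{-t} \DHm_{(X,L;\xi)},
\]
and similarly the $K_X$- and $L$-moment energies together with $\bm{\check{\sigma}}(\varphi_\xi)$ become $T$-equivariant intersection numbers on $X$ depending continuously (in fact real-analytically) on $\xi$. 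The only term that is not formally continuous is $A_X(v_\xi)$, and this is exactly what Corollary \ref{continuity of log discrepancy for product configuration} provides.

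The main obstacle is precisely this continuity of $A_X(v_\xi)$: because $A_X$ is only lower semi-continuous on $X^{\mathrm{NA}}$, continuity along the one-parameter family $\{v_\xi\}_{\xi \in \sigma}$ of quasi-monomial valuations is not automatic and requires the geometric input that the entire family factors through a single snc pair $\mathrm{QM}(Y_\sigma, D_\sigma)$ via a continuous map, which is the content of the preceding proposition. Once that geometric ingredient is in hand, assembling the continuous dependence of each ingredient of $\NAmu^\lambda(\varphi_\xi)$ and invoking the density argument completes the proof.
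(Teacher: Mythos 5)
Your proposal is correct and follows essentially the same route as the paper: realize $\varphi_\xi$ via the product configuration with reduced irreducible central fibre, verify the identity on rational vectors, and propagate it by continuity in $\xi$, with the only non-trivial continuity being that of $A_X(v_\xi)$, which is exactly what Corollary \ref{continuity of log discrepancy for product configuration} supplies. The paper's proof is precisely this assembly of the preceding reduction and that corollary, so there is nothing to add.
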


Thus we get the following reformulation. 

\begin{cor}
If $\NAmu^\lambda$ is maximized by $\varphi_\xi$, then $(X, L)$ is $\check{\mu}^\lambda_\xi$K-semistable. 
\end{cor}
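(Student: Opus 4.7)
The plan is to use Proposition \ref{comparison of NAmu and chmu} to translate the $\NAmu^\lambda$-maximization hypothesis into the $\cmu^\lambda$-maximization hypothesis already handled by the corollary following Theorem \ref{variation of mu-entropy}. Concretely, for every test configuration $(\mathcal{X}, \mathcal{L})$ and every $\rho \in \mathbb{Q}_{\ge 0}$, the metric $\varphi_{(\mathcal{X}, \mathcal{L}; \rho)}$ lies in $\nH(X, L)$, so the hypothesis that $\varphi_\xi$ maximizes $\NAmu^\lambda$ together with the identity $\NAmu^\lambda(\varphi_\xi) = \cmu^\lambda(X, L; \xi)$ from Proposition \ref{comparison of NAmu and chmu} and the general estimate $\NAmu^\lambda(\varphi_{(\mathcal{X}, \mathcal{L}; \rho)}) \ge \cmu^\lambda(\mathcal{X}, \mathcal{L}; \rho)$ recalled in Section \ref{non-archimedean mu-entropy of test configuration} chain together to yield
\[ \cmu^\lambda(X, L; \xi) \ge \cmu^\lambda(\mathcal{X}, \mathcal{L}; \rho). \]

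This is exactly the hypothesis in the corollary following Theorem \ref{variation of mu-entropy}, after the continuity remark that lets one replace the supremum over polyhedral configurations by the supremum over test configurations with $\rho \in \mathbb{Q}_{\ge 0}$. The proof there proceeds by fixing a $T$-equivariant normal test configuration $(\mathcal{X}, \mathcal{L})$ and the product polyhedral configuration $(\mathcal{X}_\sigma, \mathcal{L}_\sigma)$ over $B_{[0,\infty) \times \sigma}$ of Example \ref{product sigma-configuration}, with $\xi$ in the interior of a rational polyhedral cone $\sigma \subset \mathfrak{t}$, and deducing $\cmu^\lambda(\mathcal{X}_\sigma, \mathcal{L}_\sigma; \xi + \rho.\eta) \le \cmu^\lambda(X, L; \xi) = \cmu^\lambda(\mathcal{X}_\sigma, \mathcal{L}_\sigma; \xi)$ for small $\rho \ge 0$. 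Then Theorem \ref{variation of mu-entropy} gives $\cFut^\lambda_\xi(\mathcal{X}, \mathcal{L}) = -\tfrac{d}{d\rho}|_{\rho = 0} \cmu^\lambda(\mathcal{X}_\sigma, \mathcal{L}_\sigma; \xi + \rho.\eta) \ge 0$, establishing $\check{\mu}^\lambda_\xi$K-semistability. The non-normal case is reduced to the normal one via monotonicity of $\cmu^\lambda$ under normalization.

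The only point requiring care is the reduction from the polyhedral inequality at $\xi + \rho.\eta$ to the test-configuration inequality produced above, because for irrational $\xi$ the vectors $\xi + \rho.\eta$ are not integer even when $\rho \in \mathbb{Q}$. This is handled by rational approximation: choose a sequence $\xi_k \in \sigma^\circ \cap N_{\mathbb{Q}}$ with $\xi_k \to \xi$; for each $k$ and each rational $\rho > 0$, the point $\xi_k + \rho.\eta$ is rational, and after clearing denominators its pullback along the corresponding integer ray is a genuine test configuration, so that $\cmu^\lambda(\mathcal{X}_\sigma, \mathcal{L}_\sigma; \xi_k + \rho.\eta)$ is bounded by $\cmu^\lambda(X, L; \xi_k)$. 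Real analyticity of $\cmu^\lambda(\mathcal{X}_\sigma, \mathcal{L}_\sigma; \bullet)$ on $[0,\infty) \times \sigma$ and continuity of $\cmu^\lambda(X, L; \bullet)$ on $\mathfrak{t}$ (both coming from Proposition \ref{comparison of NAmu and chmu} and the variational results of Section \ref{section: mu-entropy of polyhedral configuration}) then allow the inequality to pass to the limit $k \to \infty$. This approximation is the main technical step; all other ingredients are direct applications of results already established in the paper.
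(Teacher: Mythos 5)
Your proposal is correct and takes essentially the same route as the paper: the paper's proof is exactly the chain $\cmu^\lambda (\mathcal{F}_\varphi) \le \NAmu^\lambda (\varphi) \le \NAmu^\lambda (\varphi_\xi) = \cmu^\lambda (X, L; \xi)$ for $\varphi \in \nH (X, L)$ (your inequality $\NAmu^\lambda (\mathcal{X}, \mathcal{L}; \rho) \ge \cmu^\lambda (\mathcal{X}, \mathcal{L}; \rho)$ is the same input) followed by an appeal to Theorem \ref{characteristic mu maximization implies muK-semistability}\,(1), whose proof via Theorem \ref{variation of mu-entropy} you correctly unpack. One small correction in your approximation aside: the hypothesis only bounds $\cmu^\lambda (\mathcal{X}_\sigma, \mathcal{L}_\sigma; \xi_k + \rho. \eta)$ by $\cmu^\lambda (X, L; \xi)$, the value at the maximizer, not by $\cmu^\lambda (X, L; \xi_k)$; the former bound is what you actually have and it suffices for the limiting argument by the real analyticity of $\cmu^\lambda (\mathcal{X}_\sigma, \mathcal{L}_\sigma; \bullet)$.
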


\begin{proof}
We have 
\[ \cmu^\lambda (\mathcal{F}_\varphi) \le \NAmu^\lambda (\varphi) \le \NAmu^\lambda (\varphi_\xi) = \cmu^\lambda (X, L; \xi) \] 
for $\varphi \in \nH (X, L)$, so we can apply Theorem \ref{characteristic mu maximization implies muK-semistability}. 
\end{proof}

The following is a refinement of \cite[Proposition 3.14]{Ino2}. 

\begin{prop}
\label{large limit of mu-entropy}
Suppose $X$ is klt. 
For $\varphi = \varphi_{(\mathcal{X}, \mathcal{L}; \xi)} \in \nH^\mathbb{R} (X, L)$, we have 
\[ \lim_{\rho \to \infty} \rho^{-1} \bm{\check{\sigma}} (\varphi_{;\rho}) = 0, \quad \lim_{\rho \to \infty} \rho^{-1} \NAmu (\varphi_{;\rho}) < 0. \]
When the central fibre $\mathcal{X}_o = \mathcal{X}_o (\varphi)$ is irreducible, we explicitly have 
\[ \lim_{\rho \to \infty} \rho^{-1} \NAmu (\varphi_{;\rho}) = -2\pi A_X (v_{\mathcal{X}_o, \xi}). \]
\end{prop}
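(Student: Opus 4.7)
The plan is to extract the leading Laplace-type asymptotics of $\int_\mathbb{R} e^{-\rho t}\DHm_\varphi$ and $\int e^{-t}\mathcal{D}_{\varphi_{;\rho}}$ as $\rho\to\infty$, using the explicit form supplied by a polyhedral configuration realising $\varphi$. Since $\varphi\in\nH^\mathbb{R}$, by Proposition~\ref{Krull envelope has reduced central fibre} and Proposition~\ref{reduced central fibre implies homogeneity} I may choose $(\mathcal{X}/B_\sigma,\mathcal{L};\xi)$ with reduced central fibre $\mathcal{X}_o=\bigcup_j E_j$, so that $\DHm_\varphi=\sum_j\DHm_{(E_j,\mathcal{L}|_{E_j};\xi)}$ and
\[
\int e^{-t}\mathcal{D}_{\varphi_{;\rho}}=\sum_j\Big(\int_\mathbb{R} e^{-\rho t}\DHm_{(E_j;\xi)}\Big)\delta_{\rho.v_{E_j,\xi}}.
\]
Extending Proposition~\ref{infimum of DH measure} to the polyhedral setting via normalised base change along integer scalings of $\xi$ (and continuity in $\zeta\in\sigma$) identifies $\sigma_j:=\varphi(v_{E_j,\xi})=\inf\supp\DHm_{(E_j;\xi)}$, and Laplace then gives $\int_\mathbb{R} e^{-\rho t}\DHm_{(E_j;\xi)}=c_j\rho^{-\alpha_j}e^{-\rho\sigma_j}(1+O(\rho^{-1}))$ with $c_j>0$ and $\alpha_j\in\mathbb{Z}_{\ge 0}$. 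Writing $\sigma_*:=\min_j\sigma_j$, one then gets $\int t\,e^{-\rho t}\DHm_\varphi/\int e^{-\rho t}\DHm_\varphi\to\sigma_*$ and $-\log\int e^{-\rho t}\DHm_\varphi=\rho\sigma_*+O(\log\rho)$.

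For the first claim, plugging these into the formula of Proposition~\ref{sigma continuity} produces a cancellation of the $\pm\rho\sigma_*$ terms and leaves $\bm{\check{\sigma}}(\varphi_{;\rho})=n+O(\log\rho)$, hence $\rho^{-1}\bm{\check{\sigma}}(\varphi_{;\rho})\to 0$. For $\NAmu$ I split via the defining formula
\[
\NAmu(\varphi_{;\rho})=-2\pi\frac{\int_{X^{\mathrm{NA}}}A_X\int e^{-t}\mathcal{D}_{\varphi_{;\rho}}}{\iint e^{-t}\mathcal{D}_{\varphi_{;\rho}}}-2\pi\frac{E_{\exp}^{K_X}(\varphi_{;\rho})}{\iint e^{-t}\mathcal{D}_{\varphi_{;\rho}}}.
\]
By the moment-measure expression and the $1$-homogeneity $A_X(\rho.v)=\rho A_X(v)$, the first ratio equals $\rho$ times $\sum_j A_X(v_{E_j,\xi})\int e^{-\rho t}\DHm_{(E_j;\xi)}\,\big/\,\sum_j\int e^{-\rho t}\DHm_{(E_j;\xi)}$, which by Laplace converges to a positive convex combination $\bar A_X$ of $\{A_X(v_{E_j,\xi})\}_{j\in J_*}$, $J_*:=\{j:\sigma_j=\sigma_*\}$.

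The main obstacle is showing the second ratio $E_{\exp}^{K_X}(\varphi_{;\rho})/\iint e^{-t}\mathcal{D}_{\varphi_{;\rho}}$ is $o(\rho)$. For this I combine the test-configuration formula~(\ref{moment M-energy}) with the log-discrepancy identity $K^{\log,\mathbb{G}_m}_{\tilde{\mathcal{X}}/\mathbb{P}^1}-p_X^*K_X^{\mathbb{G}_m}=\sum_E A_X(v_E)\mathrm{ord}_E\mathcal{X}_0\cdot E^{\mathbb{G}_m}$ and equivariant localisation on $\mathbb{P}^1$: for rational $\xi$ the numerator decomposes as a signed sum of equivariant intersections on the $E_j$ of the same exponential order $e^{-\rho\sigma_*}$ as the denominator, keeping the ratio bounded; extension to irrational $\xi$ follows from continuity of equivariant intersections in $\zeta\in\sigma$.

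Combining these contributions yields $\rho^{-1}\NAmu(\varphi_{;\rho})\to-2\pi\bar A_X$. For the strict negativity, Proposition~\ref{concavity} forces $\sigma_*<\sup\varphi$ whenever $\varphi$ is non-constant, so $v_{E_j,\xi}\ne v_{\mathrm{triv}}$ for every $j\in J_*$; the klt hypothesis together with the lsc extension of $A_X$ to $X^{\mathrm{NA}}$ then gives $A_X(v_{E_j,\xi})>0$ for every $j\in J_*$, whence $\bar A_X>0$. In the irreducible case $\mathcal{X}_o=E$ there is only one summand, the convex combination collapses to $\bar A_X=A_X(v_{\mathcal{X}_o,\xi})$, and one reads off the explicit value $\lim_{\rho\to\infty}\rho^{-1}\NAmu(\varphi_{;\rho})=-2\pi A_X(v_{\mathcal{X}_o,\xi})$.
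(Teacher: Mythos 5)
Your overall architecture coincides with the paper's: pass to a polyhedral configuration with reduced central fibre, use the primary decomposition $\DHm_\varphi=\sum_j\DHm_{(E_j,\mathcal{L}|_{E_j};\xi)}$ with $\sigma_j=\varphi(v_{E_j,\xi})=\inf\supp\DHm_{(E_j;\xi)}$ (Proposition \ref{infimum of DH measure}), extract Laplace asymptotics, and identify the limit of the $A_X$-term with a convex combination $\sum_j c_j A_X(v_{E_j,\xi})$ over the components realizing $\sigma_*$. The treatment of $\bm{\check{\sigma}}$, the strict positivity of the limit via klt plus $v_{E_j,\xi}\neq v_{\mathrm{triv}}$ for $j\in J_*$, and the collapse to $A_X(v_{\mathcal{X}_o,\xi})$ in the irreducible case are all fine and match the paper.

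The gap is in your treatment of the term $E_{\exp}^{K_X}(\varphi_{;\rho})/\iint_{X^{\mathrm{NA}}}e^{-t}\mathcal{D}_{\varphi_{;\rho}}$. You argue that, after localization, the numerator is a signed sum of equivariant intersections ``of the same exponential order $e^{-\rho\sigma_*}$ as the denominator, keeping the ratio bounded.'' This is a non sequitur: the denominator is $\Theta(\rho^{-(k+1)}e^{-\rho\sigma_*})$ for some $k\ge 0$ (the vanishing order of the density of $\DHm_\varphi$ at $\sigma_*$), so matching the exponential rate does not control the ratio — a numerator of size $\Theta(e^{-\rho\sigma_*})$ would make the ratio grow like $\rho^{k+1}$, which is not even $o(\rho)$ when $k\ge 1$. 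Worse, the individual localization terms on a resolution $\tilde{\mathcal{X}}$ dominating $X\times\mathbb{P}^1$ genuinely are of order $e^{-\rho\sigma_*}$ times bounded quantities, so boundedness of the ratio requires an exact cancellation of all leading terms that your sketch does not address. (Already for a product configuration on $\mathbb{P}^1$ one finds $E_{\exp}^{K_X}(\varphi_{;\rho})=2e^{-\rho\sup\varphi}$, concentrated at the \emph{top} of the moment interval, while each naive localization contribution is of order $e^{-\rho\sigma_*}$; the cancellation is total.) The paper circumvents this entirely by using the tomographic formula $E_{\exp}^{M}(\varphi_{;\rho})=\rho^2\int_\mathbb{R}\frac{(M,0)\cdot(L,\varphi\wedge\sigma-\sigma)^{\cdot n}}{n!}e^{-\rho\sigma}d\sigma$ together with the a priori estimate $|(M,0)\cdot(L,\varphi\wedge\sigma-\sigma)^{\cdot n}|\le C(n+1)!\,d_1(\varphi\wedge\sigma-\sigma,0)=-C\,(L,\varphi\wedge\sigma-\sigma)^{\cdot n+1}$ coming from Theorem \ref{BJ estimate 2}; this bounds the integrand of the numerator pointwise in $\sigma$ by the integrand of the denominator, so the ratio is bounded by $(n+1)C$ uniformly in $\rho$ before any asymptotics are taken. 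You should replace your localization argument for this term by that estimate (or supply the cancellation mechanism explicitly); as written, this step does not go through.
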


\begin{proof}
Let $(\mathcal{X}, \mathcal{L}; \xi)$ be a polyhedral configuration with reduced fibre corresponding to the filtration $\mathcal{F}_\varphi$. 
We may normalize $\varphi$ so that $\inf \varphi = \inf \operatorname{\mathrm{supp}} \DHm_\varphi = \inf \DHm_{(\mathcal{X}, \mathcal{L}; \xi)} = 0$. 
We recall 
\[ \frac{\int e^{-\rho t} \mathcal{D}_\varphi }{\iint_{X^{\mathrm{NA}}} e^{-\rho t} \mathcal{D}_\varphi } = \sum_{E \subset \mathcal{X}_o} \mathrm{ord}_E \mathcal{X}_0 \frac{\int_\mathbb{R} e^{-\rho t} \DHm_{(E, \mathcal{L}|_E; \xi)}}{\int_\mathbb{R} e^{-\rho t} \DHm_{(\mathcal{X}, \mathcal{L}; \xi)}} .\delta_{\rho. v_{E, \xi}}. \]

Similarly as \cite[Theorem 5.10]{BHJ1} (cf. \cite[Theorem 5.7]{GGK}), $\DHm_{(E, \mathcal{L}|_E; \xi)}$ is either Dirac mass or absolutely continuous with respect to the Lebesgue measure which has piecewise polynomial density of degree at most $\dim X -1$. 
In particular, for small $\varepsilon > 0$, we can write $\DHm_{(\mathcal{X}, \mathcal{L}; \xi)}|_{[0, \varepsilon)} = f (t) t^k dt$ for some positive continuous function $f: [0, \varepsilon) \to (0, \infty)$ and $k \ge 0$. 
Since $\DHm_{(\mathcal{X}, \mathcal{L}; \xi)}|_{[0, \varepsilon)}$ is the sum of $\mathrm{ord}_E \mathcal{X}_o \cdot \DHm_{(E, \mathcal{L}|_E; \xi)}$, we have either $\DHm_{(E, \mathcal{L}|_E; \xi)}|_{[0, \varepsilon)} = 0$ or $\DHm_{(E, \mathcal{L}|_E; \xi)}|_{[0, \varepsilon)} = f_E (t) t^{k_E} dt$ for some $k_E \ge k$ and some positive continuous function $f_E$. 
By Proposition \ref{infimum of DH measure}, the latter case happens only when $\varphi (v_{E, \xi}) = 0$. 
Again since $\DHm_{(\mathcal{X}, \mathcal{L}; \xi)}|_{[0, \varepsilon)}$ is the sum of $\mathrm{ord}_E \mathcal{X}_o \cdot \DHm_{(E, \mathcal{L}|_E; \xi)}$, there actually exists one such $E$ with $k_E = k$. 

By easy calculus, we get 
\begin{gather*} 
\rho^{k+1} \int_\mathbb{R} e^{-\rho t} \DHm_{(\mathcal{X}, \mathcal{L}; \xi)} \to k! \cdot f (0) 
\\
\rho^{k_E+1} \int_\mathbb{R} e^{-\rho t} \DHm_{(E, \mathcal{L}|_E; \xi)} \to 
\begin{cases} 
0
\\
k_E ! \cdot f_E (0) 
\end{cases}
\end{gather*}
as $\rho \to \infty$. 
We put 
\[ c_{E, \xi} := \lim_{\rho \to \infty} \mathrm{ord}_E \mathcal{X}_o \cdot \frac{\int_\mathbb{R} e^{-\rho t} \DHm_{(E, \mathcal{L}|_E; \xi)}}{\int_\mathbb{R} e^{-\rho t} \DHm_{(\mathcal{X}, \mathcal{L}; \xi)}} = 
\begin{cases}
\mathrm{ord}_E \mathcal{X}_o \cdot f_E (0) /f (0) 
& k_E = k
\\
0 
& \text{otherwise}
\end{cases}. \]
We note $c_{E, \xi} > 0$ and $\sum_{E \subset \mathcal{X}_o} c_{E, \xi} = 1$. 

Now we compute 
\[ \rho^{-1} \frac{\int_{X^{\mathrm{NA}}} A_X \int e^{-t} \mathcal{D}_{\varphi_{;\rho}} }{\iint_{X^{\mathrm{NA}}} e^{- t} \mathcal{D}_{\varphi_{;\rho}} } = \frac{\int_{X^{\mathrm{NA}}} A_X \int e^{-\rho t} \mathcal{D}_\varphi}{\iint_{X^{\mathrm{NA}}} e^{-\rho t} \mathcal{D}_\varphi} \to \sum_{E \subset \mathcal{X}_o} c_{E, \xi} A_X (v_{E, \xi}) > 0. \]
Similarly, 
\[ \rho^{-1} \frac{\int_{X^{\mathrm{NA}}} \varphi_{;\rho} \int e^{-t} \mathcal{D}_{\varphi_{;\rho}}}{\iint_{X^{\mathrm{NA}}} e^{-t} \mathcal{D}_{\varphi_{;\rho}} } = \frac{\int_{X^{\mathrm{NA}}} \varphi \int e^{-\rho t} \mathcal{D}_\varphi }{\iint_{X^{\mathrm{NA}}} e^{-\rho t} \mathcal{D}_\varphi} \to \sum_{E \subset \mathcal{X}_o} c_{E, \xi} \varphi (v_{E, \xi}) = 0 \]
as $c_{E, \xi} = 0$ unless $\varphi (v_{E, \xi}) = 0$. 

As for $E_{\exp}^M$, we firstly compute 
\begin{align*} 
E_{\exp}^M (\varphi_{; \rho}) 
&= \int_\mathbb{R} \frac{(M, 0) \cdot (L, \varphi_{;\rho} \wedge \tau - \tau)^{\cdot n}}{n!} e^{-\tau} d\tau 
\\
&= \int_\mathbb{R} \frac{(M, 0) \cdot (L, (\varphi \wedge \rho^{-1} \tau - \rho^{-1} \tau)_{;\rho})^{\cdot n}}{n!} e^{-\tau} d\tau 
\\
&= \rho \int_\mathbb{R} \frac{(M, 0) \cdot (L, \varphi \wedge \rho^{-1} \tau - \rho^{-1} \tau)^{\cdot n}}{n!} e^{-\tau} d\tau 
\\
&= \rho^2 \int_\mathbb{R} \frac{(M, 0) \cdot (L, \varphi \wedge \sigma - \sigma)^{\cdot n}}{n!} e^{- \rho \sigma} d\sigma. 
\end{align*}
Similarly, 
\begin{align*} 
\iint_{X^{\mathrm{NA}}} e^{-t} \mathcal{D}_{\varphi_{; \rho}} 
&= -\int_\mathbb{R} \frac{(L, \varphi_{;\rho} \wedge \tau - \tau)^{\cdot n+1}}{(n+1)!} e^{-\tau} d\tau 
\\
&= - \rho^2 \int_\mathbb{R} \frac{(L, \varphi \wedge \sigma - \sigma)^{\cdot n+1}}{(n+1)!} e^{- \rho \sigma} d\sigma. 
\end{align*}
As in the proof of Theorem \ref{EexpM continuity}, we have 
\begin{align*} 
|(M, 0) \cdot (L, \varphi \wedge \tau - \tau)^{\cdot n}| 
&\le C (n+1)! d_1 (\varphi \wedge \tau -\tau, 0) 
\\
&= C (n+1)! \int_\mathbb{R} (-t) \DHm_{\varphi \wedge \tau - \tau} = - C (L, \varphi \wedge \tau -\tau)^{\cdot n+1} 
\end{align*}
for a constant $C$ independent of $\varphi, \tau$. 
It follows that
\begin{align*} 
\Big{|} \frac{E_{\exp}^M (\varphi_{;\rho}) }{\iint_{X^{\mathrm{NA}}} e^{-t} \mathcal{D}_{\varphi_{; \rho}} } \Big{|} 
&= (n+1) \frac{\int_\mathbb{R} |(M, 0) \cdot (L, \varphi \wedge \sigma - \sigma)^{\cdot n}| e^{-\rho \sigma} d\sigma }{- \int_\mathbb{R} (L, \varphi \wedge \sigma - \sigma)^{\cdot n+1} e^{-\rho \sigma} d\sigma} 
\\
&\le (n+1) C,
\end{align*}
so that we have 
\[ \rho^{-1} \frac{E_{\exp}^M (\varphi_{;\rho}) }{\iint_{X^{\mathrm{NA}}} e^{-t} \mathcal{D}_{\varphi_{; \rho}} } \to 0. \]

Finally, we have 
\[ \rho^{-1} \log \iint_{X^{\mathrm{NA}}} e^{-t} \mathcal{D}_{\varphi_{;\rho}} = O (\rho^{-1} \log \rho) \to 0 \]
\end{proof}

Now we can show the following, which completes the proof of Theorem \ref{forall exists}. 

\begin{cor}
\label{properness of mu-entropy}
When $X$ is klt, the functional $\cmu^\lambda(X, L; \bullet): \mathfrak{t} \to \mathbb{R}$ is proper for any torus action $(X, L) \circlearrowleft T$. 
In particular, it admits a maximizer. 
\end{cor}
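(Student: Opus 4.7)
The approach is to reduce properness of $\cmu^\lambda(X,L;\bullet)$ on $\mathfrak{t}$ (implicitly modulo the kernel of an effective action) to a uniform lower bound for the log discrepancy along the unit sphere $S \subset \mathfrak{t}$, using the scaling asymptotic of the previous proposition. First I would invoke Proposition \ref{comparison of NAmu and chmu} to identify $\cmu^\lambda(X,L;\xi)$ with $\NAmu^\lambda(\varphi_\xi)$ for the Fubini--Study metric associated to the proper vector $\xi$. Combined with the scaling $\varphi_{\rho\eta} = (\varphi_\eta)_{;\rho}$ and the fact that the product polyhedral configuration $(X_\sigma, L_\sigma)$ at $\eta$ has irreducible reduced central fibre $X$ with $v_{\mathcal{X}_o, \eta} = v_\eta$, Proposition \ref{large limit of mu-entropy} then yields the pointwise limit
\[ \lim_{\rho\to\infty} \rho^{-1}\cmu^\lambda(X,L;\rho\eta) \;=\; -2\pi A_X(v_\eta) \]
for each $\eta \in \mathfrak{t}$.

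The next step is to show $A_X(v_\eta) \ge c > 0$ uniformly for $\eta \in S$. By Corollary \ref{continuity of log discrepancy for product configuration}, $\eta \mapsto A_X(v_\eta)$ is continuous on $\mathfrak{t}$, hence on compact $S$. For $\eta \ne 0$ acting effectively, the valuation $v_\eta$ is non-trivial: the $T$-weights in $\mathbb{C}(X)$ span $M$, so one finds an eigenfunction $f_\mu$ with $\langle \mu, \eta\rangle \ne 0$, forcing $v_\eta(f_\mu) \ne 0$. Since $X$ is klt and $v_\eta$ is quasi-monomial, $A_X(v_\eta) > 0$. Compactness of $S$ then gives $c := \min_{\eta \in S} A_X(v_\eta) > 0$.

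To conclude properness I must upgrade the pointwise limit above to uniform convergence on $S$. Tracing through the proof of Proposition \ref{large limit of mu-entropy}, the crucial asymptotic
\[ \rho^{k+1} \int_{\mathbb{R}} e^{-\rho t}\,\DHm_{(X_\sigma, L_\sigma;\eta)} \;\longrightarrow\; k!\,f_\eta(0) \]
is a Laplace-method estimate governed by the piecewise polynomial density of $\DHm_\eta$ near its infimum. Since the equivariant intersection polynomials defining $\DHm_\eta$ depend polynomially on $\eta \in \mathfrak{t}$ and the irreducible decomposition of the central fibre is trivial for product configurations, one can carry out a uniform Laplace estimate on $S$. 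The analogous uniform estimates hold for the remaining pieces of $\NAmu^\lambda(\varphi_{\eta;\rho})/\rho$ analyzed in that proof, yielding $R>0$ with $\rho^{-1}\cmu^\lambda(X,L;\rho\eta) \le -\pi c$ for all $\rho \ge R$ and $\eta \in S$. The existence of a maximizer then follows immediately from the real-analyticity of $\cmu^\lambda$ and the compactness of the super-level sets.

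The main obstacle is the uniform upgrade of the Laplace asymptotic. An attractive alternative is a contradiction argument: assuming a sequence $\xi_i = \rho_i \eta_i$ with $\rho_i \to \infty$, $\eta_i \to \eta_\infty \in S$ and $\cmu^\lambda(X,L;\xi_i) \ge -C$, combine joint continuity of $(\rho, \eta) \mapsto \cmu^\lambda(X,L;\rho\eta)$ for $\rho \le R$ with an $\eta$-equicontinuous decay estimate on the moment measure $\int \chi\,\mathcal{D}_{\varphi_\eta}$ to produce a contradiction with $\rho^{-1}\cmu^\lambda(X,L;\rho\eta_\infty) \to -2\pi A_X(v_{\eta_\infty}) \le -2\pi c$. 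Either route amounts to checking that the error terms in the Laplace expansion are controlled uniformly in the continuously-varying family $\{\DHm_\eta\}_{\eta \in S}$, in the spirit of Inoue's volume minimization argument from \cite{Ino2} but now founded on the non-archimedean pluripotential framework of section \ref{section: NAmu} rather than on the smoothness of $X$.
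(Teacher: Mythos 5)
Your overall skeleton is the paper's: identify $\cmu^\lambda(X,L;\xi)$ with $\NAmu^\lambda(\varphi_\xi)$ via Proposition \ref{comparison of NAmu and chmu}, get $\inf_{\eta\in S}A_X(v_\eta)>0$ from klt, continuity (Corollary \ref{continuity of log discrepancy for product configuration}) and compactness of the sphere, and conclude linear decay of $\cmu^\lambda$ along rays. But the step you yourself flag as the main obstacle --- upgrading the pointwise limit $\rho^{-1}\cmu^\lambda(X,L;\rho\eta)\to-2\pi A_X(v_\eta)$ to a uniform bound on $S$ via a ``uniform Laplace estimate'' for the family $\{\DHm_{\varphi_\eta}\}_{\eta\in S}$ --- is both unclosed and aimed at the wrong target. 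The local behaviour $f_\eta(t)t^{k_\eta}$ of $\DHm_{\varphi_\eta}$ at its infimum is \emph{not} continuous in $\eta$: the exponent $k_\eta$ and the coefficient $f_\eta(0)$ jump as $\eta$ crosses walls where the minimizing face of the moment body changes dimension, so a uniform Laplace expansion over $S$ does not exist in the naive sense, and the contradiction/equicontinuity variant inherits the same problem.

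The point you are missing is that for a product configuration the central fibre is $X$ itself, irreducible, so the moment measure $\int e^{-t}\mathcal{D}_{\varphi_{\rho\eta}}$ is a single weighted Dirac mass at $v_{\rho\eta}$. Consequently the entropy term is \emph{exactly}
\begin{equation*}
-2\pi\,\frac{\int_{X^{\mathrm{NA}}}A_X\int e^{-t}\mathcal{D}_{\varphi_{\rho\eta}}}{\iint_{X^{\mathrm{NA}}}e^{-t}\mathcal{D}_{\varphi_{\rho\eta}}}=-2\pi A_X(v_{\rho\eta})=-2\pi\rho\,A_X(v_\eta)
\end{equation*}
for every $\rho$, with no asymptotic analysis at all; Proposition \ref{large limit of mu-entropy} is not needed for the dominant term. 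What remains are the subleading terms, and these are handled by soft arguments rather than Laplace asymptotics: the ratio $E_{\exp}^{2\pi K_X+\lambda L}(\varphi_\xi)/\iint_{X^{\mathrm{NA}}}e^{-t}\mathcal{D}_{\varphi_\xi}$ is bounded on all of $\mathfrak{t}$ by the estimate $|(M,0)\cdot(L,\varphi\wedge\tau-\tau)^{\cdot n}|\le -C(L,\varphi\wedge\tau-\tau)^{\cdot n+1}$ from the proof of Proposition \ref{large limit of mu-entropy}, so dividing by $\rho$ kills it uniformly; and (after normalizing $\inf\operatorname{supp}\DHm_{\varphi_\eta}=0$, which is harmless since $\NAmu^\lambda$ is translation invariant) the term $g_\rho(\eta)=\rho^{-1}\log\bigl(\int_{\mathbb{R}}e^{-\rho t}\DHm_{\varphi_\eta}/\int_{\mathbb{R}}\DHm_{\varphi_\eta}\bigr)$ is monotone in $\rho$ by H\"older's inequality and converges pointwise to the continuous limit $0$, hence uniformly on $S$ by Dini's lemma. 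If you replace your uniform-Laplace step by these two observations, your argument closes and coincides with the paper's proof.
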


\begin{proof}
By the computation in the above proposition, we have 
\[ \NAmu^\lambda (\varphi_\xi) = -2\pi A_X (v_\xi) - \frac{E^{2\pi K_X + \lambda L}_{\exp} (\varphi_\xi)}{\iint_{X^{\mathrm{NA}}} e^{-t} \mathcal{D}_{\varphi_\xi}} - \lambda \log (e^L) - \lambda \log \frac{\int_\mathbb{R} e^{-t} \DHm_{\varphi_\xi}}{\int_\mathbb{R} \DHm_{\varphi_\xi}}. \]
For a norm $\| \|$ on $\mathfrak{t}$, consider the continuous function $f_\rho (\xi) := \rho^{-1} \NAmu^\lambda (\varphi_{\rho. \xi})$ on the unit sphere $S$. 
By H\"older's inequality, 
\[ g_\rho (\xi) := \rho^{-1} \log \frac{\int_\mathbb{R} e^{-t} \DHm_{\varphi_{\rho. \xi} }}{\int_\mathbb{R} \DHm_{\varphi_{\rho. \xi} }} = \rho^{-1} \log \frac{\int_\mathbb{R} e^{-\rho t} \DHm_{\varphi_{\xi} }}{\int_\mathbb{R} \DHm_{\varphi_{\xi} }} \]
is monotonically increasing, so that $g_\rho$ uniformly converges to $0$ on $S$ by Dini's lemma. 
On the other hand, $\frac{E^{2\pi K_X + \lambda L}_{\exp} (\varphi_\xi)}{\iint_{X^{\mathrm{NA}}} e^{-t} \mathcal{D}_{\varphi_\xi}} + \lambda \log (e^L)$ is bounded on $\mathfrak{t}$, so that 
\[ h_\rho (\xi) := \rho^{-1} ( \frac{E^{2\pi K_X + \lambda L}_{\exp} (\varphi_\xi)}{\iint_{X^{\mathrm{NA}}} e^{-t} \mathcal{D}_{\varphi_\xi}} + \lambda \log (e^L)) \]
uniformly converges to $0$ on $S$. 

Since $X$ is klt, $\inf_{\xi \in S} A_X (\xi)$ is positive. 
Thus for any $0 < \delta < 2\pi \inf_{\xi \in S} A_X (\xi)$, we can take large $\rho_\delta$ so that 
\[ f_\rho \le - 2\pi \inf_{\xi \in S} A_X (\xi) - h_\rho - \lambda g_\rho \le - \delta \]
for every $\rho \le \rho_\delta$. 
It follows that $\NAmu^\lambda (\varphi_\xi) \le -\delta \| \xi \|$ for $\rho \ge \rho_\delta$. 
Thus any unbounded sequence $\{ \xi_i \} \subset \mathfrak{t}$ has unbounded $\NAmu^\lambda$, which shows the properness. 
\end{proof}

Question \ref{continuity of entropy along polyhedral configuration} would be proved for general polyhedral configuration in coming \cite{BJ5}. 
At the moment, let us assume $\NAmu^\lambda (\varphi) = \cmu^\lambda (\mathcal{F}_\varphi)$ in the following reformulation to clarify our status. 

\begin{prop}
If $\NAmu^\lambda$ is maximized by $\varphi \in \pcH (X, L)$ and $\NAmu^\lambda (\varphi) = \cmu^\lambda (\mathcal{F}_\varphi)$, then the central fibre $(\mathcal{X}_o (\varphi), \mathcal{L}_o (\varphi)) = \Proj \mathcal{R}_o (\mathcal{F}_\varphi)$ is $\mu^\lambda$K-semistable with respect to the proper vector $\xi^\varphi_o$ on $(\mathcal{X}_o (\varphi), \mathcal{L}_o (\varphi))$ induced by the filtration $\mathcal{F}_\varphi$. 
\end{prop}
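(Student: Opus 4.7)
The plan is to recognize the proposition as a reformulation of Theorem~\ref{mu-entropy maximization for polyhedral configuration} once $\varphi \in \pcH(X,L)$ is realized geometrically as a polyhedral configuration whose characteristic $\mu$-entropy equals $\NAmu^\lambda(\varphi)$. The hypothesis $\NAmu^\lambda(\varphi) = \cmu^\lambda(\mathcal{F}_\varphi)$ is precisely what lets us bridge from the upper semi-continuous $\NAmu^\lambda$-maximality (which we are given) to the $\cmu^\lambda$-maximality of a polyhedral configuration (which is what Theorem~\ref{mu-entropy maximization for polyhedral configuration} requires as input).

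First, I would set up the polyhedral configuration representing $\varphi$. Since $\varphi \in \pcH(X,L)$, Theorem~\ref{finite generation of Fphi} guarantees that $\mathcal{F}_\varphi$ is finitely generated, and Proposition~\ref{polyhedral configuration and finite generation of filtration} furnishes a polyhedral configuration $(\mathcal{X}/B_\sigma, \mathcal{L})$ together with an interior vector $\xi \in \sigma^\circ$ satisfying $\mathcal{F}_{(\mathcal{X},\mathcal{L};\xi)} = \mathcal{F}_\varphi$. Proposition~\ref{Krull envelope has reduced central fibre} ensures that $\mathcal{R}_o(\mathcal{F}_\varphi)$ is reduced, so via Proposition~\ref{central fibre via filtration} the central fibre of this polyhedral configuration is canonically identified with $(\mathcal{X}_o(\varphi), \mathcal{L}_o(\varphi)) = \Proj \mathcal{R}_o(\mathcal{F}_\varphi)$, with the proper vector $\xi^\varphi_o$ on the central fibre being exactly the one induced by $\xi$ under this identification.

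Next, I would verify that this polyhedral configuration maximizes $\cmu^\lambda$ among all polyhedral configurations. For the chosen configuration, Proposition~\ref{chmu is an invariant for filtration} combined with the hypothesis yields
\[ \cmu^\lambda(\mathcal{X}, \mathcal{L}; \xi) = \cmu^\lambda(\mathcal{F}_\varphi) = \NAmu^\lambda(\varphi). \]
For any other polyhedral configuration $(\mathcal{X}'/B_{\sigma'}, \mathcal{L}'; \xi')$ with $\xi' \in (\sigma')^\circ$, let $\varphi' := \varphi_{(\mathcal{X}',\mathcal{L}';\xi')} \in \pcH(X,L)$. The maximality of $\NAmu^\lambda$ at $\varphi$ within $\pcH(X,L)$ gives $\NAmu^\lambda(\varphi') \le \NAmu^\lambda(\varphi)$. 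To close the chain I need $\cmu^\lambda(\mathcal{X}', \mathcal{L}'; \xi') \le \NAmu^\lambda(\varphi')$. For rational $\xi' \in \sigma' \cap N_\mathbb{Q}$ this reduces, after scaling, to the test-configuration inequality $\NAmu^\lambda \ge \cmu^\lambda$ from Section~\ref{non-archimedean mu-entropy of test configuration}, and in fact becomes equality once one passes to a normalized base change with reduced central fibre (using $\NAmu^\lambda(\mathcal{X}_d, \mathcal{L}_d; \rho) = \NAmu^\lambda(\mathcal{X}, \mathcal{L}; d\rho)$, the monotonicity of $\cmu^\lambda$ under normalized base change, and Proposition~\ref{reduced central fibre implies homogeneity}). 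The case of general $\xi' \in \sigma'$ would then follow by approximation, using the real-analyticity of $\cmu^\lambda(\mathcal{X}', \mathcal{L}'; \bullet)$ on $\sigma'$ and the corresponding continuity of $\NAmu^\lambda(\varphi_{(\mathcal{X}',\mathcal{L}';\bullet)})$ on $(\sigma')^\circ$ where the central-fibre stratum of the polyhedral configuration is fixed. Combining the three inequalities gives $\cmu^\lambda(\mathcal{X}', \mathcal{L}'; \xi') \le \cmu^\lambda(\mathcal{X}, \mathcal{L}; \xi)$.

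With the maximality among polyhedral configurations established, I would directly apply Theorem~\ref{mu-entropy maximization for polyhedral configuration} to $(\mathcal{X}/B_\sigma, \mathcal{L}; \xi)$, which concludes that the central fibre $(\mathcal{X}_o(\varphi), \mathcal{L}_o(\varphi))$ is $\check{\mu}^\lambda_{\xi^\varphi_o}$K-semistable with respect to every $T$-equivariant test configuration, where $T = \overline{\exp \mathbb{R} \xi^\varphi_o}_\mathbb{C}$. The main obstacle is the second step, specifically extending $\cmu^\lambda \le \NAmu^\lambda$ from test configurations to polyhedral configurations with irrational interior vectors: one must carefully control the contribution of the non-reduced part of the central fibre across the continuous family of filtrations parametrised by $\sigma'$. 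The natural remedy is to invoke the same filtration-theoretic comparison used in the proof of Proposition~\ref{reduced central fibre implies homogeneity} and to argue by continuity on each face of $\sigma'$, which should suffice because the central-fibre combinatorics of $(\mathcal{X}', \mathcal{L}'; \xi')$ are locally constant in $\xi'$ within each open cone.
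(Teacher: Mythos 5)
Your strategy is the same as the paper's: the proposition is meant as a direct rephrasing of Theorem \ref{mu-entropy maximization for polyhedral configuration} (equivalently Theorem \ref{NAmu maximizer}), obtained by converting the $\NAmu^\lambda$-maximality on $\pcH(X,L)$ into $\cmu^\lambda$-maximality among polyhedral configurations via the hypothesis $\NAmu^\lambda(\varphi)=\cmu^\lambda(\mathcal{F}_\varphi)$, and your first and third steps carry this out correctly.

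The one step that needs adjusting is your treatment of irrational $\xi'\in\sigma'$. You invoke the \emph{continuity} of $\xi'\mapsto \NAmu^\lambda(\varphi_{(\mathcal{X}',\mathcal{L}';\xi')})$ on $(\sigma')^\circ$; in the paper this is exactly Question \ref{continuity of entropy along polyhedral configuration} (continuity of $A_X(v_{E,\xi})$ along a polyhedral configuration), which is only settled for product configurations (Proposition \ref{comparison of NAmu and chmu}) and otherwise deferred to \cite{BJ5}, so as written your remedy rests on an unestablished fact. Fortunately the inequality you need, $\cmu^\lambda(\mathcal{X}',\mathcal{L}';\xi')\le \NAmu^\lambda(\varphi_{\xi'})$, points in the direction compatible with mere upper semi-continuity, which is available. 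Take rational $\eta_i\to\xi'$ in $\sigma'$; then $\varphi_{\eta_i}\to\varphi_{\xi'}$ uniformly (as in the proof of Proposition \ref{reduced central fibre implies homogeneity}), hence in $d_{\exp}$, so $\NAmu^\lambda(\varphi_{\xi'})\ge \varlimsup_i \NAmu^\lambda(\varphi_{\eta_i})=\varlimsup_i \cmu^\lambda(\mathcal{F}_{\varphi_{\eta_i}})\ge \varlimsup_i \cmu^\lambda(\mathcal{X}',\mathcal{L}';\eta_i)=\cmu^\lambda(\mathcal{X}',\mathcal{L}';\xi')$, using for the rational approximants the test-configuration identity and the normalized-base-change monotonicity you already cite, and for the last equality the real-analyticity of $\cmu^\lambda(\mathcal{X}',\mathcal{L}';\bullet)$. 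This is precisely how the paper derives the unconditional bound $\NAmu^\lambda(\varphi')\ge\cmu^\lambda(\mathcal{F}_{\varphi'})$ for all $\varphi'\in\pcH(X,L)$ in section \ref{maximizing non-archimedean mu-entropy}. With this substitution your chain of inequalities closes and the application of Theorem \ref{mu-entropy maximization for polyhedral configuration} goes through as you describe.
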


\subsubsection{Odaka's theorem in $\mu$-entropy formalism}
\label{Odaka's theorem in mu-entropy formalism}

Here we observe the non-archimedean $\mu$-entropy maximization for Calabi--Yau variety and canonically polarized variety. 
The results can be regarded as a reformulation of Odaka's theorem \cite{Oda1}, which is a K-stability/non-archimedean counterpart of Aubin--Calabi--Yau theorem. 
Note we assume $X$ is klt to ensure the lsc extension of the log discrepancy $A_X$. 
The proofs work also for log canonical varieties as soon as $A_X$ on $X^{\mathrm{NA}}$ makes sense. 

\begin{prop}
If the trivial metric $\varphi_{\mathrm{triv}} = 0$ maximizes $\NAmu^\lambda$ on $\E^{\exp} (X, L)$, then $\NAmu^{\lambda'}$ is maximized by $\varphi_{\mathrm{triv}}$ for $\lambda' \le \lambda$. 
\end{prop}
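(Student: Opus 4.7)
The plan is to combine the hypothesis with the Jensen-type lower bound on $\bm{\check{\sigma}}$ recorded in Remark \ref{sigma bounded}, which states $\bm{\check{\sigma}}(\varphi) \ge \bm{\check{\sigma}}(\varphi_{\mathrm{triv}}) = n - \log(e^L)$ for every $\varphi \in \E^{\exp}(X,L)$. The point is that $\varphi_{\mathrm{triv}}$ already minimizes $\bm{\check{\sigma}}$, so decreasing the weight $\lambda$ only helps the trivial metric in the comparison.

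More precisely, I would start from the decomposition $\NAmu^{\lambda'} = \NAmu + \lambda' \bm{\check{\sigma}} = \NAmu^\lambda + (\lambda' - \lambda)\bm{\check{\sigma}}$, which is immediate from the definitions in section \ref{Non-archimedean mu-entropy}. Then for any $\varphi \in \E^{\exp}(X,L)$, I would compute
\begin{align*}
\NAmu^{\lambda'}(\varphi) - \NAmu^{\lambda'}(\varphi_{\mathrm{triv}})
&= \bigl(\NAmu^{\lambda}(\varphi) - \NAmu^{\lambda}(\varphi_{\mathrm{triv}})\bigr) + (\lambda'-\lambda)\bigl(\bm{\check{\sigma}}(\varphi) - \bm{\check{\sigma}}(\varphi_{\mathrm{triv}})\bigr).
\end{align*}
The first bracket is $\le 0$ by the hypothesis that $\varphi_{\mathrm{triv}}$ maximizes $\NAmu^\lambda$, and the second bracket has the factor $\bm{\check{\sigma}}(\varphi) - \bm{\check{\sigma}}(\varphi_{\mathrm{triv}}) \ge 0$ by Remark \ref{sigma bounded}. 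Since $\lambda' - \lambda \le 0$ by assumption, the whole right-hand side is $\le 0$, which is precisely the desired inequality $\NAmu^{\lambda'}(\varphi) \le \NAmu^{\lambda'}(\varphi_{\mathrm{triv}})$.

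There is no real obstacle here: the argument is a one-line consequence of the convex/affine dependence of $\NAmu^\lambda$ on the parameter $\lambda$ together with the fact that $\varphi_{\mathrm{triv}}$ is simultaneously the conjectured maximizer of $\NAmu^\lambda$ and the known minimizer of $\bm{\check{\sigma}}$. The only thing worth checking is that the decomposition and Remark \ref{sigma bounded} apply on all of $\E^{\exp}(X,L)$, which is clear because $\bm{\check{\sigma}}$ was shown to be continuous and bounded below on that space in Proposition \ref{sigma continuity} and Remark \ref{sigma bounded}.
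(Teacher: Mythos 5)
Your argument is correct and is essentially identical to the paper's own proof: both use the decomposition $\NAmu^{\lambda'} = \NAmu^{\lambda} - (\lambda - \lambda')\bm{\check{\sigma}}$ together with the lower bound $\bm{\check{\sigma}}(\varphi) \ge \bm{\check{\sigma}}(\varphi_{\mathrm{triv}})$ from Remark \ref{sigma bounded}. No changes needed.
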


\begin{proof}
Recall $\bm{\check{\sigma}} (\varphi) \ge \bm{\check{\sigma}} (\varphi_{\mathrm{triv}})$. 
The claim follows by 
\[ \NAmu^{\lambda'} (\varphi) = \NAmu^\lambda (\varphi) - (\lambda - \lambda') \bm{\check{\sigma}} (\varphi) \le \NAmu^\lambda (\varphi_{\mathrm{triv}}) - (\lambda - \lambda') \bm{\check{\sigma}} (\varphi_{\mathrm{triv}}). \]
\end{proof}

\begin{cor}
Suppose $K_X \equiv_{\mathbb{Q}} 0$ and $\lambda \le 0$, then the trivial metric $\varphi_{\mathrm{triv}} = 0$ maximizes $\NAmu^\lambda$ on $\E^{\exp} (X, L)$. 
\end{cor}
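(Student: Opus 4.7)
The plan is to reduce to $\lambda=0$ by the preceding proposition, compute $\NAmu(\varphi_{\mathrm{triv}})$ explicitly, and then show the numerator of $\NAmu(\varphi)$ is non-negative for every $\varphi \in \E^{\exp}(X,L)$. By the preceding proposition, it suffices to prove that $\varphi_{\mathrm{triv}}$ maximizes $\NAmu^0 = \NAmu$ on $\E^{\exp}(X,L)$.

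First I would compute $\NAmu(\varphi_{\mathrm{triv}}) = 0$ directly. The trivial metric arises from the product configuration $X \times \mathbb{A}^1$ with trivial $\mathbb{G}_m$-action on the central fibre, so $v_X = v_{\mathrm{triv}}$ and $\DHm_{(X,L)} = (e^L)\delta_0$, giving $\int e^{-t}\mathcal{D}_{\varphi_{\mathrm{triv}}} = (e^L)\delta_{v_{\mathrm{triv}}}$. Since $A_X(v_{\mathrm{triv}}) = 0$ and $E_{\exp}^{K_X}(\varphi_{\mathrm{triv}}) = -(K_X \cdot e^L) = 0$ (using $K_X \equiv_{\mathbb{Q}} 0$), the numerator vanishes identically at $\varphi_{\mathrm{triv}}$.

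Second, the key step: show that $E_{\exp}^{K_X}(\varphi) = 0$ on all of $\E^{\exp}(X,L)$, by arguing that $E_{\exp}^M$ depends only on the numerical class of $M$ and vanishes when $M \equiv_{\mathbb{Q}} 0$. On $\nH(X,L)$ the formula
\[
E_{\exp}^M(\varphi_{(\mathcal{X},\mathcal{L};\rho)}) = -\bigl((M\cdot e^L) - \rho(\tilde{M}_{\mathbb{P}^1,\mathbb{G}_m} \cdot e^{\tilde{\mathcal{L}}_{\mathbb{G}_m}};\rho)\bigr)
\]
makes this transparent: the first term vanishes by numerical triviality, while $\tilde{M}_{\mathbb{P}^1}$ is the pullback of $M$ along the $\mathbb{G}_m$-equivariant map $p_X: \tilde{\mathcal{X}} \to X$ (with trivial action on $X$), so its equivariant first Chern class agrees with its ordinary first Chern class; numerical triviality of this class then annihilates the equivariant intersection $(\tilde{M}_{\mathbb{P}^1,\mathbb{G}_m} \cdot e^{\tilde{\mathcal{L}}_{\mathbb{G}_m}};\rho)$ when expanded via finite-dimensional approximations $\tilde{\mathcal{X}} \times_{\mathbb{G}_m} E_l\mathbb{G}_m$. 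The vanishing extends from $\nH(X,L)$ to all of $\E^{\exp}(X,L)$ by the density statement in Theorem \ref{dexp metric}(2) combined with the continuity of $E_{\exp}^{K_X}$ from Theorem \ref{EexpM continuity}.

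With these ingredients,
\[
\NAmu(\varphi) = -2\pi \frac{\int_{X^{\mathrm{NA}}} A_X \int e^{-t}\mathcal{D}_\varphi}{\iint_{X^{\mathrm{NA}}} e^{-t}\mathcal{D}_\varphi}.
\]
Since $X$ has klt singularities, the lsc extension of $A_X$ is non-negative on $X^{\mathrm{NA}}$ (positive on $X^{\mathrm{val}}\setminus\{v_{\mathrm{triv}}\}$ by the klt hypothesis, $+\infty$ off $X^{\mathrm{val}}$, and zero at $v_{\mathrm{triv}}$), while $\int e^{-t}\mathcal{D}_\varphi$ is a non-negative Radon measure of strictly positive total mass $\int_\mathbb{R} e^{-t}\DHm_\varphi > 0$. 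Hence the numerator is non-negative and $\NAmu(\varphi) \le 0 = \NAmu(\varphi_{\mathrm{triv}})$, as desired. The main obstacle is the numerical invariance of $E_{\exp}^M$ in step two; although conceptually clear, verifying it carefully requires unpacking the equivariant intersection via the Borel construction to check that the trivial equivariant structure on $p_X^*M$ produces no additional polynomial contribution in $\rho$ beyond the ordinary Chern class.
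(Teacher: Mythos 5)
Your proof is correct and follows essentially the same route as the paper: drop the $E_{\exp}^{K_X}$ term using $K_X \equiv_{\mathbb{Q}} 0$, use $A_X \ge 0$ (from the klt/log canonical hypothesis) to get $\NAmu(\varphi) \le 0 = \NAmu(\varphi_{\mathrm{triv}})$, and invoke the preceding proposition to pass from $\lambda = 0$ to $\lambda \le 0$. The only difference is that you spell out the vanishing of $E_{\exp}^M$ for numerically trivial $M$ (via the equivariant intersection formula, density of $\nH(X,L)$, and continuity of $E_{\exp}^M$), which the paper leaves implicit; that elaboration is sound.
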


\begin{proof}
Since $X$ is log canonical, we compute 
\[ \NAmu (\varphi) = - 2\pi \frac{\int_{X^{\mathrm{NA}}} A_X \int e^{-t} \mathcal{D}_\varphi}{\iint_{X^{\mathrm{NA}}} e^{-t} \mathcal{D}_\varphi} \le 0 = \NAmu (\varphi_{\mathrm{triv}}), \]
so $\varphi_{\mathrm{triv}}$ maximizes $\NAmu$. 
By the above proposition, $\varphi_{\mathrm{triv}}$ maximizes $\NAmu^\lambda$ for $\lambda \le 0$. 
\end{proof}

\begin{lem}
For $\varphi \in \E^1 (X, L)$, we have 
\[ \int_\mathbb{R} t \DHm_\varphi - \int_{X^{\mathrm{NA}}} \varphi \mathrm{MA} (\varphi) \ge 0. \]
\end{lem}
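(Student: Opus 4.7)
The plan is to prove the inequality first for Fubini--Study metrics $\varphi = \varphi_{(\mathcal{X},\mathcal{L})} \in \nH(X,L)$ by an intersection-theoretic interpolation argument, then pass to $\varphi \in \E^1(X,L)$ by continuity. For the reduction, fix a decreasing regularization $\{\varphi_i\} \subset \nH(X,L)$ with $\varphi_i \searrow \varphi$; this converges strongly in $d_1$. Both sides of the asserted inequality are continuous under such convergence: on the left, $\int_\mathbb{R} t\,\DHm_{\varphi_i} = E(\varphi_i) \to E(\varphi) = \int_\mathbb{R} t\,\DHm_\varphi$, while on the right, $\int_{X^{\mathrm{NA}}} \varphi_i \mathrm{MA}(\varphi_i) \to \int_{X^{\mathrm{NA}}} \varphi \mathrm{MA}(\varphi)$ by Proposition \ref{double limit}. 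So it suffices to treat the case $\varphi = \varphi_{(\mathcal{X},\mathcal{L})}$.

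Assume $\varphi = \varphi_{(\mathcal{X},\mathcal{L})}$ for a normal test configuration with $\mathbb{P}^1$-compactification $(\bar{\mathcal{X}}, \bar{\mathcal{L}})$, chosen (after a $\mathbb{G}_m$-equivariant log resolution) so that the canonical map $p_X: \bar{\mathcal{X}} \to X$ is a morphism. Set $\tilde{L} := p_X^*L$ and $D := \bar{\mathcal{L}} - \tilde{L}$; since $\bar{\mathcal{L}}$ and $\tilde{L}$ agree on general fibres of $\pi: \bar{\mathcal{X}} \to \mathbb{P}^1$, $D$ is a $\mathbb{Q}$-Cartier divisor supported on the central fibre, $D = \sum_E \mathrm{ord}_E(D) \cdot E$. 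Using Proposition \ref{explicit formula for sigma} to identify $\varphi(v_E) = \mathrm{ord}_E(D)/\mathrm{ord}_E(\mathcal{X}_0)$, one computes
\[
E(\varphi) = \frac{(\bar{\mathcal{L}}^{\cdot n+1})}{(n+1)!}, \qquad \int_{X^{\mathrm{NA}}} \varphi \mathrm{MA}(\varphi) = \frac{(D \cdot \bar{\mathcal{L}}^{\cdot n})}{n!}.
\]

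Introduce the polynomial interpolation $f: [0,1] \to \mathbb{R}$ given by
\[
f(t) := \frac{((1-t)\tilde{L} + t\bar{\mathcal{L}})^{\cdot n+1}}{(n+1)!} = \frac{(\tilde{L} + tD)^{\cdot n+1}}{(n+1)!}.
\]
Since $\tilde{L} = p_X^*L$ and $\dim X = n$ force $(\tilde{L}^{\cdot n+1}) = 0$, we have $f(0) = 0$, while $f(1) = E(\varphi)$ and $f'(1) = (D \cdot \bar{\mathcal{L}}^{\cdot n})/n! = \int_{X^{\mathrm{NA}}} \varphi \mathrm{MA}(\varphi)$. The asserted inequality $E(\varphi) \ge \int \varphi \mathrm{MA}(\varphi)$ then reads $f(1) - f(0) \ge f'(1)$, which follows from concavity of $f$ on $[0,1]$ via $f(0) \le f(1) + f'(1)(0-1)$.

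The main obstacle is to establish this concavity, which amounts to
\[
f''(t) = \frac{(D^{\cdot 2} \cdot M_t^{\cdot n-1})}{(n-1)!} \le 0, \qquad M_t := (1-t)\tilde{L} + t\bar{\mathcal{L}}, \ t \in [0,1].
\]
The class $M_t$ is $\pi$-nef throughout $[0,1]$, being a convex combination of the pullback class $\tilde{L}$ (nef) and the $\pi$-ample class $\bar{\mathcal{L}}$. The non-positivity $(D^{\cdot 2} \cdot M_t^{\cdot n-1}) \le 0$ is then an instance of the relative Hodge index theorem, i.e.\ the higher-dimensional Zariski lemma: the self-intersection of a vertical $\mathbb{Q}$-Cartier divisor against any tuple of $\pi$-nef classes on a projective morphism to a smooth curve is non-positive. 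Handling this uniformly in $t$ and verifying it in the presence of the (resolved) singularities of $\bar{\mathcal{X}}$ is the technical heart of the argument.
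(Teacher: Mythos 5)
Your argument is correct, but it is a genuinely different route from the paper's: the paper disposes of this lemma in one line by noting $\int_\mathbb{R} t\,\DHm_\varphi = E(\varphi)$ and citing \cite[Proposition 5.26]{BJ3}, i.e.\ the inequality $E(\varphi) \ge \int_{X^{\mathrm{NA}}} \varphi\, \mathrm{MA}(\varphi)$ from Boucksom--Jonsson's energy-pairing formalism, whereas you give a self-contained intersection-theoretic proof. Your reduction to $\nH(X,L)$ via a decreasing regularization is sound (continuity of $E$ along decreasing nets, plus Proposition \ref{double limit} with $\psi_i = \varphi_i$), the identifications $f(0)=0$ (projection formula for $p_X^*L$), $f(1)=E(\varphi)$ and $f'(1) = (D\cdot\bar{\mathcal{L}}^{\cdot n})/n! = \int \varphi\,\mathrm{MA}(\varphi)$ (via $\varphi(v_E) = \mathrm{ord}_E(D)/\mathrm{ord}_E(\mathcal{X}_0)$ from Proposition \ref{explicit formula for sigma}) are all correct, and the concavity step $f(0) \le f(1) - f'(1)$ is applied in the right direction. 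The "technical heart" you defer to is in fact standard and not a gap: since $D$ is vertical, $D$ pairs to zero with the general fibre class, so all $t$-dependent classes $M_t$ may be replaced by $M_t + c\,\pi^*\mathcal{O}_{\mathbb{P}^1}(1)$ for $c \gg 0$ without changing $(D^{\cdot 2}\cdot M_t^{\cdot n-1})$, making them genuinely nef; the non-positivity then follows by cutting with general hyperplane sections down to a fibred surface and invoking Zariski's lemma, with nef classes handled by ample approximation, and singularities are irrelevant because all intersection numbers are computed on the resolution and compared via the projection formula. What the paper's citation buys is brevity and the fact that [BJ3] proves the inequality directly for all $\varphi \in \E^1$ within the theory of energy pairings (whose monotonicity in the mixed terms rests on the same Hodge-index negativity you use); what your approach buys is a transparent, essentially elementary proof on test configurations that makes the geometric source of the positivity visible.
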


\begin{proof}
Since $\int_\mathbb{R} t \DHm_\varphi = E (\varphi)$, this is nothing but \cite[Proposition 5.26]{BJ3}. 
\end{proof}

\begin{prop}
Suppose $K_X$ is ample and $K_X = L$ and $\lambda \le 0$, then the trivial metric $\varphi_{\mathrm{triv}} = 0$ maximizes $\NAmu^\lambda$ on $\E^{\exp} (X, L)$. 
\end{prop}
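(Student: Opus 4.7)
The plan is to reduce to the case $\lambda = 0$ via the immediately preceding proposition, and then to show $\NAmu (\varphi) \le \NAmu (\varphi_{\mathrm{triv}}) = 2\pi n$ for every $\varphi \in \E^{\exp} (X, L)$. I first pin down the baseline: since $\int e^{-t} \mathcal{D}_{\varphi_{\mathrm{triv}}} = (e^L) \cdot \delta_{v_{\mathrm{triv}}}$ with $A_X (v_{\mathrm{triv}}) = 0$, and $L = K_X$ gives $E_{\exp}^{K_X} (\varphi_{\mathrm{triv}}) = -(K_X \cdot e^L) = -n (e^L)$, one computes $\NAmu (\varphi_{\mathrm{triv}}) = 2\pi n$. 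Next, using Proposition \ref{energy pairing formula} with $M = L = K_X$ to substitute
\[ E_{\exp}^{K_X} (\varphi) = - \int_\mathbb{R} (n - t)\, e^{-t}\, \DHm_\varphi - \int_{X^{\mathrm{NA}}} \varphi \int e^{-t} \mathcal{D}_\varphi \]
into the definition of $\NAmu$, the claim is reformulated as
\[ N (\varphi) := \int_{X^{\mathrm{NA}}} (A_X - \varphi) \int e^{-t} \mathcal{D}_\varphi + \int_\mathbb{R} t\, e^{-t}\, \DHm_\varphi \ge 0 \quad\text{for every } \varphi \in \E^{\exp} (X, L). \]

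For the test-configuration case $\varphi = \varphi_{(\mathcal{X}, \mathcal{L})} \in \nH (X, L)$, property (1) of the moment measure expands $N (\varphi)$ as
\[ \sum_{E \subset \mathcal{X}_0} \mathrm{ord}_E \mathcal{X}_0 \int_\mathbb{R} (A_X (v_E) + t - \varphi (v_E))\, e^{-t}\, \DHm_{(E, \mathcal{L}|_E)}. \]
The klt hypothesis gives $A_X (v_E) \ge 0$, and Proposition \ref{infimum of DH measure} identifies $\varphi (v_E) = \sigma_E = \inf \supp \DHm_{(E, \mathcal{L}|_E)}$, so $t - \varphi (v_E) \ge 0$ on the support of $\DHm_{(E, \mathcal{L}|_E)}$. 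Every summand is therefore non-negative, giving $N (\varphi) \ge 0$.

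The main obstacle is the passage from $\nH (X, L)$ to all of $\E^{\exp} (X, L)$. Along a decreasing approximation $\varphi_i \searrow \varphi$ by Fubini--Study metrics (Proposition \ref{decreasing convergence is dexp convergent}), Theorem \ref{exponential moment measure convergence} together with Theorem \ref{EexpM continuity} and Proposition \ref{energy pairing formula} give continuity of $\int_{X^{\mathrm{NA}}} \varphi_i \int e^{-t} \mathcal{D}_{\varphi_i}$ and $\int_\mathbb{R} t\, e^{-t}\, \DHm_{\varphi_i}$, whereas the entropy piece $\int_{X^{\mathrm{NA}}} A_X \int e^{-t} \mathcal{D}_{\varphi_i}$ is only lower semi-continuous in $\varphi_i \to \varphi$---precisely the wrong direction to propagate $N (\varphi_i) \ge 0$ to the limit. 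Two routes suggest themselves. The first is to invoke Conjecture \ref{Regularization of exponential entropy}, under which one can pick a regularization whose entropy also converges and then apply the upper semi-continuity of $\NAmu$ to conclude. The second, closer in spirit to Odaka's original argument for K-stability of canonically polarized klt varieties, is a tomographic reduction: use Proposition \ref{tomographic expression of exp moment measure} to write $\int_{X^{\mathrm{NA}}} A_X \int e^{-t} \mathcal{D}_\varphi = \int d\tau\, e^{-\tau} \int A_X\, \mathrm{MA} (\varphi \wedge \tau)$ and similarly expand the remaining pieces of $N (\varphi)$, then reduce to a $\tau$-by-$\tau$ estimate exploiting $A_X \ge 0$ together with the non-archimedean orthogonality $\supp \mathrm{MA} (\varphi \wedge \tau) \subset \{ v : \varphi (v) \le \tau \} \cup \{ v_{\mathrm{triv}} \}$, which is Proposition \ref{MA of wedge} in the test-configuration case. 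Establishing this orthogonality precisely for general $\varphi \in \E^{\exp} (X, L)$ is the heart of the extension.
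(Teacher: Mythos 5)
Your reduction to $N (\varphi) := \int_{X^{\mathrm{NA}}} (A_X - \varphi) \int e^{-t} \mathcal{D}_\varphi + \int_\mathbb{R} t\, e^{-t} \DHm_\varphi \ge 0$ is correct, and your verification for $\varphi \in \nH (X, L)$ via the primary decomposition and $\varphi (v_E) = \sigma_E = \inf \supp \DHm_{(E, \mathcal{L}|_E)}$ is sound. But the extension to all of $\E^{\exp} (X, L)$ is a genuine gap, which you acknowledge without closing: one of your two proposed routes rests on Conjecture \ref{Regularization of exponential entropy}, which is open, and the other is left as ``the heart of the extension'' (and the tomographic formula you cite for $\int_{X^{\mathrm{NA}}} A_X \int e^{-t} \mathcal{D}_\varphi$ is stated only for $\psi \in C^0 (X^{\mathrm{NA}})$ or $\psi \in \E^1 (X, L)$, which $A_X$ is not). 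The semicontinuity obstruction you worry about is in fact a red herring.

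The point you miss is that the entropy term enters $N (\varphi)$ with a favorable sign and can simply be discarded: since $X$ is klt, $A_X \ge 0$ on $X^{\mathrm{NA}}$, and $\int e^{-t} \mathcal{D}_\varphi$ is a non-negative measure, so $\int_{X^{\mathrm{NA}}} A_X \int e^{-t} \mathcal{D}_\varphi \ge 0$ holds \emph{directly for every} $\varphi \in \E^{\exp} (X, L)$, with no approximation. It therefore suffices to prove
\[ \int_\mathbb{R} t\, e^{-t} \DHm_\varphi - \int_{X^{\mathrm{NA}}} \varphi \int e^{-t} \mathcal{D}_\varphi \ \ge\ 0, \]
which by Proposition \ref{energy pairing formula} equals $E^L_{\exp} (\varphi) + n \int_\mathbb{R} e^{-t} \DHm_\varphi$. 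This is exactly where the tomographic decomposition does the work, but the slicewise inequality is not the support/orthogonality statement you propose; it is the elementary energy inequality $E (\psi) \ge \int_{X^{\mathrm{NA}}} \psi\, \mathrm{MA} (\psi)$ (\cite[Proposition 5.26]{BJ3}, the lemma preceding the proposition). Writing $(n+1) E_{\exp} (\varphi) = E^L_{\exp} (\varphi) + \int_\mathbb{R} d\tau\, e^{-\tau} \int_{X^{\mathrm{NA}}} (\varphi \wedge \tau - \tau) \mathrm{MA} (\varphi \wedge \tau - \tau)$ and using $E_{\exp} (\varphi) = -\int_\mathbb{R} e^{-t} \DHm_\varphi$, one gets
\[ E^L_{\exp} (\varphi) + n \int_\mathbb{R} e^{-t} \DHm_\varphi = \int_\mathbb{R} d\tau\, e^{-\tau} \Big( E (\varphi \wedge \tau - \tau) - \int_{X^{\mathrm{NA}}} (\varphi \wedge \tau - \tau) \mathrm{MA} (\varphi \wedge \tau - \tau) \Big), \]
and each integrand is non-negative by the cited energy inequality. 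This closes the argument for all $\varphi \in \E^{\exp} (X, L)$ without any regularization.
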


\begin{proof}
We have $E^{K_X}_{\exp} = E^L_{\exp}$ and $\NAmu (\varphi_{\mathrm{triv}}) = 2\pi n$. 
Since $X$ is log canonical, we have $\NAmu (\varphi) \le -2\pi \frac{E^L_{\exp} (\varphi)}{\int_\mathbb{R} e^{-t} \DHm_\varphi}$. 
Thus to see $\NAmu (\varphi) \le \NAmu (\varphi_{\mathrm{triv}})$, it suffices to show 
\[ E^L_{\exp} (\varphi) +n \int_\mathbb{R} e^{-t} \DHm_\varphi \ge 0. \]
By Proposition \ref{energy pairing formula}, we have 
\[ E^L_{\exp} (\varphi) +n \int_\mathbb{R} e^{-t} \DHm_\varphi = \int_\mathbb{R} t e^{-t} \DHm_\varphi - \int_{X^{\mathrm{NA}}} \varphi \int e^{-t} \mathcal{D}_\varphi. \]
To reduce the problem to the above lemma, we consider another expression. 
Firstly we note 
\begin{align*} 
(n+1) E_{\exp} (\varphi) 
&= \int_\mathbb{R} \frac{(L, \varphi \wedge \tau - \tau)^{\cdot n+1}}{n!} e^{-\tau} d\tau 
\\
&= \int_\mathbb{R} \frac{(L, 0) \cdot (L, \varphi \wedge \tau - \tau)^{\cdot n}}{n!} e^{-\tau} d\tau + \int_\mathbb{R} \frac{(0, \varphi \wedge \tau - \tau) \cdot (L, \varphi \wedge \tau - \tau)^{\cdot n}}{n!} e^{-\tau} d\tau 
\\
&= E^L_{\exp} (\varphi) + \int_\mathbb{R} d\tau e^{-\tau} \int_{X^{\mathrm{NA}}} (\varphi \wedge \tau - \tau) \mathrm{MA} (\varphi \wedge \tau - \tau).  
\end{align*}
Then since $E_{\exp} (\varphi) = - \int_\mathbb{R} e^{-t} \DHm_\varphi$, we have 
\begin{align*} 
E^L_{\exp} (\varphi) +n \int_\mathbb{R} e^{-t} \DHm_\varphi 
&= E^L_{\exp} (\varphi) - n E_{\exp} (\varphi) 
\\
&= E_{\exp} (\varphi) - \int_\mathbb{R} d\tau e^{-\tau} \int_{X^{\mathrm{NA}}} (\varphi \wedge \tau - \tau) \mathrm{MA} (\varphi \wedge \tau - \tau) 
\\
&= \int_\mathbb{R} d\tau e^{-\tau} \Big{(} \int_\mathbb{R} t \DHm_{\varphi \wedge \tau - \tau} - \int_{X^{\mathrm{NA}}} (\varphi \wedge \tau - \tau) \mathrm{MA} (\varphi \wedge \tau - \tau) \Big{)}. 
\end{align*}
Thanks to the above lemma, the integrand
\[ \int_\mathbb{R} t \DHm_{\varphi \wedge \tau - \tau} - \int_{X^{\mathrm{NA}}} (\varphi \wedge \tau - \tau) \mathrm{MA} (\varphi \wedge \tau - \tau) \]
is non-negative, hence we get the desired inequality. 
\end{proof}

The reverse direction is also shown in \cite{Oda2}: K-semistable ``almost $\mathbb{Q}$-Gorenstein'' scheme $X$ admits only semi log canonical singularities. 
In our non-archimedean formalism, we must assume $X$ is log canonical to ensure the lsc extension of the log discrepancy $A_X$, so the reverse direction for non-archimedean $\mu$-entropy does not make sense. 
As for the characteristic $\mu$-entropy, we do not need to assume $X$ is log canonical nor irreducible. 
If the trivial configuration maximizes the characteristic $\mu$-entropy, $X$ is K-semistable, hence $X$ has only semi log canonical singularities by Odaka's result. 

\subsection{Relation to other works}
\label{Relation to other works}

\subsubsection{Relation to $H$-entropy}
\label{Relation to H-entropy}

Let $(X, L) = (X, -K_X)$ be a $\mathbb{Q}$-Fano variety. 
We recall 
\[ \check{H}_{\mathrm{NA}} (\varphi) := - \inf_{x \in X^{\mathrm{qm}}} (A_X (x) + \varphi (x)) - \log \iint_{X^{\mathrm{NA}}} e^{-t} \mathcal{D}_\varphi \]
for $\varphi \in \E^{\exp} (X, L)$. 
For each $x \in X^{\mathrm{qm}}$, $\varphi \mapsto A_X (x) + \varphi (x)$ is upper semi-continuous with respect to the weak topology on $\PSH (X, L)$, so $\inf_{x \in X^{\mathrm{qm}}} (A_X (x) + \varphi (x))$ is upper semi-continuous. 
It follows that $\check{H}_{\mathrm{NA}}$ is lower semi-continuous with respect to $E_{\exp}/d_{\exp}$-topology. 
We can also easily see the continuity along decreasing nets. 
Actually, it would be shown in coming \cite{BJ5} that this functional is continuous with respect to $d_1$-topology, and hence $\check{H}_{\mathrm{NA}}$ is continuous with respect to $E_{\exp}/d_{\exp}$-topology, but we do not use this fact. 

\begin{prop}
\label{comparison of NAmu and H}
For $\varphi \in \E^{\exp} (X, L)$, we have 
\[ \NAmu^{2\pi} (\varphi) \le 2\pi \check{H}_{\mathrm{NA}} (\varphi). \]
\end{prop}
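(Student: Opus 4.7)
The plan is to combine a direct algebraic simplification with a pointwise inequality that holds $\mu_\varphi$-almost everywhere, where $\mu_\varphi := \int e^{-t}\mathcal{D}_\varphi$.

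First I will perform the algebraic collapse. Since $(X,L) = (X,-K_X)$, the combination $2\pi K_X + 2\pi L$ vanishes, so $E^{2\pi K_X + 2\pi L}_{\exp}(\varphi) = 0$. Substituting this into the definition of $\NAmu^{2\pi}$ and using Corollary~\ref{sigma formula} for $\bm{\check{\sigma}}(\varphi)$, the terms involving $E^L_{\exp}(\varphi)$ and $\int_{X^{\mathrm{NA}}}\varphi\int e^{-t}\mathcal{D}_\varphi$ will combine into a single term $\int_{X^{\mathrm{NA}}}(A_X+\varphi)\int e^{-t}\mathcal{D}_\varphi$, giving
\[
\NAmu^{2\pi}(\varphi) = -2\pi\,\frac{\int_{X^{\mathrm{NA}}}(A_X+\varphi)\int e^{-t}\mathcal{D}_\varphi}{\iint_{X^{\mathrm{NA}}} e^{-t}\mathcal{D}_\varphi} - 2\pi\log\iint_{X^{\mathrm{NA}}} e^{-t}\mathcal{D}_\varphi.
\]
Comparing term by term with the definition of $\check{H}_{\mathrm{NA}}(\varphi)$, the asserted inequality reduces to the averaged lower bound
\[
(\ast)\qquad \frac{\int_{X^{\mathrm{NA}}}(A_X+\varphi)\int e^{-t}\mathcal{D}_\varphi}{\iint_{X^{\mathrm{NA}}} e^{-t}\mathcal{D}_\varphi}\;\ge\; c\;:=\;\inf_{x\in X^{\mathrm{qm}}}\bigl(A_X(x)+\varphi(x)\bigr).
\]

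Next I will establish $(\ast)$ by proving the pointwise inequality $A_X+\varphi\ge c$ holds $\mu_\varphi$-almost everywhere. The measure $\mu_\varphi$ is non-negative and, by the corollary following Proposition~\ref{tomographic expression of exp moment measure}, does not charge pluripolar sets; in particular $\{\varphi=-\infty\}$ has measure zero. On $X^{\mathrm{NA}}\setminus X^{\mathrm{val}}$ the log discrepancy equals $+\infty$ by convention, so only the case $v\in X^{\mathrm{val}}$ with $\varphi(v)>-\infty$ requires argument. For such $v$, I will employ the system of retractions $r_\pi\colon X^{\mathrm{NA}}\to\Delta_\pi\subset X^{\mathrm{qm}}$ onto dual complexes of SNC log resolutions $\pi\colon Y\to X$. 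Two standard facts from Jonsson--Musta\c{t}\u{a} and Boucksom--Jonsson will be invoked: $A_X\circ r_\pi\nearrow A_X$ on $X^{\mathrm{val}}$, and $\varphi\circ r_\pi\searrow \varphi$ as $\pi$ varies over refinements for $\varphi\in\PSH(X,L)$. Since $r_\pi(v)\in X^{\mathrm{qm}}$, one has $A_X(r_\pi(v))+\varphi(r_\pi(v))\ge c$ for every $\pi$, and the telescoping identity
\[
A_X(v)+\varphi(v)\;\ge\;A_X(r_\pi(v))+\varphi(r_\pi(v))-\bigl(\varphi(r_\pi(v))-\varphi(v)\bigr)
\]
gives, in the limit as $\pi$ increases, the bound $A_X(v)+\varphi(v)\ge c$ (the parenthesized correction tending to $0$). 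Integrating this pointwise bound against $\mu_\varphi$ yields $(\ast)$.

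The main obstacle will be verifying the pointwise convergence $\varphi\circ r_\pi\to\varphi$ for a general $\varphi\in\E^{\exp}(X,L)$. For $\varphi\in\nH(X,L)$ this is automatic because the moment measure has the explicit form
\[
\int e^{-t}\mathcal{D}_{\varphi_{(\mathcal{X},\mathcal{L})}} = \sum_{E\subset\mathcal{X}_0}\mathrm{ord}_E\mathcal{X}_0\cdot\Bigl(\int_\mathbb{R} e^{-t}\,d\DHm_{(E,\mathcal{L}|_E)}\Bigr)\cdot\delta_{v_E}
\]
supported on divisorial valuations $v_E\in X^{\mathrm{div}}\subset X^{\mathrm{qm}}$, displaying the left side of $(\ast)$ as a genuine weighted average of values of $A_X+\varphi$ at points where by definition the function exceeds $c$. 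For general $\varphi\in\E^{\exp}(X,L)$ the retraction convergence is a well-known feature of Boucksom--Jonsson's theory (model/Fubini--Study functions are dense from above in $\PSH$), so the pointwise estimate still holds on the full-measure set $\{\varphi>-\infty\}$. If any subtlety arose in that density step, the fallback would be a decreasing-net approximation $\varphi_i\searrow\varphi$ with $\varphi_i\in\nH(X,L)$, but the present argument is preferable because it avoids the delicate issue that $\NAmu^{2\pi}$ is only upper semi-continuous (so semicontinuity alone does not let one pass the inequality through a limit in the desired direction).
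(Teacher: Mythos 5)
Your proposal is correct and follows essentially the same route as the paper: the algebraic collapse using $L=-K_X$ to reduce to the averaged bound $(\ast)$, followed by the retraction argument ($A_X\circ r_\pi\nearrow A_X$, $\varphi\circ r_\pi\searrow\varphi$, image in $X^{\mathrm{qm}}$) to bound the average below by the infimum. The only cosmetic difference is that you derive the pointwise inequality $A_X+\varphi\ge c$ on $\{\varphi>-\infty\}$ and then integrate, whereas the paper passes the inequality through the limit of the integrals directly via the monotone convergence theorem for nets.
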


\begin{proof}
Since $L = -K_X$, we have 
\[ \NAmu^{2\pi} (\varphi) = - 2\pi \frac{\int_{X^{\mathrm{NA}}} (A_X + \varphi) \int e^{-t} \mathcal{D}_\varphi}{\iint_{X^{\mathrm{NA}}} e^{-t} \mathcal{D}_\varphi} - 2\pi \log \iint_{X^{\mathrm{NA}}} e^{-t} \mathcal{D}_\varphi. \]
Thus it suffices to show 
\[ \frac{\int_{X^{\mathrm{NA}}} (A_X + \varphi) \int e^{-t} \mathcal{D}_\varphi}{\iint_{X^{\mathrm{NA}}} e^{-t} \mathcal{D}_\varphi} \ge \inf_{x \in X^{\mathrm{qm}}} (A_X (x) + \varphi (x)). \]
This is shown in \cite{BJ2} for general Radon measure: since $A_X \circ p_\mathcal{X} \nearrow A_X$ and $\varphi \circ p_\mathcal{X} \searrow \varphi$ pointwisely on $X^{\mathrm{NA}}$ (cf. \cite[Theorem 5.29]{BJ1}, \cite[Theorem 2.1]{BJ2}) and the image of $p_\mathcal{X}$ is in $X^{\mathrm{qm}}$, we have 
\begin{align*} 
\int_{X^{\mathrm{NA}}} (A_X + \varphi) d\mu 
&= \lim_{\mathcal{X} \in \mathrm{SNC} (X)} \int_{X^{\mathrm{NA}}} (A_X +\varphi) \circ p_\mathcal{X} d\mu 
\\
&\ge \inf_{x \in X^{\mathrm{qm}}} (A_X (x) + \varphi (x)) \cdot \int_X d\mu
\end{align*}
by the monotone convergence theorem for net (see Propsition \ref{monotone convergence for net}). 
\end{proof}

\begin{prop}
Let $(\mathcal{X}, \mathcal{L})$ be a weakly special degeneration, i.e. a normal $\mathbb{Q}$-Gorenstein test configuration such that $\mathcal{L} = -K_{\mathcal{X}/\mathbb{C}}$ is relatively ample, the pair $(\mathcal{X}, \mathcal{X}_0)$ is log canonical and the central fibre $\mathcal{X}_0$ is reduced and irreducible. 
Then we have $\NAmu^{2\pi} (\varphi_{(\mathcal{X}, \mathcal{L}; \rho)}) = 2\pi \check{H}_{\mathrm{NA}} (\varphi_{(\mathcal{X}, \mathcal{L}; \rho)})$. 
\end{prop}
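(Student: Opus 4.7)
The plan is to reduce both sides to a single-point computation on the Berkovich space and identify the result with the log-canonical discrepancy of the pair $(\mathcal{X}, \mathcal{X}_0)$.

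First, I will use that $\mathcal{X}_0$ is reduced and irreducible to collapse the moment-measure formula (property (1) of the moment-measure theorem) to a single Dirac mass
$$\int e^{-t} \mathcal{D}_{\varphi_{(\mathcal{X}, \mathcal{L}; \rho)}} = \Big(\int_{\mathbb{R}} e^{-\rho t} \DHm_{(\mathcal{X}_0, \mathcal{L}|_{\mathcal{X}_0})}\Big) \cdot \delta_{v}, \qquad v := \rho \cdot v_{\mathcal{X}_0}.$$
Together with the cancellation $2\pi K_X + 2\pi L = 0$ (so $E_{\exp}^{2\pi K_X + 2\pi L}(\varphi) = 0$), this simplifies the expression for $\NAmu^{2\pi}$ from Theorem~\ref{NAmu entropy extension}(3) to
$$\NAmu^{2\pi}(\varphi) = -2\pi \bigl(A_X(v) + \varphi(v)\bigr) - 2\pi \log \iint e^{-t} \mathcal{D}_\varphi,$$
so the desired equality $\NAmu^{2\pi}(\varphi) = 2\pi \check{H}_{\mathrm{NA}}(\varphi)$ reduces (the log terms cancel) to $A_X(v) + \varphi(v) = \inf_{x \in X^{\mathrm{qm}}}(A_X(x) + \varphi(x))$. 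By the scaling relations $A_X(\rho w) = \rho A_X(w)$ and $\varphi_{(\mathcal{X}, \mathcal{L}; \rho)}(\rho w) = \rho \varphi_{(\mathcal{X}, \mathcal{L})}(w)$, it suffices to take $\rho = 1$; by density of divisorial valuations in every quasi-monomial stratum (together with continuity of $\varphi \in \nH(X, L)$ and linearity of $A_X$ on strata), it then suffices to prove the minimization over $X^{\mathrm{div}}$.

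Next, I will fix a divisorial valuation $w \in X^{\mathrm{div}}$ and realize it as $w = v_E$ for some irreducible component $E$ of $\tilde{\mathcal{X}}_0$ of a normal test configuration $\beta: \tilde{\mathcal{X}} \to \mathcal{X}$ also dominating $X \times \mathbb{P}^1$; the strict transform of $\mathcal{X}_0$ corresponds to $v_{\mathcal{X}_0}$ itself. Combining Proposition~\ref{explicit formula for sigma} (which computes $\mathrm{ord}_E \tilde{\mathcal{X}}_0 \cdot \varphi(v_E) = \mathrm{ord}_E(\beta^*\mathcal{L} - p_X^* L)$) with the formula $\mathrm{ord}_E \tilde{\mathcal{X}}_0 \cdot A_X(v_E) = \mathrm{ord}_E(K_{\tilde{\mathcal{X}}/\mathbb{P}^1}^{\log,\mathbb{G}_m} - p_X^* K_X)$ from section~\ref{Towards non-archimedean formalism: moment measure of test configuration}, and substituting $\mathcal{L} = -K_{\mathcal{X}/\mathbb{A}^1}$, $L = -K_X$, $\beta^* K_{\mathcal{X}/\mathbb{A}^1} = K_{\tilde{\mathcal{X}}/\mathbb{A}^1} - K_{\tilde{\mathcal{X}}/\mathcal{X}}$, and $K_{\tilde{\mathcal{X}}/\mathbb{P}^1}^{\log} = K_{\tilde{\mathcal{X}}/\mathbb{A}^1} + \tilde{\mathcal{X}}_0^{\mathrm{red}}$ near $\tilde{\mathcal{X}}_0$, the $K_{\tilde{\mathcal{X}}/\mathbb{A}^1}$ and $p_X^* K_X$ terms will cancel. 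Using $A_{(\mathcal{X}, \mathcal{X}_0)}(v_E) = 1 + \mathrm{ord}_E K_{\tilde{\mathcal{X}}/\mathcal{X}} - \mathrm{ord}_E \tilde{\mathcal{X}}_0$, this collapses to
$$A_X(v_E) + \varphi(v_E) = 1 + \frac{A_{(\mathcal{X}, \mathcal{X}_0)}(v_E)}{\mathrm{ord}_E \tilde{\mathcal{X}}_0}.$$
Log-canonicity of $(\mathcal{X}, \mathcal{X}_0)$ then forces the right-hand side to be $\geq 1$ for every $E$, with equality $= 1$ at the strict transform of $\mathcal{X}_0$ (where $\mathrm{ord}_E \tilde{\mathcal{X}}_0 = 1$ and $A_{(\mathcal{X}, \mathcal{X}_0)}(v_{\mathcal{X}_0}) = 0$). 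This is the claimed minimization, completing the proof.

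The hard part will be the algebraic reduction producing the identity $A_X(v_E) + \varphi(v_E) = 1 + A_{(\mathcal{X}, \mathcal{X}_0)}(v_E)/\mathrm{ord}_E \tilde{\mathcal{X}}_0$: one must carefully track the $\mathbb{Q}$-Cartier pullback of $K_{\mathcal{X}/\mathbb{A}^1}$ through the resolution and assemble the log-discrepancy of the pair $(\mathcal{X}, \mathcal{X}_0)$ from the exceptional contribution $K_{\tilde{\mathcal{X}}/\mathcal{X}}$ and the reduced-fibre term $\tilde{\mathcal{X}}_0^{\mathrm{red}}$. Once this identification is in hand, log-canonicity immediately yields the minimization and hence the equality.
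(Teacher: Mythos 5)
Your proposal is correct in substance and reaches the statement by a genuinely different route from the paper. The paper's own proof is a one-liner: it quotes \cite[Proposition 7.29]{BHJ1} to assert that $\inf_{x \in X^{\mathrm{qm}}}(A_X(x)+\varphi(x))$ is attained at $v_{\mathcal{X}_0}$, and then observes that, since $\mathcal{X}_0$ is reduced and irreducible, the moment measure is a single Dirac mass there, so the weighted average in $\NAmu^{2\pi}$ collapses to that same infimum. Your first two reductions (the Dirac-mass collapse, the cancellation $E_{\exp}^{2\pi K_X + 2\pi L}=0$, and the scaling reduction to $\rho=1$ and then to $X^{\mathrm{div}}$) are exactly the implicit content of the paper's argument. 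What you do differently is to re-prove the BHJ minimization statement from scratch by the divisor-level computation, combining Proposition \ref{explicit formula for sigma} with the log-discrepancy formula of section \ref{Towards non-archimedean formalism: moment measure of test configuration}. That buys a self-contained proof at the cost of redoing a known computation; the paper buys brevity at the cost of an external citation.

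One arithmetic slip: in your cancellation you write $K_{\tilde{\mathcal{X}}/\mathbb{P}^1}^{\log} = K_{\tilde{\mathcal{X}}/\mathbb{A}^1} + \tilde{\mathcal{X}}_0^{\mathrm{red}}$ near the central fibre, dropping the term $-\varpi^*[0] = -\tilde{\mathcal{X}}_0$ from the definition $K_{\bar{\mathcal{X}}/\mathbb{P}^1}^{\log,\mathbb{G}_m} = (K_{\bar{\mathcal{X}}}^{\mathbb{G}_m} + [\mathcal{X}_0^{\mathrm{red},\mathbb{G}_m}]) - \varpi^*(K_{\mathbb{P}^1}^{\mathbb{G}_m} + [0^{\mathbb{G}_m}])$. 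This produces the spurious ``$1+$'' in your identity; the correct statement is
\begin{equation*}
A_X(v_E) + \varphi(v_E) = \frac{A_{(\mathcal{X},\mathcal{X}_0)}(\mathrm{ord}_E)}{\mathrm{ord}_E \tilde{\mathcal{X}}_0},
\end{equation*}
so the minimal value is $0$ rather than $1$ (consistency check: for the trivial configuration $\varphi = 0$ and the infimum of $A_X$ is $0$ at $v_{\mathrm{triv}}$). Since the discrepancy is a constant shift independent of $E$, log-canonicity of $(\mathcal{X},\mathcal{X}_0)$ still gives nonnegativity of the right-hand side with equality at the strict transform of $\mathcal{X}_0$, so the argmin conclusion — and hence the proposition — is unaffected. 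You should just fix the displayed identity.
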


We recall a special degeneration is a normal $\mathbb{Q}$-Gorenstein test configuration such that $\mathcal{L} = -K_{\mathcal{X}/\mathbb{C}}$ is relatively ample and $\mathcal{X}_0$ is klt. 
As noted in \cite[Lemma 2.2]{Berm}, $\mathbb{Q}$-Gorenstein assumption automatically follows by the assumption that $\mathcal{X}_0$ is normal. 
Any special degeneration is weakly special by the inversion of adjunction. 

\begin{proof}
By \cite[Proposition 7.29]{BHJ1}, we have 
\[ \inf_{x \in X^{\mathrm{qm}}} (A_X (x) + \varphi (x)) = A_X (v_{\mathcal{X}_0}) + \varphi (v_{\mathcal{X}_0}) = \frac{\int_{X^{\mathrm{NA}}} (A_X + \varphi) \int e^{-t} \mathcal{D}_\varphi}{\iint e^{-t} \mathcal{D}_\varphi}, \]
which shows the claim. 
\end{proof}

\begin{quest}
Since $\NAmu^{2\pi} - 2\pi \check{H}_{\mathrm{NA}} \le 0$ is usc, the subset 
\[ \{ \varphi \in \E^{\exp} (X, L) ~|~ \NAmu^{2\pi} (\varphi) = 2\pi \check{H}_{\mathrm{NA}} (\varphi) \} \] 
is closed with respect to $E_{\exp}$-topology. 
Then what is the closure of the subset $\{ \varphi_{(\mathcal{X}, \mathcal{L}; \rho)} ~|~ (\mathcal{X}, \mathcal{L}): \text{ weakly special degeneration } \}$? 
At least, $\mathrm{MA} (\varphi)$ is supported on a point of $X^{\mathrm{lin}}$ for $\varphi$ in the closure, so the closure is not the entire $\E^{\exp} (X, L)$. 
\end{quest}

Now using the crucial existence result \cite{BLXZ}, we show the following. 

\begin{thm}
Let $(X, L) = (X, -K_X)$ be a $\mathbb{Q}$-Fano variety. 
Then we have 
\[ \sup_{\varphi \in \E^{\exp} (X, L)} \NAmu^{2\pi} (\varphi) = \sup_{\varphi \in \E^{\exp} (X, L)} 2\pi \check{H}_{\mathrm{NA}} (\varphi). \]
The maximum is attained by some $\varphi_\mathcal{F} \in \pcH (X, L)$ associated to a finitely generated filtration $\mathcal{F}$ with $\mu^{2\pi}$K-semistable $\mathbb{Q}$-Fano central fibre. 
\end{thm}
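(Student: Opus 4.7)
The strategy is to sandwich both suprema between each other using the pointwise inequality from Proposition \ref{comparison of NAmu and H} at one end and the existence of an $H$-entropy maximizer from \cite{BLXZ} at the other, then verify that both functionals agree at the $H$-entropy maximizer. Concretely, the plan splits into three steps.

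\emph{Step 1 (trivial direction).} The pointwise estimate $\NAmu^{2\pi}(\varphi) \le 2\pi \check{H}_{\mathrm{NA}}(\varphi)$ of Proposition \ref{comparison of NAmu and H} immediately yields
\[ \sup_{\varphi \in \E^{\exp}(X, L)} \NAmu^{2\pi}(\varphi) \le \sup_{\varphi \in \E^{\exp}(X, L)} 2\pi \check{H}_{\mathrm{NA}}(\varphi). \]

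\emph{Step 2 (input from \cite{BLXZ}).} I would invoke the main existence theorem of \cite{BLXZ} (together with \cite{HL2} to identify modified K-semistability with $\mu^{2\pi}$K-semistability in the Fano case) to produce a maximizer $\varphi_{\mathcal{F}} \in \pcH (X, L)$ of $\check{H}_{\mathrm{NA}}$. The content of their theorem gives exactly the additional structural information claimed: the filtration $\mathcal{F}$ is finitely generated (automatic once $\varphi_{\mathcal{F}} \in \pcH(X,L)$, by Theorem \ref{finite generation of Fphi}), and the central fibre $(\mathcal{X}_o(\mathcal{F}), \mathcal{L}_o(\mathcal{F})) = \Proj \mathcal{R}_o (\mathcal{F}_{\varphi_{\mathcal{F}}})$ is a $\mathbb{Q}$-Fano variety that is $\mu^{2\pi}$K-semistable with respect to the proper vector $\xi_o^{\varphi}$ induced by $\mathcal{F}$.

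\emph{Step 3 (matching the two entropies at $\varphi_{\mathcal{F}}$).} The remaining task is to prove
\[ \NAmu^{2\pi}(\varphi_{\mathcal{F}}) = 2\pi \check{H}_{\mathrm{NA}}(\varphi_{\mathcal{F}}), \]
which combined with Step 1 closes the chain of inequalities and establishes the theorem. By Proposition \ref{polyhedral configuration and finite generation of filtration}, Proposition \ref{Krull envelope has reduced central fibre} and Proposition \ref{reduced central fibre implies homogeneity}, I may represent $\varphi_{\mathcal{F}} = \varphi_{(\mathcal{X}/B_\sigma, \mathcal{L}; \xi)}$ via a polyhedral configuration whose central fibre $\mathcal{X}_o$ is reduced and equals $\mathcal{X}_o (\mathcal{F})$. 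Because $\mathcal{X}_o$ is a (normal, irreducible) $\mathbb{Q}$-Fano variety, the moment measure decomposition worked out in Section \ref{maximizing non-archimedean mu-entropy} collapses to a single point:
\[ \int e^{-t} \mathcal{D}_{\varphi_{\mathcal{F}}} \;=\; \bigl(\textstyle\int_{\mathbb{R}} e^{-t}\,\DHm_{(\mathcal{X}_o, \mathcal{L}_o; \xi_o^\varphi)}\bigr)\cdot \delta_{v_{\mathcal{X}_o,\xi_o^\varphi}}. \]
Consequently, $\NAmu^{2\pi}(\varphi_\mathcal{F})$ reduces to $-2\pi (A_X + \varphi_{\mathcal{F}})(v_{\mathcal{X}_o, \xi_o^\varphi}) - 2\pi \log \iint e^{-t}\mathcal{D}_{\varphi_{\mathcal{F}}}$, so the desired equality becomes
\[ (A_X + \varphi_{\mathcal{F}})(v_{\mathcal{X}_o,\xi_o^\varphi}) \;=\; \inf_{x\in X^{\mathrm{qm}}} (A_X(x) + \varphi_{\mathcal{F}}(x)). \]
For \emph{integral} $\xi_o^\varphi$ this is precisely the minimization property of \cite[Proposition 7.29]{BHJ1} applied to the associated weakly special test configuration, together with the observation that the lc-places of $(X, \varphi_{\mathcal{F}})$ contain the divisorial valuation coming from the reduced $\mathbb{Q}$-Fano fibre.

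\emph{Main obstacle.} The delicate point is extending the identity of Step 3 from integral/rational vectors $\xi$ to the general real (possibly irrational) vector $\xi_o^\varphi$ produced by \cite{BLXZ}. The plan is to approximate $\xi_o^\varphi$ by rational multiples $\xi_i \to \xi_o^\varphi$ inside the cone $\sigma^\circ$, for which $\varphi_{\xi_i}$ arises from a genuine weakly special degeneration (using that $\mathbb{Q}$-Fanoness, in particular kltness of the central fibre, is an open condition along the polyhedral family in the relevant sense). The left hand side $(A_X + \varphi_{\mathcal{F}})(v_{\mathcal{X}_o, \xi})$ is continuous in $\xi$ by Corollary \ref{continuity of log discrepancy for product configuration} and the continuity of $\varphi_{\mathcal{F}}$ in $\xi$; for the right hand side one needs that $\inf_{x \in X^{\mathrm{qm}}}(A_X(x) + \varphi_{(\mathcal{X}, \mathcal{L}; \xi)}(x))$ is continuous in $\xi$, which is where the regularity result announced for the forthcoming \cite{BJ5} (continuity of the $A_X$-entropy along polyhedral configurations, cf.\ the discussion after Question \ref{continuity of entropy along polyhedral configuration}) is essential. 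Passing to the limit then yields the identity at $\xi_o^\varphi$, completing Step 3 and thereby the proof.
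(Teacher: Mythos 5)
Your overall architecture (the pointwise bound $\NAmu^{2\pi}\le 2\pi\check{H}_{\mathrm{NA}}$ from Proposition \ref{comparison of NAmu and H}, the existence input from \cite{HL2, BLXZ}, and a matching identity at the maximizer) is the same as the paper's, but your Step 3 contains a genuine gap in its justification. The problematic term is the \emph{left}-hand side of your valuative identity, $\xi\mapsto A_X(v_{\mathcal{X}_o,\xi})$. Corollary \ref{continuity of log discrepancy for product configuration} only covers \emph{product} configurations, i.e.\ the valuations $v_\xi$ attached to proper vectors acting on $X$ itself; for a general polyhedral configuration whose central fibre $\mathcal{X}_o$ differs from $X$, the continuity of $A_X(v_{E,\xi})$ in $\xi$ is precisely the open Question \ref{continuity of entropy along polyhedral configuration}, which the paper explicitly does not prove and defers to \cite{BJ5}. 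Conversely, the place where you invoke \cite{BJ5} — continuity of $\inf_{x}(A_X(x)+\varphi_{(\mathcal{X},\mathcal{L};\xi)}(x))$ in $\xi$ — needs no such input: since $\xi\mapsto\varphi_{(\mathcal{X},\mathcal{L};\xi)}$ is continuous for the sup norm and the infimum is $1$-Lipschitz under uniform perturbations of $\varphi$, that side is trivially continuous. So as written, the hard step is attributed to a corollary that does not apply, and the forthcoming reference is spent on the easy step.

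The gap is repairable, and the repair is exactly the device the paper uses: one-sided semicontinuity suffices. Since $A_X$ is lsc on $X^{\mathrm{NA}}$ and $\xi\mapsto v_{\mathcal{X}_o,\xi}$ is continuous, $(A_X+\varphi_\xi)(v_{\mathcal{X}_o,\xi})$ is lsc in $\xi$ and is always $\ge\inf_x(A_X+\varphi_\xi)(x)$; combining this with the identity at the (dense) rational $\xi_i$ and the continuity of the infimum squeezes out the equality at irrational $\xi$. The paper packages the same idea one level up: $\NAmu^{2\pi}-2\pi\check{H}_{\mathrm{NA}}$ is usc and $\le 0$, so the locus where it vanishes is closed; it contains all weakly special degenerations by \cite[Proposition 7.29]{BHJ1}, and $\varphi_v$ is a uniform limit of such by \cite[Theorem 2.24]{LXZ} (which is also the correct citation for your claim that the rational approximants give weakly special degenerations — "openness of kltness along the family" is not the right justification, since the central fibre is literally the same for all $\xi\in\sigma^\circ$; what is needed is that the total space pair is lc with $\mathcal{L}=-K_{\mathcal{X}/\mathbb{A}^1}$). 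Finally, note that the $\mu^{2\pi}$K-semistability of the central fibre with respect to \emph{general} test configurations is not a direct quote from \cite{BLXZ}: it requires verifying $\NAmu^{2\pi}(\varphi_v)=\cmu^{2\pi}(\mathcal{X}_o,\mathcal{L}|_{\mathcal{X}_o};\xi_o^v)$ (again by a semicontinuity/approximation argument) so that Theorem \ref{NAmu maximizer} applies; your proposal leaves this step implicit.
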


\begin{proof}
Since $\check{H}_{\mathrm{NA}}$ is lower semi-continuous and $\nH$ is dense in $\E^{\exp} (X, L)$, we have 
\[ \sup_{\varphi \in \E^{\exp} (X, L)} \check{H}_{\mathrm{NA}} (\varphi) = \sup_{\varphi \in \nH (X, L)} \check{H}_{\mathrm{NA}} (\varphi). \]

In \cite{HL2}, the continuity of envelopes is implicitly assumed in the definition of $\check{H}_{\mathrm{NA}} (\mathcal{F})$ for general filtration $\mathcal{F}$: the non-archimedean psh metric $\varphi_{\mathcal{F}}$ for general filtration $\mathcal{F}$ is defined \textit{under the continuity of envelopes} (see section \ref{Filtration associated to continuous psh metric}). 
In order to clarify that the existence of maximizer of $\check{H}_{\mathrm{NA}}$ does not rely on the assumption on the continuity of envelopes, we recall some arguments in \cite{HL2}. 
For a valuation of linear growth $v \in X^{\mathrm{lin}}$, we put 
\[ \check{\beta} (v) := - A_X (v) - \log \int_\mathbb{R} e^{-t} \nu_\infty (\mathcal{F}_v). \]
This is well-defined without assuming the continuity of envelopes. 
Under the continuity of envelopes, this is related to $\check{H}_{\mathrm{NA}}$ in the following way: thanks to Theorem \ref{NApsh associated to valuation of linear growth}, for any valuation of linear growth $v \in X^{\mathrm{lin}}$, the filtration $\mathcal{F}_v$ defines a non-archimedean psh metric $\varphi_v = \varphi_{\mathcal{F}_v} \in C^0 \cap \PSH (X, L)$, for which we have 
\[ \check{\beta} (v) \le H_{\mathrm{NA}} (\varphi_v). \]
It is shown in \cite[Theorem 4.9]{HL2} there exists a quasi-monomial valuation $v \in X^{\mathrm{qm}} \subset X^{\mathrm{lin}}$ satisfying 
\[ \check{\beta} (v) = \sup_{w \in X^{\mathrm{lin}}} \check{\beta} (w) \ge \sup_{(\mathcal{X}, \mathcal{L})} \check{H}_{\mathrm{NA}} (\varphi_{(\mathcal{X}, \mathcal{L})}), \]
where the last inequality is a consequence of \cite[Theorem 3.4, Lemma 4.2]{HL2}. 
By the above remark, we have 
\[ \check{\beta} (v) \ge \sup_{\varphi \in \E^{\exp} (X, L)} \check{H}_{\mathrm{NA}} (\varphi). \]
Now thanks to \cite{BLXZ}, for the quasi-monomial valuation $v$ maximizing $\check{\beta}$, the filtration $\mathcal{F}_v$ is finitely generated and its central fibre is a $\mathbb{Q}$-Fano variety. 
This result does not rely on the continuity of envelopes. 
Therefore, we obtain the non-archimedean psh metric $\varphi_v = \varphi_{\mathcal{F}_v} \in \pcH (X, L)$ associated to the finitely generated filtration $\mathcal{F}_v$ and conclude 
\[ \check{H}_{\mathrm{NA}} (\varphi_v) = \sup_{\varphi \in \E^{\exp} (X, L)} \check{H}_{\mathrm{NA}} (\varphi), \] 
thanks to the inequality $\sup \check{H}_{\mathrm{NA}} \ge H_{\mathrm{NA}} (\varphi_v) \ge \check{\beta} (v)$. 

Moreover, any $w$ in a small neighbourhood $U$ of $v$ in a suitable cone $\mathrm{QM}_\eta (Y, D) \subset \mathrm{Val} (X)$ gives a finitely generated filtration $\mathcal{F}_w$ with the same central fibre (cf. \cite[Lemma 2.10]{LX}) and $\varphi_w$ corresponds to a weakly special degeneration if $w$ is divisorial by \cite[Theorem 2.24]{LXZ}. 
Since divisorial valuations are dense in $\mathrm{QM}_\eta (Y, D)$, $\varphi_v$ is in the closure of $\{ \varphi_{(\mathcal{X}, \mathcal{L}; \rho)} ~|~ (\mathcal{X}, \mathcal{L}): \text{ weakly special degeneration } \}$, so that we have $\NAmu^{2\pi} (\varphi_v) = 2\pi \check{H}_{\mathrm{NA}} (\varphi_v)$. 
Here we note $U \to C^0 \cap \PSH (X, L): w \mapsto \varphi_w$ is continuous for the sup norm. 
Therefore, we get $\NAmu^{2\pi} \le 2\pi \check{H}_{\mathrm{NA}} \le \NAmu^{2\pi} (\varphi_v)$ and conclude $\varphi_v$ maximizes $\NAmu^{2\pi}$ on $\E^{\exp} (X, L)$. 

To check $\mu^{2\pi}$K-semistability, it suffices to check $\NAmu^{2\pi} (\varphi_v) = \cmu^{2\pi} (\mathcal{X}_o, \mathcal{L}|_{\mathcal{X}_o}; \xi_o^v)$. 
As we remark in the above, we have $\NAmu^{2\pi} (\varphi_w) = 2\pi \check{H}_{\mathrm{NA}} (\varphi_w)$ for $w$ in $U$. 
Then $\NAmu^{2\pi} (\varphi_w)$ is continuous on $w \in U$ by the upper semi-continuity of $\NAmu^{2\pi}$ and the lower semi-continuity of $\check{H}_{\mathrm{NA}}$. 
On the other hand, the filtration $\mathcal{F}_w$ induces a proper vector $\xi^w_o$ on the central fibre $(\mathcal{X}_o, \mathcal{L}|_{\mathcal{X}_o})$. 
As $w_i \to w$, we have $\xi^{w_i}_o \to \xi^w_o$. 
Then $\cmu^{2\pi} (\mathcal{X}_o, \mathcal{L}|_{\mathcal{X}_o}; \xi_o^w)$ is also continuous on $w$. 
Since the central fibre is reduced, we already know $\NAmu^{2\pi} (\varphi_{(\mathcal{X}, \mathcal{L}; \xi_o^w)}) = \cmu^{2\pi} (\mathcal{X}_o, \mathcal{L}|_{\mathcal{X}_o}; \xi^w_o)$ for divisorial $w \in U$. 
Then the desired equality follows by the continuity. 
\end{proof}

\subsubsection{Relation to normalized Donaldson--Futaki invariant}
\label{Relation to normalized Donaldson--Futaki invariant}

In the study \cite{Ino2} of $\mu^\lambda$-cscK metrics, we encounter extremal metrics in the limit $\lambda \to -\infty$. 
Here we observe its non-archimedean counterpart: in the limit $\lambda \to -\infty$ the non-archimedean $\mu$-entropy is related to normalized Donaldson--Futaki invariant which appears in Donaldson--Xia's minimax principle \cite{Don1, Xia}. 
See also \cite{Don2, Sze}, \cite{Der1, Der2, Ino4} and section \ref{Maximizing non-archimedean Calabi energy}. 

Recall the non-archimedean Mabuchi functional is defined on $\E^1 (X, L)$ by 
\begin{equation}
M_{\mathrm{NA}} (\varphi) = \int_{X^{\mathrm{NA}}} A_X \mathrm{MA} (\varphi) + \frac{(K_X, 0) \cdot (L, \varphi)^{\cdot n}}{n!} - \frac{(K_X. e^L)}{(e^L)} \frac{(L, \varphi)^{\cdot n+1}}{(n+1)!}. 
\end{equation}
Putting $b_\varphi := \int_\mathbb{R} t \DHm_\varphi/\int_\mathbb{R} \DHm_\varphi =  E (\varphi)/(e^L)$ and $\| \bar{\varphi} \| := (\int_\mathbb{R} (t-b_\varphi)^2 \DHm_\varphi)^{1/2}$, we introduce the following functional defined on $\E^2 (X, L)$: 
\begin{equation} 
C_{\mathrm{NA}} (\varphi) := - \frac{1}{(e^L)} \Big{(} 2 \pi M_{\mathrm{NA}} (\varphi) + \frac{1}{2} \| \bar{\varphi} \|^2 \Big{)}. 
\end{equation}
This functional is upper semi-continuous on $\E^2 (X, L)$ with respect to the topology induced from the metric $d_2$ (see section \ref{metric space Eexp}). 

This functional is related to normalized Donaldson--Futaki invariant in the following way. 
We note $C_{\mathrm{NA}} (\varphi_{; \rho})$ is a quadratic function on $\rho \ge 0$: 
\begin{align*} 
C_{\mathrm{NA}} (\varphi_{; \rho}) 
&= - \frac{1}{(e^L)} \Big{(} 2 \pi M_{\mathrm{NA}} (\varphi) \cdot \rho + \frac{1}{2} \| \bar{\varphi} \|^2 \cdot \rho^2 \Big{)}
\\
&= - \frac{\| \bar{\varphi} \|^2}{2 (e^L)} \Big{(} \rho + \frac{2\pi M_{\mathrm{NA}} (\varphi)}{\| \bar{\varphi} \|^2} \Big{)}^2 + \frac{2 \pi^2}{(e^L)} \frac{M_{\mathrm{NA}} (\varphi)^2}{ \| \bar{\varphi} \|^2}. 
\end{align*}
Thus we have 
\[ \sup_{\rho \ge 0} C_{\mathrm{NA}} (\varphi_{; \rho}) =
\begin{cases} 
0
& M_{\mathrm{NA}} (\varphi) > 0
\\
\frac{2 \pi^2}{(e^L)} \frac{M_{\mathrm{NA}} (\varphi)^2}{\| \bar{\varphi} \|^2} = \frac{\| \bar{\varphi}_{; \rho_{\max}} \|^2}{2 (e^L)}
& M_{\mathrm{NA}} (\varphi) \le 0
\end{cases}, \]
where the maximum are attained at $\rho_{\max} = 0$ and $\rho_{\max} = - 2\pi M_{\mathrm{NA}} (\varphi) / \| \bar{\varphi} \|^2$, respectively. 
The right hand side is nothing but the normalized Donaldson--Futaki invariant. 
As observed in \cite{Don1, Ino4} (see also \cite{Der1}), we have 
\[ \sup_{\varphi \in \nH (X, L)} C_{\mathrm{NA}} (\varphi) \le \inf_{\omega_\phi \in \mathcal{H} (X, L)} C (\omega_\phi) \]
for smooth $(X, L)$, which can be made into the equality by completing the domain of the functionals \cite{Xia} in a suitable way. 

Now as in \cite{Ino2, Ino4}, let us observe the extremal limit $\lambda \to -\infty$ of the non-archimedean $\mu$-entropy. 
As for $\bm{\check{\sigma}}$, we can easily see the following. 

\begin{prop}
For $\varphi \in \E^{\exp} (X, L)$, we have 
\begin{align*} 
\frac{d}{d\rho}\Big{|}_{\rho=+0} \bm{\check{\sigma}} (\varphi_{;\rho}) 
&= \lim_{\rho \to + 0} \rho^{-1} (\bm{\check{\sigma}} (\varphi_{; \rho}) - \bm{\check{\sigma}} (\varphi_{\mathrm{triv}})) 
= 0, 
\\
\frac{d^2}{d\rho^2}\Big{|}_{\rho=+0} \bm{\check{\sigma}} (\varphi_{;\rho}) 
&= 2 \lim_{\rho \to + 0} \rho^{-2} (\bm{\check{\sigma}} (\varphi_{; \rho}) - \bm{\check{\sigma}} (\varphi_{\mathrm{triv}})) 
= \frac{\| \bar{\varphi} \|^2}{(e^L)}. 
\end{align*}
\end{prop}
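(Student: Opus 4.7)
The plan is to reduce everything to elementary calculus on a moment generating function. By Lemma \ref{DH measure shift}, $\int_\mathbb{R} \chi(t)\,\DHm_{\varphi_{;\rho}} = \int_\mathbb{R} \chi(\rho t)\,\DHm_\varphi$, so setting $f(\rho) := \int_\mathbb{R} e^{-\rho t}\,\DHm_\varphi$ and using that $-f'(\rho) = \int_\mathbb{R} t\,e^{-\rho t}\DHm_\varphi$, a direct rewriting of the definition of $\bm{\check{\sigma}}$ gives
\[ \bm{\check{\sigma}}(\varphi_{;\rho}) = n + \rho\,\frac{f'(\rho)}{f(\rho)} - \log f(\rho). \]
Since $f(0) = (e^L)$ yields $\bm{\check{\sigma}}(\varphi_{\mathrm{triv}}) = n - \log(e^L)$, a second-order Taylor expansion of $\rho f'/f$ and $\log f$ around $\rho = 0$ cancels the constant and linear terms:
\[ \bm{\check{\sigma}}(\varphi_{;\rho}) - \bm{\check{\sigma}}(\varphi_{\mathrm{triv}}) = \tfrac{1}{2}(\log f)''(0)\,\rho^2 + o(\rho^2). \]
A direct computation then gives $(\log f)''(0) = f''(0)/f(0) - (f'(0)/f(0))^2 = \int t^2 \DHm_\varphi / (e^L) - b_\varphi^2 = \|\bar\varphi\|^2/(e^L)$, which yields both claimed derivative values.

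The real work is therefore to justify that $f \in C^2([0,\infty))$ with $f^{(k)}(\rho) = \int_\mathbb{R} (-t)^k e^{-\rho t}\,\DHm_\varphi$ for $k = 0,1,2$. The key observation is that $\varphi \in \E^{\exp}(X,L)$ implies $\int_\mathbb{R} e^{\rho|t|}\,\DHm_\varphi < \infty$ for every $\rho > 0$: by Proposition \ref{concavity} we have $\supp \DHm_\varphi \subset (-\infty, \sup\varphi]$, so $|t|$ is bounded on the positive part of the support, while on the negative part $e^{\rho|t|} = e^{-\rho t}$, which is integrable by the definition of $\E^{\exp}(X,L)$. In particular all polynomial moments of $\DHm_\varphi$ are finite, and for any $\rho_0 > 0$ the family $|t|^k e^{-\rho t}$ is uniformly dominated on $\rho \in [0,\rho_0]$ by the $\DHm_\varphi$-integrable function $|t|^k e^{\rho_0|t|}$. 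The dominated convergence theorem then delivers differentiability of $f$ up to the boundary $\rho = 0^+$ and identifies the derivatives.

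The main obstacle is purely this technical verification that $f$ is $C^2$ at the boundary $\rho = 0$ — once the uniform integrability estimate is in place, the conclusion is a routine Taylor expansion. I would also record as a byproduct the first-order formula $\frac{d}{d\rho}|_{\rho=+0}\bm{\check{\sigma}}(\varphi_{;\rho}) = 0$, which follows at once from the cancellation $(\rho f'/f)'|_0 = (\log f)'(0)$ against $-(\log f)'(0)$ in the derivative of $-\log f(\rho)$. Finally, the identification of the coefficient of $\rho^2$ with $\|\bar\varphi\|^2/(e^L)$ uses the elementary identity $\int(t-b_\varphi)^2\DHm_\varphi = \int t^2\DHm_\varphi - (e^L)b_\varphi^2$ together with $b_\varphi = E(\varphi)/(e^L) = -f'(0)/f(0)$.
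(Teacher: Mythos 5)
Your proof is correct and follows essentially the same route as the paper: both rewrite $\bm{\check{\sigma}}(\varphi_{;\rho})$ in terms of the Laplace transform $f(\rho)=\int_\mathbb{R} e^{-\rho t}\,\DHm_\varphi$, observe that $\frac{d}{d\rho}\bm{\check{\sigma}}(\varphi_{;\rho})=\rho\,(\log f)''(\rho)$ (the paper writes this as $\rho$ times the variance quotient of the tilted measure), and then read off the two limits. Your explicit verification that $f\in C^2$ at $\rho=0^+$ via the bound $\int_\mathbb{R} e^{\rho|t|}\,\DHm_\varphi<\infty$ and dominated convergence is a welcome addition that the paper leaves implicit.
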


\begin{proof}
Recall 
\[ \bm{\check{\sigma}} (\varphi_{;\rho}) = \frac{\int_\mathbb{R} (n-\rho t) e^{-\rho t} \DHm_\varphi }{\int_\mathbb{R} e^{-\rho t} \DHm_\varphi} - \log \int_\mathbb{R} e^{-\rho t} \DHm_\varphi. \]
We compute 
\begin{align*}
\frac{d}{d\rho} \bm{\check{\sigma}} (\varphi_{;\rho}) 
&=\rho \frac{\int_\mathbb{R} t^2 e^{-\rho t} \DHm_\varphi \cdot \int_\mathbb{R} e^{-\rho t} \DHm_\varphi -  (\int_\mathbb{R} t e^{-\rho t} \DHm_\varphi)^2}{(\int_\mathbb{R} e^{-\rho t} \DHm_\varphi)^2}
\end{align*}
and 
\begin{align*}
\frac{d^2}{d\rho^2}\Big{|}_{\rho=0} \bm{\check{\sigma}} (\varphi_{;\rho}) 
&= \lim_{\rho \to 0} \rho^{-1} \frac{d}{d\rho} \bm{\check{\sigma}} (\varphi_{;\rho})
\\
&= \frac{\int_\mathbb{R} t^2 \DHm_\varphi \cdot \int_\mathbb{R} \DHm_\varphi -  (\int_\mathbb{R} t \DHm_\varphi)^2}{(\int_\mathbb{R} \DHm_\varphi)^2}
= \frac{\| \bar{\varphi} \|^2}{(e^L)}.
\end{align*}
\end{proof}

To see the behavior of $\NAmu$, we prepare some computations. 

\begin{lem}
\label{limit computation lemma}
Let $f$ be a right continuous function on $\mathbb{R}$ such that $f (\sigma) e^{-\rho \sigma}$ is integrable for every small $\rho > 0$ and $f (\sigma) = c$ for $\sigma \gg 0$. 
Then we have 
\[ \lim_{\rho \to 0} \rho \int_\mathbb{R} f e^{-\rho \sigma} d\sigma = c \]
and 
\[ \lim_{\rho \to 0} \rho^2 \int_\mathbb{R} f \sigma e^{-\rho \sigma} d\sigma = c \]
\end{lem}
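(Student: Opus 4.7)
The plan is to split $f = c \cdot 1_{[N,\infty)} + g$, where $N$ is chosen so that $f(\sigma) = c$ for $\sigma \ge N$ and $g := f \cdot 1_{(-\infty, N)}$. The first (tail) piece will contribute the asserted limits $c$ in both formulas via direct computation, while the $g$-piece, supported in $(-\infty, N)$, will contribute $0$ in both cases thanks to the integrability hypothesis.

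First I would handle the tail. A direct calculation gives
\[ \rho \int_N^\infty c\, e^{-\rho \sigma}\, d\sigma = c\, e^{-\rho N} \longrightarrow c, \]
and integration by parts yields
\[ \rho^2 \int_N^\infty c \sigma\, e^{-\rho \sigma}\, d\sigma = c\rho N e^{-\rho N} + c e^{-\rho N} \longrightarrow c. \]

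The heart of the argument is then to show that $\rho \int_{-\infty}^N |g(\sigma)| e^{-\rho\sigma}\, d\sigma \to 0$ and $\rho^2 \int_{-\infty}^N |g(\sigma) \sigma| e^{-\rho \sigma}\, d\sigma \to 0$. Fix $\rho_0 > 0$ with $f e^{-\rho_0 \sigma}$ integrable on $\mathbb{R}$; since $e^{-\rho_0 \sigma}$ is bounded away from $0$ on the compact interval $[0, N]$, this also gives $g \in L^1([0, N])$. For $0 < \rho \le \rho_0$ and $\sigma \le 0$, the inequality $e^{-\rho \sigma} \le e^{-\rho_0 \sigma}$ holds, while on $[0, N]$ we have $e^{-\rho \sigma} \le 1$, so
\[ \int_{-\infty}^N |g(\sigma)| e^{-\rho \sigma}\, d\sigma \le \int_{-\infty}^0 |f(\sigma)| e^{-\rho_0 \sigma}\, d\sigma + \int_0^N |f(\sigma)|\, d\sigma < \infty, \]
uniformly in $\rho \in (0, \rho_0]$. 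Multiplying by $\rho$ and letting $\rho \to 0$ gives the first limit.

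For the second limit, I would pick $\rho_1 \in (0, \rho_0)$ and use that $|\sigma|\, e^{(\rho_0 - \rho_1)\sigma}$ is bounded on $(-\infty, 0]$ by some constant $C$, since it vanishes at both $\sigma = 0$ and $\sigma \to -\infty$. This yields $|\sigma| e^{-\rho \sigma} \le C\, e^{-\rho_0 \sigma}$ on $(-\infty, 0]$ for every $\rho \le \rho_1$, and $|\sigma| e^{-\rho \sigma} \le N$ on $[0, N]$, so $\int_{-\infty}^N |g(\sigma) \sigma| e^{-\rho \sigma}\, d\sigma$ is bounded uniformly in $\rho \in (0, \rho_1]$; multiplying by $\rho^2$ gives the desired vanishing. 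There is no real obstacle here — the only subtle point is controlling the blow-up of $e^{-\rho \sigma}$ as $\sigma \to -\infty$ by comparing $\rho$ with a fixed $\rho_0 < \rho_0$ from the integrability hypothesis, and using the weight $|\sigma|$ is absorbed with room to spare.
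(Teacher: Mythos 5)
Your proof is correct and takes essentially the same route as the paper's: isolate an explicit tail model realizing the limit $c$ and kill the remainder with a $\rho$-uniform integrable dominating function obtained from a fixed $\rho_0$ in the integrability hypothesis (the paper uses a piecewise-linear ramp $\chi$ where you use the sharp cutoff $c\,1_{[N,\infty)}$, a purely cosmetic difference, and your explicit choice of $\rho_1<\rho_0$ for the $\sigma$-weighted integral is if anything slightly more careful). The only blemish is the typo ``$\rho_0<\rho_0$'' in your last sentence, which should read ``$\rho_1<\rho_0$''.
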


\begin{proof}
For 
\[ \chi (\sigma) = 
\begin{cases} 
0 
& \sigma < \tau
\\
\sigma - \tau 
& \tau \le \sigma \le \tau + c 
\\
c
& \sigma > \tau + c
\end{cases}, \]
we compute 
\[ \rho \int_\mathbb{R} \chi e^{-\rho \sigma} d\sigma 
=  - \rho^{-1} (e^{-\rho (\tau+c)} - e^{-\rho \tau}) \to c \]
as $\rho \to 0$. 
Similarly, 
\[ \rho^2 \int_\mathbb{R} \chi \sigma e^{-\rho \sigma} d\sigma = - ((\tau +c) e^{-\rho (\tau+c)} - \tau e^{-\rho \tau}) - 2 \rho^{-1} (e^{-\rho (\tau + c)} - e^{-\rho \tau}) \to c. \]
On the other hand, since $f - \chi$ and $(f - \chi) \sigma$ has left bounded support, $|f - \chi| \max \{ e^{-\rho_0 \sigma}, 1 \}$ and $|(f - \chi) \sigma| \max \{ e^{-\rho_0 \sigma}, 1 \}$ are integrable. 
Thus we have uniform bounds 
\[ \Big{|} \int_\mathbb{R} (f - \chi) e^{-\rho \sigma} d\sigma \Big{|} \le \int_\mathbb{R} |f - \chi| \max \{ e^{-\rho_0 \sigma}, 1 \} d\sigma < \infty, \]
\[ \Big{|} \int_\mathbb{R} (f - \chi) \sigma e^{-\rho \sigma} d\sigma \Big{|} \le \int_\mathbb{R} |(f - \chi) \sigma| \max \{ e^{-\rho_0 \sigma}, 1 \} d\sigma < \infty. \]
It follows that 
\[ \lim_{\rho \to 0} \rho \int_\mathbb{R} f e^{-\rho \sigma} d\sigma = c + \lim_{\rho \to 0} \rho \int_\mathbb{R} (f - \chi) e^{-\rho \sigma} d\sigma = c, \]
\[ \lim_{\rho \to 0} \rho^2 \int_\mathbb{R} f \sigma e^{-\rho \sigma} d\sigma = c + \lim_{\rho \to 0} \rho^2 \int_\mathbb{R} (f - \chi) \sigma e^{-\rho \sigma} d\sigma = c. \]
\end{proof}

\begin{prop}
For $\varphi \in \E^{\exp} (X, L)$ and $\psi \in \E^1 (X, L) \cup C^0 (X^{\mathrm{NA}})$, we have 
\[ \frac{d}{d\rho}\Big{|}_{\rho=+0} \int_{X^{\mathrm{NA}}} \psi_{;\rho} \int e^{-t} \mathcal{D}_{\varphi_{; \rho}} = \lim_{\rho \to +0} \int_{X^{\mathrm{NA}}} \psi \int e^{-\rho t} \mathcal{D}_\varphi = \int_{X^{\mathrm{NA}}} \psi \mathrm{MA} (\varphi). \]

For $\varphi \in \PSH^{\mathrm{bdd}} (X, L)$, we have 
\[ \frac{d}{d\rho}\Big{|}_{\rho=+0} \int_{X^{\mathrm{NA}}} A_X \int e^{-t} \mathcal{D}_{\varphi_{; \rho}} = \lim_{\rho \to +0} \int_{X^{\mathrm{NA}}} A_X \int e^{-\rho t} \mathcal{D}_\varphi = \int_{X^{\mathrm{NA}}} A_X \mathrm{MA} (\varphi). \]
\end{prop}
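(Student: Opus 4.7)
The plan is to reduce the statement to the tomographic expression via a scaling identity. First I would establish the scaling identity
\[ \int_{X^{\mathrm{NA}}} \psi_{;\rho} \int e^{-t} \mathcal{D}_{\varphi_{;\rho}} = \rho \int_{X^{\mathrm{NA}}} \psi \int e^{-\rho t} \mathcal{D}_\varphi \]
for $\varphi \in \E^{\exp}(X, L)$, $\psi \in \E^1(X, L) \cup C^0(X^{\mathrm{NA}})$ and $\rho > 0$. For $\varphi = \varphi_{(\mathcal{X}, \mathcal{L})} \in \nH(X, L)$, this is a direct computation: by Corollary \ref{scaling of NA psh} we have $\varphi_{;\rho} = \varphi_{(\mathcal{X}, \mathcal{L}; \rho)}$, so formula (\ref{moment measure for test configuration}) gives $\int e^{-t} \mathcal{D}_{\varphi_{;\rho}} = \sum_E \mathrm{ord}_E \mathcal{X}_0 \int e^{-\rho t} \DHm_{(E, \mathcal{L}|_E)} . \delta_{\rho. v_E}$, and then $\psi_{;\rho}(\rho. v_E) = \rho \psi(v_E)$ yields the identity. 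The extension to general $\varphi \in \E^{\exp}$ and $\psi$ follows by regularizing $\varphi$ by a decreasing sequence $\varphi_i \in \nH(X, L)$ (noting $\varphi_{i;\rho} \searrow \varphi_{;\rho}$) and invoking Theorem \ref{exponential moment measure convergence}.

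The key technical step is the tomographic formula
\[ \int_{X^{\mathrm{NA}}} \psi \int e^{-\rho t} \mathcal{D}_\varphi = \rho \int_\mathbb{R} d\tau\, e^{-\rho \tau} F(\tau) + \psi(v_{\mathrm{triv}}) \int_\mathbb{R} e^{-\rho \tau} \DHm_\varphi, \qquad F(\tau) := \int_{X^{\mathrm{NA}}} (\psi - \psi(v_{\mathrm{triv}})) \mathrm{MA}(\varphi \wedge \tau), \]
which is derived from Proposition \ref{moment measure via distribution} by integration by parts with $\chi(\tau) = e^{-\rho \tau}$ (the factor $\rho$ comes from $\chi'(\tau) = -\rho e^{-\rho \tau}$), exactly as in the proof of Proposition \ref{tomographic expression of exp moment measure}; the boundary terms at $\pm\infty$ vanish by the decay estimate $|F(\tau)| \le C_\varepsilon e^{\tau - \varepsilon |\tau|}$ established there. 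To compute the limit as $\rho \to 0^+$, I apply Lemma \ref{limit computation lemma} to $F$: continuity in $\tau$ comes from Proposition \ref{strong convergence of rooftops}, $F(\tau) = \int (\psi - \psi(v_{\mathrm{triv}})) \mathrm{MA}(\varphi)$ is constant for $\tau \ge \sup \varphi$, and integrability of $e^{-\rho \tau} F(\tau)$ for small $\rho > 0$ follows from the same decay estimate. The second term converges to $\psi(v_{\mathrm{triv}}) (e^L)$ by dominated convergence using $\varphi \in \E^{\exp}$ (so $e^{-\rho_0 t} \DHm_\varphi$ is integrable). Summing and using $\int \mathrm{MA}(\varphi) = (e^L)$ produces $\int \psi \mathrm{MA}(\varphi)$.

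The first equality then follows formally: writing $g(\rho) := \int \psi \int e^{-\rho t} \mathcal{D}_\varphi$ and LHS$(\rho) := \int \psi_{;\rho} \int e^{-t} \mathcal{D}_{\varphi_{;\rho}} = \rho g(\rho)$, since $g(\rho)$ has finite limit $\int \psi \mathrm{MA}(\varphi)$ at $\rho = 0^+$, LHS$(\rho) \to 0$ and the derivative at $\rho = +0$ equals $\lim_{\rho \to 0^+} g(\rho)$. For the log discrepancy case, the same tomographic formula applies with $\psi = A_X$; note $A_X(v_{\mathrm{triv}}) = 0$, so $F_{A_X}(\tau) = \int A_X \mathrm{MA}(\varphi \wedge \tau)$. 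The hypothesis $\varphi \in \PSH^{\mathrm{bdd}}$ gives $F_{A_X}(\tau) = 0$ for $\tau < \inf \varphi$ (as $\varphi \wedge \tau = \tau$ gives Dirac mass at $v_{\mathrm{triv}}$ where $A_X = 0$) and $F_{A_X}(\tau) = \int A_X \mathrm{MA}(\varphi)$ for $\tau \ge \sup \varphi$, and uniform boundedness in between, which is the needed hypothesis for Lemma \ref{limit computation lemma}.

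The main obstacle I anticipate is the careful justification of the tomographic formula for the weight $e^{-\rho t}$ as $\rho \to 0^+$: the decay estimate $|F(\tau)| \le C_\varepsilon e^{\tau - \varepsilon|\tau|}$ is borderline, requiring $\rho < 1 + \varepsilon$ for integrability at $\tau \to -\infty$ and $\rho > 0$ for decay at $\tau \to +\infty$, so some care is needed that the formula and integration by parts are valid for the full range $\rho \in (0, \rho_0)$ we care about. For the log discrepancy case, the uniform boundedness of $\int A_X \mathrm{MA}(\varphi \wedge \tau)$ on $\tau \in [\inf \varphi, \sup \varphi]$ for bounded $\varphi$—and the right-continuity of this function in $\tau$, needed to apply Lemma \ref{limit computation lemma}—is the most delicate point, relying on the fact that $A_X$ is lower semi-continuous and that the measures $\mathrm{MA}(\varphi \wedge \tau)$ vary continuously in the strong topology.
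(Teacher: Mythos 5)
Your treatment of the $\psi \in \E^1 \cup C^0$ case is sound and is essentially the paper's argument: the scaling identity, the tomographic formula with weight $e^{-\rho\tau}$, and Lemma \ref{limit computation lemma} applied to $F(\tau) = \int_{X^{\mathrm{NA}}} (\psi - \psi(v_{\mathrm{triv}}))\,\mathrm{MA}(\varphi\wedge\tau)$. The only difference is that you re-derive the weighted tomographic formula by integration by parts, whereas the paper gets it for free by applying the already-proven Proposition \ref{tomographic expression of exp moment measure} to the rescaled pair $(\varphi_{;\rho}, \psi_{;\rho})$ (both of which stay in the right classes) and then substituting $\tau = \rho\sigma$; this sidesteps your worry about the admissible range of $\rho$, since Lemma \ref{exponential domination} already supplies the bound $e^{-\rho\tau} d_1(\varphi\wedge\tau-\tau,0) \le C e^{-\varepsilon|\tau|}$ for every $\rho>\varepsilon>0$ using finiteness of $E_{\exp}(\varphi_{;\rho+\varepsilon})$.

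The $A_X$ case, however, has a genuine gap. The entire tomographic machinery — Proposition \ref{moment measure via distribution}, the integration by parts, the decay estimate via Proposition \ref{BJ estimate}, and the continuity of $\tau \mapsto \int_{X^{\mathrm{NA}}} \psi\,\mathrm{MA}(\varphi\wedge\tau)$ — is established only for $\psi \in C^0(X^{\mathrm{NA}})$ or $\psi \in \E^1(X,L)$. The log discrepancy $A_X$ is neither: it is merely lower semi-continuous with values in $[0,\infty]$, so along the strongly convergent family $\varphi\wedge\tau_i \to \varphi\wedge\tau$ one only gets $\varliminf_i \int A_X\,\mathrm{MA}(\varphi\wedge\tau_i) \ge \int A_X\,\mathrm{MA}(\varphi\wedge\tau)$, not continuity or right-continuity of $F_{A_X}$; moreover $\int A_X\,\mathrm{MA}(\varphi)$ may equal $+\infty$ even for bounded $\varphi$, in which case $F_{A_X}$ is not integrable against $e^{-\rho\sigma}d\sigma$ and Lemma \ref{limit computation lemma} does not apply. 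The points you flag as "delicate" are exactly the obstruction, and they are not resolvable by routine care — they are why the paper abandons the tomographic route here. The correct argument is a sandwich: the lower bound $\varliminf_{\rho\to 0}\int A_X \int e^{-\rho t}\mathcal{D}_\varphi \ge \int A_X\,\mathrm{MA}(\varphi)$ follows from lower semi-continuity of $A_X$ together with the weak convergence $\int e^{-\rho t}\mathcal{D}_\varphi \to \mathrm{MA}(\varphi)$ (which is exactly the first part of the proposition applied to $\psi \in C^0$); the upper bound follows from the pointwise inequality of measures $\int e^{-\rho t}\mathcal{D}_\varphi \le e^{-\rho\inf\varphi}\,\mathrm{MA}(\varphi)$, checked first for $\varphi\in\nH(X,L)$ from the explicit formula for the moment measure and then extended to $\PSH^{\mathrm{bdd}}(X,L)$ by a decreasing limit. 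This is the one place where the boundedness hypothesis on $\varphi$ is genuinely used, and it works whether or not $\int A_X\,\mathrm{MA}(\varphi)$ is finite.
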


\begin{proof}
For $\psi \in \E^1 (X, L)$, using the above lemma, we compute 
\begin{align*} 
\int_{X^{\mathrm{NA}}} \psi \int e^{-\rho t} \mathcal{D}_\varphi 
&= \rho^{-1} \int_{X^{\mathrm{NA}}} \psi_{; \rho} \int e^{-t} \mathcal{D}_{\varphi_{; \rho}} 
\\
&=\rho^{-1} \Big{(} \int_\mathbb{R} d\tau e^{-\tau} \int_{X^{\mathrm{NA}}} (\psi_{;\rho} - \psi_{;\rho} (v_{\mathrm{triv}})) \mathrm{MA} (\varphi_{;\rho} \wedge \tau) + \psi_{;\rho} (v_{\mathrm{triv}}) \int_\mathbb{R} e^{-\tau} \DHm_{\varphi_{;\rho}} \Big{)}
\\
&= \rho \int_\mathbb{R} d\sigma e^{-\rho \sigma} \int_{X^{\mathrm{NA}}} (\psi - \psi (v_{\mathrm{triv}})) \mathrm{MA} (\varphi \wedge \sigma) + \psi (v_{\mathrm{triv}}) \int_\mathbb{R} e^{-\rho \tau} \DHm_\varphi
\\
&\to \int_{X^{\mathrm{NA}}} \psi \mathrm{MA} (\varphi)
\end{align*}
as $\rho \to 0$. 
Since $\lim_{\rho \to +0} \int_{X^{\mathrm{NA}}} \psi_{; \rho} \int e^{-t} \mathcal{D}_{\varphi_{; \rho}} = 0$, we obtain the first line for $\psi \in \E^1 (X, L)$. 
By uniform approximation, we can also check the same convergence for $\psi \in C^0 (X^{\mathrm{NA}})$. 
In particular, the measure $\int e^{-\rho t} \mathcal{D}_\varphi$ converges weakly to $\mathrm{MA} (\varphi)$ as $\rho \to 0$. 

We similarly have $\lim_{\rho \to +0} \int_{X^{\mathrm{NA}}} A_X \int e^{-t} \mathcal{D}_{\varphi_{; \rho}} = 0$. 
We compute 
\[ \rho^{-1} \int_{X^{\mathrm{NA}}} A_X \int e^{-t} \mathcal{D}_{\varphi_{; \rho}} = \rho^{-1} \int_{X^{\mathrm{NA}}} A_X (\rho. v) \int e^{-\rho t} \mathcal{D}_\varphi = \int_{X^{\mathrm{NA}}} A_X \int e^{-\rho t} \mathcal{D}_\varphi. \]
Since $A_X$ is lsc, we get 
\[ \varliminf_{\rho \to 0} \int_{X^{\mathrm{NA}}} A_X \int e^{-\rho t} \mathcal{D}_\varphi \ge \int_{X^{\mathrm{NA}}} A_X \mathrm{MA} (\varphi). \]
On the other hand, for $\varphi \in \PSH^{\mathrm{bdd}} (X, L)$, we have 
\[ \int e^{-\rho t} \mathcal{D}_\varphi \le e^{-\rho \inf \varphi} \mathrm{MA} (\varphi). \]
Indeed, we can directly check this for $\varphi = \varphi_{(\mathcal{X}, \mathcal{L})} \in \nH (X, L)$ by 
\[ \frac{\int_\mathbb{R} e^{-\rho t} \DHm_{(E, \mathcal{L}|_E)}}{\int_\mathbb{R} \DHm_{(E, \mathcal{L}|_E)}} \le e^{-\rho \inf \varphi}. \]
The general case follows by passing to the limit: $\varliminf_{i \to \infty} \inf \varphi_i \ge \inf \varphi$ for any convergent decreasing net $\varphi_i \searrow \varphi \in \PSH^{\mathrm{bdd}} (X, L)$. 
It follows that we have 
\[ \varlimsup_{\rho \to 0} \int_{X^{\mathrm{NA}}} A_X \int e^{-\rho t} \mathcal{D}_\varphi \le \varlimsup_{\rho \to 0} e^{-\rho \inf \varphi} \int_{X^{\mathrm{NA}}} A_X \mathrm{MA} (\varphi) = \int_{X^{\mathrm{NA}}} A_X \mathrm{MA} (\varphi) \]
for $\varphi \in \PSH^{\mathrm{bdd}} (X, L)$, which shows the claim. 
\end{proof}

We speculate the latter equality holds for general $\varphi \in \E^{\exp} (X, L)$ with finite $\int_{X^{\mathrm{NA}}} A_X \int e^{-t} \mathcal{D}_\varphi < \infty$. 

\begin{prop}
For $\varphi \in \PSH^{\mathrm{bdd}} (X, L)$, we have 
\[ \frac{d}{d\rho}\Big{|}_{\rho=+0} \NAmu (\varphi_{;\rho}) = \lim_{\rho \to + 0} \rho^{-1} (\NAmu (\varphi_{; \rho}) - \NAmu (\varphi_{\mathrm{triv}})) 
= - \frac{2\pi}{(e^L)} M_{\mathrm{NA}} (\varphi). \]
\end{prop}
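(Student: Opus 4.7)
The plan is to write $\NAmu(\varphi_{;\rho}) = -2\pi N(\rho)/D(\rho)$ with
\[
D(\rho) := \iint_{X^{\mathrm{NA}}} e^{-t}\mathcal{D}_{\varphi_{;\rho}} = \int_\mathbb{R} e^{-\rho t}\,\DHm_\varphi, \qquad
N(\rho) := \int_{X^{\mathrm{NA}}} A_X \int e^{-t}\mathcal{D}_{\varphi_{;\rho}} + E_{\exp}^{K_X}(\varphi_{;\rho}),
\]
and to compute the expansion of $N,D$ at $\rho=0$ directly, then assemble via the quotient rule. Since $\varphi \in \PSH^{\mathrm{bdd}}(X,L)$, the Duistermaat--Heckman measure $\DHm_\varphi$ has compact support (by $\supp\DHm_\varphi\subset(-\infty,\sup\varphi]$ together with Proposition \ref{concavity}), so $D$ is entire and $D(0)=(e^L)$, $D'(0)=-\int t\,\DHm_\varphi=-E(\varphi)$.

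For the first summand of $N$, the scaling identity $\int\psi\int e^{-\rho t}\mathcal{D}_\varphi = \rho^{-1}\int\psi_{;\rho}\int e^{-t}\mathcal{D}_{\varphi_{;\rho}}$ applied to $\psi = A_X$ (which is scale-invariant: $A_{X;\rho}=A_X$) gives $\int A_X\int e^{-t}\mathcal{D}_{\varphi_{;\rho}} = \rho\int A_X\int e^{-\rho t}\mathcal{D}_\varphi$. The previous proposition (for $\varphi\in\PSH^{\mathrm{bdd}}$) shows $\int A_X\int e^{-\rho t}\mathcal{D}_\varphi\to\int A_X\mathrm{MA}(\varphi)$ as $\rho\to 0^+$, so the derivative of this summand at $\rho=0$ is $\int A_X\mathrm{MA}(\varphi)$.

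For the second summand I exploit the tomographic expression of Theorem \ref{EexpM continuity}. Writing $K_X=M_1-M_2$ with $M_i$ ample, and using (i) the identity $\varphi_{;\rho}\wedge\tau-\tau=(\varphi\wedge\rho^{-1}\tau-\rho^{-1}\tau)_{;\rho}$ and (ii) the homogeneity $(L_0,\psi_{0;\rho})\cdots(L_n,\psi_{n;\rho})=\rho\cdot(L_0,\psi_0)\cdots(L_n,\psi_n)$ (which follows by multilinearity, the fact that $\mathrm{MA}(\psi_{;\rho})=\rho_*\mathrm{MA}(\psi)$ on $\nH$, and continuous extension), the substitution $\sigma=\rho^{-1}\tau$ yields
\[
E_{\exp}^M(\varphi_{;\rho}) = \rho^2\int_\mathbb{R} f_M(\sigma)\,e^{-\rho\sigma}\,d\sigma, \qquad f_M(\sigma):=\frac{(M,0)\cdot(L,\varphi\wedge\sigma-\sigma)^{\cdot n}}{n!}.
\]
For $\varphi\in\PSH^{\mathrm{bdd}}$ we have $\varphi\wedge\sigma=\sigma$ on $(-\infty,\inf\varphi]$ (hence $f_M\equiv 0$ there) and $\varphi\wedge\sigma=\varphi$ on $[\sup\varphi,\infty)$. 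On the latter interval, the shift identity $(L+sM,\varphi-\sigma)^{\cdot n+1}/(n+1)! = (L+sM,\varphi)^{\cdot n+1}/(n+1)! - \sigma\cdot((L+sM)^{\cdot n})/n!$ (which follows from $E_{L+sM}(\varphi-\sigma)=E_{L+sM}(\varphi)-\sigma((L+sM)^{\cdot n})/n!$) differentiated in $s$ at $s=0$ gives the affine form $f_M(\sigma)=A_M-\sigma(M.e^L)$ with $A_M:=(M,0)\cdot(L,\varphi)^{\cdot n}/n!$. Splitting the integral, the bounded part $\rho^2\int_{\inf\varphi}^{\sup\varphi}f_M(\sigma)e^{-\rho\sigma}d\sigma$ is $O(\rho^2)$, while the tail is computed exactly: $\rho^2\int_{\sup\varphi}^\infty(A_M-\sigma(M.e^L))e^{-\rho\sigma}d\sigma = -(M.e^L)+A_M\rho + O(\rho^2)$. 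Hence $E_{\exp}^M(\varphi_{;\rho})=-(M.e^L)+A_M\rho+O(\rho^2)$, giving derivative $A_M$ at $\rho=0$; by linearity in $M$, the derivative of $E_{\exp}^{K_X}(\varphi_{;\rho})$ at $\rho=0$ equals $(K_X,0)\cdot(L,\varphi)^{\cdot n}/n!$.

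Assembling, $N(0)=-(K_X.e^L)$ and $N'(0)=\int A_X\mathrm{MA}(\varphi)+(K_X,0)\cdot(L,\varphi)^{\cdot n}/n!$, so the quotient rule yields
\[
\frac{d}{d\rho}\Big|_{\rho=0}\NAmu(\varphi_{;\rho}) = -\frac{2\pi}{(e^L)}\Big(\int_{X^{\mathrm{NA}}} A_X\mathrm{MA}(\varphi)+\frac{(K_X,0)\cdot(L,\varphi)^{\cdot n}}{n!}-\frac{(K_X.e^L)}{(e^L)}\frac{(L,\varphi)^{\cdot n+1}}{(n+1)!}\Big) = -\frac{2\pi}{(e^L)}M_{\mathrm{NA}}(\varphi).
\]
The main technical obstacle is step (B): one must justify the homogeneity law for mixed energy pairings, and then handle the large-$\sigma$ tail of $f_M$, which grows linearly rather than being eventually constant, so Lemma \ref{limit computation lemma} does not apply directly. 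The boundedness of $\varphi$ is essential here (both to apply the preceding proposition in step (A) and to confine the non-affine portion of $f_M$ to a compact interval), and this is precisely why the statement is restricted to $\varphi\in\PSH^{\mathrm{bdd}}(X,L)$.
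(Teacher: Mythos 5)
Your proof is correct and follows the same overall architecture as the paper's: write $\NAmu(\varphi_{;\rho})=-2\pi N(\rho)/D(\rho)$, compute $N'(0)$ from the entropy term (via the preceding proposition, which is where $\PSH^{\mathrm{bdd}}$ enters) and from the tomographic expression for $E_{\exp}^{K_X}$, then apply the quotient rule; the final assembly into $M_{\mathrm{NA}}(\varphi)$ is identical. The one place you genuinely diverge is the derivative of $E_{\exp}^M(\varphi_{;\rho})$ at $\rho=0$. The paper differentiates $\rho^2\int_\mathbb{R} f_M(\sigma)e^{-\rho\sigma}d\sigma$ and evaluates the two resulting terms by citing Lemma \ref{limit computation lemma}; but as you correctly observe, that lemma assumes $f$ is eventually \emph{constant}, whereas $f_M(\sigma)=(M,0)\cdot(L,\varphi)^{\cdot n}/n!-\sigma(M.e^L)$ for $\sigma\ge\sup\varphi$ is eventually \emph{affine}, so each of the two limits in the paper's display diverges separately (each carries a $-2(M.e^L)\rho^{-1}$ term) and only their difference converges. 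Your direct expansion — splitting off the compact interval $[\inf\varphi,\sup\varphi]$, computing the affine tail exactly, and reading off $E_{\exp}^M(\varphi_{;\rho})=-(M.e^L)+A_M\rho+O(\rho^2)$ — reaches the same value $A_M=(M,0)\cdot(L,\varphi)^{\cdot n}/n!$ while avoiding the ill-posed separation of limits; in effect you prove the eventually-affine extension of Lemma \ref{limit computation lemma} that the paper implicitly needs. This is a legitimate and welcome tightening of that step. Your supporting identities (the scaling law for energy pairings, the rooftop rescaling identity, and $E_{L+sM}(\varphi+c)=E_{L+sM}(\varphi)+c((L+sM)^{\cdot n})/n!$) are all consistent with what the paper establishes in the proof of Proposition \ref{large limit of mu-entropy} and in section \ref{non-archimedean mu-entropy}, so no gap remains.
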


\begin{proof}
Recall 
\begin{align*} 
\NAmu (\varphi_{;\rho}) 
&= - 2\pi \frac{\int_{X^{\mathrm{NA}}} A_X \int e^{-t} \mathcal{D}_{\varphi_{;\rho}} + E_{\exp}^{K_X} (\varphi_{;\rho})}{\int_\mathbb{R} e^{-\rho t} \DHm_\varphi}.
\end{align*}

For $\varphi \in \E^{\exp} (X, L)$, we show 
\[ \frac{d}{d\rho}\Big{|}_{\rho=+0} E^M (\varphi_{;\rho}) = \frac{(M, 0) \cdot (L, \varphi)^{\cdot n}}{n!}. \]
From the proof of Proposition \ref{large limit of mu-entropy}, we recall 
\begin{align*} 
E_{\exp}^M (\varphi_{; \rho}) 
&= \int_\mathbb{R} \frac{(M, 0) \cdot (L, \varphi_{;\rho} \wedge \tau - \tau)^{\cdot n}}{n!} e^{-\tau} d\tau 
\\
&= \rho^2 \int_\mathbb{R} \frac{(M, 0) \cdot (L, \varphi \wedge \sigma - \sigma)^{\cdot n}}{n!} e^{- \rho \sigma} d\sigma. 
\end{align*}
Again by Lemma \ref{limit computation lemma}, we compute 
\begin{align*} 
\frac{d}{d\rho}\Big{|}_{\rho=+0} E_{\exp}^M (\varphi_{;\rho}) 
&= \lim_{\rho \to +0} 2 \rho \int_\mathbb{R} \frac{(M, 0) \cdot (L, \varphi \wedge \sigma - \sigma)^{\cdot n}}{n!} e^{- \rho \sigma} d\sigma 
\\
&\qquad- \lim_{\rho \to +0} \rho^2 \int_\mathbb{R} \frac{(M, 0) \cdot (L, \varphi \wedge \sigma - \sigma)^{\cdot n}}{n!} \sigma e^{- \rho \sigma} d\sigma 
\\
&= \frac{(M, 0) \cdot (L, \varphi)^{\cdot n}}{n!}. 
\end{align*}

By Leibniz rule, we compute 
\begin{align*} 
\frac{d}{d\rho}\Big{|}_{\rho=+0} \NAmu (\varphi_{;\rho}) 
&= - 2\pi \frac{\int_{X^{\mathrm{NA}}} A_X \mathrm{MA} (\varphi) + (K_X, 0) \cdot (L, \varphi)^{\cdot n}/n!}{\int_\mathbb{R} \DHm_\varphi} + 2\pi (K_X. e^L) \frac{\int_\mathbb{R} t \DHm_\varphi}{( \int_\mathbb{R} \DHm_\varphi )^2}
\\
&= - \frac{2\pi}{(e^L)} M_{\mathrm{NA}} (\varphi). 
\end{align*}
\end{proof}

For $\varphi \in \nH (X, L)$, we can also compute these derivatives by the equivariant intersection formulae. 

\begin{cor}
For $\varphi \in \PSH^{\mathrm{bdd}} (X, L)$, we have 
\[ \frac{d}{d\rho}\Big{|}_{\rho=0} \NAmu^\lambda (\varphi_{;\rho}) = - \frac{2\pi}{(e^L)} M_{\mathrm{NA}} (\varphi). \]
For $\varphi \in \E^2 (X, L)$, we have 
\[ \frac{d}{d\rho}\Big{|}_{\rho=0} C_{\mathrm{NA}} (\varphi_{;\rho}) = - \frac{2\pi}{(e^L)} M_{\mathrm{NA}} (\varphi). \]
\end{cor}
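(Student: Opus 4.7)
The plan is to assemble the corollary from two ingredients already established in the preceding material, with no new analytic input required. The first assertion reduces to combining the previous proposition's two derivative formulae via the definition $\NAmu^\lambda = \NAmu + \lambda \bm{\check{\sigma}}$. Concretely, for $\varphi \in \PSH^{\mathrm{bdd}}(X,L)$, the previous proposition gives
\[ \frac{d}{d\rho}\Big|_{\rho=+0} \NAmu(\varphi_{;\rho}) = -\frac{2\pi}{(e^L)} M_{\mathrm{NA}}(\varphi), \qquad \frac{d}{d\rho}\Big|_{\rho=+0} \bm{\check{\sigma}}(\varphi_{;\rho}) = 0, \]
so linearity of the derivative yields the first claimed equality immediately. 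I should mention that $\PSH^{\mathrm{bdd}}(X,L) \subset \E^{\exp}(X,L)$, which is needed to apply the statement on $\bm{\check{\sigma}}$.

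For the second assertion, I would simply invoke the explicit quadratic expansion of $C_{\mathrm{NA}}(\varphi_{;\rho})$ already derived in the excerpt (right after the definition of $C_{\mathrm{NA}}$), namely
\[ C_{\mathrm{NA}}(\varphi_{;\rho}) = -\frac{1}{(e^L)} \Big( 2\pi M_{\mathrm{NA}}(\varphi)\cdot \rho + \tfrac{1}{2} \|\bar\varphi\|^2 \cdot \rho^2 \Big). \]
Differentiating this polynomial identity in $\rho$ and evaluating at $\rho=0$ picks off the linear coefficient and gives $-\frac{2\pi}{(e^L)} M_{\mathrm{NA}}(\varphi)$ directly. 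The quadratic expansion itself relies on the scalings $M_{\mathrm{NA}}(\varphi_{;\rho}) = \rho\, M_{\mathrm{NA}}(\varphi)$ and $\|\bar\varphi_{;\rho}\|^2 = \rho^2 \|\bar\varphi\|^2$, the latter being a routine computation from $\DHm_{\varphi_{;\rho}} = (t \mapsto \rho t)_* \DHm_\varphi$ recorded in Lemma \ref{DH measure shift}.

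Since both parts are essentially bookkeeping, there is no substantive obstacle: the corollary is meant to package the explicit calculations of the previous proposition and of the $C_{\mathrm{NA}}$-expansion into a single uniform statement exhibiting $-\frac{2\pi}{(e^L)} M_{\mathrm{NA}}(\varphi)$ as the common first-order datum at $\rho = 0$ for both the non-archimedean $\mu$-entropy framework and the Calabi energy framework. The only point worth flagging in writing the proof is the discrepancy between the two regularity hypotheses ($\PSH^{\mathrm{bdd}}$ versus $\E^2$): this reflects that the $\NAmu^\lambda$-computation requires control of $\int_{X^{\mathrm{NA}}} A_X \int e^{-\rho t} \mathcal{D}_\varphi$ as $\rho \to 0$, which in the previous proposition was handled only for bounded $\varphi$, whereas the $C_{\mathrm{NA}}$-computation only needs the second moment of $\DHm_\varphi$ to be finite.
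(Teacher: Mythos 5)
Your proof is correct and is essentially the argument the paper intends (the corollary is stated there without proof): the first claim is just the linear combination $\NAmu^\lambda = \NAmu + \lambda\bm{\check{\sigma}}$ of the two derivative formulae from the preceding propositions, using $\PSH^{\mathrm{bdd}}(X,L)\subset\E^{\exp}(X,L)$ to apply the $\bm{\check{\sigma}}$ statement, and the second is read off from the quadratic expansion of $C_{\mathrm{NA}}(\varphi_{;\rho})$ in $\rho$, whose coefficients come from the homogeneities $M_{\mathrm{NA}}(\varphi_{;\rho})=\rho\,M_{\mathrm{NA}}(\varphi)$ and $\|\bar{\varphi}_{;\rho}\|^2=\rho^2\|\bar{\varphi}\|^2$ that you correctly identify.
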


%

Now we find the following as an expansion of \cite{Ino2}. 

\begin{thm}
For $\varphi \in \PSH^{\mathrm{bdd}} (X, L)$, we have 
\[ \lim_{\rho \to +0} \rho^{-1} (\NAmu^{-\rho^{-1}} (\varphi_{; \rho}) - \NAmu^{-\rho^{-1}} (0)) = C_{\mathrm{NA}} (\varphi). \]
\end{thm}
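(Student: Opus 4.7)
The plan is to reduce the statement to the two preceding propositions computing $\frac{d}{d\rho}\big|_{\rho=0}\NAmu(\varphi_{;\rho})$ and the first two derivatives of $\bm{\check{\sigma}}(\varphi_{;\rho})$ at $\rho = 0$. Starting from the definition $\NAmu^\lambda = \NAmu + \lambda \bm{\check{\sigma}}$ with $\lambda = -\rho^{-1}$, I would first rewrite
\[
\rho^{-1}\bigl(\NAmu^{-\rho^{-1}}(\varphi_{;\rho}) - \NAmu^{-\rho^{-1}}(0)\bigr)
= \rho^{-1}\bigl(\NAmu(\varphi_{;\rho}) - \NAmu(0)\bigr) - \rho^{-2}\bigl(\bm{\check{\sigma}}(\varphi_{;\rho}) - \bm{\check{\sigma}}(0)\bigr),
\]
so that the limit splits into a linear-order contribution from $\NAmu$ and a quadratic-order contribution from $\bm{\check{\sigma}}$.

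Next, for the first summand I would invoke the proposition that, for $\varphi \in \PSH^{\mathrm{bdd}}(X, L)$,
\[
\lim_{\rho \to +0}\rho^{-1}\bigl(\NAmu(\varphi_{;\rho}) - \NAmu(0)\bigr) = -\tfrac{2\pi}{(e^L)}M_{\mathrm{NA}}(\varphi),
\]
which is exactly the proposition established just before the theorem (it rests on expressing $\NAmu$ in terms of $A_X$, $E_{\exp}^{K_X}$, and $\int_\mathbb{R} e^{-\rho t}\DHm_\varphi$, and differentiating at $\rho = 0$ via Lemma \ref{limit computation lemma}). For the second summand, the preceding proposition gives $\frac{d}{d\rho}\big|_{\rho=+0}\bm{\check{\sigma}}(\varphi_{;\rho}) = 0$ and $\frac{d^2}{d\rho^2}\big|_{\rho=+0}\bm{\check{\sigma}}(\varphi_{;\rho}) = \|\bar{\varphi}\|^2/(e^L)$, which rewritten in the form
\[
\lim_{\rho \to +0}\rho^{-2}\bigl(\bm{\check{\sigma}}(\varphi_{;\rho}) - \bm{\check{\sigma}}(0)\bigr) = \tfrac{1}{2}\tfrac{\|\bar{\varphi}\|^2}{(e^L)}
\]
is the second ingredient.

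Combining the two, the limit equals
\[
-\tfrac{2\pi}{(e^L)}M_{\mathrm{NA}}(\varphi) - \tfrac{1}{2}\tfrac{\|\bar{\varphi}\|^2}{(e^L)}
= -\tfrac{1}{(e^L)}\Bigl(2\pi M_{\mathrm{NA}}(\varphi) + \tfrac{1}{2}\|\bar{\varphi}\|^2\Bigr)
= C_{\mathrm{NA}}(\varphi),
\]
which is the claim. The argument is thus essentially a bookkeeping exercise once one has the two asymptotic expansions; no real obstacle remains at this stage. The only subtle point is ensuring the vanishing of $\frac{d}{d\rho}\big|_0 \bm{\check{\sigma}}(\varphi_{;\rho})$ is used correctly, since it is precisely this vanishing that allows the $\rho^{-1}\cdot\lambda = -\rho^{-2}$ prefactor to produce a finite contribution at quadratic order rather than blowing up at linear order. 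All genuine analytic work (monotonic continuity of DH-integrals, tomographic expression of $\int e^{-\rho t}\mathcal{D}_\varphi$, and the asymptotic Lemma \ref{limit computation lemma} handling $\rho \to 0$ for $\rho^k\int f e^{-\rho\sigma}\,d\sigma$) has already been discharged in the two preparatory propositions, so the theorem reduces to formal manipulation.
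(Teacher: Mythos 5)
Your proof is correct and follows exactly the paper's argument: the paper likewise writes the limit as $\frac{d}{d\rho}\big|_{\rho=+0}\NAmu(\varphi_{;\rho}) - \frac{1}{2}\frac{d^2}{d\rho^2}\big|_{\rho=+0}\bm{\check{\sigma}}(\varphi_{;\rho})$ and substitutes the values $-\frac{2\pi}{(e^L)}M_{\mathrm{NA}}(\varphi)$ and $\frac{\|\bar{\varphi}\|^2}{(e^L)}$ from the two preceding propositions. Your remark about the vanishing of the first derivative of $\bm{\check{\sigma}}$ being what prevents the $\rho^{-2}$ prefactor from blowing up is exactly the right point to flag.
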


\begin{proof}
We compute 
\begin{align*} 
\lim_{\rho \to +0} \rho^{-1} (\NAmu^{-\rho^{-1}} (\varphi_{; \rho}) - \NAmu^{-\rho^{-1}} (0)) 
&= \frac{d}{d\rho}\Big{|}_{\rho=+0} \NAmu (\varphi_{;\rho}) - \frac{1}{2} \frac{d^2}{d\rho^2}\Big{|}_{\rho=+0} \bm{\check{\sigma}} (\varphi_{;\rho}) 
\\
&=  - 2\pi (e^L)^{-1} M_{\mathrm{NA}} (\varphi) - \frac{1}{2} (e^L)^{-1} \| \bar{\varphi} \|^2
\end{align*}
\end{proof}

If Conjecture \ref{properness conjecture} holds, then by Proposition \ref{maximizer under properness conjecture} we have a maximizer $\varphi^\rho_{\mathrm{opt}}$ of $\NAmu^{-\rho^{-1}}$ for $\rho > 0$ with $\sup \varphi^\rho_{\mathrm{opt}} = 0$. 
Then the rescaling $\tilde{\varphi}^\rho_{\mathrm{opt}} := \varphi^\rho_{\mathrm{opt}; \rho^{-1}}$ gives a maximizer of the normalized functional $\rho^{-1} (\NAmu^{-\rho^{-1}} (\bullet_{; \rho}) - \NAmu^{-\rho^{-1}} (0))$. 
The author speculates $\tilde{\varphi}^\rho_{\mathrm{opt}}$ converges to a limit $\varphi_{\mathrm{ext}}$ which maximizes $C_{\mathrm{NA}}$. 

\subsubsection{Maximizing non-archimedean Calabi energy}
\label{Maximizing non-archimedean Calabi energy}

Here we discuss the maximization problem for $C_{\mathrm{NA}}$, referring to various fundamental conjectures. 
It is studied in \cite{Xia} that for smooth $(X, L)$ the normalized Donaldson--Futaki invariant extends to the space $\mathcal{R}^2 (X, \omega)$ of geodesic rays in $\mathcal{E}^2 (X, L)$ in an (archimedean) analytic way: we put 
\begin{align*} 
\bm{C}_{\mathrm{ray}} (\ell) 
&:= - \frac{1}{(e^L)} \Big{(} 2 \pi \bm{M}^\infty (\ell) + \frac{1}{2} \|\ell \|^2 \Big{)},
\\ 
\bm{M}^\infty (\ell)
&:= \lim_{t \to \infty} t^{-1} M (\ell_t),
\\
\| \ell \|
&:= \Big{(} \int_X |\dot{\ell}_t|^2 \frac{\omega_{\ell_t}^n}{n!} \Big{)}^{1/2}
\end{align*}
for a geodesic ray $\ell$ in $\mathcal{E}^2 (X, L)$ normalized by $\int_X \dot{\ell}_t \omega_{\ell_t}^n/n! = 0$. 
Here $M$ denotes the Mabuchi functional. 
Similarly as $C_{\mathrm{NA}}$, we have 
\[ \sup_{\rho \ge 0} \bm{C}_{\mathrm{ray}} (\ell_{;\rho}) = 
\begin{cases}
0 
& \bm{M}^\infty (\ell) > 0 
\\
\frac{2\pi^2}{(e^L)} \frac{\bm{M}^\infty (\ell)^2}{\| \ell \|^2} 
& \bm{M}^\infty (\ell) \le 0
\end{cases}, \]
where $\ell_{;\rho}$ denotes the rescaled geodesic $(\ell_{; \rho})_t = \ell_{\rho t}$. 
Then the main theorem of \cite{Xia} shows that $\bm{C}_{\mathrm{ray}}$ admits a maximizing geodesic ray, and the following minimax principle holds for the Calabi energy 
\begin{equation} 
\sup_{\ell \in \mathcal{R}^2 (X, \omega)} \bm{C}_{\mathrm{ray}} (\ell) = \inf_{\omega_\phi \in \mathcal{E}^2 (X, L)} C (\omega_\phi). 
\end{equation}
See also \cite{His} for a similar equality for Ding version of the Calabi energy. 

On the other hand, it is shown by \cite{Li2} that the maximizing geodesic ray is maximal in the sense of \cite{BBJ} (since it destabilizes the Mabuchi energy) and hence is indeed subordinate to a non-archimedean psh metric $\varphi \in \E^2 (X, L)$. 
So we actually have 
\[ \sup_{\varphi \in \E^2 (X, L)} \bm{C}_{\mathrm{ray}} (\ell_\varphi) = \inf_{\omega_\phi \in \mathcal{E}^2 (X, L)} C (\omega_\phi). \]

Therefore, to conclude the existence of a maximizer $\varphi \in \E^2 (X, L)$ of $C_{\mathrm{NA}}$, it suffices to compare $\bm{C}_{\mathrm{ray}} (\ell_\varphi)$ and $C_{\mathrm{NA}} (\varphi)$. 
By \cite[Theorem 1.7]{Li2}, we know $\bm{M}^\infty (\ell_\varphi) \ge M_{\mathrm{NA}} (\varphi)$, so that at least we have 
\[ C_{\mathrm{NA}} (\varphi) \ge \bm{C}_{\mathrm{ray}} (\ell_\varphi). \]
The reverse inequality for maximizing $\varphi \in \E^2 (X, L)$ can be reduced to one of the following conjectures. 

\begin{conj}[Minimax conjecture for Calabi energy]
\label{Minimax conjecture for Calabi energy}
Suppose $X$ has only klt singularities. 
Then we have 
\[ \sup_{\varphi \in \E^2 (X, L)} C_{\mathrm{NA}} (\varphi) = \inf_{\omega_\phi \in \mathcal{E}^2 (X, L)} C (\omega_\phi) \]
with an appropriate definition of the right hand side. 
\end{conj}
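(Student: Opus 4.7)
My proposal splits the conjectured equality into its two inequalities, treating them by quite different means, and identifies the slope formula for the Mabuchi invariant as the principal obstruction. Throughout, I would first establish the smooth case and then attempt a reduction for klt $X$ via resolution and the lsc extension of $A_X$ already available under the klt hypothesis.

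First I would treat the ``easy'' direction $\sup_\varphi C_{\mathrm{NA}}(\varphi) \ge \inf_\phi C(\omega_\phi)$. For $\varphi \in \E^2(X, L)$, the geodesic ray $\ell_\varphi \in \mathcal{R}^2(X, \omega)$ subordinate to $\varphi$ via the embedding $\E^2(X, L) \hookrightarrow \mathcal{R}^2(X, \omega)$ of \cite{BBJ} satisfies Li's slope inequality $\bm{M}^\infty(\ell_\varphi) \ge M_{\mathrm{NA}}(\varphi)$, while the matching of $L^2$-norms $\|\bar{\varphi}\|^2 = \|\ell_\varphi\|^2$ holds by the slope formula for the energy functional (Chen--Tian--Phong--Sturm--Darvas). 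Combining the two yields $C_{\mathrm{NA}}(\varphi) \ge \bm{C}_{\mathrm{ray}}(\ell_\varphi)$, so taking suprema gives
\[ \sup_{\varphi \in \E^2(X, L)} C_{\mathrm{NA}}(\varphi) \;\ge\; \sup_{\varphi \in \E^2(X, L)} \bm{C}_{\mathrm{ray}}(\ell_\varphi). \]
Li's result that every geodesic ray destabilizing the Mabuchi energy is maximal---hence of the form $\ell_\varphi$ for some $\varphi \in \E^2(X, L)$---lets us replace the right-hand side by $\sup_{\ell \in \mathcal{R}^2} \bm{C}_{\mathrm{ray}}(\ell)$, which equals $\inf_{\omega_\phi \in \mathcal{E}^2} C(\omega_\phi)$ by Xia's minimax theorem.

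For the reverse inequality $\sup_\varphi C_{\mathrm{NA}}(\varphi) \le \inf_\phi C(\omega_\phi)$, my strategy is to argue that a maximizer $\varphi^\ast$ of $C_{\mathrm{NA}}$, if it exists, satisfies the slope equality $C_{\mathrm{NA}}(\varphi^\ast) = \bm{C}_{\mathrm{ray}}(\ell_{\varphi^\ast})$, whence $C_{\mathrm{NA}}(\varphi^\ast) \le \sup_\ell \bm{C}_{\mathrm{ray}}(\ell) = \inf_\phi C(\omega_\phi)$. Concretely, I would: (i) use the upper semi-continuity of $C_{\mathrm{NA}}$ with respect to the $d_2$-topology together with an analogue of the properness Conjecture \ref{properness conjecture} adapted to $\E^2(X, L)$ to obtain a maximizer $\varphi^\ast \in \E^2(X, L)$; (ii) pass via BBJ/Darvas to the maximal geodesic ray $\ell_{\varphi^\ast}$ associated to $\varphi^\ast$, which by Li's work already satisfies $\bm{M}^\infty(\ell_{\varphi^\ast}) \ge M_{\mathrm{NA}}(\varphi^\ast)$; (iii) show that if strict inequality held, one could replace $\varphi^\ast$ by the non-archimedean psh metric canonically associated to $\ell_{\varphi^\ast}$ (again from \cite{BBJ}) to obtain $\varphi^{\ast\ast}$ with $\ell_{\varphi^{\ast\ast}} = \ell_{\varphi^\ast}$ but strictly larger $C_{\mathrm{NA}}(\varphi^{\ast\ast})$, contradicting maximality. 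The equality $\|\bar{\varphi^\ast}\|^2 = \|\ell_{\varphi^\ast}\|^2$ is then ensured by the classical $L^2$ slope formula.

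The hard part will be the Mabuchi slope identity $\bm{M}^\infty(\ell_\varphi) = M_{\mathrm{NA}}(\varphi)$ for genuinely non-test-configuration $\varphi \in \E^2(X, L)$, since Li's inequality is known to be strict in general outside $\nH^{\mathbb{R}}(X, L)$. The gap is precisely the entropy defect: one must show that $\int_{X^{\mathrm{NA}}} A_X\, \mathrm{MA}(\varphi)$ controls the asymptotic archimedean entropy $\lim_{t \to \infty} t^{-1} \int_X \log(\omega_{\ell_t}^n/\omega^n)\, \omega_{\ell_t}^n$ along the subordinate geodesic. This is essentially the same obstruction that underlies Conjecture \ref{Regularization of exponential entropy}, and it is closely tied to the regularity properties of $A_X$ on $X^{\mathrm{NA}}$ that are expected to be established in \cite{BJ5}. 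A secondary obstruction is the klt case: Xia's minimax theorem as stated concerns smooth $X$, so one would need to extend $\mathcal{R}^2(X, \omega)$ and $\bm{C}_{\mathrm{ray}}$ to a klt variety with a fixed K\"ahler reference current, which is the ``appropriate definition'' tacitly demanded in the statement of the conjecture.
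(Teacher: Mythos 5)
The statement you are addressing is left as a \emph{conjecture} in the paper: there is no proof of it there, only reductions to other open conjectures (Conjecture \ref{regularization of entropy}, Conjecture \ref{slope formula for entropy}, Conjecture \ref{properness conjecture for Calabi energy} / Conjecture \ref{Egorov type estimate conjecture}), so a complete proof should not be expected from your outline either. Your first direction, $\sup_\varphi C_{\mathrm{NA}}(\varphi) \ge \inf_\phi C(\omega_\phi)$ for smooth $X$, matches the paper's own observation: Li's inequality $\bm{M}^\infty(\ell_\varphi) \ge M_{\mathrm{NA}}(\varphi)$ gives $C_{\mathrm{NA}}(\varphi) \ge \bm{C}_{\mathrm{ray}}(\ell_\varphi)$, and Xia's minimax theorem together with Li's maximality of the destabilizing ray converts $\sup_\ell \bm{C}_{\mathrm{ray}}(\ell) = \inf_\phi C(\omega_\phi)$ into $\sup_\varphi \bm{C}_{\mathrm{ray}}(\ell_\varphi) = \inf_\phi C(\omega_\phi)$. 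That part is sound (for smooth $X$).

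The genuine gap is in the reverse inequality, and your step (iii) does not close it. The correspondence of \cite{BBJ} between maximal geodesic rays and finite-energy non-archimedean metrics is essentially one-to-one: the non-archimedean metric canonically associated to $\ell_{\varphi^\ast}$ is $\varphi^\ast$ itself, so your $\varphi^{\ast\ast}$ coincides with $\varphi^\ast$ and no contradiction with maximality can be extracted. The defect $\bm{M}^\infty(\ell_\varphi) - M_{\mathrm{NA}}(\varphi) \ge 0$ is an intrinsic property of $\varphi$ (failure of the slope formula for the entropy term along the subordinate ray), not an artifact of a poor choice of representative, and Li's inequality is expected to be strict in general outside $\nH^{\mathbb{R}}(X,L)$. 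Hence the inequality $\le$ still rests entirely on Conjecture \ref{slope formula for entropy} (equivalently, on Conjecture \ref{regularization of entropy}), exactly as you concede at the end; your proposal is therefore a conditional reduction, not a proof. Note also that the paper's suggested route to $\le$ in the smooth case differs from yours: it starts from Donaldson's inequality $\sup_{\varphi \in \nH(X,L)} C_{\mathrm{NA}}(\varphi) \le \inf_{\omega_\phi \in \mathcal{H}(X,L)} C(\omega_\phi)$ and then needs (a) the regularization of entropy to pass from $\nH(X,L)$ to $\E^2(X,L)$ on the left and (b) an argument (tied to smoothness of the Calabi flow) to replace $\mathcal{H}(X,L)$ by $\mathcal{E}^2(X,L)$ on the right; your route via a maximizer additionally assumes the properness Conjecture \ref{properness conjecture for Calabi energy}, so it is conditional on strictly more. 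The klt case remains a further unresolved issue, as you note.
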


As for smooth $X$, this conjecture can be reduced to the following inequality from what we observed. 
\[ \sup_{\varphi \in \E^2 (X, L)} C_{\mathrm{NA}} (\varphi) \le \inf_{\omega_\phi \in \mathcal{E}^2 (X, L)} C (\omega_\phi) \]
We recall Donaldson's inequality \cite{Don1} states 
\begin{equation} 
\label{Donaldson inequality}
\sup_{\varphi \in \nH (X, L)} C_{\mathrm{NA}} (\varphi) \le \inf_{\omega_\phi \in \mathcal{H} (X, L)} C (\omega_\phi). 
\end{equation}
If we can replace the right hand side with $\inf_{\omega_\phi \in \mathcal{E}^2 (X, L)} C (\omega_\phi)$, then this conjecture can be reduced to another Conjecture \ref{regularization of entropy}. 
This replacement is related to the smoothness of Calabi flow as remarked in \cite[Remark 4.2]{Xia}. 

The following conjecture provides a more direct way to show $C_{\mathrm{NA}} (\varphi) = \bm{C}_{\mathrm{ray}} (\ell_\varphi)$. 
As observed in \cite{Li2}, this conjecture also follows from Conjecture \ref{regularization of entropy}. 

\begin{conj}[Slope formula for entropy \cite{Li2}]
\label{slope formula for entropy}
The slope of the entropy 
\[ \lim_{t \to \infty} t^{-1} \int_X \log \frac{\mathrm{MA} (\ell_{\varphi, t})}{\mathrm{MA} (\ell_{\varphi, 0})} \mathrm{MA} (\ell_{\varphi, t}) \] 
along the maximal geodesic ray $\{ \ell_{\varphi, t} \}_{t \in [0,\infty)}$ subordinate to $\varphi \in \E^2 (X, L)$ (cf. \cite{BBJ}) is equal to the non-archimedean entropy $\int_{X^{\mathrm{NA}}} A_X \mathrm{MA} (\varphi)$. 
\end{conj}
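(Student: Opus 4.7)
The plan is to reduce the conjecture to two ingredients: the slope formula in the case of test configurations (known by Boucksom--Hisamoto--Jonsson) and a regularization statement for the non-archimedean entropy functional $\varphi \mapsto \int_{X^{\mathrm{NA}}} A_X\,\mathrm{MA}(\varphi)$. For $\varphi = \varphi_{(\mathcal{X}, \mathcal{L})} \in \nH(X, L)$ associated to a normal ample test configuration, the maximal geodesic ray $\ell_\varphi$ is the standard smooth subgeodesic extracted from $(\mathcal{X}, \mathcal{L})$, and the entropy slope
\[
\lim_{t \to \infty} t^{-1} \int_X \log \frac{\mathrm{MA}(\ell_{\varphi, t})}{\mathrm{MA}(\ell_{\varphi, 0})}\,\mathrm{MA}(\ell_{\varphi, t}) = \sum_{E \subset \mathcal{X}_0} (\mathrm{ord}_E \mathcal{X}_0)\, A_X(v_E)\, \frac{(E . \mathcal{L}^{\cdot n})}{n!}
\]
is a classical computation. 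The right-hand side equals $\int_{X^{\mathrm{NA}}} A_X\,\mathrm{MA}(\varphi)$ by the description of $\mathrm{MA}(\varphi)$ reviewed in Section \ref{section: NAmu}, settling the base case.

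For the general case $\varphi \in \E^2(X, L)$, I would take a decreasing regularization $\varphi_i \searrow \varphi$ with $\varphi_i \in \nH(X, L)$. The associated maximal geodesic rays decrease $\ell_{\varphi_i} \searrow \ell_\varphi$ in $\mathcal{E}^2(X, \omega)$ fiberwise in $t$. Combining the monotonicity/convexity of $t \mapsto t^{-1} H(\ell_{\varphi, t})$ along each maximal geodesic ray (Berman--Berndtsson, via the positivity of direct images) with Fatou's lemma, one obtains
\[
\lim_{t \to \infty} t^{-1} H(\ell_{\varphi, t}) \leq \varliminf_i \lim_{t \to \infty} t^{-1} H(\ell_{\varphi_i, t}) = \varliminf_i \int_{X^{\mathrm{NA}}} A_X\,\mathrm{MA}(\varphi_i),
\]
which reduces to $\int_{X^{\mathrm{NA}}} A_X\,\mathrm{MA}(\varphi)$ provided that the regularization can be chosen so that the non-archimedean entropy is continuous along it, i.e.\ an $\E^1$-analogue of Conjecture \ref{Regularization of exponential entropy}. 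The reverse inequality would combine the lower semi-continuity of $\varphi \mapsto \int A_X\,\mathrm{MA}(\varphi)$ with a localized lower bound: for each divisorial $v = v_E$, one tests the archimedean entropy against a Dirac mass by showing that $\mathrm{MA}(\ell_{\varphi, t})$ concentrates near the center of $v$ with mass comparable to $e^{-t(A_X(v) + \varphi(v))}$, via a Bergman-kernel / partial Legendre transform analysis in the spirit of Darvas--Xia; summing over divisorial valuations and using the density $X^{\mathrm{div}} \subset X^{\mathrm{NA}}$ then recovers the full non-archimedean entropy.

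The main obstacle is the regularization step: the functional $\varphi \mapsto \int A_X\,\mathrm{MA}(\varphi)$ is only lower semi-continuous along decreasing nets in $\E^1(X, L)$, and upgrading this to continuity along a well-chosen decreasing sequence in $\nH(X, L)$ is precisely Conjecture \ref{Regularization of exponential entropy}; this is expected to be settled in the klt case by forthcoming work of Boucksom--Jonsson, granting which the above strategy succeeds. A secondary subtlety is commuting the limits $t \to \infty$ and $i \to \infty$: since the archimedean entropy is not continuous in the $d_2$-topology, one needs a uniform rate of convergence of the slopes $t^{-1} H(\ell_{\varphi_i, t})$, which is genuinely delicate and likely requires quantitative control of how the maximal geodesic rays converge in the Darvas metric, perhaps via a non-archimedean quantification of the Berndtsson subharmonicity that produced the monotonicity of $t \mapsto t^{-1} H(\ell_{\varphi, t})$ in the first place.
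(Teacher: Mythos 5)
The statement you are addressing is labelled a \emph{conjecture} in the paper, not a theorem: the paper offers no proof, and immediately after stating it records that, as observed in \cite{Li2}, the conjecture \emph{follows from} Conjecture \ref{regularization of entropy} (regularization of the non-archimedean entropy along a strongly convergent sequence in $\nH (X, L)$). Your proposal is, in substance, exactly that known conditional reduction, and you yourself identify the regularization step as the ``main obstacle.'' Since that step is an open conjecture (only expected to be settled in forthcoming work), what you have written is not a proof of the statement but a reduction of one open conjecture to another; it therefore does not establish the claim, and it also does not go beyond what the paper already attributes to \cite{Li2}.

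Beyond this structural issue, two of your intermediate steps are themselves unproven and would need real work even granting Conjecture \ref{regularization of entropy}. First, the passage from the slopes $\lim_{t} t^{-1} H (\ell_{\varphi_i, t}) = \int_{X^{\mathrm{NA}}} A_X \mathrm{MA} (\varphi_i)$ to the slope along $\ell_\varphi$ requires interchanging the limits in $i$ and $t$; your appeal to ``monotonicity of $t \mapsto t^{-1} H(\ell_{\varphi,t})$ plus Fatou'' does not give the inequality in the direction you assert, because the entropy is not monotone in the potential and the fiberwise convergence $\ell_{\varphi_i, t} \searrow \ell_{\varphi, t}$ only controls the potentials, not their Monge--Amp\`ere densities. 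You acknowledge this but do not resolve it. Second, your ``reverse inequality'' via concentration of $\mathrm{MA} (\ell_{\varphi, t})$ near centers of divisorial valuations with mass $\sim e^{-t (A_X (v) + \varphi (v))}$ is a heuristic with no argument behind it; the inequality $\lim_t t^{-1} H (\ell_{\varphi, t}) \ge \int_{X^{\mathrm{NA}}} A_X \mathrm{MA} (\varphi)$ is in fact the direction already proved in \cite{Li2}, by quite different methods, so you should cite it rather than re-derive it by an unsupported localization argument. In short: the base case for test configurations is correct and standard, but the general case remains conjectural, and your write-up should be presented as a conditional reduction rather than as a proof.
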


The following conjecture provides yet another approach for the maximization problem on $C_{\mathrm{NA}}$, which does not relying on Xia's existence result unlike the above approaches. 
This approach is more close to what we proposed for the maximization problem on the non-archimedean $\mu$-entropy. 

\begin{conj}[Properness of Calabi energy]
\label{properness conjecture for Calabi energy}
Assume $(X, L)$ is klt. 
Consider $d_p$-topology for $1 \le p < 2$ on $\E^2 (X, L)$. 
Then the subset 
\[ \{ \varphi \in \E^2 (X, L) ~|~ E (\varphi) = 0,~ C_{\mathrm{NA}} (\varphi) \ge C \} \]
is compact in $d_p$-topology. 
\end{conj}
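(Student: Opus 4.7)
The approach is to (i) extract uniform $d_2$, $\sup$, and entropy bounds on $S := \{\varphi \in \E^2 (X, L) : E (\varphi) = 0,~ C_{\mathrm{NA}} (\varphi) \ge C\}$ from the defining conditions, (ii) use the entropy bound to extract a $d_1$-convergent subsequence, (iii) upgrade the convergence to $d_p$ by interpolating against the $d_2$-bound, and (iv) close the argument by upper semi-continuity of $C_{\mathrm{NA}}$.

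\textit{A priori bounds.} Rewrite the condition $C_{\mathrm{NA}} (\varphi) \ge C$ as $2\pi M_{\mathrm{NA}} (\varphi) + \tfrac{1}{2} \|\bar{\varphi}\|^2 \le -C (e^L)$. Under $E (\varphi) = 0$ the Mabuchi functional reads $M_{\mathrm{NA}} (\varphi) = \int_{X^{\mathrm{NA}}} A_X \MANA (\varphi) + \tfrac{1}{n!} (K_X, 0) \cdot (L, \varphi)^{\cdot n}$, where the first term is nonnegative by the klt hypothesis and the second is controlled by $C_1 d_1 (\varphi, 0)$ via Theorem \ref{BJ estimate 2}. Combined with the Cauchy--Schwarz bound $d_1 (\varphi, 0) \le \sqrt{(e^L)} \|\bar{\varphi}\|$ (valid since $b_\varphi = 0$, so $\|\bar{\varphi}\| = d_2 (\varphi, 0)$), this yields a quadratic inequality in $\|\bar{\varphi}\|$ and hence a uniform bound $d_2 (\varphi, 0) \le M$. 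Feeding this back produces the entropy bound $\int_{X^{\mathrm{NA}}} A_X \MANA (\varphi) \le E^\ast$. Finally, Brunn--Minkowski concavity of $F_\varphi^{1/n}$ from Proposition \ref{concavity}, used with $\int t \DHm_\varphi = 0$, gives $d_1 (\varphi, 0) \ge \frac{2 (e^L)}{n+1} \sup \varphi$, so $\sup \varphi$ is also uniformly bounded (and nonnegative).

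\textit{Compactness and closure.} Equipped with uniform bounds on $\sup \varphi$ and on the entropy, one invokes strong sequential precompactness of the sublevel set $\{\sup \varphi \le M',~ \int A_X \MANA (\varphi) \le E^\ast\} \subset \E^1 (X, L)$ to extract $\varphi_i \to \varphi_\infty$ in $(\E^1, d_1)$. Since the $d_2$-bound persists to the limit and $d_1 (\varphi_i, \varphi_\infty) \to 0$, the interpolation inequality from Remark \ref{dp distance},
\[ d_p (\varphi_i, \varphi_\infty)^p \le d_1 (\varphi_i, \varphi_\infty)^{2-p} d_2 (\varphi_i, \varphi_\infty)^{2 (p-1)}, \]
upgrades the convergence to $d_p$ for every $1 \le p < 2$. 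For closedness of $S$ under $d_p$-convergence it suffices to verify $d_1$-upper semi-continuity of $C_{\mathrm{NA}}$: the entropy $\int_{X^{\mathrm{NA}}} A_X \MANA (\varphi)$ is $d_1$-lower semi-continuous because $\MANA (\varphi_i) \to \MANA (\varphi)$ strongly and $A_X$ is lsc on $X^{\mathrm{NA}}$; the energy pairing $(K_X, 0) \cdot (L, \varphi)^{\cdot n}$ is $d_1$-continuous by Theorem \ref{BJ estimate 2}; the quadratic term $\|\bar{\varphi}\|^2 = \int_\mathbb{R} t^2 \DHm_\varphi$ is $d_1$-lower semi-continuous by Proposition \ref{lower semi-continuity of norms in d1 topology}; and $E$ is $d_1$-continuous. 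Hence $\varphi_\infty \in S$.

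\textit{Main obstacle.} The decisive step is the strong-compactness input: passing from a $d_1$-bounded set with bounded entropy to a $d_1$-convergent subsequence. This is a non-archimedean counterpart of the Darvas--Di Nezza--Lu compactness for bounded Mabuchi energy, and even assuming the continuity of envelopes for $(X, L)$ its general validity on $\E^1 (X, L)$ for arbitrary klt polarized varieties is entangled with open questions in \cite{BJ3, BJ5} around regularization of the entropy functional. An alternative route bypassing a general entropy-compactness statement would associate to each $\varphi \in S$ a maximal geodesic ray $\ell_\varphi$ in $\mathcal{E}^2 (X, \omega)$ and transfer Xia's archimedean compactness along $\varphi \mapsto \ell_\varphi$, but this requires the non-archimedean slope formula for the entropy (Conjecture \ref{slope formula for entropy}) in a generality not currently available. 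The remaining steps of the plan are essentially formal once the compactness input is granted.
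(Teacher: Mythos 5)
The statement you are proving is stated in the paper as a \emph{conjecture}: the paper offers no proof of it, only (a) a proposition establishing $d_2$-boundedness and relative \emph{weak} compactness of the set under the extra hypotheses that $C_{\mathrm{NA}}$ is bounded above and that the continuity of envelopes holds, and (b) a reduction, for smooth $X$ and modulo Conjecture \ref{regularization of entropy}, to Conjecture \ref{Egorov type estimate conjecture} (``every $d_2$-bounded weakly convergent sequence in $\E^2(X,L)$ is $d_1$-convergent''). Your proposal therefore cannot be a complete proof, and indeed the obstacle you flag yourself is exactly the genuine gap: the passage from the a priori bounds to a $d_1$-convergent subsequence. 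No non-archimedean analogue of the Darvas--Di Nezza--Lu entropy-compactness theorem is available here (even granting continuity of envelopes, the paper only extracts a \emph{weakly} convergent subnet and explicitly does not know whether $E$, let alone $d_1$, passes to that limit), and the alternative route through maximal geodesic rays founders on Conjecture \ref{slope formula for entropy}. Everything downstream of that input in your write-up is sound: the interpolation $d_p^{\,p}\le d_1^{\,2-p}d_2^{\,2(p-1)}$ from Remark \ref{dp distance} and the $d_1$-upper semi-continuity of $C_{\mathrm{NA}}$ (lower semi-continuity of the entropy and of $\|\bar\varphi\|^2$ via Proposition \ref{lower semi-continuity of norms in d1 topology}, continuity of the energy pairing via Theorem \ref{BJ estimate 2}) are exactly the closing arguments the paper envisages.

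Two remarks on the parts you did carry out. Your derivation of the $d_2$-bound is a genuine improvement on the paper's: by using $A_X\ge 0$ (klt) to drop the entropy term and bounding $(K_X,0)\cdot(L,\varphi)^{\cdot n}$ linearly in $d_1(\varphi,0)\le\sqrt{(e^L)}\,\|\bar\varphi\|$, you obtain the quadratic inequality in $\|\bar\varphi\|$ without assuming global upper-boundedness of $C_{\mathrm{NA}}$, which the paper's corresponding proposition does assume. On the other hand, the explicit inequality $d_1(\varphi,0)\ge\frac{2(e^L)}{n+1}\sup\varphi$ you extract from Proposition \ref{concavity} is false as stated (the uniform measure on $[-a,a]$ gives $d_1=(e^L)a/2$, violating it for $n\le 2$); some bound $d_1(\varphi,0)\ge c\,(e^L)\sup\varphi$ does follow from the concavity of $F_\varphi^{1/n}$ together with $\int t\,\DHm_\varphi=0$, but it is simpler to bound $\sup\varphi$ directly via $(e^L)|\sup\varphi|\le\bar I(\varphi,0)\le C_n d_1(\varphi,0)$ as the paper does.
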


We note the subset is not compact in $d_2$-topology as we can see in the following toric example. 
This is the reason we consider $E_{\exp}$-topology in analogous Conjecture \ref{properness conjecture} rather than stronger $d_{\exp}$-topology. 

\begin{eg}
Consider a polarized toric normal variety $(X, L)$ and the associated toric polytope $P$. 
As explained in section \ref{NAmu via Legendre dual}, for a lower semi-continuous convex function $q$ on $P$, we can assign a non-archimedean psh metric $\varphi_q$ on $(X, L)$. 
Thanks to \cite[Proposition 6.3]{Li2}, we have 
\[ C_{\mathrm{NA}} (\varphi_q) = - \frac{2\pi}{\int_P d\mu} \Big{(} \int_{\partial P} q d\sigma + \frac{(K_X. e^L)}{(e^L)} \int_P q d\mu \Big{)} - \frac{1}{2 \int_P d\mu} \int_P (q- \bar{q})^2 d\mu. \]

Now consider the toric polytope $P = [0,1]^2$ and the following sequence of convex functions on $P$: 
\[ q_n = \max \{ 0, n - n^2 (x+ y) \} - \frac{1}{6n}. \]
We can compute $\int_{\partial P} q_n d\sigma = 1 - 2/3n$, $\int_P q_n d\mu = 0$ and $\int_P q_n^2 d\mu = 1/12 - (1/6n)^2$. 
It follows that $C_{\mathrm{NA}} (\varphi_{q_n}) = - 2\pi (1- 2/3n) - (1/12 - (1/6n)^2)/2 \ge -2\pi - 1/24$ is bounded and $E (\varphi_{q_n}) = 0$. 
Since $d_1 (\varphi_{q_n}, 0) = \int_P |q_n| d\mu \le 1/3n$, the corresponding sequence of non-archimedean psh metrics $\varphi_{q_n}$ converges to the trivial metric $\varphi_{\mathrm{triv}} = 0$ in $d_1$-topology (or even $d_p$-topology for $p < 2$), but it contains no subsequence converging in $d_2$-topology as $d_2 (\varphi_{q_n}, 0) \ge 1/18$. 
\end{eg}

This conjecture implies the existence of maximizers. 

\begin{prop}
Assuming Conjecture \ref{properness conjecture for Calabi energy}, there exists a maximizer $\varphi_{\mathrm{ext}} \in \E^2 (X, L)$ of $C_{\mathrm{NA}}$. 
\end{prop}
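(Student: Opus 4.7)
The plan is to follow the same template as Proposition~\ref{maximizer under properness conjecture}: extract a convergent subsequence from a maximizing sequence via the compactness provided by the conjecture, and conclude with upper semi-continuity. First I would observe that $C_{\mathrm{NA}}$ is translation invariant on $\E^2(X,L)$. Indeed, $\|\bar{\varphi}\|^2 = \int (t - b_\varphi)^2 \DHm_\varphi$ is manifestly invariant under $\varphi \mapsto \varphi + c$ because $\DHm_{\varphi+c} = (t \mapsto t+c)_*\DHm_\varphi$ shifts $b_\varphi$ by $c$. For the Mabuchi term, a quick multilinearity check gives $(K_X, 0)\cdot(L,\varphi+c)^{\cdot n}/n! = (K_X,0)\cdot(L,\varphi)^{\cdot n}/n! + c(K_X\cdot e^L)$ and $E(\varphi+c) = E(\varphi) + c(e^L)$, so the two shift terms cancel in $M_{\mathrm{NA}}$. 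The entropy term $\int A_X \mathrm{MA}(\varphi)$ is obviously invariant. Hence we may normalize any maximizing sequence so that $E(\varphi_i) = 0$.

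Next, take a maximizing sequence $\{\varphi_i\} \subset \E^2(X,L)$ with $C_{\mathrm{NA}}(\varphi_i) \nearrow \sup_{\E^2} C_{\mathrm{NA}}$; this supremum is finite since $C_{\mathrm{NA}}(\varphi_{\mathrm{triv}}) = 0$ is an obvious lower bound. After the normalization above, the sequence lies in the set $\{\varphi \in \E^2(X,L) \mid E(\varphi)=0,\ C_{\mathrm{NA}}(\varphi) \geq C\}$ for any $C < \sup C_{\mathrm{NA}}$ (and all $i$ sufficiently large). By Conjecture~\ref{properness conjecture for Calabi energy}, this set is compact in the $d_p$-topology for some (any) $1 \leq p < 2$. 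So some subsequence $\varphi_j$ converges in $d_p$ to a limit $\varphi_{\mathrm{ext}} \in \E^2(X,L)$.

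To conclude, I would verify that $C_{\mathrm{NA}}$ is upper semi-continuous with respect to the $d_1$-topology; since $d_1 \leq (e^L)^{1-1/p} \cdot d_p$, this $d_p$-limit then satisfies $C_{\mathrm{NA}}(\varphi_{\mathrm{ext}}) \geq \varlimsup_j C_{\mathrm{NA}}(\varphi_j) = \sup C_{\mathrm{NA}}$, making $\varphi_{\mathrm{ext}}$ a maximizer. The Mabuchi functional $M_{\mathrm{NA}}$ is lower semi-continuous in $d_1$-topology by the Boucksom--Jonsson theory (it arises as a usc-decreasing-limit of extensions of its definition on $\nH$, with the entropy part $\int A_X \mathrm{MA}(\varphi)$ lsc in $d_1$ via Proposition~\ref{double limit} and the lsc extension of $A_X$, while the remaining energy pairings are $d_1$-continuous). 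For the second piece, write $\|\bar\varphi\|^2 = \int t^2 \DHm_\varphi - b_\varphi^2 (e^L)$: the first summand is lsc in $d_1$ by Proposition~\ref{lower semi-continuity of norms in d1 topology} applied to $\chi(t) = t^2$ (after decomposing as difference of non-negative increasing/decreasing parts on $(-\infty, 0]$ and $[0,\infty)$ using a cutoff, as in the proof there), and $b_\varphi^2 = (E(\varphi)/(e^L))^2$ is $d_1$-continuous. Thus $C_{\mathrm{NA}} = -(2\pi M_{\mathrm{NA}} + \tfrac{1}{2}\|\bar\varphi\|^2)/(e^L)$ is usc in $d_1$, completing the proof.

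The \emph{main obstacle} is the last step: cleanly establishing $d_1$-lower semi-continuity of $\|\bar\varphi\|^2$, since $\chi(t) = t^2$ is neither monotonic, nor of left bounded support, nor tame in the sense of Definition~\ref{tame}. The workaround is standard — split $\chi = \chi_+ + \chi_-$ on two half-lines, approximate each piece from below by truncated non-negative continuous functions with left-bounded (resp.~right-bounded, handled separately) support, and combine monotone convergence with Proposition~\ref{lower semi-continuity of norms in d1 topology}. This is essentially a technical lemma, but it is the only step that genuinely uses the structure of $\E^2$ rather than abstract compactness.
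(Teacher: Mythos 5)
Your proof is correct and follows essentially the same route as the paper: upper semi-continuity of $C_{\mathrm{NA}}$ in $d_1$ (via lower semi-continuity of $\|\bar{\varphi}\|^2$ from Proposition \ref{lower semi-continuity of norms in d1 topology} and of $M_{\mathrm{NA}}$), combined with a maximizing sequence and the compactness supplied by Conjecture \ref{properness conjecture for Calabi energy}. Two small remarks: the ``main obstacle'' you flag is not one, since Proposition \ref{lower semi-continuity of norms in d1 topology} is stated for arbitrary non-negative continuous $\chi$ and so applies directly to $\chi(t)=t^2$ with no decomposition needed; and your explicit normalization of the maximizing sequence to $E(\varphi_i)=0$ via translation invariance of $C_{\mathrm{NA}}$ is a step the paper's proof tacitly assumes but does not spell out.
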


\begin{proof}
For a $d_1$-convergent sequence $\varphi_i \to \varphi \in \E^2 (X, L)$, we have $\varliminf_{i \to \infty} \| \bar{\varphi}_i \| \le \| \bar{\varphi} \|$ by Proposition \ref{lower semi-continuity of norms in d1 topology}, so that 
\[ \varlimsup_{i \to \infty} C_{\mathrm{NA}} (\varphi_i) \le C_{\mathrm{NA}} (\varphi). \]

Take a sequence $\varphi_i \in \E^2 (X, L)$ so that $C_{\mathrm{NA}} (\varphi_i) \nearrow \sup C_{\mathrm{NA}}$. 
By Conjecture \ref{properness conjecture for Calabi energy}, we have a $d_1$-convergent subsequence $\varphi_j \to \varphi$. 
Then we get 
\[ \sup C_{\mathrm{NA}} = \varlimsup_{j \to \infty} C_{\mathrm{NA}} (\varphi_j) \le C_{\mathrm{NA}} (\varphi) \le \sup C_{\mathrm{NA}}. \]
Thus the limit $\varphi$ attains the maximum. 
\end{proof}

In view of the following proposition, this conjecture is analogous to the following fact: any $L^p$-bounded almost everywhere convergent sequence of measurable functions on a finite measure space converges in $L^q$-topology for any $q < p$. 
Recall this is a consequence of Egorov's theorem. 

\begin{prop}
Suppose $C_{\mathrm{NA}}$ is bounded from above on $\E^2 (X, L)$, and the continuity of envelopes holds for $(X, L)$. 
Then for any $C \in \mathbb{R}$, the subset 
\[ \{ \varphi \in \E^2 (X, L) ~|~ E (\varphi) = 0,~ C_{\mathrm{NA}} (\varphi) \ge C \} \]
is $d_2$-bounded and relatively weakly compact. 
\end{prop}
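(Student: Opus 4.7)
The strategy is to first extract $d_2$-boundedness from the hypothesis by comparing $C_{\mathrm{NA}}(\varphi)$ with $C_{\mathrm{NA}}(\varphi_{;\rho})$ at a convenient rescaling, then to translate this into a uniform upper bound on $\sup\varphi$ via the Boucksom--Jonsson comparison of $d_1$ with $\bar{I}$ in Proposition \ref{Ibar and d1}, and finally to invoke the weak compactness of the normalized slice granted by the continuity of envelopes.

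For the $d_2$-bound, set $C^* := \sup_{\E^2(X, L)} C_{\mathrm{NA}} < \infty$. Recall from the quadratic expansion in the paper that
\[ C_{\mathrm{NA}}(\varphi_{;\rho}) = -\tfrac{1}{(e^L)}\bigl(2\pi \rho M_{\mathrm{NA}}(\varphi) + \tfrac{1}{2}\rho^2 \|\bar{\varphi}\|^2\bigr), \]
and $E(\varphi) = 0$ forces $b_\varphi = 0$, so $\|\bar{\varphi}\|^2 = \int t^2\,\DHm_\varphi = d_2(\varphi, 0)^2$ (using $\DHm_{\varphi, \varphi_{\mathrm{triv}}} = \DHm_\varphi$). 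Since $\varphi_{;1/2}$ lies in $\E^2(X, L)$, the inequalities $C_{\mathrm{NA}}(\varphi) \geq C$ and $C_{\mathrm{NA}}(\varphi_{;1/2}) \leq C^*$ read respectively
\[ -2\pi M_{\mathrm{NA}}(\varphi) \geq \tfrac{1}{2}\|\bar{\varphi}\|^2 + C(e^L), \qquad -2\pi M_{\mathrm{NA}}(\varphi) \leq 2C^*(e^L) + \tfrac{1}{4}\|\bar{\varphi}\|^2, \]
and chaining them yields $\|\bar{\varphi}\|^2 \leq 4(2C^* - C)(e^L)$. Hence $d_2(\varphi, 0) \leq M$ for a uniform constant $M$.

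For the $\sup$-bound, Cauchy--Schwarz gives $d_1(\varphi, 0) = \int_{\mathbb{R}} |t|\,\DHm_\varphi \leq \sqrt{(e^L)}\,d_2(\varphi, 0) \leq M\sqrt{(e^L)}$. Since $E(\varphi) = \int t\,\DHm_\varphi = 0$ and $\operatorname{supp}\DHm_\varphi \subset (-\infty, \sup\varphi]$ (by Proposition \ref{concavity} whenever $\int\DHm_\varphi \neq 0$), we have $\sup\varphi \geq 0$. Applying Proposition \ref{Ibar and d1} together with $I(\varphi, 0) \geq 0$,
\[ (e^L)\sup\varphi \leq I(\varphi, 0) + (e^L)|\sup\varphi| = \bar{I}(\varphi, 0) \leq C_n d_1(\varphi, 0), \]
so $0 \leq \sup\varphi \leq C_n M/\sqrt{(e^L)}$ is uniformly bounded. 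For relative weak compactness, take any sequence $\{\varphi_i\}$ in the set and put $c_i := \sup\varphi_i \in [0, B]$ for a uniform $B$. Under the continuity of envelopes, the slice $\{\psi \in \PSH(X, L) : \sup\psi = 0\}$ is weakly compact by \cite[Corollary 4.58]{BJ3} (cited in the paper's review), so the translated metrics $\varphi_i - c_i$ admit a weakly convergent subsequence with limit $\psi$, while $c_i$ has a convergent subsequence with limit $c$. Then $\varphi_i \to \psi + c$ weakly in $\PSH(X, L)$.

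The main point making the argument go through is the non-trivial direction $\bar{I} \leq C_n d_1$ of Proposition \ref{Ibar and d1}, which transfers a moment bound into pointwise control at $v_{\mathrm{triv}}$; without this one would be forced into a more delicate Brunn--Minkowski argument combining the concavity of $F_\varphi^{1/n}$ in Proposition \ref{concavity} with Chebyshev-type bounds to estimate $\sup\varphi$ from $d_2(\varphi, 0)$ and $E(\varphi) = 0$. The continuity of envelopes hypothesis is used only at the last step, precisely to upgrade uniform boundedness of $\sup\varphi$ into genuine weak sequential compactness in $\PSH(X, L)$.
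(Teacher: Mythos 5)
Your proof is correct and follows essentially the same route as the paper: a quadratic inequality in $\|\bar{\varphi}\|$ extracted from the upper bound on $C_{\mathrm{NA}}$ along rescalings (the paper takes $\sup_{\rho\ge 0}C_{\mathrm{NA}}(\varphi_{;\rho})$ with a case split on the sign of $M_{\mathrm{NA}}(\varphi)$, whereas you simply evaluate at $\rho=1/2$, which is a cosmetic simplification), followed by $\bar{I}(\varphi,0)\le C_n d_1(\varphi,0)\le C_n\sqrt{(e^L)}\,d_2(\varphi,0)$ to bound $\sup\varphi$, and then the weak compactness of the normalized slice under continuity of envelopes. The only nitpick is that in the last step you should extract a weakly convergent \emph{subnet} rather than a subsequence (the weak topology on $\PSH(X,L)$ is not known to be sequential), exactly as the paper does.
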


\begin{proof}
Take $\varphi$ in the subset. 
If $M_{\mathrm{NA}} (\varphi) > 0$, we have 
\[ -C \ge -C_{\mathrm{NA}} (\varphi) \ge \frac{\| \bar{\varphi} \|^2}{2(e^L)}. \]
If $M_{\mathrm{NA}} (\varphi) \le 0$, we have 
\[ \sup_{\rho \ge 0} C_{\mathrm{NA}} (\varphi_{;\rho}) = \frac{2\pi^2}{(e^L)} \frac{M_{\mathrm{NA}} (\varphi)^2}{\| \bar{\varphi} \|^2}. \]
Since $C_{\mathrm{NA}}$ is bounded from above on $\E^2 (X, L)$, we have a constant $C' \ge \max \{ 0, C \}$ such that 
\[ -2\pi M_{\mathrm{NA}} (\varphi) \le \sqrt{2 (e^L) C'} \cdot \| \bar{\varphi} \|. \]
It follows that 
\begin{align*} 
-C \ge -C_{\mathrm{NA}} (\varphi) 
&\ge \frac{1}{(e^L)} \Big{(} - \sqrt{2 (e^L) C'} \cdot \| \bar{\varphi} \| + \frac{1}{2} \| \bar{\varphi} \|^2 \Big{)} 
\\
&= \frac{1}{2 (e^L)} \Big{(} \| \bar{\varphi} \| - \sqrt{2 (e^L) C'} \Big{)}^2 - C', 
\end{align*}
hence we get 
\[ \sqrt{2 (e^L) (C' - C)} + \sqrt{2 (e^L) C'} \ge \| \bar{\varphi} \|. \]
Therefore, $d_2 (0, \varphi) = \| \bar{\varphi} \|$ is uniformly bounded on the subset in either cases. 

Since $\bar{I} (\varphi, 0) \le C_n d_1 (0, \varphi) \le C_n d_2 (0, \varphi)$ by \cite[Lemma 5.5]{BJ4}, $\sup \varphi$ is uniformly bounded on the subset. 
Therefore, under the continuity of envelopes, any sequence in the subset contains a weakly convergent subnet by \cite[Corollary 4.58]{BJ3} (cf. \cite[Corollary 6.5]{BJ1}). 
At the moment, we do not know if $\lim_{i \to \infty} E (\varphi_i) = E (\varphi)$ for the $d_2$-bounded weakly convergent net $\varphi_i \to \varphi$. 
\end{proof}

When $X$ is smooth, we know the boundedness of $C_{\mathrm{NA}}$ on $\nH (X, L)$ thanks to Donaldson's inequality (\ref{Donaldson inequality}). 
The boundedness on $\E^2 (X, L)$, which is the assumption of the proposition, holds especially when the following conjecture holds. 
The conjecture is confirmed for $T$-invariant metrics on toric varieties (cf. \cite[Proposition 6.3]{Li2}). 

\begin{conj}[Regularization of entropy \cite{BJ2, Li2}]
\label{regularization of entropy}
For any $\varphi \in \E^1 (X, L)$, there exists a sequence $\{ \varphi_i \}_{i \in \mathbb{N}} \subset \nH (X, L)$ converging to $\varphi$ in the strong topology such that 
\[ \lim_{i \to \infty} \int_{X^{\mathrm{NA}}} A_X \mathrm{MA} (\varphi_i) = \int_{X^{\mathrm{NA}}} A_X \mathrm{MA} (\varphi). \]
\end{conj}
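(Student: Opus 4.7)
The conjecture is open; the plan below describes the natural strategy and pinpoints the obstruction. The easy half is automatic: for any $\varphi \in \E^1(X, L)$ there is a decreasing sequence $\psi_i \searrow \varphi$ with $\psi_i \in \nH(X, L)$ by the countable regularization theorem invoked in the excerpt, this convergence is automatically strong in $\E^1$, and consequently $\mathrm{MA}(\psi_i) \to \mathrm{MA}(\varphi)$ strongly. Since $A_X$ is lower semi-continuous on $X^{\mathrm{NA}}$ under the klt hypothesis, the Portmanteau-type inequality yields
\[ \liminf_{i \to \infty} \int_{X^{\mathrm{NA}}} A_X \mathrm{MA}(\psi_i) \;\ge\; \int_{X^{\mathrm{NA}}} A_X \mathrm{MA}(\varphi), \]
so the content of the conjecture is the construction of a (not necessarily monotone) sequence in $\nH(X,L)$ realizing the reverse inequality.

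I would attack this through SNC retractions. Fix a proper SNC model $\mathcal{X}$ of $X$ with dual complex $\Delta(\mathcal{X}) \subset X^{\mathrm{qm}}$ and retraction $p_{\mathcal{X}}: X^{\mathrm{NA}} \to \Delta(\mathcal{X})$. On $\Delta(\mathcal{X})$ the log discrepancy $A_X$ is continuous and bounded, and as $\mathcal{X}$ ranges over a cofinal system one has the pointwise monotone limit $A_X \circ p_{\mathcal{X}} \nearrow A_X$ (the fact used in the proof of Proposition \ref{comparison of NAmu and H}); by monotone convergence $\int A_X \circ p_{\mathcal{X}} \mathrm{MA}(\varphi) \nearrow \int A_X \mathrm{MA}(\varphi)$. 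Crucially, whenever $\psi \in \nH(X, L)$ is represented by a test configuration whose divisorial valuations $v_E$ all lie on $\Delta(\mathcal{X})$ (which can be arranged by taking $\mathcal{X}$ large enough to dominate the chosen test configuration), one has $A_X \circ p_{\mathcal{X}} = A_X$ on $\supp \mathrm{MA}(\psi)$, hence $\int A_X \mathrm{MA}(\psi) = \int A_X \circ p_{\mathcal{X}} \mathrm{MA}(\psi)$. The desired sequence would then come from a diagonal argument: for each $\mathcal{X}$ in a cofinal system, manufacture test-configuration approximants $\psi_{\mathcal{X}, j} \to \varphi$ strongly in $\E^1$ whose divisorial valuations remain on $\Delta(\mathcal{X})$; continuity of $A_X \circ p_{\mathcal{X}}$ together with strong continuity of $\mathrm{MA}$ would give $\int A_X \circ p_{\mathcal{X}} \mathrm{MA}(\psi_{\mathcal{X}, j}) \to \int A_X \circ p_{\mathcal{X}} \mathrm{MA}(\varphi)$, and diagonalizing over $\mathcal{X}$ would close the argument.

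The hard part is precisely this manufacturing step: strongly approximating $\varphi \in \E^1$ by test configurations whose divisorial support is confined to a prescribed SNC skeleton. No existing regularization accomplishes this. Decreasing regularizations via dominating test configurations have the wrong semi-continuity for the entropy, and the increasing polyhedral regularization $\varphi_{\mathcal{F}_\varphi}^{(d)} \nearrow \varphi$ of Proposition \ref{sigma and non-archimedean psh} exists only for continuous $\varphi$ and presupposes the continuity of envelopes. Bridging this gap plausibly demands the quantitative control of $A_X$ along families of quasi-monomial valuations coming from polyhedral configurations foreshadowed in Question \ref{continuity of entropy along polyhedral configuration}; this is exactly the input the author attributes to the forthcoming \cite{BJ5}, and I see no way to bypass it without essentially new ideas.
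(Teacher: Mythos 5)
This statement is stated in the paper as a \emph{conjecture} (attributed to \cite{BJ2, Li2}); the paper supplies no proof, only the remark that it is known for $T$-invariant metrics on toric varieties via \cite[Proposition 6.3]{Li2} and that it would follow from the forthcoming \cite{BJ5}. You correctly recognize this, so there is nothing to compare against: what you have written is a status report, not a proof, and it should be judged as such.

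As a status report it is accurate. The ``easy half'' is right: a decreasing regularization $\psi_i \searrow \varphi$ in $\nH (X, L)$ converges strongly, $\mathrm{MA} (\psi_i) \to \mathrm{MA} (\varphi)$ weakly, and lower semi-continuity of $A_X$ (klt case) gives $\varliminf_i \int_{X^{\mathrm{NA}}} A_X \mathrm{MA} (\psi_i) \ge \int_{X^{\mathrm{NA}}} A_X \mathrm{MA} (\varphi)$, so only the reverse inequality is at stake. Your skeleton strategy is also internally consistent: $A_X \circ p_{\mathcal{X}} \nearrow A_X$ over a cofinal net of SNC models together with monotone convergence for nets of Radon measures reduces the problem to producing, for each prescribed skeleton $\Delta (\mathcal{X})$, strong approximants in $\nH (X, L)$ whose Monge--Amp\`ere support stays on $\Delta (\mathcal{X})$; and you correctly locate the obstruction there, which is exactly the quantitative control of $A_X$ along families of valuations that the paper defers to \cite{BJ5} (compare the paper's Question on continuity of $A_X (v_{E, \xi})$ along polyhedral configurations, which it only resolves for proper vectors). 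One small imprecision: the increasing approximation $\varphi_{\mathcal{F}_\varphi}^{(d)} \nearrow \varphi$ of Proposition \ref{sigma and non-archimedean psh} does not itself presuppose the continuity of envelopes --- it only requires $\varphi$ continuous; it is the extension to general linearly bounded filtrations that needs that hypothesis. This does not affect your conclusion, since such $\varphi$ need not lie in $\E^1 (X, L) \setminus C^0$ anyway and the approximants are not test configurations.
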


On the other hand, we know the continuity of envelopes holds for smooth $X$, so that the assumptions of the above proposition are valid for smooth $X$ under this regularization conjecture. 
In conjunction with this conjecture, we can reduce Conjecture \ref{properness conjecture for Calabi energy} for smooth $X$ to the following more simple conjecture. 
The existence of maximizers of $C_{\mathrm{NA}}$ is related such fundamental conjectures in the non-archimedean pluripotential theory. 

\begin{conj}
\label{Egorov type estimate conjecture}
Every $d_2$-bounded weakly convergent sequence in $\E^2 (X, L)$ is $d_1$-convergent. 
\end{conj}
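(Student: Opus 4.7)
The plan is to adapt the classical Vitali convergence theorem to this setting, treating $d_2$-boundedness as the analogue of $L^2$-boundedness (providing uniform integrability) and weak convergence as the analogue of pointwise a.e.\ convergence. Let $\{\varphi_i\} \subset \E^2(X, L)$ be $d_2$-bounded and weakly convergent to $\varphi \in \PSH(X, L)$. Since $d_1 \le (e^L)^{1/2} d_2$ by Remark~\ref{dp distance}, the sequence is $d_1$-bounded, and weak convergence at $v_{\mathrm{triv}}$ gives $\sup \varphi_i \to \sup \varphi$; normalizing so that $\sup\varphi_i = 0$, and applying Proposition~\ref{Ibar and d1}, the task reduces to proving $I(\varphi_i, \varphi) \to 0$, or equivalently to establishing that $\varphi_i \to \varphi$ strongly in $\E^1(X,L)$.

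The key intermediate statement is $E(\varphi_i) \to E(\varphi)$: combined with weak convergence, this gives strong (hence $d_1$-) convergence by the very definition of the strong topology. The upper bound $\varlimsup E(\varphi_i) \le E(\varphi)$ follows from upper semi-continuity of $E$ along weakly convergent sequences. For the reverse inequality I would exploit the tomographic identity $E(\psi) = \int_\mathbb{R} t \, \DHm_\psi$ together with the second-moment bound: Chebyshev gives $\DHm_{\varphi_i}(\{|t|>M\}) \le C/M^2$ uniformly in $i$, so the family $\{\DHm_{\varphi_i}\}$ is uniformly integrable against $t$; granted pointwise convergence $\int_{[-M,M]} t \, \DHm_{\varphi_i} \to \int_{[-M,M]} t \, \DHm_\varphi$ on bounded windows, a Vitali-style truncation then yields full $E$-convergence.

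The main obstacle is establishing this windowed convergence of Duistermaat--Heckman measures from weak convergence of the potentials alone. Lemma~\ref{limit of DH measure for d1 convergent sequence} supplies exactly the needed convergence, but only under $d_1$-convergence of $\varphi_i$, which is precisely what we are trying to prove. To break the circularity, I would combine two ingredients: first, under continuity of envelopes the normalized set $\{\varphi_i\}$ is weakly sequentially compact (cf.\ \cite[Corollary~4.58]{BJ3}), so every subsequence admits a further subsequence converging weakly, necessarily to $\varphi$ by uniqueness of weak limits at divisorial valuations; second, one must upgrade this weak subsequential convergence to $d_1$-convergence using the uniform second-moment bound. The hard part will be the upgrade step, which amounts to proving that the weak and $d_1$ topologies coincide on $d_2$-bounded subsets of $\E^2(X,L)$. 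I expect this to require either a refinement of Proposition~\ref{BJ estimate 2} giving control of energy pairings directly from weak convergence plus higher-moment bounds, or a genuinely new tightness/equi-continuity principle for spectral measures preventing mass oscillation and escape to $-\infty$ along weakly convergent sequences.
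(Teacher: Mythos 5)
The statement you are addressing is stated in the paper only as Conjecture \ref{Egorov type estimate conjecture}: the paper offers no proof, just the heuristic that it should be an analogue of the fact that an $L^2$-bounded a.e.\ convergent sequence on a finite measure space converges in $L^1$. Your reduction is sound as far as it goes: since the strong topology is by definition the coarsest refinement of the weak topology making $E$ continuous, and strong convergence is equivalent to $d_1$-convergence (Proposition \ref{Ibar and d1} together with \cite[Theorem 9.4]{BJ3}), it does suffice to prove $E(\varphi_i) \to E(\varphi)$, and the only issue is the lower bound $\varliminf_i E(\varphi_i) \ge E(\varphi)$, i.e.\ preventing the mass of $\DHm_{\varphi_i}$ from drifting to $-\infty$.

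However, your Vitali-style argument does not close. The uniform second-moment bound $\int_\mathbb{R} t^2 \, \DHm_{\varphi_i} \le C$ does give uniform integrability of $t$ against the measures $\DHm_{\varphi_i}$, but the windowed convergence $\int_{[-M,M]} t \, \DHm_{\varphi_i} \to \int_{[-M,M]} t \, \DHm_\varphi$ is precisely what is not available: Lemma \ref{limit of DH measure for d1 convergent sequence} derives such convergence \emph{from} $d_1$-convergence, which is the conclusion you want, and weak convergence of the potentials (pointwise convergence on $X^{\mathrm{div}}$) is not known to imply any form of weak convergence of the Duistermaat--Heckman measures. Your fallback --- weak sequential compactness of the normalized family plus an ``upgrade'' of weak subsequential limits to $d_1$-limits --- is circular, since that upgrade on $d_2$-bounded subsets is verbatim the statement of the conjecture. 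The genuinely missing ingredient, which you correctly flag but do not supply, is a mechanism ruling out oscillation and cancellation in the spectral measures along weakly convergent sequences: the second moment controls escape of mass only in an averaged sense, and no Egorov-type theorem is available here, because there is no reference measure on $X^{\mathrm{NA}}$ against which ``almost everywhere convergence'' of the $\varphi_i$ could be formulated (the paper even notes that finite Borel measures on $X^{\mathrm{NA}}$ need not be Radon). What you have is therefore a correct and useful reformulation of the conjecture --- reduce it to lower semi-continuity of $E$ on $d_2$-bounded weakly convergent sequences --- but not a proof of it.
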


We note $d_2$-bounded $d_1$-convergent sequence is $d_p$-convergent for $1 \le p < 2$ by Lebesgue interpolation as in the proof of Proposition \ref{Eexp convergence implies dp convergence}: 
\[ d_p (\varphi_i, \varphi)^p \le d_1 (\varphi_i, \varphi)^{2-p} (d_2 (\varphi_i, 0) + d_2 (\varphi, 0))^{2 (p-1)}. \]

\subsubsection{Relation to normalized Ding invariant}

Again we consider a $\mathbb{Q}$-Fano variety $(X, L) = (X, -K_X)$. 
The minimax equality for a Ricci potential version of the Calabi energy is studied in \cite{His}: 
\[ \sup_{\varphi \in \nH (X, L)} R_{\mathrm{NA}} (\varphi) = \inf_{\omega_\phi \in \mathcal{H} (X, L)} R (\omega_\phi). \]

Here we put 
\[ R (\omega_\phi) := \frac{1}{(e^L)} \int_X (e^{h_\phi} - 1)^2 \omega_\phi^n/n! \]
using the Ricci potential $\mathrm{Ric} (\omega_\phi) - 2\pi \omega_\phi = \sqrt{-1} \partial \bar{\partial} h_\phi$ normalized by $\int_X e^{h_\phi} \omega_\phi^n/n! = (e^L)$. 
The critical points of this functional are Mabuchi soliton (cf. \cite{Mab}).
The existence of Mabuchi soliton is studied in \cite{His2}. 

Similarly as $C_{\mathrm{NA}}$, we put 
\[  R_{\mathrm{NA}} (\varphi) := - \frac{1}{(e^L)} \Big{(} 2\pi D_{\mathrm{NA}} (\varphi) + \frac{1}{2} \| \bar{\varphi} \|^2 \Big{)}, \]
using the non-archimedean Ding functional 
\[ D_{\mathrm{NA}} (\varphi) = (e^L) \inf_{x \in X^{\mathrm{qm}}} (A_X (x) + \varphi (x)) - E (\varphi). \]
We note 
\begin{align*} 
R_{\mathrm{NA}} (\varphi_{; \rho}) 
&= - \frac{1}{(e^L)} \Big{(} 2\pi D_{\mathrm{NA}} (\varphi) \cdot \rho + \frac{1}{2} \| \bar{\varphi} \|^2 \cdot \rho^2 \Big{)} 
\\
&= - \frac{\| \bar{\varphi} \|^2}{2 (e^L)} \Big{(} \rho + 2\pi D_{\mathrm{NA}} (\varphi) \Big{)}^2 + \frac{2\pi^2}{(e^L)} \frac{D_{\mathrm{NA}} (\varphi)^2}{\| \bar{\varphi} \|^2}, 
\end{align*}
so that we have 
\[ \sup_{\rho \ge 0} R_{\mathrm{NA}} (\varphi_{; \rho}) = 
\begin{cases}
0
& D_{\mathrm{NA}} (\varphi) > 0
\\
\frac{2\pi^2}{(e^L)} \frac{D_{\mathrm{NA}} (\varphi)^2}{\| \bar{\varphi} \|^2}
& D_{\mathrm{NA}} (\varphi) \le 0
\end{cases}.
 \]
 
Compared to the Calabi flow, we have a smooth solution to the gradient flow of $R$ by \cite{CHT}, so that we can restrict things to $\nH (X, L)$ and $\mathcal{H} (X, L)$, rather than $\E^2 (X, L)$ and $\mathcal{E}^2 (X, L)$. 

Now the non-archimedean Ding functional is known to be continuous along decreasing net. 
This is an alternative of Conjecture \ref{regularization of entropy} for $R_{\mathrm{NA}}$. 
It follows that $R_{\mathrm{NA}}$ is bounded from above on $\E^2 (X, L)$ when $X$ is smooth. 
Thus we can reduce the maximization problem for $R_{\mathrm{NA}}$ to Conjecture \ref{Egorov type estimate conjecture}, following what we observed for $C_{\mathrm{NA}}$.

\section{Appendix: Toric illustration}

\subsection{Non-archimedean $\mu$-entropy via Legendre dual}
\label{NAmu via Legendre dual}

\subsubsection{Legendre transform of $T$-invariant non-archimedean metric}

We firstly review \cite[Appendix B]{BJ3}. 

Let $(X, L)$ be a polarized normal toric variety and $P \subset \mathfrak{t}^\vee$ be the moment polytope: 
\[ P := l^{-1} \overline{\{ \mu \in M ~|~ H^0 (X, lL)_\mu \neq 0 \}}^{\mathrm{conv}}, \]
where $l \in \mathbb{N}_+$ is taken so that $l L$ is a line bundle. 
On $P$, we consider the restriction of Lebesgue measure $d\mu$ normalized by the lattice $N$. 
(Since $|\det A| = 1$ for $A \in GL (N)$, the measure is uniquely determined. )

On the boundary $\partial P$, we consider the measure $d\sigma$ which is characterized on each face $\partial P \cap \{ \mu \in \mathfrak{t} ~|~ \langle \mu, \eta \rangle - \lambda = 0 \}$ by 
\[ d\sigma (B) = d\mu_{\mathrm{Ker} \eta} (B - \mu_0) \]
for a Borel subset $B$ of the face. 
Here $\mu_0 \in \mathfrak{t}$ is a point with $\langle \mu_0, \eta \rangle -\lambda = 0$ and $d\mu_{\mathrm{Ker} \eta}$ is the Lebesgue measure on $\mathrm{Ker} \eta$ normalized by the lattice $N \cap \mathrm{Ker} \eta$. 

Consider a normal toric test configuration $(\mathcal{X}, \mathcal{L})$ of $(X, L)$. 
Take sufficiently large $c > 0$ so that the line bundle $\bar{\mathcal{L}}_c := \bar{\mathcal{L}} + c. [\mathcal{X}_0]$ over $\bar{\mathcal{X}}$ is ample. 
Then the moment polytope $Q_c$ associated to the polarized toric $T \times \mathbb{G}_m$-variety $(\bar{\mathcal{X}}, \bar{\mathcal{L}}_c)$ can be written as 
\[ Q_c = \{ (\mu, t) \in \mathfrak{t}^\vee \times \mathbb{R} ~|~ \mu \in P, ~0 \le t \le - q (\mu) + c \}, \] 
using a continuous convex function $q_{(\mathcal{X}, \mathcal{L})}: P \to \mathbb{R}$ of the form 
\[ q_{(\mathcal{X}, \mathcal{L})} (\mu) = \max_E \{ \langle \mu, \eta_E \rangle - \lambda_E \} \] 
for some $\eta_E \in (\mathrm{ord}_E \mathcal{X}_0)^{-1} N$ and $\lambda_E \in \mathbb{Q}$ assigned to each irreducible component $E$ of $\mathcal{X}_0$ (cf. \cite[Proposition 4.1.1]{CLS}). 
This $q_{(\mathcal{X}, \mathcal{L})}$ is independent of the choice of $c > 0$. 
By putting $q_{(\mathcal{X}, \mathcal{L})} (\mu) = + \infty$ for $\mu \notin P$, we can regard $q_{(\mathcal{X}, \mathcal{L})}$ as a lower semi-continuous convex function on $\mathfrak{t}^\vee$. 

Recall the Legendre dual of a function $q: \mathfrak{t}^\vee \to [-\infty, \infty]$ is a function $q^*: \mathfrak{t} \to [-\infty, \infty]$ given by 
\[ q^* (\xi) := \sup \{ \langle \mu, \xi \rangle - q (\mu) ~|~ \mu \in \mathfrak{t}^\vee \}. \]
Since $\{ \langle \mu, \xi \rangle - q (\mu) \}_{\mu \in \mathfrak{t}^\vee}$ is a family of linear functions, the supremum gives a lower semi-continuous convex function on $\mathfrak{t}$. 
We have $(q^*)^* = q$ if and only if $q$ is a lower semi-continuous convex function. 

\begin{prop}
We have 
\[ q_{(\mathcal{X}, \mathcal{L})}^* (\xi) = \varphi_{(\mathcal{X}, \mathcal{L})} (v_{-\xi}) + \sup_{\mu \in P} \langle \mu, \xi \rangle. \]
In particular, $\varphi_{(\mathcal{X}, \mathcal{L})} (v_{-\xi}) + \sup_{\mu \in P} \langle \mu, \xi \rangle$ is a convex function on $\mathfrak{t}$. 
\end{prop}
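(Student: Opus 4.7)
The strategy is to reduce the identity to a direct filtration computation on the weight basis $\{\chi^{\mu}\}$ and then to carefully track the additive shift produced by the local trivialization of $L^{\otimes m}$ that is built into the definition of $v_{-\xi}(s)$ for a section $s$.

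Since $\mathcal{F}_{(\mathcal{X},\mathcal{L})}$ is finitely generated, Proposition~\ref{non-archimedean metric associated to filtration} gives
\[
\varphi_{(\mathcal{X},\mathcal{L})}(v_{-\xi})=\sigma_{v_{-\xi}}(\mathcal{F}_{(\mathcal{X},\mathcal{L})})=\sup_{m\in\mathbb{N}_{+}}\sup_{0\neq s\in R_m}m^{-1}\bigl[-v_{-\xi}(s)-\log\|s\|_{m}^{(\mathcal{X},\mathcal{L})}\bigr].
\]
Toricity provides the distinguished weight basis $R_m=\bigoplus_{\mu\in mP\cap M}\mathbb{C}\cdot\chi^{\mu}$, which simultaneously diagonalizes $\|\cdot\|_{m}^{(\mathcal{X},\mathcal{L})}$, so the inner supremum reduces to the family indexed by $\mu$.

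I would then identify the two ingredients separately.  The $T\times\mathbb{G}_{m}$-eigensections of $\bar{\mathcal{L}}_{c}^{\otimes m}$ correspond to lattice points of $mQ_{c}\cap(M\oplus\mathbb{Z})$; the eigensection lifting $\chi^{\mu}$ sits at the top boundary $k=m(-q(\mu/m)+c)$ of the fibre of $Q_{c}$ over $\mu/m$, and once the contribution $mc[\mathcal{X}_0]$ is subtracted to pass from $\bar{\mathcal{L}}_{c}$ to $\bar{\mathcal{L}}$ one reads off $-\log\|\chi^{\mu}\|_{m}^{(\mathcal{X},\mathcal{L})}=-mq(\mu/m)$ up to an $O(1)$ term that is negligible after dividing by $m$ and taking the supremum.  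For $v_{-\xi}$, its center on the toric $X$ is the closed $T$-orbit attached to the face of $P$ on which $\langle\mu,-\xi\rangle$ attains its infimum, i.e.\ where $\langle\mu,\xi\rangle$ attains its supremum $\sup_{\mu\in P}\langle\mu,\xi\rangle$.  Choosing a vertex $\mu_{0}\in P$ that realizes this supremum, $\chi^{m\mu_{0}}$ is a local generator of $L^{\otimes m}$ at the center of $v_{-\xi}$, which yields
\[
v_{-\xi}(\chi^{\mu})=v_{-\xi}(\chi^{\mu-m\mu_{0}})=-\langle\mu,\xi\rangle+m\sup_{\bar\mu\in P}\langle\bar\mu,\xi\rangle.
\]

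Combining the two formulas and passing to the limit $m\to\infty$ so that $m^{-1}(mP\cap M)$ becomes dense in $P$ and the continuity of $q$ turns the discrete supremum into a supremum over $P$, one gets
\[
\sigma_{v_{-\xi}}(\mathcal{F}_{(\mathcal{X},\mathcal{L})})=\sup_{\bar\mu\in P}\bigl[\langle\bar\mu,\xi\rangle-q(\bar\mu)\bigr]-\sup_{\mu\in P}\langle\mu,\xi\rangle=q_{(\mathcal{X},\mathcal{L})}^{\ast}(\xi)-\sup_{\mu\in P}\langle\mu,\xi\rangle,
\]
which rearranges to the stated identity.  The final convexity assertion is then automatic, since $q^{\ast}$ is a Legendre transform of a function and hence lower semi-continuous convex in $\xi$.

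The main obstacle will be the bookkeeping in the middle step: correctly extracting the filtration level of $\chi^{\mu}$ from the moment polytope $Q_{c}$, which requires being careful both about the shift $\bar{\mathcal{L}}\mapsto\bar{\mathcal{L}}_{c}$ and about the trivialization at $\mathcal{X}_{1}\cong X$ used to define $\|\cdot\|_{m}^{(\mathcal{X},\mathcal{L})}$; and verifying that $\chi^{m\mu_{0}}$ really is a local generator of $L^{\otimes m}$ at $c(v_{-\xi})$, so that the compensating shift is exactly $m\sup_{P}\langle\cdot,\xi\rangle$ and no additional vertex contribution appears.
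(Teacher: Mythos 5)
Your proposal is correct and follows essentially the same route as the paper: diagonalize on the weight basis $\{s_\mu\}$, read off $-\log\|s_{m\mu}\|^{(\mathcal{X},\mathcal{L})}_m$ from the top boundary of the polytope $Q_c$ (giving $-q$ up to the rounding to $m^{-1}\mathbb{Z}$), and compute $v_{-\xi}(s_\mu)=-\langle\mu,\xi\rangle+m\sup_{P}\langle\cdot,\xi\rangle$ — your derivation of the latter via the local generator $s_{m\mu_0}$ at a maximizing vertex is just an unpacking of the paper's identity $\sigma_{\min,m}(\mathcal{F}_{-\xi})=-\sup_P\langle\cdot,\xi\rangle$. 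The only cosmetic difference is that you pass to the limit $m\to\infty$ using density and continuity of $q$, whereas the paper gets exact equality at sufficiently divisible $m$ from the finite piecewise-affine expression of $q$; both are valid.
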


\begin{proof}
We recall $H^0 (X, L^{\otimes m}) = \bigoplus_{\mu \in mP \cap M} H^0 (X, L^{\otimes m})_\mu$ and $H^0 (X, L^{\otimes m})_\mu = \mathbb{C}. s_\mu$. 
The basis $\{ s_\mu \}$ is diagonal with respect to $\| \cdot \|^{(\mathcal{X}, \mathcal{L})}$, so we have 
\begin{align*} 
\varphi_{(\mathcal{X}, \mathcal{L})} (v_{-\xi}) 
&= \frac{1}{m} \sup_{\mu \in mP \cap M} \{ - v_{-\xi} (s_\mu) - \log \| s_\mu \|^{(\mathcal{X}, \mathcal{L})} \} 
\\
&= \frac{1}{m} \sup_{\mu \in mP \cap M} \{ - (\langle \mu, -\xi \rangle - m \sigma_{\min, m} (\mathcal{F}_{-\xi})) - \log \| s_\mu \|^{(\mathcal{X}, \mathcal{L})} \}
\\
&= \sup_{\mu \in P \cap m^{-1} M} \{ \langle \mu, \xi \rangle - \frac{1}{m} \log \| s_{m \mu} \|^{(\mathcal{X}, \mathcal{L})} \} - \sup_{\mu \in P} \langle \mu, \xi \rangle, 
\end{align*}
where the last equality follows by $\sigma_{\min, m} (\mathcal{F}_{-\xi}) = \inf \supp \DHm_{\mathcal{F}_{-\xi}} = \inf_{\mu \in P} \langle \mu, -\xi \rangle = - \sup_{\mu \in P} \langle \mu, \xi \rangle$. 

It suffices to show 
\[ q_{(\mathcal{X}, \mathcal{L})}^* = \sup_{\mu \in P \cap m^{-1} M} \{ \langle \mu, \xi \rangle - \frac{1}{m} \log \| s_{m \mu} \|^{(\mathcal{X}, \mathcal{L})} \} \]
for sufficiently divisible $m$. 
Since 
\begin{align*} 
- \frac{1}{m} \log \| s_{m \mu} \|^{(\mathcal{X}, \mathcal{L})} 
&= \frac{1}{m} \sup \{ \lambda \in \mathbb{Z} ~|~ \varpi^{-\lambda}. \bar{s}_{m\mu} \in H^0 (\mathcal{X}, \mathcal{L}^{\otimes m}) \} 
\\
&= \sup \{ \lambda \in m^{-1} \mathbb{Z} ~|~ (\mu, \lambda + c) \in Q_c \cup P \times (-\infty, 0) \}
\\
&= \sup \{ \lambda \in m^{-1} \mathbb{Z} ~|~ \lambda \le - q (\mu) \}, 
\end{align*}
we have 
\[ q_{(\mathcal{X}, \mathcal{L})}^* \ge \sup_{\mu \in P \cap m^{-1} M} \{ \langle \mu, \xi \rangle - \frac{1}{m} \log \| s_{m \mu} \|^{(\mathcal{X}, \mathcal{L})} \}. \]
The equality for sufficiently divisible $m$ is a consequence of the finite expression of $q = \max_E \{ \langle \mu, \eta_E \rangle - \lambda_E \}$. 
\end{proof}

Let $\nH (X, L)_T$ denote the set of non-archimedean psh metrics associated to toric test configurations. 
We call a non-archimedean psh metric $\varphi$ on $(X, L)$ \textit{$T$-invariant} if it is the limit of some decreasing net $\{ \varphi_i \}_{i \in I} \subset \nH (X, L)_T$. 
We denote by $\PSH (X, L)_T$ the set of $T$-invariant non-archimedean psh metrics. 
For $\varphi \in \PSH (X, L)_T$, we put 
\[ f_\varphi (\xi) := \varphi (v_{-\xi}) + \sup_{\mu \in P} \langle \mu, \xi \rangle. \]
Since $v_\xi$ is quasi-monomial, each $f_\varphi (\xi)$ is finite. 
For a convergent decreasing net $\varphi_i \searrow \varphi$ in $\PSH (X, L)_T$, we have $f_{\varphi_i} \searrow f_\varphi$. 
Thanks to the above proposition, $f_\varphi$ is convex for $\varphi \in \nH (X, L)$, which implies the convexity of $f_\varphi$ for general $\varphi \in \PSH (X, L)_T$. 
Therefore, $f_\varphi$ is finite valued convex function on $\mathfrak{t}$, in particular, it is continuous. 

Similarly, we put $q_\varphi := f_\varphi^*: \mathfrak{t}^\vee \to (-\infty, \infty]$ for $\varphi \in \PSH (X, L)_T$. 
Again by the above proposition, we have $q_{\varphi_{(\mathcal{X}, \mathcal{L})}} = q_{(\mathcal{X}, \mathcal{L})}$. 
For $\varphi_i \searrow \varphi$, we have $q_{\varphi_i} \nearrow q_\varphi$. 

\begin{prop}
\label{filtration via Legendre dual}
For $\varphi \in C^0 \cap \PSH (X, L)_T$, we have 
\[ \mathcal{F}_\varphi^\lambda R_m = \{ s \in R_m ~|~ \lambda \le - m q_\varphi (\mu/m) \text{ if } s_\mu \neq 0 \}. \]
\end{prop}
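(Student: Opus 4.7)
The plan is to reduce everything to monomial sections using the torus invariance, then compute the filtration on monomials via Legendre duality, and finally bridge the gap between the infimum over all valuations and the infimum over torus-invariant ones by approximating with toric test configurations.

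\textbf{Step 1 (reduction to monomials).} I would first show that the filtration $\mathcal{F}_\varphi^\lambda R_m$ is $T$-stable, so it decomposes as $\mathcal{F}_\varphi^\lambda R_m = \bigoplus_{\mu \in mP \cap M} (\mathcal{F}_\varphi^\lambda R_m \cap \mathbb{C} s_\mu)$. This uses that $\varphi \in \PSH(X,L)_T$ is the pointwise limit of a decreasing net of $T$-invariant Fubini--Study metrics $\varphi_{(\mathcal{X}_i,\mathcal{L}_i)}$; each of their filtrations $\mathcal{F}_{\varphi_{(\mathcal{X}_i,\mathcal{L}_i)}} = \widehat{\mathcal{F}}_{(\mathcal{X}_i,\mathcal{L}_i)}$ is $T$-invariant (being the intersection of filtrations associated to torus-invariant valuations), and this passes to the definition $\mathcal{F}_\varphi = \bigcap_v \mathcal{F}_v[\varphi(v)]$. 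Thus it suffices to characterize when $s_\mu \in \mathcal{F}_\varphi^\lambda R_m$.

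\textbf{Step 2 (computation on quasi-monomial valuations of torus type).} From the computation preceding Proposition \ref{non-archimedean metric associated to filtration} in the proof of the surrounding discussion, and using $\varphi(v_{-\xi}) = f_\varphi(\xi) - \sup_{\mu' \in P}\langle \mu',\xi\rangle$, one gets
\[
v_{-\xi}(s_\mu) + m\varphi(v_{-\xi}) \;=\; \bigl(\langle \mu,-\xi\rangle + m \sup_{\mu'\in P}\langle \mu',\xi\rangle\bigr) + m\bigl(f_\varphi(\xi) - \sup_{\mu'\in P}\langle \mu',\xi\rangle\bigr) \;=\; m f_\varphi(\xi) - \langle \mu,\xi\rangle.
\]
Taking infimum over $\xi \in \mathfrak{t}$ and applying Legendre duality $q_\varphi = f_\varphi^*$ yields
\[
\inf_{\xi \in \mathfrak{t}} \bigl(v_{-\xi}(s_\mu) + m\varphi(v_{-\xi})\bigr) \;=\; -m q_\varphi(\mu/m).
\]
This gives the easy inclusion $\mathcal{F}_\varphi^\lambda R_m \subset \{s \in R_m : s_\mu \neq 0 \Rightarrow \lambda \le -m q_\varphi(\mu/m)\}$.

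\textbf{Step 3 (reverse inclusion for toric test configurations).} For $\varphi = \varphi_{(\mathcal{X},\mathcal{L})}$ with $(\mathcal{X},\mathcal{L})$ a normal toric test configuration, Proposition \ref{filtration associated to Fubini--Study metric and the stabilization} gives $\mathcal{F}_\varphi = \widehat{\mathcal{F}}_{(\mathcal{X},\mathcal{L})} = \bigcap_{E \subset \mathcal{X}_0} \mathcal{F}_{v_E}[\sigma_E]$. Since $\mathcal{X}$ is toric, each torus-invariant divisor $E \subset \mathcal{X}_0$ has $v_E = v_{-\xi_E}$ for some $\xi_E \in \mathfrak{t}$, with $\sigma_E = \varphi(v_E)$. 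The $\xi_E$'s correspond exactly to the rays of the fan of $\bar{\mathcal{X}}$ projecting to the vertices of the piecewise affine function $q_{(\mathcal{X},\mathcal{L})} = q_\varphi$, and together with the evident monomial computation these finitely many constraints express precisely the Legendre dual condition $\lambda \le -m q_\varphi(\mu/m)$.

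\textbf{Step 4 (passage to general $\varphi \in C^0 \cap \PSH(X,L)_T$, main obstacle).} For general continuous $T$-invariant $\varphi$, I would use Lemma \ref{Dini} together with the density of $\nH(X,L)_T$ in continuous $T$-invariant metrics: for every $\varepsilon > 0$ pick $\varphi_\varepsilon = \varphi_{(\mathcal{X}_\varepsilon,\mathcal{L}_\varepsilon)} \in \nH(X,L)_T$ with $\varphi \le \varphi_\varepsilon \le \varphi + \varepsilon$, so that $q_{\varphi_\varepsilon} \nearrow q_\varphi$ (since $q$ is anti-monotone in $\varphi$ with a controlled shift, $q_{\varphi + \varepsilon}(\mu) = q_\varphi(\mu) - \varepsilon$) and $\mathcal{F}_{\varphi_\varepsilon - \varepsilon} \subset \mathcal{F}_\varphi \subset \mathcal{F}_{\varphi_\varepsilon}$. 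Step 3 applied to $\varphi_\varepsilon$ then sandwiches the membership condition for $s_\mu$ in $\mathcal{F}_\varphi^\lambda R_m$ between $\lambda \le -m q_{\varphi_\varepsilon}(\mu/m)$ and $\lambda \le -m q_{\varphi_\varepsilon}(\mu/m) + m\varepsilon$, and letting $\varepsilon \to 0$ gives the desired equality. The delicate point to verify here is that the uniform approximation by toric test configurations indeed yields $q_{\varphi_\varepsilon} \to q_\varphi$ uniformly on $P$, which follows from the Legendre involution for finite-valued continuous convex functions on compact polytopes and the uniform convergence of $f_{\varphi_\varepsilon} \to f_\varphi$ on compact subsets of $\mathfrak{t}$ (a consequence of $\|\varphi_\varepsilon - \varphi\|_\infty \le \varepsilon$ together with $\sup_{\mu \in P}\langle \mu,\xi\rangle$ being independent of $\varphi$). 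This is the step where I expect the most care is needed, since one must track the interplay between the uniform topology on metrics and the epi-convergence of their Legendre duals on the compact polytope $P$.
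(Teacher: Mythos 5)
Your proposal is correct, and the core of it (the Legendre-duality computation on monomial sections, $\inf_{\xi}\bigl(v_{-\xi}(s_\mu)+m\varphi(v_{-\xi})\bigr)=-mq_\varphi(\mu/m)$) is exactly the computation the paper performs. Where you diverge is in how the reverse inclusion is justified. The paper disposes of it in one line: ``since $\mathcal{F}_\varphi$ is $T$-invariant, $\mathcal{F}_\varphi=\bigcap_{\xi\in\mathfrak{t}}\mathcal{F}_{v_\xi}[\varphi(v_\xi)]$,'' implicitly leaning on the toric structure theory of \cite[Appendix B]{BJ3} (the retraction onto the toric skeleton), after which both inclusions follow simultaneously from the duality computation. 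You instead prove the reduction to toric valuations by regularization: first for a normal toric test configuration, where $\mathcal{F}_{\varphi_{(\mathcal{X},\mathcal{L})}}=\widehat{\mathcal{F}}_{(\mathcal{X},\mathcal{L})}=\bigcap_{E\subset\mathcal{X}_0}\mathcal{F}_{v_E}[\sigma_E]$ is a \emph{finite} intersection over toric valuations (Proposition \ref{filtration associated to Fubini--Study metric and the stabilization}), and then for general continuous $T$-invariant $\varphi$ via the Dini sandwich $\varphi\le\varphi_\varepsilon\le\varphi+\varepsilon$ with $\varphi_\varepsilon\in\nH(X,L)_T$. This is a legitimate and fully self-contained alternative within the paper's own toolkit; its cost is length, and its benefit is that it avoids invoking the skeleton/retraction machinery. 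Two small remarks: (i) the final passage $\varepsilon\to0$ in your Step 4 lands you in $\mathcal{F}_\varphi^{\lambda-m\varepsilon}R_m$ for all $\varepsilon>0$, so you should explicitly invoke the left-continuity axiom $\mathcal{F}^{\lambda-}=\mathcal{F}^{\lambda}$ of filtrations to conclude $s_\mu\in\mathcal{F}_\varphi^\lambda R_m$; (ii) the ``delicate point'' you flag at the end is actually immediate, since $0\le\varphi_\varepsilon-\varphi\le\varepsilon$ gives $0\le f_{\varphi_\varepsilon}-f_\varphi\le\varepsilon$ on all of $\mathfrak{t}$ and hence $-\varepsilon\le q_{\varphi_\varepsilon}-q_\varphi\le 0$ uniformly on $P$ directly from the definition of the Legendre transform, with no epi-convergence argument needed.
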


\begin{proof}
Since $\mathcal{F}_\varphi$ is $T$-invariant, we have $\mathcal{F}_\varphi = \bigcap_{\xi \in \mathfrak{t}} \mathcal{F}_{v_\xi} [\varphi (v_\xi)]$. 
It follows that $s \in \mathcal{F}_\varphi^\lambda R_m$ iff 
\[ v_\xi (s_\mu) + m \varphi (v_\xi) \ge \lambda \]
for every $\xi \in \mathfrak{t}$ and $\mu$ with $s_\mu \neq 0$. 
We compute 
\[ v_\xi (s) + m \varphi (v_\xi) = \inf \{ \langle \mu, \xi \rangle ~|~ s_\mu \neq 0 \} + m \sup_{\mu \in P} \langle \mu, -\xi \rangle + m \varphi (v_\xi) = \inf \{ \langle \mu, \xi \rangle ~|~ s_\mu \neq 0 \} + m f_\varphi (-\xi). \]
On the other hand, by the convex duality, we have 
\[ m q_\varphi (\mu/m) = \sup \{ \langle \mu, \xi \rangle - m f_\varphi (\xi) ~|~ \xi \in \mathfrak{t} \} = - \inf \{ v_{-\xi} (s_\mu) + m \varphi (v_{-\xi}) ~|~ \xi \in \mathfrak{t} \} \]
for $0 \neq s_\mu \in H^0 (X, L^{\otimes m})_\mu$. 
This proves the claim. 
\end{proof}

\begin{prop}
For $\varphi \in \PSH (X, L)_T$, we have 
\[ q_{\varphi \wedge \tau} = \max \{ q_\varphi, -\tau \} \]
\end{prop}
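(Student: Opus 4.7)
The plan is to derive the identity first for toric test configurations with rational $\tau$ by combining two descriptions of the associated filtration, and then extend to the general case via a double decreasing net.

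First, I would verify that $\varphi \wedge \tau \in \PSH (X, L)_T$, so that $q_{\varphi \wedge \tau}$ is defined. The rooftop inherits $T$-invariance from the uniqueness in its universal property: for any $t \in T$, the translate $(\varphi \wedge \tau) \circ t$ again satisfies the defining inequalities and hence coincides with $\varphi \wedge \tau$. Moreover, writing $\varphi$ as a pointwise decreasing limit $\varphi_i \searrow \varphi$ with $\varphi_i \in \nH (X, L)_T$ and approximating $\tau$ by a rational sequence $\tau_j \searrow \tau$, Proposition \ref{test configuration associated to phi wedge tau} gives $\varphi_i \wedge \tau_j \in \nH (X, L)$ (with $T$-invariance automatic), while Proposition \ref{decreasing limit of rooftop} yields $\varphi_i \wedge \tau_j \searrow \varphi \wedge \tau$.

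Next, for the toric test configuration case $\varphi \in \nH (X, L)_T$ and $\tau \in \mathbb{Q}$, I would combine the filtration identity $\mathcal{F}_{\varphi \wedge \tau} = \mathcal{F}_\varphi \cap \mathcal{F}_\tau$ from Proposition \ref{filtration of envelope} (applicable since $\varphi \wedge \tau \in \nH (X, L) \subset C^0 \cap \PSH$) with the description
\[
\mathcal{F}_\tau^\lambda R_m = \begin{cases} R_m & \lambda \le m\tau, \\ 0 & \lambda > m\tau, \end{cases}
\]
and Proposition \ref{filtration via Legendre dual} applied to $\varphi$. This yields
\[
\mathcal{F}_{\varphi \wedge \tau}^\lambda R_m = \{s \in R_m \mid s_\mu \neq 0 \Rightarrow \lambda \le -m \max\{q_\varphi(\mu/m), -\tau\}\}.
\]
On the other hand, Proposition \ref{filtration via Legendre dual} applied to the continuous $T$-invariant metric $\varphi \wedge \tau$ itself gives the same set with $q_{\varphi \wedge \tau}(\mu/m)$ in place of $\max\{q_\varphi(\mu/m), -\tau\}$. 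Testing on single weight vectors $0 \neq s_\mu \in R_{m, \mu}$ forces $q_{\varphi \wedge \tau}(\mu/m) = \max\{q_\varphi(\mu/m), -\tau\}$ on $P \cap m^{-1}M$ for sufficiently divisible $m$. Since both sides are lsc convex functions on $\mathfrak{t}^\vee$ with domain $P$, they agree on $P^\circ$ by the continuity of convex functions on the interior of their domain, and then on $\partial P$ by the lsc closure property.

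Finally, I would pass to the general case $\varphi \in \PSH (X, L)_T$, $\tau \in \mathbb{R}$ via the nets constructed in the first step. The toric identity $q_{\varphi_i \wedge \tau_j} = \max\{q_{\varphi_i}, -\tau_j\}$ holds for each $(i, j)$; applying the monotonicity $q_{\psi_\alpha} \nearrow q_\psi$ for $\psi_\alpha \searrow \psi$ (noted in the excerpt just before Proposition \ref{filtration via Legendre dual}) to both the net $\varphi_i \wedge \tau_j \searrow \varphi \wedge \tau$ and $\varphi_i \searrow \varphi$ yields the desired formula in the limit. The main obstacle is the boundary analysis on $\partial P$: one must verify that equality on a dense subset of $P^\circ$ propagates to $\partial P$. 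This is handled by noting that the $q_\varphi$ arise as Legendre duals of finite convex functions on $\mathfrak{t}$ and are therefore closed proper convex, hence determined by their restrictions to $P^\circ$ via the lsc closure along rays into $P^\circ$; once this is granted, the remaining steps amount to a routine monotone limit.
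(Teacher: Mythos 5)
Your proof is correct and follows essentially the same route as the paper's: both rest on the identity $\mathcal{F}_{\varphi\wedge\tau}=\mathcal{F}_\varphi\cap\mathcal{F}_\tau$ combined with the Legendre-dual description of the filtration, followed by a monotone limit using $q_{\varphi_i}\nearrow q_\varphi$. The only (harmless, and arguably slightly more careful) difference is that you take $\varphi\in\nH(X,L)_T$ with rational $\tau$ as the base case --- where continuity of $\varphi\wedge\tau$ is guaranteed and the piecewise-affine $q$'s make the dense-subset argument immediate --- whereas the paper works directly with $C^0\cap\PSH(X,L)_T$.
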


\begin{proof}
We note $q_\tau = - \tau$ for $\tau \in \mathbb{R}$. 
By the above proposition, for $\varphi \in C^0 \cap \PSH (X, L)_T$, we have 
\begin{align*} 
\{ s \in R_m ~|~ \lambda \le - q_{\varphi \wedge \tau} (\mu/m) \text{ if } s_\mu \neq 0 \} 
&= \mathcal{F}_{\varphi \wedge \tau}^\lambda R_m = (\mathcal{F}_\varphi^\lambda \cap \mathcal{F}_\tau^\lambda) R_m 
\\
&= \{ s \in R_m ~|~ \lambda \le - \max \{ q_\varphi (\mu/m), -\tau \} \text{ if } s_\mu \neq 0 \}. 
\end{align*}
This shows the claim for $\varphi \in C^0 \cap \PSH (X, L)_T$. 
The general case follows by the limit argument $\varphi_i \searrow \varphi$. 
\end{proof}

\begin{prop}
\label{relative DH measure via Legendre dual}
For $\varphi \in \PSH (X, L)_T$ and $\varphi' \in \nH (X, L)_T$, we have 
\[ \DHm_{\varphi, \varphi'} = ((q_{\varphi'}-q_\varphi)^* d\mu)|_\mathbb{R}. \]
Here we note $(q_{\varphi'}-q_\varphi)^* d\mu$ is a measure on $[-\infty, \infty)$ and may charge $\{ - \infty \}$, in which case we have $\int_\mathbb{R} \DHm_{\varphi, \varphi'} < (e^L)$. 
\end{prop}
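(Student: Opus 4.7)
The plan is to prove the identity first for toric test configurations by a Riemann-sum computation using the weight basis, and then extend to general $T$-invariant non-archimedean psh metrics by a monotone limit argument, treating the possible mass at $-\infty$ as the natural bookkeeping of mass loss on the filtration side.

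First I would settle the case $\varphi, \varphi' \in \nH(X,L)_T$. Because both metrics are $T$-invariant, the weight-space basis $\{s_\mu\}_{\mu \in mP \cap M}$ of $R_m = H^0(X, L^{\otimes m})$ diagonalizes both $\|\cdot\|^{\varphi}_m$ and $\|\cdot\|^{\varphi'}_m$ simultaneously, so it is codiagonal for $(\varphi, \varphi')$. Using Proposition~\ref{filtration via Legendre dual} (applied to the continuous metrics $\varphi$ and $\varphi'$), I would read off
\[ \lambda_\mu^\varphi(\bm{s}) = -m\, q_\varphi(\mu/m), \qquad \lambda_\mu^{\varphi'}(\bm{s}) = -m\, q_{\varphi'}(\mu/m), \]
so that $m^{-1}(\lambda_\mu^\varphi(\bm{s}) - \lambda_\mu^{\varphi'}(\bm{s})) = q_{\varphi'}(\mu/m) - q_\varphi(\mu/m)$. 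Substituting into the definition of the relative spectral measure gives
\[ \DHm_{\varphi,\varphi'} = \lim_{m \to \infty} \frac{1}{m^n} \sum_{\mu \in mP \cap M} \delta_{q_{\varphi'}(\mu/m) - q_\varphi(\mu/m)}. \]
Since $q_{\varphi'} - q_\varphi$ is continuous on the compact polytope $P$, testing this discrete measure against any $\chi \in C_c^0(\mathbb{R})$ is precisely the Riemann-sum approximation of $\int_P \chi(q_{\varphi'}(\mu) - q_\varphi(\mu))\,d\mu$, which identifies the limit with the pushforward $(q_{\varphi'} - q_\varphi)_* d\mu$ on $\mathbb{R}$ (no mass is lost at $-\infty$ in this case since both functions are finite).

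Next I would pass to the limit for $\varphi \in \PSH(X,L)_T$ by choosing a decreasing net $\{\varphi_i\}_{i \in I} \subset \nH(X,L)_T$ with $\varphi_i \searrow \varphi$. By definition of $f_\varphi$ and Legendre duality, $f_{\varphi_i} \searrow f_\varphi$ pointwise on $\mathfrak{t}$, hence $q_{\varphi_i} \nearrow q_\varphi$ pointwise on $P$. On the left-hand side, the continuity of $\DHm_{\varphi_i, \varphi'}$ along decreasing nets tested against $1_{[\tau,\infty)}$ (the relative version of the construction in Section~3.2.3, valid for tame test functions) gives $\int_{[\tau,\infty)} \DHm_{\varphi_i, \varphi'} \to \int_{[\tau,\infty)} \DHm_{\varphi, \varphi'}$. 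On the right-hand side, $q_{\varphi'} - q_{\varphi_i} \searrow q_{\varphi'} - q_\varphi$ as an extended-real valued decreasing sequence of measurable functions on $P$, so the sets $\{q_{\varphi'} - q_{\varphi_i} \ge \tau\}$ decrease to $\{q_{\varphi'} - q_\varphi \ge \tau\}$ and the monotone convergence theorem yields the matching limit for the pushforward measure restricted to $\mathbb{R}$. Combining these for each $\tau \in \mathbb{R}$ characterizes $\DHm_{\varphi,\varphi'}$ as claimed.

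The hard part will be confirming the mass bookkeeping: on the locus $\{\mu \in P : q_\varphi(\mu) = +\infty\}$ the difference $q_{\varphi'} - q_\varphi$ equals $-\infty$, contributing mass to $\{-\infty\}$ that the restriction $|_\mathbb{R}$ discards, and this must match the deficit $(e^L) - \int_\mathbb{R} \DHm_{\varphi,\varphi'}$ on the filtration side. I expect this to be the subtle point because it requires the continuity of $\int_{[\tau,\infty)} \DHm_{\bullet, \varphi'}$ along decreasing nets in the full class $\PSH(X,L)$ (rather than only in $\E(X,L)$, where the total mass is preserved), so one has to rework the monotone continuity argument of Definition~\ref{DH measure} in its relative form, verifying that $F_{\varphi,\varphi'}(\tau) := \inf\{\int_{[\tau,\infty)} \DHm_{\tilde\varphi,\varphi'} : \varphi \le \tilde\varphi \in \nH(X,L)_T\}$ is left-continuous and monotone in $\varphi$, and agrees with $|\{q_{\varphi'} - q_\varphi \ge \tau\}|$ via the regular case.
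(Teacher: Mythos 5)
Your proposal is correct and follows essentially the same route as the paper: the weight basis is codiagonal for the two $T$-invariant metrics, Proposition \ref{filtration via Legendre dual} turns the relative spectral measure into a Riemann sum for $(q_{\varphi'}-q_\varphi)_*d\mu$, and the general case is reduced to matching $\int_{[\tau,\infty)}$ on both sides via decreasing nets. The ``hard part'' you flag at the end is already handled by the relative construction of $\DHm_{\varphi,\varphi'}$ with anchor $\varphi'\in\nH(X,L)$ in section \ref{dexp with anchor in H}, which gives exactly the monotone continuity of $F_{\varphi,\varphi'}(\tau)$ along decreasing nets that you ask for.
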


\begin{proof}
We firstly assume $\varphi \in \nH (X, L)_T$. 
Take a basis $\{ s_{m, \mu} \}_{\mu \in mP \cap M}$ of $H^0 (X, L^{\otimes m}) = \bigoplus_{\mu \in mP \cap M} H^0 (X, L^{\otimes m})_\mu$ so that $H^0 (X, L^{\otimes m})_\mu = \mathbb{C}. s_\mu$. 
Then it is codiagonal with respect to $\varphi, \varphi'$, so we have 
\[ \DHm_{\varphi, \varphi'} = \lim_{m \to \infty} \frac{1}{m^n} \sum_{\mu \in mP \cap M} \delta_{\lambda^\varphi (s_{m, \mu})/m - \lambda^{\varphi'} (s_{m, \mu})/m}. \] 
By Proposition \ref{filtration via Legendre dual}, we have $\lambda^\varphi (s_{m, \mu}) = - m q_\varphi (\mu/m)$, so that we get 
\[ \DHm_{\varphi, \varphi'} = \lim_{m \to \infty} \frac{1}{m^n} \sum_{\mu \in P \cap m^{-1} M} \delta_{- q_\varphi (\mu) + q_{\varphi'} (\mu)}. \]
By the Riemannian integral, the limit can be identified with $((q_{\varphi'}-q_\varphi)^* d\mu)|_\mathbb{R}$. 

Now consider the general case $\varphi \in \PSH (X, L)_T$. 
It suffices to check 
\[ \int_{[\tau, \infty)} \DHm_{\varphi, \varphi'} = \int_{[\tau, \infty)} (q_{\varphi'} -q_\varphi)^* d\mu = \mu (\{ q_{\varphi'} -q_\varphi \ge \tau \}) \]
for general $\varphi \in \PSH (X, L)_T$. 
The left hand side is continuous along decreasing nets $\varphi_i \searrow \varphi$ by our construction of the Duistermaat--Heckman measure. 
On the other hand, the right hand side is continuous as $F_i := \{ q_{\varphi'} - q_{\varphi_i} \ge \tau \}$ gives an decreasing net of closed sets for which we have $\bigcap_i F_i = \{ q_{\varphi'} - q_\varphi \ge \tau \}$. 
\end{proof}

It follows that for $\varphi \in \PSH (X, L)_T$ we have $\varphi \in \E (X, L)$ iff $\mu (\{ q_\varphi = \infty \}) = 0$, and $\varphi \in \E^1 (X, L)$ iff $\int_P q_\varphi d\mu < \infty$. 
In particular, for $\varphi \in \E (X, L)_T = \E (X, L) \cap \PSH (X, L)_T$, $q_\varphi$ is continuous on $P^\circ$. 
If $q_\varphi$ is finite valued on $P$, then it is continuous on $P$ by \cite{GKR}. 

\subsubsection{$d_{\exp}$-metric via Legendre dual}

We observe the Legendre dual of 
\[ \E^{\exp} (X, L)_T := \E^{\exp} (X, L) \cap \PSH (X, L)_T. \]
We consider 
\[ \mathrm{Conv}^{\exp} (P) := \{ q: P \to (-\infty, \infty] ~|~ q \text{ is lsc convex and } \int_P e^{\rho q} d\mu < \infty \text{ for } \forall \rho > 0 \}. \]
Since $q$ is finite valued on a dense subset, it is automatically continuous on the interior $P^\circ$. 
By the convexity and the lower semi-continuity, for $\mu_o \in P^\circ$ and $\mu \in \partial P$, the convex function $q ((1-t) \mu_o + t\mu)$ is continuous on $t \in [0,1]$ (possibly $+\infty$ at $t=1$), hence $q$ is uniquely determined by $q|_{P^\circ}$. 
We consider the following distance on $\mathrm{Conv}^{\exp} (P)$: 
\[ d^{\exp} (q, q') := \inf \{ \beta > 0 ~|~ \int_P (e^{|q-q'|/\beta} -1) d\mu \le 1 \}. \]

\begin{prop}
\label{completeness of the Legendre dual}
$(\mathrm{Conv}^{\exp} (P), d^{\exp})$ is a complete metric space. 
If $q_i \to q$ in $d^{\exp}$, then $q_i$ converges uniformly to $q$ on every compact set $K \subset P^\circ$ and we have $q \le \varliminf q_i$ everywhere on $P$. 
\end{prop}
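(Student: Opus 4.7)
My approach parallels the construction of $d_{\exp}$ on $\mathcal{E}^{\exp}(X,L)$ in Section \ref{A metric structure on the space Eexp}, viewing $d^{\exp}$ as the Luxemburg distance attached to the Young function $\Phi(t)=e^{|t|}-1$ on the finite measure space $(P,d\mu)$. The one genuinely new ingredient is a regularity input from convex analysis that upgrades integral control on $P$ to pointwise control on $P^\circ$.

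I would first verify that $d^{\exp}$ is a metric: symmetry is immediate, and the triangle inequality is the standard Orlicz estimate, which follows from convexity of $\Phi$ exactly as in Proposition \ref{dexp gives a distance on H}. Non-degeneracy uses that $d^{\exp}(q,q')=0$ forces $\int_P\Phi(|q-q'|/\beta)\,d\mu\le 1$ for every $\beta>0$, hence $q=q'$ a.e.; continuity of finite convex functions on $P^\circ$ then gives $q|_{P^\circ}=q'|_{P^\circ}$, and the convention recalled in the paragraph preceding the statement (that an element of $\mathrm{Conv}^{\exp}(P)$ is determined by its restriction to $P^\circ$) promotes this to equality on all of $P$. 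Next, the elementary bound $|t|\le\Phi(t)$ yields $\|q-q'\|_{L^1(P,d\mu)}\le d^{\exp}(q,q')$, so $d^{\exp}$-convergent sequences are $L^1$-convergent; combined with the classical fact that convex functions on an open convex set converging in $L^1_{\mathrm{loc}}$ converge locally uniformly (convexity upgrades $L^1$-boundedness on a slightly enlarged compact set first to an $L^\infty$-bound and then to a Lipschitz bound), this yields the asserted uniform convergence on compact subsets of $P^\circ$.

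For completeness, given a $d^{\exp}$-Cauchy sequence $\{q_i\}$ the previous step produces a continuous convex function $q_\circ:P^\circ\to\mathbb{R}$ with $q_i|_{P^\circ}\to q_\circ$ locally uniformly; extend by lsc convex regularisation to a candidate limit $q:P\to(-\infty,\infty]$. I expect the main obstacle to be verifying that $\int_P e^{\rho q}\,d\mu<\infty$ for \emph{every} $\rho>0$, since the Luxemburg norm only directly controls one scale at a time. The device, modelled on Proposition \ref{dexp Cauchy implies Eexp bounded}, is to decouple $\rho$ from the Luxemburg parameter by convexity: given $\rho>0$, choose $i_0$ with $d^{\exp}(q_i,q_{i_0})<1/(2\rho)$ for $i\ge i_0$, then combine the AM-GM estimate $e^{\rho q_i}\le\tfrac12 e^{2\rho|q_i-q_{i_0}|}+\tfrac12 e^{2\rho q_{i_0}}$ with the Luxemburg bound at scale $\beta=1/(2\rho)$ and the hypothesis $q_{i_0}\in\mathrm{Conv}^{\exp}(P)$ to obtain $\sup_{i\ge i_0}\int_P e^{\rho q_i}\,d\mu<\infty$. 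Fatou applied to the a.e.\ convergence on $P^\circ$ transfers this bound to $q$, placing $q$ in $\mathrm{Conv}^{\exp}(P)$.

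To close, the convergence $d^{\exp}(q_i,q)\to 0$ follows by a standard Fatou argument: given $\beta>0$ take $j_0$ with $d^{\exp}(q_i,q_j)<\beta$ for $i,j\ge j_0$, so $\int_P\Phi(|q_i-q_j|/\beta)\,d\mu\le 1$; letting $j\to\infty$ along a subsequence with $q_j\to q$ a.e.\ yields $\int_P\Phi(|q_i-q|/\beta)\,d\mu\le 1$, hence $d^{\exp}(q_i,q)\le\beta$. Finally, for the inequality $q\le\varliminf q_i$ on $P$ equality holds on $P^\circ$ by local uniform convergence, while for $\mu\in\partial P$ one fixes any $\mu_0\in P^\circ$, writes $\mu_t=(1-t)\mu+t\mu_0\in P^\circ$, applies convexity of $q_i$ in the form $q_i(\mu)\ge(q_i(\mu_t)-tq_i(\mu_0))/(1-t)$, passes to $\varliminf_i$ using pointwise convergence on $P^\circ$, and sends $t\searrow 0$, invoking the boundary convention $q(\mu)=\lim_{t\to 0^+}q(\mu_t)$ from the lsc extension to conclude.
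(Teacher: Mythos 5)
Your proof is correct and follows essentially the same route as the paper's: the Luxemburg-norm structure gives $L^1$-control, convexity upgrades this to locally uniform convergence on $P^\circ$, the limit is extended to $\partial P$ as the lsc convex closure (equivalently, by radial limits from a fixed interior point), and the inequality $q \le \varliminf_{i} q_i$ at boundary points is obtained by the same convexity-along-segments argument the paper uses. The only real difference is that where the paper invokes completeness of the small Orlicz space $L^{\exp}(P)$ as a known fact, you re-derive the needed uniform bound $\sup_{i\ge i_0}\int_P e^{\rho q_i}\,d\mu<\infty$ via the convexity/AM--GM decoupling modeled on Proposition \ref{dexp Cauchy implies Eexp bounded} and close with Fatou, which makes the argument self-contained (and, unlike the paper's proof, you also verify the metric axioms).
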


\[ \mathrm{Conv}^{\exp} (P) \to [0, \infty]: q \mapsto \lim_{t \to 1} \int_{\partial P} e^{q ((1-t) \mu_o +t \mu)} d\sigma \]

\begin{proof}
Let $q_i$ be a Cauchy sequence. 
By the completeness of the small Orlicz space $L^{\exp} (P) = \{ f: P \to [-\infty, \infty] ~|~ \int_P e^{\rho f} d\mu < \infty \text{ for } \forall \rho > 0 \}$, we have a Lebesgue measurable function $\tilde{q}: P \to [-\infty, \infty]$ such that $q_i \to \tilde{q}$ in the $L^{\exp}$-norm. 
Since $q_i$ converges to $\tilde{q}$ pointwisely on a dense subset, there exists a convex function $q^\circ$ on $P^\circ$ such that $q_i$ converges to $q^\circ$ uniformly on any compact set $K \Subset P^\circ$ by a general argument on convex function. 
Fix a point $\mu_o \in P^\circ$ and put $q (\mu) := \lim_{t \to 1} q^\circ ((1-t) \mu_o + t \mu)$ for $\mu \in P$. 
As we have $q|_{P^\circ} = q^\circ \stackrel{\text{a.e.}}{=} \tilde{q}$, $q_i \to q$ in the $L^{\exp}$-norm. 
In particular, we get $q \in \mathrm{Conv}^{\exp} (P)$ and hence $q_i \to q$ in $d_{\exp}$. 

We show that $q$ is lower semi-continuous. 
The argument mimics \cite{Don2}. 
Take a convergent sequence $\mu_i \to \mu \in P$. 
Put $f_i (t) := q ((1-t) \mu_o + t \mu_i)$ and $f (t) := q ((1-t) \mu_o + t \mu)$, then by the continuity of $q|_{P^\circ}$, $f_i$ converges to $f$ pointwisely on $[0,1)$. 
Since
\[ f_i (t) \le (1-t) q (\mu_o) + t q (\mu_i) \]
we have 
\[ f (t) \le (1-t) q (\mu_o) + t \varliminf_{i \to \infty} q (\mu_i) \]
for $t \in [0,1)$. 
Taking the limit $t \to 1$, we get
\[ q (\mu) = f (1) \le \varliminf_{i \to \infty} q (\mu_i). \]

The rest claim is $q \le \varliminf_{i \to \infty} q_i$. 
We have
\[ q_i ((1-t) \mu_o +t \mu) \le (1-t) q_i (\mu_o) + t q_i (\mu) \] 
Since $q_i ((1-t) \mu_o +t \mu) \to q ((1-t) \mu_o +t \mu)$ for $t \in [0, 1)$, we have 
\[ q ((1-t) \mu_o +t \mu) \le (1-t) q (\mu_o) + t \varliminf_{i \to \infty} q_i (\mu) \]
for $t \in [0,1)$. 
Taking the limit $t \to 1$, we get 
\[ q (\mu) \le \varliminf_{i \to \infty} q_i (\mu). \]
\end{proof}

For $\varphi \in \E^{\exp} (X, L)_T$, we have 
\[ E_{\exp} (\varphi_{; \rho}) = \int_\mathbb{R} e^{-\rho t} \DHm_\varphi = \int_P e^{\rho q_\varphi} d\mu, \]
so that $q_\varphi \in \mathrm{Conv}^{\exp} (P)$. 

\begin{prop}
The map 
\[ (\E^{\exp} (X, L)_T, d_{\exp}) \to (\mathrm{Conv}^{\exp} (P), d^{\exp}): \varphi \mapsto q_\varphi \]
is an isometry. 
Assuming the continuity of envelopes, the map is bijective. 
\end{prop}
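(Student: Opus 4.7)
The argument decomposes into the isometry claim and, separately, surjectivity under the continuity of envelopes. Injectivity is free from the isometry.

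\textbf{Isometry.} The first step is to prove $d_{\exp}(\varphi,\varphi') = d^{\exp}(q_\varphi, q_{\varphi'})$ when $\varphi' \in \nH(X,L)_T$, then pass to the general case by density. Assuming $\varphi' \in \nH(X,L)_T$, Proposition \ref{distance with anchor} gives
\[
d_{\exp}(\varphi,\varphi') = \inf\Big\{\beta > 0 \,:\, \int_\mathbb{R}\bigl(e^{|t|/\beta}-1\bigr)\DHm_{\varphi,\varphi'} \le 1\Big\},
\]
while Proposition \ref{relative DH measure via Legendre dual} identifies $\DHm_{\varphi,\varphi'}$ as the restriction to $\mathbb{R}$ of $(q_{\varphi'}-q_\varphi)_* d\mu$. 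Since $\varphi \in \E^{\exp}(X,L)_T$ forces $q_\varphi < \infty$ $\mu$-a.e.\ on $P$, and $q_{\varphi'}$ is continuous (hence finite) on $P$, the pushforward formula converts the $\mathbb{R}$-integral into
\[
\int_P \bigl(e^{|q_{\varphi'}-q_\varphi|/\beta}-1\bigr)\,d\mu,
\]
and the infima agree, yielding $d_{\exp}(\varphi,\varphi') = d^{\exp}(q_\varphi,q_{\varphi'})$. For arbitrary $\varphi,\varphi' \in \E^{\exp}(X,L)_T$, approximate $\varphi'$ by a decreasing sequence $\varphi'_j \searrow \varphi'$ with $\varphi'_j \in \nH(X,L)_T$ (supplied by the toric analogue of the standard regularization, or by truncation plus Proposition \ref{decreasing convergence is dexp convergent}); then $q_{\varphi'_j} \nearrow q_{\varphi'}$ pointwise, $d_{\exp}(\varphi,\varphi'_j) \to d_{\exp}(\varphi,\varphi')$ by Proposition \ref{decreasing convergence is dexp convergent}, and $d^{\exp}(q_\varphi,q_{\varphi'_j}) \to d^{\exp}(q_\varphi,q_{\varphi'})$ by monotone convergence inside the Luxemburg-type infimum. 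Injectivity of the map is an immediate consequence.

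\textbf{Surjectivity under continuity of envelopes.} The strategy is completeness-plus-density. By Theorem \ref{dexp metric}(3), the continuity of envelopes implies $(\E^{\exp}(X,L),d_{\exp})$ is complete; the subspace $\E^{\exp}(X,L)_T$ is closed under decreasing limits in $d_{\exp}$ (since $T$-invariance passes to pointwise limits), so it is itself complete. An isometric image of a complete metric space is closed, so $\{q_\varphi : \varphi \in \E^{\exp}(X,L)_T\}$ is closed in $(\mathrm{Conv}^{\exp}(P),d^{\exp})$. It remains to verify density of the image: given $q \in \mathrm{Conv}^{\exp}(P)$, a lower semi-continuous convex function on the compact polytope $P$ is bounded below, say $q \ge -M$, and is the supremum of its affine minorants. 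Density of rational affine functions permits us to construct an increasing pointwise-convergent sequence $q_i \nearrow q$ of the form
\[
q_i = \max\bigl\{-M,\ \langle \mu,\eta_{i,1}\rangle - \lambda_{i,1},\ldots,\ \langle \mu,\eta_{i,N_i}\rangle - \lambda_{i,N_i}\bigr\},\qquad \eta_{i,k} \in N_\mathbb{Q},\ \lambda_{i,k} \in \mathbb{Q}.
\]
By the CLS correspondence recalled in the excerpt, each $q_i$ equals $q_{(\mathcal{X}_i,\mathcal{L}_i)}$ for some toric test configuration; setting $\varphi_i := \varphi_{(\mathcal{X}_i,\mathcal{L}_i)}$, the sequence $\varphi_i$ is decreasing and satisfies
\[
E_{\exp}(\varphi_{i;\rho}) = -\int_P e^{\rho q_i}\,d\mu \ \ge\ -\int_P e^{\rho q}\,d\mu \ >\ -\infty
\]
uniformly in $i$. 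Proposition \ref{convergence of decreasing sequence} then produces a limit $\varphi \in \E^{\exp}(X,L)_T$ with $\varphi_i \searrow \varphi$ in $d_{\exp}$, and $q_{\varphi_i} \nearrow q_\varphi$ forces $q_\varphi = q$. Hence the image is dense, and, being closed, it fills all of $\mathrm{Conv}^{\exp}(P)$.

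\textbf{Expected main obstacle.} The heaviest technical point is the construction of the approximating sequence $q_i \nearrow q$ in the surjectivity step: one must produce finitely many \emph{rational} affine minorants at each stage whose maximum approximates $q$ from below pointwise (not merely almost everywhere, since the Legendre-dual correspondence is sensitive to values at boundary vertices), while enforcing a uniform floor $-M$ so that $\sup\varphi_i = -\inf q_i$ stays bounded and the decreasing limit $\varphi$ does not collapse to $-\infty$. This requires a diagonal enumeration of rational supporting hyperplanes of $\mathrm{epi}(q)$ together with a lower-semi-continuity argument at $\partial P$ to ensure pointwise, not just a.e., convergence; the remaining steps (the isometry identity, the $\nH$-density, and the completeness/closedness package) are comparatively routine given the results already available in the excerpt.
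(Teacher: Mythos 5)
Your isometry argument coincides with the paper's: reduce to an anchor $\varphi'\in\nH(X,L)_T$ via Proposition \ref{distance with anchor} and Proposition \ref{relative DH measure via Legendre dual}, then pass to general $\varphi'$ by a decreasing net and monotone convergence. That part is fine.

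The surjectivity step diverges from the paper at exactly the point the paper flags as delicate, and you assert it without proof. The paper also takes the increasing net $q_i\nearrow q$ of maxima of rational affine minorants with $q_i=q_{\varphi_i}$, but explicitly states ``it is not evident if $\varphi_i$ gives a decreasing net'' and therefore routes around the issue: it observes that $d^{\exp}(q_i,q)\to 0$ by monotone convergence, deduces that $\{\varphi_i\}$ is $d_{\exp}$-Cauchy from the isometry $d_{\exp}(\varphi_i,\varphi_j)=d^{\exp}(q_i,q_j)$, and then invokes completeness of $\E^{\exp}(X,L)_T$ --- which is precisely where the continuity of envelopes enters --- to produce the limit $\varphi$ with $q_\varphi=q$. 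You instead declare ``the sequence $\varphi_i$ is decreasing'' and apply Proposition \ref{convergence of decreasing sequence}. If that monotonicity were established (it would follow from Proposition \ref{filtration via Legendre dual} together with the equivalence $\mathcal{F}_\varphi\subset\mathcal{F}_{\varphi'}\Leftrightarrow\varphi\le\varphi'$ for continuous $T$-invariant metrics, since the Legendre transform reverses order), your argument would prove surjectivity \emph{without} the continuity of envelopes, since Proposition \ref{convergence of decreasing sequence} does not use that hypothesis. That the conclusion would be strictly stronger than the stated proposition is a warning sign: at minimum this step needs a careful proof rather than an assertion, and you should say explicitly why $q_{\varphi_i}\le q_{\varphi_j}$ forces $\varphi_i\ge\varphi_j$, including at rational points of $\partial P$ where the filtration formula is evaluated.

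Two smaller defects. First, your ``closed plus dense'' framing is redundant: if the density step really produces, for each $q$, a decreasing sequence converging to some $\varphi$ with $q_\varphi=q$, you have already exhibited a preimage and the closedness of the image is never needed. Second, your justification of completeness of $\E^{\exp}(X,L)_T$ --- that it is ``closed under decreasing limits'' --- is not an argument for metric completeness, since a $d_{\exp}$-Cauchy sequence need not be monotone; the paper simply asserts this completeness (as a consequence of the continuity of envelopes), and if you want to use it you should too, rather than derive it from monotone closedness.
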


\begin{proof}
By Proposition \ref{relative DH measure via Legendre dual}, for $\varphi \in \E^{\exp} (X, L)_T$ and $\varphi' \in \nH (X, L)_T$, we have 
\[ \int_\mathbb{R} (e^{|t|/\beta} -1) \DHm_{\varphi, \varphi'} = \int_P (e^{|q_\varphi - q_{\varphi'}|/\beta} -1) d\mu. \]
Then by Proposition \ref{distance with anchor}, we get 
\[ d_{\exp} (\varphi, \varphi') = d^{\exp} (q_\varphi, q_{\varphi'}). \]
To see this for general $\varphi' \in \E^{\exp} (X, L)_T$, take a decreasing net $\varphi'_i \subset \nH (X, L)_T$ so that $\varphi'_i \searrow \varphi'$, then the monotone convergence theorem shows $d^{\exp} (q_{\varphi'_i}, q_{\varphi'}) \to 0$. 
It follows that $d^{\exp} (q_\varphi, q_{\varphi'_i}) \to d^{\exp} (q_\varphi, q_{\varphi'})$. 
Since we already know $d_{\exp} (\varphi, \varphi'_i) \to d_{\exp} (\varphi, \varphi')$, this shows the above map is an isometry. 

To see the surjectivity, we want to assign $\varphi_q \in \E^{\exp} (X, L)_T$ to $q \in \mathrm{Conv}^{\exp} (P)$. 
Since $q = \sup \{ \ell: P \to \mathbb{R} ~|~ q \ge \ell: \text{ rational affine function } \}$, we can find an increasing net of convex functions $q_i$ on $P$ of the form $q_i = q_{\varphi_i}$ for $\varphi_i \in \nH (X, L)_T$ so that $q_i \nearrow q$. 
It is not evident if $\varphi_i$ gives a decreasing net, so we instead use the completeness of $\E^{\exp} (X, L)_T$ as follows. 
By the monotone convergence theorem, we have $d_{\exp} (q_i, q) \to 0$. 
Since $d_{\exp} (\varphi_i, \varphi_j) = d^{\exp} (q_i, q_j)$, $\{ \varphi_i \} \subset \E^{\exp} (X, L)_T$ gives a Cauchy net. 
Under the continuity of envelopes, $\E^{\exp} (X, L)_T \subset \E^{\exp} (X, L)$ is complete, so that the Cauchy net converges to some $\varphi \in \E^{\exp} (X, L)_T$. 
Since 
\[ d^{\exp} (q, q_\varphi) \le d^{\exp} (q, q_i) + d^{\exp} (q_i, q_\varphi) = d^{\exp} (q, q_i) + d_{\exp} (\varphi_i, \varphi) \to 0, \]
we have $q_\varphi = q$. 
\end{proof}

\subsubsection{Non-archimedean $\mu$-entropy via Legendre dual}

For $q \in \mathrm{Conv}^{\exp} (P)$, we put 
\begin{align}
\NAmu^* (q) 
&:= -2\pi \frac{\int_{\partial P} e^q d\sigma}{\int_P e^q d\mu}, 
\\
\bm{\check{\sigma}}^* (q) 
&:= \frac{\int_P (n+q) e^q d\mu}{\int_P e^q d\mu} - \log \int_P e^q d \mu, 
\\
\NAmu^{*, \lambda} (q) 
&:= \NAmu^* (q) + \lambda \bm{\check{\sigma}}^* (q). 
\end{align}

\begin{prop}
\label{Toric formula}
For a normal test configuration $(\mathcal{X}, \mathcal{L})$ and $\rho > 0$, we have 
\[ \NAmu^\lambda (\varphi_{(\mathcal{X}, \mathcal{L}; \rho)}) = \NAmu^{*, \lambda} (\rho q_{(\mathcal{X}, \mathcal{L})}). \]
\end{prop}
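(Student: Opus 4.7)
The plan is to reduce every ingredient of $\NAmu^{\lambda}(\varphi_{(\mathcal{X},\mathcal{L};\rho)})$ to an integral over $P$ via the Legendre correspondence set up in this section, and then match it piece by piece with $\NAmu^{*,\lambda}(\rho q)$.

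First I would establish the scaling $q_{\varphi_{(\mathcal{X},\mathcal{L};\rho)}} = \rho q_{(\mathcal{X},\mathcal{L})}$. From $f_\varphi(\xi) = \varphi(v_{-\xi}) + \sup_{\mu \in P}\langle \mu,\xi\rangle$ and $\varphi_{;\rho}(v) = \rho\varphi(\rho^{-1}v)$, a direct computation gives $f_{\varphi_{;\rho}}(\xi) = \rho f_\varphi(\xi/\rho)$, hence $q_{\varphi_{;\rho}} = \rho q_\varphi$; then apply Corollary \ref{scaling of NA psh}. Next, Proposition \ref{relative DH measure via Legendre dual} applied with $\varphi' = \varphi_{\mathrm{triv}}$ (whose Legendre dual is $0$) identifies $\DHm_\varphi$ with the pushforward of $d\mu|_P$ along $-q_\varphi$. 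Since $q_{(\mathcal{X},\mathcal{L})}$ is finite and continuous on $P$, this yields
\[
\int_\mathbb{R} \chi(t)\,\DHm_{\varphi_{(\mathcal{X},\mathcal{L};\rho)}} = \int_P \chi(-\rho q)\,d\mu
\]
for every Borel $\chi$. Taking $\chi = e^{-t}$ and $\chi = (n-t)e^{-t}$ shows $\iint e^{-t}\mathcal{D}_\varphi = \int_P e^{\rho q}d\mu$ and $\int_\mathbb{R}(n-t)e^{-t}\DHm_\varphi = \int_P (n+\rho q)e^{\rho q}d\mu$, giving the $\bm{\check\sigma}$ part of the claim directly.

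The main work is matching the $\NAmu$-numerator with $-\int_{\partial P} e^{\rho q}d\sigma$. Using Proposition \ref{Localization} in its log-variant (Section \ref{non-archimedean mu-entropy of test configuration}), the task reduces to
\[
(K_X.\,e^L) - \rho\,(K_{\bar{\mathcal{X}}/\mathbb{P}^1}^{\log,\mathbb{G}_m}.\,e^{\bar{\mathcal{L}}_{\mathbb{G}_m}};\rho) = -\int_{\partial P} e^{\rho q}\,d\sigma.
\]
I would prove this using the toric model $(\bar{\mathcal{X}},\bar{\mathcal{L}}_c)$ whose moment polytope is $Q_c = \{(\mu,t) : \mu \in P,\ 0\le t\le -q(\mu)+c\}$. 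The facets of $Q_c$ split into the top $\{t=-q(\mu)+c\}$, the bottom $\{t=0\}$, and the \emph{vertical} facets $\{\mu \in F,\ 0\le t\le -q(\mu)+c\}$ for each facet $F \subset \partial P$. The toric dictionary identifies $-K_{\bar{\mathcal{X}}}$ with the sum of all toric divisors; the correction $+[\mathcal{X}_0^{\mathrm{red},\mathbb{G}_m}] - \varpi^\ast[0^{\mathbb{G}_m}]$ encoded in $K^{\log,\mathbb{G}_m}_{\bar{\mathcal{X}}/\mathbb{P}^1}$ cancels the bottom facet, while pullback of $K_{\mathbb{P}^1}^{\mathbb{G}_m}$ accounts for the top; thus only the vertical facets over $\partial P$ contribute. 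Computing the equivariant intersection against $e^{\bar{\mathcal{L}}_{\mathbb{G}_m}}$ facet-by-facet reduces to the fibre integral $\int_0^{-q(\mu)+c} e^{-\rho t}\,dt = \rho^{-1}(1 - e^{\rho q(\mu)-\rho c})$; combining with the $c$-piece $(K_X.\,e^L) = -\int_{\partial P} d\sigma$ from the base toric formula, the $c$-dependent terms cancel (as they must, since the equivariant class $\bm{\check\mu}$ is independent of $c$), and what remains is precisely $-\int_{\partial P} e^{\rho q}d\sigma$.

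The main obstacle is the bookkeeping in the last step: verifying that the log-correction indeed cancels exactly the horizontal facets (both with the correct multiplicity, since $\mathcal{X}_0$ is generally non-reduced, which makes $\mathcal{X}_0^{\mathrm{red}}$ differ from $\mathcal{X}_0$ by the $\mathrm{ord}_E\mathcal{X}_0$ weights appearing as denominators in $\eta_E \in (\mathrm{ord}_E\mathcal{X}_0)^{-1}N$), and tracking sign and normalization conventions for $d\sigma$ against the equivariant Chern--Weil convention fixed in Section \ref{equivariant intersection}. Once these are in place, the full equality $\NAmu^\lambda(\varphi_{(\mathcal{X},\mathcal{L};\rho)}) = \NAmu^{*,\lambda}(\rho q)$ follows by dividing by $\int_P e^{\rho q}d\mu$ and adding the $\lambda \bm{\check\sigma}^*$ piece already matched.
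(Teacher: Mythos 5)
Your proposal is correct and follows essentially the same route as the paper: identify $K_{\bar{\mathcal{X}}/\mathbb{P}^1}^{\log,\mathbb{G}_m}$ with minus the sum of the toric divisors corresponding to the vertical facets of $Q_c$ (those lying over $\partial P$), then compute each equivariant intersection by the fibre integral $\int_0^{-q(\mu)}e^{-\rho t}\,dt$ and combine via the localization formula. The only cosmetic difference is that you obtain the $\bm{\check{\sigma}}$ terms from the Legendre-dual description of $\DHm_\varphi$ rather than by directly evaluating $(e^{\bar{\mathcal{L}}};\rho)$ and $(\bar{\mathcal{L}}.e^{\bar{\mathcal{L}}};\rho)$ as integrals over $Q$, which amounts to the same computation.
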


\begin{proof}
We firstly note the canonical divisor formula for toric variety $K_{\bar{\mathcal{X}}} = - \sum_F D_F$, where $D_F$ denote the prime divisor corresponding to a face $F \subset \partial Q$ (cf. \cite[Theorem 8.2.3]{CLS}). 
Since $D_F$ is $T \times \mathbb{G}_m$-invariant, we have a $T \times \mathbb{G}_m$-equivariant Chow class $- \sum_F [D_F]^{T \times \mathbb{G}_m}$. 
The above formula indeed gives the equality of $T \times \mathbb{G}_m$-equivariant Chow classes $K_{\bar{\mathcal{X}}}^{T \times \mathbb{G}_m} = - \sum_F [D_F]^{T \times \mathbb{G}_m}$ by \cite[Theorem 13.3.1]{CLS}. 

Let $u_F = (u_F^N, a_F) \in N \times \mathbb{Z}$ denote the minimal generator of the normal ray of the face $F$. 
Since we have $\pi^* K_{\mathbb{P}^1}^{\mathbb{G}_m} = -[\mathcal{X}_0]^{\mathbb{G}_m} - [\mathcal{X}_\infty] = -\sum_F |a_F| [D_F]^{\mathbb{G}_m}$ (cf. \cite[Proposition 4.1.1]{CLS}) with $\mathcal{X}_\infty = \pi^{-1} (0:1) = X$, we have 
\begin{align*}
 K_{\bar{\mathcal{X}}/\mathbb{P}^1}^{\log, T \times \mathbb{G}_m} 
 &= K_{\bar{\mathcal{X}}/\mathbb{P}^1}^{T \times \mathbb{G}_m} + ([\mathcal{X}^{\mathrm{red}}_0]^{\mathbb{G}_m} - [\mathcal{X}_0]^{\mathbb{G}_m}) 
 \\
 &= -\sum_F (1- |a_F|) [D_F]^{\mathbb{G}_m} + \sum_{F, a_F \neq 0} (1- |a_F|) [D_F]^{\mathbb{G}_m} = - \sum_{F, a_F = 0} [D_F]^{\mathbb{G}_m}. 
\end{align*}
Thus we get 
\[ (K_{\bar{\mathcal{X}}/\mathbb{P}^1}^{\log}. e^{\bar{\mathcal{L}}}; \rho) = -\sum_{F, a_F = 0} \int_F e^{-\rho t} d\sigma \otimes dt. \]

%

Now we compute 
\begin{gather*} 
(e^{\bar{\mathcal{L}}}; \rho) = \int_Q e^{-\rho t} d\mu \otimes dt = \int_P d\mu \int_0^{-q(\mu)} e^{-\rho t} dt = \int_P \frac{1- e^{\rho q (\mu)}}{\rho} d\mu, 
\\ 
(\bar{\mathcal{L}}. e^{\bar{\mathcal{L}}}; \rho) = \int_Q ((n+1)-\rho t) e^{-\rho t} d\mu \otimes dt = \int_P d\mu \int_0^{-q (\mu)} (-\rho t) e^{-\rho t} dt = \int_P \frac{n- (n- \rho q (\mu)) e^{\rho q (\mu)}}{\rho} d\mu, 
\\
(K_{\bar{\mathcal{X}}/\mathbb{P}^1}^{\log}. e^{\bar{\mathcal{L}}}; \rho) = \sum_{F, a_F = 0} \int_F e^{-\rho t} d\sigma \otimes dt = \int_{\partial P} d\sigma \int_0^{-q (\mu)} e^{-\rho t} dt = \int_{\partial P} \frac{1- e^{\rho q (\mu)}}{\rho} d\mu, 
\end{gather*}
which shows the claim in conjunction with Lemma \ref{Localization}. 
\end{proof}

\begin{prop}
The functional $\NAmu^{*, \lambda}$ is upper semi-continuous with respect to the $d^{\exp}$-topology on $\mathrm{Conv} (P)$ and is continuous along increasing nets $q_i \nearrow q$. 
\end{prop}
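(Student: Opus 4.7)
The plan is to decompose $\NAmu^{*,\lambda} = \NAmu^* + \lambda \bm{\check{\sigma}}^*$ and analyze each constituent integral separately. The upper semi-continuity of $\NAmu^{*,\lambda}$ under $d^{\exp}$ will follow from (i) the continuity of the bulk integrals $\int_P e^q\,d\mu$ and $\int_P q\,e^q\,d\mu$, and (ii) the lower semi-continuity of the boundary integral $\int_{\partial P} e^q\,d\sigma$: the minus sign in $\NAmu^*$ flips the lsc of the numerator into the desired upper semi-continuity, while the denominator is continuous and positive. For continuity along increasing nets, everything reduces to monotone convergence for Radon measures.

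The main technical step is a uniform Orlicz bound which plays the role of Proposition \ref{dexp Cauchy implies Eexp bounded} on the Legendre side: any $d^{\exp}$-convergent sequence $q_i \to q$ in $\mathrm{Conv}^{\exp}(P)$ satisfies $\sup_i \int_P e^{\rho q_i}\,d\mu < \infty$ for every $\rho > 0$. This is a direct Cauchy--Schwarz computation: write $e^{\rho q_i} \le e^{\rho q}\cdot e^{\rho|q_i - q|}$, bound the second factor by $\int_P e^{2\rho |q_i - q|}\,d\mu \le 1 + \mathrm{vol}(P)$ once $d^{\exp}(q_i, q) < 1/(2\rho)$, and use that $e^{\rho q} \in L^2(d\mu)$ by hypothesis. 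Combined with the universal pointwise domination $|x e^x| \le e^{-1} + (e\varepsilon)^{-1}\, e^{(1+\varepsilon) x}$ valid for all $x \in \mathbb{R}$, a H\"older-on-small-sets estimate then shows that $\{e^{q_i}\}$ and $\{q_i e^{q_i}\}$ are uniformly integrable on $(P, d\mu)$. Since $d^{\exp}$ is a metric, sequential continuity suffices; and since $d^{\exp}$-convergence implies $L^1(d\mu)$-convergence, a subsequence converges $d\mu$-a.e. Vitali's convergence theorem then delivers the continuity of both bulk integrals, and hence of $\bm{\check{\sigma}}^*$.

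For the boundary integral, the pointwise inequality $q \le \varliminf_i q_i$ on all of $P$ provided by Proposition \ref{completeness of the Legendre dual} applies in particular on $\partial P$, so Fatou's lemma yields $\int_{\partial P} e^q\,d\sigma \le \varliminf_i \int_{\partial P} e^{q_i}\,d\sigma$. Combined with the continuity of the denominator, this gives the upper semi-continuity of $\NAmu^*$, and combining with the continuity of $\bm{\check{\sigma}}^*$ yields that of $\NAmu^{*,\lambda}$. For an increasing net $q_i \nearrow q$, monotone convergence for Radon measures (Proposition \ref{monotone convergence for net}) applies directly to $e^{q_i} \nearrow e^q$ on both $P$ and $\partial P$; for the signed integrand $q_i e^{q_i}$ I fix an index $i_0$ and a constant $M > |\inf_P q_{i_0}|$ so that $(q_i + M)e^{q_i}$ is a positive increasing function of $q_i$ for $i \ge i_0$, apply monotone convergence to that product, and subtract $M \int_P e^{q_i}\,d\mu$ to recover $\int_P q_i e^{q_i}\,d\mu \to \int_P q e^q\,d\mu$. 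The main obstacle is the uniform Orlicz bound above, which is the Legendre-side analog of Proposition \ref{dexp Cauchy implies Eexp bounded}; once it is in place, the rest is a routine application of Vitali and Fatou.
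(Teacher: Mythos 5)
Your argument is correct, and its skeleton matches the paper's: the boundary integral $\int_{\partial P} e^q\,d\sigma$ is handled identically in both proofs, via the pointwise bound $q \le \varliminf_i q_i$ from Proposition \ref{completeness of the Legendre dual} plus Fatou, with the minus sign converting lower semi-continuity of the numerator into upper semi-continuity of $\NAmu^*$; and the increasing-net case is monotone convergence in both. Where you genuinely diverge is in the treatment of the bulk integrals $\int_P e^q\,d\mu$ and $\int_P q\,e^q\,d\mu$: the paper disposes of $\bm{\check{\sigma}}^*$ by asserting it is continuous ``in the same way as $\bm{\check{\sigma}}$'', i.e.\ by deferring to the $E_{\exp}$-topology machinery on $\E^{\exp}(X,L)$ (tomographic expressions, Lemma \ref{exponential domination}, dominated convergence), whereas you prove the continuity directly on $(P, d\mu)$ via a uniform Orlicz bound $\sup_i \int_P e^{\rho q_i}\,d\mu < \infty$ (the Legendre-side analogue of Proposition \ref{dexp Cauchy implies Eexp bounded}, obtained by Cauchy--Schwarz against $e^{\rho|q_i-q|}$), the elementary domination $|xe^x| \le e^{-1} + (e\varepsilon)^{-1}e^{(1+\varepsilon)x}$, uniform integrability, and Vitali. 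This is more self-contained and arguably more transparent, since it never leaves the polytope; the paper's route is shorter on the page but leans on the analytic apparatus of Section 4. Two small points worth making explicit in a write-up: the a.e.\ convergence $q_i \to q$ needed for Vitali already follows from the locally uniform convergence on $P^\circ$ in Proposition \ref{completeness of the Legendre dual} (no need to pass through $L^1$), and in the increasing-net step the monotone convergence theorem you invoke is the net version (Proposition \ref{monotone convergence for net}), which requires the integrands $(q_i+M)e^{q_i}$ to be lower semi-continuous --- they are, being increasing continuous functions of the lsc convex $q_i$ on the relevant range.
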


\begin{proof}
We can check the continuity of $\bm{\check{\sigma}}^*$ in the same way as $\bm{\check{\sigma}}$. 
As for $\NAmu^*$, it suffices to check the lower semi-continuity of the map 
\[ \mathrm{Conv}^{\exp} (P) \to [0, \infty]: q \mapsto \int_{\partial P} e^q d\sigma. \]
Suppose $q_i \to q$ in $d_{\exp}$. 
By Proposition \ref{completeness of the Legendre dual}, we have $q \le \varliminf q_i$, so we get 
\[ \int_{\partial P} e^q d\sigma \le \int_{\partial P} \varliminf e^{q_i} d\sigma \le \varliminf \int_{\partial P} e^{q_i} d\sigma \]
by Fatou's lemma. 

If $q_i \nearrow q$, then by the monotone convergence theorem 
\[ \lim \int_{\partial P} e^{q_i} d\sigma = \int_{\partial P} e^q d\sigma, \]
which proves the continuity along $q_i \nearrow q$. 
\end{proof}

In particular, $\NAmu^{*, \lambda} (q_\varphi)$ is continuous along decreasing sequence $\varphi_i \searrow \varphi$. 
By the upper semi-continuity of $\NAmu^\lambda (\varphi)$, we get 
\[ \NAmu^\lambda (\varphi) \ge \NAmu^{*, \lambda} (q_\varphi) \]
for $\varphi \in \E^{\exp} (X, L)_T$. 
The equality holds under Conjecture \ref{Regularization of exponential entropy} for $\E^{\exp} (X, L)_T$. 

\begin{quest}
Can we prove the existence of maximizers of $\NAmu^{*, \lambda}$ for $\lambda \le 0$? 
Can we find a polytope $P$ for which we can show the existence of a piecewise affine maximizer? 
\end{quest}

\subsection{Illustrations}

Here we compute the $\mu$-entropy for proper vectors in explicit examples and give illustrations of the graphs. 
We compute integration of exponential by localization to vertices (cf. \cite[Theorem 13.5.2]{CLS}: let $P \subset \mathbb{R}^n$ be a simple polytope, i.e. for each vertex $v$, the cone $C_v = \mathrm{Cone} (P-v)$ is spanned by precisely $n$-vectors $\mu_{v, 1}, \ldots, \mu_{v, n} \in M$. 
Assume $\mu_{v, i}$ is primitive, i.e. $d^{-1} \mu_{v, i} \notin M$ for every integer $d \ge 2$. 
Then we have 
\[ \int_P e^{\langle \mu, \xi \rangle} d\mu = (-1)^n \sum_{v: \text{ vertex }} \frac{e^{\langle v, \xi \rangle} \cdot [M: \mathbb{Z} \mu_{v, 1} + \dotsb + \mathbb{Z} \mu_{v, n}]}{\prod_{i=1}^n \langle \mu_{v, i}, \xi \rangle}. \]
Here we can compute the index by 
\[ \mathrm{index} (v) := [M: \mathbb{Z} \mu_{v, 1} + \dotsb + \mathbb{Z} \mu_{v, n}] = |\det (a_{ij})| \]
if $\mu_{v, i} = \sum_{j=1}^n a_{ij} \mu_j$ for a basis $(\mu_1, \ldots, \mu_n)$ of $M$. 

\subsubsection{K\"ahler classes on the two points blowing-up of $\mathbb{C}P^2$}

For $0 < \delta < 3/2$, consider the polytope $P_\delta \subset \mathbb{R}^2$ given by the convex hull of the following five vertices: 
\[ v_1 = (-1, -1), ~~ v_2 = (2 -\delta, -1), ~~ v_3 = (2-\delta, -1+\delta), ~~ v_4 = (-1+ \delta, 2-\delta), ~~ v_5 = (-1, 2 - \delta). \]
\begin{center}
\begin{tikzpicture}
\filldraw[very thick, fill=cyan] (-1,-1)--(0.8,-1)--(0.8,0.2)--(0.2,0.8)--(-1,0.8)--cycle node[above = 20pt, left = 30pt] {};
\end{tikzpicture}
\end{center}

The associated toric variety is the two points blowing-up $X = \mathbb{C}P^2 \# 2 \overline{\mathbb{C}P^2}$ of the projective space. 
The associated polarization is $L_\delta = \delta (-K_X) + (1-\delta) \beta^* (-K_{\mathbb{C}P^2})$, where $\beta: X \to \mathbb{C}P^2$ is the blowing-up morphism. 

For a generic $\eta_\epsilon = (1, \epsilon)$ ($\epsilon \neq 0, 1$), we will compute 
\[ \int_{P_\delta} e^{\langle \mu, x \eta_\epsilon \rangle} d\mu = x^{-2} \sum_{v: \mathrm{vertex}} \frac{e^{x \langle v, \eta_\epsilon \rangle}}{\langle \mu_{v, +}, \eta_\epsilon \rangle \langle \mu_{v, -}, \eta_\epsilon \rangle} \]
and 
\[ \int_{\partial P_\delta} e^{\langle \mu, x \eta_\epsilon \rangle} d\sigma = - x^{-1} \sum_{v: \mathrm{vertex}} \frac{e^{x \langle v, \eta_\epsilon \rangle} \cdot \langle \mu_{v, +} + \mu_{v, -}, \eta_\epsilon \rangle}{\langle \mu_{v, +}, \eta_\epsilon \rangle \langle \mu_{v, -}, \eta_\epsilon \rangle}. \]

Substituting
\begin{align*}
\mu_{v_1, +} = (1,0), \quad \mu_{v_1, -} = (0,1), 
\\
\mu_{v_2, +} = (0, 1), \quad \mu_{v_2, -} = (-1, 0), 
\\
\mu_{v_3, +} = (-1, 1), \quad \mu_{v_3, -} = (0,-1), 
\\
\mu_{v_4, +} = (-1, 0), \quad \mu_{v_4, -} = (1, -1), 
\\
\mu_{v_5, +} = (0, -1), \quad \mu_{v_5, -} = (1, 0), 
\end{align*}
we get 
\begin{align*}
\int_{P_\delta} e^{\langle \mu, x \eta_\epsilon \rangle} d\mu 
&= x^{-2} \Big{(} \epsilon^{-1} e^{- (1+\epsilon) x} - \epsilon^{-1} e^{(2 - \delta -\epsilon) x} + \frac{1}{\epsilon (1-\epsilon)} e^{(2 -\delta -\epsilon + \delta \epsilon) x} 
\\
&\qquad - \frac{1}{1-\epsilon} e^{(-1+\delta + 2 \epsilon - \delta \epsilon) x} -\epsilon^{-1} e^{(-1 + 2\epsilon -\delta \epsilon) x} \Big{)}
\end{align*}
and 
\begin{align*}
\int_{\partial P_\delta} e^{\langle \mu, x \eta_\epsilon \rangle} d\sigma
&= -x^{-1} \Big{(} \frac{1+\epsilon}{\epsilon} e^{- (1+\epsilon) x} + \frac{1-\epsilon}{\epsilon} e^{(2 - \delta -\epsilon) x} - \frac{1}{\epsilon (1-\epsilon)} e^{(2 -\delta -\epsilon + \delta \epsilon) x} 
\\
&\qquad + \frac{\epsilon}{1-\epsilon} e^{(-1+\delta + 2 \epsilon - \delta \epsilon) x} - \frac{1-\epsilon}{\epsilon} e^{(-1 + 2\epsilon -\delta \epsilon) x} \Big{)}. 
\end{align*}


Taking the limit $\epsilon \to 1$, we get 
\begin{align*} 
\int_{P_\delta} e^{\langle \mu, x \eta_1 \rangle} d\mu 
&=x^{-2} \Big{(} e^{-2 x} -e^{(1-\delta)x} + (1+ (3-2\delta)x) e^x - e^{(1-\delta)x} \Big{)}
\end{align*}
and 
\begin{align*} 
\int_{\partial P_\delta} e^{\langle \mu, x \eta_1 \rangle} d\sigma 
&= -x^{-1} \Big{(} 2 e^{-2 x} - (2+ (3-2\delta)x ) e^x \Big{)}. 
\end{align*}

The following is the graph of $- \frac{1}{2\pi} \NAmu (x. \eta_1)$ for $\delta =1$. 
The minimizer is not $x = 0$, which implies there is no cscK metrics while there exists a K\"ahler--Ricci soliton (= $\mu$-cscK metric). 

\begin{figure}[h]
\includegraphics[width=6cm]{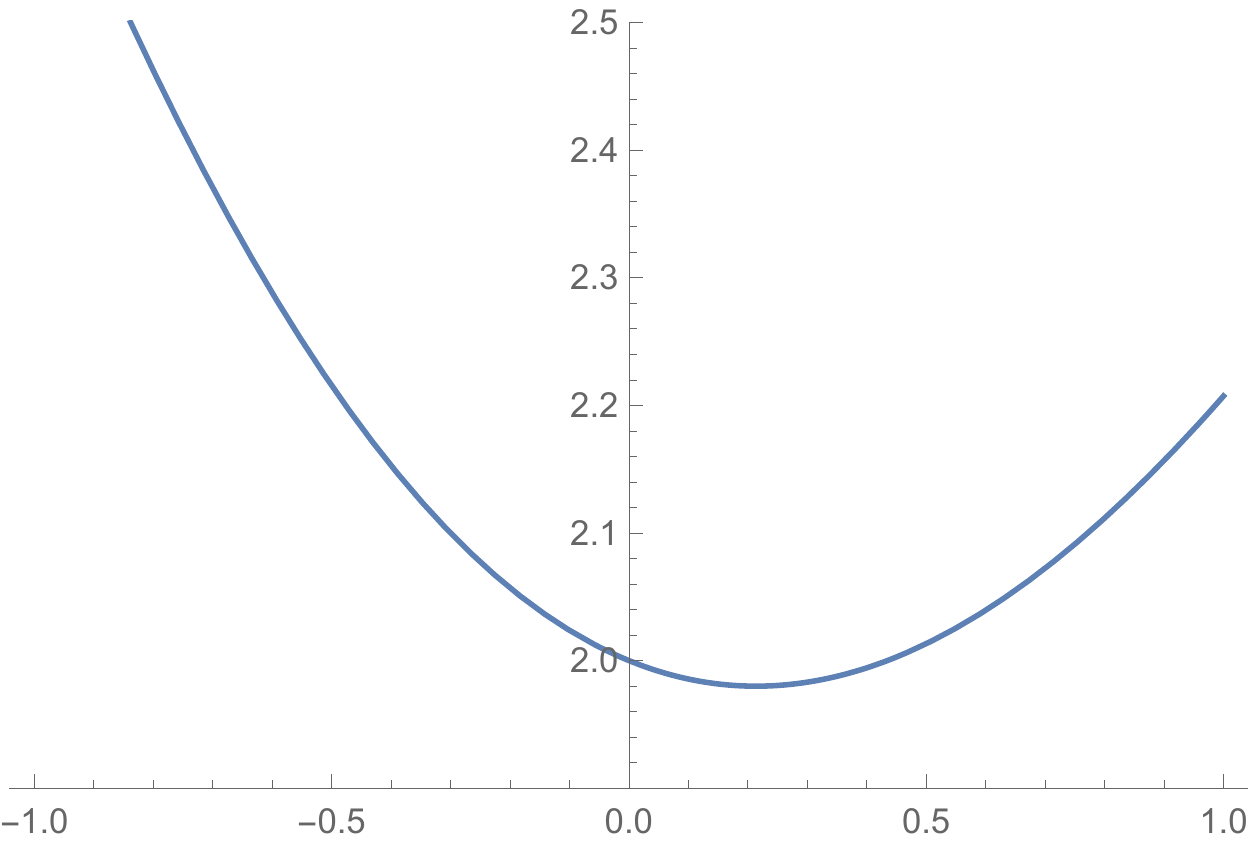}
\end{figure}

\subsubsection{Donaldson's example}

We recall Donaldson's example of toric orbifolds which are not K-semistable while its Futaki invariant vanishes for equivariant product configurations (cf. \cite{Don2}). 
The polytope $P_n$ is defined as the convex hull of the following nine vertices: 
\begin{gather*} 
v_1^+ = (1,0), \quad v_1^- = (0,1), \quad v_1 = v_1 = r_n (1,1), 
\\
v_2^+ = (4, 0) + (-1,1), \quad v_2^- = (4, 0) + (-1, 0), \quad v_2 = (4, 0) + r_n (-2, 1), 
\\
v_3^+ = (0,4) + (0,-1), \quad v_3^- = (0,4) + (1,-1), \quad v_3 = (0, 4) + r_n (1, -2),  
\end{gather*}
where $r_n := (n-2)/(3n-5)$. 

\begin{center}
\includegraphics[width=8cm]{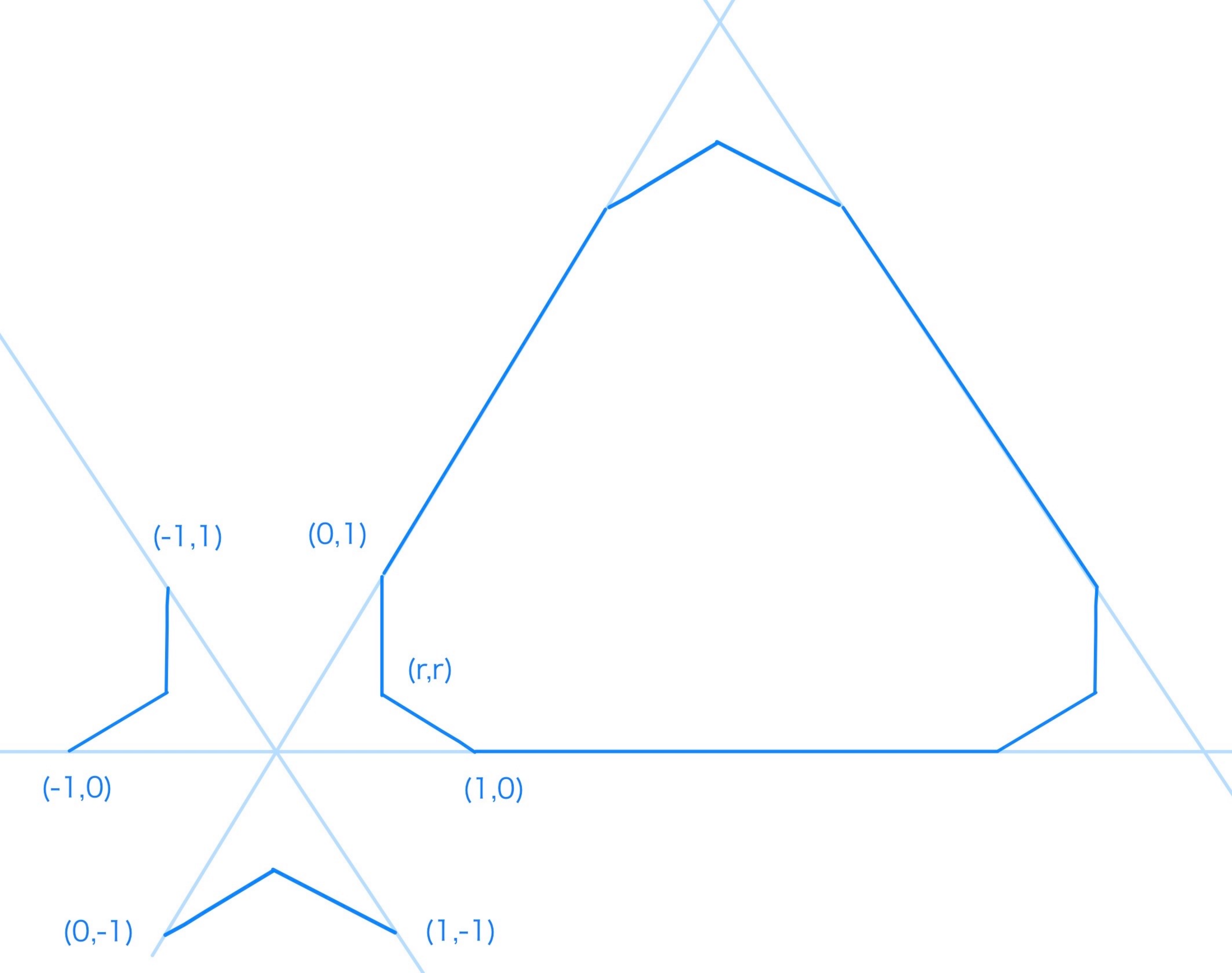}
\end{center}

For $r_5 = 3/10$, we have 
\begin{gather*} 
v_1^+ = (1,0), \quad v_1^- = (0,1), \quad v_1 = (3/10, 3/10) , 
\\
v_2^+ = (3, 1), \quad v_2^- = (3, 0), \quad v_2 = (34/10, 3/10), 
\\
v_3^+ = (0,3), \quad v_3^- = (1,3), \quad v_3 = (3/10, 34/10) 
\end{gather*}
and
\begin{gather*} 
\mu_{v_1^+, +} = (1, 0), \quad \mu_{v_1^+, -} = (-7, 3), 
\\
\mu_{v_1^-, +} = (3, -7), \quad \mu_{v_1^-, -} = (0, 1), 
\\
\mu_{v_1, +} = (7, -3), \quad \mu_{v_1, -} = (-3, 7), 
\\[4pt]
\mu_{v_2^+, +} = (-1, 1), \quad \mu_{v_2^+, -} = (4, -7), 
\\
\mu_{v_2^-, +} = (4, 3), \quad \mu_{v_2^-, -} = (-1, 0), 
\\
\mu_{v_2, +} = (-4, 7), \quad \mu_{v_2, -} = (-4, -3), 
\\[4pt]
\mu_{v_3^+, +} = (0,-1), \quad \mu_{v_3^+, -} = (3, 4), 
\\
\mu_{v_3^-, +} = (-7, 4), \quad \mu_{v_3^-, -} = (1, -1), 
\\
\mu_{v_3, +} = (-3, -4),  \quad \mu_{v_3, -} = (7, -4). 
\end{gather*}
In particular, we have 
\[ \mathrm{index} (v_i^+) = 3, \quad \mathrm{index} (v_i^-) = 3, \quad \mathrm{index} (v_i) = 40 \]
for $i= 1, 2, 3$. 

For generic $\eta_\epsilon = (1, \epsilon)$, we will compute 
\[ \int_P e^{\langle \mu, x \eta_\epsilon \rangle} d\mu = x^{-2} \sum_{v: \mathrm{vertex}} \frac{e^{x \langle \mu, \eta_\epsilon \rangle} \cdot \mathrm{index} (v)}{\langle \mu_{v, +}, \eta_\epsilon \rangle \langle \mu_{v, -}, \eta_\epsilon \rangle} \]
and 
\[ \int_{\partial P} e^{\langle \mu, x \eta_\epsilon \rangle} d\sigma = - x^{-1} \sum_{v: \mathrm{vertex}} \frac{e^{x \langle \mu, \eta_\epsilon \rangle} \cdot \langle \mu_{v, +} + \mu_{v, -}, \eta_\epsilon \rangle}{\langle \mu_{v, +}, \eta_\epsilon \rangle \langle \mu_{v, -}, \eta_\epsilon \rangle}. \]

We have
\begin{gather*} 
\langle v_1^+, \eta \rangle = 1, \quad \langle v_1^-, \eta \rangle = \epsilon, \quad \langle v_1, \eta \rangle= 3/10 + 3/10 \cdot \epsilon, 
\\
\langle v_2^+, \eta \rangle = 3 + \epsilon, \quad \langle v_2^-, \eta \rangle = 3, \quad \langle v_2, \eta \rangle = 34/10 + 3/10 \cdot \epsilon, 
\\
\langle v_3^+, \eta \rangle = 3 \epsilon, \quad \langle v_3^-, \eta \rangle = 1 + 3\epsilon, \quad \langle v_3, \eta \rangle = 3/10 + 34/10 \cdot \epsilon,  
\end{gather*}
and 
\begin{gather*} 
\langle \mu_{v_1^+, +}, \eta \rangle = 1, \quad \langle \mu_{v_1^+, -}, \eta \rangle = -7 + 3\epsilon, 
\\
\langle \mu_{v_1^-, +}, \eta \rangle = 3 - 7\epsilon, \quad \langle \mu_{v_1^-, -}, \eta \rangle = \epsilon, 
\\
\langle \mu_{v_1, +}, \eta \rangle = 7 -3 \epsilon, \quad \langle \mu_{v_1, -}, \eta \rangle = -3 + 7 \epsilon, 
\\
\langle \mu_{v_2^+, +}, \eta \rangle = -1 + \epsilon, \quad \langle \mu_{v_2^+, -}, \eta \rangle = 4 - 7 \epsilon
\\
\langle \mu_{v_2^-, +}, \eta \rangle = 4 + 3 \epsilon, \quad \langle \mu_{v_2^-, -}, \eta \rangle = -1
\\
\langle \mu_{v_2, +}, \eta \rangle = -4 + 7 \epsilon, \quad \langle \mu_{v_2, -}, \eta \rangle = -4 -3 \epsilon
\\
\langle \mu_{v_3^+, +}, \eta \rangle = - \epsilon, \quad \langle \mu_{v_3^+, -}, \eta \rangle = 3 + 4 \epsilon
\\
\langle \mu_{v_3^-, +}, \eta \rangle = -7 + 4\epsilon, \quad \langle \mu_{v_3^-, -}, \eta \rangle = 1 -\epsilon
\\
\langle \mu_{v_3, +}, \eta \rangle = -3 - 4 \epsilon,  \quad \langle \mu_{v_3, -}, \eta \rangle = 7 - 4 \epsilon. 
\end{gather*}

It follows that 
\begin{align*}
\int_P e^{\langle \mu, x \eta \rangle} d\mu 
&= x^{-2} \Big{(} \frac{3}{-7 + 3 \epsilon} e^x + \frac{3}{(3 - 7\epsilon) \epsilon} e^{\epsilon x} + \frac{40}{(7 -3 \epsilon) (-3+ 7 \epsilon)} e^{(3/10 + 3/10 \cdot \epsilon) x} 
\\
&\qquad - \frac{3}{(1-\epsilon) (4 - 7 \epsilon)} e^{(3+ \epsilon) x} - \frac{3}{4 +3 \epsilon} e^{3x} + \frac{40}{(4 - 7 \epsilon)(4 + 3\epsilon)} e^{(34/10 + 3/10 \cdot \epsilon) x} 
\\
&\qquad - \frac{3}{\epsilon(3+4 \epsilon)} e^{3\epsilon x} - \frac{3}{(7-4\epsilon)(1-\epsilon)} e^{(1+ 3\epsilon) x} - \frac{40}{(3+ 4\epsilon)(7 - 4 \epsilon)} e^{(3/10 + 34/10 \cdot \epsilon) x} \Big{)}
\end{align*}
and 
\begin{align*}
\int_{\partial P} e^{\langle \mu, x \eta \rangle} d\sigma 
&= -x^{-1} \Big{(} \frac{-6+3\epsilon}{-7 + 3 \epsilon} e^x + \frac{3 - 6 \epsilon}{(3 - 7\epsilon) \epsilon} e^{\epsilon x} + \frac{4 + 4\epsilon}{(7 -3 \epsilon) (-3+ 7 \epsilon)} e^{(3/10 + 3/10 \cdot \epsilon) x} 
\\
&\qquad - \frac{3-6\epsilon}{(1-\epsilon) (4 - 7 \epsilon)} e^{(3+ \epsilon) x} - \frac{3+3\epsilon}{4 +3 \epsilon} e^{3x} + \frac{-8+4\epsilon}{(4 - 7 \epsilon)(4 + 3\epsilon)} e^{(34/10 + 3/10 \cdot \epsilon) x} 
\\
&\qquad - \frac{3+3\epsilon}{\epsilon(3+4 \epsilon)} e^{3\epsilon x} - \frac{-6+3\epsilon}{(7-4\epsilon)(1-\epsilon)} e^{(1+ 3\epsilon) x} - \frac{4-8\epsilon}{(3+ 4\epsilon)(7 - 4 \epsilon)} e^{(3/10 + 34/10 \cdot \epsilon) x} \Big{)}. 
\end{align*}

Taking the limit $\epsilon \to 0$, we get 
\begin{align*}
\int_P e^{\langle \mu, x \eta_0 \rangle}
&= x^{-2} \Big{(} - \frac{6}{7} e^x - \frac{80}{21} e^{3/10 \cdot x} - \frac{3}{2} e^{3 x} + \frac{5}{2} e^{34/10 \cdot x} + \frac{11}{3} -2x \Big{)}
\end{align*}
and 
\begin{align*}
\int_{\partial P} e^{\langle \mu, x \eta_0 \rangle} d\sigma
&= -x^{-1} \Big{(} \frac{12}{7} e^x - \frac{8}{21} e^{3/10 \cdot x} - \frac{3}{2} e^{3 x} - \frac{1}{2} e^{34/10 \cdot x} + \frac{2}{3} -2x \Big{)}. 
\end{align*}

The following are the graphs of $- \frac{1}{2\pi} \NAmu (x. \eta_0)$ and its derivative $- \frac{1}{2\pi} \frac{d}{dx} \NAmu (x. \eta_0) = -\frac{1}{2\pi} \Fut_{x. \eta_0} (\eta_0)$, respectively. 
This illustration shows the $\mu$-entropy for proper vectors is not concave even for $\lambda = 0$. 
The author speculates the $\mu$-entorpy is quasi-concave for $\lambda \le 0$. 
\begin{figure}[h]
\begin{minipage}[b]{0.45\linewidth}
\centering
\includegraphics[width=5.5cm]{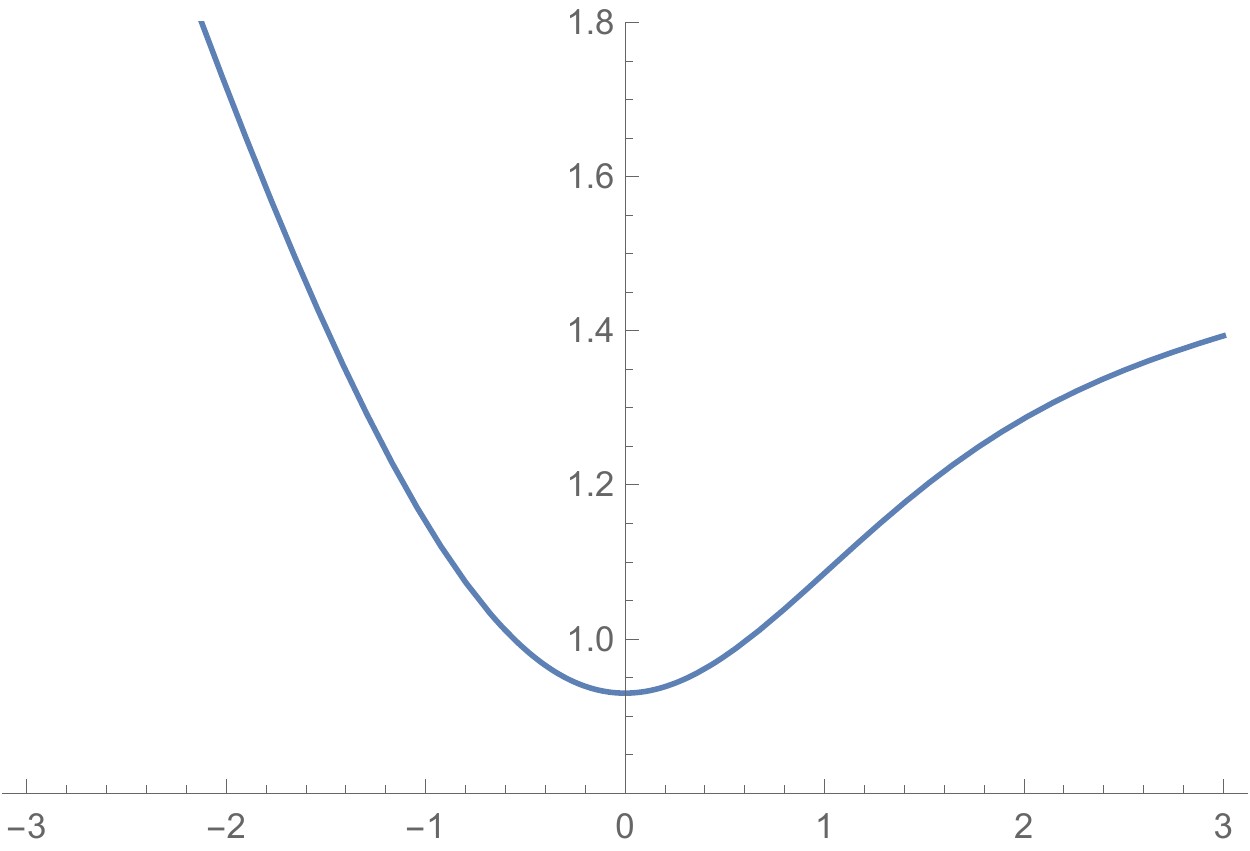}
\end{minipage}
\begin{minipage}[b]{0.45\linewidth}
\centering
\includegraphics[width=5.5cm]{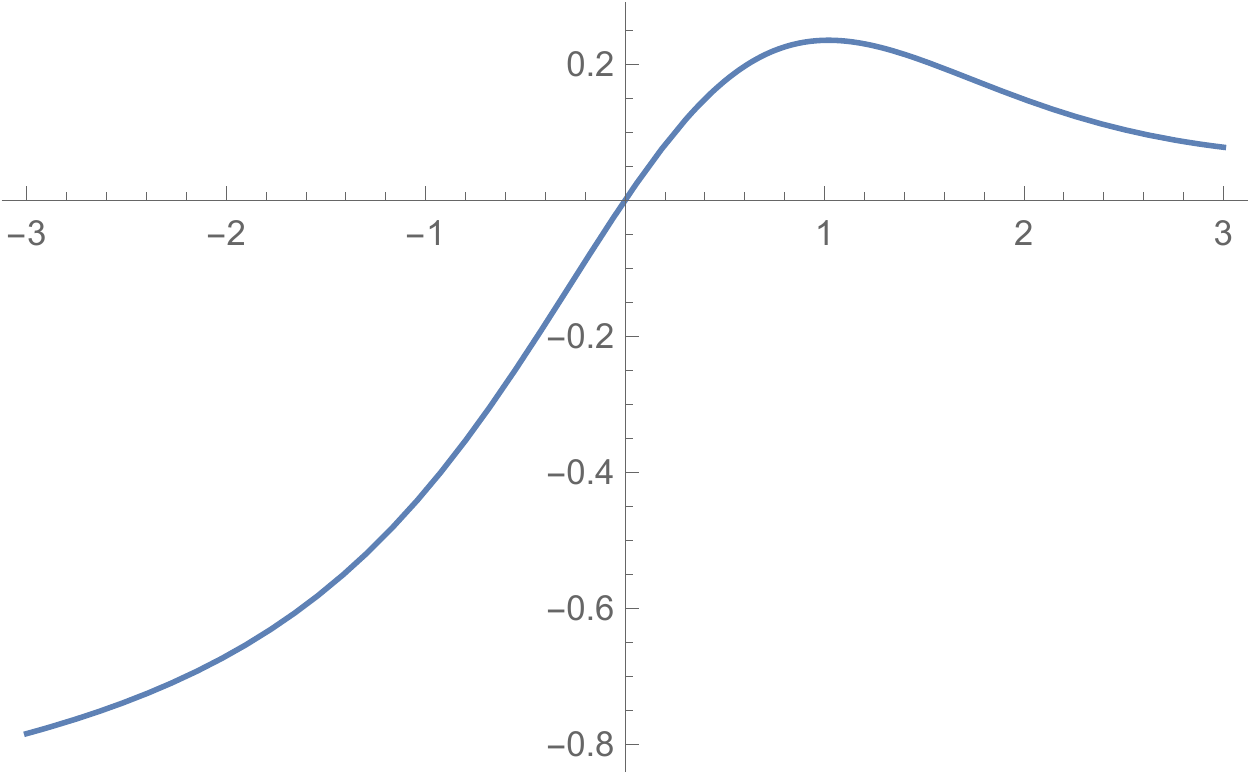}
\end{minipage}
\end{figure}

\end{document}